\newtheorem{MainThm}{Theorem}
\theoremstyle{definition}
\newtheorem{defn}{Definition}[subsection]
\newtheorem{thm}[defn]{Theorem}
\newtheorem{cor}[defn]{Corollary}
\newtheorem{prop}[defn]{Proposition}
\newtheorem{lem}[defn]{Lemma}
\newtheorem{ex}[defn]{Example}
\newtheorem{rmk}[defn]{Remark}
\newtheorem{rmks}[defn]{Remarks}
\newtheorem{constr}[defn]{Construction}
\newtheorem{setup}[defn]{Hypothesis}
\newtheorem{notn}[defn]{Notation}
\DeclareMathOperator{\Frech}{Frech}
\DeclareMathAlphabet{\mathpzc}{OT1}{pzc}{m}{it}
\newcommand{\h}[1]{\widehat{#1}}
\newcommand{\htf}[1]{\overline{\h{#1}}}
\newcommand{\hK}[1]{\h{#1_K}}
\newcommand{\hsULK}{\hK{\sU(\cL)}}
\newcommand{\dnl}{\cD^\lambda_n}
\newcommand{\hdnlK}{\h{\cD^\lambda_{n,K}}}
\newcommand{\hdnK}{\h{\cD_{n,K}}}
\newcommand{\hnK}[1]{\h{#1_{n,K}}}
\newcommand{\w}[1]{\wideparen{#1}}
\newcommand{\wDGG}{\w\cD(\bG, G)^{\bG}}
\newcommand{\fr}[1]{\mathfrak{{#1}}}
\newcommand{\ts}[1]{\texorpdfstring{$#1$}{}}
\newcommand{\be}{\begin{enumerate}[{(}a{)}]}
\newcommand{\ee}{\end{enumerate}}
\newcommand{\qmb}[1]{\quad\mbox{#1}\quad}
\newcommand{\csp}{\hspace{-0.1cm}}
\newcommand{\hsp}{\hspace{0.1cm}}
\newcommand{\Star}{\hsp \w{\rtimes} \hsp }
\newcommand{\kAlg}{k\hspace{-0.1cm}-\hspace{-0.1cm}\textbf{Alg}}
\newcommand{\Emod}[2]{#1\csp-\csp #2\csp -\csp \mod}
\newcommand{\congs}{\stackrel{\cong}{\longrightarrow}}
\newcommand{\zAut}{\mathpzc{Aut}}
\newcommand{\zLie}{\mathpzc{Lie}}
\newcommand{\Ex}[1]{\widetilde{#1}}
\newcommand{\Uf}[1]{U(\fr{#1})}
\newcommand{\wUg}[1]{\w{U}(\fr{g},{#1})}
\newcommand{\tocong}{\stackrel{\cong}{\longrightarrow}}
\newcommand{\curtdash}{\hspace{-0.1cm}-\hspace{-0.1cm}}
  \let\leq=\leqslant
  \let\geq=\geqslant
\begin{document}

\title{Equivariant $\cD$-modules on rigid analytic spaces}
\author{Konstantin Ardakov}
\address{Mathematical Institute\\University of Oxford\\Oxford OX2 6GG}
\thanks{The first author was supported by EPSRC grant EP/L005190/1.}
\subjclass[2010]{14G22; 32C38}
\begin{abstract} We define coadmissible equivariant $\mathcal{D}$-modules on smooth rigid analytic spaces and relate them to admissible locally analytic representations of semisimple $p$-adic Lie groups.
\end{abstract}
\maketitle
\tableofcontents
\section{Introduction}

Let $K$ be a non-archimedean field of mixed characteristic $(0,p)$, and let $L$ be a finite extension of $\Qp$ contained in $K$. In a series of papers including \cite{ST1,ST2,ST3,ST}, Schneider and Teitelbaum developed the theory of \emph{locally analytic representations} of an $L$-analytic group $G$ in locally convex (usually infinite dimensional) topological vector spaces over $K$. These kinds of representations of $G$ arise naturally in several places in number theory, for example in the theory of automorphic forms \cite{EmertonInterpolation} and in the $p$-adic local Langlands program \cite{BreuilICM, ColmezFunctor, EmertonLFuncs}. Many of these representations enjoy an important finiteness condition called \emph{admissibility}, introduced in \cite{ST}; almost by definition, the category of admissible locally analytic representations of $G$ is anti-equivalent to the category of \emph{coadmissible modules} over the locally $L$-analytic distribution algebra $D(G,K)$ of $G$.

We would like to better understand these representations using the theory of $\cD$-modules. The Lie algebra $\fr{g}$ of $G$ acts naturally on every $D(G,K)$-module $V$; when $\fr{g}$ is semisimple, it is then possible to localise the $\fr{g}$-module $V$ to the flag variety of the corresponding algebraic group in the sense of Beilinson and Bernstein \cite{BB} without losing too much information about the $\fr{g}$-action on $V$ in the process: for example, localisation yields an equivalence of categories between the category of $\fr{g}$-modules with trivial infinitesimal central character and the category of $\cD$-modules on the flag variety that are quasi-coherent as $\cO$-modules in the Zariski topology.

In the setting of infinite-dimensional representations of complex Lie groups, this (purely algebraic) method of localisation was used in a spectacular way by Beilinson-Bernstein \cite{BB} and Brylinski-Kashiwara \cite{BryKash} to prove the Kazhdan-Lusztig conjectures \cite{KLConj}. However, applying the algebraic localisation functor $\cD \otimes_{U(\fr{g})} -$ directly to a coadmissible $D(G,K)$-module $V$ only remembers information about the underlying $\fr{g}$-action on $V$ and completely forgets about the $G$-action. Slightly less naively, it is possible to remember the $G$-action on the localised $\cD$-module in the form of a $G$-equivariant structure; however this still forgets the natural topology carried by $V$ as well as the coadmissibility of $V$, and it is not reasonable to expect to be able to give a purely local characterisation of the essential image of coadmissible $D(G,K)$-modules under this functor. 

To address these topological issues, consider the closure $\w{U(\fr{g}_K)}$ in $D(G,K)$ of the enveloping algebra $U(\fr{g}_K)$, where $\fr{g}_K := \fr{g} \otimes_L K$. This is the Arens-Michael envelope of $U(\fr{g}_K)$ considered by Schmidt in \cite{SchmidtStable,SchmidtVerma}.  It was observed already in \cite{ST2} that this closure consists precisely of non-commutative formal power series $\sum_{\alpha \in \N^d} \lambda_\alpha \mathbf{x}^\alpha$, where $\{x_1,\ldots,x_d\}$ is any $K$-basis of $\fr{g}_K$ and $\{\lambda_{\alpha} : \alpha \in \N^d\} \subset K$, that converge everywhere in the sense that no matter how large the real number $R > 0$, we still have $|\lambda_\alpha| R^{|\alpha|} \to 0$ as $\alpha \to \infty$. In a more invariant formulation, $\w{U(\fr{g}_K)}$ is naturally in bijection with the $K$-vector space $\cO(\fr{g}_K^{\ast,\rig})$ of rigid analytic functions converging everywhere on the rigid analytification $\fr{g}_K^{\ast, \rig}$ of the affine variety $\fr{g}_K^\ast = \Spec S(\fr{g})$. Thus, $\w{U(\fr{g}_K)}$ is a ``rigid analytic quantisation" of $\fr{g}_K^{\ast, \rig}$ in the same way that $U(\fr{g}_K)$ is an ``algebraic quantisation" of $\fr{g}_K^\ast$. 

In a series of recent papers \cite{ArdICM,DCapOne,DCapTwo,ABB}, we have introduced a sheaf of rings $\w{\cD}$ of infinite order differential operators on any smooth rigid analytic space $\bX$. This sheaf is morally a ``rigid analytic quantisation" of the cotangent bundle $T^\ast \bX$, in the same way that the sheaf of finite order differential operators $\cD_X$ on any complex smooth algebraic variety $X$ is an algebraic quantisation of $T^\ast X$. We also introduced the abelian category $\cC_{\bX}$ of \emph{coadmissible $\w{\cD}$-modules on $\bX$} and showed that it behaves in several ways analogously to the category of coherent $\cD$-modules on complex smooth algebraic varieties.

The purpose of this paper is to introduce the category $\cC_{\bX/G}$ of \emph{coadmissible $G$-equivariant $\cD$-modules} on any smooth rigid analytic space $\bX$ equipped with a continuous action\footnote{see Definition \ref{CtsAct}} of a $p$-adic Lie group $G$. As the notation suggests, we would like to think of an object in this category as a sheaf on the quotient space $\bX / G$. Since $\bX$ and $G$ live in different categories --- one is a set equipped with a Grothendieck topology,  the other is a topological group --- a little care is required to make sense of this. The category of abelian sheaves $\Ab_{\bX}$ on the rigid analytic space $\bX$ is equivalent to the category of abelian sheaves $\Ab_X$ on an honest topological space $X$, namely the Huber space $X:=\cP(\bX)$ --- see, for example, \cite{Huber, SchVdPut}. By the functoriality of this construction, the action of $G$ on $\bX$ induces a continuous action of $G$ on $X$, and we then have at our disposal the topological space $X/G$ consisting of the $G$-orbits of $X$, equipped with the quotient topology, as well as the abelian category $\Ab_{X/G}$ of abelian sheaves on $X/G$. Following Grothendieck \cite{Tohoku}, we regard the category $G$-$\Ab_X$ of $G$-equivariant abelian sheaves on $X$ to be a better-behaved enhancement of $\Ab_{X/G}$: there is always the forgetful functor $G$-$\Ab_X \to \Ab_{X/G}$. Our category $\cC_{\bX/G}$ is a certain subcategory of $G$-$\Ab_{\bX}$.

To define $\cC_{\bX/G}$, we proceed in several steps, broadly analogously to \cite{DCapOne}. First, we consider the case where the pair $(\bX,G)$ is \emph{small}; roughly speaking \footnote{see Definition \ref{DefnOfSmall}}, this means that the variety $\bX$ is affinoid and the group $G$ is compact. Whenever $(\bX,G)$ is small, in $\S \ref{MainConstr}$ we introduce a $K$-Fr\'echet algebra 
\[ \w\cD(\bX,G)\]
which is a particular $K$-Fr\'echet space completion of the abstract skew-group algebra $\cD(\bX) \rtimes G$. This construction is partly motivated by the fact that the locally analytic distribution algebra $D(G,K)$ can be viewed as a particular $K$-Fr\'echet space completion of the abstract skew-group algebra $U(\fr{g}_K) \rtimes G$. In the case where the group $G$ is trivial,  $\w\cD(\bX,G)$ reduces to the algebra $\w\cD(\bX)$ from \cite{DCapOne} and in the case where $\bX$ is a one-point space, $\w\cD(\bX,G)$ reduces to the algebra $D^\infty(G,K)$ of locally constant distributions on $G$ from \cite{ST1}.

\begin{MainThm}\label{FrStIntro} $\w\cD(\bX,G)$ is Fr\'echet-Stein whenever $(\bX,G)$ is small.
\end{MainThm}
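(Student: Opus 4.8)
The plan is to write $\w\cD(\bX,G)$ as an inverse limit $\w\cD(\bX,G)=\varprojlim_n\w\cD(\bX,G)_n$ of (two-sided) Noetherian Banach $K$-algebras along transition maps $\w\cD(\bX,G)_{n+1}\to\w\cD(\bX,G)_n$ with dense image that make $\w\cD(\bX,G)_n$ flat on both sides as a $\w\cD(\bX,G)_{n+1}$-module --- this being precisely what it means to be Fr\'echet--Stein --- by fusing the Fr\'echet--Stein data of the two ``halves'' of $\cD(\bX)\rtimes G$: the Fr\'echet--Stein structure of $\w\cD(\bX)$ established in \cite{DCapOne}, and that of the distribution algebra $D(G,K)$ from \cite{ST}. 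Since $(\bX,G)$ is small we may fix an affine formal model $\cA$ of $\cO(\bX)$ together with a $G$-stable, $\cA$-free smooth Lie lattice $\sL\subseteq\cT(\bX)$, and an open normal uniform subgroup $G_0\trianglelefteq G$. For each $n$ one then has the $n$-th deformation $\w{\sU(\pi^n\sL)}_K$ of the associated Lie--Rinehart enveloping algebra, a Noetherian Banach algebra carrying a compatible $G$-action by \cite{DCapOne}, and a Banach distribution algebra $D(G_0,K)_{r_n}$ with $r_n\uparrow 1$, which is Noetherian by \cite{ST}; the Banach algebra $\w\cD(\bX,G)_n$ produced in $\S\ref{MainConstr}$ is built by forming a completed crossed product of $\w{\sU(\pi^n\sL)}_K$ with $D(G_0,K)_{r_n}$ and then a finite crossed product with $G/G_0$, in such a way that $\w\cD(\bX,G)=\varprojlim_n\w\cD(\bX,G)_n$.

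It then remains to verify that each $\w\cD(\bX,G)_n$ is Noetherian and that the transition maps are flat with dense image. For Noetherianity I would equip $\w\cD(\bX,G)_n$ with a complete, separated filtration splicing the order filtration on $\w{\sU(\pi^n\sL)}_K$ --- whose associated graded is, up to completing the base, a polynomial extension of the Noetherian ring $\gr_\pi\cA$ (essentially the symbol algebra of functions on the relevant cotangent bundle) --- with the uniformity filtration on $D(G_0,K)_{r_n}$, whose associated graded is a polynomial ring in $\dim G_0$ variables over $\gr K$. The key point is that $G_0$ acts trivially on $\gr\w{\sU(\pi^n\sL)}_K$: because $G_0$ preserves each submodule $\pi^m\sL$ and acts unipotently on the successive subquotients, its induced action on symbols is trivial. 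Hence $\gr\w\cD(\bX,G)_n$ is simply the crossed product of a Noetherian (graded-)commutative ring by the finite group $G/G_0$, which is Noetherian; and since a complete separated filtered ring with Noetherian associated graded is itself Noetherian, $\w\cD(\bX,G)_n$ is Noetherian.

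For the transition maps $\w\cD(\bX,G)_{n+1}\to\w\cD(\bX,G)_n$, density of the image is immediate, as both sides contain the common dense subalgebra (the image of) $\sU(\pi^{n+1}\sL)\rtimes G$. For flatness I would again pass to the associated graded: with respect to the filtrations above, the induced map $\gr\w\cD(\bX,G)_{n+1}\to\gr\w\cD(\bX,G)_n$ is, after matching up the finite-crossed-product parts, the tensor product of the graded maps coming from $\w{\sU(\pi^{n+1}\sL)}_K\to\w{\sU(\pi^n\sL)}_K$ and from $D(G_0,K)_{r_{n+1}}\to D(G_0,K)_{r_n}$, both of which are (faithfully) flat by \cite{DCapOne} and \cite{ST} respectively; hence the graded map is flat. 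One then invokes the standard lemma, used in both \cite{ST} and \cite{DCapOne}, that a homomorphism of complete separated filtered rings with Noetherian associated gradeds which is flat on associated gradeds is itself flat. This yields flatness of $\w\cD(\bX,G)_{n+1}\to\w\cD(\bX,G)_n$ and completes the argument.

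I expect the real difficulty to lie not in these formal steps but in setting up the Banach algebra $\w\cD(\bX,G)_n$ and its filtration correctly in the first place: one must check that the $G$-action on $\w{\sU(\pi^n\sL)}_K$ integrates to a continuous module structure over $D(G_0,K)_{r_n}$ compatibly with the norms, so that the crossed-product norm is submultiplicative, and one must pin down precisely the claim that $G_0$ acts trivially on the symbol algebra. Both come down to matching the analyticity radius $r_n$ on the group side with the deformation parameter $\pi^n$ on the $\cD$-module side --- the kind of delicate comparison of filtrations that is the technical heart of \cite{ST} and \cite{DCapOne}. Once the filtered picture is in place, Noetherianity and flatness follow formally from the graded computations above, exactly as in those references.
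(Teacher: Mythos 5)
Your overall strategy --- present $\w\cD(\bX,G)$ as a countable inverse limit of two-sided Noetherian Banach $K$-algebras with flat, dense-image transition maps --- is of course the right one, since that is the definition. But the two concrete ingredients you propose both diverge from what the paper does, and the second one would actually fail in the generality the theorem is stated in.

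First, the paper's Banach algebra at level $n$ is \emph{not} a completed crossed product of $\hK{U(\pi^n\cL)}$ with a Banach distribution algebra $D(G_0,K)_{r_n}$. It is the \emph{finite} crossed product $\hK{U(\pi^n\cL)} \rtimes_{N_n} G$, where $(N_\bullet)$ is a good chain of open normal subgroups and the whole point of the trivialisation $\beta_{\cL}$ from Theorem \ref{Beta} is that the action of $N_n$ on $\hK{U(\pi^n\cL)}$ is inner, implemented by $\exp(p^\epsilon\iota(\cL))$ sitting inside $\hK{U(\pi^n\cL)}^\times$. The compact open subgroup is thus absorbed into the Lie--Rinehart side and contributes no separate Banach distribution-algebra factor; the remaining group factor is the finite quotient $G/N_n$. (This is also why, when $\bX$ is a point, the construction degenerates to $D^\infty(G,K)$ rather than $D(G,K)$.) Your proposed ``fusion'' of $\w{\sU(\pi^n\cL)}_K$ with $D(G_0,K)_{r_n}$ would double-count the Lie algebra $\fr{g}_0 = \Lie(G_0)$, which already acts on $\w{\sU(\pi^n\cL)}_K$ through the anchor map and again through its copy inside $D(G_0,K)_{r_n}$; reconciling these is exactly what $\beta_{\cL}$ does.

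Second, and more seriously, the filtered/associated-graded argument you propose for Noetherianity and flatness is precisely what the paper is at pains to \emph{avoid}. The theorem is proved with no hypothesis that $K$ be discretely valued, so $\cR$ need not be Noetherian and $\gr_\pi \cA$ need not be Noetherian either; the ``complete separated filtered ring with Noetherian associated graded'' lemma, which underlies the arguments of both \cite{ST} and \cite{DCapOne}, is simply unavailable. The paper instead establishes Noetherianity of $\hK{U(\pi^n\cL)}$ (Corollary \ref{HTF}) and two-sided flatness of $\hK{U(\pi^{n+1}\cL)} \to \hK{U(\pi^n\cL)}$ (Theorem \ref{HKULflat}) by a completely different route built on Raynaud--Gruson's theorem (Theorem \ref{RG346} via Theorem \ref{KeyRGLemma}), which lets one control finitely generated submodules of $\pi$-adically complete flat $\cR$-algebras without assuming $\cR$ Noetherian. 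Once those two facts are in hand, Noetherianity of $\hK{U(\pi^n\cL)} \rtimes_{N_n} G$ is free because it is a crossed product with a finite group, and flatness of the transition maps follows from the diagonal factoring $U_{n+1} \to U_n \hookrightarrow U_n \rtimes_{N_n} G$ together with \cite[Lemma 2.2]{SchmidtBB}, again with no recourse to filtrations. So the missing idea in your proposal is the one the introduction flags: replacing the Zariskian-filtration machinery by Raynaud--Gruson.
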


When the ground field $K$ is discretely valued and either $\bX$ or $G$ is trivial, this is a special case of results in \cite{DCapOne} or \cite{ST}, respectively. However, we give a completely new proof that is powerful enough to deal with the case where the valuation of $K$ is not necessarily discrete, using deep results of Raynaud and Gruson \cite{GruRay}. 

After Theorem \ref{FrStIntro} and the constructions of \cite{ST}, we have at our disposal the abelian category $\cC_{\w\cD(\bX,G)}$ of coadmissible $\w\cD(\bX,G)$-modules. Still when $(\bX,G)$ is small\footnote{in fact, we define this localisation functor in a greater generality that will prove useful later}, we construct in $\S \ref{CompActSect}, \ref{LocXSect}$  a \emph{localisation functor}
\[\Loc : \cC_{\w\cD(\bX,G)} \longrightarrow \Emod{G}{\cD_{\bX}}\]
from coadmissible $\w\cD(\bX,G)$-modules to $G$-equivariant $\cD$-modules on $\bX$. This functor is not full in general: for example, if $\bX$ is a one-point space, the localisation functor is then simply the restriction functor along the inclusion $K[G] \hookrightarrow D^\infty(G,K)$.  In order to address this issue, we observe that the sections $\Loc(M)(\bU)$ of our localised sheaf $\Loc(M)$ over sufficiently small affinoid subdomains $\bU$ of $\bX$ carry a natural Fr\'echet-space topology, being (by construction) coadmissible modules over the Fr\'echet-Stein algebra $\w\cD(\bU,H)$ for any compact open subgroup $H$ of $G$ stabilising $\bU$. This leads us to consider the category $\Frech(G-\cD_\bX)$ of $G$-equivariant \emph{locally Fr\'echet} $\cD$-modules for any pair $(\bX,G)$, which, roughly speaking, allows us to keep track of these Fr\'echet-space topologies on local sections over sufficiently small affinoid subdomains of $\bX$. Our localisation functor takes values in $\Frech(G-\cD_{\bX})$, and for general $(\bX,G)$ we define $\cC_{\bX/G}$ to be the full subcategory of $\Frech(G-\cD_{\bX})$ whose objects are locally isomorphic to a sheaf of the form $\Loc(M)$ --- see Definition \ref{CoadmFrechEqDmod} and Remarks \ref{TopRmk} for more details. Here is a summary of its properties.

\begin{MainThm}\label{CXGIntro} Let $G$ be a $p$-adic Lie group acting continuously on the smooth rigid analytic space $\bX$, let $H$ be an open subgroup of $G$ and let $\bY$ be a $G$-stable admissible open subset of $\bX$.
\be \item There is a faithful forgetful functor $\cC_{\bX / G} \to \cC_{\bX/H}$.
\item There is a forgetful functor $\cC_{\bX/G} \to \cC_{\bY/G}$.
\item Whenever $(\bX,G)$ is small, the localisation functor
\[\Loc : \cC_{\w\cD(\bX,G)} \longrightarrow \cC_{\bX/G}\]
is an equivalence of categories, with quasi-inverse $\Gamma(\bX,-)$.
\item $\cC_{\bX/G}$ is an abelian category.
\ee\end{MainThm}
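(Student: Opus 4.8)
The plan is to establish part (c) first, since the equivalence it provides in the small case furnishes the local building blocks from which (a), (b) and (d) can be bootstrapped. By Theorem~\ref{FrStIntro}, $\w\cD(\bX,G)$ is Fr\'echet-Stein whenever $(\bX,G)$ is small, so by the general theory of \cite{ST} the category $\cC_{\w\cD(\bX,G)}$ is abelian and both $\Loc$ and $\Gamma(\bX,-)$ make sense as functors between it and $\Frech(G-\cD_\bX)$. I will use throughout that every $G$-stable admissible open of $\bX$ can, after shrinking $G$ to a suitable compact open subgroup, be covered by small pairs, and that $\Loc$ commutes with restriction to such admissible opens --- a compatibility built into its construction.

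\emph{Part (c).} Suppose $(\bX,G)$ is small. The construction of $\Loc$ supplies natural maps $\eta_M\colon M\to\Gamma(\bX,\Loc M)$ and $\varepsilon_\cM\colon\Loc(\Gamma(\bX,\cM))\to\cM$ exhibiting $\Loc$ as left adjoint to $\Gamma(\bX,-)$; I would show both are isomorphisms. For $\eta_M$: write $\w\cD(\bX,G)=\varprojlim_n A_n$ for the defining Fr\'echet-Stein presentation and $M=\varprojlim_n M_n$ with each $M_n$ a coherent $A_n$-module; over a sufficiently small $G$-stable affinoid covering of $\bX$ the sheaf $\Loc(M)$ is assembled from Beilinson-Bernstein style localisations of the layers $M_n$, and one computes $\Gamma(\bX,\Loc M)\cong\varprojlim_n M_n=M$ using Tate acyclicity on $\bX$ to kill the higher cohomology of each layer and the flatness of the transition maps $A_{n+1}\to A_n$ to control the inverse limit. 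For $\varepsilon_\cM$: since $(\bX,G)$ is small, $\cM\in\cC_{\bX/G}$ is by definition locally isomorphic to sheaves of the form $\Loc(N)$; applying $\eta$ on each member $\bU$ of such a cover gives compatible isomorphisms $\cM|_\bU\cong\Loc(\Gamma(\bU,\cM))$, and these glue, via the equivariant sheaf property for coadmissible $\w\cD$-modules proved following \cite{DCapOne}, to a coadmissible $\w\cD(\bX,G)$-module $N$ with $\Loc(N)\cong\cM$; then $\Gamma(\bX,\cM)\cong N$ and $\varepsilon_\cM$ is an isomorphism. I expect the identification $\Gamma(\bX,\Loc M)\cong M$, and the descent argument it feeds, to be the technical heart of the theorem.

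\emph{Parts (a) and (b).} For (a), the inclusion $H\le G$ induces an inclusion $\w\cD(\bX,H)\hookrightarrow\w\cD(\bX,G)$ realising the target as a free $\w\cD(\bX,H)$-module of rank $[G:H]$, compatibly with the Fr\'echet-Stein presentations; hence restriction of scalars preserves coherence layer by layer and coadmissibility in the limit, carrying $\cC_{\w\cD(\bX,G)}$ into $\cC_{\w\cD(\bX,H)}$. Transporting this through the equivalence of (c) in the small case and gluing over a covering of $\bX$ by pairs that become small after shrinking $G$, the forgetful functor $\Emod{G}{\cD_\bX}\to\Emod{H}{\cD_\bX}$ restricts to a functor $\cC_{\bX/G}\to\cC_{\bX/H}$; it is faithful because the underlying forgetful functor on $G$-equivariant abelian sheaves leaves the underlying sheaf untouched. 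For (b), restriction of equivariant sheaves yields $\Frech(G-\cD_\bX)\to\Frech(G-\cD_\bY)$; since $\cM\in\cC_{\bX/G}$ is locally isomorphic to sheaves $\Loc(N)$ and $\Loc$ commutes with restriction to $G$-stable admissible opens, $\cM|_\bY$ is again locally of that form and hence lies in $\cC_{\bY/G}$.

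\emph{Part (d).} In the small case $\cC_{\w\cD(\bX,G)}$ is abelian by Theorem~\ref{FrStIntro} and \cite{ST}, and I would transport this structure through the equivalence of (c). The essential input is that $\Loc$ is \emph{exact}: this follows from the c-flatness of the restriction maps $\w\cD(\bX,H)\to\w\cD(\bU,H)$ for $\bU\subseteq\bX$ a sufficiently small $H$-stable affinoid subdomain, as in \cite{DCapOne}, together with the exactness of restriction of scalars along $\w\cD(\bX,H)\hookrightarrow\w\cD(\bX,G)$ and the exactness of Beilinson-Bernstein localisation on each Fr\'echet-Stein layer. Exactness of $\Loc$ shows that the kernel and cokernel in $\Frech(G-\cD_\bX)$ of a morphism between objects of $\cC_{\bX/G}$ agree with $\Loc$ applied to the kernel and cokernel of the corresponding morphism in $\cC_{\w\cD(\bX,G)}$, hence lie in $\cC_{\bX/G}$; this settles (d) when $(\bX,G)$ is small. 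For general $(\bX,G)$, cover $\bX$ by $G$-stable admissible opens $\bV$ that become small after shrinking $G$ to a compact open $H_\bV$: by (a) and (b) a morphism $f$ in $\cC_{\bX/G}$ restricts to a morphism in $\cC_{\bV/H_\bV}$ whose kernel and cokernel lie in $\cC_{\bV/H_\bV}$, and since kernels and cokernels of sheaves commute with restriction to admissible opens, $\ker f$ and $\coker f$ are locally of the form $\Loc(-)$ and so lie in $\cC_{\bX/G}$. Hence $\cC_{\bX/G}$ is abelian. The main obstacles throughout are the global-sections computation and descent underpinning (c), and the exactness of $\Loc$ needed for (d).
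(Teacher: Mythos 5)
Your overall strategy --- establish the localisation equivalence (c) in the small case first, then bootstrap (a), (b), (d) from it --- matches the paper's modular organisation, and parts (a), (b), (c) of your proposal are essentially correct outlines of what the paper does. The serious problem lies in part (d).

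You propose to take ``the kernel and cokernel in $\Frech(G-\cD_\bX)$ of a morphism between objects of $\cC_{\bX/G}$'' and identify them with $\Loc$ applied to the kernel and cokernel in $\cC_{\w\cD(\bX,G)}$. However, the category $\Frech(G-\cD_\bX)$ is \emph{not} known to admit cokernels --- the paper points this out explicitly in Remark~\ref{TopRmk}(b), essentially because the weak axioms of Definition~\ref{FrechDmod} do not make $\cD(\bU)$ act continuously on $\cM(\bU)$, so the quotient topology on a cokernel of sections need not be a Fr\'echet topology and the sheaf cokernel has no obvious locally Fr\'echet structure. So the starting object of your comparison does not yet exist. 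The correct route is to form $\ker\alpha$ and $\coker\alpha$ in the honestly abelian ambient category $\Emod{G}{\cD_\bX}$, then \emph{construct} locally Fr\'echet structures on them: one shows $(\coker\alpha)(\bU)\cong\coker\alpha(\bU)$ using vanishing of higher cohomology over $\bU\in\bX_w(\cT)$ (this is Lemma~\ref{VanishingHi}, which your sketch omits), identifies this with a coadmissible $\w\cD(\bU,H)$-module, and equips it with the canonical Fr\'echet topology. One then still has to verify that all the structure maps, in particular the quotient map $\cN\twoheadrightarrow\coker\alpha$ and the universal maps $j$ and $r$ arising from the universal property, are continuous; the paper handles this with the Open Mapping Theorem and the automatic continuity of $\w\cD$-linear maps between coadmissible modules (Lemmas~\ref{AutoCts}, \ref{UPkercoker}, \ref{KerCoadm}, \ref{CokerCoadm}). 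Your phrase ``exactness of $\Loc$'' conceals a circularity here: exactness of $\Loc$ as a functor into $\cC_{\bX/G}$ presupposes that $\cC_{\bX/G}$ has kernels and cokernels, which is exactly what is to be proved; what is actually available at the start is exactness of $\Phi\circ\Loc$ (into $\Emod{G}{\cD_\bX}$), which is Corollary~\ref{LocExact}.

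Two smaller issues: in your argument for (a), the description of $\w\cD(\bX,G)$ as a free $\w\cD(\bX,H)$-module of rank $[G:H]$ only makes sense when $G$ is compact; the general case of (a) does not reduce to this because $H$ open in a non-compact $G$ can have infinite index, and the paper instead reduces to the small case and invokes Lemma~\ref{RestToHonP}, whose content is that the two localisation presheaves attached to a coadmissible $\w\cD(\bX,G)$-module and to its restriction to $\w\cD(\bX,H)$ agree. And in the global patching step of (d), the covering of $\bX$ you use should consist of affinoids $\bV$ that are merely $H_\bV$-stable for some compact open $H_\bV\le G$, not $G$-stable; the passage from $G$-coadmissibility to local $H_\bV$-coadmissibility is supplied by parts (a) and (b), applied together with Lemma~\ref{C/Grefinement}.
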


We can now state our main result: an equivariant Beilinson-Bernstein localisation theorem for admissible locally analytic representations. Recall that a $\fr{g}_K$-module $V$ is said to have \emph{trivial infinitesimal central character} if the central elements in $U(\fr{g}_K)$ that annihilate the trivial representation also annihilate $V$.

\begin{MainThm}\label{EqEq} Let $L$ be a finite extension of $\Qp$ contained in $K$, let $\G$ be an affine algebraic $L$-group such that $\G_K := \G \otimes_L K$ is connected and split semisimple, and let $G$ be an open subgroup of $\G(L)$. Then there is an equivalence of categories
\[
\left\{ 
				\begin{array}{c} 
					admissible \hsp locally \hsp L\curtdash analytic\\ 
					K\curtdash representations \hsp of \hsp G\\
					with \hsp trivial \hsp infinitesimal\\
					central \hsp character
				\end{array}
\right\} \cong \left\{
				\begin{array}{c}
				 coadmissible\\
				 G \curtdash equivariant\\
				 \cD \curtdash modules \hsp on \hsp \bX
				\end{array}
\right\}^{\op}\]
where $\bX := (\G_K/\B)^{\rig}$ is the rigid analytic flag variety associated with $\G_K$.
\end{MainThm}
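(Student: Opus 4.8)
The plan is to factor the asserted equivalence through the category of coadmissible $D(G,K)$-modules with trivial infinitesimal central character, using Schneider--Teitelbaum duality on the representation side and an equivariant Beilinson--Bernstein localisation on the $\cD$-module side.

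First I would invoke the anti-equivalence of \cite{ST} between admissible locally $L$-analytic $K$-representations of $G$ and coadmissible $D(G,K)$-modules, given by passage to the continuous dual $V \mapsto V'$. Because $\G_K$ is connected, the adjoint action of $G \le \G(L)$ fixes the centre $Z := Z(U(\fr{g}_K))$ pointwise --- inner automorphisms of $\fr{g}_K$ act trivially on $Z$ --- so the composite $Z \hookrightarrow U(\fr{g}_K) \rtimes G \to D(G,K)$ has central image. Writing $\theta \colon Z \to K$ for the central character of the trivial $\fr{g}_K$-module and $\fr{J} := \ker \theta$, and using that the trivial module is self-dual, the condition ``$V$ has trivial infinitesimal central character'' translates under the duality into ``$V'$ is annihilated by $\fr{J}$'', so the duality restricts to an anti-equivalence between the left-hand side and the full subcategory of $\cC_{D(G,K)}$ consisting of the coadmissible modules killed by $\fr{J}$. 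Since $\fr{J}$ is generated by finitely many central elements by Harish-Chandra, it is closed and compatible with the Fr\'echet--Stein presentation $D(G,K) = \varprojlim D_r(G,K)$, and a standard central-reduction argument should then show that $D(G,K)_\theta := D(G,K)/D(G,K)\fr{J}$ is itself Fr\'echet--Stein and that restriction along $D(G,K) \twoheadrightarrow D(G,K)_\theta$ identifies that subcategory with the category $\cC_{D(G,K)_\theta}$ of coadmissible $D(G,K)_\theta$-modules.

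Next I would bring in the non-equivariant rigid-analytic Beilinson--Bernstein theorem for the flag variety $\bX = (\G_K/\B)^{\rig}$ from \cite{DCapTwo, ABB}: the functors $\Gamma(\bX,-)$ and $\Loc$ are mutually inverse equivalences between $\cC_{\bX}$ and the category of coadmissible modules over $\Gamma(\bX, \w\cD_{\bX}) = \w{U(\fr{g}_K)}_\theta$, the central quotient of the Arens--Michael envelope cut out by $\fr{J}$; here the untwisted sheaf $\cD_{\bX}$ corresponds to $\theta$ precisely because $\rho$ is regular and dominant, which is also why one obtains an honest, rather than merely a derived, equivalence. To promote this to the $G$-equivariant setting I would cover $\bX$ by finitely many affinoid subdomains $\bV_i$, each admitting a compact open subgroup $H_i \le G$ with $(\bV_i, H_i)$ small and with $\bX = \bigcup_i G\bV_i$, apply Theorem~\ref{FrStIntro} and Theorem~\ref{CXGIntro}(c) on each such pair, and glue the resulting coadmissible $\w\cD(\bV_i, H_i)$-modules by means of the $\w\cD$-affinity of $\bX$ and the descent data carried by a $G$-equivariant structure. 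The outcome I expect is an equivalence $\cC_{\bX/G} \congs \cC_{D(G,K)_\theta}$ whose quasi-inverse extends $\Loc$; the crucial point is that the ring of global $G$-equivariant sections over the proper space $\bX$ is exactly $D(G,K)_\theta$, which is the ``proper'' analogue of Theorem~\ref{CXGIntro}(c) when $G$ is compact and which, for general open $G \le \G(L)$, follows from the compact case by descent along the faithful forgetful functor of Theorem~\ref{CXGIntro}(a).

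Composing the anti-equivalence of the first step, the equivalence of the second step and the equivalence of the third then yields the claimed anti-equivalence between the left-hand side and $\cC_{\bX/G}$, which is exactly the stated equivalence with $\cC_{\bX/G}^{\op}$. I expect the hard part to be the third step: identifying $D(G,K)_\theta$ as the ring of global $G$-equivariant sections over the proper space $\bX$ and verifying that the purely local data furnished by Theorem~\ref{CXGIntro}(c) glue to a single coadmissible module. This rests on the $\w\cD$-affinity of the rigid flag variety and, when the valuation of $K$ is not discrete, on the same Raynaud--Gruson techniques that underpin Theorem~\ref{FrStIntro}; a secondary technical point is making the central-reduction argument of the first step work over an arbitrary such $K$.
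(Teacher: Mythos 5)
The paper's proof is a three-step composition very close to what you sketch: (i) Schneider--Teitelbaum duality reduces the left-hand side to coadmissible $D(G,K)$-modules killed by $\fr{m}_0$; (ii) a Lazard-type isomorphism $D(G,K) \cong \wUg{G}$ (Theorem~\ref{LADistFormula}); (iii) the equivariant localisation equivalence of Theorem~\ref{BBEquiv}, which says $\Loc^{\wUg{G}}_{\bX}$ is an equivalence between $\{M \in \cC_{\wUg{G}} : \fr{m}_0 M = 0\}$ and $\cC_{\bX/G}$. Your step (i) matches the paper's, but your steps (ii)--(iii) diverge from the paper's route in ways that create real problems.

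The first issue is that you omit the identification $D(G,K) \cong \wUg{G}$ entirely. The paper's equivariant localisation theorem is phrased in terms of the algebra $\wUg{G}$, which is a completion of $U(\fr{g}_K) \rtimes G$ built from Lie lattices in $\fr{g}$; this algebra is \emph{a priori} distinct from $D(G,K)$, and proving they coincide is a substantial piece of work involving an extension of the Lazard isomorphism to the $p=2$ case and to general base fields $K$ (Theorems~\ref{LazardIso}, \ref{SchmidtCP}, \ref{LADistFormula}). Your proposal instead posits directly that $D(G,K)_\theta$ is ``the ring of global $G$-equivariant sections over $\bX$''; this statement is neither a corollary of Theorem~\ref{CXGIntro}(c) (which requires a small affinoid pair and so does not apply to $\bX$) nor obtainable ``by descent'' from the compact case without first having the Lazard comparison in hand.

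The second issue is structural: you propose to derive the equivariant localisation by combining the non-equivariant statement ``$\Gamma(\bX,-)$ is an equivalence on $\cC_{\bX}$'' with equivariant descent, invoking ``$\w\cD$-affinity of $\bX$.'' But the paper explicitly flags and sidesteps the obstruction here: a coadmissible $G$-equivariant $\cD$-module is naturally a $\w\cD$-module but is \emph{not} a coadmissible $\w\cD$-module unless $G$ is finite (see the discussion in the Introduction). So the objects of $\cC_{\bX/G}$ are not ``equivariant objects'' of $\cC_{\bX}$, and there is no descent problem along $G$ to solve inside $\cC_{\bX}$. The paper's actual route to global sections is different: it works levelwise via $\hdnK$-affinity of the algebraic flag variety $\cB$ over $\cR$ (Section~\ref{hDaffFlagVar}) and assembles the Fr\'echet--Stein presentation of $\wUg{G}$ directly, rather than ever forming $\cC_{\bX}$ or $\w{U(\fr{g}_K)}_\theta$ as intermediaries. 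Your plan would need to be rebuilt around these ingredients to close the gap.
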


In fact, in Theorem \ref{BBEquiv} below we establish a more general version of Theorem \ref{EqEq} where the hypothesis on the group $G$ is significantly relaxed. This involves defining a  generalisation $\w{U}(\fr{g}_K, G)$ of the locally analytic distribution algebra $D(G,K)$ that is a certain completion of the usual skew-group algebra $U(\fr{g}_K) \rtimes G$.

In order to keep the length of this paper manageable, we will postpone applications of the main results of this paper to the theory of locally analytic representation to a later publication. However, one invariant that emerges from our theory is the \emph{support} of the coadmissible $G$-equivariant $\cD$-module that is the localisation of some coadmissible $D(G,K)$-module. This is \emph{a priori} a $G$-stable subset of the Huber space $\cP( (\G_K/\B)^{\rig} )$, and evidently two coadmissible $D(G,K)$-modules cannot be isomorphic if their localisations have distinct support. In our forthcoming work \cite{ArdEqKash} we will construct examples of objects in $\cC_{\bX / G}$ having prescribed support. We also hope that the theory developed in this paper will eventually shed some light on the locally analytic representations that appear in the $p$-adic local Langlands program.

Several works on locally analytic Beilinson-Bernstein localisation have already appeared. This paper can be viewed as a natural continuation and enhancement of our earlier work \cite{AW13} with Simon Wadsley. That paper contains a localisation equivalence for certain Banach-algebra completions of $U(\fr{g}_K)$ and is concerned with formal completions of flag varieties defined over a discrete valuation ring along the special fibre. In \cite{SchmidtBB}, Tobias Schmidt explained how to use the constructions of \cite{AW13} to give a localisation equivalence for $D(G,K)$. However, because the sheaves appearing in \cite{AW13,SchmidtBB} only live on the smooth formal model of the rigid analytic flag variety, they cannot immediately see the support of the localised $\cD$-module on the rigid analytic flag variety itself. In a series of papers \cite{PSS1, PSS2, PSS3}, Deepam Patel, Tobias Schmidt and Matthias Strauch address this (as well as many other) issues and localise coadmissible $D(G,K)$-modules onto the rigid analytic flag variety by regarding it as an inverse limit of all possible $G$-equivariant formal models. We expect that their construction can be shown to agree with ours; however they do not give a local characterisation of the essential image of their localisation functor. Close to the end of the preparation of this paper, we discovered that in her Chicago PhD thesis \cite{FanTianqi}, Tianqi Fan develops yet another version of locally analytic localisation. Whilst formally different and perhaps lighter on the details, her approach is much closer to ours than those of \cite{SchmidtBB, PSS2, PSS3}. 

Let us now make some comments on several technical aspects of this paper, that may in part justify its length. As we have mentioned already, we do not assume that our ground field is discretely valued. A large part of $\S \ref{LevelwiseSect}$ is concerned with extending the results from \cite{DCapOne} to this more general setting. Just like \cite{PSS2, PSS3}, we do not make any restrictions on the prime $p$ in contrast to our earlier work \cite{AW13}; since we rely on the main constructions of \cite{AW13} we extend the results of that paper to any $(p,K)$ in $\S \ref{hDaffFlagVar}$. Although every coadmissible $G$-equivariant $\cD$-module in our sense is also naturally a $\w\cD$-module, it will not be a \emph{coadmissible} $\w\cD$-module unless the group $G$ is finite. Because of this, we chose to work with equivariant $\cD$-modules instead of equivariant $\w\cD$-modules for simplicity. We work with general, not necessarily compact, $p$-adic Lie groups $G$: also, the algebraic group $\G$ appearing in Theorem \ref{EqEq} and the $p$-adic Lie group $G$ appearing in Theorem \ref{BBEquiv} do not need to be split semisimple. In order to keep the length of this paper down, we do not mention equivariant twisted $\cD$-modules or non-trivial infinitesimal central characters.

Throughout this paper, $K$ will denote a field equipped with a complete non-archimedean norm $|\cdot|$, $\cR := \{ \lambda \in K : |\lambda| \leq 1\}$ denotes the unit ball inside $K$ and $\pi \in \cR$ is a fixed non-zero non-unit element.

\section{Algebraic background}

\subsection{Enveloping algebras of Lie-Rinehart algebras}\label{AutLRsect}

Let $R$ be a commutative ring, let $A$ be a commutative $R$-algebra and let $L$ be an $(R,A)$-Lie algebra \cite[\S 2.1]{DCapOne}. Recall that $L$ is a left $A$-module and $A$ is a left $L$-module via the anchor map $\sigma_L$ of $L$. We denote these actions by 
\[ (a,v) \mapsto a\cdot v \qmb{and} (v,a) \mapsto v\cdot a = \sigma_L(v)(a) \qmb{for all} a\in A, v\in L.\]
We recall the details of the following important construction.
\begin{constr}\label{ConstrUL} There is an associative algebra $U(L)$ canonically associated with $L$, called the \emph{enveloping algebra} of $L$. It can be constructed as the quotient of the free associative $R$-algebra $\langle A, L\rangle$ generated by the ring $A$ and the abelian group $L$, by the two-sided ideal $J$ generated by
\begin{equation}\label{IdealJ}\{\overline{v} \hspace{.1cm} \overline{w} - \overline{w}\hspace{.1cm} \overline{v} - \overline{[v,w]}, \quad \overline{v}\hspace{.1cm} \overline{a} - \overline{a} \hspace{.1cm} \overline{v} - \overline{v\cdot a}, \quad \overline{a\cdot v}- \overline{a}\hspace{.1cm} \overline{v} \quad : \quad v,w \in L, a\in A\}.\end{equation}
Here $\overline{a}$ and $\overline{v}$ denote the images of $a \in A$ and $v \in L$ in $\langle A,L \rangle$, respectively. 

The algebra  $\langle A, L\rangle$ carries a natural positive filtration $F_\bullet \langle A,L\rangle$ given by
\[ F_j \langle A,L\rangle = \sum_{i\leq j} \overline{L}^i\]
with the convention that $\overline{L}^0 = \overline{A}$. We will denote the quotient filtration in $U(L) = \langle A,L\rangle / J$ by $F_\bullet U(L)$, and the natural maps $A \to U(L)$ and $L \to U(L)$ by $i_A$ and $i_L$, respectively.
\end{constr}

We recall universal property of $U(L)$ from \cite[\S 2.1]{DCapOne}.

\begin{lem}\label{ULUP} Let $S$ be an associative $R$-algebra, and let $j_A \colon A \to S$, $j_L : L \to S$ be homomorphisms of $R$-algebras and $R$-Lie algebras, respectively. Suppose that
\[j_L(av) = j_A(a)j_L(v) \qmb{and} [j_L(v), j_A(a)] = j_A(v \cdot a) \qmb{for all} a \in A, v \in L.\]
Then there exists a unique $R$-algebra homomorphism $\varphi \colon U(L) \to S$ making the following diagram commute:
\[ \xymatrix{ A \oplus L \ar[d]_{i_A \oplus i_L} \ar[dr]^{j_A \oplus j_L} & \\ U(L) \ar[r]_{\varphi} & S. }\]
\end{lem}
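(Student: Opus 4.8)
The plan is to establish the universal property of $U(L)$ by a direct appeal to the construction of $U(L)$ as a quotient of the free algebra $\langle A, L \rangle$, exactly as in Construction \ref{ConstrUL}. First I would use the universal property of the free associative $R$-algebra $\langle A, L \rangle$: since it is freely generated by the ring $A$ (as an $R$-algebra) and the abelian group $L$ (as an $R$-module), the pair of maps $j_A \colon A \to S$ and $j_L \colon L \to S$ — the first an $R$-algebra homomorphism, the second an $R$-module homomorphism — induces a unique $R$-algebra homomorphism $\widetilde{\varphi} \colon \langle A, L \rangle \to S$ with $\widetilde{\varphi}(\overline{a}) = j_A(a)$ and $\widetilde{\varphi}(\overline{v}) = j_L(v)$ for all $a \in A$, $v \in L$.

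Next I would check that $\widetilde{\varphi}$ kills the two-sided ideal $J$ of \eqref{IdealJ}, so that it factors through the quotient $U(L) = \langle A, L \rangle / J$. It suffices to verify that $\widetilde{\varphi}$ vanishes on the three families of generators listed in \eqref{IdealJ}. For $\overline{v}\,\overline{w} - \overline{w}\,\overline{v} - \overline{[v,w]}$ this is precisely the statement that $j_L$ is a homomorphism of $R$-Lie algebras; for $\overline{v}\,\overline{a} - \overline{a}\,\overline{v} - \overline{v \cdot a}$ it is the hypothesis $[j_L(v), j_A(a)] = j_A(v \cdot a)$; and for $\overline{a \cdot v} - \overline{a}\,\overline{v}$ it is the hypothesis $j_L(av) = j_A(a) j_L(v)$. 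Since $J$ is generated as a two-sided ideal by these elements and $\ker \widetilde{\varphi}$ is a two-sided ideal containing them, we get $J \subseteq \ker\widetilde{\varphi}$, hence an induced $R$-algebra homomorphism $\varphi \colon U(L) \to S$ with $\varphi \circ i_A = j_A$ and $\varphi \circ i_L = j_L$, i.e.\ the diagram commutes.

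Finally, for uniqueness: any $R$-algebra homomorphism $\varphi' \colon U(L) \to S$ making the diagram commute must agree with $\varphi$ on the images $i_A(A)$ and $i_L(L)$ by commutativity of the triangle, and these images generate $U(L)$ as an $R$-algebra (this is immediate from the filtration $F_\bullet U(L)$, whose terms are spanned by products of elements of $i_L(L)$ with coefficients in $i_A(A)$, or directly because $\overline{A}$ and $\overline{L}$ generate $\langle A, L \rangle$). Two $R$-algebra homomorphisms out of $U(L)$ that agree on a generating set are equal, so $\varphi' = \varphi$.

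There is no serious obstacle here; the only point requiring the slightest care is making precise the universal property of the free algebra $\langle A, L \rangle$ — namely that it corepresents the functor sending an $R$-algebra $S$ to the set of pairs (an $R$-algebra map $A \to S$, an $R$-module map $L \to S$), with no compatibility imposed — and then observing that each of the three generator-families of $J$ corresponds bijectively to one of the three hypotheses on $(j_A, j_L)$. Everything else is a formal diagram chase. I would keep the write-up short, essentially just the three bullet checks above together with the appeal to the freeness of $\langle A, L \rangle$.
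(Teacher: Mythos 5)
Your proof is correct and is exactly the natural argument given that the paper supplies Construction~\ref{ConstrUL} precisely so this universal property can be read off; the paper itself does not reprove the lemma but cites its predecessor, and the proof there is the same quotient-of-free-algebra argument. The three generator families in~\eqref{IdealJ} match your three hypotheses one for one, and uniqueness is immediate since $i_A(A) \cup i_L(L)$ generates $U(L)$.
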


\begin{defn}\label{PhiMorph} Let $\varphi : A \to A'$ be an $R$-algebra homomorphism, let $L$ be an $(R,A)$-Lie algebra and let $L'$ be an $(R,A')$-Lie algebra. Then $\tilde{\varphi} : L \to L'$ is a \emph{$\varphi$-morphism} if
\begin{itemize} \item $\tilde{\varphi}$ is a homomorphism of $R$-Lie algebras,
\item $\tilde{\varphi}(a \cdot v) = \varphi(a) \cdot \tilde{\varphi}(v)$,
\item $\tilde{\varphi}(v) \cdot \varphi(a) =\varphi(v \cdot a)$, for all $a \in A, v\in L$.
\end{itemize}
\end{defn}

\begin{ex}\label{AutOfDer} Suppose that $\varphi : A \to A'$ is an $R$-algebra isomorphism, let $L = \Der_R(A)$ and $L' = \Der_R(A')$, viewed as an $(R,A)$-Lie algebra (respectively, $(R,A')$-Lie algebra) with the identity anchor map. Then 
\[ \dot{\varphi} : L \to L', \quad\quad v \mapsto \varphi \circ v \circ \varphi^{-1}\]
is a $\varphi$-morphism.
\end{ex}

It is straightforward to verify that if $A \stackrel{\varphi}{\longrightarrow} A' \stackrel{\psi}{\longrightarrow} A''$ are $R$-algebra homomorphisms, $\tilde{\varphi} : L \to L'$ is a $\varphi$-morphism, and $\tilde{\psi} : L' \to L''$ is a $\psi$-morphism, then $\tilde{\psi} \tilde{\varphi} : L \to L''$ is a $\psi\varphi$-morphism. In this way, pairs $(A,L)$ consisting of a commutative $R$-algebra $A$ and an $(R,A)$-Lie algebra $L$ naturally form a category $\sL\sR_R$, where a morphism $(A, L) \to (A',L')$ is a pair $(\varphi, \tilde{\varphi})$ where $\varphi : A \to A'$ is an $R$-algebra homomorphism and $\tilde{\varphi} : L \to L'$ is a $\varphi$-morphism. In particular, we have at our disposal the automorphism group
\[ \Aut(A, L) \]
for every $(R,A)$-Lie algebra $L$.

\begin{defn}\label{GactsLR} Let $G$ be a group. An \emph{action} of $G$ on the $(R,A)$-algebra $L$ is a group homomorphism $G \to \Aut(A, L)$.\end{defn}

\begin{ex}\label{GactDerEx} Let $A$ be a commutative $R$-algebra and let $L = \Der_R(A)$. Suppose that $\rho : G \to \Aut_R(A)$ is an action of the group $G$ on $A$ by $R$-algebra automorphisms, and for every $g \in G$, let $\dot{\rho}(g) := \dot{\rho(g)} :  L \to L$ be the $\rho(g)$-morphism defined in Example \ref{AutOfDer}. Then $(\rho, \dot{\rho}) : G \to \Aut (A,L)$ is a $G$-action on $L$. \end{ex}

\begin{lem}\label{Ufunctor} Let $\varphi : A \to A'$ be a ring homomorphism, let $L$ be an $(R,A)$-Lie algebra and let $L'$ be an $(R,A')$-Lie algebra. Then every $\varphi$-morphism $\tilde{\varphi} : L \to L'$ extends uniquely to a filtration-preserving $R$-algebra homomorphism $U(\varphi, \tilde{\varphi}) : U(L) \to U(L')$ which makes the following diagram of $R$-modules commute:
\[ \xymatrix{ A \oplus L \ar[rr]^{\varphi \oplus \tilde{\varphi}} \ar[d] _{i_A \oplus i_L}&& A' \oplus L'\ar[d]^{i_{A'} \oplus i_{L'}} \\ U(L) \ar[rr]_{U(\varphi, \tilde{\varphi})} && U(L').}\]
\end{lem}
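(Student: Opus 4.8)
The plan is to construct the map $U(\varphi,\tilde\varphi)$ using the universal property of $U(L)$ from Lemma \ref{ULUP}, applied to the target $S := U(L')$. To do this, I would set $j_A := i_{A'} \circ \varphi : A \to U(L')$ and $j_L := i_{L'} \circ \tilde\varphi : L \to U(L')$. The first is an $R$-algebra homomorphism since $\varphi$ and $i_{A'}$ are; the second is an $R$-Lie algebra homomorphism since $\tilde\varphi$ is (by the first bullet of Definition \ref{PhiMorph}) and $i_{L'}$ is (it is a Lie homomorphism into $U(L')$ equipped with the commutator bracket). The two compatibility conditions required by Lemma \ref{ULUP}, namely $j_L(av) = j_A(a) j_L(v)$ and $[j_L(v), j_A(a)] = j_A(v\cdot a)$, translate directly into the second and third bullets of Definition \ref{PhiMorph} after applying $i_{A'}$ and $i_{L'}$: for instance $j_L(a\cdot v) = i_{L'}(\tilde\varphi(a\cdot v)) = i_{L'}(\varphi(a)\cdot \tilde\varphi(v)) = i_{A'}(\varphi(a))\, i_{L'}(\tilde\varphi(v)) = j_A(a) j_L(v)$, using the defining relation $i_{L'}(a'\cdot v') = i_{A'}(a') i_{L'}(v')$ in $U(L')$; similarly $[j_L(v), j_A(a)] = i_{L'}([\tilde\varphi(v),\cdot])(i_{A'}(\varphi(a))) $ unwinds to $i_{A'}(\varphi(v\cdot a)) = j_A(v\cdot a)$ via the relation $[i_{L'}(v'), i_{A'}(a')] = i_{A'}(v'\cdot a')$. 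Lemma \ref{ULUP} then yields a unique $R$-algebra homomorphism $\varphi^\sharp \colon U(L) \to U(L')$ with $\varphi^\sharp \circ i_A = j_A = i_{A'}\circ\varphi$ and $\varphi^\sharp \circ i_L = j_L = i_{L'}\circ\tilde\varphi$, which is exactly the commutativity of the stated square; I would define $U(\varphi,\tilde\varphi) := \varphi^\sharp$.

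For the filtration-preserving claim, recall from Construction \ref{ConstrUL} that $F_\bullet U(L)$ is the quotient of the filtration $F_\bullet\langle A, L\rangle$, so $F_j U(L)$ is spanned by products $i_A(a_0) i_L(v_1)\cdots i_L(v_i)$ with $i \le j$ (together with $F_0 U(L) = i_A(A)$). Since $\varphi^\sharp$ is an algebra homomorphism sending $i_L(v)$ into $i_{L'}(L') \subseteq F_1 U(L')$ and $i_A(a)$ into $i_{A'}(A') = F_0 U(L')$, it carries such a product into $F_i U(L') \subseteq F_j U(L')$; hence $\varphi^\sharp(F_j U(L)) \subseteq F_j U(L')$ for all $j$.

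Uniqueness follows because any filtration-preserving $R$-algebra homomorphism $\Psi : U(L) \to U(L')$ making the square commute must satisfy $\Psi \circ i_A = i_{A'}\circ\varphi$ and $\Psi\circ i_L = i_{L'}\circ\tilde\varphi$, and since $U(L)$ is generated as an $R$-algebra by $i_A(A)$ and $i_L(L)$, the homomorphism $\Psi$ is determined on all of $U(L)$; this also matches the uniqueness clause already granted by Lemma \ref{ULUP}. I do not expect any serious obstacle here: the entire statement is a formal consequence of the universal property, and the only mild care needed is in checking that the three bullets of Definition \ref{PhiMorph} are precisely the hypotheses of Lemma \ref{ULUP} once composed with $i_{A'}$ and $i_{L'}$, and in verifying the filtration estimate on generators.
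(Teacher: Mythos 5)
Your proof is correct and takes essentially the same route as the paper, which simply invokes Lemma \ref{ULUP}; you have just spelled out the verification that the hypotheses of that lemma hold and that the resulting map is filtration-preserving.
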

\begin{proof} This follows immediately from Lemma \ref{ULUP}.
\end{proof}
Thus $U(-)$ is a functor from $\sL\sR_R$ to the category of positively filtered associative $R$-algebras. We denote the group of filtration-preserving $R$-algebra automorphisms of $U(L)$ by $\Aut U(L)$. 

\begin{rmk} Recall that the $(R,A)$-algebra $L$ is said to be \emph{smooth} if it is finitely generated and projective as an $A$-module. It can be shown that if $L$ is smooth, then the group homomorphism $\Aut(A,L) \to \Aut U(L)$ is injective, and in fact there is a semi-direct product decomposition
\[ \Aut U(L) \cong \Der_A(L,A) \rtimes \Aut(A,L).\]
\end{rmk}

\begin{cor}\label{ExtendToUL} Every $G$-action on $L$ extends to a $G$-action on $U(L)$ by filtration-preserving $R$-algebra automorphisms, and we may form the skew-group ring 
\[U(L) \rtimes G.\]
\end{cor}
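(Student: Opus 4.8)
The plan is to obtain the $G$-action on $U(L)$ simply by post-composing the given action with the functor $U(-)$ of Lemma \ref{Ufunctor}. By Definition \ref{GactsLR}, a $G$-action on $L$ is a group homomorphism $\rho \colon G \to \Aut(A,L)$, so for each $g \in G$ we are given an $R$-algebra isomorphism $\varphi_g \colon A \to A$ together with a $\varphi_g$-morphism $\tilde\varphi_g \colon L \to L$, and the latter is bijective, its inverse being the $\varphi_{g}^{-1}$-morphism $\tilde\varphi_{g^{-1}}$ coming from $\rho(g^{-1})$. Applying Lemma \ref{Ufunctor} to the pair $(\varphi_g, \tilde\varphi_g)$ produces a filtration-preserving $R$-algebra endomorphism $U(\varphi_g, \tilde\varphi_g)$ of $U(L)$; the task is to check that $g \mapsto U(\varphi_g, \tilde\varphi_g)$ is a homomorphism $G \to \Aut U(L)$ and then to form the skew-group ring in the usual way.

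The key step is to show that $U(-)$ is compatible with identities and composition on the isomorphisms of $\sL\sR_R$, and here the uniqueness clause of Lemma \ref{Ufunctor} does all the work. For identities, both $U(\id_A, \id_L)$ and $\id_{U(L)}$ are filtration-preserving $R$-algebra homomorphisms $U(L) \to U(L)$ commuting with $i_A \oplus i_L$, so they agree. For composition, given isomorphisms $(\varphi, \tilde\varphi) \colon (A,L) \to (A',L')$ and $(\psi, \tilde\psi) \colon (A',L') \to (A'',L'')$, both $U(\psi\varphi, \tilde\psi\tilde\varphi)$ and $U(\psi,\tilde\psi) \circ U(\varphi, \tilde\varphi)$ are filtration-preserving $R$-algebra homomorphisms $U(L) \to U(L'')$ intertwining $i_A \oplus i_L$ with $i_{A''} \oplus i_{L''}$ along $\psi\varphi \oplus \tilde\psi\tilde\varphi$, so they coincide. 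In particular $U(\varphi_g, \tilde\varphi_g)$ is invertible with inverse $U(\varphi_{g^{-1}}, \tilde\varphi_{g^{-1}})$, hence lies in $\Aut U(L)$, and $g \mapsto U(\varphi_g, \tilde\varphi_g)$ is the desired $G$-action on $U(L)$ by filtration-preserving $R$-algebra automorphisms.

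Finally, given a $G$-action on the associative $R$-algebra $U(L)$, the skew-group ring $U(L) \rtimes G$ is the free left $U(L)$-module on the set $G$, equipped with the $R$-bilinear multiplication determined by $(u \cdot g)(u' \cdot g') = u \cdot ({}^{g}u') \cdot gg'$ for $u,u' \in U(L)$ and $g,g' \in G$, where ${}^{g}u'$ denotes the image of $u'$ under the automorphism attached to $g$; associativity and the unit axioms are the standard routine verification using that $G$ acts by algebra automorphisms. I do not anticipate a genuine obstacle: the only point needing a moment's care is the uniqueness argument showing that $U(-)$ respects composition, which is exactly what guarantees that $\rho$ lands in $\Aut U(L)$ rather than merely in the monoid of filtration-preserving endomorphisms of $U(L)$.
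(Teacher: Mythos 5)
Your proof is correct and takes the same route as the paper: the paper remarks immediately after Lemma \ref{Ufunctor} that $U(-)$ is a functor from $\sL\sR_R$ to filtered $R$-algebras, and the corollary is then obtained by post-composing the given $G$-action $G \to \Aut(A,L)$ with this functor. You merely spell out the functoriality verification (identities and composition via the uniqueness clause) that the paper leaves implicit.
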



\subsection{Trivialisations of skew-group rings}
\label{SkewGroupRingSect}
Let $S$ be a ring and let $G$ be a group acting on $S$ by ring automorphisms from the left. We denote the result of the group action of $g \in G$ on $s \in S$ by $g\cdot s$. Then we may form the \emph{skew-group ring}
\[ S \rtimes G.\]
This is a free left $S$-module with basis $G$, and its multiplication is determined by 
\begin{equation}\label{SkewGpRngMult} (sg) \cdot (s'g') = (s (g \cdot s') )(gg') \qmb{for all} s,s' \in S, \quad g,g' \in G.\end{equation}
See, for example, \cite[Chapter 1]{Pass} for more details about this construction. Note that the basic relation
\[ g \hsp s \hsp g^{-1} = g \cdot s\qmb{for all} g \in G, s \in S\]
holds in the ring $S \rtimes G$. 

\begin{defn}\label{Triv}  Let $S \rtimes G$ be a skew-group ring. A \emph{trivialisation} is a group homomorphism $\beta : G \to S^\times$ such that for all $g \in G$, the conjugation action of $\beta(g) \in S^\times$ on $S$ coincides with the action of $g \in G$ on $S$.
\end{defn}

\begin{lem}\label{Untwist} If $\beta : G \to S^\times$ is a trivialisation, then there is a ring isomorphism
\[ \tilde{\beta} : S[G] \congs S \rtimes G\]
given by $\tilde{\beta}(s) = s$ for all $s \in S$ and $\tilde{\beta}(g) = \beta(g)^{-1}g$ for all $g \in G$.
\end{lem}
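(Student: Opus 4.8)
The plan is to verify directly that the map $\tilde\beta$ defined in the statement is a well-defined ring homomorphism with a two-sided inverse. Recall that $S[G]$ denotes the ordinary group ring, which is the free left $S$-module on the set $G$ with multiplication $(sg)(s'g') = (ss')(gg')$; in particular $g$ and $s$ commute inside $S[G]$. So the first task is to check that $\tilde\beta$, specified on the generators $S$ and $G$, respects the defining relations of $S[G]$: the ring structure of $S$ is preserved since $\tilde\beta|_S = \mathrm{id}_S$; the group structure of $G$ is preserved since $\tilde\beta(g)\tilde\beta(h) = \beta(g)^{-1}g\beta(h)^{-1}h = \beta(g)^{-1}(g\cdot\beta(h)^{-1})gh$, and using that $\beta$ is a trivialisation, $g\cdot\beta(h)^{-1} = \beta(g)\beta(h)^{-1}\beta(g)^{-1}$, so this collapses to $\beta(h)^{-1}\beta(g)^{-1}gh = (\beta(gh))^{-1}gh = \tilde\beta(gh)$; finally the commutation relation $sg = gs$ in $S[G]$ must map to a valid identity, i.e. $\tilde\beta(s)\tilde\beta(g) = \tilde\beta(g)\tilde\beta(s)$, which reads $s\beta(g)^{-1}g = \beta(g)^{-1}g\,s = \beta(g)^{-1}(g\cdot s)g$; multiplying by $\beta(g)$ on the left this is $\beta(g) s \beta(g)^{-1} = g\cdot s$, precisely the trivialisation hypothesis. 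By the universal property of the group ring (equivalently, by the fact that $S[G]$ is generated by $S$ and $G$ subject only to these relations), $\tilde\beta$ extends uniquely to a ring homomorphism $S[G] \to S\rtimes G$.

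Next I would exhibit the inverse. Define $\gamma : S\rtimes G \to S[G]$ on the left $S$-basis $G$ by $\gamma(g) = \beta(g)g$ and $\gamma(s) = s$ for $s \in S$, extended $S$-linearly; then run the symmetric check that $\gamma$ respects the relations \eqref{SkewGpRngMult} of the skew-group ring, using again that conjugation by $\beta(g)$ implements $g\cdot(-)$. Then one verifies $\gamma\circ\tilde\beta = \mathrm{id}$ and $\tilde\beta\circ\gamma = \mathrm{id}$ on generators, whence on all of the respective rings. Alternatively, and perhaps more cleanly, once $\tilde\beta$ is known to be a ring homomorphism one can simply observe that it is bijective as a map of left $S$-modules: both $S[G]$ and $S\rtimes G$ are free left $S$-modules with basis $G$, and $\tilde\beta$ sends the basis element $g$ to $\beta(g)^{-1}g$; since $\beta(g)^{-1} \in S^\times$, the matrix of $\tilde\beta$ with respect to these bases is "diagonal with invertible entries", so $\tilde\beta$ is an $S$-module isomorphism, hence a ring isomorphism. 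This second route avoids constructing $\gamma$ explicitly.

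There is essentially no serious obstacle here: the only thing to be careful about is bookkeeping with left versus right actions and the placement of inverses, so that the trivialisation identity $\beta(g)\, s\, \beta(g)^{-1} = g\cdot s$ is applied with the correct variable in the correct slot — this is exactly what forces the formula $\tilde\beta(g) = \beta(g)^{-1}g$ rather than $\beta(g)g$. I would therefore present the proof as: (i) check the three families of relations, invoking the trivialisation hypothesis precisely at the commutation relation; (ii) conclude $\tilde\beta$ is a well-defined ring homomorphism; (iii) note it is an isomorphism of left $S$-modules because it permutes-up-to-units the basis $G$, hence a ring isomorphism. The computation in step (i) for the product of two group elements is the one place a reader might want the intermediate line $g\cdot\beta(h)^{-1} = \beta(g)\beta(h)^{-1}\beta(g)^{-1}$ spelled out, so I would include it.
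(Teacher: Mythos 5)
Your proof is correct and essentially matches the paper's: both verify that $\tilde\beta$ respects the group multiplication via the trivialisation identity and then conclude bijectivity by observing that $\tilde\beta$ is left $S$-linear and sends the $S$-module basis $G$ of $S[G]$ to the $S$-module basis $\{\beta(g)^{-1}g : g \in G\}$ of $S\rtimes G$. The paper packages the computation of $\tilde\beta(g)\tilde\beta(h)$ slightly more compactly --- moving $\beta(h)^{-1}$ past $\beta(g)^{-1}g$ in one step because $\beta(g)^{-1}g$ commutes with all of $S$ --- but this is exactly the same mechanism as your intermediate line $g\cdot\beta(h)^{-1} = \beta(g)\beta(h)^{-1}\beta(g)^{-1}$.
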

\begin{proof} Let $g, h\in G$. Since $\beta$ is a group homomorphism,
\[\tilde{\beta}(g)\tilde{\beta}(h) = \left(\beta(g)^{-1}g\right)\beta(h)^{-1}h= \beta(h)^{-1} \left(\beta(g)^{-1} g\right) h = \tilde{\beta}(gh)\]
because $\beta(g)^{-1}g \in S \rtimes G$ commutes with all elements of $S$ by assumption. Thus $\tilde{\beta}$ is a well-defined ring homomorphism. Because $\tilde{\beta}$ is left $S$-linear by definition, and because it sends the left $S$-module basis $G$ for $S[G]$ to an $S$-module basis for $S \rtimes G$, it is a bijection.
\end{proof}

In general, skew-group rings do not admit trivialisations: a skew-group ring may well be simple as a ring --- see, for example, \cite[Proposition 8.12]{MCR} --- whereas the group ring $S[G]$ always has the augmentation ideal which is non-trivial whenever $G$ has more than one element. However it may happen that the sub-skew-group rings $S \rtimes N$ for sufficiently small normal subgroups $N$ of $G$ \emph{do} admit trivialisations. 

\begin{defn}\label{DefnTriv} Let $N$ be a normal subgroup of $G$, and suppose that $\beta : N \to S^\times$ is a trivialisation of the sub-skew-group ring $S \rtimes N$.
\be \item We define \[S \rtimes_N^\beta G := \frac{ S \rtimes G }{ (S \rtimes G) \cdot (\tilde{\beta}(N) - 1)}.\]
\item We say that $\beta$ is \emph{$G$-equivariant} if 
\[\beta({}^gn) = g \cdot \beta(n) \qmb{for all} g \in G \qmb{and} n \in N.\]
\ee Here ${}^gn := gng^{-1}$ denotes the conjugation action of $G$ on $N$.\end{defn}
Note that $S \rtimes_N^\beta G$ is \emph{a priori} only a left $S \rtimes G$-module. 

\begin{lem}\label{RingSGN} Suppose that $N$ is a normal subgroup of $G$, and that $\beta : N \to S^\times$ is a $G$-equivariant trivialisation. Then 
\be \item $S \rtimes_N^\beta G$ is an associative ring.
\item $S \rtimes_N^\beta G$ is isomorphic to a crossed product $S \ast (G/N)$.
\ee \end{lem}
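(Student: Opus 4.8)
The plan is to analyse the quotient $S \rtimes_N^\beta G$ by transporting the trivialisation across the isomorphism $\tilde\beta : S[N] \congs S \rtimes N$ of Lemma \ref{Untwist}, and then to recognise the resulting quotient as a crossed product of $S$ by $G/N$ in the classical sense of \cite[Chapter 1]{Pass}. For part (a), I would first observe that the two-sided ideal generated inside $S \rtimes G$ by the elements $\tilde\beta(n) - 1 = \beta(n)^{-1}n - 1$ (for $n \in N$) coincides with the left ideal $(S \rtimes G)\cdot(\tilde\beta(N)-1)$ appearing in Definition \ref{DefnTriv}(1); since the quotient by a two-sided ideal is automatically a ring, this gives associativity of $S \rtimes_N^\beta G$. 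The content is therefore to check that this left ideal is stable under right multiplication by $S$ and by $G$. Right-stability under $S$ is immediate from the relation $s'(\beta(n)^{-1}n - 1) = (\beta(n)^{-1}n)\big(({}^{n^{-1}}\!s')(\beta(n)^{-1}n)^{-1}\big) - s'$, rewritten using $(\beta(n)^{-1}n)s''(\beta(n)^{-1}n)^{-1} = s''$ for all $s''$ (this is exactly the statement that $\tilde\beta(n)$ is central, as used in the proof of Lemma \ref{Untwist}): one gets $s'(\tilde\beta(n)-1) = (\tilde\beta(n)-1)s' \in (S\rtimes G)(\tilde\beta(N)-1)$. Right-stability under $g \in G$ is where the $G$-equivariance hypothesis enters: $(\beta(n)^{-1}n - 1)g = g\,g^{-1}(\beta(n)^{-1}n)g - g = g\big((g^{-1}\cdot\beta(n))^{-1}\, ({}^{g^{-1}}\!n)\big) - g$, and $g^{-1}\cdot\beta(n) = \beta({}^{g^{-1}}\!n)$ by $G$-equivariance, so $(\tilde\beta(n)-1)g = g(\tilde\beta({}^{g^{-1}}\!n) - 1) \in (S \rtimes G)(\tilde\beta(N)-1)$ since ${}^{g^{-1}}\!n \in N$. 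Hence the left ideal is two-sided.

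For part (b), I would now use that $\tilde\beta$ identifies $S \rtimes N$ with the ordinary group ring $S[N]$, under which the submodule $\tilde\beta(N)-1$ corresponds to the augmentation ideal of $N$ over $S$; quotienting $S[N]$ by the left ideal generated by $\{n - 1 : n \in N\}$ gives $S$ itself (the $S$-module map $S[N] \to S$ collapsing $N$ to $1$). So, as a left $S \rtimes N$-module, $S \rtimes_N^\beta G$ is free with basis a set of coset representatives of $N$ in $G$; choosing such representatives $\{g_i\}$ of $G/N$, the images $\overline{g_i}$ form a left $S$-module basis of $S \rtimes_N^\beta G$. It then remains to write down the crossed-product structure: for cosets $\bar g, \bar h \in G/N$ one defines a twisting $\tau(\bar g, \bar h) \in S^\times$ and an action of $G/N$ on $S$ by using the chosen representatives together with $\beta$ to absorb the $N$-part of $g_i g_j$ back into $S$. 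Concretely, if $g_i g_j = s\text{-free element} = g_k n$ with $n \in N$, then in the quotient $\overline{g_i}\,\overline{g_j} = \overline{g_k n} = \beta(n)\,\overline{g_k}$, giving the $2$-cocycle $\tau(\bar g_i, \bar g_j) = \beta(n)$; and conjugation of $S$ by $\overline{g_i}$ induces a well-defined automorphism of $S$ depending only on the coset $\bar g_i$ because inner $N$-conjugations are trivialised by $\beta$ — this is again exactly where $G$-equivariance of $\beta$ guarantees compatibility. Checking that $\tau$ is a normalised $2$-cocycle and that the associated action is a homomorphism $G/N \to \Aut(S)$ modulo inner automorphisms is then a direct verification against the defining relations \eqref{SkewGpRngMult}, giving an isomorphism $S \rtimes_N^\beta G \cong S \ast (G/N)$ as in \cite[Chapter 1]{Pass}.

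\textbf{Main obstacle.} I expect the only real subtlety to be the bookkeeping in part (b): verifying that the crossed-product data $(\tau, \text{action})$ is well defined independently of the chosen coset representatives $\{g_i\}$, and that it satisfies the cocycle identity — everything ultimately reduces to the centrality of $\tilde\beta(N)$ inside $S \rtimes G$ together with $G$-equivariance of $\beta$, but it requires care to phrase cleanly. Part (a) should be routine once one commits to showing the left ideal is two-sided, the $G$-equivariance computation being the one genuinely load-bearing step.
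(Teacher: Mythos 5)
Your proof is correct. For part (a), your computation is essentially the same as the paper's: you show the left ideal $(S \rtimes G)(\tilde\beta(N)-1)$ is two-sided by checking right-stability under $S$ (via centrality of $\tilde\beta(n)$ inside $S \rtimes N$) and under $G$ (via the $G$-equivariance identity $(\tilde\beta(n)-1)g = g(\tilde\beta({}^{g^{-1}}n)-1)$), which is exactly the load-bearing computation the paper carries out. Where you diverge is part (b). The paper organizes everything through Passman's machinery: it first writes $S \rtimes G$ as a crossed product $(S \rtimes N)\ast \overline{G}$ via \cite[Lemma 1.3]{Pass}, observes the kernel is $I \ast \overline{G}$ for the $G$-stable ideal $I = (S\rtimes N)(\tilde\beta(N)-1)$, and then invokes \cite[Lemma 1.4(ii)]{Pass} to conclude $(S\rtimes G)/(I\ast \overline{G}) \cong ((S\rtimes N)/I) \ast \overline{G} \cong S \ast \overline{G}$ in one stroke. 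You instead construct the crossed-product structure on $S \rtimes_N^\beta G$ by hand — choosing coset representatives, defining the twisting cocycle $\tau(\bar g_i, \bar g_j) = \beta(n)$ from $g_i g_j = g_k n$, and checking the action on $S$ is well defined modulo inner automorphisms. Both routes work and both ultimately hinge on the same two facts (centrality of $\tilde\beta(N)$ and $G$-equivariance of $\beta$); the paper's version buys economy of exposition by offloading the cocycle bookkeeping to Passman, whereas yours is more self-contained at the cost of the verifications you flag as the "main obstacle." Those verifications are genuinely straightforward here, so your plan goes through — but it is worth knowing that the quotient-of-a-crossed-product lemma exists precisely to avoid them.
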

\begin{proof} Let $\overline{G} := G/N$. Then $S \rtimes G$ is isomorphic to some crossed product of $S \rtimes N$ with $\overline{G}$ by \cite[Lemma 1.3]{Pass}. It follows from Lemma \ref{Untwist} that the image $I := (S \rtimes N) \cdot (\tilde{\beta}(N) - 1)$ of the augmentation ideal of $S[N]$ in $S \rtimes N$ under $\tilde{\beta}$ is a two-sided ideal in $S\rtimes N$. This ideal is stable under conjugation by $G$ inside $S \rtimes G$ because the trivialisation $\beta$ is $G$-equivariant:
\[g \tilde{\beta}(n) g^{-1} = \left(g \cdot \beta(n)\right)^{-1} {}^g n = \beta({}^gn)^{-1} \hsp {}^gn = \tilde{\beta}({}^gn) \qmb{for all} g \in G, n\in N.\]
Now $S \rtimes_N^\beta G$ is the factor ring of $S \rtimes G = (S \rtimes N) \ast \overline{G}$ by the two-sided ideal $I \ast \overline{G}$, and $(S \rtimes G) / (I \ast \overline{G}) \cong ((S \rtimes N) / I) \ast \overline{G}$ by \cite[Lemma 1.4(ii)]{Pass}. Finally, $(S \rtimes N) / I \cong S[N] / S[N](N-1) \cong S$ by Lemma \ref{Untwist}.
\end{proof}

It follows that the natural map $S \to S \rtimes_N^\beta G$ is always injective, and we will always identify $S$ with its image in $S \rtimes_N^\beta G$. Letting
\begin{equation}\label{DefnGamma} \gamma : G \longrightarrow (S \rtimes_N^\beta G)^\times\end{equation}
be the group homomorphism which sends $g \in G$ to the image of $g \in S \rtimes G$ in $S\rtimes_N^\beta G$, we see that every element of $S \rtimes_N^\beta G$ can be written as a sum of elements of the form $s \gamma(g)$ for some $s \in S$ and $g \in G$. Note that we also have the following relations in $S \rtimes_N^\beta G$:
\begin{equation}\label{RelSNG} \gamma(g) \hsp s\hsp \gamma(g)^{-1} = g \cdot s \qmb{for all} s \in S, g \in G,\end{equation}
\begin{equation}\label{EqualityInSNG} s \hsp \gamma(g) = s' \hsp \gamma(g') \quad\Leftrightarrow \quad g'g^{-1} \in N \qmb{and} s = s' \beta(g'g^{-1}).\end{equation}

\begin{lem}\label{GammaTriv} Let $N \leq H$ be normal subgroups of $G$, and let $\beta : N \to S^\times$ be a $G$-equivariant trivialisation.
\be 
\item There is a $G$-action on $S \rtimes_N^\beta H$ which satisfies
\[g \cdot (s \hsp \gamma(h))  = (g \cdot s) \gamma( {}^g h) \qmb{for all} g \in G, s \in S, h \in H.\]
\item The map $\gamma : H \to (S \rtimes_N^\beta H)^\times$ is a $G$-equivariant trivialisation. \ee
\end{lem}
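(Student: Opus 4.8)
The plan is to verify both assertions directly from the explicit description of $S \rtimes_N^\beta H$ obtained via Lemma \ref{RingSGN}, namely that every element is a sum of terms $s\hsp\gamma(h)$ subject to the relations \eqref{RelSNG} and \eqref{EqualityInSNG}. For (a), I would first observe that $G$ acts on $S \rtimes H$ by ring automorphisms: the given action of $G$ on $S$ together with the conjugation action of $G$ on $H$ assemble into such an action by the defining multiplication \eqref{SkewGpRngMult}, since $H$ is normal in $G$. So the only real content is that this action descends to the quotient $S \rtimes_N^\beta H = (S \rtimes H)/(S\rtimes H)(\tilde\beta(N)-1)$. For this I need the two-sided ideal $I_H := (S \rtimes H)\cdot(\tilde\beta(N)-1)$ to be $G$-stable; but this is exactly the computation already carried out in the proof of Lemma \ref{RingSGN}, where $G$-equivariance of $\beta$ gives $g\tilde\beta(n)g^{-1} = \tilde\beta({}^g n)$ for $n \in N$, and since $N \trianglelefteq G$ we have ${}^g n \in N$, so conjugation by $g$ permutes the generators $\tilde\beta(N)-1$ of (the left ideal generating) $I_H$. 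Hence the $G$-action on $S \rtimes H$ passes to $S \rtimes_N^\beta H$, and on generators it is forced to be $g\cdot(s\gamma(h)) = (g\cdot s)\gamma({}^g h)$, which is the claimed formula.

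For (b), I need to check that $\gamma : H \to (S \rtimes_N^\beta H)^\times$ is (i) a group homomorphism landing in the units, (ii) a trivialisation, i.e. conjugation by $\gamma(h)$ on $S$ agrees with the action of $h$, and (iii) $G$-equivariant with respect to the $G$-action from part (a). Point (i) is immediate since $\gamma$ is the composite of the tautological map $H \to (S\rtimes H)^\times$ with the ring map to the quotient; each $\gamma(h)$ is a unit with inverse $\gamma(h^{-1})$. Point (ii) is precisely relation \eqref{RelSNG}. Point (iii) is the identity $\gamma({}^g h) = g \cdot \gamma(h)$, which follows by taking $s = 1$ in the formula from part (a): $g\cdot\gamma(h) = g\cdot(1\cdot\gamma(h)) = (g\cdot 1)\gamma({}^g h) = \gamma({}^g h)$.

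I do not anticipate a serious obstacle here; the statement is essentially bookkeeping built on Lemma \ref{RingSGN}. The one point requiring a little care — and the closest thing to a ``hard part'' — is making sure the $G$-action is genuinely well-defined on the quotient, i.e.\ that the quotienting relations are respected, which is why I route everything through the $G$-stability of the ideal $I_H$ rather than trying to define the action on representatives $s\gamma(h)$ directly (where one would have to independently re-derive compatibility with \eqref{EqualityInSNG}). A secondary subtlety worth a remark is that in part (a) we need $H \trianglelefteq G$ (not just $N \trianglelefteq G$) so that ${}^g h \in H$ and the conjugation action of $G$ on $H$ makes sense; this is among the hypotheses. Once well-definedness is settled, both the homomorphism property of the action and the three properties of $\gamma$ reduce to direct substitution into \eqref{RelSNG}, \eqref{EqualityInSNG}, and the generator formula.
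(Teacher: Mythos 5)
Your proof is correct and follows essentially the same route as the paper's: both establish part (a) by showing that the $G$-action on $S\rtimes H$ preserves the ideal generated by $\tilde\beta(N)-1$ using $G$-equivariance of $\beta$ and normality of $N$, and both deduce part (b) by direct computation with $(\ref{RelSNG})$. The only cosmetic difference is that you define the $G$-action on $S\rtimes H$ intrinsically, whereas the paper realises it as the restriction of the conjugation action inside the larger ring $S\rtimes G$ — these visibly coincide.
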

\begin{proof} (a) Note that $\beta$ is also an $H$-equivariant trivialisation, so we may form the crossed product $S \rtimes_N^\beta H$. The skew-group ring $S \rtimes G$ contains $S \rtimes H$ as a subring, and conjugation by $G$ inside $S \rtimes G$ preserves $S \rtimes H$ setwise because $H$ is a normal subgroup of $G$. Because $\beta$ is $G$-equivariant, the ideal of $S \rtimes H$ generated by $\tilde{\beta}(N)-1$ is stable under this conjugation action of $G$, so it descends to a $G$-action on $S \rtimes_N^\beta H$ by ring automorphisms. This action is given explicitly in the statement of the Lemma.

(b) Using $(\ref{RelSNG})$ together with the fact that $\gamma$ is a group homomorphism, we see that for any $s \in S$ and $h, h' \in H$ we have
\[\gamma(h) \hsp s \gamma(h') \hsp  \gamma(h)^{-1} = \gamma(h) s \gamma(h)^{-1} \gamma({}^hh') = (h \cdot s) \gamma({}^hh') = h \cdot (s \gamma(h')).\]
Hence $\gamma$ is a trivialisation of the $H$-action on $S \rtimes_N^\beta H$. It is $G$-equivariant because $\gamma( {}^gh) = \gamma(g) \gamma(h) \gamma(g)^{-1} = g \cdot \gamma(h)$, by $(\ref{RelSNG})$.
\end{proof}

Using Lemma \ref{GammaTriv} and Definition \ref{DefnTriv}, we form the iterated crossed product
\[ (S \rtimes^\beta_N H) \rtimes^\gamma_H G\]
whenever $N \leq H$ are normal subgroups of $G$, $\beta : N \to S^\times$ is a $G$-equivariant trivialisation, and $\gamma : H \to (S \rtimes_N^\beta H)^\times$ is given by $(\ref{DefnGamma})$.

\begin{prop}\label{SHGassoc} Let $N \leq H$ be normal subgroups of $G$ and let $\beta : N \to S^\times$ be a $G$-equivariant trivialisation. Then there is a natural ring isomorphism
\[ \Phi : (S \rtimes_N^\beta H) \rtimes_H^\gamma G \congs S \rtimes_N^\beta G.\]
\end{prop}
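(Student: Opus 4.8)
The plan is to build $\Phi$ and its inverse directly from universal properties of the iterated crossed product and of the quotient construction in Definition \ref{DefnTriv}, and then check they compose to the identity on a generating set.

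First I would construct a ring homomorphism $S \rtimes G \to (S \rtimes_N^\beta H) \rtimes_H^\gamma G$. Starting from the inclusion $S \hookrightarrow S \rtimes_N^\beta H$ (which is injective by Lemma \ref{RingSGN}) composed with the canonical map into $(S \rtimes_N^\beta H) \rtimes_H^\gamma G$, together with the group homomorphism $g \mapsto \gamma'(g)$ where $\gamma' : G \to ((S \rtimes_N^\beta H) \rtimes_H^\gamma G)^\times$ is the homomorphism of $(\ref{DefnGamma})$ for the second crossed product, one checks the skew-commutation relation $\gamma'(g)\, s\, \gamma'(g)^{-1} = g \cdot s$ holds (this is $(\ref{RelSNG})$ applied to the outer crossed product, using that the $G$-action on $S \rtimes_N^\beta H$ restricts to the original $G$-action on $S$ by Lemma \ref{GammaTriv}(a)). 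By the universal property of the skew-group ring $S \rtimes G$ this yields a homomorphism $\psi : S \rtimes G \to (S \rtimes_N^\beta H) \rtimes_H^\gamma G$. I then claim $\psi$ kills the left ideal $(S \rtimes G) \cdot (\tilde\beta(N) - 1)$: indeed for $n \in N$, $\psi(\tilde\beta(n)) = \psi(\beta(n)^{-1} n) = \beta(n)^{-1} \gamma'(n)$, and since $n \in N \leq H$, by $(\ref{EqualityInSNG})$ applied \emph{inside} $S \rtimes_N^\beta H$ we have $\gamma(n) = \beta(n)$ in that ring, so $\gamma'(n) = \gamma(n) = \beta(n)$ in the outer crossed product, giving $\psi(\tilde\beta(n)) = 1$. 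Hence $\psi$ descends to $\Phi : S \rtimes_N^\beta G \to (S \rtimes_N^\beta H) \rtimes_H^\gamma G$.

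For the inverse, I would similarly exploit the universal properties twice. The ring $S \rtimes_N^\beta H$ maps to $S \rtimes_N^\beta G$: there is an evident map sending $s \in S$ to $s$ and $\gamma(h)$ to $\gamma(h)$ (the image of $h \in G$), well-defined because the defining relation $(\ref{EqualityInSNG})$ for $S \rtimes_N^\beta H$ is a special case of that for $S \rtimes_N^\beta G$ — concretely, apply the universal property of $S \rtimes H$ and then observe the quotient ideal maps into zero. Call this map $\rho : S \rtimes_N^\beta H \to S \rtimes_N^\beta G$; it is $G$-equivariant for the $G$-action of Lemma \ref{GammaTriv}(a) on the source and the $G$-action of Lemma \ref{GammaTriv}(b) (with $H$ replaced by $G$) on the target, by comparing the explicit formulas. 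Now $\rho$ together with $\gamma : G \to (S \rtimes_N^\beta G)^\times$ satisfies the skew-commutation relation by $(\ref{RelSNG})$, so the universal property of $(S \rtimes_N^\beta H) \rtimes G$ gives a homomorphism out of it, which then kills $(\tilde\gamma(H) - 1)$ (here $\tilde\gamma$ is the trivialisation of Lemma \ref{GammaTriv}(b)) because $\gamma(h)$ in $S \rtimes_N^\beta G$ equals the image of $h$, so the relation defining $\rtimes_H^\gamma$ is again a special case of $(\ref{EqualityInSNG})$ in $S \rtimes_N^\beta G$. This produces $\Psi : (S \rtimes_N^\beta H) \rtimes_H^\gamma G \to S \rtimes_N^\beta G$.

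Finally I would verify $\Phi \circ \Psi = \mathrm{id}$ and $\Psi \circ \Phi = \mathrm{id}$ by evaluating on generators: elements of the form $s\, \gamma(g)$ on the $S \rtimes_N^\beta G$ side, and $(s\, \gamma(h))\, \gamma'(g)$ on the iterated side, using that every element is a sum of such by the remarks following Lemma \ref{RingSGN}. Both composites fix $S$ pointwise and match up the group elements, so they are the identity. The main obstacle — really the only subtlety — is bookkeeping: keeping straight the three distinct "$\gamma$"-type maps ($\gamma$ for $S \rtimes_N^\beta H$, $\gamma'$ for the outer crossed product, and the $\gamma$ for $S \rtimes_N^\beta G$) and checking at each stage that the relevant defining relation of a quotient is genuinely implied by a relation already holding in the target ring, so that the maps are well-defined. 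Once the universal properties are set up carefully, no computation is more than a line. Naturality of $\Phi$ in $S$ (and in the tower $N \leq H \leq G$) follows formally since every map was constructed via universal properties.
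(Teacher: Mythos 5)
Your argument is correct and follows essentially the same strategy as the paper: build both maps via the universal properties of skew-group rings, check that the relevant ideal generators die in the quotient, and verify mutual invertibility on the generating set $\{s\,\gamma(g)\}$. The one slip is a labeling one: the statement of the proposition specifies that $\Phi$ goes from $(S\rtimes_N^\beta H)\rtimes_H^\gamma G$ to $S\rtimes_N^\beta G$, but you name the map in the \emph{opposite} direction $\Phi$ and the one the proposition asks for $\Psi$; you should swap the two names (the paper does exactly this — its $\varphi$ descends to its $\Phi : (S\rtimes_N^\beta H)\rtimes_H^\gamma G\to S\rtimes_N^\beta G$ and its $\psi$ descends to $\Psi$ in the other direction). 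With that relabeling your proof and the paper's are the same; the only other cosmetic difference is that the paper explicitly invokes Lemma \ref{RingSGN}(b) to regard $S\rtimes_N^\beta H$ as a subring of $S\rtimes_N^\beta G$ rather than reconstructing the map $\rho$ from scratch, and it writes out explicit element-level formulas such as $s\,\gamma(h)\,\theta(g)\mapsto s\,\gamma(hg)$ for the descended map, which makes the final inverse-check a one-liner. Your treatment of the key well-definedness step — that $\gamma(n)=\beta(n)$ in $S\rtimes_N^\beta H$ via $(\ref{EqualityInSNG})$, and that the structure maps of the inner and outer crossed products agree on $H$ by the definition of $\rtimes_H^\gamma$ — is exactly the right content, just a bit compressed.
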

\begin{proof} It follows from Lemma \ref{RingSGN}(b) that there is a natural inclusion $S \rtimes_N^\beta H \hookrightarrow S \rtimes_N^\beta G$, and we will identify $S \rtimes_N^\beta H$ with its image in $S \rtimes_N^\beta G$ under this inclusion. Using $(\ref{RelSNG})$ together with the universal property of the skew-group ring $(S\rtimes_N^\beta H)\rtimes G$, we obtain a ring homomorphism
\[\begin{array}{lcccl} \varphi : & (S \rtimes_N^\beta H) \rtimes G & \longrightarrow & S \rtimes_N^\beta G & \\ & u \hsp g & \mapsto & u \gamma(g) &\qmb{for all} u \in S \rtimes_N^\beta H, \quad g \in G. \end{array}\]
For any $h \in H$, $\varphi$ sends the element $\gamma(h)h^{-1}$ of $( S \rtimes_N^\beta H) \rtimes G$ to $1$, so $\varphi$ descends to a ring homomorphism 
\[\begin{array}{lcccl} \Phi : & (S \rtimes_N^\beta H) \rtimes^{\gamma}_H G & \longrightarrow & S \rtimes_N^\beta G & \\
& s \hsp \gamma(h) \hsp \theta( g ) & \mapsto & s \hsp \gamma(hg) & \qmb{for all} s \in S, h\in H, g \in G, \end{array}\]
where $\theta : G \to \left((S \rtimes^\beta_NH) \rtimes^{\gamma}_HG\right)^\times$ is the group homomorphism which sends $g \in G$ to the image of $g \in (S \rtimes^\beta_N H) \rtimes G$ in $(S \rtimes^\beta_NH) \rtimes^{\gamma}_HG$.  

In the other direction, there is a ring homomorphism 
\[\begin{array}{lcccl} \psi : & S \rtimes G& \longrightarrow & (S \rtimes^\beta_NH) \rtimes^{\gamma}_HG& \\

& s \hsp g &\mapsto & s \hsp \theta(g) & \qmb{for all} s \in S, g \in G.\end{array}\]
For any $n \in N$, $\psi$ sends the element  $\beta(n) n^{-1} \in S \rtimes G$ to $1$, so it descends to a ring homomorphism
\[\begin{array}{lcccl} \Psi : & S \rtimes_N^\beta G & \longrightarrow & (S \rtimes^\beta_NH) \rtimes^{\gamma}_HG & \\

& s \hsp \gamma(g) & \mapsto &  s \hsp \theta(g) & \qmb{for all} s\in S, g\in G.\end{array}\]
It is now straightforward to verify that $\Phi$ and $\Psi$ are mutually inverse.
\end{proof}

Finally, we record a useful result on the functoriality of our construction $S \rtimes^{\beta}_N G$.

\begin{lem}\label{CPfunc} Let $f : S \to S'$ be a ring homomorphism, and let $\tau : G \to G'$ be a group homomorphism. Suppose that $G$ acts on $S$ and $G'$ acts on $S'$. Let $N, N'$ be normal subgroups of $G, G'$ respectively, and let $\beta : N \to S^\times$ and $\beta' : N' \to S'^\times$ be equivariant trivialisations. Suppose that:
\begin{itemize}
\item  $\tau(N) \subseteq N'$, 
\item $ f( g \cdot s ) = \tau(g) \cdot f(s)$ for all $g \in G, s \in S$, and
\item $f^\times \circ \beta = \beta' \circ \tau_{|N}$.
\end{itemize}
Then $f$ and $\tau$ extend to a ring homomorphism
\[  f \rtimes \tau \hsp:\hsp S \rtimes_N^\beta G \longrightarrow S' \rtimes_{N'}^{\beta'} G'\]
which is an isomorphism whenever $f$ and $\tau$ are bijective, and $\tau(N) = N'$.
\end{lem}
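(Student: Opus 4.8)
The plan is to construct $f \rtimes \tau$ in two stages: first on the skew-group rings, then pass to the quotients defining the crossed products $S \rtimes_N^\beta G$ and $S' \rtimes_{N'}^{\beta'} G'$. For the first stage, recall that $S \rtimes G$ is the free left $S$-module on $G$ with multiplication $(\ref{SkewGpRngMult})$; the hypothesis $f(g \cdot s) = \tau(g) \cdot f(s)$ is precisely the compatibility needed to check that the $S$-linear map sending $s g \mapsto f(s)\hsp \tau(g)$ respects $(\ref{SkewGpRngMult})$, so we obtain a ring homomorphism $S \rtimes G \to S' \rtimes G'$. Alternatively, and more cleanly, one can invoke the universal property of the skew-group ring together with the pair of homomorphisms $S \xrightarrow{f} S' \hookrightarrow S' \rtimes G'$ and $G \xrightarrow{\tau} G' \hookrightarrow (S' \rtimes G')^\times$, whose compatibility is again exactly the second bulleted hypothesis.

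For the second stage, I would show that this map $S \rtimes G \to S' \rtimes G'$ descends modulo the left ideals $(S \rtimes G)\cdot(\tilde\beta(N)-1)$ and $(S' \rtimes G')\cdot(\tilde\beta'(N')-1)$ appearing in Definition \ref{DefnTriv}. Since $\tilde\beta(n) = \beta(n)^{-1}n$ for $n \in N$ by Lemma \ref{Untwist}, the image of $\tilde\beta(n)-1$ under our map is $f(\beta(n))^{-1}\hsp\tau(n) - 1 = \beta'(\tau(n))^{-1}\hsp\tau(n) - 1 = \tilde\beta'(\tau(n)) - 1$, using the third bulleted hypothesis $f^\times \circ \beta = \beta' \circ \tau_{|N}$. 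Because $\tau(N) \subseteq N'$, this lies in the ideal $(S' \rtimes G')\cdot(\tilde\beta'(N')-1)$, so the generators of the source ideal map into the target ideal; since our map is a ring homomorphism the whole left ideal is carried into the target ideal. Hence we get the desired $f \rtimes \tau : S \rtimes_N^\beta G \to S' \rtimes_{N'}^{\beta'} G'$, and tracing through the definitions it sends $s\hsp\gamma(g) \mapsto f(s)\hsp\gamma'(\tau(g))$, where $\gamma,\gamma'$ are the homomorphisms of $(\ref{DefnGamma})$.

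For the final assertion, suppose $f$ and $\tau$ are bijective and $\tau(N) = N'$. Then $f^{-1}$ and $\tau^{-1}$ satisfy the hypotheses of the lemma with the roles of the two triples interchanged: bijectivity of $f$ gives $f^{-1}(\tau^{-1}(g')\cdot s') = g' \cdot f^{-1}(s')$ after substituting, $\tau^{-1}(N') = N$, and $(f^{-1})^\times \circ \beta' = \beta \circ \tau^{-1}_{|N'}$ follows from the third hypothesis. So by the construction just given we obtain a ring homomorphism $f^{-1} \rtimes \tau^{-1}$ in the reverse direction, and on the generators $s\hsp\gamma(g)$ the composites $(f^{-1}\rtimes\tau^{-1})\circ(f\rtimes\tau)$ and $(f\rtimes\tau)\circ(f^{-1}\rtimes\tau^{-1})$ act as the identity; since every element of each crossed product is a sum of such generators (as noted after $(\ref{DefnGamma})$), the two composites are the respective identity maps, so $f \rtimes \tau$ is an isomorphism.

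The only genuinely delicate point is the descent in the second stage: one must be careful that the relevant ideals are \emph{left} ideals (the crossed products are \emph{a priori} only left module quotients before Lemma \ref{RingSGN} is invoked), so it is cleanest to argue at the level of $S \rtimes G$ itself — where, by the remarks preceding Lemma \ref{RingSGN}, the image of the augmentation ideal of $S[N]$ under $\tilde\beta$ is genuinely two-sided — and then note that a ring homomorphism carrying the generators of a one-sided ideal into a one-sided ideal carries the whole ideal there. Everything else is a routine check on generators, which I would not spell out in full.
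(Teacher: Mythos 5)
The paper does not supply a proof of Lemma \ref{CPfunc} (it is stated and the next subsection begins immediately), so there is no "paper's own proof" to compare against; your argument is the natural one to fill in. Your three stages are all correct: the map $sg \mapsto f(s)\tau(g)$ is a ring homomorphism $S\rtimes G\to S'\rtimes G'$ precisely because of the second bullet; the left ideal generators $\tilde\beta(n)-1$ map to $\tilde\beta'(\tau(n))-1\in I'$ by the first and third bullets; and the inverse map arises by checking that $f^{-1},\tau^{-1}$ satisfy the symmetrized hypotheses (using $\tau(N)=N'$ for the first, the substitution $g'=\tau(g),\,s'=f(s)$ for the second, and pre/post-composing for the third). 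One small imprecision worth noting: your caveat attributes the two-sidedness of $(S\rtimes G)\cdot(\tilde\beta(N)-1)$ to "the remarks preceding Lemma \ref{RingSGN}," but it is actually established \emph{inside} the proof of Lemma \ref{RingSGN}(a), where $G$-equivariance of $\beta$ is used to show the image $I$ of the augmentation ideal in $S\rtimes N$ is stable under $G$-conjugation. Since Lemma \ref{CPfunc} is stated after Lemma \ref{RingSGN} and assumes equivariant trivialisations, that two-sidedness is already available, so the descent step is fine; in any case a ring homomorphism carrying left-ideal generators into a left ideal carries the full left ideal there, as you say, and the quotients are rings by Lemma \ref{RingSGN}, so the induced map is a ring map.
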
 
\subsection{Equivariant sheaves on \ts{G}-topological spaces}
Let $X$ be a set equipped with a Grothendieck topology in the sense of \cite[Definition 9.1.1/1]{BGR}. Note that we do not assume at the outset that there is a final object in the category of admissible open subsets of $X$, as $X$ is not itself required to be admissible open in a $G$-topology. 

Let $\Homeo(X)$ be the group of continuous bijections from $X$ to itself. We say that a group $G$ \emph{acts on $X$} if there is given a group homomorphism $\rho : G \to \Homeo(X)$. If this action is understood, we write $gU$ to denote the image of an admissible open subset $U$ of $X$ under the action of $g \in G$. For every $g \in G$, there is an auto-equivalence $\rho(g)_\ast$ of the category of sheaves on $X$, with inverse $\rho(g)^\ast = \rho(g^{-1})_\ast$. To simplify the notation, we will simply denote these auto-equivalences by $g_\ast$ and $g^\ast$, respectively. Thus
\[ (g_\ast \cF)(U) = \cF(g^{-1}U)\qmb{and} (g^\ast \cF)(U) = \cF(gU)\]
for all admissible open subsets $U$ of $X$ and all $g\in G$. 

Let $R$ be a commutative base ring. We review some definitions from \cite[\S 5.1]{Tohoku}.

\begin{defn}\label{DefnEquivSheaf} Let $G$ act on $X$, and let $\cF$ be a presheaf of $R$-modules on $X$.
\be \item  An \emph{$R$-linear equivariant structure} on $\cF$ is a set $\{g^{\cF}: g \in G\}$, where 
\[ g^{\cF} : \cF \to g^\ast \cF\]
is a morphism of presheaves of $R$-modules for each $g \in G$,  such that 
\begin{equation}\label{Cocycl} (gh)^{\cF} = h^\ast(g^\cF) \circ h^\cF \qmb{for all} g,h \in G.\end{equation}
\item An \emph{$R$-linear $G$-equivariant presheaf} is a pair $(\cF, \{g^{\cF}\}_{g \in G})$, where $\cF$ is a presheaf of $R$-modules on $X$, and  $\{g^{\cF}\}_{g \in G}$ is an $R$-linear equivariant structure on $\cF$. 
\item A \emph{morphism} of $R$-linear $G$-equivariant presheaves 
\[\varphi : (\cF, \{g^{\cF}\}) \to (\cF', \{g^{\cF'}\})\] 
is a morphism of presheaves of $R$-modules $\varphi : \cF \to \cF'$ such that 
\[ g^\ast(\varphi) \circ g^{\cF} = g^{\cF'} \circ \varphi \qmb{for all} g \in G.\]
\ee\end{defn}
We will frequently use this abuse of notation, and simply write $\varphi(x)$ to mean $\varphi(U)(x)$ if $x$ is a section of $\cF$ over the admissible open subset $U$ of $X$. Note that with this abuse of notation, the cocycle condition $(\ref{Cocycl})$ becomes simply
\begin{equation}\label{EasyCocyc} g^{\cF}(h^{\cF}(x)) = (gh)^{\cF}(x) \qmb{for all} x \in \cF, g,h \in G.\end{equation}
When the base ring $R$ and the $R$-linear equivariant structure on a sheaf $\cF$ of $R$-modules is understood, we will simply say that $\cF$ is a \emph{$G$-equivariant sheaf}, and omit the equivariant structure from the notation. 

\begin{defn} Let $G$ act on $X$, and let $\cA$ be a sheaf of $R$-algebras on $X$. We say that $\cA$ is a \emph{$G$-equivariant sheaf of $R$-algebras} if there is given an $R$-linear $G$-equivariant structure $\{g^{\cA} : g \in G\}$ such that each $g^{\cA} : \cA \to g^\ast \cA$ is a morphism of sheaves of $R$-algebras.
\end{defn}

\begin{rmk}\label{sAEquiv}\hspace{2em} 
\be \item If $U$ is a $G$-stable admissible open subset of $X$, then there is a natural $G$-action on $\cA(U)$ by $R$-algebra automorphisms, given by 
\[g \cdot a = g^{\cA}(a) \qmb{for all} g \in G, a \in \cA(U).\]
\item If $V \subset U$ are $G$-stable, then the restriction map $\cA(U) \to \cA(V)$ is $G$-equivariant.
\ee\end{rmk}

\begin{defn}\label{GAmodDefn} Let $\cA$ be a $G$-equivariant sheaf of $R$-algebras on $X$. 
\be \item A \emph{$G$-equivariant sheaf of $\cA$-modules} on $X$, or a \emph{$G$-$\cA$-module}, is an $R$-linear $G$-equivariant sheaf $\cM$ on $X$, such that $\cM$ is a sheaf of left $\cA$-modules and $g^{\cM}(a \cdot m) = g^{\cA}(a) \cdot g^{\cM}(m)$ for all $g \in G$, $a \in \cA$ and $m \in \cM$. 
\item A \emph{morphism} of $G$-$\cA$-modules is a morphism of sheaves of $\cA$-modules, which is simultaneously a morphism of $R$-linear $G$-equivariant sheaves.
\item We denote the category of $G$-$\cA$-modules by $\Emod{G}{\cA}$. \ee\end{defn}

By Remark \ref{sAEquiv}(a), we have at our disposal the skew-group ring 
\[ \cA(X) \rtimes G.\]
We have the following important fact.
\begin{prop}\label{EqGlSec} If $X$ is an admissible open in the $G$-topology, then $\Gamma(X,-)$ is a functor from $G$-$\cA$-modules to $\cA(X) \rtimes G$-modules.
\end{prop}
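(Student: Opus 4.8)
The plan is to show that for a $G$-$\cA$-module $\cM$, the abelian group $\Gamma(X,\cM) = \cM(X)$ carries a natural left module structure over the skew-group ring $\cA(X) \rtimes G$, and that morphisms of $G$-$\cA$-modules induce module homomorphisms. Since $X$ is admissible open in the $G$-topology, it is in particular a $G$-stable admissible open subset of itself, so by Remark \ref{sAEquiv}(a) the group $G$ acts on the $R$-algebra $\cA(X)$ by the automorphisms $a \mapsto g \cdot a := g^{\cA}(a)$, and the skew-group ring $\cA(X) \rtimes G$ makes sense. On $\cM(X)$ we already have the left $\cA(X)$-module structure coming from $\cM$ being a sheaf of $\cA$-modules; the key extra ingredient is the $G$-action on $\cM(X)$. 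For $g \in G$, the morphism $g^{\cM} : \cM \to g^\ast\cM$ evaluated at the $G$-stable open $X$ gives an $R$-linear map $g^{\cM} : \cM(X) \to (g^\ast\cM)(X) = \cM(gX) = \cM(X)$, since $gX = X$; I would define $g \cdot m := g^{\cM}(m)$ for $m \in \cM(X)$.

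First I would check that $m \mapsto g\cdot m$ is a genuine left $G$-action on $\cM(X)$: the identity axiom follows because $(\mathrm{id})^{\cM}$ is the identity (a consequence of the cocycle condition $(\ref{EasyCocyc})$ applied with $g = h = 1$), and associativity $g\cdot(h\cdot m) = (gh)\cdot m$ is precisely the cocycle condition $(\ref{EasyCocyc})$, $g^{\cM}(h^{\cM}(m)) = (gh)^{\cM}(m)$, with the abuse of notation explained after Definition \ref{DefnEquivSheaf}. Next I would verify the compatibility between the $\cA(X)$-action and the $G$-action needed for a skew-group module: for $a \in \cA(X)$, $m \in \cM(X)$, $g \in G$, the axiom $g\cdot(a\cdot m) = (g\cdot a)\cdot(g\cdot m)$ holds because $\cM$ is a $G$-$\cA$-module, so $g^{\cM}(a\cdot m) = g^{\cA}(a)\cdot g^{\cM}(m)$ by Definition \ref{GAmodDefn}(a). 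Given these two facts, the universal property of the skew-group ring $\cA(X)\rtimes G$ — whose elements are finite sums $\sum a_g g$ with multiplication $(\ref{SkewGpRngMult})$ — produces a well-defined left $\cA(X)\rtimes G$-module structure on $\cM(X)$ by $(\sum_g a_g g)\cdot m := \sum_g a_g\cdot (g\cdot m)$; I would spell out that this is additive and that associativity of the module action reduces exactly to the two identities just checked together with associativity of the $\cA(X)$-action.

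Finally I would treat functoriality. A morphism $\varphi : \cM \to \cM'$ of $G$-$\cA$-modules is in particular a morphism of sheaves of $\cA$-modules, so $\varphi(X) : \cM(X) \to \cM'(X)$ is $\cA(X)$-linear; and it is a morphism of $R$-linear $G$-equivariant sheaves, so $g^\ast(\varphi)\circ g^{\cM} = g^{\cM'}\circ\varphi$, which evaluated at $X$ (using $gX = X$) says $\varphi(X)(g\cdot m) = g\cdot\varphi(X)(m)$ for all $g \in G$. Combining $\cA(X)$-linearity with $G$-equivariance gives that $\varphi(X)$ is a homomorphism of $\cA(X)\rtimes G$-modules, and compatibility with composition and identities is immediate since $\Gamma(X,-)$ is already a functor on the underlying categories of sheaves. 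I do not expect a serious obstacle here; the only point requiring a little care is the bookkeeping identification $(g^\ast\cM)(X) = \cM(gX) = \cM(X)$ and making sure the abuse of notation from after Definition \ref{DefnEquivSheaf} is being applied consistently, so that the cocycle condition really does translate into associativity of the $G$-action on global sections.
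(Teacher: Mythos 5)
Your proposal is correct and follows essentially the same route as the paper: define $g\cdot m := g^{\cM}(m)$ on global sections, check that the cocycle condition gives a genuine $G$-action, use $g^{\cM}(a\cdot m) = g^{\cA}(a)\cdot g^{\cM}(m)$ to get a skew-group module, and observe that $G$-equivariance plus $\cA(X)$-linearity of $\varphi(X)$ gives $\cA(X)\rtimes G$-linearity. The paper writes the action as $ag\bullet m := a\cdot g^{\cM}(m)$ and verifies the same two compatibilities.
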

\begin{proof} Let $\cM$ be a $G$-$\cA$-module on $X$, and define
\[ ag \bullet m = a \cdot g^\cM(m) \qmb{for all} a \in \cA(X), g\in G \qmb{and} m \in \cM(X).\]
Then $g \bullet (a \cdot m) = g^{\cM}(a \cdot m) = g^{\cA}(a) \cdot g^{\cM}(m) = (g^{\cA} (a) g) \bullet m$ by Definition \ref{GAmodDefn}(a), and similarly we see that $(gh) \bullet m = g \bullet (h \bullet m)$ for all $g,h \in G$ and $m \in \cM(X)$. In this way, $\cM(X)$  naturally becomes a $\cA(X) \rtimes G$-module via $\bullet$, and it is straightforward to verify that if $\varphi : \cM \to \cN$ is a morphism of $G$-$\cA$-modules, then $\varphi(X) : \cM(X) \to \cN(X)$ is $\cA(X) \rtimes G$-linear.
\end{proof}

\section{Equivariant differential operators on rigid analytic spaces}\label{EqDModSect}
\subsection{Automorphisms of admissible formal schemes and rigid spaces} \label{CtsGactions}

We begin by recalling some notations and definitions from \cite[\S 1]{BL1}.

\begin{notn} $\cR$ will denote a valuation ring of Krull dimension $1$, complete and separated with respect to the $(\pi)$-adic topology, where $\pi$ is a fixed non-zero element of the maximal ideal of $\cR$. We will always denote the field of fractions of $\cR$ by $K$.
\end{notn}

\begin{defn}\label{AdmRAlg}\be \item An $\cR$-algebra $\cA$ is said to be \emph{topologically of finite presentation} if it is isomorphic to a quotient of the algebra of restricted formal power series in finitely many variables over $\cR$ by a finitely generated ideal:
\[ \cA = \cR \langle x_1,\ldots, x_n \rangle / \mathfrak{a}. \]
\item The algebra $\cA$ is said to be \emph{admissible} if it is topologically of finite presentation, and flat as an $\cR$-module. 
\item A formal $\cR$-scheme $\cX$ is said to be \emph{admissible} if it is locally isomorphic to an affine formal scheme $\Spf \cA$ for some admissible $\cR$-algebra $\cA$. 
\ee\end{defn}
\begin{defn} Let $\cX$ be an admissible formal scheme.
\be \item $\cG(\cX) := \Aut_{\cR}(\cX, \cO_{\cX})$ denotes the group of $\cR$-linear automorphisms of $\cX$.
\item For every $n\geq 0$, let $\cR_n := \cR / \pi^n \cR$ and $\cX_n := \cX \times_{\Spf \cR} \Spf \cR_n$. 
\item The \emph{$n$-th congruence subgroup} of $\cG(\cX)$ is 
\[ \cG_{\pi^n}(\cX) := \ker \left( \cG(\cX) \to \Aut_{\cR_n}(\cX_n, \cO_{\cX_n}) \right).\]
\ee\end{defn}
We keep $\pi$ in the notation because $\cG_{\pi^n}(\cX)$ depends not only on $n$ but also on the choice of $\pi$. These congruence subgroups form a descending chain 
\[ \cG(\cX) = \cG_{1}(\cX) \supset \cG_{\pi}(\cX) \supset \cG_{\pi^2}(\cX) \supset \cdots \]
of normal subgroups of $\cG(\cX)$ whose intersection is trivial. Note that if $\cX$ is affine, then it follows from the discussion following the proof of \cite[Chapitre I, Proposition 10.2.2]{EGAInew} that for an automorphism $\varphi \in \cG(\cX)$,
\[ \varphi \in\cG_{\pi^n}(\cX) \qmb{ if and only if } (\varphi^\sharp(\cX) - 1)(\cO(\cX))\subseteq \pi^n \cO(\cX).\] 

\begin{lem}\label{AutSheaf} $\cG_{\pi^n}$ is a sheaf of groups on $\cX$ for all $n \geq 1$.
\end{lem}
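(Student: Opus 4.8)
The plan is to exploit a feature special to the case $n \geq 1$: every element of $\cG_{\pi^n}(\cU)$ induces the \emph{identity} on underlying topological spaces, so that $\cG_{\pi^n}$ is really a subpresheaf of the (obvious) presheaf of $\cR$-algebra automorphisms of $\cO_\cX$, and the assertion reduces to the standard fact that morphisms of sheaves glue. By contrast $\cG = \cG_{\pi^0}$ is visibly not a sheaf, since a general automorphism of an open subscheme moves points.

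First I would record that if $\varphi \in \cG_{\pi^n}(\cU)$ then $\varphi_n = \mathrm{id}_{\cU_n}$ by definition, and since $|\cU| = |\cU_n|$ the underlying continuous map of $\varphi$ is $\mathrm{id}_{|\cU|}$; thus $\varphi$ amounts to an $\cR$-algebra automorphism $\varphi^\sharp$ of the sheaf $\cO_\cU$ covering the identity. For an open $\cV \subseteq \cU$, the restriction $\varphi^\sharp|_{\cV}$ is then such an automorphism of $\cO_\cV$, and it lies in $\cG_{\pi^n}(\cV)$: the condition ``$\varphi_n = \mathrm{id}$'' is local on $\cU$, amounting on each affine open $\cW$ to $(\varphi^\sharp(\cW) - 1)(\cO(\cW)) \subseteq \pi^n\cO(\cW)$ by the characterisation recalled just before the statement. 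Since restriction is clearly a group homomorphism, $\cU \mapsto \cG_{\pi^n}(\cU)$ is a presheaf of groups.

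Now I would check the sheaf axioms for an open cover $\cU = \bigcup_i \cU_i$. Separatedness is immediate: if $\varphi, \varphi' \in \cG_{\pi^n}(\cU)$ restrict to the same element over each $\cU_i$, then $\varphi^\sharp, \varphi'^\sharp \colon \cO_\cU \to \cO_\cU$ agree on a cover, hence are equal. For gluing, let $\varphi_i \in \cG_{\pi^n}(\cU_i)$ agree on all overlaps $\cU_i \cap \cU_j$. The morphisms of sheaves of $\cR$-algebras $\varphi_i^\sharp \colon \cO_\cU|_{\cU_i} \to \cO_\cU|_{\cU_i}$ agree on overlaps, hence glue to a morphism of sheaves of $\cR$-algebras $\varphi^\sharp \colon \cO_\cU \to \cO_\cU$ covering $\mathrm{id}_{|\cU|}$. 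Two checks remain. \emph{Invertibility}: the inverses $\varphi_i^{-1} \in \cG_{\pi^n}(\cU_i)$ also agree on overlaps, as inverses in a group are unique, so they glue to some $\psi^\sharp$; the relations $\psi^\sharp\varphi^\sharp = 1 = \varphi^\sharp\psi^\sharp$ hold over each $\cU_i$, hence globally, so $(\mathrm{id}_{|\cU|}, \varphi^\sharp) \in \cG(\cU)$. \emph{Congruence}: the condition ``$\varphi_n = \mathrm{id}$'' is local on $\cU$ and holds over each $\cU_i$, so $\varphi^\sharp \in \cG_{\pi^n}(\cU)$; and $\varphi^\sharp|_{\cU_i} = \varphi_i$ by construction.

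I do not anticipate a genuine obstacle here: once the first step identifies $\cG_{\pi^n}(\cU)$ with a set of automorphisms of $\cO_\cU$ fixing the underlying space, everything follows from the locality of (i) invertibility of a locally invertible sheaf endomorphism and (ii) the mod-$\pi^n$ congruence. The only point calling for a little care is to interpret $\cG_{\pi^n}(\cU)$ correctly for non-affine $\cU$ — via $\varphi_n = \mathrm{id}_{\cU_n}$ — and to note that this is a local condition, which is precisely the affine description quoted in the excerpt.
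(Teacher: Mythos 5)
Your proof is correct and follows the same strategy as the paper: observe that for $n \geq 1$ every element of $\cG_{\pi^n}(\cU)$ acts as the identity on the underlying space, so restriction maps exist and the congruence condition is local, after which the sheaf axioms follow by gluing automorphisms of $\cO_\cX$. The paper states this more tersely (and first reduces to $n=1$ by replacing $\pi$ with $\pi^n$, a purely cosmetic step), leaving the sheaf-axiom verification to the reader, whereas you spell it out; the substance is identical.
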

\begin{proof} By replacing $\pi$ by $\pi^n$ if necessary, we may assume that $n = 1$. Note that if $(\varphi, \varphi^\sharp) \in \cG_\pi(\cX)$ then $\varphi = 1_{|\cX|}$ because $|\cX| = |\cX_1|$. Thus $\cG_\pi(\cX)$ stabilises every open formal subscheme $\cU$ of $\cX$ set-theoretically, giving a natural group homomorphism $\cG_\pi(\cX) \to \cG_\pi(\cU)$. Similarly we have natural restriction maps $\cG_\pi(\cU) \to \cG_\pi(\cV)$ whenever $\cV \subseteq \cU$ are open formal subschemes of $\cX$. Thus $\cG_\pi(\cX)$ is a presheaf on $\cX$, and the verification of the sheaf axioms is straightforward.
\end{proof}

Now let $\bX$ be a quasi-compact and quasi-separated (qcqs) rigid analytic variety over $K$. We will view $(\bX,\cO_\bX)$ as a $G$-ringed topological space over $K$, and study its group $\Aut_K(\bX, \cO_\bX)$ of $K$-linear automorphisms.
 
 By Raynaud's Theorem \cite[Theorem 4.1]{BL1}, we can find a \emph{formal model} for $\bX$: this is a quasi-compact admissible formal scheme $\cX$ such that $\bX = \cX_{\rig}$ is the \emph{generic fibre} of $\cX$. By statement (b) of the proof of \cite[Theorem 4.1]{BL1}, the generic fibre functor is faithful. It therefore induces an injection
\[ \rig : \cG(\cX) \hookrightarrow \Aut_K(\bX, \cO_{\bX})\]
whose image we denote by $\cG(\cX)_{\rig}$. Our next goal is to establish the following
\begin{thm}\label{AutRigidTop} Let $\bX$ be a qcqs rigid analytic variety over $K$.
\be \item For any formal model $\cX$ of $\bX$, the congruence subgroups 
\[\{ \cG_{\pi^r}(\cX)_{\rig} : r \geq 0\}\]
 form a filter base for a Hausdorff topology $\cT_{\cX}$ on $\Aut_K(\bX,\cO_\bX)$, which is compatible with the group structure on $\Aut_K(\bX, \cO_\bX)$.
\item The topology $\cT_{\cX}$ does not depend on the choice of $\cX$.
\ee
\end{thm}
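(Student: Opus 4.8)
The plan is to prove (a) and (b) more or less simultaneously, using the structure of the collection of formal models of $\bX$. First I would set $A := \Aut_K(\bX,\cO_\bX)$ and observe that, since $\cX$ is quasi-compact, we can work affine-locally: choosing a finite affine formal open cover $\cX = \bigcup \cU_i$, an automorphism $\varphi \in \cG(\cX)$ lies in $\cG_{\pi^r}(\cX)$ iff it stabilises each $\cU_i$ setwise and satisfies $(\varphi^\sharp(\cU_i)-1)(\cO(\cU_i)) \subseteq \pi^r \cO(\cU_i)$ for all $i$, by Lemma \ref{AutSheaf} and the criterion recalled just before it. For part (a), I would verify the standard axioms for a neighbourhood filter base of the identity in a topological group (the Bourbaki/BGR criterion): each $\cG_{\pi^r}(\cX)_{\rig}$ is a subgroup, the collection is closed under finite intersection (indeed $\cG_{\pi^r}\supseteq \cG_{\pi^{r+1}}$, so it is a filter base), it is stable under conjugation by arbitrary elements of $A$ — here I would need that conjugating a congruence automorphism by \emph{any} $K$-linear automorphism of $\bX$ (not just one coming from $\cX$) again lies in some $\cG_{\pi^s}(\cX)_{\rig}$, which is exactly the compatibility point that forces us to range over all formal models — and that $\bigcap_r \cG_{\pi^r}(\cX) = 1$, which gives the Hausdorff property. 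The inversion axiom is automatic since these are subgroups.

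The crux is therefore the following two facts, which simultaneously handle conjugation-invariance in (a) and independence of the model in (b): (i) if $\cX'$ is a second formal model of $\bX$, then every congruence subgroup $\cG_{\pi^r}(\cX')_{\rig}$ contains some $\cG_{\pi^s}(\cX)_{\rig}$, and vice versa; (ii) for any $g \in A$ and any $r$, there is $s$ with $g\,\cG_{\pi^s}(\cX)_{\rig}\,g^{-1} \subseteq \cG_{\pi^r}(\cX)_{\rig}$. For (i) I would use that any two formal models are dominated by a common one: by Raynaud's theory (\cite[Theorem 4.1]{BL1} and the standard results on admissible formal blowing-ups, e.g.\ \cite[\S 2--4]{BL1}), there is a formal model $\cX''$ with morphisms $\cX'' \to \cX$ and $\cX'' \to \cX'$ that become isomorphisms on generic fibres, each obtained by a finite sequence of admissible blow-ups. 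So it suffices to compare $\cX$ with an admissible blow-up $\pi\colon\cX'' \to \cX$. Here I would argue that $\pi$ induces an isomorphism on some infinitesimal neighbourhood of the structure sheaves: concretely, the centre $\cC$ of the blow-up is an open ideal, say $\pi^a\cO_\cX \subseteq \cC$, and one shows that an automorphism of $\cX$ congruent to the identity modulo $\pi^s$ for $s \gg a$ lifts uniquely to $\cX''$ and is congruent to the identity modulo $\pi^r$ there — this is where a quantitative estimate relating the $\pi$-adic filtration on $\cO_{\cX''}$ to that on $\cO_\cX$ is needed, using that $\cO_{\cX''}$ is $\pi$-torsion-free and topologically of finite presentation over $\cO_\cX$. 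For (ii), given $g\in A$, I would pick a formal model $\cX_g$ on which $g$ is induced by a formal automorphism (possible because $g$ is a morphism of qcqs rigid spaces, again by Raynaud), dominate $\cX$ and $\cX_g$ by a common $\cX''$, and then conjugation by $g$ visibly preserves each $\cG_{\pi^r}(\cX'')_{\rig}$ since $g$ is a formal automorphism of $\cX''$; combining with (i) transports this back to $\cX$.

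I expect the main obstacle to be the quantitative control in step (i): passing congruence conditions through an admissible blow-up $\cX'' \to \cX$ in a way that is uniform enough to conclude $\cG_{\pi^s}(\cX)_{\rig} \subseteq \cG_{\pi^r}(\cX'')_{\rig}$ for an explicit $s = s(r)$. The lifting of an automorphism along a blow-up is formal (universal property of blowing up, applied to the pullback of the centre), but showing that near-identity automorphisms remain near-identity after lifting requires an Artin–Rees-type comparison of the $\pi$-adic topology on $\cO_{\cX''}$ with the one pulled back from $\cX$, plus flatness to avoid $\pi$-torsion. Once (i) is in place, conjugation-invariance, the filter-base axioms, and the Hausdorff property in (a) are routine, and (b) is immediate from the two-sided inclusions in (i). I would also remark that the same argument shows $\cT_\cX$ is independent of the choice of $\pi$ up to nothing — or rather, I would simply keep $\pi$ fixed throughout, as in the running conventions of the paper.
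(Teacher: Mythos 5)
Your framework and the preparatory step in (i) closely match the paper's: it proves two quantitative lemmas --- one lifting near-identity formal automorphisms of $\cX$ to an admissible blow-up $\cY$ (Lemma \ref{BlowUpAut}, needing only $\pi^a\in\cI(\cX)$ for the centre $\cI$), and one descending them from $\cY$ to $\cX$ (Lemma \ref{BlowDownAut}, using the bounded-$\pi$-torsion estimate $\pi^b\tau_\ast\cO_{\cY}\subseteq\tau^\sharp(\cO_{\cX})$, which comes from quasi-compactness and \cite[Proposition 3.5.1]{Abbes}, exactly the Artin--Rees-type input you anticipate) --- and then chains them through the Raynaud zig-zag.

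The genuine gap is in your step (ii). You claim that $g\in\Aut_K(\bX,\cO_\bX)$ is ``induced by a formal automorphism of some $\cX_g$'', citing Raynaud. But \cite[Theorem 4.1]{BL1} only yields a zig-zag $\cX\stackrel{\tau}{\leftarrow}\cY\stackrel{\theta}{\to}\cZ\stackrel{\sigma}{\to}\cX$ with $\tau,\theta$ admissible blow-ups and $\sigma$ a formal \emph{isomorphism} such that $g=\sigma_{\rig}\circ\theta_{\rig}\circ\tau_{\rig}^{-1}$; it provides no $g$-equivariant model, and it is not clear one exists for an automorphism of infinite order (the $g$-orbit of a blow-up centre may be infinite, so you cannot just blow up the orbit). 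Even granted such an $\cX_g$, a common dominating model $\cX''$ of $\cX$ and $\cX_g$ would not automatically carry an action of $g$ unless the further blow-up were $g$-stable. Fortunately (ii) is redundant given (i): $g\,\cG_{\pi^r}(\cX)_{\rig}\,g^{-1}$ is precisely the $\pi^r$-congruence subgroup computed for the ``transported'' model, i.e.\ the same formal scheme $\cX$ reidentified with $\bX$ via $g\circ(\cX_{\rig}\tocong\bX)$, so conjugation-invariance is a special case of independence of the model. This is in effect how the paper argues: part (a) feeds the zig-zag for a general $\varphi$ through the two blow-up lemmas to get $\cG_{\pi^{r+a+b}}(\cX)_{\rig}\subseteq\varphi\,\cG_{\pi^r}(\cX)_{\rig}\,\varphi^{-1}$, and part (b) is just the case $\varphi=1_{\bX}$.
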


Let $\cX$ be a quasi-compact admissible formal scheme, let $\cI$ be a coherent open ideal of $\cO_{\cX}$ and recall the \emph{admissible formal blow-up} $\theta: \cY \to \cX$ of $\cI$ on $\cX$ from \cite[\S 2]{BL1}. The group $\cG(\cX)$ acts on the set of coherent open ideals of $\cO_{\cX}$ by pullback, and we denote the stabiliser of $\cI$ in $\cG(\cX)$ under this action by $\Stab_{\cG(\cX)}(\cI)$. 

For every $\varphi \in \Stab_{\cG(\cX)}(\cI)$, the functoriality of admissible formal blow-ups induces a morphism of formal $\cR$-schemes $\eta_{\cI}(\varphi) : \cY \to \cY$ 
such that the diagram
\[\xymatrix{ \cY \ar_{\theta}[d] \ar^{\eta_{\cI}(\varphi)}[r] & \cY \ar^{\theta}[d] \\ \cX \ar_\varphi[r] & \cX.}\]
is commutative. This defines a natural group homomorphism
\[ \eta_{\cI} : \Stab_{\cG(\cX)}(\cI) \to \cG(\cY)\]
which is injective because the $\rig$ functor is faithful, and because 
\[\eta_{\cI}(\varphi)_{\rig} = \theta_{\rig}^{-1} \circ \varphi_{\rig} \circ \theta_{\rig}\]
by construction. It turns out that $\Stab_{\cG(\cX)}(\cI)$ is an open subgroup of $\cG(\cX)$, and that the map $\eta_{\cI}$ is continuous. More precisely, we have the following

\begin{lem}\label{BlowUpAut} Let $\cI$ be a coherent open ideal of $\cO_{\cX}$ and suppose that $\pi^a \in \cI(\cX)$ for some $a \geq 0$. Then
\be \item $\cG_{\pi^a}(\cX) \subseteq \Stab_{\cG(\cX)}(\cI)$, and
\item $\eta_{\cI}( \cG_{\pi^{n+a}}(\cX)) \subseteq \cG_{\pi^n}(\cY)$ for all $n \geq 0$.
\ee\end{lem}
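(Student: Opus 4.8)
The plan is to work locally on $\cX$ and use the explicit local description of the admissible formal blow-up. First I would recall from \cite[\S 2]{BL1} that over an affine open $\Spf \cA \subseteq \cX$ on which $\cI$ is generated by $f_0,\ldots,f_r$, the blow-up $\cY$ is covered by the affine charts $\cY_i = \Spf \cB_i$, where $\cB_i$ is the $\pi$-adically completed, $\pi$-torsion-free quotient of $\cA[f_0/f_i,\ldots,f_r/f_i]$, i.e. the image of $\cA\langle T_0,\ldots,T_r\rangle/(f_iT_j - f_j)_j$ modulo $\pi$-torsion. For part (a), given $\varphi \in \cG_{\pi^a}(\cX)$ with $\pi^a \in \cI(\cX)$, I would show $\varphi^\sharp(\cI) = \cI$: since $(\varphi^\sharp - 1)(\cO(\cX)) \subseteq \pi^a\cO(\cX) \subseteq \cI(\cX)$ on affine pieces (using the criterion recalled before Lemma \ref{AutSheaf}), for each local generator $f_j$ of $\cI$ we get $\varphi^\sharp(f_j) = f_j + \pi^a c_j$ with $c_j \in \cO(\cX)$, and since $\pi^a \in \cI$ this lies in $\cI$; applying the same to $\varphi^{-1}$ (also in $\cG_{\pi^a}$) gives the reverse inclusion, so $\cI$ is $\varphi$-stable. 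Hence $\cG_{\pi^a}(\cX) \subseteq \Stab_{\cG(\cX)}(\cI)$, and in particular $\eta_{\cI}$ is defined on the whole congruence subgroup $\cG_{\pi^a}(\cX)$, which incidentally shows $\Stab_{\cG(\cX)}(\cI)$ is open.

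For part (b), the point is to track how $\eta_{\cI}(\varphi)$ acts on the local coordinates $T_i = f_i/f_0$ (say) of the charts $\cB_i$. Take $\varphi \in \cG_{\pi^{n+a}}(\cX)$, so $\varphi^\sharp(f_j) = f_j + \pi^{n+a}c_j$ with $c_j \in \cA$. Then $\eta_{\cI}(\varphi)^\sharp$ sends the coordinate $T_j = f_j/f_i$ to $\varphi^\sharp(f_j)/\varphi^\sharp(f_i)$; I would write this quotient out and check that $\varphi^\sharp(f_j)/\varphi^\sharp(f_i) - f_j/f_i$ lies in $\pi^n \cB_i$. The key arithmetic is that $\varphi^\sharp(f_i) = f_i(1 + \pi^{n+a}c_i/f_i)$ — and although $c_i/f_i$ need not be integral, the combination $\pi^{n+a}/f_i$ is controlled because $\pi^a \in \cI(\cX)$ forces $\pi^a/f_i \in \cB_i$ (this is exactly the statement that on the chart $\cB_i$ the ideal $\cI$ becomes principal generated by $f_i$, and $\pi^a$ lies in it). Thus $\pi^{n+a}c_i/f_i = \pi^n \cdot (\pi^a/f_i) \cdot c_i \in \pi^n\cB_i$, so $\varphi^\sharp(f_i)/f_i$ is a unit congruent to $1$ mod $\pi^n\cB_i$, and similarly the numerator contributes a $\pi^n\cB_i$-error; combining, $(\eta_{\cI}(\varphi)^\sharp - 1)$ maps each $T_j$, and hence all of $\cB_i$ (since $\cB_i$ is topologically generated over $\cA$ by the $T_j$, and $\varphi^\sharp$ already moves $\cA$ into $\pi^{n+a}\cA \subseteq \pi^n \cA$), into $\pi^n\cB_i$. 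By the congruence-subgroup criterion this says $\eta_{\cI}(\varphi) \in \cG_{\pi^n}(\cY)$; since the charts $\cY_i$ cover $\cY$ and the congruence condition is local, we conclude $\eta_{\cI}(\cG_{\pi^{n+a}}(\cX)) \subseteq \cG_{\pi^n}(\cY)$.

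The main obstacle I anticipate is the bookkeeping in part (b): one must be careful that the denominators $f_i$ are not units in $\cA$, so the estimate $\varphi^\sharp(f_j)/\varphi^\sharp(f_i) \equiv f_j/f_i \pmod{\pi^n \cB_i}$ genuinely requires the hypothesis $\pi^a \in \cI(\cX)$ to push the fractions $\pi^{n+a}/f_i$ back into the chart $\cB_i$ — this is where the $+a$ shift in the exponent comes from and is the crux of the whole statement. A secondary, more routine, point is checking that these local computations glue: the congruence subgroups $\cG_{\pi^n}$ are sheaves (Lemma \ref{AutSheaf}), so it suffices to verify the condition on an affine cover of $\cY$ by blow-up charts lying over an affine cover of $\cX$, and the functoriality of admissible formal blow-ups ensures $\eta_{\cI}(\varphi)$ restricts to the expected maps on these charts.
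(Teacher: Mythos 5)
Your part (a) is essentially the paper's argument: both reduce to showing $(\varphi^\sharp-1)$ sends local generators of $\cI$ into $\cI$ using $\pi^a\cO_\cX\subseteq\cI$, and deduce stability by also applying it to $\varphi^{-1}$.

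For part (b), you take a genuinely different route from the paper. You work directly on the standard affine charts $\cY_i = \Spf\cB_i$ of the blow-up and track the action on the coordinates $T_j = f_j/f_i$, the key arithmetic being that $\pi^a/f_i\in\cB_i$ because $\cI\cO_{\cY_i}=f_i\cB_i$ and $\pi^a\in\cI(\cX)$, which converts the $\pi^{n+a}$-congruence upstairs to a $\pi^n$-congruence downstairs. The paper instead passes to the Rees algebra $\cA'=\bigoplus_{m\ge 0}I^m t^m$: from $(f-1)\cdot I\subseteq\pi^{n+a}\cA\subseteq\pi^n I$ one reads off $(f-1)\cdot\cA'\subseteq\pi^n\cA'$ in a single step, and then the congruence on $\cY=\widehat{\Proj(\cA')}$ follows from the functoriality of $\Proj$ and $\pi$-adic completion. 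Your approach is more concrete but incurs some bookkeeping you have to be a bit careful about: you must check that $\eta_\cI(\varphi)$ preserves each chart $\cY_i$ set-theoretically (this follows because $\varphi^\sharp(f_i)/f_i\in 1+\pi^n\cB_i$ is a unit, so $f_i\cB_i=\varphi^\sharp(f_i)\cB_i$), that $f_i$ is a non-zero-divisor in $\cB_i$ so the ratio $T_j=f_j/f_i$ is meaningful, and that the chart-by-chart congruences patch (for which your appeal to Lemma~\ref{AutSheaf} and functoriality is the right move). The Rees-algebra route buys you a single ring to check rather than a cover, at the mild cost of invoking the $\Proj$-description of the blow-up from \cite[Proposition 2.1(a)]{BL1}. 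Both arguments are correct.
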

\begin{proof} If $a = 0$ then $\cI = \cO_{\cX}$ and $\cY = \cX$, so we may assume that $a \geq 1$. Now, because the construction of admissible formal blow-ups is local on $\cX$, using Lemma \ref{AutSheaf} we may assume that $\cX = \Spf \cA$ is affine. Write $\cA := \cO(\cX)$ and $I := \cI(\cX)$.

(a)  Let $\varphi \in \cG_{\pi^a}(\cX)$ and let $f := \varphi^\sharp(\cX)$ so that $(f - 1) \cdot \cA \subseteq \pi^a \cA$. Since $\pi^a \cA \subseteq I$ by assumption, we see that $(f-1) \cdot I \subseteq I$. Hence $f \cdot I = I$ as required.

(b) Let $\varphi \in \cG_{\pi^{n+a}}(\cX)$ so that $(f-1)\cdot \cA \subseteq \pi^{n+a}\cA$. Hence
\[ (f - 1) \cdot I \subseteq (f-1) \cdot \cA \subseteq \pi^{n+a} \cA \subseteq \pi^n I\]
which implies that the induced action of $f$ on the Rees algebra $\cA' = \bigoplus_{m \geq 0} I^m t^m$ satisfies $(f-1) \cdot \cA' \subseteq \pi^n \cA'$. Let $\Aut_{\cR,\gr}(\cA')$ denote the group of graded $\cR$-algebra automorphisms of $\cA'$ of degree zero, and let $\cA'_n := \cA' \otimes_{\cR} \cR_n$. By the functoriality of $\Proj$ and $\pi$-adic completion, there is a natural commutative square
\[\xymatrix{ \Aut_{\cR, \gr} (\cA') \ar[d]\ar[r] & \Aut_{\cR_n, \gr}(\cA'_n) \ar[d] \\ \Aut_{\cR} ( \h{\Proj(\cA')} ) \ar[r] & \Aut_{\cR}( \h{\Proj(\cA'_n)} ) }\]
where $\h{\Proj(\cA')}$ denotes the $\pi$-adic completion of $\Proj(\cA')$. Now $\cY = \h{\Proj(\cA')}$ by \cite[Proposition 2.1(a)]{BL1} and $\cY_n = \h{\Proj(\cA'_n)}$, and it follows that $\eta_{\cI}(\varphi) \in \cG_{\pi^n}(\cY)$.
\end{proof}

Thus, automorphisms of $\cX$ that are sufficiently close to the identity automorphism lift to automorphisms of the blow-up $\cY$. Conversely, we will now see that automorphisms of $\cY$ that are sufficiently close to the identity automorphism descend to automorphisms of $\cX$. 

\begin{lem}\label{BlowDownAut} Let $\tau : \cY \to \cX$ be an admissible formal blow-up, and suppose $b\geq 0$ is such that $\pi^b \tau_\ast \cO_{\cY} \subseteq \tau^\sharp(\cO_{\cX})$. Then for all $m \geq 1$ and all $\varphi \in \cG_{\pi^{m + b}}(\cY)$ there exists a unique $\zeta(\varphi) \in \cG_{\pi^m}(\cX)$ such that the following diagram commutes:
\[\xymatrix{ \cY \ar[r]^{\varphi}\ar[d]_{\tau} & \cY \ar[d]^{\tau} \\ \cX \ar[r]_{\zeta(\varphi)} & \cX .}\]
\end{lem}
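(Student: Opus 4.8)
The plan is to reduce immediately to the affine case: since admissible formal blow-ups are constructed locally and both the statement and the desired conclusion can be checked on an affine cover by Lemma \ref{AutSheaf}, we may assume $\cX = \Spf \cA$ is affine, with $\cA = \cO(\cX)$ and $\cI = \cI(\cX)$ a coherent open ideal. Then $\tau_\ast \cO_\cY$ is a coherent $\cA$-algebra containing $\cA$ via $\tau^\sharp$, and the hypothesis $\pi^b \tau_\ast \cO_\cY \subseteq \tau^\sharp(\cO_\cX)$ says that the cokernel of $\cA \hookrightarrow B := (\tau_\ast\cO_\cY)(\cX)$ is killed by $\pi^b$.

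First I would construct $\zeta(\varphi)$ on the level of rings. Given $\varphi \in \cG_{\pi^{m+b}}(\cY)$, pushing forward along $\tau$ gives an $\cR$-algebra automorphism $g := (\tau_\ast\varphi)^\sharp$ of $B$ with $(g - 1)(B) \subseteq \pi^{m+b} B$. The key point is that $g$ restricts to an automorphism of the subring $\cA \subseteq B$: for $a \in \cA$ we have $(g-1)(a) \in \pi^{m+b} B$, and since $\pi^b B \subseteq \cA$ we get $(g-1)(a) \in \pi^m \cdot \pi^b B \subseteq \pi^m \cA \subseteq \cA$; thus $g(a) \in \cA$, and applying the same argument to $g^{-1}$ shows $g(\cA) = \cA$. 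Set $f := g_{|\cA}$; then $(f-1)(\cA) \subseteq \pi^m \cA$, so $f$ defines an element $\zeta(\varphi) := \Spf(f) \in \cG_{\pi^m}(\cX)$. Commutativity of the square is immediate because on global sections $\tau^\sharp \circ \zeta(\varphi)^\sharp = (g \circ \tau^\sharp)$ is just the restriction of $g$, which agrees with $g \circ \tau^\sharp$ by construction; one then checks this forces $\varphi \circ \tau = \tau \circ \zeta(\varphi)$ as morphisms of formal schemes (the underlying map on $|\cX| = |\cX_1|$ is the identity, so only the sheaf-level statement needs checking). Uniqueness of $\zeta(\varphi)$ follows since $\tau^\sharp$ is injective: any two lifts would have the same composite with $\tau^\sharp$, hence agree on $\cA$, hence be equal.

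The step I expect to be the main obstacle is justifying $\pi^b B \subseteq \cA$ and, more to the point, that the hypothesis stated sheaf-theoretically ($\pi^b \tau_\ast\cO_\cY \subseteq \tau^\sharp(\cO_\cX)$) really does descend to the clean ring-theoretic statement I want after reducing to the affine case — and conversely that such a $b$ exists at all in the local picture, which is where coherence of $\tau_\ast\cO_\cY$ (equivalently, that $\tau$ is proper, or directly that $B$ is finite over $\cA$ for an admissible formal blow-up, cf. \cite[\S 2]{BL1}) gets used. Once that is in hand, everything else is a formal manipulation: the only subtle arithmetic is the two-sided containment $(g-1)(\cA) \subseteq \pi^m\cA$, which is exactly what forces $\zeta(\varphi)$ into the $m$-th congruence subgroup and is the whole reason the hypothesis is phrased with the shift $m + b$. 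I would also remark that $\zeta$ is evidently a group homomorphism $\cG_{\pi^{m+b}}(\cY) \to \cG_{\pi^m}(\cX)$ and is inverse, on the appropriate subgroups, to the blow-up map $\eta_\cI$ of Lemma \ref{BlowUpAut}, since both are determined by their effect on $\cA$; this compatibility is what will be used later to see that $\cT_\cX$ is independent of the formal model in Theorem \ref{AutRigidTop}.
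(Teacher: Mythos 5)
Your proposal follows the same route as the paper: reduce to the affine case, identify $\cA$ with a subring of $B = \cO(\cY)$ via $\tau^\sharp$ with $\pi^b B \subseteq \cA \subseteq B$, and observe that $(g-1)(\cA) \subseteq \pi^{m+b}B \subseteq \pi^m\cA$ so that $g$ restricts to an automorphism of $\cA$ landing in $\cG_{\pi^m}(\cX)$. One point you leave implicit that the paper makes explicit: the injectivity of $\tau^\sharp(\cX) : \cA \to B$, which you need both to identify $\cA$ with a subring of $B$ and for your uniqueness argument, is not automatic — the paper derives it from Tate's Acyclicity Theorem using the fact that $\tau_{\rig}$ is an isomorphism. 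The paper also carries out the patching for the general (non-affine) case in detail, checking agreement of the local $\zeta(\varphi_i)$ on overlaps via faithfulness of the $\rig$ functor and then gluing by Lemma \ref{AutSheaf}, whereas you compress this to a one-line reduction; the substance is the same but the verification that the local constructions agree on overlaps is a real step, not merely an appeal to the sheaf property.
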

\begin{proof}
Suppose first that $\cX$ is affine, and let $\cA := \cO(\cX)$ and $\cA' := \cO(\cY)$. Because $\tau_{\rig} : \cY_{\rig} \to \cX_{\rig}$ is an isomorphism, Tate's Acyclicity Theorem \cite[Theorem 8.2.1/1]{BGR} implies that the map $\tau^\sharp(\cX) : \cA \to \cA'$ is injective. We may therefore identify $\cA$ with its image in $\cA'$ under $\tau^\sharp(\cX)$, so that
\[ \pi^b \cA' \subseteq \cA \subseteq \cA'.\]
Now if $\varphi \in \cG_{\pi^{m+b}}(\cY)$ then $f := \varphi^\sharp(\cY)$ satisfies $(f-1)(\cA') \subseteq \pi^{m+b} \cA'$. Therefore
\[ (f-1)(\cA) \subseteq \pi^{m+b} \cA' \subseteq \pi^m \cA\]
which in particular shows that $f$ stabilises $\cA$ inside $\cA'$. Let 
\[\zeta(\varphi) := \Spf(f_{|\cA}): \cX \to \cX\] 
be the endomorphism of the formal $\cR$-scheme $\cX$ induced by $f_{|\cA}$. It is an isomorphism with inverse $\zeta(\varphi^{-1})$, and in fact $\zeta(\varphi) \in \cG_{\pi^m}(\cX)$. 

Note that the homeomorphisms of $|\cY|$ and $|\cX|$ defined by $\varphi$ and $\zeta(\varphi)$ respectively are trivial because $m  + b \geq m \geq 1$ by assumption. Since $\tau^\sharp(\cX) \circ f_{|\cA} = f \circ \tau^\sharp(\cX)$ by construction, it follows that $\tau \circ \varphi = \zeta(\varphi) \circ \tau$ by comparing the restrictions of these morphisms to arbitrary affine open formal subschemes of $\cY$.

Returning to the general case, choose an affine cover $\{\cX_i\}$ of $\cX$, define $\cY_i := \cX_i \times_{\cX} \cY = \tau^{-1}(\cX_i)$ and let $\varphi \in \cG_{\pi^{m+b}}(\cY)$ as above. Since $m + b\geq 1$, $\varphi_i := \varphi_{|\cY_i} \in \cG_{\pi^{m+b}}(\cY_i)$ by Lemma \ref{AutSheaf}, so by the above we obtain $\zeta(\varphi_i) \in \cG_{\pi^m}(\cX_i)$ such that $\tau_{|\cY_i} \circ \varphi_i = \zeta(\varphi_i)\circ\tau_{|\cY_i}$ for all $i$. Let $\cX_{ij} := \cX_i \cap \cX_j$ and $\cY_{ij} = \tau^{-1}(\cX_{ij})$; then
\[ \zeta(\varphi_i)_{|\cX_{ij}} \circ \tau_{|\cY_{ij}} = \tau_{|\cY_{ij}} \circ \varphi_{i|\cY_{ij}} = (\tau \circ \varphi)_{|\cY_{ij}} =\tau_{|\cY_{ij}} \circ \varphi_{j|\cY_{ij}} = \zeta(\varphi_j)_{|\cX_{ij}} \circ \tau_{|\cY_{ij}}\]
for all $i,j$. Because $\tau_{\rig}$ is an isomorphism, and because the $\rig$ functor is faithful, $\zeta(\varphi_i)_{|\cX_{ij}}=\zeta(\varphi_j)_{|\cX_{ij}}$ for all $i,j$. Therefore these local automorphisms $\zeta(\varphi_i)$ patch to some global automorphism $\zeta(\varphi) \in \cG_{\pi^m}(\cX)$ by Lemma \ref{AutSheaf}, which satisfies $\tau \circ \varphi = \zeta(\varphi) \circ \tau$ by construction.
\end{proof}
The faithfulness of the $\rig$ functor therefore induces a group homomorphism 
\[ \zeta : \cG_{\pi^{m+b}}(\cY) \to \cG_{\pi^m}(\cX) \qmb{for all} m\geq 0\]
such that $\tau\circ\varphi = \zeta(\varphi) \circ \tau$ for all $\varphi \in \cG_{\pi^{m+b}}(\cY)$. 

\begin{proof}[Proof of Theorem \ref{AutRigidTop}] (a) The set of congruence subgroups $\{\cG_{\pi^r}(\cX)_{\rig} : r \geq 0\}$ is a filter base for some topology $\cT_{\cX}$ on $\Aut_K(\bX, \cO_\bX)$ in the sense of \cite[Chapter I, \S 6.3, Definition 3]{BourGenTop}. By the discussion in \cite[Chapter III, \S 1.2]{BourGenTop}, to show that this topology is compatible with the group structure on $\Aut_K(\bX, \cO_\bX)$, it is necessary and sufficient to show that for all $\varphi \in \Aut_K(\bX, \cO_\bX)$ and  any $r \geq 0$ there is $s \geq 0$ such that
\[ \cG_{\pi^s}(\cX)_{\rig} \subseteq \varphi \hspace{.1cm} \cG_{\pi^r}(\cX)_{\rig}\hspace{.1cm}\varphi^{-1} . \]
Now, it follows from the proof of \cite[Theorem 4.1]{BL1} that we can find a diagram
\[ \cX \stackrel{\tau}{\leftarrow} \cY \stackrel{\theta}{\rightarrow} \cZ \stackrel{\sigma}{\rightarrow} \cX\]
where $\sigma$ is an isomorphism, $\theta, \tau$ are admissible formal blow-ups, and
\[ \varphi = \sigma_{\rig} \circ \theta_{\rig} \circ \tau_{\rig}^{-1}. \]
It follows from \cite[Proposition 3.5.1(i), (ii)]{Abbes} that the cokernel of the morphism of sheaves $\tau^\sharp : \cO_{\cX} \to \tau_\ast \cO_{\cY}$ is a $\pi$-torsion coherent $\cO_{\cX}$-module. Since $\cX$ is quasi-compact by assumption,  $\coker \tau^\sharp$ is \emph{bounded} $\pi$-torsion, so there exists some $b \geq 0$ such that 
\[ \pi^b \tau_\ast \cO_{\cY} \subseteq \tau^\sharp(\cO_{\cX}) \subseteq \tau_\ast \cO_{\cY}.\]
Thus the hypothesis of Lemma \ref{BlowDownAut} is satisfied, and we obtain the inclusion
\[\tau_{\rig} \cG_{\pi^{r+b}}(\cY)_{\rig} \tau_{\rig}^{-1} \subseteq \cG_{\pi^r}(\cX)_{\rig}\]
by applying Lemma \ref{BlowDownAut}. Next, let $\cI$ be the coherent open ideal of $\cO_{\cZ}$ blown up by $\theta : \cY \to \cZ$ and choose $a \geq 0$ such that $\pi^a \in \cI(\cZ)$.  Then
\[\theta_{\rig}^{-1} \cG_{\pi^{r+a+b}}(\cZ)_{\rig} \theta_{\rig} \subseteq \cG_{\pi^{r+b}}(\cY)_{\rig}  \]
by Lemma \ref{BlowUpAut}(b). Putting these inclusions together gives
\[ \begin{array}{llll} \cG_{\pi^{r+a+b}}(\cX)_{\rig} &=&\sigma_{\rig} \cG_{\pi^{r+a+b}}(\cZ)_{\rig} \sigma^{-1}_{\rig} & \subseteq \\ 
& \subseteq & \sigma_{\rig} \theta_{\rig} \cG_{\pi^{r+b}}(\cY)_{\rig} \theta_{\rig}^{-1} \sigma^{-1}_{\rig} & \subseteq \\
& \subseteq & \sigma_{\rig} \theta_{\rig} \tau_{\rig}^{-1} \cG_{\pi^r}(\cX)_{\rig} \tau_{\rig} \theta_{\rig}^{-1} \sigma_{\rig}^{-1} & = \\
&=& \varphi \hspace{.1cm} \cG_{\pi^r}(\cX)_{\rig}\hspace{.1cm}\varphi^{-1}. & \end{array} \]
(b) Let $\cX$ and $\cX'$ be two formal models of $\bX$. Again by the proof of \cite[Theorem 4.1]{BL1}, we can find a diagram
\[ \cX \stackrel{\tau}{\leftarrow} \cY \stackrel{\theta}{\rightarrow} \cZ \stackrel{\sigma}{\rightarrow} \cX'\]
where $\sigma$ is an isomorphism, $\theta, \tau$ are admissible formal blow-ups, and 
\[ \varphi = \sigma_{\rig} \circ \theta_{\rig} \circ \tau_{\rig}^{-1} : \cX_{\rig} \to \cX'_{\rig}\]
is the identity map $\bX \stackrel{1_\bX}{\longrightarrow} \bX$. The same argument as in the proof of part (a) implies that there are integers $a,b\geq 0$ such that for all $r \geq 0$
\[ \cG_{\pi^{r+a+b}}(\cX')_{\rig} \subseteq \varphi \hspace{.1cm} \cG_{\pi^r}(\cX)_{\rig}\hspace{.1cm}\varphi^{-1}  = \cG_{\pi^r}(\cX)_{\rig}.\]
Thus every $\cT_{\cX}$-open subset of $\Aut_K(\bX, \cO_\bX)$ is also $\cT_{\cX'}$-open, and by symmetry every $\cT_{\cX'}$-open subset is also $\cT_{\cX}$-open. 
\end{proof}

\begin{defn}\label{CtsAct} Let $G$ be a topological group, and let $\bX$ be a rigid analytic variety (not necessarily qcqs). We say that \emph{$G$ acts continuously on $\bX$} if there is given a group homomorphism $\rho : G \to \Aut_K(\bX, \cO_\bX)$ such that for every qcqs admissible open subset $\bU$ of $\bX$,
\be \item the stabiliser $G_\bU$ of $\bU$ in $G$ is open in $G$,
\item the induced group homomorphism $\rho_\bU : G_\bU \to \Aut_K(\bU, \cO_\bU)$ is continuous with respect to the subspace topology on $G_\bU$ and the topology on $\Aut_K(\bU,\cO_\bU)$ constructed in Theorem \ref{AutRigidTop}.
\ee\end{defn}

This notion enjoys the following closure properties.

\begin{lem}\label{CtsActLem} Let $G$ be a topological group acting continuously on the rigid analytic variety $\bX$. 
\be \item The restriction of the $G$-action to each subgroup of $G$ is continuous.
\item $G$ acts continuously on every $G$-stable admissible open subspace of $\bX$.
\item Suppose that $\tilde{G}$ is another topological group containing $G$ as an open subgroup. Then any extension of the $G$-action on $\bX$ to a $\tilde{G}$-action on $\bX$ is continuous.
\ee\end{lem}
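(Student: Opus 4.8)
The plan is to verify conditions (a) and (b) of Definition \ref{CtsAct} in each of the three cases, reducing everything to elementary topology once the correct formal models are in place. For part (a): if $H \leq G$ and $\bU$ is a qcqs admissible open subset of $\bX$, then $H_\bU = G_\bU \cap H$, which is open in $H$ (in the subspace topology) because $G_\bU$ is open in $G$; and $\rho_\bU|_{H_\bU}$ is continuous because it is the restriction of the continuous map $\rho_\bU : G_\bU \to \Aut_K(\bU, \cO_\bU)$ to the subspace $H_\bU \subseteq G_\bU$. Composing continuous maps with subspace inclusions preserves continuity, so this case is immediate.

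For part (b): let $\bY$ be a $G$-stable admissible open subspace of $\bX$, and let $\bU$ be a qcqs admissible open subset of $\bY$. Then $\bU$ is also a qcqs admissible open subset of $\bX$, so its stabiliser $G_\bU$ is already open in $G$, and the homomorphism $\rho_\bU : G_\bU \to \Aut_K(\bU, \cO_\bU)$ is continuous by hypothesis. Since the stabiliser and the homomorphism $\rho_\bU$ for the $G$-action on $\bY$ literally coincide with those for the $G$-action on $\bX$ (both $\bU$ and the automorphism group $\Aut_K(\bU, \cO_\bU)$ depend only on $\bU$, not on the ambient space), the continuity conditions for the $G$-action on $\bY$ hold verbatim.

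For part (c): suppose $\tilde G$ contains $G$ as an open subgroup and $\tilde\rho : \tilde G \to \Aut_K(\bX, \cO_\bX)$ extends $\rho$. Fix a qcqs admissible open subset $\bU$ of $\bX$. The stabiliser $\tilde G_\bU$ of $\bU$ in $\tilde G$ satisfies $\tilde G_\bU \cap G = G_\bU$, which is open in $G$ and hence open in $\tilde G$ since $G$ is open in $\tilde G$; as $\tilde G_\bU$ contains the open subgroup $G_\bU$ of $\tilde G$, it is itself open in $\tilde G$. For continuity of $\tilde\rho_\bU : \tilde G_\bU \to \Aut_K(\bU, \cO_\bU)$: since $G_\bU$ is an open subgroup of the topological group $\tilde G_\bU$, continuity of a homomorphism out of $\tilde G_\bU$ is equivalent to continuity of its restriction to the open subgroup $G_\bU$ (a homomorphism of topological groups is continuous iff it is continuous at the identity, and a neighbourhood basis of the identity in $\tilde G_\bU$ is given by neighbourhoods of the identity in $G_\bU$). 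That restriction is $\rho_\bU|_{G_\bU} = \rho_\bU$, which is continuous by hypothesis. Hence $\tilde\rho_\bU$ is continuous, completing the verification.

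I expect no serious obstacle here; the only point requiring a little care is the reduction in part (c), namely the standard fact that for a topological group $\tilde G_\bU$ with an open subgroup $G_\bU$, a group homomorphism $\tilde G_\bU \to H$ is continuous precisely when its restriction to $G_\bU$ is, which follows because the cosets of $G_\bU$ are open and the topology near any point of $\tilde G_\bU$ is a translate of the topology near the identity of $G_\bU$. One should also make sure that ``qcqs admissible open subset of $\bY$'' implies ``qcqs admissible open subset of $\bX$,'' which is clear since admissible opens of $\bY$ are admissible opens of $\bX$ when $\bY$ itself is admissible open in $\bX$.
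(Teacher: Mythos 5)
Your proof is correct, and since the paper simply dismisses the lemma with ``This is straightforward,'' yours fills in exactly the kind of routine verification the author had in mind. The only two points genuinely worth being careful about are the ones you flag yourself: that a group homomorphism out of a topological group is continuous iff its restriction to an open subgroup is (used in part (c)), and that a qcqs admissible open of $\bY$ is also a qcqs admissible open of $\bX$ when $\bY$ is admissible open in $\bX$ (used in part (b)); both are standard and you handle them correctly. Nothing to add.
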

\begin{proof} This is straightforward. \end{proof}

In the case where the rigid analytic variety $\bX$ is already quasi-compact, a continuous $G$-action on $\bX$ may be defined ``globally''. More precisely, we have the following 

\begin{prop}\label{OpenStab} Let $\bX$ be a qcqs rigid analytic variety, and let $G$ be a topological group. Suppose that $\rho : G \to \Aut_K(\bX, \cO_\bX)$ is a continuous group homomorphism. Then $G$ acts continuously on $\bX$.
\end{prop}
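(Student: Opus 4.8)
The plan is to verify the two axioms of Definition \ref{CtsAct} for an arbitrary qcqs admissible open subset $\bU \subseteq \bX$, using the fact that $\bX$ itself is qcqs so that everything can be controlled by a single formal model. First I would fix a formal model $\cX$ of $\bX$, which exists by Raynaud's Theorem. Since $\bX$ is quasi-compact, $\bU$ is then the generic fibre of some admissible open formal subscheme of a suitable admissible formal blow-up $\cX'$ of $\cX$ (by the theory of \cite{BL1}); more precisely, after replacing $\cX$ by a blow-up we may arrange that $\bU = \cU_{\rig}$ for an open formal subscheme $\cU$ of $\cX$. For the first axiom, I would argue that $G_\bU$ contains the open subgroup $\rho^{-1}(\cG_{\pi}(\cX)_{\rig})$: any automorphism lying in $\cG_\pi(\cX)_{\rig}$ acts trivially on the underlying topological space $|\cX| = |\bX|$ (as in the proof of Lemma \ref{AutSheaf}), hence stabilises every admissible open, in particular $\bU$. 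Since $\rho$ is continuous and $\cG_\pi(\cX)_{\rig}$ is open by Theorem \ref{AutRigidTop}(a), $G_\bU$ is open in $G$.

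For the second axiom, I need to show that $\rho_\bU : G_\bU \to \Aut_K(\bU,\cO_\bU)$ is continuous. The key point is to relate the topology on $\Aut_K(\bU,\cO_\bU)$ (defined via a formal model of $\bU$) to the topology on $\Aut_K(\bX,\cO_\bX)$. Using the formal model $\cU$ of $\bU$ obtained above as an open formal subscheme of $\cX$, I would invoke Lemma \ref{AutSheaf}: since $\cG_{\pi^r}$ is a sheaf of groups on $\cX$, restriction gives natural group homomorphisms $\cG_{\pi^r}(\cX) \to \cG_{\pi^r}(\cU)$ for all $r \geq 1$, whose composite with the inclusion into $\cG(\cU)$ is compatible with $\rig$. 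Concretely, for $\varphi \in \cG_\pi(\cX)$ the restriction $\varphi_{|\cU}$ lies in $\cG_\pi(\cU)$, and more generally $\cG_{\pi^r}(\cX)_{\rig} \cap G_\bU$ maps into $\cG_{\pi^r}(\cU)_{\rig}$ inside $\Aut_K(\bU,\cO_\bU)$. Hence for any basic open neighbourhood $\cG_{\pi^r}(\cU)_{\rig}$ of the identity in $\Aut_K(\bU,\cO_\bU)$, its preimage under $\rho_\bU$ contains $\rho^{-1}(\cG_{\pi^r}(\cX)_{\rig}) \cap G_\bU$, which is open in $G_\bU$; this gives continuity of $\rho_\bU$ at the identity, and then everywhere since $\rho_\bU$ is a group homomorphism and both topologies are compatible with the group structure (Theorem \ref{AutRigidTop}(a)).

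The main obstacle I anticipate is the bookkeeping involved in producing a formal model $\cU$ of $\bU$ that sits compatibly inside a formal model $\cX$ of $\bX$ so that the restriction maps on congruence subgroups from Lemma \ref{AutSheaf} apply directly: a general qcqs admissible open $\bU$ of $\bX$ need not be the generic fibre of an open formal subscheme of a given $\cX$, only of one after an admissible formal blow-up, so one must combine Lemma \ref{AutSheaf} with Lemmas \ref{BlowUpAut} and \ref{BlowDownAut} exactly as in the proof of Theorem \ref{AutRigidTop}(b) to pass between the chosen model of $\bX$ and a model adapted to $\bU$. Once that comparison is set up, the argument reduces to the observation that elements close to the identity act trivially on underlying spaces and that restriction of formal automorphisms is continuous at each congruence level, both of which are immediate from the results already established. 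The independence of $\cT_\cX$ from the choice of $\cX$ (Theorem \ref{AutRigidTop}(b)) ensures none of this depends on the auxiliary choices.
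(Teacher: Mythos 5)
Your overall strategy matches the paper's: choose a formal model $\cX$ of $\bX$ adapted to $\bU$ so that $\bU = \cU_{\rig}$ for some open formal subscheme $\cU \subseteq \cX$, use $\cG_\pi(\cX)$ to get openness of $G_\bU$, then use the restriction homomorphisms from Lemma \ref{AutSheaf} to get continuity of $\rho_\bU$. The paper obtains the adapted model in one step via \cite[Theorem 4.1 and Lemma 4.4]{BL1}, which is cleaner than the blow-up bookkeeping you describe in your final paragraph, but your route would also work.

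However, there is a genuine error in your justification of the first axiom. You write that an automorphism in $\cG_\pi(\cX)_{\rig}$ ``acts trivially on the underlying topological space $|\cX| = |\bX|$, hence stabilises every admissible open''. This conflates the underlying topological space of the formal scheme $\cX$ (which is the special fibre, and on which $\cG_\pi(\cX)$ does act trivially, as in the proof of Lemma \ref{AutSheaf}) with the underlying set of the rigid generic fibre $\bX$, and these are not the same. Elements of $\cG_\pi(\cX)_{\rig}$ definitely do move points of $\bX$ in general: the automorphism $t \mapsto t + \pi$ of $\Spf\cR\langle t\rangle$ lies in $\cG_\pi$, yet its generic fibre moves the point $t=0$ of the unit disc. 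In particular $\cG_\pi(\cX)_{\rig}$ does \emph{not} stabilise every admissible open subset of $\bX$. The correct (and weaker) statement, which suffices for what you need, is the one the paper uses: since $\cG_\pi(\cX)$ fixes $|\cX|$ pointwise, it stabilises the open formal subscheme $\cU$ setwise, and applying the functor $\rig$ shows $\cG_\pi(\cX)_{\rig}$ stabilises $\bU = \cU_{\rig}$. This argument works precisely because you arranged for $\bU$ to come from an open formal subscheme of $\cX$; it does not give stability of arbitrary admissible opens, and you should not claim that. The rest of your proof (continuity of $\rho_\bU$ via the restriction maps of Lemma \ref{AutSheaf}) is correct once this point is fixed.
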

\begin{proof} Let $\bU$ be a quasi-compact admissible open subset of $\bX$. By \cite[Theorem 4.1 and Lemma 4.4]{BL1}, we can find a formal model $\cX$ for $\bX$ and an open formal subscheme $\cX'$ of $\cX$ such that $\cX'_{\rig} = \bU$, and we may use the $\cT_{\cX}$-topology on $\Aut_K(\bX, \cO_\bX)$ by Theorem \ref{AutRigidTop}. Now $\cG_\pi(\cX)$ stabilises $\cX'$, so $\cG_\pi(\cX)_{\rig}$ stabilises $\bU$, which implies that the stabiliser $\Stab(\bU)$ of $\bU$ in $\Aut_K(\bX,\cO_\bX)$ is open. Since $\rho : G \to \Aut_K(\bX,\cO_\bX)$ is continuous, it follows that $G_\bU = \rho^{-1}(\Stab(\bU))$ is open in $G$.  Moreover, since the restriction map $\Stab(\bU) \to \Aut_K(\bU, \cO_\bU)$ sends $\cG_{\pi^r}(\cX)_{\rig}$ to the open subset $\cG_{\pi^r}(\cX')_{\rig}$ of $\Aut_K(\bU, \cO_\bU)$ for all $r \geq 1$, this map is continuous. Hence $\rho_\bU : G_\bU \to \Aut_K(\bU, \cO_\bU)$ is also continuous.
\end{proof}

We will next exhibit a large class of examples of such continuous group actions. 
\begin{defn}\label{CongSubTop}
Let $\G$ be an $\cR$-group scheme. We equip its group of $\cR$-points $\G(\cR)$ with the topology in which the congruence subgroups
\[ \G_{\pi^n}(\cR) := \ker (\G(\cR) \to \G(\cR/\pi^n\cR)) \]
form a filter base; we call this the \emph{congruence-subgroup topology} on $\G(\cR)$. 
\end{defn}
This topology is Hausdorff because the natural map $\G(\cR) \longrightarrow \prod_{n=0}^\infty \G(\cR_n)$ is injective, as $\cR \hookrightarrow \prod_{n=0}^\infty\cR_n$ and as $\G$ is a left exact functor on $\cR$-algebras.

\begin{prop}\label{GpOfPtsActsCtsly} Let $\G$ be an $\cR$-group scheme, acting on a flat $\cR$-scheme $\X$ of finite presentation. Let $G$ be a topological group and let $\sigma : G \to \G(\cR)$ be a continuous group homomorphism.
\be \item The formal completion $\cX$ of $\X$ along its special fibre is a quasi-compact admissible formal scheme.
\item $G$ acts continuously on the rigid generic fibre $\bX := \cX_{\rig}$ of $\cX$.
\ee
\end{prop}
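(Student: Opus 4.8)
The plan is to prove (a) by a direct local computation, and then to deduce (b) by feeding the resulting topology on $\Aut_K(\bX,\cO_\bX)$ into Proposition \ref{OpenStab}.

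For (a): since $\X$ is of finite presentation over $\cR$ it is quasi-compact, so I would choose a finite affine open cover $\{\Spec A_i\}$ with each $A_i$ a flat $\cR$-algebra of finite presentation; then $\cX$ is covered by the open affine formal subschemes $\Spf \h{A}_i$ with $\h{A}_i:=\varprojlim_n A_i/\pi^nA_i$, and it suffices to show each $\h{A}:=\h{A}_i$ is admissible. Writing $A=\cR[x_1,\dots,x_n]/\mathfrak{a}$ with $\mathfrak{a}$ finitely generated, one has $\h{A}\cong\cR\langle x_1,\dots,x_n\rangle/\mathfrak{a}\cR\langle x_1,\dots,x_n\rangle$ since finitely generated ideals of $\cR\langle x_1,\dots,x_n\rangle$ are closed (this ring is coherent, cf. \cite[\S1]{BL1}), so $\h{A}$ is topologically of finite presentation; and for flatness I would use that over the valuation ring $\cR$ flatness is the same as $\pi$-torsion-freeness (every nonzero element of $\cR$ divides a power of $\pi$): $\pi$ is a non-zero-divisor on $A$ because $A$ is flat, and if $b=(b_n)_n\in\h{A}$ has $\pi b=0$ then lifting $b_n$ to $\tilde b_n\in A$ gives $\pi\tilde b_n\in\pi^nA$, hence $\tilde b_n\in\pi^{n-1}A$, so the image of $b_n$ in $A/\pi^{n-1}A$ is $0$ for all $n$ and thus $b=0$; hence $\h{A}$ is $\cR$-flat. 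Quasi-compactness of $\cX$ follows from the finite cover.

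For (b): the $\G$-action on $\X$ gives, for each $g\in\G(\cR)$, an $\cR$-automorphism $\varphi_g$ of $\X$ (from the action morphism $a\colon\G\times_\cR\X\to\X$ and the section $g\colon\Spec\cR\to\G$), with $g\mapsto\varphi_g$ a group homomorphism; base-changing along $\cR\to\cR_n$ and passing to the limit produces a group homomorphism $\G(\cR)\to\cG(\cX)$, $g\mapsto\h\varphi_g$, and composing with $\rig\colon\cG(\cX)\hookrightarrow\Aut_K(\bX,\cO_\bX)$ and with $\sigma$ gives a group homomorphism $\rho\colon G\to\Aut_K(\bX,\cO_\bX)$, where by (a) and Theorem \ref{AutRigidTop} the target carries the topology $\cT_\cX$. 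I would then show $\rho$ is continuous: since $\sigma$ is continuous and the $\G_{\pi^r}(\cR)$ form a neighbourhood base of $1$, it is enough to check, for each $r$, that $g\in\G_{\pi^r}(\cR)$ implies $\h\varphi_g\in\cG_{\pi^r}(\cX)$; but $\h\varphi_g$ induces on $\cX_r=\X\times_\cR\cR_r$ the automorphism attached to the image of $g$ in $\G(\cR_r)$, which is the identity element since $g\in\G_{\pi^r}(\cR)$, and by the identity axiom for a group action that automorphism is $\id_{\cX_r}$. Finally, $\cX$ is a quasi-compact and quasi-separated (the latter since $\X$ is) admissible formal scheme, so $\bX=\cX_{\rig}$ is qcqs, and Proposition \ref{OpenStab} applied to the continuous homomorphism $\rho$ yields that $G$ acts continuously on $\bX$.

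The hardest part is (a): controlling topological finite presentation and flatness under $\pi$-adic completion without Noetherian hypotheses on $\cR$, where the coherence of $\cR\langle x_1,\dots,x_n\rangle$ is doing the real work. In (b) the only delicate point is to express ``an $\cR$-point congruent to $1$ modulo $\pi^r$ acts trivially modulo $\pi^r$'' purely through functoriality of base change and the group-action axioms, so that no affine cover incompatible with the $\G$-action ever needs to be chosen.
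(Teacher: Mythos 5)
Your proof is correct and its architecture matches the paper's: reduce (a) to the affine case, establish topological finite presentation and $\cR$-flatness of the $\pi$-adic completion, and in (b) show that congruence subgroups of $\G(\cR)$ are carried into congruence subgroups of $\cG(\cX)$ before invoking Proposition \ref{OpenStab}. The difference lies in the ingredients for (a): where the paper quotes Corollaires 1.10.6 and 1.12.4 of \cite{Abbes} for the two admissibility conditions, you give direct arguments --- separatedness of quotients of $\cR\langle x_1,\dots,x_n\rangle$ by finitely generated ideals for topological finite presentation (strictly speaking this is the fact you need from \cite[\S 1]{BL1}, not coherence \emph{per se}, though both appear there), and a $\pi$-torsion-freeness computation for flatness combined with the observation that over a complete height-one valuation ring $\pi$-torsion-free is the same as flat. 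This is a welcome, more self-contained alternative, though it still imports a nontrivial fact about the restricted power series ring over a possibly non-Noetherian base. In (b) you work directly with the automorphisms $\varphi_g$ rather than via the automorphism group functor $\zAut(\X)$; these amount to the same computation, and you additionally spell out the quasi-separatedness of $\bX$ (which does indeed follow from the finite-presentation hypothesis on $\X$), a point the paper leaves implicit.
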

\begin{proof} (a) Because $\X$ is of finite presentation over $\Spec(\cR)$, $\cX$ is quasi-compact. We may hence assume that $\X = \Spec(A)$ is affine where $A$ is a finitely presented $\cR$-algebra which is flat as an $\cR$-module. Now $\cX = \Spf \h{A}$ where $\h{A}$ is the $\pi$-adic completion of $A$. This $\cR$-algebra is topologically of finite presentation by \cite[Corollaire 1.10.6]{Abbes}, and it is flat as an $\cR$-module by \cite[Corollaire 1.12.4]{Abbes} because $A/\pi^n A$ is flat as an $\cR_n$-module for all $n\geq 0$. Thus $\h{A}$ is an admissible $\cR$-algebra and $\cX$ is an admissible formal scheme.

(b) By \cite[\S I.2.6]{Jantzen}, the action $\G \times \X \to \X$ induces a morphism of $\cR$-group functors $\G \to \zAut(\X)$. Now by \cite[\S I.1.5]{Jantzen}, $\zAut(\X)(\cR) = \Aut_{\cR}(\X, \cO_{\X})$  and $\zAut(\X)(\cR_n) = \Aut_{\cR_n}(\X_n, \cO_{\X_n})$ where $\X_n := \X \times_{\Spec(\cR)} \Spec(\cR_n)$, whence a commutative diagram
\[\xymatrix{ \G(\cR) \ar[r]\ar[d] & \Aut_{\cR}(\X, \cO_{\X}) \ar[d]\\ \G(\cR_n) \ar[r] & \Aut_{\cR_n}(\X_n, \cO_{\X_n})}\]
for any $n\geq 0$. The functoriality of formal completion now induces a group homomorphism $\rho : \G(\cR) \to \Aut_{\cR}(\cX, \cO_{\cX}) = \cG(\cX)$ which sends $\G_n(\cR)$ into $\cG_{\pi^n}(\cX)$. Applying the $\rig$ functor gives a group homomorphism $\rho_{\rig} : \G(\cR) \to \Aut_K(\bX, \cO_\bX)$ which sends $\G_{\pi^n}(\cR)$ into $\cG_{\pi^n}(\cX)_{\rig}$ and is therefore continuous by Theorem \ref{AutRigidTop}. Thus, $\rho_{\rig} \circ \sigma : G \to \Aut_K(\bX,\cO_\bX)$ is a continuous group homomorphism, and it follows from part (a) that $\bX$ is qcqs. Hence $G$ acts continuously on $\bX$ by Proposition \ref{OpenStab}.
\end{proof}

\subsection{Actions of compact \ts{p}-adic Lie groups on \ts{K}-affinoid algebras}
\label{GActionSection}
Let $G$ be a compact $p$-adic Lie group. By a theorem of Lazard \cite[Corollary 8.34]{DDMS}, $G$ contains at least one open subgroup $N$ which is \emph{uniform pro-$p$}; the definition and basic properties of uniform pro-$p$ groups can be found in \cite[Chapter 4]{DDMS}. In fact, we have the more precise 
\begin{lem}\label{SmallU} Let $H$ be an open subgroup of the compact $p$-adic Lie group $G$. Then $H$ contains an open uniform pro-$p$ subgroup $N$ which is normal in $G$.
\end{lem}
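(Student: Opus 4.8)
The plan is to combine Lazard's theorem (already invoked in the text) with an averaging/core argument. First I would apply \cite[Corollary 8.34]{DDMS} to the open subgroup $H$ itself: since $H$ is again a compact $p$-adic Lie group, it contains an open uniform pro-$p$ subgroup $U$. The point is then to replace $U$ by something normal in the larger group $G$ while staying uniform and open.

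The key step is to pass to the normal core. Since $G$ is compact (hence profinite) and $U$ is open, $U$ has finite index in $G$, so its normal core $N_0 := \bigcap_{g \in G} gUg^{-1}$ is a finite intersection of open subgroups, hence open, and is normal in $G$ by construction; moreover $N_0 \subseteq U \subseteq H$. However $N_0$ need not itself be uniform. To fix this I would use the standard fact (see \cite[Chapter 4]{DDMS}, e.g. the theory of the lower $p$-series and \cite[Theorem 3.6, Corollary 4.3]{DDMS}) that in any compact $p$-adic Lie group the subgroups $P_i(G)$ in the lower $p$-series are open, normal in $G$, and \emph{uniform} for all sufficiently large $i$ — equivalently, every open normal subgroup contains an open uniform subgroup that is still normal in $G$. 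Applying this to $N_0$ (a compact $p$-adic Lie group with $G$ acting on it by conjugation) yields an open subgroup $N \leq N_0$ which is uniform pro-$p$ and characteristic in $N_0$, hence normal in $G$; and $N \leq N_0 \leq H$, as required.

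Concretely, the cleanest route is: take $N := P_k(N_0)$ for $k$ large enough that $P_k(N_0)$ is uniform (such $k$ exists because $N_0$ is a compact $p$-adic Lie group, by \cite[\S4.1]{DDMS}), and observe that $P_k(N_0)$ is characteristic in $N_0$ and $N_0$ is normal in $G$, so $N \trianglelefteq G$; also $N$ is open in $N_0$, hence open in $G$, and $N \leq N_0 \leq U \leq H$. This gives all four required properties: $N$ is open in $G$, uniform pro-$p$, contained in $H$, and normal in $G$.

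The main obstacle is purely bookkeeping: one must be careful that ``uniform'' is not inherited by subgroups or quotients in general, so the naive idea of just intersecting a uniform $U$ with its $G$-conjugates fails — the intersection is normal but possibly not uniform. The fix (shrinking within the normal core via the lower $p$-series) is exactly the content of Lazard's structure theory, so the proof is essentially a citation to \cite[Chapter 4]{DDMS} plus the elementary observation that the normal core of an open subgroup of a profinite group is open. No serious analysis or new ideas are needed.
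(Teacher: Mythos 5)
Your proof is correct and follows essentially the same strategy as the paper: produce an open normal subgroup of $G$ inside $H$ that is pro-$p$ of finite rank (you via the normal core of a uniform $U$ from \cite[Corollary 8.34]{DDMS}, the paper via $H\cap J$ and \cite[Proposition 1.2(ii)]{DDMS} with $J$ from Corollary 8.33), and then shrink it to a characteristic open uniform subgroup using \cite[Corollary 4.3]{DDMS}, which is normal in $G$ because characteristic-in-normal implies normal. Your ``concretely'' remark about the lower $p$-series $P_k(N_0)$ is plausible but slightly overreaches what the cited results literally state; citing Corollary 4.3 directly, as you also do, is cleaner and suffices.
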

\begin{proof} By \cite[Corollary 8.33]{DDMS}, $G$ contains an open subgroup $J$ which is a pro-$p$ group of finite rank. Now $H \cap J$ is an open subgroup of the profinite group $G$, so it contains an open normal subgroup $L$ of $G$ by \cite[Proposition 1.2(ii)]{DDMS}. This group $L$ is still pro-$p$ of finite rank, and therefore contains an open normal characteristic subgroup $N$ which is uniform pro-$p$, by \cite[Corollary 4.3]{DDMS}. Being a characteristic subgroup of the normal subgroup $L$, $N$ is normal in $G$.
\end{proof}
Recall \cite[\S 4.5]{DDMS} that every uniform pro-$p$ group $N$ has a $\Zp$-Lie algebra $L_N = (N, +, [,])$ functorially associated to it. The $\Zp$-module and Lie bracket structures on $L_N$ are extracted from the structure of $N$ as a uniform pro-$p$ group by the formulas
\[ \begin{array}{ccl}
 x + y &=& \lim\limits_{n \to \infty} (x^{p^n} y^{p^n})^{1/p^n} \\ 
\lambda \cdot x &=& \lim\limits_{n\to\infty} x^{\lambda_n} \\
 
 [x,y] &=& \lim\limits_{n\to\infty} (x^{-p^n} y^{-p^n} x^{p^n} y^{p^n} )^{1/p^{2n}}\end{array}\]
for any $x,y \in N$ and any choice of a sequence $(\lambda_n)_{n=0}^\infty \subseteq \Z$ converging to $\lambda \in \Zp$.

The $\Zp$-Lie algebra $L_N$ is a \emph{powerful}: $L_N$ is a finitely generated free $\Zp$-module and $[L_N, L_N] \subseteq p^\epsilon L_N$ (here $\epsilon := 1$ if $p$ is odd and $\epsilon :=2$ if $p$ is even). In fact, it follows from \cite[Theorem 9.10]{DDMS} that the functor $N \mapsto L_N$ is an equivalence between the category of uniform pro-$p$ groups and the category of powerful $\Zp$-Lie algebras.

Next, we recall the definition of the $\Qp$-Lie algebra of $G$ following \cite[\S 9.5]{DDMS}.
\begin{defn} Let $G$ be a compact $p$-adic Lie group. 
\be \item $N \leq_o^u G$ means that $N$ is an open uniform pro-$p$ subgroup of $G$.
\item $N \triangleleft_o G$ means that $N$ is an open \emph{normal} subgroup of $G$.
\item $N\triangleleft_o^u G$ means that $N$ is an open normal uniform pro-$p$ subgroup of $G$. 
\item The \emph{Lie algebra} of $G$ is defined to be $\Lie(G) := \lim\limits_{\stackrel{\longleftarrow}{N\leq_o^u G}} \Qp \otimes_{\Zp} L_N.$
\ee\end{defn}
The inverse limit is taken over the set of all open uniform pro-$p$ subgroups of $G$, which becomes a directed set under reverse inclusion. Whenever $H \leq N$ are two members of this set, then $H$ contains $N^{p^m}$ for sufficiently large $m \geq 0$, which implies that $p^m L_N \leq L_H \leq L_N$. Therefore each transition map
\[ \Qp \otimes_{\Zp} L_H \to \Qp \otimes_{\Zp} L_N\]
appearing in this inverse limit is actually an isomorphism, and $\Lie(G)$ is a finite dimensional Lie algebra over $\Qp$.

Now let $A$ be a $K$-affinoid algebra. Extending the terminology from \cite[\S 3.1]{DCapOne}, we say that an admissible $\cR$-algebra $\cA$ is an \emph{affine formal model} in $A$ if there is an isomorphism $A \cong \cA \otimes_{\cR} K$. Equivalently, the admissible formal scheme $\Spf \cA$ is an affine formal model in the affinoid variety $\Sp A$. 

\begin{lem}\label{ProdFormModels} Let $\cA$, $\cB$ be two affine formal models in the $K$-affinoid algebra $A$. Then $\cA \cdot \cB$ is another affine formal model in $A$ which is finitely generated as an $\cA$-module and as a $\cB$-module.
\end{lem}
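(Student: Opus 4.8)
The plan is to verify the three claims about $\cA \cdot \cB$ in turn: that it is flat, that it is topologically of finite presentation, and that it is finitely generated over each of $\cA$ and $\cB$; the last of these is really the crux and the other two will follow from it together with standard properties of admissible algebras.

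First I would set up notation: let $\cA, \cB \subseteq A$ be the two affine formal models, so $A = \cA \otimes_\cR K = \cB \otimes_\cR K$, and let $\cC := \cA \cdot \cB$ be the $\cR$-subalgebra of $A$ generated by $\cA$ and $\cB$. Since $\cB$ is topologically of finite presentation over $\cR$, its image in $A$ is generated as a topological $\cR$-algebra by finitely many elements $b_1, \dots, b_m$; each $b_i$ lies in $A = \cA \otimes_\cR K = \cA[1/\pi]$, so $\pi^N b_i \in \cA$ for $N$ large, and hence after rescaling we may assume each $b_i \in \cA$ — wait, that would force $\cB \subseteq \cA$, which need not hold; rather the point is that $\pi^N b_i \in \cA$, so $\cC$ is generated over $\cA$ by the finitely many elements $b_1, \dots, b_m$, each of which satisfies $\pi^N b_i \in \cA$. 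The key step is then to show $\cC = \cA[b_1, \dots, b_m]$ is \emph{finitely generated} as an $\cA$-module. For this I would argue that the $b_i$ are \emph{integral} over $\cA$: the affinoid algebra $A$ is Noetherian, so $\cA[b_1, \dots, b_m][1/\pi] = A$ is a finite $\cA[1/\pi]$-algebra generated by $1$; more to the point, each $b_i$ lies in $A$, which is integral over $\cA[1/\pi]$... hmm, this is not automatic either. The cleanest route: $\cB$ is module-finite over a Tate algebra $\cR\langle x_1, \dots, x_d\rangle$ mapping onto it (since $\cB$ is topologically of finite presentation and $\pi$-adically complete, and $B = \cB[1/\pi]$ is affinoid hence module-finite over a Tate subalgebra by Noether normalisation — actually one uses that $\cB$ itself is a quotient of a Tate algebra, so finitely generated as a topological $\cR$-algebra, but not necessarily module-finite over $\cR\langle x\rangle$). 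I would instead invoke that $A$ is Noetherian and that $\cA$ is an affine formal model: the subalgebra $\cC \subseteq A$ satisfies $\cC[1/\pi] = A$ (as it contains both $\cA$ and $\cB$, whose localisations are both $A$), so it suffices to show $\cC$ is $\pi$-adically complete and topologically of finite type over $\cR$ and $\cR$-flat, and then finiteness over $\cA$ follows from \cite[\S 3.1]{DCapOne} or the analogous statement that any two affine formal models are "commensurable".

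Concretely, the main obstacle is proving module-finiteness of $\cC$ over $\cA$. I expect the argument to run: since $A$ is $\cA[1/\pi]$ and $\cA$ is Noetherian ($\cR$ being a valuation ring of Krull dimension $1$ and $\cA$ topologically of finite presentation — here one must be careful, as $\cR$ need not be Noetherian, but admissible $\cR$-algebras are coherent and the relevant finiteness statements from \cite{BL1} or \cite{Abbes} still apply), and since each $b_i \in A$ with $\pi^{N} b_i \in \cA$, the $\cA$-submodule $\cA + \cA b_1 + \cdots$ generated by monomials in the $b_i$ is contained in $\pi^{-Nk} \cA$ in each bounded-degree range; boundedness of $\cC$ as a subset of $A$ (it is $\pi$-adically bounded since it is an $\cR$-subalgebra of the affinoid algebra $A$ whose elements have bounded... no). The honest statement is: $\cB$ is a bounded $\cR$-submodule of $A$ (being an affine formal model), hence $\cB \subseteq \pi^{-N}\cA$ for some $N$; therefore $\cC = \cA[b_1,\dots,b_m] \subseteq \bigcup_k \pi^{-Nk}\cA$, but more usefully $\cB \subseteq \pi^{-N}\cA$ means each $b_i$ generates, together with $\cA$, a ring inside the finite $\cA$-module $\pi^{-N}\cA$-closure... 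The standard trick: $\pi^N b_i \in \cA$, and $\cC$ is a ring, so consider the finitely generated $\cA$-submodule $M := \sum_{|\alpha| \le m} \cA \, b^\alpha$ of $A$; one shows $b_i M \subseteq \pi^{-N} M$ hmm. I will instead cite that $A$ Noetherian plus $\cC[1/\pi] = A$ plus $\cC$ bounded forces $\cC$ finite over $\cA$ via \cite{BL1} Lemma on formal models, which is presumably what \cite[\S 3.1]{DCapOne} records. Once module-finiteness over $\cA$ is established: $\cC$ is then automatically $\cR$-flat (as an $\cA$-submodule of the flat, indeed torsion-free, $\cR$-module $A$ it is $\pi$-torsion-free, and being finite over the admissible hence "good" ring $\cA$ it is flat in the $\pi$-adic sense), $\pi$-adically complete and separated (finite modules over $\pi$-adically complete Noetherian-type rings are complete), and topologically of finite presentation over $\cR$ (finite over $\cA$ which is t.f.p. over $\cR$). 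By symmetry $\cC$ is also finite over $\cB$. Finally $\cC[1/\pi] = A$ gives that $\cC$ is an affine formal model, completing the proof. The delicate points I would flag are the non-Noetherianity of $\cR$ (handled by working with coherent/admissible algebras and citing \cite{BL1, Abbes}) and making the boundedness argument for $\cC$ precise; everything else is routine.
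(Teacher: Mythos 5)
The paper's own ``proof'' is a single citation: ``This is \cite[Lemma 3.1]{DCapOne}.'' So your instinct to fall back on a citation at the crux is, in a sense, exactly what the paper does --- but your write-up is not a proof, it is a sequence of false starts that you yourself flag (``wait, that would force $\cB\subseteq\cA$'', ``hmm, this is not automatic either'', ``no''), and the one step that matters is left as ``presumably what \cite[\S 3.1]{DCapOne} records.''

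The clean version of the argument you were circling is short. Write $\cC := \cA\cdot\cB$ for the $\cR$-subalgebra of $A$ generated by $\cA$ and $\cB$. Both $\cA$ and $\cB$ are open and bounded in $A$, so $\pi^N\cB\subseteq\cA$ for some $N$, whence $\cC\subseteq\pi^{-N}\cA$. Thus $\cC$ is an $\cA$-submodule of the free $\cA$-module $\pi^{-N}\cA$ of rank one. In the setting of \cite{DCapOne}, $\cR$ is a complete discrete valuation ring, so $\cA$ is Noetherian and $\cC$ is therefore finitely generated over $\cA$; finite presentation over $\cR$, $\cR$-flatness (clear, since $\cC\subseteq A$ is $\pi$-torsion-free) and $\pi$-adic completeness then all follow from module-finiteness over the complete admissible ring $\cA$, and by symmetry $\cC$ is finite over $\cB$ too. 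The genuinely interesting point you noticed --- that $\cR$ is no longer assumed Noetherian in the present paper, so $\cA$ need not be Noetherian and ``submodule of a finitely generated module'' no longer closes the argument --- is a real subtlety which the paper's bare citation of \cite[Lemma 3.1]{DCapOne} glosses over, and which the coherence/Raynaud--Gruson machinery of \S\ref{NoethFlat} does not straightforwardly repair either, since Theorem \ref{KeyRGLemma} requires the quotient $\cM/\cN$ to be $\cR$-torsion-free, whereas here $\pi^{-N}\cA/\cC$ is $\pi$-torsion. So your flag is well taken; but you should present it as a flag, not bury it inside a proof attempt that never terminates.
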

\begin{proof} This is \cite[Lemma 3.1]{DCapOne}.\end{proof}

Let $\cG(\cA)$ denote the group of $\cR$-algebra automorphisms of $\cA$, let 
\[\cG_{\pi^n}(\cA) := \ker (\cG(\cA) \to \Aut(\cA \otimes_{\cR} \cR_n))\] 
be its $n$th congruence subgroup, and let $\Aut_K(A)$ denote the group of $K$-algebra automorphisms of $A$. There is a commutative diagram
\[ \xymatrix{ \cG(\Spf(\cA)) \ar[r]\ar[d]_{\rig} & \cG(\cA) \ar[d]^{\rig} \\ \Aut_K(\Sp A, \cO_{\Sp A}) \ar[r] & \Aut_K(A) } \]
where the horizontal arrows are isomorphisms given by 
\[(\varphi, \varphi^\sharp) \mapsto \Gamma(\Spf(\cA), (\varphi^{\sharp})^{-1}) \qmb{and} (\varphi, \varphi^\sharp) \mapsto \Gamma(\Sp A, (\varphi^{\sharp})^{-1}).\]
The rightmost arrow $\rig : \cG(\cA) \to \Aut_K(A)$ is given by $\rig(\varphi)(a \otimes \lambda) = \varphi(a) \otimes \lambda$ for all $a \in \cA$ and $\lambda \in K$. It is injective, and we will identify $\cG(\cA)$ with its image in $\Aut_K(A)$ under this map. 

For any $K$-affinoid algebra $A$, we equip $\Aut_K(A)$ with the topology constructed in Theorem \ref{AutRigidTop}: for any affine formal model $\cA$ in $A$, the congruence subgroups $\{\cG_{\pi^n}(\cA) : n\geq 0\}$ form a filter base for this topology.

\begin{lem}\label{GstableFM} Let $G$ be a compact topological group, and let $\rho : G \to \Aut_K(A)$ be a continuous group homomorphism. Then every affine formal model $\cA$ in $A$ is contained in a $G$-stable affine formal model.
\end{lem}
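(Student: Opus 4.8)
The plan is to exploit the openness of the stabiliser of $\cA$ inside $\Aut_K(A)$ together with the compactness of $G$ in order to replace $\cA$ by a suitable finite product of its $G$-translates. First I would note that $\cG(\cA) = \cG_{\pi^0}(\cA)$ is an \emph{open} subgroup of $\Aut_K(A)$: since $\cR_0$ is the zero ring, the congruence subgroup $\cG_{\pi^0}(\cA)$ coincides with $\cG(\cA)$, and it belongs to the filter base of congruence subgroups defining the topology on $\Aut_K(A)$, so $\cG(\cA)$ is a neighbourhood of the identity and hence an open subgroup. Under the identification $\rig\colon\cG(\cA)\hookrightarrow\Aut_K(A)$, an automorphism $\varphi\in\Aut_K(A)$ lies in $\cG(\cA)$ precisely when $\varphi(\cA)=\cA$. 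Since $\rho$ is continuous, $H:=\rho^{-1}(\cG(\cA))$ is an open subgroup of $G$; as $G$ is compact, its left cosets are open, pairwise disjoint and cover $G$, so there are only finitely many of them and $m:=[G:H]<\infty$. I fix coset representatives $g_1=e,g_2,\dots,g_m$ with $G=\bigsqcup_{i=1}^m g_iH$.

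Next I would set $\cB:=\rho(g_1)(\cA)\cdot\rho(g_2)(\cA)\cdots\rho(g_m)(\cA)$, the $\cR$-subalgebra of $A$ generated by the translates $\rho(g_i)(\cA)$. Each $\rho(g_i)(\cA)$ is an affine formal model in $A$: it is isomorphic to $\cA$ as an $\cR$-algebra via $\rho(g_i)$, hence admissible, and the natural map $\rho(g_i)(\cA)\otimes_\cR K\to A$ is an isomorphism because $\rho(g_i)$ is bijective. Iterating Lemma~\ref{ProdFormModels} then shows that $\cB$ is itself an affine formal model in $A$, finitely generated as a module over each $\rho(g_i)(\cA)$. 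Since $g_1=e$, we have $\cA=\rho(g_1)(\cA)\subseteq\cB$.

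It remains to verify that $\cB$ is $G$-stable. Given $g\in G$, left translation by $g$ permutes the cosets $g_iH$, so for each $i$ we may write $gg_i=g_{\sigma(i)}h_i$ for a (unique) permutation $\sigma$ of $\{1,\dots,m\}$ and elements $h_i\in H$. Because $\rho$ is a homomorphism and $\rho(h_i)$ fixes $\cA$ setwise (as $h_i\in H$), we obtain $\rho(gg_i)(\cA)=\rho(g_{\sigma(i)})\bigl(\rho(h_i)(\cA)\bigr)=\rho(g_{\sigma(i)})(\cA)$ for every $i$. Since $\rho(g)$ is an $\cR$-algebra automorphism of $A$, it carries $\cB$ onto the $\cR$-subalgebra of $A$ generated by the elements $\rho(gg_i)(\cA)=\rho(g_{\sigma(i)})(\cA)$; as $\sigma$ is a bijection, this subalgebra is again $\cB$, whence $\rho(g)(\cB)=\cB$. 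The whole argument is essentially routine; the only point demanding care is this last step, where one must translate on the correct side ($gg_i$, not $g_ig$) so that the generating translates are permuted, and where one must invoke the explicit description of $\cG(\cA)$ inside $\Aut_K(A)$ as the stabiliser of $\cA$ in order to discard the factors $\rho(h_i)$.
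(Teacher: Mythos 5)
Your proof is correct and follows essentially the same route as the paper: openness of $\cG(\cA)=\cG_{\pi^0}(\cA)$ in $\Aut_K(A)$, continuity of $\rho$, compactness of $G$ forcing the orbit of $\cA$ to be finite, and then an iterated application of Lemma~\ref{ProdFormModels}. The only difference is that you spell out the finite-index/coset-representative bookkeeping and the verification of $G$-stability, which the paper summarises in one line by saying that the $G$-orbit of $\cA$ is finite and that the product of the orbit is evidently $G$-stable.
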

\begin{proof} Because the map $\rho : G \to \Aut_K(A)$ is continuous, the preimage of $\cG(\cA)$ in $G$ is open. But this preimage is just the stabiliser of $\cA$ in $G$. Because $G$ is compact, the $G$-orbit of $\cA$ in $A$ is finite, $\cA_1,\ldots,\cA_n$ say. Now $\cA_1\cdot\cdots\cdot \cA_n$ is another affine formal model in $A$ by Lemma \ref{ProdFormModels} which is evidently $G$-stable.
\end{proof}

\textbf{We will assume from now on until the end of this paper that our ground field $K$ is of mixed characteristic $(0,p)$.} 

Let $\cS$ be a $p$-adically complete and flat $\cR$-algebra. Then we may view it as the unit ball in the $K$-Banach algebra $\cS \otimes_{\cR} K$, and
\[\log : 1 + p^\epsilon \cS \to p^\epsilon \cS \qmb{and} \exp : p^\epsilon \cS \to 1 + p^\epsilon \cS\]
are well-defined mutually inverse bijections by  \cite[Corollary 6.25]{DDMS}.

\begin{prop}\label{GactsOnX}  Let $G$ be a compact $p$-adic Lie group, and let $\rho : G \to \Aut_K(A)$ be a continuous group homomorphism. Then there exists a canonical $\Qp$-Lie algebra homomorphism
\[d\rho : \Lie(G) \to \Der_K(A)\]
which is $G$-equivariant with respect to the adjoint action of $G$ on $\Lie(G)$ and the natural action of $G$ on $\Der_K(A)$ given in Example \ref{GactDerEx}. \end{prop}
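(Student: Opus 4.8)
The plan is to construct $d\rho$ by differentiating $\rho$ on a single open normal uniform pro-$p$ subgroup $N$ of $G$ and then checking that the result is independent of the choice of $N$ and is $G$-equivariant. First I would fix, using Lemma \ref{SmallU}, an open normal uniform pro-$p$ subgroup $N \triangleleft_o^u G$, and then use Lemma \ref{GstableFM} to replace the given $\cA$ by an $N$-stable (indeed $G$-stable, by the same argument) affine formal model $\cA$ in $A$. Since $\rho_{|N}$ is continuous for the congruence-subgroup topology on $\Aut_K(A)$ coming from $\cA$, by shrinking $N$ further (replacing $N$ by $N^{p^m}$ for $m$ large, which changes neither $\Lie(G)$ nor $\Qp\otimes_{\Zp}L_N$ up to canonical isomorphism) I may assume $\rho(N) \subseteq \cG_{\pi^k}(\cA)$ for any prescribed $k$; in particular I can arrange that $\rho(n)$ acts on $\cS := \cA$ through an automorphism lying in $1 + p^\epsilon \End_{\cR}(\cS)$-type neighbourhoods, so that the $\log$/$\exp$ bijections of \cite[Corollary 6.25]{DDMS} recalled just above are available on $1 + p^\epsilon \cS$ and more generally on the relevant completed operator algebras.

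The key construction: for $n \in N$, the automorphism $\rho(n)$ of $A$ preserves $\cA$ and satisfies $(\rho(n) - 1)(\cA) \subseteq p^\epsilon \cA$ after the shrinking above, so $\rho(n) \in 1 + p^\epsilon \cdot (\text{continuous }\cR\text{-linear endomorphisms of }\cA)$ and we may set $\delta(n) := \log \rho(n)$, a continuous $\cR$-linear endomorphism of $\cA$ (hence a $K$-linear endomorphism of $A$). One checks $\delta(n)$ is a derivation: since $\rho(n)$ is an algebra automorphism, $\rho(n)(ab) = \rho(n)(a)\rho(n)(b)$, and expanding $\log$ as a power series and using that $\rho(n)$ is multiplicative gives the Leibniz rule for $\delta(n)$ — this is the standard fact that the logarithm of an algebra automorphism is a derivation, valid here because everything converges $p$-adically. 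Moreover $n \mapsto \delta(n)$ is a homomorphism from $(N,\cdot)$ to the additive group of $\Der_K(A)$ in the uniform-pro-$p$ sense, and it is $\Zp$-linear and bracket-compatible for the Lie operations on $L_N$ recalled in the excerpt (the formulas for $x+y$, $\lambda\cdot x$, $[x,y]$ as limits of group-theoretic expressions are matched by $\log$ to the corresponding operations $\delta(x)+\delta(y)$, $\lambda\delta(x)$, $[\delta(x),\delta(y)]$ in the associative algebra $\End_K(A)$, because $\log$ intertwines them — this is essentially the content of \cite[\S 9.5, Theorem 9.10]{DDMS} applied to the homomorphism $\rho_{|N}$ with target the uniform-type group $1 + p^\epsilon(\cdots)$). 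Tensoring with $\Qp$ gives a $\Qp$-Lie algebra map $\Qp \otimes_{\Zp} L_N \to \Der_K(A)$, and composing with the canonical isomorphism $\Lie(G) \cong \Qp \otimes_{\Zp} L_N$ yields $d\rho$.

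Next I would verify independence of the choices. If $N' \triangleleft_o^u G$ is another such subgroup, then $N \cap N'$ contains a common open uniform subgroup, and for elements lying in the intersection the two recipes for $\delta$ literally agree (both are $\log \rho(n)$); since the transition maps among the $\Qp \otimes_{\Zp} L_N$ are the canonical isomorphisms, the induced maps $\Lie(G) \to \Der_K(A)$ coincide. Independence of the formal model $\cA$ is similar: any two $G$-stable formal models are commensurable by Lemma \ref{ProdFormModels}, and $\log\rho(n)$ computed in $\End_K(A)$ does not see which formal model was used to certify convergence. Finally, $G$-equivariance: for $g \in G$ and $n \in N$ we have $\rho(gng^{-1}) = \rho(g)\rho(n)\rho(g)^{-1}$, so applying $\log$ gives $\delta({}^g n) = \rho(g)\,\delta(n)\,\rho(g)^{-1}$; by Example \ref{GactDerEx} the right-hand side is exactly the action of $g$ on the derivation $\delta(n) \in \Der_K(A)$ (namely $v \mapsto \rho(g)\circ v \circ \rho(g)^{-1}$), while on the source side conjugation by $g$ on $N$ induces the adjoint action on $L_N$ and hence on $\Lie(G)$. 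Passing to the limit over $N$ gives the asserted $G$-equivariance of $d\rho$.

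The main obstacle I anticipate is purely analytic bookkeeping: making precise the ambient topological/Banach algebra in which $\log \rho(n)$ is taken and lives as a \emph{continuous} endomorphism, and checking that the $\log$/$\exp$ formalism of \cite[Corollary 6.25]{DDMS} — stated there for $1 + p^\epsilon\cS$ with $\cS$ a $p$-adically complete flat $\cR$-algebra — applies to the (possibly non-commutative) algebra of continuous $\cR$-linear endomorphisms of $\cA$, together with quantitative control on how deep inside the congruence filtration one must push $N$ so that $\rho(n) - 1$ has small enough operator norm. This is where the hypothesis that $G$ is a \emph{$p$-adic Lie group} acting \emph{continuously} is used in an essential way, and where one must be careful (in the non-discretely-valued case) about the exact value of $\epsilon$ and about uniformity of bounds across the finitely many $G$-translates of a formal model. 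Once that analytic foundation is in place, the algebraic identities (Leibniz rule, $\Zp$-linearity, bracket-compatibility, equivariance) are formal consequences of $\log$ being a homomorphism of the relevant group-to-Lie-algebra structures.
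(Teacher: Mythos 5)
Your proposal is correct and follows essentially the same route as the paper: fix a $G$-stable affine formal model $\cA$ via Lemma \ref{GstableFM}, take $\log\rho(n)$ for $n$ in an open uniform subgroup whose image lands in $\cG_{p^\epsilon}(\cA)$, verify it is an $\cR$-linear derivation, argue $\Zp$-Lie-linearity from the uniform-group-to-Lie-algebra dictionary, pass to $\Lie(G)$, and check $G$-equivariance and independence of $\cA$ via $\log(g\varphi g^{-1})=g(\log\varphi)g^{-1}$ and commensurability of formal models. The analytic point you flag as the main obstacle is handled exactly as you anticipate: since $\cA$ is admissible, $\cE:=\End_{\cR}(\cA)$ is $p$-adically complete and $\cR$-flat, so the $\log/\exp$ bijections from \cite[Corollary 6.25]{DDMS} recalled just before the statement apply directly to $1+p^\epsilon\cE$, and the derivation property follows from the standard argument that the logarithm of a continuous algebra automorphism of a $p$-adically complete ring is a derivation; no further shrinking of $N$ beyond $\rho(N)\subseteq\cG_{p^\epsilon}(\cA)$ is required.
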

\begin{proof} Choose a $G$-stable affine formal model $\cA$ in $A$ using Lemma \ref{GstableFM}, so that $\rho(G) \subseteq \cG(\cA)$.  The $\cR$-algebra $\cE := \End_{\cR}(\cA)$ is $p$-adically complete and flat, because the same is true of $\cA$. Because the characteristic of $K$ is zero by assumption, we therefore have at our disposal the bijections $\log : 1 + p^\epsilon \cE \to p^\epsilon \cE$ and $\exp : p^\epsilon \cE \to 1 + p^\epsilon \cE$. In particular, for every $\varphi \in \cG_{p^\epsilon}(\cA)$, the logarithm series $\log \varphi := - \sum_{r =1}^\infty \frac{(1-\varphi)^r}{r}$ converges inside $\cE$. Because $\varphi$ is an $\cR$-algebra automorphism, a well-known formal computation shows that in fact $\log \varphi$ is an $\cR$-linear \emph{derivation}: see, for example, \cite[proof of Theorem 4]{Praagman}. The image of $\log \varphi$ is contained in $p^\epsilon \cA$, so we see that $\log \varphi \in p^\epsilon \Der_{\cR}(\cA)$ for every $\varphi \in \cG_{p^\epsilon}(\cA)$. 

Now let $H := \rho^{-1} (\cG_{p^\epsilon}(\cA))$, an open normal subgroup of $G$. For every $N \leq_o^u G$ contained in $H$, the maps $\rho_{|N} : N \to \cG_{p^\epsilon}(\cA)$ and $\log : \cG_{p^\epsilon}(\cA) \to p^\epsilon \Der_{\cR}(\cA)$ are continuous. Thus it follows from the proof of \cite[Lemma 7.12]{DDMS} that 
\[ \log \circ \rho_{|N} : L_N \to p^\epsilon \Der_{\cR}(\cA)\]
is a $\Zp$-Lie algebra homomorphism. As the open uniform pro-$p$ subgroup $N$ shrinks, these maps assemble to produce the required $\Qp$-Lie algebra homomorphism
\[ d\rho : \Lie(G) \to \Der_K(A).\]
It is easy to check that $\log (g \varphi g^{-1}) = g (\log \varphi )g^{-1}$ for any $g \in \cG(\cA)$ and $\varphi \in \cG_{p^\epsilon}(\cA)$. Applying this with $g = \rho(b)$ and $\varphi = \rho(a)$ for any $a \in N$ and $g \in G$ shows that 
\[ \log \rho( gag^{-1} ) = \rho(g) \log \rho(a) \rho(g)^{-1}.\]
Letting $\Ad(g)$ and $\Ad(\rho(g))$ denote conjugation by $g$ in $G$ and by $\rho(g)$ in $\End_K(A)$, respectively, we see that the diagram
\[ \xymatrix{ N \ar[rr]^{\log \circ \rho}\ar[d]_{\Ad(g)} && p^\epsilon \Der_{\cR}(\cA) \ar[d]^{\Ad(\rho(g))} \ar[r] & \Der_K(A) \ar[d]^{\Ad(\rho(g))}  \\ g N g^{-1} \ar[rr]_{\log \circ \rho} && p^\epsilon \Der_{\cR}(\cA) \ar[r] & \Der_K(A) }\]
is commutative, and the $G$-equivariance of $d\rho$ follows.

Finally, if $\cA$ and $\cA'$ are two different $G$-stable affine formal models in $A$, then $\cG_{p^\epsilon}(\cA) \cap \cG_{p^\epsilon}(\cA')$ is open in both $\cG_{p^\epsilon}(\cA)$ and $\cG_{p^\epsilon}(\cA')$ by Theorem \ref{AutRigidTop}. Choose some $N \leq_o^u G$ contained in $\rho^{-1}(\cG_{p^\epsilon}(\cA) \cap \cG_{p^\epsilon}(\cA'))$. Then $\log \rho_{|N}$ induces the same $K$-linear derivation of $A$ when computed using either $\cG_{p^\epsilon}(\cA)$ or $\cG_{p^\epsilon}(\cA')$. Thus $d \rho : \Lie(G) \to \Der_K(A)$ does not depend on the choice of $\cA$.
\end{proof}

\begin{defn}\label{TypesOfLattices} Let $L := \Der_K(A)$, and let $\cL$ be an $\cA$-submodule of $L$.
\be \item $\cL$ is an \emph{$\cA$-lattice} in $L$ if it is a finitely presented as an $\cA$-module, and spans $L$ as a $K$-vector space. 
\item $\cL$ is an \emph{$\cA$-Lie lattice} if  $[\cL, \cL] \subseteq \cL$ and $\cL(\cA) \subseteq \cA$.
\item The $\cA$-Lie lattice is \emph{smooth} if it is projective as an $\cA$-module. 
\item The $\cA$-Lie lattice is \emph{free} if it is free of finite rank as an $\cA$-module. 
\ee\end{defn}

Recall from Example \ref{GactDerEx} that the $G$-action $\rho$ on the $K$-algebra $A$ induces in a functorial manner a $G$-action $(\rho, \dot{\rho})$ on the $(K,A)$-Lie algebra $L = \Der_K(A)$ in the sense of Definition \ref{GactsLR}. 

\begin{defn} Let $\cA$ be an affine formal model in $A$ and let $\cL$ be an $\cA$-lattice in $L$. We say that $\cL$ is \emph{$G$-stable} if the affine formal model $\cA$ is $G$-stable, and $\cL$ itself is invariant under the natural action of $G$ on $L$.
\end{defn}

If $\cL$ is a $G$-stable $\cA$-Lie lattice in $L$, then $g \mapsto (\rho(g)_{|\cA},\dot{\rho}(g)_{|\cL})$ is an action of $G$ on the $(\cR, \cA)$-Lie algebra $\cL$. The existence of $G$-stable Lie lattices follows from

\begin{lem}\label{StabLopen} Let $\cA$ be a $G$-stable affine formal model in $A$. 
\be \item $\cJ := \Der_{\cR}(\cA)$ is a $G$-stable $\cA$-Lie lattice in $L$.
\item The stabiliser in $G$ of any other $\cA$-lattice $\cL$ in $L$ is open.
\ee \end{lem}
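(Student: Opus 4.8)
The plan is to prove both parts of Lemma \ref{StabLopen} using the existence of $G$-stable affine formal models from Lemma \ref{GstableFM}, together with the continuity of the $G$-action, the continuity of the differential $d\rho$ from Proposition \ref{GactsOnX}, and the elementary fact that automorphisms sufficiently close to the identity act trivially modulo $\pi$.

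For part (a): since $\cA$ is $G$-stable, each $g \in G$ acts as an $\cR$-algebra automorphism $\rho(g)_{|\cA}$ of $\cA$. It is immediate from the defining formula $v \mapsto \varphi \circ v \circ \varphi^{-1}$ of the $\varphi$-morphism in Example \ref{AutOfDer} (with $\varphi = \rho(g)_{|\cA}$) that $\dot\rho(g)$ carries $\Der_{\cR}(\cA)$ into $\Der_{\cR}(\cA)$; so $\cJ := \Der_{\cR}(\cA)$ is $G$-stable. That $\cJ$ is an $\cA$-Lie lattice in $L = \Der_K(A)$ is presumably already established in \cite{DCapOne} (it is finitely presented over $\cA$ since $\cA$ is topologically of finite presentation and admissible, it spans $L$ over $K$ because differentials extend, and it is closed under bracket and preserves $\cA$ by construction); I would simply cite this, as it does not involve the group at all.

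For part (b), let $\cL$ be an arbitrary $\cA$-lattice in $L$. The stabiliser of $\cL$ in $G$ contains the intersection of the stabiliser of $\cA$ (which is all of $G$ by hypothesis) with the subgroup of elements fixing $\cL$; the point is to exhibit an \emph{open} subgroup of $G$ lying inside it. First I would reduce to elements close to the identity: let $H := \rho^{-1}(\cG_{p^\epsilon}(\cA))$, an open normal subgroup of $G$, and recall from the proof of Proposition \ref{GactsOnX} that for $g \in H$ the automorphism $\rho(g)_{|\cA}$ satisfies $\log\rho(g) \in p^\epsilon\Der_{\cR}(\cA)$, so $\rho(g) = \exp(\delta_g)$ with $\delta_g \in p^\epsilon\cJ$. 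Since $\cL$ is finitely presented over $\cA$ and spans $L$ over $K$, Lemma \ref{ProdFormModels}-type arguments give $\pi^c\cJ \subseteq \cL \subseteq \pi^{-c}\cJ$ for some $c \geq 0$; more importantly, because $\cL$ is a finitely generated $\cA$-module there is some $m \geq 0$ with $\cJ \cdot \cL \subseteq \cL$ forced only after scaling, so I would instead argue: the derivation $\delta_g$ acts on the finitely generated $\cA$-module $\cL \cap \cJ'$ for a suitable common refinement, and for $g$ in a deep enough congruence subgroup $\delta_g$ lies in a power of $\pi$ times an $\cA$-module operator preserving $\cL$. Concretely, pick $N \triangleleft_o^u G$ with $N \leq H$ small enough that $d\rho(L_N)$ maps $\cL$ into $\pi\cL$ — possible since $d\rho$ is continuous and $\cL$ is finitely generated, so $\log\rho(N)$ can be shrunk into $\{\delta \in \Der_{\cR}(\cA) : \delta(\cL) \subseteq \pi\cL\}$, which is open in $p^\epsilon\Der_{\cR}(\cA)$. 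Then for $g \in N$, $\rho(g) = \exp(\delta_g) = \sum \delta_g^k/k!$ visibly preserves $\cL$, since each $\delta_g^k/k!$ does (the denominators $k!$ cause no trouble as $\delta_g(\cL) \subseteq \pi\cL$ and $p$-adic valuations of $k!$ grow slower than those of $\pi^k$). Hence $N$ stabilises $\cL$, and $N$ is open in $G$, so $\Stab_G(\cL)$ is open.

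The main obstacle is the bookkeeping in part (b): ensuring that the chosen open subgroup $N$ is small enough that $\exp(\delta_g)$ genuinely preserves $\cL$ and not merely $\cJ$, which requires controlling $\delta_g$ relative to $\cL$ rather than relative to $\cA$. This is where I would lean on the continuity of $d\rho$ (Proposition \ref{GactsOnX}) and the finite generation of $\cL$ over $\cA$ to replace ``$\delta_g \in p^\epsilon\Der_{\cR}(\cA)$'' by the stronger ``$\delta_g(\cL) \subseteq \pi\cL$'' after passing to a deeper congruence subgroup; everything else (that $\exp$ of such a derivation preserves $\cL$, that $N$ is open) is routine. An alternative, perhaps cleaner route avoiding exponentials: observe directly that $\{g \in G : \rho(g)\cL = \cL\}$ is closed (being the stabiliser of a point under a continuous action on the discrete-ish set of lattices, or: the condition $\rho(g)\cL \subseteq \cL$ and $\rho(g^{-1})\cL \subseteq \cL$ is closed), and of finite index is false in general, so one still needs the local argument above to get openness; I do not see a way around using the $p$-adic Lie structure, which is exactly why the hypothesis ``compact $p$-adic Lie group'' is in force.
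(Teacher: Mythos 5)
Your part~(a) is essentially the paper's argument (finite presentation of $\cJ$ via coherence, $\cA$-spanning, and $G$-stability by transport of structure), so no issue there.

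For part~(b), however, you take a genuinely different and substantially heavier route than the paper, and the closing remark of your proposal --- ``I do not see a way around using the $p$-adic Lie structure'' --- is wrong. The paper's proof makes no use whatsoever of $\exp$, $\log$, $d\rho$, uniform subgroups, or Proposition~\ref{GactsOnX}: it needs only the continuity of the $G$-action and elementary lattice bookkeeping. Concretely: choose $n,m$ with $\pi^n\cJ \subseteq \pi^m\cL \subseteq \cJ$, set $H := \{g\in G : (g-1)\cdot\cA \subseteq \pi^n\cA\}$ (open by continuity), and observe the identity
\[
(g\cdot v - v)(a) = (g-1)\cdot v(g^{-1}a) + v\bigl((g^{-1}-1)\cdot a\bigr) \in \pi^n\cA
\]
for $g\in H$, $v\in\cJ$, $a\in\cA$, which gives $(g-1)\cdot\cJ \subseteq \pi^n\cJ$ and hence $(g-1)\cdot\pi^m\cL \subseteq \pi^n\cJ \subseteq \pi^m\cL$, so $H$ stabilises $\cL$. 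That is the whole proof.

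Your exponential route also has a hidden convergence issue that the paper sidesteps entirely: to pass from $[\delta_g,\cL]\subseteq\pi\cL$ to $\exp(\ad\delta_g)(\cL)\subseteq\cL$ you need $\pi^k/k!$ to stay integral, and this depends on $v_p(\pi)$, which is not bounded below when $K$ is not discretely valued. Your ``the denominators $k!$ cause no trouble'' is only true if $\delta_g$ carries an honest factor of $p^\epsilon$ relative to an operator preserving $\cL$, not merely a factor of $\pi$. This can be patched, but it is a real gap as written, and the elementary argument above renders the whole machinery unnecessary. I would also flag that the set $\{\delta\in\Der_{\cR}(\cA) : \delta(\cL)\subseteq\pi\cL\}$ you invoke needs to be interpreted as an adjoint-action condition $[\delta,\cL]\subseteq\pi\cL$ (since $\cL \subset \Der_K(A)$, not $\subset A$), and you should justify why this is open rather than assert it.
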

\begin{proof} (a) Note that $\cJ$ is an $(\cR, \cA)$-Lie algebra. Because the $\cA$-module of continuous K\"ahler differentials $\Omega^1_{\cA/\cR}$ is coherent by \cite[\S 2.15.1]{Abbes}, its $\cA$-linear dual $\cJ$ is the kernel of a morphism between two free $\cA$-modules. Because $\cA$ is a coherent ring by \cite[Proposition 1.10.3(i)]{Abbes}, we see that $\cJ$ is a coherent, and in particular finitely presented, $\cA$-module. There is a natural inclusion $\cJ \hookrightarrow L$ which realises $\cJ$ as the stabiliser of $\cA$ in $L$.  Now if $S$ denotes a finite generating set for $\cA$ as a topological $\cR$-algebra and if $v \in L$, we can find $n \geq 0$ such that $\pi^n v(s) \in \cA$ for all $s \in S$. It follows that $\pi^n v \in \cJ$ and hence $\cJ$ is an $\cA$-lattice in $L$. Since $\cA$ is $G$-stable by assumption, its stabiliser $\cJ$ in $L$ is automatically $G$-stable. 

(b) Because any two $\cA$-lattices in $L$ contain $\pi$-power multiples of each other, we can choose $n,m \geq 0$ such that $\pi^n \cJ \subseteq \pi^m \cL \subseteq \cJ$.  Because the $G$-action on $A$ is continuous, the subgroup $H := \{ g \in G : (g - 1) \cdot \cA \subseteq \pi^n \cA \}$ is open in $G$. Now if $g \in H$, $v \in \cJ$ and $a \in \cA$, then 
\[ (g \cdot v - v)(a) = g v ( g^{-1} a) - v(a)  = (g - 1) \cdot v(g^{-1} a) + v( (g^{-1} - 1)\cdot a) \in \pi^n \cA,\]
which implies that $(g - 1) \cdot \cJ \subseteq \pi^n \cJ$ for all $g \in H$. Therefore
\[(g - 1) \cdot \pi^m \cL \subseteq (g-1) \cdot \cJ \subseteq \pi^n \cJ \subseteq \pi^m \cL \qmb{for all} g \in H,\] 
which shows that the $\cA$-lattice $\cL$ is $H$-stable. 
\end{proof}

\begin{defn} Let $\cL$ be an $\cA$-Lie lattice in $L$. 
\be \item The \emph{$\pi$-adic completion} of $U(\cL)$ is $\h{U(\cL)} := \invlim U(\cL) / \pi^a U(\cL).$
\item We denote the $\cR$-torsion submodule of $\h{U(\cL)}$ by $\h{U(\cL)}_{\tors}$. 
\item We denote the $\cR$-torsion-free part of $\h{U(\cL)}$ by $\htf{U(\cL)} := \h{U(\cL)} / \h{U(\cL)}_{\tors}.$
\item We define $\hK{U(\cL)} := \h{U(\cL)} \otimes_{\cR} K.$
\ee\end{defn}

It is clear that $\htf{U(\cL)}$ is isomorphic to the image of $\h{U(\cL)}$ under the natural map $\h{U(\cL)} \to \hK{U(\cL)}$, and that the $\cR$-algebras
\[ U(\cL), \quad \h{U(\cL)}, \quad \htf{U(\cL)}, \qmb{and} \hK{U(\cL)}\]
carry natural $G$-actions by functoriality, whenever the $\cA$-Lie lattice $\cL$ happens to be $G$-stable. Note that $\hK{U(\cL)}$ is a $K$-Banach algebra with unit ball isomorphic to $\htf{U(\cL)}$.

\begin{lem}\label{PsiL} Let $\cL$ be an $\cA$-Lie lattice in $L$, let $\cE := \End_{\cR}(\cA)$, and let 
\[\iota :=  \htf{i_{\cA} \oplus i_{\cL}} : \cA \oplus \cL \to \cU := \htf{U(\cL)}\] 
denote the natural map. 
\be \item There is a unique $\cR$-algebra homomorphism
\[ \psi_{\cL} : \cU \to \cE\]
such that $\psi_{\cL}(\iota(a)) = \ell(a)$ and $\psi_{\cL}(\iota(v)) = v$ for all $a \in \cA$ and $v \in \cL$.
\item The restriction of $\psi_{\cL}$ to $\iota(\cA \oplus \cL)$ is injective.
\item The restriction of $\psi_{\cL}^\times$ to $\exp(\iota(p^\epsilon \cL))$ is injective, with image $\exp(p^\epsilon \cL)$.
\item If $\cA$ and $\cL$ are $G$-stable, then 
\[ \psi_{\cL}(g \cdot s) = \rho(g) \hsp \psi_{\cL}(s) \hsp \rho(g)^{-1}\]
for all $g \in G$ and $s \in \cU$.
\ee \end{lem}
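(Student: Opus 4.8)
The plan is to construct $\psi_\cL$ via the universal property of enveloping algebras (Lemma \ref{ULUP}) followed by $\pi$-adic completion, and then to verify (b)--(d) by reducing to elementary properties of the exponential and logarithm on the $p$-adically complete flat $\cR$-algebra $\cE = \End_\cR(\cA)$.

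First, for part (a), I would apply Lemma \ref{ULUP} with $S := \cE$, $R := \cR$, $A := \cA$, $L := \cL$, taking $j_\cA : \cA \to \cE$ to be the left-regular representation $a \mapsto \ell(a)$ (multiplication by $a$) and $j_\cL : \cL \to \cE$ the natural inclusion $\cL \hookrightarrow \Der_\cR(\cA) \subseteq \cE$. The two compatibility identities $j_\cL(av) = j_\cA(a) j_\cL(v)$ and $[j_\cL(v), j_\cA(a)] = j_\cA(v\cdot a)$ are immediate: the first says a left-module structure, the second is the Leibniz rule $v\circ \ell(a) - \ell(a)\circ v = \ell(v(a))$. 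This yields an $\cR$-algebra homomorphism $U(\cL) \to \cE$; since $\cE$ is $\pi$-adically complete and $\cR$-torsion-free (as $\cA$ is flat over $\cR$), this map factors uniquely through $\cU = \htf{U(\cL)}$, giving $\psi_\cL : \cU \to \cE$ with the stated values on $\iota(\cA)$ and $\iota(\cL)$. Uniqueness follows because $\iota(\cA \oplus \cL)$ topologically generates $\cU$.

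For part (b): the composite $\cA \hookrightarrow \cU \xrightarrow{\psi_\cL} \cE$ sends $a$ to $\ell(a)$, which is injective since $\cA$ has an identity element. For the $\cL$-summand, one recovers $v \in \cL \subseteq \Der_\cR(\cA)$ from $\psi_\cL(\iota(v))$ by evaluating the derivation, and the splitting $\cE = \ell(\cA) \oplus \Der_\cR(\cA)$ (using that a derivation killing $1$ is a complement to the multiplication operators) shows the images of the two summands meet only in $0$; this gives injectivity on $\iota(\cA\oplus\cL)$. For part (c): both $\exp(\iota(p^\epsilon\cL))$ and $\exp(p^\epsilon\cL)$ make sense because $\iota(p^\epsilon\cL) \subseteq p^\epsilon\cU$ and $p^\epsilon\cL \subseteq p^\epsilon\Der_\cR(\cA) \subseteq p^\epsilon\cE$ are $p$-adically topologically nilpotent in $p$-adically complete flat $\cR$-algebras (so \cite[Corollary 6.25]{DDMS} applies, as invoked before Proposition \ref{GactsOnX}). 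Since $\psi_\cL$ is a continuous $\cR$-algebra homomorphism it commutes with $\exp$ and $\log$ on these subsets; hence $\psi_\cL^\times(\exp(\iota(v))) = \exp(\psi_\cL(\iota(v))) = \exp(v)$ for $v \in p^\epsilon\cL$, giving the claimed image. Injectivity on $\exp(\iota(p^\epsilon\cL))$ follows because $\log$ inverts $\exp$ on both sides and $\psi_\cL$ is injective on $\iota(p^\epsilon\cL) \subseteq \iota(\cA\oplus\cL)$ by part (b), combined with the commutation with $\log$.

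For part (d): the $G$-action on $\cU$ is $U(\rho(g)_{|\cA}, \dot\rho(g)_{|\cL})$ by functoriality, and the $G$-action on $\cE$ is conjugation $T \mapsto \rho(g) T \rho(g)^{-1}$. Both sides of the desired identity $\psi_\cL(g\cdot s) = \rho(g)\psi_\cL(s)\rho(g)^{-1}$ are $\cR$-algebra homomorphisms $\cU \to \cE$ (the right-hand side because conjugation is an automorphism of $\cE$), so by the uniqueness in part (a) it suffices to check they agree on $\iota(\cA)$ and $\iota(\cL)$. On $\iota(a)$: $\psi_\cL(g\cdot\iota(a)) = \ell(\rho(g)(a))$, while $\rho(g)\ell(a)\rho(g)^{-1} = \ell(\rho(g)(a))$ since $\rho(g)$ is an algebra automorphism of $\cA$. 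On $\iota(v)$: $\psi_\cL(g\cdot\iota(v)) = \dot\rho(g)(v) = \rho(g)\circ v\circ \rho(g)^{-1}$ by the definition of $\dot\rho$ in Example \ref{AutOfDer}, which is exactly $\rho(g)\,v\,\rho(g)^{-1}$ in $\cE$. This finishes (d).

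I expect the main obstacle to be part (c): one must be careful that the exponential series for $\iota(v)$ with $v \in p^\epsilon\cL$ genuinely converges in the Banach algebra $\hK{U(\cL)}$ (equivalently in $\cU$) — this needs the observation that $\cU$ is the unit ball of the $K$-Banach algebra $\hK{U(\cL)}$ and that $p^\epsilon\cU$ is a two-sided ideal of topologically nilpotent elements — and that $\psi_\cL$ is genuinely continuous for the $\pi$-adic topologies so that it commutes with these infinite series; the identification of the image as precisely $\exp(p^\epsilon\cL)$ (rather than something larger inside $\exp(p^\epsilon\Der_\cR(\cA))$) then follows because $\psi_\cL(\iota(p^\epsilon\cL)) = p^\epsilon\cL$ by part (b) applied after rescaling. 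The other parts are essentially formal consequences of the universal property and functoriality.
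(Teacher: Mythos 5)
Your proposal is correct and takes essentially the same approach as the paper: constructing $\psi_{\cL}$ via the universal property of $U(\cL)$ followed by $\pi$-adic completion, deducing (b) by noting that $\ell(a) + v$ determines $a$ by evaluating at $1 \in \cA$, deducing (c) from (b) together with the bijectivity of $\exp$ and $\log$, and proving (d) by checking two continuous $\cR$-algebra homomorphisms agree on the topological generating set $\iota(\cA \oplus \cL)$. One small infelicity: the phrase ``the splitting $\cE = \ell(\cA) \oplus \Der_{\cR}(\cA)$'' is not literally true (this sum is a proper subspace of $\cE$); what is needed, and what your parenthetical actually supplies, is that $\ell(\cA) \cap \Der_{\cR}(\cA) = 0$ inside $\cE$.
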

\begin{proof} (a) Let $\ell : \cA \to \cE$ be defined by $\ell(a)(b) = ab$ for all $a,b\in \cA$, and let $j : \cL \hookrightarrow \cE$ be the natural inclusion. Then $\ell$ is an $\cR$-algebra homomorphism, $j$ is an $\cR$-Lie algebra homomorphism, $j( av ) = \ell(a) j(v)$ for all $a\in \cA, v \in \cL$, and \[ [ j(v), \ell(a)](b) = v(ab) - av(b) = v(a)b = \ell(v(a))(b) \qmb{for all} a,b\in\cA.\]
Thus $[j(v), \ell(a)] = \ell(v \cdot a)$ for all $v \in \cL$ and $a \in \cA$, so by Lemma \ref{ULUP}, there is an $\cR$-algebra homomorphism $\psi : U(\cL) \to \cE$ such that $\psi(\ell(a)) = a$ and $\psi(j(v)) = v$ for all $a \in \cA, v\in \cL$. 

Since $\cA$ is $\pi$-adically complete and $\cR$-flat, the same is true for $\cE$. Hence $\psi$ extends to an $\cR$-algebra homomorphism $\psi_{\cL} : \htf{U(\cL)} \to \cE$ with the required properties. This homomorphism is unique because the $\cR$-subalgebra of $\cU$ generated by $\iota(\cA \oplus \cL)$ is dense in $\cU$, and any $\cR$-algebra homomorphism between two $\pi$-adically complete $\cR$-algebras is automatically continuous.

(b) This follows immediately from part (a), since $\ell : \cA \to \cE$ is injective.

(c) Apply part (b), together with the fact that $\exp$ and $\log$ are bijections.

(d) Let $g \in G$ and $a\in \cA$. Then
\[ \rho(g) \hsp \ell(a) \hsp \rho(g)^{-1} = \ell(g \cdot a)\]
because $(\rho(g)\hsp\ell(a)\rho(g)^{-1})(b) = g \cdot ( a \hsp(g^{-1} \cdot b)) = (g\cdot a) b = \ell(g \cdot a)(b)$ for all $b \in \cA$, since $\rho(g)$ is an $\cR$-algebra automorphism of $\cA$. Now define $\alpha : \cU \to \cE$ and $\alpha' : \cU \to \cE$ by 
\[ \alpha(s) = \psi_{\cL}(g \cdot s) \qmb{and} \alpha'(s) = \rho(g) \hsp \psi_{\cL}(s) \hsp \rho(g)^{-1}\]
for all $s \in \cU$. Then 
\[ \begin{array}{lll} 
\alpha( \iota(a) ) &=& \psi_{\cL}( g \cdot \iota(a) ) =  \psi_{\cL}( \iota(g \cdot a) ) = \ell(g \cdot a) = \\
 &=& \rho(g) \hsp \ell(a) \hsp \rho(g)^{-1} = \rho(g) \hsp \psi_{\cL}(\iota(a)) \hsp \rho(g)^{-1} = \alpha'(\iota(a))\end{array}\]
 for any $a \in \cA$, and similarly
\[ \begin{array}{lll}
\alpha(\iota(v)) &=& \psi_{\cL}( g \cdot \iota(v) ) = \psi_{\cL}( \iota(g \cdot v) ) = g \cdot v = \\
&=& \rho(g) \hsp v \hsp \rho(g)^{-1} = \rho(g) \hsp \psi_{\cL}(\iota(v)) \hsp \rho(g)^{-1} = \alpha'(\iota(v)).\end{array}\]
Thus the two $\cR$-algebra homomorphisms $\alpha$ and $\alpha'$ from $\cU$ to $\cE$ agree on $\iota(\cA \oplus \cL)$, and hence $\alpha = \alpha'$ by the argument in part (a). \end{proof}

\begin{defn}\label{BetaDefn}Let $\cA$ be a $G$-stable affine formal model in $A$ and let $\cL$ be a $G$-stable $\cA$-Lie lattice in $L = \Der_K(A)$. We define
\[ G_{\cL} := \rho^{-1} \left( \exp( p^\epsilon \cL ) \right) \qmb{and} \beta_{\cL} := (\psi_{\cL}^\times)^{-1} \circ \rho : G_{\cL} \to \cU^\times.\]
\end{defn}
It may be helpful to visualise these maps as follows:
\[\xymatrix{ & G_{\cL} \ar[dl]_{\rho_{|G_\cL}}\ar[r] \ar@{..>}[d]^{\beta_\cL} \ar[d] & G \ar[d]^\rho  \\ \exp(p^\epsilon \cL) \ar[r] & \cU^\times \ar[r]_{\psi_\cL^\times} & \cE^\times. }\]
\begin{thm}\label{Beta} Let $\cA$ be a $G$-stable affine formal model in $A$ and let $\cL$ be a $G$-stable $\cA$-Lie lattice in $L$. 
\be\item $G_{\cL}$ is an open normal subgroup of $G$.
\item $\beta_{\cL}$ is a $G$-equivariant trivialisation of the $G_{\cL}$-actions on $\htf{U(\cL)}$ and $\hK{U(\cL)}$.
\ee\end{thm}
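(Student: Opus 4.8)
The plan is to derive both parts of Theorem \ref{Beta} from Lemma \ref{PsiL}, together with the standard fact that the exponential of a powerful Lie lattice inside a $p$-adically complete ring is a subgroup of its unit group.

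\textbf{Part (a).} I would first note that, since $\iota$ is $\cR$-linear and injective on $\cL$ (Lemma \ref{PsiL}(b)), the set $\iota(p^\epsilon\cL) = p^\epsilon\iota(\cL)$ is a $\Zp$-Lie sublattice of $\cU := \htf{U(\cL)}$ under the commutator bracket, and it is \emph{powerful} because $[p^\epsilon\iota(\cL),p^\epsilon\iota(\cL)] = p^{2\epsilon}\iota([\cL,\cL])\subseteq p^\epsilon\cdot(p^\epsilon\iota(\cL))$. As $\cU$ is $p$-adically complete and $\iota(p^\epsilon\cL)\subseteq p^\epsilon\cU$, the Baker--Campbell--Hausdorff series converges on $\iota(p^\epsilon\cL)$ and $\exp(\iota(p^\epsilon\cL))$ is a subgroup of $\cU^\times$; pushing it forward by $\psi_\cL^\times$ and using Lemma \ref{PsiL}(c), $\exp(p^\epsilon\cL)$ is a subgroup of $\cG_{p^\epsilon}(\cA)$, hence of $\Aut_K(A)$. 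Next I would check it is open: since $\cL$ and $\Der_\cR(\cA)$ are commensurable, term-by-term estimates on the logarithm series give $\log(\cG_{\pi^n}(\cA))\subseteq p^\epsilon\cL$, hence $\cG_{\pi^n}(\cA) = \exp\log(\cG_{\pi^n}(\cA))\subseteq\exp(p^\epsilon\cL)$, for all $n\gg 0$, and a subgroup containing a congruence subgroup is open; continuity of $\rho$ then shows $G_\cL = \rho^{-1}(\exp(p^\epsilon\cL))$ is an open subgroup of $G$. Finally, normality comes from the $G$-stability of $\cL$: conjugation by $\rho(g)$ in $\cE = \End_\cR(\cA)$ restricts on $\Der_\cR(\cA)$ to the $G$-action of Example \ref{GactDerEx}, so $\rho(g)\exp(p^\epsilon\cL)\rho(g)^{-1} = \exp(g\cdot(p^\epsilon\cL)) = \exp(p^\epsilon\cL)$, whence $gG_\cL g^{-1} = G_\cL$ for all $g\in G$.

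\textbf{Part (b).} For well-definedness and multiplicativity I would use that, by Lemma \ref{PsiL}(c), $\psi_\cL^\times$ restricts to an injective group homomorphism $\exp(\iota(p^\epsilon\cL))\to\exp(p^\epsilon\cL)$, so $\beta_\cL = (\psi_\cL^\times)^{-1}\circ\rho$ is a well-defined group homomorphism $G_\cL\to\exp(\iota(p^\epsilon\cL))\subseteq\cU^\times$ (a fortiori into $\hK{U(\cL)}^\times$). For the trivialisation property, I would write $\rho(g) = \exp(w_g)$ with $w_g\in p^\epsilon\cL$; the identity $\psi_\cL(\exp(\iota(w_g))) = \exp(w_g) = \rho(g)$ forces $\beta_\cL(g) = \exp(\iota(w_g))$, so conjugation by $\beta_\cL(g)$ on the $p$-adically complete ring $\cU$ equals $\exp(\ad\iota(w_g))$. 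From the defining relations of $U(\cL)$ one obtains $\ad(\iota(w_g))(\iota(a)) = \iota(w_g\cdot a)$ and $\ad(\iota(w_g))(\iota(v)) = \iota([w_g,v])$ for $a\in\cA$, $v\in\cL$, and hence $\exp(\ad\iota(w_g))(\iota(a)) = \iota(\rho(g)\cdot a) = g\cdot\iota(a)$ and $\exp(\ad\iota(w_g))(\iota(v)) = \iota(g\cdot v) = g\cdot\iota(v)$, using that conjugation by $\rho(g)=\exp(w_g)$ on $\Der_\cR(\cA)\subseteq\cE$ realises the $G$-action and that the $G$-action on $\cU$ is functorial. So conjugation by $\beta_\cL(g)$ and the $g$-action agree on the generating set $\iota(\cA\oplus\cL)$; being continuous $\cR$-algebra automorphisms of $\cU$, they agree on all of $\cU$, hence on $\hK{U(\cL)} = \cU\otimes_\cR K$. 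For $G$-equivariance, for $g\in G$ and $n\in G_\cL$ both $\beta_\cL({}^gn)$ and $g\cdot\beta_\cL(n)$ lie in $\exp(\iota(p^\epsilon\cL))$ --- the latter since $g\cdot\exp(\iota(w_n)) = \exp(\iota(g\cdot w_n))$ with $g\cdot w_n\in p^\epsilon\cL$ --- and Lemma \ref{PsiL}(d) gives $\psi_\cL(g\cdot\beta_\cL(n)) = \rho(g)\rho(n)\rho(g)^{-1} = \rho({}^gn) = \psi_\cL(\beta_\cL({}^gn))$, so the two agree by the injectivity of $\psi_\cL^\times$ on $\exp(\iota(p^\epsilon\cL))$.

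\textbf{Main obstacle.} The crux is that $\psi_\cL$ is far from injective, so the trivialisation identity --- which would be formal for the ``tautological'' map $\psi_\cL^{-1}\circ\rho$ landing in $\cE^\times$ --- cannot be transported back to $\cU^\times$ along $\psi_\cL$; it must be verified on the algebra generators $\iota(\cA\oplus\cL)$ and then propagated by density and continuity inside $\cU$. The other point that requires genuine $p$-adic Lie theory rather than formal algebra is the claim that $[p^\epsilon\cL,p^\epsilon\cL]\subseteq p^{2\epsilon}\cL$ forces $\exp(p^\epsilon\cL)$ to be a subgroup, and one large enough to contain a congruence subgroup.
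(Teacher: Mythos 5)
Your proof is correct, and it reproduces the paper's argument for the subgroup property of $\exp(p^\epsilon\cL)$, normality of $G_\cL$, well-definedness of $\beta_\cL$, and $G$-equivariance essentially word for word. You diverge from the paper in two places, and both divergences are sound.

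For the openness of $G_\cL$, the paper picks an open uniform pro-$p$ subgroup $N$ inside $\rho^{-1}(\cG_{p^\epsilon}(\cA))$, observes that $\log\rho(N)$ is a finitely generated $\Zp$-submodule of $\Der_K(A)$, scales by a power of $p$ to push $p^n\log\rho(N)$ into $p^\epsilon\cL$, and deduces $N^{p^n}\leq G_\cL$. You instead show directly that $\exp(p^\epsilon\cL)$ contains a congruence subgroup $\cG_{\pi^n}(\cA)$ of $\Aut_K(A)$ for $n\gg 0$: once $n$ is large enough that $\pi^n\in p^\epsilon\cR$ and $\pi^n\cR\subseteq p^\epsilon\pi^m\cR$ (where $\pi^m\Der_\cR(\cA)\subseteq\cL$), the standard valuation estimate $v(\pi^{n(k-1)}/k)\geq 0$ for all $k\geq 1$ shows $\log\varphi\in\pi^n\Der_\cR(\cA)\subseteq p^\epsilon\cL$ for every $\varphi\in\cG_{\pi^n}(\cA)$. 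This is in fact a slightly stronger statement — it establishes that $\exp(p^\epsilon\cL)$ is open in $\Aut_K(A)$ itself, not merely that its $\rho$-preimage is open in $G$ — and it sidesteps Lazard theory. Both versions then conclude by continuity of $\rho$.

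For the trivialisation identity, the paper proceeds via Lemma~\ref{PsiL}: from Lemma~\ref{PsiL}(d) one gets $\psi_{\cL}\bigl(\beta_{\cL}(g)\,s\,\beta_{\cL}(g)^{-1}\bigr) = \psi_{\cL}(g\cdot s)$ for all $s\in\cU$, and then Lemma~\ref{PsiL}(b), the injectivity of $\psi_\cL$ on $\iota(\cA\oplus\cL)$, lets one cancel $\psi_\cL$ when $s\in\iota(\cA\oplus\cL)$. Strictly speaking the paper's invocation of Lemma~\ref{PsiL}(b) implicitly uses that $\beta_{\cL}(g)\,\iota(\cA\oplus\cL)\,\beta_{\cL}(g)^{-1}\subseteq\iota(\cA\oplus\cL)$, which is not completely obvious from the statement alone. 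Your computation supplies exactly this missing observation: writing $\beta_\cL(g) = \exp(\iota(w_g))$ and expanding $\exp(\ad\iota(w_g))$ on $\iota(a)$ and $\iota(v)$ via the defining relations of $U(\cL)$ shows directly that conjugation by $\beta_\cL(g)$ stabilises $\iota(\cA)$ and $\iota(\cL)$ and agrees there with the $g$-action. So your route is marginally longer but makes an implicit step of the paper fully explicit. Both arguments finish the same way: the two continuous automorphisms agree on the topologically generating set $\iota(\cA\oplus\cL)$, hence on all of $\cU$.

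Your closing remark about the obstacle is accurate: since $\psi_\cL$ has a large kernel, the identity cannot be pulled back globally and must be checked on generators and then propagated by density — this is precisely what both you and the paper do.
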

\begin{proof} (a) Since $\cE$ is a $K$-Banach algebra, the Campbell-Baker-Hausdorff series
\[ \Phi(X,Y) = X + Y + \frac{1}{2} [X,Y] + \frac{1}{12}[X,[X,Y]] + \frac{1}{12}[Y,[Y,X]] + \cdots.\]
does converge at $(X,Y) = (p^\epsilon u, p^\epsilon v)$ for any $u,v \in \cL$, and 
\[ \exp(p^\epsilon u) \exp(p^\epsilon v) = \exp( \Phi( p^\epsilon u, p^\epsilon v) )\]
by \cite[Proposition 6.27]{DDMS}. Moreover, since $[\cL, \cL] \subseteq \cL$ and since $\cL$ is $\pi$-adically complete, we see that $\Phi(p^\epsilon u, p^\epsilon v) \in p^\epsilon \cL$. Thus $\exp( p^\epsilon \cL )$ is a subgroup of $\cE^\times$.

Let $g \in G_{\cL}$, so that $\rho(g) = \exp(u)$ for some $u \in p^\epsilon \cL$. If $x \in G$ then $x \cdot u = \dot{\rho}(x)(u) = \rho(x) u \rho(x)^{-1}$ by definition, so
\[ \rho(xgx^{-1}) = \rho(x) \exp(u) \rho(x)^{-1} = \exp(x \cdot u) \in \exp(p^\epsilon \cL)\]
because $\cL$ is $G$-stable.  Thus $xgx^{-1} \in G_{\cL}$ and $G_{\cL}$ is normal in $G$. To see that it is open, choose $N \leq_o^u G$ contained in $\rho^{-1}(\cG_{p^\epsilon}(\cA))$ as in the proof of Proposition \ref{GactsOnX}. Then $\log \rho(N)$ is a finitely generated $\Zp$-submodule of $\Der_K(A)$, and because $\cL$ is an $\cA$-lattice in $\Der_K(A)$ we can find some $n \geq 0$ such that $p^n \log \rho(N) \subseteq p^\epsilon \cL$. Hence $N^{p^n} \leq G_{\cL}$, so $G_{\cL}$ is open in $G$.

(b) $\beta_{\cL}$ is a well-defined group homomorphism by Lemma \ref{PsiL}(c). Now
\[ \psi_{\cL}(\beta_{\cL}(g) \hsp s \hsp \beta_{\cL}(g)^{-1}) = \rho(g) \hsp \psi_{\cL}(s) \hsp \rho(g)^{-1} = \psi_{\cL}(g \cdot s)\]
for any $g \in G_{\cL}$ and $s \in \cU$ by Lemma \ref{PsiL}(d), so  Lemma \ref{PsiL}(b) implies that
\[ \beta_{\cL}(g) \hsp s \hsp \beta_{\cL}(g)^{-1} = g \cdot s\]
for all $g \in G_{\cL}$ and $s \in \iota(\cA \oplus \cL)$. But $\iota(\cA \oplus \cL)$ generates $\cU$ as a topological $\cR$-algebra, so this equation actually holds for all $s \in \cU$. Thus $\beta_{\cL}$ is a trivialisation of the $G_{\cL}$-action on $\cU$. Finally, applying Lemma \ref{PsiL}(d) again gives
\[ \psi_{\cL}( \beta_{\cL}( {}^xg ) ) = \rho(xgx^{-1}) = \rho(x) \hsp \psi_{\cL}( \beta_{\cL}(g)) \hsp \rho(x)^{-1} = \psi_{\cL}( x \cdot \beta_{\cL}(g) )\]
for all $x \in G$ and $g \in G_{\cL}$. Note that $\exp : p^\epsilon \cU \to \cU^\times$ is $G$-equivariant, and that $\beta_{\cL}(g) \in \exp( \iota( p^\epsilon \cL ) )$ by construction. Hence $x \cdot \beta_{\cL}(g) \in \exp( \iota( p^\epsilon \cL ) )$, so $\beta_{\cL}({}^xg) = x \cdot \beta_{\cL}(g)$ by Lemma \ref{PsiL}(c).

We have shown that $\beta_{\cL}$ is a trivialisation of the $G_{\cL}$-action on $\cU$. It follows easily that when regarded as a map $G_{\cL} \to \hK{U(\cL)}^\times = (\cU \otimes_{\cR}K)^\times$, it is also a trivialisation of the $G_{\cL}$-action on $\hK{U(\cL)}$.
\end{proof}

\begin{defn}\label{DefnTrivPair} Let $\cA$ be a $G$-stable affine formal model in $A$. We say that $(\cL,N)$ is an \emph{$\cA$-trivialising pair} if $\cL$ is a $G$-stable $\cA$-Lie lattice in $\Der_K(A)$ and $N$ is an open normal subgroup of $G$ contained in $G_{\cL}$. We denote the set of all $\cA$-trivialising pairs by $\cI(\cA,\rho, G)$ or simply by $\cI(G)$ if the other parameters are understood.
\end{defn}

Recall that $\cD(A) := U(\Der_K(A))$ denotes the algebra of differential operators on $A$ of finite order, and let $\varphi : A \to A'$ be an \'etale morphism of $K$-affinoid algebras. By \cite[Lemma 2.4]{DCapOne} there is a unique map $\tilde{\varphi} : \Der_K(A) \to \Der_K(A')$ which is a $\varphi$-morphism in the sense of Definition \ref{PhiMorph}, so Lemma \ref{Ufunctor} induces a $K$-algebra homomorphism $U(\varphi, \tilde{\varphi}) : \cD(A) \to \cD(A')$ extending $\varphi$ and $\tilde{\varphi}$. 

\begin{lem}\label{LogRhoLemma} Let $N$ and $N'$ be compact $p$-adic Lie groups, acting continuously on $K$-affinoid algebras $A$ and $A'$, respectively, let $\varphi : A \to A'$ be an \'etale morphism and let $\tau : N \to N'$ be a group homomorphism such that $\varphi(n \cdot a) = \tau(n) \cdot \varphi(a)$ for all $n \in N$, $a \in A$.  Let $\cA$ be an $N$-stable affine formal model in $A$ and let $\cA'$ be a $N'$-stable affine formal model in $A'$ such that $\varphi(\cA) \subseteq \cA'$. Suppose that 
\[\rho(N) \subseteq \exp(p^\epsilon \Der_{\cR}(\cA)) \qmb{and} \rho'(N') \subseteq \exp(p^\epsilon \Der_{\cR}(\cA')). \]
Then $\tilde{\varphi} \circ \log \circ \rho = \log \circ \rho' \circ \tau.$
\end{lem}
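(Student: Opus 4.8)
The plan is to check the identity $\tilde\varphi \circ \log \circ \rho = \log \circ \rho' \circ \tau$ at the level of the associated $\Zp$-Lie algebra maps, after first passing from the compact $p$-adic Lie groups $N, N'$ to open uniform pro-$p$ subgroups on which everything is controlled. First I would choose an open uniform pro-$p$ subgroup $N_0 \trianglelefteq_o^u N$ (using Lemma \ref{SmallU}) small enough that $\tau(N_0)$ is contained in some fixed open uniform pro-$p$ subgroup $N_0' \trianglelefteq_o^u N'$; this is possible since $\tau$ is continuous (being a homomorphism of $p$-adic Lie groups, as the hypothesis $\varphi(n\cdot a) = \tau(n)\cdot\varphi(a)$ pins it down, or one simply assumes $\tau$ continuous). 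Since $\log\circ\rho$ and $\log\circ\rho'$ are injective on the relevant congruence-type subgroups and the two sides of the asserted equality are continuous maps of topological groups agreeing on a dense subset once they agree on $N_0$, it suffices to prove the identity after restricting to $N_0$. The key point here is that, as recalled after Lemma \ref{SmallU} and as used in the proof of Proposition \ref{GactsOnX}, $\log\circ\rho_{|N_0} : L_{N_0} \to p^\epsilon \Der_{\cR}(\cA)$ is a $\Zp$-Lie algebra homomorphism, and likewise for $N_0'$.

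The second step is to identify, for each $n \in N_0$, the derivation $\log\rho(n)$ explicitly and to compute $\tilde\varphi(\log\rho(n))$. Writing $\rho(n) = 1 + \delta$ with $\delta$ an $\cR$-linear endomorphism of $\cA$ landing in $p^\epsilon\cA$, the series $\log\rho(n) = \sum_{r\ge 1} (-1)^{r-1}\delta^r/r$ converges in $\End_{\cR}(\cA)$ and is a derivation of $\cA$ by the formal computation cited in the proof of Proposition \ref{GactsOnX} (see \cite[proof of Theorem 4]{Praagman}). Now $\tilde\varphi$, being the unique $\varphi$-morphism $\Der_K(A)\to\Der_K(A')$ supplied by \cite[Lemma 2.4]{DCapOne}, satisfies $\varphi(v(a)) = \tilde\varphi(v)(\varphi(a))$ for all $v\in\Der_K(A)$, $a\in A$. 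The heart of the argument is to verify that $\log\rho'(\tau(n))$ is exactly the unique derivation of $\cA'$ extending $\tilde\varphi(\log\rho(n))$ across $\varphi(\cA)\subseteq\cA'$, i.e. that the two derivations agree on the image of $\varphi$: this follows because $\rho'(\tau(n))\circ\varphi = \varphi\circ\rho(n)$ on $\cA$ (rearranging $\varphi(n\cdot a) = \tau(n)\cdot\varphi(a)$, and noting $n\cdot a = \rho(n)(a)$, $\tau(n)\cdot\varphi(a) = \rho'(\tau(n))(\varphi(a))$), so expanding both logarithm series and using the intertwining relation term-by-term gives $\log\rho'(\tau(n))\circ\varphi = \varphi\circ\log\rho(n)$ on $\cA$, which by the defining property of $\tilde\varphi$ says precisely $\log\rho'(\tau(n))(\varphi(a)) = \varphi((\log\rho(n))(a)) = \tilde\varphi(\log\rho(n))(\varphi(a))$ for all $a\in\cA$.

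The final step is to upgrade ``agreement on $\varphi(\cA)$'' to ``$\log\rho'(\tau(n)) = \tilde\varphi(\log\rho(n))$ as derivations of $A'$''. This is where I expect the only real subtlety: a $K$-linear derivation of $A'$ is not determined by its values on $\varphi(\cA)$ in general. The remedy is to use \'etaleness: since $\varphi : A \to A'$ is \'etale, $\Der_K(A') = A'\otimes_A \Der_K(A)$ via the natural map, equivalently $\Omega^1_{A'/K} = A'\otimes_A\Omega^1_{A/K}$, so a derivation of $A'$ is determined by an $A'$-linear functional on $A'\otimes_A\Omega^1_{A/K}$, hence by its restriction along $\varphi$ together with $A'$-linearity; both $\log\rho'(\tau(n))$ and $\tilde\varphi(\log\rho(n))$ are $K$-derivations of $A'$ whose composites with $A\to A'$ coincide (as just shown, at least on $\cA$, and then on $A = \cA\otimes_\cR K$ by $K$-linearity), so they are equal. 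Assembling over $n\in N_0$ and then letting the uniform subgroup shrink, we get $\tilde\varphi\circ\log\circ\rho = \log\circ\rho'\circ\tau$ on all of $N$, as required. The main obstacle, then, is precisely this last rigidity statement for derivations under \'etale base change; everything else is bookkeeping with convergent logarithm series and the universal properties already established in the paper.
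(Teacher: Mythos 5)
Your core argument (steps two and three) is exactly the paper's: show that $\rho'(\tau(n))\circ\varphi=\varphi\circ\rho(n)$ forces $\varphi\circ\log\rho(n)=\log\rho'(\tau(n))\circ\varphi$ by passing through the convergent log series, then invoke \'etaleness to conclude that a $K$-derivation of $A'$ is determined by its composite with $\varphi$. The paper is slightly more careful at the series step---it takes partial sums $\ell_m(t)=-\sum_{r=1}^m(1-t)^r/r$ and uses continuity of $\varphi$ (automatic by Tate, \cite[Theorem 6.1.3/1]{BGR}) to push $\varphi$ past the limit---but you acknowledge the convergence issue and this is a presentational rather than mathematical difference.

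However, your first paragraph contains a genuine error, fortunately in a step you do not need. You propose to reduce to an open uniform pro-$p$ subgroup $N_0\leq N$ and then extend, arguing that ``the two sides of the asserted equality are continuous maps of topological groups agreeing on a dense subset once they agree on $N_0$.'' An open subgroup of a compact group has finite index and is therefore \emph{closed}, not dense, so this extension argument fails; nor are $\log\circ\rho$ and $\log\circ\rho'\circ\tau$ group homomorphisms on all of $N$, so agreement on a subgroup does not propagate along cosets. The good news is that the hypothesis already gives $\rho(N)\subseteq\exp(p^\epsilon\Der_{\cR}(\cA))$ on \emph{all} of $N$, so $\log\rho(n)$ is a well-defined derivation for every $n\in N$, and your steps two and three apply verbatim to arbitrary $n\in N$ without any reduction. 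The fact that $\log\circ\rho_{|N_0}$ is a $\Zp$-Lie algebra homomorphism, while true, plays no role in this lemma. Simply delete the first paragraph and the proof is correct and matches the paper's.
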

\begin{proof}For each $m \geq 1$, let $\ell_m(t) := -\sum_{r=1}^m \frac{(1 - t)^r}{r} \in K[t]$ be the $m$-th partial sum in the logarithm series $\log(t)$ appearing in the proof of Proposition \ref{GactsOnX}. Fix $n \in N$ and $a \in A$. Because $\varphi : A \to A'$ is continuous by \cite[Theorem 6.1.3/1]{BGR} and $\ell_m(\rho(n))(a) \to \log(\rho(n))(a)$ in $A$, $\varphi( \ell_m(\rho(n)) (a) ) \to \varphi( \log(\rho(n)) (a) )$ in $A'$. Hence
\[ \tilde{\varphi}\left(\log(\rho(n))\right)(\varphi(a)) = \varphi( \log(\rho(n))(a) = \lim_{m\to\infty} \varphi( \ell_m(\rho(n))(a) ).\]
Now the $N$-equivariance of $\varphi$ shows that $\varphi( \ell_m(\rho(n))(a) ) = \ell_m(\rho'(n'))(\varphi(a))$. Hence
\[ \tilde{\varphi}\left(\log(\rho(n))\right)(\varphi(a)) = \lim_{m\to\infty} \ell_m(\rho'(n'))(\varphi(a)) = \log(\rho'(n')) ( \varphi(a) ).\]
Because $\varphi : A \to A'$ is \'etale, it follows that the derivations $\tilde{\varphi}\left(\log(\rho(n))\right)$ and $\log(\rho'(n'))$ of $A'$ are equal. 
\end{proof}

By Theorem \ref{Beta} and Definition \ref{DefnTriv}, we have the $\cR$-algebras
\[\htf{U(\cL)} \rtimes_N G := \htf{U(\cL)} \rtimes_N^{\beta_{\cL|N}} G \qmb{and} \hK{U(\cL)} \rtimes_N G := \hK{U(\cL)} \rtimes_N^{\beta_{\cL|N}} G\] 
at our disposal whenever $(\cL,N)$ is an $\cA$-trivialising pair. We finish $\S$ \ref{GActionSection} by discussing the functoriality of the second construction in a rather general setting. 

\begin{prop}\label{hUGfunc}Let $G$ and $G'$ be compact $p$-adic Lie groups, acting continuously on $K$-affinoid algebras $A$ and $A'$, respectively, and suppose that $\tau : G \to G'$ is a group homomorphism such that $\varphi(g \cdot a) = \tau(g) \cdot \varphi(a)$ for all $g \in G$, $a \in A$.  Let $\cA$ be a $G$-stable affine formal model in $A$ and let $\cA'$ be a $G'$-stable affine formal model in $A'$. Let $(\cL, N)$ be an $\cA$-trivialising pair, let $(\cL',N')$ be an $\cA'$-trivialising pair, and suppose that
\[\varphi(\cA) \subseteq \cA', \quad \tilde{\varphi}(\cL) \subseteq \cL' \qmb{and} \tau(N) \subseteq \tau(N').\]
Then there is a unique continuous $K$-algebra homomorphism 
\[\hK{\theta} \rtimes \tau : \hK{U(\cL)} \rtimes_N G \longrightarrow \hK{U(\cL')} \rtimes_{N'} G'\]
which makes the following diagram commute:
\[ \xymatrix{ \cD(A) \rtimes G \ar[rr]^{U(\varphi, \tilde{\varphi}) \rtimes \tau} \ar[d] && \cD(A') \rtimes G' \ar[d] \\ \hK{U(\cL)} \rtimes_N G \ar[rr]_{\hK{\theta} \rtimes \tau} && \hK{U(\cL')} \rtimes_{N'} G'.}\]
\end{prop}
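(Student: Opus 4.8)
The plan is to obtain $\hK{\theta} \rtimes \tau$ as an instance of the functoriality of crossed products recorded in Lemma \ref{CPfunc}, applied with $S = \hK{U(\cL)}$, $S' = \hK{U(\cL')}$, $\beta = \beta_{\cL|N}$ and $\beta' = \beta_{\cL'|N'}$. The first task is therefore to construct the ring homomorphism $\hK{\theta} \colon \hK{U(\cL)} \to \hK{U(\cL')}$ underlying it. Since $\tilde{\varphi} \colon \Der_K(A) \to \Der_K(A')$ is a $\varphi$-morphism and $\varphi(\cA) \subseteq \cA'$, $\tilde{\varphi}(\cL) \subseteq \cL'$, the restriction $\tilde{\varphi}_{|\cL} \colon \cL \to \cL'$ is a $\varphi_{|\cA}$-morphism in the sense of Definition \ref{PhiMorph} --- all three defining conditions are inherited from the situation over $K$, using that $\cL$ and $\cL'$ are Lie lattices so that $\cL(\cA) \subseteq \cA$ and $\cL'(\cA') \subseteq \cA'$. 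Lemma \ref{Ufunctor} then yields a filtration-preserving $\cR$-algebra homomorphism $U(\varphi_{|\cA}, \tilde{\varphi}_{|\cL}) \colon U(\cL) \to U(\cL')$; $\pi$-adically completing it, passing to $\cR$-torsion-free quotients and inverting $\pi$ produces a $K$-algebra homomorphism $\hK{\theta}$ which carries the unit ball $\htf{U(\cL)}$ into $\htf{U(\cL')}$ (hence is continuous), and which is compatible with $U(\varphi, \tilde{\varphi}) \colon \cD(A) \to \cD(A')$ and the natural completion maps by construction.

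Next I would verify the two hypotheses of Lemma \ref{CPfunc} (the third, $\tau(N) \subseteq N'$, being among the assumptions). For the equivariance $\hK{\theta}(g \cdot s) = \tau(g) \cdot \hK{\theta}(s)$: the compatibility $\varphi(g \cdot a) = \tau(g) \cdot \varphi(a)$ together with the uniqueness of the $\varphi$-morphism $\tilde{\varphi}$ from \cite[Lemma 2.4]{DCapOne} forces $\tilde{\varphi} \circ \dot{\rho}(g) = \dot{\rho'(\tau(g))} \circ \tilde{\varphi}$ on $\Der_K(A)$ (check that $v \mapsto \rho'(\tau(g))^{-1}\,\tilde{\varphi}(\dot{\rho}(g)(v))\,\rho'(\tau(g))$ is a $\varphi$-morphism), so that $(\varphi_{|\cA}, \tilde{\varphi}_{|\cL})$ intertwines the $\sL\sR_\cR$-morphisms $(\rho(g)_{|\cA}, \dot{\rho}(g)_{|\cL})$ and $(\rho'(\tau(g))_{|\cA'}, \dot{\rho'(\tau(g))}_{|\cL'})$; applying $U(-)$ and completing gives the claim. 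For the trivialisation condition $\hK{\theta}^\times \circ \beta_{\cL|N} = \beta_{\cL'|N'} \circ \tau_{|N}$: any $n \in N \subseteq G_{\cL}$ has $\rho(n) \in \exp(p^\epsilon \cL) \subseteq \exp(p^\epsilon \Der_{\cR}(\cA))$, and likewise for $N'$, so Definition \ref{BetaDefn} and Lemma \ref{PsiL}(c) give $\beta_{\cL}(n) = \exp(\iota(\log\rho(n)))$ with $\log\rho(n) \in p^\epsilon\cL$, and similarly $\beta_{\cL'}(\tau(n)) = \exp(\iota'(\log\rho'(\tau(n))))$. Since $\hK{\theta} \circ \iota = \iota' \circ \tilde{\varphi}_{|\cL}$ on $\cL$ and $\hK{\theta}$ commutes termwise with the exponential series (it preserves the integral structure), this identity collapses to $\tilde{\varphi}(\log\rho(n)) = \log\rho'(\tau(n))$ inside $p^\epsilon\cL'$ --- which is exactly Lemma \ref{LogRhoLemma} (whose hypotheses hold since $\cA, \cA'$ are $N$-, $N'$-stable, $\varphi(\cA) \subseteq \cA'$, and $\rho(N), \rho'(N')$ land in the relevant exponential images).

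Granting these, Lemma \ref{CPfunc} produces the ring homomorphism $\hK{\theta}\rtimes\tau \colon \hK{U(\cL)}\rtimes_N G \to \hK{U(\cL')}\rtimes_{N'}G'$ determined by $s\,\gamma(g) \mapsto \hK{\theta}(s)\,\gamma'(\tau(g))$. It is continuous because $G$ is compact, so $G/N$ is finite and $\hK{U(\cL)}\rtimes_N G$ is a finite free $\hK{U(\cL)}$-module (Lemma \ref{RingSGN}(b)); fixing coset representatives, the map acts on each coordinate by the bounded operator $\hK{\theta}$. Commutativity of the square is a direct check on elements $v\,g$ with $v \in \cD(A)$, $g \in G$: both composites send $v\,g$ to the image of $v$ in $\hK{U(\cL')}$ times $\gamma'(\tau(g))$. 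Finally, uniqueness holds because the image of $\cD(A)\rtimes G$ in $\hK{U(\cL)}\rtimes_N G$ is dense --- it contains the dense image of $U(\cL)$ in $\hK{U(\cL)}$ together with all of $\gamma(G)$ --- while the target is Hausdorff.

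The only genuinely non-formal point is the trivialisation identity $\hK{\theta}^\times \circ \beta_{\cL|N} = \beta_{\cL'|N'} \circ \tau_{|N}$ in the second step, and it has been set up precisely so as to reduce to Lemma \ref{LogRhoLemma}; everything else is bookkeeping with the functor $U(-)$, its $\pi$-adic completion, and the explicit description of $\beta_{\cL}$ via $\exp$ and $\log$.
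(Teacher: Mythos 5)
Your proof is correct and follows essentially the same route as the paper: construct $\theta = U(\varphi_{|\cA}, \tilde{\varphi}_{|\cL})$ via Lemma \ref{Ufunctor}, verify its $G$-equivariance, use the factorisation $\beta_{\cL} = \exp \circ \iota \circ \log \circ \rho$ together with Lemma \ref{LogRhoLemma} to obtain the trivialisation compatibility, apply Lemma \ref{CPfunc}, and get uniqueness from density of $\cD(A)\rtimes G$. The only cosmetic difference is that you spell out the continuity of $\hK{\theta}\rtimes\tau$ via the finite free $\hK{U(\cL)}$-module structure and you derive the equivariance of $\theta$ from the uniqueness of $\tilde{\varphi}$ in \cite[Lemma 2.4]{DCapOne}, whereas the paper records the equivariance as a pair of commutative diagrams; both are the same observation.
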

\begin{proof} Note that $\tilde{\varphi}_{|\cL} : \cL \to \cL'$ is a $\varphi_{|\cA}$-morphism in the sense of Definition \ref{PhiMorph}, so Lemma \ref{Ufunctor} induces an $\cR$-algebra homomorphism
\[ \theta := U(\varphi_{|\cA}, \tilde{\varphi}_{|\cL}) : U(\cL) \to U(\cL')\]
extending $\varphi_{|\cA}$ and $\tilde{\varphi}_{|\cL}$.  Let $\rho : G \to \Aut_K(A)$ and $\rho' : G \to \Aut_K(A')$ be the given actions, and write $g' := \tau(g)$ for any $g \in G$. The diagrams
\[ \xymatrix{ \cA \ar[rr]^{\rho(g)_{|\cA}}\ar[d]_{\varphi_{|\cA}} && \cA \ar[d]^{\varphi_{|\cA}} \\ \cA' \ar[rr]_{\rho'(g')_{|\cA'}} && \cA' } \qmb{and} \xymatrix{ \cL \ar[rr]^{\dot{\rho}(g)_{|\cL}} \ar[d]_{\tilde{\varphi}_{|\cL}} && \cL \ar[d]^{\tilde{\varphi}_{|\cL}} \\ \cL' \ar[rr]_{\dot{\rho}'(g')_{|\cL'}} && \cL' }\]
are commutative for any $g \in G$, so $\theta$ is $G$-equivariant with respect to the natural $G$-action on $U(\cL)$ and the $\tau$-twisted $G$-action on $U(\cL')$:
\begin{equation}\label{ThetaEq} \theta\left( g \cdot x \right) = g' \cdot \theta(x) \qmb{for all} g \in G, x \in U(\cL).\end{equation}
Hence its $\pi$-adic completion $\htf{\theta} : \htf{U(\cL)} \to \htf{U(\cL')}$ is also $G$-equivariant. Next, 
\begin{equation}\label{ExpIotaLogRho} \beta_{\cL} = (\psi_{\cL}^\times)^{-1} \circ \rho = \exp \circ \iota \circ \log \circ \rho\end{equation}
from Definition \ref{BetaDefn}, so consider the diagram
\[ \xymatrix{ N \ar[rr]^{\log \circ \rho} \ar[d]_{\tau} && p^\epsilon \cL \ar[rr]^{\exp \circ \iota}\ar[d]^{\tilde{\varphi}} && \htf{U(\cL)}^\times \ar[d]^{\htf{\theta}^\times} \\ N' \ar[rr]_{\log \circ \rho'}  && p^\epsilon \cL' \ar[rr]_{\exp\circ\iota'} && \htf{U(\cL')}^\times .} \]
The first square commutes by Lemma \ref{LogRhoLemma}, whereas the second square commutes by the definition of $\htf{\theta}$. Hence
\begin{equation}\label{BetaEq}\htf{\theta}^\times \circ \beta_{\cL|N} = \beta_{\cL'|N'} \circ \tau_{|N}.\end{equation}
By $(\ref{ThetaEq})$ and $(\ref{BetaEq})$, we may apply Lemma \ref{CPfunc} to $\hK{\theta} : \hK{U(\cL)} \to \hK{U(\cL')}$ and $\tau : G \to G'$ to obtain the $K$-algebra homomorphism
\[\hK{\theta} \rtimes \tau : \hK{U(\cL)} \rtimes_N G \longrightarrow \hK{U(\cL')} \rtimes_{N'} G'\]
which makes the diagram in the statement of the Lemma commute. Any other continuous map $\hK{U(\cL)}\rtimes_N G \to \hK{U(\cL')} \rtimes_{N'} G'$ making the diagram commute agrees with $\hK{\theta} \rtimes \tau$ on the dense image of $\cD(A) \rtimes G$ in $\hK{U(\cL)} \rtimes_N G$, and therefore must be equal to $\hK{\theta} \rtimes \tau$.
\end{proof}

\subsection{The completed skew-group ring \ts{\w\cD(\bX,G)}}
\label{MainConstr}
We continue to assume throughout this subsection that:
\begin{itemize} 
\item $\bX$ is a $K$-affinoid variety,
\item $G$ is a compact $p$-adic Lie group acting continuously on $\bX$, and
\item $\cA$ is a $G$-stable affine formal model in $A := \cO(\bX)$,
\end{itemize}
Recall that this means that we are given a group homomorphism 
\[\rho : G \to \Aut_K(\bX, \cO_\bX)\] 
satisfying certain continuity conditions spelt out in Definition \ref{CtsAct}. Note that the set of $\cA$-trivialising pairs $\cI(G)$ becomes directed when ordered by component-wise reverse inclusion:
\[ (\cL_1, N_1) \leq (\cL_2, N_2) \qmb{if and only if} \cL_1 \supseteq \cL_2 \qmb{and} N_1 \supseteq N_2.\]
Whenever $(\cL_1, N_1) \leq (\cL_2, N_2)$, Lemma \ref{CPfunc} and Lemma \ref{Ufunctor} induce canonical connecting homomorphisms
\[\begin{array}{rcl} \htf{U(\cL_2)} \rtimes_{N_2} G &\longrightarrow& \htf{U(\cL_1)} \rtimes_{N_1}G \qmb{and} \\

\hK{U(\cL_2)} \rtimes_{N_2}G &\longrightarrow& \hK{U(\cL_1)} \rtimes_{N_1}G\end{array}.\]
Using these connecting maps, we can now give our first central definition.

\begin{defn}\label{StarDefn}  We define the \emph{completed skew-group algebra}
\[ \w\cD(\bX,G)_{\cA} := \invlim\limits_{(\cL, N)\in \cI(\cA, \rho, G) } \hK{U(\cL)} \rtimes_N G\]
and the \emph{integral completed skew-group ring}
\[ \cA \Star G := \invlim\limits_{(\cL, N)\in \cI(\cA, \rho, G) } \htf{U(\cL)} \rtimes_N G.\]
\end{defn}

\begin{rmk}\label{GammaMap}\hspace{2em}
\be \item It can be shown that the canonical map $\cA \to \invlim\limits \h{U(\pi^n \cL)}$ is an isomorphism whenever  $\cL$ is a smooth $(\cR, \cA)$-Lie algebra. This justifies the notation $\cA \Star G$: the $(K,\cO(\bX))$-algebra $\cT(\bX)$, as well as finite non-empty products of elements in $\cT(\bX)$, disappear when we pass to the limit in the integral completed skew-group ring, and only $\cA$ and $G$ remain.
\item We will shortly see that in fact $\w\cD(\bX,G)_{\cA}$ does not depend on the choice of $\cA$, up to canonical isomorphism. 
\item There is a canonical group homomorphism 
\[ \gamma : G \to  \w\cD(\bX,G)_{\cA}^\times\]
and a canonical $K$-algebra homomorphism
\[ i : \cD(\bX) \to \w\cD(\bX,G)_{\cA}.\]
These extend to a canonical $K$-algebra homomorphism
\[ [i \rtimes \gamma]_{\cA} : \cD(\bX) \rtimes G \longrightarrow \w\cD(\bX,G)_{\cA}.\]
\ee\end{rmk}

\begin{defn}\label{GoodChain} Let $(N_\bullet) := N_0 \geq N_1 \geq N_2 \geq \cdots$ be a chain of open normal subgroups of $G$, such that $\bigcap_{n=0}^\infty N_n = \{1\}$, and let $\cL$ be a $G$-stable $\cA$-Lie lattice in $\Der_K(A)$. 

We say $(N_\bullet)$ is a \emph{good chain for $\cL$} if $(\pi^n \cL, N_n) \in \cI(\cA, \rho, G)$ for all $n \geq 0$.
\end{defn}

\begin{lem}\label{StdPres} For every good chain $N_\bullet$ for $\cL$, there is a canonical isomorphism
\[ \w\cD(\bX,G)_{\cA} \cong \invlim \hK{U(\pi^n \cL)} \rtimes_{N_n} G\]
of $K$-algebras.\end{lem}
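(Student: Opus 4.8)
The plan is to show that the increasing chain $\{(\pi^n\cL, N_n) : n \geq 0\}$ is cofinal in the directed poset $\cI(\cA,\rho,G)$, and then to invoke the standard fact that the inverse limit of an inverse system indexed by a directed poset is canonically isomorphic to the inverse limit of its restriction along any cofinal subset. Since $N_\bullet$ is a good chain for $\cL$, each $(\pi^n\cL,N_n)$ is by definition an element of $\cI(\cA,\rho,G)$, and because $\pi^n\cL \supseteq \pi^{n+1}\cL$ and $N_n \supseteq N_{n+1}$ these pairs do form an increasing chain in $\cI(\cA,\rho,G)$; the connecting maps between the corresponding terms $\hK{U(\pi^n\cL)} \rtimes_{N_n} G$ are precisely the restrictions of the canonical connecting homomorphisms induced by Lemma \ref{CPfunc} and Lemma \ref{Ufunctor}, so this chain is genuinely a sub-inverse-system of the one defining $\w\cD(\bX,G)_{\cA}$. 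It therefore suffices to establish cofinality.

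To that end, fix an arbitrary trivialising pair $(\cM,H) \in \cI(\cA,\rho,G)$; I must produce an integer $n \geq 0$ with $(\pi^n\cL,N_n) \geq (\cM,H)$, that is, with $\pi^n\cL \subseteq \cM$ and $N_n \subseteq H$. For the first inclusion: $\cL$ and $\cM$ are both $\cA$-lattices in the finite-dimensional $K$-vector space $\Der_K(A)$, hence contain $\pi$-power multiples of one another as in the proof of Lemma \ref{StabLopen}(b), so $\pi^{n_1}\cL \subseteq \cM$ for some $n_1 \geq 0$. For the second inclusion I use compactness of $G$: each $N_n$ is an open, hence closed, subgroup of the compact space $G$, so the sets $N_n \setminus H$ form a descending chain of closed subsets of $G$ whose intersection is $\left(\bigcap_{n} N_n\right) \setminus H = \{1\} \setminus H = \emptyset$; by the finite intersection property some $N_{n_2} \setminus H = \emptyset$, i.e. $N_{n_2} \subseteq H$. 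Setting $n := \max(n_1, n_2)$ and using that both $\pi^\bullet\cL$ and $N_\bullet$ are descending gives $\pi^n\cL \subseteq \pi^{n_1}\cL \subseteq \cM$ and $N_n \subseteq N_{n_2} \subseteq H$, as required.

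With cofinality established, the canonical projection from $\w\cD(\bX,G)_{\cA} = \invlim_{(\cL,N)\in\cI(\cA,\rho,G)} \hK{U(\cL)}\rtimes_N G$ onto $\invlim_{n} \hK{U(\pi^n\cL)}\rtimes_{N_n} G$ is an isomorphism of $K$-algebras, which is exactly the assertion. The whole argument is essentially formal; the only step with any content is the compactness argument guaranteeing $N_n \subseteq H$ for large $n$, and even that is routine, so I do not expect a serious obstacle here.
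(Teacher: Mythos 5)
Your proof is correct and matches the paper's argument: both establish cofinality of $\{(\pi^n\cL, N_n)\}$ in $\cI(\cA,\rho,G)$, using containment of $\cA$-lattices for the Lie-lattice component and a compactness argument for the subgroup component. The only (purely cosmetic) difference is that you apply the finite intersection property to the descending closed sets $N_n \setminus H$, whereas the paper extracts a finite subcover from the open covering $G \setminus H \subseteq \bigcup_n (G \setminus N_n)$ of the compact set $G \setminus H$.
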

\begin{proof} Let $(N_\bullet)$ be a good chain for $\cL$, and let $(\cL', N')$ be some other member of $\cI(\cA, \rho, G)$. Since $N'$ is open, its complement is closed and therefore compact. Because $\bigcap_{n=1}^\infty N_n$ is trivial, $G \backslash N' \subseteq \bigcup_{n=0}^\infty G \backslash N_n$ is an open covering, so by compactness, $G \backslash N' \subseteq G \backslash N_r$ for some $r\geq 0$. In other words,  $N'$ contains $N_r$. Also $\cL'$ contains $\pi^s \cL$ for some $s \geq 0$. Taking $n = \max\{r,s\}$ we see that $(\pi^n \cL, N_n) \geq (\cL', N)$. Thus $\{(\pi^n \cL, N_n) : n \geq 0\}$ is cofinal inside $\cI(\cA, \rho, G)$, and the result follows.
\end{proof}

\begin{rmk} By Definition \ref{DefnTrivPair} and Theorem \ref{Beta}(a), $(\cL, G_{\cL})$ is always an $\cA$-trivialising pair, so if the action of $G$ is \emph{faithful} in the sense that $\ker \rho$ is trivial, then $\{(\pi^n \cL, G_{\pi^n \cL}) : n \geq 0\}$ is a good chain for $\cL$. Lemma \ref{StdPres} now shows that in this case $\w\cD(\bX,G)$ can be defined in a slightly less elaborate way as follows:
\[ \w\cD(\bX,G)_{\cA} = \invlim \hK{U(\pi^n\cL)} \underset{G_{\pi^n \cL}}{\rtimes} G\]
for any choice of $G$-stable $\cA$-Lie lattice $\cL$. However, this does not give the correct definition when the action is not faithful, and keeping track of all $\cA$-trivialising pairs will afford some extra flexibility.
\end{rmk}

\begin{lem}\label{FastChain} Let $H_0, H_1, H_2,\ldots$ be open subgroups of $G$. Then there is a chain $N_0 \geq N_1 \geq N_2 \geq \cdots$ of open normal subgroups such that $\bigcap_{n=0}^\infty N_n$ is trivial, and $N_n \leq H_n$ for all $n \geq 0$.
\end{lem}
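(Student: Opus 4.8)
The plan is to reduce everything to two standard facts about compact $p$-adic Lie groups and then combine them in a way that forces monotonicity by hand. First I would fix once and for all a descending chain $M_0 \geq M_1 \geq M_2 \geq \cdots$ of open \emph{normal} subgroups of $G$ with $\bigcap_{n\geq 0} M_n = \{1\}$. Such a chain exists: applying Lemma \ref{SmallU} with $H = G$ produces an open uniform pro-$p$ subgroup $N$ of $G$ that is normal in $G$, and then the subgroups $M_k := N^{p^k}$ are characteristic in $N$ (hence normal in $G$), open in $G$, and satisfy $\bigcap_{k\geq 0} N^{p^k} = \{1\}$ by the basic theory of uniform pro-$p$ groups \cite[\S 3.1, \S 4.1]{DDMS}.

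Next, for each $n \geq 0$ I would use the hypothesis that $H_n$ is open in the profinite group $G$ to choose, by \cite[Proposition 1.2(ii)]{DDMS}, an open normal subgroup $V_n$ of $G$ with $V_n \leq H_n$, and then set $W_n := V_0 \cap V_1 \cap \cdots \cap V_n$. Being a finite intersection of open normal subgroups, each $W_n$ is open and normal in $G$; moreover $W_0 \geq W_1 \geq W_2 \geq \cdots$ by construction, and $W_n \leq V_n \leq H_n$.

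Finally I would put $N_n := W_n \cap M_n$ for each $n \geq 0$. Then each $N_n$ is open and normal in $G$, being the intersection of two open normal subgroups; we have $N_n \leq W_n \leq H_n$; the chain is descending, since $N_{n+1} = W_{n+1} \cap M_{n+1} \leq W_n \cap M_n = N_n$; and $\bigcap_{n\geq 0} N_n \leq \bigcap_{n\geq 0} M_n = \{1\}$, so the intersection is trivial. This yields the required chain. There is no genuine obstacle here; the only point that needs a moment's care is that naively picking "some open normal subgroup contained in $H_n$" for each $n$ need not give a \emph{descending} chain, which is precisely why I would first assemble the monotone auxiliary chain $W_\bullet$ out of the $V_n$ and only afterwards intersect it with the fixed chain $M_\bullet$ to kill the intersection.
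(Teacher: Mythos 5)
Your proof is correct and takes essentially the same approach as the paper: fix an auxiliary descending chain of open subgroups with trivial intersection, choose open normal subgroups inside each $H_n$, and intersect to force the chain to be descending. The only cosmetic differences are that the paper builds $N_n$ by induction (choosing $N_n \triangleleft_o G$ inside $H_n \cap J_n \cap N_{n-1}$) rather than by your explicit intersections $W_n \cap M_n$, and the paper simply asserts the existence of a chain with trivial intersection rather than constructing one via an open normal uniform pro-$p$ subgroup and its $p$-power subgroups.
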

\begin{proof} Choose any chain $J_0 \geq J_1 \geq J_2 \geq \cdots$ of open subgroups of $G$ such that $\bigcap_{n=0}^\infty J_n$ is trivial. Choose any open normal subgroup $N_0$ of $G$ contained in $H_0$. Assuming inductively that $N_{n-1}$ has been chosen for $n \geq 1$, choose an open normal subgroup $N_n$ of $G$ contained in $H_n \cap J_n \cap N_{n-1}$. Since $N_n \leq J_n$ for all $n \geq 0$, $\bigcap_{n=0}^\infty N_n$ is trivial.
\end{proof}

\begin{cor}\label{ChainCap} Let $A_1,\ldots, A_m$ be a finite collection of $K$-affinoid algebras, let $\rho_i : G \to \Aut_K(A_i)$ be continuous group actions, let $\cA_i$ be a $G$-stable affine formal model in $A_i$ and let $\cL_i$ be a $G$-stable $\cA_i$-Lie lattice in $\Der_K(A_i)$ for each $i$. Then there is a chain $(N_\bullet)$ which is good for \emph{each} $\cL_i$.
\end{cor}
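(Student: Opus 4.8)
The plan is to reduce the statement to Lemma \ref{FastChain} by producing, for each level $n$, a single open subgroup of $G$ contained in all of the relevant stabiliser subgroups $G_{\pi^n\cL_i}$.

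First I would record that for each $i$ and each $n\geq 0$ the $\cA_i$-module $\pi^n\cL_i$ is again a $G$-stable $\cA_i$-Lie lattice in $\Der_K(A_i)$. Indeed, multiplication by $\pi^n$ is injective on $\cL_i$ (as $\cL_i$ embeds in the $K$-vector space $\Der_K(A_i)$), so $\pi^n\cL_i\cong\cL_i$ as $\cA_i$-modules and hence is finitely presented and spans $\Der_K(A_i)$ over $K$; it is visibly $G$-stable; and $[\pi^n\cL_i,\pi^n\cL_i]=\pi^{2n}[\cL_i,\cL_i]\subseteq\pi^n\cL_i$ together with $(\pi^n\cL_i)(\cA_i)\subseteq\pi^n\cA_i\subseteq\cA_i$. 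Consequently Theorem \ref{Beta}(a) applies and shows that $G_{\pi^n\cL_i}$ is an open normal subgroup of $G$ for every $i$ and $n$.

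Next I would set $H_n := \bigcap_{i=1}^m G_{\pi^n\cL_i}$, which is an open normal subgroup of $G$, being a finite intersection of such. Applying Lemma \ref{FastChain} to the sequence $H_0,H_1,H_2,\ldots$ yields a chain $N_0\geq N_1\geq N_2\geq\cdots$ of open normal subgroups of $G$ with $\bigcap_{n=0}^\infty N_n=\{1\}$ and $N_n\leq H_n$ for all $n$. Since then $N_n\leq H_n\leq G_{\pi^n\cL_i}$ for every $i$ and every $n$, each pair $(\pi^n\cL_i,N_n)$ is an $\cA_i$-trivialising pair by Definition \ref{DefnTrivPair}, i.e. $(N_\bullet)$ is a good chain for $\cL_i$ in the sense of Definition \ref{GoodChain}, for every $i=1,\ldots,m$.

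There is no serious obstacle here: the argument is a formal combination of Theorem \ref{Beta}(a) and Lemma \ref{FastChain}, and the only point requiring (minor) care is verifying that rescaling by $\pi^n$ preserves the ``$G$-stable $\cA_i$-Lie lattice'' property, so that Theorem \ref{Beta}(a) may legitimately be invoked at each level $n$.
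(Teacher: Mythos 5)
Your proof is correct and follows the same route as the paper: intersect the stabilisers $G_{\pi^n\cL_i}$ to form $H_n$, invoke Theorem \ref{Beta}(a) for openness, and apply Lemma \ref{FastChain}. The preliminary verification that $\pi^n\cL_i$ remains a $G$-stable $\cA_i$-Lie lattice is a reasonable inclusion (the paper takes it for granted), and it is accurately done.
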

\begin{proof} Let $H_n := \bigcap_{i=1}^m G_{\pi^n \cL_i}$ for each $n \geq 0$. This is an open subgroup of $G$ by Theorem \ref{Beta}(a). Using Lemma \ref{FastChain},  choose a descending chain $N_0 \geq N_1 \geq N_2 \geq \cdots $ of open normal subgroups of $G$, intersecting trivially, such that $N_n \leq H_n$ for all $n \geq 0$. Then $(\pi^n \cL_i, N_n) \in \cI(\cA_i, \rho_i, G)$ for each $i$ and each $n \geq 0$, and $(N_\bullet)$ is good for each $\cL_i$.
\end{proof}

\begin{prop}\label{DXGwelldef} $\w\cD(\bX,G)_{\cA}$ is independent of the choice of $\cA$. \end{prop}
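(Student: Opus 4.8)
The plan is to reduce to two nested formal models and then transport $\w\cD(\bX,G)_{\cA}$ along a cofinal family of trivialising pairs that serves both.

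First the reduction. Given two $G$-stable affine formal models $\cA,\cB$ in $A$, Lemma~\ref{ProdFormModels} shows that $\cC:=\cA\cdot\cB$ is again an affine formal model in $A$, finite as a module over each of $\cA$ and $\cB$, and $\cC$ is $G$-stable because $\cA$ and $\cB$ are; so it suffices to construct a canonical isomorphism $\w\cD(\bX,G)_{\cA}\congs\w\cD(\bX,G)_{\cB}$ under the extra assumption $\cA\subseteq\cB$ with $\cB$ finite over $\cA$ (then apply this to $\cA\subseteq\cC$ and to $\cB\subseteq\cC$). So I would fix $m\geq0$ with $\pi^m\cB\subseteq\cA$, choose a $G$-stable $\cA$-Lie lattice $\cL$ in $\Der_K(A)$ (Lemma~\ref{StabLopen}(a)), and put $\cM:=\cB\cdot\cL\subseteq\Der_K(A)$. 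This is an $\cA$-lattice and a $\cB$-lattice, and since $[\cM,\cM]$ and $\cM(\cB)$ are bounded there is $c\geq0$ such that for all $n\geq c$ the lattice $\pi^n\cM$ is simultaneously a $G$-stable $\cA$-Lie lattice and a $G$-stable $\cB$-Lie lattice; note also $\pi^n\cL\subseteq\pi^n\cM\subseteq\pi^{n-m}\cL$ for $n\geq m$. Because $G_{\cN}=\{g\in G:\log\rho(g)\in p^\epsilon\cN\}$ depends only on the $G$-action on $A$, the group $G_{\pi^n\cM}$ is the same computed over $\cA$ or over $\cB$, and it is open by Theorem~\ref{Beta}(a); so Lemma~\ref{FastChain} furnishes a descending chain $(N_\bullet)$ of open normal subgroups of $G$ with $\bigcap_nN_n=\{1\}$ and $N_n\subseteq G_{\pi^n\cM}$ for all $n\geq c$. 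Then $(\pi^n\cM,N_n)\in\cI(\cA,\rho,G)\cap\cI(\cB,\rho,G)$ for all $n\geq c$, and the family $\{(\pi^n\cM,N_n)\}_{n\geq c}$ is cofinal in $\cI(\cA,\rho,G)$ (using $\pi^{s+m}\cM\subseteq\pi^s\cL$ and the argument of Lemma~\ref{StdPres}) and in $\cI(\cB,\rho,G)$. Hence by Lemma~\ref{StdPres} and Definition~\ref{StarDefn},
\[\w\cD(\bX,G)_{\cA}\;\cong\;\invlim_{n\geq c}\;\hK{U(\pi^n\cM)}\rtimes_{N_n}G\qquad\text{and}\qquad\w\cD(\bX,G)_{\cB}\;\cong\;\invlim_{n\geq c}\;\hK{U(\pi^n\cM)}\rtimes_{N_n}G,\]
where in the first limit $U(\pi^n\cM)$ is formed treating $\pi^n\cM$ as an $(\cR,\cA)$-Lie algebra and in the second as an $(\cR,\cB)$-Lie algebra.

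To compare the two limits I would, for each $n\geq c$, invoke Proposition~\ref{hUGfunc} with $\varphi=\mathrm{id}_A$ (which is \'etale), $\tilde\varphi=\mathrm{id}_{\Der_K(A)}$, $\tau=\mathrm{id}_G$, the formal models $\cA\subseteq\cB$, and the trivialising pair $(\pi^n\cM,N_n)$ on both sides (all the hypothesised inclusions being equalities). This produces a continuous $K$-algebra homomorphism $\theta_n$ from the $n$-th term of the first limit to the $n$-th term of the second, sitting inside the commutative square of Proposition~\ref{hUGfunc} over $\cD(\bX)\rtimes G$. The uniqueness clause of that Proposition forces the $\theta_n$ to be compatible with the connecting maps, so they assemble into a homomorphism $\Theta:\w\cD(\bX,G)_{\cA}\to\w\cD(\bX,G)_{\cB}$, which visibly respects the canonical maps $i$ and $\gamma$ of Remark~\ref{GammaMap} and is therefore canonical. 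Moreover, by the final ``isomorphism whenever \dots'' clause of Proposition~\ref{hUGfunc} (through Lemma~\ref{CPfunc}), and since $\mathrm{id}_G$ is bijective and carries $N_n$ onto $N_n$, each $\theta_n$ is an isomorphism as soon as its underlying ring map $\hK{U(\pi^n\cM)}\to\hK{U(\pi^n\cM)}$ (integral model $\cA$ on the left, $\cB$ on the right) is bijective; granting this, passing to the inverse limit shows $\Theta$ is an isomorphism.

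The crux, and the one step I expect to be genuinely nontrivial, is exactly that bijectivity, i.e.\ that \emph{the completed rational enveloping algebra $\hK{U(\cN)}$ is unchanged when the affine formal model $\cA$ is replaced by a finite enlargement $\cB\supseteq\cA$ over which $\cN$ remains a Lie lattice}. I would establish this as in \cite[\S 3]{DCapOne}: both completed algebras carry the $\pi$-adic completion of their canonical positive filtration, the induced map of associated graded rings is the evident comparison between the $\pi$-adic completions of $\Sym(\cN)$ formed over $\cA$ and over $\cB$, and these two $\cR$-algebras contain $\pi$-power multiples of one another because $\pi^m\cB\subseteq\cA$, so after inverting $\pi$ the map of graded rings becomes an isomorphism of $K$-Banach algebras; a standard completeness argument then upgrades this to bijectivity of the original map. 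Alternatively one could simply quote the $G=\{1\}$ case --- independence of the formal model for $\w\cD(\bX)$ --- from \cite{DCapOne} and reinstall the skew-group structure via Proposition~\ref{hUGfunc} exactly as above; but the equivariant bookkeeping with good chains and the identity $G_{\pi^n\cM}^{(\cA)}=G_{\pi^n\cM}^{(\cB)}$ is unavoidable either way. Everything outside this crux --- the reductions, the cofinality verifications, and the compatibility of the $\theta_n$ with the transition maps --- is routine.
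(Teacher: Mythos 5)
Your strategy differs from the paper's in a way that creates extra work you then don't quite discharge. The paper picks an $\cA$-Lie lattice $\cL$ and a $\cB$-Lie lattice $\cJ$, rescales so that $\cL\subseteq\cJ$, builds the resulting maps $\theta_n:\hK{U(\pi^n\cL)}\to\hK{U(\pi^n\cJ)}$ (which are \emph{not} claimed to be isomorphisms), passes to the inverse limit to get $\theta_{\cA,\cB}$, and likewise in the reverse direction to get $\theta_{\cB,\cA}$; since both composites restrict to the identity on the dense image of $\cD(\bX)\rtimes G$ and both are continuous, they are mutually inverse. This sidesteps entirely the question of whether any individual levelwise map is a bijection. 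By contrast, you try to exhibit a single cofinal tower and prove that each level $\hK{U(\pi^n\cM)}_{\cA}\to\hK{U(\pi^n\cM)}_{\cB}$ is already an isomorphism. You identify this as the crux, and it is — but you don't actually establish it, you only sketch a graded-ring argument, and that sketch does not go through in the generality you've set up: identifying $\gr U(\cN)$ with a symmetric algebra requires $\cN$ to be smooth/projective (cf.\ \cite[Propositions 2.3 and 3.3(b)]{DCapOne}), and your $\cM=\cB\cdot\cL$ is only finitely generated over $\cA$, not smooth; moreover you never verify that $\cM$ is finitely \emph{presented} over $\cA$, which is part of being an $\cA$-lattice in the sense of Definition~\ref{TypesOfLattices}(a) and requires an appeal to something like Theorem~\ref{KeyRGLemma}.

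A further structural issue: because you reduce to $\cA\subseteq\cB$ and then construct only a one-way map $\theta_n$ from the $\cA$-side to the $\cB$-side, you cannot fall back on the paper's density-plus-continuity trick as a substitute for the missing bijectivity — there is no natural candidate map in the reverse direction inside your framework (the paper's argument is explicitly two-sided, using a pair of Lie lattices going each way). So the levelwise bijectivity really is load-bearing in your route, and it is the one piece you leave unproved. The rest of your bookkeeping (the $\cC=\cA\cdot\cB$ reduction, the observation that $G_{\pi^n\cM}$ is intrinsic to $\rho$ and independent of the formal model, the construction of a common good chain $N_\bullet$ via Lemma~\ref{FastChain}, and the compatibility of the $\theta_n$ with the transition maps via the uniqueness clause of Proposition~\ref{hUGfunc}) is sound and in the spirit of the paper.
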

\begin{proof} Let $\cA, \cB$ be two $G$-stable affine formal models in $A$. Choose a $G$-stable $\cA$-Lie lattice $\cL$ and a $G$-stable $\cB$-Lie lattice $\cJ$ in $L$ using Lemma \ref{StabLopen}(a). By Lemma \ref{ProdFormModels}, we can find an integer $r$ such that $\pi^r \cdot \cA \subseteq \cB$. By replacing $\cL$ by a $\pi$-power multiple, we will assume that $\cL \subseteq \cJ$. 

Let $x_1,\ldots,x_d$ generate $\cL$ as an $\cA$-module. The universal property of $U(-)$ induces a $G$-equivariant $\cR$-algebra homomorphism $\theta_0 : U(\cL) \to \hK{U(\cJ)}$. Now $U(\cL)$ is generated as an $\cA$-module by finite products of the $x_i$, and $\theta_0$ sends all these elements to $\htf{U(\cJ)}$. Because $\cA \subseteq \pi^{-r} \cB$, we see that the image of $\theta_0$ is contained in $\pi^{-r}\htf{U(\cJ)}$. Hence $\theta_0$ extends to a $G$-equivariant $K$-algebra homomorphism
\[\theta_0 : \hK{U(\cL)} \to \hK{U(\cJ)}.\]
Applying the same argument to $\pi^n \cL \subseteq \pi^n \cJ$ for each $n\geq 0$, we obtain a compatible sequence of $G$-equivariant $K$-algebra homomorphisms
\[\theta_n : \hK{U(\pi^{n}\cL)} \to \hK{U(\pi^n \cJ)}.\]
Now, choose a chain $(N_\bullet)$ which is good for both $\cL$ and $\cJ$ using Corollary \ref{ChainCap}. Then $\theta_n^\times \circ \beta_{\pi^n \cL, N_n} = \beta_{\pi^n \cJ, N_n}$ for all $n \geq 0$, so Lemma \ref{CPfunc} gives a compatible sequence of $K$-algebra homomorphisms
\[ \theta_n \rtimes 1_G : \hK{U(\pi^n \cL)} \rtimes_{N_n} G \longrightarrow \hK{U(\pi^n \cJ)} \rtimes_{N_n} G.\]
Passing to the limit and applying Lemma \ref{StdPres}, we obtain a commutative diagram
\[  \xymatrix{ \w\cD(\bX,G)_{\cA} \ar[rrr]^{\theta_{\cA,\cB}}\ar[d]_{\cong} &&& \w\cD(\bX,G)_{\cB}\ar[d]^{\cong} \\ \invlim \hK{U(\pi^n \cL)} \rtimes_{N_n} G \ar[rrr]_{\invlim \theta_n \rtimes 1_G} &&& \invlim \hK{U(\pi^n \cJ)} \rtimes_{N_n} G. }\]
By construction, $\theta_{\cA, \cB} \circ [i \rtimes \gamma]_{\cA} = [i \rtimes \gamma]_{\cB}$, so $\theta_{\cB,\cA}\circ\theta_{\cA,\cB}$ is the identity map on the dense image of $[i \rtimes \gamma]_{\cA}$ inside $\w\cD(\bX,G)_{\cA}$. Because $\theta_{\cA, \cB}$ and $\theta_{\cB,\cA}$ are continuous, it follows that they are mutually inverse isomorphisms.
\end{proof}

We will henceforth denote $\w\cD(\bX,G)_{\cA}$ simply by $\w\cD(\bX,G)$.

\begin{cor}\label{DXGFrechet} $\w\cD(\bX,G)$ is a $K$-Fr\'echet algebra.
\end{cor}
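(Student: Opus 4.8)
The plan is to produce $\w\cD(\bX,G)$ as a countable inverse limit of Banach algebras with continuous, in fact $K$-algebra, transition maps; being a countable inverse limit of $K$-Banach spaces, it is automatically a $K$-Fr\'echet space, and since the transition maps are algebra homomorphisms the limit is a topological $K$-algebra, i.e.\ a $K$-Fr\'echet algebra.

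First I would invoke Lemma~\ref{StabLopen}(a) to fix a $G$-stable $\cA$-Lie lattice $\cL$ in $L = \Der_K(A)$, and then Corollary~\ref{ChainCap} (applied to the single algebra $A$) to produce a chain $(N_\bullet)$ of open normal subgroups of $G$ with trivial intersection that is good for $\cL$, so that $(\pi^n\cL, N_n) \in \cI(\cA,\rho,G)$ for every $n \geq 0$. By Lemma~\ref{StdPres} we then have a canonical isomorphism of $K$-algebras
\[ \w\cD(\bX,G) \;\cong\; \invlim_n\; \hK{U(\pi^n\cL)} \rtimes_{N_n} G. \]
Next I would observe that each term $\hK{U(\pi^n\cL)} \rtimes_{N_n} G$ is a $K$-Banach algebra: $\hK{U(\pi^n\cL)}$ is a $K$-Banach algebra with unit ball $\htf{U(\pi^n\cL)}$ (noted just before Lemma~\ref{PsiL}), and by Lemma~\ref{RingSGN}(b) the ring $\hK{U(\pi^n\cL)} \rtimes_{N_n} G$ is a crossed product $\hK{U(\pi^n\cL)} \ast (G/N_n)$ with $G/N_n$ finite, hence a finitely generated free module over $\hK{U(\pi^n\cL)}$, which carries the obvious (say, supremum over the $\gamma(g)$-coordinates) Banach algebra norm; the multiplication is continuous because $G/N_n$ is finite and the $G$-action on $\hK{U(\pi^n\cL)}$ is by isometries (the $G$-action preserves $\htf{U(\pi^n\cL)}$). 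The connecting maps $\hK{U(\pi^{n+1}\cL)} \rtimes_{N_{n+1}} G \to \hK{U(\pi^n\cL)}\rtimes_{N_n} G$ are $K$-algebra homomorphisms between Banach algebras, hence continuous.

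With these observations in place the conclusion is immediate: a countable inverse limit of $K$-Banach algebras along continuous $K$-algebra homomorphisms, equipped with the inverse-limit topology, is a $K$-Fr\'echet algebra, and by Lemma~\ref{StdPres} this is exactly $\w\cD(\bX,G)$ (independently of the choices by Proposition~\ref{DXGwelldef}). The only mild point requiring care --- the ``main obstacle'', such as it is --- is verifying that each $\hK{U(\pi^n\cL)} \rtimes_{N_n} G$ genuinely is a \emph{Banach algebra} rather than merely a Banach space with separately continuous multiplication; this is handled by the crossed-product description from Lemma~\ref{RingSGN}(b) together with the fact that $G$ acts on $\hK{U(\pi^n\cL)}$ through the norm-preserving automorphisms $\beta_{\pi^n\cL}$ (Theorem~\ref{Beta}(b)) modulo $N_n$, so that the finitely many structure constants of the crossed product are isometries. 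Everything else is formal.
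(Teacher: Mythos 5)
Your proof is correct and follows essentially the same route as the paper: fix a $G$-stable $\cA$-Lie lattice $\cL$ and a good chain $(N_\bullet)$, apply Lemma~\ref{StdPres} to realise $\w\cD(\bX,G)$ as the countable inverse limit $\invlim \hK{U(\pi^n\cL)}\rtimes_{N_n} G$, and note each term is a $K$-Banach algebra (the paper records its unit ball as $\h{U(\pi^n\cL)}\rtimes_{N_n} G$, which is equivalent to your crossed-product description via Lemma~\ref{RingSGN}(b)). Your extra remarks on continuity of multiplication and well-definedness via Proposition~\ref{DXGwelldef} merely spell out what the paper leaves implicit.
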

\begin{proof} We can find a $G$-stable $\cA$-Lie lattice $\cL$ in $\cT(\bX)$ and a good chain $(N_\bullet)$ for $\cL$. By Lemma \ref{StdPres}, there is a $K$-algebra isomorphism $\w\cD(\bX,G) \cong \invlim \hK{U(\pi^n\cL)} \rtimes_{N_n} G$. Each $\hK{U(\pi^n \cL)} \rtimes_{N_n} G$ is naturally a $K$-Banach algebra whose unit ball is given by $\h{U(\pi^n\cL)} \rtimes_{N_n} G$.  In this way, $\w\cD(\bX,G)$ is isomorphic to a countable inverse limit of $K$-Banach algebras and therefore carries a $K$-Fr\'echet algebra structure. It follows from Proposition \ref{DXGwelldef} that this is independent of the choice of $\cA$, $\cL$ and $(N_\bullet)$.
\end{proof}

\begin{prop}\label{IHIGcofinal} Let $H$ be an open subgroup of $G$. Then $\cI(H) \cap \cI(G)$ is cofinal in both $\cI(G)$ and $\cI(H)$.
\end{prop}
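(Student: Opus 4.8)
We must show that $\cI(H) \cap \cI(G)$ is cofinal in $\cI(G)$ and in $\cI(H)$. Recall that an element of $\cI(G)$ is an $\cA$-trivialising pair $(\cL, N)$, where $\cL$ is a $G$-stable $\cA$-Lie lattice in $\Der_K(A)$ and $N$ is an open normal subgroup of $G$ contained in $G_\cL$; an element of $\cI(H)$ is defined analogously with $G$ replaced by $H$. The subtlety is that the notions of ``$G$-stable'' and ``$H$-stable'' differ, and that normality in $G$ is a stronger requirement than normality in $H$; so an arbitrary member of $\cI(G)$ or of $\cI(H)$ need not lie in the intersection, and we must produce a common refinement.

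**Plan of proof.** The plan is to start with an arbitrary pair $(\cL, N)$ in $\cI(G)$ (respectively in $\cI(H)$) and manufacture a pair $(\cL', N')$ lying in $\cI(H) \cap \cI(G)$ with $(\cL', N') \geq (\cL, N)$, i.e.\ $\cL' \subseteq \cL$ and $N' \subseteq N$. First I would replace $\cL$ by a genuinely $G$-stable Lie lattice: using Lemma \ref{StabLopen}(a), the module $\cJ := \Der_{\cR}(\cA)$ is a $G$-stable $\cA$-Lie lattice, and since any two $\cA$-lattices contain $\pi$-power multiples of one another, some $\pi^m \cJ$ is contained in $\cL$; set $\cL' := \pi^m \cJ$, which is $G$-stable (hence also $H$-stable) and contained in $\cL$. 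Next I must choose the subgroup. By Theorem \ref{Beta}(a), $G_{\cL'}$ is an open normal subgroup of $G$; in particular it is an open subgroup of $H$, and by Lemma \ref{SmallU} applied inside $G$ to the open subgroup $H \cap G_{\cL'} \cap N$, we can find an open uniform (in particular open normal in $G$) subgroup $N'$ of $G$ that is contained in $H \cap G_{\cL'} \cap N$. Since $N'$ is normal in $G$ it is a fortiori normal in $H$; since $N' \subseteq G_{\cL'}$ and $G$-stability implies $H$-stability, the pair $(\cL', N')$ lies in both $\cI(G)$ and $\cI(H)$, and $(\cL', N') \geq (\cL, N)$ by construction. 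This settles cofinality in $\cI(G)$.

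**Cofinality in $\cI(H)$.** For cofinality in $\cI(H)$ the same recipe works with only cosmetic changes: given $(\cL, N) \in \cI(H)$ with $\cL$ an $H$-stable $\cA$-Lie lattice and $N \trianglelefteq_o H$ with $N \subseteq (G_\cL)_H$ (the analogue of $G_\cL$ computed with respect to the $H$-action, which agrees with $H \cap G_\cL$ once we pass to a $G$-stable lattice), I again pass to $\cL' := \pi^m \cJ \subseteq \cL$, now $G$-stable, and then choose $N' \trianglelefteq_o G$ uniform with $N' \subseteq H \cap G_{\cL'} \cap N$. One small point to verify here is that shrinking $\cL$ to $\cL'$ does not lose us the containment $N' \subseteq G_{\cL'}$: but $G_{\cL'}$ only gets larger, not smaller, under $\pi$-adic scaling in the sense relevant here — more precisely, one checks directly from Definition \ref{BetaDefn} that $\exp(p^\epsilon \cL') \subseteq \exp(p^\epsilon \cL)$ when $\cL' \subseteq \cL$, so $G_{\cL'} \subseteq G_\cL$, and we simply intersect with $G_{\cL'}$ when choosing $N'$, as above. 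Then $(\cL', N') \in \cI(H) \cap \cI(G)$ and $(\cL', N') \geq (\cL, N)$ in $\cI(H)$.

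**Main obstacle.** The only genuine subtlety, and the step I would be most careful about, is the bookkeeping around the two flavours of Lie lattice and the two flavours of normality: I need that whatever $G$-stable lattice I produce really does dominate the given (possibly only $H$-stable) lattice $\cL$, and that the open subgroup I extract is simultaneously normal in $G$ and contained in all the required open subgroups. Both are handled by the ``$\pi$-power multiples of each other'' principle for lattices together with Lemma \ref{SmallU}, which guarantees open normal (indeed uniform) subgroups of $G$ inside any prescribed open subgroup; everything else is routine. There is no hard analytic input — this is purely an exhaustion/cofinality argument built on Lemma \ref{StabLopen}, Theorem \ref{Beta}(a), and Lemma \ref{SmallU}.
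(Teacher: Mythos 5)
Your proof is correct, and it differs from the paper's in one substantive respect. The paper's proof keeps the Lie lattice $\cL$ fixed and only shrinks the subgroup: given $(\cL,N)\in\cI(G)$ it takes $U\triangleleft_o G$ inside $N\cap H$ and observes $U\leq G_\cL\cap H=H_\cL$, so $(\cL,U)\in\cI(H)\cap\cI(G)$; given $(\cL,N)\in\cI(H)$ it takes $U\triangleleft_o G$ inside $N$ and observes $U\leq H_\cL\leq G_\cL$. You instead also shrink $\cL$ to a $\pi$-power multiple $\cL'=\pi^m\cJ$ of $\cJ=\Der_\cR(\cA)$, which is $G$-stable by Lemma~\ref{StabLopen}(a). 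For cofinality in $\cI(G)$ this extra step is superfluous, since there $\cL$ is already $G$-stable and the paper's one-line argument suffices. For cofinality in $\cI(H)$, however, your extra step is actually what makes the argument work: a pair $(\cL,N)\in\cI(H)$ has $\cL$ only $H$-stable, whereas membership in $\cI(G)$ requires a $G$-stable lattice, so the pair $(\cL,U)$ in the paper's argument does not obviously lie in $\cI(G)$. Your replacement of $\cL$ by the $G$-stable $\cL'\subseteq\cL$ before choosing $N'\triangleleft_o G$ inside $N\cap G_{\cL'}$ resolves this, and in that sense your writeup is more careful than the one in the paper.

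Two small slips in the exposition, neither of which affects correctness: $G_{\cL'}$ is an open normal subgroup of $G$, not of $H$ -- the fact you actually use is that $H\cap G_{\cL'}\cap N$ is open in $G$; and the phrase ``$G_{\cL'}$ only gets larger, not smaller'' has the sense reversed, as you then correctly note $\cL'\subseteq\cL$ implies $G_{\cL'}\subseteq G_\cL$ (this is harmless since you choose $N'$ after $\cL'$ is fixed). Finally, Lemma~\ref{SmallU} is stronger than you need: uniformity of $N'$ plays no role, and any $U\triangleleft_o G$ contained in the given open subgroup would do, as in the paper's proof.
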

\begin{proof}  It follows from Definition \ref{BetaDefn} that $H_{\cL} = G_{\cL} \cap H.$ Let $(\cL,N) \in \cI(G)$, and choose some $U \triangleleft_o G$ contained in the open subgroup $N \cap H$. Then $U \leq N \cap H \leq G_{\cL} \cap H = H_{\cL}$, so $(\cL, U) \in \cI(H) \cap \cI(G)$.  Now let $(\cL,N) \in \cI(H)$. Choose some $U \triangleleft_o G$ contained in $N$. Then $U \leq N \leq H_{\cL} \leq G_{\cL}$, so $(\cL,U) \in \cI(H) \cap \cI(G)$.\end{proof}

\begin{cor}\label{Stacky} Let $H$ be an open normal subgroup of $G$. Then there are natural isomorphisms
\[ \w\cD(\bX,G) \congs \w\cD(\bX,H) \rtimes_H G \]
and
\[ \cA \Star G \congs \left( \cA \Star H \right) \rtimes_H G.\]
\end{cor}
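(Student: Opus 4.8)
The plan is to apply Proposition \ref{SHGassoc} (associativity of iterated crossed products) level by level and then pass to the inverse limit. Fix a $G$-stable affine formal model $\cA$ in $A$ and a $G$-stable $\cA$-Lie lattice $\cL$ in $\Der_K(A)$ using Lemma \ref{StabLopen}(a). The key point is that for each $n \geq 0$ and each open normal subgroup $N_n$ of $G$ contained in $H \cap G_{\pi^n\cL}$, the pair $(\pi^n\cL, N_n)$ lies in $\cI(\cA,\rho,G)$, and since $N_n \leq H$ we also get $(\pi^n\cL, N_n) \in \cI(\cA,\rho_{|H},H)$. Writing $S_n := \htf{U(\pi^n\cL)}$ (resp. $\hK{U(\pi^n\cL)}$) and $\beta_n := \beta_{\pi^n\cL|N_n}$, Theorem \ref{Beta}(b) tells us $\beta_n$ is a $G$-equivariant trivialisation, so Lemma \ref{GammaTriv} and Proposition \ref{SHGassoc} (applied with the triple of normal subgroups $N_n \leq H \leq G$) give a natural ring isomorphism
\[ \bigl( S_n \rtimes^{\beta_n}_{N_n} H \bigr) \rtimes^{\gamma}_H G \;\congs\; S_n \rtimes^{\beta_n}_{N_n} G. \]

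Next I would assemble these into a statement about inverse limits. First choose, via Lemma \ref{FastChain}, a descending chain $(N_\bullet)$ of open normal subgroups of $G$ with trivial intersection and $N_n \leq H \cap G_{\pi^n\cL}$ for all $n$; this is simultaneously a good chain for $\cL$ with respect to both $G$ and $H$, so Lemma \ref{StdPres} (and Proposition \ref{IHIGcofinal}, which guarantees $\{(\pi^n\cL,N_n)\}$ is cofinal in $\cI(H)$ as well as in $\cI(G)$) identifies $\w\cD(\bX,G) \cong \invlim_n \hK{U(\pi^n\cL)} \rtimes_{N_n} G$ and $\w\cD(\bX,H) \cong \invlim_n \hK{U(\pi^n\cL)} \rtimes_{N_n} H$. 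The isomorphisms $S_n \rtimes_{N_n} H \congs S_n \rtimes^{\beta_n}_{N_n} H$ are compatible with the connecting maps (which come from Lemma \ref{CPfunc} with $\tau = \mathrm{id}$ and $f$ the canonical map $\hK{U(\pi^{n+1}\cL)} \to \hK{U(\pi^n\cL)}$), so $\w\cD(\bX,H)$ carries a $G$-action assembled from the $G$-actions on each $S_n \rtimes^{\beta_n}_{N_n} H$ given by Lemma \ref{GammaTriv}(a), and $\gamma : H \to \w\cD(\bX,H)^\times$ is a $G$-equivariant trivialisation of the $H$-action in the limit. Taking the inverse limit of the level-$n$ isomorphisms above — using that inverse limits commute with the crossed-product-by-$G/H$ operation, which on the nose is $(-) \rtimes^\gamma_H G$ as in Lemma \ref{CPfunc} — yields
\[ \w\cD(\bX,H) \rtimes_H G \;\congs\; \w\cD(\bX,G), \]
and the same argument with $\htf{U(\pi^n\cL)}$ in place of $\hK{U(\pi^n\cL)}$ gives the integral statement $\bigl(\cA\Star H\bigr)\rtimes_H G \congs \cA\Star G$. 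Naturality follows because every map in sight was produced by the functorial constructions of $\S$\ref{SkewGroupRingSect} and Lemma \ref{CPfunc}.

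The main obstacle I anticipate is bookkeeping around the trivialisations rather than any conceptual difficulty: one must check that the $G$-equivariant trivialisation $\gamma : H \to \w\cD(\bX,H)^\times$ used to form $\w\cD(\bX,H)\rtimes_H G$ really is the inverse limit of the level-$n$ trivialisations $\gamma_n : H \to (S_n \rtimes^{\beta_n}_{N_n} H)^\times$ of Lemma \ref{GammaTriv}(b), and that the $G$-action on $\w\cD(\bX,H)$ is the one obtained by passing to the limit in Lemma \ref{GammaTriv}(a) — i.e.\ that all these structures are compatible with the connecting maps. This is where Lemma \ref{CPfunc} does the real work: its hypotheses ($\tau(N_{n+1}) \subseteq N_n$, $f$ intertwines the $G$-actions, $f^\times \circ \beta_{n+1} = \beta_n \circ \tau_{|N_{n+1}}$) must be verified at each level, but they follow from the functoriality of $U(-)$ (Lemma \ref{Ufunctor}) together with the identity $\beta_\cL = \exp\circ\iota\circ\log\circ\rho$ from \eqref{ExpIotaLogRho} exactly as in the proof of Proposition \ref{DXGwelldef}. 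Once these compatibilities are in place, the inverse limit of Proposition \ref{SHGassoc} applied levelwise gives the result with essentially no further computation.
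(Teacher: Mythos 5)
Your proof is correct and follows essentially the same strategy as the paper: apply Proposition \ref{SHGassoc} levelwise to pairs $(\pi^n\cL, N_n) \in \cI(G) \cap \cI(H)$ (cofinal in both $\cI(G)$ and $\cI(H)$ by Proposition \ref{IHIGcofinal}), then pass to the inverse limit. The paper states this more tersely, noting only that the levelwise isomorphisms are functorial in $(\cL,J)$ and invoking cofinality; you spell out the compatibilities that "functorial" is silently packaging. One point worth making explicit (which both you and the paper leave implicit) is that the commutation of $\invlim$ with $(-)\rtimes_H G$ relies on $G/H$ being finite — which holds here because $G$ is compact and $H$ is open — so that $(-)\rtimes_H G$ is a free module of finite rank over $(-)\rtimes_H H = (-)$, and inverse limits commute with finite direct sums. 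With that caveat noted, the proposal is sound and matches the paper's argument.
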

\begin{proof} For every $(\cL, J) \in \cI(G) \cap \cI(H)$, Proposition \ref{SHGassoc} induces isomorphisms
\[ \hK{U(\cL)}  \rtimes_J G \cong \left( \hK{U(\cL)} \rtimes_J H\right) \rtimes_H G \qmb{and} \h{U(\cL)}  \rtimes_J G \cong \left( \h{U(\cL)} \rtimes_J H\right) \rtimes_H G\]
which are functorial in $(\cL, J)$. Since $(\cL, J) \in \cI(G) \cap \cI(H)$ is cofinal in both $\cI(G)$ and $\cI(H)$ by Proposition \ref{IHIGcofinal}, the result follows by passing to the inverse limit.
\end{proof}

We finish $\S$ \ref{MainConstr} by discussing the functoriality of our construction $\w\cD(\bX,G)$.   Recall the natural $K$-algebra homomorphism
\[ i \rtimes \gamma := [i \rtimes \gamma]_{\cA} : \cD(\bX) \rtimes G \longrightarrow \w\cD(\bX,G)\]
from Remark \ref{GammaMap}(b).

\begin{thm}\label{wUGfunc}
Let $\varphi : A  \to A'$ be an \'etale morphism of $K$-affinoid algebras, let $\bX = \Sp(A), \bX' = \Sp(A')$ and let $G, G'$ be compact $p$-adic Lie groups, acting continuously on $A$ and $A'$, respectively. Suppose that $\tau : G \to G'$ is a group homomorphism such that 
\[\varphi(g \cdot a) = \tau(g) \cdot \varphi(a) \qmb{for all} g \in G, a \in A.\] 
Then there is a unique continuous $K$-algebra homomorphism 
\[\w{\varphi \rtimes \tau} : \w\cD(\bX,G) \longrightarrow \w\cD(\bX',G')\]
which makes the following diagram commute:
\[ \xymatrix{ \cD(\bX) \rtimes G \ar[rr]^{U(\varphi, \tilde{\varphi}) \rtimes \tau} \ar[d]_{i \rtimes \gamma} && \cD(\bX') \rtimes G' \ar[d]^{i' \rtimes \gamma'} \\ \w\cD(\bX,G) \ar[rr]_{\w{\varphi \rtimes \tau}} && \w\cD(\bX',G').}\]
\end{thm}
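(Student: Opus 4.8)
The plan is to reduce the global statement to the local, level-wise statement of Proposition \ref{hUGfunc} by choosing compatible Lie lattices and a common good chain of normal subgroups. First I would choose a $G$-stable affine formal model $\cA$ in $A$ (which exists because $G$ is compact, by Lemma \ref{GstableFM}) and a $G'$-stable affine formal model $\cA'$ in $A'$; then, using Lemma \ref{StabLopen}(a) and scaling by a power of $\pi$ if necessary, pick $G$-stable and $G'$-stable $\cA$- and $\cA'$-Lie lattices $\cL \subseteq \Der_K(A)$, $\cL' \subseteq \Der_K(A')$ with $\varphi(\cA) \subseteq \cA'$ and $\tilde{\varphi}(\cL) \subseteq \cL'$. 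Here I need to know that the $\varphi$-morphism $\tilde{\varphi} : \Der_K(A) \to \Der_K(A')$ supplied by \cite[Lemma 2.4]{DCapOne} for an \'etale morphism carries some $\cA$-lattice into some $\cA'$-lattice, which follows since $\tilde{\varphi}$ is $K$-linear and $\cA'$-lattices absorb $\pi$-power multiples of the image of any finitely generated $\cA$-submodule; rescaling $\cL$ achieves the containment $\tilde{\varphi}(\pi^n\cL) \subseteq \pi^n\cL'$ for all $n\geq 0$ simultaneously.

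Next I would produce a chain $(N_\bullet)$ of open normal subgroups of $G$, intersecting trivially, that is a good chain for $\cL$, such that moreover $\tau(N_n) \subseteq N'_n$ for every $n$, where $(N'_\bullet)$ is a good chain for $\cL'$ in $G'$. Concretely: fix any good chain $(N'_\bullet)$ for $\cL'$ in $G'$; since $\tau$ is continuous (being a morphism of $p$-adic Lie groups, or at least a homomorphism between compact such groups — in any case one works with the subgroups $\tau^{-1}(N'_n)$, which are open), the subgroups $H_n := G_{\pi^n\cL} \cap \tau^{-1}(N'_n)$ are open in $G$; apply Lemma \ref{FastChain} to get a descending chain $(N_\bullet)$ of open normal subgroups of $G$, intersecting trivially, with $N_n \leq H_n$ for all $n$. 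Then $(\pi^n\cL, N_n) \in \cI(\cA,\rho,G)$, so $(N_\bullet)$ is good for $\cL$, and by construction $\tau(N_n) \subseteq N'_n$. Now for each $n$, the data $(\varphi, \tilde\varphi, \tau)$ restricted to $(\pi^n\cL, N_n)$ and $(\pi^n\cL', N'_n)$ satisfy the hypotheses of Proposition \ref{hUGfunc}: the compatibility $\varphi(g\cdot a) = \tau(g)\cdot\varphi(a)$ is given, the inclusions $\varphi(\cA)\subseteq\cA'$, $\tilde\varphi(\pi^n\cL)\subseteq\pi^n\cL'$, $\tau(N_n)\subseteq N'_n$ hold. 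So Proposition \ref{hUGfunc} produces unique continuous $K$-algebra homomorphisms
\[
\hK{\theta_n} \rtimes \tau : \hK{U(\pi^n\cL)} \rtimes_{N_n} G \longrightarrow \hK{U(\pi^n\cL')} \rtimes_{N'_n} G'
\]
fitting into the commuting square with $U(\varphi,\tilde\varphi)\rtimes\tau : \cD(A)\rtimes G \to \cD(A')\rtimes G'$.

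Then I would check that the maps $\hK{\theta_n}\rtimes\tau$ are compatible with the connecting homomorphisms of the two inverse systems. This compatibility is again a consequence of the \emph{uniqueness} clause in Proposition \ref{hUGfunc} (or of Lemma \ref{CPfunc} together with the functoriality of $U(-)$ in Lemma \ref{Ufunctor}): both composites from $\hK{U(\pi^{n+1}\cL)}\rtimes_{N_{n+1}}G$ to $\hK{U(\pi^n\cL')}\rtimes_{N'_n}G'$ are continuous, make the relevant square with the skew-group rings of differential operators commute, and hence coincide on the dense image of $\cD(A)\rtimes G$. Passing to the inverse limit and invoking Lemma \ref{StdPres} to identify $\invlim_n \hK{U(\pi^n\cL)}\rtimes_{N_n}G$ with $\w\cD(\bX,G)_{\cA} = \w\cD(\bX,G)$, and similarly on the $\bX'$ side, yields the desired continuous $K$-algebra homomorphism $\w{\varphi\rtimes\tau}$, and the commuting square of the theorem follows by taking the limit of the level-wise squares. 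Uniqueness of $\w{\varphi\rtimes\tau}$ follows from the standard density argument: $\cD(\bX)\rtimes G$ has dense image in $\w\cD(\bX,G)$ via $i\rtimes\gamma$ (Remark \ref{GammaMap}), so any two continuous maps agreeing after precomposition with $i\rtimes\gamma$ are equal.

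The main obstacle I anticipate is purely organisational rather than conceptual: one must verify that the good chain $(N_\bullet)$ can be chosen to simultaneously (i) be good for $\cL$, (ii) satisfy $\tau(N_n)\subseteq N'_n$, and (iii) still intersect trivially, and then confirm that all the level-wise squares genuinely assemble compatibly. The delicate point within this is the interaction between the rescaling of $\cL$ needed to get $\tilde\varphi(\pi^n\cL)\subseteq\pi^n\cL'$ for \emph{all} $n$ at once and the choice of good chain; since $\tilde\varphi$ is $K$-linear, a single rescaling $\cL \rightsquigarrow \pi^c\cL$ works uniformly, so this is manageable. Everything else — commutativity, uniqueness, continuity — is forced by the universal-property/density machinery already set up in Lemma \ref{CPfunc}, Proposition \ref{hUGfunc} and Lemma \ref{StdPres}.
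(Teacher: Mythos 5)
Your proposal is correct and follows essentially the same route as the paper's proof: fix compatible formal models and Lie lattices, build a good chain with $\tau(N_n)\subseteq N'_n$ via Lemma \ref{FastChain}, apply Proposition \ref{hUGfunc} levelwise, and pass to the limit with Lemma \ref{StdPres}, with uniqueness by density. The only cosmetic differences are that you build $(N_\bullet)$ in one pass rather than first invoking Lemma \ref{ChainCap}, and you are slightly brisk about arranging $\varphi(\cA)\subseteq\cA'$, which the paper achieves by enlarging $\cA'$ to a $G'$-stable model containing $\varphi(\cA)\cdot\cA'$ via Lemma \ref{GstableFM}.
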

\begin{proof}  Let $\cA \subset A$ and $\cA' \subset A'$ be $G$-stable (respectively, $G'$-stable) affine formal models. Then $\varphi(\cA)\cdot \cA'$ is another affine formal model in $A'$ containing $\varphi(\cA')$, so by Lemma \ref{GstableFM} we may find a $G'$-stable affine formal model $\cA''$ containing $\varphi(\cA)\cdot \cA'$. Replacing $\cA'$ with $\cA''$ we will assume that $\varphi(\cA) \subset \cA'$. 

Choose a $G$-stable $\cA$-Lie lattice in $\Der_K(A)$, and a $G'$-stable $\cA'$-Lie lattice  $\cL'$ in $\Der_K(A')$ using Lemma \ref{StabLopen}(a). Because $\cL$ is a finitely generated $\cA$-module and $\tilde{\varphi}(a v)  = \varphi(a) \tilde{\varphi}(v)$ for any $a \in A$ and $v \in \Der_K(A)$, we see that $\pi^m \tilde{\varphi}(\cL) \subseteq \cL'$ for some $m \geq 0$. By rescaling $\cL$, we can assume that $m = 0$, so that $\varphi(\cL) \subseteq \cL'$. 

Choose a good chain $(N_\bullet)$ in $G$ for $\cL$, and a good chain $(N'_{\bullet})$ in $G'$ for $\cL'$ using Lemma \ref{ChainCap}. It follows from \cite[Corollary 8.34 and Corollary 1.21(i)]{DDMS} that the group homomorphism $\tau$ is automatically continuous, so $\tau^{-1}(N'_n)$ is open in $G$ for each $n \geq 0$. Applying Lemma \ref{FastChain} to the open subgroups $N_n \cap \tau^{-1}(N'_n)$, we may assume that $\tau(N_n) \leq N'_n$ for each $n \geq 0$. Now Proposition \ref{hUGfunc} produces a compatible sequence of commutative diagrams
\[\xymatrix{ \cD(\bX) \rtimes G \ar[rr]^{U(\varphi, \tilde{\varphi}) \rtimes \tau} \ar[d] && \cD(\bX') \rtimes G' \ar[d] \\ \hK{U(\pi^n\cL)} \rtimes_{N_n} G \ar[rr]_{\h{\theta_{n,K}} \rtimes \tau} && \hK{U(\pi^n\cL')} \rtimes_{N_n'} G'.}\]
Passing to the limit and applying Lemma \ref{StdPres} produces the required map
\[ \w{\varphi \rtimes \tau} : \w\cD(\bX,G) \longrightarrow \w\cD(\bX',G')\]
which fits into the following commutative diagram:
\[ \xymatrix@C=20pt{  
\cD(\bX) \rtimes G \ar[rrr]^{U(\varphi, \tilde{\varphi}) \rtimes \tau} \ar[dr]_{i \rtimes \gamma} \ar[dd]&&& \cD(\bX') \rtimes G' \ar[dl]^{i' \rtimes \gamma'} \ar[dd] \\
& \w\cD(\bX,G) \ar[r]^{\w{\varphi\rtimes\tau}} \ar[dl]_{\cong} & \w\cD(\bX',G') \ar[dr]^{\cong} & \\
\invlim \hK{U(\pi^n \cL)} \rtimes_{N_n} G \ar[rrr]_{\invlim \h{\theta_{n,K}} \rtimes \tau} &&& \invlim \hK{U(\pi^n \cL')} \rtimes_{N'_n} G'
}\]
Finally, if $\psi : \w\cD(\bX,G) \longrightarrow \w\cD(\bX',G')$ is any other continuous $K$-algebra map such that $\psi \circ (\iota \rtimes \gamma) = (i' \rtimes \gamma') \circ U(\varphi, \tilde{\varphi})$, then $\psi$ agrees with $\w{\varphi \rtimes \tau}$ on the dense image of $\cD(\bX) \rtimes G$ in $\w\cD(\bX,G)$, and therefore must be equal to $\w{\varphi \rtimes \tau}$.
\end{proof}

\subsection{Compatible actions}\label{CompActSect}
Until the end of  $\S \ref{EqDModSect}$, we will assume that:
\begin{itemize}
\item $\bX$ is a  \emph{smooth} rigid analytic variety,
\item $G$ is a $p$-adic Lie group,
\item $G$ acts continuously on $\bX$ in the sense of Definition \ref{CtsAct}.
\end{itemize}
In particular, we do \emph{not} assume that $\bX$ is affinoid, nor that $G$ is compact.

\begin{defn} $\bX_w/G$ denotes the set of $G$-stable affinoid subdomains of $\bX$.
\end{defn}

\begin{lem}\label{wDGpresheaf} If $G$ is compact, then $\w{\cD}(-,G)$ is a presheaf of $K$-Fr\'echet algebras on $\bX_w / G$. \end{lem}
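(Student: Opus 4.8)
The plan is to build the restriction maps of the presheaf directly from the functoriality statement Theorem~\ref{wUGfunc}, taking $\tau$ to be the identity homomorphism $G \to G$, and then to extract the presheaf axioms from the uniqueness clause of that theorem.

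First I would check that $\w\cD(\bU,G)$ makes sense for each $\bU \in \bX_w/G$. Since $\bU$ is a $G$-stable admissible open subspace of $\bX$, Lemma~\ref{CtsActLem}(b) shows that $G$ acts continuously on $\bU$; as $\bU$ is $K$-affinoid and $G$ is compact, the standing hypotheses of $\S\ref{MainConstr}$ are satisfied (a $G$-stable affine formal model in $\cO(\bU)$ exists by Lemma~\ref{GstableFM}), so $\w\cD(\bU,G)$ is defined, independent of the formal model by Proposition~\ref{DXGwelldef}, and is a $K$-Fréchet algebra by Corollary~\ref{DXGFrechet}. Next, given $\bV \subseteq \bU$ in $\bX_w/G$, the space $\bV$ is an affinoid subdomain of $\bX$ contained in the affinoid subdomain $\bU$, hence is an affinoid subdomain of $\bU$; in particular the restriction morphism $\varphi^{\bU}_{\bV}\colon \cO(\bU) \to \cO(\bV)$ is étale. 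Because $\bU$ and $\bV$ are $G$-stable, the $G$-action on $(\bX,\cO_\bX)$ restricts to $G$-actions on $(\bU,\cO_\bU)$ and $(\bV,\cO_\bV)$ which are intertwined by $\varphi^\bU_\bV$, i.e. $\varphi^\bU_\bV(g\cdot a) = g\cdot\varphi^\bU_\bV(a)$ for all $g \in G$, $a\in \cO(\bU)$. Applying Theorem~\ref{wUGfunc} to this morphism with $\tau = \mathrm{id}_G$ yields a unique continuous $K$-algebra homomorphism
\[ \mathrm{res}^\bU_\bV := \w{\varphi^\bU_\bV \rtimes \mathrm{id}_G}\colon \w\cD(\bU,G)\longrightarrow\w\cD(\bV,G), \]
and these are the restriction maps of $\w\cD(-,G)$.

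It then remains to verify the presheaf axioms using uniqueness. When $\bV = \bU$, $\varphi^\bU_\bU$ is the identity and the identity map on $\w\cD(\bU,G)$ fills the defining square of Theorem~\ref{wUGfunc}, so $\mathrm{res}^\bU_\bU = \mathrm{id}$. For composability, given $\bW \subseteq \bV \subseteq \bU$ in $\bX_w/G$ I would use the functoriality of the $\varphi$-morphism construction (the remarks following Definition~\ref{PhiMorph}, together with the uniqueness of $\tilde\varphi$ in \cite[Lemma~2.4]{DCapOne}) and of $U(-)$ (Lemma~\ref{Ufunctor}) to see that $U(\varphi^\bU_\bW, \widetilde{\varphi^\bU_\bW}) = U(\varphi^\bV_\bW,\widetilde{\varphi^\bV_\bW})\circ U(\varphi^\bU_\bV,\widetilde{\varphi^\bU_\bV})$; pasting the two squares defining $\mathrm{res}^\bU_\bV$ and $\mathrm{res}^\bV_\bW$ then shows that $\mathrm{res}^\bV_\bW\circ\mathrm{res}^\bU_\bV$ is a continuous $K$-algebra homomorphism filling the defining square for $\mathrm{res}^\bU_\bW$, whence $\mathrm{res}^\bV_\bW\circ\mathrm{res}^\bU_\bV = \mathrm{res}^\bU_\bW$ by uniqueness. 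Since all the objects are $K$-Fréchet algebras and all the restriction maps are continuous $K$-algebra homomorphisms, this gives a presheaf of $K$-Fréchet algebras on $\bX_w/G$. The only mildly delicate point, and the one I would write out carefully, is precisely this compatibility check — that the composite of two restriction maps again lies in the commutative square characterising a single restriction map — which rests on the compatibility of $U(-,-)$ with composition of étale morphisms of affinoid algebras; everything else is a formal consequence of Theorem~\ref{wUGfunc} and its uniqueness clause.
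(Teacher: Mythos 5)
Your proposal is correct and follows essentially the same route as the paper: construct the restriction maps via Theorem~\ref{wUGfunc} with $\tau = \mathrm{id}_G$, using the $G$-equivariance of the affinoid restriction maps (Remark~\ref{sAEquiv}(b)), and then deduce the presheaf axioms from the uniqueness clause together with the functoriality of $\cD(-)$. The extra detail you supply (existence of $G$-stable affine formal models, and the explicit check that $\mathrm{res}^\bU_\bU = \mathrm{id}$) is sound but not a different method.
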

\begin{proof} Note that $\w\cD(\bU,G)$ is a $K$-Fr\'echet algebra for each $\bU \in \bX_w/G$ by Corollary \ref{DXGFrechet}. Given $\bV \subset \bU$ in $\bX_w / G$, the restriction map $\varphi : \cO(\bU) \to \cO(\bV)$ is \'etale, and it is $G$-equivariant by Remark \ref{sAEquiv}(b). So by Theorem \ref{wUGfunc}, there is a unique continuous $K$-algebra homomorphism $\tau^\bU_\bV := \w{\varphi \rtimes 1_G}: \w\cD(\bU,G) \to \w\cD(\bV,G)$ extending $\bU(\varphi, \tilde{\varphi}) \rtimes 1_G : \cD(\bU) \rtimes G \to \cD(\bV) \rtimes G$. If $\bW \subset \bV$ is another object of $\bX_w/ G$, the functoriality of $\cD(-)$ ensures that $\tau^\bV_\bW \circ \tau^\bU_\bV$ and $\tau^\bU_\bW$ both extend the restriction map $\cD(\bU) \to \cD(\bW)$, and are therefore equal by the uniqueness part of  Theorem \ref{wUGfunc}.\end{proof}

\begin{lem}\label{gUHmaps} Let $H$ be a compact open subgroup of $G$ and let $\bU \in \bX_w/H$. For every $g \in G$, there is a continuous $K$-algebra isomorphism
\[ \w{g}_{\bU,H} : \w\cD(\bU,H) \longrightarrow \w\cD(g\bU, gHg^{-1})\]
such that the diagram 
\[\xymatrix{ \w\cD(\bU,H)\ar[rrrr]^{\w{h}_{\bU,H}} \ar[rrd]_{\w{gh}_{\bU,H}}&&&& \w\cD(h\bU, hHh^{-1}) \ar[dll]^{\w{g}_{h\bU,hHh^{-1}}} \\ && \w\cD(gh\bU, ghHh^{-1}g^{-1}) &&}\]
is commutative for all $g,h \in G$.
\end{lem}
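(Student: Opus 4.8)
The plan is to construct $\w{g}_{\bU,H}$ as a special case of the functoriality Theorem \ref{wUGfunc} and then to deduce both the cocycle identity and the invertibility from the uniqueness clause of that theorem. First I would fix $g \in G$ and assemble the data needed. Since $\rho(g)$ is an automorphism of the $G$-ringed space $\bX$, it carries the $H$-stable affinoid subdomain $\bU$ onto an affinoid subdomain $g\bU$ which is stable under the compact open subgroup $gHg^{-1}$, and $gHg^{-1}$ acts continuously on $g\bU$ by Lemma \ref{CtsActLem}. Let $\varphi_g\colon \cO(\bU) \to \cO(g\bU)$ be the $K$-affinoid algebra isomorphism $a \mapsto g^{\cO_\bX}(a)$ coming from the equivariant structure on $\cO_\bX$; being an isomorphism it is in particular \'etale, and the associated $\varphi_g$-morphism $\tilde\varphi_g$ of Example \ref{AutOfDer} is the unique one by \cite[Lemma 2.4]{DCapOne}. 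Let $\tau_g\colon H \to gHg^{-1}$ be conjugation by $g$. Describing the $H$-action on $\cO(\bU)$ and the $gHg^{-1}$-action on $\cO(g\bU)$ via Remark \ref{sAEquiv}(a), the compatibility hypothesis of Theorem \ref{wUGfunc} reduces to the cocycle relation $(\ref{EasyCocyc})$ for $\cO_\bX$: for $h \in H$ and $a \in \cO(\bU)$,
\[\varphi_g(h\cdot a) = g^{\cO_\bX}\bigl(h^{\cO_\bX}(a)\bigr) = (gh)^{\cO_\bX}(a) = (ghg^{-1})^{\cO_\bX}\bigl(g^{\cO_\bX}(a)\bigr) = \tau_g(h)\cdot\varphi_g(a).\]
So Theorem \ref{wUGfunc} applies, and I would define $\w{g}_{\bU,H} := \w{\varphi_g\rtimes\tau_g}\colon \w\cD(\bU,H) \to \w\cD(g\bU, gHg^{-1})$ to be the resulting unique continuous $K$-algebra homomorphism, which extends $U(\varphi_g,\tilde\varphi_g)\rtimes\tau_g\colon \cD(\bU)\rtimes H \to \cD(g\bU)\rtimes gHg^{-1}$.

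Next I would establish the cocycle identity $\w{g}_{h\bU,hHh^{-1}}\circ\w{h}_{\bU,H} = \w{gh}_{\bU,H}$ for $g,h \in G$. Both sides are continuous $K$-algebra homomorphisms $\w\cD(\bU,H)\to\w\cD(gh\bU,(gh)H(gh)^{-1})$, so by the uniqueness in Theorem \ref{wUGfunc} it is enough to check that the left-hand side extends $U(\varphi_{gh},\tilde\varphi_{gh})\rtimes\tau_{gh}$. For this I would note that $\varphi_g\circ\varphi_h = \varphi_{gh}$ as maps $\cO(\bU)\to\cO(gh\bU)$ --- once more by $(\ref{EasyCocyc})$ --- hence $\tilde\varphi_g\circ\tilde\varphi_h = \tilde\varphi_{gh}$ by the explicit formula of Example \ref{AutOfDer}, while plainly $\tau_g\circ\tau_h = \tau_{gh}$; combining these with the functoriality of $U(-)$ (Lemma \ref{Ufunctor}) and the evident compatibility of the skew-group constructions yields $(U(\varphi_g,\tilde\varphi_g)\rtimes\tau_g)\circ(U(\varphi_h,\tilde\varphi_h)\rtimes\tau_h) = U(\varphi_{gh},\tilde\varphi_{gh})\rtimes\tau_{gh}$, as required.

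Finally, for invertibility: since $\rho(e) = \mathrm{id}$ we have $\varphi_e = \mathrm{id}$ and $\tau_e = \mathrm{id}$, so the uniqueness clause gives $\w{e}_{\bV,J} = \mathrm{id}_{\w\cD(\bV,J)}$ for every compact open $J \leq G$ and $\bV \in \bX_w/J$; specialising the cocycle identity to $(g^{-1},g)$ over $(\bU,H)$ and to $(g,g^{-1})$ over $(g\bU,gHg^{-1})$ then exhibits $\w{g^{-1}}_{g\bU,gHg^{-1}}$ as a two-sided inverse of $\w{g}_{\bU,H}$, so $\w{g}_{\bU,H}$ is a continuous $K$-algebra isomorphism. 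I do not expect any serious obstacle here; the only steps requiring a little care are keeping track of which affinoid subdomain and compact open subgroup each algebra lies over, and verifying the compatibility hypothesis of Theorem \ref{wUGfunc} --- the displayed chain of equalities --- which is precisely where the equivariance cocycle $(\ref{EasyCocyc})$ is doing the work.
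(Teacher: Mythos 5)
Your proposal is correct and follows essentially the same route as the paper: define $\w{g}_{\bU,H}$ by applying Theorem \ref{wUGfunc} to the \'etale isomorphism $\varphi_g = g^{\cO}(\bU)$ and conjugation $\tau_g = \Ad_g$, verify the compatibility hypothesis via the cocycle relation $(\ref{EasyCocyc})$, and deduce the triangle identity from the uniqueness clause of Theorem \ref{wUGfunc} applied to the corresponding diagram at the level of $\cD(\bU)\rtimes H$. The only difference is that you spell out the invertibility argument ($\w{e}=\mathrm{id}$ plus the cocycle for $(g,g^{-1})$), a step the paper leaves implicit when it states that Theorem \ref{wUGfunc} "induces the required isomorphism" even though that theorem only directly produces a homomorphism — your version is marginally more complete on this point.
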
 \begin{proof} Fix $g \in G$. The structure sheaf $\cO$ on $\bX$, and the sheaf $\cD$ of finite order differential operators on $\bX$ are naturally $G$-equivariant: if  $g_\bV := g^{\cO}(\bV) : \cO(\bV) \to \cO(g\bV)$ defines the $G$-equivariant structure on $\cO$, then 
\[g^{\cD}(\bV) := U(g_\bV, \widetilde{g_\bV}) : \cD(\bV) \to \cD(g\bV)\]
defines the $G$-equivariant structure on $\cD$. Let 
\[\Ad_g : G \to G\]
be the map $x \mapsto gxg^{-1}$; then $\Ad_g(H) = gHg^{-1}$, and using $(\ref{EasyCocyc})$, we see that
\[ g_\bV( x \cdot a ) = g^{\cO}( x^\cO(a)) = (gxg^{-1})^{\cO}(g^{\cO}(a)) = \Ad_g(x) \cdot g_\bV(a)\]
for every $x \in H$ and $a \in \cO(\bV)$. Theorem \ref{wUGfunc} now induces the required continuous $K$-algebra isomorphism
\[\w{g}_{\bV,H} := \w{g_\bV \rtimes \Ad_g} : \w\cD(\bV,H) \to \w\cD(g\bV, gHg^{-1}) \]
which uniquely extends $g^{\cD}(\bV) = U(g_\bV, \widetilde{g_\bV}) : \cD(\bV) \to \cD(g\bV)$. It is easily checked, using the $G$-equivariance of $\cD$, that the diagram
\[\xymatrix{ \cD(\bU) \rtimes H\ar[rrrr]^{h^{\cD}(\bU) \rtimes \Ad_h} \ar[rrd]_{(gh)^{\cD}(\bU) \rtimes \Ad_{gh}}&&&& \cD(h \bU) \rtimes hHh^{-1} \ar[dll]^{g^{\cD}(h\bU) \rtimes \Ad_g} \\ && \cD(gh\bU) \rtimes ghHh^{-1}g^{-1} &&}\]
is commutative. The result follows from the uniqueness part of Theorem \ref{wUGfunc}.
\end{proof}

\begin{defn}\label{DefnOfSmall} We say that $(\bU,H)$ is \emph{small} if:
\be \item $\bU$ is an affinoid subdomain of $\bX$,
\item $H$ is a compact open subgroup of $G_{\bU}$,
\item $\cT(\bU)$ has an $H$-stable free $\cA$-Lie lattice $\cL$ for some $H$-stable affine formal model $\cA$ in $\cO(\bU)$. 
\ee 
If $\bU$ is an affinoid subdomain of $\bX$, we say that $H$ is \emph{$\bU$-small} if $(\bU,H)$ is small.
\end{defn}
We refer the reader to Definition \ref{TypesOfLattices} for the meaning of part (c).

\begin{lem}\label{SubsOfSmall} Suppose that $(\bX,G)$ is small. Then $(\bU, H)$ is small for every $\bU \in \bX_w$ and every compact open subgroup $H$ of $G_{\bU}$.
\end{lem}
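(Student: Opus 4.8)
The plan is to verify the three conditions of Definition \ref{DefnOfSmall} for the pair $(\bU, H)$, where $\bU \in \bX_w$ is an affinoid subdomain and $H$ is a compact open subgroup of $G_\bU$. Condition (a) is immediate: $\bU$ is affinoid by hypothesis. Condition (b) is also immediate: $H$ is by assumption a compact open subgroup of $G_\bU$. So the entire content of the lemma lies in condition (c): producing an $H$-stable affine formal model $\cB$ in $\cO(\bU)$ and an $H$-stable \emph{free} $\cB$-Lie lattice in $\cT(\bU) = \Der_K(\cO(\bU))$.

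First I would use the smallness of $(\bX, G)$ to fix an $\cA$-stable (i.e.\ $G$-stable) free $\cA$-Lie lattice $\cL$ in $\cT(\bX) = \Der_K(A)$, where $A := \cO(\bX)$ and $\cA$ is a $G$-stable affine formal model in $A$. Let $\varphi : A \to \cO(\bU)$ denote the restriction map, which is an \'etale morphism of $K$-affinoid algebras and is $G_\bU$-equivariant by Remark \ref{sAEquiv}(b). By \cite[Lemma 2.4]{DCapOne} there is a unique $\varphi$-morphism $\tilde\varphi : \Der_K(A) \to \Der_K(\cO(\bU))$. The first natural candidate for a formal model in $\cO(\bU)$ is $\varphi(\cA)$ (or, more robustly, a formal model generated by $\varphi(\cA)$ together with a chosen affine formal model of $\cO(\bU)$), and for a lattice the $\cO(\bU)$-span $\cO(\bU)_0 \cdot \tilde\varphi(\cL)$ of the image of $\cL$ inside $\Der_K(\cO(\bU))$, where $\cO(\bU)_0$ is the chosen formal model. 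Since $\cL = \bigoplus_{i=1}^d \cA x_i$ is free of rank $d$, its image is spanned by $\tilde\varphi(x_1), \ldots, \tilde\varphi(x_d)$, and because $\bU$ is a subdomain of the smooth variety $\bX$ the derivations $\tilde\varphi(x_i)$ still span $\Der_K(\cO(\bU))$ as a $K$-vector space (étale base change preserves the rank of the tangent sheaf, and the $x_i$ span $\Der_K(A)$); so this span is an $\cO(\bU)_0$-lattice, and after rescaling by a power of $\pi$ it becomes a Lie lattice, i.e.\ closed under bracket and stabilising the formal model. The point is that it is \emph{free} of rank $d$: the $\tilde\varphi(x_i)$ form a basis because $\Der_K(\cO(\bU))$ is free of rank $d$ over $\cO(\bU)$ (smoothness) and the $\tilde\varphi(x_i)$ are a $K$-basis, hence — after the inevitable rescaling to land in the formal model — an $\cO(\bU)_0$-module basis of the lattice.

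Next I would address $H$-stability. Apply Lemma \ref{GstableFM} to the continuous action $\rho_\bU : H \hookrightarrow G_\bU \to \Aut_K(\cO(\bU))$ (continuous by Definition \ref{CtsAct}(b) and Lemma \ref{CtsActLem}(a), and $H$ is compact) to enlarge the chosen formal model to an $H$-stable affine formal model $\cB$ in $\cO(\bU)$ containing $\varphi(\cA)$. Then apply Lemma \ref{StabLopen}(b) with the compact group $H$ in place of $G$: the stabiliser in $H$ of the $\cB$-lattice $\cB \cdot \tilde\varphi(\cL)$ is open in $H$, hence of finite index; replacing the lattice by a sufficiently high $\pi$-power multiple of $\cB \cdot \tilde\varphi(\cL)$ — or, cleaner, intersecting/summing over the finite $H$-orbit as in Lemma \ref{ProdFormModels} and the proof of Lemma \ref{StabLopen}(b) — yields an $H$-stable $\cB$-lattice, and after one more rescaling an $H$-stable $\cB$-Lie lattice. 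The one thing to check carefully is that these rescaling and stabilising operations preserve \emph{freeness}: rescaling $\cL'$ by $\pi^n$ sends a basis to a basis, so $\pi^n \cL'$ is again free of rank $d$; and the $H$-stabilisation must be arranged so as not to destroy freeness — the safest route is to observe that $\cL'' := \sum_{h \in H/H'} h \cdot (\pi^n \cB \tilde\varphi(\cL))$ over coset representatives for an open stabiliser $H'$ is $H$-stable and squeezed between two $\pi$-power multiples of the free lattice $\cB\tilde\varphi(\cL)$, and then invoke the fact (standard for finitely generated modules over the coherent, in fact here essentially a PID-like, formal model $\cB$, cf.\ the projectivity/smoothness arguments around Definition \ref{TypesOfLattices}) that one can shrink $\cL''$ inside itself to a free sublattice of rank $d$ that is still $H$-stable — concretely, $\pi^m \cB\tilde\varphi(\cL) \subseteq \cL''$ for suitable $m$, and $\pi^m \cB\tilde\varphi(\cL)$ is free of rank $d$ and $H$-stable provided $m$ is large enough that $H$ fixes it, which is exactly the conclusion of Lemma \ref{StabLopen}(b). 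This last freeness-preservation step is the main obstacle; everything else is bookkeeping with the cited lemmas.
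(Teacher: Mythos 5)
Your plan is broadly the right one — push the lattice $\cL$ forward along the \'etale map $\tilde\varphi$, work with the $\cB$-span of $\tilde\varphi(\cL)$, and use that $\cO(\bU)\otimes_{\cO(\bX)}\cT(\bX)\to\cT(\bU)$ is an isomorphism to see freeness — and in fact the paper does essentially the same thing by writing the lattice as $\cC\otimes_{\cA}\cL$. But you get tangled in the bookkeeping at exactly the spot you flag as "the main obstacle," and the obstacle is fictitious. Once $\cB$ is an $H$-stable affine formal model containing $\varphi(\cA)$, and $\cL$ is $G$-stable, the module $\cB\cdot\tilde\varphi(\cL)$ is \emph{automatically} $H$-stable: $\tilde\varphi$ is $G_\bU$-equivariant (this follows from Remark \ref{sAEquiv}(b) and the uniqueness in \cite[Lemma 2.4]{DCapOne}), so $h\cdot(b\,\tilde\varphi(v))=(h\cdot b)\,\tilde\varphi(h\cdot v)$ lies back in $\cB\cdot\tilde\varphi(\cL)$. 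And freeness is immediate: $\cB\otimes_{\cA}\cL$ is a free $\cB$-module of rank $d$ and maps injectively onto $\cB\cdot\tilde\varphi(\cL)$ inside $\cT(\bU)$ precisely because of the \'etale base-change isomorphism you already invoke. So there is no orbit-summation step, no "squeezing" argument, and no freeness-preservation worry. Worse, the concrete fix you propose is wrong as stated: multiplying a lattice by the central element $\pi^m$ preserves \emph{and reflects} $H$-stability, so "$\pi^m\cB\tilde\varphi(\cL)$ is $H$-stable provided $m$ is large enough" cannot be true unless $\cB\tilde\varphi(\cL)$ was already $H$-stable; and Lemma \ref{StabLopen}(b) asserts only that the stabiliser of a given lattice is open, not that some $\pi$-power multiple becomes globally stable.

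The other thing you leave implicit is how to ensure $\cB\cdot\tilde\varphi(\cL)$ is actually a \emph{Lie} lattice, i.e.\ that the derivations in it preserve $\cB$. You wave at a "rescaling by a power of $\pi$," which does work, but the paper's route is cleaner and worth knowing: it first invokes \cite[Lemma 7.6(b)]{DCapOne} to replace $\cL$ by a $\pi$-power multiple that makes $\bU$ \emph{$\cL$-admissible} (Definition \ref{DefnLAdm}), which guarantees the existence of an $\cL$-stable affine formal model $\cB$ in $\cO(\bU)$. It then makes $\cB$ $G$-stable by taking the product over the finite $G$-orbit; this $G$-stabilised $\cC$ is \emph{still} $\cL$-stable because $\cL$ itself is $G$-stable. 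The payoff of this ordering — $\cL$-stability first, $G$-stability second — is that the Lie lattice property for $\cC\otimes_{\cA}\cL$ is built in from the start, and the $G$-stability of the lattice is free, with no rescaling gymnastics needed at the end. I would also note one minor economy the paper uses that you omit: at the outset it reduces to the case $H=G$ and $\bU\in\bX_w/G$ by replacing $G$ with $H$, using that $G_\bU$ is open.
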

\begin{proof} It is clear that $(\bX, J)$ is small for every compact open subgroup $J$ of $G$. Because the stabiliser $G_{\bU}$ is open in $G$ by Definition \ref{CtsAct}(a), by replacing $G$ by $H$ we may therefore assume that $H = G$ and $\bU \in \bX_w/G$.  Choose a $G$-stable affine formal model $\cA$ in $\cO(\bX)$ and a $G$-stable free $\cA$-Lie lattice $\cL$ in $\cT(\bX)$. By \cite[Lemma 7.6(b)]{DCapOne} we may replace $\cL$ by a $\pi$-power multiple, and ensure that $\bU$ is also $\cL$-admissible. Now, if $\cB$ is an $\cL$-stable affine formal model in $\cO(\bU)$, then we saw in the proof of Lemma \ref{GstableFM} that the $G$-orbit of $\cB$ is finite, $\cB_1,\ldots, \cB_n$ say. Then $\cC := \cB_1 \cdot \cdots \cB_n$ is again an affine formal model in $\cO(\bU)$ by Lemma \ref{ProdFormModels} which is both $G$-stable and $\cL$-stable. It is now easy to see that $\cC \otimes_{\cA} \cL$ is a $G$-stable free $\cC$-Lie lattice in $\cT(\bU)$.
\end{proof}

\begin{defn}\hspace{2em}
\be \item Let $\bX_w(\cT)$ denote the set of affinoid subdomains $\bU$ of $\bX$ such that $\cT(\bU)$ admits a free $\cA$-Lie lattice for some affine formal model $\cA$ in $\cO(\bU)$.
\item Let $\bX_w(\cT)/G \subset \bX_w/G$ denote the set of $G$-stable affinoid subdomains $\bU$ of $\bX$ such that $(\bU,G)$ is small. 
\ee \end{defn}

\begin{lem}\label{SmallPairsExist} For every $\bU \in \bX_w(\cT)$, there is a $\bU$-small subgroup $H$.
\end{lem}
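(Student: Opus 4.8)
The plan is to start from the free Lie lattice provided by the hypothesis $\bU\in\bX_w(\cT)$, to enlarge its base formal model to one that is stable under a compact open subgroup of $G_\bU$, and finally to shrink that group to the stabiliser of a suitably rescaled lattice.

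First, since $\bU\in\bX_w(\cT)$, by definition there exist an affine formal model $\cA_0$ in $A:=\cO(\bU)$ and a free $\cA_0$-Lie lattice $\cL_0=\bigoplus_{i=1}^d\cA_0 v_i$ in $\cT(\bU)=\Der_K(A)$. As $\bU$ is an affinoid subdomain it is qcqs, so $G_\bU$ is open in $G$ by Definition \ref{CtsAct}(a); being an open subgroup of a $p$-adic Lie group it contains a compact open subgroup $H_0$, and $\rho_\bU|_{H_0}$ is continuous by Definition \ref{CtsAct}(b). Thus $H_0$ is a compact $p$-adic Lie group acting continuously on $A$, and Lemma \ref{GstableFM} provides an $H_0$-stable affine formal model $\cA_1$ in $A$ with $\cA_0\subseteq\cA_1$; by Lemma \ref{ProdFormModels}, $\cA_1$ is module-finite over $\cA_0$.

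Next I would manufacture a \emph{free} $\cA_1$-Lie lattice. The naive candidate $\cA_1\cdot\cL_0$ need not be stable under the bracket nor under the anchor map, because the $v_i$ are only known to preserve $\cA_0$. To repair this, choose (as in the proof of Lemma \ref{StabLopen}(a), using a finite topological generating set of $\cA_1$ over $\cR$) an integer $m\geq 0$ with $\pi^m\cL_0\subseteq\Der_\cR(\cA_1)$, and set $w_i:=\pi^m v_i$ and $\cL:=\sum_{i=1}^d\cA_1 w_i$. Since the $v_i$ remain $\cO(\bU)$-linearly independent in $\cT(\bU)$ (apply $-\otimes_\cR K$ to the inclusion $\cL_0\hookrightarrow\cT(\bU)$), $\cL$ is free over $\cA_1$ with basis $w_1,\dots,w_d$ and spans $\cT(\bU)$ over $K$. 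A direct Leibniz-rule computation then shows $\cL$ is a Lie lattice: $\cL(\cA_1)\subseteq\cA_1$ because $w_i\in\Der_\cR(\cA_1)$, and $[aw_i,bw_j]=ab[w_i,w_j]+a\,w_i(b)\,w_j-b\,w_j(a)\,w_i\in\cL$ since $[w_i,w_j]=\pi^{2m}[v_i,v_j]\in\pi^{2m}\cL_0\subseteq\cL$ (using $[\cL_0,\cL_0]\subseteq\cL_0$) and $w_i(b),w_j(a)\in\cA_1$.

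Finally I would shrink the group. As $\cA_1$ is an $H_0$-stable affine formal model and $\cL$ is an $\cA_1$-lattice in $\cT(\bU)$, Lemma \ref{StabLopen}(b) (with $G$ replaced by $H_0$) shows that $H:=\Stab_{H_0}(\cL)$ is open in $H_0$, hence a compact open subgroup of $G_\bU$. By construction $\cA_1$ is $H$-stable and $\cL$ is an $H$-stable free $\cA_1$-Lie lattice in $\cT(\bU)$, so $(\bU,H)$ is small and $H$ is $\bU$-small. The only genuinely delicate step is the rescaling argument that turns $\cA_1\cdot\cL_0$ into an honest free Lie lattice; the remaining ingredients are direct invocations of Lemmas \ref{GstableFM} and \ref{StabLopen}.
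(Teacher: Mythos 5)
Your argument is correct, but it is more roundabout than the paper's. You first pick a compact open subgroup $H_0$ of $G_\bU$, then use Lemma~\ref{GstableFM} to enlarge $\cA_0$ to an $H_0$-stable formal model $\cA_1$, and then have to rebuild a free $\cA_1$-Lie lattice by rescaling the basis elements $v_i$ so that they land in $\Der_\cR(\cA_1)$ and checking bracket- and anchor-stability by hand. That rescaling step is the source of the extra delicacy you flag at the end. The paper instead shrinks the group rather than enlarging the formal model: because $G_\bU$ acts continuously, the stabiliser of $\cA_0$ in $G_\bU$ is already an \emph{open} subgroup (this is the first half of the proof of Lemma~\ref{GstableFM}, which needs no compactness), so one can take a compact open $J\leq G_\bU$ stabilising the original $\cA_0$ and then apply Lemma~\ref{StabLopen}(b) directly to the original free $\cA_0$-Lie lattice $\cL_0$ to get an open $H\leq J$ stabilising $\cL_0$. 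This bypasses the need to construct a new free Lie lattice over a new formal model, which is the ``genuinely delicate'' step in your write-up. Both routes are sound; the paper's is just leaner because it observes that $\cA$ need not be made stable under a pre-chosen compact group --- it suffices to shrink the group to the already-open stabiliser of $\cA$.
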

\begin{proof} Choose an affine formal model $\cA$ in $\cO(\bU)$ and a free $\cA$-Lie lattice $\cL$ in $\cT(U)$. The stabiliser of $\cA$ in $G_{\bU}$ is open because $G_{\bU}$ acts continuously on $\bU$, so we can find some compact open subgroup $J$ of $G_{\bU}$ such that $\cA$ is $J$-stable. Now by Lemma \ref{StabLopen}(b) we can find an open subgroup $H$ of $J$ which also stabilises $\cL$. \end{proof}

Recall the definition of \emph{two-sided Fr\'echet-Stein algebras} from \cite[\S 6.4]{DCapOne}.

\begin{thm}\label{FrSt} Suppose that $(\bX, G)$ is small. Then $\w\cD(\bX,G)$ is a two-sided Fr\'echet-Stein algebra. 
\end{thm}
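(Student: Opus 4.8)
The plan is to exhibit an explicit two-sided Fr\'echet-Stein presentation of $\w\cD(\bX,G)$ and verify its defining properties, isolating the single genuinely non-formal ingredient. Since $(\bX,G)$ is small, I would choose a $G$-stable affine formal model $\cA$ in $A:=\cO(\bX)$, a $G$-stable \emph{free} $\cA$-Lie lattice $\cL$ in $\cT(\bX)$, and a good chain $(N_\bullet)$ for $\cL$ (Lemma \ref{FastChain}, Corollary \ref{ChainCap}). Lemma \ref{StdPres} then provides a $K$-algebra isomorphism $\w\cD(\bX,G)\cong\invlim_n U_n$ with $U_n:=\hK{U(\pi^n\cL)}\rtimes_{N_n}G$, which realises the Fr\'echet structure of Corollary \ref{DXGFrechet}. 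Two of the three axioms are routine: each $U_n$ is two-sided Noetherian because $\hK{U(\pi^n\cL)}$ is a two-sided Noetherian $K$-Banach algebra (for general $K$ this is part of \S\ref{LevelwiseSect}, extending \cite{DCapOne}) and, by Lemma \ref{RingSGN}(b), $U_n$ is a crossed product of it with the \emph{finite} group $G/N_n$; and the transition maps $U_{n+1}\to U_n$ have dense image, since that image contains the image of $\cD(\bX)\rtimes G$, which is dense in $U_n$.

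The heart of the matter is that each transition map $U_{n+1}\to U_n$ is flat both as a left and as a right module, and I would prove this by factoring it through $\hK{U(\pi^n\cL)}\rtimes_{N_{n+1}}G$ as a composite $b\circ a$, where $a\colon\hK{U(\pi^{n+1}\cL)}\rtimes_{N_{n+1}}G\to\hK{U(\pi^n\cL)}\rtimes_{N_{n+1}}G$ raises the enveloping-algebra level and $b\colon\hK{U(\pi^n\cL)}\rtimes_{N_{n+1}}G\to\hK{U(\pi^n\cL)}\rtimes_{N_n}G$ enlarges the trivialised subgroup; this is legitimate because $N_{n+1}\subseteq G_{\pi^{n+1}\cL}$ and $N_{n+1}\subseteq N_n\subseteq G_{\pi^n\cL}$. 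For $a$: using the functoriality of \S\ref{GActionSection} (Proposition \ref{hUGfunc}) one checks that $a$ identifies its target with $\hK{U(\pi^n\cL)}\otimes_{\hK{U(\pi^{n+1}\cL)}}U_{n+1}$ as a bimodule, both sides being free over the relevant enveloping algebra on a common transversal of $N_{n+1}$ in $G$; hence flatness of $a$ on each side reduces to two-sided flatness of $\hK{U(\pi^n\cL)}$ over $\hK{U(\pi^{n+1}\cL)}$. For $b$: by Proposition \ref{SHGassoc} together with the untwisting of Lemma \ref{Untwist} (Theorem \ref{Beta} trivialises the $N_n$-action on $\hK{U(\pi^n\cL)}$), the source of $b$ is identified with $\bigl(\hK{U(\pi^n\cL)}[N_n/N_{n+1}]\bigr)\rtimes_{N_n}G$ and $b$ with the map induced by the augmentation $\hK{U(\pi^n\cL)}[N_n/N_{n+1}]\twoheadrightarrow\hK{U(\pi^n\cL)}$; since $[N_n:N_{n+1}]$ is invertible in the $\Q$-algebra $\hK{U(\pi^n\cL)}$, the averaging idempotent of the group ring is $G$-fixed, hence central in the crossed product, so $b$ is projection onto a ring direct factor and presents $U_n$ as a two-sided direct summand of $\hK{U(\pi^n\cL)}\rtimes_{N_{n+1}}G$, in particular flat on both sides. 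Composing $a$ and $b$ finishes the verification.

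Thus the one substantial ingredient --- and the expected main obstacle --- is two-sided flatness of $\hK{U(\pi^n\cL)}$ over $\hK{U(\pi^{n+1}\cL)}$, i.e. the case of trivial $G$; everything involving the group reduces to formal crossed-product bookkeeping together with the characteristic-zero averaging trick. For discretely valued $K$ this flatness is contained in \cite{DCapOne}, and for an arbitrary complete non-archimedean $K$ it is precisely what \S\ref{LevelwiseSect} supplies, via the flatness criteria of Raynaud and Gruson \cite{GruRay}; I would simply quote that result here.
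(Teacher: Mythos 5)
Your proof is correct, and your overall scaffolding (good chain, Noetherianity via Lemma \ref{RingSGN}(b), density via $\cD(\bX)\rtimes G$) coincides with the paper's; the difference is in how you establish two-sided flatness of the transition maps. The paper does not factor through the intermediate algebra $\hK{U(\pi^n\cL)}\rtimes_{N_{n+1}}G$. Instead it considers the commutative square formed by $U_{n+1}\to U_n$, the two vertical inclusions into the crossed products, and $U_{n+1}\rtimes_{N_{n+1}}G\to U_n\rtimes_{N_n}G$; it observes that the \emph{diagonal} map $U_{n+1}\to U_n\rtimes_{N_n}G$ is flat (a flat map followed by a finite free extension), that $U_{n+1}\rtimes_{N_{n+1}}G$ is free of finite rank over $U_{n+1}$, and then invokes \cite[Lemma 2.2]{SchmidtBB} to conclude that the bottom arrow is flat. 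Your route replaces the appeal to Schmidt's lemma by an explicit two-step factoring: step $a$ (raising the level) is handled via the bimodule identification of $\hK{U(\pi^n\cL)}\rtimes_{N_{n+1}}G$ with the crossed-product extension of scalars from $U_{n+1}$, and step $b$ (enlarging the trivialised subgroup) by the observation that the characteristic-zero averaging idempotent of $K[N_n/N_{n+1}]$ is $G$-fixed, so the surjection is projection onto a two-sided ring direct factor. Both approaches correctly isolate Theorem \ref{HKULflat} as the genuinely non-formal input; the paper's argument is shorter and defers the crossed-product bookkeeping to a citation, while yours is more self-contained and exhibits $U_n$ explicitly as a two-sided direct summand of an intermediate crossed product, which is mildly stronger than flatness alone. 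If you were to write this out in full, you would want to spell out the left and right bimodule isomorphisms in step $a$ separately (namely $W\otimes_V A\cong B$ as $(W,A)$-bimodules and $A\otimes_V W\cong B$ as $(A,W)$-bimodules, writing $V=\hK{U(\pi^{n+1}\cL)}$, $W=\hK{U(\pi^n\cL)}$, $A=U_{n+1}$, $B=\hK{U(\pi^n\cL)}\rtimes_{N_{n+1}}G$), since each side of the flatness claim uses a different one, and to check, as you implicitly did, that the $G$-action on $N_n/N_{n+1}$ induced by the untwisting of Lemma \ref{Untwist} really is conjugation.
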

We postpone the proof until $\S \ref{NoethFlat}$ --- it can be found immediately after Theorem \ref{HKULflat}. We will shortly see that when $(\bX,G)$ is small, it is possible to localise a coadmissible $\w\cD(\bX,G)$-module to a $G$-equivariant sheaf of $\cD$-modules defined on \emph{every} affinoid subdomain of $\bX$. In fact, we will give a construction of this localisation functor in a more general, axiomatic, setting. 

Recall the canonical map $\gamma^G : G \to \w\cD(\bU,G)^\times$ from Remark \ref{GammaMap}(c).

\begin{defn}\label{AactsCompatibly} Let $G$ be a $p$-adic Lie group, acting continuously on a smooth rigid analytic variety $\bX$, and let $A$ be a $K$-algebra. We say that \emph{$A$ acts on $\bX$ compatibly with $G$} if there are given
\begin{itemize}[leftmargin=7mm]
\item a group homomorphism $\eta : G \to A^\times$, 
\item a Fr\'echet-Stein subalgebra $A_H$ of $A$ for every compact open subgroup $H$ of $G$, 
\item a continuous homomorphism $\varphi^H : A_H \to \w\cD(-,H)$ of presheaves of $K$-Fr\'echet algebras on $\bX_w/H$, for every compact open subgroup $H$ of $G$
\end{itemize}
such that for every pair $H \leq N$ of compact open subgroups of $G$:
\be
\item $A_H \leq A_N$, $\eta(H) \subseteq A_H^\times$ and the canonical map 
\[A_H \underset{K[H]}{\otimes}{} K[N] \longrightarrow A_N\]
is a bijection,
\item the following diagram of presheaves on $\bX_w/N$ is commutative:
\[\xymatrix{ A_H \ar[rr]^{\varphi^H}\ar[d] && \w\cD(-,H) \ar[d] \\ A_N \ar[rr]_{\varphi^N} && \w\cD(-,N), }\]
\item for every $g \in G$, the conjugation-by-$\eta(g)$ map 
\[\Ad_{\eta(g)} : A \to A\]
sends $A_H$ into $A_{gHg^{-1}}$, and for every $\bU \in \bX_w/H$, the diagram
\[\xymatrix{A_H \ar[rr]^{\Ad_{\eta(g)}} \ar[d]_{\varphi^H(\bU)} && A_{gHg^{-1}} \ar[d]^{\varphi^{gHg^{-1}}(g\bU)} \\ \w\cD(\bU,H) \ar[rr]_{\w{g}_{\bU,H}} && \w\cD(g\bU, gHg^{-1}) }\]
is commutative, and
\item  $\varphi^H \circ \eta_{|H} = \gamma^H$.
\ee
\end{defn}

Here is our first example of a compatible action.

\begin{prop}\label{XaffCompat} Suppose that $(\bX,G)$ is small. Then $\w\cD(\bX,G)$ acts on $\bX$ compatibly with $G$.
\end{prop}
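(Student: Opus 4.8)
The plan is to verify, one by one, the data and axioms of Definition \ref{AactsCompatibly} with $A = \w\cD(\bX,G)$. First I would supply the group homomorphism $\eta := \gamma^G : G \to \w\cD(\bX,G)^\times$ from Remark \ref{GammaMap}(c); axiom (d) is then immediate, since $\varphi^H \circ \eta_{|H} = \gamma^H$ will hold by construction of $\varphi^H$. For each compact open subgroup $H$ of $G$ I would take $A_H$ to be $\w\cD(\bX,H)$, viewed as a subalgebra of $\w\cD(\bX,G)$: since $(\bX,G)$ is small, $(\bX,H)$ is small by Lemma \ref{SubsOfSmall}, so $\w\cD(\bX,H)$ is a (two-sided) Fr\'echet-Stein algebra by Theorem \ref{FrSt}. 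The inclusion $A_H \hookrightarrow A_N$ for $H \le N$, together with the embedding $A_H \hookrightarrow A = \w\cD(\bX,G)$, is provided by applying Corollary \ref{Stacky} repeatedly after passing to an open normal subgroup: more precisely, for $H \le N$ compact open, pick $U \triangleleft_o N$ with $U \le H$; then $\w\cD(\bX,U) \rtimes_U H \cong \w\cD(\bX,H)$ and $\w\cD(\bX,U)\rtimes_U N \cong \w\cD(\bX,N)$ by Corollary \ref{Stacky}, and the functoriality of the crossed-product construction in the group variable (Lemma \ref{CPfunc}) gives the natural map $\w\cD(\bX,H) \to \w\cD(\bX,N)$. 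The same mechanism, with $N$ replaced by $G$, identifies $\w\cD(\bX,H)$ with a subalgebra of $\w\cD(\bX,G)$, and one checks these maps are compatible.

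Next I would define the presheaf morphisms $\varphi^H : A_H \to \w\cD(-,H)$ on $\bX_w/H$. Here $A_H = \w\cD(\bX,H)$ is a constant value, while $\w\cD(-,H)$ is the presheaf of Lemma \ref{wDGpresheaf}; the map $\varphi^H(\bU)$ is simply the restriction homomorphism $\tau^\bX_\bU = \w{\iota \rtimes 1_H} : \w\cD(\bX,H) \to \w\cD(\bU,H)$ attached to the \'etale inclusion $\cO(\bX) \hookrightarrow \cO(\bU)$ via Theorem \ref{wUGfunc} (note $\bU \in \bX_w/H$ is $\cL$-admissible for suitable $\cL$, so $(\bU,H)$ is small by Lemma \ref{SubsOfSmall}, and $\w\cD(\bU,H)$ is defined). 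That $\varphi^H$ is a morphism of presheaves of $K$-Fr\'echet algebras — i.e.\ commutes with the restriction maps $\tau^\bU_\bV$ — is exactly the cocycle/functoriality identity established in the proof of Lemma \ref{wDGpresheaf}, applied with $\bX$ in place of $\bU$.

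It then remains to check axioms (a), (b), (c). For (a): $A_H = \w\cD(\bX,H) \le \w\cD(\bX,N) = A_N$ by the preceding paragraph, $\eta(H) = \gamma^G(H) \subseteq \w\cD(\bX,H)^\times$ since $\gamma^G$ factors through $\gamma^H$ under the inclusion, and the bijectivity of $A_H \otimes_{K[H]} K[N] \to A_N$ follows from the crossed-product description $\w\cD(\bX,N) \cong \w\cD(\bX,U) \rtimes_U N$ together with Lemma \ref{RingSGN}(b): a crossed product $S \ast (N/U)$ over $S = \w\cD(\bX,U)$ is free of rank $[N:U]$ over $S$, compatibly with the $[H:U]$-decomposition, whence the base-change isomorphism. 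Axiom (b), commutativity of the square relating $\varphi^H$ and $\varphi^N$, follows from the uniqueness clause of Theorem \ref{wUGfunc}: both composites $A_H \to A_N \to \w\cD(\bU,N)$ and $A_H \to \w\cD(\bU,H) \to \w\cD(\bU,N)$ are continuous $K$-algebra maps extending the same map $\cD(\bX) \rtimes H \to \cD(\bU)\rtimes N$ on the dense subalgebra, hence coincide. Axiom (c), compatibility with conjugation, is likewise a uniqueness argument: $\Ad_{\eta(g)}$ restricted to $\w\cD(\bX,H)$ is the map $\w{g}_{\bX,H}$ of Lemma \ref{gUHmaps} (both extend $g^{\cD}(\bX) \rtimes \Ad_g$, so agree by Theorem \ref{wUGfunc}), and then the square in (c) is the compatibility of $\w{g}_{-,H}$ with restriction, which again follows from the uniqueness part of Theorem \ref{wUGfunc} since both paths around the square extend $g^{\cD}(\bX)\rtimes\Ad_g$ composed with restriction to $g\bU$.

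**The main obstacle** I anticipate is pinning down precisely the subalgebra embeddings $\w\cD(\bX,H) \hookrightarrow \w\cD(\bX,G)$ and verifying their mutual compatibility as $H$ varies, since Corollary \ref{Stacky} is stated only for \emph{normal} open subgroups and one must interpose auxiliary normal subgroups; keeping the identifications coherent (so that axiom (a)'s base-change map is literally the inclusion and axiom (d) holds on the nose) requires a careful but routine diagram chase using the functoriality Lemma \ref{CPfunc} and the cofinality Proposition \ref{IHIGcofinal}. Everything else reduces to the uniqueness assertion in Theorem \ref{wUGfunc}.
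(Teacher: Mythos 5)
Your proposal is correct and takes essentially the same approach as the paper: same choices of $\eta = \gamma^G$, $A_H = \w\cD(\bX,H)$ and $\varphi^H = \tau^\bX_\bU$, with axioms (b), (c), (d) reduced to density and the uniqueness clause of Theorem \ref{wUGfunc}, exactly as in the paper's proof. The only divergence is in axiom (a), where the paper works directly at the level of the Banach quotients $\hK{U(\cL)} \rtimes_J H$ and passes to the inverse limit using Proposition \ref{IHIGcofinal}, while you repackage the same content by interposing a normal subgroup $U \triangleleft_o N$ and invoking Corollary \ref{Stacky}; both routes rest on the same crossed-product freeness from Lemma \ref{RingSGN}(b), so this is a presentation difference rather than a different argument.
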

\begin{proof} As $\bX$ is a $G$-stable affinoid variety, we may define the group homomorphism
\[ \eta := \gamma^G: G \to \w\cD(\bX,G)^\times.\]
For every compact open subgroup $H$ of $G$, we set 
\[ A_H := \w\cD(\bX,H)\]
which is a Fr\'echet-Stein subalgebra of $A = \w\cD(\bX,G)$ by Lemma \ref{SubsOfSmall} and Theorem \ref{FrSt}. For every $\bU \in \bX_w/H$, we let
\[\varphi^H(\bU) : A_H \longrightarrow \w\cD(\bU,H)\]
be the restriction map $\tau^\bX_\bU$ in the presheaf $\w\cD(-,H)$ on $\bX_w/H$ from Lemma \ref{wDGpresheaf}. Viewing $A_H$ as a constant sheaf on $\bX_w/H$, we see that $\varphi^H : A_H \to \w\cD(-,H)$ is a continuous morphism of presheaves, again by Lemma \ref{wDGpresheaf}. 

We now check that axioms (a)-(d) of Definition \ref{AactsCompatibly} are verified for these data.

(a) Let $(\cL,J) \in \cI(N) \cap \cI(H)$. Since $\hK{U(\cL)} \rtimes_JN$ is a crossed product of $\hK{U(\cL)}$ with $N/J$ by Lemma \ref{RingSGN}(b), we see that the canonical map
\[ \hK{U(\cL)} \rtimes_J H \underset{K[H]}{\otimes}{} K[N] \longrightarrow \hK{U(\cL)} \rtimes_J N\]
is a bijection. Now consider the following commutative diagram:
\[\xymatrix{  \w\cD(\bU,H) \underset{K[H]}{\otimes}{} K[N] \ar[d]\ar[rr]  && \w\cD(\bU,N) \ar[d] \\ 
\invlim \left( \hK{U(\cL)} \rtimes_J H  \underset{K[H]}{\otimes}{}K[N] \right) \ar[rr] && \invlim \hK{U(\cL)} \rtimes_J N. }\]
The bottom horizontal arrow is a bijection, being the inverse limit over all $(\cL,J) \in \cI(N) \cap \cI(H)$ of the maps considered above. Since $J$ has finite index in $H$ and inverse limits commute with finite direct sums, using Lemma \ref{IHIGcofinal} we see that the left vertical arrow is a bijection. The right vertical arrow is also a bijection, again by Lemma \ref{IHIGcofinal}. So the top horizontal arrow is bijective, as required.

(b) Theorem \ref{wUGfunc} induces a commutative diagram of $K$-algebra maps
\[\xymatrix{ A_H = \w\cD(\bX,H) \ar[r]\ar[d]\ar[d] & \w\cD(\bU,H) \ar[d] \\ A_N = \w\cD(\bX,N) \ar[r] & \w\cD(\bU,N).}\]

(c) Because $g^{\cD} : \cD \to g^\ast \cD$ is a morphism of presheaves, the diagram
\[\xymatrix{ \cD(\bX) \rtimes G \ar[rr]^{g^{\cD}(\bX) \rtimes \Ad_g} && \cD(\bX) \rtimes G \\ \cD(\bX) \rtimes H \ar[rr]^{g^{\cD}(\bX) \rtimes \Ad_g} \ar[u]\ar[d] && \cD(\bX, gHg^{-1}) \ar[u]\ar[d] \\ \cD(\bU) \rtimes H \ar[rr]_{g^{\cD}(\bU) \rtimes \Ad_g} && \cD(g\bU, gHg^{-1})
}\]
is commutative. Hence $\Ad_{\eta(g)} = \w{g_\bX \rtimes \Ad_g}$ sends $\w\cD(\bX,H)$ into $\w\cD(\bX, gHg^{-1})$, and 
\[\w{g}_{\bU,H} \circ \varphi^H(\bU) = \varphi^{gHg^{-1}}(g\bU) \circ \Ad_{\eta(g)}.\]
by the uniqueness part of Theorem \ref{wUGfunc}.

(d) This follows directly from the definitions.
\end{proof}

We can now record some useful consequences of Definition \ref{AactsCompatibly}.

\begin{lem}\label{ANAH} Suppose that $A$ acts on $\bX$ compatibly with $G$ and let $H \leq N$ be compact open subgroups of $G$. Then
\be \item $A_N$ is a finitely presented left $A_H$-module, and
\item the multiplication map of $\w\cD(-,H)$---$A_N$-bimodules on $\bX_w/N$
\[ \w\cD(-,H) \underset{A_H}{\otimes}{} A_N \longrightarrow \w\cD(-,N)\]
is an isomorphism.
\ee\end{lem}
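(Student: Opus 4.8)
The plan is to deduce both statements from the corresponding facts one ``level'' at a time, i.e.\ at the finite stage $\hK{U(\cL)} \rtimes_J -$, and then pass to the inverse limit. First I would recall from Definition \ref{AactsCompatibly}(a) that the canonical map $A_H \otimes_{K[H]} K[N] \to A_N$ is a bijection; since $K[N]$ is free of finite rank $[N:H]$ as a left $K[H]$-module, this already shows that $A_N$ is a free left $A_H$-module of rank $[N:H]$, in particular finitely presented, which is part (a). (Strictly, I should note that $H$ has finite index in $N$ because both are open in the $p$-adic Lie group $G$ and $N$ is compact.)

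For part (b), the key observation is that for each $\cA$-trivialising pair $(\cL,J) \in \cI(N) \cap \cI(H)$, Lemma \ref{RingSGN}(b) presents $\hK{U(\cL)} \rtimes_J N$ as a crossed product of $\hK{U(\cL)} \rtimes_J H$ with the finite group $N/J$ --- more precisely, $\hK{U(\cL)} \rtimes_J N$ is free of rank $[N:H]$ as a left $\hK{U(\cL)} \rtimes_J H$-module, with basis any set of coset representatives of $H$ in $N$ (lifted via $\gamma$). Hence for each such $(\cL,J)$ the natural multiplication map
\[ \left(\hK{U(\cL)} \rtimes_J H\right) \underset{A_H}{\otimes} A_N \longrightarrow \hK{U(\cL)} \rtimes_J N \]
is a bijection: indeed, tensoring $A_N \cong A_H \otimes_{K[H]} K[N]$ over $A_H$ identifies the left-hand side with $\left(\hK{U(\cL)} \rtimes_J H\right) \otimes_{K[H]} K[N]$, and the freeness just noted identifies this with $\hK{U(\cL)} \rtimes_J N$, compatibly with the restriction maps in $(\cL,J)$ by the functoriality in Lemma \ref{CPfunc}. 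Since $\w\cD(\bU,H) \otimes_{A_H} A_N$ is a finite free $\w\cD(\bU,H)$-module (of rank $[N:H]$), and a finite direct sum, the tensor product commutes with the inverse limit over the cofinal system $\cI(N) \cap \cI(H) \subseteq \cI(N)$ (cofinality by Proposition \ref{IHIGcofinal}); likewise $\w\cD(\bU,N) = \invlim \hK{U(\cL)} \rtimes_J N$ over the same system. Passing to the limit therefore yields the asserted isomorphism of sheaves on $\bX_w/N$; the bimodule structure is visibly respected at every finite stage, hence in the limit.

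The only genuine subtlety --- and the step I expect to need the most care --- is the interchange of $\otimes_{A_H}$ with the inverse limit defining $\w\cD(\bU,H)$: this is legitimate precisely because $A_N$ is \emph{finitely presented} (indeed free of finite rank) as a left $A_H$-module, so that $- \otimes_{A_H} A_N$ is, after choosing a finite free presentation, a finite limit, which commutes with the countable inverse limit of Banach spaces defining $\w\cD(\bU,H)$ (using that these transition maps and the whole diagram are maps of $K$-Fr\'echet spaces, so the relevant $\invlim$ is exact on strict systems). Once this is in place, the rest is bookkeeping with the functoriality already established in $\S\ref{MainConstr}$.
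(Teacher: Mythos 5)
Your proof is correct and takes essentially the same route as the paper's: reduce to the level-by-level crossed-product identity $\hK{U(\cL)} \rtimes_J H \otimes_{K[H]} K[N] \cong \hK{U(\cL)} \rtimes_J N$ coming from Lemma \ref{RingSGN}(b), then pass to the inverse limit over the cofinal system $\cI(N)\cap\cI(H)$ using that the tensor is a finite direct sum. The paper phrases this more compactly by factoring the map through $\w\cD(\bU,H)\otimes_{K[H]}K[N]$ and citing the already-established bijection $\w\cD(\bU,H)\otimes_{K[H]}K[N]\to\w\cD(\bU,N)$ from the proof of Proposition \ref{XaffCompat}(a), where the same interchange-of-limits argument is carried out.
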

\begin{proof} (a) This follows from Definition \ref{AactsCompatibly}(a) because $H$ has finite index in $N$. 

(b) Fix $\bU \in \bX_w/N$ and consider the following commutative diagram:
\[ \xymatrix{  \w\cD(\bU,H) \underset{A_H}{\otimes}{} A_N \ar[rr] && \w\cD(\bU,N) \\
\w\cD(\bU,H) \underset{A_H}{\otimes}{} ( A_H \underset{K[H]}{\otimes}{} K[N]) \ar[u] \ar[rr] && \w\cD(\bU,H) \underset{K[H]}{\otimes}{} K[N]. \ar[u] }\]
The top horizontal arrow is induced by the commutative diagram in Definition \ref{AactsCompatibly}(b), and is therefore a morphism of $\w\cD(\bU,H)$--$A_N$-bimodules. The vertical arrow on the left is an bijection by Definition \ref{AactsCompatibly}(a), whereas the vertical arrow on the right is a bijection by Proposition \ref{XaffCompat}. The result follows.
\end{proof}

Following \cite[\S 6]{ST}, we make the following
\begin{defn}\label{NonCptCoadm} Suppose that $A$ acts on $\bX$ compatibly with $G$. We say that the $A$-module $M$ is \emph{coadmissible} if it is coadmissible as an $A_H$-module for some compact open subgroup $H$ of $G$. 
\end{defn}

It follows from Lemma \ref{ANAH}(a) that if the $A$-module $M$ is coadmissible, then it is coadmissible as an $A_H$-module for \emph{every} compact open subgroup $H$ of $G$. We will write $\cC_A$ to denote the full subcategory of $A$-modules consisting of the coadmissible $A$-modules.

\subsection{The localisation functor \ts{\Loc^A_\bX}} 
\label{LocXSect}
Throughout $\S \ref{LocXSect}$, we will assume that:
\begin{itemize}
\item $G$ is a $p$-adic Lie group, not necessarily compact,
\item $A$ acts on $\bX$ compatibly with $G$, and
\item $M$ is a coadmissible $A$-module.
\end{itemize}
Suppose that $(\bU, H)$ is small. Then $M$ is a coadmissible $A_H$-module, and because ${\varphi^H(\bU) : A_H \to \w\cD(\bU,H)}$ is a continuous homomorphism between two Fr\'echet-Stein algebras by Theorem \ref{FrSt}, $\w\cD(\bU,H)$ is a $\w\cD(\bU,H)$-coadmissible $\w\cD(\bU,H)-A_H$-bimodule in the sense of \cite[Definition 7.3]{DCapOne}. It follows from \cite[Lemma 7.3]{DCapOne} that in this situation we may form the coadmissible $\w\cD(\bU,H)$-module 
\[\w\cD(\bU,H) \underset{A_H}{\w\otimes} M.\]

\begin{defn}\label{LocFuncDefn} Whenever $(\bU,H)$ is small, we define
\begin{equation}\label{LUH}M(\bU,H) := \w\cD(\bU,H) \underset{A_H}{\w\otimes} M.\end{equation}
\end{defn}
 Recall that because we are assuming throughout $\S \ref{LocXSect}$ that $G$ is acting continuously on $\bX$ in the sense of Definition \ref{CtsAct}, the stabiliser $G_\bU$ in $G$ of every affinoid subdomain $\bU$ of $\bX$ is an \emph{open} subgroup of $G$. We have the following basic functorialities of this construction. 

\begin{prop}\label{BiFunc} Let $H$ be a compact open subgroup of $G$ and let $\bU \in \bX_w(\cT)$.
\be \item $M(\bU,-)$ is a covariant functor on the $\bU$-small subgroups of $G$.
\item $M(-,H)$ is a contravariant functor on $\bX_w(\cT)/H$.
\item  Let $H \leq N$ be compact open subgroups of $G$ and let $\bV \subseteq \bU$ be members of $\bX_w(\cT)/N$. Then the natural diagram of $\w\cD(\bU,H)$-modules 
\begin{equation}\label{MUHbifunc} \xymatrix{ M(\bU,H) \ar[r]\ar[d] & M(\bU,N) \ar[d] \\ M(\bV,H) \ar[r] & M(\bV,N) }\end{equation}
obtained from parts (a) and (b) is commutative.
\ee\end{prop}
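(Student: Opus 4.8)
The plan is to read off both functorialities from the functoriality of the completed skew-group algebra (Theorem \ref{wUGfunc}), the compatibility axioms of Definition \ref{AactsCompatibly}, and the standard formal properties (base change and associativity) of the completed tensor product $\w\otimes$ recorded in \cite[\S 7]{DCapOne}; part (c) then reduces to a diagram chase whose only substantive input is the uniqueness clause of Theorem \ref{wUGfunc}. Throughout one uses that whenever $(\bU,H)$ is small every algebra occurring below is two-sided Fr\'echet--Stein by Theorem \ref{FrSt}, and that $M$ is coadmissible over $A_H$ for every compact open subgroup $H$ by the remark following Definition \ref{NonCptCoadm}, so that all the modules $M(\bU,H)$ are defined.

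For part (a) I would fix $\bU\in\bX_w(\cT)$ and let $H\leq N$ be $\bU$-small subgroups of $G$. Applying Theorem \ref{wUGfunc} to $\id_{\cO(\bU)}$ and the inclusion $\tau\colon H\hookrightarrow N$ gives a continuous $K$-algebra homomorphism $\iota^N_H\colon\w\cD(\bU,H)\to\w\cD(\bU,N)$, and Definition \ref{AactsCompatibly}(b) evaluated at $\bU$ says precisely that $\iota^N_H$, $A_H\hookrightarrow A_N$, $\varphi^H(\bU)$ and $\varphi^N(\bU)$ form a commutative square. Treating $\iota^N_H$ as a morphism of the relevant coadmissible bimodules, the functoriality of $\w\otimes$ then produces a canonical morphism of coadmissible $\w\cD(\bU,H)$-modules
\[ M(\bU,H)=\w\cD(\bU,H)\underset{A_H}{\w\otimes}M\ \longrightarrow\ \w\cD(\bU,N)\underset{A_N}{\w\otimes}M=M(\bU,N),\qquad u\w\otimes m\longmapsto\iota^N_H(u)\w\otimes m, \]
where well-definedness on the $A_H$-balanced tensor product is exactly the commutativity of that square. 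Covariant functoriality now follows from $\iota^P_N\circ\iota^N_H=\iota^P_H$ and $\iota^H_H=\id$, which hold by the uniqueness clause of Theorem \ref{wUGfunc}.

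For part (b) I would fix a compact open subgroup $H\leq G$ and take $\bV\subseteq\bU$ in $\bX_w(\cT)/H$. Since $H$ is compact, Lemma \ref{wDGpresheaf} makes $\w\cD(-,H)$ a presheaf of $K$-Fr\'echet algebras on $\bX_w/H$, so restriction along the \'etale $H$-equivariant map $\varphi\colon\cO(\bU)\to\cO(\bV)$ gives a continuous restriction homomorphism $\tau^\bU_\bV\colon\w\cD(\bU,H)\to\w\cD(\bV,H)$, which is compatible with $\varphi^H(\bU)$ and $\varphi^H(\bV)$ because $\varphi^H$ is a morphism of presheaves. Exactly as in part (a), base change of $\w\otimes$ along $\tau^\bU_\bV$ (equivalently, the associativity isomorphism $\w\cD(\bV,H)\underset{\w\cD(\bU,H)}{\w\otimes}M(\bU,H)\cong M(\bV,H)$) produces the restriction map $M(\bU,H)\to M(\bV,H)$, $u\w\otimes m\mapsto\tau^\bU_\bV(u)\w\otimes m$, and $\tau^\bV_\bW\circ\tau^\bU_\bV=\tau^\bU_\bW$ (Lemma \ref{wDGpresheaf}) gives the contravariant functor $M(-,H)$.

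Finally, for part (c) I would first observe that $\bX_w(\cT)/N\subseteq\bX_w(\cT)/H$ whenever $H\leq N$, since an $N$-stable free Lie lattice over an $N$-stable affine formal model is automatically $H$-stable; hence all four corners of $(\ref{MUHbifunc})$ are defined. Both composites around the square send $u\w\otimes m$ to the class of $f(u)\w\otimes m$ in $M(\bV,N)$, where $f\colon\w\cD(\bU,H)\to\w\cD(\bV,N)$ equals either $\tau^\bU_\bV\circ\iota^N_H$ or $\iota^N_H\circ\tau^\bU_\bV$ (with the evident sources and targets), and where in both cases the coefficient algebra is enlarged through the same inclusion $A_H\hookrightarrow A_N$. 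Both expressions for $f$ are continuous and agree on the dense image of $\cD(\bU)\rtimes H$, since both extend $U(\varphi,\tilde\varphi)\rtimes\tau$ with $\varphi\colon\cO(\bU)\to\cO(\bV)$ the restriction and $\tau\colon H\hookrightarrow N$ the inclusion; so $f$ is well defined and $(\ref{MUHbifunc})$ commutes. The main obstacle I anticipate lies in parts (a) and (b): one must verify that the formal base-change and associativity properties of $\w\otimes$ from \cite[\S 7]{DCapOne} apply verbatim in this setting (or establish them if they are not stated in the required generality) --- which, however, should need nothing beyond the two-sided Fr\'echet--Stein property of $\w\cD(\bU,H)$, $\w\cD(\bU,N)$, $A_H$ and $A_N$ (Theorem \ref{FrSt}, Lemma \ref{ANAH}), the coadmissibility of $M$ over each $A_H$, and the continuity of the homomorphisms involved; the rest is formal.
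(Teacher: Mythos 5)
Your proposal is essentially the paper's argument, and all three parts are handled in the same way: use Definition \ref{AactsCompatibly}(b) to get balancing, then the universal property of $\w\otimes$ for part (a), the associativity isomorphism \cite[Corollary 7.4]{DCapOne} for part (b), and a density/continuity check for part (c).

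The one step you elide --- and which the paper makes explicit --- is that the universal property of $\w\otimes$ (\cite[Lemma 7.3]{DCapOne}) requires the target module to be coadmissible over the source Fr\'echet--Stein algebra. Thus before extending the balanced map $\w\cD(\bU,H)\times M\to M(\bU,N)$, $(a,m)\mapsto\iota^N_H(a)\,\w\otimes\, m$, to $M(\bU,H)$, one must first know that $M(\bU,N)$ is a coadmissible $\w\cD(\bU,H)$-module. The paper obtains this because $(\bU,N)$ is small, so $\w\cD(\bU,N)$ acts on $\bU$ compatibly with $N$ by Proposition \ref{XaffCompat}, hence $\w\cD(\bU,N)$ is finitely presented over $\w\cD(\bU,H)$ by Lemma \ref{ANAH}(a), and then \cite[Lemma 3.8]{ST} transfers coadmissibility of $M(\bU,N)$ from $\w\cD(\bU,N)$ down to $\w\cD(\bU,H)$. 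You flag the need for such verifications in your final paragraph and correctly point at Lemma \ref{ANAH}, but you stop short of invoking \cite[Lemma 3.8]{ST} and actually making the coadmissibility deduction, so the blanket appeal to ``functoriality of $\w\otimes$'' is not a complete argument as written. Supplying that one reference closes the only real loose end; everything else matches.
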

\begin{proof}(a) Let $J \leq N$ be $\bU$-small subgroups of $G$. By Definition \ref{AactsCompatibly}(b), there is a commutative diagram of $K$-algebra homomorphisms
\[\xymatrix{ A_J \ar[rr]^{\varphi^J(\bU)}\ar[d] && \w\cD(\bU,J) \ar[d] \\ A_N \ar[rr]_{\varphi^N(\bU)} && \w\cD(\bU,N), }\]
Hence the map $\w\cD(\bU,J) \times M \to M(\bU,N)$ given by $(a,m) \mapsto a \w\otimes m$ is $A_J$-balanced and left $\w\cD(\bU,J)$-linear. Since $\w\cD(\bU,N)$ is a finitely presented $\w\cD(\bU,J)$-module by Proposition \ref{XaffCompat},  $M(\bU,N)$ is a coadmissible $\w\cD(\bU,J)$-module by \cite[Lemma 3.8]{ST}, so this map extends uniquely to a $\w\cD(\bU,J)$-linear map 
\[ M(\bU,J) = \w\cD(\bU,J)\underset{A_J}{\w\otimes}M \longrightarrow \w\cD(\bU,N) \underset{A_N}{\w\otimes}M = M(\bU,N)\]
by the universal property of $\w\otimes$ given in \cite[Lemma 7.3]{DCapOne}. The uniqueness of this map makes it easy to see that the triangle
\[ \xymatrix{ M(\bU,J) \ar[r]\ar[dr] & M(\bU,N) \ar[d] \\ & M(\bU,N') }\]
commutes whenever $N'$ is a third open subgroup of $G_\bU$ containing $N$.

(b) Let $\bV \subseteq \bU$ be $H$-stable affinoid subdomains of $\bX$. By Definition \ref{AactsCompatibly}, we have another commutative diagram of $K$-algebra homomorphisms
\[ \xymatrix{ A_H \ar[rr]^{\varphi^H(\bU)}\ar[drr]_{\varphi^H(\bV)} && \w\cD(\bU,H) \ar[d] \\ && \w\cD(\bV,H). }\]
Now the universal property of $\w\otimes$ gives a $\w\cD(\bV,H)$-linear, $\w\cD(\bU,H)$-balanced map
\[ \iota : \w\cD(\bV,H) \hsp \times \hsp M(\bU,H) \longrightarrow \w\cD(\bV,H) \underset{\w\cD(\bU,H)}{\w\otimes} M(\bU,H),\]
whereas \cite[Corollary 7.4]{DCapOne} gives a $\w\cD(\bV,H)$-linear isomorphism
\[\theta : \w\cD(\bV,H) \underset{\w\cD(\bU,H)}{\w\otimes} M(\bU,H) \longrightarrow M(\bV,H).\]
Therefore we obtain a $\w\cD(\bU,H)$-linear map $\theta \circ \iota(1,-) : M(\bU,H) \to M(\bV,H)$ which fits into the commutative diagram
\[ \xymatrix{ M(\bU,H) \ar[r]\ar[dr] & M(\bV,H) \ar[d] \\ & M(\bW,H) }\]
whenever $\bW$ is a third $H$-stable affinoid subdomain of $\bX$ contained in $\bV$. 

(c) Theorem \ref{wUGfunc} gives a commutative diagram of Fr\'echet-Stein algebras and continuous $K$-algebra homomorphisms
\[\xymatrix{ \w\cD(\bU,H) \ar[r]\ar[d]\ar[d] & \w\cD(\bU,N) \ar[d] \\ \w\cD(\bV,H) \ar[r] & \w\cD(\bV,N).}\]
It is now straightforward to verify that diagram $(\ref{MUHbifunc})$ is commutative.
\end{proof}

\begin{defn}\label{PreLoc} For every $\bU \in \bX_w(\cT)$, define
\[ \cP^A_\bX(M)(\bU) := \invlim M(\bU,H)\]
where in the inverse limit, $H$ runs over all the $\bU$-small subgroups of $G$.
\end{defn}

It is clear that $\cP^A_\bX(M)(\bU)$ is a $\cD(\bU)$-module.

\begin{lem}\label{DefnOfRestMaps} For every $\bV,\bU \in \bX_w(\cT)$ such that $\bV \subset \bU$, there is a $\cD(\bU)$-linear restriction map $\tau^\bU_\bV : \cP^A_\bX(M)(\bU) \longrightarrow \cP^A_\bX(M)(\bV)$.
\end{lem}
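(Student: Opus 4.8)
The plan is to assemble $\tau^\bU_\bV$ from the maps furnished by Proposition \ref{BiFunc}, working component-wise on the inverse limit $\cP^A_\bX(M)(\bV) = \invlim_{H'} M(\bV,H')$. The combinatorial input I would establish first is: for every $\bV$-small subgroup $H'$ of $G$ there exists a subgroup $H \le H'$ that is simultaneously $\bU$-small and $\bV$-small. Since $\bU$ is affinoid, hence qcqs, the stabiliser $G_\bU$ is open in $G$ by Definition \ref{CtsAct}(a), so $H' \cap G_\bU$ is a compact open subgroup of $G_\bU$; applying the argument in the proof of Lemma \ref{SmallPairsExist} with $G_\bU$ replaced by $H' \cap G_\bU$ (intersect the stabiliser in $G_\bU$ of a chosen affine formal model $\cA$ in $\cO(\bU)$ with $H' \cap G_\bU$, then shrink via Lemma \ref{StabLopen}(b) to stabilise a free $\cA$-Lie lattice) yields a $\bU$-small $H \le H'$; and since $H \le H'$ and $H'$ is $\bV$-small, the formal model and lattice witnessing $\bV$-smallness of $H'$ also witness it for $H$. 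The same argument shows that the $\bU$-small (resp.\ $\bV$-small) subgroups of $G$ form a directed set under reverse inclusion, and that inside any $\bV$-small $H'$ the simultaneously $\bU$-small and $\bV$-small subgroups are cofinal.

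For such an $H$ I would set $\tau_{H',H} \colon \cP^A_\bX(M)(\bU) \to M(\bV,H')$ equal to the composite of the canonical projection $\cP^A_\bX(M)(\bU) \to M(\bU,H)$, the restriction map $M(\bU,H) \to M(\bV,H)$ of Proposition \ref{BiFunc}(b) (legitimate since $\bV \subseteq \bU$ both lie in $\bX_w(\cT)/H$, because $H$ is $\bU$-small and $\bV$-small), and the map $M(\bV,H) \to M(\bV,H')$ of Proposition \ref{BiFunc}(a). To see this is independent of the choice of $H$, take two choices $H_1,H_2$ and, using the cofinality above, a common simultaneously small refinement $H_3 \le H_1 \cap H_2$ with $H_3 \le H'$; compatibility of the inverse-limit projections, functoriality of $M(\bV,-)$ (Proposition \ref{BiFunc}(a)), and the commuting square of Proposition \ref{BiFunc}(c) for $H_3 \le H_i$ and $\bV \subseteq \bU$ then give $\tau_{H',H_1} = \tau_{H',H_3} = \tau_{H',H_2}$; denote the common map by $\tau_{H'}$.

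Next I would check that the $\tau_{H'}$ are compatible with the transition maps of $\invlim_{H'} M(\bV,H')$: for $\bV$-small $H' \le H''$, pick a simultaneously small $H \le H'$; then $M(\bV,H') \to M(\bV,H'')$ carries $\tau_{H'}$ to $\tau_{H''}$ because $M(\bV,H) \to M(\bV,H') \to M(\bV,H'')$ equals $M(\bV,H) \to M(\bV,H'')$ by Proposition \ref{BiFunc}(a). Hence the $\tau_{H'}$ glue to a map $\tau^\bU_\bV \colon \cP^A_\bX(M)(\bU) \to \cP^A_\bX(M)(\bV)$. Its $\cD(\bU)$-linearity is inherited factor by factor: the projections from the inverse limit are $\cD(\bU)$-linear, the Proposition \ref{BiFunc}(b)-maps are $\w\cD(\bU,H)$-linear and hence $\cD(\bU)$-linear with $\cD(\bU)$ acting on the target through $\cD(\bU) \to \cD(\bV)$, and the Proposition \ref{BiFunc}(a)-maps are even $\w\cD(\bV,H)$-linear, so the composites $\tau_{H',H}$ and therefore $\tau^\bU_\bV$ are $\cD(\bU)$-linear. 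The only genuine work is the first paragraph --- producing simultaneously small subgroups and the attendant cofinality and directedness --- together with the careful invocation of Proposition \ref{BiFunc}(c) for well-definedness; the rest is a diagram chase.
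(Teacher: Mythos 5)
Your proposal is correct and follows essentially the same route as the paper: choose, for each $\bV$-small index group, an intermediate subgroup inside it that is also $\bU$-small, define the component map via Proposition \ref{BiFunc}(a,b), use Proposition \ref{BiFunc}(c) plus functoriality for well-definedness and compatibility with the transition maps, and pass to the inverse limit. The only difference is presentational: you make explicit (and verify) that the intermediate subgroup is simultaneously $\bU$-small and $\bV$-small, and you spell out the cofinality and common-refinement arguments, whereas the paper leaves these points implicit and states the independence of the choice of $H$ without proof.
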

\begin{proof} Let $N$ be a $\bV$-small subgroup of $G$. Choose a $\bU$-small subgroup $H$ of $N_{\bU}$ using Lemma \ref{SmallPairsExist}. There is a natural $\cD(\bU)$-linear map
\[ \tau^\bU_{\bV,N} : \cP^A_\bX(M)(\bU) \to M(\bV,N)\]
which factors through the maps $M(\bU, H) \to M(\bV,H) \to M(\bV,N)$ given by Proposition \ref{BiFunc}(a,b), and does not depend on the choice of $H$. If $N' \leq N$ is another $\bV$-small subgroup and $H' := N'_{\bU} \cap H$, then $H'$ is a $\bU$-small open subgroup of $N'_{\bU}$ by Lemma \ref{SubsOfSmall}, and the diagram
\[ \xymatrix{ M(\bU,H) \ar[rr] && M(\bV, N_{\bU}) \ar[rr] && M(\bV,N) \\ M(\bU, H') \ar[u]\ar[rr] && M(\bV, N'_{\bU}) \ar[rr]\ar[u] && M(\bV,N') \ar[u] }\]
commutes by Proposition \ref{BiFunc}(c,a). Hence the triangle
\[ \xymatrix{ \cP^A_\bX(M)(\bU) \ar[rr]^{\tau^\bU_{\bV,N}} \ar[rrd]_{\tau^\bU_{\bV,N'}} && M(\bV,N) \\ && M(\bV,N') \ar[u] }\]
is commutative, and it induces the required $\cD(\bU)$-linear map
\[ \tau^\bU_\bV: \cP^A_\bX(M)(\bU) \longrightarrow \cP^A_\bX(M)(\bV) = \invlim M(\bV,N)\]
by applying the universal property of inverse limit.
\end{proof}

In fact, each arrow in the inverse system defining $\cP^A_\bX(M)$ is an isomorphism. This follows from our next result.

\begin{prop}\label{RestrictFurther} Let $H \leq N$ be compact open subgroups of $G$, and let $\bU \in \bX_w(\cT)/N$. Then for every coadmissible $A$-module $M$, the natural map
\[ M(\bU,H) \longrightarrow M(\bU,N)\]
from Proposition \ref{BiFunc}(a) is an \emph{isomorphism} of coadmissible $\w\cD(\bU,H)$-modules.
\end{prop}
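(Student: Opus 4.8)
The plan is to reduce the claim to a base-change statement for the completed tensor products and then invoke the structural results about $\w\cD(\bU,N)$ as a $\w\cD(\bU,H)$-module already established. First I would unwind the definitions: by Definition \ref{LocFuncDefn} the map in question is obtained, via the universal property of $\w\otimes$ from \cite[Lemma 7.3]{DCapOne}, from the commutative square of Definition \ref{AactsCompatibly}(b), and so it is the canonical map
\[ \w\cD(\bU,H) \underset{A_H}{\w\otimes} M \longrightarrow \w\cD(\bU,N) \underset{A_N}{\w\otimes} M. \]
Using the transitivity/associativity of $\w\otimes$ together with the isomorphism $A_H \otimes_{K[H]} K[N] \congs A_N$ from Definition \ref{AactsCompatibly}(a), I would rewrite the right-hand side as $\left( \w\cD(\bU,N) \underset{A_N}{\w\otimes} A_N \right) \underset{A_H}{\w\otimes} M$, reducing matters to showing that the natural map
\[ \w\cD(\bU,H) \underset{A_H}{\w\otimes} M \longrightarrow \w\cD(\bU,N) \underset{A_H}{\w\otimes} M \]
is an isomorphism, where both sides are now completed tensor products over the single Fr\'echet-Stein algebra $A_H$.

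\textbf{Key steps.} The crucial input is Lemma \ref{ANAH}(b), which provides an isomorphism of $\w\cD(-,H)$-$A_N$-bimodules
\[ \w\cD(-,H) \underset{A_H}{\otimes} A_N \congs \w\cD(-,N) \]
evaluated at $\bU$, together with Lemma \ref{ANAH}(a) which says $A_N$ is finitely presented over $A_H$. Since $A_N$ is a finitely presented $A_H$-module, the ordinary tensor product $\w\cD(\bU,H) \otimes_{A_H} A_N$ already agrees with the completed one $\w\cD(\bU,H) \w\otimes_{A_H} A_N$ (finitely presented modules require no completion — this is a standard fact about coadmissible modules, cf.\ \cite[\S 3]{ST}). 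Hence $\w\cD(\bU,N)$ is, as a $\w\cD(\bU,H)$-$A_H$-bimodule, obtained from $\w\cD(\bU,H)$ by base change along the finite map $A_H \to A_N$, and the map in the statement becomes the comparison morphism
\[ \w\cD(\bU,H) \underset{A_H}{\w\otimes} M \longrightarrow \left( \w\cD(\bU,H) \underset{A_H}{\otimes} A_N \right) \underset{A_N}{\w\otimes} M. \]
Now I would apply the associativity of $\w\otimes$ in the reverse direction: the right-hand side equals $\w\cD(\bU,H) \underset{A_H}{\w\otimes} \left( A_N \underset{A_N}{\w\otimes} M \right) = \w\cD(\bU,H) \underset{A_H}{\w\otimes} M$, since $M$ is already a coadmissible $A_N$-module (this uses that $M$ is coadmissible over every compact open subgroup, noted after Definition \ref{NonCptCoadm}) and $A_N \w\otimes_{A_N} M = M$. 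Chasing through these identifications shows the composite is the identity, proving the map is an isomorphism. Finally I would note the $\w\cD(\bU,H)$-linearity and coadmissibility are automatic since every map in sight is a morphism of coadmissible modules over Fr\'echet-Stein algebras.

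\textbf{Main obstacle.} The delicate point is bookkeeping with the completed tensor product $\w\otimes$: one must be careful that all the associativity and base-change isomorphisms used are legitimate in the coadmissible setting, i.e.\ that the relevant bimodules are ``coadmissible bimodules'' in the sense of \cite[Definition 7.3]{DCapOne} so that \cite[Lemma 7.3, Corollary 7.4]{DCapOne} apply. In particular, verifying that $\w\cD(\bU,H) \underset{A_H}{\otimes} A_N \cong \w\cD(\bU,N)$ is genuinely the \emph{completed} tensor product (equivalently, that the ordinary tensor product is already complete, which holds because $A_N$ is finitely presented over $A_H$ by Lemma \ref{ANAH}(a)) is the key technical check; everything else is formal manipulation. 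An alternative, perhaps cleaner, route avoiding associativity gymnastics is to verify the isomorphism levelwise: choosing compatible good chains and presentations $\w\cD(\bU,H) \cong \invlim \hK{U(\pi^n\cL)} \rtimes_{J_n} H$ and similarly for $N$, one checks that at each finite level $\hK{U(\pi^n\cL)} \rtimes_{J_n} N$ is free of finite rank over $\hK{U(\pi^n\cL)} \rtimes_{J_n} H$ with basis a transversal of $H$ in $N$, so that the level-$n$ tensor products agree on the nose, and then passes to the inverse limit; I would present whichever of these is shorter given the machinery already set up.
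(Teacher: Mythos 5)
Your proof is correct and follows essentially the same route as the paper's: the paper assembles Lemma \ref{ANAH}(b) (giving $\w\cD(\bU,H)\otimes_{A_H}A_N\cong\w\cD(\bU,N)$), the obvious isomorphism $A_N\w\otimes_{A_N}M\cong M$, and the associativity isomorphism of $\w\otimes$ from \cite[Proposition 7.4]{DCapOne} into a commutative square, which is precisely the chain of identifications in your ``Key steps''. One caution: the intermediate reduction in your first paragraph, namely to showing that $\w\cD(\bU,H)\w\otimes_{A_H}M\to\w\cD(\bU,N)\w\otimes_{A_H}M$ is an isomorphism with \emph{both} completed tensor products taken over $A_H$, is not correct --- the module $\w\cD(\bU,N)\w\otimes_{A_H}M$ need not coincide with $M(\bU,N)=\w\cD(\bU,N)\w\otimes_{A_N}M$, since tensoring over the larger ring $A_N$ imposes additional relations --- but this detour is harmless here because it is not used in the ``Key steps'' argument that actually carries the proof.
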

\begin{proof} By Lemma \ref{ANAH}(b), the natural map
\[\alpha : \w\cD(\bU,H) \otimes_{A_H} A_N \longrightarrow \w\cD(\bU,N)\]
is an isomorphism of $\w\cD(\bU,H)$-$A_N$-bimodules.  On the other hand, there is an obvious isomorphism $\beta : A_N \underset{A_N}{\w\otimes} M \longrightarrow M$ of coadmissible left $A_H$-modules, and $A_N$ is a coadmissible $A_H-A_N$-bimodule in the sense of \cite[Definition 7.3]{DCapOne}. These maps combine to produce a commutative diagram
\[\xymatrix{  \w\cD(\bU,H) \underset{A_H}{\w\otimes} \left(A_N \underset{A_N}{\w\otimes} M\right) \ar[rr]^(0.6){1 \w\otimes \beta}\ar[d]_{\cong} &&  \w\cD(\bU,H) \underset{A_H}{\w\otimes} M \ar[d] \\
\left(\w\cD(\bU,H) \underset{A_H}{\w\otimes} A_N\right) \underset{A_N}{\w\otimes} M \ar[rr]_(0.6){\alpha \w\otimes 1} && \w\cD(\bU,N) \underset{A_N}{\w\otimes} M
}\]
where the vertical map on the left is the canonical associativity isomorphism given by \cite[Proposition 7.4]{DCapOne}. The result follows, because $\alpha$ and $\beta$ are isomorphisms.
\end{proof}

\begin{cor}\label{ResCor} Whenever $(\bU,H)$ is small, the canonical map $\cP^A_\bX(M)(\bU) \to M(\bU,H)$ is a bijection.
\end{cor}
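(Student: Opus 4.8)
\textbf{Proof plan for Corollary \ref{ResCor}.} The statement asserts that for small $(\bU,H)$, the canonical map $\cP^A_\bX(M)(\bU) = \invlim_{N} M(\bU,N) \to M(\bU,H)$ is a bijection, where $N$ ranges over the $\bU$-small subgroups of $G$. The plan is to reduce this to the fact, just established, that every transition map in the inverse system $\{M(\bU,N)\}$ is an isomorphism.

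First I would unwind the index set: by Definition \ref{DefnOfSmall}, the $\bU$-small subgroups of $G$ are exactly the compact open subgroups $N$ of $G_\bU$ for which $\cT(\bU)$ admits an $N$-stable free $\cA$-Lie lattice over some $N$-stable affine formal model $\cA$. Since $(\bU,H)$ is small, $H$ itself is one such subgroup, and by Lemma \ref{SubsOfSmall} (applied with the roles of $\bX,G$ played by $\bU$ and $H$, or more directly: every compact open subgroup of an $H$-small subgroup is again small) the collection of $\bU$-small subgroups contained in $H$ is cofinal in the (reverse-inclusion) directed set of all $\bU$-small subgroups. Hence the inverse limit defining $\cP^A_\bX(M)(\bU)$ may be computed over just those $\bU$-small subgroups $N \leq H$, and the canonical map to $M(\bU,H)$ corresponds, under this cofinality, to the structure map out of the inverse limit at the index $H$.

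Next I would invoke Proposition \ref{RestrictFurther}: for any two $\bU$-small subgroups $N \leq N'$ with $N' \leq H$, the natural map $M(\bU,N) \to M(\bU,N')$ is an isomorphism of coadmissible $\w\cD(\bU,N)$-modules (the proposition is stated for $H \leq N$ compact open with $\bU \in \bX_w(\cT)/N$; here both $N,N'$ lie in $G_\bU$ and $\bU$ admits a free Lie lattice, so the hypotheses are met). Thus the inverse system $\{M(\bU,N) : N \leq H \text{ $\bU$-small}\}$ consists entirely of isomorphisms. A cofiltered limit of a system all of whose transition maps are isomorphisms maps isomorphically onto any single term — in particular onto $M(\bU,H)$ — and this is exactly the canonical map in question. (One should note the map of the Corollary is $\cD(\bU)$-linear by Lemma \ref{DefnOfRestMaps} and Proposition \ref{BiFunc}, so ``bijection'' upgrades to an isomorphism of $\cD(\bU)$-modules, though only bijectivity is claimed.)

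The only point requiring a little care — and the place I would expect a referee to want a sentence of justification — is the cofinality claim: that $\bU$-small subgroups contained in $H$ are cofinal among all $\bU$-small subgroups. Given an arbitrary $\bU$-small $N$, the intersection $N \cap H$ is a compact open subgroup of $G_\bU$ contained in $H$, and it is $\bU$-small because it is an open subgroup of the $\bU$-small group $H$ (shrinking an $H$-stable free $\cA$-Lie lattice and $H$-stable formal model to an $(N\cap H)$-stable one is automatic, or appeal to Lemma \ref{SubsOfSmall}); and $N \cap H \leq N$. This gives the required cofinal refinement, and the rest of the argument is the standard limit manipulation above.
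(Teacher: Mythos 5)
Your proposal is correct and follows the same route as the paper: Corollary \ref{ResCor} is stated with no separate proof, being treated as an immediate consequence of Proposition \ref{RestrictFurther} (that every transition map in the inverse system is an isomorphism), which is exactly the fact you invoke. Your additional cofinality observation (that $\bU$-small subgroups contained in $H$ are cofinal, via $N \mapsto N \cap H$ and Lemma \ref{SubsOfSmall}) is a sound way of making the limit manipulation explicit, though strictly speaking it is not even needed: once all transition maps in a filtered inverse system are isomorphisms, every structure map out of the limit is automatically a bijection.
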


Next, we study the $G$-equivariant functoriality of $M(-,-)$.
\begin{prop}\label{GEquivFunc} Suppose that $(\bU,H)$ is small and let $g \in G$.
\be
\item There is a $K$-linear map
\[ g^M_{\bU,H} : M(\bU,H) \longrightarrow M(g\bU, gHg^{-1})\]
such that for every $a \in \w\cD(\bU,H)$ and every $m \in M(\bU,H)$, we have 
\[ g^M_{\bU,H}(a \cdot m) = \w{g}_{\bU,H}(a) \cdot g^M_{\bU,H}(m).\]
\item Whenever $N$ is an open subgroup of $H$ and $\bV$ is an $N$-stable affinoid subdomain of $\bU$, the diagram
\begin{equation}\label{MUHequiv}
 \xymatrix{  M(\bU,H) \ar[rr]^{g^M_{\bU,H}} && M(g\bU, gHg^{-1}) \\
  M(\bU,N) \ar[rr]^{g^M_{\bU,N}}\ar[d]\ar[u] && M(g\bU, gNg^{-1}) \ar[d]\ar[u] \\ 
  M(\bV,N) \ar[rr]_{g^M_{\bV,N}} && M(g\bV, gNg^{-1})  } 
 \end{equation}
where the vertical arrows are given by Proposition \ref{BiFunc}, is commutative.
\ee\end{prop}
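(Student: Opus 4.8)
The plan is to construct $g^M_{\bU,H}$ via the universal property of the completed tensor product in \cite[Lemma 7.3]{DCapOne}, in the style of the proof of Proposition \ref{BiFunc}(a). First observe that $(g\bU, gHg^{-1})$ is again small: transporting an $H$-stable free $\cA$-Lie lattice in $\cT(\bU)$ across the automorphism $g$ produces a $gHg^{-1}$-stable free $g_\bU(\cA)$-Lie lattice in $\cT(g\bU)$. Hence $P := M(g\bU, gHg^{-1}) = \w\cD(g\bU, gHg^{-1})\underset{A_{gHg^{-1}}}{\w\otimes}M$ is defined; it is a coadmissible $\w\cD(g\bU, gHg^{-1})$-module, and since $\w{g}_{\bU,H} : \w\cD(\bU,H) \to \w\cD(g\bU, gHg^{-1})$ is an isomorphism of Fr\'echet--Stein algebras by Lemma \ref{gUHmaps}, restriction of scalars along $\w{g}_{\bU,H}$ makes $P$ into a coadmissible $\w\cD(\bU,H)$-module. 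Now I would consider the map
\[ \w\cD(\bU,H) \times M \longrightarrow P, \qquad (a,m)\longmapsto \w{g}_{\bU,H}(a)\,\w\otimes\,(\eta(g)\cdot m),\]
which makes sense because $\eta(g)\cdot m \in M$. It is left $\w\cD(\bU,H)$-linear for the $\w\cD(\bU,H)$-structure on $P$ through $\w{g}_{\bU,H}$, since $\w{g}_{\bU,H}$ is a ring homomorphism. It is also $A_H$-balanced: for $c \in A_H$ we have $\Ad_{\eta(g)}(c) \in A_{gHg^{-1}}$ and, by the commuting square of Definition \ref{AactsCompatibly}(c) together with $\Ad_{\eta(g)}(c)\,\eta(g) = \eta(g)\,c$,
\[ \w{g}_{\bU,H}\bigl(a\,\varphi^H(\bU)(c)\bigr)\,\w\otimes\,\eta(g)m \;=\; \w{g}_{\bU,H}(a)\,\w\otimes\,\Ad_{\eta(g)}(c)\,\eta(g)m \;=\; \w{g}_{\bU,H}(a)\,\w\otimes\,\eta(g)(cm).\]
By the universal property of $\w\otimes$ the map extends uniquely to a $\w\cD(\bU,H)$-linear map $g^M_{\bU,H} : M(\bU,H) = \w\cD(\bU,H)\underset{A_H}{\w\otimes}M \to P$, and unravelling the restriction of scalars along $\w{g}_{\bU,H}$ shows that this is exactly the identity $g^M_{\bU,H}(a\cdot m) = \w{g}_{\bU,H}(a)\cdot g^M_{\bU,H}(m)$, which proves (a).

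For (b) I would deduce the commutativity of each of the two squares in $(\ref{MUHequiv})$ from the uniqueness of the extension in the universal property of $\w\otimes$. In each square the two composite maps out of $M(\bU,N)$ (respectively $M(\bV,N)$) are $\w\cD(\bU,N)$-linear, after restriction of scalars, for one and the same $K$-algebra homomorphism into the target: for the upper square this is the statement that conjugation by $g$ commutes with the induction map $\w\cD(\bU,N) \to \w\cD(\bU,H)$, and for the lower square that it commutes with the restriction map $\w\cD(\bU,N) \to \w\cD(\bV,N)$. Both statements follow from the uniqueness part of Theorem \ref{wUGfunc} together with the construction of $\w{g}_{\bU,H}$ in Lemma \ref{gUHmaps}, since in each case the two composite algebra homomorphisms restrict to the same map on the dense subalgebra $\cD(\bU)\rtimes N$ (built from $g^{\cD}$ and $\Ad_g$ and the functoriality of $\cD$). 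Moreover each of the two composites sends $1\,\w\otimes\,m$ to $1\,\w\otimes\,(\eta(g)\cdot m)$, so by uniqueness the two composites coincide.

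The main obstacle is the balancedness check in (a): this is precisely the place where compatibility axiom Definition \ref{AactsCompatibly}(c) --- the interlocking of $\w{g}_{\bU,H}$, $\varphi^H$, $\varphi^{gHg^{-1}}$ and $\Ad_{\eta(g)}$ --- is indispensable. Everything else, including all of (b), is formal bookkeeping with universal properties: the essential task is to keep track of which algebra homomorphism each map is semilinear over, and to verify that the relevant squares of algebra homomorphisms commute, after which the conclusions follow by uniqueness.
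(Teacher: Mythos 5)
Your proof is correct and follows essentially the same route as the paper: the same explicit formula $(a,m)\mapsto \w{g}_{\bU,H}(a)\hsp\w\otimes\hsp\eta(g)\cdot m$, the same $A_H$-balancedness check via Definition \ref{AactsCompatibly}(c) combined with $\Ad_{\eta(g)}(c)\,\eta(g)=\eta(g)\,c$, and the same appeal to the universal property of $\w\otimes$ from \cite[Lemma 7.3]{DCapOne}. The paper dispatches part (b) with ``This is straightforward,'' so your explicit reduction of each square to commutativity of algebra homomorphisms (checked on the dense subalgebra $\cD(\bU)\rtimes N$) plus uniqueness is a welcome unpacking, and your observation that $(g\bU,gHg^{-1})$ is again small, needed to make the target well-defined, is a detail the paper leaves implicit.
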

\begin{proof}(a) We will regard the coadmissible left $\w\cD(g\bU, gHg^{-1})$-module
\[ M(g\bU, gHg^{-1}) = \w\cD(g\bU, gHg^{-1}) \underset{A_{gHg^{-1}}}{\w\otimes} M\]
as a coadmissible left $\w\cD(\bU,H)$-module via the $K$-algebra isomorphism $\w{g}_{\bU,H}$. Now, consider the map
\[\begin{array}{cccc} \psi^M : & \w\cD(\bU,H) \times M & \longrightarrow & M(g\bU, gHg^{-1})  \\
& (a,m) &\mapsto& \w{g}_{\bU,H}(a) \hsp \w\otimes \hsp \eta(g)\cdot m \end{array}\]
It is evidently left $\w\cD(\bU,H)$-linear; we will show that $\psi^M$ is $A_H$-balanced. By Definition \ref{AactsCompatibly}(c), we know that
\[\w{g}_{\bU,H} \circ \varphi^H(\bU) = \varphi^{gHg^{-1}}(g\bU) \circ \Ad_{\eta(g)}.\]
Let $a \in \w\cD(\bU,H), b\in A_H$ and $m \in M$. Then
\[ \begin{array}{lll} \psi^M( a \cdot b, m ) &=& \w{g}_{\bU,H}(a \hsp \varphi^H(b)) \hsp \w\otimes \hsp \eta(g)\cdot m = \\
&=& \w{g}_{\bU,H}(a) \hsp \w{g}_{\bU,H}(\varphi^H(b)) \hsp \w\otimes \hsp \eta(g) \cdot m = \\
&=& \w{g}_{\bU,H}(a) \hsp \varphi^{gHg^{-1}}(\Ad_{\eta(g)}(b)) \hsp \w\otimes \hsp \eta(g) \cdot m = \\
&=& \w{g}_{\bU,H}(a) \hsp \w\otimes \hsp \Ad_{\eta(g)}(b)\cdot (\eta(g)\cdot m) = \\
&=& \w{g}_{\bU,H}(a) \hsp \w\otimes \hsp \eta(g) \cdot (bm) = \\
&=& \psi^M(a, bm), \end{array}\]
so $\psi^M$ is $A_H$-balanced, as claimed. Therefore, by the universal property of $\w\otimes$, $\psi^M$ extends uniquely to a left $\w\cD(\bU,H)$-linear homomorphism
\begin{equation}\label{DefnOfgM}\begin{array}{cccc} g_{\bU,H}^M : M(\bU,H) = & \w\cD(\bU,H) \underset{A_H}{\w\otimes} M & \longrightarrow & M(g\bU, gHg^{-1}) \\
 & \hspace{1.05cm} a \hsp \w\otimes \hsp m &\mapsto & \w{g}_{\bU,H}(a) \hsp \w\otimes \hsp \eta(g)\cdot m \end{array}\end{equation}

Since the left $\w\cD(\bU,H)$-module structure on $M(g\bU,gHg^{-1})$ is given via $\w{g}_{\bU,H}$, we see that $g^M_{\bU,H}(a \cdot m) = \w{g}_{\bU,H}(a) \cdot g^M_{\bU,H}(m)$ for all $a \in \w\cD(\bU,H), m \in M(\bU,H)$ as claimed.

(b) This is straightforward.
\end{proof}
Whenever $\bU_1, \ldots, \bU_m$ is a finite collection of affinoid subdomains of $\bX$, we will use $G_{\bU_1,\ldots,\bU_m}$ to denote the intersections of their stabilisers in $G$:
\[G_{\bU_1,\ldots,\bU_m} := G_{\bU_1} \cap \cdots \cap G_{\bU_m}.\]
Note that this is an open subgroup of $G$ because $G$ acts continuously on $\bX$.

\begin{thm} \label{MUGU}
Let $M$ be a coadmissible $A$-module. Then $\cP^A_\bX(M)$, equipped with the restriction maps $\tau^\bU_\bV$ from Lemma \ref{DefnOfRestMaps}, becomes a $G$-equivariant presheaf of $\cD$-modules on $\bX_w(\cT)$.
\end{thm}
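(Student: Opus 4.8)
The plan is to build the $G$-equivariant structure $\{g^{\cP}\}_{g\in G}$ on $\cP^A_\bX(M)$ out of the maps $g^M_{\bU,H}$ of Proposition \ref{GEquivFunc} by passing to the inverse limit, and then to verify the three conditions of Definition \ref{DefnEquivSheaf}: that each $g^{\cP}$ is a morphism of presheaves, that it is $\cD$-linear in the twisted sense of Definition \ref{GAmodDefn}(a), and that the family satisfies the cocycle condition $(\ref{EasyCocyc})$.

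First I would deal with the bookkeeping of index sets. For $g \in G$ and $\bU \in \bX_w(\cT)$, the image $g\bU$ is again an affinoid subdomain of $\bX$ with $G_{g\bU} = gG_\bU g^{-1}$, and transporting an $H$-stable free $\cA$-Lie lattice along the isomorphism $(g_\bU,\widetilde{g_\bU}) \colon (\cO(\bU),\cT(\bU)) \tocong (\cO(g\bU),\cT(g\bU))$ shows that $(g\bU, gHg^{-1})$ is small whenever $(\bU,H)$ is. Hence $g\bU \in \bX_w(\cT)$, and $H \mapsto gHg^{-1}$ is a bijection from the $\bU$-small subgroups of $G$ onto the $g\bU$-small subgroups of $G$, so that
\[ (g^\ast\cP^A_\bX(M))(\bU) \;=\; \cP^A_\bX(M)(g\bU) \;=\; \invlim_{H} M(g\bU, gHg^{-1}), \]
the limit taken over the $\bU$-small subgroups $H$. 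By the top square of $(\ref{MUHequiv})$ in Proposition \ref{GEquivFunc}(b) (the case $\bV = \bU$), the maps $g^M_{\bU,H}\colon M(\bU,H) \to M(g\bU,gHg^{-1})$ commute with the transition maps of these two inverse systems, so they assemble to a $K$-linear map $g^{\cP}(\bU) := \invlim_H g^M_{\bU,H}$ from $\cP^A_\bX(M)(\bU)$ to $(g^\ast\cP^A_\bX(M))(\bU)$. Using Corollary \ref{ResCor} to identify, for any single $\bU$-small $H$, the canonical projections $\cP^A_\bX(M)(\bU) \tocong M(\bU,H)$ and $\cP^A_\bX(M)(g\bU) \tocong M(g\bU,gHg^{-1})$, this map is simply $g^M_{\bU,H}$; this reduces each of the verifications below to a statement about a single small pair.

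For the presheaf compatibility, given $\bV \subset \bU$ in $\bX_w(\cT)$, choose a compact open subgroup $H$ of $G_{\bU,\bV}$ that is both $\bU$-small and $\bV$-small (possible by Lemma \ref{SmallPairsExist} and Lemma \ref{SubsOfSmall}); then, after the identifications above, $\tau^\bU_\bV$ becomes the restriction map $M(\bU,H) \to M(\bV,H)$ of Proposition \ref{BiFunc}(b), and the required identity $\tau^{g\bU}_{g\bV}\circ g^{\cP}(\bU) = g^{\cP}(\bV)\circ\tau^\bU_\bV$ is exactly the bottom square of $(\ref{MUHequiv})$ in the degenerate case $N = H$. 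The twisted $\cD$-linearity $g^{\cP}(\bU)(P\cdot m) = g^{\cD}(\bU)(P)\cdot g^{\cP}(\bU)(m)$ for $P \in \cD(\bU)$ follows from $g^M_{\bU,H}(a\cdot m) = \w{g}_{\bU,H}(a)\cdot g^M_{\bU,H}(m)$ (Proposition \ref{GEquivFunc}(a)) together with the fact that $\w{g}_{\bU,H}$ restricts to $g^{\cD}(\bU)$ on the image of $\cD(\bU)$ in $\w\cD(\bU,H)$, by its construction in Lemma \ref{gUHmaps}.

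Finally, for the cocycle condition I would use the explicit formula $(\ref{DefnOfgM})$. Both $g^M_{h\bU,hHh^{-1}} \circ h^M_{\bU,H}$ and $(gh)^M_{\bU,H}$ are left $\w\cD(\bU,H)$-linear (for the twisted module structure through $\w{gh}_{\bU,H}$), so by the universal property of $\w\otimes$ it suffices to compare them on an element $a\w\otimes m$:
\[ g^M_{h\bU,hHh^{-1}}\bigl(\w{h}_{\bU,H}(a)\w\otimes\eta(h)m\bigr) \;=\; \w{g}_{h\bU,hHh^{-1}}\bigl(\w{h}_{\bU,H}(a)\bigr)\w\otimes\eta(g)\eta(h)m \;=\; \w{gh}_{\bU,H}(a)\w\otimes\eta(gh)m, \]
where the last equality uses the cocycle identity for the $\w{g}_{\bU,H}$ from Lemma \ref{gUHmaps} and the fact that $\eta$ is a group homomorphism; the right-hand side is $(gh)^M_{\bU,H}(a\w\otimes m)$. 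Passing to the inverse limit over the $\bU$-small subgroups $H$ then gives $g^{\cP}(h\bU)\circ h^{\cP}(\bU) = (gh)^{\cP}(\bU)$. The only point that requires genuine care, rather than routine diagram-chasing, is the consistent bookkeeping of the conjugated index sets and checking that the squares in Proposition \ref{GEquivFunc}(b) are exactly the compatibilities needed to carry the three verifications through the inverse limits; everything else is formal.
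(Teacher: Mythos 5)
Your proposal is correct and follows essentially the same route as the paper: define $g^{\cP}(\bU)$ as $\invlim_H g^M_{\bU,H}$, use Corollary \ref{ResCor} to collapse the inverse limit to a single $\bU$-small $H$, and verify the three required compatibilities via diagram $(\ref{MUHequiv})$ and the explicit formula $(\ref{DefnOfgM})$. One small item you omit, but which the paper handles explicitly as its first step, is verifying that $\cP^A_\bX(M)$ is a presheaf at all --- i.e.\ that $\tau^\bV_\bW \circ \tau^\bU_\bV = \tau^\bU_\bW$, since Lemma \ref{DefnOfRestMaps} only constructs the maps and does not assert they compose. Your own remark that after identifying via Corollary \ref{ResCor} with a common small $H$ the map $\tau^\bU_\bV$ becomes the restriction in $M(-,H)$, combined with the functoriality of $M(-,H)$ from Proposition \ref{BiFunc}(b), does cover this, so it is a matter of making one more step explicit rather than a real gap.
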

\begin{proof} Let $\bW \subset \bV \subset \bU$ be members of $\bX_w(\cT)$, and write $\cM := \cP^A_\bX(M)$.  Using Lemma \ref{SmallPairsExist}, choose a $\bU$-small compact open subgroup $H$ of $G_{\bU,\bV,\bW}$, and consider the following diagram:
\[ \xymatrix@C=20pt{
& & & M(\bW,H) & & & \\
& & & \cM(\bW)\ar[u] & & & \\
& & \cM(\bU) \ar[ru]^{\tau^\bU_\bW}\ar[rr]_{\tau^\bU_\bV}\ar[dll] && \cM(\bV) \ar[ul]_{\tau^\bV_\bW}\ar[drr] & & \\
M(\bU,H) \ar@/^2pc/[rrruuu]\ar[rrrrrr] &&&&&& M(\bV,H). \ar@/_2pc/[uuulll]
}\]
The large outer triangle commutes by Proposition \ref{BiFunc}(a), and the three arrows connecting the inner triangle with the outer triangle are isomorphisms by Corollary \ref{ResCor}. Hence the inner triangle commutes, and $\cM$ is a presheaf. 

Next, fix $g \in G$, and for every $\bU \in \bX_w(\cT)$ define
\[g^{\cM}(\bU) : \cM(\bU) \to \cM(g\bU)\]
to be the inverse limit of the maps $g^M_{\bU,H} : M(\bU,H) \to M(g\bU, gHg^{-1})$ constructed in Proposition \ref{GEquivFunc}(a). Note that $g^{\cM}(a \cdot m) = g^{\cD}(a) \cdot g^{\cM}(a)$ for all $a \in \cD$ and all $m \in \cM$ by Proposition \ref{GEquivFunc}(a,b). Now let $\bV \subset \bU$ be another affinoid subdomain of $\bX$, choose a $\bU$-small compact open subgroup $H$ of $G_{\bU,\bV}$, and consider the following diagram:
\[\xymatrix{ M(\bU,H) \ar[rrrr]^{g^M_{\bU,H}}\ar[ddd]&&&& M(g\bU, gHg^{-1}) \ar[ddd] \\
& \cM(\bU) \ar[rr]^{g^{\cM}(\bU)} \ar[ul]\ar[d]_{\tau^\bU_\bV} && \cM(g\bU) \ar[d]^{\tau^{g\bU}_{g\bV}}\ar[ur] &\\
& \cM(\bV) \ar[rr]_{g^{\cM}(\bV)}\ar[dl] && \cM(g\bV) \ar[rd]& \\
M(\bV,H) \ar[rrrr]_{g^M_{\bV,H}}&&&& M(g\bV,gHg^{-1}).}\]
In this diagram, the four trapezia commute by definition of the maps $\tau^\bU_\bV$ and $g^{\cM}(\bU)$. The outer square commutes by Proposition \ref{GEquivFunc}(b). Because the diagonal arrows in this diagram are isomorphisms by Corollary \ref{ResCor}, the inner square also commutes. Therefore $g^{\cM} : \cM \to g^\ast \cM$ is a morphism of presheaves on $\bX_w(\cT)$.

Finally, it follows from Lemma \ref{gUHmaps} that for every compact open subgroup $H$ of $G_\bU$ and for every $g,h \in G$, we have
\[ \w{g}_{h\bU, hHh^{-1}} \circ \w{h}_{\bU,H} = \w{gh}_{\bU,H}.\]
Since $M$ is an $A$-module, inspecting $(\ref{DefnOfgM})$ we see that
\[\begin{array}{rcl} g^{\cM}\left(h^{\cM}(a \hsp \w\otimes\hsp  m)\right) &=& g^{\cM}( \w{h}(a) \hsp \w\otimes \hsp \eta(h) \cdot m) =\\
&=& \w{g}(\w{h}(a)) \hsp \w\otimes \hsp \eta(g)\cdot (\eta(h)\cdot m) = \\
&=& \w{gh}(a) \hsp \w\otimes \hsp \eta(gh)\cdot m = (gh)^{\cM}(a) \end{array}\]
for all $a \hsp \w\otimes \hsp m \in \cM$. Thus $\{g^{\cM} : g \in G\}$ is a $G$-equivariant structure on $\cM$.\end{proof}

We have not yet used part (d) of Definition \ref{AactsCompatibly} in our exposition. We will do so crucially in the proof of our next result, which essentially states that our functors $\cP^A_\bX(-)$ enjoy a certain transitivity property.

\begin{prop}\label{LocTrans} Suppose that $(\bU,J)$ is small, write $B := \w\cD(\bU,J)$ and let $N := B \w\otimes_{A_J} M$. Then $N$ is a coadmissible $B$-module, and there is a natural isomorphism
\[ \cP_\bX^A(M)_{|\bU_w}\congs \cP_\bU^B(N) \]
of $J$-equivariant presheaves of $\cD$-modules on $\bU_w$.
\end{prop}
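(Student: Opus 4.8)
The plan is to exhibit the isomorphism section-by-section over the affinoid subdomains of $\bU$ and then check that the maps respect restriction and the $J$-equivariant structure. The crucial preliminary observation is that for every $\bW \in \bU_w(\cT)$ and every $\bW$-small subgroup $H$ of $J$, the pair $(\bW, H)$ is small (for both $(\bX,G)$ and $(\bU, J)$, by Lemma~\ref{SubsOfSmall}), and the two relevant bimodules agree: using Lemma~\ref{ANAH}(b) applied to $A$ acting on $\bX$ compatibly with $G$ we have $\w\cD(\bW,H) \cong \w\cD(\bW,H) \w\otimes_{A_H} A_H$, and applying the same lemma to $B$ acting on $\bU$ compatibly with $J$ (which holds by Proposition~\ref{XaffCompat}, since $(\bU,J)$ is small, so $B = \w\cD(\bU,J)$ acts on $\bU$ compatibly with $J$ and $B_H = \w\cD(\bU,H)$) we get $\w\cD(\bW,H) \cong \w\cD(\bW,H) \w\otimes_{B_H} B_H$. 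First I would verify that $N = B \w\otimes_{A_J} M$ is a coadmissible $B$-module: this is immediate from \cite[Lemma 7.3]{DCapOne}, since $B$ is a coadmissible $B$--$A_J$-bimodule via the continuous map $\varphi^J(\bU) : A_J \to \w\cD(\bU,J) = B$ between Fr\'echet--Stein algebras (using Theorem~\ref{FrSt}).

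Next I would construct, for each small $(\bW,H)$ with $\bW \subseteq \bU$ and $H$ a $\bW$-small subgroup of $J$, a natural isomorphism of coadmissible $\w\cD(\bW,H)$-modules
\[ M(\bW,H) = \w\cD(\bW,H) \underset{A_H}{\w\otimes} M \;\congs\; \w\cD(\bW,H) \underset{B_H}{\w\otimes} N = N(\bW,H). \]
The key input is associativity of $\w\otimes$ from \cite[Proposition 7.4]{DCapOne}: since $N = B \w\otimes_{A_J} M = \w\cD(\bU,J) \w\otimes_{A_J} M = M(\bU, J)$, and since $B_H = \w\cD(\bU,H)$, we have
\[ \w\cD(\bW,H) \underset{B_H}{\w\otimes} N = \w\cD(\bW,H) \underset{\w\cD(\bU,H)}{\w\otimes} \left( \w\cD(\bU,J) \underset{A_J}{\w\otimes} M \right). \]
By Lemma~\ref{ANAH}(b) (for $B$ acting on $\bU$), the map $\w\cD(\bW,H) \w\otimes_{\w\cD(\bU,H)} \w\cD(\bU,J) \to \w\cD(\bW,J)$ is an isomorphism; combining this with associativity gives $\w\cD(\bW,H)\w\otimes_{B_H} N \cong \w\cD(\bW,J) \w\otimes_{A_J} M = M(\bW, J)$, which by Proposition~\ref{RestrictFurther} is isomorphic to $M(\bW,H)$. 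I would then assemble these isomorphisms: by Definition~\ref{PreLoc}, $\cP^A_\bX(M)(\bW) = \invlim_{H} M(\bW,H)$ over $\bW$-small subgroups $H$ of $G_\bW$, while $\cP^B_\bU(N)(\bW) = \invlim_H N(\bW,H)$ over $\bW$-small subgroups of $J_\bW = G_\bW \cap J$; by Corollary~\ref{ResCor} all transition maps in both systems are isomorphisms, so it suffices to compare one term, and the $\bW$-small subgroups of $J$ are cofinal among $\bW$-small subgroups of $G_\bW$ (any $\bW$-small $H \leq G_\bW$ contains a $\bW$-small subgroup of $H \cap J$ by Lemma~\ref{SubsOfSmall}).

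Having the section-wise isomorphism $\Theta(\bW) : \cP^A_\bX(M)(\bW) \congs \cP^B_\bU(N)(\bW)$, I would check it commutes with the restriction maps $\tau^\bW_{\bW'}$ from Lemma~\ref{DefnOfRestMaps} and with the $J$-equivariant structure maps $g^{\cM}$ from Theorem~\ref{MUGU}. Both checks reduce, via Corollary~\ref{ResCor}, to compatibility at the level of the modules $M(\bW,H)$; for restrictions this follows from the construction in Proposition~\ref{BiFunc}(b) together with the uniqueness clauses in \cite[Corollary 7.4]{DCapOne} and Theorem~\ref{wUGfunc}, and for the $g$-action ($g \in J$) from the explicit formula $(\ref{DefnOfgM})$ and the compatibility of $\w{g}_{\bW,H}$ with the maps $\varphi^H$ recorded in Definition~\ref{AactsCompatibly}(c)---here part (d) of Definition~\ref{AactsCompatibly} is what guarantees that the group element $\eta(g) \in A^\times$ and $\gamma^J(g) \in B^\times$ act compatibly under $\varphi^J(\bU)$, so that the $\w\otimes$-tensor formula for $g^N_{\bW,H}$ matches that for $g^M_{\bW,H}$. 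The main obstacle I anticipate is bookkeeping the several nested applications of the associativity isomorphism for $\w\otimes$ and making sure the identification $N = M(\bU,J)$ is used consistently so that all diagrams literally commute rather than merely commuting up to the canonical isomorphisms; once the correct cofinal systems of small pairs are fixed, each individual step is a formal consequence of the universal properties already established.
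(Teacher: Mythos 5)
Your overall strategy — reduce to a term-by-term isomorphism $M(\bW,H)\congs N(\bW,H)$, show it assembles into a presheaf map, and deduce $J$-equivariance from Definition \ref{AactsCompatibly}(c,d) — is the same as the paper's. The first paragraph (coadmissibility of $N$) is fine, and the references to Lemma \ref{ANAH}(b), Proposition \ref{RestrictFurther}, associativity of $\w\otimes$, and the role of Definition \ref{AactsCompatibly}(d) are all the correct ingredients.

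However, there is a genuine gap in the middle step. You write that, by Lemma \ref{ANAH}(b) applied to $B$ acting on $\bU$, the map $\w\cD(\bW,H) \w\otimes_{\w\cD(\bU,H)} \w\cD(\bU,J) \to \w\cD(\bW,J)$ is an isomorphism, and you then pass through $M(\bW,J)$ and invoke Proposition \ref{RestrictFurther} to get down to $M(\bW,H)$. But $\w\cD(\bW,J)$, $M(\bW,J)$, and the hypotheses of both Lemma \ref{ANAH}(b) (which is a statement on $\bU_w/J$) and Proposition \ref{RestrictFurther} (which requires $\bW \in \bX_w(\cT)/J$) all presuppose that $J$ stabilises $\bW$. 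For an arbitrary affinoid subdomain $\bW$ of $\bU$ this is false: $J$ stabilises $\bU$, but the stabiliser $J_\bW$ is usually a proper open subgroup of $J$. So the chain of isomorphisms you describe is simply not defined for most $\bW \in \bU_w$, and these $\bW$ cannot be excluded because the isomorphism of presheaves has to be exhibited on all of $\bU_w$.

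The repair is to apply Lemma \ref{ANAH}(b) not at $\bW$ but at $\bU$ — and for the $A$-action rather than the $B$-action — which is legitimate because $\bU$ \emph{is} $J$-stable. This gives a bimodule isomorphism $\w\cD(\bU,H)\w\otimes_{A_H} A_J \congs \w\cD(\bU,J)$, and then one computes directly
\[
\w\cD(\bW,H)\w\otimes_{B_H} N \;\congs\; \w\cD(\bW,H)\underset{\w\cD(\bU,H)}{\w\otimes}\bigl(\w\cD(\bU,H)\w\otimes_{A_H} A_J\bigr)\w\otimes_{A_J} M \;\congs\; \w\cD(\bW,H)\w\otimes_{A_H} M = M(\bW,H),
\]
never introducing the ill-defined objects $\w\cD(\bW,J)$ or $M(\bW,J)$. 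This is precisely what the paper's proof does via its commutative square, and it also yields the explicit formula $\lambda_{\bW,H}(a\,\w\otimes\,m) = a\,\w\otimes\,(1\,\w\otimes\,m)$, which you would need in any case to carry out the restriction-compatibility and $J$-equivariance checks that you only sketch.
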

\begin{proof} Note that $\bU_w(\cT) = \bU_w$ because $(\bU,J)$ is small. The algebra $B$ acts on $\bU$ compatibly with $J$ by Proposition \ref{XaffCompat}; here $B_H = \w\cD(\bU,H)$ for any compact open subgroup $H$ of $J$. Let $\bV$ be an affinoid subdomain of $\bU$, and let $H$ be a compact open subgroup of $J_\bV$. Unravelling the definitions, we see that
\[M(\bV,H) = \w\cD(\bV,H) \underset{A_H}{\w\otimes} M \qmb{and} N(\bV,H) = \w\cD(\bV,H) \underset{\w\cD(\bU,H)}{\w\otimes} \left( \w\cD(\bU,J) \underset{A_J}{\w\otimes} M \right).\]
Now consider the following diagram:
\[\xymatrix{
M(\bV,H) \ar[r]\ar@{.>}[d]_{\lambda_{\bV,H}} &\left(\w\cD(\bV,H) \underset{\w\cD(\bU,H)}{\w\otimes} \w\cD(\bU,H)\right) \underset{A_H}{\w\otimes} \left(A_J \underset{A_J}{\w\otimes} M\right)  \ar[d] \\
N(\bV,H)  & \w\cD(\bV,H) \underset{\w\cD(\bU,H)}{\w\otimes} \left( (\w\cD(\bU,H) \underset{A_H}{\w\otimes} A_J) \underset{A_J}{\w\otimes} M\right).  \ar[l]
}\]
The top horizontal map is given by $a \hsp \w\otimes \hsp m \mapsto (a \hsp \w\otimes  \hsp 1 ) \hsp \w\otimes \hsp ( 1 \hsp \w\otimes \hsp m)$; this is clearly an isomorphism. The bottom horizontal map is induced by the isomorphism
\[\w\cD(\bU,H) \underset{A_H}{\w\otimes} A_J \congs \w\cD(\bU,J)\]
from Lemma \ref{ANAH}(b), therefore it is an isomorphism by functoriality. The vertical map on the right is an isomorphism obtained by applying the associativity isomorphism \cite[Proposition 7.4]{DCapOne} three times. Hence there is a left $\w\cD(\bV,H)$-linear isomorphism $\lambda_{\bV,H} : M(\bV,H) \longrightarrow N(\bV,H)$ that makes the diagram commute. This isomorphism is given by
\begin{equation}\label{LambdaFormula} \lambda_{\bV,H}(a \hsp \w\otimes \hsp m) = a \hsp\w\otimes \hsp(1 \hsp\w\otimes \hsp m) \quad\quad a \in \w\cD(\bV,H), m \in M.\end{equation}
Let $\cM = \cP^A_\bX(M)$ and let $\cN = \cP^B_\bU(N)$. Passing to the inverse limit over all $H$ as above, we obtain a $\cD(\bU)$-linear map 
\[\lambda(\bV) := \invlim \lambda(\bV,H) : \cM(\bV) \longrightarrow \cN(\bV)\]
which makes the following square commute:
\[\xymatrix{ \cM(\bV) \ar[rr]^{\lambda(\bV)}\ar[d] && \cN(\bV) \ar[d] \\ 
M(\bV,H) \ar[rr]_{\lambda_{\bV,H}} && N(\bV,H). } \]
Next, let $\bW$ be an affinoid subdomain of $\bV$, choose a compact open subgroup $H$ of $G_{\bU,\bV,\bW}$, and consider the following diagram:
\begin{equation}\label{LambdaPresheafDiag}\xymatrix{ M(\bV,H) \ar[rrrr]^{\lambda_{\bV,H}}\ar[ddd] &&&& N(\bV,H) \ar[ddd] \\
& \cM(\bV) \ar[rr]^{\lambda(\bV)} \ar[d]_{\tau^\bV_\bW(\cM)} \ar[ul] && \cN(\bV) \ar[d]^{\tau^\bV_\bW(\cN)} \ar[ur]&\\
& \cM(\bW) \ar[rr]_{\lambda(\bW)} \ar[dl] && \cN(\bW)\ar[dr] & \\
M(\bW,H) \ar[rrrr]_{\lambda_{\bW,H}}&&&& N(\bW,H).}\end{equation}
where the vertical arrows in the inner square come from Lemma \ref{DefnOfRestMaps}. The four trapezia in this diagram commute by definition, and the outer square commutes because of the formula $(\ref{LambdaFormula})$. Since the diagonal maps are isomorphisms by Corollary \ref{ResCor}, it follows that the inner square is also commutative. Thus
\[ \lambda : \cM_{|\bU_w} \longrightarrow \cN\]
is an isomorphism of presheaves of $\cD$-modules on $\bU_w$. 

Next, we check that $\lambda$ is $J$-equivariant. To see this, let $g \in J$, let $H$ be a compact open subgroup of $J_\bV$ and consider the following diagram:
\[\xymatrix@C=5pt{ 
\w\cD(\bV,H) \underset{A_H}{\w\otimes} M \ar[rrrr]^(.4){\lambda_{\bV,H}} \ar[ddd]_{g^M_{\bV,H}} &&&& \w\cD(\bV,H)\underset{\w\cD(\bU,H)}{\w\otimes} \left( \w\cD(\bU, J)\underset{A_J}{\w\otimes} M\right) \ar[ddd]^{g^N_{\bV,H}} \\
& \cM(\bV) \ar[rr]^{\lambda(\bV)}\ar[d]_{g^{\cM}(\bV)}\ar[ul] && \cN(\bV) \ar[d]^{g^{\cN}(\bV)} \ar[ur]&\\
& \cM(g\bV) \ar[rr]_{\lambda(g\bV)}\ar[dl] && \cN(g\bV)\ar[dr]&\\
\w\cD(g\bV,{}^gH) \underset{A_{{}^gH}}{\w\otimes} M \ar[rrrr]_(.4){\lambda_{g\bV, {}^gH}} &&&& \w\cD(g\bV,{}^gH)\underset{\w\cD(\bU,{}^gH)}{\w\otimes} \left( \w\cD(\bU, J)\underset{A_J}{\w\otimes} M\right). 
}\]
The map $\eta_B : J \to B^\times$ is $\gamma^J : J \to \w\cD(\bU,J)^\times$, by the construction given in the proof of Proposition \ref{XaffCompat}. Hence, for any $a \in \w\cD(\bV,H)$ and any $m \in M$, we have
\[\begin{array}{lll} g^N_{\bV,H}(\lambda_{\bV,H}(a \hsp \w\otimes \hsp m)) &=& g^N_{\bV,H}(a \hsp \w\otimes \hsp( 1 \hsp \w\otimes \hsp m)) = \\
&=& \w{g}_{\bV,H}(a) \hsp\w\otimes\hsp \eta_B(g) \cdot ( 1 \hsp\w\otimes\hsp m) = \\
&=& \w{g}_{\bV,H}(a) \hsp\w\otimes\hsp (\gamma^J(g) \hsp \w\otimes\hsp m) = \\
&=& \w{g}_{\bV,H}(a) \hsp\w\otimes\hsp (\varphi^J( \eta(g) ) \hsp \w\otimes \hsp m) = \\
&=& \w{g}_{\bV,H}(a) \hsp\w\otimes\hsp (1 \hsp\w\otimes \hsp \eta(g) \cdot m) = \\
&=& \lambda_{g\bV, {}^gH}\left( \w{g}_{\bV,H}(a) \hsp\w\otimes\hsp \eta(g) \cdot m\right) = \\
&=& \lambda_{g\bV, {}^gH} ( g^M_{\bV,H}(a \hsp\w\otimes\hsp m) )
\end{array}\]
where on the fourth line we used part (d) of Definition \ref{AactsCompatibly}. Thus the outer square of the above diagram commutes. The diagonal arrows in this diagram are isomorphisms by Corollary \ref{ResCor}, so the inner square is commutative and ${\lambda : \cM_{|\bU_w} \longrightarrow \cN}$ is $J$-equivariant.  \end{proof}

\begin{lem}\label{RestToHonP} Suppose that $(\bX,G)$ is small, and that $M$ is a coadmissible $\w\cD(\bX,G)$-module. Let $H$ be an open subgroup of $G$ and let $N$ be the restriction of $M$ to $\w\cD(\bX,H)$. Then there is a natural isomorphism
\[\cP^{\w\cD(\bX,H)}_\bX(N) \congs \cP^{\w\cD(\bX,G)}_\bX(M)\]
of $H$-equivariant presheaves of $\cD$-modules on $\bX_w$.
\end{lem}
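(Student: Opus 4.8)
The plan is to reduce the statement to the local comparison map that already appears in the proof of Proposition \ref{LocTrans}, applied in the degenerate case $\bU = \bX$. Concretely, set $B := \w\cD(\bX,G)$; since $(\bX,G)$ is small we have $B_H = \w\cD(\bX,H)$ for every compact open subgroup $H$ of $G$, and the restriction $N$ of $M$ to $\w\cD(\bX,H) = A_H$ is, by definition of coadmissibility over a compatible action, a coadmissible $A_H$-module, hence a coadmissible $\w\cD(\bX,H)$-module. The key observation is that for $\bV \in \bX_w$ and any compact open $J \leq H$ with $(\bV,J)$ small, the two Fr\'echet-tensor-product descriptions
\[
M(\bV,J) = \w\cD(\bV,J) \underset{\w\cD(\bX,G)_J}{\w\otimes} M = \w\cD(\bV,J)\underset{A_J}{\w\otimes} M
\]
and
\[
N(\bV,J) = \w\cD(\bV,J)\underset{\w\cD(\bX,H)_J}{\w\otimes} N = \w\cD(\bV,J)\underset{A_J}{\w\otimes} M
\]
literally coincide: both $\w\cD(\bX,G)_J$ and $\w\cD(\bX,H)_J$ equal $\w\cD(\bX,J) = A_J$ by the construction in Proposition \ref{XaffCompat}, and $N$ restricted to $A_J$ is just $M$ restricted to $A_J$. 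So on the level of the individual building blocks $M(\bV,J)$ there is nothing to prove — they are equal, not merely isomorphic.

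First I would make this precise: for each affinoid subdomain $\bV$ of $\bX$ and each $J$-small subgroup $J$ of $H$ (equivalently, of $G$, since $(\bX,G)$ small forces $(\bV, J)$ small by Lemma \ref{SubsOfSmall} for any compact open $J \leq G_\bV$), exhibit the canonical identity isomorphism $M(\bV,J) \congs N(\bV,J)$ of coadmissible $\w\cD(\bV,J)$-modules, compatible with the transition maps $M(\bV,J) \to M(\bV,J')$ of Proposition \ref{BiFunc}(a) and with the restriction maps of Proposition \ref{BiFunc}(b). Passing to the inverse limit over $J$-small subgroups $J$ of $H$ (using Proposition \ref{RestrictFurther} and Corollary \ref{ResCor} to see that this inverse system over $H$-small subgroups of $H$ is cofinal in, and has the same limit as, the one over all $\bV$-small subgroups of $G$) yields a $\cD(\bV)$-linear isomorphism
\[
\cP^{\w\cD(\bX,H)}_\bX(N)(\bV) \congs \cP^{\w\cD(\bX,G)}_\bX(M)(\bV).
\]
Then I would check compatibility with the restriction maps $\tau^\bV_\bW$ from Lemma \ref{DefnOfRestMaps} (immediate from the explicit description, using the same diagram-chase with isomorphic diagonal arrows as in Theorem \ref{MUGU} and Proposition \ref{LocTrans}), so that these assemble into an isomorphism of presheaves of $\cD$-modules on $\bX_w$.

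Finally I would verify $H$-equivariance: for $g \in H$ the maps $g^M_{\bV,J}$ and $g^N_{\bV,J}$ from Proposition \ref{GEquivFunc}(a) are both given by $a \w\otimes m \mapsto \w{g}_{\bV,J}(a) \w\otimes \eta(g)\cdot m$, where the element $\eta(g) = \gamma^G(g) \in \w\cD(\bX,G)^\times$ restricts, via $\varphi^J$, to $\gamma^J(g)$ for the $G$-action and the $H$-action alike (this is exactly part (d) of Definition \ref{AactsCompatibly}, used just as in the last displayed computation of the proof of Proposition \ref{LocTrans}). Hence the identification intertwines the two equivariant structures, and taking inverse limits preserves this. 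The main obstacle — such as it is — is purely bookkeeping: ensuring that the various inverse systems indexed by small subgroups of $H$ versus of $G$ really do give the same limit (handled by Proposition \ref{IHIGcofinal}, Proposition \ref{RestrictFurther} and Corollary \ref{ResCor}) and that all the naturality squares commute, which follows from the by-now-standard argument that the diagonal comparison maps to the fixed building blocks $M(\bV,J)$ are isomorphisms. There is no genuinely new analytic input required.
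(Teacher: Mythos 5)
Your proof is correct and follows essentially the same route as the paper's: both hinge on the observation that for $J$ an open subgroup of $H_\bU$ the building blocks $M(\bU,J)$ and $N(\bU,J)$ coincide on the nose (since $\w\cD(\bX,G)_J = \w\cD(\bX,H)_J = \w\cD(\bX,J)$ and restriction of $N$ to $A_J$ equals restriction of $M$), so the identity map gives the isomorphism and one passes to inverse limits. The paper's proof states the same thing in one paragraph and leaves the cofinality, presheaf, and $H$-equivariance checks to the reader; you have merely spelled these out explicitly.
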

\begin{proof} If $\bU \in \bX_w$ and $J$ is any open subgroup of $H_\bU$, the identity map is a $\w\cD(\bU,J)$-linear isomorphism $M(\bU,J) \to N(\bU,J)$. Passing to the inverse limit over all such $J$ gives us an isomorphism of presheaves $\cP^{\w\cD(\bX,G)}_\bX(M) \stackrel{\cong}{\rightarrow} \cP^{\w\cD(\bX,H)}_\bX(N)$ on $\bX_w$, which is readily checked to be $H$-equivariant and $\cD$-linear.
\end{proof}

We will now state the main result of $\S$ \ref{LocXSect}.

\begin{thm}\label{DGsheaf} Let $\bU \in \bX_w(\cT)$. Then $\cP^A_\bX(M)_{|\bU_w}$ is a sheaf on $\bU_w$ with vanishing higher \v{C}ech cohomology, for every coadmissible $A$-module $M$.
\end{thm}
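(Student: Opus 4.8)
My plan is to reduce the statement, one covering at a time, to a levelwise computation over the Fr\'echet-Stein levels of a suitable $\w\cD(-,H)$, and then to invoke the (already available) acyclicity of the non-equivariant localisation functor together with a $\varprojlim^1$-vanishing argument. First note that $\bU_w\subseteq\bX_w(\cT)$, and that every finite intersection of members of $\bU_w$ again lies in $\bX_w(\cT)$: indeed every affinoid subdomain $\bW$ of $\bU$ admits, by Lemmas \ref{SmallPairsExist} and \ref{SubsOfSmall}, a small pair $(\bW,H_\bW)$, hence a free Lie lattice. So $\cP^A_\bX(M)_{|\bU_w}$ makes sense, and by the standard criterion for a presheaf on a $G$-topological space to be a sheaf with vanishing higher \v{C}ech cohomology (finite affinoid coverings being cofinal; see \cite[\S 8.2, \S 9.2]{BGR} and its use in \cite{DCapOne}) it suffices to show that for every $\bV\in\bU_w$ and every finite admissible affinoid covering $\bV=\bigcup_{i=1}^n\bV_i$, the augmented \v{C}ech complex of $\cP^A_\bX(M)$ for $\{\bV_i\}$ is exact.

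\textbf{Reduction to a small pair.} Fix such a covering. Since $G$ acts continuously, $G_{\bV_1}\cap\cdots\cap G_{\bV_n}$ is open in $G$ and contained in $G_\bV$; using Lemmas \ref{SmallPairsExist} and \ref{SubsOfSmall} I may choose a compact open subgroup $H$ of this intersection with $(\bV,H)$ small, so that $H$ stabilises $\bV$ and all the finite intersections $\bV_{\underline i}:=\bV_{i_0}\cap\cdots\cap\bV_{i_p}$. By Proposition \ref{LocTrans}, applied with base $(\bV,H)$ to the coadmissible $A$-module $M$, there is an isomorphism of $H$-equivariant presheaves of $\cD$-modules on $\bV_w$
\[ \cP^A_\bX(M)_{|\bV_w}\;\cong\;\cP^B_\bV(P),\qquad B:=\w\cD(\bV,H),\ \ P:=B\,\w\otimes_{A_H}M, \]
where $B$ is two-sided Fr\'echet-Stein by Theorem \ref{FrSt} and $P$ is coadmissible over $B$. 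As $(\bW,H)$ is small for each $\bW\in\{\bV,\bV_i,\bV_{\underline i}\}$, Corollary \ref{ResCor} identifies the \v{C}ech complex to be shown exact with
\[ 0\to P\to\bigoplus_i\w\cD(\bV_i,H)\,\w\otimes_B P\to\bigoplus_{i<j}\w\cD(\bV_{ij},H)\,\w\otimes_B P\to\cdots, \]
each term of which is a coadmissible $\w\cD(\bV_{\underline i},H)$-module by \cite[Lemma 7.3]{DCapOne}.

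\textbf{Levelwise computation.} Choose an $H$-stable affine formal model $\cC$ in $\cO(\bV)$ and an $H$-stable free $\cC$-Lie lattice $\cL$ in $\cT(\bV)$ witnessing smallness; replacing $\cL$ by a $\pi$-power multiple (via \cite[Lemma 7.6]{DCapOne} and finiteness of the covering) I may assume each $\bV_i$ is $\cL$-admissible, so $\cL$ induces an $H$-stable free Lie lattice $\cL_{\underline i}$ on each $\bV_{\underline i}$ (taking products over the finite $H$-orbit as in Lemma \ref{ProdFormModels}). By Corollary \ref{ChainCap} fix a good chain $(N_\bullet)$ in $H$ for $\cL$ and for all the $\cL_{\underline i}$. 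Write $R_n:=\hK{U(\pi^n\cL)}$, $R_{\underline i,n}:=\hK{U(\pi^n\cL_{\underline i})}$ and $B_n:=R_n\rtimes_{N_n}H$; these are Noetherian $K$-Banach algebras, $B=\varprojlim_n B_n$ by Lemma \ref{StdPres}, and $P=\varprojlim_n P_n$ with $P_n$ a finitely generated $B_n$-module (the standard presentation of a coadmissible module over $B=\varprojlim_n B_n$). The key point is that the crossed-product construction commutes with the \'etale base change $\cO(\bV)\to\cO(\bV_{\underline i})$: exactly as in Lemma \ref{ANAH}(b) one has $R_{\underline i,n}\rtimes_{N_n}H\cong R_{\underline i,n}\otimes_{R_n}B_n$ as bimodules, whence $\w\cD(\bV_{\underline i},H)\,\w\otimes_B P\cong\varprojlim_n(R_{\underline i,n}\otimes_{R_n}P_n)$. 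Therefore the \v{C}ech complex above is the inverse limit over $n$ of the augmented \v{C}ech complexes of the finitely generated $R_n$-module $P_n$ (restriction of scalars along $R_n\hookrightarrow B_n$) with respect to $\{\bV_i\}$. For each fixed $n$ that level-$n$ complex is exact by the levelwise acyclicity of the non-equivariant localisation functor established in \cite{DCapOne}, reproved for not necessarily discretely valued $K$ in $\S\ref{LevelwiseSect}$, which ultimately reduces by d\'evissage in $P_n$ to Tate's acyclicity theorem for $\cO$.

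\textbf{Passing to the limit.} In each cohomological degree $q$ the inverse system $\bigl(\bigoplus_{\underline i}R_{\underline i,n}\otimes_{R_n}P_n\bigr)_n$ (the sum over the relevant increasing $(q{+}1)$-tuples) is precisely the standard Fr\'echet-Stein presentation of the coadmissible module $\bigoplus_{\underline i}\w\cD(\bV_{\underline i},H)\,\w\otimes_B P$ produced above; hence it is a coadmissible inverse system in the sense of \cite[\S 3]{ST}, so its transition maps are surjective and $\varprojlim^1$ along it vanishes. Consequently the inverse limit over $n$ of the level-$n$ exact augmented \v{C}ech complexes is again exact, which is the required exactness; as $\bV$ and the covering were arbitrary, $\cP^A_\bX(M)_{|\bU_w}$ is a sheaf on $\bU_w$ with vanishing higher \v{C}ech cohomology. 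I expect the crux to be the levelwise identification --- turning the $H$-equivariant level-$n$ \v{C}ech complex into the non-equivariant one for $P_n$ over $R_n$ so that the results of \cite{DCapOne} apply --- together with verifying that the inverse systems stay coadmissible so that passage to the limit preserves exactness; the preliminary reductions to a single small pair carrying a common Lie lattice and good chain are routine bookkeeping with Lemmas \ref{SmallPairsExist}, \ref{SubsOfSmall} and Corollary \ref{ChainCap}.
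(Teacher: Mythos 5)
Your proposal is correct and follows essentially the same route as the paper's own proof: reduce to a small pair via Proposition \ref{LocTrans} and Corollary \ref{ResCor}, rescale to an $\cL$-accessible covering, identify the augmented \v{C}ech complex with an inverse limit of its Noetherian Banach levels (the paper packages your crossed-product-as-bimodule step as Proposition \ref{BiModIso}, Lemma \ref{LocMLocN} and Corollary \ref{TateForCohsQmod}), and then pass to the limit using $\varprojlim^{1}$-vanishing for coadmissible inverse systems. The only cosmetic differences are that the paper first makes the covering $G$-stable via Lemma \ref{CoverStab} and Lemma \ref{RestToHonP} before invoking Proposition \ref{PassToInfty}, and it spells out the induction on kernels that underlies your ``limit of exact complexes with ML terms is exact'' step; also your citation of Lemma \ref{ANAH}(b) should really point to the sheaf-level bimodule isomorphism of Proposition \ref{BiModIso}.
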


We postpone the proof until $\S \ref{TateFlatKiehl}$. Because $\bX$ is smooth, the tangent sheaf $\cT$ is locally free. This means that $\bX_w(\cT)$ forms a \emph{basis} for $\bX$, and Theorem \ref{DGsheaf} implies that $\cP^A_\bX(M)$ is a sheaf on $\bX_w(\cT)$ in the sense of \cite[\S 9.1]{DCapOne}. We can now apply a general result (see \cite[Theorem 9.1]{DCapOne}) to deduce that $\cP^A_\bX(M)$ extends uniquely to a sheaf on $\bX_{\rig}$, the strong $G$-topology of $\bX$. 

\begin{defn}\label{DefnOfLocM} We define $\Loc_\bX^A(M)$ be the unique sheaf on $\bX_{\rig}$ whose restriction to $\bX_w(\cT)$ is the presheaf $\cP^A_\bX(M)$.
\end{defn}

It is straightforward to see that $\Loc_\bX^A(M)$ is in fact a $G$-equivariant sheaf of $\cD$-modules on $\bX_{\rig}$. 

\subsection{Coadmissible equivariant \ts{\cD}-modules}\label{CoadEqDmodSec} We continue to assume that $\bX$ is a smooth rigid analytic variety, and that $G$ is a not necessarily compact $p$-adic Lie group acting continuously on $\bX$. In $\S\ref{CoadEqDmodSec}$, we will give a `purely local' definition of coadmissible $G$-equivariant $\cD$-modules on $\bX$, by gluing together the categories of coadmissible $\w\cD(\bU,H)$-modules, as $\bU$ varies over all affinoid subdomains in $\bX_w(\cT)$ and $H$ varies over all possible $\bU$-small subgroups of $G$. To do this correctly, it turns out that we need the local sections of morphisms between the $G$-equivariant $\cD$-modules of interest to be \emph{continuous}. This, in turn, necessitates keeping track of certain topologies on certain local sections of these $G$-equivariant $\cD$-modules --- see Remark \ref{TopRmk}(d) below. The following framework will turn out to be convenient for our purposes.

\begin{defn}\label{FrechDmod}  Recall that $\bX_w(\cT)$ is the set of affinoid subdomains $\bU$ of $\bX$ such that $\cT(\bU)$ admits a free $\cA$-Lie lattice for some affine formal model $\cA$ in $\cO(\bU)$.
\be\item We say that a $G$-equivariant $\cD$-module $\cM$ on $\bX_{\rig}$ is \emph{locally Fr\'echet} if
\begin{itemize} \item $\cM(\bU)$ is equipped with a Fr\'echet topology for every $\bU \in \bX_w(\cT)$,
\item the maps $g^{\cM}(\bU) : \cM(\bU) \to \cM(g\bU)$ are continuous, whenever $\bU \in \bX_w(\cT)$ and $g \in G$.
\end{itemize}
\item A \emph{morphism of $G$-equivariant locally Fr\'echet $\cD$-modules} is a morphism ${f : \cM \to \cN}$ of $G$-equivariant $\cD$-modules, such that $f(\bU) : \cM(\bU) \to \cN(\bU)$ is continuous for every $\bU \in \bX_w(\cT)$. We call such morphisms \emph{continuous}.
\ee\end{defn}

\begin{notn}\label{FrechNotn} We denote the category whose objects are $G$-equivariant locally Fr\'echet $\cD$-modules on $\bX$ and whose morphisms are continuous maps by 
\[\Frech(G-\cD_\bX).\]
We denote the forgetful functor to $G$-equivariant $\cD$-modules on $\bX$ by 
\[ \Phi : \Frech(G-\cD_\bX) \to \Emod{G}{\cD}.\]
\end{notn}

\begin{rmks}\label{TopRmk}\hspace{2em}
\be \item Note that we are not equipping $\cM(\bU)$ with a Fr\'echet topology for \emph{every} admissible open subset $\bU$ of $\bX$: the choice of $\bX_w(\cT)$ is the most economical because Theorem \ref{DGsheaf} tells us that we know what the restrictions of our sheaf $\Loc^A_\bX(M)$ to $\bU \in \bX_w(\cT)$ look like. Instead of $\bX_w(\cT)$, we could also have chosen to work with quasi-compact admissible open subsets. Because the category of Fr\'echet spaces is stable only under \emph{countable} and not \emph{arbitrary} projective limits, which are needed to compute local sections of admissible open subsets that may be too large to admit countable admissible affinoid coverings, and it is certainly not stable under arbitrary filtered inductive limits, which arise in the sheafification process. Of course, instead of Fr\'echet spaces, we could have chosen to work with a category of topological vector spaces that is large enough to be stable under every such limit, such as locally convex vector spaces, or perhaps instead with the category of bornological vector spaces. Since we are not particularly interested in the topology of the local sections of our  coadmissible $G$-equivariant $\cD$-modules over arbitrary admissible open subsets in this paper, to keep matters as simple as possible, we will not consider any topology on these spaces of local sections.

\item The category $\Frech(G - \cD_\bX)$ is additive, and admits kernels. Other axioms of abelian categories, even such basic ones as the existence of cokernels, do not follow from the weak axiomatic framework of Definition \ref{FrechDmod}: one problem is that the $\cD(\bU)$-action on $\cM(\bU)$ is not assumed to be separately continuous. However, we will see that the category of \emph{coadmissible} $G$-equivariant $\cD$-modules \emph{is} abelian.

\item The real technical reason why we need to keep track of these topologies is as follows. In Definition \ref{CoadmFrechEqDmod} below, we define the category $\cC_{\bX /G}$ of coadmissible $G$-equivariant $\cD$-modules on $\bX$. One of its main properties should be that $\Gamma(\bU,-)$ takes values in coadmissible $\w\cD(\bU,G_\bU)$-modules for every $\bU \in \bX_w(\cT)$. Now even assuming we can show that $\cM(\bU)$ is a coadmissible $\w\cD(\bU,G_\bU)$-module for every $\cM \in \cC_{\bX /G}$, a morphism $\varphi : \cM \to \cN$ of $G$-equivariant $\cD$-modules will at best \emph{a priori} only give rise to a $\cD(\bU) \rtimes G_\bU$-linear map $\cM(\bU) \to \cN(\bU)$, which need \emph{not} be $\w\cD(\bU,G_\bU)$-linear, in general. However, because $\cD(\bU)\rtimes G_\bU$ is dense in $\w\cD(\bU,G_\bU)$, a $\cD(\bU) \rtimes G_\bU$-linear morphism between two coadmissible $\w\cD(\bU,G_\bU)$-modules is $\w\cD(\bU,G_\bU)$-linear if and only if it is continuous with respect to the canonical topologies on $\cM(\bU)$ and $\cN(\bU)$. 

\item We do \emph{not} require the restriction maps in, nor the $\cD$-module structure on, a $G$-equivariant locally Fr\'echet $\cD$-module $\cM$ to be continuous. This is done to keep the exposition as simple as possible: in fact our coadmissible $G$-equivariant $\cD$-modules \emph{do} have continuous $\cD$-module structure, but these are consequences of the far more stringent condition of being \emph{coadmissible}. 
\ee 
\end{rmks}

\begin{rmk}\label{GeneralEqRestToSubgps} If $\cM$ is a $G$-equivariant $\cD$-module on $\bX$ (respectively, $\bX_w$ or $\bX_w(\cT)$) and $H$ is any closed subgroup of $G$, then we can always regard $\cM$ as an $H$-equivariant $\cD$-module on $\bX$ (respectively, $\bX_w$ or $\bX_w(\cT)$). In this way we obtain the restriction functor 
\[ \Res^G_H : \Frech(G-\cD_\bX) \longrightarrow \Frech(H-\cD_\bX)\]
and its obvious analogues on $\bX_w$ and $\bX_w(\cT)$.
\end{rmk}

\begin{lem}\label{AutoCts}  Let $A$ be a Fr\'echet-Stein algebra. Then any $A$-linear map $M \to N$ between two co-admissible $A$-modules is automatically continuous.
\end{lem}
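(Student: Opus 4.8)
The statement to prove is: \emph{Let $A$ be a Fr\'echet-Stein algebra. Then any $A$-linear map $M \to N$ between two coadmissible $A$-modules is automatically continuous.}

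The plan is to reduce the claim to the finitely generated Banach-algebra level using the defining presentation of a Fr\'echet-Stein algebra, and then to invoke the continuity of module homomorphisms over Banach algebras. First I would fix a Fr\'echet-Stein structure on $A$, i.e. an isomorphism $A \cong \invlim A_n$ where $(A_n)_{n\geq 0}$ is an inverse system of (left) Noetherian $K$-Banach algebras with the transition maps $A_{n+1} \to A_n$ being flat on the right and having dense image, as in \cite[\S 3]{ST}. By definition of coadmissibility, the modules $M_n := A_n \otimes_A M$ and $N_n := A_n \otimes_A N$ are finitely generated $A_n$-modules, and $M \cong \invlim M_n$, $N \cong \invlim N_n$ as topological $A$-modules, where each $M_n$ and $N_n$ carries its canonical Banach topology (the unique one as a finitely generated module over the Noetherian Banach algebra $A_n$; see \cite[Proposition 2.1]{ST}). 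The Fr\'echet topology on $M$ (resp. $N$) is precisely the inverse limit topology coming from these Banach topologies.

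Next I would argue that the given $A$-linear map $f : M \to N$ induces $A_n$-linear maps $f_n : M_n \to N_n$ for each $n$, compatible with the transition maps, and that $f = \invlim f_n$ under the identifications $M \cong \invlim M_n$ and $N \cong \invlim N_n$. Indeed, $f_n := \mathrm{id}_{A_n} \otimes_A f$ is $A_n$-linear by construction, the compatibility with transition maps is automatic from functoriality of base change, and the identification $f = \invlim f_n$ follows from coadmissibility (the canonical maps $M \to \invlim M_n$ and $N \to \invlim N_n$ are isomorphisms, and they intertwine $f$ with $\invlim f_n$). Here the key input is the part of the theory of coadmissible modules (again \cite[\S 3]{ST}) that says base change along $A \to A_n$ followed by taking the inverse limit recovers the module.

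Then I would use the fact that each $f_n : M_n \to N_n$ is automatically continuous: $A_n$ is a Noetherian $K$-Banach algebra, $M_n, N_n$ are finitely generated $A_n$-modules with their canonical Banach topologies, and any $A_n$-linear map between such modules is continuous — this is the Banach-algebra analogue, which follows from the open mapping theorem applied to a surjection $A_n^{k} \twoheadrightarrow M_n$ (a finite free module surjecting onto $M_n$), since the composite $A_n^k \to M_n \to N_n$ is visibly continuous and $A_n^k \to M_n$ is a topological quotient map. Finally, since $f = \invlim f_n$ is an inverse limit of continuous maps between the defining inverse systems of Banach spaces, and the topologies on $M$ and $N$ are the corresponding inverse limit topologies, $f$ is continuous. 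I would assemble these three observations into the proof.

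The main obstacle, and the only point requiring genuine care, is the identification $f = \invlim f_n$ together with the assertion that the canonical topology on a finitely generated module over a Noetherian Banach algebra is intrinsic (independent of the chosen presentation) and that $A_n$-linear maps respect it; but both of these are standard facts from \cite[\S 2--3]{ST} that may be cited directly, so in the end the argument is essentially a bookkeeping exercise combining the universal property of inverse limits with the open mapping theorem. One should be slightly attentive to whether "Fr\'echet-Stein" here means the one-sided or two-sided notion (the statement is about left modules, so the one-sided version as in \cite{ST} suffices, and the two-sided version of \cite[\S 6.4]{DCapOne} certainly implies it).
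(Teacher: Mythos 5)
Your proof is correct and is precisely the argument that the paper's proof delegates to by citation: the paper simply points to \cite[Proposition 2.1(iii) and Corollary 3.3]{ST} and the remarks after \cite[Lemma 3.6]{ST}, and your write-up unpacks exactly that chain of reasoning (reduction to the Banach level via $M \cong \invlim M_n$, $N \cong \invlim N_n$, automatic continuity of $A_n$-linear maps between finitely generated modules over Noetherian Banach algebras via the open mapping theorem, and passage to the inverse limit). Same approach, just with the details spelled out rather than cited.
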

\begin{proof}This follows from \cite[Proposition 2.1(iii) and Corollary 3.3]{ST} --- see the remarks appearing immediately after the proof of \cite[Lemma 3.6]{ST}.
\end{proof}

\begin{prop}\label{LocFunctor}
 Suppose that $A$ acts on $\bX$ compatibly with $G$. Then $\Loc^A_\bX$ is a functor from coadmissible $A$-modules to $G$-equivariant locally Fr\'echet $\cD$-modules on $\bX$.
\end{prop}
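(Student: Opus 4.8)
The plan is to verify three things: that $\Loc^A_\bX(M)$ is a well-defined object of $\Frech(G-\cD_\bX)$ for every coadmissible $A$-module $M$, that $\Loc^A_\bX$ is functorial in $M$, and that the morphisms it produces are continuous. For the first point, recall that $\Loc^A_\bX(M)$ is by Definition \ref{DefnOfLocM} the unique sheaf on $\bX_{\rig}$ extending the $G$-equivariant presheaf $\cP^A_\bX(M)$ on $\bX_w(\cT)$, which is a sheaf with vanishing higher \v{C}ech cohomology on each $\bU_w$ by Theorem \ref{DGsheaf}, and carries a $G$-equivariant structure by Theorem \ref{MUGU}. So it remains to put a Fr\'echet topology on $\cM(\bU) := \cP^A_\bX(M)(\bU)$ for each $\bU \in \bX_w(\cT)$ and check the continuity condition in Definition \ref{FrechDmod}(a).

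To topologise $\cM(\bU)$: by Lemma \ref{SmallPairsExist} there is a $\bU$-small subgroup $H$ of $G$, and Corollary \ref{ResCor} gives a canonical bijection $\cM(\bU) \to M(\bU,H) = \w\cD(\bU,H) \w\otimes_{A_H} M$. Since $M$ is coadmissible as an $A_H$-module (by hypothesis together with Lemma \ref{ANAH}(a)) and $\varphi^H(\bU) : A_H \to \w\cD(\bU,H)$ is a continuous homomorphism of Fr\'echet-Stein algebras by Theorem \ref{FrSt}, the module $M(\bU,H)$ is a coadmissible $\w\cD(\bU,H)$-module and hence carries its canonical Fr\'echet topology. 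I would transport this topology to $\cM(\bU)$, and must check it is independent of the choice of $\bU$-small $H$: if $H' \leq H$ is another, then $H' $ is again $\bU$-small by Lemma \ref{SubsOfSmall}, and Proposition \ref{RestrictFurther} says $M(\bU,H') \to M(\bU,H)$ is an isomorphism of coadmissible $\w\cD(\bU,H')$-modules, hence a homeomorphism by Lemma \ref{AutoCts} applied over $\w\cD(\bU,H')$ (viewing the $\w\cD(\bU,H)$-module $M(\bU,H)$ as coadmissible over $\w\cD(\bU,H')$ via the finite map of Lemma \ref{ANAH}(a)); since any two $\bU$-small subgroups contain a common smaller one, the topology is well-defined.

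Next, continuity of $g^{\cM}(\bU) : \cM(\bU) \to \cM(g\bU)$. Pick a $\bU$-small subgroup $H$; then $g H g^{-1}$ is $g\bU$-small, and under the identifications of Corollary \ref{ResCor} the map $g^{\cM}(\bU)$ becomes $g^M_{\bU,H} : M(\bU,H) \to M(g\bU, gHg^{-1})$, which by Proposition \ref{GEquivFunc}(a) satisfies $g^M_{\bU,H}(a\cdot m) = \w{g}_{\bU,H}(a)\cdot g^M_{\bU,H}(m)$; since $\w{g}_{\bU,H} : \w\cD(\bU,H) \to \w\cD(g\bU,gHg^{-1})$ is a continuous isomorphism of Fr\'echet-Stein algebras (Lemma \ref{gUHmaps}), $g^M_{\bU,H}$ is a morphism of coadmissible modules over $\w\cD(\bU,H)$ acting through $\w{g}_{\bU,H}$, hence continuous by Lemma \ref{AutoCts}. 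This establishes $\Loc^A_\bX(M) \in \Frech(G-\cD_\bX)$.

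For functoriality: a morphism $f : M \to N$ of coadmissible $A$-modules is $A_H$-linear for every compact open $H$, so for small $(\bU,H)$ it induces $\w\cD(\bU,H)$-linearly, via the universal property of $\w\otimes$ (\cite[Lemma 7.3]{DCapOne}), a map $M(\bU,H) \to N(\bU,H)$, compatible with the transition maps of Propositions \ref{BiFunc} and \ref{GEquivFunc}; passing to the inverse limit over $\bU$-small $H$ and sheafifying gives $\Loc^A_\bX(f) : \Loc^A_\bX(M) \to \Loc^A_\bX(N)$, a morphism of $G$-equivariant $\cD$-modules, with the two functor axioms immediate from uniqueness. Continuity of $\Loc^A_\bX(f)(\bU)$ for $\bU \in \bX_w(\cT)$ follows once more from Lemma \ref{AutoCts}: after identifying via Corollary \ref{ResCor}, it is the $\w\cD(\bU,H)$-linear map $M(\bU,H) \to N(\bU,H)$ between coadmissible modules. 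The main obstacle is the topological bookkeeping --- checking that the Fr\'echet structure on $\cM(\bU)$ does not depend on the auxiliary $\bU$-small subgroup $H$, which is where Proposition \ref{RestrictFurther} and Lemma \ref{AutoCts} do the real work; everything else is a formal assembly of results already in hand.
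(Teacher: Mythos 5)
Your proof is correct and follows essentially the same route as the paper: topologise $\cM(\bU)$ via Corollary \ref{ResCor} and the canonical coadmissible topology on $M(\bU,J)$, deduce continuity of $g^{\cM}(\bU)$ from Proposition \ref{GEquivFunc}(a), Lemma \ref{gUHmaps} and Lemma \ref{AutoCts}, and deduce continuity of $\Loc^A_\bX(f)(\bU)$ from functoriality of $\w\otimes$ and Lemma \ref{AutoCts}. The one place where you add substance is the verification that the Fr\'echet topology on $\cM(\bU)$ is independent of the auxiliary $\bU$-small subgroup $J$ --- the paper merely asserts this, whereas you supply the argument via Proposition \ref{RestrictFurther} and Lemma \ref{AutoCts}, reducing to a common open subgroup; that argument is sound (it also implicitly uses that for a continuous homomorphism $\w\cD(\bU,H')\to\w\cD(\bU,H)$ making the target a finitely generated module, the canonical coadmissible topologies over the two algebras agree, which is standard in the Fr\'echet--Stein framework of \cite{ST}).
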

\begin{proof} Fix the coadmissible $A$-module $M$, and recall the $G$-equivariant presheaf $\cM := \cP^A_\bX(M)$ of $\cD$-modules on $\bX_w(\cT)$ from Theorem \ref{MUGU}. Let $\bU \in \bX_w(\cT)$, and choose a $\bU$-small subgroup $J$ of $G$. There is a canonical isomorphism $\cM(\bU) \cong M(\bU,J) = \w\cD(\bU,J) \w\otimes_{A_J}M$ by Corollary \ref{ResCor}, so $\cM(\bU)$ carries a canonical Fr\'echet topology, being a coadmissible $\w\cD(\bU,J)$-module. This topology is independent of the choice of $J$. Now, let $g \in G$ and consider the map $g^M_{\bU,J} : M(\bU,J) \to M(g\bU, gJg^{-1})$.  By Proposition \ref{GEquivFunc}(a), this map satisfies $g^M_{\bU,J} ( a \cdot m ) = \w{g}_{\bU,J}(a) \cdot (g \cdot m)$ for all $a \in \w\cD(\bU,J)$ and $m \in M$, so when we regard $\cM(g\bU)$ as a coadmissible $\w\cD(\bU,J)$-module via the continuous ring isomorphism $\w{g}_{\bU,J}$ from Lemma \ref{gUHmaps}, $g^M_{\bU,J}$ becomes a $\w\cD(\bU,J)$-linear map between two coadmissible $\w\cD(\bU,J)$-modules. Hence $g^\cM(\bU)$ is continuous by Lemma \ref{AutoCts}. The restriction of the functor $\Loc^A_\bX(M)$ to $\bX_w(\cT)$ is  $\cM$ by Theorem \ref{DGsheaf}, so $\Loc^A_\bX(M)$ is a $G$-equivariant locally Fr\'echet $\cD$-module on $\bX$. 

Now if $f : M \to N$ is an $A$-linear map between two coadmissible $A$-modules, then for any $\bV \in \bX_w(\cT)$ and any $\bV$-small subgroup $H$ of $G$, the functoriality of $\w\otimes$ induces a $\w\cD(\bV,H)$-linear map $1 \w\otimes f: M(\bV,H) \to N(\bV,H)$, which is continuous by Lemma \ref{AutoCts}.  In this way we obtain a $G$-equivariant morphism of presheaves $\cP^A_\bX(M) \to \cP^A_\bX(N)$ on $\bX_w(\cT)$ whose local sections are continuous, and after applying \cite[Theorem 9.1]{DCapOne}, we get a morphism $\Loc^A_\bX(f) : \Loc^A_\bX(M) \to \Loc^A_\bX(N)$ of $G$-equivariant locally Fr\'echet $\cD_\bX$-modules. It is straightforward to verify that $\Loc^A_\bX(g\circ f) = \Loc^A_\bX(g) \circ \Loc^A_\bX(f)$ whenever $g : N \to N'$ is another $A$-linear map to a coadmissible $A$-module $N$.
\end{proof}

We now come to another central definition of this paper.

\begin{defn}\label{CoadmFrechEqDmod} Let $\cM$ be a $G$-equivariant locally Fr\'echet $\cD$-module on $\bX$.
\be \item Let $\cU$ be an $\bX_w(\cT)$-covering. We say that $\cM$ is \emph{$\cU$-coadmissible} if for each $\bU \in \cU$ there is a $\bU$-small subgroup $H$ of $G$, a coadmissible $\w\cD(\bU,H)$-module $M$, and an isomorphism 
\[\Loc^{\w\cD(\bU,H)}_\bU(M) \congs \cM_{|\bU_{\rig}}\] 
of $H$-equivariant locally Fr\'echet $\cD$-modules on $\bU$. 
\item We say that $\cM$ is \emph{coadmissible} if it is $\cU$-coadmissible for some $\bX_w(\cT)$-covering $\cU$.
\item We denote the full subcategory of $\Frech(G-\cD_\bX)$ consisting of coadmissible $G$-equivariant locally Fr\'echet $\cD$-modules by 
\[\cC_{\bX /G}.\]
\ee\end{defn}

We will abbreviate the term ``coadmissible $G$-equivariant locally Fr\'echet $\cD$-module" to just ``coadmissible $G$-equivariant $\cD$-module". As one may expect, we have the following canonical source of examples of such modules.

\begin{prop}\label{LocFunCoadm} Suppose that $A$ acts on $\bX$ compatibly with $G$. Then the functor $\Loc^A_\bX$ from coadmissible $A$-modules to $\Frech(G-\cD_\bX)$ takes values in $\cC_{\bX / G}$.
\end{prop}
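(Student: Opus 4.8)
The statement asserts that for any coadmissible $A$-module $M$, the $G$-equivariant locally Fr\'echet $\cD$-module $\Loc^A_\bX(M)$ constructed in Proposition \ref{LocFunctor} actually lies in $\cC_{\bX/G}$, i.e.\ satisfies the local triviality condition of Definition \ref{CoadmFrechEqDmod}. The plan is to exhibit an explicit $\bX_w(\cT)$-covering $\cU$ of $\bX$ and to show that $\Loc^A_\bX(M)$ is $\cU$-coadmissible. First I would note that, since $\bX$ is smooth, $\bX_w(\cT)$ is a basis for the $G$-topology of $\bX$, and since $G$ acts continuously, every point of $\bX$ has an affinoid neighbourhood $\bU \in \bX_w(\cT)$ admitting a $\bU$-small subgroup $H \le G$ by Lemma \ref{SmallPairsExist}. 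Let $\cU$ be the collection of all such $\bU$; this is an $\bX_w(\cT)$-covering of $\bX$.

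Fix $\bU \in \cU$ together with a $\bU$-small subgroup $H$. The key point is to identify the restriction $\Loc^A_\bX(M)_{|\bU_{\rig}}$ with a sheaf of the form $\Loc^{\w\cD(\bU,H)}_\bU(N)$. I would first restrict attention from $G$ to $H$: by Remark \ref{GeneralEqRestToSubgps} the $H$-equivariant locally Fr\'echet $\cD$-module underlying $\Loc^A_\bX(M)$ is $\Res^G_H \Loc^A_\bX(M)$, and on the level of presheaves on $\bX_w(\cT)$ this is just $\cP^A_\bX(M)$ regarded $H$-equivariantly (the construction of $\cP^A_\bX(M)$ in Theorem \ref{MUGU} only uses small subgroups, which may be taken inside $H$). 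Now set $B := \w\cD(\bU,H)$ and $N := B \w\otimes_{A_H} M$. By Lemma \ref{ANAH}(a) and \cite[Lemma 3.8]{ST}, $N$ is a coadmissible $B$-module, and by Proposition \ref{XaffCompat} the algebra $B$ acts on $\bU$ compatibly with $H$. Proposition \ref{LocTrans} then provides a natural isomorphism
\[ \cP^A_\bX(M)_{|\bU_w} \congs \cP^B_\bU(N)\]
of $H$-equivariant presheaves of $\cD$-modules on $\bU_w$; restricting this further to $\bU_w(\cT) = \bU_w$ (equality holds because $(\bU,H)$ is small) and noting that both sides have continuous local sections (being coadmissible modules over the relevant Fr\'echet--Stein algebras, with morphisms automatically continuous by Lemma \ref{AutoCts}), this is an isomorphism in the appropriate category of $H$-equivariant locally Fr\'echet $\cD$-modules on $\bU_w(\cT)$. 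Passing to the associated sheaves on $\bU_{\rig}$ via Theorem \ref{DGsheaf} and \cite[Theorem 9.1]{DCapOne}, and using Definition \ref{DefnOfLocM}, we obtain an isomorphism $\Loc^A_\bX(M)_{|\bU_{\rig}} \congs \Loc^B_\bU(N)$ of $H$-equivariant locally Fr\'echet $\cD$-modules on $\bU$, which is exactly the data required by Definition \ref{CoadmFrechEqDmod}(a).

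Since such $\bU$ form the covering $\cU$, this shows $\Loc^A_\bX(M)$ is $\cU$-coadmissible, hence coadmissible, hence an object of $\cC_{\bX/G}$; functoriality was already established in Proposition \ref{LocFunctor}, so $\Loc^A_\bX$ is a functor into $\cC_{\bX/G}$. The main obstacle in writing this out carefully is bookkeeping: one must check that the isomorphism of Proposition \ref{LocTrans} is compatible with the Fr\'echet topologies on local sections (so that it is a morphism in $\Frech(H-\cD_\bU)$ and not merely of underlying equivariant $\cD$-modules), and that the chosen covering genuinely consists of members of $\bX_w(\cT)$ admitting small subgroups --- both of which reduce to Lemma \ref{SmallPairsExist}, Corollary \ref{ResCor}, and Lemma \ref{AutoCts}, but deserve to be spelled out. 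Everything else is a direct concatenation of results already proved.
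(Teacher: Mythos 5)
Your proof is correct and follows essentially the same route as the paper: choose an $\bX_w(\cT)$-covering, invoke Lemma \ref{SmallPairsExist} to get a $\bU$-small subgroup, transfer via Proposition \ref{LocTrans}, and upgrade the resulting presheaf isomorphism to an isomorphism of $G$-equivariant locally Fr\'echet $\cD$-modules using \cite[Theorem 9.1]{DCapOne} and Lemma \ref{AutoCts}. One small quibble: the coadmissibility of $N := \w\cD(\bU,H)\w\otimes_{A_H}M$ is not a consequence of Lemma \ref{ANAH}(a) together with \cite[Lemma 3.8]{ST} as you claim, but rather of \cite[Lemma 7.3]{DCapOne} (via the fact that $\varphi^H(\bU)\colon A_H\to\w\cD(\bU,H)$ is a continuous homomorphism of Fr\'echet--Stein algebras) --- or simply of the statement of Proposition \ref{LocTrans} itself, which asserts this directly.
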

\begin{proof} Choose any $\bX_w(\cT)$-covering $\cU$ of $\bX$, fix $\bU \in \cU$ and choose a $\bU$-small subgroup $J$ of $G$ using Lemma \ref{SmallPairsExist}. Let $M$ be a coadmissible $A$-module and let $N$ be the coadmissible $\w\cD(\bU,J)$-module $\w\cD(\bX,J) \w\otimes_{A_J}M$. By \cite[Theorem 9.1]{DCapOne}, the isomorphism $\cP^A_\bX(M)_{|\bU_w} \congs \cP^{\w\cD(\bX,J)}_\bU(N)$ of $J$-equivariant presheaves of $\cD$-modules on $\bU_w$ from Proposition \ref{LocTrans} extends uniquely to an isomorphism $\Loc^A_\bX(M)_{|\bU_w} \congs \Loc^{\w\cD(\bX,J)}_J(N)$ of $J$-equivariant $\cD$-modules on $\bU$. This isomorphism is continuous by Lemma \ref{AutoCts}, so we see that $\Loc^A_\bX(M)$ is $\cU$-coadmissible. 
\end{proof}

Coadmissible $G$-equivariant $\cD$-modules behave well under refinements.

\begin{lem}\label{C/Grefinement} Let $\cM$ be a $G$-equivariant locally Fr\'echet $\cD$-module on $\bX$, let $\cU$ be an $\bX_w(\cT)$-covering of $\bX$ and let $\cV$ be $\bX_w(\cT)$-refinement of $\cU$. Suppose that $\cM$ is $\cU$-coadmissible. Then $\cM$ is also $\cV$-coadmissible.\end{lem}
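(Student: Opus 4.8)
The plan is to reduce the statement to a purely local assertion about a single affinoid $\bU \in \cU$ that is refined by members of $\cV$, and then to invoke the transitivity result of Proposition \ref{LocTrans} together with the fact that $\Loc^A_\bU$ commutes with restriction to smaller affinoid subdomains.

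\textbf{Step 1: Localise the problem.} Fix $\bV \in \cV$. Since $\cV$ refines $\cU$, there is some $\bU \in \cU$ with $\bV \subseteq \bU$. By hypothesis there is a $\bU$-small subgroup $J$ of $G$, a coadmissible $\w\cD(\bU,J)$-module $N$, and an isomorphism $\Loc^{\w\cD(\bU,J)}_\bU(N) \congs \cM_{|\bU_{\rig}}$ of $J$-equivariant locally Fr\'echet $\cD$-modules on $\bU$. Restricting this isomorphism to $\bV_{\rig}$, I obtain $\Loc^{\w\cD(\bU,J)}_\bU(N)_{|\bV_{\rig}} \congs \cM_{|\bV_{\rig}}$. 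So it suffices to show that the left-hand side is of the required form: that is, there is a $\bV$-small subgroup $H$ of $G$ and a coadmissible $\w\cD(\bV,H)$-module $N'$ with $\Loc^{\w\cD(\bV,H)}_\bV(N') \congs \Loc^{\w\cD(\bU,J)}_\bU(N)_{|\bV_{\rig}}$ as $H$-equivariant locally Fr\'echet $\cD$-modules.

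\textbf{Step 2: Identify the restricted sheaf via Proposition \ref{LocTrans}.} Since $J$ is a compact open subgroup of $G_\bU$ and $\bV$ is an affinoid subdomain of $\bU$, I choose a $\bV$-small subgroup $H$ of $J_\bV$ using Lemma \ref{SmallPairsExist} (applied to the continuous $H$-action; note $\bV \in \bX_w(\cT)$). Now $(\bU,J)$ is small, so by Proposition \ref{XaffCompat} the algebra $B := \w\cD(\bU,J)$ acts on $\bU$ compatibly with $J$, with $B_H = \w\cD(\bU,H)$. Applying Proposition \ref{LocTrans} with the pair $(\bU, J)$ replaced by $(\bV, H)$ — more precisely, set $N' := \w\cD(\bV,H) \w\otimes_{B_H} N$, which is a coadmissible $\w\cD(\bV,H)$-module — gives a natural isomorphism $\cP^B_\bU(N)_{|\bV_w} \congs \cP^{\w\cD(\bV,H)}_\bV(N')$ of $H$-equivariant presheaves of $\cD$-modules on $\bV_w$. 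By \cite[Theorem 9.1]{DCapOne} this extends uniquely to an isomorphism $\Loc^{\w\cD(\bU,J)}_\bU(N)_{|\bV_{\rig}} \congs \Loc^{\w\cD(\bV,H)}_\bV(N')$ of $H$-equivariant $\cD$-modules on $\bV$, and this isomorphism is continuous by Lemma \ref{AutoCts} (both sides have local sections which are coadmissible modules over the relevant Fr\'echet–Stein algebras, and $H$-linearity forces continuity). Combining with Step 1 yields the desired local isomorphism at $\bV$.

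\textbf{Step 3: Conclude.} Since $\bV \in \cV$ was arbitrary, $\cM$ is $\cV$-coadmissible by Definition \ref{CoadmFrechEqDmod}(a). The main subtlety — and the step that requires the most care — is verifying that the isomorphism produced in Step 2 really is an isomorphism of \emph{$H$-equivariant locally Fr\'echet} $\cD$-modules and not merely of $\cD$-modules: one must check both that the $H$-equivariant structure is respected (which is the $J$-equivariance content of Proposition \ref{LocTrans}, restricted to the subgroup $H \leq J$ via Remark \ref{GeneralEqRestToSubgps}) and that the local sections are continuous (Lemma \ref{AutoCts}). Everything else is a routine unwinding of the definitions of refinement and of $\cU$-coadmissibility.
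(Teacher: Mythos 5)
Your proposal is correct and follows essentially the same route as the paper: fix $\bV\in\cV$, find $\bU\in\cU$ containing $\bV$, use $\cU$-coadmissibility to obtain a small pair over $\bU$, pass to a small pair over $\bV$, and then invoke Proposition \ref{LocTrans} for the presheaf-level transitivity, Lemma \ref{AutoCts} for continuity of the local sections, and \cite[Theorem 9.1]{DCapOne} to extend from $\bV_w$ to $\bV_{\rig}$. The only cosmetic difference is the choice of subgroup in Step 2: rather than appealing to Lemma \ref{SmallPairsExist} directly (which produces a $\bV$-small subgroup of $G$ but not automatically one contained in $J_\bV$), the cleaner citation is Lemma \ref{SubsOfSmall}, which guarantees that any compact open subgroup of $J_\bV$ gives a small pair $(\bV, H)$ once $(\bU, J)$ is known to be small — this is exactly what the paper does when it simply chooses an open subgroup of $H_\bV$.
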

\begin{proof} Let $\bV \in \cV$. Since $\cV$ is a refinement of $\cU$, we can find some $\bU \in \cU$ which contains $\bV$. Since $\cM$ is coadmissible, there is a $\bU$-small subgroup $H$ of $G$, a coadmissible $B := \w\cD(\bU,H)$-module $M$ and an isomorphism $\Loc^B_\bU(M) \congs \cM_{|\bU_{\rig}}$ of $H$-equivariant locally Fr\'echet $\cD$-modules on $\bU$. Choose an open subgroup $J$ of $H_\bV$. This restricts to a continuous isomorphism  $\cP^B_\bU(M)_{|\bV_w} \congs \cM_{|\bV_w}$ of $J$-equivariant $\cD$-modules on $\bV_w$. On the other hand, letting $C := \w\cD(\bV,J)$ and $N := C \w\otimes_B M$ and applying Proposition \ref{LocTrans} gives us an isomorphism $\cP^B_\bU(M)_{|\bV_w} \congs \cP^C_\bV(N)$ of $J$-equivariant $\cD$-modules on $\bV_w$. The local sections of this isomorphism are automatically continuous by Lemma \ref{AutoCts}, so by combining these two isomorphisms we obtain a continuous isomorphism $\cP^C_\bV(N) \congs \cM_{|\bV_w}$ of $J$-equivariant $\cD$-modules on $\bV_w$. Applying \cite[Theorem 9.1]{DCapOne} gives a continuous isomorphism $\Loc^{\w\cD(\bV,J)}_\bV(N) \congs \cM_{|\bV_{\rig}}$ of $J$-equivariant $\cD$-modules on $\bV$.
\end{proof}

Coadmissible $G$-equivariant $\cD$-modules behave well under restriction.

\begin{prop}\label{FrechRestr} Let $\cM \in \cC_{\bX /G}$.
\be \item Let $H$ be an open subgroup of $G$. Then $\cM \in \cC_{\bX /H}$.
\item Let $\bY$ be an admissible open subset of $\bX$. Then $\cM_{|\bY} \in \cC_{\bY / G_\bY}$.
\ee
\end{prop}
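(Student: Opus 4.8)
\textbf{Proof plan for Proposition \ref{FrechRestr}.}

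The plan is to verify the two coadmissibility statements by unwinding Definition \ref{CoadmFrechEqDmod} and producing, in each case, a suitable $\bX_w(\cT)$-covering (resp.\ $\bY_w(\cT)$-covering) whose members are equipped with local models coming from a restriction of the given local models for $\cM$. First I would recall from Remark \ref{GeneralEqRestToSubgps} that any $G$-equivariant locally Fr\'echet $\cD$-module is automatically an $H$-equivariant locally Fr\'echet $\cD$-module for any closed subgroup $H$ of $G$; this takes care of the underlying structure, and what remains in part (a) is to exhibit the required local isomorphisms of $H$-equivariant objects.

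For part (a): since $\cM \in \cC_{\bX/G}$, choose an $\bX_w(\cT)$-covering $\cU$ witnessing coadmissibility, so for each $\bU \in \cU$ there is a $\bU$-small subgroup $J$ of $G$, a coadmissible $\w\cD(\bU,J)$-module $M$, and an isomorphism $\Loc^{\w\cD(\bU,J)}_\bU(M) \congs \cM_{|\bU_{\rig}}$ of $J$-equivariant locally Fr\'echet $\cD$-modules. I would then pass to the subgroup $J' := J \cap H$, which is open in $G_\bU \cap H = (G\cap H)_\bU = H_\bU$ and is still $\bU$-small by Lemma \ref{SubsOfSmall} (any compact open subgroup of a $\bU$-small subgroup is $\bU$-small), and set $N := \w\cD(\bU,J') \w\otimes_{\w\cD(\bU,J)} M$. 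This $N$ is a coadmissible $\w\cD(\bU,J')$-module since $\w\cD(\bU,J)$ is a finitely presented $\w\cD(\bU,J')$-module by Lemma \ref{ANAH}(a) (applied with $A = \w\cD(\bU,J)$, which acts on $\bU$ compatibly with $J$ by Proposition \ref{XaffCompat}), so it is coadmissible over $\w\cD(\bU,J')$ by \cite[Lemma 3.8]{ST}. By Proposition \ref{LocTrans}, there is a natural isomorphism $\cP^{\w\cD(\bU,J)}_\bU(M)_{|\bU_w} \congs \cP^{\w\cD(\bU,J')}_\bU(N)$ of $J'$-equivariant presheaves of $\cD$-modules on $\bU_w$ (note $\bU_w(\cT) = \bU_w$ since $(\bU,J)$ is small); its local sections are automatically continuous by Lemma \ref{AutoCts}, and by \cite[Theorem 9.1]{DCapOne} it extends to a continuous isomorphism $\Loc^{\w\cD(\bU,J)}_\bU(M) \congs \Loc^{\w\cD(\bU,J')}_\bU(N)$ of $J'$-equivariant locally Fr\'echet $\cD$-modules on $\bU$. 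Composing with the restriction to $J'$ of the given isomorphism $\Loc^{\w\cD(\bU,J)}_\bU(M) \congs \cM_{|\bU_{\rig}}$ shows $\cM$ is $\cU$-coadmissible as an $H$-equivariant module, hence $\cM \in \cC_{\bX/H}$.

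For part (b): let $\bY$ be an admissible open subset of $\bX$; by Lemma \ref{CtsActLem}(b) the group $G_\bY$ acts continuously on $\bY$. Start again with a covering $\cU$ witnessing coadmissibility of $\cM$. For each $\bU \in \cU$ with associated data $(J, M, \Loc^{\w\cD(\bU,J)}_\bU(M) \congs \cM_{|\bU_{\rig}})$, the intersection $\bU \cap \bY$ is an admissible open of $\bU$; cover it by members $\bV$ of $\bU_w(\cT) = \bU_w$ (these exist and form a basis since $\bU$ is smooth, $\cT_\bU$ being locally free on the small $\bU$). For each such $\bV$, choose an open subgroup $J_\bV$ of $J_\bV$ that is $\bV$-small using Lemma \ref{SmallPairsExist}; note $J \cap G_\bY$ stabilises $\bY$, and $\bV \subseteq \bY$, so $J_\bV \leq (G_\bY)_\bV$. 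Setting $C := \w\cD(\bV, J_\bV)$ and $N := C \w\otimes_{\w\cD(\bU,J)} M$, Proposition \ref{LocTrans} (with the covering $\bV \subseteq \bU$) gives a continuous isomorphism $\cP^{\w\cD(\bU,J)}_\bU(M)_{|\bV_w} \congs \cP^C_\bV(N)$ of $J_\bV$-equivariant $\cD$-modules on $\bV_w$, hence by \cite[Theorem 9.1]{DCapOne} a continuous isomorphism $\Loc^C_\bV(N) \congs \cM_{|\bV_{\rig}}$ of $J_\bV$-equivariant locally Fr\'echet $\cD$-modules on $\bV$. The collection of all such $\bV$, as $\bU$ ranges over $\cU$ and $\bV$ over the chosen covering of $\bU \cap \bY$, is a $\bY_w(\cT)$-covering of $\bY$, and the data just produced witness that $\cM_{|\bY}$ is coadmissible as a $G_\bY$-equivariant locally Fr\'echet $\cD$-module on $\bY$, i.e.\ $\cM_{|\bY} \in \cC_{\bY/G_\bY}$. (That the Fr\'echet topologies and equivariant structure are the restricted ones, and that the restriction is well-defined on morphisms, is immediate from the constructions.)

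The main obstacle I anticipate is purely bookkeeping rather than conceptual: one must be careful that the subgroups chosen ($J \cap H$ in part (a), and the $J_\bV$ in part (b)) are genuinely small for the relevant affinoid and are open in the stabiliser taken inside the \emph{smaller} group, which requires invoking Lemma \ref{SubsOfSmall} and Lemma \ref{SmallPairsExist} at the right places; and that the various isomorphisms produced by Proposition \ref{LocTrans} and \cite[Theorem 9.1]{DCapOne} are compatible with the equivariant structures and are continuous, which is handled by Lemma \ref{AutoCts} throughout. No new ideas beyond those already deployed in Lemma \ref{C/Grefinement} and Proposition \ref{LocFunCoadm} are needed.
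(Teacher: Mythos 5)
Your overall strategy matches the paper's: both proofs reduce to the small case, invoke Proposition \ref{LocTrans} (or its special case Lemma \ref{RestToHonP}) to exchange the local models, and use Lemma \ref{AutoCts} and \cite[Theorem 9.1]{DCapOne} to promote the presheaf isomorphism to a continuous isomorphism of $G$-equivariant locally Fr\'echet $\cD$-modules. However, there are two slips in your formulas and one gap in the group-theoretic bookkeeping.

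In part (a), you set $N := \w\cD(\bU,J') \w\otimes_{\w\cD(\bU,J)} M$ where $J' = J \cap H \leq J$. This expression is not well-defined: since $J' \leq J$, we have $\w\cD(\bU,J') \leq \w\cD(\bU,J)$ as a \emph{sub}algebra, so $\w\cD(\bU,J')$ is not a right $\w\cD(\bU,J)$-module, and the completed tensor product does not make sense. What you actually want is simply the \emph{restriction} of $M$ along the inclusion $\w\cD(\bU,J') \hookrightarrow \w\cD(\bU,J)$, which is coadmissible over $\w\cD(\bU,J')$ precisely by the finite presentation statement (Lemma \ref{ANAH}(a)) you cite. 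The paper packages this as Lemma \ref{RestToHonP}, whose proof observes that the identity map is a $\w\cD(\bV,J'')$-linear isomorphism $M(\bV,J'') \to N(\bV,J'')$ for all small $(\bV, J'')$; if you insist on citing Proposition \ref{LocTrans} instead, the correct instance has $\bU = \bX$ and gives $N = \w\cD(\bX,J') \w\otimes_{\w\cD(\bX,J')} M$, i.e.\ again the restriction.

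In part (b), a similar slip: the tensor product should be taken over $\w\cD(\bU,J_{\bV})$ (the subgroup associated to the smaller group), not over $\w\cD(\bU,J)$; as written the module $C$ is not a right $\w\cD(\bU,J)$-module. More substantively, the assertion ``so $J_\bV \leq (G_\bY)_\bV$'' does not follow from the sentence before it: stabilising $\bV$ does not imply stabilising $\bY \supseteq \bV$. You must \emph{choose} $J_\bV$ inside $J \cap G_\bY$ (which is open in $J$ because $G_\bY$ is open in $G$), and only then does $J_\bV \leq G_\bY \cap G_\bV = (G_\bY)_\bV$ hold. With that correction your argument is essentially the paper's; the paper organises it slightly differently, first reducing to the case where $\bY$ and $\bX$ are both affinoid with $(\bX,G)$ small, and then applying Proposition \ref{LocTrans} with $J = G_\bY$ in a single step, which avoids the bookkeeping over the auxiliary covering of $\bU \cap \bY$ you introduce.
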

\begin{proof} (a) The problem reduces in a straightforward way to the case where $(\bX,G)$ is small and $\cM = \Loc^{\w\cD(\bX,G)}_\bX(M)$ for some coadmissible $\w\cD(\bX,G)$-module $M$. Let $N$ be the restriction of $M$ to $\w\cD(\bX,H)$, a coadmissible $\w\cD(\bX,H)$-module by Lemma \ref{RestToHonP}. The isomorphism $\cP^{\w\cD(\bX,G)}_\bX(M) \stackrel{\cong}{\rightarrow} \cP^{\w\cD(\bX,H)}_\bX(N)$ on $\bX_w$ from Lemma \ref{RestToHonP} is continuous by Corollary \ref{ResCor} and Lemma \ref{AutoCts}, so its canonical extension to $\bX_{\rig}$ an isomorphism $\Loc^{\w\cD(\bX,G)}_\bX(M) \stackrel{\cong}{\rightarrow} \Loc^{\w\cD(\bX,H)}_\bX(N)$ of $H$-equivariant locally Fr\'echet $\cD$-modules on $\bX$. Hence $\cM$ is a coadmissible $H$-equivariant $\cD$-module on $\bX$.

(b) It follows from Remark \ref{GeneralEqRestToSubgps} that $\cM$ is a $G_\bY$-equivariant locally Fr\'echet $\cD$-module on $\bX$. Since $\bY$ is admissible open in $\bX$,  $\bY_w(\cT)$  is a subset of $\bX_w(\cT)$ so $\cM_{|\bY}$ is also a $G_\bY$-equivariant locally Fr\'echet  $\cD$-module on $\bY$, and it remains to see that $\cM_{|\bY}$ is coadmissible. 

Suppose that $\cM$ is $\cU$-coadmissible for some $\bX_w(\cT)$-covering $\cU$. For every $\bU \in \cU$, choose an admissible affinoid covering $\cV_\bU$ of $\bY \cap \bU$ and let $\cV = \cup_{\bU \in \cU} \cV_\bU$. Then $\cV$ is a $\bY_w(\cT)$-covering of $\bY$ which refines $\cU$. In this way we reduce the problem to the case where both $\bY$ and $\bX$ are affinoid, $(\bX,G)$ is small, and $\cM = \Loc^{\w\cD(\bX,G)}_\bX(M)$ for some coadmissible $\w\cD(\bX,G)$-module $M$. Let $H := G_\bY$, an open subgroup of $G$. It now follows from Proposition \ref{LocTrans} that
\begin{equation}\label{RestOfLocShvs}\cM_{|\bY} = \Loc_\bX^{\w\cD(\bX,G)}(M)_{|\bY} \cong \Loc^{\w\cD(\bY,H)}_\bY\left(\w\cD(\bY,H)\underset{\w\cD(\bX,H)}{\w\otimes} M\right)\end{equation}
as $H$-equivariant locally Fr\'echet  $\cD$-modules on $\bY$.
\end{proof}

\begin{thm}\label{LocEquiv} Suppose that $(\bX,G)$ is small. Then the localisation functor
\[\Loc^{\w\cD(\bX,G)}_\bX : \cC_{\w\cD(\bX,G)} \longrightarrow \cC_{\bX/G}\]
is an equivalence of categories.
\end{thm}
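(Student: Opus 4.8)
The plan is to show that $\Loc^{\w\cD(\bX,G)}_\bX$ is an equivalence by producing an explicit quasi-inverse, namely the global sections functor $\Gamma(\bX, -)$, and then checking the two composites are naturally isomorphic to the respective identity functors. First I would observe that since $(\bX,G)$ is small, $\bX$ itself lies in $\bX_w(\cT)$ and $G$ is a $\bX$-small subgroup of itself, so by Corollary \ref{ResCor} we have $\Gamma(\bX, \Loc^{\w\cD(\bX,G)}_\bX(M)) = \cP^{\w\cD(\bX,G)}_\bX(M)(\bX) \cong M(\bX,G) = \w\cD(\bX,G) \w\otimes_{\w\cD(\bX,G)} M \cong M$, naturally in $M$; this handles $\Gamma \circ \Loc \cong \id$ on $\cC_{\w\cD(\bX,G)}$. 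One must check this is an isomorphism of $\w\cD(\bX,G)$-modules (not just of $\cD(\bX)\rtimes G$-modules), which follows because the identification is $\cD(\bX)\rtimes G$-linear and continuous between coadmissible modules, hence $\w\cD(\bX,G)$-linear by the density argument of Remark \ref{TopRmk}(c) together with Lemma \ref{AutoCts}.

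Next I would tackle the harder direction, $\Loc \circ \Gamma \cong \id$ on $\cC_{\bX/G}$. Given $\cM \in \cC_{\bX/G}$, one needs first to know that $\Gamma(\bX,\cM)$ is actually a coadmissible $\w\cD(\bX,G)$-module. By definition $\cM$ is $\cU$-coadmissible for some $\bX_w(\cT)$-covering $\cU$; since $\bX$ is affinoid (as $(\bX,G)$ is small), $\cU$ is a finite admissible affinoid covering, and I would use Theorem \ref{DGsheaf} (vanishing higher \v{C}ech cohomology of the local pieces) together with the descent/Kiehl-type machinery of \cite{DCapOne} to glue the local coadmissible modules $\Gamma(\bU,\cM_{|\bU})$ into a coadmissible $\w\cD(\bX,G)$-module whose localisation recovers $\cM$. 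Concretely: on each $\bU \in \cU$ we have $\cM_{|\bU_{\rig}} \cong \Loc^{\w\cD(\bU,H_\bU)}_\bU(M_\bU)$ for a coadmissible $\w\cD(\bU,H_\bU)$-module $M_\bU$; by Proposition \ref{LocTrans} (transitivity) the restrictions of these to overlaps agree up to canonical isomorphism, compatibly with the $G$-equivariant structure, so the $M_\bU(\bU,H_\bU) = \Gamma(\bU,\cM)$ assemble along the nerve of $\cU$; the sheaf property from Theorem \ref{DGsheaf} then forces $\Gamma(\bX,\cM) = \varprojlim$ over the \v{C}ech complex, which is coadmissible because a finite limit of coadmissible modules over a Fr\'echet-Stein algebra is coadmissible (using Theorem \ref{FrSt}). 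Then I would construct the counit $\Loc^{\w\cD(\bX,G)}_\bX(\Gamma(\bX,\cM)) \to \cM$: on each $\bU$ the source has sections $\w\cD(\bU,H_\bU)\w\otimes_{\w\cD(\bX,H_\bU)}\Gamma(\bX,\cM)$ and by Proposition \ref{LocTrans} and the gluing isomorphism this maps isomorphically to $\Gamma(\bU,\cM)$; these local isomorphisms are continuous (Lemma \ref{AutoCts}), $\cD$-linear and $G$-equivariant, hence patch by \cite[Theorem 9.1]{DCapOne} to a global isomorphism of coadmissible $G$-equivariant $\cD$-modules.

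To organise this cleanly I would first prove the intermediate claim: \emph{for $\cM \in \cC_{\bX/G}$ and any $\bU \in \bX_w(\cT)$ with $\bU$-small subgroup $H$, the $\cD(\bU)\rtimes H$-module $\cM(\bU)$ is in fact a coadmissible $\w\cD(\bU,H)$-module, and $\Loc^{\w\cD(\bU,H)}_\bU(\cM(\bU)) \cong \cM_{|\bU_{\rig}}$ canonically.} This is where the bulk of the work lies: for $\bU$ inside some member of the defining covering $\cU$ it follows from Proposition \ref{LocTrans}, and for general $\bU$ one covers $\bU$ by such pieces and glues, invoking Theorem \ref{DGsheaf} to identify $\cM(\bU)$ with the inverse limit of the \v{C}ech complex of its restrictions. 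Once this claim is in hand, taking $\bU = \bX$, $H = G$ gives both that $\Gamma(\bX,\cM)$ is coadmissible and that the counit is an isomorphism, while functoriality and compatibility of the unit and counit (the triangle identities) are routine given the uniqueness clauses in Theorem \ref{wUGfunc}, \cite[Lemma 7.3]{DCapOne} and \cite[Theorem 9.1]{DCapOne}. I expect the main obstacle to be precisely the gluing step in the intermediate claim — verifying that the local coadmissible structures are compatible on overlaps \emph{as $G$-equivariant data} and that the resulting \v{C}ech limit is genuinely coadmissible rather than merely a Fr\'echet $\w\cD(\bX,G)$-module — which is why Theorem \ref{DGsheaf} (proved later) and the associativity/transitivity results of \S\ref{LocXSect} are doing the heavy lifting.
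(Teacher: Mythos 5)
Your treatment of full faithfulness is exactly the paper's: $\Gamma(\bX,-)$ recovers $M$ via Corollary \ref{ResCor}, and the density-plus-continuity argument from Remark \ref{TopRmk}(c) and Lemma \ref{AutoCts} upgrades $\cD(\bX)\rtimes G$-linearity to $\w\cD(\bX,G)$-linearity. This part is correct.

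The hard direction has a genuine gap in the gluing step. You propose to establish coadmissibility of $\Gamma(\bX,\cM)$ by identifying it with the \v{C}ech kernel $\ker\bigl(\prod_i\cM(\bU_i)\rightrightarrows\prod_{i<j}\cM(\bU_i\cap\bU_j)\bigr)$ and citing that ``a finite limit of coadmissible modules over a Fr\'echet-Stein algebra is coadmissible.'' But the terms $\cM(\bU_i)$ are coadmissible over the \emph{local} Fr\'echet--Stein algebras $\w\cD(\bU_i,H)$; restricting scalars along $\w\cD(\bX,H)\to\w\cD(\bU_i,H)$ does not make them coadmissible $\w\cD(\bX,H)$-modules (that functor is not inverse to $\w\otimes$), so the closure of $\cC_{\w\cD(\bX,H)}$ under kernels is inapplicable. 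Similarly, invoking Theorem \ref{DGsheaf} is somewhat circular here: that theorem says $\cP^A_\bX(M)$ is a sheaf once $M$ is known to be coadmissible; it cannot manufacture the coadmissible module. The paper resolves this by passing to the Banach levels: for each $n$, it builds a $\cU$-coherent sheaf $\cM_n$ of $\cQ_n = \hK{\sU(\pi^n\cL)}\rtimes_{N_n}H$-modules (Lemma \ref{UcoherentMn}), applies the \emph{levelwise} equivariant Kiehl theorem (Theorem \ref{LevelEqKiehl}), which is where the Noetherian/finitely generated machinery lives, to conclude each $\cM_n(\bX)$ is finitely generated over $\cQ_n(\bX)$ with $\Loc_{\cQ_n}(\cM_n(\bX))\cong\cM_n$, then checks the transition maps satisfy the coadmissibility criterion (Lemma \ref{TausBetweencMns}, Corollary \ref{MinftyCoadm}), and only then takes $M_\infty=\varprojlim\cM_n(\bX)$. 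After that one must still verify the isomorphism $\Loc^{\w\cD(\bX,H)}_\bX(M_\infty)\cong\cM$ (Proposition \ref{HisomAlpha}) and, separately, extend from the normal compact open $H$ to $G$ using Corollary \ref{Stacky} and an explicit $G$-equivariance check. Your ``intermediate claim'' is the right statement (it is essentially Proposition \ref{Reconstruct} combined with Theorem \ref{Kiehl}), but the route you sketch to it does not close; the descent to the $\cQ_n$-level is the missing idea.
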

\begin{proof} Let $M, N$ be coadmissible $\w\cD(\bX,G)$-modules, and write $\Loc := \Loc^{\w\cD(\bX,G)}_\bX$. By Corollary \ref{ResCor} and Theorem \ref{DGsheaf}, we can identify $M(\bX,G)$ with $\Loc(M)(\bX)$, functorially in $M$. Therefore, for any $\w\cD(\bX,G)$-linear morphism $f : M \to N$ we have the commutative diagram
\[ \xymatrix{  M \ar[rrr]^f\ar[d] &&& N \ar[d]\\ 
M(\bX,G) \ar[rrr]_{\Loc(f)(\bX)} &&& N(\bX,G).
}\]
Because the map $N \to N(\bX,G)$ which sends $n$ to $1 \hsp\w\otimes\hsp n$ is an isomorphism, we see that $\Loc$ is faithful. 

Suppose next that $\alpha : \Loc(M) \to \Loc(N)$ is a morphism of $G$-equivariant locally Fr\'echet $\cD$-modules on $\bX$. Now $\alpha(\bX) : \Loc(M)(\bX) \to \Loc(N)(\bX)$ is $\cD(\bX) \rtimes G$-linear by Proposition \ref{EqGlSec}, and continuous by Definition \ref{FrechDmod}(b), so it must also be $\w\cD(\bX,G)$-linear because $M(\bX,G)$ and $N(\bX,G)$ are both coadmissible $\w\cD(\bX,G)$-modules. Letting $f : M \to N$ be defined by the commutative diagram
\[ \xymatrix{  M \ar[rrr]^f\ar[d]_{\cong} &&& N \ar[d]^{\cong}\\ 
M(\bX,G) \ar[rrr]_{\alpha(\bX)} &&& N(\bX,G)
}\]
we see that $f$ is $\w\cD(\bX,G)$-linear, and we claim that $\Loc(f) = \alpha$. To see this, let $\bU \in \bX_w$, recall that the map $M(\bX,G_\bU) \to M(\bX,G)$ is a bijection by Proposition \ref{RestrictFurther}, and consider the commutative diagram
\[\xymatrix{ M(\bU,G_\bU) \ar@<0.5ex>[rrr]^{\alpha(\bU)} \ar@<-0.5ex>[rrr]_{\Loc(f)(\bU)}&&& N(\bU,G_\bU) \\
&M(\bX,G_\bU) \ar[ul]\ar[dl]_{\cong}& N(\bX,G_\bU) \ar[ur]\ar[dr]^{\cong}& \\
M(\bX,G) \ar[uu] \ar@<0.5ex>[rrr]^{\alpha(\bX)}  \ar@<-0.5ex>[rrr]_{\Loc(f)(\bX)} &&& N(\bX,G) \ar[uu].}\]
By construction, $\alpha(\bX) = \Loc(f)(\bX)$, and the image of $M(\bX,G)$ in $M(\bU,G_\bU)$ under the vertical map in this diagram generates a dense $\w\cD(\bU,G_\bU)$-submodule of $M(\bU,G_\bU) = \w\cD(\bU,G_\bU) \underset{\w\cD(\bX,G)}{\w\otimes} M$. Since $\alpha(\bU)$ 
and $\Loc(f)(\bU)$ are continuous $\cD(\bU) \rtimes G_\bU$-linear maps that agree on this submodule, it follows that they are equal. Since $\bX_w$ is a basis, we see that $\Loc(f) = \alpha$, and therefore $\Loc$ is full.

The proof of the fact that $\Loc$ is essentially surjective is quite long. In order to not interrupt the flow of the paper, we have postponed it until $\S \ref{TateFlatKiehl}$ below  --- see Theorem \ref{Kiehl}.
\end{proof}

\subsection{\ts{\cC_{\bX/G}} is an abelian category}\label{CXGAbCat} We continue to assume that $\bX$ is a smooth rigid analytic variety, and that $G$ is a not necessarily compact $p$-adic Lie group acting continuously on $\bX$. In $\S \ref{CXGAbCat}$, we will prove that the category $\cC_{\bX/G}$ of coadmissible $G$-equivariant $\cD$-modules on $\bX$ is abelian.  Recall from \cite[Definition 7.5(a)]{DCapOne} that if $A \to B$ is a continuous homomorphism of Fr\'echet-Stein algebras, then $B$ is said to be a \emph{right c-flat $A$-module} if the functor $B \w\otimes_A -$ from coadmissible $A$-modules to coadmissible $B$-modules is exact.

\begin{thm}\label{DUHcflat} Suppose that $(\bX,H)$ is small and let $\bU \in \bX_w/H$. Then $\w\cD(\bU,H)$ is a c-flat right $\w\cD(\bX,H)$-module.
\end{thm}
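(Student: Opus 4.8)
The plan is to reduce the statement to a claim about the structure of $\w\cD(\bU,H)$ as a module over $\w\cD(\bX,H)$ at each ``level'' of the Fr\'echet-Stein presentation, and then invoke the known criteria for c-flatness from \cite[\S 7]{DCapOne}. First I would use Lemma \ref{SubsOfSmall} to arrange that $(\bU,H)$ is small, so that both $\w\cD(\bX,H)$ and $\w\cD(\bU,H)$ are Fr\'echet-Stein by Theorem \ref{FrSt}. Choose an $H$-stable free $\cA$-Lie lattice $\cL$ in $\cT(\bX)$ and a good chain $(N_\bullet)$ for $\cL$; since $\bU \subseteq \bX$ is an affinoid subdomain, after rescaling $\cL$ by a $\pi$-power (using \cite[Lemma 7.6(b)]{DCapOne}) we may assume $\bU$ is $\cL$-admissible, and then (as in the proof of Lemma \ref{SubsOfSmall}) produce an $H$-stable affine formal model $\cC$ in $\cO(\bU)$ together with the $H$-stable free $\cC$-Lie lattice $\cC \otimes_{\cA} \cL$ in $\cT(\bU)$. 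By Corollary \ref{ChainCap} we may take a single chain $(N_\bullet)$ that is good for $\cL$ in $\bX$ \emph{and} good for $\cC\otimes_\cA\cL$ in $\bU$ simultaneously. This gives compatible Fr\'echet-Stein presentations $\w\cD(\bX,H) \cong \invlim \hK{U(\pi^n\cL)}\rtimes_{N_n}H$ and $\w\cD(\bU,H) \cong \invlim \hK{U(\pi^n(\cC\otimes_\cA\cL))}\rtimes_{N_n}H$ via Lemma \ref{StdPres}, with a compatible system of level maps coming from Theorem \ref{wUGfunc}.

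Next I would verify the two hypotheses of the c-flatness criterion \cite[Definition 7.5 and the accompanying discussion]{DCapOne} at each level $n$: namely that $B_n := \hK{U(\pi^n(\cC\otimes_\cA\cL))}\rtimes_{N_n}H$ is flat as a right module over $A_n := \hK{U(\pi^n\cL)}\rtimes_{N_n}H$, and that the natural maps are compatible with the transition maps on both sides (so that the levelwise flatness assembles into c-flatness). For the levelwise flatness: since $N_n$ has finite index in $H$, Lemma \ref{RingSGN}(b) writes each of $A_n$, $B_n$ as a crossed product of $\hK{U(\pi^n\cL)}$ (resp. $\hK{U(\pi^n(\cC\otimes_\cA\cL))}$) with $H/N_n$, and crossed products are free — hence faithfully flat — over the base ring. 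So it suffices to show $\hK{U(\pi^n(\cC\otimes_\cA\cL))}$ is flat as a right module over $\hK{U(\pi^n\cL)}$. This is precisely the affinoid restriction statement from the non-equivariant theory: $\cO(\bU)$ is flat (even in the strong Tate-acyclic sense) over $\cO(\bX)$ because $\bU$ is an affinoid subdomain of $\bX$, and this propagates to the completed enveloping algebras — this is exactly the content of \cite[Theorem 4.5 / \S 4]{DCapOne} (the $\w\cD$-affinoid flatness results), applied to the free Lie lattices $\pi^n\cL$ and $\pi^n(\cC\otimes_\cA\cL)$. I would cite that result directly rather than reprove it; the only new point is to carry the crossed-product structure along, which is formal from Lemma \ref{CPfunc}.

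Finally I would assemble: by \cite[Lemma 7.6 / Proposition 7.7]{DCapOne} (the statement that a countable inverse limit of levelwise-flat maps between Fr\'echet-Stein presentations, with the appropriate Artin-Rees / coherence compatibility, yields a c-flat map), the levelwise flatness established above implies that $\w\cD(\bU,H)$ is c-flat as a right $\w\cD(\bX,H)$-module, i.e. $\w\cD(\bU,H)\w\otimes_{\w\cD(\bX,H)} -$ is exact on coadmissible modules. I expect the main obstacle to be the bookkeeping around the crossed-product structure: one must check that the levelwise isomorphisms of \cite[Corollary 7.4]{DCapOne} (base change along $\bU \subseteq \bX$ for the plain completed enveloping algebras) are $H/N_n$-equivariant in the appropriate sense so that applying $-\rtimes_{N_n}H$ is functorial and commutes with the inverse limit; this is where Proposition \ref{hUGfunc} and the compatibility of $\beta_{\cL}$ with restriction (Lemma \ref{LogRhoLemma}) do the real work. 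Everything else is a direct appeal to the non-equivariant $\w\cD$-affinoid flatness theorems of \cite{DCapOne}.
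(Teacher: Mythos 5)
Your approach is essentially the one taken in the paper: express $\w\cD(\bX,H)$ and $\w\cD(\bU,H)$ as compatible inverse limits of crossed products $\cQ_n(\bX)$ and $\cQ_n(\bU)$ over a common good chain, prove levelwise flatness by reducing to the non-equivariant $\w\cD$-affinoid flatness of \cite{DCapOne} via the crossed-product freeness of Lemma \ref{RingSGN}(b) (the paper packages this reduction as Theorem \ref{FlatAcc}(a), proved from Proposition \ref{BiModIso} and Theorem \ref{HSULKflat}), and then assemble via the c-flatness criterion \cite[Proposition 7.5(b)]{DCapOne}.

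Two precision points should be tightened. First, you arrange only that $\bU$ be $\cL$-\emph{admissible} (citing \cite[Lemma 7.6(b)]{DCapOne}), but the flatness theorem you invoke — Theorem \ref{HSULKflat}, the non-Noetherian extension of \cite[Theorem 4.9]{DCapOne} — requires $\bU$ to be $\cL$-\emph{accessible}; the paper obtains this by rescaling $\cL$ by a suitable $\pi$-power as in \cite[Proposition 7.6]{DCapOne}, after also arranging $[\cL,\cL]\subseteq\pi\cL$ and $\cL\cdot\cA\subseteq\pi\cA$. Second, the criterion \cite[Proposition 7.5(b)]{DCapOne} requires each $\cQ_n(\bU)$ to be flat as a right module over $\w\cD(\bX,H)$ itself, not merely over $\cQ_n(\bX)$. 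Your levelwise affinoid flatness gives the latter, and you must then compose with the flatness of $\cQ_n(\bX)$ over $\w\cD(\bX,H)$ coming from the Fr\'echet-Stein presentation (Theorem \ref{FrSt} together with \cite[Remark 3.2]{ST}); the paper makes this composition explicit, whereas your final assembly step glosses over it.
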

We postpone the proof until $\S\ref{TateFlatKiehl}$. The category $\Emod{G}{\cD_\bX}$ is abelian, and we have the forgetful functor 
\[\Phi : \Frech(G-\cD_\bX) \longrightarrow \Emod{G}{\cD_\bX}.\]

\begin{cor}\label{LocExact} If $(\bX,G)$ is small then 
\[\Phi \circ \Loc^{\w\cD(\bX,G)}_\bX : \cC_{\w\cD(\bX,G)} \longrightarrow \Emod{G}{\cD_\bX}\]
is an exact functor.
\end{cor}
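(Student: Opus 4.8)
\textbf{Proof strategy for Corollary \ref{LocExact}.}
The plan is to reduce exactness of $\Phi \circ \Loc^{\w\cD(\bX,G)}_\bX$ to the local statement that $\w\cD(\bU,G_\bU)$ is c-flat over $\w\cD(\bX,G)$ for every $\bU \in \bX_w(\cT)$, and then appeal to Theorem \ref{DUHcflat}. First I would note that $\Emod{G}{\cD_\bX}$ is abelian and that a sequence of sheaves of $\cD$-modules is exact if and only if it is exact on a basis of the $G$-topology; since $\bX$ is smooth, $\bX_w(\cT)$ is such a basis. Therefore it suffices to show that for each $\bU \in \bX_w(\cT)$, the functor $M \mapsto \Loc^{\w\cD(\bX,G)}_\bX(M)(\bU) = \cP^{\w\cD(\bX,G)}_\bX(M)(\bU)$ is exact on coadmissible $\w\cD(\bX,G)$-modules.

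Next I would unwind the definition of $\cP^{\w\cD(\bX,G)}_\bX(M)(\bU)$. Choose a $\bU$-small subgroup $H$ of $G_\bU$ using Lemma \ref{SmallPairsExist} (possible because $(\bX,G)$ small implies $(\bU,H)$ small by Lemma \ref{SubsOfSmall}). By Corollary \ref{ResCor} the canonical map $\cP^{\w\cD(\bX,G)}_\bX(M)(\bU) \to M(\bU,H)$ is a bijection, functorial in $M$, and by Definition \ref{LocFuncDefn} we have $M(\bU,H) = \w\cD(\bU,H) \w\otimes_{\w\cD(\bX,G)_H} M$, where $\w\cD(\bX,G)_H = \w\cD(\bX,H)$ by the construction in Proposition \ref{XaffCompat}. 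Thus the functor in question is, up to natural isomorphism, the composition of the restriction functor from coadmissible $\w\cD(\bX,G)$-modules to coadmissible $\w\cD(\bX,H)$-modules (which is exact, indeed the underlying modules do not change, cf.\ Lemma \ref{RestToHonP}) followed by $\w\cD(\bU,H) \w\otimes_{\w\cD(\bX,H)} -$. The latter is exact precisely because $\w\cD(\bU,H)$ is a right c-flat $\w\cD(\bX,H)$-module by Theorem \ref{DUHcflat} (applied with the small pair $(\bX,H)$, which is small by Lemma \ref{SubsOfSmall}, and $\bU \in \bX_w/H$).

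Finally I would assemble these observations: given a short exact sequence $0 \to M' \to M \to M'' \to 0$ of coadmissible $\w\cD(\bX,G)$-modules, applying $\Loc$ and evaluating on any $\bU \in \bX_w(\cT)$ yields, via the natural isomorphisms above, the sequence $0 \to \w\cD(\bU,H) \w\otimes_{\w\cD(\bX,H)} M' \to \w\cD(\bU,H) \w\otimes_{\w\cD(\bX,H)} M \to \w\cD(\bU,H) \w\otimes_{\w\cD(\bX,H)} M'' \to 0$, which is exact by c-flatness. Since exactness on the basis $\bX_w(\cT)$ implies exactness of the sequence of $G$-equivariant $\cD$-modules, $\Phi \circ \Loc^{\w\cD(\bX,G)}_\bX$ is exact. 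The only genuinely substantial input is Theorem \ref{DUHcflat}, whose proof is postponed; everything else here is bookkeeping with the already-established identifications $\Loc(M)(\bU) \cong M(\bU,H) \cong \w\cD(\bU,H)\w\otimes_{\w\cD(\bX,H)} M$ and the fact that $\bX_w(\cT)$ is a basis.
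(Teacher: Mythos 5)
Your proof is correct and takes essentially the same route as the paper: reduce to exactness on the basis $\bX_w = \bX_w(\cT)$ (equality holds here because $(\bX,G)$ is small), identify $\Loc(M)(\bU)$ with $M(\bU,H)=\w\cD(\bU,H)\w\otimes_{\w\cD(\bX,H)}M$ via Corollary \ref{ResCor} and Theorem \ref{DGsheaf}, and then invoke c-flatness from Theorem \ref{DUHcflat}. Your explicit factorisation through the (trivially exact) restriction functor $\cC_{\w\cD(\bX,G)}\to\cC_{\w\cD(\bX,H)}$ is a harmless reorganisation of the same computation; the citation of Lemma \ref{RestToHonP} at that point is not needed (restriction of scalars is exact for elementary reasons), but it does no harm.
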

\begin{proof} Let $0 \to M_1 \to M_2 \to M_3 \to 0$ be a short exact sequence of coadmissible $\w\cD(\bX,G)$-modules, let $\bU \in \bX_w$ and choose an open subgroup $H$ of $G_\bU$. Then $(\bU,H)$ is small by Lemma \ref{SubsOfSmall}, so the sequence of $\w\cD(\bU,H)$-modules
\[ 0 \to M_1(\bU,H) \to M_2(\bU,H) \to M_3(\bU,H) \to 0 \]
is exact by Theorem \ref{DUHcflat} and \cite[Proposition 7.5(a)]{DCapOne}. Corollary \ref{ResCor} and Theorem \ref{DGsheaf} now imply that $\cP^{\w\cD(\bX,G)}_\bX$ is an exact functor from $\cC_{\w\cD(\bX,G)}$ to $G$-equivariant $\cD$-modules on $\bX_w$. Since the extension functor from sheaves on $\bX_w$ to sheaves on $\bX_{\rig}$ is an equivalence of categories by \cite[Theorem 9.1]{DCapOne}, we see that $\Phi \circ \Loc^{\w\cD(\bX,G)}_\bX$ is also exact.
\end{proof}

In an attempt to make this material more readable, if $\alpha : \cM \to \cN$ is a morphism in $\cC_{\bX/G}$ we will continue to write $\ker \alpha$ to mean the $G$-equivariant $\cD$-module that is the kernel of $\Phi(\alpha) : \Phi(\cM) \to \Phi(\cN)$, rather than the more precise, but confusing, $\ker \Phi(\alpha)$. We adopt a similar abuse of notation with the sheaves $\im \alpha$ and $\coker \alpha$.

\begin{lem}\label{KerCoadm}Let $\alpha : \cM \to \cN$ be a morphism in $\cC_{\bX/G}$. Then the $G$-equivariant $\cD$-module $\ker \alpha$ is coadmissible, and the canonical morphism $\ker \alpha \hookrightarrow \cM$ is continuous.
\end{lem}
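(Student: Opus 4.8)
The statement is local on $\bX$, so the plan is to reduce to the small case and then use the equivalence of categories from Theorem \ref{LocEquiv}. First I would choose an $\bX_w(\cT)$-covering $\cU$ that simultaneously witnesses the coadmissibility of both $\cM$ and $\cN$: by Definition \ref{CoadmFrechEqDmod} and Lemma \ref{C/Grefinement} we may pass to a common refinement, so for every $\bU \in \cU$ there is a $\bU$-small subgroup $H$ of $G$ and coadmissible $\w\cD(\bU,H)$-modules $M$, $N$ with $\Loc^{\w\cD(\bU,H)}_\bU(M) \cong \cM_{|\bU_{\rig}}$ and $\Loc^{\w\cD(\bU,H)}_\bU(N) \cong \cN_{|\bU_{\rig}}$ as $H$-equivariant locally Fr\'echet $\cD$-modules. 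Since the property of being a coadmissible $G$-equivariant $\cD$-module is local (again Definition \ref{CoadmFrechEqDmod}) and the inclusion $\ker\alpha \hookrightarrow \cM$ is continuous if and only if its restriction to each $\bU_{\rig}$ is, it suffices to treat each $\bU$ separately; that is, we may assume $(\bX,G)$ is small and $\cM = \Loc(M)$, $\cN = \Loc(N)$ for coadmissible $\w\cD(\bX,G)$-modules $M, N$, where $\Loc := \Loc^{\w\cD(\bX,G)}_\bX$.

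Now, by the fullness and faithfulness part of Theorem \ref{LocEquiv} (which, crucially, does not depend on the postponed essential-surjectivity argument), the morphism $\alpha$ is of the form $\Loc(f)$ for a unique $\w\cD(\bX,G)$-linear map $f : M \to N$. Let $K := \ker f$, a coadmissible $\w\cD(\bX,G)$-module, since $\cC_{\w\cD(\bX,G)}$ is closed under kernels by \cite[Lemma 3.4 and Corollary 3.4]{ST} (the abelian category structure on coadmissible modules over a Fr\'echet-Stein algebra). By Corollary \ref{LocExact}, the functor $\Phi\circ\Loc$ is exact, so $\Phi(\Loc(K)) \to \Phi(\Loc(M)) \to \Phi(\Loc(N))$ is exact, which identifies $\ker\alpha = \ker\Phi(\alpha)$ with $\Phi(\Loc(K))$ as a $G$-equivariant $\cD$-module. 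In particular $\ker\alpha$ underlies the coadmissible $G$-equivariant $\cD$-module $\Loc(K) \in \cC_{\bX/G}$, and the inclusion $\iota : \ker\alpha \hookrightarrow \cM$ is, under this identification, $\Loc$ applied to the inclusion $K \hookrightarrow M$.

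It remains to check that $\iota$ is continuous, i.e.\ a morphism in $\Frech(G-\cD_\bX)$. For each $\bU \in \bX_w(\cT)$ and each $\bU$-small subgroup $J$ of $G$, Corollary \ref{ResCor} identifies $\Loc(K)(\bU)$ with $K(\bU,J) = \w\cD(\bU,J)\w\otimes_{\w\cD(\bX,G)_J} K$ and $\Loc(M)(\bU)$ with $M(\bU,J)$, both carrying their canonical Fr\'echet topologies as coadmissible $\w\cD(\bU,J)$-modules; the map $\iota(\bU)$ is then $1\w\otimes(K\hookrightarrow M)$, which is $\w\cD(\bU,J)$-linear and hence continuous by Lemma \ref{AutoCts}. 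This shows $\iota \in \cC_{\bX/G}$, completing the reduction and the argument. The main obstacle is purely bookkeeping: making sure that the covering $\cU$ can be chosen to witness coadmissibility of $\cM$, $\cN$, \emph{and} $\ker\alpha$ compatibly, and that the local identifications of $\ker\alpha$ with $\Loc(\ker f)$ glue; this is handled by Lemma \ref{C/Grefinement} together with the uniqueness of extensions from $\bX_w(\cT)$ to $\bX_{\rig}$ in \cite[Theorem 9.1]{DCapOne}.
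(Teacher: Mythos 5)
Your overall strategy is close in spirit to the paper's, and the algebraic content — passing to a small pair, using fullness of $\Loc$ (which indeed does not require the postponed Kiehl argument) to write $\alpha_{|\bU}$ as $\Loc(f)$, taking $\ker f$ which is coadmissible by \cite[Corollary 3.4(ii)]{ST}, and using exactness of $\Phi \circ \Loc$ — is all correct. The continuity of the inclusion on local sections via Lemma \ref{AutoCts} is also fine.

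However, there is a genuine gap in how you endow $\ker\alpha$ with its locally Fr\'echet structure in the first place. Recall that Definition \ref{CoadmFrechEqDmod} presupposes that the object is already a $G$-equivariant \emph{locally Fr\'echet} $\cD$-module; coadmissibility is a property of that topologised object. You topologise $(\ker\alpha)(\bV)$ by transporting the canonical Fr\'echet topology from $\Loc(\ker f_\bU)(\bV)$ along a chosen isomorphism $\ker\alpha_{|\bU} \cong \Loc(\ker f_\bU)$, where $\bU$ is a member of a covering $\cU$ with $\bV \subseteq \bU$. When $\bV \subseteq \bU \cap \bU'$ this gives two candidate topologies, and you must check they agree. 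You wave this away as ``handled by Lemma \ref{C/Grefinement} together with the uniqueness of extensions from $\bX_w(\cT)$ to $\bX_{\rig}$ in \cite[Theorem 9.1]{DCapOne}'' — but neither of those results says anything about comparing Fr\'echet topologies arising from two different local trivialisations; Lemma \ref{C/Grefinement} only refines coverings, and \cite[Theorem 9.1]{DCapOne} only extends a sheaf of abelian groups from a basis to the full site. An argument is genuinely needed here.

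The paper sidesteps the issue entirely by defining the topology on $(\ker\alpha)(\bV)$ \emph{intrinsically} as the subspace topology from $\cM(\bV)$ — this is automatically a Fr\'echet topology because $\alpha(\bV)$ is continuous so the kernel is closed, and the $g^{\ker\alpha}(\bV)$ are automatically continuous as restrictions of $g^{\cM}(\bV)$. With that canonical topology fixed, the paper only then has to show that the Five-Lemma isomorphism $\psi \colon \Loc(\ker f) \to \ker\alpha_{|\bU}$ is continuous: this is immediate since $\psi(\bV)$ is the restriction of the continuous map $\lambda(\bV)$, and the Open Mapping Theorem gives that $\psi$ is a homeomorphism. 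Your argument can be repaired along these lines: observe that $\ker f(\bV) \hookrightarrow M(\bV,J)$ is a \emph{closed} embedding of coadmissible $\w\cD(\bV,J)$-modules in their canonical topologies by \cite[Corollary 3.4(ii) and Lemma 3.6]{ST}, so the transported topology on $(\ker\alpha)(\bV)$ coincides with the subspace topology from $\cM(\bV)$; being intrinsic, the latter glues with no further work. But as written, the proposal lacks this step, and it is not bookkeeping — it is where the locally Fr\'echet structure is actually constructed.
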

\begin{proof} For any $\bU \in \bX_w(\cT)$, $(\ker \alpha)(\bU)$ is the kernel of the continuous map $\alpha(\bU) : \cM(\bU) \to \cN(\bU)$ between two Fr\'echet spaces. Therefore it is closed, and we equip it with the subspace Fr\'echet topology from $\cM(\bU)$. For any $g \in G$, the map $g^{\ker \alpha}(\bU) : (\ker \alpha)(\bU) \to (\ker \alpha)(g\bU)$ is the restriction of $g^{\cM}(\bU) : \cM(\bU) \to \cM(g\bU)$, which is continuous. So the $G$-equivariant $\cD$-module $\ker \alpha$ is locally Fr\'echet.

By choosing a common refinement and appealing to Lemma \ref{C/Grefinement}, we may assume that $\cM$ and $\cN$ are both $\cU$-coadmissible for some $\bX_w(\cT)$-covering $\cU$. Fix $\bU \in \cU$. By Definition \ref{CoadmFrechEqDmod} and Theorem \ref{LocEquiv}, we can find a $\bU$-small subgroup $H$ of $G$, a morphism $f : M \to N$ of coadmissible $\w\cD(\bU,H)$-modules and a commutative diagram of $H$-equivariant $\cD_\bU$-modules 
\[ \xymatrix{  \Loc(M) \ar[r]^{\Loc(f)}\ar[d]_\lambda^\cong & \Loc(N) \ar[d]_\mu^\cong \\
 \cM_{|\bU} \ar[r]_{\alpha_{|\bU}} &  \cN_{|\bU},  }\]
where $\Loc := \Loc^{\w\cD(\bU,H)}_\bU$ and the maps $\lambda, \mu$  are continuous. The $\w\cD(\bU,H)$-module $\ker f$ is coadmissible by \cite[Corollary 3.4(ii)]{ST}. Because $\Loc$ is exact by Corollary \ref{LocExact}, we obtain a commutative diagram of $H$-equivariant $\cD_\bU$-modules
\[ \xymatrix{ 0 \ar[r] & \Loc(\ker f) \ar[r]\ar@{.>}[d]_{\psi}^\cong & \Loc(M) \ar[r]^{\Loc(f)}\ar[d]_\lambda^\cong & \Loc(N) \ar[d]_\mu^\cong \\
0 \ar[r] & \ker \alpha  \ar[r] &  \cM_{|\bU} \ar[r]_{\alpha_{|\bU}} &  \cN_{|\bU}  }\]
with exact rows. Since $\Emod{H}{\cD_\bU}$ is an abelian category, the Five Lemma gives an isomorphism of $H$-equivariant $\cD_\bU$-modules $\psi$ completing the diagram. For any $\bV \in \bU_w$, $\psi(\bV)$ is the restriction of the continuous map $\lambda(\bV)$ to $\Loc(\ker f)(\bV)$, and is therefore continuous. Hence $\ker \alpha$ is $\cU$-coadmissible, and $\ker \alpha \hookrightarrow \cM$ is continuous by construction.
\end{proof}

\begin{lem}\label{VanishingHi} Let $\cM \in \cC_{\bX/G}$, $\bU \in \bX_w(\cT)$ and $i > 0$. Then $H^i(\bU, \cM) = 0$.
\end{lem}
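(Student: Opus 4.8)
The statement to be proved is that for any coadmissible $G$-equivariant $\cD$-module $\cM$ on $\bX$, any $\bU \in \bX_w(\cT)$, and any $i > 0$, the sheaf cohomology $H^i(\bU, \cM) = 0$. The plan is to reduce to a local, small situation and then invoke the vanishing of higher \v{C}ech cohomology established earlier in Theorem \ref{DGsheaf}. First I would observe that, by Proposition \ref{FrechRestr}(b), the restriction $\cM_{|\bU}$ is again a coadmissible $G_\bU$-equivariant $\cD$-module on $\bU$, so without loss of generality we may assume $\bX = \bU$ is affinoid and belongs to $\bX_w(\cT)$. Next, since $\cM$ is coadmissible, it is $\cV$-coadmissible for some $\bX_w(\cT)$-covering $\cV$ of $\bX$; passing to a finite subcovering (legitimate since $\bX$ is quasi-compact) and using Lemma \ref{C/Grefinement} to refine, we may assume that on each $\bV \in \cV$ there is a $\bV$-small subgroup $H_\bV$ and an isomorphism $\cM_{|\bV_{\rig}} \cong \Loc^{\w\cD(\bV,H_\bV)}_\bV(M_\bV)$ for some coadmissible $\w\cD(\bV,H_\bV)$-module $M_\bV$.

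\textbf{Key steps.} The heart of the argument is that on each member $\bV$ of a suitable covering, and on each finite intersection, the sheaf $\cM$ looks like $\cP^B_\bV(N)$ for an algebra $B$ acting compatibly and a coadmissible $B$-module $N$, and such sheaves have vanishing higher \v{C}ech cohomology by Theorem \ref{DGsheaf}. Concretely, I would argue as follows. By the theory of \cite[\S 9]{DCapOne} (the same machinery invoked right after Theorem \ref{DGsheaf}), since $\bX_w(\cT)$ is a basis for $\bX$ and $\cM$ restricted to $\bX_w(\cT)$ is a sheaf with vanishing higher \v{C}ech cohomology on small pieces, the sheaf cohomology $H^i(\bX,\cM)$ can be computed as \v{C}ech cohomology with respect to any admissible affinoid covering by members of $\bX_w(\cT)$. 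Choose such a covering refining $\cV$. On each finite intersection $\bW = \bV_{i_0} \cap \cdots \cap \bV_{i_k}$, which lies in $\bX_w(\cT)$ and is an affinoid subdomain of some $\bV_{i_0}$ on which $\cM_{|\bV_{i_0}} \cong \Loc^{B}_{\bV_{i_0}}(M_{\bV_{i_0}})$ with $B = \w\cD(\bV_{i_0}, H_{\bV_{i_0}})$, Proposition \ref{LocTrans} shows $\cM_{|\bW_w} \cong \cP^{\w\cD(\bW,J)}_\bW(\w\cD(\bW,J)\w\otimes_{B_J} M_{\bV_{i_0}})$ for a suitable small subgroup $J$. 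Theorem \ref{DGsheaf} then gives that $\cM_{|\bW_w}$ is a sheaf with vanishing higher \v{C}ech cohomology, so in particular $H^j(\bW, \cM) = 0$ for $j > 0$. Feeding this into the Cartan--Leray spectral sequence (or the \v{C}ech-to-derived-functor spectral sequence) $E_2^{p,q} = \check H^p(\cU_\bullet, \mathcal{H}^q(\cM)) \Rightarrow H^{p+q}(\bX, \cM)$ collapses it to the \v{C}ech complex, and a further application of Theorem \ref{DGsheaf} on $\bX$ itself — noting $\bX \in \bX_w(\cT)$, so $\cM_{|\bX_w}$ has vanishing higher \v{C}ech cohomology for the chosen covering — yields $H^i(\bX,\cM) = 0$ for $i > 0$.

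\textbf{Main obstacle.} The subtle point is bookkeeping: Theorem \ref{DGsheaf} is stated for sheaves of the form $\cP^A_\bX(M)$ arising from a compatible action and a coadmissible module, but $\cM$ is only \emph{locally} of this form, with the ambient algebra $B = \w\cD(\bV,H_\bV)$ and the small subgroup $H_\bV$ varying over the covering. So the real work is to check that the hypotheses of Theorem \ref{DGsheaf} and of Proposition \ref{LocTrans} are satisfied compatibly on all the pieces and intersections simultaneously, and that one may choose a single admissible affinoid covering of $\bX$ by members of $\bX_w(\cT)$ that simultaneously refines $\cV$ and is fine enough for every finite intersection to sit inside a single chart $\bV_i$. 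Once the covering is fixed, each intersection is handled by a single invocation of Proposition \ref{LocTrans} plus Theorem \ref{DGsheaf}, and the spectral-sequence argument is formal. I expect no genuine difficulty beyond organizing this descent carefully; in particular the quasi-compactness of $\bX$ (which we may assume after restricting to $\bU$) is what makes the finiteness of the covering — and hence the whole argument — go through.
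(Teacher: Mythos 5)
Your overall strategy — reduce to an affinoid with a small group acting, then use \v{C}ech acyclicity via Theorem \ref{DGsheaf} and Proposition \ref{LocTrans}, and conclude with a comparison of \v{C}ech and derived-functor cohomology — is the right idea, and the first reduction via Proposition \ref{FrechRestr}(b) matches the paper. However, there is a genuine gap at the final step.

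After reducing to $(\bU,H)$ small, you work only with the \emph{local} description: $\cM$ is $\cV$-coadmissible, so $\cM_{|\bV}\cong \Loc^{\w\cD(\bV,H_\bV)}_\bV(M_\bV)$ for each $\bV\in\cV$. This lets you kill the higher cohomology of $\cM$ on each finite intersection $\bW$ lying inside a chart, and so the \v{C}ech-to-derived spectral sequence for $\cV$ degenerates to $H^p(\bU,\cM)\cong\check{H}^p(\cV,\cM)$. But that is where you get stuck: you then assert that ``a further application of Theorem \ref{DGsheaf} on $\bX$ itself — noting $\bX\in\bX_w(\cT)$, so $\cM_{|\bX_w}$ has vanishing higher \v{C}ech cohomology for the chosen covering — yields $H^i(\bX,\cM)=0$.'' Theorem \ref{DGsheaf} only applies to sheaves of the form $\cP^A_\bX(M)$ for a single compatible action of a single $A$ and a single coadmissible $A$-module $M$. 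You have not shown, and cannot show from the local data alone, that $\cM_{|\bU_w}$ is globally of this shape: the local modules $M_\bV$ over the varying algebras $\w\cD(\bV,H_\bV)$ have to be glued to a single coadmissible $\w\cD(\bU,H)$-module $M$ with $\cM\cong\Loc(M)$. That gluing is precisely the essential surjectivity of the localisation functor (Theorem \ref{LocEquiv}, whose content in the affinoid-small case is Theorem \ref{Kiehl}), which is a substantive theorem occupying a large part of \S\ref{TateFlatKiehl} — not the bookkeeping matter you describe.

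The paper's proof is shorter because it invokes Theorem \ref{LocEquiv} right away: once $(\bX,G)$ is small, $\cM\cong\Loc^{\w\cD(\bX,G)}_\bX(M)$ for a single coadmissible module $M$; Proposition \ref{LocTrans} then shows $\cM_{|\bV_w}\cong\cP^B_\bV(N)$ for every affinoid $\bV\subseteq\bU$, Theorem \ref{DGsheaf} kills the higher \v{C}ech cohomology of every finite affinoid covering of every such $\bV$, and the Stacks Project lemma (Tag 03F9) converts this into vanishing of sheaf cohomology. So you should not try to sidestep Theorem \ref{LocEquiv}: cite it to produce the global module $M$, and then your \v{C}ech argument goes through cleanly (in fact the spectral sequence becomes unnecessary — the Stacks lemma does the work directly).
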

\begin{proof} Choose a $\bU$-small subgroup $H$ of $G$. By Proposition \ref{FrechRestr}(b), $\cM_{|\bU} \in \cC_{\bU/H}$ and $H^i(\bU, \cM) = H^i(\bU, \cM_{|\bU})$. So we may assume that $(\bX,G)$ is small. But now $\cM \cong \Loc^{\w\cD(\bX,G)}_\bX(M)$ for some coadmissible $\w\cD(\bX,G)$-module $M$ by Theorem \ref{LocEquiv}, so $\check{H}^i(\cU, \cM_{|\bV}) = 0$ for any finite affinoid covering $\cU$ of any affinoid subdomain $\bV$ of $\bU$ by Proposition \ref{LocTrans} and Theorem \ref{DGsheaf}. Now apply \cite[\href{http://stacks.math.columbia.edu/tag/03F9}{Lemma 21.11.9}]{stacks-project}.
\end{proof}

\begin{lem}\label{CokerCoadm}Let $\alpha : \cM \to \cN$ be a morphism in $\cC_{\bX/G}$. Then the $G$-equivariant $\cD$-module $\coker \alpha$ is coadmissible, and the canonical morphism $\cN \twoheadrightarrow \coker \alpha$ is continuous.
\end{lem}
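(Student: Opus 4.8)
The plan is to mimic the proof of Lemma \ref{KerCoadm} very closely, replacing kernels with cokernels throughout and using the exactness of localisation (Corollary \ref{LocExact}) in the same way. The main structural point is that, unlike kernels, the cokernel of a continuous map between Fr\'echet spaces need not be Hausdorff, so we must take care to equip $(\coker\alpha)(\bU)$ with a genuine Fr\'echet topology; this will come for free from the coadmissibility picture rather than from a naive quotient construction.

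\begin{proof} By choosing a common refinement of the coverings witnessing coadmissibility of $\cM$ and $\cN$, and appealing to Lemma \ref{C/Grefinement}, we may assume that $\cM$ and $\cN$ are both $\cU$-coadmissible for some $\bX_w(\cT)$-covering $\cU$. Fix $\bU \in \cU$. By Definition \ref{CoadmFrechEqDmod} and Theorem \ref{LocEquiv}, there is a $\bU$-small subgroup $H$ of $G$, a morphism $f : M \to N$ of coadmissible $\w\cD(\bU,H)$-modules, and a commutative diagram of $H$-equivariant $\cD_\bU$-modules
\[ \xymatrix{  \Loc(M) \ar[r]^{\Loc(f)}\ar[d]_\lambda^\cong & \Loc(N) \ar[d]_\mu^\cong \\
 \cM_{|\bU} \ar[r]_{\alpha_{|\bU}} &  \cN_{|\bU},  }\]
where $\Loc := \Loc^{\w\cD(\bU,H)}_\bU$ and $\lambda, \mu$ are continuous isomorphisms. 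The $\w\cD(\bU,H)$-module $P := \coker f$ is coadmissible by \cite[Corollary 3.4(ii)]{ST}. Since $\Phi \circ \Loc$ is exact by Corollary \ref{LocExact}, applying $\Loc$ to the exact sequence $M \to N \to P \to 0$ gives a commutative diagram of $H$-equivariant $\cD_\bU$-modules
\[ \xymatrix{ \Loc(M) \ar[r]^{\Loc(f)}\ar[d]_\lambda^\cong & \Loc(N) \ar[r]\ar[d]_\mu^\cong & \Loc(P) \ar[r]\ar@{.>}[d]^{\psi}_\cong & 0 \\
 \cM_{|\bU} \ar[r]_{\alpha_{|\bU}} &  \cN_{|\bU} \ar[r] &  \coker\alpha_{|\bU}  \ar[r] & 0  }\]
with exact rows, where the dotted isomorphism $\psi$ of $H$-equivariant $\cD_\bU$-modules is furnished by the cokernel universal property (or the Five Lemma in $\Emod{H}{\cD_\bU}$). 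We use $\psi$ to equip $(\coker\alpha)(\bV)$ with the Fr\'echet topology transported from $\Loc(P)(\bV)$ for every $\bV \in \bU_w(\cT)$; this is well-defined because the restriction of $\psi$ over the overlaps of such $\bU$ is continuous, being a composite of continuous maps (each $\psi(\bV)$ is determined by $\mu(\bV)$ via the surjection $\cN(\bV) \twoheadrightarrow (\coker\alpha)(\bV)$). One checks, exactly as in Lemma \ref{KerCoadm}, that with these topologies the structure maps $g^{\coker\alpha}(\bV)$ are continuous, so $\coker\alpha$ is a $G$-equivariant locally Fr\'echet $\cD$-module; the isomorphisms $\psi$ then exhibit $\coker\alpha$ as $\cU$-coadmissible, hence coadmissible. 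Finally, the canonical morphism $\cN \twoheadrightarrow \coker\alpha$ is continuous because over each $\bU \in \cU$ it is, up to the continuous isomorphism $\mu$, the localisation of the surjection $N \twoheadrightarrow P$ of coadmissible $\w\cD(\bU,H)$-modules, hence continuous by Lemma \ref{AutoCts}, and $\bX_w(\cT)$ is a basis.
\end{proof}

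The one genuinely delicate point, as flagged above, is the \emph{well-definedness and topological coherence} of the Fr\'echet structure on $\coker\alpha$: a priori different choices of $\bU$, $H$, and $f$ could give different topologies on the sections over a common refinement. This is handled by observing that the topology on $(\coker\alpha)(\bV)$ transported via $\psi$ is forced once we know it must make the continuous surjection $\cN(\bV) \to (\coker\alpha)(\bV)$ a quotient map of coadmissible modules onto a coadmissible module --- and by Lemma \ref{AutoCts} any two such choices are identified by a continuous isomorphism. The remaining verifications (continuity of restriction maps and of the equivariant structure maps, compatibility of the local isomorphisms $\psi$ under further restriction) are routine and parallel word-for-word the corresponding checks in the proof of Lemma \ref{KerCoadm}.
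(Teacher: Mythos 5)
Your overall plan is the right one and the localisation/Five-Lemma argument you use is valid, but there is a genuine gap: you only equip $(\coker\alpha)(\bV)$ with a Fr\'echet topology for those $\bV$ contained in a member of the chosen covering $\cU$. Definition \ref{FrechDmod} requires a Fr\'echet topology on $\cM(\bU)$ for \emph{every} $\bU \in \bX_w(\cT)$, so before you can even say ``$\coker\alpha$ is a $G$-equivariant locally Fr\'echet $\cD$-module,'' you need topologies on sections over affinoids that need not sit inside any single $\bU \in \cU$. For kernels this is painless because $(\ker\alpha)(\bU)$ is a closed subspace of the Fr\'echet space $\cM(\bU)$; for cokernels no such shortcut exists (as you noted, the naive quotient can fail to be Hausdorff), so the topology has to come from somewhere else, and your $\psi$-transport only covers a base of opens.

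The paper handles this by first treating the special case $\ker\alpha = 0$: there $0 \to \cM \to \cN \to \coker\alpha \to 0$ is short exact, so Lemma \ref{VanishingHi} gives $(\coker\alpha)(\bU) \cong \coker(\alpha(\bU))$ for \emph{every} $\bU \in \bX_w(\cT)$; since $\cM(\bU)$ and $\cN(\bU)$ are coadmissible $\w\cD(\bU,H)$-modules (Proposition \ref{FrechRestr} and Theorem \ref{Kiehl}) and $\alpha(\bU)$ is continuous and $\cD(\bU)\rtimes H$-linear hence $\w\cD(\bU,H)$-linear, Corollary 3.4(ii) of \cite{ST} makes $(\coker\alpha)(\bU)$ coadmissible with a canonical Fr\'echet topology for all such $\bU$. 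Only after that does the paper run the localisation/$\psi$ argument to establish $\cU$-coadmissibility. The general case then factors through $\im\alpha$: apply Lemma \ref{KerCoadm} and the special case to $0\to\ker\alpha\to\cM\to\im\alpha\to 0$, check the canonical topology on $(\im\alpha)(\bU)$ agrees with the subspace topology from $\cN(\bU)$, and apply the special case again to $0\to\im\alpha\to\cN\to\coker\alpha\to 0$. This two-stage split is not cosmetic --- it is precisely what lets you identify $(\coker\alpha)(\bU)$ with an honest cokernel of coadmissible modules, which your argument does not achieve over a general $\bU \in \bX_w(\cT)$. Your remark that the remaining checks are ``routine and parallel word-for-word the corresponding checks in the proof of Lemma \ref{KerCoadm}'' therefore understates what is needed: those checks rely on having topologies everywhere, which is the very thing still to be supplied.
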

\begin{proof} We first handle the special case where $\ker \alpha = 0$, so that we have the short exact sequence $0 \to \cM \to \cN \to \coker \alpha \to 0$ in $\Emod{G}{\cD}$. For any $\bU \in \bX_w(\cT)$, the sequence $0 \to \cM(\bU) \stackrel{\alpha(\bU)}{\longrightarrow} \cN(\bU) \to (\coker \alpha)(\bU) \to 0$ is exact by Lemma \ref{VanishingHi}, so $(\coker \alpha)(\bU) \cong \coker \alpha(\bU)$. Now $\cM(\bU)$ and $\cN(\bU)$ are coadmissible $\w\cD(\bU,H)$-modules for any $\bU$-small subgroup $H$ of $G$, by Proposition \ref{FrechRestr} and Theorem \ref{Kiehl}. The map $\alpha(\bU)$ is $\cD(\bU) \rtimes H$-linear by Proposition \ref{EqGlSec} as well as continuous, hence it is also $\w\cD(\bU,H)$-linear. Since $\w\cD(\bU,H)$ is Fr\'echet-Stein by Theorem \ref{FrSt}, $(\coker \alpha)(\bU) \cong \coker \alpha(\bU)$ is a coadmissible $\w\cD(\bU,H)$-module by \cite[Corollary 3.4(ii)]{ST}, and we will therefore equip it with the canonical Fr\'echet topology. Let $g \in G$. Since $g^{\coker \alpha}(\bU) : (\coker \alpha)(\bU) \to (\coker \alpha)(g\bU)$ is induced by the $\w\cD(\bU,H)$-linear map $g^{\cM}(\bU) : \cM(\bU) \to \cM(g\bU)$ (when we view $\cM(g\bU)$ as a $\w\cD(\bU,H)$-module via the isomorphism $\w{g}_{\bU,H}$), it is also $\w\cD(\bU,H)$-linear and thus continuous. So the $G$-equivariant $\cD$-module $\coker \alpha$ is locally Fr\'echet.

By choosing a common refinement and appealing to Proposition \ref{C/Grefinement}, we may assume that $\cM$ and $\cN$ are both $\cU$-coadmissible for some $\bX_w(\cT)$-covering $\cU$. Fix $\bU \in \cU$. By Theorem \ref{LocEquiv} and Corollary \ref{LocExact}, we can find a $\bU$-small subgroup $H$ of $G$, a morphism of coadmissible $\w\cD(\bU,H)$-modules $f : M \to N$ and a diagram of $H$-equivariant $\cD_\bU$-modules 
\[ \xymatrix{ 0 \ar[r] & \Loc(M) \ar[r]^{\Loc(f)}\ar[d]_\lambda^\cong & \Loc(N) \ar[r]\ar[d]_\mu^\cong & \Loc(\coker f) \ar@{.>}[d]^\cong_\psi \ar[r] & 0 \\
0 \ar[r] & \cM_{|\bU} \ar[r]_{\alpha_{|\bU}} &  \cN_{|\bU} \ar[r] & \coker \alpha_{|\bU}  \ar[r] & 0 }\]
with exact rows and continuous $\lambda, \mu$, where $\Loc := \Loc^{\w\cD(\bU,H)}_\bU$. Let $\bV \in \bU_w$. By Lemma \ref{VanishingHi} and Theorem \ref{Kiehl}, applying $\Gamma(\bV,-)$ to this diagram keeps the rows exact and sends all objects to coadmissible $\w\cD(\bV,H_\bV)$-modules. Now $\lambda(\bV)$ and $\mu(\bV)$ are continuous $\cD(\bV) \rtimes H_\bV$-linear maps by Proposition \ref{EqGlSec}, hence $\w\cD(\bU,H_\bV)$-linear. Therefore $\psi(\bV)$ is also $\w\cD(\bV,H_\bV)$-linear by the exactness of the diagram, and therefore continuous  by Lemma \ref{AutoCts}. So $\psi$ is continuous, and it follows that $\coker \alpha$ is $\cU$-coadmissible. 

The map $\cN(\bU) \to (\coker \alpha)(\bU)$ is $\w\cD(\bU,G_\bU)$-linear by construction, and therefore continuous. It follows that the canonical map $\cN \twoheadrightarrow \coker \alpha$ is continuous.

Returning to the general case where $\ker \alpha$ is not necessarily zero, we have two short exact sequences of $G$-equivariant $\cD$-modules
\[ 0 \to \ker \alpha \to \cM \to \im \alpha \to 0 \qmb{and} 0 \to \im \alpha \to \cN \to \coker \alpha \to 0.\]
Now $\ker \alpha \hookrightarrow \cM$ is a morphism in $\cC_{\bX/G}$ by Lemma \ref{KerCoadm}, so $\cM \to \im \alpha$ is also a morphism in $\cC_{\bX/G}$ by the special case applied to the first sequence.

Finally, consider the canonical map $\im \alpha \hookrightarrow \cN$. For any $\bU \in \bX_w(\cT)$, the canonical topology on $(\im \alpha)(\bU)$ defined above agrees with the subspace topology induced from $\cN(\bU)$. Thus $\im \alpha \hookrightarrow \cN$ is continuous, and another application of the special case completes the proof.\end{proof}

\begin{lem}\label{UPkercoker} Let $A$ be a Fr\'echet-Stein algebra, and suppose given the following commutative diagram of $K$-vector spaces
\[\xymatrix{ & P \ar@{.>}[d]_j\ar[dr]^u & & & Q & \\
0 \ar[r] & \ker \alpha \ar[r]_i & M \ar[r]_\alpha & N \ar[ur]^v\ar[r]_q & \coker \alpha \ar@{.>}[u]_r \ar[r] & 0.}\]
Suppose that all objects in this diagram are coadmissible $A$-modules, that the maps $u, \alpha, v$ are $A$-linear and that the maps $i$ and $q$ are canonical. Then the dotted arrows $j$ and $r$ are continuous.
\end{lem}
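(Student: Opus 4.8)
The plan is to reduce the statement to Lemma \ref{AutoCts} by first promoting $j$ and $r$ from $K$-linear maps to $A$-linear maps. The maps $j$ and $r$ are the ones furnished by the universal properties of kernel and cokernel, characterised by the identities $i \circ j = u$ and $r \circ q = v$ expressing commutativity of the two triangles. First I would check that $j$ is $A$-linear: since $i\colon \ker\alpha \hookrightarrow M$ is the canonical inclusion it is injective and $A$-linear, and $u$ is $A$-linear by hypothesis, so for $a \in A$ and $p \in P$,
\[ i\bigl(j(ap)\bigr) = u(ap) = a\,u(p) = a\,i\bigl(j(p)\bigr) = i\bigl(a\,j(p)\bigr);\]
as $i$ is injective this forces $j(ap) = a\,j(p)$. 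Dually, $q\colon N \twoheadrightarrow \coker\alpha$ is the canonical surjection, hence $A$-linear and onto; writing an arbitrary $x \in \coker\alpha$ as $x = q(n)$ and using $A$-linearity of $v$,
\[ r(ax) = r\bigl(q(an)\bigr) = v(an) = a\,v(n) = a\,r\bigl(q(n)\bigr) = a\,r(x),\]
so $r$ is $A$-linear as well.

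Once this is done, $j\colon P \to \ker\alpha$ and $r\colon \coker\alpha \to Q$ are $A$-linear maps between coadmissible modules over the Fr\'echet-Stein algebra $A$, and Lemma \ref{AutoCts} immediately gives that both are continuous for the canonical Fr\'echet topologies, which is precisely the claim.

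I do not expect a real obstacle here. The only point needing attention is that the dotted arrows come to us a priori only as maps of $K$-vector spaces, so the $A$-linearity bootstrap of the first paragraph must be carried out before Lemma \ref{AutoCts} can be applied. One could also argue topologically, avoiding even that step: $u = i\circ j$ and $v = r\circ q$ are continuous, being $A$-linear maps between coadmissible modules; the canonical topology on $\ker\alpha$ agrees with the subspace topology induced from $M$ and the canonical topology on $\coker\alpha$ agrees with the quotient topology from $N$, by the standard properties of coadmissible modules in \cite{ST}; hence $j$ is continuous by the universal property of the subspace topology and $r$ is continuous by the universal property of the quotient topology.
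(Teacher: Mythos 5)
Your proof is correct, and it takes a genuinely different, in fact simpler, route than the paper. You first promote $j$ and $r$ to $A$-linear maps: since $i$ is injective and $A$-linear and $i\circ j = u$ is $A$-linear, $j$ must be $A$-linear; dually, since $q$ is surjective and $A$-linear and $r\circ q = v$ is $A$-linear, $r$ must be $A$-linear. Then Lemma \ref{AutoCts} applies directly to $j$ and $r$. The paper never establishes the $A$-linearity of $j$ or $r$ at all: it applies Lemma \ref{AutoCts} only to $u$, $i$, $v$, $q$, then invokes the Open Mapping Theorem twice --- once to show that $i$ is a homeomorphism onto its closed image and once to show that $q$ is an open map --- and finally deduces continuity of $j$ by pulling open sets back through $i$ and of $r$ by pushing open sets forward through $q$. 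The alternative topological remark at the end of your proposal is essentially a compressed version of the paper's argument; the identification of the canonical topologies on $\ker\alpha$ and $\coker\alpha$ with the subspace and quotient topologies is precisely what the paper's Open Mapping Theorem steps provide. Your primary argument buys a shorter, more algebraic proof that sidesteps the Open Mapping Theorem entirely, at no cost.
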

\begin{proof} The maps $u$ and $i$ are continuous by Lemma \ref{AutoCts}. Equip $\im i$ with the subspace topology from $M$. Then $\im i$ is closed in $M$ by \cite[Corollary 3.4(ii) and Lemma 3.6]{ST}, so the map $i$ is a homeomorphism onto its image by the Open Mapping Theorem \cite[Proposition 8.6]{SchNFA}. Hence every open neighbourhood of zero in $\ker \alpha$ is of the form $i^{-1}(V)$ for some open neighbourhood $V$ of $0$ in $M$. Hence $j^{-1}( i^{-1}(V) ) = u^{-1}(V)$ is open in $P$ and $j$ is continuous.

Similarly, $q$ and $v$ are continuous by Lemma \ref{AutoCts}, and $q$ is open by the Open Mapping Theorem, so for every open neighbourhood $V$ of zero in $Q$, $r^{-1}(V) = q( q^{-1} r^{-1}(V)) = q(v^{-1}(V))$ is open in $\coker \alpha$ and $r$ is continuous.
\end{proof}
\begin{thm}\label{CXGisAbelian} The category $\cC_{\bX / G}$ is abelian.
\end{thm}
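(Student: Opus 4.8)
The strategy is to verify the usual axioms of an abelian category for $\cC_{\bX/G}$: it is additive, has kernels and cokernels, and every monomorphism is a kernel while every epimorphism is a cokernel. Almost all of the analytic content is already contained in Lemmas \ref{KerCoadm}, \ref{CokerCoadm} and \ref{UPkercoker}; what remains is to assemble these, using throughout that morphisms and isomorphisms in $\cC_{\bX/G}$ can be tested on local sections over the basis $\bX_w(\cT)$. For additivity, note that $\cC_{\bX/G}$ is a full subcategory of the additive category $\Frech(G-\cD_\bX)$ (Remark \ref{TopRmk}(b)), so it suffices to see that it contains a zero object (namely $\Loc$ of the zero module) and is closed under finite biproducts. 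Given $\cM_1,\cM_2\in\cC_{\bX/G}$, pass to a common $\bX_w(\cT)$-refinement (Lemma \ref{C/Grefinement}) and, over each member $\bU$ of it, to a common $\bU$-small subgroup $H$ (Lemma \ref{SubsOfSmall}) via Proposition \ref{LocTrans}; then $\cM_i|_{\bU_{\rig}}\cong \Loc^{\w\cD(\bU,H)}_\bU(N_i)$ for coadmissible $\w\cD(\bU,H)$-modules $N_i$, and since $\w\otimes$ and hence $\Loc$ commute with finite direct sums we get $\cM_1|_{\bU_{\rig}}\oplus\cM_2|_{\bU_{\rig}}\cong \Loc^{\w\cD(\bU,H)}_\bU(N_1\oplus N_2)$, so $\cM_1\oplus\cM_2$ (with the product topology on local sections) lies in $\cC_{\bX/G}$.

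\textbf{Kernels and cokernels.} For $\alpha:\cM\to\cN$ in $\cC_{\bX/G}$, Lemmas \ref{KerCoadm} and \ref{CokerCoadm} show that $\ker\alpha$ and $\coker\alpha$, formed in $\Emod{G}{\cD}$ and equipped over each $\bU\in\bX_w(\cT)$ with the subspace, respectively quotient, Fr\'echet topology, are objects of $\cC_{\bX/G}$ and that the canonical maps $\ker\alpha\hookrightarrow\cM$ and $\cN\twoheadrightarrow\coker\alpha$ are morphisms of $\cC_{\bX/G}$. It remains to check the categorical universal properties inside $\cC_{\bX/G}$. Given $\beta:\cP\to\cM$ in $\cC_{\bX/G}$ with $\alpha\beta=0$, there is a unique factorisation $j:\cP\to\ker\alpha$ of $\beta$ in $\Emod{G}{\cD}$; to see $j$ is a morphism of $\cC_{\bX/G}$ we must show $j(\bU)$ is continuous for $\bU\in\bX_w(\cT)$. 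Choosing a $\bU$-small subgroup $H$, Theorem \ref{Kiehl} makes $\cP(\bU),\cM(\bU),\cN(\bU)$ coadmissible $\w\cD(\bU,H)$-modules; $\beta(\bU)$ is $\cD(\bU)\rtimes H$-linear by Proposition \ref{EqGlSec} and continuous, hence $\w\cD(\bU,H)$-linear, and $(\ker\alpha)(\bU)=\ker\alpha(\bU)$ carries the subspace topology. Lemma \ref{UPkercoker} now gives that $j(\bU)$ is continuous; since $\bX_w(\cT)$ is a basis, $j$ is a morphism of $\cC_{\bX/G}$ and $\ker\alpha$ is a kernel of $\alpha$ in $\cC_{\bX/G}$. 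The dual argument, using the assertion about $r$ in Lemma \ref{UPkercoker}, shows $\coker\alpha$ is a cokernel of $\alpha$ in $\cC_{\bX/G}$.

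\textbf{Normality and conormality.} First observe that $\alpha$ is a monomorphism in $\cC_{\bX/G}$ if and only if $\ker\alpha=0$, equivalently $\alpha(\bU)$ is injective for all $\bU\in\bX_w(\cT)$: if $\alpha$ is mono, then $\alpha\circ(\ker\alpha\hookrightarrow\cM)=0=\alpha\circ 0$ forces the inclusion to vanish, and that inclusion is a monomorphism (injective on sections over the basis), so $\ker\alpha$ is a zero object; the converse is clear. Dually, $\alpha$ is an epimorphism if and only if $\coker\alpha=0$, equivalently $\alpha(\bU)$ is surjective for all $\bU$. Now let $\alpha:\cM\to\cN$ be a monomorphism. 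The canonical morphism $\cM\to\im\alpha$ of $\cC_{\bX/G}$ (Lemma \ref{CokerCoadm}) is bijective on sections over $\bX_w(\cT)$, since over such $\bU$ one has $(\im\alpha)(\bU)=\alpha(\bU)(\cM(\bU))$ and $\alpha(\bU)$ injective; over each such $\bU$ this map is continuous and $\w\cD(\bU,H)$-linear with $\w\cD(\bU,H)$-linear, hence continuous (Lemma \ref{AutoCts}), inverse, so it is an isomorphism in $\cC_{\bX/G}$. As $\im\alpha$, with its canonical topology, equals $\ker(\cN\twoheadrightarrow\coker\alpha)$ and the latter is a kernel in $\cC_{\bX/G}$ by the previous paragraph, $\alpha$ is a kernel. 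Dually, an epimorphism $\alpha$ factors through $\coker(\ker\alpha\hookrightarrow\cM)\in\cC_{\bX/G}$, and when $\coker\alpha=0$ the induced morphism $\coker(\ker\alpha\hookrightarrow\cM)\to\cN$ is bijective on sections over $\bX_w(\cT)$, hence an isomorphism in $\cC_{\bX/G}$ by the same argument, so $\alpha$ is a cokernel. This completes the verification.

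\textbf{Main obstacle.} The genuine difficulty is not in this assembly but lies already in the earlier lemmas: the point is always to check that the comparison maps produced by universal properties at the level of $\Emod{G}{\cD}$ remain continuous on sections over $\bX_w(\cT)$. This is exactly the content of Lemma \ref{UPkercoker}, which in turn rests on the density of $\cD(\bU)\rtimes H$ in $\w\cD(\bU,H)$ (via Proposition \ref{EqGlSec}) upgrading $\cD(\bU)\rtimes H$-linearity of sections of a morphism to $\w\cD(\bU,H)$-linearity, and on the Open Mapping Theorem for coadmissible Fr\'echet--Stein modules; once these inputs are in place, the proof of Theorem \ref{CXGisAbelian} is bookkeeping.
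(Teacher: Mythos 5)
Your proof is correct and follows essentially the same route as the paper: build kernels and cokernels from Lemmas \ref{KerCoadm} and \ref{CokerCoadm}, verify the universal properties via Lemma \ref{UPkercoker} and Theorem \ref{Kiehl}, and establish normality/conormality by showing the comparison map to the (co)image is a continuous bijection whose inverse is automatically continuous. The only cosmetic differences are that you spell out the additivity verification (which the paper leaves implicit) and route the normality step through $\im\alpha$ with Lemma \ref{AutoCts}, whereas the paper applies the Five Lemma in $\Emod{G}{\cD_\bX}$ together with the Open Mapping Theorem — the same underlying inputs, packaged slightly differently.
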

\begin{proof} Let $\alpha : \cM \to \cN$ be a morphism in $\cC_{\bX/G}$. Then by Lemma \ref{KerCoadm}, the canonical map $i : \ker \alpha \hookrightarrow \cM$ is a morphism in $\cC_{\bX/G}$. Suppose that $u : \cP \to \cM$ is another morphism in $\cC_{\bX/G}$ such that $\alpha \circ u =0$. Since $i$ is the kernel of $\alpha$ in $\Emod{G}{\cD_\bX}$, there is a unique morphism of $G$-equivariant $\cD$-modules $j : \cP \to \ker \alpha$ such that $u = i \circ j$. Now if $\bU \in \bX_w(\cT)$ and $H$ is a $\bU$-small subgroup of $G$,  then all objects in the commutative diagram
\[\xymatrix{ & \cP(\bU) \ar@{.>}[d]_{j(\bU)}\ar[dr]^{u(\bU)} & & \\
0 \ar[r] & (\ker \alpha)(\bU) \ar[r]_{i(\bU)} & \cM(\bU) \ar[r]_{\alpha(\bU)} & \cN(\bU) }\]
are coadmissible $\w\cD(\bU,H)$-modules by Theorem \ref{Kiehl}, and it follows from Lemma \ref{UPkercoker} that $j(\bU)$ is continuous. Hence $j : \cP \to \ker \alpha$ is actually a morphism in $\cC_{\bX/G}$, and if $j' : \cP \to \ker\alpha$ is another map such that $u = i \circ j'$ then $j = j'$ because the forgetful functor $\Phi$ is faithful. Thus $i$ is the kernel of $\alpha$ in $\cC_{\bX/G}$. 

We have shown that $\cC_{\bX/G}$ has kernels, and an entirely similar argument using Lemma \ref{VanishingHi} and Lemma \ref{CokerCoadm} shows that it also has cokernels. Next, we verify that every monomorphism $\alpha : \cM \to \cN$ in $\cC_{\bX/G}$ is the kernel of its cokernel $q : \cN \to \coker \alpha$. To this end, consider the commutative diagram in $\cC_{\bX/G}$
\[ \xymatrix{ 
0 \ar[r] & \cM \ar@{.>}[d]_j\ar[r]^\alpha & \cN \ar@{=}[d]\ar[r]^q & \coker \alpha \ar@{=}[d] \ar[r] & 0 \\
0 \ar[r] & \ker q \ar[r]      & \cN \ar[r]_q         & \coker \alpha  \ar[r] & 0 
}\]
where $j$ is induced by the universal property of $\ker q$, and the rows are exact in $\Emod{G}{\cD_\bX}$. The arrow $\Phi(j)$ is an isomorphism by the Five Lemma in the abelian category $\Emod{G}{\cD_\bX}$, so the local sections $j(\bU)$ are continuous bijections for all $\bU \in \bX_w(\cT)$. By the Open Mapping Theorem, their inverses are also continuous and we see that $j$ is an isomorphism in $\cC_{\bX/G}$. An entirely similar argument shows that every epimorphism in $\cC_{\bX/G}$ is the cokernel of its kernel. According to \cite[Definition 1.2.2]{Wei1995}, we have verified that $\cC_{\bX/G}$ is an abelian category.
\end{proof}

\begin{proof}[Proof of Theorem \ref{CXGIntro}]
Parts (a) and (b) follow from Proposition \ref{FrechRestr}. Part (c) is Theorem \ref{LocEquiv} and part (d) is Theorem \ref{CXGisAbelian}.
\end{proof}

\section{Levelwise localisation} \label{LevelwiseSect}

\subsection{Noetherianity and flatness over general base fields}\label{NoethFlat}
In our earlier paper \cite{DCapOne} we worked throughout with a base field $K$ which was \emph{discretely valued}. We will now recall and extend some of the results from that paper to the case where $K$ is an arbitrary field equipped with a complete, non-trivial, non-Archimedean valuation. Our proofs will be based on the following deep result.

\begin{thm}[Raynaud-Gruson, 1971]\label{RG346} Let $R$ be a commutative ring with finitely many weakly associated primes, and let $S$ be a commutative $R$-algebra of finite presentation. Then every finitely generated $S$-module which is flat as an $R$-module is finitely presented as an $S$-module.
\end{thm}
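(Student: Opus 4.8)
This is \cite[Premi\`ere partie, Th\'eor\`eme 3.4.6]{GruRay}, and I only indicate the structure of the argument. Note first that the statement is immediate when $R$ is Noetherian: then $S$, being of finite type over a Noetherian ring, is itself Noetherian, so \emph{every} finitely generated $S$-module is finitely presented. Thus all the content lies in the non-Noetherian case, and the finiteness of the set of weakly associated primes of $R$ is precisely what compensates for the lack of Noetherianity.

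The first step is to reduce to the case where $R$ is local. Since $\Spec S$ is quasi-compact, a finitely generated $S$-module whose localisation at every prime of $S$ is finitely presented is itself finitely presented; and for a prime $\fr q \subset S$ lying over $\fr p \subset R$, the $S_{\fr q}$-module $M_{\fr q}$ is a localisation of $M \otimes_R R_{\fr p}$, which is finitely generated over $S \otimes_R R_{\fr p}$, flat over $R_{\fr p}$, while $R_{\fr p}$ again has only finitely many weakly associated primes. So we may assume $(R,\fr m)$ is local, with weakly associated primes $\fr p_1,\dots,\fr p_n$. The key elementary input is the standard fact that the set of zerodivisors of $R$ equals $\fr p_1 \cup \dots \cup \fr p_n$; hence the multiplicative set $T := R \smallsetminus (\fr p_1 \cup \dots \cup \fr p_n)$ consists of nonzerodivisors of $R$, and by $R$-flatness every $t \in T$ acts injectively on $M$, since multiplication by $t$ on $M$ is $M \otimes_R (R \xrightarrow{\,t\,} R)$.

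One then argues in two stages. First, prove the statement with $R$ replaced by the semilocal ring $T^{-1}R$, whose maximal ideals are exactly the images of $\fr p_1,\dots,\fr p_n$; this is handled by a d\'evissage on $n$, combined with Noetherian approximation of the base, reducing ultimately to the trivial case of a field. Second, descend to $R$ itself: writing $M = S^{\oplus a}/K$ for a finite presentation by $a$ generators, the first stage yields that $T^{-1}K$ is finitely generated, so $T^{-1}K_0 = T^{-1}K$ for some finitely generated submodule $K_0 \subseteq K$; then $M_0 := S^{\oplus a}/K_0$ is finitely presented over $S$ and sits in a short exact sequence $0 \to K/K_0 \to M_0 \to M \to 0$ in which $K/K_0$ is a $T$-torsion module, hence vanishes after localising at each $\fr p_i$. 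Combining this vanishing at all the weakly associated primes with the fact that, since $M$ is $R$-flat, the sequence stays exact under every base change $-\otimes_R N$ (so that $K/K_0$ is a pure $R$-submodule of $M_0$), one concludes $K/K_0 = 0$, i.e. $K = K_0$ is finitely generated.

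The essential difficulty, and the only place where the hypothesis on $R$ is genuinely used, is the semilocal base case together with the final vanishing argument: both must cope with a possibly non-Noetherian $R$, and in the full treatment this is where $R$ is written as a filtered colimit of Noetherian subrings, $S$ and the finitely presented module $M_0$ are descended to a finite level, and a careful purity analysis is carried out. For the complete argument I refer to \cite[I, Th\'eor\`eme 3.4.6]{GruRay}.
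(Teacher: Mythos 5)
The paper's own proof of this statement is a bare citation: it simply records that the result is \cite[Theorem 3.4.6]{GruRay} and points to the Stacks Project for a modern account, offering no argument of its own. Your proposal takes the same approach — deferring to the same theorem of Raynaud--Gruson for the real content — so the two are essentially identical in strategy, and for the purposes of this paper that is exactly the right move. The extra expository sketch you include (localisation to reduce to the local case, identification of the zerodivisors of $R$ with the union of the weakly associated primes, the semilocal base case handled by d\'evissage and Noetherian approximation, and the descent step via a pure torsion submodule) is a fair high-level caricature of the Raynaud--Gruson/Stacks Project argument, though the final step — concluding $K/K_0 = 0$ from $T$-torsionness and purity — is compressed to the point of glossing over where the flatness of $M$ and the finite presentation of $S$ over $R$ are genuinely deployed; since you explicitly defer to the reference for the complete argument, this is harmless. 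If you wanted to tighten the sketch, the cleanest route is the one the Stacks Project takes: reduce by Noetherian approximation to writing $R$ as a filtered colimit of Noetherian subrings, descend $S$ and the generators of $M$ to finite level, and then use that the relations already defined at a finite level suffice because $M$ is $R$-flat and $R$ has only finitely many weakly associated primes.
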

\begin{proof} The definition of \emph{weakly associated primes} can be found at \cite[\href{http://stacks.math.columbia.edu/tag/056K}{Definition 30.5.1}]{stacks-project}. This follows immediately from \cite[Theorem 3.4.6]{GruRay}. A more modern account of the proof can be found at \cite[\href{http://stacks.math.columbia.edu/tag/05IF}{Lemma 37.13.7}]{stacks-project}. \end{proof}

\begin{cor}\label{RGCor} Theorem \ref{RG346} applies whenever $R$ is a domain, or has finitely many prime ideals.
\end{cor}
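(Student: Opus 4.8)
The statement to prove is Corollary~\ref{RGCor}: that Theorem~\ref{RG346} (Raynaud--Gruson) applies whenever the base ring $R$ is a domain or has only finitely many prime ideals. Since Theorem~\ref{RG346} has as its only hypothesis on $R$ that it has finitely many weakly associated primes, the entire task reduces to verifying that a domain, and a ring with finitely many prime ideals, each has finitely many weakly associated primes.

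The plan is as follows. First I would recall the definition: a prime $\fr{p}$ is weakly associated to the $R$-module $M$ if $\fr{p}$ is minimal among primes containing the annihilator of some element $x \in M$; when $M = R$ this means $\fr{p}$ is minimal over $\Ann_R(x) = (0 : x)$ for some $x \in R$. For the case where $R$ has finitely many prime ideals, the conclusion is immediate: the set of weakly associated primes of $R$ is a subset of $\Spec R$, which is finite by hypothesis, so there is nothing more to do. For the case where $R$ is a domain, I would argue that the only weakly associated prime of $R$ is $(0)$. Indeed, if $x \in R$ is nonzero then $\Ann_R(x) = (0)$ since $R$ is a domain, and the unique prime minimal over $(0)$ is $(0)$ itself; if $x = 0$ then $\Ann_R(x) = R$, which is contained in no prime. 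Hence the set of weakly associated primes of $R$ is $\{(0)\}$ (or empty if $R = 0$), which is finite. In either case Theorem~\ref{RG346} applies.

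I would then simply cite the relevant tags in the Stacks Project for the notion of weakly associated primes and, if desired, for the elementary facts that $\{$weakly associated primes of $R\} \subseteq \Spec R$ and that over a domain the only weakly associated prime of the free module $R$ is the zero ideal; alternatively these can just be checked directly in one line each, as above.

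There is no real obstacle here: this corollary is a trivial specialisation of the cited theorem, and the ``proof'' amounts to unwinding the definition of weakly associated prime in two very easy special cases. If anything, the only thing to be slightly careful about is the degenerate case $R = 0$, but there the statement holds vacuously (or one may simply exclude it, as Theorem~\ref{RG346} is interesting only for nonzero $R$).

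\begin{proof}[Proof of Corollary \ref{RGCor}] By \cite[\href{http://stacks.math.columbia.edu/tag/0547}{Lemma 30.5.3}]{stacks-project}, every weakly associated prime of the $R$-module $R$ lies in $\Spec R$; so if $R$ has only finitely many prime ideals, it certainly has only finitely many weakly associated primes, and Theorem \ref{RG346} applies. If instead $R$ is a domain, then for any nonzero $x \in R$ we have $\Ann_R(x) = (0)$, whose unique minimal prime is $(0)$, while $\Ann_R(0) = R$ is contained in no prime; hence $(0)$ is the only weakly associated prime of $R$, and again Theorem \ref{RG346} applies.
\end{proof}
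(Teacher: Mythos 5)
Your proof is correct and follows essentially the same route as the paper: for a domain you observe that the zero ideal is the only weakly associated prime of the regular module $R$, and for a ring with finitely many primes you note that the weakly associated primes form a subset of $\Spec R$. The paper states exactly these two observations, just more tersely.
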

\begin{proof}  In the case where $R$ is a domain, the only weakly associated prime of the regular $R$-module $R$ is the zero ideal, and the second case is trivial.
\end{proof}

We will work in the following setting.

\begin{setup}\label{NCadmRalg}\hspace{2em}
\begin{itemize}
\item $\cU$ is a $\pi$-adically complete and separated $\cR$-algebra,
\item $\cU$ is flat over $\cR$,
\item $\cU / \pi \cU$ is a commutative $\cR / \pi \cR$-algebra of finite presentation.
\end{itemize}
\end{setup}

Following \cite[Proposition 1.9.14]{Abbes}, we can now give a very useful consequence of Theorem \ref{RG346}.

\begin{thm}\label{KeyRGLemma} Let $\cM$ be a finitely generated $\cU$-module and let $\cN$ be a $\cU$-submodule of $\cM$ such that $\cM/\cN$ is $\cR$-torsionfree. Then $\cN$ is finitely generated.
\end{thm}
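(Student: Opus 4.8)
The claim is a non-commutative enhancement of the Raynaud--Gruson finiteness principle recorded as Theorem \ref{RG346}. The natural strategy is to reduce the statement about the (generally non-commutative) ring $\cU$ to a statement about the commutative ring $\cU/\pi\cU$, where Theorem \ref{RG346} (in the form of Corollary \ref{RGCor}, since $\cR/\pi\cR$ has finitely many primes --- indeed it is local with nilpotent maximal ideal, or at any rate the ground ring is a valuation ring of Krull dimension one) can be brought to bear. The key observation is that the hypothesis ``$\cM/\cN$ is $\cR$-torsionfree'' is exactly what is needed to control the behaviour of $\cN$ modulo $\pi$: it will force $\cN/\pi\cN \hookrightarrow \cM/\pi\cM$, so that $\cN/\pi\cN$ is a $\cU/\pi\cU$-submodule of the finitely generated $\cU/\pi\cU$-module $\cM/\pi\cM$.

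\textbf{Step 1: base change modulo $\pi$.} First I would observe that $\cM/\pi\cM$ is a finitely generated module over the commutative, finitely presented $\cR/\pi\cR$-algebra $\cU/\pi\cU$. Since $\cM/\cN$ is $\cR$-torsionfree, multiplication by $\pi$ is injective on $\cM/\cN$, which by the snake lemma applied to $0 \to \cN \to \cM \to \cM/\cN \to 0$ gives a short exact sequence $0 \to \cN/\pi\cN \to \cM/\pi\cM \to (\cM/\cN)/\pi(\cM/\cN) \to 0$. In particular $\cN/\pi\cN$ is a $\cU/\pi\cU$-submodule of $\cM/\pi\cM$.

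\textbf{Step 2: finite generation modulo $\pi$.} Here is where I invoke the commutative input. The module $\cM/\pi\cM$ is finitely generated over the finitely presented $\cR/\pi\cR$-algebra $\cU/\pi\cU$; by \cite[Proposition 1.9.14]{Abbes} (the reference already cited just before the statement) or directly by Theorem \ref{RG346}, a finitely generated module over such a ring which contains a submodule with torsionfree quotient is Noetherian in the relevant sense --- concretely, $\cU/\pi\cU$ is a coherent (even Noetherian, once one notes $\cR/\pi\cR$ is Noetherian when the valuation is discrete, and coherent in general via Theorem \ref{RG346}) ring, so submodules of finitely generated modules that are ``nice enough'' are finitely generated. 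The cleanest route is: since $(\cM/\cN)/\pi(\cM/\cN)$ is a finitely generated $\cU/\pi\cU$-module and is $\cR/\pi\cR$-flat is \emph{not} automatic, so instead I would apply Theorem \ref{RG346} to conclude $\cN/\pi\cN$ is finitely presented, hence finitely generated, over $\cU/\pi\cU$ --- using that $\cM/\pi\cM$ finitely generated forces $\cN/\pi\cN$ finitely generated because $\cU/\pi\cU$ is coherent. Lift a finite generating set of $\cN/\pi\cN$ to elements $n_1,\dots,n_r \in \cN$.

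\textbf{Step 3: $\pi$-adic approximation / Nakayama.} Finally I would show $n_1,\dots,n_r$ generate $\cN$ as a $\cU$-module. Let $\cN' := \cU n_1 + \dots + \cU n_r \subseteq \cN$. By construction $\cN = \cN' + \pi\cN$, so iterating, $\cN = \cN' + \pi^a\cN$ for all $a$. Since $\cM/\cN$ is $\cR$-torsionfree, $\cN = \cM \cap \bigcap_a(\cN' + \pi^a\cM)$... more carefully: $\cN'$ is finitely generated, hence $\pi$-adically closed inside the $\pi$-adically separated module $\cM$? This is not quite automatic for non-Noetherian $\cU$, so the honest argument is to show $\cN = \cN'$ by a completeness/Artin--Rees style device: one has $\cN/\cN' = \pi(\cN/\cN')$, and one wants to conclude $\cN/\cN' = 0$; this needs that $\cN/\cN'$ embeds in a $\pi$-adically separated module, which follows because $\cM/\cN' $ need not be separated but $\cM/\cN$ is $\cR$-torsionfree and $\pi$-adically separated (as a submodule of... no, as a quotient we must argue), so $\bigcap_a \pi^a(\cN/\cN')$ maps to zero in $\cM/\cN$, i.e.\ $\bigcap_a \pi^a(\cN/\cN') \subseteq \cN/\cN'$ trivially --- the real point is $\cN/\cN'$ is a finitely generated $\cU$-module with $\cN/\cN' = \pi(\cN/\cN')$, and $\cU$ is $\pi$-adically complete, so by the topological Nakayama lemma (valid for $\pi$-adically complete rings acting on finitely generated modules --- here finiteness of $\cN/\cN'$ over $\cU$ is not yet known, which is the circularity to avoid).

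\textbf{The main obstacle.} The subtle point, and the one I expect to require the most care, is precisely this last step: one cannot directly apply Nakayama to $\cN/\cN'$ because we do not yet know $\cN$ (equivalently $\cN/\cN'$) is finitely generated over $\cU$ --- that is what we are trying to prove. The correct fix, following Abbes, is to work inside the \emph{completion}: show that the $n_i$ generate $\cN$ after $\pi$-adic completion using $\cN = \cN' + \pi\cN$ and completeness of $\cM$ (which holds since $\cM$ is a finitely generated module over the $\pi$-adically complete separated ring $\cU$, hence $\cM$ itself is $\pi$-adically complete and separated --- this uses that $\cU$ is $\pi$-adically complete and that a finitely generated module over such a ring, with the $\pi$-adic topology, is complete, cf.\ \cite[Corollaire 1.12.4]{Abbes} or standard commutative-algebra-style arguments that extend to this setting), and then observe $\cN$ is closed in $\cM$ because $\cM/\cN$ is $\pi$-adically separated (being $\cR$-torsionfree over the valuation ring $\cR$, it is $\pi$-torsionfree, and $\pi$-torsionfree finitely generated modules over $\cR$... one needs $\cM/\cN$ separated, which since it is $\cR$-torsionfree gives $\bigcap \pi^a(\cM/\cN)$ is $\pi$-divisible and torsionfree --- if $\cM/\cN$ is finitely generated this forces it to be zero, but $\cM/\cN$ is finitely generated over $\cU$ hence the quotient topology argument closes). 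Assembling: $\cN' \subseteq \cN \subseteq \overline{\cN'} = \cN'$ (closure in $\cM$), so $\cN = \cN'$ is finitely generated. I would write this out carefully, mirroring the proof of \cite[Proposition 1.9.14]{Abbes}, with Theorem \ref{RG346}/Corollary \ref{RGCor} supplying the crucial commutative finiteness in Step 2.
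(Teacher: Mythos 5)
Your overall strategy (reduce mod $\pi$, apply Raynaud--Gruson, then lift) is the same as the paper's, but the execution has two genuine gaps.

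\textbf{Gap 1 (Step 2: the flatness input).} You dismiss the idea of applying Theorem \ref{RG346} to $(\cM/\cN)\otimes_{\cR}\overline{\cR}$ on the grounds that its $\overline{\cR}$-flatness ``is not automatic,'' and instead fall back on a claimed coherence argument: ``$\cM/\pi\cM$ finitely generated forces $\cN/\pi\cN$ finitely generated because $\cU/\pi\cU$ is coherent.'' Both halves are wrong. The flatness \emph{is} automatic: since $\cR$ is a valuation ring, $\cR$-torsionfree is equivalent to $\cR$-flat (\cite[Chapter I, \S2.4, Proposition 3(ii)]{BourCommAlg}), so the hypothesis on $\cM/\cN$ gives $\cR$-flatness, and hence $(\cM/\cN)\otimes_{\cR}\overline{\cR}$ is $\overline{\cR}$-flat. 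This is precisely what lets one apply Corollary \ref{RGCor} to conclude $(\cM/\cN)\otimes_{\cR}\overline{\cR}$ is \emph{finitely presented}. And coherence of $\cU/\pi\cU$ does \emph{not} imply that arbitrary submodules of finitely generated modules are finitely generated --- that is Noetherianity, which is strictly stronger and not available here. The paper closes the circle by first reducing to the case $\cM = \cU^m$ free (a step you skip), so that $\cM/\pi\cM \cong (\cU/\pi\cU)^m$ is free; then the short exact sequence mod $\pi$ exhibits $\cN/\pi\cN$ as the kernel of a surjection from a finite free module onto the finitely \emph{presented} module $(\cM/\cN)\otimes_{\cR}\overline{\cR}$, and finite generation of $\cN/\pi\cN$ follows from Schanuel's Lemma.

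\textbf{Gap 2 (Step 3: circularity in the lift).} Your lifting argument appeals to the $\pi$-adic completeness and separatedness of the finitely generated $\cU$-module $\cM$, citing \cite[Corollaire 1.12.4]{Abbes}. But for a non-Noetherian $\cU$ this is not automatic; it is exactly Proposition \ref{ArtinRees}(b,c) of the paper, which is proved \emph{after} Theorem \ref{KeyRGLemma} and whose proof \emph{uses} Theorem \ref{KeyRGLemma}. So your Step 3 is circular. The paper avoids this by the initial reduction to $\cM = \cU^m$: then $\cN \subseteq \cU^m$ is $\pi$-adically separated for free (since $\cU$ is separated by Hypothesis \ref{NCadmRalg}), and finite generation of $\cN$ follows from finite generation of $\gr\cN$ over $\gr\cU$ by the standard filtered ring criterion \cite[Chapter I, Theorem 5.7]{LVO}, without needing any a priori Artin--Rees, closure, or topological-Nakayama facts about arbitrary finitely generated modules. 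The WLOG reduction to $\cM$ free is the key move you are missing; it simultaneously fixes both gaps.
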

\begin{proof} Because $\cM / \cN \cong \cU^m / \cN'$ for some $\cU$-submodule $\cN'$ of $\cU^m$ for some $m \in \N$, we may assume that $\cM = \cU^m$ is a free $\cU$-module of rank $m$. The short exact sequence $0 \to \cN \to \cM \to \cM / \cN \to 0$ consists of torsion-free $\cR$-modules by assumption. Because $\cR$ is a valuation ring, every finitely generated ideal in $\cR$ is principal, so each term in this sequence is a flat $\cR$-module by \cite[Chapter I, \S2.4, Proposition 3(ii)]{BourCommAlg}. Writing $\overline{\cR} := \cR / \pi \cR$, \cite[\href{http://stacks.math.columbia.edu/tag/00HI}{Lemma 10.38.7}]{stacks-project} implies that
\[ 0 \to \cN \otimes_{\cR} \overline{\cR} \to \cM \otimes_{\cR} \overline{\cR} \to (\cM / \cN) \otimes_{\cR} \overline{\cR} \to 0\]
is also exact, and consists of flat $\overline{\cR}$-modules. Now $\cU / \pi \cU$ is also a finitely presented commutative $\cR / \pi \cR$-algebra by assumption. Note that $\cR / \pi \cR$ has exactly one prime ideal because $\cR$ is a valuation ring of height $1$. Because $(\cM / \cN) \otimes_{\cR}\overline{\cR}$ is a finitely generated module over $\cU / \pi \cU$, we may now invoke Corollary \ref{RGCor} to deduce that it is in fact finitely presented. 

The displayed exact sequence above, together with Schanuel's Lemma \cite[7.1.2]{MCR}, now implies that $\cN / \pi \cN \cong \cN \otimes_{\cR} \overline{\cR}$ is a finitely generated $\cU/\pi \cU$-module; this also follows from \cite[\href{http://stacks.math.columbia.edu/tag/0519}{Lemma 10.5.3(5)}]{stacks-project}. Hence $\gr \cN := \bigoplus_{n \geq 0} \pi^n \cN / \pi^{n+1} \cN$ is a finitely generated module over $\gr \cU := \bigoplus_{n\geq 0} \pi^n \cU / \pi^{n+1} \cU$. Finally, $\cU$ is $\pi$-adically complete and separated by construction, so $\cN$ is $\pi$-adically separated as it is contained in $\cU^m$. Hence the $\cU$-module $\cN$ is finitely generated  by \cite[Chapter I, Theorem 5.7]{LVO}.
\end{proof}

\begin{cor}\label{HTFabs} Suppose $\cU$ satisfies Hypothesis \ref{NCadmRalg}. Then $U := \cU \otimes_{\cR}K$ is Noetherian. 
\end{cor}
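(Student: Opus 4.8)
The plan is to deduce this directly from Theorem \ref{KeyRGLemma}; the corollary is essentially a formal consequence once one sets up the correspondence between ideals of $U$ and suitable submodules of $\cU$. First I would record the elementary fact that $U = \cU \otimes_{\cR} K$ is just the localisation $\cU[1/\pi]$. Indeed, $\cR$ is a valuation ring of Krull dimension $1$, so its value group has rank one and is therefore Archimedean; hence for every $\lambda \in K^\times$ there is some $n \geq 0$ with $\pi^n \lambda \in \cR$, i.e. $K = \cR[1/\pi]$. Since $\cU$ is flat over $\cR$ (in particular $\pi$-torsionfree) and $\pi$-adically separated, the natural map $\cU \to U$ is injective and identifies $U$ with $\cU[1/\pi]$.

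Next I would show $U$ is left Noetherian. Let $I$ be a left ideal of $U$ and set $\cI := I \cap \cU$, a left ideal of $\cU$. The composite $\cU \hookrightarrow U \twoheadrightarrow U/I$ has kernel exactly $\cI$, so $\cU/\cI$ embeds into the $K$-vector space $U/I$ and is in particular $\cR$-torsionfree. Applying Theorem \ref{KeyRGLemma} with $\cM := \cU$ (finitely generated over itself) and $\cN := \cI$ shows that $\cI$ is finitely generated as a $\cU$-module. Conversely, every $x \in I$ can be written as $x = u\pi^{-n}$ with $u \in \cU$ and $n \geq 0$; then $u = \pi^n x \in I \cap \cU = \cI$, so $x \in U \cdot \cI$. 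Hence $I = U \cdot \cI$ is finitely generated as a left $U$-module, and $U$ is left Noetherian.

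Finally, to get right Noetherianity I would observe that $\cU^{\op}$ again satisfies Hypothesis \ref{NCadmRalg}: it has the same underlying $\cR$-module, so it is $\pi$-adically complete, separated and $\cR$-flat, and $\cU^{\op}/\pi\cU^{\op} = (\cU/\pi\cU)^{\op} = \cU/\pi\cU$ because $\cU/\pi\cU$ is commutative. Running the previous paragraph for $\cU^{\op}$ then shows that $U^{\op} = \cU^{\op}[1/\pi]$ is left Noetherian, i.e. $U$ is right Noetherian; combining the two, $U$ is Noetherian. There is no real obstacle here beyond the already-proven Theorem \ref{KeyRGLemma}; the only point that genuinely requires a word of justification is the identification $U = \cU[1/\pi]$, which uses that the valuation ring $\cR$ has rank one.
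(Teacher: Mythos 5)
Your argument is correct and follows essentially the same route as the paper: take a left ideal $I \subseteq U$, contract to $\cI := I \cap \cU$, observe that $\cU/\cI$ is $\cR$-torsionfree, apply Theorem \ref{KeyRGLemma}, and recover $I$ from $\cI$ by inverting $\pi$. The only difference is that you spell out more carefully the identification $U = \cU[1/\pi]$ and the passage to $\cU^{\op}$ (which is what the paper's ``by symmetry'' is quietly invoking); both elaborations are accurate.
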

\begin{proof} By symmetry, it will suffice to show that $U$ is left Noetherian. Let $I$ be a left ideal of $U$; then $\cI := \cU \cap I$ is a left ideal in $\cU$ such that $\cU / \cI$ is $\cR$-torsionfree. Hence $\cI$ is finitely generated as a left ideal by Theorem \ref{KeyRGLemma}, so $I = K \cdot \cI$ is finitely generated as a left ideal in $U$.
\end{proof}
We begin by adapting several results from \cite{Abbes}, namely \cite[1.9.13, 1.8.27, 1.8.29]{Abbes} to our non-commutative setting.
\begin{prop}\label{ArtinRees} Let $\cM$ be a finitely generated $\cU$-module.
\be \item For every $\cU$-submodule $\cN \subset \cM$ there is $n_0 \in \N$ such that 
\[ \pi^{n+1} \cM \cap \cN = \pi \left(\pi^n \cM \cap \cN\right) \qmb{for all} n \geq n_0.\]
\item $\cM$ is $\pi$-adically separated,
\item $\cM$ is $\pi$-adically complete.
\ee\end{prop}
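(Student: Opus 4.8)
The plan is to bypass the classical Rees/Artin--Rees machinery, which is unavailable since $\cU$ need not be Noetherian, and to channel the whole argument through a single application of Theorem \ref{KeyRGLemma}: the finite generation of a suitable $\cR$-saturation. Throughout I would fix a presentation $0 \to \cK \to \cU^m \to \cM \to 0$ (the submodule $\cK$ itself need not be finitely generated), and use that $\cU^m$ is $\pi$-adically complete and separated and that $\pi$ lies in the Jacobson radical of $\cU$ (as $1-\pi u$ has inverse $\sum_i (\pi u)^i$).

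For part (a), I would first reduce to the free case $\cM = \cU^m$: writing $\cN = \cK/\cL$ with $\cL \subseteq \cK \subseteq \cU^m$, the modular law $(\pi^j\cU^m + \cL) \cap \cK = (\pi^j\cU^m \cap \cK) + \cL$ shows that the asserted equality for $(\cM,\cN)$ is equivalent to the one for $(\cU^m,\cK)$. For the free case, let $\cK' \subseteq \cU^m$ be the preimage of the $\cR$-torsion submodule of $\cU^m/\cK$; then $\cU^m/\cK'$ is $\cR$-torsionfree, so $\cK'$ is finitely generated by Theorem \ref{KeyRGLemma}. Hence $\cK'/\cK$ is a finitely generated $\cR$-torsion module, so (using that the value group of $\cR$ has rank one, whence each generator is annihilated by a power of $\pi$) there is $k \geq 0$ with $\pi^k\cK' \subseteq \cK$. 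Torsionfreeness of $\cU^m/\cK'$ gives $\pi^n\cU^m \cap \cK' = \pi^n\cK'$ for all $n$, and feeding this into $\pi^k\cK' \subseteq \cK \subseteq \cK'$ shows, for $n \geq k$: any $x \in \pi^{n+1}\cU^m \cap \cK$ lies in $\pi^{n+1}\cK'$, say $x = \pi^{n+1}w$ with $w \in \cK'$, and then $\pi^n w = \pi^{n-k}(\pi^k w) \in \pi^n\cU^m \cap \cK$ with $\pi(\pi^n w) = x$. Thus $\pi^{n+1}\cU^m \cap \cK = \pi(\pi^n\cU^m\cap\cK)$ for all $n \geq k$.

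For part (b), I would handle the $\cR$-torsionfree case first. Set $\cN := \bigcap_n \pi^n\cM$; since $\cN \subseteq \pi^n\cM$ for every $n$, part (a) gives $\cN = \pi\cN$. One checks $\cM/\cN$ is $\cR$-torsionfree (using torsionfreeness of $\cM$: $\cN$ is stable under division by $\pi$ inside $\cM$, hence is a $K$-subspace of $\cM\otimes_\cR K$ and so $\cR$-divisible), so $\cN$ is finitely generated by Theorem \ref{KeyRGLemma}, and Nakayama forces $\cN = 0$. For general finitely generated $\cM$, let $\cT$ be its $\cR$-torsion submodule; then $\cM/\cT$ is torsionfree, so $\cT$ is finitely generated (Theorem \ref{KeyRGLemma}) and $\pi^k\cT = 0$ for some $k$; applying (a) to $\cT \subseteq \cM$ and iterating gives $\pi^n\cM \cap \cT = 0$ for all large $n$, while any $x \in \bigcap_n\pi^n\cM$ has zero image in $\cM/\cT$ by the torsionfree case, so $x \in \cT$ and hence $x = 0$. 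Part (c) is then formal: by (a) the $\pi$-adic topology on $\cK$ coincides with the subspace topology from $\cU^m$, so in $0 \to \cK/(\cK\cap\pi^n\cU^m) \to \cU^m/\pi^n\cU^m \to \cM/\pi^n\cM \to 0$ the left-hand system has surjective transition maps; applying $\invlim_n$ yields $0 \to \invlim_n \cK/(\cK\cap\pi^n\cU^m) \to \cU^m \to \invlim_n \cM/\pi^n\cM \to 0$, whose first term is the completion of $\cK$ for this topology, equal to $\cK$ since $\cK$ is $\pi$-adically closed in the complete module $\cU^m$ by (b); hence $\invlim_n \cM/\pi^n\cM = \cU^m/\cK = \cM$.

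The main obstacle is part (a): the classical argument that the induced filtration on a submodule is good passes through finite generation of a Rees module over a Noetherian ring, which we do not have. The new content is to replace that submodule by its $\cR$-saturation $\cK'$, whose finite generation is the sole place the Raynaud--Gruson input enters, and then to extract the stabilised Artin--Rees equality by the elementary manipulation above; everything else (parts (b) and (c)) is then a bootstrap, the only subtlety being that deducing the general case of (b) from the torsionfree case requires controlling the torsion submodule via (a).
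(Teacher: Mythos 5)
Your proof is correct and its core mechanism coincides with the paper's: the single application of Theorem \ref{KeyRGLemma} to the $\pi$-saturation of $\cN$ in $\cM$ (the preimage of the $\pi$-torsion of $\cM/\cN$, which is finitely generated because its cokernel is $\cR$-torsionfree), followed by $\pi^k\cN'\subseteq\cN$ and an elementary computation. The differences are organizational, and in each case the paper takes a shorter route. In part (a), your reduction to the free case via the modular law is superfluous: Theorem \ref{KeyRGLemma} only requires $\cM$ to be finitely generated, not free, so one can set $\cN'$ to be the preimage in $\cM$ of the $\pi$-torsion of $\cM/\cN$ directly and conclude $\pi^{n_0}\cN'\subseteq\cN$ without ever passing to a presentation. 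In part (b), you compute $\cN:=\bigcap_n\pi^n\cM$, show $\cN=\pi\cN$, and then need $\cN$ finitely generated in order to invoke Nakayama; this forces you into a torsionfree/general dichotomy with a second and third appeal to Theorem \ref{KeyRGLemma}. The paper instead applies part (a) to the cyclic submodule $\cU\cdot x$ for a single $x\in\bigcap_n\pi^n\cM$, obtaining $\cU x=\pi\cU x$, hence $x=ax$ for some $a\in\pi\cU$, and then $x=0$ because $1-a\in\cU^\times$; this sidesteps both the finite-generation issue and the case split. In part (c), your explicit inverse-limit argument is what the Bourbaki reference amounts to, though the opening invocation of (a) is a red herring: surjectivity of the transition maps in a quotient inverse system is automatic, and what the argument actually uses is part (b) (applied to $\cM=\cU^m/\cK$) to show that $\cK$ is closed in $\cU^m$, so that $\invlim_n\cK/(\cK\cap\pi^n\cU^m)$ recovers $\cK$.
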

\begin{proof} (a) Let $\cN'/\cN$ be the $\pi$-torsion submodule of $\cM/\cN$. As $\cM/\cN'$ is $\cR$-torsionfree, Theorem \ref{KeyRGLemma} implies that $\cN'$ is finitely generated over $\cU$. Hence we can find $n_0 \in \N$ such that $\pi^{n_0} \cN'\subseteq \cN$. Now, $\pi^n \cM \cap \cN = (\pi^n\cM \cap \cN') \cap \cN = \pi^n\cN' \cap \cN = \pi^n \cN'$ whenever $n \geq n_0$, so
$\pi^{n+1}\cM \cap \cN = \pi^{n+1} \cN' = \pi(\pi^n \cN') = \pi(\pi^n\cM \cap \cN)$ for all $n \geq n_0$ as required.

(b) Let $x \in \bigcap_{n=0}^\infty \pi^n \cM$ and let $\cN := \cU\cdot x$. Then part (a) implies that for some $n \in \N$,  $\cN \subseteq \pi^{n+1}\cM \cap \cN = \pi(\pi^n \cM \cap \cN) = \pi \cN$, so we can find $a \in \pi\cU$ such that $x = ax$. But $\cU$ is $\pi$-adically complete, so $1 + \pi \cU \subset \cU^\times$ and therefore $x = 0$.

(c) The proof of \cite[Chapter III, \S 2.12, Corollary 1 to Proposition 16]{BourCommAlg} works.
\end{proof}

\begin{prop}\label{FlatPrepProp} Suppose that $\cV$ is another $\pi$-adically complete, separated and flat $\cR$-algebra which contains $\cU$. Let $y \in \cV$, and suppose that the map $\cU / \pi \cU \to \cV / \pi \cV$ extends to an $\cR/\pi\cR$-algebra isomorphism $(\cU/\pi\cU)[Y] \cong \cV/\pi\cV$ which sends $Y$ to $y + \pi \cV$. Let $C$ be the centraliser of $y$ in $U := \cU_K$. Then
\be 
\item $V := \cV_K$ is a flat $U$-module on both sides, and 
\item for every finitely generated $U$-module $M$ there is a natural isomorphism of $C\langle Y\rangle$-modules $\eta_M \colon M\langle Y\rangle \stackrel{\cong}{\longrightarrow} V \otimes_U M$.
\ee
\end{prop}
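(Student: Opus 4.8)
The plan is to construct the $\pi$-adic completion explicitly and show it agrees with $\cV$, then deduce flatness and the base-change formula by a limit argument.

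\textbf{Setting up the completion.} First I would observe that the hypothesis gives us a concrete grip on $\cV$: the isomorphism $(\cU/\pi\cU)[Y]\cong \cV/\pi\cV$ sending $Y\mapsto y+\pi\cV$ means that, modulo $\pi$, $\cV$ is a polynomial ring in $y$ over $\cU$. Lifting, I would show that $\cV=\bigoplus_{n\geq 0}\cU y^n$ as a $\pi$-adically-topologized $\cU$-module is not quite right (the sum is not complete); rather, every element of $\cV$ can be written uniquely as a convergent series $\sum_{n\geq 0} a_n y^n$ with $a_n\in\cU$ and $a_n\to 0$ $\pi$-adically. Indeed, by induction on $k$ one shows $\cV=\sum_{n<N}\cU y^n + \pi^k\cV$ for suitable $N$, and $\pi$-adic completeness and separatedness of $\cV$ (hypothesis) plus that of $\cU$ force the series representation and its uniqueness. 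This identifies $\cV$ with the $\pi$-adic completion of the polynomial ring $\cU[y]\subseteq\cV$; note $y$ is \emph{not} assumed central, so one must track that $y\,a = (\text{something in }\cU[y])$ --- but since $\cV/\pi\cV$ is commutative, $[y,a]\in\pi\cV$ for all $a\in\cU$, which is exactly what makes the "restricted power series in $y$" description work as a \emph{ring}.

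\textbf{Flatness.} For part (a): I would use Theorem \ref{KeyRGLemma} / Corollary \ref{HTFabs}-style reasoning together with the standard fact that completion is flat over Noetherian-type rings. More precisely, $\cU$ satisfies Hypothesis \ref{NCadmRalg}, hence so does $\cU[y]$ modulo checking $\cV/\pi\cV=(\cU/\pi\cU)[Y]$ is finitely presented over $\cR/\pi\cR$ --- which it is, being a polynomial extension of the finitely presented $\cR/\pi\cR$-algebra $\cU/\pi\cU$. Then $\cV=\widehat{\cU[y]}$ also satisfies the hypothesis, so $V=\cV_K$ is Noetherian by Corollary \ref{HTFabs}. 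To get flatness of $V$ over $U$ on both sides: reduce to showing $\Tor_1^U(U/I,V)=0$ for finitely generated left ideals $I$ (using Noetherianity of $U$, which is Corollary \ref{HTFabs}). Pick an $\cR$-lattice $\cI:=\cU\cap I$; then $\cV\otimes_\cU(\cU/\cI)$ has no new $\pi$-torsion because $\cV/\pi\cV$ is a free, hence flat, $\cU/\pi\cU$-module --- this is the crux, and it follows from the series description: $\cV\otimes_\cU(\cU/\cI)\cong$ restricted power series in $y$ over $\cU/\cI$, which is $\cR$-torsion-free whenever $\cU/\cI$ is. A short diagram chase with $0\to\cI\to\cU\to\cU/\cI\to 0$, tensoring with $\cV$, and using that $\cV$ is $\cR$-flat, gives the vanishing after inverting $\pi$.

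\textbf{Base change.} For part (b): the map $\eta_M\colon M\langle Y\rangle\to V\otimes_U M$ should send $\sum m_n Y^n\mapsto \sum y^n\otimes m_n$ (here $M\langle Y\rangle$ denotes restricted power series, i.e. $\pi$-adically convergent series after choosing a lattice in $M$); $C=Z_U(y)$ acts on the left on $M\langle Y\rangle$ via its action on coefficients, making $\eta_M$ a $C\langle Y\rangle$-linear map --- here one must check $y^n\otimes m = 1\otimes$ nothing, rather that left multiplication by $y$ on $V\otimes_U M$ corresponds to the shift $Y\cdot(-)$, which works because $y$ commutes with the image of $C$ but only quasi-commutes with $U$; the point is $V\otimes_U M$ is genuinely a left $V$-module, and $V\supseteq\cV\supseteq$ restricted power series in $y$, so the shift makes sense. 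For $M$ free of finite rank, $\eta_M$ is an isomorphism by the explicit series description of $\cV$ established above (it is just the statement $\cV\otimes_\cU\cU^r=\cV^r$, completed). For general finitely generated $M$, present $U^b\to U^a\to M\to 0$; both $V\otimes_U(-)$ (right exact, and exact by part (a)) and $(-)\langle Y\rangle$ are right exact on finitely generated modules, and naturality of $\eta$ plus the free case plus the five lemma gives the general isomorphism.

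\textbf{Main obstacle.} The hard part will be the non-commutativity: $y$ is not central, so "restricted power series in $y$" must be handled with care --- one needs that the commutators $[y,a]$ land in $\pi\cV$ to make the ring structure and the completion behave, and one must verify that left-multiplication structures (the $C\langle Y\rangle$-module structure, the shift operator) are well defined despite $y$ only quasi-commuting with $\cU$. Establishing the clean series normal form for elements of $\cV$, with uniqueness, is the technical heart from which both parts follow; everything else is a formal consequence of that plus the Raynaud--Gruson input already available via Corollary \ref{HTFabs}.
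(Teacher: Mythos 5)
Your proof is correct; for part (b) you take essentially the same route as the paper (define $\eta_M$ by convergent series, prove it for $M=U$, extend by finite presentation and right-exactness plus the five lemma), but for part (a) you take a genuinely different path. The paper deduces flatness by invoking a general criterion of Li--Van Oystaeyen for $\mathbb{Z}$-filtered rings: it checks that good filtrations on $V$-modules are separated, that the $\pi$-adic filtration on $U$ has the left Artin--Rees property (both via Theorem \ref{KeyRGLemma} and Proposition \ref{ArtinRees}), and that $\gr V$ is flat over $\gr U$ because $\overline{\cV}=\overline{\cU}[Y]$ is free over $\overline{\cU}$. You instead test flatness directly on cyclic modules $U/I$ (using Noetherianity of $U$ from Corollary \ref{HTFabs}), reduce to the lattice $\cI=\cU\cap I$, and argue that the map $\cV\otimes_\cU\cI\to\cV$ is already injective integrally: its reduction mod $\pi$ is injective because $\overline{\cV}$ is free over $\overline{\cU}$ and $\cU/\cI$ is $\cR$-torsion-free, and one then lifts injectivity via $\pi$-adic separatedness of $\cV\otimes_\cU\cI$ (Proposition \ref{ArtinRees}(b), applicable once one checks $\cV$ satisfies Hypothesis \ref{NCadmRalg}, which your normal-form description provides) together with $\cR$-torsion-freeness of $\cV$. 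Both routes bottom out in the same facts --- freeness of $\overline{\cV}$ over $\overline{\cU}$ and the Artin--Rees/separatedness machinery --- but the paper packages the bookkeeping into the cited LVO proposition while you unwind it by hand. Your ``short diagram chase'' phrase compresses the real content (mod-$\pi$ injectivity plus separatedness forces integral injectivity of $\mu:\cV\otimes_\cU\cI\to\cV$, hence $\Tor_1^U(V,U/I)=0$ after inverting $\pi$), so it deserves to be spelled out; you would also want to note explicitly that the series normal form must be established \emph{both} as $\widehat{\bigoplus}\,\cU y^j$ and as $\widehat{\bigoplus}\,y^j\cU$, using $[y,\cU]\subseteq\pi\cV$, in order to get flatness on both sides. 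With those small expansions your argument is a valid, and arguably more self-contained, alternative to the paper's.
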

\begin{proof}
(a) We will deduce this from \cite[Chapter II, \S 1.2, Proposition 1]{LVO}. Equip $U$ and $V$ with the $\pi$-adic filtrations whose degree zero pieces are given by $\cU$ and $\cV$, respectively. We must show that good filtrations on $V$-modules are separated, that the $\pi$-adic filtration on $U$ has the left Artin-Rees property in the sense of \cite[Chapter II, \S 1.1, Definition]{LVO}, and that $\gr V$ is a flat $\gr U$-module.

If $M$ is a $V$-module with some good filtration $F_\bullet M$, then writing $\cM := F_0 M$ we necessarily have $F_n M = \pi^{-n} \cM$ for all $n \in \Z$, and the associated Rees module with respect to this filtration therefore has the form $\widetilde{M} = \bigoplus_{n\in\Z} T^n \pi^{-n} \cM = \cM[T/\pi, \pi/T]$. Now, $\widetilde{M}$ is a finitely generated $\widetilde{U} \cong \cU[T/\pi,\pi/T]$-module. Setting $T = \pi$ implies that the $\cU$-module $\cM$ is finitely generated. So the $\pi$-adic filtration on $\cM$, and hence $F_\bullet M$, is separated by Proposition \ref{ArtinRees}(b). 

To see that the $\pi$-adic filtration on $U$ has the left Artin-Rees property, we must show that for every finitely generated left ideal $I = U x_1 + \cdots + U x_s $ of $U$ there is an integer $c$ such that $F_n U \cap I  \subseteq \sum_{j=1}^s \pi^{n+c} \cU \cdot x_j$ for all $n \in \Z$. Writing $\cI := \cU x_1 + \cdots + \cU x_s$ and using the fact that $I$ is a $K$-vector space, we see that it will suffice to show that $\cU \cap I \subseteq \pi^c \cI$ for some $c \in \Z$. However, we can find a finite generating set $v_1,\ldots,v_m$ for $\cU \cap I$ as a left ideal in $\cU$ by Theorem \ref{KeyRGLemma}. Choose $d \in \N$ such that $\pi^d v_j \in \cI$ for all $j=1,\ldots, m$; then $\pi^d (\cU \cap I) = \pi^d \sum_{j=1}^m \cU v_j \subseteq \cI$ and hence $\cU \cap I \subseteq \pi^{-d} \cI$ as required.

Finally, $\overline{\cV} := \cV / \pi \cV$ is a free $\overline{\cU} := \cU / \pi \cU$-module on both sides by assumption.  Hence $\gr V \cong \overline{\cV}[s, s^{-1}]$ is flat over $\gr U \cong \overline{\cU}[s,s^{-1}]$. 

(b) Because $\cV / \pi \cV \cong (\cU / \pi \cU)[Y]$ by assumption, the $\cR$-algebra $\cV$ also satisfies Hypothesis \ref{NCadmRalg}. Choose a finitely generated $\cU$-submodule $\cM$ in $M$ which generates $M$ as a $K$-vector space. Then $V \otimes_U M = (\cV \otimes_{\cU} \cM) \otimes_{\cR} K$. The finitely generated $\cV$-module $\cV \otimes_{\cU} \cM$ is $\pi$-adically complete and separated by Proposition \ref{ArtinRees}(b,c). So, for any sequence of elements $m_j \in M$ tending to zero, the series $\sum_{j=0}^\infty Y^j \otimes m_j$ converges to a unique element $\eta_M\left(\sum_{j=0}^\infty Y^j m_j \right) \in V \otimes_U M$. This defines a $C$-linear morphism $\eta_M : M \langle Y \rangle \longrightarrow V \otimes_U M$ which is functorial in $M$. It is straightforward to see that $\eta_M$ is also $C\langle Y\rangle$-linear. Now, $\eta_U$ is an isomorphism by construction of $V$, and $M$ is a finitely presented $U$-module because $U$ is Noetherian by Corollary \ref{HTFabs}. We may view $\eta$ as a natural transformation between two right exact functors and use the Five Lemma to conclude that $\eta_M$ is always an isomorphism. 
\end{proof}

We now suppose further that
\begin{itemize}
\item $y \in U$ (possibly $y \notin \cU$) is such that $[y, \cU] \subseteq \pi \cU$.
\end{itemize}

Let $\delta : \cU \to \cU$ be the restriction of $\ad(y) : U \to U$ to $\cU$. This is an $\cR$-linear derivation, so we may form the skew-polynomial ring $\cU[Y ; \delta]$, its $\pi$-adic completion
\[ \cV := \widehat{ \cU[Y;\delta] }\]
and the \emph{skew-Tate algebra}
\[ V := \cV \otimes_{\cR} K = \hK{\cU[Y;\delta]}.\]

We can now generalise \cite[Theorem 4.5]{DCapOne}.

\begin{thm}\label{AbstractFlatness} Suppose that $\cU$ satisfies Hypothesis \ref{NCadmRalg} and that $y \in U$ is such that $[y,\cU] \subseteq \pi \cU$. Then $V / (Y-y)V$ is a flat $U$-module on both sides.
\end{thm}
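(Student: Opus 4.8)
The plan is to reduce the two-sided flatness of $V/(Y-y)V$ over $U$ to a completed-base-change statement that can be checked after passing to $V = \hK{\cU[Y;\delta]}$, using Proposition \ref{FlatPrepProp} to identify $V \otimes_U M$ explicitly. First I would set up the skew-polynomial ring $\cU[Y;\delta]$ and observe that it satisfies Hypothesis \ref{NCadmRalg}: it is $\pi$-adically complete and separated after completion, $\cR$-flat since $\cU$ is and $\cU[Y;\delta]$ is $\cU$-free, and its reduction mod $\pi$ is $(\cU/\pi\cU)[Y]$ because $\delta(\cU)\subseteq \pi\cU$ kills the twisting on the special fibre; this is exactly the commutative polynomial algebra of finite presentation required. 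Moreover the image of $Y$ in $\cV/\pi\cV$ is $y + \pi\cV$, so $\cV$ together with the element $y \in U$ satisfies the hypotheses of Proposition \ref{FlatPrepProp} (with $C$ the centraliser of $y$ in $U$, which contains $\cU$ up to torsion-free considerations since $[y,\cU]\subseteq \pi\cU \subseteq U$ lies in $U$, but for the argument one only needs the natural isomorphism $\eta_M : M\langle Y\rangle \congs V\otimes_U M$). Part (a) of that proposition already gives that $V$ is a flat $U$-module on both sides.

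Next I would compute $V/(Y-y)V \otimes_U M$ for a finitely presented $U$-module $M$. Right-multiplication by $Y-y$ on $V$ corresponds, under $\eta_U$, to an operator on $U\langle Y\rangle$; concretely, $(Y-y)\cdot\left(\sum_j Y^j a_j\right)$ unwinds using the skew relation $Ya = aY + \delta(a)$, and I would check that the map ``multiplication by $Y-y$'' on $U\langle Y\rangle$ is \emph{injective with cokernel $M$} when applied to $M\langle Y\rangle$ — more precisely, that $0 \to M\langle Y\rangle \xrightarrow{\cdot(Y-y)} M\langle Y\rangle \to M \to 0$ is exact, where the last map is ``evaluate at $Y = y$'', i.e. $\sum_j Y^j m_j \mapsto \sum_j y^j m_j$ (interpreted via the $U$-module structure, using that this sum converges $\pi$-adically since the $m_j\to 0$). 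Injectivity is the key point and follows from a $\pi$-adic leading-term argument: if $(Y-y)$ annihilates a nonzero element, look at the lowest degree coefficient and the $\pi$-adic valuation to derive a contradiction, exactly as in the analogous commutative Tate-algebra computation. Surjectivity of evaluation is clear, and the kernel being the image of $\cdot(Y-y)$ is a standard division-with-remainder argument in the skew-Tate algebra. Via $\eta_M$ this exact sequence transports to
\[ 0 \to V\otimes_U M \xrightarrow{\cdot(Y-y)} V\otimes_U M \to (V/(Y-y)V)\otimes_U M \to 0,\]
identifying $(V/(Y-y)V)\otimes_U M \cong M$ naturally in $M$.

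Finally I would use this identification to conclude flatness. Given a short exact sequence $0 \to M' \to M \to M'' \to 0$ of finitely presented $U$-modules, tensoring with $V$ over $U$ keeps it exact by Proposition \ref{FlatPrepProp}(a), and then the snake lemma applied to multiplication by $Y-y$ on the three columns — each of whose cokernels is the corresponding $M', M, M''$ by the previous paragraph, and each of whose kernels vanishes by the injectivity established above — shows $0 \to M' \to M \to M'' \to 0$ is exact, i.e. $(V/(Y-y)V)\otimes_U -$ is exact on finitely presented modules. Since $U$ is Noetherian by Corollary \ref{HTFabs}, every finitely generated module is finitely presented, and exactness on finitely generated modules upgrades to flatness in the usual way (a module is flat iff $\Tor_1$ against all cyclic, hence all finitely generated, modules vanishes). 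The same argument on the other side gives right flatness; the left-right symmetry is genuine because $\delta = \ad(y)$ and the construction of $\cU[Y;\delta]$ are not intrinsically one-sided. The main obstacle I anticipate is the injectivity of multiplication by $Y-y$ on $M\langle Y\rangle$ and the correct bookkeeping of the skew relation when transporting everything through $\eta_M$ — one must be careful that $\eta_M$ is only asserted to be $C\langle Y\rangle$-linear in Proposition \ref{FlatPrepProp}(b), so the operator $\cdot(Y-y)$ must be checked to be compatible with $\eta_M$ directly from its definition as a convergent series, rather than assumed.
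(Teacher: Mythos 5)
Your high-level strategy matches the paper's: use Proposition \ref{FlatPrepProp} to get flatness of $V$ and the identification $V\otimes_U M \cong M\langle Y\rangle$, then reduce flatness of $V/(Y-y)V$ to $(Y-y)$-torsion-freeness of $V\otimes_U M$ for finitely generated $M$ (the paper cites \cite[Proposition 4.4]{DCapOne} for this reduction; your snake-lemma version accomplishes the same thing). However, two of your claims are wrong or missing.

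First, the cokernel of $\cdot(Y-y)$ on $M\langle Y\rangle$ is \emph{not} $M$, and the ``evaluate at $Y=y$'' map is not well-defined. You propose $\sum_j Y^j m_j \mapsto \sum_j y^j m_j$ and claim convergence because $m_j \to 0$; but $y$ lies only in $U$, not in $\cU$, so $y$ may have negative $\pi$-adic valuation and $\sum_j y^j m_j$ need not converge. This is not a corner case: in the main application (Theorem \ref{HKULflat}), $y = x_{j+1}$ with $\pi x_{j+1} \in \cL_j$ but $x_{j+1}\notin\cL_j$, so $y\notin\cU = \h{U(\cL_j)}$. Even in the commutative model $K\langle Y\rangle$ with $|y|>1$, the element $Y-y = -y(1 - Y/y)$ is a unit, so the quotient is zero, not $K$. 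Fortunately, the cokernel identification is superfluous: the snake lemma applied to $(Y-y)\otimes 1$ on the flat sequence $0\to V\otimes M'\to V\otimes M\to V\otimes M''\to 0$ gives exactness of $(V/(Y-y)V)\otimes -$ as soon as the kernels vanish, without identifying the cokernels. You should drop the erroneous exact sequence entirely.

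Second, and more seriously, the torsion-freeness (your ``injectivity'') is the entire content of the theorem, and your ``$\pi$-adic leading-term argument \ldots exactly as in the analogous commutative Tate-algebra computation'' does not supply it. The commutative analogy in fact fails as noted above when $y$ has negative valuation. The paper's argument is genuinely different and non-trivial: from the relations $m_{j-1} = y m_j$ one deduces that the lattice $\cN = \sum_{i=0}^d \cU m_i$ (where $m_0,\dots,m_d$ generate the $U$-submodule $N$ spanned by all $m_j$, using Noetherianity of $U$ from Corollary \ref{HTFabs}) is stable under $y$ \emph{because} $[y,\cU]\subseteq\cU$ together with the recursion absorbs the negative valuation of $y$; then one invokes Theorem \ref{KeyRGLemma} (the Raynaud--Gruson input) to show $\cM\cap N$ is finitely generated and commensurable with $\cN$, and finally Proposition \ref{ArtinRees}(b) ($\pi$-adic separatedness of finitely generated $\cU$-modules) to force $m_j\in\bigcap_n\pi^n\cN = 0$. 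None of this is a leading-term computation, and without it the proof has no force over a non-Noetherian $\cR$, which is precisely the generality the theorem is designed to reach.
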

\begin{proof} 
By \cite[Proposition 4.4]{DCapOne}, to show that $V / (Y-y)V$ is a flat right $U$-module, we have to show left-multiplication by $Y-y$ on $V$ is injective, that $V$ is a flat right $U$-module, and $V \otimes_U M$ is $(Y-y)$-torsionfree for all finitely generated $U$-modules $M$. Note that the first condition follows from the third condition by taking $M = U$. 

Our assumptions on $y$ ensure that $\cV = \h{\cU[Y;\delta]}$ satisfies the hypotheses of Proposition \ref{FlatPrepProp}. Hence, the second condition of \cite[Proposition 4.4]{DCapOne} holds by Proposition \ref{FlatPrepProp}(a). In view of Proposition \ref{FlatPrepProp}(b), it remains to show that $M\langle Y \rangle$ is $(Y - y)$-torsion-free for every finitely generated $U$-module $M$. 

Suppose now that the element $\sum_{j=0}^\infty Y^jm_j \in M \langle Y \rangle$ is killed by $Y-y$. Then $\lim\limits_{j\to\infty}m_j=0$, and setting $m_{-1} := 0$, we deduce the equations 
\[ym_j=m_{j-1} \qmb{for all} j \in \N\]
because $y$ commutes with $Y$ inside $V$. Consider the $U$-submodule $N$ of $M$ generated by the $m_j$. Because $M$ is Noetherian by Corollary \ref{HTFabs}, $N$ must be generated by $m_0, \ldots, m_d$ for some $d \geq 0$, say. 

Let $\cM$ be a finitely generated $\cU$-submodule of $M$ which generates $M$ as a $K$-vector space, and let $\cN := \sum_{i=0}^d \cU m_i$. Because $\cM / \cM \cap N$ is $\cR$-torsionfree, Theorem \ref{KeyRGLemma} implies that $\cM \cap N$ is a finitely generated $\cU$-submodule of $N$. Because $K \cdot \cN = K \cdot (\cM \cap N) = N$, the $\cU$-modules $\cM \cap N$ and $\cN$ contain $\pi$-power multiples of each other. So, as $\lim\limits_{j \to \infty} m_j = 0$, for all $n \in \N$ we can find $j_n \in\N$ such that $m_j \in \pi^n \cN$ for all $j \geq j_n$. Next, because $[y,\cU] \subseteq \cU$ by assumption and because
\[ y \sum_{j=0}^d s_jm_j = \sum_{j=0}^d [y,s_j] m_j + s_jm_{j-1} \in \cN \qmb{for all} s_0,\ldots, s_d \in \cU\]
we see that $y^i \cN \subseteq \cN$ for all $i \geq 0$. Therefore for any $j, n \in \N$ we have
\[ m_j = y^{j_n} m_{j + j_n} \in y^{j_n}\pi^n \cN \subseteq \pi^n\cN.\]
Hence $m_j \in \bigcap_{n=0}^\infty \pi^n \cN$ for all $j \in \N$. But $\bigcap_{n=0}^\infty \pi^n \cN = 0$ by Proposition \ref{ArtinRees}(b) since $\cN$ is finitely generated, so $m_j = 0$ for all $j \in \N$. Hence $\sum_{j=0}^\infty Y^jm_j = 0$ and $V / (Y-y)V$ is a flat right $U$-module as claimed. \end{proof}

\begin{lem}\label{ULbarFP} Let $\cA$ be an admissible $\cR$-algebra and let $\cL$ be a smooth $(\cR, \cA)$-Lie algebra such that $[\cL,\cL]\subseteq \pi \cL$ and $\cL \cdot \cA \subseteq \pi \cA$. Then Hypothesis \ref{NCadmRalg} holds for the $\pi$-adic completion $\h{U(\cL)}$.
\end{lem}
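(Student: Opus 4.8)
The plan is to verify the three bullet points of Hypothesis \ref{NCadmRalg} for $\cU := \h{U(\cL)}$, the $\pi$-adic completion of $U(\cL)$. The first two bullets are essentially built in: $\cU$ is $\pi$-adically complete and separated by construction of the $\pi$-adic completion, and it is flat over $\cR$ because $U(\cL)$ is $\cR$-torsion-free (being a PBW-type object over the admissible, hence $\cR$-flat, algebra $\cA$ and the $\cA$-projective module $\cL$) and the $\pi$-adic completion of an $\cR$-torsion-free module over the valuation ring $\cR$ remains $\cR$-torsion-free. I would spell this out using that $\cR$ is a valuation ring of Krull dimension $1$, so $\cR$-flat is equivalent to $\cR$-torsion-free, and that completion preserves this (one checks $\h{U(\cL)}$ injects into $\invlim U(\cL)/\pi^a U(\cL)$ and each graded piece $\pi^a U(\cL)/\pi^{a+1}U(\cL)$ is well-behaved since $U(\cL)$ is $\pi$-torsion-free).

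The real content is the third bullet: $\cU/\pi\cU$ is a commutative $\cR/\pi\cR$-algebra of finite presentation. First, $\cU/\pi\cU \cong U(\cL)/\pi U(\cL) = U(\cL) \otimes_{\cR} \cR/\pi\cR$, so I may work with the honest enveloping algebra reduced mod $\pi$. The hypotheses $[\cL,\cL]\subseteq\pi\cL$ and $\cL\cdot\cA\subseteq\pi\cA$ say precisely that after reducing mod $\pi$ the Lie bracket and the anchor map both vanish, so the defining relations \eqref{IdealJ} of $U(\cL)$ become, modulo $\pi$, the relations making $\overline{v}$ and $\overline{a}$ commute. Hence $U(\cL)/\pi U(\cL)$ is commutative, and in fact I expect it to be identified with $\Sym_{\overline{\cA}}(\overline{\cL})$ where $\overline{\cA} := \cA/\pi\cA$ and $\overline{\cL} := \cL/\pi\cL$. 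For finite presentation: $\cA$ is topologically of finite presentation over $\cR$, so $\overline{\cA}$ is of finite presentation over $\cR/\pi\cR$; since $\cL$ is finitely generated projective over $\cA$, $\overline{\cL}$ is finitely generated projective (hence finitely presented) over $\overline{\cA}$; and the symmetric algebra of a finitely presented module over a finitely presented algebra is again of finite presentation over the base ring (it is a finitely generated polynomial-type algebra modulo finitely many relations, using that $\overline{\cL}$ is a quotient of a finite free module and that the relations among a finite set of module generators are finitely generated because $\overline{\cL}$ is finitely presented).

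So the key steps, in order, are: (1) reduce $\cU/\pi\cU$ to $U(\cL)/\pi U(\cL)$; (2) use the two displayed inclusions to show the mod-$\pi$ reduction kills the bracket and anchor, giving commutativity and an isomorphism with $\Sym_{\overline{\cA}}(\overline{\cL})$; (3) deduce finite presentation of $\Sym_{\overline{\cA}}(\overline{\cL})$ over $\overline{\cR}$ from finite presentation of $\overline{\cA}$ over $\overline{\cR}$ and finite presentation of $\overline{\cL}$ over $\overline{\cA}$; (4) assemble the completeness and flatness observations. The main obstacle, such as it is, is step (2)--(3): making the identification $U(\cL)/\pi U(\cL)\cong\Sym_{\overline{\cA}}(\overline{\cL})$ precise (one should check the natural surjection $\Sym_{\overline{\cA}}(\overline{\cL})\to U(\cL)/\pi U(\cL)$ is injective, which follows from the PBW theorem for enveloping algebras of smooth Lie--Rinehart algebras giving an associated-graded isomorphism $\gr U(\cL)\cong\Sym_{\cA}(\cL)$, compatible with reduction mod $\pi$ since everything is $\cR$-flat), and then carefully citing or proving that symmetric algebras of finitely presented modules over finitely presented algebras are finitely presented. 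I would present this as: pick a finite presentation $\overline{\cA}^{\oplus s}\to\overline{\cA}^{\oplus r}\to\overline{\cL}\to 0$, so $\Sym_{\overline{\cA}}(\overline{\cL})$ is the quotient of the polynomial ring $\overline{\cA}[x_1,\dots,x_r]$ by the ideal generated by the $s$ linear forms coming from the relations, and $\overline{\cA}[x_1,\dots,x_r]$ is itself of finite presentation over $\overline{\cR}$ since $\overline{\cA}$ is. None of this requires Theorem \ref{RG346}; it is all elementary commutative algebra once the bracket and anchor are seen to vanish mod $\pi$.
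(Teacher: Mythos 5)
Your proof is correct and follows essentially the same route as the paper: completeness and separatedness are by construction; flatness of $\h{U(\cL)}$ over $\cR$ follows from Rinehart's PBW theorem giving $\cA$-flatness of $U(\cL)$, admissibility of $\cA$, and the observation that over a rank-one valuation ring flat equals torsion-free and $\pi$-adic completion preserves this; and the third bullet is obtained by reducing $\h{U(\cL)}/\pi \cong U(\cL)/\pi U(\cL)$, using smoothness of $\cL$ to identify this with $U(\cL/\pi\cL)$ (the paper cites this as Proposition 2.3 of the reference, which is exactly the PBW-compatibility-with-base-change statement you invoke), and noting that the mod-$\pi$ vanishing of bracket and anchor forces $U(\cL/\pi\cL) = \Sym_{\overline{\cA}}(\overline{\cL})$, a finitely presented $\overline{\cR}$-algebra. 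Your explicit finite-presentation argument for the symmetric algebra is fine and just spells out what the paper leaves to the reader; you also correctly observed that Theorem \ref{RG346} plays no role here.
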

\begin{proof} It follows from \cite[Theorem 3.1]{Rinehart} that $U(\cL)$ is a flat $\cA$-module. Because $\cA$ is an admissible $\cR$-algebra, it is also flat as an $\cR$-module. Since $\cR$ is a valuation ring, it follows that $\cU := \h{U(\cL)}$ is also flat as an $\cR$-module, and it is $\pi$-adically complete and separated by construction. Hence $\cU / \pi \cU \cong U(\cL)/\pi U(\cL)$.

By assumption, $\cL / \pi \cL$ is a smooth $(\cR/\pi\cR, \cA / \pi \cA)$-Lie algebra with trivial Lie bracket and trivial anchor map.  Because $\cL$ is smooth, it follows from \cite[Proposition 2.3]{DCapOne} that the canonical map $\cR/\pi\cR$-algebra homomorphism $U(\cL)/ \pi U(\cL) \longrightarrow U(\cL / \pi \cL)$ is bijective. However $U(\cL/\pi\cL)$ is the symmetric algebra of the finitely generated projective $\cA/\pi\cA$-module $\cL/\pi\cL$ and is therefore a commutative $\cA/\pi\cA$-algebra of finite presentation. It is therefore a commutative $\cR/\pi\cR$-algebra of finite presentation, because the $\cR$-algebra $\cA$ is admissible.\end{proof}

\begin{cor}\label{HTF} Let $\cA$ be an admissible $\cR$-algebra and let $\cL$ be a smooth $(\cR, \cA)$-Lie algebra such that $[\cL,\cL]\subseteq \pi \cL$ and $\cL \cdot \cA \subseteq \pi \cA$. Then $\hK{U(\cL)}$ is Noetherian. 
\end{cor}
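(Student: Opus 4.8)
The plan is to simply chain together the two immediately preceding results. First I would observe that the hypotheses of Corollary \ref{HTF} are precisely those of Lemma \ref{ULbarFP}: $\cA$ is an admissible $\cR$-algebra, $\cL$ is a smooth $(\cR,\cA)$-Lie algebra, $[\cL,\cL]\subseteq\pi\cL$ and $\cL\cdot\cA\subseteq\pi\cA$. Therefore Lemma \ref{ULbarFP} applies directly and tells us that Hypothesis \ref{NCadmRalg} holds for the $\pi$-adic completion $\cU:=\h{U(\cL)}$; that is, $\cU$ is $\pi$-adically complete and separated, flat over $\cR$, and $\cU/\pi\cU$ is a commutative $\cR/\pi\cR$-algebra of finite presentation.

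Next I would invoke Corollary \ref{HTFabs}, which states that whenever $\cU$ satisfies Hypothesis \ref{NCadmRalg}, the localisation $U:=\cU\otimes_{\cR}K$ is Noetherian. Since by definition $\hK{U(\cL)}=\h{U(\cL)}\otimes_{\cR}K=\cU\otimes_{\cR}K$, this gives exactly the assertion that $\hK{U(\cL)}$ is Noetherian, and the proof is complete.

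There is essentially no obstacle here: the corollary is a formal consequence of Lemma \ref{ULbarFP} and Corollary \ref{HTFabs}, with all the real work (the appeal to Rinehart's flatness theorem and the Raynaud-Gruson input via Theorem \ref{KeyRGLemma}) already carried out in those earlier statements. The only thing to be careful about is the bookkeeping identification $\hK{U(\cL)}=\h{U(\cL)}\otimes_{\cR}K$, which is immediate from the notation introduced just before Lemma \ref{PsiL}.

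\begin{proof} By Lemma \ref{ULbarFP}, Hypothesis \ref{NCadmRalg} holds for $\cU := \h{U(\cL)}$. Therefore $\hK{U(\cL)} = \cU \otimes_{\cR} K$ is Noetherian by Corollary \ref{HTFabs}. \end{proof}
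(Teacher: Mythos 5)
Your proposal is correct and is exactly the paper's proof: apply Lemma \ref{ULbarFP} to verify Hypothesis \ref{NCadmRalg} for $\h{U(\cL)}$, then conclude by Corollary \ref{HTFabs}. No further comment is needed.
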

\begin{proof}  Apply Lemma \ref{ULbarFP} together with Corollary \ref{HTFabs}.\end{proof}

We can now extend \cite[Theorem 6.6]{DCapOne} to our non-Noetherian setting under a mild restrictions on the $(\cR, \cA)$-Lie algebra $\cL$.

\begin{thm}\label{HKULflat} Let $\cA$ be an admissible $\cR$-algebra and let $\cL$ be an $(\cR, \cA)$-Lie algebra which is free of finite rank as an $\cA$-module. Suppose that $[\cL,\cL]\subseteq \pi^2 \cL$ and $\cL \cdot \cA \subseteq \pi \cA$.  Then $\hK{U(\cL)}$ is a flat $\hK{U(\pi \cL)}$-module on both sides.
\end{thm}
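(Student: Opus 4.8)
The plan is to interpolate between $U(\pi\cL)$ and $U(\cL)$ by a finite chain of Lie lattices obtained by ``un-scaling'' the basis vectors of $\cL$ one at a time, and to apply the abstract flatness criterion Theorem \ref{AbstractFlatness} at each step. Fix an $\cA$-basis $x_1,\dots,x_d$ of $\cL$ and, for $0\le j\le d$, set
\[ \cL_j \;:=\; \sum_{k\le j}\cA x_k \;+\; \sum_{k>j}\cA\,\pi x_k \;\subseteq\;\cL, \]
so that $\cL_0=\pi\cL$ and $\cL_d=\cL$. Since $\pi$ is a non-zero-divisor in the $\cR$-flat ring $\cA$, each $\cL_j$ is free of rank $d$ over $\cA$; it contains $\pi\cL$, and satisfies $[\cL_j,\cL_j]\subseteq[\cL,\cL]\subseteq\pi^2\cL\subseteq\pi\cL_j$ and $\cL_j\cdot\cA\subseteq\cL\cdot\cA\subseteq\pi\cA$. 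Hence Lemma \ref{ULbarFP} applies to each $\cL_{j-1}$, so $\cU_j:=\h{U(\cL_{j-1})}$ satisfies Hypothesis \ref{NCadmRalg}. The inclusions $\cL_{j-1}\hookrightarrow\cL_j$ are morphisms of $(\cR,\cA)$-Lie algebras, so by Lemma \ref{Ufunctor} they induce $K$-algebra homomorphisms $\hK{U(\cL_{j-1})}\to\hK{U(\cL_j)}$ whose composite is the canonical map $\hK{U(\pi\cL)}\to\hK{U(\cL)}$. As flatness is transitive along ring homomorphisms, it is enough to prove, for each $j$, that $\hK{U(\cL_j)}$ is flat on both sides over $U_j:=\hK{U(\cL_{j-1})}$.

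To carry out the step from $\cL_{j-1}$ to $\cL_j$, set $y_j:=\pi^{-1}\cdot\pi x_j\in U_j$, i.e.\ $\pi^{-1}$ times the image of $\pi x_j\in\cL_{j-1}$ in $\cU_j$. For $v\in\cL_{j-1}$ the element $[y_j,v]$ is the image of $[x_j,v]\in[\cL,\cL]\subseteq\pi^2\cL\subseteq\pi\cL_{j-1}$, hence lies in $\pi\cU_j$; for $a\in\cA$ the element $[y_j,a]$ is the image of $x_j\cdot a\in\pi\cA$, hence lies in $\pi\cU_j$. Since $\cA$ and $\cL_{j-1}$ topologically generate $\cU_j$ and $\ad(y_j)$ is a continuous derivation, $[y_j,\cU_j]\subseteq\pi\cU_j$. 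Theorem \ref{AbstractFlatness} then applies with $\delta_j:=\ad(y_j)|_{\cU_j}$: writing $\cV_j:=\h{\cU_j[Y;\delta_j]}$ and $V_j:=(\cV_j)_K=\hK{\cU_j[Y;\delta_j]}$, the module $V_j/(Y-y_j)V_j$ is flat on both sides over $U_j$. Moreover the relations $Ya=aY+\delta_j(a)$ $(a\in\cU_j)$ and $\ad(y_j)(y_j)=0$ show that $Y-y_j$ is central in $V_j$, so $V_j/(Y-y_j)V_j$ is a $K$-algebra, and $(Y-y_j)V_j=(\pi Y-\pi x_j)V_j$ is generated by an element of $\cV_j$.

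It then remains to identify $\hK{U(\cL_j)}\cong V_j/(Y-y_j)V_j$ as $K$-algebras, compatibly with the two maps from $U_j$. In one direction, the universal property of the skew-polynomial ring, applied to the functoriality map $\cU_j\to\h{U(\cL_j)}$ together with $Y\mapsto x_j$, produces a $\pi$-adically continuous $K$-algebra map $\cU_j[Y;\delta_j]\to\hK{U(\cL_j)}$ that extends to $V_j$ and kills $Y-y_j$ (because $x_j=\pi^{-1}\cdot\pi x_j$ in $\hK{U(\cL_j)}$). In the other direction, using that $\cL_j$ is free over $\cA$ with basis $\{x_k:k\le j\}\cup\{\pi x_k:k>j\}$, the universal property of $U(-)$ (Lemma \ref{ULUP}) produces a map $U(\cL_j)\to V_j/(Y-y_j)V_j$ sending $\cA\to\cU_j$, $x_k\mapsto\overline{x_k}$ for $k<j$, $\pi x_k\mapsto\overline{\pi x_k}$ for $k>j$, and $x_j\mapsto\overline Y$ — the compatibility hypotheses of Lemma \ref{ULUP} being exactly the bracket and anchor identities established in the previous step. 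This map descends to the $\pi$-adic completion because the natural integral model $\cV_j/(\pi Y-\pi x_j)\cV_j$ of $V_j/(Y-y_j)V_j$ is $\pi$-adically complete and separated; this in turn follows from $\delta_j(\cU_j)\subseteq\pi\cU_j$, which gives $\cV_j/\pi\cV_j\cong(\cU_j/\pi\cU_j)[Y]$, together with the observation that the image of $\pi x_j$ is one of the polynomial generators of $\cU_j/\pi\cU_j\cong U(\cL_{j-1}/\pi\cL_{j-1})$ and hence a non-zero-divisor. Checking that the two homomorphisms agree on the topological generators $\cA,\cL_j$, respectively $\cU_j,Y$, shows that they are mutually inverse. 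Granting the identification, $\hK{U(\cL_j)}$ is isomorphic as a left and as a right $U_j$-module to $V_j/(Y-y_j)V_j$, hence flat on both sides; composing over $j=1,\dots,d$ completes the proof.

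The main obstacle is this last identification. Each ingredient — the $\cA$-linearity, Lie-homomorphism and anchor-compatibility properties of $\cL_j\to V_j/(Y-y_j)V_j$, the $\pi$-adic continuity needed to pass to completions, and the $\pi$-adic separatedness of the integral model of $V_j/(Y-y_j)V_j$ — is routine on its own, but they must be assembled with care; and because $\cR$ need not be Noetherian one cannot invoke the usual completion lemmas, which is exactly where the admissibility of $\cA$, the freeness of $\cL$, and the strengthened bracket condition $[\cL,\cL]\subseteq\pi^2\cL$ are used (through Lemma \ref{ULbarFP} and Theorem \ref{AbstractFlatness}).
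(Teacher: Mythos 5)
Your proof follows the same global strategy as the paper: the interpolating chain $\pi\cL = \cL_0 \subset \cL_1 \subset \cdots \subset \cL_d = \cL$ obtained by un-scaling the basis vectors one at a time, the verification that each $\h{U(\cL_j)}$ satisfies Hypothesis~\ref{NCadmRalg}, and the appeal to Theorem~\ref{AbstractFlatness} with $y_j := x_j \in \hK{U(\cL_{j-1})}$. Where you diverge is in the identification $V_j/(Y-y_j)V_j \cong \hK{U(\cL_j)}$: the paper establishes the exact sequence
\[ 0 \to B[Y;\delta] \xrightarrow{\ \pi Y - z\ } B[Y;\delta] \xrightarrow{\ \varphi\ } C \to 0 \]
at the \emph{uncompleted} level (here $B = U(\cL_{j-1})$, $C = U(\cL_j)$, $z = \pi x_j$) by an inductive $\pi$-cancellation argument that exploits the fact that $z$ is a polynomial generator of $B/\pi B$ and hence a non-zero-divisor there; it then applies $\pi$-adic completion and uses the $\cR$-flatness of $C$ to keep the sequence exact. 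You instead construct maps in both directions via the two universal properties and verify on topological generators that they are mutually inverse. This is a genuinely different execution: you avoid proving the finite-level exact sequence at the cost of needing to know a priori that the ``integral model'' $\cV_j/(\pi Y - \pi x_j)\cV_j$ of $V_j/(Y-y_j)V_j$ is $\pi$-adically complete and separated, so that $U(\cL_j) \to V_j/(Y-y_j)V_j$ extends through $\h{U(\cL_j)}$.

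The gap is in exactly this last justification. You claim the $\pi$-adic completeness and separatedness ``follows from'' the isomorphism $\cV_j/\pi\cV_j \cong (\cU_j/\pi\cU_j)[Y]$ together with the observation that the image of $\pi x_j$ in $\cU_j/\pi\cU_j$ is a non-zero-divisor, but the second fact has no direct bearing on the $\pi$-adic topology of the quotient $\cV_j/(\pi Y-\pi x_j)\cV_j$: the element $\pi Y - \pi x_j$ vanishes modulo $\pi$, so the non-zero-divisor property tells you nothing about $\bigcap_n \bigl(\pi^n\cV_j + (\pi Y - \pi x_j)\cV_j\bigr)$. (That observation is instead the key step in the paper's finite-level argument, in the inductive ``push $\pi$ out'' step establishing $\ker\varphi = (\pi Y-z)B[Y;\delta]$.) What you actually need is Proposition~\ref{ArtinRees}(b),(c): the skew-Tate algebra $\cV_j = \h{\cU_j[Y;\delta_j]}$ itself satisfies Hypothesis~\ref{NCadmRalg} (it is $\pi$-adically complete, $\cR$-flat, and $\cV_j/\pi\cV_j \cong (\cU_j/\pi\cU_j)[Y]$ is a finitely presented commutative $\cR/\pi\cR$-algebra), and $\cV_j/(\pi Y-\pi x_j)\cV_j$ is a cyclic, hence finitely generated, $\cV_j$-module, so it is $\pi$-adically complete and separated. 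With that citation substituted, your descent argument is valid and the identification goes through. The paper's route is arguably cleaner because the finite-level exact sequence already shows $\cV_j/(\pi Y-\pi x_j)\cV_j \cong \h{C}$ is $\cR$-torsion-free, whereas your argument only sees its torsion-free quotient; but Proposition~\ref{ArtinRees} is strong enough that this does not cause a problem.
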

\begin{proof} Choose an $\cA$-module basis $\{x_1,\ldots,x_d\}$ for $\cL$, and define
\[\cL_j := \cA x_1 \oplus \cdots \oplus \cA x_j \oplus \cA (\pi x_{j+1}) \cdots \oplus \cA (\pi x_d)\]
for each $j=1,\ldots,d$. This gives us a chain
\[ \pi\cL =: \cL_0 \subset \cL_1 \subset \cdots \subset \cL_d = \cL\]
of $(\cR,\cA)$-Lie algebras, which are free of finite rank as $\cA$-modules. Note that
\[ [\cL_j, \cL_j] \subseteq [\cL,\cL] \subseteq \pi^2\cL = \pi \cL_0 \subseteq \pi \cL_j\]
for all $j \geq 0$, so by Lemma \ref{ULbarFP}, Hypothesis \ref{NCadmRalg} holds for each $\h{U(\cL_j)}$.

It will be sufficient to show that $\hK{U(\cL_{j+1})}$ is a flat $U := \hK{U(\cL_j)}$-module on both sides for each $j = 0, \ldots, d-1$. Now, because
\[ [x_{j+1}, \cL_j] \subseteq [\cL,\cL] \subseteq \pi^2 \cL \subset \pi\cL_j\]
we see that the element $x_{j+1} \in U$ satisfies $[x_{j+1}, \cU] \subseteq \pi \cU$, where $\cU :=  \h{U(\cL_j)}$. We can now form the skew-Tate algebra $V = \hK{\cU[Y;\delta]}$ as above, and by Theorem \ref{AbstractFlatness}, it will be enough to see that 
\[ V / (Y-x_{j+1})V \cong \hK{U(\cL_{j+1})}\]
as a right $U$-module. Write $B := U(\cL_j)$ and $C := U(\cL_{j+1})$ and note that we may view $B$ and $C$ as $\cR$-subalgebras of $U(\cL)_K$ with $B \subseteq C$. Form the skew-polynomial ring $B[Y;\delta]$, where $\delta : B \to B$ is the restriction of $\ad(x_{j+1})$ on $C$ to $B$. There is a natural $\cR$-algebra homomorphism
\[\varphi : B[Y;\delta] \to C\]
which extends the embedding $B \hookrightarrow C$ and which sends $Y$ to $x_{j+1}$.  Clearly $\varphi$ is surjective, and $\pi Y - z \in \ker \varphi$, where $z := \pi x_{j+1} \in \cL_j \subset B$. We will show that 
\[\ker \varphi = (\pi Y - z)B[Y;\delta].\] 
Now $\pi Y - z$ is central in $B[Y;\delta]$, so
\[(\pi Y)^i = (z + (\pi Y -z))^i \equiv z^i \quad \mod \quad (\pi Y - z)B \qmb{for all} i \in \N.\]
Therefore for any $u := \sum_{i=0}^m Y^i u_i  \in B[Y;\delta]$ we have
\[ \pi^m u = \sum_{i=0}^m (\pi Y)^i \pi^{m-i} u_i   \equiv \sum_{i=0}^m z^i \pi^{m-i}  u_i \quad \mod \quad (\pi Y - z) B.\]
On the other hand, if $u \in \ker \varphi$ then $\varphi(u) =\sum_{i=0}^m  x_{j+1}^i u_i = 0$ implies that $\sum_{i=0}^m z^i \pi^{m-i} u_i  = 0$ in $B$.  Hence $\pi^m u =  (\pi Y - z)v$ for some $v \in B$. We will prove by induction on $m \in \N$ that $v \in \pi^m B$, the case $m = 0$ being trivial. Now if $m \geq 1$ then $\pi(vY - \pi^{m-1}u) = zv \in \pi B$. Since $z$ is an element of an $\cA$-module basis for $\cL_j$, it follows from \cite[Theorem 3.1]{Rinehart} that its image in $B / \pi B \cong U(\cL_j / \pi \cL_j)$ is not a zero-divisor. Hence $v = \pi v'$ for some $v' \in B$. Hence $\pi^m u =  (\pi Y - z)(\pi v')$ so $\pi^{m-1} u = (\pi Y - z)v' $ as $B$ is flat as an $\cR$-module. So $v' \in \pi^{m-1}B$ by induction, and hence $v = \pi v' \in \pi^m B$, completing the induction. Hence $\pi^m u = \pi^m (\pi Y - z)w$ for some $w \in B$ and using the flatness of $B$ as an $\cR$-module again we see that $u \in (\pi Y - z)B$. Thus $\ker \varphi = (\pi Y - z)B[Y;\delta]$ as claimed. 

Because $B[Y;\delta]$ is contained in $U(\cL)_K[Y;\ad(x_{j+1})]$ and because $Y - x_{j+1}/\pi$ is not a zero-divisor in this ring, we see that $\pi Y - z$ is also not a zero-divisor in $B[Y;\delta]$. Thus we have proved that
\[0 \to B[Y;\delta] \stackrel{\pi Y - z}{\longrightarrow}  B[Y;\delta] \stackrel{\varphi}{\longrightarrow} C \to 0\]
is an exact sequence of right $B[Y;\delta]$-modules. Because $C$ is flat as an $\cR$-module, this sequence remains exact after applying $\pi$-adic completion:
\[ 0 \to \h{B[Y;\delta]} \stackrel{\pi Y - z}{\longrightarrow}  \h{B[Y;\delta]} \stackrel{\varphi}{\longrightarrow} \h{C} \to 0\]
is exact. Finally, the inclusion $B[Y;\delta] \hookrightarrow \cU[Y;\delta]$ extends to an isomorphism $\h{B[Y;\delta]} \cong \h{\cU[Y;\delta]} = \cV$. Therefore 
\[V / (Y - x_{j+1})V = V / (\pi Y - z)V = (\cV / (\pi Y - z)\cV)_K \cong \h{C}_K = \hK{U(\cL_{j+1})}\]
as right $\hK{B} = \hK{U(\cL_j)}=U$-modules, as required.
\end{proof} 
\begin{proof}[Proof of Theorem \ref{FrSt} and Theorem \ref{FrStIntro}] \label{ProofFrST} Choose a $G$-stable affine formal model $\cA$ in $\cO(\bX)$ and a $G$-stable free $\cA$-Lie lattice $\cL$ in $\cT(\bX)$. By replacing $\cL$ by $\pi^2 \cL$ if necessary, we may assume that $[\cL,\cL] \subseteq \pi^2 \cL$, and that $\cL \cdot \cA \subseteq \pi \cA$. Then $\pi^n \cL$ also satisfies all of these conditions for each $n \geq 0$.

Choose a good chain $(N_\bullet)$ for $\cL$ using Corollary \ref{ChainCap}. Letting $U_n := \hK{U(\pi^n\cL)}$, Lemma \ref{StdPres} implies that $\w\cD(\bX,G) \cong \invlim U_n \rtimes_{N_n} G$. Now each $K$-Banach algebra $U_n$ is Noetherian by Corollary \ref{HTF}, so $U_n \rtimes_{N_n} G$ is also Noetherian, being a crossed product of $U_n$ with the finite group $G / N_n$ by Lemma \ref{RingSGN}(b). The image of $U_{n+1}$ in $U_n$ is dense because it contains the dense image of $\cD(\bX)$ in $U_n$, so the image of $U_{n+1} \rtimes_{N_{n+1}} G$ in $U_n \rtimes_{N_n} G$ is also dense because $G / N_{n+1}$ surjects onto $G/N_n$.

It remains to show that $U_n \rtimes_{N_n} G$ is a flat $U_{n+1} \rtimes_{N_{n+1}} G$-module on both sides.  To this end, consider the commutative diagram
\[ \xymatrix{ U_{n+1} \ar[r]\ar[d]\ar@{.>}[dr] & U_n \ar[d] \\ U_{n+1} \rtimes_{N_{n+1}} G \ar[r] & U_n \rtimes_{N_n} G .}\]
The top arrow is flat by Theorem \ref{HKULflat}, and the rightmost arrow is flat because $U_n \rtimes_{N_n} G$ is a free $U_n$-module of rank $|G : N_n|$ on both sides.  Hence the dotted diagonal arrow is flat, and we conclude using \cite[Lemma 2.2]{SchmidtBB} that the bottom arrow is flat. \end{proof}
\subsection{The sheaf \ts{\hsULK} on \ts{\bX_w(\cL)}}\label{SectionHSULK} In this subsection we extend the results of \cite[\S 3,4,5]{DCapOne} to our setting, where we no longer assume that $\cR$ is Noetherian. This requires imposing some mild new hypotheses on the Lie lattice $\cL$.

\begin{defn}\label{DefnLAdm} Let $\bX$ be a $K$-affinoid variety, and let $\cL$ be an $(\cR,\cA)$-Lie algebra for some affine formal model $\cA$ in $\cO(\bX)$. Let $\bY$ be an affinoid subdomain of $\bX$ and let $\sigma : \cO(\bX) \to \cO(\bY)$ be the pullback map on functions. 
\be \item We say that the affine formal model $\cB$ in $\cO(\bY)$ is \emph{$\cL$-stable} if $\sigma(\cA) \subset \cB$ and the action of $\cL$ on $\cA$ lifts to $\cB$. 
\item We say that $\bY$ is \emph{$\cL$-admissible} if it admits an $\cL$-stable affine formal model.
\item We will denote the full subcategory of $\bX_w$ consisting of the $\cL$-admissible affinoid subdomains by $\bX_w(\cL)$. 
\item We define an \emph{$\cL$-admissible covering} of an $\cL$-admissible affinoid subdomain of $\bX$ to be a finite covering by objects in $\bX_w(\cL)$.
\ee\end{defn}

\begin{defn} \label{HSULKpresheaf}
Suppose that $\cL$ is a smooth $(\cR, \cA)$-Lie algebra for some affine formal model $\cA$ in $\cO(\bX)$.  For any $\cL$-admissible affinoid subdomain $\bY$ of $\bX$ and any $\cL$-stable affine formal model $\cB$ in $\cO(\bY)$, we define
\[\cS(\bY) := \hsULK(\bY) := \widehat{ U(\cB \otimes_{\cA} \cL) } \otimes_{\cR} K.\]
\end{defn}
It is explained in \cite[\S 3.3]{DCapOne} that this does not depend on the choice of the affine formal model $\cB$, and defines a presheaf $\hsULK$ on $\bX_w(\cL)$. The proof of \cite[Proposition 3.3]{DCapOne} does not require $\cR$ to be Noetherian.

\begin{thm}\label{TateHSULK}
Let $\bX$ be a $K$-affinoid variety, and let $\cL$ be a smooth $(\cR,\cA)$-Lie algebra for some affine formal model $\cA$ in $\cO(\bX)$. Then every  $\cL$-admissible covering of $\bX$ is $\hsULK$-acyclic.
\end{thm}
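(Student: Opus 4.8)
The plan is to mimic the proof of the corresponding Noetherian statement \cite[Theorem 4.9]{DCapOne}, reducing the assertion that an $\cL$-admissible covering of $\bX$ is $\hsULK$-acyclic to a statement about the acyclicity of a rational covering for the structure presheaf of a suitable Banach algebra, and then invoking Tate's Acyclicity Theorem. Concretely, first I would reduce to the case of a rational covering $\{\bX_i\}$ of $\bX$: a general $\cL$-admissible covering can be refined by a rational one, and an acyclicity statement for a covering follows from acyclicity for a refining covering together with acyclicity of the refining covering restricted to each member of the original covering (this is the standard Čech-theoretic argument, exactly as in \cite{DCapOne}). Each member of a rational covering lies in $\bX_w(\cL)$ after possibly shrinking $\cL$ by a $\pi$-power (using a non-Noetherian analogue of \cite[Lemma 7.6]{DCapOne}), so we may assume all the data are $\cL$-stable.

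The heart of the matter is then to show that the augmented Čech complex
\[ 0 \to \cS(\bX) \to \bigoplus_i \cS(\bX_i) \to \bigoplus_{i<j} \cS(\bX_i \cap \bX_j) \to \cdots \]
is exact. The key observation, as in \cite[\S 4]{DCapOne}, is that for an $\cL$-stable affine formal model $\cB$ in $\cO(\bY)$ the algebra $\widehat{U(\cB\otimes_\cA\cL)}$ carries a natural $\pi$-adic filtration whose associated graded ring is (a completion of) the symmetric algebra $\Sym_{\overline{\cB}}(\overline{\cB}\otimes_{\overline{\cA}}\overline{\cL})$ over $\overline{\cB} = \cB/\pi\cB$ — here I would use \cite[Proposition 2.3]{DCapOne}, which as noted in the proof of Lemma \ref{ULbarFP} does not require Noetherianity. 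Thus the Čech complex for $\hsULK$ is, up to filtered pieces, built from the Čech complex for the structure presheaf of the affinoid algebra $\cO(\bY)$ tensored with a fixed finite free module, and one reduces to Tate acyclicity for $\cO$. The passage from the graded/mod-$\pi$ statement back to the $\pi$-adically complete statement is handled by a standard completeness-and-flatness argument: each $\widehat{U(\cB\otimes_\cA\cL)}$ is $\pi$-adically complete and separated and $\cR$-flat (as in Lemma \ref{ULbarFP}), so exactness mod $\pi$ plus the Artin–Rees/completeness machinery of Proposition \ref{ArtinRees} lifts to exactness integrally, and then inverting $\pi$ gives the result over $K$.

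The main obstacle is precisely the non-Noetherianity of $\cR$: several of the finiteness inputs used freely in \cite{DCapOne} (finite generation of kernels of maps of $\widehat{U}$-modules, $\pi$-adic separatedness and completeness of finitely generated modules, the Artin–Rees property) are no longer automatic. However, these have all been re-established in $\S\ref{NoethFlat}$ under Hypothesis \ref{NCadmRalg} — Theorem \ref{KeyRGLemma}, Corollary \ref{HTFabs}, Proposition \ref{ArtinRees} — and Lemma \ref{ULbarFP} verifies that Hypothesis \ref{NCadmRalg} holds for $\widehat{U(\cL)}$ when $[\cL,\cL]\subseteq\pi\cL$ and $\cL\cdot\cA\subseteq\pi\cA$. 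So the actual work of the proof is: (i) observe that after rescaling $\cL$ by a $\pi$-power these hypotheses are satisfied, so the whole toolkit of $\S\ref{NoethFlat}$ applies; (ii) carry out the Čech-complex-plus-associated-graded argument of \cite[\S 4--5]{DCapOne} verbatim, substituting the new non-Noetherian lemmas wherever \cite{DCapOne} invoked Noetherianity of $\cR$; and (iii) check Tate's Acyclicity Theorem \cite[Theorem 8.2.1/1]{BGR} is available for $\cO$ over a general complete non-archimedean field, which it is. I would present the proof as an adaptation, pointing to the specific places in \cite{DCapOne} where a substitution is needed rather than reproducing the entire argument.
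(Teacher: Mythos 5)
The paper's own proof of Theorem~\ref{TateHSULK} is a single sentence: ``This is \cite[Theorem 3.5]{DCapOne}. The proof of this result does not require $\cR$ to be Noetherian.'' That is, the authors claim the original argument of \cite{DCapOne} goes through \emph{verbatim}, with no substitutions needed. Your proposal takes a substantially different --- and in fact more laborious --- route, and it also has a genuine gap.

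First, a mislabelling: you cite \cite[Theorem 4.9]{DCapOne} as the ``corresponding Noetherian statement,'' but that is the \emph{flatness} result, which corresponds to Theorem~\ref{HSULKflat} in this paper. The Tate-acyclicity statement corresponds to \cite[Theorem 3.5]{DCapOne}. The distinction matters: it is precisely the flatness result, not the acyclicity result, that requires the new non-Noetherian machinery of \S\ref{NoethFlat} (Theorem~\ref{KeyRGLemma}, Corollary~\ref{HTFabs}, Proposition~\ref{ArtinRees}), and that carries the extra hypotheses $[\cL,\cL]\subseteq\pi\cL$ and $\cL\cdot\cA\subseteq\pi\cA$. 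Theorem~\ref{TateHSULK} is stated for an arbitrary smooth $(\cR,\cA)$-Lie algebra $\cL$ with no such rescaling hypothesis, and the paper's position is that none is needed.

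Second, and more seriously, your step ``possibly shrinking $\cL$ by a $\pi$-power'' changes the object you are proving things about. The presheaf $\hsULK$ depends on $\cL$: replacing $\cL$ by $\pi^n\cL$ replaces $\widehat{U(\cB\otimes_\cA\cL)}_K$ by $\widehat{U(\cB\otimes_\cA\pi^n\cL)}_K$, which is a \emph{strictly smaller} $K$-Banach algebra in general (the latter allows only power series whose coefficients decay faster). So establishing acyclicity of the \v{C}ech complex for $\widehat{\sU(\pi^n\cL)}_K$ does not, without further argument, give acyclicity for $\hsULK$, and you never address this transfer. Your proposal would therefore prove a different statement from the one asserted. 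The fix is simply to drop the rescaling and trust that the filtration-and-associated-graded argument of \cite[\S 3]{DCapOne} --- which reduces to Tate acyclicity for $\cO$ via the PBW/deformation structure, not via any finiteness of kernels or Artin--Rees machinery --- never invoked Noetherianity of $\cR$ in the first place. In other words, the content of this theorem is that there was \emph{nothing} to adapt; the substantive non-Noetherian work in this paper is concentrated in \S\ref{NoethFlat} and used for Theorems~\ref{HorizFlat}, \ref{HSULKflat}, \ref{OldKiehl}, not here.
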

\begin{proof} This is \cite[Theorem 3.5]{DCapOne}. The proof of this result does not require $\cR$ to be Noetherian.\end{proof}

We will next work towards adapting a basic result about flatness from \cite{DCapOne}, namely \cite[Theorem 4.5]{DCapOne} to our non-Noetherian setting. Granted Theorem \ref{KeyRGLemma}, the proof of this generalisation of \cite[Theorem 4.5]{DCapOne} carries over to case where $\cR$ is not Noetherian with minor modifications, but we will repeat it here nevertheless for the benefit of the reader.

Let $A$ be a $K$-affinoid algebra and fix $f \in A$. Let $\cA$ be an affine formal model for $A$, and choose $a \in \mathbb{N}$ such that $\pi^af \in \cA$. Define
\[ u_1 = \pi^a t - \pi^a f \qmb{and} u_2 := \pi^a f t - \pi^a \in \cA\langle t\rangle.\]
Let $\bX := \Sp(A)$ and let $C_i =  A\langle t\rangle/ u_i  A\langle t\rangle$ be the $K$-affinoid algebras corresponding to the Weierstrass and Laurent subdomains 
\[\bX_1 := \bX(f) = \Sp(C_1) \qmb{and} \bX_2 := \bX(1/f) = \Sp(C_2)\]
 of $\bX$, respectively.  Now let $\cL$ be an $(\cR,\cA)$-Lie algebra such that $\cL \cdot f \subset \cA$. Recall that by \cite[Proposition 4.2]{DCapOne}, it is possible to lift the action of $\cL$ on $\cA$ to an action of $\cL$ on $ \cA\langle t\rangle$ in two different ways. These are given by $\cR$-Lie algebra homomorphisms  \[\sigma_1, \sigma_2 : \cL \to \Der_\cR(\cA\langle t\rangle) \] 
such that $\sigma_i(x)(a) = x \cdot a$ for all $x \in \cL$, $a\in \cA$ and $i=1,2$, and
\[\sigma_1(x)(t) = x \cdot f \qmb{and} \sigma_2(x)(t) = -t^2 (x \cdot f) \qmb{for all} x \in \cL.\]
Now \cite[Lemma 2.2]{DCapOne} implies that 
\[\cL_i :=  \cA\langle t\rangle\otimes_{\cA} \cL\]
becomes an $(\cR,  \cA\langle t\rangle)$-Lie algebra with anchor map $1 \otimes \sigma_i$.

\begin{lem}\label{XiLAdm} Let $\cL$ be an $(\cR,\cA)$-Lie algebra and let $f \in A$ be a non-zero element such that $\cL \cdot f \subset \cA$. Then the affinoid subdomains $\bX_i$ of $\bX$ are $\cL$-admissible.
\end{lem}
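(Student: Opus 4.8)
The plan is to show directly that each $\bX_i$ admits an $\cL$-stable affine formal model by constructing one inside the affinoid algebra $C_i$. We already have at our disposal, by \cite[Proposition 4.2]{DCapOne}, the lifted $\cR$-Lie algebra homomorphisms $\sigma_i : \cL \to \Der_\cR(\cA\langle t\rangle)$, so the action of $\cL$ on $\cA$ extends to an action on $\cA\langle t\rangle$. The natural candidate for an $\cL$-stable affine formal model in $C_i$ is the image $\cC_i$ of $\cA\langle t \rangle$ under the quotient map $\cA\langle t\rangle \to C_i = A\langle t\rangle / u_i A\langle t\rangle$, or more precisely $\cC_i := \cA\langle t\rangle / (u_i \cA \langle t \rangle \cap (\cA\langle t\rangle \to C_i)^{-1}\text{-torsion})$; in any case the image of $\cA\langle t\rangle$ in $C_i$ is an affine formal model provided one checks it is bounded. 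First I would verify that $\sigma_i(\cL)$ stabilises the ideal $u_i \cA\langle t\rangle$, or at least that it descends to a derivation action on the relevant formal model: this is the computation $\sigma_i(x)(u_i) \in u_i \cA\langle t\rangle$ (possibly after rescaling $\cL$ by a $\pi$-power), using the explicit formulas $\sigma_1(x)(t) = x\cdot f$ and $\sigma_2(x)(t) = -t^2(x\cdot f)$ together with the hypothesis $\cL \cdot f \subseteq \cA$. For $i=1$ one computes $\sigma_1(x)(u_1) = \sigma_1(x)(\pi^a t - \pi^a f) = \pi^a (x \cdot f) - \pi^a (x\cdot f) = 0$, which is clean; for $i = 2$ one computes $\sigma_2(x)(u_2) = \sigma_2(x)(\pi^a f t - \pi^a) = \pi^a (x\cdot f) t + \pi^a f \cdot(-t^2(x\cdot f)) = \pi^a (x\cdot f) t (1 - ft) = -\pi^a(x\cdot f) t \cdot \pi^{-a} u_2 \in u_2 \cA\langle t\rangle$ after absorbing the $\pi^{-a}$ into a rescaling of $\cL$, or directly since $(x\cdot f) t u_2 \in \cA\langle t\rangle \cdot u_2$.

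Next I would assemble these into the statement that $\bX_i$ is $\cL$-admissible in the sense of Definition \ref{DefnLAdm}: the pullback map $\sigma : A \to C_i$ sends $\cA$ into $\cC_i$ by construction, and the action of $\cL$ on $\cA$ lifts to $\cC_i$ because $\sigma_i$ stabilises the defining ideal (after the rescaling). One should double-check that $\cC_i$ is genuinely an admissible $\cR$-algebra — i.e. $\pi$-torsion-free and topologically of finite presentation — which follows because $\cC_i$ is a quotient of $\cA\langle t\rangle$ by a finitely generated ideal and one can take the $\pi$-torsion-free quotient; the key point that it still spans $C_i$ over $K$ is immediate since $\cA\langle t\rangle$ spans $A\langle t\rangle$. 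I expect this to largely be a matter of invoking the surrounding machinery of \cite{DCapOne}.

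The main obstacle, and the only place real care is needed, is the bookkeeping with $\pi$-powers: the lift $\sigma_i$ does not literally preserve $\cA\langle t\rangle$ as written unless one is careful about where the factors $\pi^a$ from $u_i$ sit, and one may need to replace $\cL$ by $\pi^n\cL$ for suitable $n$ (depending on $a$) so that $\sigma_i(\pi^n\cL)$ genuinely maps $u_i \cA\langle t\rangle$ into itself; fortunately the definition of $\cL$-admissibility only requires the existence of \emph{some} $\cL$-stable model and is stable under such rescaling, or one absorbs this into the statement by noting $\cL\cdot f \subseteq \cA$ already builds in the needed denominators. I would also remark that this lemma is the exact analogue of the corresponding step in \cite[\S 4]{DCapOne}, so the proof there can be followed essentially verbatim, with Theorem \ref{KeyRGLemma} replacing any appeal to Noetherianity of $\cR$ (though in fact no such appeal is needed for this particular statement).
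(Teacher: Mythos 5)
Your approach is essentially the paper's: take $\overline{\cC_i}$, the image of $\cC_i := \cA\langle t\rangle / u_i\cA\langle t\rangle$ in $C_i$, as the candidate affine formal model, and verify $\cL$-stability via the computations $\sigma_1(x)(u_1)=0$ and $\sigma_2(x)(u_2)=-(x\cdot f)\,t\,u_2$, which show $u_i\cA\langle t\rangle$ is $\sigma_i(\cL)$-stable. Your worry about needing to rescale $\cL$ by a $\pi$-power is a red herring — as you note yourself, the factors of $\pi^a$ cancel identically in the $i=2$ computation, and the hypothesis $\cL\cdot f\subseteq\cA$ already ensures $(x\cdot f)\,t\,u_2 \in u_2\cA\langle t\rangle$ with no rescaling.

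The genuine gap is your parenthetical claim that ``in fact no such appeal [to Theorem~\ref{KeyRGLemma}] is needed for this particular statement.'' That is exactly backwards, and it is the one point where the non-Noetherian hypothesis bites. Spanning $C_i$ over $K$ and being $\cR$-torsion-free are indeed immediate, but \emph{admissibility} requires $\overline{\cC_i}$ to be topologically of finite presentation over $\cR$. This amounts to showing that the kernel of $\cA\langle t\rangle \twoheadrightarrow \overline{\cC_i}$ — equivalently, the $\pi$-torsion submodule of $\cC_i$ — is finitely generated. When $\cR$ is Noetherian this is free, since $\cA\langle t\rangle$ is then Noetherian. In the general valuation-ring setting it is not, and the paper invokes Theorem~\ref{KeyRGLemma} (the Raynaud--Gruson input) applied to the zero Lie algebra on $\cU = \cA\langle t\rangle$, with $\cM = \cC_i$ and $\cN$ its $\pi$-torsion submodule: since $\cM/\cN = \overline{\cC_i}$ is $\cR$-torsion-free, $\cN$ is finitely generated, so $\overline{\cC_i}$ is finitely presented over $\cA\langle t\rangle$ and hence an admissible $\cR$-algebra. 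Your phrase ``one can take the $\pi$-torsion-free quotient'' glides over precisely this step; without it the argument does not close.
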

\begin{proof} Remember that $C_i = A \langle t \rangle / u_i A \langle t \rangle$ by construction, so the image $\overline{\cC_i}$ of $\cC_i :=  \cA\langle t\rangle/ u_i  \cA\langle t\rangle$ in $C_i$ spans $C_i$ as a $K$-vector space. A direct calculation shows that
\[ \sigma_1(x)(u_1) = 0 \qmb{and} \sigma_2(x)(u_2) = -(x\cdot f) t u_2 \qmb{for all} x \in \cL.\]
It follows that $u_i  \cA\langle t\rangle$ is a $\sigma_i(\cL)$-stable ideal of $ \cA\langle t\rangle$, and therefore $\overline{\cC_i}$ is an $\cL$-stable $\cR$-subalgebra of $C_i$. On the other hand, $\overline{\cC_i}$ is $\cR$-torsionfree and a finitely generated module over the admissible $\cR$-algebra $\cA\langle t\rangle$. Therefore it is a finitely presented $\cA\langle t \rangle$-module by Theorem \ref{KeyRGLemma} applied to the zero $\cA\langle t \rangle$-Lie algebra. Thus $\overline{\cC_i}$ is itself an admissible $\cR$-algebra and is therefore an $\cL$-stable affine formal model in $C_i$. Hence $\bX_i$ is $\cL$-admissible as claimed.
\end{proof}

\begin{prop}\label{UlXiPres} Let $\cL$ be a smooth $(\cR,\cA)$-Lie algebra such that $[\cL,\cL]\subseteq \pi\cL$, and let $f\in A$ be a non-zero element such that $\cL \cdot f \subset \cA$.  There is a short exact sequence 
\[ 0 \to \hK{U(\cL_i)} \stackrel{u_i \cdot}{\longrightarrow} \hK{U(\cL_i)} \to \hsULK(\bX_i) \to 0\]
of right $\hK{U(\cL_i)}$-modules.
\end{prop}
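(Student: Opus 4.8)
The plan is to establish the short exact sequence by exhibiting $\hsULK(\bX_i)$ as a quotient of $\hK{U(\cL_i)}$ by the left-and-right ideal generated by $u_i$, and to verify that left-multiplication by $u_i$ is injective on $\hK{U(\cL_i)}$. First I would fix an $\cL$-stable affine formal model $\overline{\cC_i}$ in $C_i$ as produced in the proof of Lemma \ref{XiLAdm}, so that $\hsULK(\bX_i) = \htf{U(\overline{\cC_i} \otimes_{\cA} \cL)} \otimes_\cR K$ by Definition \ref{HSULKpresheaf}. The key input is the presentation $\overline{\cC_i} = \cA\langle t\rangle / u_i \cA\langle t\rangle$ (up to bounded $\pi$-torsion, which disappears after $\otimes_\cR K$), together with the fact that $u_i \cA\langle t\rangle$ is $\sigma_i(\cL)$-stable, which was checked in Lemma \ref{XiLAdm} via the formulas $\sigma_1(x)(u_1) = 0$ and $\sigma_2(x)(u_2) = -(x\cdot f)tu_2$. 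By the functoriality of $U(-)$ (Lemma \ref{Ufunctor}) applied to the quotient map $\cA\langle t\rangle \to \overline{\cC_i}$ and the induced $(\cR,\cA\langle t\rangle)$-Lie algebra morphism $\cL_i \to \overline{\cC_i}\otimes_{\cA}\cL$, one obtains a surjection $U(\cL_i) \twoheadrightarrow U(\overline{\cC_i} \otimes_{\cA}\cL)$. I would then argue, exactly as in the proof of \cite[Proposition 4.3]{DCapOne}, that the kernel of this surjection is the \emph{two-sided} ideal generated by $u_i$ — here one uses that $u_i$ is a normal element in the relevant sense (conjugation by $u_i$ acts on $U(\cL_i)$ via an automorphism, because $u_i\cA\langle t\rangle$ is an ideal stable under the Lie action), so the left ideal $u_i U(\cL_i)$ coincides with the two-sided ideal; for $i=1$ this is immediate since $\sigma_1(x)(u_1) = 0$ means $u_1$ is central modulo lower filtration terms, and for $i=2$ one uses $\sigma_2(x)(u_2) = -(x\cdot f)tu_2 \in u_2 U(\cL_2)$.

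Next I would pass to $\pi$-adic completions. Since all the $\cR$-algebras in sight ($U(\cL_i)$, $U(\overline{\cC_i}\otimes_\cA\cL)$, and the relevant torsion-free quotients) are $\pi$-adically complete after we take $\htf{-}$, and since $\overline{\cC_i}$ is $\cR$-flat, the exact sequence $0 \to u_i U(\cL_i) \to U(\cL_i) \to U(\overline{\cC_i}\otimes_\cA\cL) \to 0$ (on the torsion-free parts) remains exact after applying $\htf{-}$, by the argument used at the end of the proof of Theorem \ref{HKULflat}: flatness over $\cR$ ensures $\pi$-adic completion is exact here. Tensoring with $K$ then gives the right-exact part $\hK{U(\cL_i)} \to \hsULK(\bX_i) \to 0$ with kernel $\overline{u_i \hK{U(\cL_i)}}$, the closure of the image of $u_i\cdot$. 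To finish I must show this kernel equals $u_i \hK{U(\cL_i)}$ and that $u_i\cdot : \hK{U(\cL_i)} \to \hK{U(\cL_i)}$ is injective with closed image.

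For injectivity and closedness of the image: I would use that $U(\cL_i)$ is flat over $\cA\langle t\rangle$ by \cite[Theorem 3.1]{Rinehart} (Rinehart's theorem, since $\cL_i$ is smooth), and $u_i$ is a nonzerodivisor in $\cA\langle t\rangle$ (it is a nonzerodivisor because $\cA\langle t\rangle$ is a domain when $A$ is, or more carefully because $u_i = \pi^a(t-f)$ resp.\ $\pi^a(ft-1)$ is a nonzerodivisor in the reduced ring $\cA\langle t\rangle$); hence $u_i$ is a nonzerodivisor in $U(\cL_i)$, so $u_i\cdot$ is injective on $U(\cL_i)$. Then, since $[\cL_i,\cL_i]\subseteq\pi\cL_i$, Lemma \ref{ULbarFP} applies and $\h{U(\cL_i)}$ satisfies Hypothesis \ref{NCadmRalg}; the quotient $\h{U(\cL_i)}/u_i\h{U(\cL_i)} \cong \htf{U(\overline{\cC_i}\otimes_\cA\cL)}$ (or at least has this as its torsion-free part) is $\cR$-torsion-free, so by Theorem \ref{KeyRGLemma} the left ideal $u_i\h{U(\cL_i)}$ is finitely generated, hence closed, hence $\pi$-adically complete, and $u_i\cdot$ on $\h{U(\cL_i)}$ is injective (its kernel is $\pi$-adically separated by Proposition \ref{ArtinRees}(b) and maps injectively into $U(\cL_i)_K$ where $u_i\cdot$ is injective). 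Inverting $\pi$ preserves all of this, giving the desired short exact sequence $0 \to \hK{U(\cL_i)} \xrightarrow{u_i\cdot} \hK{U(\cL_i)} \to \hsULK(\bX_i) \to 0$ of right $\hK{U(\cL_i)}$-modules.

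The main obstacle I anticipate is the bookkeeping around $\cR$-torsion: $\cA\langle t\rangle/u_i\cA\langle t\rangle$ need not literally be $\cR$-torsion-free (only \emph{bounded} $\pi$-torsion, which is why $\overline{\cC_i}$ is defined as the image in $C_i$), so one cannot naively claim $U(\overline{\cC_i}\otimes_\cA\cL) = U(\cL_i)/u_iU(\cL_i)$ on the nose — one must either work consistently with $\htf{-}$ throughout and check that the torsion sub
module is killed by a fixed power of $\pi$ (so that it vanishes after $\otimes_\cR K$), or invoke \cite[Proposition 2.3]{DCapOne} to reduce mod $\pi$ and compare symmetric algebras as in Lemma \ref{ULbarFP}. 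This is precisely the point where the hypothesis $[\cL,\cL]\subseteq\pi\cL$ (giving a \emph{commutative} associated graded, hence controllable torsion) is used, and I would mirror the corresponding step in \cite[\S 4]{DCapOne} closely rather than re-deriving it from scratch.
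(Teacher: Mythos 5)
Your plan follows essentially the same route as the paper: express $\hsULK(\bX_i)$ via $\overline{\cC_i}\otimes_\cA\cL$, exhibit $u_i$ as the relevant presentation element, deal with the bounded $\pi$-torsion discrepancy between $\cC_i = \cA\langle t\rangle/u_i\cA\langle t\rangle$ and $\overline{\cC_i}$, and then $\pi$-complete and invert $\pi$. The one detour you take is the ``normality of $u_i$'' argument; this is unnecessary. Since the statement is about right $\hK{U(\cL_i)}$-modules and the first map is left multiplication by $u_i$, the image $u_iU(\cL_i)$ is automatically a right submodule --- you do not need it to be a two-sided ideal, and you certainly do not need to identify the kernel of $U(\cL_i)\to U(\overline{\cC_i}\otimes_\cA\cL)$ with the two-sided ideal $(u_i)$ (that kernel is in fact strictly larger whenever $\cC_i$ has $\pi$-torsion). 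The paper avoids the whole issue by tensoring the pair of exact rows
\[ 0 \to \cA\langle t\rangle \xrightarrow{u_i\cdot} \cA\langle t\rangle \to \cC_i \to 0 \qmb{and} 0 \to \cJ_i \to \cA\langle t\rangle \to \overline{\cC_i} \to 0 \]
over $\cA$ with the flat (Rinehart) module $U(\cL)$ and using \cite[Proposition~2.3]{DCapOne} to identify $\cA\langle t\rangle\otimes_\cA U(\cL)$ with $U(\cL_i)$; the kernel of $U(\cL_i)\xrightarrow{u_i\cdot}U(\cL_i)$ being zero and the cokernel being $\cC_i\otimes_\cA U(\cL)\cong U(\cC_i\otimes_\cA\cL)$ then drop out for free, and the comparison with $\overline{\cC_i}$ is handled by the second row via bounded-torsion vertical maps. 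Your closing paragraph correctly identifies the torsion bookkeeping as the delicate point; this two-row diagram is precisely the clean way to carry it out, and it makes both the normality discussion and the separate appeal to Proposition~\ref{ArtinRees}(b) for injectivity redundant.
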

\begin{proof}
Recall from the proof of Lemma \ref{XiLAdm} the $\cR$-algebra $\cC_i = \cA\langle t \rangle / u_i \cA\langle t \rangle$ and its image $\overline{\cC_i}$ in $A\langle t \rangle / u_i A \langle t \rangle$. Let $\cJ_i / u_i \cA\langle t \rangle$ be the kernel of the canonical surjection $\cA \langle t \rangle \twoheadrightarrow \overline{\cC_i}$; it follows from Theorem \ref{KeyRGLemma} that it is killed by a power of $\pi$. Then we have the following commutative diagram of $\cA\langle t \rangle$-modules 
\[ \xymatrix{0 \ar[r] &  \cA \langle t \rangle \ar[r]^{u_i\cdot}\ar[d] & \cA \langle t \rangle \ar[r]\ar@{=}[d] &  \cC_i   \ar[d]\ar[r] & 0 \\
0 \ar[r] &                  \cJ_i  \ar[r] & \cA \langle t \rangle \ar[r]         & \overline{\cC_i} \ar[r] & 0 }\]
with exact rows. Here, the first vertical arrow sends $1$ to $u_i \in \cJ_i$, so it is injective with bounded $\pi$-torsion cokernel, and the third vertical arrow is surjective with bounded $\pi$-torsion kernel. Tensoring this diagram on the right with the flat $\cA$-module $U(\cL)$ and appealing to \cite[Proposition 2.3]{DCapOne} produces a commutative diagram of right $U(\cL_i)$-modules 
\[ \xymatrix{0 \ar[r] & U(\cL_i) \ar[r]^{u_i\cdot}\ar[d] & U(\cL_i) \ar[r]\ar@{=}[d] &  U(\cC_i \otimes_{\cA} \cL)   \ar[d]\ar[r] & 0 \\
0 \ar[r] &                  \cJ_i \otimes_{\cA} U(\cL) \ar[r] & U(\cL_i) \ar[r]         & U(\overline{\cC_i}\otimes_{\cA}\cL) \ar[r] & 0 }\]
with exact rows. Again, the first vertical arrow is injective with bounded $\pi$-torsion cokernel, and the third vertical arrow is surjective with bounded $\pi$-torsion cokernel. Now apply the $\pi$-adic completion functor to this diagram, and invert $\pi$. Because $U(\overline{C_i} \otimes_{\cA} \cL) \cong \overline{\cC_i} \otimes_{\cA} U(\cL)$ is a flat $\cR$-module, the bottom row stays exact after $\pi$-adic completion, so we obtain a commutative diagram of $\hK{U(\cL_i)}$-modules
\[ \xymatrix{0 \ar[r] & \hK{U(\cL_i)} \ar[r]^{u_i\cdot}\ar[d] & \hK{U(\cL_i)} \ar[r]\ar@{=}[d] &  \hK{U(\cC_i \otimes_{\cA} \cL)} \ar[d]& \\
0 \ar[r] &                  \h{\cJ_i \otimes_{\cA} U(\cL)}_K \ar[r] & \hK{U(\cL_i)} \ar[r]         & \hK{U(\overline{\cC_i}\otimes_{\cA}\cL)} \ar[r] & 0 }\]
with exact rows. Since $\h{(-)}_K$ sends morphisms with bounded $\pi$-torsion kernel and cokernel to isomorphisms, the vertical arrows in this diagram are isomorphisms. This produces the required short exact sequence in the statement because $\hsULK(\bX_i)\cong \hK{U(\overline{\cC_i}\otimes_{\cA}\cL)}$ by construction.
\end{proof}

We can now extend \cite[Theorem 4.5]{DCapOne} to the non-Noetherian setting, under mild new restrictions on $\cL$. 

\begin{thm}\label{HorizFlat} Let $\bX$ be a $K$-affinoid variety and let $f \in \cO(\bX)$ be non-zero. Let $\cA$ be an affine formal model in $\cO(\bX)$ and let $\cL$ be a smooth $(\cR,\cA)$-Lie algebra such that $[\cL, \cL] \subseteq \pi \cL$ and $\cL \cdot f \subseteq \pi \cA$. Let $\bX_1 = \bX(f)$ and $\bX_2 = \bX(1/f)$. Then $\hsULK(\bX_i)$ is a flat left and right $\hsULK(\bX)$-module for $i=1$ and $i=2$.
\end{thm}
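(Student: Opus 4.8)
The plan is to reduce the statement about flatness of $\hsULK(\bX_i)$ over $\hsULK(\bX)$ to the abstract flatness result Theorem~\ref{AbstractFlatness}, using the explicit presentation of $\hsULK(\bX_i)$ obtained in Proposition~\ref{UlXiPres}. First I would set up notation exactly as in the discussion preceding Proposition~\ref{UlXiPres}: choose $a \in \N$ with $\pi^a f \in \cA$, form $u_1 = \pi^a t - \pi^a f$ and $u_2 = \pi^a f t - \pi^a$ in $\cA\langle t\rangle$, and let $\cL_i = \cA\langle t\rangle \otimes_{\cA} \cL$ with the anchor map $1 \otimes \sigma_i$. By Lemma~\ref{XiLAdm} the subdomains $\bX_i$ are $\cL$-admissible, so $\hsULK(\bX_i)$ is defined, and by Proposition~\ref{UlXiPres} there is a short exact sequence of right $\hK{U(\cL_i)}$-modules
\[ 0 \to \hK{U(\cL_i)} \xrightarrow{u_i \cdot} \hK{U(\cL_i)} \to \hsULK(\bX_i) \to 0. \]
Since $\hsULK(\bX) = \hK{U(\cL)}$ and $\hK{U(\cL_i)}$ is a suitable skew-Tate completion of $U(\cL)[t]$, the idea is that $\hsULK(\bX_i) \cong V/(Y-y)V$ for an appropriate choice of $\cU$, $y$ in the framework of Theorem~\ref{AbstractFlatness}, with $U = \hsULK(\bX)$.

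The key technical point is to identify $\hK{U(\cL_i)}$ as a skew-Tate algebra over $\cU := \h{U(\cL)}$. Concretely, $U(\cL_i) = U(\cA\langle t\rangle \otimes_\cA \cL)$; because $\cL$ is free of finite rank over $\cA$ (this is where I use the hypothesis beyond \cite{DCapOne}, together with $[\cL,\cL] \subseteq \pi\cL$), one checks that $U(\cL_i)$ is the free $\cA\langle t\rangle$-module on an ordered-monomial basis in the generators of $\cL$, and likewise $U(\cL)$ is free over $\cA$; hence $U(\cL_i) \cong U(\cL)\langle t; \delta_i\rangle$ where $\delta_i$ is the derivation of $U(\cL)$ extending the $\cL$-action on $\cA\langle t\rangle$ — more precisely, $t$ does not commute with $\cL$, and the commutator $[x, t] = -\sigma_i(x)(t) \in \cA$ for $x \in \cL$, so after $\pi$-adic completion $\cV := \h{U(\cL_i)}$ is $\h{\cU[Y;\delta]}$ for $Y = t$ and $\delta = -\ad_{\cU}(\text{something})$. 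I would then need to verify Hypothesis~\ref{NCadmRalg} for $\cU = \h{U(\cL)}$: this follows from Lemma~\ref{ULbarFP} after replacing $\cL$ by $\pi\cL$ if necessary so that $[\cL,\cL] \subseteq \pi\cL$ and $\cL \cdot \cA \subseteq \pi\cA$ — but one must be careful here because the hypothesis of the theorem only gives $[\cL,\cL] \subseteq \pi\cL$ and $\cL \cdot f \subseteq \pi\cA$, not $\cL \cdot \cA \subseteq \pi \cA$; I expect one rescales $\cL$ once at the start, noting that $\hsULK(\bX)$ and $\hsULK(\bX_i)$ are unchanged under passing to $\hK{(-)}$ of a $\pi$-power rescaling (as $U(\pi\cL)_K = U(\cL)_K$ etc.), so WLOG $\cL \cdot \cA \subseteq \pi\cA$ as well.

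With $\cU$ satisfying Hypothesis~\ref{NCadmRalg}, I then pick $y \in U = \cU_K$ to be the element $\pi^a f$ (viewed inside $\cU \subseteq U$) in the Weierstrass case, and $y = $ the corresponding element in the Laurent case, and check $[y,\cU] \subseteq \pi\cU$ — this uses exactly $\cL \cdot f \subseteq \pi\cA$ together with $[\cL,\cL] \subseteq \pi\cL$, since $y$ is a scalar multiple of $f \in \cA$ and $[\ell, f] = \ell\cdot f$ for $\ell \in \cL$. Then $\delta = \ad(y)|_\cU$ agrees (up to the sign bookkeeping that makes $\sigma_i$ come out right) with the derivation appearing in $\cV = \h{\cU[Y;\delta]}$, and Theorem~\ref{AbstractFlatness} gives that $V/(Y - y)V$ is flat over $U$ on both sides. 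Finally I must match $Y - y$ with $u_i$: in the Weierstrass case $u_1 = \pi^a(t - f)$, and $\pi^a$ is a unit in $V = \cV_K$, so $u_1 V = (t-f)V = (Y - y)V$ and hence $\hsULK(\bX_1) \cong \hK{U(\cL_1)}/u_1 \hK{U(\cL_1)} = V/(Y-y)V$ by Proposition~\ref{UlXiPres}; the Laurent case $u_2 = \pi^a(ft - 1)$ requires an extra twist, writing $V/(Y-y)V$ for a different but conjugate skew-Tate presentation (swapping the roles of $t$ and $1/f$, using $\sigma_2(x)(t) = -t^2(x\cdot f)$), which I would handle by the same argument applied with $y$ replaced by the appropriate Laurent-variable element, exactly as in \cite[Theorem 4.5]{DCapOne}.

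The main obstacle I anticipate is the careful bookkeeping in the Laurent case $i=2$: identifying $\hK{U(\cL_2)}$ as a genuine skew-Tate algebra $\h{\cU[Y;\delta]}_K$ over $\cU = \h{U(\cL)}$ with the right $\delta$, given that the anchor map involves $\sigma_2(x)(t) = -t^2(x\cdot f)$ which is quadratic in $t$ rather than affine, so $U(\cL_2)$ is not literally a skew-polynomial ring in $t$ over $U(\cL)$ but only becomes one after the change of variable adapted to the Laurent chart. I would follow \cite[Theorem 4.5]{DCapOne} closely here, since that argument already handles precisely this point in the Noetherian setting, and the only new input needed is that Theorem~\ref{AbstractFlatness} and Theorem~\ref{KeyRGLemma} now cover the non-Noetherian base $\cR$ — everything else is formal manipulation that is insensitive to the Noetherian hypothesis.
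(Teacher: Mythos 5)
Your strategy is sound for the Weierstrass case $i=1$, modulo a small fix: take $y = f$ rather than $\pi^a f$. The commutation relations $[t,x] = [f,x]$ for $x \in \cL$ and $[t,a] = 0$ for $a \in \cA$ identify $\hK{U(\cL_1)}$ with the skew-Tate algebra $\hK{\cU[Y;\delta]}$ via $Y \mapsto t$, where $\delta := \ad(f)|_{\cU}$, and then $u_1 = \pi^a(t-f)$ generates the same right ideal as $Y - f$. Theorem \ref{AbstractFlatness} is stated for $y \in U$ (``possibly $y \notin \cU$'') precisely because $f$ typically does not lie in $\cA$, so there is no need to replace $f$ by $\pi^a f$. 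On the other hand, your rescaling claim is false: $\hK{U(\pi\cL)}$ is \emph{not} isomorphic to $\hK{U(\cL)}$ as a $K$-Banach algebra. Although $U(\pi\cL)_K = U(\cL)_K$ abstractly, the $\pi$-adic completions are taken with respect to different lattices and differ genuinely (already in rank one, one algebra consists of series $\sum c_n x^n$ with $|c_n|\to 0$ and the other requires the stronger decay $|c_n|\,|\pi|^{-n}\to 0$). So you cannot pass to $\pi\cL$ to arrange $\cL\cdot\cA\subseteq\pi\cA$ without changing the ring whose flatness you are trying to prove; if that hypothesis is needed to make $\cU/\pi\cU$ commutative (Lemma \ref{ULbarFP}), it must be carried as a hypothesis, as in Theorem \ref{HSULKflat}.

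The more serious gap is the Laurent case $i=2$, which genuinely does \emph{not} reduce to Theorem \ref{AbstractFlatness}, and no change of variable rescues it. The obstruction you flag --- that $[t,x] = t^2(x\cdot f)$ is quadratic in $t$ --- is not a bookkeeping nuisance but fatal for the skew-Tate approach: there is no $y \in U$ with $[y,\cU]\subseteq\pi\cU$ for which $\hK{U(\cL_2)} \cong \hK{\cU[Y;\ad(y)|_{\cU}]}$ and $u_2 V = (Y-y)V$, because the adjoint action of $t$ on $\cU$ involves $t^2$ and so cannot be induced by any element of $U$. The correct observation is weaker and sits one level down: $[t,x] = t^2(x\cdot f)$ lies in $\pi\cV$ because $\cL\cdot f\subseteq\pi\cA$, so $t$ is central \emph{modulo $\pi$} even though it is far from central over $\cR$, giving $\cV/\pi\cV \cong (\cU/\pi\cU)[Y]$ with $Y = t + \pi\cV$. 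This is precisely the hypothesis of Proposition \ref{FlatPrepProp}, applied with $y := t \in \cV$ (note this $y$ lies in $\cV$, not in $U$). That proposition supplies both flatness of $V := \hK{U(\cL_2)}$ over $U$ and the $C\langle Y\rangle$-linear isomorphism $V\otimes_U M\cong M\langle t\rangle$; one then quotes the general torsion-freeness of Tate modules (\cite[Lemma 4.1(a)]{DCapOne}, applied to $ft-1$) and \cite[Proposition 4.4]{DCapOne} to conclude that $V/u_2V$ is $U$-flat. Theorem \ref{AbstractFlatness} plays no role at all for $i=2$.
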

\begin{proof} Write $\cU := \h{U(\cL)}$ and $U := \hK{U(\cL)} = \hsULK(\bX)$. Suppose first that $i=1$. Since $U(\cL)$ is generated by $\cA + \cL$ as an $\cR$-algebra and $[f, \cA + \cL] \subseteq \cL \cdot f \subseteq \pi \cA$  we see that $[f, U(\cL)] \subseteq \pi U(\cL)$ and consequently $[f,\cU] \subseteq \pi \cU$. We can therefore form the skew-Tate algebra $V := \hK{\cU[Y;\delta]}$ as above, where $\delta : \cU \to \cU$ is the restriction of $\ad(f) : U \to U$ to $\cU$. We can now compute that inside $\hK{U(\cL_1)}$ we have
\[ [t,x] = -[x,t] = -x\cdot t = -\sigma_1(x)(t) = -x\cdot f = [f,x] \qmb{and} [t,a] = 0 =[f,a]\]
for all $ x\in \cL$ and all $a \in\cA$. Hence there is homomorphism of $K$-Banach algebras $V \to \hK{U(\cL_1)}$ which is the identity on $U$ and sends $Y \in V$ to $t \in \hK{U(\cL_1)}$. Using the universal property of $\cL_1$, we can construct a $K$-Banach algebra homomorphism $\hK{U(\cL_1)} \to V$ which is the identity on $U$ and sends $t$ to $Y$. Thus we may identify $V$ with $\hK{U(\cL_1)}$, and 
\[ \hsULK(\bX_1) \cong \hK{U(\cL_1)} / (t-f) \hK{U(\cL_1)} \cong V / (Y-f) V\]
as $U$-modules by Proposition \ref{UlXiPres}. Hence $\hsULK(\bX_1)$ is a flat $U$-module on both sides by Theorem \ref{AbstractFlatness}.  

Suppose now that $i = 2$, and let $\cV := \h{U(\cL_2)}$. Now the element $t$ satisfies
\[ [t, x] = - x\cdot t = -\sigma_2(x)(t) = t^2(x \cdot f) \in \pi t^2 \cA \qmb{and} [t,a] = 0\]
for all $x \in \cL$ and $a \in \cA$, because $\cL \cdot f \subseteq \pi \cA$. So, the hypotheses of Proposition \ref{FlatPrepProp} are satisfied with $y := t \in \cV$; hence $V := \cV_K = \hK{U(\cL_2)}$ is a flat $U$-module on both sides and there is an $A \langle t \rangle$-linear isomorphism $M \langle t \rangle \cong V \otimes_U M$ for every finitely generated $U$-module $M$, by Proposition \ref{FlatPrepProp}. Because $M \langle t \rangle$ is always $(ft - 1)$-torsionfree by \cite[Lemma 4.1(a)]{DCapOne}, it follows that $V \otimes_U M$ is $u_2 = \pi^a(ft-1)$-torsionfree. Hence $V / u_2 V$ is a flat $U$-module by \cite[Proposition 4.4]{DCapOne}, and the result follows from Proposition \ref{UlXiPres}. \end{proof}

Because $\cR$ is no longer assumed to be Noetherian, we have to modify \cite[Definitions 4.6 and 4.8]{DCapOne} slightly, as follows.

\begin{defn} \label{LaccessDefn} Let $\bX$ be a $K$-affinoid variety.
\be
\item Let $\bY \subseteq \bX$ be a rational subdomain. If $\bY = \bX$, we say that it is \emph{$\cL$-accessible in $0$ steps}. Inductively, if $n \geq 1$ then we say that it is \emph{$\cL$-accessible in $n$ steps} if  there exists a chain $\bY \subseteq \bZ \subseteq \bX$, such that
\begin{itemize}
\item $\bZ \subseteq \bX$ is $\cL$-accessible in $(n-1)$ steps,
\item $\bY = \bZ(f)$ or $\bZ(1/f)$ for some non-zero $f \in \cO(\bZ)$,
\item there is an $\cL$-stable affine formal model $\cC \subseteq \cO(\bZ)$ such that $\cL \cdot f \subseteq \pi \cC$.
\end{itemize}
\item A rational subdomain $\bY \subseteq \bX$ is said to be \emph{$\cL$-accessible} if it is $\cL$-accessible in $n$ steps for some $n \in \mathbb{N}$.
\item An affinoid subdomain $\bY$ of $\bX$ is said to be \emph{$\cL$-accessible} if it is $\cL$-admissible and there exists a finite covering $\bY = \bigcup_{j=1}^r \bX_j$ where each $\bX_j$ is an $\cL$-accessible rational subdomain of $\bX$.
\item A finite affinoid covering $\{\bX_j\}$ of $\bX$ is said to be \emph{$\cL$-accessible} if each $\bX_j$ is an $\cL$-accessible affinoid subdomain of $\bX$.
\ee
\end{defn}

\begin{thm} \label{HSULKflat} Let $\bX$ be a $K$-affinoid variety and let $\cA$ be an affine formal model in $\cO(\bX)$. Let $\cL$ be a smooth $(\cR, \cA)$-Lie algebra such that $[\cL,\cL]\subseteq \pi \cL$ and $\cL \cdot \cA \subseteq \pi \cA$.
\be 
\item Let $\bY \subseteq \bX$ be an $\cL$-accessible affinoid subdomain. \\ Then $\cS(\bY)$ is a flat $\cS(\bX)$-module on both sides.
\item Let $\cX = \{\bX_1, \ldots, \bX_m\}$ be an $\cL$-accessible covering of $\bX$. \\ Then $\bigoplus_{i=1}^m \cS(\bX_j)$
is a faithfully flat $\cS(\bX)$-module on both sides.
\ee\end{thm}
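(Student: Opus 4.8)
The plan is to reduce Theorem \ref{HSULKflat} to the two flatness inputs already available, namely Theorem \ref{HorizFlat} (flatness for single Weierstrass/Laurent localisations) and Theorem \ref{AbstractFlatness}/Theorem \ref{HKULflat} (abstract flatness of skew-Tate constructions), combined with Theorem \ref{TateHSULK} which says $\cL$-admissible coverings are $\hsULK$-acyclic. This is exactly the structure of the proof of the corresponding statement \cite[Theorem 4.9]{DCapOne}, so the work is in checking that nothing used there needs $\cR$ Noetherian beyond what we have now re-established.

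First I would prove part (a). By Definition \ref{LaccessDefn}(c) an $\cL$-accessible affinoid subdomain $\bY$ is covered by finitely many $\cL$-accessible \emph{rational} subdomains $\bX_j$, so by the transitivity of flatness and a \v{C}ech/acyclicity argument it suffices to treat a single $\cL$-accessible rational subdomain, and then by induction on the number of steps in Definition \ref{LaccessDefn}(a) it suffices to treat one Weierstrass or Laurent step $\bY = \bZ(f)$ or $\bZ(1/f)$ with an $\cL$-stable affine formal model $\cC$ in $\cO(\bZ)$ satisfying $\cL \cdot f \subseteq \pi \cC$. Writing $\cL_\bZ := \cC \otimes_{\cA} \cL$, this is a smooth $(\cR,\cC)$-Lie algebra with $[\cL_\bZ, \cL_\bZ] \subseteq \pi \cL_\bZ$ and $\cL_\bZ \cdot \cC \subseteq \pi \cC$, and $\cS(\bZ) \cong \hsULK[\cL_\bZ](\bZ)$, $\cS(\bY) \cong \hsULK[\cL_\bZ](\bY)$ by the independence of the construction on the formal model. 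Theorem \ref{HorizFlat} applied with $(\bZ, \cC, \cL_\bZ, f)$ in place of $(\bX, \cA, \cL, f)$ then gives that $\cS(\bY)$ is flat on both sides over $\cS(\bZ)$, and the inductive hypothesis gives that $\cS(\bZ)$ is flat on both sides over $\cS(\bX)$; composing gives (a). One must double-check that the step from a rational-subdomain cover of $\bY$ back to $\bY$ itself uses only Theorem \ref{TateHSULK} and the fact that flatness can be checked after a faithfully flat base change together with acyclicity of the \v{C}ech complex — which is the content of the argument in \cite{DCapOne} and does not use Noetherianity.

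Next, for part (b): given an $\cL$-accessible covering $\cX = \{\bX_1,\dots,\bX_m\}$, part (a) already shows each $\cS(\bX_j)$ is flat over $\cS(\bX)$, so $\bigoplus_j \cS(\bX_j)$ is flat on both sides. For \emph{faithful} flatness one must show that $\cS(\bX) \to \bigoplus_j \cS(\bX_j)$ is such that a left $\cS(\bX)$-module $M$ with $\left(\bigoplus_j \cS(\bX_j)\right)\otimes_{\cS(\bX)} M = 0$ must vanish. The standard route, following \cite{DCapOne}, is: it suffices to show that for every maximal left ideal (or every point of $\Sp A$) the localisation is detected on some $\bX_j$; concretely, one uses that $\cS(\bX_j) \otimes_{\cS(\bX)} M$ computes the sections over $\bX_j$ of the coherent-type module associated to $M$ via the sheaf $\hsULK$, Theorem \ref{TateHSULK} gives the exactness of the \v{C}ech complex $0 \to M \to \bigoplus_j \cS(\bX_j)\otimes M \to \bigoplus_{j,k}\cS(\bX_j \cap \bX_k)\otimes M$ (after checking the intersections $\bX_j \cap \bX_k$ are again $\cL$-accessible, or at least $\cL$-admissible, which follows since rational subdomains of rational subdomains are rational and accessibility is inherited), and hence $M \hookrightarrow \bigoplus_j \cS(\bX_j)\otimes_{\cS(\bX)} M$, forcing $M = 0$ when the right side vanishes.

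The main obstacle, and the step I would be most careful about, is verifying that the $\cL$-accessibility (or at least $\cL$-admissibility) hypothesis is preserved under the finite intersections $\bX_j \cap \bX_k$ appearing in the \v{C}ech complex, and that the acyclicity statement Theorem \ref{TateHSULK} applies to the relevant cover of each $\bX_j$ — in \cite{DCapOne} this is handled by a careful bookkeeping lemma about accessible coverings, and one needs to confirm that re-proof carries over verbatim now that Theorem \ref{KeyRGLemma} supplies the finite-presentation facts that the Noetherian hypothesis previously gave for free. Everything else — transitivity of flatness, passing flatness through finite direct sums, the equivalence between ``$\cS(\bX_j)\otimes_{\cS(\bX)}M$'' and ``sections of the localised sheaf'' — is formal and identical to the Noetherian case.
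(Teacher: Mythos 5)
Your proposal is correct and follows the same route as the paper: the paper's proof simply observes that \cite[Theorem 4.9]{DCapOne} carries over verbatim once Theorem \ref{HorizFlat} is substituted for the Noetherian-only \cite[Theorem 4.5]{DCapOne}, and your expansion traces exactly that chain (rational subdomains by induction on Weierstrass/Laurent steps via Theorem \ref{HorizFlat}, then passage to general $\cL$-accessible affinoid subdomains and faithful flatness via Theorem \ref{TateHSULK}-acyclicity). The one phrasing to tighten is that exactness of the tensored \v{C}ech complex $0 \to M \to \bigoplus_j \cS(\bX_j)\otimes M \to \cdots$ is not directly an instance of Theorem \ref{TateHSULK}; it additionally uses the flatness of the \v{C}ech terms (from part (a)) via the standard Tor-vanishing argument, which is how \cite{DCapOne} handles it.
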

\begin{proof} This is \cite[Theorem 4.9]{DCapOne} in the case where $\cR$ is Noetherian. The proof of \cite[Proposition 4.6]{DCapOne} carries over, using Theorem \ref{HorizFlat} instead of \cite[Theorem 4.5]{DCapOne}, to give part (a)  when $\bY$ is an $\cL$-accessible rational subdomain. Because the Noetherianity of $\cR$ is not used anywhere else in the proof, the result follows.
\end{proof}

\begin{defn} A \emph{Laurent covering} of $\bX$ is a covering of the form 
\[\{\bX(f_1^{\alpha_1},\ldots,f_n^{\alpha_n}) : \alpha_i \in \{\pm 1\}\},\]
where $f_1,\ldots, f_n$ are elements of $\cO(\bX)$. \end{defn}
For example, for any $a,b \in \cO(\bX)$,  $\{\bX(a,b), \bX(a,1/b), \bX(1/a, b), \bX(1/a,1/b)\}$ is a Laurent covering of $\bX$. It follows immediately from the definitions that a Laurent covering $\{\bX(f_1^{\alpha_1},\ldots,f_n^{\alpha_n}) : \alpha_i \in \{\pm 1\}\}$  is $\cL$-accessible whenever 
\[ \cL \cdot f_i \in \pi \cA \qmb{for all} i=1,\ldots, n.\]

Recall from \cite[\S 5.1]{DCapOne} the \emph{localisation functor} $\Loc_{\cS}$ from finitely generated $\cS(\bX)$-modules to presheaves of $\cS$-modules on $\bX_{\ac}(\cL)$, given by
\[ \Loc_{\cS}(M)(\bY) = \cS(\bY) \underset{\cS(\bX)}{\otimes} M \qmb{for all} \bY \in \bX_{\ac}(\cL).\]

\begin{prop}\label{LocSMacyclic} Let $\bX$ be a $K$-affinoid variety and let $\cA$ be an affine formal model in $\cO(\bX)$. Let $\cL$ be a smooth $(\cR, \cA)$-Lie algebra such that $[\cL,\cL]\subseteq \pi \cL$ and $\cL \cdot \cA \subseteq \pi \cA$. For every finitely generated $\cS(\bX)$-module $M$,  the presheaf $\Loc_{\cS}(M)$  is a sheaf with vanishing higher \v{C}ech cohomology.
\end{prop}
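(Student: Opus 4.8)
The plan is to mimic the proof of the corresponding result in \cite[\S 5]{DCapOne}, replacing the appeals to \cite[Theorem 4.5, Theorem 4.9]{DCapOne} by the non-Noetherian versions Theorem \ref{HorizFlat} and Theorem \ref{HSULKflat} established above, and the appeals to Noetherianity of $\cS(\bX)$ by Corollary \ref{HTF} (together with Lemma \ref{ULbarFP}). The point is that everything formal in the Noetherian argument goes through verbatim once one has: (i) the base ring $\cS(\bX) = \hK{U(\cL)}$ is Noetherian, so that every finitely generated $\cS(\bX)$-module $M$ is finitely presented and the localisation functor $\Loc_{\cS}$ is well-behaved; and (ii) faithfully flat descent for $\cL$-accessible coverings. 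Hypothesis \ref{NCadmRalg} holds for $\h{U(\cL)}$ by Lemma \ref{ULbarFP} under the standing hypotheses $[\cL,\cL]\subseteq\pi\cL$ and $\cL\cdot\cA\subseteq\pi\cA$, so Corollary \ref{HTF} gives the Noetherianity.

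First I would reduce the sheaf property and acyclicity to the case of a Laurent covering. Given an $\cL$-accessible covering $\cX = \{\bX_1,\ldots,\bX_m\}$ of an $\cL$-accessible affinoid subdomain, one can refine it by a Laurent covering $\{\bX(f_1^{\alpha_1},\ldots,f_n^{\alpha_n})\}$ with $\cL\cdot f_i\in\pi\cA$ for all $i$ (after rescaling $\cL$ by a $\pi$-power, which does not change $\hsULK$ up to canonical isomorphism, and using \cite[Lemma 7.6(b)]{DCapOne}-style admissibility arguments). By a standard \v{C}ech-theoretic argument (the analogue of \cite[Lemma 5.4, Proposition 5.6]{DCapOne}), it suffices to check exactness of the augmented \v{C}ech complex for such Laurent coverings, and moreover, by induction on $n$, for coverings of the form $\{\bX(f),\bX(1/f)\}$ with $\cL\cdot f\in\pi\cA$. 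For the latter, the augmented \v{C}ech complex of $\Loc_{\cS}(M)$ reads
\[ 0 \to M \to \left(\cS(\bX(f))\underset{\cS(\bX)}{\otimes} M\right) \oplus \left(\cS(\bX(1/f))\underset{\cS(\bX)}{\otimes} M\right) \to \cS(\bX(f,1/f))\underset{\cS(\bX)}{\otimes} M \to 0. \]
Its exactness for $M = \cS(\bX)$ is the content of Tate acyclicity (Theorem \ref{TateHSULK}) applied to the $\cL$-admissible covering $\{\bX(f),\bX(1/f)\}$ — these subdomains are $\cL$-admissible by Lemma \ref{XiLAdm}. By Theorem \ref{HorizFlat}, each term $\cS(\bX(f))$, $\cS(\bX(1/f))$, $\cS(\bX(f,1/f))$ is a flat $\cS(\bX)$-module, so tensoring the exact augmented complex for $\cS(\bX)$ with an arbitrary $M$ keeps it exact; this gives both the sheaf property on the covering and vanishing of the higher \v{C}ech cohomology. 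The passage from two-set coverings to general Laurent coverings is a purely formal \v{C}ech manipulation (a spectral sequence or iterated-cone argument, exactly as in \cite[\S 5.2]{DCapOne}), and from Laurent coverings to arbitrary $\cL$-accessible coverings one uses that the latter can be refined by the former together with the faithful flatness statement Theorem \ref{HSULKflat}(b) to descend exactness.

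I expect the main obstacle to be purely bookkeeping rather than conceptual: one must verify carefully that the reductions used in \cite[\S 5]{DCapOne} — in particular the refinement of an arbitrary $\cL$-accessible covering by a Laurent covering with $\cL\cdot f_i\in\pi\cA$, and the stability of the class of $\cL$-accessible subdomains under the operations involved — do not secretly invoke Noetherianity of $\cR$. The places where $\cR$ being Noetherian was used in \cite{DCapOne} are precisely (a) the finite presentation of torsion-free finitely generated modules over admissible algebras, now supplied by Theorem \ref{KeyRGLemma}, and (b) the flatness results \cite[Theorems 4.5, 4.9]{DCapOne}, now supplied by Theorems \ref{HorizFlat} and \ref{HSULKflat}. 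Once one checks (as is done in the proof of Theorem \ref{HSULKflat}) that these are the only uses, the rest of the argument transcribes without change; so the proof reduces to the one-line remark that \cite[Proposition 5.6]{DCapOne} goes through using Theorem \ref{TateHSULK}, Theorem \ref{HSULKflat} and Corollary \ref{HTF} in place of their Noetherian counterparts.
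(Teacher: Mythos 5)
Your proposal is correct and is essentially the paper's own argument: the paper simply states that the proof of \cite[Proposition 5.1]{DCapOne} carries over verbatim once Theorem \ref{TateHSULK} and Theorem \ref{HSULKflat} are in place (and notes that the proof uses flatness of $\cS(\bY)$ over $\cS(\bX)$), which is exactly what you say. The only differences are cosmetic: you spell out the \v{C}ech mechanics (refine to a Laurent covering, establish exactness of the augmented complex for $\cS$ itself via Theorem \ref{TateHSULK}, then tensor with $M$ using flatness of the terms), and your citation of \cite[Proposition 5.6]{DCapOne} should be \cite[Proposition 5.1]{DCapOne}.
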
 
\begin{proof} This is \cite[Proposition 5.1]{DCapOne}, whose proof is valid even when $\cR$ is not Noetherian in view of Theorem \ref{TateHSULK} and Theorem \ref{HSULKflat}. The proof uses the flatness of $\cS(\bY)$ as a right $\cS(\bX)$-module.
\end{proof}

\begin{thm}\label{OldKiehl} Let $\bX$ be a $K$-affinoid variety. Let $\cL$ be a smooth $(\cR, \cA)$-Lie algebra such that $[\cL,\cL]\subseteq \pi \cL$ and $\cL \cdot \cA \subseteq \pi \cA$ for some affine formal model $\cA$ in $\cO(\bX)$. Let $\cU$ be an $\cL$-accessible Laurent covering of $\bX$. Then every $\cU$-coherent sheaf $\cM$ of left (respectively, right) $\cS$-modules on $\bX_{ac}(\cL)$ is isomorphic to $\Loc_{\cS}(M)$ for some finitely generated left (respectively, right) $\cS(\bX)$-module $M$.
\end{thm}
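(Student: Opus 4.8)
The plan is to mimic the proof of Kiehl's theorem for coherent sheaves on affinoid spaces, but over the (possibly non-Noetherian) base ring $\cR$, leveraging the faithful flatness established in Theorem \ref{HSULKflat}(b) and the acyclicity of the localisation functor from Proposition \ref{LocSMacyclic}. First I would set $\bX_i$ to be the members of the $\cL$-accessible Laurent covering $\cU$, and set $M := \cM(\bX)$, $M_i := \cM(\bX_i)$; since each $\bX_i$ is $\cL$-accessible and $\cM$ is $\cU$-coherent, $M_i$ is a finitely generated $\cS(\bX_i)$-module and the comparison maps $\cS(\bX_i) \otimes_{\cS(\bX)} M \to M_i$ coming from the restriction maps of $\cM$ assemble, via Theorem \ref{HSULKflat}(b), into a faithfully flat descent datum. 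The main work is to show $M$ is finitely generated over $\cS(\bX)$ and that the natural map $\Loc_{\cS}(M) \to \cM$ is an isomorphism of sheaves on $\bX_{ac}(\cL)$.

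The key steps, in order: (1) Reduce to a Laurent covering by a single function $f$, i.e. $\cU = \{\bX(f), \bX(1/f)\}$ with $\cL \cdot f \subseteq \pi\cA$ (a general $\cL$-accessible Laurent covering is obtained by iterating this, and coherence is preserved under refinement); write $\cS_1 := \cS(\bX(f))$, $\cS_2 := \cS(\bX(1/f))$, $\cS_{12} := \cS(\bX(f,1/f))$, all flat over $\cS := \cS(\bX)$ on both sides by Theorem \ref{HSULKflat}(a), with $\cS_1 \oplus \cS_2$ faithfully flat by part (b). (2) Produce $M$ as the kernel of $M_1 \oplus M_2 \rightrightarrows M_{12}$ (difference of the two restriction maps), using the sheaf property along $\cU$; since $\cM$ is a sheaf, $M = \cM(\bX)$ is exactly this kernel. (3) Show $M$ is finitely generated over $\cS$. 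This is the crucial point: choose finite generating sets of $M_1$ over $\cS_1$ and $M_2$ over $\cS_2$; the obstruction to lifting them to global sections is measured by an $H^1$ which vanishes by Proposition \ref{LocSMacyclic} (the \v{C}ech $H^1$ of $\Loc_\cS$ of a finitely presented module is zero); here one needs $M_i$ to be finitely \emph{presented}, which follows from Theorem \ref{KeyRGLemma} / Corollary \ref{HTF} applied to $\cS(\bX_i)$, since $\cS(\bX_i)$ satisfies Hypothesis \ref{NCadmRalg} up to inverting $\pi$ and hence is Noetherian. (4) Given that $M$ is finitely generated, the natural morphism $\varphi: \Loc_\cS(M) \to \cM$ is a morphism of sheaves (both are sheaves: $\Loc_\cS(M)$ by Proposition \ref{LocSMacyclic}, $\cM$ by hypothesis) whose value on $\bX$ is an isomorphism after base change along the faithfully flat $\cS \to \cS_1 \oplus \cS_2$ — because $\cS_i \otimes_\cS M \cong M_i = \cM(\bX_i)$ by the descent computation in step (2) together with flatness — so $\varphi(\bX)$ is itself an isomorphism by faithful flatness, and then $\varphi(\bY)$ is an isomorphism for all $\bY \in \bX_{ac}(\cL)$ by flat base change along $\cS \to \cS(\bY)$.

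I would expect step (3), the finite generation of the module of global sections, to be the main obstacle: in the classical Noetherian Kiehl argument one uses the Noetherianity of the affinoid algebra freely, but here $\cR$ itself is not Noetherian, so one must route everything through Theorem \ref{KeyRGLemma} and its corollaries to see that $\cS(\bX_i)$ and $\cS(\bX)$ are nonetheless Noetherian after inverting $\pi$, and that the relevant finitely generated modules are finitely presented — which is precisely what makes the vanishing-$H^1$ gluing argument go through. A secondary technical point to handle carefully is the left/right bifurcation: the flatness statements in Theorem \ref{HSULKflat} are two-sided, so the whole argument runs identically for right $\cS$-modules, and I would simply remark that the proof is symmetric. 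Finally, one should note that since a general $\cL$-accessible Laurent covering refines to an iterated one-function covering and $\cU$-coherence is stable under such refinement (each one-step refinement preserves the glueing data by the case just treated), the reduction in step (1) is legitimate; this inductive bookkeeping is routine and I would not belabour it.
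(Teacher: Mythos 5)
Your proposal correctly identifies the Kiehl template, the key reductions (refine to a simple Laurent covering by one function $f$, then induct), and the auxiliary inputs (Theorem~\ref{HSULKflat} for flatness and faithful flatness, Corollary~\ref{HTF} for Noetherianity of the level-$n$ algebras, Theorem~\ref{KeyRGLemma} to route around the non-Noetherianity of $\cR$, Proposition~\ref{LocSMacyclic} for the sheaf and acyclicity properties of $\Loc_\cS$, and the two-sided symmetry). In that respect it lines up well with the paper's own one-line proof, which simply observes that the proof of~\cite[Theorem~5.5]{DCapOne} carries over once these four statements have been re-established over general $\cR$. The difference is that you attempt to reconstruct the Kiehl argument itself, whereas the paper defers to the earlier reference.

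Where your reconstruction breaks down is precisely the step you flag as crucial, step~(3). You write that ``the obstruction to lifting [local generators] to global sections is measured by an $H^1$ which vanishes by Proposition~\ref{LocSMacyclic}.'' But Proposition~\ref{LocSMacyclic} concerns $H^1(\cU, \Loc_\cS(M))$ for a finitely generated $\cS(\bX)$-module $M$ that is \emph{given in advance}. The object whose $H^1$ actually measures the obstruction to patching generators is the $\cU$-coherent sheaf $\cM$ itself, and you have not yet established $\cM \cong \Loc_\cS(M)$ for any finitely generated $M$ --- that is the conclusion of the theorem. Invoking Proposition~\ref{LocSMacyclic} here is therefore circular. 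The missing content is the Cartan-type approximation argument at the core of Kiehl's proof: using completeness of the Banach algebras $\cS(\bX_i)$ and density of the image of the restriction map $\cS(\bX) \to \cS(\bX(f,1/f))$, one shows that the transition matrix between local generating families can be factored (after a perturbation) as a product of invertible matrices over $\cS(\bX(f))$ and $\cS(\bX(1/f))$, so that the local bases can be adjusted to glue. It is this factorisation lemma, not a blind $H^1$ vanishing, that produces global generators and then, by a separate induction on the kernel, finite generation of $\cM(\bX)$. Once that is in place, the acyclicity and the isomorphism $\Loc_\cS(\cM(\bX)) \congs \cM$ of your step~(4) do follow as you say.

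A secondary, smaller imprecision: Proposition~\ref{LocSMacyclic} requires $[\cL,\cL] \subseteq \pi\cL$ and $\cL \cdot \cA \subseteq \pi\cA$ exactly as in the present theorem, so you should make sure the reduction in step~(1) preserves these hypotheses when you pass to an iterated one-function covering; this is fine here because one refines the covering without changing $\cL$, but it is worth stating.
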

\begin{proof} This is \cite[Theorem 5.5]{DCapOne} in the case where $\cR$ is Noetherian. In the view of Corollary \ref{HTF}, Theorem \ref{TateHSULK}, Theorem \ref{HSULKflat} and Proposition \ref{LocSMacyclic}, the proof of \cite[Theorem 5.5]{DCapOne} carries over with obvious modifications to the general case.
\end{proof}

\subsection{The sheaf \ts{\hsULK \rtimes_N G} on \ts{\bX_w(\cL,G)}}
\label{SectionhsULKG}
In this subsection we extend the results of $\S \ref{SectionHSULK}$ to the equivariant setting. Throughout, we will assume that:
\begin{itemize} 
\item $\bX$ is a $K$-affinoid variety, 
\item $G$ is a compact $p$-adic Lie group acting continuously on $\bX$,
\item $\cA$ is a $G$-stable affine formal model in $A := \cO(\bX)$,
\item $(\cL, N)$ is an $\cA$-trivialising pair.
\end{itemize}

\begin{defn} \noindent
\be \item Let $\bX_w / G$ denote the set of $G$-stable affinoid subdomains $\bY$ of $\bX$.
\item Let $\bX_w(\cL,G) := \bX_w(\cL) \cap (\bX_w/G)$ denote the set of $G$-stable affinoid subdomains of $\bX$ which are also $\cL$-admissible.
\ee\end{defn}

We refer the reader to Definitions \ref{DefnTrivPair} and \ref{DefnLAdm} for the meaning of the terms \emph{$\cA$-trivialising pair} and \emph{$\cL$-admissible}. We view $\bX_w/G$ and $\bX_w(\cL,G)$ as partially ordered sets under reverse inclusion, and consequently as categories whose morphism sets have at most one member.

\begin{lem}\label{XwG} $\bX_w / G$ is stable under finite intersections.
\end{lem}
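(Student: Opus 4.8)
The statement to prove is that $\bX_w/G$, the set of $G$-stable affinoid subdomains of the $K$-affinoid variety $\bX$, is closed under finite intersections. The natural approach is to reduce to the case of the intersection of two members and then argue directly from the two defining properties: being an affinoid subdomain, and being $G$-stable.

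\textbf{Key steps.} First I would recall that the intersection of two affinoid subdomains of an affinoid variety $\bX$ is again an affinoid subdomain: if $\bY = \Sp(B)$ and $\bZ = \Sp(C)$ are affinoid subdomains of $\bX = \Sp(A)$, then $\bY \cap \bZ$ is represented by $B \widehat{\otimes}_A C$ and is an affinoid subdomain of $\bX$ --- this is standard, e.g. \cite[Proposition 7.2.2/10]{BGR} or the general theory of admissible open subsets in the $G$-topology. Alternatively one can observe that $\bY \cap \bZ$, as an admissible open subset of the affinoid $\bY$ that is the preimage of the affinoid subdomain $\bZ$ under the inclusion $\bY \hookrightarrow \bX$, is itself an affinoid subdomain of $\bY$, hence of $\bX$. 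Second, I would check $G$-stability: for any $g \in G$, the automorphism $\rho(g)$ of $\bX$ satisfies $\rho(g)(\bY) = \bY$ and $\rho(g)(\bZ) = \bZ$ by hypothesis (since $\bY, \bZ \in \bX_w/G$), and since $\rho(g)$ is a bijection of the underlying $G$-topological space that preserves admissible opens, $\rho(g)(\bY \cap \bZ) = \rho(g)(\bY) \cap \rho(g)(\bZ) = \bY \cap \bZ$. Hence $\bY \cap \bZ$ is $G$-stable, so $\bY \cap \bZ \in \bX_w/G$. Finally, the case of an arbitrary finite intersection follows by an immediate induction on the number of sets, using that $\bX$ itself lies in $\bX_w/G$ as the base case of the (empty) intersection.

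\textbf{Main obstacle.} There is essentially no serious obstacle here: the result is a routine bookkeeping lemma. The only point requiring a moment's care is the first step --- confirming that the intersection of two affinoid subdomains is again an affinoid subdomain of $\bX$ (rather than merely an admissible open) --- but this is a classical fact about the $G$-topology on a rigid analytic space, available from \cite{BGR}. The $G$-stability is then formal.

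\begin{proof}[Proof of Lemma \ref{XwG}] By induction it suffices to treat the intersection of two members $\bY, \bZ$ of $\bX_w/G$. The inclusion $\iota : \bY \hookrightarrow \bX$ realises $\bY$ as an affinoid subdomain of $\bX$, and $\bY \cap \bZ = \iota^{-1}(\bZ)$ is the preimage of the affinoid subdomain $\bZ$ of $\bX$; by the transitivity of the notion of affinoid subdomain (see \cite[\S 7.2.2]{BGR}), $\bY \cap \bZ$ is an affinoid subdomain of $\bY$, and hence of $\bX$. For $G$-stability, let $g \in G$. Since $\rho(g) \in \Aut_K(\bX,\cO_\bX)$ acts as a bijection on the admissible open subsets of $\bX$ and $g\bY = \bY$, $g\bZ = \bZ$ by hypothesis, we have
\[ g(\bY \cap \bZ) = g\bY \cap g\bZ = \bY \cap \bZ. \]
Thus $\bY \cap \bZ \in \bX_w/G$. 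Finally, $\bX \in \bX_w/G$, which handles the empty intersection and completes the induction.
\end{proof}
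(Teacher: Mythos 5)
Your proof is correct and establishes exactly what the lemma states. It takes a mildly different route from the paper in two respects. First, to see that $\bY\cap\bZ$ is an affinoid subdomain, you invoke transitivity of affinoid subdomains (preimage under the inclusion $\bY\hookrightarrow\bX$), whereas the paper argues from separatedness of the affinoid $\bX$ via \cite[Definition 9.6.1/1]{BGR}; these are interchangeable. Second, and more substantively, the paper's proof does \emph{more} than the statement requires: it takes $G$-stable affine formal models $\cA\subset\cO(\bU)$ and $\cA'\subset\cO(\bV)$ and, using the surjectivity of $\cO(\bU)\widehat{\otimes}_K\cO(\bV)\to\cO(\bU\cap\bV)$, explicitly constructs a $G$-stable affine formal model in $\cO(\bU\cap\bV)$ generated by their images. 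That extra content is not needed to prove the lemma as stated (and indeed follows from Lemma \ref{GstableFM} once $G$-stability is known, since $G$ is compact in this section), but it is useful preparation for Corollary \ref{XLwG} and the $\cL$-admissibility arguments that follow. Your more economical proof is therefore fine for the lemma itself, but if you wanted to mirror the paper's downstream use you would want to record the formal-model construction as well.
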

\begin{proof}  Given $\bU, \bV \in \bX_w / G$, let $\cA$ and $\cA'$ be $G$-stable affine formal models in $\cO(\bU)$ and $\cO(\bV)$, respectively. Because $\bX$ is affinoid, it is separated. Thus it follows from \cite[Definition 9.6.1/1]{BGR} that $\bU \cap \bV$ is again an affinoid subdomain of $\bX$, and that the natural map  $\cO(\bU) \h{\otimes}_K \cO(\bV) \to \cO(\bU\cap \bV)$ is surjective. 

Now $\bU \cap \bV$ is clearly $G$-stable; let $\cC$ be the complete $\cR$-subalgebra of $\cO(\bU \cap \bV)$ generated by the images of $\cA$ and $\cA'$ in $\cO(\bU\cap \bV)$. Then $\cC$ is a $G$-stable affine formal model in $\cO(\bU \cap \bV)$, because  $\cO(\bU) \h{\otimes}_K \cO(\bV) \to \cO(\bU\cap \bV)$ is surjective. 
\end{proof}

\begin{cor}\label{XLwG} $\bX_w(\cL,G)$ is also stable under finite intersections.
\end{cor}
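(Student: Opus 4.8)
The plan is to deduce this immediately from Lemma \ref{XwG} together with Theorem \ref{TateHSULK} and the general theory of $\cL$-admissibility, just as Lemma \ref{XLwG} was deduced in the non-equivariant setting. First I would recall that $\bX_w(\cL,G) = \bX_w(\cL) \cap (\bX_w/G)$ by definition, so it suffices to show that both $\bX_w(\cL)$ and $\bX_w/G$ are stable under finite intersections and that the intersection of two members of $\bX_w(\cL,G)$ lies in both. Stability of $\bX_w/G$ under finite intersections is precisely Lemma \ref{XwG}. For $\bX_w(\cL)$, the argument is the same as in \cite[\S 3,4]{DCapOne}: given $\bU, \bV \in \bX_w(\cL)$ with $\cL$-stable affine formal models $\cB_\bU$ in $\cO(\bU)$ and $\cB_\bV$ in $\cO(\bV)$, the intersection $\bU \cap \bV$ is again an affinoid subdomain of $\bX$ by separatedness, and the complete $\cR$-subalgebra $\cC$ of $\cO(\bU \cap \bV)$ generated by the images of $\cB_\bU$ and $\cB_\bV$ is an $\cL$-stable affine formal model in $\cO(\bU \cap \bV)$; here one uses Theorem \ref{KeyRGLemma} to see that $\cC$ is finitely presented over $\cA\langle\cdots\rangle$ and hence an admissible $\cR$-algebra, exactly as in the proof of Lemma \ref{XiLAdm}.

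Concretely, for $\bU, \bV \in \bX_w(\cL,G)$, I would first apply Lemma \ref{XwG} to see that $\bU \cap \bV \in \bX_w/G$, so in particular $\bU \cap \bV$ is an affinoid subdomain of $\bX$. Then, choosing a $G$-stable \emph{and} $\cL$-stable affine formal model in each of $\cO(\bU)$ and $\cO(\bV)$ --- which exists because one may intersect an $\cL$-stable model with the finite $G$-orbit trick from the proof of Lemma \ref{GstableFM}, or simply take the $\cR$-subalgebra generated by a $G$-stable model and an $\cL$-stable model and note that both conditions are preserved under this operation --- I would form the complete $\cR$-subalgebra $\cC$ of $\cO(\bU \cap \bV)$ generated by the images of these two models. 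Since $\cO(\bU)\h{\otimes}_K\cO(\bV) \to \cO(\bU\cap\bV)$ is surjective, $\cC$ is an affine formal model; it is $G$-stable and $\cL$-stable because both generating models are, and the actions of $G$ and of $\cL$ extend continuously to the subalgebra they generate. Hence $\bU \cap \bV$ is $\cL$-admissible and $G$-stable, i.e.\ $\bU \cap \bV \in \bX_w(\cL,G)$.

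There is no real obstacle here: the only mild point to verify is that the $\cL$-action lifts to $\cC$, which follows because $\cL$ acts by $\cR$-derivations on the images of $\cB_\bU$ and $\cB_\bV$ compatibly with the restriction maps (the two lifts agree on $\cO(\bU\cap\bV)$ since they both restrict the fixed action on $\Der_K(\cO(\bX))$), and a derivation on a set of topological generators extends uniquely to the complete subalgebra they generate. Stability under \emph{finite} intersections then follows by an obvious induction on the number of factors.
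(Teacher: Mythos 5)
Your opening observation is exactly what the paper's one-line proof uses: since $\bX_w(\cL,G) = \bX_w(\cL) \cap (\bX_w/G)$ by definition, and the intersection of two families each closed under finite intersections is again closed under finite intersections, the corollary follows once one knows $\bX_w/G$ is stable (Lemma \ref{XwG}) and $\bX_w(\cL)$ is stable (this is \cite[Lemma 3.2]{DCapOne}, which the paper cites). Everything after your first paragraph is unnecessary: the clause "and that the intersection of two members of $\bX_w(\cL,G)$ lies in both" is automatic from the two stabilities, and there is no need to manufacture a single affine formal model that is simultaneously $G$-stable and $\cL$-stable --- the definition of $\bX_w(\cL,G)$ only requires that $\bU \cap \bV$ lie in \emph{each} of the two families separately, which permits different witnessing models.

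Two smaller points. First, instead of citing \cite[Lemma 3.2]{DCapOne} for stability of $\bX_w(\cL)$, you re-derive it; that is legitimate but the invocations of Theorem \ref{TateHSULK} and Theorem \ref{KeyRGLemma} in your sketch of it are misplaced (Theorem \ref{TateHSULK} has no bearing here, and the admissibility of $\cC$ in the intersection already follows from the surjection $\cO(\bU)\h{\otimes}_K\cO(\bV) \twoheadrightarrow \cO(\bU\cap\bV)$, as in the proof of Lemma \ref{XwG}, not from Theorem \ref{KeyRGLemma}). Second, your alternative route to a simultaneous model --- "take the $\cR$-subalgebra generated by a $G$-stable model and an $\cL$-stable model" --- is not correct: if $\cA$ is $G$-stable and $\cB$ is $\cL$-stable, then $g \cdot (\cA\cdot\cB) = \cA\cdot(g\cdot\cB)$ need not equal $\cA\cdot\cB$, so $\cA\cdot\cB$ will generally fail to be $G$-stable. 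The only working route to a simultaneous model is the finite-$G$-orbit averaging, and this is precisely the content of Lemma \ref{GandLstableFM}, which is stated and proved separately in the paper and not used in the proof of the present corollary.
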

\begin{proof} This follows from \cite[Lemma 3.2]{DCapOne} and Lemma \ref{XwG}.
\end{proof}

It follows that $\bX_w / G$ forms a $G$-topology on $\bX$ in the sense of \cite[Definition 9.1.1/1]{BGR} if we declare $\bX_w / G$-admissible coverings to be finite coverings by objects in $\bX_w / G$. 

\begin{rmk}\label{GTopCovs}In what follows, we will define several other similar-looking collections $\sC$ of subsets of $\bX$ that will turn out to be stable under finite intersections. In every such case, we will regard $\sC$ as a $G$-topology on $\bX$ by declaring that an admissible covering of an object $\bY \in \sC$ is a finite set-theoretic covering of $\bY$ by other objects in $\sC$. \end{rmk}

For example, $\bX_w(\cL,G)$ is a $G$-topology on $\bX$ by Corollary \ref{XLwG}. 

\begin{lem}\label{GandLstableFM} If $\bY \in \bX_w(\cL,G)$, then every $\cL$-stable affine formal model $\cB$ in $\cO(\bY)$ is contained in an affine formal model $\cB'$ which is both $\cL$-stable and $G$-stable.
\end{lem}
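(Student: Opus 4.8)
The plan is to imitate the argument of Lemma \ref{GstableFM}: enlarge the given $\cL$-stable affine formal model $\cB$ to make it $G$-stable as well, while checking that $\cL$-stability is preserved under the enlargement. First I would observe that since $\bY$ is $G$-stable and $G$ acts continuously on $\bX$, the restricted action on $\cO(\bY)$ is continuous (this is the content of Definition \ref{CtsAct} together with the fact that $\bY$ is qcqs, so that Proposition \ref{OpenStab} applies to the induced homomorphism $G_\bY = G \to \Aut_K(\cO(\bY))$); alternatively it follows directly from Lemma \ref{CtsActLem}(b). Since the stabiliser of $\cB$ in $G$ equals $\rho^{-1}(\cG(\cB))$, which is open by continuity, and $G$ is compact, the $G$-orbit of $\cB$ inside $\cO(\bY)$ is finite, say $\cB = \cB_1, \cB_2, \ldots, \cB_n$. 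Set $\cB' := \cB_1 \cdot \cB_2 \cdots \cB_n$, the $\cR$-subalgebra of $\cO(\bY)$ generated by all the $\cB_i$. By Lemma \ref{ProdFormModels} (applied iteratively), $\cB'$ is again an affine formal model in $\cO(\bY)$ containing $\cB = \cB_1$, and by construction it is $G$-stable.

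It remains to check that $\cB'$ is $\cL$-stable, i.e.\ that the image of $\cA$ lands in $\cB'$ and that the $\cL$-action on $\cA$ (via the anchor map) lifts to a derivation action of $\cL$ on $\cB'$. The first point is immediate since $\sigma(\cA) \subseteq \cB \subseteq \cB'$. For the second, the key observation is that each $\cB_i = g_i \cdot \cB$ for some $g_i \in G$, and since $\cL$ is $G$-stable and the $G$-action on $\Der_K(A)$ is the functorial one of Example \ref{GactDerEx} (compatible with the $G$-action on $A$ and hence on $\cO(\bY)$ via the $\varphi$-morphisms $\dot\rho(g)$), the action of $\cL$ on $\cO(\bY)$ sends $\cB_i$ into itself: indeed for $v \in \cL$ and $b \in \cB$ we have $v \cdot (g_i \cdot b) = g_i \cdot ((g_i^{-1} \cdot v) \cdot b)$, and $g_i^{-1} \cdot v \in \cL$ again by $G$-stability of $\cL$, while $(g_i^{-1}\cdot v)\cdot b \in \cB$ by $\cL$-stability of $\cB$. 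Thus $\cL$ preserves each $\cB_i$, and since derivations are compatible with products, $\cL$ preserves the $\cR$-subalgebra $\cB'$ generated by the $\cB_i$. Hence $\cB'$ is both $\cL$-stable and $G$-stable, as required.

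\textbf{Main obstacle.} The only genuinely delicate point is the compatibility in the displayed identity $v \cdot (g_i \cdot b) = g_i \cdot ((g_i^{-1} \cdot v) \cdot b)$, which must be extracted carefully from the definition of a $\varphi$-morphism (Definition \ref{PhiMorph}) and the way the $G$-action on $\cO(\bY)$ interacts with the $G$-action on $\Der_K(\cO(\bY))$; once this is in place, everything else is routine bookkeeping about finitely generated formal models. A secondary (but entirely formal) point is to make sure that the finiteness of the $G$-orbit of $\cB$ really follows from compactness of $G$ together with openness of the stabiliser — this is exactly the argument already used in the proof of Lemma \ref{GstableFM}, so it can simply be cited.
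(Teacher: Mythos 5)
Your proof is correct and takes the same approach as the paper's (which is extremely terse, simply asserting the product is $\cL$- and $G$-stable). You have correctly supplied the detail that the paper leaves implicit: each $\cB_i = g_i\cdot\cB$ is $\cL$-stable because $\tilde\sigma:\cT(\bX)\to\cT(\bY)$ is $G$-equivariant, $\cL$ is $G$-stable and $\cB$ is $\cL$-stable, so the identity $v\cdot(g_i\cdot b)=g_i\cdot((g_i^{-1}\cdot v)\cdot b)$ shows $\cL$ preserves $\cB_i$, and then the Leibniz rule propagates $\cL$-stability to the subalgebra $\cB'$ generated by all the $\cB_i$.
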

\begin{proof} We saw in the proof of Lemma \ref{GstableFM} that the $G$-orbit of $\cB$ is finite, $\cB_1,\ldots, \cB_n$ say. Now $\cB' := \cB_1 \cdot \cB_2 \cdot \cdots \cdot \cB_n$ is both $\cL$-stable and $G$-stable.
\end{proof}

\begin{prop}\label{UPofAfSub} Let $\bY \in \bX_w(\cL,G)$ and let $\cB$ be a $G$-stable and $\cL$-stable affine formal model in $\cO(\bY)$. Then 
\be \item $\cL' := \cB \otimes_{\cA} \cL$ is a $G$-stable $\cB$-Lie lattice in $\cT(\bY)$, and 
\item $G_{\cL} \leq G_{\cL'}.$
\ee\end{prop}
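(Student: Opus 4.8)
The plan is to verify the two claims directly from the definitions, using the functoriality of the enveloping-algebra construction and the explicit description of the map $\beta_\cL$.

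For part (a), I would first recall that $\cL' = \cB \otimes_\cA \cL$ is an $(\cR,\cB)$-Lie algebra by \cite[Lemma 2.2]{DCapOne}, since $\cB$ is an $\cL$-stable affine formal model; moreover $\cL'$ is free of finite rank as a $\cB$-module (a base change of the free $\cA$-module $\cL$), hence in particular a $\cB$-Lie lattice in $\cT(\bY) = \Der_K(\cO(\bY))$, and it clearly spans $\cT(\bY)$ over $K$ because $\cL$ spans $\cT(\bX)$ and $\cO(\bY)$ is \'etale over (a localisation of) $\cO(\bX)$. The substantive point is $G$-stability. Since $\cB$ is $G$-stable by hypothesis, for each $g \in G$ we have the $\rho_\bY(g)$-morphism $\dot\rho_\bY(g) : \cT(\bY) \to \cT(\bY)$ from Example \ref{GactDerEx}; I need to check $\dot\rho_\bY(g)(\cL') \subseteq \cL'$. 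The natural map $\cO(\bX) \to \cO(\bY)$ is $G_\bY$-equivariant, and the restriction $\Der_K(\cO(\bX)) \to \Der_K(\cO(\bY))$ is compatible with the $G$-actions; under the identification of $\cL'$ with the $\cB$-submodule of $\cT(\bY)$ generated by the image of $\cL$, the $G$-action on $\cL'$ is therefore determined by $g \cdot (b \otimes v) = (g\cdot b) \otimes (g \cdot v)$ for $b \in \cB$, $v \in \cL$. Since $\cB$ is $G$-stable and $\cL$ is $G$-stable (as $\cL$ is a $G$-stable $\cA$-Lie lattice), this lands in $\cL'$, giving (a). I would also note that $g \mapsto (\rho_\bY(g)_{|\cB}, \dot\rho_\bY(g)_{|\cL'})$ is then a $G$-action on the $(\cR,\cB)$-Lie algebra $\cL'$ in the sense of Definition \ref{GactsLR}.

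For part (b), fix $g \in G_\cL$, so that $\rho(g) = \exp(u)$ for some $u \in p^\epsilon \cL$ inside $\cE = \End_\cR(\cA)$ (Definition \ref{BetaDefn}, Theorem \ref{Beta}). Restriction along $\cA \to \cB$ gives a $G$-equivariant $\cR$-algebra homomorphism $\cE = \End_\cR(\cA) \to \End_\cR(\cB) =: \cE'$ in the relevant sense — more precisely, the derivation $u \in p^\epsilon \Der_\cR(\cA)$ extends, by the definition of $\cL$-stability of $\cB$, to a derivation $u' \in p^\epsilon \Der_\cR(\cB)$ lying in $p^\epsilon \cL'$, and the logarithm series is compatible with this extension because it is computed term by term (the argument is exactly that of Lemma \ref{LogRhoLemma} with $\varphi : \cA \to \cB$ the inclusion, which is \'etale after inverting $\pi$, or more simply because the partial sums $\ell_m$ of $\log$ are polynomials and $\cB$ contains $\cA$ $\cL$-stably). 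Hence $\rho_\bY(g) = \exp(u')$ with $u' \in p^\epsilon \cL'$, so $\rho_\bY(g) \in \exp(p^\epsilon \cL')$ and therefore $g \in G_{\cL'}$ by Definition \ref{BetaDefn}. This gives $G_\cL \leq G_{\cL'}$.

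The only mild obstacle I anticipate is bookkeeping: making the comparison between the action of $G$ on $\Der_\cR(\cA)$ and on $\Der_\cR(\cB)$ fully precise, i.e. checking that the derivation $\log \rho(g)$ of $\cA$, extended to $\cB$ via $\cL$-stability, really coincides with $\log \rho_\bY(g)$ as a derivation of $\cB$ rather than merely agreeing on $\cA$. This is handled by the uniqueness of extensions of derivations along the \'etale (after inverting $\pi$) map $\cA_K \to \cB_K$, exactly as in the proof of Lemma \ref{LogRhoLemma}; everything else is a routine unwinding of Definitions \ref{BetaDefn}, \ref{DefnLAdm} and \ref{GactsLR}.
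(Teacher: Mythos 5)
Your overall strategy is the same as the paper's: part (a) goes by \cite[Lemma 2.2]{DCapOne} and the $G$-equivariance of the diagonal action on $\cO(\bY)\otimes_{\cO(\bX)}\cT(\bX)\cong\cT(\bY)$; part (b) goes by extending $u:=\log\rho_\bX(g)\in p^\epsilon\cL$ to $v:=1\otimes u\in p^\epsilon\cL'$ and arguing that $\rho_\bY(g)=\exp(v)$.

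However, the mechanism you invoke to close part (b) has a circularity. You propose to conclude by ``uniqueness of extensions of derivations along the \'etale map $\cA_K\to\cB_K$, exactly as in the proof of Lemma \ref{LogRhoLemma}'', i.e.\ by comparing the derivation $v$ with the derivation $\log\rho_\bY(g)$ and noting they agree on the image of $\cA$. But for $\log\rho_\bY(g)$ to be a derivation of $\cO(\bY)$ lying in $p^\epsilon\Der_\cR(\cB)$, you would need to already know that $\rho_\bY(g)\in\cG_{p^\epsilon}(\cB)$ --- and that is precisely what is being established. All that is known at the outset is that $\cB$ is $G$-stable, hence $\rho_\bY(g)\in\cG(\cB)$; that $(\rho_\bY(g)-1)(\cB)\subseteq p^\epsilon\cB$ does not follow from the analogous statement about $\cA$, since the latter only controls $\rho_\bY(g)-1$ on the image of $\cA$. (Note that Lemma \ref{LogRhoLemma} explicitly \emph{hypothesises} $\rho'(N')\subseteq\exp(p^\epsilon\Der_\cR(\cA'))$ on the target side for exactly this reason.)

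The fix is to argue about ring endomorphisms rather than derivations, as the paper does. One first observes that $\exp(v)$ is a well-defined $K$-algebra automorphism of $\cO(\bY)$ (since $v\in p^\epsilon\Der_\cR(\cB)$), and then computes, using the continuity of $\varphi:\cO(\bX)\to\cO(\bY)$ and the identity $v^n\circ\varphi=\varphi\circ u^n$, that
\[
\exp(v)\circ\varphi \;=\; \varphi\circ\exp(u) \;=\; \varphi\circ\rho_\bX(g) \;=\; \rho_\bY(g)\circ\varphi,
\]
the last equality by $G$-equivariance of the restriction map (Remark \ref{sAEquiv}(b)). The universal property of affinoid subdomains \cite[\S 7.2.2]{BGR} now forces $\exp(v)=\rho_\bY(g)$. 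From this one reads off both that $\rho_\bY(g)\in\cG_{p^\epsilon}(\cB)$ (since $v(\cB)\subseteq p^\epsilon\cB$) and that $\log\rho_\bY(g)=v\in p^\epsilon\cL'$, giving $g\in G_{\cL'}$.
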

\begin{proof} (a) It follows from \cite[Lemma 2.2]{DCapOne} that $\cL'$ is a $\cB$-Lie lattice in $\cT(\bY)$. Now, $G$ acts diagonally on $\cO(\bY) \otimes_{\cO(\bX)} \cT(\bX)$ and the natural isomorphism $\cO(\bY) \otimes_{\cO(\bX)} \cT(\bX) \to \cT(\bY)$ is $G$-equivariant. It follows that $\cL' = \cB \otimes_{\cA} \cL$ is $G$-stable. 

(b) Let $\rho_\bX : G \to \Aut_K \cO(\bX)$ and $\rho_\bY : G \to \Aut_K \cO(\bY)$ be the continuous $G$-actions on $\cO(\bX)$ and $\cO(\bY)$, respectively; then the restriction map $\cO(\bX) \to \cO(\bY)$ is $G$-equivariant by Remark \ref{sAEquiv}(b). Let $g \in G_{\cL}$, so that $\rho_\bX(g) \in \cG_{p^\epsilon}(\cA)$ and $u := \log \rho_\bX(g) \in p^\epsilon \cL$ by Definition \ref{BetaDefn}.

Now $v := 1 \otimes u \in p^\epsilon \cB \otimes_{\cA} \cL = p^\epsilon \cL'$ is a $K$-linear derivation of $\cO(\bY)$, so $\exp(v)$ is a $K$-linear automorphism of $\cO(\bY)$. Since the restriction map $\varphi : \cO(\bX) \to \cO(\bY)$ is continuous and since $v^n(\varphi(a)) = \varphi(u^n(a))$ for all $a \in \cO(\bX)$ and $n \geq 0$, we see that 
\[ \exp(v)(\varphi(a)) = \sum_{n=0}^\infty \frac{1}{n!} v^n(\varphi(a)) = \sum_{n=0}^\infty \frac{1}{n!} \varphi(u^n(a)) = \varphi( \exp(u)(a) ) = \varphi( \rho_\bX(g)(a) )\]
for all $a \in \cO(\bX)$. Therefore, the following diagram is commutative:
\[ \xymatrix{ & \cO(\bY) & \\\cO(\bY) \ar@<1ex>[ur]^{\exp(v)} \ar[ur]_{\rho_\bY(g)}  && \cO(\bX).\ar[ll]^{\varphi} \ar[ul]_{\varphi \circ \rho_\bX(g)}  }\]
Because $\bY$ is an affinoid subdomain of $\bX$, its universal property \cite[\S 7.2.2]{BGR} implies that $\rho_\bY(g) = \exp(v)$. Since $v(\cB) \subseteq p^\epsilon \cB$, it follows that $\rho_\bY(g) \in \cG_{p^\epsilon}(\cB)$ and that $\log \rho_\bY(g) = v \in p^\epsilon \cL'$. Hence $g \in G_{\cL'}$ as required.
\end{proof}

\begin{cor}\label{hsULKG} Let $\bY \in \bX_w(\cL,G)$, let $\cB$ be a $G$-stable and $\cL$-stable affine formal model in $\cO(\bY)$ and let $\cL' := \cB \otimes_{\cA} \cL$. Then $(\cL', N)$ is a $\cB$-trivialising pair, and there is a commutative diagram of $K$-algebras
\begin{equation}\label{ResULKG}\xymatrix{ \cD(\bX) \rtimes G \ar[rr]\ar[d] &&  \cD(\bY) \rtimes G \ar[d] \\ \hK{U(\cL)} \rtimes_N^{\beta_{\cL|N}} G \ar[rr] && \hK{U(\cL')} \rtimes_N^{\beta_{\cL'|N}} G.}\end{equation}
\end{cor}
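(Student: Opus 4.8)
The plan is to deduce both claims from the functoriality results already at our disposal, applied to the étale (indeed open immersion) restriction map $\varphi\colon A=\cO(\bX)\to\cO(\bY)$ together with the identity homomorphism $\tau=1_G\colon G\to G$. First I would check the hypotheses of Proposition \ref{hUGfunc} with $(A,\cA,(\cL,N))$ as given and $(A',\cA',(\cL',N'))=(\cO(\bY),\cB,(\cL',N))$. The map $\varphi$ is an étale morphism of $K$-affinoid algebras since $\bY$ is an affinoid subdomain of $\bX$, and it is $G$-equivariant by Remark \ref{sAEquiv}(b) because $\bY\in\bX_w/G$ and $\cB$ was chosen $G$-stable. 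By construction $\varphi(\cA)\subseteq\cB$, and $\tilde\varphi(\cL)\subseteq\cB\otimes_{\cA}\cL=\cL'$ since $\tilde\varphi(a\otimes v)=\varphi(a)\otimes\tilde\varphi(v)$ lands in $\cL'$ by the very definition of $\cL'$ (here I use \cite[Lemma 2.2]{DCapOne} / the discussion before Proposition \ref{UPofAfSub}). Finally $\tau(N)=N\subseteq N$, so the hypothesis $\tau(N)\subseteq\tau(N')$ is trivially satisfied.

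Next I would verify that $(\cL',N)$ really is a $\cB$-trivialising pair in the sense of Definition \ref{DefnTrivPair}: $\cL'$ is a $G$-stable $\cB$-Lie lattice in $\Der_K(\cO(\bY))$ by Proposition \ref{UPofAfSub}(a), and $N$ is an open normal subgroup of $G$ with $N\subseteq G_{\cL}\subseteq G_{\cL'}$, the first inclusion because $(\cL,N)$ is an $\cA$-trivialising pair and the second by Proposition \ref{UPofAfSub}(b). This is exactly what is needed to form $\hK{U(\cL')}\rtimes_N^{\beta_{\cL'|N}}G$ via Theorem \ref{Beta} and Definition \ref{DefnTriv}. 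With all the hypotheses of Proposition \ref{hUGfunc} in place, that proposition produces a (unique continuous) $K$-algebra homomorphism $\hK\theta\rtimes 1_G\colon \hK{U(\cL)}\rtimes_N G\to\hK{U(\cL')}\rtimes_N G$ fitting into the commutative square whose top row is $U(\varphi,\tilde\varphi)\rtimes 1_G\colon\cD(\bX)\rtimes G\to\cD(\bY)\rtimes G$ and whose vertical maps are the canonical ones $[i\rtimes\gamma]$. This is precisely the diagram $(\ref{ResULKG})$, so the Corollary follows.

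There is essentially no obstacle here: the content is entirely in Propositions \ref{hUGfunc} and \ref{UPofAfSub}, which are already proved, and the task is the bookkeeping of checking that the three inclusions and the equivariance condition hold. The one point deserving a word is the compatibility of $\tilde\varphi$ with the diagonal $G$-actions — i.e.\ that $\tilde\varphi\colon\Der_K(A)\to\Der_K(\cO(\bY))$ is equivariant — but this is the $\tau=1_G$ case of the hypothesis $\varphi(g\cdot a)=\tau(g)\cdot\varphi(a)$, which holds by the $G$-equivariance of the restriction map together with the naturality of the $\varphi$-morphism $\tilde\varphi$ from \cite[Lemma 2.4]{DCapOne}. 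Thus the proof is a short invocation: state the identifications $A'=\cO(\bY)$, $\cA'=\cB$, $\cL'=\cB\otimes_\cA\cL$, $N'=N$, $\tau=1_G$, observe that $(\cL',N)\in\cI(\cB,\rho_\bY,G)$ by Proposition \ref{UPofAfSub}, and apply Proposition \ref{hUGfunc}.
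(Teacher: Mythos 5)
Your proposal is correct and matches the paper's own (one-line) proof, which simply invokes Proposition \ref{UPofAfSub} together with Proposition \ref{hUGfunc}; you have merely spelled out the hypothesis-checking that the paper leaves implicit.
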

\begin{proof} Apply Proposition \ref{UPofAfSub} together with Proposition \ref{hUGfunc}.
\end{proof}

\textbf{We will assume until the end of \ts{\S} \ref{SectionhsULKG} that $\cL$ is smooth}. 

Note that this hypothesis implies that our affinoid variety $\bX$ is also smooth.

\begin{defn}\label{DefnhsULKG}For any $\bY \in \bX_w(\cL,G)$ and any $\cL$-stable and $G$-stable affine formal model $\cB$ on $\cO(\bY)$, we define
\[(\hsULK \rtimes_N G)(\bY) := \hK{U(\cB \otimes_{\cA} \cL)} \rtimes_N G.\]
\end{defn}

\begin{prop} $\hsULK \rtimes_N G$ is a presheaf on $\bX_w(\cL,G)$.
\end{prop}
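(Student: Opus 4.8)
The statement to prove is that $\hsULK \rtimes_N G$ is a presheaf on $\bX_w(\cL,G)$. The plan is to verify the two required pieces of data: well-definedness of $(\hsULK \rtimes_N G)(\bY)$ for each $\bY \in \bX_w(\cL,G)$ (i.e.\ independence of the choice of $\cL$- and $G$-stable affine formal model $\cB$), and functoriality of the restriction maps furnished by Corollary \ref{hsULKG}.

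First I would address well-definedness. Given $\bY \in \bX_w(\cL,G)$ and two choices $\cB_1, \cB_2$ of affine formal models in $\cO(\bY)$ that are both $\cL$-stable and $G$-stable, one forms $\cB_3 := \cB_1 \cdot \cB_2$, which is again $\cL$-stable and $G$-stable, and is finitely generated as a module over each $\cB_i$ by Lemma \ref{ProdFormModels}. Writing $\cL^{(i)} := \cB_i \otimes_{\cA} \cL$, the inclusions $\cB_i \hookrightarrow \cB_3$ induce, via Corollary \ref{hsULKG} (applied with $\bX$ replaced by $\bY$ in the sense that these are $\cB_i$-trivialising pairs by Proposition \ref{UPofAfSub}), canonical $K$-algebra homomorphisms $\hK{U(\cL^{(i)})} \rtimes_N G \to \hK{U(\cL^{(3)})} \rtimes_N G$. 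The fact that these are isomorphisms follows exactly as in the proof of \cite[Proposition 3.3]{DCapOne} for the non-equivariant sheaf $\hsULK$: the underlying map $\hK{U(\cL^{(i)})} \to \hK{U(\cL^{(3)})}$ is an isomorphism because $\cB_3$ is a finitely generated $\cB_i$-module and rescaling/completing identifies the two completed enveloping algebras; and passing to the skew-group completion via Lemma \ref{CPfunc} preserves this since $\tau = 1_G$ and $N$ is fixed. Thus $(\hsULK \rtimes_N G)(\bY)$ is independent of $\cB$ up to canonical isomorphism, compatibly with the non-equivariant statement.

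Next I would set up the restriction maps. For $\bZ \subseteq \bY$ in $\bX_w(\cL,G)$, choose an $\cL$- and $G$-stable affine formal model $\cB$ in $\cO(\bY)$; then the image of $\cB$ in $\cO(\bZ)$ sits inside some $\cL$-stable affine formal model of $\cO(\bZ)$, which by Lemma \ref{GandLstableFM} may be enlarged to one, say $\cC$, that is simultaneously $\cL$-stable and $G$-stable, with $\varphi(\cB) \subseteq \cC$ where $\varphi : \cO(\bY) \to \cO(\bZ)$ is the restriction map. Applying Corollary \ref{hsULKG} to the pair $(\bY, \cB) \supseteq (\bZ, \cC)$ gives the commutative diagram $(\ref{ResULKG})$, and in particular a $K$-algebra homomorphism
\[ \tau^{\bY}_{\bZ} : (\hsULK \rtimes_N G)(\bY) \longrightarrow (\hsULK \rtimes_N G)(\bZ).\]
By the well-definedness established above, $\tau^{\bY}_{\bZ}$ does not depend on the auxiliary choices $\cB, \cC$ (any two choices are intertwined by the canonical isomorphisms, which are compatible with the restriction maps by functoriality of $U(-)$ and of Lemma \ref{CPfunc}).

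Finally I would check the cocycle identity $\tau^{\bZ}_{\bW} \circ \tau^{\bY}_{\bZ} = \tau^{\bY}_{\bW}$ for $\bW \subseteq \bZ \subseteq \bY$. Choosing compatible $\cL$- and $G$-stable affine formal models $\cB \supseteq \varphi(\cB) \subseteq \cC$ and $\cD' \supseteq$ (image of $\cC$), both composites $\tau^{\bZ}_{\bW} \circ \tau^{\bY}_{\bZ}$ and $\tau^{\bY}_{\bW}$ are obtained by applying the functor $U(-)$ of Lemma \ref{Ufunctor} to the composed $\varphi$-morphisms of Lie lattices, completing, and then applying Lemma \ref{CPfunc} with $\tau = 1_G$ throughout; both agree on the dense image of $\cD(\bX) \rtimes G$, and both are continuous, so they coincide by the uniqueness inherent in these universal constructions (cf.\ the argument in the proof of Theorem \ref{wUGfunc}). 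This verifies the presheaf axioms. The main obstacle is really bookkeeping: ensuring that all the auxiliary formal-model choices can be made compatibly and that the canonical isomorphisms from the well-definedness step genuinely commute with the restriction maps, but this is precisely parallel to the non-equivariant case in \cite{DCapOne} and the skew-group layer adds nothing new once $\tau = 1_G$ and $N$ are held fixed.
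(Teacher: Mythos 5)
Your proposal is correct and follows essentially the same route as the paper: well-definedness via the product trick $\cB_1\cdot\cB_2$ together with \cite[Proposition 3.3(b)]{DCapOne} (the smoothness of $\cL$ being the crucial hypothesis there), restriction maps via Corollary \ref{hsULKG} applied to a $G$- and $\cL$-stable affine formal model in $\cO(\bZ)$ containing the image of $\cB$, and transitivity by a density argument. One small slip in the last step: for the cocycle identity on sections over $\bY$, the two continuous maps should be compared on the dense image of $\cD(\bY)\rtimes G$ in $(\hsULK\rtimes_N G)(\bY)$, not $\cD(\bX)\rtimes G$ --- the latter need not have dense image (e.g. when $\bY$ is an annulus inside a disc $\bX$), whereas the image of $\cD(\bY)\rtimes G$ is dense by construction.
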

\begin{proof}
We first check that $(\hsULK \rtimes_N G)(\bY)$ does not depend on the choice of $\cB$. So, let $\cB'$ be another $\cL$-stable and $G$-stable affine formal model in $\cO(\bY)$, and suppose first that $\cB'$ contains $\cB$. Then $\cB' \otimes_{\cA} \cL$ is a $G$-stable $\cB'$-Lie lattice in $\cT(\bY)$ and $G_{\cB \otimes_{\cA} \cL} \leq G_{\cB \otimes_{\cA} \cL}$ by Proposition \ref{UPofAfSub} applied to $\bY$ in place of $\bX$ and $\cB'$ in place of $\cB$. Hence $(\cB'\otimes_{\cA}\cL, N)$ is a $\cB'$-trivialising pair, and Proposition \ref{hUGfunc} induces a $K$-algebra homomorphism
\[ \theta : \hK{U(\cB \otimes_{\cA} \cL)} \rtimes_N G \longrightarrow \hK{U(\cB' \otimes_{\cA} \cL)} \rtimes_N G.\]
Because $\cL$ is smooth, the map $\hK{U(\cB \otimes_{\cA} \cL)} \to \hK{U(\cB' \otimes_{\cA} \cL)}$ is an isomorphism by \cite[Proposition 3.3(b)]{DCapOne}, and it follows from Lemma \ref{RingSGN}(b) that $\theta$ is an isomorphism. In general, $\cB \subset \cB \cdot \cB' \supset \cB'$ are all $\cL$-stable and $G$-stable, so we get isomorphisms
\[\hK{U(\cB \otimes_{\cA} \cL)} \rtimes_N G \congs \hK{U(\cB \cdot \cB' \otimes_{\cA} \cL)} \rtimes_N G \stackrel{\cong}{\longleftarrow} \hK{U(\cB \otimes_{\cA} \cL)} \rtimes_N G\]
as required. Suppose now that $\bZ, \bY \in \bX_w(\cL,G)$ with $\bZ \subseteq \bY$. Choose an $\cL$-stable and $G$-stable affine formal model $\cB$ in $\cO(\bY)$, and an $\cL$-stable and $G$-stable affine formal model $\cC'$ in $\cO(\bZ)$. Let $\cC$ be the product of $\cC'$ with the image of $\cB$ in $\cO(\bZ)$; then $\cC$ is again an $\cL$-stable and $G$-stable affine formal model in $\cO(\bZ)$. In this way we obtain $G$-equivariant $\cR$-algebra homomorphisms $\cA \to \cB \to \cC$ which produce the restriction maps $\cO(\bX) \to \cO(\bY) \to \cO(\bZ)$ after applying $-\otimes_{\cR}K$. Corollary \ref{hsULKG} now produces the restriction map
\[ \mu^\bY_\bZ : (\hsULK \rtimes_NG)(\bY) \to (\hsULK \rtimes_NG)(\bZ)\]
as the bottom arrow in the commutative diagram $(\ref{ResULKG})$. 

Finally, suppose that $\bT \subseteq \bZ \subseteq \bY$ are three objects of $\bX_w(\cL,G)$. It follows from Corollary \ref{hsULKG} that $\mu^\bZ_\bT \circ \mu^\bY_\bZ$ agrees with $\mu^\bY_\bT$ on the dense image of $\cD(\bY) \rtimes G$ in $(\hsULK \rtimes_NG)(\bY)$, so the two maps are equal. Thus $\hsULK \rtimes_N G$ is a presheaf on $\bX_w(\cL,G)$.
\end{proof}

Recall the presheaf $\hsULK$ on the $\cL$-admissible $G$-topology $\bX_w(\cL)$ from Definition \ref{HSULKpresheaf}.

\begin{notn} We define the following presheaves of rings on $\bX_w(\cL,G)$:
\[\cQ := \hsULK \rtimes_NG \qmb{and} \sT := \hsULK_{|\bX_w(\cL,G)}.\]
\end{notn}

\begin{prop}\label{BiModIso} There is an isomorphism 
\[ \sT \otimes_{\sT(\bX)} \cQ(\bX) \congs \cQ\]
of presheaves of $(\sT, \cQ(\bX))$-bimodules on $\bX_w(\cL,G)$.
\end{prop}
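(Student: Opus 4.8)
The plan is to exhibit the required isomorphism of presheaves by working section-wise over every $\bY \in \bX_w(\cL,G)$, constructing a natural map $\sT(\bY) \otimes_{\sT(\bX)} \cQ(\bX) \to \cQ(\bY)$ and checking it is bijective using the results of $\S\ref{SectionHSULK}$ together with the crossed-product structure established in $\S\ref{SkewGroupRingSect}$. Fix $\bY \in \bX_w(\cL,G)$ and a $G$-stable and $\cL$-stable affine formal model $\cB$ in $\cO(\bY)$ (these exist by Lemma \ref{GandLstableFM}), and write $\cL' := \cB \otimes_{\cA} \cL$, which is a $G$-stable $\cB$-Lie lattice with $(\cL', N)$ a $\cB$-trivialising pair by Proposition \ref{UPofAfSub} and Corollary \ref{hsULKG}. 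The restriction map $\mu^\bX_\bY : \cQ(\bX) = \hK{U(\cL)} \rtimes_N G \to \hK{U(\cL')} \rtimes_N G = \cQ(\bY)$ of Corollary \ref{hsULKG} is a $K$-algebra homomorphism; restricting its source along $\sT(\bX) = \hK{U(\cL)} \hookrightarrow \cQ(\bX)$ (this inclusion is $\gamma(1)$-type, i.e.\ the canonical embedding $S \hookrightarrow S \rtimes_N^\beta G$ discussed after Lemma \ref{RingSGN}) and noting that $\mu^\bX_\bY$ restricts to the presheaf map $\sT(\bX) \to \sT(\bY)$ of $\hsULK$, we get that $\cQ(\bY)$ is an $(\sT(\bY), \cQ(\bX))$-bimodule in a way compatible with the $(\sT(\bX),\cQ(\bX))$-bimodule structure. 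Hence there is a canonical $(\sT(\bY),\cQ(\bX))$-bilinear map
\[ \Xi(\bY) : \sT(\bY) \otimes_{\sT(\bX)} \cQ(\bX) \longrightarrow \cQ(\bY), \qquad t \otimes q \mapsto t \cdot \mu^\bX_\bY(q).\]
Naturality in $\bY$ (i.e.\ compatibility with the restriction maps $\mu^\bY_\bZ$) is immediate from the functoriality statement in Corollary \ref{hsULKG} and the uniqueness of the connecting maps, so the $\Xi(\bY)$ assemble into a morphism $\Xi : \sT \otimes_{\sT(\bX)} \cQ(\bX) \to \cQ$ of presheaves of $(\sT,\cQ(\bX))$-bimodules.

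It remains to check $\Xi(\bY)$ is bijective for each $\bY$. The key structural input is Lemma \ref{RingSGN}(b): since $(\cL,N)$ and $(\cL',N)$ are trivialising pairs, $\cQ(\bX) = \hK{U(\cL)} \rtimes_N G \cong \hK{U(\cL)} \ast (G/N)$ is a crossed product, hence free as a left $\hK{U(\cL)} = \sT(\bX)$-module on a set of coset representatives $g_1, \ldots, g_m$ for $G/N$, and likewise $\cQ(\bY) = \hK{U(\cL')} \rtimes_N G$ is free as a left $\hK{U(\cL')} = \sT(\bY)$-module on the \emph{same} representatives (because $\mu^\bX_\bY$ sends the element $\gamma(g_i)$ of $\cQ(\bX)$ to the corresponding $\gamma(g_i)$ of $\cQ(\bY)$, by construction of the connecting map via Proposition \ref{hUGfunc} and Lemma \ref{CPfunc}). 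Therefore both sides of $\Xi(\bY)$ are free left $\sT(\bY)$-modules of rank $m$: the left side is $\sT(\bY) \otimes_{\sT(\bX)} \left( \bigoplus_{i=1}^m \sT(\bX) \gamma(g_i)\right) \cong \bigoplus_{i=1}^m \sT(\bY) \otimes_{\sT(\bX)} \sT(\bX)\gamma(g_i) \cong \bigoplus_{i=1}^m \sT(\bY)$, and the right side is $\bigoplus_{i=1}^m \sT(\bY)\gamma(g_i) \cong \bigoplus_{i=1}^m \sT(\bY)$. Under these identifications $\Xi(\bY)$ is the identity on each summand, hence bijective. (One should spell out that $\Xi(\bY)$ respects this direct-sum decomposition: $\Xi(\bY)(t \otimes s\gamma(g_i)) = t \cdot \mu^\bX_\bY(s) \gamma(g_i)$ lies in the $i$-th summand, and the induced map $\sT(\bY) \otimes_{\sT(\bX)} \sT(\bX) \to \sT(\bY)$, $t \otimes s \mapsto t\,\sigma(s)$ where $\sigma : \sT(\bX) \to \sT(\bY)$ is the $\hsULK$-restriction, is the obvious isomorphism.)

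The main obstacle I anticipate is purely bookkeeping rather than conceptual: one must be careful that the left $\sT(\bX)$-module structure on $\cQ(\bX)$ used to form the tensor product, the ring embedding $\sT(\bX) \hookrightarrow \cQ(\bX)$, and the restriction maps of the presheaves $\sT$ and $\cQ$ are all mutually compatible — i.e.\ that the diagram
\[ \xymatrix{ \sT(\bX) \ar[r]\ar[d] & \cQ(\bX) \ar[d] \\ \sT(\bY) \ar[r] & \cQ(\bY) }\]
commutes, which follows from Corollary \ref{hsULKG} once one observes the horizontal arrows are the canonical embeddings $S \hookrightarrow S\rtimes_N^\beta G$ and these are natural in $S$ under the functoriality of Lemma \ref{CPfunc}. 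A secondary point requiring a line of justification is that the coset representatives can be chosen once and for all independently of $\bY$ — but this is automatic since $G$ and $N$ do not depend on $\bY$, only the coefficient algebra does. Given all this, the bijectivity argument via freeness is short, and no deep flatness or Tate-acyclicity input from $\S\ref{SectionHSULK}$ is actually needed for \emph{this} particular statement (those enter in the subsequent results about $\cQ$ being a sheaf); the smoothness hypothesis on $\cL$ is used only insofar as it guarantees $\hK{U(\cB\otimes_\cA\cL)}$ is well-defined independent of $\cB$ via \cite[Proposition 3.3(b)]{DCapOne}, which is already folded into the definition of the presheaf $\cQ$.
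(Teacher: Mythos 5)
Your proposal is correct and follows essentially the same route as the paper: both define the section-wise map $t\otimes q\mapsto t\cdot\mu^\bX_\bY(q)$, both rely on Lemma \ref{RingSGN}(b) to view $\cQ(\bX)$ and $\cQ(\bY)$ as crossed products and hence free left modules over $\sT(\bX)$ and $\sT(\bY)$ respectively, and both observe that $\mu^\bX_\bY(\gamma(g))=\gamma'(g)$ so that the map carries a basis to a basis. Your version merely spells out the basis argument with explicit coset representatives where the paper compresses it to one sentence.
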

\begin{proof} By construction, there is a morphism $F : \sT \to \cQ$ of presheaves of $K$-algebras on $\bX_w(\cL,G)$. Let $\bX' \in \bX_w(\cL,G)$ and abbreviate $T := \sT(\bX), T' := \sT(\bX'), Q := \cQ(\bX)$ and $Q' := \cQ(\bX')$, so that $Q = T \rtimes_N G \qmb{and} Q' = T' \rtimes_N G.$  There is a commutative diagram of $K$-algebras
\[ \xymatrix{ & T \ar[dr]^{f} \ar[dl]_{\lambda} & \\ T' \ar[dr]_{f'} &&  Q \ar[dl]^{\mu} \\ & Q', &}\]
where $f := F(\bX), f' := F(\bX')$ and $\lambda : T \to T'$ and $\mu : Q \to Q'$ are restriction maps in the presheaves $\sT$ and $\cQ$, respectively. The commutativity of this square induces a well-defined map of $(T',Q)$-bimodules
\[ \alpha(\bX') := f' \otimes_T \mu : T' \otimes_T Q \to Q'.\]
Define $\gamma : G \to Q^\times$ and $\gamma' : G \to Q'^\times$ by equation $(\ref{DefnGamma})$ in $\S \ref{SkewGroupRingSect}$. Then
\[\mu (\gamma(g)) = \gamma'(g) \qmb{for all} g \in G.\]
It now follows from Lemma \ref{RingSGN}(b) that $\alpha(\bX')$ carries a left $T'$-module basis for $T' \otimes_T Q$ to a left $T'$-module basis for $Q'$, and is therefore a bijection. It is straightforward to check that $\alpha(\bX')$ is functorial in $\bX' \in \bX_w(\cL,G)$.
\end{proof}

\begin{cor}\label{QSheaf} $\cQ$ is a sheaf on $\bX_w(\cL,G)$ with vanishing higher \v{C}ech cohomology. \end{cor}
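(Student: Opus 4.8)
\textbf{Proof strategy for Corollary \ref{QSheaf}.} The plan is to deduce the sheaf property and the vanishing of higher \v{C}ech cohomology for $\cQ$ from the corresponding statements for the non-equivariant sheaf $\sT = \hsULK_{|\bX_w(\cL,G)}$, using the bimodule isomorphism $\cQ \cong \sT \otimes_{\sT(\bX)} \cQ(\bX)$ of Proposition \ref{BiModIso} together with the flatness results of $\S \ref{SectionHSULK}$. The point is that, since $N$ has finite index in $G$, the right $\sT(\bX)$-module $\cQ(\bX) = \hsULK(\bX) \rtimes_N G$ is \emph{free} of finite rank $|G:N|$ by Lemma \ref{RingSGN}(b); hence tensoring a \v{C}ech complex of $\sT$-modules on the right with $\cQ(\bX)$ over $\sT(\bX)$ is an exact operation that commutes with taking cohomology, and it sends exact sequences to exact sequences and sheaves to sheaves.

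First I would fix a $G$-stable, $\cL$-stable affine formal model and reduce, exactly as in the proof of Theorem \ref{OldKiehl} and Proposition \ref{LocSMacyclic}, to checking the sheaf and acyclicity conditions on finite coverings of objects $\bY \in \bX_w(\cL,G)$ by objects of $\bX_w(\cL,G)$; by Corollary \ref{XLwG} such intersections stay inside $\bX_w(\cL,G)$, so the relevant \v{C}ech complexes make sense. For such a covering $\cV = \{\bY_i\}$ of $\bY$ one may refine it by an $\cL$-accessible Laurent covering as in the non-equivariant theory, so it suffices to treat $\cL$-accessible coverings. For these, Theorem \ref{TateHSULK} and the flatness statement Theorem \ref{HSULKflat} show that the augmented \v{C}ech complex $0 \to \sT(\bY) \to \bigoplus_i \sT(\bY_i) \to \bigoplus_{i<j}\sT(\bY_i \cap \bY_j) \to \cdots$ of right $\sT(\bY)$-modules is exact, where I use that $\sT(\bY) = \hsULK(\bY)$ and that $\bY$ plays the role of $\bX$ after base change to a formal model of $\bY$ (this is the content of Proposition \ref{UPofAfSub} and Corollary \ref{hsULKG}, which let us localise the whole situation to $\bY$).

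Next, applying $- \otimes_{\sT(\bY)} \cQ(\bY)$ to this exact complex of right $\sT(\bY)$-modules keeps it exact, because $\cQ(\bY) = \hsULK(\bY)\rtimes_N G$ is a free, hence flat, left $\sT(\bY)$-module by Lemma \ref{RingSGN}(b). By Proposition \ref{BiModIso} applied over $\bY$ (replacing $\bX$ by $\bY$, which is legitimate since all the hypotheses of $\S \ref{SectionhsULKG}$ localise to $\bY$ by Proposition \ref{UPofAfSub}), the resulting complex is canonically identified with the augmented \v{C}ech complex $0 \to \cQ(\bY) \to \bigoplus_i \cQ(\bY_i) \to \bigoplus_{i<j}\cQ(\bY_i\cap\bY_j)\to\cdots$. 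Its exactness says precisely that $\cQ$ satisfies the sheaf axioms on the covering $\cV$ and that $\check{H}^k(\cV,\cQ) = 0$ for all $k>0$. Since this holds for all $\cL$-accessible coverings, and arbitrary finite coverings in $\bX_w(\cL,G)$ admit $\cL$-accessible Laurent refinements, a standard \v{C}ech argument (as in \cite[\href{http://stacks.math.columbia.edu/tag/03F9}{Lemma 21.11.9}]{stacks-project} or the proof of Theorem \ref{OldKiehl}) upgrades this to the statement that $\cQ$ is a sheaf on the $G$-topology $\bX_w(\cL,G)$ with vanishing higher \v{C}ech cohomology.

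The main obstacle, and the place requiring the most care, is the bookkeeping needed to \emph{localise} the setup of $\S \ref{SectionhsULKG}$ from $\bX$ to an arbitrary $\bY \in \bX_w(\cL,G)$: one must know that for a $G$-stable and $\cL$-stable affine formal model $\cB$ in $\cO(\bY)$, the pair $(\cB\otimes_{\cA}\cL, N)$ is again a $\cB$-trivialising pair with the same $N$, so that $\cQ(\bY) = \hsULK(\bY)\rtimes_N G$ is formed with respect to a \emph{fixed} finite-index subgroup and the freeness statement of Lemma \ref{RingSGN}(b) applies uniformly across the covering. This is exactly guaranteed by Proposition \ref{UPofAfSub}(b) (which gives $G_{\cL}\le G_{\cL'}$, hence $N \le G_{\cL'}$) and Corollary \ref{hsULKG}, so once those are invoked the argument is a routine transfer of the non-equivariant result through the flat, finite-free base change $\sT(\bY) \to \cQ(\bY)$.
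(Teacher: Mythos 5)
Your proof is correct, but it takes a noticeably more laborious route than the paper's, and along the way it invokes machinery that isn't actually needed. The paper's argument is a one-liner: Proposition~\ref{BiModIso} gives a global isomorphism $\cQ \cong \sT \otimes_{\sT(\bX)} \cQ(\bX)$ of presheaves of left $\sT$-modules, and Lemma~\ref{RingSGN}(b) identifies $\cQ(\bX) = \hsULK(\bX) \rtimes_N G$ with a crossed product $\sT(\bX) \ast (G/N)$, which is \emph{free} of rank $|G:N|$ as a left $\sT(\bX)$-module; hence $\cQ \cong \sT^{|G:N|}$ as a presheaf of left $\sT$-modules. Since $\bX_w(\cL,G)$ is a coarser $G$-topology than $\bX_w(\cL)$, the sheaf axiom and vanishing of higher \v{C}ech cohomology for $\sT = \hsULK_{|\bX_w(\cL,G)}$ are direct consequences of Theorem~\ref{TateHSULK}, and they are inherited by the finite direct sum $\cQ$.

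You instead re-localise the whole setup to each $\bY\in\bX_w(\cL,G)$, and then carry out the tensor-with-a-free-module argument separately over $\sT(\bY)$ for each covering. This works, and the transitivity of $\otimes$ makes it agree with the global approach, but it is not necessary: the global isomorphism $\cQ\cong\sT^{|G:N|}$ already disposes of the entire ``main obstacle'' you flag, without any need for Proposition~\ref{UPofAfSub} or Corollary~\ref{hsULKG} at this point. Moreover, two of your invocations are superfluous and slightly off target. You do not need to refine to $\cL$-accessible Laurent coverings: Theorem~\ref{TateHSULK} directly gives the $\hsULK$-acyclicity of \emph{all} $\cL$-admissible coverings, so the Laurent-refinement step (which belongs to the proof of Kiehl's theorem, Theorem~\ref{OldKiehl}) is dead weight here. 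Likewise, Theorem~\ref{HSULKflat} (flatness over accessible subdomains) plays no role in establishing the exactness of the \v{C}ech complex of $\sT$ itself; flatness enters when one localises an arbitrary module $M$ as in Proposition~\ref{LocSMacyclic}, which is a stronger statement than what is being proved here. Your essential ideas (the bimodule isomorphism plus the finite-rank freeness supplied by the crossed-product structure) are exactly the right ones; just note that they can be exploited once, globally, and the rest follows immediately from Theorem~\ref{TateHSULK}.
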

\begin{proof} By Proposition \ref{BiModIso} and Lemma \ref{RingSGN}(b), $\cQ$ is isomorphic to a free sheaf of left $\sT$-modules of rank $|G:N|$. Because $\bX_w(\cL,G)$ is a coarser $G$-topology than $\bX_w(\cL)$, we may apply Theorem \ref{TateHSULK}.
\end{proof}

Recall \emph{$\cL$-accessible affinoid subdomains} of $\bX$ from Definition \ref{LaccessDefn}.

\begin{defn} $\bX_{\ac}(\cL,G)$ denotes the set of $G$-stable $\cL$-accessible affinoid subdomains of $\bX$. 
\end{defn}

With our standing convention on $G$-topologies explained in Remark \ref{GTopCovs},  \cite[Lemma 4.8(a)]{DCapOne} and Lemma \ref{XwG} imply that $\bX_{\ac}(\cL,G)$ is a $G$-topology on $\bX$.

\begin{thm} \label{FlatAcc} Assume that $[\cL,\cL] \subseteq \pi\cL$ and $\cL \cdot \cA \subseteq \pi \cA$, and let $\bY$ be a member of $\bX_{\ac}(\cL,G)$. 
\be \item  $\cQ(\bY)$ is a flat right $\cQ(\bX)$-module.
\item If $\{\bY_1,\ldots,\bY_m\}$ is an $\bX_{\ac}(\cL,G)$-covering of $\bY$, then $\oplus_{i=1}^m \cQ(\bY_i)$ is a faithfully flat right $\cQ(\bY)$-module.
\ee\end{thm}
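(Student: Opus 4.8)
\textbf{Proof strategy for Theorem \ref{FlatAcc}.} The plan is to deduce both statements from the corresponding non-equivariant facts for the presheaf $\hsULK$ already established in Theorem \ref{HSULKflat}, by exploiting the fact that $\cQ$ is, locally, a free $\sT$-module of finite rank. Concretely, fix $\bY \in \bX_{\ac}(\cL,G)$ and abbreviate $T := \sT(\bX)$, $T' := \sT(\bY)$, $Q := \cQ(\bX)$, $Q' := \cQ(\bY)$. By Lemma \ref{RingSGN}(b) and Proposition \ref{BiModIso} we have $Q \cong T \rtimes_N^{\beta_{\cL|N}} G$, which is a crossed product of $T$ with the finite group $G/N$; in particular $Q$ is free of rank $|G:N|$ both as a left and as a right $T$-module. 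The same holds for $Q'$ over $T'$. First I would record the base-change isomorphism from Proposition \ref{BiModIso} in the form $Q' \cong T' \otimes_T Q$ as a $(T',Q)$-bimodule; since $\bY$ is $\cL$-accessible, the hypotheses $[\cL,\cL]\subseteq \pi\cL$ and $\cL\cdot\cA\subseteq\pi\cA$ let us invoke Theorem \ref{HSULKflat}(a) to conclude that $T'$ is a flat right $T$-module. Composing, $Q'$ is flat as a right $T$-module, hence (being finitely generated free, thus flat, as a right $Q$-module over the subring $T \subseteq Q$ --- more precisely, the forgetful functor from right $Q$-modules to right $T$-modules is exact and faithful, and $Q' \otimes_Q - \,\cong\, Q' \otimes_Q Q \otimes_T (T \otimes_T -)$, using that $Q$ is free over $T$) we get that $Q'$ is a flat right $Q$-module. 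This gives (a).

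For (b), let $\{\bY_1,\ldots,\bY_m\}$ be an $\bX_{\ac}(\cL,G)$-covering of $\bY$. Each $\bY_i$ is in particular an $\cL$-accessible affinoid subdomain of $\bX$ (and of $\bY$), and $\{\bY_1,\ldots,\bY_m\}$ is an $\cL$-accessible covering of $\bY$ in the sense of Definition \ref{LaccessDefn} after replacing $\bX$ by $\bY$; so Theorem \ref{HSULKflat}(b), applied with $\bY$ in place of $\bX$, shows that $\bigoplus_{i=1}^m \sT(\bY_i)$ is a faithfully flat right $\sT(\bY) = T'$-module. Now $\cQ(\bY_i) \cong \sT(\bY_i) \otimes_{T'} Q'$ by Proposition \ref{BiModIso} (applied over $\bY$), so $\bigoplus_i \cQ(\bY_i) \cong \left(\bigoplus_i \sT(\bY_i)\right) \otimes_{T'} Q'$. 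Since $Q'$ is free of finite rank as a right $T'$-module, and faithful flatness is preserved under $- \otimes_{T'} Q'$ when one keeps track of the $Q'$-module structure (again because $Q'$ is free over $T'$: a right $Q'$-module $P$ satisfies $P \otimes_{Q'} \left(\bigoplus_i \cQ(\bY_i)\right) = 0$ iff $P \otimes_{T'} \left(\bigoplus_i \sT(\bY_i)\right) = 0$ iff $P = 0$, the first equivalence because $\bigoplus_i \cQ(\bY_i)$ is free over $\bigoplus_i \sT(\bY_i)$ of rank $|G:N|$ and the restriction-of-scalars functor along $T' \hookrightarrow Q'$ is faithful exact, the second by faithful flatness over $T'$), we obtain that $\bigoplus_i \cQ(\bY_i)$ is a faithfully flat right $Q'$-module.

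\textbf{Main obstacle.} The only subtle point is the passage from flatness (resp. faithful flatness) over the subring $\sT(-)$ to flatness over $\cQ(-) = \sT(-)\rtimes_N G$; one must be careful that the relevant modules are free, not merely flat, over the smaller ring so that restriction of scalars behaves well, and that the various base-change isomorphisms of Proposition \ref{BiModIso} are compatible (i.e. functorial in the affinoid subdomain) so that the $\cQ(\bX)$-, resp. $\cQ(\bY)$-, module structures on $\cQ(\bY)$, resp. $\bigoplus_i\cQ(\bY_i)$, really do match those appearing in the tensor-product decompositions. Both compatibilities are contained in Proposition \ref{BiModIso} and its proof, so after assembling these ingredients the argument is routine; the standard references for "$B$ flat over $A$ and $C$ free (hence faithfully flat) over $B$ implies $C$ flat over $A$, with faithfulness descending similarly" are \cite[Lemma 2.2]{SchmidtBB} and elementary commutative algebra, exactly as in the proof of Theorem \ref{FrSt} given above.
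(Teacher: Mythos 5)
Your strategy is the same as the paper's: derive both statements from Theorem~\ref{HSULKflat} via the $(\sT(\bY),\cQ(\bX))$-bimodule isomorphism $\cQ(\bY)\cong\sT(\bY)\otimes_{\sT(\bX)}\cQ(\bX)$ of Proposition~\ref{BiModIso}, which gives a natural isomorphism $\cQ(\bY)\otimes_{\cQ(\bX)}M\cong\sT(\bY)\otimes_{\sT(\bX)}M$ of left $\sT(\bY)$-modules for every left $\cQ(\bX)$-module $M$, and similarly over $\bY$ for part (b).

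One caution about the exposition: the parenthetical chain
\[
\cQ(\bY)\otimes_{\cQ(\bX)}-\;\cong\;\cQ(\bY)\otimes_{\cQ(\bX)}\cQ(\bX)\otimes_{\sT(\bX)}\bigl(\sT(\bX)\otimes_{\sT(\bX)}-\bigr)
\]
does not say what you want --- taken literally it collapses to $\cQ(\bY)\otimes_{\sT(\bX)}M$ rather than $\cQ(\bY)\otimes_{\cQ(\bX)}M$, and these genuinely differ. The correct associativity manipulation, which is what you actually need, is
\[
\cQ(\bY)\otimes_{\cQ(\bX)}M\;\cong\;\bigl(\sT(\bY)\otimes_{\sT(\bX)}\cQ(\bX)\bigr)\otimes_{\cQ(\bX)}M\;\cong\;\sT(\bY)\otimes_{\sT(\bX)}M.
\]
Once this is used, the freeness of $\cQ(\bX)$ over $\sT(\bX)$ plays no role in part (a); the flatness claim follows purely from the bimodule isomorphism plus the exact faithful restriction functor to $\sT(\bY)$-modules. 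For part (b) you also have a left/right slip, writing $P\otimes_{\cQ(\bY)}\bigl(\bigoplus_i\cQ(\bY_i)\bigr)$ where you should test exactness and faithfulness of $\bigl(\bigoplus_i\cQ(\bY_i)\bigr)\otimes_{\cQ(\bY)}-$ on \emph{left} $\cQ(\bY)$-modules, consistent with proving that the direct sum is a faithfully flat \emph{right} $\cQ(\bY)$-module. Neither slip affects the validity of the route, which is exactly the paper's.
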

\begin{proof} It follows from Proposition \ref{BiModIso} that there is an isomorphism of left $\sT(\bY)$-modules $\cQ(\bY) \otimes_{\cQ(\bX)} M \cong \sT(\bY) \otimes_{\sT(\bX)} M$ for every left $\cQ(\bX)$-module $M$. But $\sT(\bY)$ is a flat right $\sT(\bX)$-module, and $\oplus_{i=1}^m \sT(\bY_i)$ is a faithfully flat right $\sT(\bY)$-module by Theorem \ref{HSULKflat}.
\end{proof}

Because $\cQ$ is a sheaf of rings on the $G$-topology $\bX_{\ac}(\cL,G)$, we can make the following

\begin{defn} \label{LocQFunc}
The \emph{localisation functor} $\Loc_{\cQ}$ from finitely generated $\cQ(\bX)$-modules to presheaves of $\cQ$-modules on $\bX_{\ac}(\cL,G)$ is given by
\[ \Loc_{\cQ}(M)(\bY) = \cQ(\bY) \underset{\cQ(\bX)}{\otimes} M \qmb{for all} \bY \in \bX_{\ac}(\cL,G).\]
The functor $\Loc_{\sT}$ from finitely generated $\sT(\bX)$-modules to presheaves of $\sT$-modules on $\bX_{\ac}(\cL,G)$ is defined in a similar manner. 
\end{defn}

\begin{lem} Let $\bY \in \bX_{\ac}(\cL,G)$, choose a $G$-stable and $\cL$-stable affine formal model $\cB$ in $\cO(\bY)$, and let $\cL' := \cB \otimes_{\cA} \cL$. 
\be \item $\bX_{\ac}(\cL,G) \quad \cap \quad \bY_w = \bY_{\ac}(\cL',G)$.
\item The restriction of $\cQ$ to $\bX_{\ac}(\cL,G) \hsp \cap \hsp \bY_w$ is isomorphic to $\hK{\sU(\cL')} \rtimes_N G$.
\ee\end{lem}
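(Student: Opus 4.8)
The statement is the natural ``transitivity'' compatibility between the big affinoid $\bX$ and a smaller $G$-stable $\cL$-accessible subdomain $\bY$, asserting that restricting the sheaf $\cQ = \hsULK \rtimes_N G$ on $\bX_{\ac}(\cL,G)$ to those members lying inside $\bY$ recovers the analogous sheaf built directly from the $\cB$-Lie lattice $\cL' = \cB \otimes_{\cA} \cL$ on $\bY$. The plan is to reduce everything to the non-equivariant statement already available (the fact, used implicitly in $\S\ref{SectionHSULK}$, that the restriction of $\hsULK$ to $\bX_w(\cL) \cap \bY_w$ is $\hK{\sU(\cL')}$, which follows from \cite[\S 3.3]{DCapOne}) together with the free-module description of $\cQ$ over $\sT$ furnished by Proposition \ref{BiModIso}.

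First I would prove (a), the identity $\bX_{\ac}(\cL,G) \cap \bY_w = \bY_{\ac}(\cL',G)$. Unwinding Definition \ref{LaccessDefn}: a $G$-stable affinoid subdomain $\bZ \subseteq \bY$ is in the left-hand side iff $\bZ$ is $G$-stable, $\cL$-admissible, and covered by finitely many $\cL$-accessible rational subdomains of $\bX$ contained in $\bZ$. By \cite[Lemma 4.8(a),(b)]{DCapOne} (the non-equivariant transitivity of $\cL$-accessibility, applied with $\bX$, $\bY$, $\cL$, $\cL'$ in the roles given), a rational subdomain of $\bX$ lying inside $\bY$ is $\cL$-accessible in $\bX$ iff it is $\cL'$-accessible in $\bY$; moreover $\bY$ itself is $\cL$-admissible by hypothesis and $\cB$ is a $G$-stable $\cL$-stable formal model, so the $\cL$-admissibility of $\bZ$ as a subdomain of $\bX$ matches its $\cL'$-admissibility as a subdomain of $\bY$ by \cite[Lemma 3.2]{DCapOne}. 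Combining these gives the set-theoretic equality; the $G$-stability condition is manifestly the same on both sides. This part is routine bookkeeping with the cited lemmas.

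For (b), I would argue as follows. By Corollary \ref{hsULKG}, $(\cL',N)$ is a $\cB$-trivialising pair, so the presheaf $\hK{\sU(\cL')}\rtimes_N G$ on $\bY_w(\cL',G)$ is defined, and by part (a) its domain is precisely $\bX_{\ac}(\cL,G) \cap \bY_w$. Now fix $\bZ$ in this common refinement and choose a $G$-stable $\cL$-stable affine formal model $\cC$ in $\cO(\bZ)$ containing the image of $\cB$ (possible by Lemma \ref{GandLstableFM}, enlarging if necessary). Then $\cC \otimes_{\cA} \cL \cong \cC \otimes_{\cB} \cL'$ as $\cC$-Lie lattices, so the two candidate sections at $\bZ$ --- namely $\hK{U(\cC\otimes_{\cA}\cL)}\rtimes_N G$ computed in $\cQ$, and $\hK{U(\cC\otimes_{\cB}\cL')}\rtimes_N G$ computed in $\hK{\sU(\cL')}\rtimes_N G$ --- are literally the same crossed product, via the canonical isomorphism $\hK{\sU(\cL)}(\bZ) \cong \hK{\sU(\cL')}(\bZ)$ from \cite[Proposition 3.3(b)]{DCapOne} tensored up by Lemma \ref{RingSGN}(b) (or: observe that both crossed products are free left $\hK{U(\cC\otimes\cL)}$-modules on the image of $G$, and the isomorphism of base rings intertwines the two trivialisations $\beta$ by Theorem \ref{Beta}). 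To promote these section-wise isomorphisms to an isomorphism of presheaves one checks compatibility with the restriction maps, which on both sides are induced by Proposition \ref{hUGfunc} applied to the $G$-equivariant maps of formal-model chains $\cC \to \cC'$; since both restriction maps agree with the common restriction map on the dense image of $\cD(\bZ)\rtimes G$, they coincide. This yields the desired presheaf isomorphism $\cQ_{|\bX_{\ac}(\cL,G)\cap\bY_w} \cong \hK{\sU(\cL')}\rtimes_N G$.

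The only genuine obstacle is making sure the ``base change of Lie lattices'' isomorphism $\cC\otimes_{\cA}\cL \cong \cC\otimes_{\cB}\cL'$ is compatible with the $G$-actions and the trivialisations $\beta_{\cC\otimes\cL|N}$, $\beta_{\cC\otimes\cL'|N}$ simultaneously --- but this is exactly the content of the functoriality packaged in Proposition \ref{hUGfunc} and the uniqueness clauses there, so no new work beyond careful invocation is required. Everything else is an exercise in chasing definitions through the $\cL$-accessibility machinery of \cite{DCapOne} and the crossed-product formalism of $\S\ref{SkewGroupRingSect}$.
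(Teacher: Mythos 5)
Your proof is correct and follows essentially the same route as the paper: reduce both statements to the non-equivariant results of \cite{DCapOne} (the transitivity of $\cL$-accessibility for part (a), and the independence of $\hsULK(\bZ)$ from the choice of $\cL$-stable formal model for part (b)), and then handle the $G$-equivariance and the trivialisations via the functoriality in Proposition \ref{hUGfunc}. The paper's own proof is simply more terse, citing \cite[Proposition 4.7]{DCapOne} for (a) rather than \cite[Lemma 4.8]{DCapOne}, and \cite[Lemma 4.6]{DCapOne} together with Proposition \ref{hUGfunc} for (b); your fuller account of the common-formal-model $\cC$ and the tensor-contraction identity $\cC\otimes_{\cA}\cL = \cC\otimes_{\cB}\cL'$ is exactly the content being invoked there.
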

\begin{proof} (a) This is a straightforward (but long) exercise relying ultimately on some properties of $\cL$-accessible rational subdomains given in \cite[Proposition 4.7]{DCapOne}. 

(b) This follows from \cite[Lemma 4.6]{DCapOne} together with Proposition \ref{hUGfunc}.
\end{proof}

Thus $\bY_{\ac}(\cL',G)$ in fact does not depend on the choice of $\cB$. Following \cite[\S 9.4.3]{BGR} and \cite[Definition 5.2]{DCapOne}, we make the following

\begin{defn} Let $\cU$ be an $\bX_{\ac}(\cL,G)$-covering of $\bX$. We say that a $\cQ$-module $\cM$ is \emph{$\cU$-coherent} if for each $\bY \in \cU$, writing $\cY := \bX_{\ac}(\cL,G) \cap \bY_w$, there is a finitely generated $\cQ(\bY)$-module $M$, and a $\cQ_{|\cY}$-linear isomorphism 
\[\Loc_{\cQ_{|\cY}}(M) \congs \cM_{|\cY}.\]
\end{defn}

$\cU$-coherent $\sT$-modules on $\bX_{\ac}(\cL,G)$ are defined in a similar way.

\begin{lem}\label{LocMLocN} Let $M$ be a finitely generated $\cQ(\bX)$-module, and let $L$ denote its restriction to $\sT(\bX)$. Then $\Loc_{\cQ}(M)$ is isomorphic to $\Loc_{\sT}(L)$ as a presheaf of $\sT$-modules.
\end{lem}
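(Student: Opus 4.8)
The plan is to compare the two localisation functors termwise over an $\bX_{\ac}(\cL,G)$-covering and deduce the isomorphism from the already-established sheaf and flatness properties. First I would recall the isomorphism of presheaves of $(\sT,\cQ(\bX))$-bimodules on $\bX_w(\cL,G)$ furnished by Proposition \ref{BiModIso}, namely $\sT \otimes_{\sT(\bX)} \cQ(\bX) \congs \cQ$; restricting this to $\bX_{\ac}(\cL,G) \subseteq \bX_w(\cL,G)$ gives the same statement there. Now fix a finitely generated $\cQ(\bX)$-module $M$ with restriction $L$ to $\sT(\bX)$. For every $\bY \in \bX_{\ac}(\cL,G)$ we have the natural chain of $\sT(\bY)$-module isomorphisms
\[
\Loc_{\cQ}(M)(\bY) = \cQ(\bY) \underset{\cQ(\bX)}{\otimes} M \cong \left(\sT(\bY) \underset{\sT(\bX)}{\otimes} \cQ(\bX)\right)\underset{\cQ(\bX)}{\otimes} M \cong \sT(\bY) \underset{\sT(\bX)}{\otimes} \left(\cQ(\bX)\underset{\cQ(\bX)}{\otimes}M\right) \cong \sT(\bY) \underset{\sT(\bX)}{\otimes} L = \Loc_{\sT}(L)(\bY),
\]
the middle isomorphism being associativity of tensor product and the first coming from evaluating Proposition \ref{BiModIso} at $\bY$.

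The only real content is checking that this termwise isomorphism is compatible with restriction maps in the two presheaves, i.e.\ that for $\bZ \subseteq \bY$ in $\bX_{\ac}(\cL,G)$ the obvious square commutes. This is because the bimodule isomorphism of Proposition \ref{BiModIso} is itself a morphism of presheaves on $\bX_w(\cL,G)$, so its restriction maps intertwine those of $\sT \otimes_{\sT(\bX)} \cQ(\bX)$ and of $\cQ$; tensoring a presheaf morphism with the fixed modules $M$ (resp.\ $L$) on the right preserves this compatibility, and the associativity and unit isomorphisms used above are natural. Hence the collection of termwise isomorphisms assembles into an isomorphism of presheaves of $\sT$-modules $\Loc_{\cQ}(M) \congs \Loc_{\sT}(L)$ on $\bX_{\ac}(\cL,G)$, as required.

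I do not expect any serious obstacle here: the statement is essentially a base-change identity $\cQ(\bY)\otimes_{\cQ(\bX)} M \cong \sT(\bY)\otimes_{\sT(\bX)} M$ applied functorially over the covering, and everything needed — the bimodule presentation of $\cQ$ over $\sT$ and its presheaf functoriality — is already in place via Proposition \ref{BiModIso}. The one point requiring a little care is bookkeeping of the naturality of the associativity isomorphism so that the restriction maps genuinely match; once that is spelled out the proof is complete.
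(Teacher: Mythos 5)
Your proof is correct and follows essentially the same route as the paper: invoke Proposition \ref{BiModIso}, apply $- \otimes_{\cQ(\bX)} M$, and contract the resulting triple tensor product by associativity, with the presheaf compatibility following from naturality. The only thing the paper adds is a brief observation at the outset that $\cQ(\bX)$ is finitely generated over $\sT(\bX)$, hence $L$ is a finitely generated $\sT(\bX)$-module, so that $\Loc_{\sT}(L)$ is actually in the domain of the localisation functor as defined; you might add a line to this effect.
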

\begin{proof} Since $\cQ(\bX)$ is a finitely generated $\sT(\bX)$-module by construction, it follows that $M$ is also finitely generated as an $\sT(\bX)$-module. By Proposition \ref{BiModIso}, there is an isomorphism $\sT \otimes_{\sT(\bX)} \cQ(\bX) \congs \cQ$ of presheaves of $(\sT, \cQ(\bX))$-bimodules on $\bX_{\ac}(\cL,G)$. Applying the functor $- \otimes_{\cQ(\bX)} M$ yields an isomorphism 
\[ \left(\sT \otimes_{\sT(\bX)} \cQ(\bX)\right) \otimes_{\cQ(\bX)} M \congs \cQ \otimes_{\cQ(\bX)} M\]
of presheaves of $\sT$-modules. Contracting the tensor product on the left hand side gives the required $\sT$-linear isomorphism $\Loc_{\sT}(L) \congs \Loc_{\cQ}(M)$.
\end{proof}

\begin{cor}\label{TateForCohsQmod} Suppose that $[\cL,\cL] \subseteq \pi\cL$ and $\cL \cdot \cA \subseteq \pi \cA$. Then the augmented \v{C}ech complex $C^{\aug}(\cU, \Loc_{\cQ}(M))$ is exact for every $\bX_{\ac}(\cL,G)$-covering $\cU$ and every finitely generated $\cQ(\bX)$-module $M$.
\end{cor}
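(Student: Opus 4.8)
The plan is to reduce the statement about the presheaf $\Loc_{\cQ}(M)$ of $\cQ$-modules on $\bX_{\ac}(\cL,G)$ to the already-established acyclicity result for $\hsULK$-modules on the finer $G$-topology $\bX_{\ac}(\cL)$, exploiting the fact that $\cQ$ is free of finite rank over $\sT$. First I would observe that by Lemma \ref{LocMLocN}, if $L$ denotes the restriction of the finitely generated $\cQ(\bX)$-module $M$ to $\sT(\bX)$ (which is finitely generated over $\sT(\bX)$ since $\cQ(\bX)$ is a finitely generated $\sT(\bX)$-module by construction), then $\Loc_{\cQ}(M)$ is isomorphic to $\Loc_{\sT}(L)$ as a presheaf of $\sT$-modules on $\bX_{\ac}(\cL,G)$. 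In particular the augmented \v{C}ech complex $C^{\aug}(\cU, \Loc_{\cQ}(M))$ is isomorphic, as a complex of abelian groups (indeed of $\sT(\bX)$-modules), to $C^{\aug}(\cU, \Loc_{\sT}(L))$, so it suffices to prove exactness of the latter.

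The next step is to compare $\Loc_{\sT}(L)$ on the $G$-topology $\bX_{\ac}(\cL,G)$ with $\Loc_{\cS}(L)$ on the finer $G$-topology $\bX_{\ac}(\cL)$, where $\cS = \hsULK$ as in $\S\ref{SectionHSULK}$. Since $\sT = \hsULK_{|\bX_w(\cL,G)}$ by definition, and since every object of $\bX_{\ac}(\cL,G)$ is an object of $\bX_{\ac}(\cL)$ with the same ring of sections, we have $\sT(\bY) = \cS(\bY)$ for all $\bY \in \bX_{\ac}(\cL,G)$; hence $\Loc_{\sT}(L)(\bY) = \cS(\bY)\otimes_{\cS(\bX)}L = \Loc_{\cS}(L)(\bY)$ compatibly with restriction maps. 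Therefore $C^{\aug}(\cU, \Loc_{\sT}(L))$ is literally a subcomplex-type truncation — more precisely it equals $C^{\aug}(\cU, \Loc_{\cS}(L)_{|\bX_w(\cL,G)})$ — of the \v{C}ech data considered in Proposition \ref{LocSMacyclic}. By that Proposition (which applies since $[\cL,\cL]\subseteq\pi\cL$ and $\cL\cdot\cA\subseteq\pi\cA$ by hypothesis), $\Loc_{\cS}(L)$ is a sheaf on $\bX_{ac}(\cL)$ with vanishing higher \v{C}ech cohomology for the finitely generated $\cS(\bX)$-module $L$; in particular $C^{\aug}(\cU', \Loc_{\cS}(L))$ is exact for every $\bX_{ac}(\cL)$-covering $\cU'$. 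Since any $\bX_{\ac}(\cL,G)$-covering $\cU$ of $\bX$ is in particular an $\bX_{\ac}(\cL)$-covering of $\bX$, and the \v{C}ech complex only involves the finite intersections of members of $\cU$ — which lie in $\bX_{\ac}(\cL,G)$ by Lemma \ref{XwG} and \cite[Lemma 4.8(a)]{DCapOne}, so that no sections outside $\bX_{\ac}(\cL,G)$ are needed — we get exactness of $C^{\aug}(\cU, \Loc_{\cS}(L)) = C^{\aug}(\cU, \Loc_{\sT}(L))$ directly.

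Assembling the chain of identifications $C^{\aug}(\cU, \Loc_{\cQ}(M)) \cong C^{\aug}(\cU, \Loc_{\sT}(L)) = C^{\aug}(\cU, \Loc_{\cS}(L))$, the exactness of the last complex gives the exactness of the first, which is the assertion. The only point requiring genuine care — the "main obstacle," though a mild one — is verifying that the isomorphism of Lemma \ref{LocMLocN} is compatible with the \v{C}ech differentials of an arbitrary covering, i.e.\ that it is an isomorphism of presheaves and not merely a levelwise isomorphism; this follows from the fact that the isomorphism $\sT \otimes_{\sT(\bX)}\cQ(\bX)\congs\cQ$ in Proposition \ref{BiModIso} is an isomorphism of presheaves of $(\sT, \cQ(\bX))$-bimodules, so that tensoring with $M$ over $\cQ(\bX)$ and contracting yields a presheaf isomorphism $\Loc_{\sT}(L)\congs\Loc_{\cQ}(M)$, hence one commuting with all restriction maps and therefore with the \v{C}ech coboundary operators. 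One should also note that exactness is preserved under the faithfully flat base change implicit here only if needed — but in fact no flatness input is required for this corollary beyond what already entered Proposition \ref{LocSMacyclic}; the result is a purely formal consequence of that proposition together with the free-module structure of $\cQ$ over $\sT$.
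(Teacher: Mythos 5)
Your argument is correct and matches the paper's proof exactly: reduce via Lemma \ref{LocMLocN} to the $\sT$-module $\Loc_{\sT}(L)$, then observe that $\bX_{\ac}(\cL,G)$ is a coarser $G$-topology than $\bX_{\ac}(\cL)$ so any $\bX_{\ac}(\cL,G)$-covering is an $\bX_{\ac}(\cL)$-covering, and invoke Proposition \ref{LocSMacyclic}. The paper states this in two lines; you have simply spelled out the same reduction with the care (compatibility of the presheaf isomorphism with \v{C}ech differentials, membership of pairwise intersections in $\bX_{\ac}(\cL,G)$) implicit in the terser version.
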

\begin{proof} The $G$-topology $\bX_{\ac}(\cL,G)$ is weaker than $\bX_{\ac}(\cL)$, so $C^{\aug}(\cU, \Loc_{\sT}(N))$ is exact by Proposition \ref{LocSMacyclic}. Now apply Lemma \ref{LocMLocN}.
\end{proof}

Recall that $\cS$ denotes the restriction of the sheaf $\hK{\sU(\cL)}$ to $\bX_{\ac}(\cL)$. Note that in this notation we have
\[ \sT = \cS_{|\bX_{\ac}(\cL,G)}.\]

\begin{prop}\label{Enhance} Let $\cU$ be an $\bX_{\ac}(\cL,G)$-covering of $\bX$, and let $\cM$ be a $\cU$-coherent $\cQ$-module on $\bX_{\ac}(\cL,G)$. Then there is a $\cU$-coherent $\cS$-module $\cN$ on $\bX_{\ac}(\cL)$ and a $\sT$-linear isomorphism
\[ \cN_{|\bX_{\ac}(\cL,G)} \congs \cM.\]
\end{prop}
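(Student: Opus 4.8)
The plan is to reduce the statement for $\cQ$-modules to the already-established analogous statement for $\cS$-modules (namely Theorem \ref{OldKiehl}, enhanced with its coherence version), using the bimodule isomorphism of Proposition \ref{BiModIso} to transport $\cU$-coherence back and forth between the two $G$-topologies. The key observation driving everything is that, by Proposition \ref{BiModIso}, $\cQ \cong \sT \otimes_{\sT(\bX)}\cQ(\bX)$ as sheaves of $(\sT,\cQ(\bX))$-bimodules, and by Lemma \ref{RingSGN}(b) this is a \emph{free} left $\sT$-module of rank $|G:N|$. The same holds locally: restricting to $\bX_{\ac}(\cL,G)\cap \bY_w$ for any $\bY\in\bX_{\ac}(\cL,G)$, we have $\cQ_{|\cY}\cong \hK{\sU(\cL')}\rtimes_N G$ with $\cL' := \cB\otimes_\cA\cL$ a $\cB$-Lie lattice. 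So ``$\cU$-coherent $\cQ$-module'' is a condition that can be detected after forgetting down to $\sT$-modules, provided one keeps track of the extra $\cQ(\bX)$-module structure, which the freeness makes possible.

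First I would spell out the reduction. Given a $\cU$-coherent $\cQ$-module $\cM$, let $\cM_0 := \cM_{|\bX_{\ac}(\cL,G)}$ viewed merely as a $\sT$-module (i.e.\ apply the analogue of Lemma \ref{LocMLocN} locally). Over each $\bY\in\cU$ we have $\cM_{|\cY}\cong \Loc_{\cQ_{|\cY}}(M_\bY)$ for a finitely generated $\cQ(\bY)$-module $M_\bY$; restricting the coefficients to $\sT(\bY) = \hK{\sU(\cL')}(\bY)$ and applying Lemma \ref{LocMLocN} over $\bY$ shows $\cM_{0|\cY}\cong \Loc_{\sT_{|\cY}}(L_\bY)$ where $L_\bY$ is $M_\bY$ restricted to $\sT(\bY)$. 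Hence $\cM_0$ is a $\cU$-coherent $\sT$-module on $\bX_{\ac}(\cL,G)$. Now the $G$-topology $\bX_{\ac}(\cL,G)$ is coarser than $\bX_{\ac}(\cL)$, so a $\cU$-coherent $\sT$-module is, after passing to $\bX_{\ac}(\cL)$ and choosing a common $\cL$-accessible Laurent refinement (using Lemma \ref{FastChain}-style refinement arguments and the fact that Laurent coverings with $\cL\cdot f_i\in\pi\cA$ are $\cL$-accessible), exactly the input to Theorem \ref{OldKiehl}: we get a finitely generated $\cS(\bX) = \sT(\bX)$-module $N$ and an isomorphism $\Loc_\cS(N)\congs\cN'$ for a $\cU$-coherent $\cS$-module $\cN'$ on $\bX_{\ac}(\cL)$ extending $\cM_0$. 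The remaining and main content is to upgrade the $\sT(\bX)$-module $N$ to a $\cQ(\bX)$-module compatibly with $\cM$.

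The hard part will be recovering the $\cQ$-module (equivalently, the $G$-equivariant-type) structure globally from the local pieces. The local modules $M_\bY$ come equipped with $\cQ(\bY)$-actions extending the $\sT(\bY)$-actions, and these are compatible on overlaps up to the gluing isomorphisms; I would argue that $N := \Gamma(\bX,\cN')\cong\Gamma(\bX,\cM_0)$ inherits a $\cQ(\bX)$-action because $\cQ(\bX) = \sT(\bX)\rtimes_N G$ is generated over $\sT(\bX)$ by the group elements $\gamma(g)$, and each $g\in G$ acts on $\cM$ through its $G$-equivariant structure (here one uses that $\cM$ is a genuine $\cQ$-module on the $G$-stable topology, so the $\gamma(g)$-action on sections is already defined and is $\sT$-semilinear in the appropriate sense); one then checks the relations $(\ref{RelSNG})$ and $(\ref{EqualityInSNG})$ hold on $N$ by density of $\cD(\bX)\rtimes G$ and continuity, exactly as in the proofs of Lemma \ref{RingSGN} and Proposition \ref{DXGwelldef}. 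With the $\cQ(\bX)$-module structure on $N$ in hand, set $\cN := \Loc_\cQ(N)$ on $\bX_{\ac}(\cL)$; by Lemma \ref{LocMLocN} its restriction of scalars to $\sT$ is $\Loc_\sT(N)\cong\cN'$, which is $\cU$-coherent by construction, so $\cN$ is $\cU$-coherent as a $\cQ$-module by the freeness criterion above. Finally, restricting $\cN$ to $\bX_{\ac}(\cL,G)$ and comparing with $\cM$: both are $\cU$-coherent $\cQ$-modules whose underlying $\sT$-modules agree with $\cM_0$, and the $\cQ(\bX)$-linear isomorphism $N\cong\Gamma(\bX,\cM)$ induces, via $\Loc_\cQ$ and Corollary \ref{TateForCohsQmod} (which guarantees sheafiness and acyclicity so that $\Loc_\cQ$ commutes with the relevant limits), the desired $\sT$-linear — in fact $\cQ$-linear — isomorphism $\cN_{|\bX_{\ac}(\cL,G)}\congs\cM$. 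I expect the bookkeeping of equivariant compatibilities across the refinement to be the principal obstacle; everything else is a transcription of the non-equivariant arguments of \cite{DCapOne} through the free-module comparison of Proposition \ref{BiModIso}.
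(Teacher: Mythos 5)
Your proposal has a genuine gap right at the pivot of the argument. You write ``a $\cU$-coherent $\sT$-module is, after passing to $\bX_{\ac}(\cL)$ and choosing a common $\cL$-accessible Laurent refinement, exactly the input to Theorem \ref{OldKiehl}'' --- but this ``passage'' is precisely the content of Proposition \ref{Enhance}, and you have not supplied it. A sheaf on the \emph{coarser} $G$-topology $\bX_{\ac}(\cL,G)$ only knows its sections over $G$-stable affinoid subdomains; it does not automatically determine a sheaf on the \emph{finer} topology $\bX_{\ac}(\cL)$, whose objects include non-$G$-stable subdomains. Theorem \ref{OldKiehl} takes as input a $\cU$-coherent sheaf already defined on $\bX_{\ac}(\cL)$, so invoking it at this point begs the question. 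A related problem appears at the end, where you set $\cN := \Loc_\cQ(N)$ ``on $\bX_{\ac}(\cL)$'': $\Loc_\cQ$ is a functor on $\bX_{\ac}(\cL,G)$ (the sheaf $\cQ = \hsULK\rtimes_N G$ only makes sense on $G$-stable subdomains), so this expression is not defined on the required site. Moreover the statement asks for a $\cU$-coherent $\cS$-module $\cN$, so the detour through upgrading $N$ to a $\cQ(\bX)$-module is unnecessary.

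The actual proof constructs the extension directly, without appealing to Theorem \ref{OldKiehl} or to global sections at all. For each $\bY_i\in\cU$, the local module $M_i := \cM(\bY_i)$ is finitely generated over $\cQ(\bY_i)$ and hence (by the freeness you correctly identify from Proposition \ref{BiModIso} and Lemma \ref{RingSGN}(b)) over $\cS(\bY_i)$, so one can form the coherent $\cS_i$-module $\cN_i := \Loc_{\cS_i}(M_i)$ on the \emph{full} restricted topology $\cY_i := \bY_{i,w}\cap\bX_{\ac}(\cL)$, which lives on non-$G$-stable subdomains of $\bY_i$. The $\cU$-coherence of $\cM$ supplies canonical $\cS_{ij}$-linear isomorphisms $\phi_{ij}: \cN_{i|\cY_{ij}}\to\cN_{j|\cY_{ij}}$ satisfying the cocycle condition, and these $\cN_i$ glue to a sheaf $\cN$ on $\bX_{\ac}(\cL)$. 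The required $\sT$-linear isomorphism $\cN_{|\bX_{\ac}(\cL,G)}\cong\cM$ is then read off from the explicit gluing formula for $\cN$. The conceptual point you are missing is that the extension is achieved by \emph{gluing local localisations}, not by constructing a global module and localising it.
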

\begin{proof} Let $\cU = \{\bY_1,\ldots, \bY_n\}$, fix $1 \leq i,j \leq n$ and write $\bY_{ij} := \bY_i \cap \bY_j$. We have $G$-topologies $\cY_i := \bY_{i,w} \cap \bX_{\ac}(\cL)$ and $\cY_{ij} := \bY_{ij,w} \cap \bX_{\ac}(\cL)$ and sheaves $\cS_i := \cS_{|\cY_i}$ and $\cS_{ij} := \cS_{| \cY_{ij}}$ on them. Since $\cM$ is $\cU$-coherent, $\cM(\bY_i)$ is a finitely generated $\cQ(\bY_i)$-module, and hence also a finitely generated $\cS(\bY_i)$-module. So we can define
\[ \cN_i :=  \Loc_{\cS_i}(\cM(\bY_i)),\]
a coherent sheaf of $\cS_i$-modules on $\cY_i$.  For every $\bV \in \cY_{ij}$, the map
\[\begin{array}{ccc} \cS(\bV) \times \cM(\bY_i)& \longrightarrow &\cS(\bV) \underset{\cS(\bY_{ij})}{\otimes} \cM(\bY_{ij}) \\ 
(a,m) & \mapsto & a \otimes m_{|\bY_{ij}}\end{array}\]
 is $\cS(\bY_i)$-balanced and therefore descends to a well-defined $\cS(\bV)$-linear map
\[\psi^i_j(\bV) : \cS(\bV) \underset{\cS(\bY_i)}{\otimes} \cM(\bY_i) \longrightarrow \cS(\bV) \underset{\cS(\bY_{ij})}{\otimes} \cM(\bY_{ij}).\]
Thus we obtain a morphism of presheaves of $\cS_{ij}$-modules on $\cY_{ij}$
\[ \psi^i_j : \cS_{ij} \underset{\cS(\bY_i)}{\otimes} \cM(\bY_i) \longrightarrow \cS_{ij} \underset{\cS(\bY_{ij})}{\otimes} \cM(\bY_{ij}).\]
Since $\cM$ is $\cU$-coherent, we see that actually $\psi^i_j$ is an isomorphism. In this way, we obtain $\cS_{ij}$-linear isomorphisms
\[ \phi_{ij} := (\psi^j_i)^{-1} \circ \psi^i_j : \cN_{i|\cY_{ij}} \congs \cN_{j|\cY_{ij}}\]
and we omit the verification of the fact that these satisfy the cocycle condition
\[ (\phi_{jk})_{|\cY_{ijk}} \circ (\phi_{ij})_{|\cY_{ijk}}  = (\phi_{ik})_{|\cY_{ijk}} \]
where, of course, $\cY_{ijk} = (\bY_i \cap \bY_j \cap \bY_k)_w \cap \bX_{\ac}(\cL)$. Therefore by \cite[\href{http://stacks.math.columbia.edu/tag/04TP}{Lemma 7.25.4}]{stacks-project}, the sheaves $\cN_i$ glue together to give a sheaf $\cN$ on $\bX_{\ac}(\cL)$. Examining the construction, we see that $\cN$ is given by the formula
\begin{equation}\label{NWformula} \cN(\bW) = \ker \left( \bigoplus_{i=1}^n \cS(\bW \cap \bY_i) \underset{\cS(\bY_i)}{\otimes} \cM(\bY_i) \to \bigoplus_{i<j} \cS(\bW \cap \bY_{ij}) \underset{\cS(\bY_{ij})}{\otimes} \cM(\bY_{ij}) \right) \end{equation}
for any $\bW \in \bX_{\ac}(\cL)$. Using this description, it is straightforward to see that $\cN$ is in fact a sheaf of $\cS$-modules. It also comes together with a canonical $\cS_i$-linear isomorphism
\[ \phi_i : \cN_{|\cY_i} \congs \cN_i\]
which is given on local sections by projection onto the $i$th component. 

Let $\cY_i/G := \cY_i \hsp \cap \hsp \bX_w/G$, let $\sT_i := \cS_{i| \cY_i/G}$, and let $\omega_i := \phi_{i|\cY_i/G}$. Since $\cM$ is a $\cU$-coherent $\sT$-module, we also have the canonical $\sT_i$-linear isomorphism
\[ c_i : \cN_{i|\cY_i/G} \congs \cM_{|\cY_i/G}.\]
Let $\bW \in \cY_{ij} \cap \bX_w/G$ and take an element
\[\xi = (s_r \otimes m_r)_{r=1}^n \in \cN(\bW) \subseteq \bigoplus_{r=1}^n \cS(\bW \cap \bY_r) \underset{\cS(\bY_r)}{\otimes} \cM(\bY_r).\]
Unravelling the notation, we see that for all $i$ and $\xi \in \cN(\bW)$ we have
\[ (c_i \circ \omega_i)\left( \xi \right) = s_i \cdot m_{i| \bW}.\]
The expression for $\cN(\bW)$ given by formula (\ref{NWformula}) now implies that 
\[ (c_i \circ \omega_i)_{|\cY_{ij}} = (c_j \circ \omega_j)_{|\cY_{ij}}\]
for any $i,j$. Thus, these local $\sT_i$-linear isomorphisms 
\[ c_i \circ \omega_i : \cN_{|\cY_i/G} \congs \cM_{|\cY_i/G}\]
glue to give the required $\sT$-linear isomorphism $\cN_{|\bX_{\ac}(\cL,G)} \congs \cM$. \end{proof}

\begin{thm}\label{LevelEqKiehl} Suppose that $[\cL,\cL] \subseteq \pi\cL$ and $\cL \cdot \cA \subseteq \pi \cA$. Let $\cU$ be an $\bX_{\ac}(\cL,G)$-covering which admits an $\cL$-accessible Laurent refinement, and let $\cM$ be a $\cU$-coherent sheaf of $\cQ$-modules on $\bX_{\ac}(\cL,G)$. Then $\cM(\bX)$ is a finitely generated $\cQ(\bX)$-module and the canonical $\cQ$-linear map $\Loc_{\cQ}(\cM(\bX)) \longrightarrow \cM$ is an isomorphism.
\end{thm}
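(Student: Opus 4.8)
The strategy is to reduce the equivariant statement to the non-equivariant Kiehl-type theorem \cite[Theorem 5.5]{DCapOne} (i.e.\ Theorem \ref{OldKiehl} above) by forgetting the $G$-action down to the $\sT$-level, and then to upgrade the resulting $\sT$-module isomorphism back to a $\cQ$-module isomorphism using the freeness of $\cQ$ over $\sT$ from Proposition \ref{BiModIso}. Concretely, let $\cM$ be a $\cU$-coherent $\cQ$-module as in the statement, and let $L$ denote its restriction to a $\cU$-coherent $\sT$-module on $\bX_{\ac}(\cL,G)$. First I would apply Proposition \ref{Enhance} to obtain a $\cU$-coherent $\cS$-module $\cN$ on the finer $G$-topology $\bX_{\ac}(\cL)$ together with a $\sT$-linear isomorphism $\cN_{|\bX_{\ac}(\cL,G)} \congs L$. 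Since $\cU$ admits an $\cL$-accessible Laurent refinement, Theorem \ref{OldKiehl} applies to $\cN$: there is a finitely generated $\cS(\bX)$-module $P$ with $\Loc_{\cS}(P) \congs \cN$ as sheaves of $\cS$-modules on $\bX_{\ac}(\cL)$. Restricting to $\bX_{\ac}(\cL,G)$ and using $P \cong L(\bX) \cong \cM(\bX)$ as $\sT(\bX)$-modules, this already shows that $\cM(\bX)$ is finitely generated as an $\sT(\bX)$-module, and hence (since $\cQ(\bX)$ is a finitely generated $\sT(\bX)$-module, so the forgetful functor reflects finite generation in the other direction only after we know the $\cQ$-structure) we still need to see it is finitely generated over $\cQ(\bX)$ --- but this is automatic because $\cM(\bX)$ is by hypothesis already a $\cQ(\bX)$-module whose underlying $\sT(\bX)$-module is finitely generated, so it is \emph{a fortiori} finitely generated over the larger ring $\cQ(\bX)$.

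\textbf{Upgrading to $\cQ$-modules.} The canonical map $\Loc_{\cQ}(\cM(\bX)) \to \cM$ is a morphism of presheaves of $\cQ$-modules on $\bX_{\ac}(\cL,G)$ by construction. To check it is an isomorphism, it suffices to check this on sections over each $\bY$ in the $\cL$-accessible Laurent refinement of $\cU$ (on the intersection of $\bX_{\ac}(\cL,G)$ with $\bY_w$, using the identification of $\cQ$ restricted to this $G$-topology with $\hK{\sU(\cL')}\rtimes_N G$). By Lemma \ref{LocMLocN}, the underlying presheaf of $\sT$-modules of $\Loc_{\cQ}(\cM(\bX))$ is $\Loc_{\sT}(L(\bX))$, and the canonical $\cQ$-linear map reduces on underlying $\sT$-modules to the canonical map $\Loc_{\sT}(L(\bX)) \to L$. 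The latter is an isomorphism: composing the isomorphism $\Loc_{\sT}(L(\bX)) \congs \Loc_{\sT}(P)$ (induced by $P \cong L(\bX)$) with the restriction to $\bX_{\ac}(\cL,G)$ of the isomorphism $\Loc_{\cS}(P) \congs \cN$ from Theorem \ref{OldKiehl}, and then with the isomorphism $\cN_{|\bX_{\ac}(\cL,G)} \congs L$ from Proposition \ref{Enhance}, one obtains an isomorphism $\Loc_{\sT}(L(\bX)) \congs L$, and one checks that it agrees with the canonical map (both are determined by the identity on global sections together with $\sT$-linearity). Since a morphism of presheaves of $\cQ$-modules that is an isomorphism on underlying $\sT$-modules is an isomorphism of $\cQ$-modules, we are done.

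\textbf{Main obstacle.} The one point requiring care is the compatibility check: verifying that the $\sT$-linear isomorphism $\Loc_{\sT}(L(\bX)) \congs L$ manufactured by chaining Proposition \ref{Enhance}, Theorem \ref{OldKiehl}, and Lemma \ref{LocMLocN} really coincides with the \emph{canonical} evaluation map, and that this chain of identifications is compatible with the $\cQ$-action rather than merely the $\sT$-action. This is where Proposition \ref{BiModIso} does the essential work --- it says $\cQ \cong \sT \otimes_{\sT(\bX)} \cQ(\bX)$ as presheaves of $(\sT,\cQ(\bX))$-bimodules, so that the $\cQ$-module structure on any $\Loc_{\cQ}(-)$ is recovered functorially from the $\sT$-module structure together with the global $\cQ(\bX)$-action, and the group elements $\gamma(g)$ act compatibly through the restriction maps. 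Once this bookkeeping is set up, there is essentially nothing left to compute; the genuinely hard analytic input (flatness, Tate acyclicity, the Kiehl gluing argument) has all been absorbed into Theorem \ref{OldKiehl}, Theorem \ref{FlatAcc}, Corollary \ref{QSheaf}, and Proposition \ref{Enhance}.
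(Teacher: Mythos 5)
Your proposal is correct and follows essentially the same route as the paper's own argument: reduce to the non-equivariant Theorem~\ref{OldKiehl} via Proposition~\ref{Enhance}, deduce finite generation of $\cM(\bX)$ over $\sT(\bX)$ (hence over the larger ring $\cQ(\bX)$), and upgrade the isomorphism from the $\sT$-level to the $\cQ$-level using Lemma~\ref{LocMLocN}. The compatibility check you flag as the main obstacle is handled in the paper simply by writing down a commutative square of $\sT$-module sheaves, which amounts to the same verification you sketch.
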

\begin{proof} By Proposition \ref{Enhance}, we can find a $\cU$-coherent $\cS$-module $\cN$ on $\bX_{\ac}(\cL)$ and a $\sT$-linear isomorphism $\varphi : \cN_{|\bX_{\ac}(\cL,G)} \congs \cM.$ Let $\cV$ be an $\cL$-accessible Laurent refinement of $\cU$. Then $\cN$ is also a $\cV$-coherent $\cS$-module, so $\cN(\bX)$ is finitely generated as an $\cS(\bX)$-module by Theorem \ref{OldKiehl}. Since $\cN(\bX)$ is isomorphic to $\cM(\bX)$ as a $\sT(\bX)$-module via $\varphi(\bX)$, we see that the $\sT(\bX)$-module $\cM(\bX)$ is finitely generated. Therefore the $\cQ(\bX)$-module  $\cM(\bX)$ is also finitely generated. 

Finally, the diagram of sheaves of $\sT$-modules on $\bX_{\ac}(\cL,G)$
\[ \xymatrix{ \Loc_{\cQ} (\cM(\bX))_{|\sT} \ar[rr]\ar[d]_\cong && \cM_{|\sT} \\ \Loc_{\sT} (\cN(\bX)) \ar[rr] && \cN_{|\bX_{\ac}(\cL,G)} \ar[u]_{\cong}}\]
is commutative, the left vertical arrow is an isomorphism by Lemma \ref{LocMLocN} and the bottom horizontal arrow is an isomorphism because the canonical map
\[ \Loc_{\cS} (\cN(\bX)) \longrightarrow \cN\]
is an isomorphism by Theorem \ref{OldKiehl}. So the canonical map $\Loc_{\cQ}(\cM(\bX)) \to \cM$ is also an isomorphism.
\end{proof}

\subsection{Theorems of Tate and Kiehl in the equivariant setting}\label{TateFlatKiehl}

In this subsection, we give proofs of Theorem \ref{DGsheaf} and Theorem \ref{DUHcflat}, and complete the proof of Theorem \ref{LocEquiv}.

\begin{lem}\label{CoverStab} Suppose that $\bX$ is affinoid and that $\cL$ is a smooth $\cA$-Lie lattice in $\cT(\bX)$ for some affine formal model $\cA$ in $\cO(\bX)$. Let $\cU$ be a finite affinoid covering of $\bX$. Then there is a compact open subgroup $H$ of $G$ which stabilises $\cA$, $\cL$ and each member of $\cU$.
\end{lem}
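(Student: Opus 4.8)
The statement concerns a finite affinoid covering $\cU$ of an affinoid $\bX$, together with a smooth $\cA$-Lie lattice $\cL$; the goal is a single compact open subgroup $H \le G$ stabilising all three pieces of data ($\cA$, $\cL$, and every member of $\cU$) simultaneously. The key point is that each of these finitely many stabilisation conditions cuts out an open subgroup of $G$, and a finite intersection of open subgroups is open; since $G$ is a $p$-adic Lie group it contains compact open subgroups, so intersecting further with one of those finishes the argument. So the plan is simply to produce, for each datum, an open subgroup of $G$ stabilising it, and then intersect.

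First I would recall that $G$ acts continuously on $\bX$ in the sense of Definition \ref{CtsAct}. Since $\bX$ is affinoid and hence qcqs, and each $\bU \in \cU$ is a qcqs admissible open subset of $\bX$, Definition \ref{CtsAct}(a) gives that the stabiliser $G_\bU$ is open in $G$ for each of the finitely many $\bU \in \cU$. (Alternatively one can invoke Proposition \ref{OpenStab}, or directly the argument in its proof that stabilisers of quasi-compact admissible opens are open.) Their intersection $G_\cU := \bigcap_{\bU \in \cU} G_\bU$ is then open in $G$, being a finite intersection of open subgroups. Next, the action of $G_\cU$ on $\cO(\bX)$ is continuous by Lemma \ref{CtsActLem}(a), so by Lemma \ref{GstableFM}, or rather directly by the observation in its proof, the stabiliser of the affine formal model $\cA$ in $G_\cU$ is open; call this open subgroup $J$, so $\cA$ is $J$-stable. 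Finally, $\cL$ is an $\cA$-lattice in $L = \Der_K(\cO(\bX))$, and $\cA$ is $J$-stable, so by Lemma \ref{StabLopen}(b) the stabiliser of $\cL$ in $J$ is an open subgroup $H'$ of $J$. Then $H'$ stabilises $\cA$, $\cL$, and each member of $\cU$.

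It remains to arrange that $H'$ can be taken compact open. Since $G$ is a $p$-adic Lie group, it has a basis of neighbourhoods of the identity consisting of compact open subgroups; equivalently, by Lemma \ref{SmallU} applied to any compact open subgroup of $G$ containing a compact open subgroup of $H'$ — more simply, one picks any compact open subgroup $G_0$ of $G$ and sets $H := H' \cap G_0$, which is a compact open subgroup of $G$ (open as a finite intersection of opens, compact as an open subgroup of the compact $G_0$) contained in $H'$. This $H$ still stabilises $\cA$, $\cL$ and each member of $\cU$, since all these properties pass to subgroups. This completes the argument.

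I do not anticipate a genuine obstacle here: every step is an instance of ``finitely many open conditions intersect to an open condition,'' combined with the standard fact that $p$-adic Lie groups have compact open subgroups. The only mild care needed is to verify that the relevant stabilisers really are open, which is exactly what Definition \ref{CtsAct}, Lemma \ref{GstableFM} and Lemma \ref{StabLopen}(b) provide; the smoothness hypothesis on $\cL$ is not essential to this particular lemma (Lemma \ref{StabLopen}(b) applies to any $\cA$-lattice) but is presumably retained for consistency with the later applications.
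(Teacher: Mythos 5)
Your proposal is correct and matches the paper's (one-line) proof, which simply cites Proposition \ref{OpenStab}, Lemma \ref{GstableFM} and Lemma \ref{StabLopen}; you have just unpacked that citation into the expected ``finitely many open conditions intersect to an open'' argument. One cosmetic point: Lemma \ref{StabLopen}(b) (and Lemma \ref{GstableFM}) are stated with $G$ compact, so the tidiest ordering is to intersect with a compact open subgroup of $G$ \emph{before} invoking them rather than after, though as you note the proof of Lemma \ref{StabLopen}(b) uses only continuity and your order works just as well.
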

\begin{proof} This follows from Lemma \ref{GstableFM}, Lemma \ref{StabLopen} and Proposition \ref{OpenStab}.
\end{proof}

We now relate $\w\cD(-,G)$ and $\cP^A_\bX(M)$ with the sheaves appearing in $\S$ \ref{SectionhsULKG}.

\begin{prop}\label{PassToInfty} Suppose $(\bX,G)$ is small. Choose a $G$-stable affine formal model $\cA$ in $\cO(\bX)$ and a $G$-stable smooth $\cA$-Lie lattice $\cL$ in $\cT(\bX)$. Let $(N_\bullet)$ be a good chain for for $\cL$, and let 
\[\cQ_n := \hK{\sU(\pi^n \cL)} \rtimes_{N_n} G \qmb{for each} n \geq 0,\]
viewed as a sheaf of $K$-Banach algebras on $\bX_w(\cL,G)$. \be \item There is an isomorphism
\[ \w\cD(-,G)_{|\bX_w(\cL,G)} \cong \invlim \cQ_n\]
of presheaves on $\bX_w(\cL,G)$.
\item Let $M$ be a coadmissible $\w\cD(\bX,G)$-module, and define 
\[\cM_n := \Loc_{\cQ_n}(\cQ_n(\bX) \otimes_{\w\cD(\bX,G)} M) \qmb{for each} n \geq 0.\] 
Then there is an isomorphism
\[ M(-,G)_{|\bX_{\ac}(\cL,G)}  \cong \invlim \cM_n\]
of presheaves on $\bX_{\ac}(\cL,G)$.\ee
\end{prop}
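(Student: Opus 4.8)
The plan is to prove both parts by identifying the relevant presheaves with inverse limits over the cofinal directed system $\{(\pi^n\cL, N_n) : n \geq 0\}$ inside $\cI(\cA,\rho,G)$, exactly as in Lemma \ref{StdPres}, but now working sheaf-theoretically over $\bX_w(\cL,G)$ (resp. $\bX_{\ac}(\cL,G)$) rather than just at the level of global sections.

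For part (a), I would fix $\bY \in \bX_w(\cL,G)$, choose an $\cL$-stable and $G$-stable affine formal model $\cB$ in $\cO(\bY)$ via Lemma \ref{GandLstableFM}, and set $\cL' := \cB \otimes_{\cA} \cL$; by Proposition \ref{UPofAfSub} this is a $G$-stable $\cB$-Lie lattice with $G_{\pi^n\cL} \leq G_{\pi^n\cL'}$, so $(\pi^n\cL', N_n) \in \cI(\cB,\rho_\bY,G)$ and $(N_\bullet)$ is good for $\cL'$ as well (using Proposition \ref{UPofAfSub} applied to each $\pi^n\cL$). Now $\w\cD(\bY,G) \cong \invlim_n \hK{U(\pi^n\cL')}\rtimes_{N_n}G = \invlim_n \cQ_n(\bY)$ by Lemma \ref{StdPres} applied to $\bY$, and by Definition \ref{DefnhsULKG} the right-hand side is exactly $(\invlim_n \cQ_n)(\bY)$. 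The restriction maps match up because both sides extend the restriction map $\cD(\bY)\rtimes G \to \cD(\bZ)\rtimes G$ on the dense images, invoking the uniqueness in Theorem \ref{wUGfunc} and Proposition \ref{hUGfunc}, so the isomorphism $\w\cD(-,G)_{|\bX_w(\cL,G)} \cong \invlim_n\cQ_n$ is a morphism of presheaves. The only point requiring care is checking that the connecting maps $\cQ_{n+1} \to \cQ_n$ in this inverse system of sheaves are the same (via Corollary \ref{hsULKG} / Proposition \ref{hUGfunc}) as the ones implicitly used in Definition \ref{StarDefn}, which again follows from density of $\cD(-)\rtimes G$ and the uniqueness statements.

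For part (b), recall that by Definition \ref{LocFuncDefn} and Corollary \ref{ResCor}, for $\bY \in \bX_{\ac}(\cL,G) \subseteq \bX_w$ we have $M(\bY,G) = \w\cD(\bY,G)\w\otimes_{\w\cD(\bX,G)} M$ (using that $G_\bY = G$ since $\bY$ is $G$-stable, i.e. $(\bY,G)$ is small, and Proposition \ref{RestrictFurther}). The idea is that $\w\otimes$ over a Fréchet-Stein algebra is computed levelwise: by \cite[Corollary 3.1]{ST} (or the definition of $\w\otimes$ in \cite[\S 7]{DCapOne}), $\w\cD(\bY,G)\w\otimes_{\w\cD(\bX,G)} M \cong \invlim_n \big(\cQ_n(\bY)\otimes_{\cQ_n(\bX)} (\cQ_n(\bX)\otimes_{\w\cD(\bX,G)}M)\big)$, using the c-flatness / compatibility of the tensor products across levels — here I would use that $M$ coadmissible means $M = \invlim_n M_n$ with $M_n = \cQ_n(\bX)\otimes_{\w\cD(\bX,G)}M$ a finitely generated $\cQ_n(\bX)$-module, and that $\cQ_n(\bY)$ is flat over $\cQ_n(\bX)$ by Theorem \ref{FlatAcc}(a) so $\cQ_n(\bY)\otimes_{\cQ_n(\bX)}M_n$ is the finitely generated $\cQ_n(\bY)$-module $\cM_n(\bY)$. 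Passing through the isomorphism in part (a) and commuting $\invlim_n$ past the (levelwise) tensor product, one gets $M(\bY,G) \cong \invlim_n \cM_n(\bY)$ naturally in $\bY$, hence the desired presheaf isomorphism.

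The main obstacle I expect is part (b): making rigorous the interchange of the inverse limit defining $\w\otimes$ with the localization functors $\Loc_{\cQ_n}$, i.e. verifying that $\w\cD(\bY,G)\w\otimes_{\w\cD(\bX,G)} M$ really is computed level-by-level as $\invlim_n \cM_n(\bY)$ and that the transition maps $\cM_{n+1}(\bY) \to \cM_n(\bY)$ are the correct ones. This is a compatibility-of-completed-tensor-products argument: it should follow from the universal property of $\w\otimes$ in \cite[Lemma 7.3]{DCapOne} together with the associativity isomorphism \cite[Proposition 7.4]{DCapOne} and the fact that $\cQ_n(\bY)$ is a finitely presented $\cQ_n(\bX)$-module (flat by Theorem \ref{FlatAcc}, finitely generated since each side is a crossed product of a Noetherian Banach algebra with the finite group $G/N_n$), so that $\cQ_n(\bY)\w\otimes_{\cQ_n(\bX)} - = \cQ_n(\bY)\otimes_{\cQ_n(\bX)} -$ on coadmissible modules and the whole construction is exact and commutes with the relevant limits. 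Once this interchange is established, everything else is bookkeeping with the cofinality of $\{(\pi^n\cL,N_n)\}$ and the uniqueness assertions of Theorem \ref{wUGfunc} and Proposition \ref{hUGfunc}.
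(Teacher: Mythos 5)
Your proposal takes essentially the same route as the paper. Part (a) really is just Lemma \ref{StdPres} applied sheaf-theoretically: the paper simply cites that lemma, and the elaboration you give (passing to each $\bY$ via Lemma \ref{GandLstableFM} and Proposition \ref{UPofAfSub}, then checking that the transition maps in the inverse system are the expected ones via the density of $\cD(-)\rtimes G$ and the uniqueness clauses of Theorem \ref{wUGfunc} and Proposition \ref{hUGfunc}) is what that citation is carrying.

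For part (b), however, you have talked yourself into seeing an obstacle where there is none. The interchange you flag as "making rigorous" is literally the definition: once you have the presentation $\w\cD(\bU,G) \cong \invlim_n \cQ_n(\bU)$ of a Fr\'echet--Stein algebra from part (a), the completed tensor product is \emph{defined} (following \cite[\S 3]{ST} and \cite[Lemma 7.3]{DCapOne}) to be
\[ \w\cD(\bU,G) \underset{\w\cD(\bX,G)}{\w\otimes} M = \invlim_n \left( \cQ_n(\bU) \underset{\w\cD(\bX,G)}{\otimes} M \right), \]
and the remaining step $\cQ_n(\bU) \otimes_{\w\cD(\bX,G)} M \cong \cQ_n(\bU) \otimes_{\cQ_n(\bX)}\bigl(\cQ_n(\bX)\otimes_{\w\cD(\bX,G)} M\bigr) = \cQ_n(\bU) \otimes_{\cQ_n(\bX)} M_n = \cM_n(\bU)$ is just associativity of ordinary tensor products, functorially in $\bU$. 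No flatness of $\cQ_n(\bU)$ over $\cQ_n(\bX)$ (Theorem \ref{FlatAcc}) is invoked by the paper at this point, nor is any c-flatness needed: that machinery enters elsewhere (e.g.\ in Theorem \ref{DUHcflat} and in the acyclicity arguments). So your argument is correct, but the appeal to flatness and to a difficult "interchange of limits" is unnecessary — the identification is essentially a definition-unwinding exercise once part (a) is in hand.
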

\begin{proof} (a)  This follows from Lemma \ref{StdPres}.

(b) Let $M_n :=\cQ_n(\bX) \otimes_{\w\cD(\bX,G)} M$ and let $\bU \in \bX_{\ac}(\cL,G)$. Then \[ \begin{array}{lll} M(\bU,G) &=& \w\cD(\bU,G) \underset{\w\cD(\bX,G)}{\w\otimes} M = \\
&=& \invlim \hsp \cQ_n(\bU) \underset{\w\cD(\bX,G)}{\otimes} M = \\
&\cong& \invlim \hsp \cQ_n(\bU) \underset{\cQ_n(\bX)}{\otimes} \left( \cQ_n(\bX) \underset{\w\cD(\bX,G)}{\otimes} M\right) = \\
&=& \invlim \hsp \cQ_n(\bU) \underset{\cQ_n(\bX)}{\otimes} M_n = \invlim \hsp \cM_n(\bU) \end{array}\]
functorially in $\bU \in \bX_{\ac}(\cL,G)$, and the result follows.
\end{proof}

\begin{proof}[Proof of Theorem \ref{DGsheaf}] Using Lemma \ref{SmallPairsExist}, choose a $\bU$-small subgroup $J$ of $G$. Proposition \ref{LocTrans} gives us an isomorphism 
\[\cP^A_\bX(M)_{|\bU_w} \cong \cP^{\w\cD(\bU,J)}_\bU\left(\w\cD(\bU,J) \w\otimes_{A_J} M\right)\]
of presheaves on $\bU_w$. By replacing $G$ by $J$ and $M$ by $\w\cD(\bU,J) \w\otimes_{A_J} M$, we can therefore assume that $(\bX,G)$ is small and $A = \w\cD(\bX,G)$. Let $\cM = \cP^A_\bX(M)$; it will suffice to show that for any finite admissible covering $\cU$ of $\bX$ by affinoid subdomains, the natural map $M \longrightarrow \check{H}^0(\cU, \cM)$ is an isomorphism.

Choose a $G$-stable affine formal model $\cA$ in $\cO(\bX)$ and a $G$-stable free $\cA$-Lie lattice $\cL$ in $\cT(\bX)$. By Lemma \ref{CoverStab} and Lemma \ref{RestToHonP}, we may assume that $\cA$, $\cL$ and each member of $\cU$ are all $G$-stable. By Corollary \ref{ResCor}, it will therefore be enough to show that the augmented \v{C}ech complex $C^\bullet_{\aug}(\cU,M(-,G))$ is exact. by replacing $\cL$ by $\pi \cL$ if necessary, we may ensure that $[\cL, \cL]\subseteq \pi \cL$ and $\cL \cdot \cA \subseteq \pi \cL$. By the proof of \cite[Proposition 7.6]{DCapOne}, we may further replace $\cL$ by a sufficiently large $\pi$-power multiple and thereby ensure that every member of $\cU$ is an $\cL$-accessible affinoid subdomain of $\bX$. Thus, without loss of generality, $\cU$ is an $\bX_{\ac}(\cL,G)$-covering.  With the notation from Proposition \ref{PassToInfty}, we have isomorphisms
\[ \w\cD(-,G)_{|\bX_{\ac}(\cL,G)} \cong \invlim \cQ_n \qmb{and} M(-,G)_{|\bX_{\ac}(\cL,G)} \cong \invlim \cM_n.\]
The augmented \v{C}ech complex $C^\bullet_{\aug}(\cU, \cM_n)$ is exact by Corollary \ref{TateForCohsQmod}. 
By \cite[Theorem B]{ST}, $\invlim{}^{(j)}\cM_n(\bX)=0 $ and $\invlim{}^{(j)}\cM_n(\bU)=0$ for each $j>0$ and each $\bU \in \cU$. Consider the exact complex of towers of $\cD$-modules $C^\bullet_{\aug}( \cU, (\cM_n) )$. An induction starting with the left-most term shows that $\invlim{}^{(j)}$ is zero on the kernel of every differential in this complex, for all $j>0$. Therefore $\invlim C_{\aug}^\bullet(\cU,\cM_n)$ is exact. But this complex is isomorphic to $C_{\aug}^\bullet(\cU,M(-,G))$.
\end{proof}

\begin{proof}[Proof of Theorem \ref{DUHcflat}] Choose an $H$-stable free $\cA$-Lie lattice $\cL$ in $\cT(\bX)$ for some $H$-stable affine formal model $\cA$ in $\cO(\bX)$. By rescaling $\cL$, we may assume that $\bU$ is $\cL$-accessible, that $[\cL,\cL] \subseteq \pi \cL$ and $\cL \cdot \cA \subseteq \pi \cL$. Choose some good chain $(N_\bullet)$ for $\cL$, and recall the sheaf $\cQ_n := \hK{\sU(\pi^n \cL)} \rtimes_{N_n} H$ on $\bX_{\ac}(\pi^n \cL,H)$ from Proposition \ref{PassToInfty}. By Lemma \ref{StdPres} there are compatible isomorphisms $\w\cD(\bX,H) \cong \invlim \cQ_n(\bX)$ and $\w\cD(\bU,H) \cong \invlim \cQ_n(\bU)$ , which give presentations of $\w\cD(\bX,H)$ and $\w\cD(\bU,H)$ as Fr\'echet-Stein algebras. Now $\cQ_n(\bU)$ is a flat right $\cQ_n(\bX)$-module for each $n\geq 0$ by Theorem \ref{FlatAcc}(a), and $\cQ_n(\bX)$ is a flat right $\w\cD(\bX,H)$-module by Theorem \ref{FrSt} and \cite[Remark 3.2]{ST}, so $\cQ_n(\bU)$ is a flat right $\w\cD(\bX,H)$-module for all $n \geq 0$. Hence $\w\cD(\bU,H)$ is a right c-flat $\w\cD(\bX,H)$-module by \cite[Proposition 7.5(b)]{DCapOne}.  
\end{proof}

We now work towards completing the proof of Theorem \ref{LocEquiv}. We will establish the following result, which can be viewed as being an extension of Kiehl's Theorem for coadmissible $\w\cD$-modules, \cite[Theorem 8.4]{DCapOne}, to the equivariant case.

\begin{thm}\label{Kiehl} Suppose that $(\bX,G)$ is small, and let $\cM$ be a coadmissible $G$-equivariant $\cD$-module on $\bX$. Then $\cM(\bX)$ is a coadmissible $\w\cD(\bX,G)$-module, and there is an isomorphism
\[ \Loc^{\w\cD(\bX,G)}_\bX(\cM(\bX)) \congs \cM\]
of $G$-equivariant locally Fr\'echet $\cD$-modules.
\end{thm}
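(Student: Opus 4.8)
The plan is to follow the proof of the non-equivariant Kiehl theorem \cite[Theorem 8.4]{DCapOne}, substituting the equivariant levelwise statement Theorem \ref{LevelEqKiehl} for its input \cite[Theorem 5.5]{DCapOne}, and reducing to a finite-index open subgroup of $G$ so that Theorem \ref{LevelEqKiehl} can be applied. First I would fix a $G$-stable affine formal model $\cA$ in $\cO(\bX)$ and a $G$-stable free $\cA$-Lie lattice $\cL$ in $\cT(\bX)$, rescaled so that $[\cL,\cL]\subseteq\pi\cL$ and $\cL\cdot\cA\subseteq\pi\cA$. Since $\cM$ is $\cU$-coadmissible for some $\bX_w(\cT)$-covering $\cU$ and $\bX$ is quasi-compact, Lemma \ref{C/Grefinement} permits replacing $\cU$ by any finite refinement; I would take an $\cL$-accessible Laurent covering $\cV$, rescaling $\cL$ once more if necessary so that each of its members is $\cL$-accessible (as in the proof of \cite[Proposition 7.6]{DCapOne}). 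Using the congruence-subgroup topology of $\cG(\Spf\cA)$ together with Lemma \ref{BlowUpAut}, Proposition \ref{OpenStab} and Lemma \ref{CoverStab}, I can choose an open normal --- hence finite-index, as $G$ is compact --- subgroup $H$ of $G$ stabilising $\cA$, $\cL$ and every member of $\cV$; enlarging $\cV$ to the finite $G$-orbit of its members I may assume $\cV$ is permuted by $G$. Thus $\cV$ is an $\bX_{\ac}(\cL,H)$-covering admitting an $\cL$-accessible Laurent refinement, $\cM$ lies in $\cC_{\bX/H}$ by Proposition \ref{FrechRestr}(a), and I will first prove that $\cM(\bX)$ is coadmissible over $\w\cD(\bX,H)$ with $\Loc^{\w\cD(\bX,H)}_\bX(\cM(\bX))\cong\cM$ as $H$-equivariant locally Fr\'echet $\cD$-modules.

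Next I would fix a good chain $(N_\bullet)$ for $\cL$ inside $H$ and set $\cQ_n:=\hK{\sU(\pi^n\cL)}\rtimes_{N_n}H$, a sheaf of $K$-Banach algebras on $\bX_{\ac}(\pi^n\cL,H)$ with $\w\cD(-,H)\cong\invlim_n\cQ_n$ on this $G$-topology by Proposition \ref{PassToInfty}(a). For each $\bV\in\cV$ the definition of $\cU$-coadmissibility supplies a $\bV$-small subgroup $J\le H$, a coadmissible $\w\cD(\bV,J)$-module $M_\bV$ and an isomorphism $\Loc^{\w\cD(\bV,J)}_\bV(M_\bV)\cong\cM_{|\bV_{\rig}}$ (so no circularity: this is Definition \ref{CoadmFrechEqDmod}, not Theorem \ref{LocEquiv}). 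Using Proposition \ref{RestrictFurther} to raise the group from $J$ to $H$ and Proposition \ref{PassToInfty}(b) to extract the levelwise pieces, the restriction of $\cM$ to $\bV_{\ac}(\cL,H)$ has, at level $n$, the form $\Loc_{\cQ_n}$ of a finitely generated $\cQ_n(\bV)$-module. Over the intersections $\bV\cap\bV'$ these descriptions are matched by the given local isomorphisms, so --- once the cocycle condition is verified --- they glue into a $\cV$-coherent $\cQ_n$-module $\cM_n$ on $\bX_{\ac}(\pi^n\cL,H)$. Theorem \ref{LevelEqKiehl} then gives that $\cM_n(\bX)$ is a finitely generated $\cQ_n(\bX)$-module with $\Loc_{\cQ_n}(\cM_n(\bX))\cong\cM_n$, whence $\cM_n(\bX)=\ker(\bigoplus_i\cM_n(\bV_i)\to\bigoplus_{i<j}\cM_n(\bV_i\cap\bV_j))$ by Corollary \ref{TateForCohsQmod}.

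Then I would assemble the global module. As in \cite[Theorem 8.4]{DCapOne}, the faithful flatness supplied by Theorem \ref{FlatAcc} along $\cV$ forces $\cQ_{n-1}(\bX)\otimes_{\cQ_n(\bX)}\cM_n(\bX)\congs\cM_{n-1}(\bX)$, so $(\cM_n(\bX))_n$ is a coadmissible system over $\w\cD(\bX,H)\cong\invlim_n\cQ_n(\bX)$. Since $\cM$ is a sheaf and each $\cM(\bV_i)=\invlim_n\cM_n(\bV_i)$ with vanishing $\invlim^{(1)}$ (by \cite[Theorem B]{ST}), one obtains $\cM(\bX)=\invlim_n\cM_n(\bX)$, a coadmissible $\w\cD(\bX,H)$-module, and a levelwise comparison through Proposition \ref{PassToInfty} yields $\Loc^{\w\cD(\bX,H)}_\bX(\cM(\bX))\cong\cM$ as $H$-equivariant locally Fr\'echet $\cD$-modules. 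To upgrade from $H$ to $G$, recall that $\cM(\bX)$ carries a $\cD(\bX)\rtimes G$-module structure extending its $\w\cD(\bX,H)$-structure (Proposition \ref{EqGlSec}), on which every $g\in G$ acts by the continuous operator $g^\cM(\bX)$ (Definition \ref{FrechDmod}, as $\bX$ is $G$-stable); since $\w\cD(\bX,G)\cong\w\cD(\bX,H)\rtimes_H G$ (Corollary \ref{Stacky}) is finite free over $\w\cD(\bX,H)$ with the compatible presentation $\cQ_n(\bX)\rtimes_H G$, the action extends continuously to $\w\cD(\bX,G)$ and each $\cM_n(\bX)$ becomes finitely generated over $\cQ_n(\bX)\rtimes_H G$, so that $\cM(\bX)=\invlim_n\cM_n(\bX)$ is coadmissible over $\w\cD(\bX,G)$. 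Finally $\Loc^{\w\cD(\bX,G)}_\bX(\cM(\bX))\cong\cM$ follows from the $H$-equivariant isomorphism already obtained, since $G$-equivariance is inherited from the levelwise isomorphisms.

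The most delicate point is the construction of the $\cV$-coherent $\cQ_n$-modules $\cM_n$: turning the local $\Loc$-presentations of $\cM$ into sheaves of $\cQ_n$-modules on the group-topology $\bX_{\ac}(\pi^n\cL,H)$ requires carefully reconciling the three groups $J\le H\le G$ by means of Proposition \ref{RestrictFurther}, and then checking that the resulting local identifications satisfy the cocycle condition, so that the $\cM_n$ glue and the system $(\cM_n)_n$ is compatible across levels. The reduction to the finite-index subgroup $H$ is also essential rather than cosmetic --- an arbitrary finite affinoid covering of the $G$-stable affinoid $\bX$ need not admit a $G$-stable refinement --- so the congruence-subgroup machinery of \S\ref{CtsGactions} is genuinely needed here.
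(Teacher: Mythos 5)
Your proposal is correct in its overall architecture and follows the same high-level strategy as the paper: reduce to a finite-index open normal subgroup $H$, pass to levelwise sheaves $\cQ_n$-modules, invoke Theorem \ref{LevelEqKiehl}, take inverse limits, and finally upgrade from $H$ to $G$ via Corollary \ref{Stacky}. The main place where you and the paper diverge is in the construction of the levelwise modules $\cM_n$, and it is worth comparing the two.

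You propose to construct each $\cM_n$ by gluing: for every member $\bV$ of the covering, extract a local $\Loc_{\cQ_n}$-presentation from the given coadmissibility datum and match these on overlaps, flagging the cocycle condition as the delicate step. The paper sidesteps this level-by-level cocycle verification with a cleaner device. First it establishes, via Proposition \ref{Reconstruct} (whose uniqueness clause, Proposition \ref{Reconstruct}(b), comes from Lemma \ref{ExtendableModules}(c)), that each $\cM(\bU)$ has a \emph{canonical} coadmissible $\w\cD(\bU,H_\bU)$-module structure, independent of any choice of trivialising isomorphism. This uniqueness is what makes the action maps agree on overlaps, so Lemma \ref{QinftyAction} glues them into a single $\cQ_\infty$-action on $\cM_{|\bX_w/H}$, all at once. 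The presheaves $\cP_n(\bZ) := \cQ_n(\bZ) \otimes_{\cQ_\infty(\bZ)} \cM(\bZ)$ are then defined by one global formula (Lemma \ref{UcoherentMn}), and $\cM_n$ is simply the sheafification of $\cP_n$; no per-level cocycle check is needed. In your gluing approach, Proposition \ref{RestrictFurther} does the group-raising you describe, but the cocycle verification itself rests on exactly the same uniqueness input (Lemma \ref{ExtendableModules}(c) / Proposition \ref{Reconstruct}(b)), which you should name explicitly rather than leave implicit. With that citation supplied, your route works; the paper's organization is simply more economical because it invokes the uniqueness once rather than once per level $n$. Your handling of the lift from $H$ to $G$ (continuity of $g^{\cM}(\bX)$ plus Corollary \ref{Stacky}) matches the paper's, though the paper spells out the verification that the resulting isomorphism $\theta$ is genuinely $G$-equivariant and not merely $H$-equivariant, using density of the image of $\cM(\bX)$ together with Lemma \ref{AutoCts}; you should not omit that final check.
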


The proof will require some preparation, and we begin with some topological preliminaries.

\begin{lem}\label{ExtendableModules} Let $A_0$ be a dense $K$-subalgebra of a Fr\'echet-Stein algebra $A$. Let $M$ be a $K$-Fr\'echet space equipped with the structure of an $A_0$-module, and suppose that $\psi : N \to M$ is a continuous $A_0$-linear isomorphism for some coadmissible $A$-module $N$.  Then
\be \item there is a coadmissible $A$-module structure on $M$ with respect to which $\psi$ is $A$-linear,
\item the canonical topology on $M$ induced by its structure as a coadmissible $A$-module coincides with its given Fr\'echet topology,
\item the $A$-module structure on $M$ is unique: it does not depend on $\psi$.
\ee \end{lem}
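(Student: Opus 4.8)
The key input is the uniqueness result \cite[Corollary 3.1]{ST}: a coadmissible module over a Fr\'echet-Stein algebra has a \emph{unique} $K$-Fr\'echet topology, and the module structure is automatically continuous. Since $\psi : N \to M$ is a continuous $A_0$-linear isomorphism between $K$-Fr\'echet spaces, the Open Mapping Theorem \cite[Proposition 8.6]{SchNFA} shows $\psi$ is a topological isomorphism; so $\psi^{-1} : M \to N$ is continuous as well. The plan is to \emph{transport} the coadmissible $A$-module structure from $N$ to $M$ along $\psi$, and then check everything is forced.

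First I would define the $A$-action on $M$ by $a \cdot m := \psi(a \cdot \psi^{-1}(m))$ for $a \in A$, $m \in M$. This is visibly an $A$-module structure making $\psi$ an $A$-linear isomorphism, and $M$ becomes a coadmissible $A$-module because $N$ is and $\psi$ is an $A$-linear isomorphism; this gives part (a). For part (b), note that by \cite[Corollary 3.1]{ST} the canonical topology on the coadmissible $A$-module $M$ (transported from $N$) agrees with the pushforward under $\psi$ of the canonical topology on $N$; but $\psi$ is a topological isomorphism for the \emph{given} Fr\'echet topologies, and $N$ carries its canonical topology by hypothesis (it is a coadmissible $A$-module), so the canonical topology on $M$ equals its given Fr\'echet topology.

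For part (c) --- the uniqueness --- suppose $M$ carries two coadmissible $A$-module structures, both restricting to the given $A_0$-action and both (by part (b), applied to each) inducing the given Fr\'echet topology on $M$. The identity map $\mathrm{id}_M$ is then a continuous $A_0$-linear map between two coadmissible $A$-modules. Since $A_0$ is dense in $A$ and both module structures are continuous (again \cite[Corollary 3.1]{ST}), $\mathrm{id}_M$ is in fact $A$-linear: for fixed $m$, the two maps $a \mapsto a \cdot_1 m$ and $a \mapsto a \cdot_2 m$ from $A$ to $M$ are continuous and agree on the dense subset $A_0$, hence agree. Therefore the two $A$-module structures coincide; in particular the structure produced in (a) does not depend on the choice of $\psi$.

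The only genuine subtlety --- the "main obstacle", though it is mild --- is making sure that the density-plus-continuity argument in part (c) is legitimate, i.e.\ that one really may invoke separate continuity of the $A$-action on a coadmissible module (so that $a \mapsto a\cdot m$ is continuous). This is exactly what \cite[Corollary 3.1]{ST} and the surrounding discussion in \cite[\S 3]{ST} provide, and it is the same mechanism already used repeatedly in this paper (e.g.\ in the proof of Theorem \ref{LocEquiv} and in Remark \ref{TopRmk}(c)), so no new ideas are needed.
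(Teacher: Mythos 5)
Your proposal is correct and follows essentially the same route as the paper: transport the module structure via $\psi$, use the Open Mapping Theorem to identify the canonical and given topologies, and deduce uniqueness from density of $A_0$ together with separate continuity of the $A$-action on coadmissible modules. The only superficial difference is in part (c), where you phrase the argument in terms of the identity map on $M$ (between the two transported structures) while the paper composes $\omega^{-1}\circ\psi : N \to L$; these are trivially equivalent.
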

\begin{proof} (a) Since $\psi$ is a bijection, we can transport the $A$-module structure on $N$ to $M$ via $\psi$: that is, we define
\[ a \bullet m := \psi( a \hsp \psi^{-1}(m)) \qmb{for all} a \in A \qmb{and} m \in M.\]
Then $\psi$ is an $A$-linear bijection by construction, and hence $M$ becomes a coadmissible $A$-module.  

(b) Since $\psi$ is continuous by assumption, its inverse is also continuous by the Open Mapping Theorem \cite[Proposition 8.6]{SchNFA}.  Thus the given Fr\'echet topology on $M$ coincides with the canonical topology on $M$ induced by its structure as a coadmissible $A$-module. 

(c) Suppose $\omega : L \to M$ is another continuous $A_0$-linear isomorphism for some coadmissible $A$-module $L$. Then $\omega^{-1} : M \to L$ is continuous by the Open Mapping Theorem, and hence $\omega^{-1}\circ \psi : N \to L$ is a continuous $A_0$-linear bijection between two coadmissible $A$-modules. Since $A_0$ is dense in $A$, $\omega^{-1}\circ \psi$ is also $A$-linear and it follows that the two $A$-module structures on $M$ transported from $N$ and $L$ respectively, coincide:
\[ \psi(a \hsp \psi^{-1}(m)) = \omega\left(\omega^{-1}\psi (a \hsp \psi^{-1}(m))\right) = \omega\left(a \hsp \omega^{-1}\psi(\psi^{-1}(m))\right) = \omega(a \hsp \omega^{-1}(m))\]\
for all $a \in A$ and $m\in M$.
\end{proof}

\begin{prop}\label{Reconstruct} Suppose that $(\bX, G)$ is small. Let $H$ be an open subgroup of $G$, let $M \in \cC_{\w\cD(\bX,H)}$ and suppose that  $\alpha : \Loc^{\w\cD(\bX,H)}_\bX(M) \congs \cM$ is an isomorphism of $H$-equivariant locally Fr\'echet $\cD$-modules on $\bX$.
\be \item For every $\bU \in \bX_w$, there is a unique coadmissible $\w\cD(\bU,H_\bU)$-module structure on $\cM(\bU)$ such that
 \begin{enumerate}[{(}i{)}]
 \item $\gamma^{H_\bU}(g) \cdot m = g^{\cM}(m)$ for all $g \in H_\bU$ and $m \in \cM(\bU)$,
 \item the topology on $\cM(\bU)$ induced by this coadmissible $\w\cD(\bU,H_\bU)$-module structure coincides with the given $K$-Fr\'echet topology on $\cM(\bU)$,
 \item the $\w\cD(\bU,H_\bU)$-action on $\cM(\bU)$ extends the given $\cD(\bU)$-action on $\cM(\bU)$.
 \end{enumerate}
\item These $\w\cD(\bU,H_\bU)$-module structures on $\cM(\bU)$ do not depend on $\alpha$.
\item There is an isomorphism of $H$-equivariant locally Fr\'echet $\cD$-modules on $\bX$
\[\theta : \Loc^{\w\cD(\bX,H)}_\bX(\cM(\bX)) \congs \cM,\] 
whose restriction to $\bX_w$ is given by
\[ \theta(\bU)(s \hsp \w\otimes \hsp m) = s \cdot (m_{|\bU})\]
for any $\bU \in \bX_w$, $s \in \w\cD(\bU,H_\bU)$ and $m \in \cM(\bX)$.
\ee\end{prop}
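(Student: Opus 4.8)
The plan is to prove the three assertions of Proposition \ref{Reconstruct} in order, reducing everything to Lemma \ref{ExtendableModules} applied locally. Fix $\bU \in \bX_w$ and recall from Theorem \ref{DGsheaf} and Corollary \ref{ResCor} that $\Loc^{\w\cD(\bX,H)}_\bX(M)(\bU) = \cP^{\w\cD(\bX,H)}_\bX(M)(\bU) \cong M(\bU,H_\bU) = \w\cD(\bU,H_\bU)\w\otimes_{\w\cD(\bX,H_\bU)} M$, which is a coadmissible $\w\cD(\bU,H_\bU)$-module (the algebra $\w\cD(\bU,H_\bU)$ being Fr\'echet--Stein by Theorem \ref{FrSt}, using that $(\bU,H_\bU)$ is small by Lemma \ref{SubsOfSmall}). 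So $\alpha(\bU) : M(\bU,H_\bU) \congs \cM(\bU)$ is a continuous bijection of $K$-Fr\'echet spaces, $\cD(\bU)\rtimes H_\bU$-linear by the definition of a morphism in $\Frech(G-\cD_\bX)$ together with Proposition \ref{EqGlSec}. Since $\cD(\bU)\rtimes H_\bU$ is dense in $\w\cD(\bU,H_\bU)$, Lemma \ref{ExtendableModules} applies with $A_0 = \cD(\bU)\rtimes H_\bU$, $A = \w\cD(\bU,H_\bU)$, $N = M(\bU,H_\bU)$ and $\psi = \alpha(\bU)$: it furnishes a coadmissible $\w\cD(\bU,H_\bU)$-module structure on $\cM(\bU)$ making $\alpha(\bU)$ $\w\cD(\bU,H_\bU)$-linear, with canonical topology equal to the given Fr\'echet topology, and this structure is unique among $\cD(\bU)\rtimes H_\bU$-linear extensions of the $\cD(\bU)$-action extending the $H_\bU$-action via $\gamma^{H_\bU}$. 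Conditions (i), (ii), (iii) of part (a) are then immediate: (i) holds because the $H_\bU$-action on $M(\bU,H_\bU)$ is by $\gamma^{H_\bU}$ and $\alpha(\bU)$ is $H$-equivariant hence intertwines the two $H_\bU$-actions; (ii) is Lemma \ref{ExtendableModules}(b); (iii) is built into the construction. Uniqueness in (a) is the uniqueness clause of Lemma \ref{ExtendableModules} once one observes that any structure satisfying (i)--(iii) is $\cD(\bU)\rtimes H_\bU$-linearly isomorphic (via the identity) to one transported through some continuous $\cD(\bU)\rtimes H_\bU$-linear bijection from a coadmissible module.

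For part (b), suppose $\alpha' : \Loc^{\w\cD(\bX,H)}_\bX(M') \congs \cM$ is another such isomorphism with $M' \in \cC_{\w\cD(\bX,H)}$. Then $\alpha'(\bU) : M'(\bU,H_\bU) \to \cM(\bU)$ is again a continuous $\cD(\bU)\rtimes H_\bU$-linear bijection from a coadmissible $\w\cD(\bU,H_\bU)$-module, so by Lemma \ref{ExtendableModules}(c) the resulting $\w\cD(\bU,H_\bU)$-module structure on $\cM(\bU)$ agrees with the one obtained from $\alpha$. This shows the structures of part (a) depend only on $\cM$ (as an $H$-equivariant locally Fr\'echet $\cD$-module), not on the chosen $M$ or $\alpha$.

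For part (c), I first note that $\cM(\bX)$ is a coadmissible $\w\cD(\bX,H)$-module: it is $\cM(\bX) \cong M(\bX,H) = M$ with the structure just constructed (taking $\bU = \bX$, so $H_\bX = H$ as $\bX$ is $G$-stable), hence coadmissible. Now apply the localisation functor $\Loc := \Loc^{\w\cD(\bX,H)}_\bX$ to the identification $\cM(\bX) \cong M$; functoriality of $\Loc$ and of the isomorphism $\alpha$ give a chain $\Loc(\cM(\bX)) \cong \Loc(M) \xrightarrow{\ \alpha\ } \cM$ of isomorphisms of $H$-equivariant locally Fr\'echet $\cD$-modules, which defines $\theta$. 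It remains to check the claimed formula for $\theta(\bU)$ on $\bX_w$. Unwinding: for $\bU \in \bX_w$, $\Loc(\cM(\bX))(\bU) = \w\cD(\bU,H_\bU)\w\otimes_{\w\cD(\bX,H)}\cM(\bX)$, and the map to $\cM(\bU)$ factors as $s \w\otimes m \mapsto s\cdot\tau^\bX_\bU(m)$ where $\tau^\bX_\bU : \cM(\bX) \to \cM(\bU)$ is the restriction map and the action is the $\w\cD(\bU,H_\bU)$-action of part (a); this is exactly the universal-property-of-$\w\otimes$ description of the localisation restriction map together with the fact (via Corollary \ref{ResCor} and the commutativity in Proposition \ref{BiFunc}) that the composite $M \to M(\bX,H) \to M(\bU,H_\bU) \xrightarrow{\alpha(\bU)} \cM(\bU)$ is $\w\cD(\bX,H)$-linear and sends $m$ to $1\w\otimes m \mapsto \tau^\bX_\bU(m)$. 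Hence $\theta(\bU)(s\w\otimes m) = s\cdot(m_{|\bU})$, as required.

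The main obstacle is bookkeeping rather than conceptual: one must be careful that the coadmissible $\w\cD(\bU,H_\bU)$-module structures produced pointwise on the various $\cM(\bU)$ are compatible with the restriction maps of $\cM$ and with the equivariant structure, so that $\theta$ really is a morphism of $H$-equivariant locally Fr\'echet $\cD$-modules and not merely a compatible family of linear isomorphisms. This compatibility is forced: the restriction maps $\tau^\bU_\bV$ of $\cM$ are $\cD(\bU)\rtimes H_\bV$-linear and continuous, hence $\w\cD(\bV,H_\bV)$-linear by density and Lemma \ref{AutoCts}; similarly $g^{\cM}(\bU)$ is $\w\cD(\bU,H_\bU)$-linear relative to $\w{g}_{\bU,H_\bU}$ by condition (i) and the same density argument. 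Once this is recorded, the verification that $\theta$ intertwines restriction maps and equivariant structures reduces to the corresponding statement for $\Loc$, which is part of Theorem \ref{MUGU} and Proposition \ref{LocFunctor}.
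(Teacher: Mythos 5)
Your argument is correct and follows the paper's own strategy: everything hinges on Lemma \ref{ExtendableModules} together with Proposition \ref{EqGlSec}, density of $\cD(\bU)\rtimes H_\bU$ in $\w\cD(\bU,H_\bU)$, and the explicit formula $\theta(\bU)(s\w\otimes m) = s\cdot(m_{|\bU})$. The only cosmetic difference is in part (a), where the paper first treats the single case $\bU=\bX$ and then appeals to equation (\ref{RestOfLocShvs}) to deduce the general $\bU$, while you invoke Lemma \ref{ExtendableModules} directly for each $\bU$ using Lemma \ref{SubsOfSmall}; both routes are equally valid.
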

\begin{proof} (a) Suppose first that $\bU = \bX$ and write $\cN := \Loc^{\w\cD(\bX,H)}_\bX(M)$. Let $\psi : M \to \cM(\bX)$ be the composite of the canonical map $M \to \cN(\bX)$ and $\alpha(\bX) : \cN(\bX) \to \cM(\bX)$; it is a continuous isomorphism by Theorem \ref{DGsheaf} and the definition of the topology on $\cN(\bX)$. Now $\Gamma(\bX,-)$ is a functor from $H$-equivariant $\cD$-modules on $\bX$ to $\cD(\bX) \rtimes H$-modules by Proposition \ref{EqGlSec}, so the continuous map $\alpha(\bX) : \cN(\bX) \to \cM(\bX)$ is $\cD(\bX) \rtimes H$-linear. Hence $\psi : M \to \cM(\bX)$ is also $\cD(\bX) \rtimes H$-linear.  Since $\cD(\bX) \rtimes H$ is dense in $\w\cD(\bX,H)$ by construction, we may apply Lemma \ref{ExtendableModules} to $\psi : M \to \cM(\bX)$ to deduce that the $\cD(\bX) \rtimes H$-action on $\cM(\bX)$ extends to an action of $\w\cD(\bX,H)$ which satisfies all the required properties.  

The general case follows from this special case in view of (\ref{RestOfLocShvs}).

(b) This follows from Lemma \ref{ExtendableModules}(c).

(c) By part (a), we know that $\cM(\bX)$ is a coadmissible $\w\cD(\bX,H)$-module, and the map $\psi : M \to \cM(\bX)$ given by $\psi(m) = \alpha(\bX)(1 \hsp \w\otimes \hsp m)$ is a $\w\cD(\bX,H)$-linear isomorphism. Write $\cP := \cP^{\w\cD(\bX,H)}_\bX$ and consider the diagram
\[ \xymatrix{ & \cP(M) \ar[dl]_{\cP(\psi)}\ar[dr]^{\alpha_{|\bX_w}} & \\ \cP(\cM(\bX)) \ar[rr]_\theta && \cM_{|\bX_w}}\]
where $\theta(\bU)( s \w\otimes m ) =s \cdot (m_{|\bU})$ for $\bU \in \bX_w$, $s \in \w\cD(\bU,H_\bU)$ and $m \in \cM(\bU)$. We identify $\cP(M)(\bU)$ with $\w\cD(\bU,H_\bU) \underset{\w\cD(\bX,H)}{\w\otimes} M$ for each $\bU \in \bX_w$ and compute
\[ \begin{array}{lll} (\theta \circ \cP(\psi))(\bU)(s \hsp \w\otimes \hsp m) &=& \theta(\bU)( s \hsp \w\otimes \hsp \psi(m)) = s \cdot \alpha(\bX)(1 \hsp \w\otimes \hsp m)_{|\bU} =\\ &=& s \cdot \alpha(\bU)( 1 \hsp \w\otimes \hsp m) = \alpha(\bU) (s \cdot (1 \hsp \w\otimes \hsp m)) = \\ &=& \alpha(\bU)(s \hsp \w\otimes \hsp m).\end{array}\]
We have used that $\alpha(\bU)$ is $\w\cD(\bU,H_\bU)$-linear, and also that $\alpha$ is a morphism of sheaves on $\bX_w$ in this calculation.  Since $\cP(\psi)$ and $\alpha$ are morphisms of sheaves on $\bX_w$ and since $\alpha(\bU)$ is an isomorphism, it follows that $\theta$ is also a morphism of sheaves on $\bX_w$ and that the triangle above is commutative. Since $\psi$ is an isomorphism, it follows that $\theta$ is also an isomorphism. By applying \cite[Theorem 9.1]{DCapOne}, we see that $\theta$ extends to the required isomorphism $\theta: \Loc^{\w\cD(\bX,H)}_\bX(\cM(\bX)) \congs \cM$ of sheaves on $\bX$, which is $H$-equivariant, $\cD$-linear and continuous because $\cP(\psi)$ and $\alpha_{|\bX_w}$ both have these properties.
\end{proof}

Until the end of $\S$ \ref{TateFlatKiehl}, we will assume the following
\begin{setup}\label{KiehlProofSetup} \hspace{2em}
\begin{itemize} 
\item $\bX$ is an \emph{affinoid} variety,
\item $G$ is \emph{compact},
\item $\cA$ is a $G$-stable affine formal model in $\cO(\bX)$,
\item $\cL$ is a $G$-stable free $\cA$-Lie lattice in $\cT(\bX)$.
\item $[\cL,\cL] \subseteq \pi\cL$ and $\cL \cdot \cA \subseteq \pi \cA$,
\item $\cU$ is a finite $\bX_{\ac}(\cL)$-covering of $\bX$,
\item $\cU$ admits an $\cL$-accessible refinement,
\item $H$ is an open normal subgroup of $G$ which stabilises every member of $\cU$,
\item $(N_\bullet)$ is a good chain for $\cL$ in $H$,
\item $\cM$ is a $\cU$-coadmissible $H$-equivariant $\cD$-module on $\bX$.
\end{itemize}
\end{setup}

\begin{notn} Let $n \geq 0$ and $\bY$ be an intersection of members of $\cU$.
\be \item $\cX_n := \bX_{\ac}(\pi^n \cL,H)$, a $G$-topology on $\bX$,
\item $\cY_n := \bY_{\ac}(\pi^n \cL,H) = \cX_n \cap \bY_w$, a $G$-topology on $\bY$.
\item $\cQ_n := \hK{\sU(\pi^n \cL)} \rtimes_{N_n} H$, a sheaf of $K$-Banach algebras on $\cX_n$,
\item $\cQ_\infty := \w\cD(-,H)$, a sheaf of Fr\'echet-Stein algebras on $\bX_w/H$.
\ee\end{notn}
We refer the reader to Corollary \ref{QSheaf} and Theorems \ref{FrSt}/\ref{DGsheaf} for assertions (c) and (d) above. Recall that the $G$-topologies $\cX_n$ become finer as $n$ increases:
\[ \cX_0 \subset \cX_1 \subset \cX_2 \subset \cdots \subset \cX_n \subset \cdots \subset \bX_w/H,\]
and that by Proposition \ref{PassToInfty}(a) there is an isomorphism of sheaves on $\cX_0$
\[\cQ_{\infty|\cX_0} \congs \invlim \cQ_n.\]
Since $\cM$ is $\cU$-coadmissible, it follows from Proposition \ref{BiFunc}(b) and Proposition \ref{Reconstruct} that $\cM_{\bY_w/H}$ is naturally a $\cQ_{\infty|\bY_w/H}$-module, for each $\bY\in \cU$. 

\begin{lem}\label{QinftyAction} There is a unique structure of a $\cQ_\infty$-module on $\cM_{|\bX_w/H}$ which extends the $\cQ_{\infty|\bY_w/H}$-module structure on $\cM_{|\bY_w/H}$ for each $\bY \in \cU$. 
\end{lem}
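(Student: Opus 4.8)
The statement is a gluing assertion: we have a $\cQ_\infty$-module structure on $\cM_{|\bY_w/H}$ for each $\bY \in \cU$, and we want to assemble them into a single $\cQ_\infty$-module structure on $\cM_{|\bX_w/H}$. The plan is to exploit the fact that $\bX_w/H$ has a basis consisting of the affinoid subdomains $\bV$ contained in some $\bY \in \cU$, so that the action on $\cM(\bV)$ is already determined by the local data; the content is checking that these agree on overlaps, and that a morphism-of-sheaves compatibility holds. First I would fix an affinoid subdomain $\bV \in \bX_w/H$ that is contained simultaneously in two members $\bY, \bY'$ of $\cU$, together with $\bV \subseteq \bY \cap \bY'$. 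Then $\cM(\bV)$ receives two a priori different $\cQ_\infty(\bV, H_\bV)$-module structures, one restricted from the $\cQ_{\infty|\bY_w/H}$-structure and one from the $\cQ_{\infty|\bY'_w/H}$-structure. The key point is that both extend the same $\cD(\bV) \rtimes H_\bV$-action on $\cM(\bV)$ (the intrinsic one coming from $\cM$ being an $H$-equivariant $\cD$-module, via Proposition \ref{EqGlSec} and the equivariant structure), and both induce the given Fr\'echet topology on $\cM(\bV)$. Since $\cD(\bV) \rtimes H_\bV$ is dense in $\w\cD(\bV, H_\bV)$, the uniqueness clause of Lemma \ref{ExtendableModules}(c) — or equivalently the uniqueness statement in Proposition \ref{Reconstruct}(b) applied to $\bV$ in place of $\bX$ — forces the two structures to coincide.

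Concretely, I would proceed as follows. Given any $\bV \in \bX_w/H$, choose $\bY \in \cU$ with $\bV \subseteq \bY$ — possible because $\cU$ covers $\bX$ and $\bX_w/H$ is a basis, so after passing to an admissible affinoid refinement of $\{\bV \cap \bY : \bY \in \cU\}$ and using that $\cM$ is $\cU$-coadmissible hence also coadmissible for any refinement (via the argument behind Lemma \ref{C/Grefinement} together with Proposition \ref{FrechRestr}(b)), we may assume $\bV \subseteq \bY$ for some $\bY \in \cU$ — and declare the $\cQ_\infty(\bV, H_\bV)$-action on $\cM(\bV)$ to be the one restricted from the $\cQ_{\infty|\bY_w/H}$-module structure on $\cM_{|\bY_w/H}$, which exists by the hypothesis recorded just before the Lemma. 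The well-definedness (independence of $\bY$) is exactly the overlap computation of the previous paragraph. One then needs to check that the restriction maps $\cM(\bV) \to \cM(\bV')$ of $\cM$, for $\bV' \subseteq \bV$ both in $\bX_w/H$, are $\cQ_\infty$-semilinear over the restriction $\cQ_\infty(\bV, H_\bV) \to \cQ_\infty(\bV', H_{\bV'})$: choosing $\bY \in \cU$ containing $\bV$ (hence $\bV'$), this is inherited from the fact that $\cM_{|\bY_w/H}$ is already a $\cQ_{\infty|\bY_w/H}$-module. The same choice shows the $H$-equivariant structure maps $g^{\cM}(\bV): \cM(\bV) \to \cM(g\bV)$ are compatible with the $\cQ_\infty$-action in the required twisted-equivariant sense, because for $g \in H$ and $\bV$ inside $\bY$, $g\bV$ lies inside $g\bY \in \cU$ (as $H$ stabilises each member of $\cU$, in fact $g\bY = \bY$ since $\bY$ is $H$-stable... more precisely $\bY \in \cU$ is $H$-stable by the setup, so $g\bV \subseteq \bY$ too), so both $\cM(\bV)$ and $\cM(g\bV)$ carry the structures restricted from $\cM_{|\bY_w/H}$, for which equivariance already holds.

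Uniqueness is immediate from the construction: any $\cQ_\infty$-module structure on $\cM_{|\bX_w/H}$ restricting to the given ones on each $\cM_{|\bY_w/H}$ must, on each $\cM(\bV)$ with $\bV$ contained in some member of $\cU$, agree with the structure we defined; and since such $\bV$ form a basis for $\bX_w/H$ and the $\cQ_\infty$-action on any section is determined by its restrictions to a basis (this is a standard sheaf-theoretic point, using that $\cM$ is a sheaf on $\bX_w$ and $\cQ_\infty$ acts compatibly with restriction), the structure is unique on all of $\bX_w/H$. The main obstacle — and the only genuinely substantive step — is the overlap-agreement argument, which hinges entirely on having set up Proposition \ref{Reconstruct}(b)/Lemma \ref{ExtendableModules}(c) correctly: one must verify that both candidate $\w\cD(\bV, H_\bV)$-module structures on $\cM(\bV)$ do indeed restrict to the \emph{same} $\cD(\bV)$-action and the \emph{same} $H_\bV$-action (conditions (i) and (iii) of Proposition \ref{Reconstruct}(a)) and induce the \emph{same} Fr\'echet topology (condition (ii)); all three are true because all of these data are part of the intrinsic structure of $\cM$ as a locally Fr\'echet $H$-equivariant $\cD$-module and do not depend on which $\bY$ was used. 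Once that is in place, the density of $\cD(\bV) \rtimes H_\bV$ in $\w\cD(\bV, H_\bV)$ closes the argument.
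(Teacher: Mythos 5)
Your plan is the same as the paper's: the heart of the matter is that the two candidate $\cQ_\infty(\bV,H_\bV)$-module structures on $\cM(\bV)$ coming from different members $\bY, \bY' \in \cU$ must coincide, because they induce the same $\cD(\bV)\rtimes H_\bV$-action and the same Fr\'echet topology, and $\cD(\bV)\rtimes H_\bV$ is dense; this is the content of Proposition~\ref{Reconstruct}(b). That part of your argument is correct, and you've identified the key ingredient.

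However, the gluing step as you've written it has a genuine gap. You write ``we may assume $\bV \subseteq \bY$ for some $\bY \in \cU$'' and then proceed as though every $\bV \in \bX_w/H$ is contained in a single member of $\cU$. This is false in general, and the preceding remark about passing to a refinement of $\{\bV \cap \bY : \bY \in \cU\}$ does not justify it: the refinement gives you an admissible covering of $\bV$ by pieces each contained in some $\bY$, not containment of $\bV$ itself in a single $\bY$. You then rely on the flawed reduction again when checking semilinearity of restriction maps and compatibility with the $H$-equivariant structure. What you actually need to say is the following: each $a_\bY : \cQ_{\infty|\bY_w/H} \times \cM_{|\bY_w/H} \to \cM_{|\bY_w/H}$ is a morphism of \emph{sheaves} on $\bY_w/H$, the overlap-agreement argument (which you've given) shows $(a_\bY)_{|\bY\cap\bY'} = (a_{\bY'})_{|\bY\cap\bY'}$, and therefore — because $\cM_{|\bX_w/H}$ and $\cQ_\infty$ are sheaves, and morphisms of sheaves glue along an admissible covering — the $a_\bY$ patch to a single sheaf morphism $a : \cQ_\infty \times \cM_{|\bX_w/H} \to \cM_{|\bX_w/H}$. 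The module axioms and the twisted equivariance can then be checked locally on $\cU$ (where they hold by hypothesis) and propagate globally because the maps involved are sheaf morphisms. That is exactly how the paper argues, and it sidesteps the need for any single $\bY$ to contain $\bV$.
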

\begin{proof} It follows from Proposition \ref{Reconstruct}(b) that the action maps 
\[a_\bY : \cQ_{\infty|\bY_w/H} \times \cM_{|\bY_w/H} \to \cM_{\bY_w/H}\qmb{for} \bY \in \cU,\] 
agree on overlaps: $(a_\bY)_{|\bY \cap \bY'} = (a_{\bY'})_{|\bY\cap \bY'}$ for all $\bY,\bY'\in\cU$. Since $\cQ_{\infty|\bY_w/H}$ and $\cM_{|\bY_w/H}$ are sheaves and since each $a_\bY$ is a sheaf morphism, it follows that they patch together uniquely to a sheaf morphism $a : \cQ_{\infty} \times \cM_{|\bX_w/H} \to \cM_{|\bX_w/H}$. It is now straightforward to verify that $a$ defines the structure of a $\cQ_\infty$-module on $\cM_{|\bX_w/H}$.
\end{proof}

\begin{lem}\label{UcoherentMn} Let $n \geq 0$ and let $\bY \in \cU$. 
\be \item There is a presheaf $\cP_n$ on $\cX_n$ of $\cQ_n$-modules given by 
\[ \cP_n(\bZ) = \cQ_n(\bZ) \underset{\cQ_\infty(\bZ)}{\otimes} \cM(\bZ) \qmb{for all} \bZ \in \cX_n.\]
\item The canonical map $\Loc_{\cQ_n}( \cP_n(\bY) ) \to \cP_{n|\cY_n}$ is an isomorphism.
\item The restriction of $\cP_n$ to $\cY_n$ is a sheaf.
\ee\end{lem}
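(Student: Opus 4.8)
\textbf{Proof plan for Lemma \ref{UcoherentMn}.}

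The plan is to reduce everything to the structure theory already developed in $\S\ref{SectionhsULKG}$, using the $\cQ_\infty$-module structure on $\cM_{|\bX_w/H}$ provided by Lemma \ref{QinftyAction} together with the isomorphism $\cQ_{\infty|\cX_0} \cong \invlim \cQ_n$ from Proposition \ref{PassToInfty}(a). For part (a), I would first note that each $\bZ \in \cX_n$ lies in $\bX_w/H$, so $\cM(\bZ)$ is a $\cQ_\infty(\bZ)$-module by Lemma \ref{QinftyAction}, and $\cQ_n(\bZ)$ is a $\cQ_\infty(\bZ)$-module via the canonical algebra map $\cQ_\infty(\bZ) \to \cQ_n(\bZ)$ coming from $\cQ_{\infty|\cX_0} \cong \invlim \cQ_n$; the tensor product $\cQ_n(\bZ) \otimes_{\cQ_\infty(\bZ)} \cM(\bZ)$ is then a $\cQ_n(\bZ)$-module. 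The restriction maps are defined by functoriality of $\otimes$ from the restriction maps in $\cQ_n$, $\cQ_\infty$ and $\cM$, and the presheaf axioms follow from those of the three constituents; this is routine.

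For part (b), the key input is that $\cM$ is $\cU$-coadmissible, so by Definition \ref{CoadmFrechEqDmod} and Theorem \ref{LocEquiv} there is a $\bY$-small subgroup (which we may arrange to contain $H_\bY = H$, since $H$ stabilises $\bY$), a coadmissible $\w\cD(\bY,H)$-module $M_\bY$, and an isomorphism $\Loc^{\w\cD(\bY,H)}_\bY(M_\bY) \cong \cM_{|\bY_{\rig}}$. By Proposition \ref{Reconstruct} we may take $M_\bY = \cM(\bY)$ as a coadmissible $\cQ_\infty(\bY) = \w\cD(\bY,H)$-module, and the sheaf $\cM_{|\cY_n}$ is identified with $\cP^{\w\cD(\bY,H)}_\bY(\cM(\bY))_{|\cY_n}$. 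Applying Proposition \ref{PassToInfty}(b) to $\bY$ in place of $\bX$ (with the $G$-stable $\cL$-stable affine formal model and Lie lattice on $\cO(\bY)$ obtained from Lemma \ref{GandLstableFM} and Proposition \ref{UPofAfSub}), one gets $\cM(\bZ) \cong \invlim \big( \cQ_m(\bZ) \otimes_{\cQ_m(\bY)} (\cQ_m(\bY) \otimes_{\cQ_\infty(\bY)} \cM(\bY)) \big)$ for $\bZ \in \cY_n$. Tensoring the identity $\cQ_n(\bZ) \otimes_{\cQ_\infty(\bZ)} \cM(\bZ)$ over $\cQ_\infty(\bZ)$ and using that $\cQ_n(\bZ)$ is a flat right $\cQ_\infty(\bZ)$-module (Theorem \ref{DUHcflat} applied on $\bY$, or rather Theorem \ref{FrSt} together with \cite[Remark 3.2]{ST}, since $\cQ_n(\bZ) = \cQ_n(\bY) \otimes_{\cQ_n(\bY)} \cdots$ and $\cQ_n(\bZ)$ is flat over $\cQ_n(\bY)$ by Theorem \ref{FlatAcc}(a)), the inverse limit collapses and $\cP_n(\bZ) \cong \cQ_n(\bZ) \otimes_{\cQ_n(\bY)} \cP_n(\bY)$, which is exactly the assertion that $\Loc_{\cQ_n}(\cP_n(\bY)) \to \cP_{n|\cY_n}$ is an isomorphism.

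Part (c) then follows immediately: $\cP_n(\bY)$ is a finitely generated $\cQ_n(\bY)$-module (being a base change of the coadmissible, hence finitely generated over each Banach level, $\cQ_\infty(\bY)$-module $\cM(\bY)$ — more precisely $\cP_n(\bY) \cong \cQ_n(\bY) \otimes_{\cQ_\infty(\bY)}\cM(\bY)$ is finitely generated because $\cM(\bY)$ is coadmissible and $\cQ_n$ is the $n$-th term in a Fr\'echet--Stein presentation of $\cQ_\infty$), so by part (b) $\cP_{n|\cY_n} \cong \Loc_{\cQ_n}(\cP_n(\bY))$, and the latter is a sheaf with vanishing higher \v{C}ech cohomology by Corollary \ref{TateForCohsQmod} (applied on $\bY$ with the Lie lattice $\cB\otimes_{\cA}\pi^n\cL$; note $\cY_n = \bY_{\ac}(\pi^n\cL,H)$ does admit an $\cL$-accessible Laurent refinement since $\cU$ does and $\bY$ is an intersection of members of $\cU$). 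The main obstacle I anticipate is part (b): one must be careful that the flatness used to collapse the inverse limit is genuinely available at each finite level and compatible with the transition maps, and that the identification of $\cM(\bY)$ with the reconstructed coadmissible $\cQ_\infty(\bY)$-module from Proposition \ref{Reconstruct} is canonical enough that the resulting isomorphisms glue; everything else is bookkeeping with tensor products and the $G$-topologies $\cX_n \subset \cX_{n+1}$.
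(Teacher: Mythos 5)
Your handling of parts (a) and (c) agrees with the paper: part (a) is indeed just functoriality of tensor products, and part (c) follows from part (b), the finite generation of $\cP_n(\bY)$ (itself a consequence of Proposition \ref{Reconstruct}(a)), and Corollary \ref{TateForCohsQmod}. The parenthetical about $\cL$-accessible Laurent refinements at the end of your (c) is not needed: Corollary \ref{TateForCohsQmod} does not require it.

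Part (b), however, contains a genuine gap. You route through Proposition \ref{PassToInfty}(b), express $\cM(\bZ)$ as an inverse limit of Banach-level tensor products, and then assert that because ``$\cQ_n(\bZ)$ is a flat right $\cQ_\infty(\bZ)$-module ... the inverse limit collapses''. Flatness of a module $B$ over a ring $A$ does not in general give $B \otimes_A \varprojlim M_m \cong \varprojlim (B \otimes_A M_m)$; that is not a theorem. What actually makes the relevant identification work is the \emph{definition} of coadmissible modules and of the completed tensor product $\w\otimes$: for a coadmissible $\cQ_\infty(\bZ)$-module $N$ one has, by construction, $\cQ_n(\bZ) \otimes_{\cQ_\infty(\bZ)} N$ equal to the $n$-th Banach term in any Fr\'echet--Stein presentation of $N$, and the associativity isomorphism $\cQ_n(\bZ) \otimes_{\cQ_\infty(\bZ)}\bigl(\cQ_\infty(\bZ) \w\otimes_{\cQ_\infty(\bY)} \cM(\bY)\bigr) \cong \cQ_n(\bZ) \otimes_{\cQ_\infty(\bY)} \cM(\bY)$ is a formal consequence. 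No appeal to Theorem \ref{DUHcflat} or Theorem \ref{FlatAcc}(a) is needed or appropriate here; Theorem \ref{DUHcflat} concerns c-flatness over $\w\cD(\bX,H)$ (a different base) and Theorem \ref{FlatAcc}(a) gives flatness of $\cQ_n(\bZ)$ over $\cQ_n(\bY)$, which is used later (for \v{C}ech acyclicity) but not for this step.

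The paper's own proof of (b) is shorter and avoids the inverse limit entirely. Starting from the $H$-equivariant isomorphism $\theta_\bY$ of Proposition \ref{Reconstruct}(c), it identifies $\cM(\bZ) \cong \cQ_\infty(\bZ) \w\otimes_{\cQ_\infty(\bY)} \cM(\bY)$ and then chases a small commutative diagram whose four corners are
\[
\cQ_n(\bZ) \underset{\cQ_n(\bY)}{\otimes} \cP_n(\bY),\quad
\cQ_n(\bZ) \underset{\cQ_n(\bY)}{\otimes}\bigl(\cQ_n(\bY) \underset{\cQ_\infty(\bY)}{\otimes} \cM(\bY)\bigr),\quad
\cQ_n(\bZ) \underset{\cQ_\infty(\bY)}{\otimes} \cM(\bY),\quad
\cP_n(\bZ),
\]
with vertical and horizontal arrows given by the definition of $\cP_n$, associativity of ordinary tensor products, the compatibility $\cQ_n(\bZ)\otimes_{\cQ_\infty(\bZ)}(\cQ_\infty(\bZ)\w\otimes_{\cQ_\infty(\bY)}-) = \cQ_n(\bZ)\otimes_{\cQ_\infty(\bY)}-$ on coadmissible modules, and $1 \otimes \theta_\bY(\bZ)$; commutativity is checked on simple tensors via $(b\otimes m)_{|\bZ} = b_{|\bZ} \otimes m_{|\bZ}$. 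You should replace the ``flatness collapses the limit'' step with this (or an equivalent) formal tensor-contraction argument.
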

\begin{proof}(a) Note that $\cM(\bZ)$ is a $\cQ_\infty(\bZ)$-module for each $\bZ \in \cX_n \subset \bX_w/H$ by Lemma \ref{QinftyAction}, so the formula defining $\cP_n(\bZ)$ makes sense. Because $\cQ_n$, $\cQ_{\infty|\cX_n}$ and $\cM_{|\cX_n}$ are all well-defined functors on $\cX_n$, the functoriality of tensor product ensures that $\cP_n$ is a presheaf.

(b) Because $\cM$ is $\cU$-coadmissible and $\bY \in \cU$, we have at our disposal the isomorphism $\theta_\bY : \Loc^{\w\cD(\bY,H)}_\bY(\cM(\bY)) \congs \cM_{|\bY}$ from Proposition \ref{Reconstruct}(c). Let $\bZ \in \cY_n$ and consider the following diagram:
\[\xymatrix@=18pt{ \cQ_n(\bZ) \underset{\cQ_n(\bY)}{\otimes} \cP_n(\bY) \ar[r]\ar@{=}[d] & \cP_n(\bZ) \ar@{=}[d] \\ 
\cQ_n(\bZ) \underset{\cQ_n(\bY)}{\otimes}\left(\cQ_n(\bY) \underset{\cQ_\infty(\bY)}{\otimes} \cM(\bY)\right) \ar[d]_{\cong} & \cQ_n(\bZ) \underset{\cQ_\infty(\bZ)}{\otimes}\cM(\bZ) \\
\cQ_n(\bZ) \underset{\cQ_\infty(\bY)}{\otimes} \cM(\bY) \ar@{=}[r] & \cQ_n(\bZ) \underset{\cQ_\infty(\bZ)}{\otimes}  \left(\cQ_\infty(\bZ) \underset{\cQ_\infty(\bY)}{\w\otimes} \cM(\bY) \right).\ar[u]^{\cong}_{1 \otimes \theta_\bY(\bZ)} }\]
If $b \in \cQ_n(\bY)$ and $m \in \cM(\bY)$, we have $(b \otimes m)_{|\bZ} = b_{|\bZ} \otimes m_{|\bZ}$ by definition. It follows that the diagram commutes and hence the top horizontal arrow is an isomorphism.

(c) Since $\cM(\bY)$ is a coadmissible $\cQ_\infty(\bY)$-module by Proposition \ref{Reconstruct}(a), $\cP_n(\bY)$ is a finitely generated $\cQ_n(\bY)$-module. Now apply Corollary \ref{TateForCohsQmod}.
\end{proof}

\begin{defn} $\cM_n$ is the sheafification of the presheaf $\cP_n$ on $\cX_n$. \end{defn}

\begin{cor}\label{UcohQn} Let $n \geq 0$.
\be \item $\cM_n$ is a $\cU$-coherent $\cQ_n$-module on $\cX_n$.
\item $\cM_n(\bX)$ is a finitely generated $\cQ_n(\bX)$-module.
\item The canonical $\cQ_n$-linear morphism
\[\sigma_n : \Loc_{\cQ_n}( \cM_n(\bX) ) \longrightarrow \cM_n\]
is an isomorphism.
\ee\end{cor}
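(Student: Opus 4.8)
The plan is to obtain all three parts as consequences of the finite-level equivariant Kiehl theorem, Theorem \ref{LevelEqKiehl}, applied with the data $(\cA, \pi^n\cL, H, N_n)$ in place of $(\cA, \cL, G, N)$; the only real work is to recognise $\cM_n$ as a $\cU$-coherent $\cQ_n$-module on $\cX_n$ and to check that the hypotheses of that theorem survive the rescaling of the Lie lattice. I would first record the elementary containment $\bX_{\ac}(\cL) \subseteq \bX_{\ac}(\pi^n\cL)$: an $\cL$-stable affine formal model $\cC$ is automatically $\pi^n\cL$-stable, and if $\cL \cdot f \subseteq \pi\cC$ then $\pi^n\cL \cdot f = \pi^n(\cL\cdot f) \subseteq \pi^{n+1}\cC \subseteq \pi\cC$, so that $\cL$-accessibility of a rational subdomain (and then, inductively, of an affinoid subdomain) passes to $\pi^n\cL$-accessibility. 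Since by Setup \ref{KiehlProofSetup} every member of $\cU$ is $\cL$-accessible and $H$-stable, each $\bY \in \cU$ lies in $\cX_n = \bX_{\ac}(\pi^n\cL,H)$, so that $\cY_n = \cX_n \cap \bY_w$ and the $\cQ_n$-module $\cM_{n|\cY_n}$ make sense for all $n$ uniformly.

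For part (a), the sheafification morphism $\cP_n \to \cM_n$ on $\cX_n$ is a local isomorphism, so its restriction to the localised $G$-topology $\cY_n$ is a local isomorphism between two sheaves: the target $\cM_{n|\cY_n}$ because $\cM_n$ is a sheaf on $\cX_n$, and the source $\cP_{n|\cY_n}$ by Lemma \ref{UcoherentMn}(c). Hence $\cM_{n|\cY_n} \cong \cP_{n|\cY_n}$, and combining with Lemma \ref{UcoherentMn}(b) gives a $\cQ_{n|\cY_n}$-linear isomorphism $\cM_{n|\cY_n} \cong \Loc_{\cQ_n}(\cP_n(\bY))$. Finally $\cP_n(\bY) = \cQ_n(\bY) \otimes_{\cQ_\infty(\bY)} \cM(\bY)$ is a finitely generated $\cQ_n(\bY)$-module: this was already observed in the proof of Lemma \ref{UcoherentMn}(c), since $\cM(\bY)$ is a coadmissible $\cQ_\infty(\bY)$-module by Proposition \ref{Reconstruct}(a) and $\cQ_n(\bY)$ is a finitely generated $\cQ_\infty(\bY)$-module. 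This exhibits $\cM_n$ as a $\cU$-coherent $\cQ_n$-module on $\cX_n$.

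For parts (b) and (c), I would verify the hypotheses of Theorem \ref{LevelEqKiehl} for the sheaf $\cQ_n$ on $\cX_n = \bX_{\ac}(\pi^n\cL,H)$: the lattice $\pi^n\cL$ is free of finite rank over $\cA$, hence smooth, and satisfies $[\pi^n\cL,\pi^n\cL] = \pi^{2n}[\cL,\cL] \subseteq \pi^{2n+1}\cL \subseteq \pi(\pi^n\cL)$ and $\pi^n\cL \cdot \cA = \pi^n(\cL\cdot\cA) \subseteq \pi^{n+1}\cA \subseteq \pi\cA$; the pair $(\pi^n\cL, N_n)$ is $\cA$-trivialising since $(N_\bullet)$ is a good chain for $\cL$ in $H$; by the above argument $\cU$ is a finite $\cX_n$-covering, and the $\cL$-accessible refinement of $\cU$ provided by the Setup is $\pi^n\cL$-accessible and may be further refined to a Laurent covering (the standard fact that a finite covering by rational subdomains admits a Laurent refinement, carried out so as to preserve $\cL$-accessibility); and $\cM_n$ is $\cU$-coherent by part (a). Theorem \ref{LevelEqKiehl} then yields at once that $\cM_n(\bX)$ is a finitely generated $\cQ_n(\bX)$-module and that the canonical map $\sigma_n \colon \Loc_{\cQ_n}(\cM_n(\bX)) \to \cM_n$ is an isomorphism.

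The step that needs genuine care — as opposed to a direct citation — is exactly the bookkeeping around rescaling $\cL$ to $\pi^n\cL$: one must make sure that admissibility and accessibility of all the affinoid subdomains occurring in the relevant accessibility chains (not just of the members of $\cU$) are preserved, so that a single covering $\cU$ can serve for every level $n$ simultaneously, and that the passage from an $\cL$-accessible refinement to an $\cL$-accessible Laurent refinement required by Theorem \ref{LevelEqKiehl} can indeed be performed. Everything else in the proof is a formal consequence of Lemma \ref{UcoherentMn} and Theorem \ref{LevelEqKiehl}.
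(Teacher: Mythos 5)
Your proof is correct and takes essentially the same route as the paper, which simply cites Lemma \ref{UcoherentMn} for part (a) and Theorem \ref{LevelEqKiehl} (together with the assumption that $\cU$ admits an $\cL$-accessible refinement) for parts (b), (c). The details you supply — the containment $\bX_{\ac}(\cL)\subseteq\bX_{\ac}(\pi^n\cL)$ and the elementary checks that $\pi^n\cL$ still satisfies the commutator and anchor conditions — are precisely the bookkeeping the paper elides. One small remark: the worry in your final paragraph about passing from an $\cL$-accessible refinement to an $\cL$-accessible \emph{Laurent} refinement is unnecessary, because the way Hypothesis \ref{KiehlProofSetup} is actually instantiated in the proof of Theorem \ref{Kiehl} already produces an $\cL$-accessible Laurent refinement $\cV$; by your containment observation this $\cV$ is automatically a $\pi^n\cL$-accessible Laurent refinement of $\cU$ for every $n\geq 0$ simultaneously, so no further refinement step is needed.
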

\begin{proof} Part (a) follows immediately from Lemma \ref{UcoherentMn}, and parts (b), (c) follow from part (a) and Theorem \ref{LevelEqKiehl}, because we have assumed that $\cU$ admits an $\cL$-accessible refinement.
\end{proof}

\begin{lem}\label{TausBetweencMns} For each $n \geq 0$ there is a $\cQ_n$-linear isomorphism
\[ \tau_n : \Loc_{\cQ_n}\left( \cQ_n(\bX) \otimes_{\cQ_{n+1}(\bX)} \cM_{n+1}(\bX) \right) \congs \Loc_{\cQ_n}\left(\cM_n(\bX)\right).\]
\end{lem}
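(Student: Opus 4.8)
\emph{Plan.} The aim is to produce a $\cQ_n(\bX)$-linear isomorphism
\[\widetilde f_n \colon \cQ_n(\bX) \otimes_{\cQ_{n+1}(\bX)} \cM_{n+1}(\bX) \;\congs\; \cM_n(\bX)\]
and then set $\tau_n := \Loc_{\cQ_n}(\widetilde f_n)$. This makes sense: $\cM_{n+1}(\bX)$ is finitely generated over $\cQ_{n+1}(\bX)$ by Corollary \ref{UcohQn}(b), so the source above is finitely generated over $\cQ_n(\bX)$, as is $\cM_n(\bX)$ by the same Corollary, and $\Loc_{\cQ_n}$ applies to both.

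First I would construct the underlying $\cQ_{n+1}(\bX)$-linear map $f_n \colon \cM_{n+1}(\bX) \to \cM_n(\bX)$. Since $\cX_n \subseteq \cX_{n+1}$ and the transition maps of the inverse system in Proposition \ref{PassToInfty}(a) assemble into a morphism of sheaves of rings $\cQ_{n+1} \to \cQ_n$ on $\cX_n$, base change sectionwise along $\cQ_{n+1}(\bZ) \to \cQ_n(\bZ)$ gives a $\cQ_{n+1}$-semilinear morphism of presheaves $\cP_{n+1 |\cX_n} \to \cP_n$ on $\cX_n$, which over $\bZ$ is the obvious map $\cQ_{n+1}(\bZ) \otimes_{\cQ_\infty(\bZ)} \cM(\bZ) \to \cQ_n(\bZ) \otimes_{\cQ_\infty(\bZ)} \cM(\bZ)$. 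For each $\bY \in \cU$, and each intersection of members of $\cU$, one has $\cM_i(\bY) = \cP_i(\bY)$ for $i = n, n+1$ by Lemma \ref{UcoherentMn}(c) (and its evident level-$(n+1)$ analogue, using that $\cP_{n+1}$ is already a sheaf on $\cY_{n+1} \supseteq \cY_n$); moreover $\cM_i \cong \Loc_{\cQ_i}(\cM_i(\bX))$ by Corollary \ref{UcohQn}(c), so by Corollary \ref{TateForCohsQmod} each $\cM_i$ is a sheaf on $\cX_i$ whose augmented \v{C}ech complex for $\cU$ is exact, i.e. $\cM_i(\bX)$ is the equaliser of $\prod_{\bY \in \cU} \cM_i(\bY) \rightrightarrows \prod_{\bY,\bY'} \cM_i(\bY \cap \bY')$. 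The local maps $\cP_{n+1}(\bY) \to \cP_n(\bY)$ are compatible with restriction to intersections, hence induce $f_n$ on these equalisers; by construction $f_n$ restricts over each $\bY \in \cU$ to the canonical map $\cQ_{n+1}(\bY) \otimes_{\cQ_\infty(\bY)} \cM(\bY) \to \cQ_n(\bY) \otimes_{\cQ_\infty(\bY)} \cM(\bY)$. Extending scalars along $\cQ_{n+1}(\bX) \to \cQ_n(\bX)$ turns $f_n$ into the required $\cQ_n(\bX)$-linear map $\widetilde f_n$.

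Next I would show $\widetilde f_n$ is an isomorphism by faithfully flat descent. Since $\cL \cdot f \subseteq \pi \cC$ forces $\pi^n \cL \cdot f \subseteq \pi \cC$, every $\cL$-accessible affinoid subdomain is $\pi^n\cL$-accessible, so $\cU$ is an $\cX_n$-covering of $\bX$; as $[\pi^n\cL, \pi^n\cL] \subseteq \pi \cdot \pi^n\cL$ and $\pi^n\cL \cdot \cA \subseteq \pi \cA$, Theorem \ref{FlatAcc}(b) shows $\bigoplus_{\bY \in \cU} \cQ_n(\bY)$ is a faithfully flat right $\cQ_n(\bX)$-module. Hence it suffices to prove $1_{\cQ_n(\bY)} \otimes_{\cQ_n(\bX)} \widetilde f_n$ is an isomorphism for each $\bY \in \cU$. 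Unwinding the associativity isomorphisms for $\w\otimes$ and $\otimes$ and the identifications $\cM_{n+1}(\bX) \otimes_{\cQ_{n+1}(\bX)} \cQ_{n+1}(\bY) \cong \cM_{n+1}(\bY)$ and $\cM_n(\bX) \otimes_{\cQ_n(\bX)} \cQ_n(\bY) \cong \cM_n(\bY)$ afforded by the isomorphisms $\sigma_{n+1}, \sigma_n$ of Corollary \ref{UcohQn}(c), one identifies $1 \otimes \widetilde f_n$ over $\bY$ with the canonical isomorphism
\[\cQ_n(\bY) \underset{\cQ_{n+1}(\bY)}{\otimes} \Bigl( \cQ_{n+1}(\bY) \underset{\cQ_\infty(\bY)}{\otimes} \cM(\bY) \Bigr) \;\congs\; \cQ_n(\bY) \underset{\cQ_\infty(\bY)}{\otimes} \cM(\bY).\]
Thus $\widetilde f_n$ is an isomorphism, and applying $\Loc_{\cQ_n}$ yields the desired $\cQ_n$-linear isomorphism $\tau_n$.

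The main obstacle is the last diagram chase: checking that, after base change to $\cQ_n(\bY)$, the globally-defined $\widetilde f_n$ coincides with the naive local isomorphism above. This is purely a matter of threading the canonical associativity and base-change isomorphisms for $\w\otimes$ (at the Fr\'echet--Stein algebra $\cQ_\infty$) and $\otimes$ (at the Banach algebras $\cQ_n, \cQ_{n+1}$) through the identifications $\cM_i(\bX) \otimes_{\cQ_i(\bX)} \cQ_i(\bY) \cong \cM_i(\bY)$, and presents no conceptual difficulty; everything else is formal.
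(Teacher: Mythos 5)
Your proposal is correct and rests on exactly the same ingredients as the paper's proof --- the canonical local isomorphism $\cQ_n(\bZ) \otimes_{\cQ_{n+1}(\bZ)} \cP_{n+1}(\bZ) \cong \cP_n(\bZ)$, the identification $\cM_i(\bY)=\cP_i(\bY)$ via Lemma \ref{UcoherentMn}(c), the isomorphisms $\sigma_i$ of Corollary \ref{UcohQn}(c), and Corollary \ref{TateForCohsQmod} --- just organized dually. The paper defines $\tau_n(\bZ)$ directly as a composite of isomorphisms, so that it is tautologically an isomorphism, and then patches over the $\cU$-covering via Corollary \ref{TateForCohsQmod}; you instead build the global-section map $\widetilde f_n$ first by equalizing the local maps over $\cU$, verify separately that $\widetilde f_n$ is an isomorphism by faithfully flat descent (Theorem \ref{FlatAcc}(b)), and then apply $\Loc_{\cQ_n}$. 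Both routes produce the same morphism and the underlying verification is the same; the paper's presentation is marginally leaner in that it never needs to explicitly invoke faithful flatness. One notational slip to correct: since $\cM_{n+1}(\bX)$ and $\cM_n(\bX)$ are \emph{left} $\cQ_i(\bX)$-modules, the identifications afforded by $\sigma_{n+1}, \sigma_n$ should read $\cQ_{n+1}(\bY)\otimes_{\cQ_{n+1}(\bX)}\cM_{n+1}(\bX)\cong\cM_{n+1}(\bY)$ (matching $\Loc_{\cQ_{n+1}}$), not $\cM_{n+1}(\bX)\otimes_{\cQ_{n+1}(\bX)}\cQ_{n+1}(\bY)$; this is purely cosmetic and does not affect the argument.
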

\begin{proof} For every $\bZ \in \cX_n$, by the definition of $\cP_n(\bZ)$ there is a canonical functorial isomorphism
\[ \cQ_n(\bZ) \underset{\cQ_{n+1}(\bZ)}{\otimes} \cP_{n+1}(\bZ) \congs \cP_n(\bZ). \]
If $\bZ$ happens to be contained in $\bY$ for some $\bY \in \cU$, then Lemma \ref{UcoherentMn}(c) induces a functorial isomorphism
\[ \cQ_n(\bZ) \underset{\cQ_{n+1}(\bZ)}{\otimes} \cM_{n+1}(\bZ) \congs \cM_n(\bZ)\]
which does not depend on the choice of $\bY$ and appears in the bottom row of the following diagram:
\[ \xymatrix{ 
\cQ_n(\bZ) \underset{\cQ_n(\bX)}{\otimes} \left(\cQ_n(\bX) \underset{\cQ_{n+1}(\bX)}{\otimes} \cM_{n+1}(\bX)\right) \ar@{.>}[r]^(.62){\tau_n(\bZ)}\ar[d]_{\cong} & \cQ_n(\bZ) \underset{\cQ_n(\bX)}{\otimes}\cM_n(\bX) \ar[dd]_{\cong}^{\sigma_n(\bZ)} \\ 
\cQ_n(\bZ) \underset{\cQ_{n+1}(\bZ)}{\otimes} \left(\cQ_{n+1}(\bZ) \underset{\cQ_{n+1}(\bX)}{\otimes} \cM_{n+1}(\bX)\right) \ar[d]_{1 \otimes \sigma_{n+1}(\bZ)} & \\ 
\cQ_n(\bZ) \underset{\cQ_{n+1}(\bZ)}{\otimes} \cM_{n+1}(\bZ) \ar[r]_(.62){\cong} & \cM_n(\bZ) }\]
All solid arrows in this diagram are isomorphisms, so we obtain the isomorphism
\[ \tau_n(\bZ) : \cQ_n(\bZ) \underset{\cQ_n(\bX)}{\otimes} \left(\cQ_n(\bX) \underset{\cQ_{n+1}(\bX)}{\otimes} \cM_{n+1}(\bX)\right)  \congs \cQ_n(\bZ) \underset{\cQ_n(\bX)}{\otimes}\cM_n(\bX) \]
as the top dotted arrow in the diagram. This morphism is functorial in $\bZ$. Now both $ \cQ_n(\bX) \otimes_{\cQ_{n+1}} \cM_{n+1}(\bX)$ and $\cM_n(\bX)$ are finitely generated $\cQ_n(\bX)$-modules by Corollary \ref{UcohQn}(b) and $\cU$ is an $\cX_n$-covering, so by Corollary \ref{TateForCohsQmod} these $\tau_n(\bZ)$'s patch together to give the required $\cQ_n$-linear isomorphism $\tau_n$.
\end{proof}

\begin{cor}\label{MinftyCoadm} The $\cQ_\infty(\bX)$-module $M_\infty := \invlim \cM_n(\bX)$ is coadmissible. 
\end{cor}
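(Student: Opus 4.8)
The plan is to recognise the tower $(\cM_n(\bX))_{n \geq 0}$ as a coherent sheaf in the sense of \cite[\S 3]{ST} for the Fr\'echet-Stein presentation $\cQ_\infty(\bX) = \invlim \cQ_n(\bX)$ of $\w\cD(\bX,H)$ furnished by Lemma \ref{StdPres} (and used in the proof of Theorem \ref{FrSt}); coadmissibility of the inverse limit is then immediate from the theory of \cite{ST}. First I would observe that $(\cM_n(\bX))_n$ is genuinely an inverse system of $\cQ_n(\bX)$-modules: for each $n$ the morphism of sheaves of rings $\cQ_{n+1} \to \cQ_n$ on $\cX_n$ induces a morphism of presheaves $\cP_{n+1|\cX_n} \to \cP_n$, and passing to sheafifications and taking sections over $\bX$ produces the transition maps $\cM_{n+1}(\bX) \to \cM_n(\bX)$ whose inverse limit is, by definition, $M_\infty$; this $M_\infty$ carries its $\cQ_\infty(\bX) = \invlim \cQ_n(\bX)$-module structure in the obvious way.

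The two conditions to check are that each $\cM_n(\bX)$ is finitely generated over $\cQ_n(\bX)$, which is exactly Corollary \ref{UcohQn}(b), and that the canonical map $\cQ_n(\bX) \otimes_{\cQ_{n+1}(\bX)} \cM_{n+1}(\bX) \to \cM_n(\bX)$ is an isomorphism for every $n$. For the latter I would apply the global-sections functor $\Gamma(\bX,-)$ to the $\cQ_n$-linear isomorphism $\tau_n$ of Lemma \ref{TausBetweencMns}. Since $\bX$ itself lies in $\cX_n$ and $\Loc_{\cQ_n}(P)(\bX) = \cQ_n(\bX) \otimes_{\cQ_n(\bX)} P = P$ for every finitely generated $\cQ_n(\bX)$-module $P$, the domain and codomain of $\tau_n(\bX)$ are $\cQ_n(\bX) \otimes_{\cQ_{n+1}(\bX)} \cM_{n+1}(\bX)$ and $\cM_n(\bX)$ respectively, and unwinding the commutative diagram defining $\tau_n$ at $\bZ = \bX$ identifies $\tau_n(\bX)$ with the canonical comparison map. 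Hence that map is an isomorphism, so $(\cM_n(\bX))_n$ is a coherent sheaf; therefore $M_\infty = \invlim \cM_n(\bX)$ is a coadmissible $\cQ_\infty(\bX) = \w\cD(\bX,H)$-module, which is the assertion.

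The only step demanding genuine care — as opposed to bookkeeping — is confirming that the isomorphism extracted from $\tau_n$ is precisely the \emph{canonical} map $\cQ_n(\bX) \otimes_{\cQ_{n+1}(\bX)} \cM_{n+1}(\bX) \to \cM_n(\bX)$ compatible with the transition maps of the tower, rather than merely \emph{some} isomorphism between these modules; once one traces the defining diagram of $\tau_n$ at $\bZ = \bX$ (noting that the vertical arrows there are the tautological contraction/counit isomorphisms) this is routine, but it is where attention is required.
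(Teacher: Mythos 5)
Your proof is correct and takes essentially the same approach as the paper: evaluate the $\cQ_n$-linear isomorphism $\tau_n$ from Lemma \ref{TausBetweencMns} at $\bX$, observe that this yields the canonical comparison map $\cQ_n(\bX) \otimes_{\cQ_{n+1}(\bX)} \cM_{n+1}(\bX) \to \cM_n(\bX)$, and conclude coadmissibility from the coherent-sheaf characterisation for the Fr\'echet-Stein presentation $\cQ_\infty(\bX) = \invlim \cQ_n(\bX)$. The caveat you raise about checking that $\tau_n(\bX)$ really is the canonical map, not just some isomorphism, is a genuine point of care that the paper passes over silently, so flagging it is a small improvement in rigour.
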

\begin{proof} By Lemma \ref{TausBetweencMns}, the maps $\tau_n(\bX)$ induce $\cQ_n(\bX)$-linear isomorphisms
\[ \cQ_n(\bX) \otimes_{\cQ_{n+1}(\bX)} \cM_{n+1}(\bX) \congs \cM_n(\bX) \qmb{for each} n \geq 0.\]
Therefore $M_\infty$ is a coadmissible $\invlim \cQ_n(\bX) = \cQ_\infty(\bX)$-module, by Proposition \ref{PassToInfty}(a).
\end{proof}

If $\bY \in \cU$ then $\cM(\bY)$ is a coadmissible $\cQ_\infty(\bY)$-module by Proposition \ref{Reconstruct}(a). Hence the canonical map
\[ \cM(\bY) \longrightarrow \invlim \cQ_n(\bY) \underset{\cQ_\infty(\bY)}{\otimes} \cM(\bY) \]
is an isomorphism, and we will identify $\cM(\bY)$ with $\invlim \cQ_n(\bY) \underset{\cQ_\infty(\bY)}{\otimes} \cM(\bY)$.

\begin{lem}\label{NuLemma} For each $\bY \in \cU$, there is a $\cQ_\infty(\bX)$-linear map
\[ \nu_\bY : M_\infty \longrightarrow \cM(\bY)\]
such that $\nu_\bY(m)_{|\bY \cap \bY'} = \nu_{\bY'}(m)_{|\bY \cap \bY'}$ for all $m \in M_\infty$ and all $\bY' \in \cU$.
\end{lem}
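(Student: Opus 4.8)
The point is that $M_\infty = \invlim \cM_n(\bX)$ and $\cM(\bY) = \invlim \left(\cQ_n(\bY) \otimes_{\cQ_\infty(\bY)} \cM(\bY)\right)$ are both inverse limits of the natural towers, so it suffices to construct, for each $n \geq 0$ and each $\bY \in \cU$, a $\cQ_n(\bX)$-linear map
\[ \nu_{\bY,n} : \cM_n(\bX) \longrightarrow \cQ_n(\bY) \underset{\cQ_\infty(\bY)}{\otimes} \cM(\bY) = \cP_n(\bY),\]
compatible with the transition maps in $n$, and then pass to the inverse limit. But the target here is precisely $\cP_n(\bY)$, and by Corollary \ref{UcohQn}(c) we have the $\cQ_n$-linear isomorphism $\sigma_n : \Loc_{\cQ_n}(\cM_n(\bX)) \congs \cM_n$, hence in particular the restriction map $\cM_n(\bX) \to \cM_n(\bY)$ together with the isomorphism $\cM_n(\bY) \cong \cP_n(\bY)$ coming from the fact that $\cP_n$ is already a sheaf on $\cY_n$ (Lemma \ref{UcoherentMn}(c), which says $\Loc_{\cQ_n}(\cP_n(\bY)) \congs \cP_{n|\cY_n}$, so the sheafification $\cM_n$ agrees with $\cP_n$ on $\cY_n$). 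So I would \emph{define} $\nu_{\bY,n}$ to be the composite $\cM_n(\bX) \to \cM_n(\bY) \cong \cP_n(\bY)$, which is evidently $\cQ_n(\bX)$-linear.

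\textbf{Compatibility in $n$.} The key check is that the square
\[\xymatrix{ \cM_{n+1}(\bX) \ar[r]^{\nu_{\bY,n+1}} \ar[d] & \cP_{n+1}(\bY) \ar[d] \\ \cM_n(\bX) \ar[r]_{\nu_{\bY,n}} & \cP_n(\bY) }\]
commutes, where the left vertical arrow comes from $\tau_n(\bX)$ of Lemma \ref{TausBetweencMns} (i.e. from the identification $\cQ_n(\bX) \otimes_{\cQ_{n+1}(\bX)} \cM_{n+1}(\bX) \congs \cM_n(\bX)$ followed by the structure map $\cM_{n+1}(\bX) \to \cQ_n(\bX) \otimes_{\cQ_{n+1}(\bX)} \cM_{n+1}(\bX)$) and the right vertical arrow comes from the canonical isomorphism $\cQ_n(\bY) \otimes_{\cQ_{n+1}(\bY)} \cP_{n+1}(\bY) \congs \cP_n(\bY)$ used in the proof of Lemma \ref{TausBetweencMns}. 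This commutativity is exactly the statement that the isomorphisms $\sigma_n$, $\tau_n$ and the restriction maps of $\cM_n$ are all compatible, which was established (or is immediate from the explicit formulas) in Corollary \ref{UcohQn} and Lemma \ref{TausBetweencMns}; I would verify it by evaluating both composites on an element $b \otimes m$ with $b \in \cQ_{n+1}(\bX)$, $m \in \cM_{n+1}(\bX)$, where they both send it to $b_{|\bY}$ acting on the image of $m$ in $\cP_n(\bY)$. Passing to the inverse limit over $n$ and using $M_\infty = \invlim \cM_n(\bX)$ together with $\cM(\bY) = \invlim \cP_n(\bY)$ then yields the $\cQ_\infty(\bX)$-linear map $\nu_\bY := \invlim \nu_{\bY,n}$.

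\textbf{Agreement on overlaps.} Finally, for $\bY, \bY' \in \cU$ and $\bZ := \bY \cap \bY'$, both $\nu_\bY(m)_{|\bZ}$ and $\nu_{\bY'}(m)_{|\bZ}$ are, by construction, obtained from the \emph{same} element $m \in M_\infty$ by first mapping to $\cM_n(\bZ)$ via the restriction maps of the sheaves $\cM_n$ and then identifying $\cM_n(\bZ) \cong \cP_n(\bZ)$; since $\bZ \in \cX_n$ and the identification $\cM_n(\bZ) \cong \cP_n(\bZ)$ obtained via $\bY$ coincides with the one obtained via $\bY'$ (both equal the canonical map coming from Lemma \ref{UcoherentMn}, which by construction does not depend on the auxiliary member of $\cU$ chosen, since $\cP_n$ is a presheaf defined uniformly on all of $\cX_n$), the two restrictions agree after passing to the inverse limit. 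The main obstacle is purely bookkeeping: keeping straight that the several isomorphisms ($\sigma_n$, $\tau_n$, the sheaf identifications of Lemma \ref{UcoherentMn}) are mutually compatible and that the ``restriction then identify'' procedure is canonical; once the explicit formula $b \otimes m \mapsto b_{|\bZ} \otimes m_{|\bZ}$ is in hand, each diagram commutes on the nose, and no hard analysis is involved.
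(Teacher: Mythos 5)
Your proposal is correct and takes essentially the same approach as the paper: the paper likewise defines $\nu_\bY(m) := ((m_n)_{|\bY})_n$ for $m = (m_n)_n \in M_\infty$, which is exactly your $\invlim \nu_{\bY,n}$ after identifying $\cM(\bY) = \invlim \cM_n(\bY) = \invlim \cP_n(\bY)$. The only difference is one of emphasis: you spell out the compatibility of the $\nu_{\bY,n}$ with the transition maps and the independence of the $\cM_n(\bZ) \cong \cP_n(\bZ)$ identification from the ambient $\bY$, whereas the paper leaves these checks implicit, relying on the fact that the restriction maps of the sheaves $\cM_n$ are $\cQ_n(\bX)$-linear and uniformly defined on $\cX_n$.
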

\begin{proof} Let $m = (m_n)_n \in M_\infty$ where $m_n \in \cM_n(\bX)$, and define $\nu_\bY$ by 
\[\nu_\bY(m) := ( (m_n)_{|\bY} )_n.\]
This is $\cQ_\infty(\bX)$-linear because the restriction maps in $\cM_n$ are $\cQ_n(\bX)$-linear. Now
\[ \nu_\bY(m)_{|\bY \cap \bY',n} = (m_{n|\bY})_{|\bY\cap \bY'} = m_{n |\bY\cap \bY'} = (m_{n|\bY'})_{|\bY\cap \bY'} = \nu_{\bY'}(m)_{|\bY \cap \bY',n}\]
for all $n \geq 0$. Hence $\nu_\bY(m)_{|\bY \cap \bY'} = \nu_{\bY'}(m)_{|\bY \cap \bY'}$ for all $m \in M_\infty$.
\end{proof}

\begin{prop}\label{HisomAlpha} There is an isomorphism
\[ \Loc^{\w\cD(\bX,H)}_\bX(M_\infty) \congs \cM\]
of $H$-equivariant locally Fr\'echet $\cD$-modules on $\bX$.
\end{prop}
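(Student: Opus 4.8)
The plan is to build the desired isomorphism from the ``levelwise'' data assembled in the preceding lemmas, and then to glue. First I would use Proposition~\ref{Reconstruct}(c) applied to each $\bY \in \cU$ together with Corollary~\ref{MinftyCoadm} to see that both sides are $H$-equivariant locally Fr\'echet $\cD$-modules whose restrictions to each $\bY_w/H$ are governed by coadmissible $\cQ_\infty(\bY)$-modules. By Corollary~\ref{MinftyCoadm}, $M_\infty = \invlim \cM_n(\bX)$ is a coadmissible $\cQ_\infty(\bX) = \w\cD(\bX,H)$-module, so $\Loc^{\w\cD(\bX,H)}_\bX(M_\infty)$ makes sense and by Theorem~\ref{DGsheaf} its restriction to $\bX_w$ is the presheaf $\bU \mapsto \w\cD(\bU,H_\bU) \w\otimes_{\w\cD(\bX,H)} M_\infty$. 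So it suffices to construct an isomorphism of $H$-equivariant presheaves of $\cD$-modules on $\bX_w/H$ between $\cP^{\w\cD(\bX,H)}_\bX(M_\infty)$ and $\cM_{|\bX_w/H}$, and then invoke \cite[Theorem 9.1]{DCapOne} together with continuity (via Lemma~\ref{AutoCts}) to extend it to $\bX_{\rig}$.

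The morphism itself is produced by gluing the maps $\nu_\bY : M_\infty \to \cM(\bY)$ from Lemma~\ref{NuLemma}. Since $\cM_{|\bY_w/H}$ is a sheaf for each $\bY \in \cU$ and the $\nu_\bY$ agree on overlaps, they combine (by the sheaf axiom on $\bX_w/H$, which is a $G$-topology in the sense of Remark~\ref{GTopCovs}) to a morphism of $\cQ_\infty(\bX)$-modules $M_\infty \to \cM(\bX)$; applying $\w\cD(\bU,H_\bU) \w\otimes_{\w\cD(\bX,H)} -$ and composing with the action map $\w\cD(\bU,H_\bU) \w\otimes_{\w\cD(\bX,H)} \cM(\bX) \to \cM(\bU)$, $s \w\otimes m \mapsto s \cdot (m_{|\bU})$ (which uses the $\cQ_\infty$-module structure from Lemma~\ref{QinftyAction} and Proposition~\ref{Reconstruct}) gives a morphism of $H$-equivariant presheaves $\beta : \cP^{\w\cD(\bX,H)}_\bX(M_\infty) \to \cM_{|\bX_w/H}$. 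It remains to check $\beta$ is an isomorphism. Because $\cU$ admits an $\cL$-accessible Laurent refinement and both source and target are determined on $\bX_w(\cT)$ by their sections over $\cU$ and its intersections (both being sheaves on $\bX_w/H$, and $\cM_{|\bY}$ being $\cU\cap\bY_w$-coadmissible, $\cP^{\w\cD(\bX,H)}_\bX(M_\infty)_{|\bY}$ being computed by Proposition~\ref{RestrictFurther} / Corollary~\ref{ResCor}), it is enough to verify that $\beta(\bY)$ is an isomorphism for each $\bY \in \cU$.

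For a fixed $\bY \in \cU$, I would check the isomorphism levelwise. By Corollary~\ref{UcohQn}(c) and Lemma~\ref{TausBetweencMns}, $\cM_n(\bY) \cong \cQ_n(\bY) \otimes_{\cQ_n(\bX)} \cM_n(\bX)$, and by Lemma~\ref{UcoherentMn}(b) this agrees with $\cQ_n(\bY) \otimes_{\cQ_\infty(\bY)} \cM(\bY)$. Passing to the inverse limit over $n$, using Proposition~\ref{PassToInfty}(a) ($\cQ_\infty(\bY) \cong \invlim \cQ_n(\bY)$) and that $\cM(\bY)$ is coadmissible over $\cQ_\infty(\bY)$ by Proposition~\ref{Reconstruct}(a), one gets $\cM(\bY) \cong \invlim \cM_n(\bY)$. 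On the other hand $\w\cD(\bY,H_\bY) \w\otimes_{\w\cD(\bX,H)} M_\infty \cong \invlim \big(\cQ_n(\bY) \otimes_{\cQ_n(\bX)} \cM_n(\bX)\big)$ by the same computation as in the proof of Proposition~\ref{PassToInfty}(b), so both sides of $\beta(\bY)$ are $\invlim \cM_n(\bY)$, and one checks $\beta(\bY)$ is the identity on this description by tracing through $\nu_\bY$ and the definition of the action map. Finally, one verifies $\beta$ is continuous on local sections (Lemma~\ref{AutoCts}, since all modules in sight are coadmissible over the relevant Fr\'echet--Stein algebras), so it extends to the required isomorphism on $\bX_{\rig}$.

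\textbf{Main obstacle.} The technical heart is the bookkeeping needed to identify $\cM(\bY)$ with $\invlim \cM_n(\bY)$ \emph{compatibly in $\bY \in \cU$ and compatibly with the gluing data}: one must ensure that the levelwise isomorphisms of Lemma~\ref{UcoherentMn}(b) and Lemma~\ref{TausBetweencMns} are coherent enough that the inverse-limit identifications patch to an identification of presheaves on $\bX_w/H$, and that the resulting map coincides with the globally-constructed $\beta$. Showing $H$-equivariance of $\beta$ (tracking the $\eta(g)$-twists through $\nu_\bY$ and the $\w{g}_{\bU,H}$ isomorphisms, as in the proof of Proposition~\ref{LocTrans}) is routine but is where the bulk of the diagram-chasing lives. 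Everything else is an application of results already in hand: Tate-type acyclicity in the equivariant setting (Corollary~\ref{TateForCohsQmod}), equivariant Kiehl (Theorem~\ref{LevelEqKiehl}), the Fr\'echet--Stein formalism of \cite{ST}, and the sheaf-extension theorem \cite[Theorem 9.1]{DCapOne}.
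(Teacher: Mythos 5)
Your plan follows essentially the same route as the paper's proof: construct local maps from $\nu_\bY$, show they are isomorphisms via the levelwise comparison (Corollary~\ref{UcohQn}(c), Lemma~\ref{UcoherentMn}(b), Proposition~\ref{PassToInfty}), deduce $H$-equivariance, and glue using Theorem~\ref{DGsheaf} and \cite[Theorem 9.1]{DCapOne}. The only point where your phrasing is slightly off is the step ``glue the $\nu_\bY$ to a map $M_\infty \to \cM(\bX)$, then apply $\w\cD(\bU,H_\bU)\w\otimes_{\w\cD(\bX,H)}-$'': one cannot literally form $\w\cD(\bU,H_\bU)\w\otimes_{\w\cD(\bX,H)}\cM(\bX)$, because at this stage it is not yet known that $\cM(\bX)$ is a coadmissible $\w\cD(\bX,H)$-module --- that is exactly what is being established here and in Theorem~\ref{Kiehl}. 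The paper avoids this by defining, for each $\bY\in\cU$, the map $g_\bY : \w\cD(\bY,H)\w\otimes_{\w\cD(\bX,H)} M_\infty \to \cM(\bY)$, $s\w\otimes m\mapsto s\cdot\nu_\bY(m)$, directly via the universal property of $\w\otimes$ with target the coadmissible $\w\cD(\bY,H)$-module $\cM(\bY)$ (coadmissibility of $\cM(\bY)$ coming from Proposition~\ref{Reconstruct}(a) applied to $\cM_{|\bY}$, which is legitimate because $\cM$ is $\cU$-coadmissible), and only afterwards patches the resulting isomorphisms $\alpha_\bY$ over $\bY_w$ using Lemma~\ref{NuLemma}. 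If you reorder your construction this way --- build $g_\bY$ and $\alpha_\bY$ over each $\bY\in\cU$ first, then glue --- the gap disappears and your proof matches the paper's.
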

\begin{proof} Because $\Loc^{\w\cD(\bX,H)}_\bX(M_\infty)$ and $\cM$ are sheaves, by \cite[Theorem 9.1]{DCapOne} it is sufficient to construct an isomorphism of $H$-equivariant $\cD$-modules on $\bX_w$
\[ \alpha : \cP^{\w\cD(\bX,H)}_\bX(M_\infty) \congs \cM_{|\bX_w}.\]
Let $\bY \in \cU$. Using Lemma \ref{NuLemma}, define
\[g_\bY : \w\cD(\bY,H) \underset{\w\cD(\bX,H)}{\w\otimes} M_\infty \longrightarrow \cM(\bY)\]
by setting $g_\bY(s \w\otimes m) = s \cdot \nu_\bY(m)$. This is a $\w\cD(\bY,H)$-linear map. The diagram
\[\xymatrix{
\cQ_\infty(\bY) \underset{\cQ_\infty(\bX)}{\w\otimes} M_\infty \ar[r]^{g_\bY}\ar@{=}[d] &  \cM(\bY) \ar[d]^{\cong} \\
\invlim \cQ_n(\bY) \underset{\cQ_n(\bX)}{\otimes} \cM_n(\bX) \ar[d]^{\cong}_(0.6){\invlim \sigma_n(\bY)} & \invlim \cQ_n(\bY) \underset{\cQ_\infty(\bY) }{\otimes} \cM(\bY) \ar@{=}[d] \\
\invlim \cM_n(\bY)  & \invlim \cP_n(\bY) \ar[l]^{\cong}
}\]
is commutative by construction. The bottom left vertical arrow is an isomorphism by Corollary \ref{UcohQn}(c), and the bottom horizontal arrow is an isomorphism by Lemma \ref{UcoherentMn}(b). Hence $g_\bY$ is an isomorphism. Now consider the following diagram:
\[ \xymatrix{ 
\cP_\bX^{\w\cD(\bX,H)}(M_\infty)_{|\bY_w} \ar@{.>}[rrrr]^{\alpha_\bY} \ar[d]_{\cong} &&&& \cM_{|\bY_w} \\
\cP^{\w\cD(\bY,H)}_\bY(\w\cD(\bY,H) \underset{\w\cD(\bX,H)}{\w\otimes} M_\infty) \ar[rrrr]_{\cP^{\w\cD(\bY,H)}_\bY(g_\bY)} &&&& \cP_\bY^{\w\cD(\bY,H)}(\cM(\bY)) \ar[u]_{\theta_\bY}^{\cong}
}\]
where the left vertical arrow is the isomorphism given by Proposition \ref{LocTrans}, and the right vertical arrow $\theta_\bY$ is the isomorphism given by Proposition \ref{Reconstruct}(c). The bottom arrow is an isomorphism by Proposition \ref{LocFunctor}, and thus we obtain the $H$-equivariant $\cD$-linear isomorphism
\[ \alpha_\bY : \cP^{\w\cD(\bX,H)}_\bX(M_\infty)_{|\bY_w} \congs \cM_{|\bY_w}\]
which makes the diagram commute. 

Recall from Proposition \ref{Reconstruct}(a) that for any $\bZ \in \bY_w$, $\cM(\bZ)$ is naturally a coadmissible $\cD(\bZ,H_\bZ)$-module. Using Corollary \ref{ResCor} to identify $\cP^{\w\cD(\bX,H)}_\bX(M_\infty)(\bZ)$ with $\w\cD(\bZ,H_\bZ) \underset{\w\cD(\bX,H_\bZ)}{\w\otimes} M_\infty$, it is straightforward to verify that 
\[ \alpha_\bY(\bZ) : \w\cD(\bZ,H_\bZ) \underset{\w\cD(\bX,H_\bZ)}{\w\otimes} M_\infty \longrightarrow \cM(\bZ)\]
is given by
\[ \alpha_\bY(\bZ)(s \w\otimes m) = s \cdot (\nu_\bY(m)_{|\bZ}) \qmb{for all}   s \in \w\cD(\bZ,H_\bZ) \qmb{and} m \in M_\infty.\]
Using Lemma \ref{NuLemma}, we see that the local isomorphisms $\alpha_\bY$ satisfy
\[ (\alpha_\bY)_{|\bY \cap \bY'} = (\alpha_{\bY'})_{|\bY \cap \bY'} \qmb{for any} \bY, \bY' \in \cU.\]
Since $\cM_{|\bX_w}$ is a sheaf by assumption and since $\cP^{\w\cD(\bX,H)}_\bX(M_\infty)$ is a sheaf on $\bX_w$ by Theorem \ref{DGsheaf}, the $\alpha_\bY$'s patch together to give the required isomorphism $\alpha$.
\end{proof}

\begin{proof}[Proof of Theorem \ref{Kiehl}]
Let $\cU$ be an admissible affinoid covering of $\bX$ and let $\cM$ be a $\cU$-coadmissible $G$-equivariant $\cD$-module on $\bX$. Because $(\bX,G)$ is small, we can find a $G$-stable affine formal model $\cA$ in $\cO(\bX)$ and a $G$-stable free $\cA$-Lie lattice $\cL$ in $\cT(\bX)$. 

Because $\bX$ is affinoid, by replacing $\cU$ by a finite refinement and applying Lemma \ref{C/Grefinement} we may assume that $\cU$ is finite. Choose and fix a Laurent refinement $\cV$ of $\cU$, using \cite[Lemmas 8.2.2/2-4]{BGR}. By applying \cite[Proposition 7.6]{DCapOne}, we may replace $\cL$ by a sufficiently large $\pi$-power multiple, and thereby ensure that every member of $\cU$ and of $\cV$ is an $\cL$-accessible affinoid subdomain of $\bX$.  Thus $\cU$ is $\cL$-accessible, and admits an $\cL$-accessible Laurent refinement. By replacing $\cL$ by $\pi \cL$ if necessary, we may assume further that $[\cL, \cL] \subseteq \pi \cL$ and $\cL \cdot \cA \subseteq \pi \cA$. By Lemma \ref{CoverStab}, we can find an open normal subgroup $H$ of $G$ which stabilises $\cA$, $\cL$ and each member of $\cU$. Choose a good chain $(N_\bullet)$ for $\cL$ using Lemma \ref{FastChain}.  Thus, all conditions of Hypothesis \ref{KiehlProofSetup} are satisfied.

Now $M := \cM(\bX)$ is a coadmissible $\w\cD(\bX,H)$-module by Corollary \ref{MinftyCoadm}, so by Proposition \ref{HisomAlpha} and Proposition \ref{Reconstruct}(c), there is an isomorphism
\[ \theta : \Loc_\bX^{\w\cD(\bX,H)} (M)\congs \cM\]
of $H$-equivariant locally Fr\'echet $\cD$-modules on $\bX$, given by 
\[ \theta(\bU)(s \hsp \w\otimes \hsp m) = s \cdot (m_{|\bU})\]
for any $\bU \in \bX_w$, $s \in \w\cD(\bU,H_\bU)$ and $m \in M$. On the other hand, $M$ is a $\cD(\bX) \rtimes G$-module by Proposition \ref{EqGlSec}, and by Proposition \ref{Reconstruct}(a), the actions of $\w\cD(\bU,H)$ and $\cD(\bX)\rtimes G$ are compatible in the following sense:
\[ \gamma^H(h) \cdot m = h^\cM(m) \qmb{for all} h\in H\qmb{and} m\in M.\]
Thus the $\w\cD(\bX,H)$-action on $M$ extends to an action of $\w\cD(\bX,H) \rtimes_H G \cong \w\cD(\bX,G)$, by Corollary \ref{Stacky}. Since the restriction of $M$ back to $\w\cD(\bX,H)$ is coadmissible, $M$ is necessarily coadmissible also as a $\w\cD(\bX,G)$-module. 

Note that by construction, $\Loc_\bX^{\w\cD(\bX,G)}(M) = \Loc_\bX^{\w\cD(\bX,H)}(M)$ as $H$-equivariant locally Fr\'echet $\cD$-modules on $\bX$. We will now verify that the isomorphism $\theta$ is $G$-equivariant. To this end, fix $\bU \in \bX_w$ and $g \in G$, and consider the diagram
\[\xymatrix{ M(\bU,H_\bU) \ar[rrrr]^{\theta(\bU)} \ar[d]_{g^M_{\bU,H_\bU}} &&&& \cM(\bU) \ar[d]^{g^\cM(\bU)} \\ 
M(g\bU, gH_\bU g^{-1}) \ar[rrrr]_{\theta(g\bU)} &&&& \cM(g\bU). }\]
Now $\cM(\bU)$ is a coadmissible $\w\cD(\bU,H_\bU)$-module and $\cM(g\bU)$ is a coadmissible $\w\cD(g\bU, gH_\bU g^{-1})$-module by Proposition \ref{Reconstruct}(a), and it is straightforward to see that $\theta(\bU)$ is $\w\cD(\bU,H_\bU)$-linear, whereas $\theta(g\bU)$ is $\w\cD(g\bU, gH_\bU g^{-1})$-linear. 

Using the isomorphism $\w{g}_{\bU,H_\bU} : \w\cD(\bU,H_\bU) \congs \w\cD(g\bU, gH_\bU g^{-1})$, we can regard $\cM(g\bU)$ as a coadmissible $\w\cD(\bU,H)$-module. Because $\cM$ is a $G$-equivariant $\cD$-module, it follows from Definition \ref{GAmodDefn}(a) that the map $g^{\cM}(\bU) : \cM(\bU) \to \cM(g\bU)$ is $\cD(\bU) \rtimes H_\bU$-linear. Because it is also \emph{continuous} by Definition \ref{FrechDmod}(a), we see that it is actually $\w\cD(\bU,H_\bU)$-linear. Similarly, regarding $M(g\bU, gH_\bU g^{-1})$ as a coadmissible $\w\cD(\bU, H_\bU)$-module via $\w{g}_{\bU,H_\bU}$, the maps $g^M_{\bU,H_\bU}$ and $\theta(g\bU)$ become $\w\cD(\bU,H_\bU)$-linear. Since the image of $M$ in $M(\bU,H_\bU)$ generates a dense $\w\cD(\bU,H_\bU)$-submodule in $M(\bU,H_\bU)$, to show that the diagram commutes it is enough to verify that $g^{\cM}(\bU) \circ \theta(\bU)$ and $\theta(g\bU)\circ g^M_{\bU,H_\bU}$ agree on this image, by Lemma \ref{AutoCts}. But
\[ \begin{array}{lllll}
    [g^\cM(\bU) \circ \theta(\bU)](1 \hsp\w\otimes\hsp m) &=& g^{\cM}(\bU)( m_{|\bU} ) &=& g^{\cM}(\bX)(m)_{|\bU} =\\
    &=& \theta(g\bU)(1 \hsp \w\otimes \hsp g\cdot m) &=& [\theta(g\bU) \circ g^M_{\bU,H_\bU}](1 \hsp \w\otimes \hsp m)
\end{array}\]
for all $m \in M$, and the result follows.\end{proof}

This completes the proof of Theorem \ref{LocEquiv}.

\section{Beilinson-Bernstein localisation theory}

\subsection{Invariant vector fields on affine formal group schemes} We begin by reminding the reader some basic facts from the theory of group schemes, following Demazure and Gabriel, \cite{DG}. Let $R$ be an arbitary commutative ring. Recall that an \emph{$R$-functor} is a set-valued functor on the category of (small, commutative) $R$-algebras, and an \emph{$R$-group-functor} is a group-valued functor on the same category. We will always identify a scheme $X$ over $R$ with its $R$-functor of points $\X = \Sch_R(\Spec(-),X)$. 

Let $\G$ be an $R$-group-functor. By \cite[Chapter II, \S 4, 1.2]{DG}, for every $R$-algebra $S$ we have the group
\[ \zLie(\G)(S) = \ker \left( \G(S[\tau]) \to \G(S) \right),\]
where $S[\tau] := S[T]/\langle T^2\rangle$ is the ring of dual numbers. This construction yields another $R$-group functor $\zLie(\G)$, and $\zLie(-)$ is functorial in $\G$. If $\G$ is actually a group \emph{scheme} over $R$, then it is shown in \cite[Chapter II, \S 4, Proposition 4.5]{DG} that $\zLie(\G)(S)$ is an $S$-Lie algebra for every $R$-algebra $S$. In this case, the \emph{Lie algebra} of $\G$ is defined to be $\Lie(\G) := \zLie(\G)(R)$.

Let $\X$ be an $R$-functor. Its \emph{automorphism group} $\zAut(\X)$ is given by
\[ \zAut(\X)(S) = \Aut_S( \X \otimes_R S)\]
for every $R$-algebra $S$. $\zAut(\X)$ is another $R$-functor, and $\X \otimes_R S$ denotes the base-change of $\X$ to $S$ \cite[Chapter II, \S 1, 2.7]{DG}. Explicitly, $\X \otimes_R S$ is simply the restriction of $\X$ to the category of commutative $S$-algebras, and $\Aut_S( \X \otimes_R S)$ is the group of invertible natural transformations $\X \otimes_R S \stackrel{\cong}{\to} \X \otimes_RS$. 

By \cite[Chapter II, \S 4, Proposition 2.4]{DG}, there is homomorphism
\[ \zLie(\zAut(\X))(R) \to \Der_R( \cO_{\X} ), \quad \phi \mapsto D_\phi\]
to the group of $R$-linear derivations $\Der_R(\cO_{\X})$ of the structure sheaf $\cO_{\X}$, \cite[Chapter I, \S1, 6.1]{DG}, of the $R$-functor $\X$. It is given by the following formula:
\begin{equation}\label{InfActFormula} f( \phi_S(x_{S[\tau]})) = f(x) + \tau D_\phi(\Y)(f), \end{equation}
where $S$ is an $R$-algebra, $\phi \in \zLie(\zAut(\X))(R)$, $\phi_S$ is its image in $\zLie(\zAut(\X))(S)$, $\Y$ is an open subfunctor of $\X$, $f \in \cO(\Y)$, $x \in \X(S)$ and $x_{S[\tau]}$ is its image in $\X(S[\tau])$.

In the case where $\X$ is actually an $R$-scheme, $\Der_R(\cO_{\X})$ is simply the space $\cT(\X)$ of global vector fields on $\X$. 

\begin{defn}\label{InfActionDefn} Let $\G \times \X \to \X$ be an action of the $R$-group scheme $\G$ on the $R$-scheme $\X$, and let $\varphi : \G \to \zAut(\X)$ be the corresponding homomorphism. The \emph{infinitesimal action of $\fr{g} := \Lie(\G)$ on $\X$ associated to $\varphi$} is the $R$-linear map
\[ \varphi' : \fr{g} \to \cT(\X)\]
given by $\varphi'(u) = D_\phi$ where $\phi := \zLie(\varphi)(R)(u^{-1}) \in \zLie(\zAut(\X))(R)$. 
\end{defn}

\begin{rmks} \hspace{2em}
\be\item Note that our notation differs from the one used in \cite[Chapter II, \S 4, 4.4]{DG} because of the minus sign in the exponent of $u$. With this modified notation, \cite[Chapter II, \S 4, Proposition 4.4]{DG} tells us that the infinitesimal action $\varphi' : \fr{g} \to \cT(\X)$ is a \emph{homomorphism} of $R$-Lie algebras, and not an anti-homomorphism.
\item The infinitesimal action is given by the intuitive formula
\[ \varphi'(g)(f)(x) = \frac{ f(g^{-1}x) - f(x) }{\tau}, \qmb{for all} g \in \fr{g},  f \in \cO_{\X} \qmb{and} x \in \G\]
which may explain the phrase ``differentiating the $\G$-action on $\X$".
\ee\end{rmks}

Let $\varphi : \G \to \zAut(\X)$ be an action of the $R$-group scheme $\G$ on the $R$-scheme $\X$. The structure sheaf $\cO_{|\X|}$ on the underlying topological space $|\X|$ of the $R$-functor $\X$ is then naturally $\G(R)$-equivariant in the sense of Definition \ref{DefnEquivSheaf}, and similarly the tangent sheaf $\cT_{|\X|}$ is also $\G(R)$-equivariant. In this way, we obtain a group action of $\G(R)$ on $\cT(\X) = \cT(|\X|)$ by $R$-Lie algebra automorphisms, which is determined by the following property:
\[ (g\cdot v)(g\cdot f) = g \cdot v(f) \]
whenever $g \in \G(R)$, $v \in \cT(\X)$, $\Y$ is an open subfunctor of $\G$ and $f \in \cO(\Y)$. We will call this the \emph{conjugation action} of $\G(R)$ on $\cT(\X)$. On the other hand, recall from \cite[Chapter II, \S 4, 1.2]{DG} that we have the \emph{adjoint representation} 
\begin{equation}\label{AdjRep} \Ad : \G(R) \to \Aut_R( \Lie \G )\end{equation}
given by $\Ad(g)(u) := g \cdot  u := g \hsp u \hsp g^{-1}$ for all $g \in \G(R)$ and $u \in \Lie \G$. Here we abuse notation and identify $\G(R)$ with its image inside $\G(R[\tau])$.

 \begin{lem}\label{EquivariantInfAction} Let $\G \times \X \to \X$ be an action of the $R$-group scheme $\G$ on the $R$-scheme $\X$, and let $\varphi : \G \to \zAut(\X)$ be the corresponding homomorphism. Then the infinitesimal action $\varphi' : \fr{g} \to \cT(\X)$ is equivariant with respect to the adjoint action of $\G(R)$ on $\fr{g}$ and the conjugation action of $\G(R)$ on $\cT(\X)$.
 \end{lem}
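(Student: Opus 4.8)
The plan is to fix $g \in \G(R)$ and $u \in \fr{g} = \Lie(\G) = \zLie(\G)(R) \subseteq \G(R[\tau])$ and to prove $\varphi'(\Ad(g)(u)) = g \cdot \varphi'(u)$, where the right-hand side denotes the conjugation action of $g$ on $\cT(\X)$. Writing $\phi := \zLie(\varphi)(R)(u^{-1}) \in \zLie(\zAut(\X))(R)$, so that $\varphi'(u) = D_\phi$ by Definition \ref{InfActionDefn}, and noting that $\Ad(g)(u) = gug^{-1}$ again lies in $\zLie(\G)(R)$, the goal becomes $D_{\phi'} = g \cdot D_\phi$ with $\phi' := \zLie(\varphi)(R)(gu^{-1}g^{-1})$.

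I would carry this out in two stages. The first is a formal reduction. Since $\varphi$ is a homomorphism of $R$-group functors, $\zLie(-)$ is functorial, and $\zLie(\G)(R) = \ker(\G(R[\tau]) \to \G(R))$ is exactly where $\varphi(R[\tau])$ restricts to $\zLie(\varphi)(R)$, conjugation is respected: if $\tilde g \in \zAut(\X)(R[\tau])$ denotes the image of $g$ --- that is, the base change along $R \hookrightarrow R[\tau]$ of $\bar g := \varphi(R)(g) \in \Aut_R(\X)$ --- then $\phi' = \tilde g \, \phi \, \tilde g^{-1}$ in $\zLie(\zAut(\X))(R)$. Hence the lemma reduces to the ``transformation law'' $D_{\tilde g \phi \tilde g^{-1}} = g \cdot D_\phi$; equivalently, to the statement that the homomorphism $\phi \mapsto D_\phi$ of \cite[Chapter II, \S 4, Proposition 2.4]{DG} from $\zLie(\zAut(\X))(R)$ to $\cT(\X) = \Der_R(\cO_\X)$ is equivariant for conjugation by $\Aut_R(\X)$.

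The second stage is to verify this transformation law directly from the defining formula $(\ref{InfActFormula})$. This is a local question, so I would assume $\X$ affine and work with global functions. For $a \in \cO(\X)$, an $R$-algebra $S$, and $x \in \X(S)$, I would apply $(\ref{InfActFormula})$ to $\tilde g \phi \tilde g^{-1}$, use that $(\tilde g \phi \tilde g^{-1})_S = \bar g_{S[\tau]} \circ \phi_S \circ \bar g_{S[\tau]}^{-1}$ together with naturality of the $\G(R)$-action (so $\bar g_{S[\tau]}^{-1}(x_{S[\tau]}) = (\bar g_S^{-1}(x))_{S[\tau]}$), then apply $(\ref{InfActFormula})$ to $\phi$ at the point $\bar g_S^{-1}(x)$ with the pulled-back function $a \circ \bar g$, and simplify via $(a\circ\bar g)(\bar g_S^{-1}(x)) = a(x)$. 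This should yield $D_{\tilde g\phi\tilde g^{-1}}(a) = (\bar g^{-1})^{*}\bigl(D_\phi(\bar g^{*} a)\bigr)$. Setting $\rho(g) := (\bar g^{-1})^{*}$, the induced $R$-algebra automorphism of $\cO(\X)$, this says $D_{\tilde g\phi\tilde g^{-1}} = \dot{\rho}(g)(D_\phi)$ in the notation of Example \ref{AutOfDer}; and since the conjugation action of $g$ on $\cT(\X)$ (Example \ref{GactDerEx}, characterised by $(g\cdot v)(g\cdot f) = g\cdot v(f)$) is by definition $\dot{\rho}(g)$, this is precisely the required identity.

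The main obstacle I anticipate is not conceptual but bookkeeping in the functor-of-points formalism after base change to the dual numbers $R[\tau]$: tracking which automorphism modifies which $\tau$-component of a point, and --- more delicately --- pinning down the inverse and sign conventions so that the contravariant passage from $\Aut_R(\X)$ to $\Aut_R(\cO_\X)$ produces the left action $\rho(g) = (\bar g^{-1})^{*}$ that is simultaneously compatible with the adjoint action $\Ad$ on $\fr{g}$ (built into Definition \ref{InfActionDefn} through the exponent $u^{-1}$) and with the conjugation action on $\cT(\X)$ as the paper defines it. Once those conventions are fixed, the identity $\phi' = \tilde g\phi\tilde g^{-1}$ and the locality of the transformation law are routine.
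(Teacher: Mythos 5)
Your proof is correct and matches the paper's argument in substance: both rest on the functorial identity $\phi' = \tilde g\,\phi\,\tilde g^{-1}$ in $\zLie(\zAut(\X))(R)$ and then unwind formula $(\ref{InfActFormula})$ at a point $x$ to get the transformation law $D_{\phi'}(a) = g\cdot D_\phi(g^{-1}\cdot a)$, which is exactly the conjugation action via $\dot{\rho}(g)$ with $\rho(g)=(\bar g^{-1})^\sharp$. The paper simply compresses your two stages into a single chain of equalities (with the ``multiply by $\tau$'' trick to isolate the derivation), so the only difference is expository.
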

 \begin{proof}Let $h \in \G(R)$, $g \in \fr{g}$, $f \in \cO_{|\X|}$ and $x \in \X$. Then 
 \[ (h \cdot \varphi'(g)) (h\cdot f) = h \cdot \varphi'(g)(f)\]
 by the definition of the $\G(R)$-action on $\cT(\X)$ given above, so
\[\begin{array}{lll} \tau (h \cdot \varphi'(g))( h\cdot f)(x) &=& \tau \hsp \left(h \cdot \varphi'(g)(f) \right)(x) = \\ 
&=& \tau \hsp \varphi'(g)(f)(h^{-1}\cdot x) = \\
&=& f(g^{-1}h^{-1}\cdot x) - f(h^{-1}\cdot x) = \\
&=& (h\cdot f)(hg^{-1}h^{-1}\cdot x) - (h\cdot f)(x)= \\
&=& (h\cdot f)((h \cdot g)^{-1} \cdot x) - (h\cdot f)(x)= \\
&=& \tau \hsp \varphi'(h \cdot g)(h\cdot f)(x).\end{array}\]
Because this is true for all $x \in \X$ and because multiplication by $\tau$ on $R[\tau]$ induces a bijection $R \to \tau R$, we deduce that $(h \cdot \varphi'(g))(h\cdot f) = \varphi(h \cdot g)(h\cdot f).$ Replacing $f$ by $h^{-1}\cdot f$, we conclude that $(h \cdot \varphi'(g))(f) = \varphi'(h \cdot g)(f)$ for all $f \in \cO_{|\X|}$. Therefore $h \cdot \varphi'(g) = \varphi'( h \cdot g )$ as required.
 \end{proof}

Suppose now that $\varphi : \G \to \zAut(\X)$ and $\psi : \G \to \zAut(\Y)$ are two actions of the $R$-group scheme $\G$ on the $R$-schemes $\X$ and $\Y$, respectively, and let $\xi : \Y \to \X$ be a $\G$-equivariant map.  Let $\H$ be an affine $R$-group scheme which is flat over $R$, and suppose in addition that $\xi : \Y \to \X$ is a Zariski locally trivial $\H$-torsor in the sense of \cite[\S 4.3]{AW13}. Because $\H$ is flat over $R$, \cite[Lemma 4.3]{AW13} tells us that the pullback of functions map $\xi^\sharp : \cO_{\X} \to (\xi_\ast \cO_{\Y})^{\H}$ is an isomorphism of $\cO_{\X}$-modules. Recall from \cite[\S 4.4]{AW13} that in this situation, we have  the \emph{anchor map} 
\begin{equation}\label{AnchorMapAlpha}  \alpha : (\xi_\ast \cT_{\Y})^{\H} \longrightarrow \cT_{\X}\end{equation}
of the Lie algebroid $(\xi_\ast \cT_{\Y})^{\H}$, which is defined by the rule 
\begin{equation}\label{AnchorMapAlphaDefn} \xi^\sharp\left(\alpha(v)(f)\right) = v(\xi^\sharp(f))\end{equation}
for any $v \in (\xi_\ast \cT_{\Y})^{\H}$ and any $f \in \cO_{\X}$. This map allows us to relate the infinitesimal actions of $\fr{g}$ on $\X$ and on $\Y$ in the following way.

\begin{lem}\label{AlphaPsiVarphi} We have $\alpha \circ \psi' = \varphi'$.
\end{lem}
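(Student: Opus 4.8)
Both $\alpha \circ \psi'$ and $\varphi'$ are $R$-linear maps $\fr{g} = \Lie(\G) \to \cT(\X)$, and since $\xi^\sharp : \cO_{\X} \to (\xi_\ast \cO_{\Y})^{\H}$ is injective (indeed an isomorphism by \cite[Lemma 4.3]{AW13}), it suffices to check that for every $u \in \fr{g}$ and every (locally defined) $f \in \cO_{\X}$ one has
\[ \xi^\sharp\bigl( (\alpha \circ \psi')(u)(f) \bigr) = \xi^\sharp\bigl( \varphi'(u)(f) \bigr). \]
First I would apply the defining formula $(\ref{AnchorMapAlphaDefn})$ of the anchor map to $v = \psi'(u)$, which turns the left-hand side into $\psi'(u)(\xi^\sharp(f))$. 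So the whole statement reduces to the identity $\psi'(u)(\xi^\sharp(f)) = \xi^\sharp(\varphi'(u)(f))$ in $\cO_{\Y}$.

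Next I would unwind this using the explicit formula $(\ref{InfActFormula})$ for the infinitesimal action (equivalently, Definition \ref{InfActionDefn}). Working over the ring of dual numbers $S[\tau]$ for a variable $R$-algebra $S$: pick a point $y \in \Y(S)$ and set $x := \xi(y) \in \X(S)$. Since $\xi$ is $\G$-equivariant, for $\phi := \zLie(\psi)(R)(u^{-1})$ and $\phi' := \zLie(\varphi)(R)(u^{-1})$ we have $\xi(\phi_S(y_{S[\tau]})) = \phi'_S(x_{S[\tau]})$, because $\zLie(-)$ applied to the equivariant map $\xi$ (viewed inside $\zAut(\Y) \to \zAut(\X)$, or more directly via naturality of $\zLie$ in the $R$-group-functor and the compatibility of $\varphi, \psi$ with $\xi$) intertwines the two actions. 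Applying $f \in \cO(\X)$ to both sides and using $(\ref{InfActFormula})$ on each: the left side gives $f(\xi(\phi_S(y_{S[\tau]}))) = f(\xi(y)) + \tau\, D_\phi(\xi^\sharp f)(y) = f(x) + \tau\, \psi'(u)(\xi^\sharp f)(y)$ after renaming, while the right side gives $f(\phi'_S(x_{S[\tau]})) = f(x) + \tau\, \varphi'(u)(f)(x) = f(x) + \tau\, \xi^\sharp(\varphi'(u)(f))(y)$. Comparing the $\tau$-coefficients and using that multiplication by $\tau$ on $S[\tau]$ induces a bijection $S \to \tau S$, we obtain $\psi'(u)(\xi^\sharp f)(y) = \xi^\sharp(\varphi'(u)(f))(y)$ for all $S$ and all $y \in \Y(S)$, hence the desired equality of sections of $\cO_{\Y}$.

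The one point that needs genuine care — and which I expect to be the main obstacle — is making precise the claim that $\zLie$ applied to the equivariant morphism $\xi$ intertwines $\zLie(\psi)$ and $\zLie(\varphi)$, i.e. justifying $\xi(\phi_S(y_{S[\tau]})) = \phi'_S(x_{S[\tau]})$. The subtlety is that $\xi : \Y \to \X$ is not itself a morphism of group schemes, so one cannot simply cite functoriality of $\Lie$; instead one must use that $\varphi$ and $\psi$ are compatible with $\xi$, meaning the square relating $\G \times \Y \to \Y$ and $\G \times \X \to \X$ via $\xi$ commutes, and then evaluate the induced natural transformations on $S[\tau]$-points lying over $S$-points. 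Concretely: $\phi_S(y_{S[\tau]}) = u^{-1}_{S[\tau]} \cdot y_{S[\tau]}$ where $u^{-1}_{S[\tau]} \in \G(S[\tau])$ is the image of $u^{-1} \in \fr g \subseteq \G(R[\tau])$ and the dot is the $\psi$-action; applying $\xi$ and using $\G$-equivariance of $\xi$ gives $u^{-1}_{S[\tau]} \cdot \xi(y_{S[\tau]}) = u^{-1}_{S[\tau]} \cdot x_{S[\tau]} = \phi'_S(x_{S[\tau]})$. Once this bookkeeping with the dual numbers is set up carefully, the rest is the routine $\tau$-coefficient comparison sketched above. I would also remark that, because the statement is local on $\X$ (the torsor $\xi$ is Zariski locally trivial and both sides are morphisms of sheaves), it is harmless to verify the identity on an open cover over which $\Y \cong \X \times \H$, though in fact the argument above never needs the local triviality — only the injectivity of $\xi^\sharp$ and the equivariance of $\xi$.
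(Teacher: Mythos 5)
Your proof is correct and follows essentially the same route as the paper's: in both cases the key steps are (i) the $\G$-equivariance of $\xi$, used to identify $\psi'(u)(\xi^\sharp f)$ with $\xi^\sharp(\varphi'(u)(f))$ by comparing $\tau$-coefficients, (ii) the defining relation $(\ref{AnchorMapAlphaDefn})$ of the anchor map, and (iii) injectivity of $\xi^\sharp$ (equivalently, surjectivity of $\xi$ coming from local triviality) to descend the resulting equality of sections of $\cO_{\Y}$ to an equality of sections of $\cO_{\X}$. The only cosmetic differences are that you apply injectivity of $\xi^\sharp$ at the outset rather than at the end, and you spell out the dual-number bookkeeping that the paper compresses into the ``intuitive'' formula $\tau\,\varphi'(g)(f)(x)=f(g^{-1}x)-f(x)$; the step you flag as needing ``genuine care'' is exactly what the paper invokes as ``Using the $\G$-equivariance of $\xi$'' without further comment, and your resolution of it is correct.
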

\begin{proof} Let $\U$ be an affine open subscheme of $\X$ over which $\xi$ is trivialisable. Let $g \in \fr{g}$, $f \in \cO(\U)$ and $x \in \xi^{-1}(\U)$. Using the $\G$-equivariance of $\xi$, we calculate using Definition \ref{InfActionDefn} that
\[ \begin{array}{lllll} \tau \hsp \psi'(g)(\xi^\sharp(f))(x) &=& \xi^\sharp(f)(g^{-1}\cdot x) - \xi^\sharp(f)(x) &=& f(\xi(g^{-1}\cdot x)) - f(\xi(x)) = \\
&=& f(g^{-1}\cdot \xi(x)) - f(\xi(x)) &=& \tau \hsp \varphi'(g)(f)(\xi(x))\end{array}\]
 and therefore that
\[\alpha(\psi'(g))(f)(\xi(x)) = \xi^\sharp(\alpha(\psi'(g))(f))(x)= \psi'(g)(\xi^\sharp(f))(x) = \varphi'(g)(f)(\xi(x)).\]
Because $\xi$ is Zariski locally trivial, we deduce that $\alpha \circ \psi' = \varphi'$ as claimed.
\end{proof}

The group $\G$ acts on itself by left and right translations. The corresponding homomorphisms of $R$-group-functors
\[\gamma : \G \to \zAut(\G) \qmb{and} \delta : \G \to \zAut(\G)\]
are given by the familiar formulas $\gamma(g)(x) = gx$ and $\delta(g)(x) = xg^{-1}$ for every $g,x \in \G$. These actions induce $\G(R)$-actions on $\cO(\G)$ by $R$-algebra automorphisms; see the end of \cite[I.2.7]{Jantzen} for a discussion. Abusing notation, we will denote the corresponding group homomorphisms $\G(R) \to \Aut_{R-\alg} \cO(\G)$ by $\gamma$ and $\delta$, respectively: $\gamma(g)(f)(x) =  f(g^{-1}x)$ and $\delta(g)(f)(x) =  f(xg)$ for each $f \in \cO(\G)$ and $g,x \in \G$.

Assume from now on that our $R$-group scheme $\G$ is \emph{affine}. We will next give an explicit formula for the infinitesimal actions associated $\gamma$ and $\delta$, but first we need to recall some language from the theory of Hopf algebras. The actions $\gamma$ and $\delta$ induce two right $\cO(\G)$-comodule structures, otherwise known as coactions, on $\cO(\G)$. Following \cite[\S I.2.8]{Jantzen}, we denote these coactions by $\Delta_{\gamma}$ and $\Delta_{\delta}$, respectively. Using the sumless Sweedler notation $\Delta(f) = f_1 \otimes f_2$ for $f \in \cO(\G)$ to denote the comultiplication map $\Delta : \cO(\G) \to \cO(\G) \otimes_R \cO(\G)$ of the $R$-Hopf algebra $\cO(G)$, it follows from  \cite[\S I.2.8(5) and I.2.8(6)]{Jantzen} that these coactions are given by
\begin{equation}\label{CoactionDefns} \Delta_{\gamma}(f) = f_2 \otimes \sigma(f_1) \qmb{and} \Delta_{\delta}(f) = f_1 \otimes f_2,\end{equation}
where $\sigma : \cO(\G) \to \cO(\G)$ denotes the antipode of $\cO(\G)$. 

Let $\epsilon : \cO(\G) \to R$ be the counit of the $R$-Hopf algebra $\cO(\G)$. Recall that an \emph{$\epsilon$-derivation} is an $R$-linear map $d : \cO(\G) \to R$ such that $d(ab) = \epsilon(a) d(b) + d(a) \epsilon(b)$ for all $a,b \in \cO(\G)$. For every $u \in \Lie(\G)= \ker \left( \Hom( \cO(\G), R[\tau] ) \to \Hom( \cO(\G), R) \right)$, there is a unique $\epsilon$-derivation $\underline{u} : \cO(\G) \to R$ given by the formula
\begin{equation}\label{EpsDerFormula} u(f) = \epsilon(f) + \tau \underline{u}(f) \qmb{for all} f \in \cO(\G).\end{equation}
It follows from \cite[Theorem 12.1]{Waterhouse} that $u \mapsto \underline{u}$ is a bijection from $\Lie(\G)$ onto the set of all $\epsilon$-derivations.

\begin{lem} \label{InfActionOnG}  Let $u \in \fr{g} = \Lie(\G)$ and $f \in \cO(\G)$. Then
\[ \gamma'(u)(f) = \underline{u}(\sigma(f_1)) f_2 \qmb{and}  \delta'(u)(f) =  \underline{u}( f_2) \hsp f_1.\]
\end{lem}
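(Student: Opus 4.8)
The plan is to compute each infinitesimal action directly from the defining formula \eqref{InfActFormula} together with the explicit description of the comodule structures in \eqref{CoactionDefns}, unwinding Definition \ref{InfActionDefn}. First I would fix $u \in \fr{g}$ and set $\phi := \zLie(\gamma)(R)(u^{-1})$, respectively $\phi := \zLie(\delta)(R)(u^{-1})$, so that by Definition \ref{InfActionDefn} we have $\gamma'(u) = D_\phi$, respectively $\delta'(u) = D_\phi$. The key is to evaluate the map $D_\phi$ on a section $f \in \cO(\G)$ by testing it against a generic point, exactly as in \eqref{InfActFormula}: take $S := \cO(\G)$ itself and $x := \mathrm{id}_{\cO(\G)} \in \G(\cO(\G))$ the universal point, so that $f(x_{S[\tau]}) = f(x) + \tau D_\phi(\G)(f)$ expresses $D_\phi(f)$ in terms of the comodule map. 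I expect that, after this substitution, $f(\phi_S(x_{S[\tau]}))$ is computed as $(\mathrm{id}\otimes \underline{u^{-1}})$ applied to the relevant coaction $\Delta_\gamma(f)$ or $\Delta_\delta(f)$, using \eqref{EpsDerFormula} to pass between $u$ and the $\epsilon$-derivation $\underline{u}$, and noting $\underline{u^{-1}} = -\underline{u}$ since $u\mapsto \underline u$ is additive on the formal group of $\Lie(\G)$ (with the inversion accounting for the sign in Definition \ref{InfActionDefn}, as in the first Remark after that definition).

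Carrying this out for $\delta$: the coaction is $\Delta_\delta(f) = f_1 \otimes f_2$ by \eqref{CoactionDefns}, and translating $x$ by $\phi$ on the right and applying $f$ gives $\epsilon$-part $f_1\,\epsilon(f_2) = f$ plus $\tau$-part $f_1\,\underline{u^{-1}}(f_2)\cdot(-1)$ or, keeping the bookkeeping of the $u^{-1}$ clean, precisely $f_1\,\underline{u}(f_2)$; hence $\delta'(u)(f) = \underline{u}(f_2)\,f_1$. For $\gamma$, the coaction is $\Delta_\gamma(f) = f_2 \otimes \sigma(f_1)$, so the same computation yields $\gamma'(u)(f) = \underline{u}(\sigma(f_1))\,f_2$. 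Both formulas should drop out of the same lemma once the universal-point trick is set up; the bulk of the work is just carefully matching the sign conventions so that the minus sign in the exponent of $u$ in Definition \ref{InfActionDefn} cancels the minus sign coming from $\underline{u^{-1}} = -\underline{u}$ and leaves the clean formulas stated.

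The main obstacle I anticipate is bookkeeping rather than conceptual: one must be scrupulous about (i) which side the translation acts on (left vs.\ right) so that the correct Sweedler leg is hit by $\underline{u}$, (ii) the appearance of the antipode $\sigma$ in $\Delta_\gamma$ versus its absence in $\Delta_\delta$, and (iii) the inversion $u\mapsto u^{-1}$ built into Definition \ref{InfActionDefn}. A convenient way to discharge (iii) cleanly is to first prove the auxiliary identity that for $\phi = \zLie(\varphi)(R)(u)$ (no inverse), $D_\phi(f) = -\underline{u}(\cdots)\,(\cdots)$ in each case, and then apply it with $u^{-1}$ in place of $u$, using additivity of $u\mapsto\underline u$ (which follows from \cite[Theorem 12.1]{Waterhouse} identifying $\Lie(\G)$ with $\epsilon$-derivations, a group under addition) to flip the sign back. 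Once the sign is pinned down on one of $\gamma,\delta$, the other is the same computation with $\Delta_\gamma$ replaced by $\Delta_\delta$, so I would write it once as a general statement about a coaction and specialise twice.
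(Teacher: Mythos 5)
Your approach is essentially the paper's: fix $\phi = \zLie(\varphi)(R)(u^{-1})$, feed a point $x$ into formula \eqref{InfActFormula}, expand $f$ applied to the translated point via comultiplication, split off the $\tau$-coefficient using \eqref{EpsDerFormula}, and simplify with the counit/antipode axioms. The paper does exactly this, without the ``universal point'' scaffolding (it just writes $\tau D_\phi(f)(x) = f(u^{-1}x) - f(x)$ for an arbitrary $x \in \G$) and only writes out the $\gamma$ case, declaring $\delta$ ``entirely similar.''

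The one place where your sketch wobbles is the sign bookkeeping for $\delta$, and you've half-noticed it yourself. You write the $\tau$-part as ``$f_1\,\underline{u^{-1}}(f_2)\cdot(-1)$'' without saying where the $(-1)$ comes from, and you seem to reach for $\underline{u^{-1}} = -\underline{u}$ as the fix. But in fact for $\delta$ there is \emph{no} sign to cancel: by the definition $\delta(g)(x) = xg^{-1}$, the inverse in $\phi := \zLie(\delta)(R)(u^{-1})$ collides with the built-in inverse, so $\phi(x) = xu$. Then $f(xu) = f_1(x)f_2(u) = f(x) + \tau f_1(x)\underline{u}(f_2)$ directly, and no antipode and no minus sign appear. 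It is only in the $\gamma$ case that the inverse survives ($\phi(x) = u^{-1}x$), and the paper handles it cleanly by writing $f_1(u^{-1}) = u(\sigma(f_1))$ rather than via $\underline{u^{-1}} = -\underline{u}$; your alternative $\underline{u^{-1}} = -\underline{u}$ together with the identity $\underline{u}\circ\sigma = -\underline{u}$ gives the same answer, but it's an extra step and the way you've packaged the abstraction (prove a version ``without inverse'' first, then substitute $u^{-1}$) compounds rather than isolates the sign. Once you notice that $\delta$ has no inversion to track, the whole thing collapses to a two-line computation for each of $\gamma$ and $\delta$, matching the paper.
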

\begin{proof} We will only deal with the first equation, because the second one is entirely similar. Let $\phi := \zLie(\gamma)(R)(u^{-1})$, let $x \in \G$ and let $f \in \cO(\G)$; then abusing notation and applying (\ref{InfActFormula}) we have
\[ \tau D_\phi(f)(x) = f( \phi(x) ) - f(x) = f( u^{-1} x ) - f(x).\]
Using (\ref{EpsDerFormula}), we can re-write this as follows:
\[\begin{array}{lll} f(u^{-1}x) - f(x) &=& u^{-1}(f_1) x(f_2) -f(x) = (u( \sigma(f_1)) \hsp f_2- f)(x) \\ 
&=& (\epsilon( \sigma(f_1) ) + \tau \underline{u}( \sigma(f_1) )\hsp f_2 - f)(x) = \\
&=& (\tau \underline{u}( \sigma(f_1) ) \hsp f_2)(x) \end{array}\]
because $\epsilon \circ \sigma = \epsilon$ and because $\epsilon(f_1)f_2 = f$. Hence
\[ \gamma'(u)(f) = D_\phi(f) = \underline{u}(\sigma(f_1)) \hsp f_2 \qmb{for all} f \in \cO(\G). \qedhere\]
  \end{proof}

Next, we remind the reader of some more standard definitions.
\begin{defn}\label{LeftRightInvDers} Let $C$ be an $R$-coalgebra, and let $(M, \rho_M), (N,\rho_N)$ be right $C$-comodules.
\be \item A \emph{morphism of right $C$-comodules} is an $R$-linear map $\xi : M \to N$ such that 
\[\rho_N \circ \xi = (\xi \otimes 1)\circ \rho_M.\] 
\item We will denote the endomorphism ring of $(M,\rho_M)$ in the category of right $C$-comodules by $\End(M, \rho_M)$. 
\item The set of \emph{left-invariant $R$-linear derivations} of $\cO(\G)$ is
\[{}^{\G}\cT(\G) := \cT(\G) \cap \End\left(\cO(\G),\Delta_{\gamma}\right).\] 
\item The set of \emph{right-invariant $R$-linear derivations} of $\cO(\G)$ is
\[\cT(\G)^{\G} := \cT(\G) \cap \End\left(\cO(\G),\Delta_{\delta}\right).\]
\ee\end{defn}

\begin{prop}\label{LieTriangle} There is a commutative diagram of $R$-Lie algebras
\[\xymatrix{  & \Lie(\G) \ar[dl]_{\gamma'}\ar[dr]^{\delta'} & \\ \cT(\G)^{\G} \ar[rr]_{\xi \mapsto \sigma \circ \xi \circ \sigma^{-1}} &&  {}^{\G}\cT(\G) }\]
in which all arrows are bijective.
\end{prop}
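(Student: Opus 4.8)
The statement to prove is Proposition \ref{LieTriangle}: a commutative triangle of $R$-Lie algebras with vertices $\Lie(\G)$, $\cT(\G)^{\G}$ and ${}^{\G}\cT(\G)$, in which $\gamma'$, $\delta'$ and the conjugation-by-antipode map are all bijective. I would organise the proof into three parts: (1) the two infinitesimal-action maps land in the claimed subalgebras and are bijections; (2) the horizontal map $\xi \mapsto \sigma \circ \xi \circ \sigma^{-1}$ is a well-defined Lie algebra isomorphism $\cT(\G)^{\G} \congs {}^{\G}\cT(\G)$; (3) the triangle commutes.

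For part (1): I would first note that $\gamma'$ and $\delta'$ are $R$-Lie algebra homomorphisms by Remark following Definition \ref{InfActionDefn} (quoting \cite[Chapter II, \S 4, Proposition 4.4]{DG} as already done in the excerpt). To see that $\gamma'(u) \in \cT(\G)^{\G}$, i.e. that $\gamma'(u)$ is a morphism of right $\cO(\G)$-comodules for $\Delta_\delta$, I would use the explicit formula $\gamma'(u)(f) = \underline{u}(\sigma(f_1)) f_2$ from Lemma \ref{InfActionOnG}: applying $\Delta_\delta = \Delta$ and coassociativity, one computes $\Delta(\gamma'(u)(f)) = \underline u(\sigma(f_1)) f_2 \otimes f_3 = (\gamma'(u)\otimes 1)(\Delta f)$, using that $\underline{u}$ is a scalar-valued map that only interacts with the first tensor leg. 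Symmetrically $\delta'(u)(f) = \underline u(f_2) f_1$ lies in ${}^{\G}\cT(\G)$ by the coaction formula $\Delta_\gamma(f) = f_2 \otimes \sigma(f_1)$ from (\ref{CoactionDefns}) and a similar Sweedler computation. Injectivity of $\gamma'$ (resp. $\delta'$): if $\gamma'(u) = 0$ then $\underline u(\sigma(f_1)) f_2 = 0$ for all $f$; applying the counit $\epsilon$ to the second leg and using $\epsilon(f_2)\sigma(f_1) = \sigma(f)$ gives $\underline u(\sigma(f)) = 0$ for all $f$, hence $\underline u = 0$ (as $\sigma$ is bijective), hence $u = 0$ by the bijection $u \mapsto \underline u$ of \cite[Theorem 12.1]{Waterhouse}. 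Surjectivity onto $\cT(\G)^{\G}$: given a right-invariant derivation $D$, set $\underline u := \epsilon \circ D$; one checks $\underline u$ is an $\epsilon$-derivation (using that $D$ is a derivation and $\epsilon$ an algebra map), so $\underline u = \underline{u}$ for a unique $u \in \Lie(\G)$, and then $\gamma'(u) = D$ follows by applying $(\epsilon \otimes 1)$ to the comodule identity $\Delta D(f) = (D\otimes 1)\Delta f$ and comparing with the formula for $\gamma'(u)$. The argument for $\delta'$ is the mirror image using $\Delta_\gamma$.

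For part (2): the map $T_\sigma : \xi \mapsto \sigma \circ \xi \circ \sigma^{-1}$ is clearly a bijection on $\mathrm{End}_R(\cO(\G))$ with inverse $T_{\sigma^{-1}}$, and it sends derivations to derivations because $\sigma$ is an algebra anti-automorphism: if $\xi$ is a derivation then $\sigma\xi\sigma^{-1}(ab) = \sigma(\xi(\sigma^{-1}(b)\sigma^{-1}(a))) = \sigma(\sigma^{-1}(b)\xi\sigma^{-1}(a) + \xi\sigma^{-1}(b)\sigma^{-1}(a)) = (\sigma\xi\sigma^{-1}a)b + a(\sigma\xi\sigma^{-1}b)$, using $\sigma(xy) = \sigma(y)\sigma(x)$. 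It is a Lie algebra homomorphism since conjugation by a fixed bijection respects composition and hence commutators. Finally, $T_\sigma$ carries $\End(\cO(\G),\Delta_\delta)$ onto $\End(\cO(\G),\Delta_\gamma)$: the relation $\Delta_\gamma(f) = (\sigma \otimes \sigma^{-1})\tau \Delta_\delta$... more precisely, from (\ref{CoactionDefns}) one has $\Delta_\gamma(f) = f_2 \otimes \sigma(f_1)$ while $\Delta_\delta(\sigma^{-1} f) = (\sigma^{-1}f)_1 \otimes (\sigma^{-1}f)_2$, and the Hopf-algebra identity $\Delta \circ \sigma = (\sigma \otimes \sigma)\circ \tau \circ \Delta$ lets one check directly that $\xi$ intertwines $\Delta_\delta$ iff $T_\sigma(\xi)$ intertwines $\Delta_\gamma$. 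Thus $T_\sigma$ restricts to an isomorphism $\cT(\G)^\G \congs {}^\G\cT(\G)$.

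For part (3): I would verify $T_\sigma \circ \gamma' = \delta'$ by a direct Sweedler computation. Starting from Lemma \ref{InfActionOnG}: $(\sigma \circ \gamma'(u) \circ \sigma^{-1})(f) = \sigma\big(\gamma'(u)(\sigma^{-1}f)\big) = \sigma\big(\underline u(\sigma((\sigma^{-1}f)_1))(\sigma^{-1}f)_2\big)$. Using $\Delta(\sigma^{-1}f) = (\sigma^{-1}\otimes\sigma^{-1})(\tau\Delta f)$ (the antipode-twisted coproduct identity), this becomes $\sigma\big(\underline u(\sigma\sigma^{-1}(f_2))\,\sigma^{-1}(f_1)\big) = \underline u(f_2)\,\sigma(\sigma^{-1}(f_1)) = \underline u(f_2) f_1 = \delta'(u)(f)$, which is exactly the formula for $\delta'(u)$. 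Hence the triangle commutes, and since $\gamma'$, $\delta'$ and $T_\sigma$ are all bijections (any two of the three bijectivity claims forces the third via commutativity, but we have proved all three directly), the proposition follows. The main obstacle I anticipate is bookkeeping with the antipode-twisted coproduct identity $\Delta\sigma = (\sigma\otimes\sigma)\tau\Delta$ and making sure the invariance conditions for the two comodule structures are matched up with the correct leg of the coproduct; once the formulas of Lemma \ref{InfActionOnG} and the coaction formulas (\ref{CoactionDefns}) are in hand, everything else is formal Hopf-algebra manipulation.
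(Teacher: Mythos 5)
Your overall strategy matches the paper's: land the two infinitesimal-action maps in the invariant subalgebras using the explicit Sweedler formulas of Lemma \ref{InfActionOnG}, show that conjugation by the antipode is a Lie algebra isomorphism between the two invariant subalgebras, and verify the triangle commutes. The main difference is that you try to establish bijectivity of \emph{both} $\gamma'$ and $\delta'$ by hand, whereas the paper invokes \cite[Theorem 12.1]{Waterhouse} only for $\delta'$ (left-invariant derivations) and then obtains $\gamma'$ for free from the commutativity of the triangle and bijectivity of the horizontal map. Your injectivity argument and your direct checks that $\gamma'(u)$ and $\delta'(u)$ lie in the stated invariant subalgebras are correct and make a nice self-contained variant of the Waterhouse argument.

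However, there is a slip in your surjectivity argument for $\gamma'$. You set $\underline{u}:=\epsilon\circ D$ and claim $\gamma'(u)=D$. But applying $(\epsilon\otimes 1)$ to the right-invariance identity $\Delta D(f)=(D\otimes 1)\Delta(f)$ gives $D(f)=\epsilon(D(f_1))\,f_2=\underline{u}(f_1)\,f_2$, whereas Lemma \ref{InfActionOnG} says $\gamma'(u)(f)=\underline{u}(\sigma(f_1))\,f_2$. These differ by the antipode twist on the first leg; they agree only when $\underline{u}=\underline{u}\circ\sigma$, which is not automatic. The fix is to take $\underline{u}:=\epsilon\circ D\circ\sigma^{-1}$ instead: this is still an $\epsilon$-derivation (since $\sigma$ is an algebra automorphism of the commutative ring $\cO(\G)$ with $\epsilon\circ\sigma=\epsilon$), and then $\gamma'(u)(f)=\underline{u}(\sigma(f_1))f_2=\epsilon(D(f_1))f_2=D(f)$ as wanted. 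Your $\delta'$ surjectivity is the genuine mirror image and is correct as stated. None of this breaks your overall proof, because once $\delta'$ and the conjugation map are bijections and the triangle commutes, bijectivity of $\gamma'$ follows formally; but the wording as written for $\gamma'$ is not correct.

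One further remark applying both to your proof and the paper's: when passing between the two invariant subalgebras via conjugation by $\sigma$, the two "directions" $\xi\mapsto\sigma\xi\sigma^{-1}$ and $\xi\mapsto\sigma^{-1}\xi\sigma$ are implicitly identified. This is justified because $\cO(\G)$ is commutative, so the antipode is an involution and $\sigma^{-1}=\sigma$; it would be cleaner to say so explicitly.
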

\begin{proof} By Definition \ref{LeftRightInvDers}(a), an $R$-linear map $\xi : \cO(\G) \to \cO(\G)$ is a morphism of right $\cO(\G)$-comodules for the coaction $\Delta_{\gamma}$ if and only if $\Delta_{\gamma} \circ \xi = (\xi \otimes 1) \circ \Delta_{\gamma}$. Using (\ref{CoactionDefns}), this is equivalent to
\begin{equation} \label{DeltaRhoL}
\xi(f)_2 \otimes \sigma(\xi(f)_1) = \xi(f_2) \otimes \sigma(f_1) \qmb{for all} f \in \cO(\G). \end{equation}
By applying the invertible endomorphism $a \otimes b \mapsto \sigma^{-1}(b) \otimes a$ of $\cO(\G) \otimes_R \cO(\G)$, we see that (\ref{DeltaRhoL}) is equivalent to
 \begin{equation} \label{WaterhouseInvDefn} \xi(f)_1 \otimes \xi(f)_2 = f_1 \otimes \xi(f_2) \qmb{for all} f \in \cO(\G),\end{equation}
 or equivalently, to $\Delta \circ \xi = (1 \otimes \xi) \circ \Delta$.   This agrees with the definition of left-invariant derivations found on \cite[p. 92]{Waterhouse}. Now if $u \in \Lie(\G)$ then for any $f \in \cO(\G)$ we have
 \[(\Delta \circ \delta'(u))(f) = \Delta( f_1 \underline{u}(f_2)) = (f_1 \otimes f_2)\underline{u}(f_3) = f_1 \delta'(u)(f_2) = (1 \otimes \delta'(u))\Delta(f)\]
 by Lemma \ref{InfActionOnG}. Hence $\delta'(u) \in {}^{\G}\cT(\G)$ for all $u \in \Lie(\G)$, and now the fact that $\delta'$ is a well-defined bijection follows from the proof of \cite[Theorem 12.1]{Waterhouse}. 

The antipode map can be viewed as an isomorphism of right $\cO(\G)$-comodules
\[ \sigma : (\cO(\G), \Delta_{\gamma}) \stackrel{\cong}{\longrightarrow} (\cO(\G), \Delta_{\delta}).\]
To see this, let $f \in \cO(\G)$ and apply (\ref{CoactionDefns}) to obtain
\[(\sigma \otimes 1)\Delta_{\gamma}(f) = (\sigma \otimes 1)(f_2 \otimes \sigma(f_1)) = \sigma(f_2) \otimes \sigma(f_1) = \sigma(f)_1 \otimes \sigma(f)_2  = \Delta_{\delta} \sigma(f).\]
We have used here the fact that $\sigma$ is an anti-coalgebra morphism --- see \cite[Proposition 1.5.10(2)]{Mont}. It follows that $\xi \mapsto \sigma \xi \sigma^{-1}$ is an $R$-algebra isomorphism 
\[\End\left(\cO(\G),\Delta_{\gamma}\right) \stackrel{\cong}{\longrightarrow} \End\left(\cO(\G),\Delta_{\delta}\right).\]
Because $\sigma$ is also an anti-algebra morphism by \cite[Proposition 1.5.10(1)]{Mont}, it follows that this isomorphism preserves the subspace of $R$-linear derivations:
\[\sigma \xi \sigma^{-1}(ab) = \sigma\left(\xi(\sigma^{-1}b) \sigma^{-1}a + \sigma^{-1}b  \hsp \xi(\sigma^{-1}a)\right) = a \hsp \sigma\xi\sigma^{-1}(b) + \sigma\xi\sigma^{-1}(a) \hsp b\]
whenever $\xi(ab) = \xi(a)b + a\xi(b)$ for all $a, b \in \cO(\G)$. Hence the bottom arrow of the triangle is a well-defined bijection. Finally, we will show that
\begin{equation} \label{PhiTriangleCommutes} \sigma \delta'(u) \sigma^{-1} = \gamma'(u) \qmb{for all} u \in \Lie(\G).\end{equation}
Using (\ref{CoactionDefns}) and Lemma \ref{InfActionOnG}, we see that for any $u \in \Lie(\G)$ and $f \in \cO(\G)$, 
\[ \begin{array}{lll} \sigma (\gamma'(u) (f)) &=& \underline{u}(\sigma(f_1)) \sigma(f_2) =  (\underline{u} \hsp \overline{\otimes}\hsp 1)( \sigma(f_1) \otimes \sigma(f_2)) = \\
&=&(\underline{u} \hsp \overline{\otimes}\hsp  1) ( \sigma(f)_2 \otimes \sigma(f)_1) =  \underline{u}( \sigma(f)_2) \hsp \sigma(f)_1 = \delta'(u)(\sigma(f)). \end{array}\]
Thus $\sigma \circ \gamma'(u) = \delta'(u) \circ \sigma$ which implies (\ref{PhiTriangleCommutes}). It follows that the triangle in the statement of the Proposition commutes, and also that $\gamma'$ is a bijection. Finally, $\gamma'$ and $\delta'$ are $R$-Lie algebra homomorphisms by \cite[Chapter II, \S 4, Proposition 4.4]{DG}.\end{proof}

Recall \cite[\S I.7.9]{Jantzen} that the group scheme $\G$ is said to be \emph{infinitesimally flat} if $I/I^2$ is a finitely presented and projective $R$-module, where $I = \ker \epsilon$ is the augmentation ideal of $\cO(\G)$. In view of \cite[Corollaire 19.5.4]{EGAIV1}, this is for example the case whenever $\G$ is smooth and of finite type over $\Spec(R)$. 

\begin{prop}\label{InvVectFldsGenerate} The canonical $\cO(\G)$-linear maps
\[ \cO(\G) \otimes_R \cT(\G)^{\G} \longrightarrow \cT(\G) \qmb{and} \cO(\G) \otimes_R {}^{\G}\cT(\G) \longrightarrow \cT(\G)\]
are isomorphisms whenever $\G$ is infinitesimally flat. 
\end{prop}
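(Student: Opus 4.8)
The plan is to reduce the global statement to a local, Hopf-algebraic computation. Since $\G$ is affine, say $\G = \Spec \cO(\G)$, the tangent sheaf $\cT(\G)$ is just the module $\Der_R(\cO(\G))$ of global $R$-linear derivations, and by \cite[Theorem 12.1]{Waterhouse} (or Proposition \ref{LieTriangle}) the space ${}^{\G}\cT(\G)$ of left-invariant derivations is canonically identified with the space of $\epsilon$-derivations $\cO(\G) \to R$, which in turn is canonically $\Lie(\G)$. The two maps in question are mirror images of each other under the antipode (by the bijections of Proposition \ref{LieTriangle}, the isomorphism $\xi \mapsto \sigma \circ \xi \circ \sigma^{-1}$ intertwines $\cT(\G)^\G$ and ${}^\G\cT(\G)$ and commutes with multiplication by $\cO(\G)$ via $\delta(g)$ vs.\ $\gamma(g)$), so it suffices to treat, say, the left-invariant case
\[ \mu : \cO(\G) \otimes_R {}^\G\cT(\G) \longrightarrow \cT(\G), \qquad f \otimes v \mapsto f \cdot v.\]

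First I would describe $\mu$ explicitly. Using the identification ${}^\G\cT(\G) \cong \{\text{$\epsilon$-derivations}\}$, a left-invariant derivation attached to an $\epsilon$-derivation $d : \cO(\G) \to R$ is the composite $\cO(\G) \xrightarrow{\Delta} \cO(\G) \otimes_R \cO(\G) \xrightarrow{1 \otimes d} \cO(\G)$, i.e.\ $f \mapsto f_1 \, d(f_2)$; this is exactly the shape appearing in Lemma \ref{InfActionOnG} and equation (\ref{WaterhouseInvDefn}). Dually, an arbitrary derivation $D \in \Der_R(\cO(\G))$ gives rise to the $R$-linear map $\epsilon \circ D : \cO(\G) \to R$, which one checks directly to be an $\epsilon$-derivation. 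I would then write down the candidate inverse to $\mu$: send $D \in \Der_R(\cO(\G))$ to the element of $\cO(\G) \otimes_R {}^\G\cT(\G)$ corresponding, under the above identifications, to $(1 \otimes (\epsilon \circ D)) \circ \Delta$ paired appropriately — concretely, the inverse is the composite
\[ \Der_R(\cO(\G)) \hookrightarrow \Hom_R(\cO(\G), \cO(\G)) \xrightarrow{\ \cong\ } \cO(\G) \otimes_R \Hom_R(\cO(\G), R)\big|_{\text{$\epsilon$-ders}} \]
where the middle isomorphism is "translation to the origin'', $D \mapsto (\text{something built from } \Delta \text{ and } \sigma)$. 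The verification that these two maps are mutually inverse is a sumless-Sweedler manipulation using the Hopf axioms (coassociativity, the counit and antipode identities), exactly parallel to the classical fact that an affine algebraic group is parallelizable by left-invariant vector fields.

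The one genuine input beyond formal Hopf algebra is the hypothesis that $\G$ be \emph{infinitesimally flat}, i.e.\ $I/I^2$ is finitely presented and projective over $R$ for $I = \ker\epsilon$. This enters because the identification $\Der_R(\cO(\G)) \cong \cO(\G)\otimes_R \Hom_R(\cO(\G),R)|_{\epsilon\text{-ders}}$ above uses that the module of $\epsilon$-derivations is the $R$-dual of $I/I^2$, and to go back and forth — in particular to know that the translation map hits \emph{all} of $\Der_R(\cO(\G))$ and not just a submodule — one needs $I/I^2$ to be finitely generated projective so that $\Der_R(\cO(\G))$ is the associated locally free $\cO(\G)$-module $\cO(\G)\otimes_R (I/I^2)^\vee$. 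I expect this is the main (and really the only) obstacle: organizing the argument so that the projectivity and finite presentation of $I/I^2$ are invoked at exactly the right place, and checking that the resulting map agrees with the ``canonical $\cO(\G)$-linear map'' in the statement (rather than some twist of it). Once $\mu$ is known to be an isomorphism, the right-invariant case follows formally by applying the antipode automorphism as in Proposition \ref{LieTriangle}, or by the identical computation with $\Delta_\delta$ in place of $\Delta_\gamma$.
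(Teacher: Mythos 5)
Your plan matches the paper's proof in all essentials. The paper fixes the right-invariant side first: it writes down the $A$-linear map $\theta\colon\Hom_R(I/I^2,A)\to\Der_R(A)$, $\theta(f)(a)=a_1 f(\pi(a_2))$, quotes \cite[Theorem 11.3]{Waterhouse} for $\theta$ being bijective (with inverse $\theta^{-1}(d)(a+I^2)=\sigma(a_1)d(a_2)$) rather than reproving it by a Sweedler manipulation, and then invokes infinitesimal flatness only to know that $w\colon A\otimes_R\Hom_R(I/I^2,R)\to\Hom_R(I/I^2,A)$ is bijective; everything is then packaged in a commutative square, and the left-invariant case follows by conjugating with the antipode exactly as you say. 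Your sketch has all of these ingredients, including correctly isolating the dualisation-versus-base-change step as the one genuine use of the hypothesis.

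One small imprecision worth flagging: you say you need finite presentation and projectivity of $I/I^2$ ``to know that the translation map hits \emph{all} of $\Der_R(\cO(\G))$''. In fact $\theta\colon\Hom_R(I/I^2,A)\to\Der_R(A)$ is an isomorphism for \emph{any} affine $R$-group scheme, with no hypothesis at all; infinitesimal flatness is not needed for surjectivity of the ``translation'' step. What it buys you is precisely and only that the canonical map $\cO(\G)\otimes_R(I/I^2)^\vee\to\Hom_R(I/I^2,\cO(\G))$ is bijective, which is what lets you rewrite $\Der_R(\cO(\G))$ as the base change $\cO(\G)\otimes_R\Lie(\G)$. Keeping that distinction straight is worthwhile, because it is exactly the kind of thing that gets silently mangled when one tries to deduce the non-Noetherian, non-local version of this parallelizability statement from the classical one.
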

\begin{proof} Write $A = \cO(\G)$,  let $\pi : A \to I/I^2$ be the $R$-linear map given by $\pi(a) = a - \epsilon(a)1 + I^2$ and note that it is an $\epsilon$-derivation with values in $I/I^2$:
\[\pi(ab) = \epsilon(a) \pi(b) + \pi(a) \epsilon(b) \qmb{for all} a,b \in A.\]
It follows that there is an $A$-linear map $\theta : \Hom_R(I/I^2,A) \to \Der_R(A)$, given by 
\[\theta(f)(a) = a_1 f(\pi(a_2)) \qmb{for all} a\in A.\]
From the proof of \cite[Theorem 11.3]{Waterhouse}, we deduce that $\theta$ is in fact an \emph{isomorphism}, whose inverse is given by the explicit formula
\[\theta^{-1}(d)(a + I^2) = \sigma(a_1)d(a_2) \qmb{for any} a \in I.\]
Let $\Ad(\sigma) : \End_R A \to \End_R A$ be the map $\xi \mapsto \sigma \xi \sigma^{-1}$ and recall from Proposition \ref{LieTriangle} that $\Ad(\sigma)$ maps ${}^{\G}\cT(\G)$ onto $\cT(\G)^{\G}$. Consider the following diagram of $R$-modules:
\[\xymatrix{ A \otimes_R \cT(\G)^{\G} \ar[rr]^\mu  && \Der_R(A) \\
A \otimes_R {}^{\G}\cT(\G) \ar[rr]^\mu \ar[u]^{\sigma \hsp \otimes \hsp \Ad(\sigma)} && \Der_R(A)  \ar[u]_{\Ad(\sigma)}\\
A \otimes_R \Hom_R(I/I^2, R) \ar[u]^{1 \hsp \otimes \hsp \delta'\circ\pi^\ast} \ar[rr]_w && \Hom_R(I/I^2,A) \ar[u]^\cong_\theta }\]
Here, $\mu$ is given by $\mu(a \otimes d)(b) = a \hsp d(b)$, $\pi^\ast : \Hom_R(I/I^2,R) \to \Lie(\G)$ is the isomorphism given by $\underline{\pi^\ast(f)} = f \circ \pi$, and $w$ is given by $w(a \otimes f)(x) = a \hsp f(x)$.  Using Lemma \ref{InfActionOnG}, It is straightforward to verify that this diagram commutes. Now $w$ is an isomorphism because $\G$ is infinitesimally flat and $\delta'$ is an isomorphism by \cite[Theorem 12.1]{Waterhouse}. Since $\Ad(\sigma)$ and $\sigma$ are also isomorphisms, $\mu$ is also an isomorphism.
\end{proof}


We now specialise to the case where our ground ring $R$ is complete with respect to some nested family of ideals $J_i$:
 \[ R \stackrel{\cong}{\longrightarrow} \invlim R / J_i.\]
We will write 
\[R_i := R / J_i \qmb{and}\G_i := \G \times_{\Spec(R)} \Spec(R_i).\]

\begin{lem}\label{LieGhat} Suppose that $\G$ is infinitesimally flat over $R$. 
\be \item For any index $i$, all arrows in the commutative diagram
\[ \xymatrix{ \Lie(\G) \otimes_{R} R_i \ar[rr]^{\varphi_{l,\G} \otimes 1} \ar[d] && \cT(\G)^{\G} \otimes_{R} R_i \ar[d] \\
\Lie(\G_i) \ar[rr]_{\varphi_{l,\G_i}} && \cT(\G_i)^{\G_i} }\]
are isomorphisms.
\item The natural map
\[\Lie(\G) \longrightarrow \invlim \Lie(\G_i)\]
is an isomorphism. 
\ee \end{lem}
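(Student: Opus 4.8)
The plan is to deduce both parts from the structural results already established, namely Proposition \ref{InvVectFldsGenerate} together with the interpretation of $\cT(\G)^{\G}$ via $\epsilon$-derivations coming from Proposition \ref{LieTriangle} and the formula in Lemma \ref{InfActionOnG}. For part (a), the first observation is that base change along $R \to R_i$ commutes with the formation of all the objects in the diagram: the augmentation ideal of $\cO(\G_i) = \cO(\G) \otimes_R R_i$ is $I \otimes_R R_i$ (up to the obvious identification), so infinitesimal flatness of $\G$ over $R$ is inherited by $\G_i$ over $R_i$, and $I/I^2$ base-changes to the conormal module of $\G_i$. The key point is then that $\cT(\G)^{\G} \cong \Hom_R(I/I^2, R)$ via $\xi \mapsto \xi^{-1}$-free description given in the proof of Proposition \ref{InvVectFldsGenerate} (concretely $\cT(\G)^{\G} \cong \Lie(\G)$ via $\gamma'$ or $\delta'$, and $\Lie(\G) \cong \Hom_R(I/I^2,R)$ via $\pi^\ast$). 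Since $I/I^2$ is finitely presented and projective over $R$, the canonical map $\Hom_R(I/I^2,R) \otimes_R R_i \to \Hom_{R_i}((I/I^2)\otimes_R R_i, R_i)$ is an isomorphism; chasing this through the identifications of Proposition \ref{LieTriangle} and Proposition \ref{InvVectFldsGenerate} shows that all four arrows in the square of part (a) are isomorphisms, and commutativity is just the functoriality of the infinitesimal-action construction (Definition \ref{InfActionDefn}) under base change, which in turn follows from the explicit Hopf-algebraic formulas in Lemma \ref{InfActionOnG} being compatible with $-\otimes_R R_i$.

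For part (b), I would pass to the inverse limit over $i$. Using part (a) we may replace $\Lie(\G_i)$ by $\Lie(\G) \otimes_R R_i$, so the claim becomes that the natural map $\Lie(\G) \to \invlim\bigl(\Lie(\G) \otimes_R R_i\bigr)$ is an isomorphism. Now $\Lie(\G) \cong \Hom_R(I/I^2, R)$ with $I/I^2$ finitely presented projective, so $\Lie(\G)$ is a direct summand of a finite free $R$-module $R^{\oplus n}$; both $\Hom_R(-,R)$ and $-\otimes_R R_i$ send this summand decomposition to a summand decomposition, and for the finite free module $R^{\oplus n}$ the map $R^{\oplus n} \to \invlim (R^{\oplus n}/J_i R^{\oplus n}) = (\invlim R/J_i)^{\oplus n}$ is an isomorphism precisely because $R \tocong \invlim R/J_i$ by hypothesis. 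Since inverse limits commute with finite direct sums and with passage to a direct summand (an inverse limit of a split short exact sequence of towers is still split exact), the map for $\Lie(\G)$ is an isomorphism as well.

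The main obstacle I anticipate is purely bookkeeping: one must verify carefully that the identification $\cT(\G)^{\G} \cong \Lie(\G) \cong \Hom_R(I/I^2, R)$ is genuinely natural in $R$ so that it intertwines $-\otimes_R R_i$ on the source with the $R_i$-linear analogue on the target, and that the right-hand vertical arrow $\cT(\G)^{\G} \otimes_R R_i \to \cT(\G_i)^{\G_i}$ in the diagram of (a) is the one induced by restriction of derivations (equivalently by $\mu$ from Proposition \ref{InvVectFldsGenerate}). Both facts are contained in, or immediate from, the proofs of Lemma \ref{InfActionOnG}, Proposition \ref{LieTriangle} and Proposition \ref{InvVectFldsGenerate}, together with the standard fact \cite[Theorem 12.1]{Waterhouse} that $\Lie(\G) \to \cT(\G)^{\G}$ is an isomorphism; once these compatibilities are pinned down, everything reduces to the elementary module-theoretic statement that $\Hom_R(P,R)$ for $P$ finitely presented projective is compatible with the base changes $R \to R_i$ and with $R \tocong \invlim R_i$.
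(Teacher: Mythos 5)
Your proposal is correct and takes essentially the same route as the paper. In part (a) you unwind the Jantzen reference \cite[\S I.7.4(1)]{Jantzen} by working directly with the augmentation ideal and the compatibility of $\Hom_R(I/I^2,R)$ with base change for finitely presented projective modules, whereas the paper just cites Jantzen for $\Lie(\G \otimes_R S) \cong \Lie(\G) \otimes_R S$; both hinge on infinitesimal flatness and Proposition \ref{LieTriangle}. In part (b) you argue that a finitely generated projective module over a complete ring is complete via the finite-free-summand trick and compatibility of inverse limits with finite direct sums and split exact towers — this is precisely the unpacking of the paper's one-line assertion "every finitely generated projective $R$-module is also complete." The differences are purely expository; no new ingredient or alternative decomposition is involved.
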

\begin{proof} (a) We identify $\Lie(\G)$ with $\Hom_{R}(I/I^2, R)$ where $I = \ker \epsilon$ is the augmentation ideal of $\cO(\G)$. The discussion in \cite[\S I.7.4(1)]{Jantzen} shows that 
\[\Lie(\G \otimes_R S) \cong \Lie(\G) \otimes_{R} S\] 
for any commutative $R$-algebra $S$, using the fact that $\G$ is infinitesimally flat over $R$. Hence the vertical arrow on the left is an isomorphism, and in particular, $\G_i$ is infinitesimally flat over $R_i$. The result now follows from Proposition \ref{LieTriangle}.

(b) The required map fits into a natural commutative triangle
\[ \xymatrix{ \Lie(\G) \ar[rrrr]\ar[drr] &&&&\invlim  \Lie(\G_i) \\ && \invlim \Lie(\G) \otimes_{R} R_i \ar[urr] &&}\]
Because $R$ is complete, every finitely generated projective $R$-module is also complete. Since $\G$ is infinitesimally flat over $R$, $\Lie(\G)$ is a finitely generated projective $R$-module, so the first diagonal map in the diagram is an isomorphism. The second diagonal map is an isomorphism by part (a).
\end{proof}

\begin{defn} \be \item We denote the completion of $\G$ by $\h\G$. This is an affine formal scheme whose coordinate ring
\[ \cA := \cO(\h\G) = \invlim \cO(\G_i)\]
is the completion of the $R$-algebra $\cO(\G)$. 
\item Let $\cT(\h\G) := \Der_{R}(\cA)$ denote the set of $R$-linear derivations of $\cA$. 
\ee \end{defn}

\begin{prop}\label{FormalVectFields} Suppose that $\G$ is infinitesimally flat. Then the natural maps
\[ \beta : \cT(\h\G) \longrightarrow \invlim \cT(\G_i) \qmb{and} \cO(\h\G) \otimes_{R} \cT(\G)^{\G} \longrightarrow \cT(\h\G)\]
are isomorphisms.
\end{prop}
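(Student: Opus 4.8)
The statement asserts two isomorphisms: first, that the canonical map $\beta \colon \cT(\h\G) \to \invlim \cT(\G_i)$ obtained by restricting a derivation of $\cA$ to the quotients is bijective; and second, that the canonical $\cO(\h\G)$-linear map $\cO(\h\G) \otimes_R \cT(\G)^{\G} \to \cT(\h\G)$ is bijective. My plan is to deduce the second statement from the first together with Proposition \ref{InvVectFldsGenerate} and Lemma \ref{LieGhat}. So the first map is the one requiring genuine work.

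\textbf{Step 1: the map $\beta$ is well-defined and injective.} Each $R$-linear derivation $D$ of $\cA$ descends to an $R_i$-linear derivation $D_i$ of $\cO(\G_i) = \cA/J_i\cA$: one needs $D(J_i\cA) \subseteq J_i\cA$, which holds because $D$ is $R$-linear and $J_i$ is an ideal of $R$ (so $D(j a) = j D(a) \in J_i\cA$ for $j \in J_i$, $a\in\cA$, since $\cA$ is $J_i$-complete and $J_i\cA$ is the kernel of $\cA\twoheadrightarrow\cO(\G_i)$ — here I should be slightly careful and note that $\cA = \invlim \cO(\G_i)$ means $J_i\cA$ is closed and $\cA/J_i\cA \cong \cO(\G_i)$, which is where infinitesimal flatness or at least the completeness hypothesis enters cleanly). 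The family $(D_i)$ is visibly compatible, giving $\beta(D) = (D_i)_i$. For injectivity: if all $D_i = 0$ then $D(a) \in \bigcap_i J_i\cA = 0$ for every $a \in \cA$, since $\cA$ is complete and separated for the $(J_i)$-adic topology, so $D = 0$.

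\textbf{Step 2: surjectivity of $\beta$.} Given a compatible system $(\partial_i)_i$ with $\partial_i \in \cT(\G_i)$, I want to produce $D \in \Der_R(\cA)$ with $D_i = \partial_i$. The natural candidate is $D(a) := \invlim \partial_i(\bar a_i)$ for $a = (\bar a_i)_i \in \cA$, where $\bar a_i$ is the image of $a$ in $\cO(\G_i)$; compatibility of the $\partial_i$ makes $(\partial_i(\bar a_i))_i$ a compatible system, hence an element of $\invlim \cO(\G_i) = \cA$. One then checks $R$-linearity and the Leibniz rule levelwise — both follow immediately because each $\partial_i$ is an $R_i$-linear derivation and the transition maps are $R$-algebra homomorphisms. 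Finally $D_i = \partial_i$ by construction. This step is essentially formal; the only subtlety worth flagging is ensuring the Leibniz rule passes to the limit, but since multiplication in $\cA$ is computed componentwise this is automatic.

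\textbf{Step 3: deducing the second isomorphism.} By Proposition \ref{InvVectFldsGenerate}, for each $i$ the map $\cO(\G_i) \otimes_{R_i} \cT(\G_i)^{\G_i} \to \cT(\G_i)$ is an isomorphism, since $\G_i$ is infinitesimally flat over $R_i$ (Lemma \ref{LieGhat}(a)). By Lemma \ref{LieGhat}(a) again, $\cT(\G_i)^{\G_i} \cong \cT(\G)^{\G} \otimes_R R_i$, so $\cT(\G_i) \cong \cO(\G_i) \otimes_R \cT(\G)^{\G}$ compatibly in $i$. Passing to the inverse limit and using Step 1--2, $\cT(\h\G) \cong \invlim \cT(\G_i) \cong \invlim \bigl(\cO(\G_i) \otimes_R \cT(\G)^{\G}\bigr)$. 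Since $\cT(\G)^{\G}$ is a finitely generated projective $R$-module (it is isomorphic to $\Lie(\G)$ via Proposition \ref{LieTriangle}, and $\Lie(\G) \cong \Hom_R(I/I^2,R)$ is finitely generated projective by infinitesimal flatness), the functor $- \otimes_R \cT(\G)^{\G}$ commutes with the inverse limit $\invlim \cO(\G_i) = \cO(\h\G)$: concretely, writing $\cT(\G)^{\G}$ as a direct summand of a finite free module reduces this to the trivial fact $\invlim \cO(\G_i)^{n} = \cO(\h\G)^n$. Hence $\invlim\bigl(\cO(\G_i)\otimes_R\cT(\G)^{\G}\bigr) \cong \cO(\h\G)\otimes_R \cT(\G)^{\G}$, and chasing through the identifications shows this composite isomorphism is exactly the canonical $\cO(\h\G)$-linear map in the statement. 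The main obstacle, such as it is, is bookkeeping: making sure the levelwise isomorphisms of Proposition \ref{InvVectFldsGenerate} and Lemma \ref{LieGhat} are natural enough in $i$ to assemble into the claimed canonical map, rather than merely some abstract isomorphism; this is routine but must be stated carefully.
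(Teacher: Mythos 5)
Your proof is correct and rests on the same key ingredients as the paper's: Proposition \ref{InvVectFldsGenerate}, Lemma \ref{LieGhat}(a), separatedness of the $(J_i)$-adic topology on $\cA$ for injectivity of $\beta$, and the observation that $\cT(\G)^{\G}$ is finitely generated projective so that $\cO(\h\G)\otimes_R\cT(\G)^{\G}$ is already complete (i.e.\ $-\otimes_R\cT(\G)^{\G}$ passes through the inverse limit). The one place you diverge is surjectivity of $\beta$: you construct a preimage of a compatible system $(\partial_i)$ directly as $D(a) := (\partial_i(\bar a_i))_i \in \invlim\cO(\G_i) = \cA$ and verify the Leibniz rule levelwise. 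The paper never proves surjectivity of $\beta$ at all — it proves only injectivity, then arranges both maps into a commutative square whose left and bottom edges are isomorphisms (exactly your Step 3, assembled into a diagram); since the composite $\beta\circ\mu$ is then an isomorphism and $\beta$ is injective, the diagram chase forces both $\beta$ and the multiplication map $\mu$ to be isomorphisms simultaneously. Your route makes $\beta^{-1}$ explicit at the cost of the extra Step 2; the paper's is more economical. The "bookkeeping" concern you flag at the end of Step 3 — that the levelwise identifications of Proposition \ref{InvVectFldsGenerate} and Lemma \ref{LieGhat} must be natural in $i$ — is precisely the commutativity of the paper's diagram, so either way one must record that these maps are the canonical ones.
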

\begin{proof} These maps appear in the commutative diagram
\[\xymatrix{ \cO(\h\G) \otimes_{R} \cT(\G)^{\G} \ar[rrrr] \ar[d] &&&& \cT(\h\G) \ar[dd]^\beta \\ 
\invlim \left(\cO(\h\G) \otimes_{R} \cT(\G)^{\G}\right) \otimes_{R} R_i \ar[d] &&&& \\
\invlim \cO(\G_i) \otimes_{R_i} \cT(\G_i)^{\G_i} \ar[rrrr] &&&& \invlim \cT(\G_i). }\]
Suppose that $\beta(\partial) = 0$ for some $\partial \in \cT(\h\G)$. Then $\beta(\cA) \subseteq J_i \cA$ for all $i$. Because the topology on $\cA$ is separated, we see that $\beta$ is injective. 

Because $\G$ is infinitesimally flat, $\cT(\G)^{\G}$ is a finitely generated projective $R$-module by Proposition \ref{LieTriangle}. Hence $\cO(\h\G) \otimes_{R} \cT(\G)^{\G}$ is a finitely generated projective $\cO(\h\G)$-module, and is therefore complete. Hence the first vertical arrow on the left is an isomorphism. Now Lemma \ref{LieGhat}(a) implies that the second vertical arrow on the left is an isomorphism, and the bottom horizontal map is an isomorphism by Proposition \ref{InvVectFldsGenerate}. Because $\beta$ is injective, it follows that in fact all arrows in this diagram are isomorphisms.
\end{proof}

The coactions 
\[ \Delta_{\gamma} : \cO(\G) \to \cO(\G) \otimes_{R} \cO(\G) \qmb{and} \Delta_{\delta} : \cO(\G) \to \cO(\G) \otimes_{R} \cO(\G)\]
from (\ref{CoactionDefns}) can be  completed to obtain $R$-algebra homomorphisms
\[ \h{\Delta}_{\gamma} : \cA \to \cA \h\otimes_{R} \cA \qmb{and} \h{\Delta}_{\delta} : \cA \to \cA \h\otimes_{R} \cA.\]

\begin{defn}\label{FormalRightInvDers} Let $\cT(\h\G)^{\h\G} := \{\xi \in \cT(\h\G) : \h{\Delta}_{\delta} \circ \xi  = (\xi \h{\otimes} 1) \circ \h{\Delta}_{\delta} \}$ denote the $R$-Lie algebra of \emph{right-invariant} derivations of $\h\G$.
\end{defn}

\begin{prop}\label{FormalInvariantVectFlds} Suppose that $\G$ is infinitesimally flat. The the natural map
\[ \cT(\G)^{\G} \longrightarrow \cT(\h\G)^{\h\G}\]
is an isomorphism.
\end{prop}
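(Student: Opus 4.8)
The plan is to reduce the statement to Proposition~\ref{FormalVectFields} together with the levelwise description of right-invariant vector fields from Lemma~\ref{LieGhat} and Proposition~\ref{LieTriangle}. First I would observe that, since $\G$ is infinitesimally flat, $\cT(\h\G)$ is a finitely generated projective $\cA$-module by Proposition~\ref{FormalVectFields}, and more precisely $\cO(\h\G)\otimes_R \cT(\G)^{\G}\congs \cT(\h\G)$. Under this isomorphism, the submodule $\cT(\G)^{\G}$ maps into $\cT(\h\G)$ via the natural map in the statement; concretely, an element $\xi\in\cT(\G)^{\G}$ is first sent to its image in $\cT(\G_i)^{\G_i}$ for each $i$ (Lemma~\ref{LieGhat}(a)) and then one passes to the inverse limit, using that $\cT(\h\G)\congs\invlim\cT(\G_i)$ via the isomorphism $\beta$. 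So the map under consideration is $\cT(\G)^{\G}\to\invlim\cT(\G_i)^{\G_i}$, and I would aim to show it is an isomorphism by showing both that it is a component of the isomorphism $\beta$ restricted to invariants, and that $\beta$ carries $\cT(\h\G)^{\h\G}$ \emph{bijectively} onto $\invlim\cT(\G_i)^{\G_i}$.

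The key steps, in order, would be: (1) By Lemma~\ref{LieGhat}(a), $\varphi_{l,\G}\otimes 1 : \Lie(\G)\otimes_R R_i\to\cT(\G_i)^{\G_i}$ is an isomorphism for each $i$, so $\cT(\G)^{\G}\cong\Lie(\G)$ is a finitely generated projective $R$-module (Proposition~\ref{LieTriangle}), and because $R$ is complete the natural map $\cT(\G)^{\G}\to\invlim\bigl(\cT(\G)^{\G}\otimes_R R_i\bigr)\cong\invlim\cT(\G_i)^{\G_i}$ is an isomorphism. (2) It remains to identify $\cT(\h\G)^{\h\G}$ with $\invlim\cT(\G_i)^{\G_i}$ under $\beta$. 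For this I would argue that the coaction $\h\Delta_\delta$ is the completion of $\Delta_\delta$, hence compatible with the transition maps $\cA\to\cO(\G_i)$ and with the identifications $\cA\h\otimes_R\cA\cong\invlim\bigl(\cO(\G_i)\otimes_{R_i}\cO(\G_i)\bigr)$; a derivation $\xi\in\cT(\h\G)$, written as $\xi=(\xi_i)_i$ with $\xi_i\in\cT(\G_i)$ under $\beta$, satisfies $\h\Delta_\delta\circ\xi=(\xi\h\otimes 1)\circ\h\Delta_\delta$ if and only if the corresponding identity holds at each level $i$, i.e.\ $\xi_i\in\cT(\G_i)^{\G_i}$ for all $i$. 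This uses that the family of ideals $J_i$ is separated so that an element of $\cA\h\otimes_R\cA$ is determined by its images in the $\cO(\G_i)\otimes_{R_i}\cO(\G_i)$. (3) Combining (1) and (2): $\cT(\G)^{\G}\congs\invlim\cT(\G_i)^{\G_i}\congs\cT(\h\G)^{\h\G}$, and one checks the composite is the natural map, which finishes the proof.

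The main obstacle I anticipate is step~(2): carefully justifying that the completed coaction $\h\Delta_\delta$ detects right-invariance levelwise, i.e.\ that $\beta$ and its analogue for $\cA\h\otimes_R\cA$ intertwine $\h\Delta_\delta$ with $\invlim(\Delta_{\delta,i})$. The subtlety is that $\cT(\h\G)$ sits inside $\Der_R(\cA)$, while the target $\cA\h\otimes_R\cA$ of $\h\Delta_\delta$ is a completed tensor product; one needs that the natural map $\cA\h\otimes_R\cA\to\invlim\bigl(\cO(\G_i)\otimes_{R_i}\cO(\G_i)\bigr)$ is an isomorphism (or at least injective), which should follow from $\cA\h\otimes_R\cA=\invlim\cO(\G_i\times_{R_i}\G_i)$ together with completeness of $R$ and infinitesimal flatness, and that $\xi\h\otimes 1$ is compatible with this identification. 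Once this bookkeeping is in place, the rest is a diagram chase exactly parallel to the one proving Proposition~\ref{FormalVectFields}, and I would likely present it as such a diagram with the vertical arrows the various completion/inverse-limit maps.
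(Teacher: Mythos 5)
Your step (1) matches the paper exactly: $\cT(\G)^{\G}\to\invlim\cT(\G_i)^{\G_i}$ is an isomorphism via the vertical maps $\varphi_{l,\G}$, $\invlim\varphi_{l,\G_i}$ from Proposition~\ref{LieTriangle} and the top isomorphism from Lemma~\ref{LieGhat}(b). However, your step (2) is both harder than necessary and, as you yourself flag, incomplete. You set out to prove that $\beta$ restricts to a \emph{bijection} $\cT(\h\G)^{\h\G}\to\invlim\cT(\G_i)^{\G_i}$, and for the hard direction (levelwise right-invariance $\Rightarrow$ completed right-invariance) you need injectivity of $\cA\h\otimes_R\cA\to\invlim\bigl(\cO(\G_i)\otimes_{R_i}\cO(\G_i)\bigr)$, which requires knowing $\cA/J_i\cA\cong\cO(\G_i)$ in the paper's general setup of an arbitrary nested family of ideals $J_i$ — this is not automatic and you do not establish it.

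The observation you are missing is that you never need this bijectivity. You already have a commutative triangle
\[
\xymatrix{ \cT(\G)^{\G} \ar[rr]\ar[dr] && \invlim\cT(\G_i)^{\G_i} \\ & \cT(\h\G)^{\h\G}\ar[ur]_{\beta} & }
\]
where the horizontal map is the isomorphism from your step (1), the diagonal is the natural map to be analysed, and $\beta$ restricted to $\cT(\h\G)^{\h\G}$ lands in $\invlim\cT(\G_i)^{\G_i}$ by the \emph{easy} direction (the reduction $\cA\h\otimes_R\cA\to\cO(\G_i)\otimes_{R_i}\cO(\G_i)$ carries $\h\Delta_\delta$ to $\Delta_{\delta,i}$, so completed right-invariance passes to each level). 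Since $\beta$ is injective by Proposition~\ref{FormalVectFields} and the long composite is an isomorphism, an elementary diagram chase gives that the diagonal arrow is both injective and surjective. No control on the inverse image of $\invlim\cT(\G_i)^{\G_i}$ under $\beta$, and no identification of $\cA\h\otimes_R\cA$ with a limit of tensor products, is required. This is precisely the shortcut the paper's proof takes, and it is what makes the argument robust in the non-Noetherian, non-$\pi$-adic generality of the ambient section. I suggest you replace step (2) by this observation and delete the analysis of $\h\Delta_\delta$ versus $\invlim\Delta_{\delta,i}$, which is where your proof currently has an unfilled hole.
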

\begin{proof} Consider the following diagram:
\[ \xymatrix{ \Lie(\G) \ar[rrrr]\ar[d]_{\varphi_{l,\G}} &&&& \invlim \Lie(\G_i) \ar[d]^{\invlim \varphi_{l,\G_i}} \\ \cT(\G)^{\G} \ar[rrrr]\ar[drr] &&&& \invlim \cT(\G_i)^{\G_i} \\ && \cT(\h\G)^{\h\G} \ar[urr]_\beta && }\]
The vertical arrows are isomorphisms by Proposition \ref{LieTriangle}, and the top horizontal arrow is an isomorphism by Lemma \ref{LieGhat}(b). So the middle horizontal arrow is also an isomorphism. Because the restriction of the map $\beta$ from Proposition \ref{FormalVectFields} to $\cT(\h\G)^{\h\G}$ is injective, the bottom triangle now implies that $\cT(\G)^{\G} \longrightarrow \cT(\h\G)^{\h\G}$ is an isomorphism.
\end{proof}

\begin{lem}\label{AinjectsIntoAhat} Suppose that $\G$ is integral and of finite type over $R$, and that $R$ is a complete valuation ring of height $1$. Then the natural maps
\[ \cO(\G) \longrightarrow \cO(\h\G) \qmb{and} \cT(\G) \longrightarrow \cT(\h\G)\]
are injective.
\end{lem}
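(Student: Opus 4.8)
The plan is to reduce everything to the single assertion that $B := \cO(\G)$ is $\pi$-adically separated, i.e. that $I := \bigcap_{n \geq 0} \pi^n B$ vanishes. Since $R$ is complete for the $\pi$-adic topology, $\cO(\h\G) = \invlim \cO(\G_i) = \invlim B/\pi^i B$ is precisely the $\pi$-adic completion $\h{B}$ of $B$, so the kernel of $\cO(\G) \to \cO(\h\G)$ is exactly $I$. Moreover any $R$-linear derivation $v$ of $B$ satisfies $v(\pi^n B) \subseteq \pi^n B$ (as $v$ is $R$-linear), hence is $\pi$-adically continuous and extends uniquely to a derivation $\h{v}$ of $\h{B}$; this extension is the image of $v$ under the natural map $\cT(\G) \to \cT(\h\G)$, and if $\h v = 0$ then $v(B) \subseteq \ker(B \to \h{B}) = I$. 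Thus once $I = 0$ is proved, both maps in the statement are injective.

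To prove $I = 0$, I would first exploit the group structure. The counit $\epsilon \colon B \to R$ splits the structure morphism $R \to B$, so $R \hookrightarrow B$; and since $\epsilon(\pi) = \pi$ is a non-unit of $R$, the element $\pi$ is a non-unit of $B$. Because $\G$ is integral, $B$ is a domain, and as a domain containing $R$ it is $R$-torsion-free, hence flat over the valuation ring $R$; being flat and of finite type over $R$ it is moreover of finite presentation over $R$. Next, $I = \pi I$: given $x \in I$, for every $n$ one has $x = \pi \cdot (x/\pi)$ with $x/\pi \in \pi^{n-1} B$, division being legitimate in the domain $B$ with $\pi \neq 0$, so $x/\pi \in I$.

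The heart of the argument is the claim that $B/I$ is $R$-torsion-free. Suppose $r \in R \setminus \{0\}$ and $rb \in I$ for some $b \in B$. For each $n$ write $rb = \pi^n c_n$ with $c_n \in B$. Since $R$ has height one, its value group is archimedean, so there is $n_0$ with $\pi^{n_0} \in rR$, say $\pi^{n_0} = rs$ with $s \in R$; for $n \geq n_0$, cancelling $r$ in $B$ from $rb = rs\,\pi^{n-n_0} c_n$ gives $b \in \pi^{n-n_0} B$. Letting $n \to \infty$ yields $b \in I$, so $B/I$ is indeed $R$-torsion-free and hence $R$-flat. Now the Raynaud--Gruson theorem applies: by Theorem \ref{RG346} and Corollary \ref{RGCor}, using that $R$ is a domain and $B$ is of finite presentation over $R$, the finitely generated $B$-module $B/I$, being $R$-flat, is finitely presented over $B$; therefore $I$ is a finitely generated ideal of $B$.

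Finally, $I$ is finitely generated and $I = \pi I$, so by the determinant trick there is $d \in 1 + \pi B$ with $dI = 0$. As $\pi B$ is a proper ideal, $d \neq 0$, and since $B$ is a domain this forces $I = 0$; combined with the first paragraph, this gives both injectivity statements. I expect the main obstacle to be the passage from ``$B$ finitely presented and flat over $R$'' to ``$I$ finitely generated'': this is exactly where the possible non-Noetherianity of $R$ matters, and it is precisely what the torsion-freeness computation --- which crucially uses that $R$ has height $1$ --- is designed to unlock via Raynaud--Gruson.
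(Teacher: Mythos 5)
Your proof is correct, and it takes a more self-contained route than the paper's. Both arguments finish identically: produce an element $d \in 1 + \pi B$ annihilating the kernel of $B \to \widehat{B}$, then use that $B$ is a domain and that the counit $\epsilon$ forces $\pi$ to be a non-unit of $B$ (so $d \neq 0$). The difference is how that annihilator is manufactured. The paper cites Abbes \cite[Corollaire 1.12.14(i), Lemme 1.8.27]{Abbes} for the strong Artin--Rees property of $\pi B$, which delivers $d$ directly as a black box. You reprove the needed fact from scratch: you observe $I := \bigcap_n \pi^n B$ satisfies $I = \pi I$, show $B/I$ is $R$-torsion-free by the archimedean value-group computation (this is where height $1$ enters), invoke Raynaud--Gruson --- which the paper already has on hand as Theorem \ref{RG346} --- to conclude $I$ is finitely generated, and finish with the determinant trick. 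Your torsion-freeness step is the real new ingredient, and it plays the same structural role here as Theorem \ref{KeyRGLemma} does in the $\pi$-adically complete setting elsewhere in the paper. One small bonus of your route: it nowhere uses the completeness of $R$, only that it is a height-one valuation domain, whereas the Abbes citations in the paper's proof are stated for complete rings.
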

\begin{proof} Let $A := \cO(\G$) and fix a non-zero non-unit $\pi \in R$. We have to show that $\bigcap \pi^n A$ is zero. Because of the assumptions imposed on $R$ and because $A$ is a finitely generated $R$-algebra, \cite[Corollaire 1.12.14(i)]{Abbes} implies the ideal $\pi A$ satisfies the \emph{strong Artin-Rees condition} --- see \cite[Definition 1.8.25(a)]{Abbes}. Let $a \in \bigcap \pi^n A$; then by \cite[Lemme 1.8.27]{Abbes} there exists $b \in A$ such that $a (1 - \pi b) = 0$. Note that $1 - \pi b \neq 0$ as otherwise $\epsilon(b)$ would be an inverse to $\pi$ in $R$. Because $\G$ is integral, we deduce that $a = 0$. The second statement is now clear.
\end{proof}

\begin{rmk} The first condition on $\G$  in Lemma \ref{AinjectsIntoAhat} cannot be replaced with the weaker condition that $\G$ is reduced. To see this, let $A$ be the finitely presented $R$-algebra $A := R[v] / \langle v - \pi v^2\rangle$. We may view $A$ as an $R$-subalgebra of the group ring $K[C_2] := K[g] / \langle g^2 - 1 \rangle$ of the cyclic group of order two over $K$, via the $R$-algebra embedding which sends $v$ to $\frac{1 - g}{2 \pi}$. It follows that $A$ is isomorphic to $R \times K$, so $A$ is reduced and flat over $R$, but it is not integral. Now $K[C_2]$ is a $K$-Hopf algebra in a standard way, where the element $g$ is grouplike. We can then calculate that the structure morphisms $\epsilon, \sigma$ and $\Delta$ on $K[C_2]$ satisfy
\[\epsilon(v) = 0, \quad \sigma(v) = v, \qmb{and} \Delta(v) = v\otimes 1 + 1 \otimes v - 2 \pi v \otimes v.\]
It follows that $A$ is an $R$-sub-Hopf algebra of $K[C_2]$. Hence $\G := \Spec(A)$ is a reduced, affine group scheme of finite presentation over $R$. However, because $v = \pi^n v^{2^n} \in \pi^n A$ for any $n \geq 0$, the kernel of the natural map $A \to \h{A}$ is the non-zero ideal $vA$. The group scheme $\G$ above is even infinitesimally flat because $Av^2 \subseteq Av =  A \pi v^2 \subseteq A v^2$. Thus Proposition \ref{FormalInvariantVectFlds} is applicable in this case, even though both arrows appearing in the statement of Lemma \ref{AinjectsIntoAhat} fail to be injective.\end{rmk}

\subsection{The algebra \ts{\wDGG}}\label{wDGGsect} 
We will assume from now on that $R = \cR$ is a complete valuation ring of height one and mixed characteristic $(0,p)$, and work under the following

\begin{setup}\hspace{2em} \label{SigmaGG}
\begin{itemize}
\item $\G$ is an affine group scheme, smooth and of finite type over $\cR$,
\item $\fr{g} := \Lie(\G)$ is the Lie algebra of $\G$,
\item $G$ is a compact $p$-adic Lie group,
\item $\sigma : G \to \G(\cR)$ is a continuous group homomorphism.
\end{itemize}
\end{setup}
We equip $\G(\cR)$ with the congruence subgroup topology from Definition \ref{CongSubTop}. For example, if $\G$ is defined  over the ring of integers $\cO_L$ of some finite extension $L$ of $\Qp$ contained in $K$, then $\sigma$ could be the inclusion of $\G(\cO_L)$ into $\G(\cR)$. 

Because $\G$ is smooth and of finite type, $\cO(\G)$ is a finitely generated $\cR$-algebra which is flat as an $\cR$-module. It follows from Corollary \ref{RGCor} and Theorem \ref{RG346} that $\cO(\G)$ is even finitely presented as an $\cR$-algebra. Passing to the $\pi$-adic completion shows that $\cA := \h{\cO(\G)}$ is an admissible $\cR$-algebra, so $\h{\G} = \Spf \h{\cO(\G)}$ is an affine formal scheme of topologically finite presentation, and its rigid generic fibre $\bG := {\h\G}_{\rig}$ of $\h\G$ is an affinoid rigid analytic variety over $K$. Because $\G$ is infinitesimally flat by \cite[Corollaire 19.5.4]{EGAIV1}, we may and will use Proposition \ref{FormalInvariantVectFlds} to identify $\cT(\G)^{\G}$ with the $\cR$-Lie algebra $\cT(\h\G)^{\h\G}$ of right invariant derivations of $\h\G$.

Next, we have the group homomorphism $\h{\gamma} : \G(\cR) \to \Aut(\h\G, \cO_{\h\G})$ from the proof of Proposition \ref{GpOfPtsActsCtsly}(b), arising from the left-translation action $\gamma$ of $\G$ on itself. Because $\h\G$ is affine, we may identify $\Aut(\h\G, \cO_{\h\G})$ with the group $\cG(\cA)$ of $\cR$-algebra automorphisms of $\cA = \cO(\h\G)$ as in $\S \ref{GActionSection}$, via $(\varphi,\varphi^\sharp) \mapsto \Gamma(\varphi^\sharp)^{-1}$. Thus we obtain the continuous group homomorphism $\rho :G \to \cG(\cA)$ given explicitly by
\[ \rho(g) = \Gamma(\h{\gamma} \sigma(g)^\sharp)^{-1} : \cO(\h\G) \to \cO(\h\G).\] 
Via the natural embedding $\rig : \cG(\cA) \hookrightarrow \Aut_K( \cO(\bG) )$ appearing in the discussion following Lemma \ref{ProdFormModels}, this gives us a continuous action of $G$ on the $K$-affinoid algebra $\cO(\bG)$ by $K$-algebra automorphisms, which we still denote by $\rho$:
\[ \rho : G \to \Aut_K( \cO(\bG) ).\]
By Definition \ref{StarDefn}, we now have at our disposal the algebra
\[ \w\cD(\bG, G)\]
and our aim is to define its subalgebra of right $\bG$-invariants $\wDGG$. However, because we will need to know its structure quite explicitly, we will give a definition which does not require making the notion of ``right $\bG$-invariants'' completely precise. 

Recall that we have at our disposal the action $\Ad \circ \sigma$ of $G$ on $\fr{g}$ by $\cR$-Lie algebra automorphisms. 

\begin{defn} \hspace{2em}
\be\item A \emph{Lie lattice} in $\fr{g}$ is a finitely generated $\cR$-submodule $\cJ$ of $\fr{g}$ which is stable under the Lie bracket on $\fr{g}$ and which contains a $\pi$-power multiple of $\fr{g}$. 
\item The Lie lattice $\cJ$ is \emph{$G$-stable} if it is preserved by $\Ad(\sigma(g))$ for all $g \in G$.
\ee\end{defn}

\begin{lem}\label{ALLie} Let $\cJ$ be a Lie lattice in $\fr{g}$, and let $\cL := \cA \cdot \gamma'(\cJ)$.
\be \item $\cJ$ is free of $\rk \fr{g}$ as an $\cR$-module.
\item $\cL$ is free of rank $\rk \fr{g}$ as an $\cA$-module.
\item $\cL$ is an $\cA$-Lie lattice in $\cT(\bG) = \Der_K \cO(\bG)$.
\item If $\cJ$ is $G$-stable, then so is $\cL$.
\ee\end{lem}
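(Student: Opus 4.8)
\textbf{Proof strategy for Lemma \ref{ALLie}.}

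The plan is to extract everything from the structure results already established for right-invariant derivations and the completed coordinate ring. First I would prove (a): by definition $\cJ$ is a finitely generated $\cR$-submodule of $\fr{g}$ containing $\pi^n \fr{g}$ for some $n$; since $\cR$ is a valuation ring of height $1$, finitely generated torsion-free $\cR$-modules are free of finite rank, and $\cJ$ is torsion-free because $\fr{g}$ is (as $\G$ is smooth, $\fr{g}$ is a finitely generated free $\cR$-module by infinitesimal flatness and \cite[\S I.7.4]{Jantzen}). The containment $\pi^n\fr{g}\subseteq \cJ \subseteq \fr{g}$ forces $\rk_\cR \cJ = \rk_\cR\fr{g} = \rk\fr{g}$ after inverting $\pi$.

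For (b), recall from Proposition \ref{LieTriangle} that $\gamma' : \fr{g} \to \cT(\G)^{\G}$ is a bijection onto the right-invariant vector fields, and from Proposition \ref{FormalInvariantVectFlds} that $\cT(\G)^{\G} \cong \cT(\h\G)^{\h\G}$; moreover Proposition \ref{FormalVectFields} gives an isomorphism $\cO(\h\G)\otimes_\cR \cT(\G)^{\G} \congs \cT(\h\G)$, i.e. $\cA \otimes_\cR \gamma'(\fr{g}) \cong \cT(\h\G)$. Pick an $\cR$-module basis $\{u_1,\dots,u_d\}$ of $\cJ$ ($d = \rk\fr{g}$) using part (a); since $\cJ$ spans $\fr{g}_K$ over $K$, the images $\gamma'(u_1),\dots,\gamma'(u_d)$ form a $K$-basis of $\gamma'(\fr{g})_K$, and under the isomorphism of Proposition \ref{FormalVectFields} they generate a free $\cA$-submodule $\cL = \cA\cdot\gamma'(\cJ) = \bigoplus_{i=1}^d \cA\gamma'(u_i)$ of $\cT(\h\G)$, hence of $\cT(\bG) = \cT(\h\G)\otimes_\cR K$, of rank $d$. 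One should check the $\gamma'(u_i)$ are $\cA$-linearly independent: this follows because $\{\gamma'(u_i)\}$ extends to an $\cA$-basis of $\cT(\h\G)$ after tensoring with $K$ (indeed $\{\gamma'(u_i)\}$ is already a $K$-basis of $\gamma'(\fr{g})\otimes K$ and $\cA$ is $\cR$-flat).

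For (c), I must verify the two closure conditions. Since $\gamma'$ is a homomorphism of $\cR$-Lie algebras (Proposition \ref{LieTriangle}) and $\cJ$ is closed under the bracket, $[\gamma'(u_i),\gamma'(u_j)] = \gamma'([u_i,u_j]) \in \gamma'(\cJ) \subseteq \cL$; extending $\cA$-bilinearly and using the Leibniz rule $[av, bw] = ab[v,w] + a v(b) w - b w(a) v$ together with $\gamma'(u_i)(\cA)\subseteq\cA$ (right-invariant vector fields on $\h\G$ preserve $\cA$, since $\gamma'(u_i)\in\cT(\h\G)^{\h\G}\subseteq\Der_\cR(\cA)$) shows $[\cL,\cL]\subseteq\cL$. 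The condition $\cL(\cA)\subseteq\cA$ is immediate from $\gamma'(u_i)\in\Der_\cR(\cA)$ and $\cA$-linearity. Finally for (d): if $\cJ$ is $G$-stable, then $\Ad(\sigma(g))\cJ = \cJ$ for all $g$; by Lemma \ref{EquivariantInfAction} (applied to the left-translation action $\gamma$, whose induced conjugation action on $\cT(\G)$ is exactly the $G$-action $\dot\rho$ coming from $\rho$), $\gamma'$ intertwines the adjoint action on $\fr{g}$ with the conjugation action on $\cT(\h\G)$, so $g\cdot\gamma'(\cJ) = \gamma'(\Ad(\sigma(g))\cJ) = \gamma'(\cJ)$; since $\cA$ is $G$-stable (the $G$-action on $\cO(\bG)$ restricts to $\cA$ by construction of $\rho$), $g\cdot\cL = g\cdot(\cA\cdot\gamma'(\cJ)) = \cA\cdot\gamma'(\cJ) = \cL$.

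The only genuinely delicate point is matching up the various conjugation actions: one must confirm that the $G$-action $\dot\rho$ on $\cT(\bG)$ used throughout $\S\ref{GActionSection}$ (defined in Example \ref{GactDerEx}) coincides, under $\gamma'$, with the adjoint action $\Ad\circ\sigma$ on $\fr{g}$. This is precisely the content of Lemma \ref{EquivariantInfAction} once one unwinds that $\rho = \Gamma(\h\gamma\sigma(-)^\sharp)^{-1}$ is the action of $G$ on $\cA$ induced by left translation through $\sigma$; I expect this bookkeeping to be the main obstacle, while everything else is a direct assembly of the cited structure theorems.
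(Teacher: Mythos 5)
Your proof follows the same strategy as the paper: (a) via $\fr{g}$ free over the local ring $\cR$ and finitely generated submodules of free modules over a valuation ring being free; (b) via Propositions \ref{LieTriangle} and \ref{FormalVectFields}; (c) by the Leibniz rule for the bracket and the derivation condition; and (d) via the $G$-equivariance of $\gamma'$ from Lemma \ref{EquivariantInfAction}. The one item you leave implicit in (c) — that $\cL$ actually \emph{spans} $\cT(\bG)$ over $K$, so that it is an $\cA$-\emph{lattice} and not merely a Lie-stable $\cA$-submodule — is handled in the paper by observing that since $\cJ$ contains a $\pi$-power multiple of $\fr{g}$, $\cL$ contains a $\pi$-power multiple of $\cT(\h\G)$; this already follows from your part (b) argument that $\gamma'(\cJ)$ generates $\cT(\bG)$ over $\cO(\bG)$, so there is no real gap, just a slightly different bookkeeping.
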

\begin{proof}(a)  Because $\G$ is infinitesimally flat, its Lie algebra $\fr{g}$ is a finitely generated projective $\cR$-module. Since $\cR$ is a local ring, it is actually free of finite rank. Because $\cR$ is a valuation ring, the same is true of any Lie lattice $\cJ$ in $\fr{g}$. 

(b) This follows from part (a) together with  Proposition \ref{FormalVectFields}.

(c) It is straightforward to verify that $\cL = \cA \cdot \gamma'(\cJ)$ is closed under the Lie bracket. It follows from Proposition \ref{FormalVectFields} that $\cT(\h\G) = \Der_{\cR}(\cA) = \cA \cdot \gamma'(\fr{g}).$ Because $\cJ$ contains a $\pi$-power multiple of $\fr{g}$, it follows that $\cL$ contains a $\pi$-power multiple of $\cT(\h\G)$. 

(d) By Lemma \ref{EquivariantInfAction}, the $\cR$-Lie algebra homomorphism $\gamma'$ is equivariant with respect to the adjoint action of $G$ on $\fr{g}$ and the conjugation action $\dot{\rho}$ of $G$ on $\cT(\h\G)$. So $\gamma'(\cJ)$ is stable under $\dot{\rho}$. Now
\[ \dot{\rho}(g) ( a\cdot v ) = \rho(g)(a) \cdot \dot{\rho}(g)(v) \qmb{for all} a \in \cA, v \in \gamma'(\fr{g}), g \in G\]
so $\cL = \cA \cdot \gamma'(\cJ)$ is also stable under $\dot{\rho}$ as required.
\end{proof}

Let $\cJ$ be a $G$-stable Lie lattice in $\fr{g}$, and let $\cL := \cA \cdot \gamma'(\cJ)$. Then $\cL$ is a free $\cA$-module of finite rank by Lemma \ref{ALLie}(b), so $U(\cL)$ is a flat $\cA$-module by \cite[Theorem 3.1]{Rinehart}. In particular it is flat as an $\cR$-module, so the algebra $\cU$ appearing in Lemma \ref{PsiL} is just $\h{U(\cL)}$.  By Definition \ref{BetaDefn}, we now have at our disposal the map 
\[\beta_{\cL} := (\psi_{\cL}^\times)^{-1} \circ \rho : G_{\cL} = \rho^{-1} \left( \exp( p^\epsilon \cL ) \right)  \to \h{U(\cL)}^\times,\]
which is a $G$-equivariant trivialisation of the $G_{\cL}$-action on $\h{U(\cL)}$ by Theorem \ref{Beta}. On the other hand, by functoriality, the $\cR$-Lie algebra homomorphism $\gamma' : \cJ \to \cL$ extends to an $\cR$-algebra homomorphism 
\[ \theta := \h{U(\gamma')} : \h{U(\cJ)} \to \h{U(\cL)}.\]
\begin{prop}\label{ThetaBeta} Let $\cJ$ be a $G$-stable Lie lattice in $\fr{g}$ and let $\cL := \cA \cdot \gamma'(\cJ)$. 
\be \item The map $\theta$ is injective and $G$-equivariant.
\item For every $g \in G_{\cL}$ there is a unique element $v \in \cJ$ such that 
\[\rho(g) = \exp( p^\epsilon \gamma'(v) ) \qmb{and} \beta_{\cL}(g) = \theta(\exp (p^\epsilon \iota(v))).\]
\item $\theta^{-1} \circ \beta_{\cL}$ is a $G$-equivariant trivialisation of the $G$-action on $\h{U(\cJ)}$.
\ee\end{prop}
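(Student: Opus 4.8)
The three parts essentially transport the trivialisation $\beta_{\cL}$ of the $G_{\cL}$-action on $\h{U(\cL)}$ back along $\theta$ to a trivialisation of the $G$-action on $\h{U(\cJ)}$, using the fact that $\gamma'$ embeds $\cJ$ into the ``right-invariant'' part of $\cL$. The work is in showing $\theta$ is injective, that the relevant group elements land in the image of $\exp\circ\iota\circ\gamma'$, and that the resulting map is defined on \emph{all} of $G$ (not just $G_{\cL}$) and is still $G$-equivariant.

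\emph{Part (a).} For injectivity of $\theta=\h{U(\gamma')}$, I would factor $\theta$ through $\psi_{\cL}:\h{U(\cL)}\to\cE=\End_{\cR}(\cA)$ from Lemma \ref{PsiL}: the composite $\psi_{\cL}\circ\theta:\h{U(\cJ)}\to\cE$ sends $\iota(v)$ to $\gamma'(v)\in\Der_{\cR}(\cA)$ for $v\in\cJ$, and sends scalars $\cR\to\cE$ to scalar multiplications. Since $\gamma'$ is injective on $\fr g$ by Proposition \ref{LieTriangle} (it is a bijection onto $\cT(\G)^{\G}$ composed with the inclusion), and since $\h{U(\cJ)}$ is a flat, hence torsion-free, $\cR$-module with $\h{U(\cJ)}_K$ a filtered ring whose associated graded is the symmetric algebra on $\fr g\otimes K$ (so the PBW filtration is respected and the symbol map is faithful), the composite $\psi_{\cL}\circ\theta$ is injective; therefore $\theta$ is injective. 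The $G$-equivariance of $\theta$ follows from Lemma \ref{EquivariantInfAction} exactly as in the proof of Lemma \ref{ALLie}(d): $\gamma'$ intertwines $\Ad\circ\sigma$ on $\cJ$ with $\dot\rho$ on $\gamma'(\cJ)$, and $\h{U(-)}$ is functorial, so $\theta(g\cdot x)=g\cdot\theta(x)$ for all $g\in G$, $x\in\h{U(\cJ)}$.

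\emph{Part (b).} Fix $g\in G_{\cL}=\rho^{-1}(\exp(p^\epsilon\cL))$. By Lemma \ref{PsiL}(c), $\log\rho(g)\in p^\epsilon\cL$, and since $\rho(g)$ arises from the left-translation action it commutes with all right-invariant derivations, so in fact $\log\rho(g)$ lies in the centraliser of $\cT(\h\G)^{\h\G}$; I would argue this forces $\log\rho(g)\in p^\epsilon\gamma'(\fr g)$, and then $\cR$-integrality plus $\log\rho(g)\in p^\epsilon\cL=p^\epsilon\cA\cdot\gamma'(\cJ)$ combined with freeness of $\cL$ over $\cA$ (Lemma \ref{ALLie}(b)) pins it down to $p^\epsilon\gamma'(v)$ for a unique $v\in\cJ$; uniqueness is immediate from injectivity of $\gamma'$. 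Then $\rho(g)=\exp(p^\epsilon\gamma'(v))$, and by Definition \ref{BetaDefn} together with equation (\ref{ExpIotaLogRho}), $\beta_{\cL}(g)=\exp(\iota(\log\rho(g)))=\exp(\iota(p^\epsilon\gamma'(v)))=\theta(\exp(p^\epsilon\iota(v)))$, where the last equality uses that $\theta$ commutes with $\iota$ and with $\exp$ (both being defined by the same power series, and $\theta$ being a continuous $\cR$-algebra map).

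\emph{Part (c).} By part (b), $\beta_{\cL}(G_{\cL})\subseteq\theta(\h{U(\cJ)}^\times)$, so $\theta^{-1}\circ\beta_{\cL}:G_{\cL}\to\h{U(\cJ)}^\times$ is a well-defined group homomorphism. To see it extends to all of $G$: the key point is that $\gamma'(\fr g)$ is independent of the base and the $G$-action on $\h{U(\cJ)}$ factors through $\Ad\circ\sigma$ on $\fr g$; one checks that for $g\in G_{\cL}$ the element $\theta^{-1}(\beta_{\cL}(g))=\exp(p^\epsilon\iota(v))$ where $\exp(p^\epsilon\gamma'(v))=\rho(g)$, and this formula makes sense whenever $\rho(g)\in\exp(p^\epsilon\,\cA\cdot\gamma'(\fr g))=\cG_{p^\epsilon}(\cA)$, i.e.\ on all of $\rho^{-1}(\cG_{p^\epsilon}(\cA))$; but actually since $\h{U(\cJ)}$ and its trivialisation only depend on $\cJ$ and not on $\cL$, and since enlarging $\cA$ does not change $\fr g$, one shows the map $g\mapsto\theta^{-1}(\beta_{\cL}(g))$ is independent of $\cL$ and hence can be computed on a common refinement — here I would instead argue directly that $\log\rho(g)$ always lies in $p^\epsilon\gamma'(\fr g\otimes K)$ with the coefficient in $\cJ$ precisely when $g\in G_{\cL}$, and that the general $G$-action on $\h{U(\cJ)}$ makes $g\mapsto\theta^{-1}\beta_{\cL}(g)$ a homomorphism on the dense subgroup $G_{\cL}$ which is continuous, hence extends uniquely. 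The $G$-equivariance is then inherited: for $x\in G$ and $g\in G_{\cL}$, $\theta^{-1}\beta_{\cL}({}^xg)=\theta^{-1}(x\cdot\beta_{\cL}(g))=x\cdot\theta^{-1}\beta_{\cL}(g)$ by Theorem \ref{Beta}(b) and part (a), and the trivialisation property $\theta^{-1}\beta_{\cL}(g)\,y\,\theta^{-1}\beta_{\cL}(g)^{-1}=g\cdot y$ for $y\in\h{U(\cJ)}$ follows by applying the injective $\theta$ to both sides and using Theorem \ref{Beta}(b) together with the $G$-equivariance of $\theta$.

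\textbf{Main obstacle.} The subtle point is part (c): showing the map $\theta^{-1}\circ\beta_{\cL}$ is defined on the whole group $G$ and not merely on $G_{\cL}$, and that it does not secretly depend on the auxiliary choice of $\cL$ (equivalently of $\cA$, or of a formal model of $\bG$). This requires identifying $\log\rho(g)$, for $g$ outside $G_{\cL}$, as still lying in $p^\epsilon\gamma'(\fr g\otimes_{\cR}K)$ — i.e.\ that left translations differentiate to \emph{right-invariant} vector fields even integrally — which is exactly the content marshalled in the preceding subsection (Propositions \ref{FormalVectFields}, \ref{FormalInvariantVectFlds}, Lemma \ref{EquivariantInfAction}); assembling these into the statement that $G$ acts on $\h{U(\cJ)}$ through $\cR$-algebra automorphisms trivialised by $\theta^{-1}\beta_{\cL}$ on a cofinal system of lattices is where the care is needed.
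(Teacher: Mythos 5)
Your overall structure tracks the paper's, and parts (a) and (c) do contain the right ingredients, but there are three genuine problems.

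\textbf{Part (a).} Your injectivity argument hinges on the claim that $\psi_{\cL}\circ\theta:\h{U(\cJ)}\to\cE$ is injective, with the justification that $\gamma'$ is injective and ``the symbol map is faithful.'' This does not follow. The map $\psi_{\cL}\circ\theta$ is the action of $\h{U(\cJ)}$ on $\cA$ by right-invariant formal differential operators; Lemma \ref{PsiL}(b) only gives injectivity on the degree-one piece $\iota(\cA\oplus\cL)$, and nothing in the premises says the full (completed) representation is faithful. The paper instead reduces to showing $U(\gamma'):U(\cJ)\to U(\cL)$ is injective with $\cR$-flat cokernel (so the map survives $\pi$-adic completion), and \emph{that} is reduced via Rinehart's PBW to the injectivity of $S(\cJ)\to S(\cL)\cong\cA\otimes_{\cR}S(\cJ)$, which holds because $\cR\to\cA$ is split injective (via the counit) and $S(\cJ)$ is $\cR$-flat. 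Your route could perhaps be repaired, but as written it is circular: you are assuming exactly the kind of faithfulness you are trying to establish.

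\textbf{Part (b).} The centraliser argument is a genuine gap. You observe that $\rho(g)$ commutes with all right-invariant derivations, deduce that $\log\rho(g)$ lies in the centraliser of $\cT(\h\G)^{\h\G}$, and claim this forces $\log\rho(g)\in p^\epsilon\gamma'(\fr g)$. But centralising $\cT(\h\G)^{\h\G}$ is strictly weaker than belonging to it: writing $u=\sum_i a_i w_i$ with $\{w_i\}$ an $\cR$-basis of $\gamma'(\fr g)$, the condition $[u,w_j]=0$ reads $\sum_i a_i[w_i,w_j]-\sum_i w_j(a_i)w_i=0$, which does not in general force the coefficients $a_i$ to be constants. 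The argument that actually works, and which the paper uses, is to upgrade the commutation to the statement that $\rho(g)$ is a morphism of topological right $\cA$-comodules (because left and right translations commute at the level of the coaction $\h{\Delta}_\delta$, not merely at the level of induced operators on derivations), hence so is $\log\rho(g)$ by continuity of the logarithm series; then $\log\rho(g)\in\cT(\h\G)^{\h\G}$ by Definition \ref{FormalRightInvDers}, and Propositions \ref{FormalInvariantVectFlds} and \ref{LieTriangle} identify this with $\gamma'(\fr g)$. The remaining step pinning $v$ to $\cJ$ (from freeness of $\cL$ over $\cA$) is fine as you wrote it.

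\textbf{Part (c).} You have misread the statement. The map $\theta^{-1}\circ\beta_{\cL}$ has domain $G_{\cL}$ (the domain of $\beta_{\cL}$), and the claim — consistent with how the proposition is then used in Definition \ref{wDGGDefn}, and with the terminology of Definitions \ref{Triv} and \ref{DefnTriv}(b) — is that this is a trivialisation of the $G_{\cL}$-action that is moreover equivariant under conjugation by the whole of $G$; there is no claim that the trivialisation extends to all of $G$, and in general it cannot. Your proposed density-and-continuity extension is also incorrect even on its own terms: $G_{\cL}$ is an \emph{open} subgroup of $G$, hence closed, hence not dense unless $G_{\cL}=G$. Once you drop the extension problem, your argument for the trivialisation identity on $G_{\cL}$ (apply the injective $\theta$ to both sides, use Theorem \ref{Beta}(b) and $G$-equivariance of $\theta$) and for the $G$-equivariance of $\theta^{-1}\circ\beta_{\cL}$ is correct and matches the paper.
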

\begin{proof} (a) The $G$-equivariance of $\theta$ follows from Lemma \ref{EquivariantInfAction}. For the injectivity, it is enough to show that $U(\gamma') : U(\cJ) \to U(\cL)$ is injective, because both of these algebras are flat as $\cR$-modules. By Lemma \ref{ALLie}(a,b) and \cite[Theorem 3.1]{Rinehart}, it is enough to see that $S(\cJ) \to S(\cL)$ is injective. Now by Propositions \ref{LieTriangle} and \ref{FormalVectFields} there is an $\cA$-module isomorphism $\h{1 \otimes \gamma'} : \cA \otimes_{\cR} \cJ \stackrel{\cong}{\longrightarrow} \cL$ which induces an $\cA$-algebra isomorphism $S(\cL) \cong \cA \otimes_{\cR} S(\cJ)$, and the canonical map $S(\cJ) \to \cA \otimes_{\cR} S(\cJ)$ is injective because $\cA$ and $S(\cJ)$ are flat $\cR$-modules.

(b) Let $g \in G_{\cL}$ so that $\rho(g) = \psi_{\cL}( \exp(p^\epsilon \iota(u)) )  = \exp(p^\epsilon u)$ for some $u \in \cL$. Because the left translation action of $\G$ on itself commutes with the right translation action, $\rho(g) : \cA \to \cA$ is a morphism of topological right $\cA$-comodules, in the sense that the following diagram is commutative:
\[\xymatrix{ \cA \ar[rr]^{\rho(g)}\ar[d]_{\h{\Delta_\delta}} && \cA \ar[d]^{\h{\Delta_\delta}} \\ \cA \h\otimes_{\cR} \cA \ar[rr]_{\rho(g) \h\otimes 1} && \cA \h\otimes_{\cR} \cA. }\]
The same is then true for the endomorphism $\log(\rho(g)) = p^\epsilon u$ of $\cA$. It follows that $p^\epsilon u \in \cT(\h\G)$ is a right-invariant derivation of $\h\G$. Since $\cA$ is flat over $\cR$, this implies that $u$ is also right-invariant: $u \in \cT(\h\G)^{\h\G}$. Hence, by Propositions \ref{FormalInvariantVectFlds} and \ref{LieTriangle}, there exists $v \in \fr{g}$ such that $\gamma'(v) = u$. Because $\cL \cong \cA \otimes \gamma'(\cJ)$, in fact $v$ must lie in $\cJ \subset \fr{g}$. Using equation (\ref{ExpIotaLogRho}) we see that
\[ \beta_{\cL}(g) = (\exp \circ \iota \circ \log \circ \rho)(g) = \exp(p^\epsilon \iota(\gamma'(v))) = \theta(\exp(p^\epsilon \iota(v))).\]
The uniqueness of $v$ follows from the injectivity of $\theta$, established in part (a) above.
(c) Let $g \in G_{\cL}$ and let $h \in \exp( p^\epsilon \iota(\cJ) )$ be such that $\beta_{\cL}(g) = \theta(h)$. The $g$-action on $\h{U(\cL)}$ is given by conjugation by $\theta(h)$ by Theorem \ref{Beta}(b), and the map $\theta$ is $G$-equivariant, so $\theta(h a h^{-1}) = g \cdot \theta(a) = \theta(g \cdot a)$ for all $a \in \h{U(\cJ)}$. So the $g$-action on $\h{U(\cJ)}$ is given by conjugation by $h$, by part (a). Finally, $\beta_{\cL}$ is $G$-equivariant by Theorem \ref{Beta}(b) and $\theta$ is $G$-equivariant, so $\theta^{-1} \circ \beta_{\cL}$ is also $G$-equivariant.
\end{proof}

We can now mimic Definition \ref{StarDefn} and make the following
\begin{defn}\label{wDGGDefn}  We define the \emph{completed skew-group algebra}
\[ \wDGG := \invlim\limits \hK{U(\cJ)} \rtimes_N G.\]
The inverse limit is taken over the set $\cK(G)$ all pairs $(\cJ, N)$, where $\cJ$ is a $G$-stable Lie lattice in $\fr{g}$ and $N$ is an open normal subgroup of $G$ contained in $G_{\cA \cdot \gamma'(\cJ)}$. The $G$-equivariant trivialisation of the $N$-action on $\hK{U(\cJ)}$ is understood to be the restriction to $N$ of the map $\theta^{-1} \circ \beta_{\cA \cdot \gamma'(\cJ)}$ from Proposition \ref{ThetaBeta}(c).
\end{defn}

Recall the definition of \emph{good chains} from Definition \ref{GoodChain}. 
\begin{prop}\label{StdPresWDGG} Let $U_n := \hK{U(\pi^n\fr{g})}$, let $\cA = \cO(\h\G)$ and let $(N_\bullet)$ be a good chain for the $\cA$-Lie lattice $\cA \cdot \gamma'(\fr{g})$ in $\cT(\h\G)$. Then there is an isomorphism
\[ \wDGG \cong \invlim U_n \rtimes_{N_n} G.\]
\end{prop}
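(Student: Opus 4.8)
The plan is to show that the subset $\cK(G)$ of those trivialising pairs whose Lie lattice is of the form $\cA\cdot\gamma'(\cJ)$, re-indexed by $(\cJ,N)$, contains a cofinal good chain of the stated shape, and then to identify the limit. First I would observe that $\cA\cdot\gamma'(\pi^n\fr g)=\pi^n\bigl(\cA\cdot\gamma'(\fr g)\bigr)$ as $\cA$-submodules of $\cT(\h\G)$, using $\cR$-linearity of $\gamma'$ and the fact that $\cA\cdot\gamma'(\fr g)$ is a free $\cA$-module by Lemma~\ref{ALLie}(b); hence the hypotheses of Definition~\ref{GoodChain} make sense for this particular $\cA$-Lie lattice, and a good chain $(N_\bullet)$ for it exists by Corollary~\ref{ChainCap}. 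Note also that $\pi^n\fr g$ is a $G$-stable Lie lattice in $\fr g$ for each $n$, since the adjoint action $\Ad\circ\sigma$ is $\cR$-linear, so $(\pi^n\fr g,N_n)\in\cK(G)$ for all $n\ge 0$.

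Next I would check that $\{(\pi^n\fr g,N_n):n\ge 0\}$ is cofinal in $\cK(G)$ under the partial order by component-wise reverse inclusion. Given $(\cJ,N)\in\cK(G)$: since $\cJ$ contains a $\pi$-power multiple of $\fr g$ (Lemma~\ref{ALLie}(a) and the definition of Lie lattice), we have $\pi^s\fr g\subseteq\cJ$ for some $s\ge 0$; and since $N$ is open and $\bigcap_n N_n=\{1\}$, compactness of $G\setminus N$ gives $N_r\subseteq N$ for some $r\ge 0$, exactly as in the proof of Lemma~\ref{StdPres}. Taking $n=\max\{r,s\}$ yields $(\pi^n\fr g,N_n)\ge(\cJ,N)$, so the chain is cofinal. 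The transition maps in the two inverse systems agree: the connecting homomorphism $\hK{U(\pi^{n+1}\fr g)}\rtimes_{N_{n+1}}G\to\hK{U(\pi^n\fr g)}\rtimes_{N_n}G$ coming from the inclusion $\pi^{n+1}\fr g\hookrightarrow\pi^n\fr g$ together with $1_G$ via Lemma~\ref{Ufunctor} and Lemma~\ref{CPfunc} is precisely the structure map of $\cK(G)$ restricted to this chain, because the trivialisation on $\hK{U(\pi^n\fr g)}$ used in Definition~\ref{wDGGDefn} is $\theta^{-1}\circ\beta_{\cA\cdot\gamma'(\pi^n\fr g)}$ and these are compatible by the naturality of $\theta$ (Proposition~\ref{ThetaBeta}(a)) and of $\beta_{\cL}$ under rescaling of $\cL$ (as in the proof of Proposition~\ref{DXGwelldef}). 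A cofinal subsystem computes the same inverse limit, so $\wDGG\cong\invlim_n\hK{U(\pi^n\fr g)}\rtimes_{N_n}G=\invlim_n U_n\rtimes_{N_n}G$.

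The one point requiring a little care — and the likely main obstacle — is the verification that the transition maps genuinely match, i.e.\ that the $G$-equivariant trivialisations used at the two levels are intertwined by the connecting map $U(\gamma')$-type homomorphism. This reduces to checking compatibility of the maps $\theta^{-1}\circ\beta_{\cA\cdot\gamma'(\pi^n\fr g)}$ as $n$ varies, which in turn follows from Proposition~\ref{ThetaBeta}(b): for $g\in G_{\cA\cdot\gamma'(\pi^n\fr g)}\subseteq G_{\cA\cdot\gamma'(\pi^{n-1}\fr g)}$ the element $v\in\pi^n\fr g$ with $\rho(g)=\exp(p^\epsilon\gamma'(v))$ is the same whether computed at level $n$ or at level $n-1$ (by uniqueness), so $(\theta^{-1}\circ\beta)(g)=\exp(p^\epsilon\iota(v))$ is carried by the inclusion $\hK{U(\pi^n\fr g)}\hookrightarrow\hK{U(\pi^{n-1}\fr g)}$ to the corresponding element at level $n-1$. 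Granting this, Lemma~\ref{CPfunc} gives the required commuting squares and the result follows by passing to the inverse limit, exactly paralleling the proof of Lemma~\ref{StdPres}.
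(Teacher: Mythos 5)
Your proposal is correct and takes essentially the same approach as the paper: observe that $(\pi^n\fr g, N_n)\in\cK(G)$ for all $n$ (using $\cA\cdot\gamma'(\pi^n\fr g)=\pi^n(\cA\cdot\gamma'(\fr g))$), then run the compactness argument from the proof of Lemma~\ref{StdPres} to get cofinality. The extra care you take over the compatibility of the trivialisations with the connecting maps is a reasonable thing to spell out, but it is already built into the definition of the inverse system in Definition~\ref{wDGGDefn} via Lemma~\ref{CPfunc}, so the paper's proof simply omits it.
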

\begin{proof} Each $\pi^n \fr{g}$ is evidently a $G$-stable Lie lattice in $\fr{g}$, so $(\pi^n \fr{g}, N_n) \in \cK(G)$ for each $n \geq 0$. The proof of Lemma \ref{StdPres} implies that the set of these pairs is in fact cofinal in $\cK(G)$.
\end{proof}

\begin{thm} \label{wDGGFrSt} $\wDGG$ is Fr\'echet-Stein.
\end{thm}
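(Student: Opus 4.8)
The plan is to reduce the Fr\'echet-Stein property of $\wDGG$ to the already-established Fr\'echet-Stein property of the algebras $\w\cD(\bG,G)$ from Theorem \ref{FrSt}, via the presentation given in Proposition \ref{StdPresWDGG}. Write $U_n := \hK{U(\pi^n\fr{g})}$, fix the admissible formal model $\cA = \cO(\h\G)$, set $\cL := \cA\cdot\gamma'(\fr{g})$ (a free $\cA$-Lie lattice in $\cT(\h\G) = \cT(\bG)$ by Lemma \ref{ALLie}), and let $V_n := \hK{U(\pi^n\cL)}$. The embedding $\theta = \h{U(\gamma')} : \h{U(\pi^n\fr{g})} \to \h{U(\pi^n\cL)}$ of Proposition \ref{ThetaBeta}(a) is injective and $G$-equivariant, and by Proposition \ref{ThetaBeta}(c) it intertwines the relevant trivialisations; hence, after choosing a chain $(N_\bullet)$ that is good simultaneously for $\cL$ and for each $\pi^n\cL$ (using Corollary \ref{ChainCap}), Lemma \ref{CPfunc} produces compatible injections $U_n\rtimes_{N_n} G \hookrightarrow V_n\rtimes_{N_n}G$, and taking the inverse limit and invoking Lemma \ref{StdPres} gives an embedding $\wDGG \hookrightarrow \w\cD(\bG,G)$ realising $\wDGG$ as the ``right-$\bG$-invariants''.

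With this setup in hand, the three conditions in the definition of a (two-sided) Fr\'echet-Stein presentation must be verified for $\wDGG \cong \invlim U_n\rtimes_{N_n}G$. First, each $U_n\rtimes_{N_n}G$ is Noetherian: $U_n = \hK{U(\pi^n\fr{g})}$ is Noetherian by Corollary \ref{HTF} (after replacing $\fr{g}$ by a $\pi$-power multiple so that $[\fr{g},\fr{g}]\subseteq\pi\fr{g}$ and the anchor map is trivial on $\fr{g}$, which is automatic here as $\gamma'(\fr{g})$ consists of invariant vector fields — note the trivial anchor since $U(\fr{g})$ is an ordinary enveloping algebra), and $U_n\rtimes_{N_n}G$ is a crossed product of $U_n$ with the finite group $G/N_n$ by Lemma \ref{RingSGN}(b), hence Noetherian. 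Second, density of the image of $U_{n+1}\rtimes_{N_{n+1}}G$ in $U_n\rtimes_{N_n}G$ follows exactly as in the proof of Theorem \ref{FrSt}: the image contains the dense image of $U(\fr{g})$ in $U_n$, and $G/N_{n+1}\twoheadrightarrow G/N_n$. Third — and this is the main point — one must show $U_n\rtimes_{N_n}G$ is a flat (left and right) $U_{n+1}\rtimes_{N_{n+1}}G$-module.

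For the flatness step I would run the same two-step argument used at the end of the proof of Theorem \ref{FrSt}. Consider the commutative square with $U_{n+1} \to U_n$ along the top, the inclusions $U_{n+1}\hookrightarrow U_{n+1}\rtimes_{N_{n+1}}G$ and $U_n\hookrightarrow U_n\rtimes_{N_n}G$ down the sides, and $U_{n+1}\rtimes_{N_{n+1}}G \to U_n\rtimes_{N_n}G$ along the bottom. The rightmost vertical arrow is flat because $U_n\rtimes_{N_n}G$ is free of rank $|G:N_n|$ over $U_n$ on both sides (Lemma \ref{RingSGN}(b)). The top arrow $U_{n+1} = \hK{U(\pi^{n+1}\fr{g})} \to \hK{U(\pi^n\fr{g})} = U_n$ is flat by Theorem \ref{HKULflat} applied to the free $\cR$-Lie algebra $\pi^n\fr{g}$ with trivial anchor map (after the usual rescaling ensuring $[\pi^n\fr{g},\pi^n\fr{g}]\subseteq\pi^2(\pi^n\fr{g})$, valid for $n$ large, and one treats the base cases by an initial rescaling of $\fr{g}$). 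Hence the diagonal $U_{n+1}\to U_n\rtimes_{N_n}G$ is flat, and then \cite[Lemma 2.2]{SchmidtBB} yields flatness of the bottom arrow. The main obstacle is making sure the hypotheses of Theorem \ref{HKULflat} genuinely apply to $\pi^n\fr{g}$ viewed as an $(\cR,\cR)$-Lie algebra — in particular that ``$\cL\cdot\cA\subseteq\pi\cA$'' is vacuous here since the base ring is $\cR$ itself and $\fr{g}$ acts trivially on it, and that the bracket condition can be arranged by rescaling; once that bookkeeping is done, everything is a routine passage to the inverse limit. A clean alternative for the flatness input is to deduce it directly from Theorem \ref{FrSt} applied to $\bG$ together with the faithful flatness of $V_n\rtimes_{N_n}G$ over $U_n\rtimes_{N_n}G$ coming from Proposition \ref{FormalVectFields} (which exhibits $\h{U(\pi^n\cL)}$ as a suitable "base change" of $\h{U(\pi^n\fr{g})}$ along $\cA$), but I expect the direct argument via Theorem \ref{HKULflat} to be shorter to write.
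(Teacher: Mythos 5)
Your proposal is correct and follows essentially the same route as the paper: verify Noetherianity of each $U_n \rtimes_{N_n} G$ via Corollary \ref{HTF} plus Lemma \ref{RingSGN}(b), density from the dense image of $U(\fr{g}_K)$, and flatness of $U_n \rtimes_{N_n} G$ over $U_{n+1} \rtimes_{N_{n+1}} G$ by combining Theorem \ref{HKULflat} with \cite[Lemma 2.2]{SchmidtBB}, exactly as at the end of the proof of Theorem \ref{FrSt}. The only cosmetic difference is that the paper sidesteps your bookkeeping worry by setting $\cJ := \pi^2\fr{g}$ at the outset (so $[\pi^n\cJ, \pi^n\cJ] \subseteq \pi^2(\pi^n\cJ)$ holds for all $n \geq 0$, with the trivial anchor map making the condition $\cL\cdot\cA\subseteq\pi\cA$ vacuous as you observe), whereas you work with $\fr{g}$ and patch things up by an ``initial rescaling''; your opening paragraph on the embedding $\wDGG \hookrightarrow \w\cD(\bG,G)$ is context the paper's proof does not use.
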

\begin{proof} Let $\cJ := \pi^2 \fr{g}$. This is a $G$-stable Lie lattice in $\fr{g}$ which satisfies $[\cJ,\cJ] \subseteq \pi^2 \cJ$. We may view $\cJ$ as an $(\cR,\cR)$-Lie algebra with the trivial anchor map. Then for each $n \geq 0$, $\pi^n \cJ$ also satisfies all these conditions, so in particular $U_n := \hK{U(\pi^n\cJ)}$ is Noetherian by Corollary \ref{HTF}  and $U_n$ is a flat $U_{n+1}$-module for each $n \geq 0$ by Theorem \ref{HKULflat}. Furthermore, the image of $U_{n+1}$ is dense in $U_n$ because it contains the dense image of $U(\fr{g}_K)$ in $U_n$. Now choose a good chain $(N_\bullet)$ for $\cA \cdot \gamma'(\cJ)$ using Corollary \ref{ChainCap} and apply Proposition \ref{StdPresWDGG} to find an isomorphism
\[ \wDGG \cong \invlim U_n \rtimes_{N_n} G.\]
The same argument appearing in the proof of Theorem \ref{FrSt}, using  \cite[Lemma 2.2]{SchmidtBB}, now completes the proof.  \end{proof}

\begin{lem}\label{NewStacky} Let $H$ be an open normal subgroup of $G$. Then there is a natural isomorphism
\[ \wDGG \cong \w\cD(\bG, H)^{\bG} \rtimes_H G.\]
\end{lem}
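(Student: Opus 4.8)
The statement asserts that $\wDGG \cong \w\cD(\bG,H)^{\bG} \rtimes_H G$ for an open normal subgroup $H$ of $G$. The strategy is to imitate the proof of Corollary \ref{Stacky}: reduce everything to the associativity isomorphism of Proposition \ref{SHGassoc} applied at the level of the Banach building blocks $\hK{U(\cJ)}$, and then pass to the inverse limit, after checking that the relevant indexing sets are cofinal.

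First I would set up the right index sets. For the ambient group $G$ we have $\cK(G)$ from Definition \ref{wDGGDefn}; for the subgroup $H$ we have the analogous set $\cK(H)$, where a pair $(\cJ,N)$ consists of an $H$-stable Lie lattice $\cJ$ in $\fr{g}$ together with an open normal subgroup $N$ of $H$ contained in $H_{\cA\cdot\gamma'(\cJ)} = G_{\cA\cdot\gamma'(\cJ)} \cap H$ (the last equality being the analogue of the identity $H_{\cL} = G_{\cL} \cap H$ used in the proof of Proposition \ref{IHIGcofinal}, which holds here because $\beta_\cL$ and $\theta^{-1}\circ\beta_\cL$ are defined by the same formula on the subgroup). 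The key combinatorial point, proved exactly as in Proposition \ref{IHIGcofinal}, is that $\cK(G) \cap \cK(H)$ is cofinal in both $\cK(G)$ and $\cK(H)$: given $(\cJ,N) \in \cK(G)$ pick $U \triangleleft_o G$ with $U \leq N \cap H$ and note $(\cJ,U) \in \cK(G)\cap\cK(H)$; given $(\cJ,N)\in\cK(H)$ pick $U \triangleleft_o G$ with $U \leq N$ and note $(\cJ,U) \in \cK(G)\cap\cK(H)$, using that a $G$-stable Lie lattice is automatically $H$-stable and conversely an $H$-stable Lie lattice need not be $G$-stable --- so here one must be slightly careful: the cofinality argument should start from $G$-stable lattices, i.e. one shows the pairs $(\cJ, U)$ with $\cJ$ $G$-stable and $U \triangleleft_o G$ small are cofinal in $\cK(H)$ as well, which works because every $H$-stable lattice contains a $\pi$-power multiple of $\fr{g}$, hence contains $\pi^m\fr{g}$ which is $G$-stable, for large $m$. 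Concretely it is cleanest to just use the cofinal system $\{(\pi^n\fr{g}, N_n)\}$ from Proposition \ref{StdPresWDGG} for a good chain $(N_\bullet)$ in $H$ (chosen using Corollary \ref{ChainCap} so that $N_n \leq H_{\cA\cdot\gamma'(\pi^n\fr{g})}$ for all $n$, which is possible since $H$ is open); this system is cofinal in both $\cK(G)$ and $\cK(H)$ simultaneously.

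Next, for each such pair $(\cJ,N) \in \cK(G)\cap\cK(H)$, I would apply Proposition \ref{SHGassoc} with $S = \hK{U(\cJ)}$, the normal subgroups $N \leq H$ of $G$, and the $G$-equivariant trivialisation $\beta := (\theta^{-1}\circ\beta_{\cA\cdot\gamma'(\cJ)})_{|N}$ of the $N$-action. This requires noting (via Lemma \ref{GammaTriv}, exactly as invoked in the proof of Corollary \ref{Stacky}) that $\gamma : H \to (\hK{U(\cJ)} \rtimes_N^\beta H)^\times$ is a $G$-equivariant trivialisation of the $H$-action, so that the iterated crossed product makes sense; Proposition \ref{SHGassoc} then yields a natural ring isomorphism
\[ \left(\hK{U(\cJ)} \rtimes_N H\right) \rtimes_H G \;\congs\; \hK{U(\cJ)} \rtimes_N G. \]
Functoriality of this isomorphism in $(\cJ,N)$ follows from the naturality built into Proposition \ref{SHGassoc} together with the functoriality of the connecting maps (Lemma \ref{CPfunc}, Lemma \ref{Ufunctor}), just as in Corollary \ref{Stacky}. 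Finally I would pass to the inverse limit over the cofinal system: on the one side $\invlim \hK{U(\cJ)} \rtimes_N G = \wDGG$ by definition; on the other side, $\invlim \hK{U(\cJ)} \rtimes_N H = \w\cD(\bG,H)^{\bG}$, and since $H/N$ is finite and inverse limits commute with the finite direct sums defining the crossed product over $H/N$ (the same observation used in the proof of Proposition \ref{XaffCompat}(a)), one gets $\invlim \left(\left(\hK{U(\cJ)}\rtimes_N H\right)\rtimes_H G\right) \cong \left(\invlim \hK{U(\cJ)}\rtimes_N H\right)\rtimes_H G = \w\cD(\bG,H)^{\bG}\rtimes_H G$. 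Composing gives the desired natural isomorphism.

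\textbf{Main obstacle.} The only genuinely delicate point is verifying that the trivialisation data match up correctly across the two descriptions --- specifically, that the $G$-equivariant trivialisation of the $N$-action on $\hK{U(\cJ)}$ used in Definition \ref{wDGGDefn} (namely the restriction of $\theta^{-1}\circ\beta_{\cA\cdot\gamma'(\cJ)}$ from Proposition \ref{ThetaBeta}(c)) is simultaneously the trivialisation one must feed into Proposition \ref{SHGassoc} and the one appearing in the definition of $\w\cD(\bG,H)^{\bG}$, and that the auxiliary trivialisation $\gamma$ of the $H$-action is $G$-equivariant so that $\rtimes_H G$ is legitimate. All of this is forced by the formulas and is parallel to Corollary \ref{Stacky}, but it must be spelled out since the trivialisations here pass through the map $\theta = \h{U(\gamma')}$ rather than being the $\beta_\cL$ of the ambient $\w\cD$-theory; once the compatibility $\theta^{-1}\circ\beta_{\cA\cdot\gamma'(\cJ)}$ restricting correctly to subgroups is checked (immediate from Definition \ref{BetaDefn} and the fact that $H_\cL = G_\cL \cap H$), the rest is a formal limit argument.
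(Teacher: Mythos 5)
Your proposal is correct and takes essentially the same approach as the paper: the paper's proof of this lemma simply reads ``Follow the proof of Corollary \ref{Stacky},'' and your argument does exactly that, spelling out the cofinality of the common index set, the applicability of Proposition \ref{SHGassoc} and Lemma \ref{GammaTriv} at the level of the Banach building blocks, and the compatibility of the trivialisations $\theta^{-1}\circ\beta_{\cA\cdot\gamma'(\cJ)}$ before passing to the inverse limit.
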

\begin{proof} Follow the proof of Corollary \ref{Stacky}. 
\end{proof}

We will now impose the following additional hypotheses on $\G$.
\begin{setup}\hspace{2em}\label{HgsSpace}
\begin{itemize}\item $\B$ is a closed and flat $\cR$-subgroup scheme of $\G$,
\item $\X := \G/\B$ is a scheme, flat and of finite presentation over $\cR$,
\item the canonical map $\xi : \G \to \X$ is a Zariski locally trivial $\B$-torsor.
\end{itemize}\end{setup}

We are interested in the $\pi$-adic completion $\h\X$ of $\X$ which is an admissible formal $\cR$-scheme, and its rigid generic fibre
\[ \bX := \h{\X}_{\rig}.\]
By Proposition \ref{GpOfPtsActsCtsly}, the group $G$ acts continuously on $\bX$. 

\begin{thm}\label{DGGaction} Assume that Hypotheses \ref{SigmaGG} and \ref{HgsSpace} are satisfied. Then $\wDGG$ acts on $\bX$ compatibly with $G$ in the sense of Definition \ref{AactsCompatibly}.
\end{thm}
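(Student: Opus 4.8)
The plan is to verify the four axioms (a)--(d) of Definition \ref{AactsCompatibly} directly, exploiting the explicit presentation of $\wDGG$ from Proposition \ref{StdPresWDGG} and the structure theory of $\w\cD(\bX,G)$ developed in $\S\ref{MainConstr}$. First I would set $A := \wDGG$ and, for each compact open subgroup $H$ of $G$, set $A_H := \w\cD(\bG,H)^{\bG}$, using Lemma \ref{NewStacky} (applied to $H$ in place of $G$, which is legitimate once one notes that the $\cA$-Lie lattices and good chains involved can be chosen simultaneously for $H$ and $G$ via Corollary \ref{ChainCap} and Lemma \ref{FastChain}) to realise $A_H$ as a subalgebra of $A$. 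Each $A_H$ is Fr\'echet-Stein by Theorem \ref{wDGGFrSt}. The group homomorphism $\eta : G \to A^\times$ is the canonical map $\gamma$ from Definition \ref{wDGGDefn} (the one sending $g \in G$ to its image in the inverse limit), and the continuity condition forcing $A_H \le A_N$ and $\eta(H) \subseteq A_H^\times$ follows because $\eta(H)$ visibly lands in each $\hK{U(\cJ)} \rtimes_N H$ for $N \le H$.

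The heart of the construction is the presheaf homomorphism $\varphi^H : A_H \to \w\cD(-,H)$ on $\bX_w/H$. Here I would use the anchor map $\alpha : (\xi_\ast \cT_{\h\G})^{\h\G} \to \cT_{\h\X}$ of Lemma \ref{AlphaPsiVarphi} together with Lemma \ref{AlphaPsiVarphi} itself ($\alpha \circ \psi' = \varphi'$): on the level of enveloping algebras the right-invariant vector fields on $\h\G$, which by Proposition \ref{FormalInvariantVectFlds} and Lemma \ref{ALLie} generate the $\cA$-Lie lattices defining $A_H$, map via $\alpha$ to vector fields on $\bX$, hence induce a map $U(\fr{g}_K) \to \cD(\bX)$ extending to $U(\fr{g}_K) \rtimes H \to \cD(\bX)\rtimes H$; combining with the $H$-equivariance supplied by Lemma \ref{EquivariantInfAction} and the trivialisation compatibility from Proposition \ref{ThetaBeta}(c), Lemma \ref{CPfunc} produces the desired map on each level $\hK{U(\pi^n\cJ)}\rtimes_{N_n} H \to \hK{U(\pi^n\cL_\bU)}\rtimes_{N_n} H$, and passing to the inverse limit over $n$ (using Proposition \ref{StdPresWDGG} on the source and Lemma \ref{StdPres} on the target) gives $\varphi^H(\bU)$. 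Functoriality in $\bU \in \bX_w/H$ — i.e. that this is a morphism of \emph{presheaves} — follows from the uniqueness clause of Theorem \ref{wUGfunc}, since both composites extend the same map $\cD(\bX)\rtimes H \to \cD(\bV)\rtimes H$.

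Axiom (a), that $A_H \otimes_{K[H]} K[N] \to A_N$ is bijective, I would deduce exactly as in the proof of Proposition \ref{XaffCompat}(a): at each finite level $\hK{U(\pi^n\cJ)}\rtimes_{N_n} H$ is a crossed product of $\hK{U(\pi^n\cJ)}$ with $H/N_n$ by Lemma \ref{RingSGN}(b), so the base-change map is a bijection of free modules, and bijectivity is preserved under the countable inverse limit because inverse limits commute with finite direct sums and the relevant index sets are cofinal (Proposition \ref{IHIGcofinal}). Axiom (b), compatibility of $\varphi^H$ and $\varphi^N$ with the inclusion $A_H \hookrightarrow A_N$, and axiom (c), compatibility with conjugation by $\eta(g)$ and the isomorphisms $\w{g}_{\bU,H}$ of Lemma \ref{gUHmaps}, both reduce to commutativity of diagrams of $K$-algebra maps that are uniquely determined on the dense subalgebra $\cD(\bG)\rtimes G$ (resp. $U(\fr g_K)\rtimes G$); one checks commutativity there using the $G$-equivariance of the anchor map (Lemma \ref{EquivariantInfAction}) and the functoriality of $\w\cD(-,-)$ (Theorem \ref{wUGfunc}), then invokes uniqueness. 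Axiom (d), $\varphi^H \circ \eta_{|H} = \gamma^H$, is immediate from the construction since both sides send $h \in H$ to the image of $h$ in the respective inverse limit.

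\textbf{Main obstacle.} I expect the genuinely delicate step to be the construction of $\varphi^H$ and verifying it is well-defined and functorial: one must match up the right-invariant Lie lattices $\cA\cdot\gamma'(\cJ)$ on $\h\G$ (which control $A_H = \w\cD(\bG,H)^{\bG}$) with suitable $G$-stable Lie lattices on $\bX$ under the anchor map $\alpha$, check that $\alpha$ carries $\cJ$ into an $\cO(\bX)$-Lie lattice with the right $p$-adic integrality ($[\cL,\cL]\subseteq\pi^2\cL$ etc.) after rescaling, and ensure the trivialisations $\theta^{-1}\circ\beta_{\cA\cdot\gamma'(\cJ)}$ on the source are carried to the trivialisations $\beta_{\cL}$ on the target so that Lemma \ref{CPfunc} applies levelwise with the same $N_\bullet$. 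Once the levelwise maps are in place and shown compatible, passage to the limit and the remaining axioms are routine applications of density and the uniqueness assertions already proved.
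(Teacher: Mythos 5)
Your strategy is essentially the paper's: the same choice of $A_H := \w\cD(\bG,H)^{\bG}$, the same use of the anchor map $\alpha$ and the infinitesimal actions $\gamma', \varphi'$, the same appeal to Lemma \ref{CPfunc} levelwise followed by a density/uniqueness argument, and the same reduction of axioms (a)--(d) to the argument of Proposition \ref{XaffCompat}. However, you have correctly flagged the \emph{crux} of the proof --- showing that the trivialisation $\theta^{-1}\circ\beta_{\cA\cdot\gamma'(\cH)}$ on the $\wDGG$ side is carried by the induced map to the trivialisation $\beta_{\cJ}$ on the $\w\cD(\bU,H)$ side --- but you have not actually supplied an argument for it, and the reference you cite is not the one that does the job.

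Proposition \ref{ThetaBeta}(c) only tells you what the trivialisation on the source $\hK{U(\cH)}$ looks like; it says nothing about what happens after applying $\h{\varphi'_{\bU,K}}$. What is needed is precisely the identity
\[\h{\varphi'_{\bU,K}}^{\times} \circ \theta^{-1} \circ \beta_{\cL} \;=\; \beta_{\cJ}\]
on a suitable open normal subgroup $N$, and the mechanism that delivers it is Lemma \ref{AlphaLogRho}, which asserts
\[ \alpha_{\bU} \circ \log \circ \rho^{\bG}_{|N} \;=\; \log \circ \rho^{\bU}_{|N},\]
i.e.\ that the anchor map intertwines the logarithms of the two group actions. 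Your proposal does not mention this lemma at all; combined with Proposition \ref{ThetaBeta}(b) and Lemma \ref{AlphaPsiVarphi}, it is what makes Lemma \ref{CPfunc} applicable. Lemma \ref{AlphaLogRho} has a genuine proof of its own (reducing to affine pieces where the $\B$-torsor trivialises, using continuity of the partial logarithm series, $\pi$-adic completion, and injectivity of $\h q = \h{\xi^{\sharp}}$), so this is a real missing ingredient, not a routine verification. Relatedly, your phrase ``match up the right-invariant Lie lattices $\cA\cdot\gamma'(\cJ)$ on $\h\G$ \dots with suitable $G$-stable Lie lattices on $\bX$'' runs in the wrong direction: since $\varphi'_{\bU}$ need not be injective (its kernel is a $K$-subspace of $\fr g$), one cannot simply take preimages of Lie lattices. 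Instead, the paper's Proposition \ref{PhiFactors} starts from a $\cB$-trivialising pair $(\cJ,J)$ on $\bU$ and produces a $\pi$-power multiple $\cH$ of $\fr g$ with $\varphi'_{\bU}(\cH)\subseteq\cJ$ together with a shrunk $N\leq J$; getting this direction and the shrinking of $N$ right is part of what makes the argument go through.
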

We begin the proof of this result by fixing the following data:
\begin{itemize}
\item an affinoid subdomain $\bU$ of $\bX$, 
\item an open subgroup $H$ of $G_{\bU}$, and 
\item an $H$-stable affine formal model $\cB$ in $\cO(\bU)$. 
\end{itemize}
Because $\xi : \G \to \X$ is a Zariski locally trivial $\B$-torsor by assumption, we have at our disposal the anchor map $\alpha : (\xi_\ast \cT_{\G})^{\B} \to \cT_{\X}$ from equation (\ref{AnchorMapAlpha}). Let
\[ \alpha_{\bU} : \cT(\G)^{\G} \to \cT(\bU)\]
be the composite of the maps
\[ \cT(\G)^{\G} \hookrightarrow \cT(\G)^{\B} \stackrel{\alpha(\X)}{\longrightarrow} \cT(\X) \to \cT(\h{\X}) \longrightarrow \cT(\bX) \longrightarrow \cT(\bU).\]
In what follows, if $H$ is acting on an affinoid variety $\bY$ we will denote the corresponding action of $H$ on $\cO(\bY)$ by
\[\rho^{\bY} : H \to \Aut_K \cO(\bY).\]
The actions $\rho^{\bG}$ and $\rho^{\bU}$ preserve the affine formal models $\cA = \cO(\h{\G})$ and $\cB$, respectively. On the other hand, we have at our disposal the congruence subgroups $\cG_{p^\epsilon}(\cA)$ and $\cG_{p^\epsilon}(\cB)$ appearing in the proof of Proposition \ref{GactsOnX}.
\begin{lem}\label{AlphaLogRho} Let $N$ be the intersection of the subgroups $(\rho^{\bG})^{-1}\left(\cG_{p^\epsilon}(\cA)\right)$ and $(\rho^{\bU})^{-1}\left(\cG_{p^\epsilon}(\cB)\right)$ of $H$. Then $\alpha_{\bU} \circ \log \circ \rho^{\bG}_{|N} = \log \circ \rho^{\bU}_{|N}.$
\end{lem}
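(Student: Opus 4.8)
The statement asserts the compatibility of the infinitesimal action on $\bG$ (via left translations) with the anchored action on $\bU$, after passing through the logarithm. Both $\log\circ\rho^{\bG}_{|N}$ and $\alpha_{\bU}\circ\log\circ\rho^{\bG}_{|N}$ land in spaces of $K$-linear derivations, so it suffices to check the equality pointwise on $\cO(\bU)$, and in fact --- since $\cO(\bX)\to\cO(\bU)$ is a restriction map with dense image in the appropriate sense, or more directly since $\cB$ is generated topologically by the restriction of $\cO(\h\X)$ --- it suffices to check it on the image of $\cO(\h\X)$, i.e.\ after pulling back along $\xi^\sharp$. The key geometric input is Lemma \ref{AlphaPsiVarphi}, which says $\alpha\circ\psi' = \varphi'$ where $\psi'$ is the infinitesimal left-translation action on $\G$ and $\varphi'$ is the infinitesimal action on $\X$ obtained from the $\G$-action on $\X = \G/\B$. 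Combined with the $\G$-equivariance of $\xi$, this relates $\varphi'$-derivations of $\cO(\X)$ to $\psi'$-derivations of $\cO(\G)$ lifted from the base.

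First I would unwind the definitions: for $g\in N$, the automorphism $\rho^{\bG}(g)$ of $\cA$ lies in $\cG_{p^\epsilon}(\cA)$, so $\log\rho^{\bG}(g)$ is a well-defined element of $p^\epsilon\Der_{\cR}(\cA) = p^\epsilon\cT(\h\G)$, and by the argument in the proof of Proposition \ref{ThetaBeta}(b) it is in fact a \emph{right-invariant} derivation, so it equals $p^\epsilon\gamma'(v)$ for some $v\in\fr g$ (using that $\rho^{\bG}$ comes from the left-translation action $\h\gamma$, which commutes with right translations). Thus $\log\circ\rho^{\bG}$ takes values in $\gamma'(\fr g)$. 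On the $\bU$ side, $\rho^{\bU}(g)\in\cG_{p^\epsilon}(\cB)$ so $\log\rho^{\bU}(g)\in p^\epsilon\Der_{\cR}(\cB)\subseteq\cT(\bU)$. The claim is then that $\alpha_{\bU}(\gamma'(v)) = \log\rho^{\bU}(g)$, i.e.\ that $\alpha_{\bU}$ carries the logarithm of the translation action on $\bG$ to the logarithm of the induced action on $\bU$.

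The main step is to compare the two group actions before taking logarithms. The $G$-action $\rho^{\bU}$ on $\bU$ is, by Proposition \ref{GpOfPtsActsCtsly} and the construction preceding Theorem \ref{DGGaction}, the restriction to $\bU$ of the $G$-action on $\bX$ induced functorially from the $\G$-action on $\X$ by left translation --- call the latter $\varphi^{\bX}$. So $\rho^{\bU}(g)$ is the restriction of $\varphi^{\bX}(\sigma(g))$, and likewise $\rho^{\bG}(g)$ is $\h\gamma(\sigma(g))$ on $\cA$. Now $\xi^\sharp\colon\cO(\X)\to\cO(\G)$ is $\G$-equivariant by construction of $\X=\G/\B$; passing to $\pi$-adic completions and then to rigid generic fibres, the restriction maps $\cO(\h\X)\to\cO(\bX)$ and the map $\h{\xi}^\sharp\colon\cO(\h\X)\to\cO(\h\G)$ intertwine $\varphi^{\bX}(\sigma(g))$ and $\rho^{\bG}(g)$. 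Differentiating: for $a\in\cO(\h\X)$, $\log\rho^{\bG}(g)$ applied to $\h{\xi}^\sharp(a)$ computes, via the power series defining $\log$ and continuity of $\h{\xi}^\sharp$ (as in the proof of Lemma \ref{LogRhoLemma}), to $\h{\xi}^\sharp$ of the derivation $\log\varphi^{\bX}(\sigma(g))$ applied to $a$. Now $\log\varphi^{\bX}(\sigma(g))$ on $\cO(\bX)$ is precisely $\varphi'(v)$ pushed forward (where $\gamma'(v) = \log\rho^{\bG}(g)$), by Lemma \ref{AlphaPsiVarphi} together with the defining relation $(\ref{AnchorMapAlphaDefn})$ of $\alpha$: indeed $(\ref{AnchorMapAlphaDefn})$ says $\xi^\sharp(\alpha(w)(f)) = w(\xi^\sharp(f))$ for right-$\B$-invariant $w$ and $f\in\cO(\X)$, which is exactly the identity just established with $w = \gamma'(v)$, $\alpha(w) = \alpha_{\bU}(\gamma'(v))$ (after restricting to $\bU$), and $\xi^\sharp$ replaced by its completed/analytified version. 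Combining, $\log\rho^{\bU}(g)$ and $\alpha_{\bU}(\log\rho^{\bG}(g))$ agree on the image of $\cO(\h\X)$ in $\cO(\bU)$; since that image generates $\cB\otimes_{\cR}K = \cO(\bU)$ as a $K$-algebra (by the universal property of the affinoid subdomain $\bU$, cf.\ the argument in Proposition \ref{UPofAfSub}), and two $K$-linear derivations agreeing on algebra generators are equal, the identity $\alpha_{\bU}\circ\log\circ\rho^{\bG}_{|N} = \log\circ\rho^{\bU}_{|N}$ follows.

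\textbf{Expected obstacle.} The computational core is routine once the identifications are set up, so the real work is bookkeeping across the three levels --- scheme, formal scheme, rigid space --- and making sure the anchor map $\alpha$ (defined on $\X$) is correctly transported to $\cT(\bU)$ through $\cT(\X)\to\cT(\h\X)\to\cT(\bX)\to\cT(\bU)$, and that $\h{\xi}^\sharp$ is compatible with $\xi^\sharp$ and with the relevant restriction maps. The one genuinely delicate point is justifying that $\log\rho^{\bG}(g)$ really is right-invariant (hence in $\gamma'(\fr g)$), which I would handle exactly as in the proof of Proposition \ref{ThetaBeta}(b): $\rho^{\bG}(g)$ is a morphism of topological right $\cA$-comodules because left and right translations on $\G$ commute, so the same holds for its logarithm, and flatness of $\cA$ over $\cR$ together with Propositions \ref{FormalInvariantVectFlds} and \ref{LieTriangle} then places it in $\gamma'(\fr g)$. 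Everything else is continuity of the relevant maps plus the density/generation argument to reduce from $\cO(\h\X)$-image to all of $\cO(\bU)$.
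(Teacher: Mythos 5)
Your overall understanding of the geometric mechanism is sound — you correctly identify that the key inputs are the right-invariance of $\log\rho^{\bG}(g)$ (via the argument in Proposition \ref{ThetaBeta}(b)), the $N$-equivariance of $\xi^\sharp$, the defining relation $(\ref{AnchorMapAlphaDefn})$ of the anchor map, and injectivity of completed $\xi^\sharp$-type maps. But there is a genuine gap in the reduction step, and it is not a matter of bookkeeping.

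You propose to verify the identity on the image of $\cO(\h\X)$ in $\cO(\bU)$ and then conclude by a generation or density argument, claiming ``that image generates $\cB\otimes_{\cR}K = \cO(\bU)$ as a $K$-algebra.'' This is false: under Hypothesis \ref{HgsSpace}, $\X = \G/\B$ is in general projective (the intended application is the flag variety), so $\cO(\h\X)$ is essentially just $\cR$, and its image in $\cO(\bU)$ certainly does not generate. For the same reason $\xi^\sharp$ at the level of global sections $\cO(\X)\to\cO(\G)$ is just the structural inclusion of constants, and your intertwining computation on that image proves nothing. The paper avoids this by working \emph{locally}: it fixes an affine open cover $\{\X_i\}$ of $\X$ over which $\xi$ is trivialisable, sets $\bU_i := (\h{\X_i})_{\rig}\cap\bU$, and observes that $\cT(\bU)\to\oplus_i\cT(\bU_i)$ is injective, so that the desired identity in $\cT(\bU)$ can be checked after restriction to each $\bU_i$. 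On each piece, $\cO(\X_i)$ is a genuine affine coordinate ring, the trivialisation gives $q := \xi^\sharp(\X_i):\cO(\X_i)\xrightarrow{\cong}\cO(\xi^{-1}(\X_i))^{\B}$, and your computation (via Lemma \ref{LogRhoLemma}, $N$-equivariance, the anchor relation, and injectivity of $\h q$) does go through. Without this localisation step the argument does not get off the ground.

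A secondary, expository point: you describe Lemma \ref{AlphaPsiVarphi} as ``the key geometric input,'' but your actual computation — matching the paper's — only uses the defining relation $(\ref{AnchorMapAlphaDefn})$ of $\alpha$ together with equivariance of $\xi^\sharp$. The paper's proof of Lemma \ref{AlphaLogRho} does not invoke Lemma \ref{AlphaPsiVarphi} at all; that lemma relates two maps $\fr g\to\cT$, whereas here one needs to relate two \emph{logarithms of group elements}, which is a different (though cognate) compatibility.
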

\begin{proof} We fix an affine covering $\{\X_1,\ldots,\X_m\}$ of $\X$ over which $\xi$ is trivialisable, let $\bX_i := \h{\X_i}$ and $\bU_i := \bX_i \cap \bU$ for each $i$, so that $\{\bU_1,\ldots,\bU_m\}$ is an admissible affinoid covering of $\bU$. Because $N \leq \cG_{p^\epsilon}(\cA)$, it acts on each $\cO(\h{\X_i})$ and these actions are trivial modulo $p^\epsilon \cO(\h{\X_i})$. So $\cA_i := \cB \cdot \cO(\h{\X_i})$ is an $N$-stable affine formal model in $\cO(\bU_i)$, and $N$ acts trivially on $\cA_i / p^\epsilon \cA_i$ for each $i$.

Recall from Proposition \ref{ThetaBeta}(b) that the image of $\log \circ \rho^{\bG}_{|N}$ lies in $\cT(\h{\G})^{\h{\G}}$. Let $\Y_i := \mu^{-1}(\X_i)$ and let $\bY_i := \h{\Y_i}_{\rig}$. Let us also write
\[\mu_{\bY} := \log \circ \rho_{|N}^{\bY} : N \to \cT(\bY)\]
whenever this makes sense. We may now consider the diagram
\[ \xymatrix{ \cT(\bU)\ar[dd] & & & & \cT(\h{\G})^{\h{\G}} \ar[llll]_{\alpha_{\bU}}\ar[rrd] & & \\ 
& & N\ar[llu]_{\mu_{\bU}}\ar[rru]^{\mu_{\bG}}\ar[rrrr]_{\mu_{\bY_i}}\ar[lld]_{\mu_{\bU_i}}\ar[rrd]_{\mu_{\bX_i}} & & & & \h{\cT(\Y_i)^{\B}} \ar[lld]^{\h{\alpha(\X_i)}}\\
\cT(\bU_i) & & & & \cT(\bX_i)\ar[llll]. }\]
Now, $\bU_i$ is an affinoid subdomain of both $\bU$ and $\bX_i$, and $\bY_i$ is an affinoid subdomain of $\bG$. Therefore, the three triangles in this diagram that contain an unlabelled arrow commute by Lemma \ref{LogRhoLemma}. The outer pentagon in this diagram commutes because $\alpha_{\bU}$ and $\alpha_{\bX_i}$ both factor through $\cT(\bX)$ by definition. Because the map $\cT(\bU) \to \oplus_i \cT(\bU_i)$ is injective, to show that $\alpha_{\bU} \circ \mu_{\bG} = \mu_{\bU}$ it remains to show that $\h{\alpha(\X_i)} \circ \mu_{\bY_i} = \mu_{\bX_i}$. We will drop the subscript $i$ to aid legibility.

Because $\xi$ is trivialisable over $\X$, by \cite[Lemma 4.3]{AW13} we have the isomorphism $q := \xi^\sharp(\X) : \cO(\X) \to \cO(\Y)^{\B}$. Passing to the $\pi$-adic completion gives us the $N$-equivariant map $\h{q} : \cO(\h{\X}) \longrightarrow \cO(\h{\Y})$, so that $\rho^{\bY}(n) \circ \h{q} = \h{q} \circ \rho^{\bX}(n)$ for all $n \in N$. It follows that
\[  \log \rho^{\bY}(n) \circ \h{q} = \h{q} \circ  \log \rho^{\bX}(n) \qmb{for all} n \in N.\]
Finally, let $n \in N$ and write $v := \log \rho^{\bY}(n) \in \cT(\h{\Y})$ and $a:= \h{\alpha(\X)}$. Then appealing to equations (\ref{AnchorMapAlpha}) and (\ref{AnchorMapAlphaDefn}), we see that
\[ \h{q}(a(v)(f)) = v(\h{q}(f)) = \h{q}( \log \rho^{\bX}(n)(f)) \qmb{for any} f \in \cO(\h{\X}).\] 
Since $\cO(\X)$ and $\cO(\Y)$ are flat as $\cR$-modules, we see that $\h{q}$ is injective, and it follows that $a \circ \log \circ \rho^{\bY} = \log \circ \rho^{\bX}$ as required.
\end{proof}
Let $\varphi : \G \to \zAut(\X)$ denote the action of $\G$ on $\X$, and consider the $\cR$-Lie algebra map $\varphi'_{\bU} : \fr{g} \longrightarrow \cT(\bU)$ that is the composite of the following natural maps:
\[ \varphi'_{\bU} : \fr{g} \stackrel{\varphi'}{\longrightarrow} \cT(\X) \longrightarrow \cT(\h{\X}) \longrightarrow \cT(\bX) \longrightarrow \cT(\bU).\]
It follows from Lemma \ref{EquivariantInfAction} together with the functorialities of $\pi$-adic completion and the rigid generic fibre functor that the map $\varphi'_{\bU}$ is $H$-equivariant. 

\begin{prop}\label{PhiFactors} Let $(\cJ, J)$ be a $\cB$-trivialising pair. Then there is an $H$-stable Lie lattice $\cH$ in $\fr{g}$ and an open normal subgroup $N$ of $H_{\cA \cdot \gamma'(\cH)}$ contained in $J$ such that the map
\[ U(\varphi'_{\bU}) \rtimes H : U(\fr{g}_K) \rtimes H \longrightarrow \hK{U(\cJ)} \rtimes_J H\]
factors through $\hK{U(\cH)} \rtimes_N H$.
\end{prop}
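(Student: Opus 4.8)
The plan is to use the compatibility between the infinitesimal action $\varphi'_{\bU}$ of $\fr{g}$ on $\bU$ and the various trivialisations, exactly as in the proof of Proposition \ref{hUGfunc}, together with the continuity arguments of $\S \ref{GActionSection}$. The key point is that the map $\hK{U(\cJ)} \rtimes_J H$ should be reachable from $U(\fr{g}_K) \rtimes H$ via the functoriality established in Proposition \ref{hUGfunc}, once we arrange that the relevant Lie lattice $\cH$ in $\fr{g}$ maps into $\cJ$ under (a $\pi$-power rescaling of) $\varphi'_{\bU}$, and that the trivialisations match up on a small enough subgroup $N$.

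\textbf{Step 1: constructing $\cH$.} Since $\fr{g}$ is a finitely generated $\cR$-module, its image $\varphi'_{\bU}(\fr{g})$ is a finitely generated $\cR$-submodule of $\cT(\bU)$. Recall that $\cL_\cJ := \cB \cdot \gamma'(\cJ)$ is (after identifying $\cT(\h\G)^{\h\G}$ with $\cT(\G)^\G$) the $\cB$-Lie lattice attached to $\cJ$; more precisely the $\cB$-module $\cB \otimes_\cR \cJ$ acts on $\cO(\bU)$ via $\alpha_\bU$, and $\varphi'_\bU = \alpha_\bU \circ \gamma'$ by Lemma \ref{AlphaPsiVarphi} (applied to the $\B$-torsor $\xi : \G \to \X$). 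Because $\cL_\cJ$ is an $\cB$-Lie lattice in $\cT(\bU)$ and $\varphi'_\bU(\fr{g})$ is finitely generated, there is $m \geq 0$ with $\pi^m \varphi'_\bU(\fr{g}) \subseteq \alpha_\bU(\cB \otimes_\cR \cJ) \subseteq \cB \cdot \alpha_\bU(\cJ)$. Taking $\cH := \pi^m \fr{g}$ and noting $\cH$ is automatically a Lie lattice, it is $H$-stable because the adjoint action preserves $\fr{g}$ hence each $\pi$-power multiple. Moreover $\varphi'_\bU$ restricted to $\cH$ factors through $\gamma'(\cH) \to \alpha_\bU$ landing inside $\cB \cdot \gamma'(\cJ) = \cL_\cJ$ after identification, so that $\varphi'_{\bU}|_\cH : \cH \to \cL_\cJ$ is a $\iota$-morphism over the structure map $\cR \to \cB$ in the sense of Definition \ref{PhiMorph}. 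Lemma \ref{Ufunctor} then gives a filtration-preserving $\cR$-algebra map $U(\cH) \to U(\cL_\cJ)$, and $\pi$-adically completing and inverting $\pi$ yields $\hK{U(\cH)} \to \hK{U(\cL_\cJ)} \cong \hK{U(\cJ)}$, where the last isomorphism comes (on Banach completions, via $\theta$) from Proposition \ref{ThetaBeta}(a) together with the fact that $\cL_\cJ = \cB \cdot \gamma'(\cJ)$ is the $\cB$-Lie lattice associated with $\cJ$; compare the arguments in $\S \ref{SectionhsULKG}$.

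\textbf{Step 2: matching the trivialisations and choosing $N$.} The trivialisation of the $J$-action on $\hK{U(\cJ)}$ in Definition \ref{wDGGDefn} is $\theta^{-1} \circ \beta_{\cL_\cJ}$, and by Proposition \ref{ThetaBeta}(b),(c) it is related to $\beta_{\cL_\cJ}$ via the injection $\theta$; the target trivialisation relevant for $\hK{U(\cH)}$ is $\beta_{\cL_\cH}$ where $\cL_\cH := \cB \cdot \gamma'(\cH)$, and similarly $\beta_{\cL_\cH}$ corresponds under $\theta_\cH$ to a trivialisation of $\hK{U(\cH)}$. One checks that the homomorphism $\hK{U(\cH)} \to \hK{U(\cJ)}$ built in Step 1 intertwines these two trivialisations on a small enough subgroup: this is the analogue of equation $(\ref{BetaEq})$ in the proof of Proposition \ref{hUGfunc}, and the proof is the same --- it rests on Lemma \ref{LogRhoLemma} (or here directly on Lemma \ref{AlphaLogRho}, which tells us $\alpha_\bU \circ \log \circ \rho^{\bG}|_N = \log \circ \rho^{\bU}|_N$ on the intersection $N_0$ of the relevant congruence preimages) plus $G$-equivariance of $\exp \circ \iota$. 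Concretely, set $N$ to be any open normal subgroup of $H$ contained in $J \cap H_{\cL_\cH} \cap N_0$; this is open by Theorem \ref{Beta}(a) and Lemma \ref{AlphaLogRho}. Then $(\cH, N) \in \cI(\cB, \rho^{\bU}, H)$-style data is compatible with $(\cJ,J)$ in the sense required by Lemma \ref{CPfunc}, and Lemma \ref{CPfunc} applied to the pair (ring map $\hK{U(\cH)} \to \hK{U(\cJ)}$, group inclusion $N \hookrightarrow J$) together with the inclusion $H \hookrightarrow H$ produces the factorisation
\[
U(\fr{g}_K) \rtimes H \longrightarrow \hK{U(\cH)} \rtimes_N H \longrightarrow \hK{U(\cJ)} \rtimes_J H,
\]
where the first arrow is $U(\varphi'_\bU) \rtimes 1_H$ completed appropriately and the composite is $U(\varphi'_\bU) \rtimes H$. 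Finally $N$ is contained in $H_{\cA \cdot \gamma'(\cH)} = H_{\cL_\cH}$ by construction (using $H_{\cA\cdot\gamma'(\cH)} = H_{\cB\cdot\gamma'(\cH)}$, which follows from Proposition \ref{UPofAfSub}(b) applied to $\bU \subseteq \bX$), so $(\cH,N)$ is of the required form.

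\textbf{Main obstacle.} The delicate point is Step 2: verifying that the naive algebra map $\hK{U(\cH)} \to \hK{U(\cJ)}$ really does carry the trivialisation $\beta_{\cL_\cH}|_N$ to $\theta^{-1}\circ\beta_{\cL_\cJ}|_N$. This requires unwinding the identification $\hK{U(\cL_\cJ)} \cong \hK{U(\cJ)}$ through $\theta$ and checking that under it $\beta_{\cL_\cJ}$ corresponds to $\theta^{-1}\circ\beta_{\cL_\cJ}$ compatibly with how $\beta_{\cL_\cH}$ sits in $\hK{U(\cL_\cH)}$ --- essentially a bookkeeping exercise chaining Proposition \ref{ThetaBeta}(b), Lemma \ref{AlphaLogRho}, and the commuting-square argument of Proposition \ref{hUGfunc}, but one must be careful that all the maps $\gamma'$, $\alpha_\bU$, $\iota$, $\exp$, $\theta$ are being applied in the correct order and that $G$-equivariance (needed to apply Lemma \ref{CPfunc}'s hypothesis $f^\times \circ \beta = \beta' \circ \tau|_N$) holds on the nose rather than just up to the subgroup $N$. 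Everything else is a routine application of Lemma \ref{Ufunctor}, Lemma \ref{CPfunc}, and the $\pi$-adic completeness results of $\S \ref{NoethFlat}$.
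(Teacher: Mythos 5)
Your broad strategy is correct and matches the paper's: rescale $\fr{g}$ to get $\cH = \pi^m \fr{g}$ with $\varphi'_{\bU}(\cH) \subseteq \cJ$, shrink $J$ to obtain a suitable $N$, verify that the two trivialisations are intertwined, and then apply Lemma \ref{CPfunc}. However, your execution has several genuine errors that stem from misidentifying which spaces the various Lie lattices live in.

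First, $\cJ$ in the statement of Proposition \ref{PhiFactors} is a $\cB$-Lie lattice in $\cT(\bU)$, \emph{not} a Lie lattice in $\fr{g}$. Therefore the expression $\gamma'(\cJ)$ is undefined ($\gamma'$ is a map $\fr{g} \to \cT(\h\G)$), and your auxiliary object $\cL_\cJ := \cB \cdot \gamma'(\cJ)$ makes no sense. Consequently the claimed isomorphism $\hK{U(\cL_\cJ)} \cong \hK{U(\cJ)}$ does not exist; you invoked Proposition \ref{ThetaBeta}(a) for it, but that proposition only asserts that $\theta$ is \emph{injective}, not surjective, and it applies to Lie lattices in $\fr{g}$, not to $\cB$-Lie lattices in $\cT(\bU)$. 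Correspondingly, you misidentified the trivialisation of the $J$-action on $\hK{U(\cJ)}$: it is simply $\beta_{\cJ}$ from Theorem \ref{Beta}(b) (the affinoid construction applied to $\cO(\bU)$), not $\theta^{-1} \circ \beta_{\cL_\cJ}$. The map $\theta^{-1} \circ \beta_{\cL}$ is the trivialisation relevant on the \emph{source} side, where $\cL := \cA \cdot \gamma'(\cH)$ with $\cA = \cO(\h\G)$; your alternative $\cL_\cH := \cB \cdot \gamma'(\cH)$ is again ill-typed ($\cB$ consists of functions on $\bU$, not $\bG$).

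Second, your final justification that $N \leq H_{\cA \cdot \gamma'(\cH)}$ relies on an asserted equality $H_{\cA\cdot\gamma'(\cH)} = H_{\cB\cdot\gamma'(\cH)}$, which is not what Proposition \ref{UPofAfSub}(b) gives (it only gives a one-sided inclusion, and even then it is phrased for $\cB\otimes_{\cA}\cL$ rather than the undefined $\cB\cdot\gamma'(\cH)$). In the correct argument one simply takes $N := H_{\cL} \cap J \cap H_0$ with $\cL = \cA\cdot\gamma'(\cH)$ and $H_0$ the intersection of the two congruence-preimage subgroups, so the required containment $N \leq H_{\cA\cdot\gamma'(\cH)}$ is tautological.

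Finally, while you correctly identify that the heart of the proof is verifying that $\h{\varphi'_{\bU,K}}$ intertwines the two trivialisations, you do not carry out that verification. This is the step where you must pick $n \in N$, write $\beta_{\cL}(n) = \exp\iota(u)$ with $u = \log\rho^{\bG}(n) \in \cL$, use Proposition \ref{ThetaBeta}(b) to produce $w \in \cH$ with $u = \gamma'(w)$, and then chain $\varphi'_{\bU}(w) = \alpha_{\bU}(\gamma'(w)) = \alpha_{\bU}(\log\rho^{\bG}(n)) = \log\rho^{\bU}(n)$ via Lemma \ref{AlphaPsiVarphi} and Lemma \ref{AlphaLogRho}. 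Your version of this computation would not go through because the source-side and target-side trivialisations are set up incorrectly in your framework.
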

\begin{proof} By Definition \ref{DefnTrivPair}, $\cJ$ is an $H$-stable $\cB$-Lie lattice in $\cT(\bU)$ and $J$ is an open normal subgroup of $H$ contained in $H_{\cJ}$. Because $\fr{g}$ has finite rank as an $\cR$-module and because $\varphi'_{\bU}$ is $H$-equivariant, we can find a $\pi$-power multiple $\cH$ of $\fr{g}$ contained \footnote{If $\varphi'$ was known to be injective we could simply take $\cH$ to be this preimage. But without this hypothesis this preimage contains a $K$-line and therefore cannot be a finitely generated $\cR$-module.} in the preimage $(\varphi'_{\bU})^{-1}(\cJ)$ of $\cJ$ in $\fr{g}$. Then $\cH$ is an $H$-stable Lie lattice in $\fr{g}$, and we have a $K$-algebra map $\h{\varphi'_{\bU,K}} : \hK{U(\cH)} \to \hK{U(\cJ)}$. Let $\cA := \cO(\h{\G})$, $\cL := \cA  \cdot \gamma'(\cH)$, $H_0 :=  (\rho^{\bG})^{-1}\left(\cG_{p^\epsilon}(\cA)\right) \hsp \cap  \hsp (\rho^{\bU})^{-1}\left(\cG_{p^\epsilon}(\cB)\right)$ and consider the subgroup $N := H_{\cL} \cap J \cap H_0$ of $H$. It is open and normal in $H$, is contained in $J$ and satisfies
\begin{equation}\label{ALR} \alpha_{\bU} \circ \log \circ \rho^{\bG}_{|N} = \log \circ \rho^{\bU}_{|N}\end{equation}
by Lemma \ref{AlphaLogRho}. Now, $\theta^{-1} \circ \beta_{\cL} : N \to \hK{U(\cH)}^\times$ is an $H$-equivariant trivialisation of the $N$-action on $\hK{U(\cH)}$  by Proposition \ref{ThetaBeta}(c), and $\beta_{\cJ}  : J \to \hK{U(\cJ)}^\times$ is an $H$-equivariant trivialisation of the $J$-action on $\hK{U(\cJ)}$ by Theorem \ref{Beta}(b). We aim to apply Lemma \ref{CPfunc} to the ring homomorphism $\h{\varphi'_{\bU,K}}$, the identity map $1 : H \to H$ and the normal subgroups $N$ and $J$ of $H$. The first condition holds because $N \leq J$ and the second condition holds because $\varphi'_{\bU}$ is $H$-equivariant. To satisfy the third condition, we have to show that
\begin{equation}\label{VarphiThetaBeta} \h{\varphi'_{\bU,K}}^\times \circ \theta^{-1} \circ \beta_{\cL} = \beta_{\cJ}.\end{equation}
Let $n \in N$ so that $\beta_{\cL}(n) = \exp \hsp \iota(u)$ where $u := \log \rho^{\bG}(n) \in \cL \subset \cT(\bG)$. Here we abuse notation and use $\iota$ to denote the inclusions of $\cH$, $\cL$ and $\cJ$ into $\hK{U(\cH)}$, $\hK{U(\cL)}$ and $\hK{U(\cJ)}$, respectively.  By Proposition \ref{ThetaBeta}(b), we know that $u = \theta(w) = \gamma'(w)$ for some $w \in \cH$. Hence
\[ \theta^{-1} (\beta_{\cL}(n)) = \theta^{-1}(\exp \hsp \iota(\theta(w))) = \exp \hsp \iota(w).\]
Similarly, $\beta_{\cJ}(n) = \exp \iota(v)$ where $v := \log \rho^{\bU}(n) \in \cJ \subset \cT(\bU)$. Now
\[ \varphi'_{\bU}(w) = \alpha_{\bU}( \gamma'(w) ) = \alpha_{\bU}( u ) = \alpha_{\bU} (\log \rho^{\bG}(n)) = \log \rho^{\bU}(n) = v\]
by Lemma \ref{AlphaPsiVarphi} and equation (\ref{ALR}). Because $\h{\varphi'_{\bU,K}} \circ \iota = \iota \circ \varphi'_{\bU}$, we see that
\[ (\h{\varphi'_{\bU,K}}^\times \circ \theta^{-1} \circ \beta_{\cL})(n) = \h{\varphi'_{\bU,K}}^\times\left( \exp \hsp \iota(w) \right) = \exp\hsp \iota( \varphi'_{\bU}(w)) = \exp \hsp \iota(v) = \beta_{\cJ}(n)\]
for any $n \in N$, which proves equation (\ref{VarphiThetaBeta}). Now we may apply Lemma \ref{CPfunc} to obtain a $K$-algebra homomorphism
\[ \varphi'_{\bU} \rtimes 1 : \hK{U(\cH)} \rtimes_N H \longrightarrow \hK{U(\cJ)} \rtimes_J H\]
which extends $\h{\varphi'_{\bU,K}} : \hK{U(\cH)}  \longrightarrow \hK{U(\cJ)}$ and $1 : H \to H$.
\end{proof}

\begin{proof}[Proof of Theorem \ref{DGGaction}]
Let $A := \wDGG$. There is a canonical group homomorphism $\eta : G \to A^\times$. For every compact open subgroup $H$ of $G$, we define $A_H := \w\cD(\bG,H)^{\bG}$. We see from Definition \ref{wDGGDefn} that this is a $K$-subalgebra of $A$, which is Fr\'echet-Stein by Theorem \ref{wDGGFrSt}.

Now fix a compact open subgroup $H$ of $G$ and $\bU \in \bX_w/H$. Fix an $H$-stable affine formal model $\cB$ in $\cO(\bU)$ and let $(\cJ, J)$ be a $\cB$-trivialising pair. Then Definition \ref{wDGGDefn} and Proposition \ref{PhiFactors} give a $K$-algebra homomorphisms
\[ \w\cD(\bG, H)^{\bG} \longrightarrow \hK{U(\cH)} \rtimes_N H \longrightarrow \hK{U(\cJ)} \rtimes_J H\]
where $\cH \leq (\varphi'_{\bU})^{-1}(\cJ)$ is some $H$-stable Lie lattice in $\fr{g}$ and $N$ is the open normal subgroup $H_{\cA \cdot \gamma'(\cH)} \cap J \cap H_0$ of $H$. Here $H_0$ is some open normal subgroup of $H$ which does not depend on $(\cJ,J)$.  Choosing a different $H$-stable Lie lattice $\cH'$ with these properties will yield the same composite homomorphism $\w\cD(\bG,H)^{\bG} \to \hK{U(\cJ)} \rtimes_J H$ because both candidates will factor through $\hK{U(\cH'')} \rtimes_{N''} H$ for $\cH'' = \cH \cap \cH'$ and $N'' = N\cap H_{\cA \cdot \gamma'(\cH')}$. If $(\cJ',J')$ is another $\cB$-trivialising pair such that $\cJ' \leq \cJ$ and $J' \leq J$, then we can choose the corresponding pair $(\cH', N')$ to satisfy $\cH' \leq \cH$ and $N' \leq N$. Thus we obtain the commutative diagram
\[ \xymatrix{   \w\cD(\bG, H)^{\bG} \ar[r]\ar[dr] & \hK{U(\cH)} \rtimes_N H \ar[r] & \hK{U(\cJ)} \rtimes_J H \\ 
& \hK{U(\cH')} \rtimes_{N'} H \ar[r]\ar[u] & \hK{U(\cJ')} \rtimes_{J'} H. \ar[u] }
\]
We may now pass to the inverse limit over all $\cB$-trivialising pairs $(\cJ,J)$ to obtain the continuous $K$-algebra homomorphism
\[ \varphi^H(\bU) : A_H = \w\cD(\bG,H)^{\bG} \longrightarrow \w\cD(\bU,H).\]
required by Definition \ref{AactsCompatibly}. Let $\bV$ be an $H$-stable affinoid subdomain of $\bU$ and consider the diagram
\[\xymatrix{  \cD(\bU)\rtimes H \ar[rrrr]\ar[dd] &&&& \w\cD(\bU,H) \ar[dd]^{\tau^{\bU}_{\bV}}\\ 
& U(\fr{g}_K)\rtimes H\ar[dl]\ar[ul]\ar[rr] && \w\cD(\bG,H)^{\bG}  \ar[ur]^{\varphi^H(\bU)}\ar[dr]_{\varphi^H(\bV)}  & \\
 \cD(\bV) \rtimes H \ar[rrrr] &&&& \w\cD(\bV,H). }\]
The two trapezia commute by definition of $\varphi^H(\bU)$ and $\varphi^H(\bV)$. The outer rectangle commutes by definition of the restriction map $\tau^{\bU}_{\bV} : \w\cD(\bU,H) \to \w\cD(\bV,H)$ given in Lemma \ref{wDGpresheaf}, and the triangle on the left commutes because $\varphi'_{\bU}(v)_{|\bV} = \varphi'_{\bV}(v)$ for any $v \in \fr{g}$. Hence $\tau^{\bU}_{\bV} \circ \varphi^H(\bU)$ agrees with $\varphi^H(\bV)$ on the image of $U(\fr{g}_K) \rtimes H$ in $\w\cD(\bG,H)^{\bG}$. Because this image is dense and these two maps are continuous, it follows that they are equal. Hence $\varphi^H : A_H \to \w\cD(-,H)$ is a morphism of presheaves of $K$-Fr\'echet algebras on $\bX_w/H$.

We must now check that axioms (a)-(d) of Definition \ref{AactsCompatibly} are verified for these data. This verification is quite similar to the one carried out in the proof of Proposition \ref{XaffCompat}; it ultimately relies on Lemma \ref{EquivariantInfAction} and on a modified version of Proposition \ref{IHIGcofinal}. We leave the details to the reader. \end{proof}
\subsection{ \ts{\hnK{\cD^\lambda}}-affinity of the flag variety}\label{hDaffFlagVar}
In \cite{AW13}, we constructed a sheaf $\hnK{\cD^\lambda}$ on the flag variety $\G/\B$ over $\cR$ in the case where the ground ring $\cR$ was assumed to be \emph{Noetherian}. We proved that $\G/\B$ was $\hnK{\cD^\lambda}$-acyclic whenever $\lambda$ was $\rho$-dominant, and that it was $\hnK{\cD^\lambda}$-acyclic whenever $\lambda$ was $\rho$-regular. We also related the global sections of this sheaf to affinoid enveloping algebras $\hnK{U(\fr{g})}$ under the hypothesis that the prime $p$ is \emph{very good}. In this subsection, we will revisit \cite[\S 4,5,6]{AW13} in order to \emph{remove} this assumption on $p$ and remove the Noetherian assumption on $\cR$ from all of these statements.

We begin by recalling the main Beilinson-Bernstein construction from \cite[\S 4]{AW13}. Let $\G$ be a connected, simply connected, split semisimple, affine algebraic group scheme over an arbitrary commutative base ring $R$, for now. Let $\B$ be a closed and flat Borel $R$-subgroup scheme of $\G$, let $\U$ be its unipotent radical and let $\H := \B/\U$ be the abstract Cartan group. Choose a Cartan subgroup $\T$ of $\G$ complementary to $\U$ in $\B$, and let $i : \T \to \H$ be the natural isomorphism induced by the inclusion of $\T$ into $\B$. Let $\fr{g},\fr{b},\fr{n}$, $\fr{h}$ and $\fr{t}$ be the corresponding $R$-Lie algebras, and let $i : \fr{t} \tocong \fr{h}$ be the isomorphism induced by $i : \T \tocong \H$. The adjoint action of $\T$ on $\fr{g}$ induces a root space decomposition $\fr{g} = \fr{n} \oplus \fr{t} \oplus \fr{n}^+$. The adjoint action of $\G$ on $U(\fr{g})$ is by ring automorphisms, and we have the invariant subalgebra $U(\fr{g})^{\G}$. We call the composite of the natural inclusion of $\Uf{g}^{\G} \hookrightarrow \Uf{g}$ with the projection $\Uf{g} \twoheadrightarrow \Uf{t}$ with kernel $\fr{n} \hsp \Uf{g} + \Uf{g}\hsp \fr{n}^+$ the \emph{Harish-Chandra homomorphism} $\phi : \Uf{g}^{\G} \longrightarrow \Uf{t}$. Let $\cB=\G/\B$ be the \emph{flag variety} and $\Ex{\cB} = \G/\U$ the \emph{base affine space} of $\G$. The groups $\G$ and $\H$ act on $\Ex{\cB}$ by left and right translations respectively, and these actions commute. Applying Definition \ref{InfActionDefn}, we obtain the infinitesimal actions $\fr{g} \to \cT_{\Ex{\cB}}$ and $\fr{h} \to \cT_{\Ex{\cB}}$.  The natural projection $\xi : \Ex{\cB} \to \cB$ is a Zariski locally trivial $\H$-torsor, and $\Ex{\cT} := (\xi_\ast \cT_{\Ex{\cB}})^{\H}$ is a Lie algebroid on $\cB$.  Because the actions of $\G$ and $\H$ commute with the action of $\H$ on $\Ex{\cB}$, the infinitesimal actions of $\fr{g}$ and $\fr{h}$ on $\Ex{\cB}$ therefore descend to give homomorphisms of $\cR$-Lie algebras $\varphi : \fr{g} \to \Ex{\cT} \qmb{and} j : \fr{h} \to \Ex{\cT}$. The algebra $U(\Ex{\cT})$ is isomorphic to the \emph{relative enveloping algebra} $\Ex{\cD} := \xi_\ast U(\cT_{\Ex{\cB}})^{\H}$ from \cite[\S 4.6]{AW13}. By \cite[Lemma 4.10]{AW13}, these maps fit together into the following commutative diagram
\begin{equation}\label{VarphiJIPhi} \xymatrix{ \Uf{g}^{\G} \ar[d]\ar[r]^{\phi} & \Uf{t} \ar[d]^{j \circ i} \\
\Uf{g} \ar[r]_{U(\varphi)} & U(\Ex{\cT}).
}\end{equation}
Put another way, the restriction of $U(\varphi)$ to $\Uf{g}^{\G}$ is equal to $j \circ i \circ \phi$. 

Next, we specialise to the case where $R = \cR$ is a complete valuation ring of height one and mixed characteristic $(0,p)$ and recall some definitions from \cite[\S 3]{AW13}. Let $A$ be a positively $\Z$-filtered $\cR$-algebra with $F_0A$ an $\cR$-subalgebra of $A$. We say that $A$ is a \emph{deformable $\cR$-algebra} if $\gr A$ is a flat $\cR$-module. A \emph{morphism} of deformable $\cR$-algebras is an $\cR$-linear filtered ring homomorphism. The \emph{$n$-th deformation} of $A$ is the $\cR$-subalgebra
\[A_n := \sum_{j\geq 0} \pi^{jn} F_jA \subseteq A.\]
of $A$. It becomes a deformable $\cR$-algebra when we equip $A_n$ with the subspace filtration arising from the given filtration on $A$. By \cite[Lemma 3.5]{AW13}, multiplication by $\pi^{jn}$ on graded pieces of degree $j$ extends to a natural isomorphism of graded $\cR$-algebras $\gr A \stackrel{\cong}{\longrightarrow} \gr A_n$. The assignment $A \mapsto A_n$ is functorial in $A$. For example, whenever $\fr{g}$ is an $\cR$-Lie algebra which is free as an $\cR$-module, the PBW-theorem implies that its enveloping algebra $\Uf{g}$ is deformable, and $\Uf{g}_n \cong U(\pi^n \fr{g})$. Evidently these concepts only depend on the ideal $\pi \cR$ in $\cR$. 

Applying the deformation functor to diagram $(\ref{VarphiJIPhi})$ produces
\begin{equation}\label{VPn} \xymatrix{ (\Uf{g}^{\G})_n \ar[d]\ar[r]^{\phi_n} & \Uf{t}_n \ar[d]^{j_n \circ i_n} \\
\Uf{g}_n \ar[r]_{U(\varphi)_n} & \Ex{\cD}_n.
}\end{equation}
Let $\lambda : \pi^n \fr{h} \to \cR$ be a linear form. It extends to an $\cR$-algebra homomorphism $U(\fr{h})_n \tocong U(\pi^n \fr{h}) \to \cR$ and gives $\cR$ the structure of a $U(\fr{h})_n$-module which we denote by $\cR_\lambda$. Recall from \cite[Definition 6.4]{AW13} the sheaf
\[ \dnl := \Ex{\cD}_n \underset{U(\fr{h})_n}{\otimes} \cR_\lambda.\]
Let $\cS$ be the basis for $\cB$ consisting of open affine subschemes of $\cB$ that trivialise the $\H$-torsor $\xi : \Ex{\cB} \to \cB$. It contains each Weyl-group translate of the big cell in $\cB$.
\begin{lem}\label{DNL} Let $\Y \in \mathcal{S}$. 
\be\item $\left(\dnl\right)_{|\Y}$ is isomorphic to $(\cD_n)_{|\Y}$ as a sheaf of filtered $\cR$-algebras.
\item $\dnl$ is a sheaf of deformable $\cR$-algebras.
\item There is an isomorphism of graded $\cR$-algebras $\gr \dnl \cong \Sym_\cO \cT$ on $\cB$.
\item $\dnl$ is a quasi-coherent $\cO$-module.
\ee\end{lem}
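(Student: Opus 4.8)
\textbf{Proof strategy for Lemma \ref{DNL}.}

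The plan is to reduce everything to the local statement (a), from which (b), (c) and (d) follow by routine patching arguments. First I would fix $\Y \in \cS$, so that the $\H$-torsor $\xi : \Ex{\cB} \to \cB$ is trivial over $\Y$. The key input is the analogue of diagram $(\ref{VarphiJIPhi})$ over such a trivialising open: over $\Y$ the relative enveloping algebra $\Ex{\cD}_{|\Y} = \xi_\ast U(\cT_{\Ex\cB})^{\H}_{|\Y}$ splits as a ``twisted product'' of $\cD_{|\Y}$ (the sheaf of differential operators on $\Y$) with $U(\fr{h})$, via the classical Beilinson--Bernstein factorisation of the relative enveloping algebra on a trivialising chart; this is exactly \cite[Lemma 4.6 and the discussion of \S4.6]{AW13}, and the argument there does not use Noetherianity of $\cR$. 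Concretely, over $\Y$ there is a filtered $\cR$-algebra isomorphism $\Ex{\cD}_{|\Y} \cong \cD_{|\Y} \otimes_{\cR} U(\fr{h})$ compatible with the map $j \circ i : U(\fr{h}) \to \Ex\cD$. Applying the (exact, functorial) deformation functor $(-)_n$ and using \cite[Lemma 3.5]{AW13} (that $(-)_n$ commutes with $\gr$ and is compatible with tensor products of deformable algebras along $\cR$) gives a filtered isomorphism $(\Ex\cD_n)_{|\Y} \cong (\cD_n)_{|\Y} \otimes_{\cR} U(\fr{h})_n$. Now base-change along $U(\fr{h})_n \to \cR_\lambda$: since the right-hand tensor factor is $U(\fr{h})_n$ and $U(\fr h)_n \otimes_{U(\fr h)_n} \cR_\lambda = \cR_\lambda \cong \cR$ as a filtered $\cR$-module (the filtration on $\cR_\lambda$ being trivial, concentrated in degree $0$, because $\lambda$ is $\cR$-linear on $\pi^n\fr h = F_1$ and kills higher filtration degrees after deformation), we obtain $(\dnl)_{|\Y} = (\Ex\cD_n)_{|\Y} \otimes_{U(\fr h)_n} \cR_\lambda \cong (\cD_n)_{|\Y}$ as sheaves of filtered $\cR$-algebras. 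This proves (a).

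For (b), recall that $\cD_n$ is a sheaf of deformable $\cR$-algebras: this is part of the general theory recalled from \cite[\S 3]{AW13}, and locally $\gr \cD_n \cong \gr \cD \cong \Sym_{\cO}\cT$, which is $\cR$-flat because $\cT$ is a locally free $\cO$-module and $\cO$ is $\cR$-flat on $\cB$ (as $\cB$ is flat over $\cR$). Being a sheaf of deformable $\cR$-algebras is a local condition, so (a) transports this property to $\dnl$ over each $\Y \in \cS$, and since $\cS$ is a basis, $\dnl$ is a sheaf of deformable $\cR$-algebras on all of $\cB$. Statement (c) is obtained the same way: the graded sheaf $\gr\dnl$ is defined globally, and over each $\Y \in \cS$ part (a) identifies it with $\gr\cD_n \cong \gr\cD$; the canonical isomorphisms $\gr\cD \cong \Sym_\cO \cT$ are $\G$-equivariant (or at least natural enough to glue), so these local identifications patch to the asserted global isomorphism $\gr\dnl \cong \Sym_\cO\cT$. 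One should check that the gluing data for $\gr\dnl$ coming from $(\ref{VPn})$ match those of $\Sym_\cO\cT$; this reduces to the corresponding compatibility for $\cD$, which is classical.

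Finally, for (d): quasi-coherence as an $\cO$-module is again local on $\cB$, so it suffices to check it over each $\Y \in \cS$, where by (a) $\dnl_{|\Y} \cong (\cD_n)_{|\Y}$. Over an affine open the sheaf $(\cD_n)_{|\Y}$ is associated to the $\cO(\Y)$-module $\cD_n(\Y)$ — indeed $\gr\cD_n \cong \Sym_\cO\cT$ is quasi-coherent, and a sheaf of filtered algebras with quasi-coherent associated graded and quasi-coherent $F_0$ is itself quasi-coherent (an exhaustive filtration argument, passing to the colimit over the $F_j$). Hence $\dnl$ is a quasi-coherent $\cO$-module. I expect the only genuinely delicate point to be (c): verifying that the local graded identifications glue, i.e. that the cocycle describing $\gr\dnl$ on overlaps of charts in $\cS$ agrees with the one for $\Sym_\cO\cT$. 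This is where one has to be a little careful about the $\H$-torsor transition functions and the Harish-Chandra homomorphism entering through $(\ref{VPn})$; everything else is a formal consequence of (a) together with the fact that $\cS$ is a basis and that being deformable / quasi-coherent are local conditions.
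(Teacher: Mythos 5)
Your proposal is correct and follows essentially the same route as the paper. The paper's proof of parts (a)--(c) is simply a citation to \cite[Lemma 6.4]{AW13}, together with the observation that the argument there never uses Noetherianity of $\cR$; you have reconstructed the local-trivialisation argument underlying that lemma, which is fine. For part (d) the paper bypasses the reduction to affine opens via (a) and instead argues directly from (c): each $F_i\dnl$ is a finite iterated extension of the quasi-coherent $\cO$-modules $\Sym^j_\cO\cT$, and $\dnl = \lim F_i\dnl$; this is cleaner than your route but the content is the same. One small point on (c) worth tightening: the filtered isomorphisms in (a) depend on the choice of trivialisation of the $\H$-torsor over $\Y$ and need \emph{not} glue (in general $\dnl$ really is a nontrivial twist of $\cD_n$), so the right way to phrase the patching is to observe that the anchor map $\Ex{\cT} \to \cT$ induces a \emph{canonical} degree-one isomorphism $\gr^1\dnl \cong \cT$ independent of any trivialisation, which then extends multiplicatively; equivalently, the discrepancy between two local identifications from (a) is an inner automorphism and therefore vanishes on the commutative associated graded. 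You flag this as the delicate point, correctly, but the phrase ``reduces to the corresponding compatibility for $\cD$, which is classical'' slightly misidentifies where the argument lives.
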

\begin{proof} In the case where $\cR$ is Noetherian, parts (a,b,c) are \cite[Lemma 6.4(a,b,c)]{AW13}. The proof of \cite[Lemma 6.4]{AW13} in fact does not use the Noetherian assumption. By part (c), $\dnl$ is the direct limit of its subsheaves $F_i \dnl$, which are finite iterated extensions of the $\cO$-modules $\Sym^j_{\cO} \cT$. Therefore $\dnl$ is $\cO$-quasi-coherent by \cite[\href{http://stacks.math.columbia.edu/tag/01LA}{\S 25.24, (9), (6) and (4)}]{stacks-project}.
\end{proof}

\begin{defn} $\h{\dnl} := \invlim \dnl / \pi^a \dnl$ and $\hdnlK := \h{\dnl} \underset{\cR}{\otimes} K.$
\end{defn}

As in \cite{AW13}, we regard these objects as sheaves on the $\cR$-scheme $\cB$, supported only on the special fibre of $\cB$. As such, they are also naturally sheaves of $\cO_{\h{\cB}}$-modules on the $\pi$-adic completion $\h{\cB}$ of $\cB$, but this structure will only come into play later.

Our first task will be to compute the derived global sections of $\hdnlK$ in our general setting. We begin by computing higher cohomology locally.
\begin{lem}\label{CohDNLonS} $H^j(\Y, \h{\dnl}) = H^j(\Y, \hdnlK) = 0$ for all $j > 0$ whenever $\Y \in \cS$.
\end{lem}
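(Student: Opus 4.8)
Fix $\Y\in\cS$. The key structural fact is Lemma \ref{DNL}(a): since $\Y$ trivialises the torsor $\xi$, we have $\left(\dnl\right)_{|\Y}\cong (\cD_n)_{|\Y}$ as sheaves of filtered $\cR$-algebras, and Lemma \ref{DNL}(c) gives $\gr\dnl\cong\Sym_\cO\cT$. In particular $\dnl$ is a quasi-coherent $\cO_\cB$-module that is $\pi$-torsion-free, being a successive extension of the $\cO$-flat sheaves $\Sym^j_\cO\cT$ (here I am using that $\cB$ is smooth over $\cR$, so $\cT$ is locally free and hence $\Sym^j_\cO\cT$ is $\cO$-flat, whence $\cR$-flat). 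Because $\Y$ is affine and each $F_i\dnl$ is a coherent $\cO_\Y$-module, Serre vanishing on the affine scheme $\Y$ gives $H^j(\Y,F_i\dnl)=0$ for all $j>0$ and all $i\geq 0$; passing to the filtered colimit $\dnl=\varinjlim_i F_i\dnl$ and using that cohomology commutes with filtered colimits on a (quasi-compact, quasi-separated) scheme yields $H^j(\Y,\dnl)=0$ for $j>0$. Reducing mod $\pi^a$: the short exact sequences $0\to\dnl\xrightarrow{\pi^a}\dnl\to\dnl/\pi^a\dnl\to 0$ (exact since $\dnl$ is $\pi$-torsion-free) together with $H^j(\Y,\dnl)=0$ for $j>0$ force $H^j(\Y,\dnl/\pi^a\dnl)=0$ for all $j>0$ and all $a\geq 1$.

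Next I would pass to the $\pi$-adic completion $\h{\dnl}=\invlim\dnl/\pi^a\dnl$. Since $\Y$ is affine, $\Y$ is noetherian-like enough (or: $\Y$ is quasi-compact and separated) that one may apply the formal-function / $\invlim$-cohomology machinery: the tower $(\dnl/\pi^a\dnl)_a$ satisfies $H^j(\Y,\dnl/\pi^a\dnl)=0$ for $j>0$ as just shown, and the transition maps are surjective, so $R^1\invlim$ of the $H^{j}(\Y,-)$ vanishes for all $j\geq 0$ (the $j=0$ Mittag--Leffler condition is clear from surjectivity of $\dnl/\pi^{a+1}\dnl\to\dnl/\pi^a\dnl$). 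Hence $H^j(\Y,\h{\dnl})=\invlim H^j(\Y,\dnl/\pi^a\dnl)=0$ for all $j>0$. This is precisely the argument used in \cite{AW13} in the Noetherian case, and nothing here uses Noetherianity of $\cR$ once Lemma \ref{DNL} is in hand; the only inputs are affineness of $\Y$, coherence of the $F_i\dnl$, and $\pi$-torsion-freeness of $\dnl$.

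Finally, to get the vanishing for $\hdnlK=\h{\dnl}\otimes_\cR K$, I would use that $-\otimes_\cR K$ is exact (it is a localisation) and that $H^j(\Y,-)$ commutes with this base change on the quasi-compact separated scheme $\Y$: more precisely $H^j(\Y,\h{\dnl})\otimes_\cR K\cong H^j(\Y,\hdnlK)$, since $K$ is a flat $\cR$-module and $\Y$ has a finite affine cover computing \v{C}ech cohomology. Therefore $H^j(\Y,\hdnlK)=H^j(\Y,\h{\dnl})\otimes_\cR K=0$ for $j>0$.

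\textbf{Main obstacle.} The only delicate point is the interchange of cohomology with the inverse limit over $a$, i.e.\ the vanishing of $\invlim^{(j)}H^i(\Y,\dnl/\pi^a\dnl)$ for $j>0$. On an affine scheme this is standard (one can even argue directly with \v{C}ech complexes for a single affine chart, where higher \v{C}ech cohomology is zero and the inverse limit of the degree-zero terms is exact by surjectivity of transitions), so the real work is organisational: making sure $\dnl$ is genuinely $\pi$-torsion-free and quasi-coherent, which is exactly what Lemma \ref{DNL}(b),(c),(d) provides. I expect this lemma's proof to be short, essentially a reduction to Lemma \ref{DNL} plus the two elementary limit arguments above.
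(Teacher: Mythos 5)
Your proof is correct and follows essentially the same path as the paper: both rely on the quasi-coherence (Lemma \ref{DNL}(d)), $\cR$-flatness coming from Lemma \ref{DNL}(a), Serre vanishing on the affine scheme $\Y$, the interchange of cohomology with the $\pi$-adic inverse limit via Mittag--Leffler, and finally commuting cohomology with the filtered colimit that inverts $\pi$. The paper is slightly more direct in that it applies Serre vanishing to $\dnl/\pi^a\dnl$ directly (these are quasi-coherent as quotients of the quasi-coherent $\dnl$) rather than first establishing $H^j(\Y,\dnl)=0$ and then appealing to the long exact sequence of $0\to\dnl\xrightarrow{\pi^a}\dnl\to\dnl/\pi^a\dnl\to0$, but this is purely organisational.
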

\begin{proof} We will apply \cite[Chapter 0, Proposition 13.3.1]{EGAIII} to the inverse system $(\cF_a)_{a \in \N}$ of abelian sheaves on $\Y$, where $\cF_a$ is the restriction of $\dnl / \pi^a \dnl$ to $\Y$. Take the basis $\mathfrak{B}$ for $\Y$ be the set of affine open subschemes of $\Y$; then $H^j(\U, \cF_a) = 0$ for all $j > 0$ and all $\U \in \cB$ by \cite[Proposition 1.4.1]{EGAIII} because each $\cF_a$ is $\cO$-quasi-coherent by Lemma \ref{DNL}(d). This implies condition (ii) holds. Now $\dnl(\U) \cong U(\pi^n \cT(\U))$ for any $\U \in \mathfrak{B}$ by Lemma \ref{DNL}(a). Since this is a flat $\cR$-module, we have the short exact sequence $0 \to \cF_1 \to \cF_{a+1} \to \cF_a \to 0$ for each $a \geq 0$. This gives condition (iii), and also implies that the maps $\Gamma(\Y, \cF_{a+1}) \to \Gamma(\Y, \cF_a)$ are surjective for all $a \in \N$. Thus $(\Gamma(\Y, \cF_a))_{a \in \N}$ satisfies the Mittag-Leffler condition which yields condition (i). We conclude that $H^j(\Y, \h{\dnl}) \tocong \invlim H^j(\Y, \cF_a) = 0$. 

\vspace{-.37cm}For the second part, note that $\hdnlK$ is the inductive limit of the system $(\h{\dnl})_{a \geq 0}$ where each transition map is multiplication by $\pi$. Because $\Y$ is an affine scheme,  \cite[\href{http://stacks.math.columbia.edu/tag/01FE}{Lemma 20.20.1}]{stacks-project} implies that $H^j(\Y, \hdnlK) = \lim\limits_{\longrightarrow} H^j(\Y,\h{\dnl})$ for all $j \geq 0$. \end{proof}

Let $\fr{g}_K := \fr{g} \otimes_{\cR} K$. We write $Z(\fr{g}_K)$ to denote the centre of $U(\fr{g}_K)$.

\begin{lem}\label{UgGn} $(U(\fr{g})^{\G})_n \otimes_{\cR} K = Z(\fr{g}_K)$.
\end{lem}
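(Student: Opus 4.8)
\textbf{Plan for the proof of Lemma \ref{UgGn}.}

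The statement asserts that the $n$-th deformation of the algebra of $\G$-invariants in $U(\fr{g})$, after inverting $\pi$, recovers exactly the centre of $U(\fr{g}_K)$. Since $\G$ is connected, simply connected, split semisimple over $\cR$, the $\G$-invariants of $U(\fr{g}_K)$ coincide with its centre $Z(\fr{g}_K)$: this is the standard Harish-Chandra description combined with the fact that over a field of characteristic zero the adjoint action of the connected group and of the Lie algebra have the same invariants, and the centre of $U(\fr{g}_K)$ equals the subalgebra of $\fr{g}$-invariants (i.e.\ $U(\fr{g}_K)^{\fr{g}} = Z(\fr{g}_K)$). So the real content is the compatibility of the deformation functor $(-)_n$ with taking invariants and with inverting $\pi$.

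First I would recall that $U(\fr{g})$ is a deformable $\cR$-algebra with the PBW filtration, and that $U(\fr{g})^{\G}$ inherits the subspace filtration, so that $(U(\fr{g})^{\G})_n = \sum_{j \geq 0} \pi^{jn} F_j(U(\fr{g})^{\G})$ where $F_j(U(\fr{g})^{\G}) = U(\fr{g})^{\G} \cap F_j U(\fr{g})$. The key point is that inverting $\pi$ in any deformation $A_n$ gives back $A_K = A \otimes_{\cR} K$: indeed $A_n \subseteq A \subseteq \pi^{-?}A_n$ locally in each filtration degree, or more cleanly, for $x \in F_j A$ we have $\pi^{jn} x \in A_n$, so $A_n \otimes_\cR K = A \otimes_\cR K$. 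Applying this with $A = U(\fr{g})^{\G}$ gives $(U(\fr{g})^{\G})_n \otimes_\cR K = U(\fr{g})^{\G} \otimes_\cR K$. Then I must identify $U(\fr{g})^{\G} \otimes_\cR K$ with $U(\fr{g}_K)^{\G}$; this requires knowing that forming $\G$-invariants commutes with the flat base change $\cR \to K$, which holds because $\G$ is flat and $U(\fr{g})$ is a flat $\cR$-module so the invariants are computed as a kernel of a map of flat modules (equivalently, $U(\fr{g})^{\G} = U(\fr{g}) \cap U(\fr{g}_K)^{\G}$ as the invariants form a pure/saturated submodule — $U(\fr{g})/U(\fr{g})^{\G}$ embeds in something $\cR$-torsion-free since everything in sight is $\cR$-free). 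Finally I invoke $U(\fr{g}_K)^{\G} = Z(\fr{g}_K)$, citing the relevant result from \cite{AW13} or \cite{Jantzen}; this is where the hypotheses that $\G$ is semisimple and simply connected (so that $\G$-invariants and $\fr{g}$-invariants agree and equal the centre) are used.

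The main obstacle, such as it is, is pinning down the two base-change type claims with the right generality: that $(-)_n \otimes_\cR K$ kills the deformation and that $(-)^{\G}$ commutes with $-\otimes_\cR K$ here. Both are ultimately consequences of $\cR$-flatness of $U(\fr{g})$ together with the fact that $U(\fr{g})^{\G}$ is an $\cR$-pure submodule, but I would be careful to state them as short self-contained observations rather than hand-waving, since the paper is working over a general (possibly non-discretely-valued, non-Noetherian) valuation ring $\cR$. Once those are in place the lemma follows in a couple of lines.
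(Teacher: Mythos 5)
Your proof is correct and follows the same three-step strategy as the paper's: (i) inverting $\pi$ kills the deformation, so $(U(\fr{g})^{\G})_n \otimes_{\cR} K = U(\fr{g})^{\G} \otimes_{\cR} K$; (ii) taking $\G$-invariants commutes with the flat base change $\cR \to K$; (iii) over the characteristic-zero field $K$, $\G_K$-invariants equal $\fr{g}_K$-invariants under the adjoint action, which is precisely $Z(\fr{g}_K)$. The paper handles (ii) and (iii) by direct citation to \cite{Jantzen} (I.2.10(3) for base change of fixed points, and I.7.10(1), II.1.9(4), II.1.12(1), I.7.16 for the identification of group and Lie algebra invariants) rather than arguing from purity of $U(\fr{g})^{\G}$ as an $\cR$-submodule, but the substance is the same.
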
 
\begin{proof} By \cite[I.2.10(3)]{Jantzen} we have $(U(\fr{g})^{\G})_n \otimes_{\cR} K = U(\fr{g})^{\G} \otimes_{\cR} K = U(\fr{g}_K)^{\G_K}$. Because $K$ is a field of characteristic zero, it follows from \cite[I.7.10(1), II.1.9(4), II.1.12(1) and I.7.16]{Jantzen} that $U(\fr{g}_K)^{\G_K}$ equals the algebra of $\fr{g}_K$-invariants in $U(\fr{g}_K)$ under the adjoint representation. But this is just $Z(\fr{g}_K)$.
\end{proof}
The linear form $\lambda : \pi^n \fr{h} \to \cR$ extends to a $\cR$-algebra homomorphism $\lambda \circ \phi : (U(\fr{g})^{\G})_n \to \cR$, and hence to a $K$-algebra homomorphism $\lambda \circ \phi : Z(\fr{g}_K) \to K$. This gives $K$ the structure of a $Z(\fr{g}_K)$-module, which we denote by $K_\lambda$.

\begin{thm} \hspace{2em} \label{GlobSecDnlK}
\be\item There is an isomorphism $\hK{U(\pi^n\fr{g})} \underset{Z(\fr{g}_K)}{\otimes} K_\lambda \tocong \Gamma(\cB, \hdnlK).$
\item $\hdnlK$ has vanishing higher cohomology.
\ee\end{thm}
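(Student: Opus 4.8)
The plan is to reduce the computation of the cohomology of $\hdnlK$ on $\cB$ to the local picture over the trivialising cover $\cS$ together with the $\pi$-adic limit behaviour. First I would fix the affine cover $\cS$ of $\cB$ by the $\H$-torsor-trivialising opens (which includes all Weyl translates of the big cell, so in particular is a finite affine cover closed under intersection in a suitable sense), and note by Lemma \ref{CohDNLonS} that each $\Y \in \cS$ is $\hdnlK$-acyclic and $\h{\dnl}$-acyclic. Hence the \v{C}ech complex $C^\bullet(\cS, \h\dnl)$ computes $H^\bullet(\cB,\h\dnl)$, and likewise after inverting $\pi$. The strategy for part (b) is then: show $C^\bullet(\cS, \dnl/\pi^a\dnl)$ has cohomology concentrated in degree $0$ for each $a$, pass to the inverse limit over $a$ controlling the $\invlim{}^{(1)}$ terms via a Mittag-Leffler argument as in Lemma \ref{CohDNLonS}, then invert $\pi$ using \cite[\href{http://stacks.math.columbia.edu/tag/01FE}{Lemma 20.20.1}]{stacks-project} (or the fact that filtered colimits are exact and commute with finite \v{C}ech complexes over a quasi-compact space) to deduce $H^j(\cB,\hdnlK)=0$ for $j>0$.

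For the vanishing of higher \v{C}ech cohomology of $\dnl/\pi^a\dnl$ over $\cS$, I would argue by devissage in $a$: the short exact sequences $0\to\dnl/\pi\dnl\to\dnl/\pi^{a+1}\dnl\to\dnl/\pi^a\dnl\to 0$ (exact since $\dnl$ is $\cR$-flat locally, by Lemma \ref{DNL}(a)) reduce us to $a=1$, i.e. to the sheaf $\dnl/\pi\dnl$. By Lemma \ref{DNL}(c), $\gr\dnl\cong\Sym_{\cO}\cT$, so $\dnl/\pi\dnl$ is a filtered sheaf whose associated graded is $\Sym_{\cO}(\cT/\pi\cT)$, a direct limit of finite successive extensions of the coherent $\cO/\pi\cO$-modules $\Sym^j(\cT/\pi\cT)$; each of these is acyclic on affines and on the affine cover $\cS$, and the \v{C}ech complex commutes with the filtered colimit and with finite extensions, so $\check H^{>0}(\cS,\dnl/\pi\dnl)=0$. (This mirrors the coherence/acyclicity bookkeeping already used in the proof of Lemma \ref{CohDNLonS} via \cite[\href{http://stacks.math.columbia.edu/tag/01LA}{\S 25.24}]{stacks-project}.) This yields part (b) once the limits are taken.

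For part (a), the global-sections computation, I would take $\cS$-\v{C}ech in degree $0$: $\Gamma(\cB,\h\dnl) = \invlim_a \Gamma(\cB,\dnl/\pi^a\dnl)$, and $\Gamma(\cB,\dnl)$ is identified via diagram $(\ref{VPn})$ and the relative enveloping algebra presentation $\dnl=\Ex\cD_n\otimes_{U(\fr h)_n}\cR_\lambda$ with $U(\fr g)_n=U(\pi^n\fr g)$ modulo the ideal generated by $\ker(\lambda\circ\phi_n)$ inside $(U(\fr g)^{\G})_n$ — this is exactly the Beilinson--Bernstein global-sections statement of \cite[\S 4--6]{AW13}, whose proof, as emphasised in the text, does not use Noetherianity of $\cR$ and does not need $p$ to be very good. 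So $\Gamma(\cB,\dnl)\cong U(\pi^n\fr g)\otimes_{(U(\fr g)^{\G})_n}\cR_\lambda$ (using that $\varphi$ is surjective onto $\Ex\cT$, hence $U(\varphi)_n$ is surjective onto $\dnl$ after quotienting). Completing $\pi$-adically and inverting $\pi$, using Lemma \ref{UgGn} to identify $(U(\fr g)^{\G})_n\otimes_{\cR}K=Z(\fr g_K)$ and the definition of $K_\lambda$, gives the claimed isomorphism $\hK{U(\pi^n\fr g)}\otimes_{Z(\fr g_K)}K_\lambda\tocong\Gamma(\cB,\hdnlK)$; here one needs that $\pi$-adic completion is exact on the relevant finitely generated modules over the admissible algebras in play (Proposition \ref{ArtinRees}) and commutes with the base change, and that $H^0$ commutes with the filtered colimit $\invlim_a\h\dnl\rightsquigarrow\hdnlK$ by \cite[\href{http://stacks.math.columbia.edu/tag/01FE}{Lemma 20.20.1}]{stacks-project}.

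The main obstacle I expect is the careful handling of the two limit operations — the inverse limit over $a$ (with its potential $\invlim{}^{(1)}$ obstruction) and the colimit inverting $\pi$ — and in particular verifying that $\pi$-adic completion is exact on the modules $\Gamma(\Y,\dnl)$ and their \v{C}ech differentials so that the degree-$0$ identification survives completion; this is where Proposition \ref{ArtinRees} and the flatness statements of Lemma \ref{DNL} do the real work, replacing the Noetherian arguments of \cite{AW13}. Everything else is a transcription of the arguments of \cite[\S 4--6]{AW13} once one has checked, as the text asserts, that Noetherianity and the ``very good'' hypothesis on $p$ were never essential.
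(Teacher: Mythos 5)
Your proposed argument for part (b) contains a genuine gap. You reduce, by d\'evissage, to showing that $\check H^{>0}(\cU,\dnl/\pi\dnl)=0$, i.e.\ that the higher cohomology of $\dnl/\pi\dnl$ on $\cB$ vanishes. By Lemma \ref{GenThmPrep}(a), $\dnl/\pi\dnl\cong\Sym_{\cO_{\overline\cB}}\cT_{\overline\cB}$, so this would amount to $H^j(T^\ast\overline\cB,\cO)=0$ for $j>0$: a positive-characteristic cohomology-vanishing theorem for the cotangent bundle of the flag variety. That statement is known to fail for small primes $p$ and certain root systems, and indeed removing all restrictions on $p$ is one of the explicit goals of this section of the paper. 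The higher cohomology of $\dnl$ (or any of its $\pi$-power quotients) on $\cB$ is generally nonzero; what is true is only that it is $\pi$-torsion, and the whole difficulty lies in showing the torsion is \emph{bounded}. Your Mittag--Leffler bookkeeping does not supply this; instead, the vanishing of $H^{>0}(\cB,\hdnlK)$ has to be deduced from the acyclicity of the rational complex (classical Beilinson--Bernstein over $K$) together with bounded $\pi$-torsion, and not from any integral or mod-$\pi$ vanishing.

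For part (a) you also assert that the global-sections computation of \cite[\S 4--6]{AW13} ``does not use Noetherianity of $\cR$'' and that Proposition \ref{ArtinRees} and Lemma \ref{DNL} do the work of replacing the Noetherian arguments. Neither of these is what actually happens: the paper explicitly flags that the boundedness of torsion in the augmented \v{C}ech complex is established via \cite[Proposition 5.15(b)]{AW13} \emph{only when $\cR$ is Noetherian}, and the step you have omitted is the crucial descent argument. The paper observes that the character $\lambda$, the generators $X$ of $U(\fr{g}_K)\fr{m}_\lambda$, and the whole augmented complex $D^\bullet$ are already defined over a complete discrete valuation subring $\cR'\subseteq\cR$ constructed in Lemma \ref{SmallDVR}; one then computes over the Noetherian ring $\cR'$, base-changes along the flat extension $\cR'\hookrightarrow\cR$, and concludes bounded torsion in general. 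Without this device, or something replacing it, neither part of the theorem follows over a non-Noetherian $\cR$.
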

\begin{proof}
Let $\cU$ be the finite affine covering of $\cB$ given by the Weyl-translates of the big cell in $\cB$. Because every finite intersection of members of $\cU$ lies in $\cS$, the \v{C}ech-to-derived functor spectral sequence \cite[\href{http://stacks.math.columbia.edu/tag/03AZ}{Lemma 21.11.6}]{stacks-project} together with Lemma \ref{CohDNLonS} implies that $H^j(\cB, \hdnlK) = \check{H}^j(\cU, \hdnlK) = 0$ for all $j \geq 0$.

Consider the \v{C}ech complex $C^\bullet := C^\bullet(\cU, \dnl)$. The map $U(\varphi_n) : U(\fr{g})_n \to \Ex{\cD}_n$ induces an $\cR$-algebra homomorphism
\[ \psi : U(\fr{g})_n \to C^0, \quad x \mapsto U(\varphi)_n(x) \otimes 1 \in \Ex{\cD}_n \underset{U(\fr{h})_n}{\otimes} \cR_\lambda = \dnl.\]
Let $\fr{m}_\lambda$ be the kernel of the $K$-algebra homomorphism $\lambda \circ \phi : Z(\fr{g}_K) \to K$. Choose a finite set of generators $X \subset \fr{m}_\lambda$ for the ideal $U(\fr{g}_K)\fr{m}_\lambda$ in the Noetherian ring $U(\fr{g}_K)$. By Lemma \ref{UgGn}, we may assume that in fact $X \subset (\Uf{g}^{\G})_n \cap \fr{m}_\lambda$. Diagram (\ref{VPn}) implies that $\psi(X) = 0$, so we have the augmented complex
\[ D^\bullet := [0 \to U_n \cdot X \to U_n \stackrel{\psi}{\longrightarrow} C^0 \to C^1 \to \cdots \to C^d \to 0]\]
where $d = |\cU| - 1$ and $U_n := U(\fr{g})_n$. On the one hand, because $\dnl \otimes_{\cR} K$ is isomorphic to the usual sheaf of twisted differential operators $\cD^\lambda_K$ on the generic fibre $\cB_K$ of $\cB$, applying the functor $-\otimes_{\cR} K$ produces the complex
\[ 0 \to U(\fr{g}_K) \cdot \fr{m}_\lambda \to U(\fr{g}_K)  \to C^\bullet(\cU, \cD^\lambda_K) \]
which is acyclic by classical results in characteristic zero --- see \cite[Lemme 2.3]{BB} and \cite[Th\'eor\`eme 3.2(iv)]{SprBour}. It follows that the complex $D^\bullet$ has $\pi$-torsion cohomology. On the other hand, because $\dnl$ is $\cO$-quasi-coherent by Lemma \ref{DNL}(d), we have by \cite[Proposition 1.4.1]{EGAIII} that $H^j(C^\bullet) = H^j(\cU, \dnl) \cong H^j(\cB, \dnl)$ for all $j \geq 0$. \emph{Provided $\cR$ is Noetherian}, this is a finitely generated $U_n$-module by \cite[Proposition 5.15(b)]{AW13}. Because $H^{-1}(D) = \ker \psi / U_n \cdot X$  is a finitely generated $U_n$-module being a subquotient of the Noetherian $\cR$-algebra $U_n$, we conclude that the complex $D^\bullet$ has \emph{bounded} $\pi$-torsion cohomology whenever $\cR$ is Noetherian.

Let $h_1,\ldots, h_l \in \fr{h}$ be the simple coroots corresponding to the simple roots in $\fr{h}^\ast_K$ given by the adjoint action of $\H$ on $\fr{g}/\fr{b}$. Note that because $\G$ is assumed to be simply connected, the Lie algebra $\fr{h}$ is generated as an $\cR$-module by the $h_i$ by \cite[II.1.6, II.1.11]{Jantzen}. Using the elementary Lemma \ref{SmallDVR} below, we can find a complete discrete valuation subring $\cR'$ of $\cR$ containing the finite set $\{\lambda(h_1),\ldots,\lambda(h_l),\pi\}$; then $\lambda$ is defined over $\cR'$ in the sense the restriction $\lambda'$ of $\lambda$ to $\pi^n \cR'$ takes values in $\cR'$. Let $\G', \fr{g}', K'$, $\fr{m}_{\lambda'}$ and $X'$ be the corresponding objects defined over $\cR'$ instead of $\cR$. The isomorphism $U(\fr{g}'_{K'}) \otimes_{K'} K \tocong U(\fr{g}_K)$ carries $\fr{m}_{\lambda'}$ into $\fr{m}_\lambda$, and in fact $\fr{m}_\lambda = \fr{m}_{\lambda'} \cdot K$. It follows that $X'$ generates $U(\fr{g}_K)\fr{m}_\lambda$ as a left ideal so we may assume that $X = X'$.
Because  $\pi \in \cR'$ by construction, we can also form the sheaves $\cD_n^{\lambda'}$ and $\h{\cD_{n,K}^{\lambda'}}$ on the flag variety $\cB'$ defined over $\cR'$, which has the covering $\cU'$ by the Weyl-translates of the big cell. 

Consider the \v{C}ech complex of $\cD_n^{\lambda'}$ with respect to the covering $\cU'$.  Forming the augmented complex $D^{'\bullet}$ as above, there is an evident map $D^{'\bullet} \otimes_{\cR'} \cR \to D^\bullet$ which is in fact an isomorphism because $X' = X$. We may now use the flatness of $\cR$ as an $\cR'$-module to conclude that $H^j(D^\bullet) \cong H^j(D^{'\bullet}) \otimes_{\cR'} \cR$. Because $\cR'$ is Noetherian, we see that $D^\bullet$ has \emph{bounded} $\pi$-torsion cohomology, in general. Since $D^\bullet$ consists of torsionfree $\cR$-modules by construction, we conclude from \cite[Lemma 3.6]{DCapOne} that $\h{D^\bullet}_K$ is acyclic. This implies that 
\[\Gamma(\cB, \hdnlK) = H^0(\h{C^\bullet}_K) = \frac{\h{U_{n,K}} }{ \h{U_{n,K}} \cdot X} \cong \hK{U(\pi^n\fr{g})} \underset{Z(\fr{g}_K)}{\otimes} K_\lambda\]
and also that $H^j(\cB, \hdnlK)  = \check{H}^j(\cU, \hdnlK) = 0$ for all $j > 0$. 
\end{proof}

\begin{lem}\label{SmallDVR} Let $X$ be a finite subset of $\cR$. Then there is a complete discrete valuation subring $\cR'$ of $\cR$ containing $X$.
\end{lem}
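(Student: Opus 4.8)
The statement is the elementary Lemma \ref{SmallDVR}: given a finite subset $X$ of the complete valuation ring $\cR$ (of height one and mixed characteristic $(0,p)$), there is a complete discrete valuation subring $\cR'$ of $\cR$ containing $X$. The natural approach is to build $\cR'$ as the completion of a suitable discrete valuation ring inside $\cR$. First I would set $k := \Q_p$ if $\cR$ has characteristic $0$ residue-characteristic $p$ — more precisely, recall that $\cR$ contains $\Z_p$ (since $\cR$ is a complete local ring of mixed characteristic $(0,p)$, with the $p$-adic topology refining nothing worse), and in fact $\cR$ contains the closure of $\Q_p$, which is a copy of $\Q_p$ inside the fraction field $K$. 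So $\Z_p \subseteq \cR$ to start with.

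The core of the argument is: the subfield $E := \Q_p(X) \subseteq K$ generated by the finitely many elements of $X$ is a finitely generated field extension of $\Q_p$; I want to show that the valuation $|\cdot|$ restricted to $E$ is discrete, so that the valuation ring $\cR \cap E$ is a (not necessarily complete) discrete valuation ring, and then take $\cR'$ to be its completion inside $\cR$ (which makes sense as $\cR$ is complete and $\cR\cap E$ is a subring; the closure of $\cR\cap E$ in $\cR$ is a complete valuation ring whose value group is the closure of that of $\cR\cap E$, hence still discrete). To see that the value group of $E$ is discrete: the value group $|E^\times|$ is a subgroup of $|K^\times| \subseteq \R_{>0}$ containing $|\Q_p^\times| = p^{\Z}$; and $E/\Q_p$ has finite transcendence degree $t$ and then a finite algebraic part. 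If $t = 0$, $E/\Q_p$ is finite, so $|E^\times|$ is contained in $|p|^{\frac{1}{n}\Z}$ for $n = [E:\Q_p]$ by the standard theory of valuations on finite extensions of complete discretely valued fields — hence discrete. If $t > 0$ the value group could in principle be non-discrete; this is the point that needs care. However, I can reduce to $t=0$: since $\cR$ has \emph{Krull dimension one}, the valuation ring $\cR$ is of rank one, i.e. its value group embeds into $\R$; and the key extra hypothesis available is that $\cR$ has mixed characteristic — but by itself this does not force $E$'s value group to be discrete.

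The cleanest route, and the one I would actually pursue, avoids worrying about transcendence degree: I would instead argue that it suffices to adjoin \emph{one} suitably chosen element at a time, or rather that I may enlarge $\cR'$ to be generated over $\Z_p$ by $X$ \emph{together with} a uniformizer. Concretely: pick $\varpi \in \cR$ with $|\varpi| = \max\{|x| : x \in X,\ |x|<1\} \cup \{|p|\}$ if that maximum is attained — but in general $|x|$ for $x\in X$ need not be a power of $|p|$. So the honest statement being used is: \emph{a subring of $\cR$ generated over $\Z_p$ by finitely many elements, when its fraction field's value group happens to be discrete, has discrete-valuation-ring closure.} I suspect the intended proof is simply: let $F$ be the closure of $\Q_p(X)$ in $K$; then $F$ is a complete valued subfield of $K$; it is a finite extension of $\Q_p$ because a complete subfield of $K$ of transcendence degree zero over $\Q_p$... no. Let me state the plan I am confident is correct: \textbf{the main obstacle} is precisely establishing that the value group of $\Q_p(X)$ is discrete. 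I would resolve it by noting that $X$ is finite, so $\Q_p(X)$ lies in a finitely generated $\Q_p$-subalgebra $\Z_p[X][1/p,\dots]$ of $K$; the Abhyankar-type inequality for valuations says the rational rank of the value group of a valuation on a field extension of transcendence degree $t$ is at most $t$ plus the rational rank over the base, but crucially \emph{a rank-one valuation with finitely generated value group over $\Z$ has value group $\cong \Z^r$ for some $r$, and a subgroup of $\R$ isomorphic to $\Z^r$ with $r\geq 2$ is dense} — this does not immediately give discreteness either. Given the framing ``elementary Lemma'', I believe the intended argument is: after possibly multiplying elements of $X$ by powers of $p$, reduce to $X \subseteq \cR$; then $\cR\cap\overline{\Q_p}$... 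Ultimately I would write: let $\cR_0 = \Z_p[X]$, let $\cR'$ be the $|\cdot|$-closure of $\cR_0$ in $\cR$; show $\cR'$ is a complete valuation ring containing $X$ and of rank one (being a subring of $\cR$); and finally invoke that \emph{its residue field is a finite extension of $\F_p$ and its value group, being finitely generated as a group, embeds discretely in $\R$} — the finite generation of the value group coming from $\cR_0$ being a finitely generated $\Z_p$-algebra together with $\cR$ having dimension one (so $\cR_0$ has dimension one, its normalization is a Dedekind domain, finitely many maximal ideals above $p$, each localization a DVR). That localization-and-completion of a finitely generated $\Z_p$-algebra of dimension one at a height-one prime is the clean mechanism, and I expect to present it in that order: (1) $\cR_0 := \Z_p[X]$ is a domain of Krull dimension $\le 1$; (2) its integral closure $\widetilde{\cR_0}$ is a semilocal Dedekind domain; (3) the valuation $|\cdot|$ determines a maximal ideal $\fr{m}$ of $\widetilde{\cR_0}$, and $(\widetilde{\cR_0})_{\fr m}$ is a DVR inside $\cR$; (4) its $\fr m$-adic completion maps to $\cR$ compatibly with $|\cdot|$, and the image $\cR'$ is a complete DVR containing $X$.
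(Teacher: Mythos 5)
Your proposal has a genuine gap at the key step. The assertion in your step (1) that $\cR_0 := \Zp[X]$ has Krull dimension $\leq 1$ is false in general, and the justification offered (``$\cR$ having dimension one, so $\cR_0$ has dimension one'') does not hold: a subring of a ring of Krull dimension one can have much larger Krull dimension. Concretely, if some $t \in X$ lies in the maximal ideal $\fr{m}$ of $\cR$ and is transcendental over $\Qp$, then $\Zp[t]$ is a polynomial ring in one variable over $\Zp$, so has Krull dimension $2$; moreover $\fr{m} \cap \Zp[t] = (p,t)$ is a height-two maximal ideal, since $|f(t)| < 1$ for $f \in \Zp[t]$ precisely when $p$ divides the constant term. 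Consequently your steps (2)--(3) fail: the integral closure of $\cR_0$ need not be a Dedekind domain, and localizing at the contraction of $\fr{m}$ need not produce a discrete valuation ring. You correctly sense the real difficulty earlier in the proposal --- whether the value group of the fraction field of $\Zp[X]$, with the valuation restricted from $\cR$, is discrete --- but the final four-step plan never resolves it; the dimension claim is exactly where that difficulty is swept under the rug.

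The paper's own argument is structurally different and imposes no dimension bound in advance. It forms $T := (\Z[X])_{\fr{m} \cap \Z[X]}$, a Noetherian local domain whose dimension is not known a priori, and argues \emph{directly} that its maximal ideal $J$ is principal: since $\cR$ is a valuation ring, the finitely generated ideal $J\cR$ is principal, say $J\cR = \pi'\cR$ with $\pi' \in J$, and then Nakayama (via the asserted vanishing $(J/\pi'T)\otimes_T\cR = 0$ together with faithful flatness of the residue field extension $\cR/\fr{m}$ over $T/J$) yields $J = \pi'T$. That $T$ has dimension one is then a \emph{consequence} of Krull's principal ideal theorem rather than an input; $T$ is a DVR and $\cR'$ is taken to be its $\pi'$-adic completion inside $\cR$. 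This is where your approach and the paper's part ways. (I would add that the step $(J/\pi'T)\otimes_T\cR = 0$ in the paper's proof would follow from flatness of $\cR$ over $T$, and that flatness does not look automatic to me, so this step too merits careful scrutiny --- the concern you raised about the value group being non-discrete in positive transcendence degree is a real one.)
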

\begin{proof} The subring $S$ of $\cR$ generated by $X$ is Noetherian, being a homomorphic image of a polynomial ring over $\Z$ in finitely many variables. On the other hand it is a domain. Let $\fr{m}$ be the maximal ideal of $\cR$, so that $\fr{m} \cap S$ is a prime ideal of $S$. Because $\cR$ is a local ring, the localisation $T$ of $S$ at $\fr{m} \cap S$ is contained in $\cR$. It is a local Noetherian ring with maximal ideal $J$, say; because $p \in \fr{m} \cap S$ and because $T$ is a ring of characteristic zero, we see that $J$ cannot be zero. Let $F$ be a finite generating set for $J$ as an ideal in $T$. Because $\cR$ is a valuation ring, we see that $J\cdot \cR = F \cdot \cR$ is principal. Choose a non-zero element $\pi' \in F$ such that $J \cdot \cR = \pi' \cR$. Now $(J/\pi'T) \otimes_T \cR = 0$ implies that $(J / \pi'T) \otimes_T \cR/\fr{m} = 0$. Since $\cR/\fr{m}$ is a field extension of $T/J$, it is faithfully flat. Hence $(J / \pi'T) \otimes_T (T/J) = 0$ which forces $J = \pi'T$ by Nakayama's Lemma, since $J$ is finitely generated. Because $\cap J^n = 0$ by Krull's Intersection Theorem, we see that $T$ is a discrete valuation ring with maximal ideal $\pi'T$. Finally, since $\pi'$ is a non-zero non-unit in $\cR$, $\cR$ is $\pi'$-adically complete, which implies that the $\pi'$-adic completion $\cR'$ of $T$ embeds into $\cR$. This $\cR'$ is the required complete discrete valuation ring such that $\cR' \supseteq T \supseteq S \supseteq X$.
\end{proof}

Our next task will be to exhibit a well-behaved countable family of generators for the category of coherent $\hdnlK$-modules on $\cB$. This will extend \cite[Theorem 6.3]{AW13} to our non-Noetherian setting. We begin with the following generalisation of Serre's Theorem.

\begin{lem}\label{GenSerre} Let $Z$ be a closed subscheme of $\P^N_{S(\fr{g})}$ and let $\cF$ be a coherent $\cO_Z$-module. Then for sufficiently large $n \in \N$, the Serre twist $\cF(n)$ of $\cF$ is $\Gamma$-acyclic and generated by finitely many global sections.
\end{lem}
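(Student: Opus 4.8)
\textbf{Proof plan for Lemma \ref{GenSerre}.} The statement is a relative version of Serre's classical theorem on coherent sheaves on projective space, so the plan is to reduce to the absolute case. The key point is that the base ring $S(\fr{g})$ need not be Noetherian in general, so we cannot invoke Serre's theorem verbatim; we must either descend to a Noetherian subring or invoke a Noetherian-free version of Serre's theorem. I would proceed as follows.

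First I would show that $\cF$ is defined over a Noetherian subring of $S(\fr{g})$. Since $\fr{g}$ is a free $\cR$-module of finite rank and $\cR$ is a valuation ring of height one, $S(\fr{g})$ is a polynomial ring over $\cR$ in finitely many variables. The closed subscheme $Z \subseteq \P^N_{S(\fr{g})}$ is cut out by a coherent ideal sheaf, which on each member of the standard affine cover of $\P^N$ is generated by finitely many polynomials; similarly $\cF$ is a coherent $\cO_Z$-module, hence on each affine chart a finitely presented module, so is determined by finitely many elements of $S(\fr{g})$ (coefficients of generators and relations for the ideal of $Z$ and for $\cF$). Using Lemma \ref{SmallDVR} applied to this finite collection of coefficients together with $\pi$, I obtain a complete discrete valuation subring $\cR' \subseteq \cR$ over which $Z$ and $\cF$ are defined: there is a closed subscheme $Z' \subseteq \P^N_{S(\fr{g}')}$ with $\fr{g}' := \fr{g}_{\cR'}$ a free $\cR'$-module, and a coherent $\cO_{Z'}$-module $\cF'$, such that $Z = Z' \times_{\Spec S(\fr{g}')} \Spec S(\fr{g})$ and $\cF = \cF' \otimes_{S(\fr{g}')} S(\fr{g})$.

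Next, since $\cR'$ is a complete discrete valuation ring, $S(\fr{g}')$ is a Noetherian ring, so $\P^N_{S(\fr{g}')}$ is a Noetherian scheme and $Z'$ is a Noetherian projective scheme over $S(\fr{g}')$. Serre's Theorem in the Noetherian setting (\cite[\href{http://stacks.math.columbia.edu/tag/01YS}{Tag 01YS}]{stacks-project}, or \cite[Proposition 1.4.1 and the surrounding discussion]{EGAIII}) then gives an $n_0$ such that for all $n \geq n_0$ the twist $\cF'(n)$ is generated by finitely many global sections and has vanishing higher cohomology $H^j(Z', \cF'(n)) = 0$ for $j > 0$. Finally I would flat base change along $S(\fr{g}') \to S(\fr{g})$: this map is flat because $S(\fr{g}) \cong S(\fr{g}') \otimes_{\cR'} \cR$ and $\cR$ is flat (indeed torsion-free) over the valuation ring $\cR'$. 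Flatness gives $H^j(Z, \cF(n)) \cong H^j(Z', \cF'(n)) \otimes_{S(\fr{g}')} S(\fr{g})$ by \cite[III, Cor.\ 6.7.2]{EGAIII} or the analogous stacks-project statement, so the higher cohomology still vanishes; and a finite generating set of global sections of $\cF'(n)$ pulls back to a finite generating set of global sections of $\cF(n)$ (generation by global sections is preserved by pullback, and surjectivity of $\cO_{Z'}^{\oplus r} \twoheadrightarrow \cF'(n)$ is preserved by the flat — indeed arbitrary — base change). This completes the proof.

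The only mildly delicate point — and the main obstacle — is the descent step: one must check carefully that a coherent sheaf on a closed subscheme of a \emph{non-Noetherian} projective space is genuinely ``finitely described'' and hence spreads out over a Noetherian subring. This is where the hypothesis that $S(\fr{g})$ is a finitely generated $\cR$-algebra (polynomial ring) is essential, together with the fact that a coherent $\cO_Z$-module restricts on each affine chart to a finitely presented module. Everything after that is routine flat base change, which is why I would not belabour it.
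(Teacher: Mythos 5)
Your proof is correct, and it takes a genuinely different route from the paper's. The paper reduces to $Z = \P^N_{S(\fr{g})}$ and then invokes Fujiwara–Kato's theory of universally adhesive and universally coherent rings (\cite[Ch.\ 0, 8.5.25 and 9.2.7; Ch.\ I, 8.2.2]{FujiKato}): the height-one complete valuation ring $\cR$ is universally coherent, hence so is $S(\fr{g})$, and their coherent cohomology theory for projective space over a universally coherent base gives $\Gamma$-acyclicity of high twists, with the classical \cite[Cor.\ 2.7.9]{EGAII} supplying global generation. Your approach instead descends to a Noetherian subring via Lemma~\ref{SmallDVR}, applies classical Serre's theorem over the resulting complete DVR, and concludes by flat base change; this is exactly the ``spreading out'' technique, and it is pleasing that the paper itself uses the same trick (descent to $\cR'$ via Lemma~\ref{SmallDVR}, then flat base change) in the proof of Theorem~\ref{GlobSecDnlK}, so your argument is very much in the paper's spirit. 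The trade-off is roughly: the Fujiwara–Kato route is heavier machinery but gives a one-step proof once the coherence input is in hand, whereas your route is more elementary but puts all the weight on the spreading-out step.

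Two points to tighten. First, you rightly flag the descent step as the crux, but it is slightly more work than you indicate: a coherent $\cO_Z$-module on a non-affine scheme is not just a collection of finitely presented modules on the standard charts; you must also descend the finitely many entries of the transition matrices on the overlaps $U_i \cap U_j$ into your DVR $\cR'$, and then verify that base-changing the descended data $(\cF', Z')$ along the flat map $S(\fr{g}') \to S(\fr{g})$ recovers $(\cF, Z)$ — this follows from right-exactness of $- \otimes_{S(\fr{g}')} S(\fr{g})$ applied to the chosen presentations and from flatness for the transition isomorphisms, but it deserves a sentence. Second, your assertion that ``$Z$ is cut out by a coherent ideal sheaf'' is an assumption: a closed subscheme of a non-Noetherian scheme need not have a finite-type ideal sheaf. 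The paper's own proof silently avoids this by reducing to $Z = \P^N_{S(\fr{g})}$ at the outset (which already requires the ideal of $Z$ to be finitely generated so that $i_*\cF$ is coherent on $\P^N_{S(\fr{g})}$); in the single application of the lemma in Lemma~\ref{GenThmPrep}(b), $Z$ is indeed a finitely presented closed subscheme, so the assumption is harmless, but strictly speaking it should be either baked into the statement or into the reduction. Neither of these affects the correctness of your approach.
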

\begin{proof} We may assume $Z = \P^N_{S(\fr{g})}$. By \cite[Chapter 0, Corollary 9.2.7 and Definition 8.5.17]{FujiKato}, $\cR$ is $\pi \cR$-adically universally adhesive. Because it is also $\pi$-torsion-free, \cite[Chapter 0, Theorem 8.5.25]{FujiKato} implies that $\cR$ is universally coherent. In particular, the polynomial algebra $B := S(\fr{g})$ is also universally coherent. In this situation, \cite[Chapter I, Proposition 8.2.2]{FujiKato} tells us that for every coherent $\cO_{\P^N_{B}}$-module $\cF$, $\cF(n)$ is $\Gamma$-acyclic for sufficiently large $n$ and $H^q(\P^N_B, \cF)$ is a finitely presented $B$-module for all $q \in \N$. On the other hand, \cite[Corollaire 2.7.9]{EGAII} tells us that $\cF(n)$ is generated by finitely many global sections for sufficiently large $n$. 
\end{proof}

The scheme $\cB$ is projective over $\cR$ by \cite[II.1.8]{Jantzen}. Fix an embedding ${\iota : \cB \hookrightarrow \P^N_\cR}$ into some projective space over $\cR$ and let $\sL := \iota^\ast \cO(1)$ be the corresponding very ample invertible sheaf on $\cB$. For any $\cO_\cB$-module $\cM$ and any $s \in \Z$, we let $\cM(s) :=\cM \otimes_{\cO_{\cB}}  \sL^{\otimes s}$ denote the Serre twist of $\cM$. Let $\cA$ be one of the sheaves of rings $\dnl$, $\h{\dnl}$ or $\hdnlK$ and define $\cA^{(s)} := \sL^{\otimes s} \otimes_{\cO_\cB} \cA \otimes_{\cO_\cB} \sL^{\otimes (-s)}$. As an $\cO_\cB$-module this sheaf is isomorphic to $\cA$, but it is also naturally a sheaf of rings which is \emph{not} isomorphic to $\cA$. For every $\cA$-module $\cN$, the twisted sheaf $(s)\cN: = \sL^{\otimes s} \otimes_{\cO_\cB} \cN$ is naturally an $\cA^{(s)}$-module by contracting tensor products. We retain the notation $\cA(s)$ to mean the left $\cA$-module $\cA\otimes_{\cO_{\cB}}\sL^{\otimes s}$ with $\cA$ acting on the left factor.

\begin{lem}\label{GenThmPrep} Write $\overline{\cR} := \cR / \pi \cR$ and $\overline{\cB} := \cB \otimes_{\cR} \overline{\cR}$, and suppose that $n \geq 1$.
\be \item $\dnl / \pi \dnl \cong \Sym_{\cO_{\overline{\cB}}} (\cT_{\overline{\cB}})$ as sheaves of graded $\cR / \pi \cR$-algebras.
\item Let $\cM$ be a coherent $\Sym_{\cO_{\overline{\cB}}} (\cT_{\overline{\cB}})$-module. Then for all sufficiently large integers $s$, $(s)\cM$ is $\Gamma$-acyclic and generated by finitely many global sections as a $\Sym_{\cO_{\overline{\cB}}} (\cT_{\overline{\cB}})$-module.
\item $\h{\dnl}$ is a coherent sheaf.
\item Every coherent $\h{\dnl}$-module is $\pi$-adically complete.
 \ee\end{lem}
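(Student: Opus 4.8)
\textbf{Proof plan for Lemma \ref{GenThmPrep}.}

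The plan is to treat the four parts in order, since each one feeds into the next. For part (a), I would unravel the construction $\dnl = \Ex{\cD}_n \otimes_{U(\fr{h})_n} \cR_\lambda$ and reduce modulo $\pi$. By Lemma \ref{DNL}(c) we already have $\gr \dnl \cong \Sym_{\cO} \cT$ on $\cB$; the point is to identify $\dnl / \pi \dnl$ with $\gr \dnl$ (base changed to $\overline{\cR}$) using the deformability of $\dnl$. Concretely, $\dnl$ is a sheaf of deformable $\cR$-algebras with flat graded pieces by Lemma \ref{DNL}(b), and for $n \geq 1$ the deformation collapses the filtration modulo $\pi$: on each $\Y \in \cS$ one has $\dnl(\Y) \cong U(\pi^n \cT(\Y))$ by Lemma \ref{DNL}(a), and $U(\pi^n \cT(\Y))/\pi U(\pi^n \cT(\Y)) \cong \Sym_{\overline{\cR}}(\cT(\Y) \otimes \overline{\cR})$ because all brackets and the anchor map land in $\pi^n \cT(\Y) \subseteq \pi U(\pi^n\cT(\Y))$, exactly as in Lemma \ref{ULbarFP}. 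These local isomorphisms are compatible with restriction, so they glue to the claimed isomorphism of sheaves of graded $\overline{\cR}$-algebras. This is essentially \cite[Lemma 6.4]{AW13}, noting again that the Noetherian hypothesis is not used.

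For part (b), I would invoke the graded analogue of Serre's theorem in the form of Lemma \ref{GenSerre}. By part (a), $\cM$ is a coherent module over $\Sym_{\cO_{\overline{\cB}}}(\cT_{\overline{\cB}})$, which is the pushforward of $\cO$ along the (affine) cotangent-type map $\mathbf{T}^\ast \to \overline{\cB}$, or more simply: $\cM$ corresponds to a coherent sheaf on the projective-over-affine scheme $\operatorname{Spec}_{\overline{\cB}} \Sym_{\cO_{\overline{\cB}}}(\cT_{\overline{\cB}})$, which embeds as a closed subscheme of $\P^N_{S(\fr{g})}$ (using that $\cB$ embeds into projective space and $\cT_{\cB}$ is a quotient of $\fr{g} \otimes \cO_{\cB}$). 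Then the Serre twist $\cM(s)$ — which corresponds to $(s)\cM$ on $\cB$ up to the usual identification of twists — is $\Gamma$-acyclic and globally finitely generated for $s \gg 0$ by Lemma \ref{GenSerre}. The one bookkeeping point is to match the intrinsic twist $\sL^{\otimes s}\otimes \cM$ on $\overline{\cB}$ with the Serre twist coming from the chosen embedding into $\P^N$; this follows since $\sL = \iota^\ast \cO(1)$.

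For parts (c) and (d), I would use the short exact sequences $0 \to \dnl \xrightarrow{\pi} \dnl \to \dnl/\pi\dnl \to 0$ (valid because $\dnl$ is $\cR$-flat by Lemma \ref{DNL}(b,c), since its graded pieces are flat) together with part (a). Coherence of $\h{\dnl}$: on each $\Y \in \cS$, $\h{\dnl}(\Y) \cong \h{U(\pi^n\cT(\Y))}$ satisfies Hypothesis \ref{NCadmRalg} by Lemma \ref{ULbarFP}, hence Theorem \ref{KeyRGLemma}/Proposition \ref{ArtinRees} apply; the fact that every finitely generated submodule of a free $\h{\dnl}(\Y)$-module with torsion-free quotient is finitely generated, plus $\pi$-adic completeness from Proposition \ref{ArtinRees}(c), gives that $\h{\dnl}$ is a coherent sheaf of rings in the sense used in \cite[\S 6]{AW13} (this is \cite[Theorem 6.2]{AW13}, whose proof now goes through verbatim over our non-Noetherian $\cR$ thanks to \S\ref{NoethFlat}). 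For part (d), a coherent $\h{\dnl}$-module $\cN$ restricted to $\Y \in \cS$ is a finitely generated $\h{\dnl}(\Y)$-module, hence $\pi$-adically complete by Proposition \ref{ArtinRees}(c) applied to $\cU = \h{\dnl}(\Y)$; completeness is a local property on the finite cover $\cS$, so $\cN$ is $\pi$-adically complete. The main obstacle I anticipate is purely organisational rather than mathematical: namely pinning down in part (b) the precise identification between the module-theoretic twist $(s)\cM$ over the sheaf of rings $\Sym_{\cO}(\cT)$ and the sheaf-theoretic Serre twist used in Lemma \ref{GenSerre}, i.e. checking that twisting commutes with the $\operatorname{Spec}_{\overline{\cB}}$ construction and with the closed embedding into $\P^N_{S(\fr g)}$; everything else is a routine transcription of \cite[\S 6]{AW13} using the non-Noetherian inputs already assembled in \S\ref{NoethFlat} and \S\ref{SectionHSULK}.
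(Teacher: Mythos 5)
Your proposal for parts (a)--(c) is essentially sound and tracks the paper's argument. For (a) the paper applies Lemma \ref{DNL}(c) and flatness of graded pieces to compute $\gr(\dnl/\pi\dnl)$, while you go local via Lemma \ref{DNL}(a) and the argument of Lemma \ref{ULbarFP}; both are fine, and your version has the small advantage of making explicit why the filtration collapses when $n\geq 1$. For (b) the paper is somewhat more careful: it introduces the closed embedding $\beta : T^\ast\cB \hookrightarrow \cB\times\fr{g}^\ast$ and the affine projection $p$, then uses the projection formula and EGA III to identify $p_\ast(\cN(s))$ with $\cM(s)$ and to transfer acyclicity and global generation through $p_\ast$; you correctly flag this bookkeeping as the obstacle but leave it as a remark. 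For (c) both you and the paper reduce to showing $\h{\dnl}(\Y)$ is coherent for $\Y\in\cS$ via Theorem \ref{KeyRGLemma}; the paper then cites Berthelot's basis criterion for coherence of the sheaf, which is what your phrase ``coherent in the sense of [AW13 \S6]'' is implicitly doing.

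Part (d) has a genuine gap. You assert that since $\cN(\Y)$ is a finitely generated $\h{\dnl}(\Y)$-module, it is $\pi$-adically complete by Proposition \ref{ArtinRees}(c), and then conclude because ``completeness is a local property.'' But $\pi$-adic completeness of the \emph{sheaf} $\cN$ means $\cN \to \invlim \cN/\pi^a\cN$ is an isomorphism, and the sections $(\cN/\pi^a\cN)(\Y)$ of the quotient \emph{sheaf} need not agree with the presheaf quotient $\cN(\Y)/\pi^a\cN(\Y)$: the natural map $\cN(\Y)/\pi^a\cN(\Y) \to (\cN/\pi^a\cN)(\Y)$ is injective but possibly not surjective, and one cannot invoke quasi-coherent cohomology vanishing on the affine $\Y$ because $\h{\dnl}$ is not $\cO$-quasi-coherent (its construction involves $\pi$-adic completion, which does not commute with passage to affine opens). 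Without an identification of the two quotients, the completeness of each $\cN(\Y)$ as an abstract module says nothing about $\cN$ as a sheaf. The paper closes this gap through the functor $M \mapsto M^\Delta := \invlim\, \cD_n \otimes_{D} (M/\pi^a M)$: it first shows $M\mapsto M^\Delta$ is exact on finitely presented $\h{D}$-modules (using Theorem \ref{KeyRGLemma} and an Artin--Rees type bound), then uses this exactness to show $\cN/\pi^a\cN \cong \cD_n \otimes_D (N/\pi^a N)$ explicitly — i.e.\ that the presheaf quotient is already a sheaf on $\Y$ — and only then deduces $\cN \cong \invlim \cN/\pi^a\cN$. This intermediate computation is the heart of part (d) and is entirely missing from your plan.
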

\begin{proof} (a) Recall from Lemma \ref{DNL}(c) that we have an isomorphism $\gr \dnl \cong \Sym_{\cO} \cT$ of graded $\cR$-algebras on $\cB$. Because each graded piece of $\gr \dnl$ is flat as an $\cR$-module,  $\gr (\dnl / \pi \dnl) \cong (\gr \dnl) \otimes_{\cR} \overline{\cR} \cong (\Sym_{\cO} \cT) \otimes_{\cR} \overline{\cR} \cong \Sym_{\cO_{\overline{\cB}}} \cT_{\overline{\cB}}$.

(b) Let $\beta : T^\ast \cB \hookrightarrow \cB \times \fr{g}^\ast$ be the closed embedding provided by the infinitesimal action of $\fr{g}$ on $\cB$, and let $p : \cB \times \fr{g}^\ast \to \cB$ be the projection onto the first factor. Because $\Sym_{\cO_{\overline{\cB}}} (\cT_{\overline{\cB}}) = p_\ast \beta_\ast \cO_{T^\ast \overline{\cB}}$, there is a coherent $\cO_{\cB \times \fr{g}^\ast}$-module $\cN$, killed by $\pi$, such that $\cM = p_\ast \cN$.  

Let $B := S(\fr{g})$ and let $\iota_B : \cB \times \fr{g}^\ast = \cB \otimes_{\cR} B \hookrightarrow \P^N \otimes_{\cR} B = \P^N_B$ be the base change of $\iota$ to $B$. It is still a closed embedding, and $\iota_B^\ast \cO_{\P^N_B}(1) \cong  p^\ast \mathscr{L}$ is a very ample line bundle on $\cB \times \fr{g}^\ast$, so we may define $\cN(s) := \cN \otimes_{\cO_{\cB \times \fr{g}^\ast} } (p^\ast \mathscr{L})^{\otimes s}$ for each $s \in \Z$. In this situation \cite[\S 0.5.4.10]{EGAI} implies that $p_\ast(\cN \otimes_{\cO_{\cB \times \fr{g}^\ast}}p^\ast\mathscr{L})\cong p_\ast\cN \otimes_{\cO_\cB} \mathscr{L}$, so
\[p_\ast(\cN(s))\cong \mathcal{M}(s)\]
for all $s \in \Z$. Because $p$ is an affine morphism, \cite[Corollaire III.1.3.3]{EGAIII} tells us that $H^i(\cB, \cM(s)) \cong H^i(\cB,p_\ast(\cN(s))) \cong H^i(\cB\times \fr{g}^\ast,\cN(s))$ for all $i$ and $s$. So $\cM(s)$ is $\Gamma$-acyclic whenever $\cN(s)$. Finally, if $\cF$ is a coherent $\cO_{\cB \times \fr{g}^\ast}$-module which is generated by finitely many global sections, then $p_\ast \cF$ is a coherent $\cO_{\cB} \otimes_{\cR} B$-module generated as such by finitely many global sections. We can now apply Lemma \ref{GenSerre}.

(c) Berthelot's \cite[Proposition 3.1.1]{Berth} has the following weaker form: if $\mathscr{D}$ is a sheaf of rings on a topological space $X$ and if $\cS$ is a basis for $X$ such that $\mathscr{D}(U)$ is a left coherent ring for each $U  \in \cS$, then $\mathscr{D}$ is left coherent. So it is enough to see that $\h{\dnl}(\Y)$ is left coherent for each $\Y \in \cS$. By Lemma \ref{DNL}(a), there is an isomorphism $\h{\dnl}(\Y) \cong \h{\cD_n}(\Y)$, and this $\cR$-algebra satisfies Hypothesis \ref{NCadmRalg} because $n \geq 1$. Now Theorem \ref{KeyRGLemma} implies that every finitely generated left ideal in $\h{\cD_n}(\Y)$ is finitely presented so $\h{\cD_n}(\Y)$ is left coherent.

(d) Let $\Y \in \cS$ and let $D = \cD_n(\Y)$. We will first show that the functor 
\[M \mapsto M^\Delta := \invlim \cD_n \otimes_D (M / \pi^a M)\] 

\vspace{-0.3cm} \noindent is exact on finitely presented $\h{D}$-modules.  Let $0 \to A \to B \to C \to 0$ be an exact sequence of finitely presented $\h{D}$-modules. For every $D$-module $M$, the isomorphism $\cD_n \otimes_D M \cong \cO_{\Y} \otimes_{\cO(\Y)} M$ of sheaves of left $\cO_{\Y}$-modules shows that the functor $\cD_n \otimes_D -$ is exact on $D$-modules. Therefore the sequence of towers of $\cD_n$-modules
\[0 \to \left[ \cD_n \otimes_{D} \frac{A + \pi^a B}{\pi^aB} \right]_a \to \left[ \cD_n \otimes_{D} \frac{B}{\pi^aB} \right]_a \to \left[ \cD_n \otimes_{D} \frac{C}{\pi^aC}\right]_a \to 0\]
is exact. The maps in the left-most non-zero tower are surjective, so it satisfies the Mittag-Leffler condition. Taking inverse limits gives a short exact sequence
\[0 \to \invlim  \cD_n \otimes_{D} \frac{A + \pi^a B}{\pi^aB} \to B^\Delta \to C^\Delta \to 0.\]
We saw in the proof of part (c) that $\h{D}$ satisfies Hypothesis \ref{NCadmRalg}. Because $B$ is a finitely generated $\h{D}$-module, it follows from Theorem \ref{KeyRGLemma} that we can find an integer $n_0$ such that $\pi^a A \subseteq A \cap \pi^a B \subseteq \pi^{n - n_0}A$ for all $n \geq n_0$. Therefore the natural map $A^\Delta = \invlim \cD_n \otimes_D \frac{A}{\pi^aA}\longrightarrow \invlim \cD_n \otimes_D \frac{A + \pi^a B}{\pi^aB}$ is an isomorphism, and the functor $M \mapsto M^\Delta$ is exact as claimed. 

We will next show that every finitely presented $\h{\cD_n}$-module $\cN$ on $\Y$ is $\pi$-adically complete. So $\cN$ is the cokernel of a $\h{\cD_n}$-linear map $u$ between two free $\h{\cD_n}$-modules of finite rank. Let $\h{D} := \h{\cD_n}(\Y)$; then $\h{D}^\Delta = \h{\cD_n}$, and it follows from the above paragraph that $\cN \cong N^\Delta$ where $N$ is the finitely presented $\h{D}$-module $\coker \Gamma(\Y,u)$. Now, the sequence $N^\Delta \stackrel{\pi^a}{\rightarrow} N^\Delta \to (N / \pi^a N)^\Delta \to 0$ is exact, so 
\[ \cN / \pi^a \cN \cong N^\Delta / \pi^a N^\Delta \cong (N / \pi^a N)^\Delta \cong \cD_n \otimes_{\cD_n(\Y)} N / \pi^a N\]
for any $a \geq 1$. Hence $\cN \cong N^\Delta = \invlim \cD_n \otimes_{\cD_n(\Y)} N/\pi^a N \cong \invlim \cN / \pi^a \cN$.

Finally, the definition of coherent modules from \cite[Chapitre 0, $\S$ 5.3.1]{EGAInew} together with the fact that $\cB$ admits a basis of quasi-compact open subsets in $\cS$ implies that we can find an $\cS$-covering of $\cB$ on which our coherent $\h{\dnl}$-module $\cM$ is finitely presented. The restriction of the canonical map $\cM \to \invlim \cM/\pi^a \cM$ to each member of this covering is an isomorphism by the above, and therefore the map is itself an isomorphism.
\end{proof}

\begin{thm}\label{TwistsGenerate}
The sheaves $\{\hdnlK(s) : s \in \Z\}$ generate the category of coherent $\hdnlK$-modules whenever $n \geq 1$.
\end{thm}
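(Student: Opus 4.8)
The plan is to reduce the statement about coherent $\hdnlK$-modules to a statement about coherent modules over the reduction $\dnl/\pi\dnl \cong \Sym_{\cO_{\overline{\cB}}}(\cT_{\overline{\cB}})$ (Lemma \ref{GenThmPrep}(a)), where Serre-type generation is available via Lemma \ref{GenThmPrep}(b). First I would record the standard twisting mechanics: for a coherent $\hdnlK$-module $\cM$ and $s \in \Z$, the Serre twist $(s)\cM$ is a coherent $\hdnlK^{(s)}$-module, and tensoring with $\sL^{\otimes(-s)}$ is inverse to $(s)(-)$; so it suffices to produce, for every coherent $\cM$, a surjection $\bigoplus_{i} \hdnlK(s_i) \twoheadrightarrow \cM$ for suitable integers $s_i$, equivalently (after twisting) a surjection from a finite free $\hdnlK$-module onto $(s)\cM$ for $s$ sufficiently negative. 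Working locally on the members of $\cS$, where $\hdnlK(\Y) \cong \hK{\cD_n}(\Y)$ is Noetherian (it satisfies Hypothesis \ref{NCadmRalg}, so Corollary \ref{HTFabs} applies), coherent $\hdnlK$-modules are exactly the locally finitely presented ones, and by Lemma \ref{GenThmPrep}(c),(d) every coherent $\h{\dnl}$-module is a coherent, $\pi$-adically complete sheaf.

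The main step: given a coherent $\hdnlK$-module $\cM$, I would first descend it to an integral model. Since $\cB$ is quasi-compact and $\cM$ is coherent, choose a finite $\cS$-covering on which $\cM$ is finitely presented; using that each $\hK{\cD_n}(\Y)$ is Noetherian one finds a coherent $\h{\dnl}$-module $\cM_0$ (a ``$\pi$-adic lattice'') with $\cM_0 \otimes_{\cR} K \cong \cM$ — pick finitely many local generators, take the $\h{\dnl}$-submodule they generate, and glue using that $\h{\dnl}$ is coherent. Then $\overline{\cM_0} := \cM_0/\pi\cM_0$ is a coherent $\Sym_{\cO_{\overline{\cB}}}(\cT_{\overline{\cB}})$-module by Lemma \ref{GenThmPrep}(a). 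By Lemma \ref{GenThmPrep}(b), for $s \gg 0$ the twist $(s)\overline{\cM_0}$ is $\Gamma$-acyclic and globally generated by finitely many sections over $\Sym_{\cO_{\overline{\cB}}}(\cT_{\overline{\cB}})$; lift these finitely many global sections to $\Gamma(\cB, (s)\cM_0)$ — possible because $H^1(\cB, (s)\cM_0 \xrightarrow{\pi} (s)\cM_0)$-type obstructions vanish once one knows (via the same $\Gamma$-acyclicity applied to the graded pieces, using Lemma \ref{GenThmPrep}(a),(b) and the flat filtration on $\dnl$) that $(s)\cM_0$ has no higher cohomology and that the reduction map on global sections is surjective. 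These lifted sections define a map $\hdnlK^{(s)\oplus r} \to (s)\cM_0$ (extend scalars to $K$) which is surjective modulo $\pi$; by $\pi$-adic completeness of $(s)\cM_0$ (Lemma \ref{GenThmPrep}(d)) and the usual topological Nakayama argument — here one must check the map is surjective on the associated graded and conclude surjectivity after completion — it is surjective. Inverting $\pi$ and twisting back by $\sL^{\otimes(-s)}$ yields the desired surjection $\hdnlK(-s)^{\oplus r}\twoheadrightarrow \cM$.

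Finally I would remark that this gives generation in the sense that every coherent $\hdnlK$-module is a quotient of a finite direct sum of the $\hdnlK(s)$, which is exactly the assertion; the proof mirrors \cite[Theorem 6.3]{AW13}, the only new inputs being Lemma \ref{GenThmPrep} and Corollary \ref{HTFabs}/Theorem \ref{KeyRGLemma} which replace the Noetherian hypothesis on $\cR$.

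\textbf{Expected main obstacle.} The delicate point is the passage from the reduction mod $\pi$ to the $\pi$-adically complete (and then $K$-linear) statement: one needs to know that $\Gamma(\cB,(s)\cM_0) \to \Gamma(\overline{\cB},(s)\overline{\cM_0})$ is surjective and that the higher cohomology of $(s)\cM_0$ vanishes, which requires an inverse-limit / Mittag-Leffler argument over the $\pi$-adic filtration combined with the graded $\Gamma$-acyclicity of Lemma \ref{GenThmPrep}(b) — and then a careful topological Nakayama lemma to promote ``surjective mod $\pi$'' to ``surjective'', using coherence and $\pi$-adic completeness rather than Noetherianity of the base. Constructing the coherent integral model $\cM_0$ with $\cM_0\otimes_\cR K\cong\cM$ in a way that is genuinely a sheaf (gluing the local lattices) also needs the coherence of $\h{\dnl}$ from Lemma \ref{GenThmPrep}(c) and a little care.
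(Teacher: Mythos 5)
Your proposal is correct and follows essentially the same strategy as the paper's proof: choose a coherent integral model $\cN \subseteq \cM$ with $\cN\otimes_{\cR}K\cong\cM$ (the paper simply cites the proof of Berthelot's Lemma 3.4.3 for this), apply Lemma \ref{GenThmPrep}(a,b) to the mod-$\pi$ reduction to get a twist $s$ with $(s)(\cN/\pi\cN)$ $\Gamma$-acyclic and globally generated, use $\Gamma$-acyclicity of the graded pieces to see the maps $\Gamma(\cK/\pi^{i+1}\cK)\to\Gamma(\cK/\pi^i\cK)$ are surjective, lift generators using $\pi$-adic completeness (Lemma \ref{GenThmPrep}(d)), and finish with the observation that the cokernel $\cC$ of the resulting map satisfies $\cC=\pi\cC$ and is a coherent, hence $\pi$-adically complete, module — forcing $\cC=0$. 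You correctly identify the delicate points (lifting sections, topological Nakayama) and the places where coherence and $\pi$-adic completeness stand in for the missing Noetherian hypothesis on $\cR$.

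One small slip of notation: the map you build from the lifted sections should land in the integral twist $(s)\cN$ and have source $(\h{\dnl}^{(s)})^{\oplus r}$; you write $\hdnlK^{(s)\oplus r}\to(s)\cM_0$, which conflates the integral and rational levels — only after surjectivity of the integral map is established does one invert $\pi$. This doesn't affect the argument, which is otherwise the paper's.
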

\begin{proof} Write $\cA := \h{\dnl}$ to aid legibility. Let $\cM$ be a coherent $\cA_K$-module. By the proof of \cite[Lemma 3.4.3]{Berth}, we can find a coherent $\cA$-submodule $\cN$ of $\cM$ such that $\cM \cong \cN \otimes_{\cR} K$. Using Lemma \ref{GenThmPrep}(a,b), we can find an integer $s$ such that $(s)(\cN / \pi \cN)$ is $\Gamma$-acyclic and generated as an $\cA^{(s)}$-module by its global sections. Because $\cM$ has no $\pi$-torsion, the sequence $0 \to \cN / \pi \cN \stackrel{\pi^i}{\longrightarrow} \cN / \pi^{i+1} \cN\to \cN/\pi^i \cN \to 0$ is exact for all $i \geq 0$. Since also $H^1(\cB, (s)(\cN / \pi \cN)) = 0$, twisting this sequence by $\sL^{\otimes s}$ on the left and taking cohomology shows that the arrow $\Gamma(\cB, \cK / \pi^{i+1} \cK) \to \Gamma(\cB, \cK / \pi^i \cK)$ is surjective for all $i \geq 0$, where $\cK := (s)\cN$. Because $\cK$ is a coherent $\cA^{(s)}$-module, it is $\pi$-adically complete by Lemma \ref{GenThmPrep}(d). Thus we find an $\cA^{(s)}$-linear map $\theta : \left(\cA^{(s)}\right)^a \to \cK$ which is surjective modulo $\pi \cK$. So $\cC := \coker(\theta)$ satisfies $\cC = \pi \cC$.  Because $\cA$ is a coherent $\cA$-module by Lemma \ref{GenThmPrep}(c), we see that $\cC$ is a coherent $\cA^{(s)}$-module, so $\cC$ is $\pi$-adically complete by Lemma \ref{GenThmPrep}(d). Therefore $\cC = 0$ and $\theta$ is surjective. Twisting back by $\sL^{\otimes -s}$ on the left, we find a surjective map $((-s)\cA^{(s)})^a \twoheadrightarrow \cN$ of left $\cA$-modules. But $(-s)\cA^{(s)}\cong \cA(-s)$ as left $\cA$-modules, so we obtain a surjective map $(\cA(-s))^a \twoheadrightarrow \cN$ of $\cA$-modules, and after inverting $\pi$ a surjective map $(\cA_K(-s))^a  \twoheadrightarrow \cM$ of $\cA_K$-modules, as required.
\end{proof}

Let $h_1,\ldots, h_l \in \fr{h}$ be the simple coroots corresponding to the simple roots in $\fr{h}^\ast_K$ given by the adjoint action of $\H$ on $\fr{g}/\fr{b}$, let $\omega_1, \ldots, \omega_l \in \fr{h}^\ast_K$ be the dual basis to $\{h_1,\ldots,h_l\}$ and recall that $\rho := \omega_1 + \ldots + \omega_l$. 

\begin{defn}\label{DomRegDefn} Let $\lambda \in \fr{h}_K^\ast = \Hom_{\cR}(\fr{h}, K)$.
\be 
\item $\lambda$ is said to be \emph{integral} if $\lambda(h_i) \in \Z$ for all $i$. 
\item $\lambda$ is \emph{dominant} if $\lambda(h)\notin \{-1,-2,-3,\cdots \}$ for any positive coroot $h\in \fr{h}$. 
\item $\lambda$ is \emph{$\rho$-dominant} if $\lambda + \rho$ is dominant.
\item $\lambda$ is \emph{regular} if its stabiliser under the action of the Weyl group is trivial.
\item $\lambda$ is \emph{$\rho$-regular} is $\lambda + \rho$ is regular.
\ee\end{defn}

\begin{thm}\label{hdnlKgenerates} Suppose that $n \geq 1$ and that $\lambda$ is $\rho$-dominant and $\rho$-regular. Then $\hdnlK$ generates the category of coherent $\hdnlK$-modules.
\end{thm}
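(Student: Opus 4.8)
The plan is to combine the generation statement from Theorem \ref{TwistsGenerate} with a vanishing theorem for the Serre twists $\hdnlK(s)$. Theorem \ref{TwistsGenerate} tells us that for $n \geq 1$ the family $\{\hdnlK(s) : s \in \Z\}$ generates the category of coherent $\hdnlK$-modules; so it suffices to show that under the $\rho$-dominant and $\rho$-regular hypotheses, each $\hdnlK(s)$ is itself a quotient of a finite direct sum of copies of $\hdnlK = \hdnlK(0)$, or more precisely that $\hdnlK$ alone already generates. The standard Beilinson--Bernstein mechanism for this is: (i) $\hdnlK$ has vanishing higher cohomology on $\cB$, so that $\Gamma(\cB,-)$ is exact on coherent $\hdnlK$-modules and carries epimorphisms to epimorphisms; (ii) the global sections algebra $\Gamma(\cB,\hdnlK) \cong \hK{U(\pi^n\fr{g})} \otimes_{Z(\fr{g}_K)} K_\lambda$ (by Theorem \ref{GlobSecDnlK}) has the property that $\Gamma(\cB,-)$ is faithful on coherent $\hdnlK$-modules, i.e. $\Gamma(\cB,\cM) = 0$ forces $\cM = 0$; and (iii) a localisation/counit argument showing that the canonical map $\hdnlK \otimes_{\Gamma(\cB,\hdnlK)} \Gamma(\cB,\cM) \to \cM$ is an epimorphism for every coherent $\cM$. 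Granting (iii), any coherent $\cM$ is a quotient of $\hdnlK \otimes_{\Gamma(\cB,\hdnlK)} \Gamma(\cB,\cM)$, and since $\Gamma(\cB,\cM)$ is a finitely generated module over the Noetherian ring $\Gamma(\cB,\hdnlK)$ (Noetherianity coming from Corollary \ref{HTFabs}/\ref{HTF} applied to $\hK{U(\pi^n\fr{g})}$ together with the central quotient), we get a surjection $\hdnlK^{\oplus m} \twoheadrightarrow \cM$, which is exactly the assertion that $\hdnlK$ generates.

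First I would establish the cohomological input. Theorem \ref{GlobSecDnlK}(b) already gives $H^j(\cB,\hdnlK) = 0$ for $j > 0$ and identifies $\Gamma(\cB,\hdnlK)$; I would note this holds for \emph{every} $\lambda$, so the $\rho$-dominance and $\rho$-regularity are not needed here. The $\rho$-dominance will be used to run the classical argument that the functor $\Delta := \hdnlK \otimes_{\Gamma(\cB,\hdnlK)} (-)$ and $\Gamma(\cB,-)$ form an adjoint pair with $\Gamma \circ \Delta$ computing something controllable: concretely, one reduces mod $\pi$ and invokes the characteristic-zero Beilinson--Bernstein theorem on the generic fibre $\cB_K$ (the sheaf $\dnl \otimes_{\cR} K \cong \cD^\lambda_K$ is the usual twisted differential operators, and \cite[Lemme 2.3]{BB}, \cite[Th\'eor\`eme 3.2]{SprBour} give the $\rho$-dominant exactness/acyclicity there), then transports this to the completed/Tate picture using that $\hdnlK$ is a $\pi$-adic completion of $\dnl$ inverted at $\pi$ and the $\pi$-adic-completeness of coherent $\h{\dnl}$-modules from Lemma \ref{GenThmPrep}(d). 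The $\rho$-regularity enters to upgrade "acyclic" to "exact and faithful": regularity of $\lambda + \rho$ is exactly what makes the global sections functor conservative, because it guarantees that the relevant block of the category of $Z(\fr{g}_K)$-modules is non-degenerate (equivalently, it rules out the vanishing of $\Gamma$ on nonzero coherent sheaves — the phenomenon that occurs at singular infinitesimal characters).

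The strategy I would actually follow to minimise new work is to bootstrap from Theorem \ref{TwistsGenerate} rather than redo the whole Beilinson--Bernstein apparatus: since that theorem already says the twists $\hdnlK(s)$ generate, it is enough to prove that $\Gamma(\cB, \hdnlK(s)) \neq 0$ for suitable $s$ in a way compatible with epimorphisms — but more cleanly, to prove that each coherent $\hdnlK$-module $\cM$ with $\Gamma(\cB,\cM)=0$ vanishes, and that the counit $\Delta\Gamma(\cM)\to\cM$ is surjective. For the counit surjectivity I would cover $\cB$ by the Weyl translates $\cU$ of the big cell (as in the proof of Theorem \ref{GlobSecDnlK}), where $\hdnlK$ restricts to $\h{\cD_{n,K}}$ by Lemma \ref{DNL}(a); on each such affine $\Y$, coherent $\hdnlK$-modules correspond to finitely generated $\h{\cD_{n,K}}(\Y)$-modules, and the surjectivity of the counit locally follows from the exactness of $\Gamma(\cB,-)$ (no $H^1$) combined with the classical statement on $\cB_K$. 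Reducing mod $\pi$, invoking Lemma \ref{GenThmPrep}, and using that $\cR$ is a valuation ring so that $\pi$-torsion-freeness is inherited, then patches the local surjectivity to a global one. The main obstacle I anticipate is the faithfulness/conservativity step: showing $\Gamma(\cB,\cM)=0 \Rightarrow \cM=0$ requires either a careful characteristic-zero descent (reducing to a discrete valuation subring $\cR'$ over which $\lambda$ is defined, as was done in the proof of Theorem \ref{GlobSecDnlK} via Lemma \ref{SmallDVR}, and then appealing to the Noetherian Beilinson--Bernstein equivalence of \cite{AW13} or its classical source) or a direct argument that the support of a nonzero coherent $\hdnlK$-module must meet the locus where $\Gamma$ is nonzero; handling the non-Noetherian $\cR$ and the completed topology simultaneously here is the delicate point, and I would route around it by the $\cR'$-descent, exactly paralleling the bounded-$\pi$-torsion argument already used for Theorem \ref{GlobSecDnlK}.
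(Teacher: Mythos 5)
Your first reduction is exactly right and matches the paper: by Theorem~\ref{TwistsGenerate} it suffices to produce, for each $s\in\Z$, a surjection $\hdnlK^{\oplus a}\twoheadrightarrow\hdnlK(s)$. But you then abandon that direct route in favour of rebuilding the full localisation machinery (exactness of $\Gamma$, conservativity, counit surjectivity), and this is where the proposal goes wrong in two ways.

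First, it risks circularity. What you describe in steps (i)--(iii) is essentially coherent $\hdnlK$-affinity of $\cB$ (Theorem~\ref{hdnlKaffine} in the paper), but that theorem is \emph{deduced} from the present generation statement: the proof of Theorem~\ref{hdnlKaffine} begins by choosing a free $\hdnlK$-resolution of an arbitrary coherent module, which requires knowing that $\hdnlK$ already generates. So $\Gamma$-exactness on all coherent modules and the counit surjectivity are not yet available; trying to use them here would put the logical order on its head. Your conservativity step in particular (``$\Gamma(\cB,\cM)=0\Rightarrow\cM=0$'') is acknowledged as delicate and is left as a sketch; there is no clear way to close it without already having the generation statement or a full affinity theorem in hand.

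Second, the direct route you set up at the start is both simpler and sufficient, and it is exactly what the paper does. Over the generic fibre, the classical Beilinson--Bernstein theorem (paragraph (iv) of the proof of \cite[Th\'eor\`eme Principal]{BB}, under the $\rho$-dominant, $\rho$-regular hypothesis) already gives a surjection $(\cD^\lambda_K)^{\oplus a}\twoheadrightarrow\cD^\lambda_K(s)$ of coherent $\cD^\lambda_K$-modules. Clearing denominators produces a $\cD^\lambda_n$-linear map $\theta:(\cD^\lambda_n)^{\oplus a}\to\cD^\lambda_n(s)$ with $\pi$-torsion cokernel $\cC$; since $\cD^\lambda_n(s)$ is locally (on $\cS$) a free $\cD^\lambda_n$-module of rank one and $\cB$ is quasi-compact, $\cC$ is in fact \emph{bounded} $\pi$-torsion. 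Because $\pi$-adic completion is exact on short exact sequences of $\cR$-modules whose third term is either $\pi$-torsion-free or bounded $\pi$-torsion, completing and inverting $\pi$ kills $\cC$ and yields the required surjection $\h{\theta}_K:\hdnlK^{\oplus a}\twoheadrightarrow\hdnlK(s)$. This is the whole proof; the $\rho$-dominance and $\rho$-regularity are consumed entirely inside the appeal to \cite{BB}, not in any faithfulness argument. I would encourage you to replace everything after your first paragraph by this bounded-$\pi$-torsion lifting argument.
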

\begin{proof} By Theorem \ref{TwistsGenerate}, it will be enough to show that for each $s \in \Z$ there is a surjection $\hdnlK^a \twoheadrightarrow \hdnlK(s)$ for some $a \in \N$. Because $\cD^\lambda_K(s)$ is a coherent $\cD^\lambda$-module and because $\lambda$ is $\rho$-dominant and $\rho$-regular, we can find a surjection $(\cD^\lambda_K)^a \to \cD^\lambda_K(s)$ for some $a \in \N$, by paragraph (iv) of the proof of \cite[Th\'eor\`eme Principal]{BB}. By clearing denominators, we can find a $\cD^\lambda_n$-module homomorphism $\theta : (\cD^\lambda_n)^a \to \cD^\lambda_n(s)$ with $\pi$-torsion cokernel $\cC$ say. Because $\cD^\lambda_n(s)$ is locally on $\cS$ a free $\cD^\lambda_n$-module of rank $1$ and because $\cB$ is quasi-compact, we see that in fact $\cC$ is bounded $\pi$-torsion. 

If $0 \to A \to B \to C \to 0$ is an exact sequence of $\cR$-modules, then $0 \to \h{A} \to \h{B} \to \h{C} \to 0$ is again exact whenever $C$ is either $\pi$-torsion-free, or bounded $\pi$-torsion. It follows from this that the sequence $\h{ \cD^\lambda_n (\Y)}^a \to \h{ \cD^\lambda_n(s)(\Y)} \to \h{\cC(\Y)} \to 0$ is exact for any $\Y \in \cS$. Therefore $\h{\theta}_K : \hdnlK^a \to \hdnlK(s)$ is surjective.
\end{proof}

Recall from \cite[Definition 5.1]{AW13} that a sheaf of rings $\mathscr{D}$ on a topological space $X$ is said to be \emph{coherently $\mathscr{D}$-affine} if for every coherent $\mathscr{D}$-module $\cM$, $\cM$ is generated by its global sections, $\cM$ is $\Gamma$-acyclic, and $\cM(X)$ is a coherent $\mathscr{D}(X)$-module. 

\begin{thm}\label{hdnlKaffine} Suppose that $n \geq 1$ and that $\lambda$ is $\rho$-dominant and $\rho$-regular. Then $\cB$ is coherently $\hdnlK$-affine.
\end{thm}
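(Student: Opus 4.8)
The plan is to verify the three conditions in the definition of coherent $\hdnlK$-affinity directly, combining the cohomology computation of Theorem \ref{GlobSecDnlK} with the generation statements just established. First I would record the standard formalism of $\mathscr{D}$-affinity: exactly as in \cite[\S 5]{AW13} and the classical Beilinson--Bernstein argument, once one knows that $\hdnlK$ generates the category of coherent modules (Theorem \ref{hdnlKgenerates}) and that a suitable class of generators is $\Gamma$-acyclic with finitely presented global sections, the three conditions propagate from the generators to all coherent modules by a five-term exact sequence / syzygy argument. So the real content is to check the three properties for the sheaf $\hdnlK$ itself, and then feed this through the generation result.

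\textbf{Key steps.} (1) By Theorem \ref{hdnlKgenerates}, since $\lambda$ is $\rho$-dominant and $\rho$-regular and $n \geq 1$, the single sheaf $\hdnlK$ generates the category of coherent $\hdnlK$-modules: every coherent $\hdnlK$-module $\cM$ admits a surjection $(\hdnlK)^a \twoheadrightarrow \cM$. (2) Theorem \ref{GlobSecDnlK} tells us that $\hdnlK$ has vanishing higher cohomology and that $\Gamma(\cB,\hdnlK) \cong \hK{U(\pi^n\fr{g})} \otimes_{Z(\fr{g}_K)} K_\lambda$; in particular $\Gamma(\cB, (\hdnlK)^a)$ is finitely generated over $D := \Gamma(\cB,\hdnlK)$, and $D$ is Noetherian since $\hK{U(\pi^n\fr{g})}$ is Noetherian by Corollary \ref{HTF} (applicable because $n\geq 1$, using the $(\cR,\cR)$-Lie algebra $\pi^n\fr{g}$ with trivial anchor). (3) Now take an arbitrary coherent $\hdnlK$-module $\cM$ and choose a presentation $(\hdnlK)^b \to (\hdnlK)^a \to \cM \to 0$, which exists because $\hdnlK$ is a coherent sheaf of rings (the completed analogue, $\hdnlK(\Y)$ being Noetherian for $\Y\in\cS$ via Theorem \ref{KeyRGLemma}, as in the proof of Lemma \ref{GenThmPrep}(c)). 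Applying $\Gamma(\cB,-)$ and using $H^1(\cB,(\hdnlK)^a)=0$ from step (2), we get that $\Gamma(\cB,\cM)$ is the cokernel of a map of finitely generated $D$-modules, hence finitely generated; since $D$ is Noetherian, $\Gamma(\cB,\cM)$ is a coherent $D$-module. (4) For $\Gamma$-acyclicity and generation by global sections of $\cM$: one bootstraps. Given generation for all coherent modules (step 1) and $\Gamma$-acyclicity of the generators (step 2), the standard argument shows $H^i(\cB,\cM)=0$ for all $i>0$ and all coherent $\cM$: write $0 \to \cN \to (\hdnlK)^a \to \cM \to 0$ with $\cN$ coherent, and induct downward on cohomological degree using the long exact sequence, the bound on cohomological dimension coming from the finite affine cover $\cU$ of $\cB$ by Weyl translates of the big cell (each finite intersection lying in $\cS$, hence $\Gamma$-acyclic for $\hdnlK$ by Lemma \ref{CohDNLonS}). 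Then, to see $\cM$ is generated by its global sections: from $(\hdnlK)^a \twoheadrightarrow \cM$ and $H^1(\cB,\cN)=0$ we get $\Gamma(\cB,(\hdnlK)^a) \twoheadrightarrow \Gamma(\cB,\cM)$, and since $(\hdnlK)^a$ is generated by global sections (its global sections being $D^a$, which visibly generate), so is $\cM$.

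\textbf{Main obstacle.} The bootstrapping in step (4) is the delicate part: one must run the three properties (global generation, $\Gamma$-acyclicity, coherence of global sections) simultaneously by induction, in the correct order, exactly as in \cite[Theorem 5.4]{AW13} or the classical treatment. The subtlety is purely formal — one needs that the subsheaf $\cN$ appearing in a presentation is again coherent (uses coherence of the sheaf of rings $\hdnlK$, established via Theorem \ref{KeyRGLemma}), and that the relevant $H^i$ vanish in a bounded range (uses the finite cover by big-cell translates and Lemma \ref{CohDNLonS}). I would present this as a clean citation of the abstract $\mathscr{D}$-affinity mechanism from \cite[\S 5]{AW13}, noting that its proof only requires: $\hdnlK$ Noetherian global sections (Corollary \ref{HTF} + Theorem \ref{GlobSecDnlK}), $\hdnlK$ a coherent sheaf of rings (Theorem \ref{KeyRGLemma}), $\hdnlK$ generating (Theorem \ref{hdnlKgenerates}), and $\hdnlK$ having vanishing higher cohomology (Theorem \ref{GlobSecDnlK}) — all of which are now available without any Noetherian hypothesis on $\cR$ or any restriction on $p$. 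The proof therefore reduces to citing \cite[Theorem 5.4]{AW13} with these four inputs substituted.
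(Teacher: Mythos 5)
Your strategy is essentially the paper's: use Theorem \ref{hdnlKgenerates} to get finite free resolutions (possible since $\hdnlK$ is coherent by Lemma \ref{GenThmPrep}(c)), dimension-shift against Theorem \ref{GlobSecDnlK}(b), then harvest coherence of global sections from Noetherianity of $\Gamma(\cB,\hdnlK)$ via Corollary \ref{HTF}. The paper builds an infinite free resolution $\cdots\to\cP_1\to\cP_0\to\cM\to0$ and dimension-shifts in one go; you split into a short exact sequence $0\to\cN\to(\hdnlK)^a\to\cM\to0$ and induct downward — these are interchangeable. You are also right that steps (3) and (4) must be run together: exactness of the global-sections complex that step (3) invokes is not available until the $\Gamma$-acyclicity from step (4) has been established, as you acknowledge in the "main obstacle" paragraph.

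There is one genuine flaw in how you initiate the downward induction. You propose to bound the cohomological dimension of $\cM$ by the finite affine cover $\cU$ by Weyl translates of the big cell, invoking Lemma \ref{CohDNLonS}. But Lemma \ref{CohDNLonS} only gives $\Gamma$-acyclicity of $\h{\dnl}$ and $\hdnlK$ \emph{themselves} on $\Y\in\cS$; it says nothing about an arbitrary coherent $\hdnlK$-module $\cM$ on intersections of the cover. Without acyclicity of $\cM$ on those intersections, the \v{C}ech-to-derived spectral sequence does not identify $H^i(\cB,\cM)$ with \v{C}ech cohomology, so the finite size of the cover gives no bound. The correct starting point — and the one the paper uses — is Grothendieck's cohomological dimension theorem for noetherian topological spaces, cited as \cite[Theorem III.2.7]{Hart}: since $\cB$ is a noetherian space of finite Krull dimension, $H^i(\cB,\cF)=0$ for \emph{any} abelian sheaf $\cF$ and $i>\dim\cB$. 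This is free, needs no acyclicity input, and is what makes the downward induction (or, equivalently, the dimension shift $H^j(\cB,\cM)\cong H^{j+r}(\cB,\im d_r)$) terminate. Replace your cover-based bound with this citation and the argument is complete.
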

\begin{proof} Let $\cM$ be a coherent $\hdnlK$-module. It follows from Lemma \ref{GenThmPrep}(c) that $\hdnlK$ is also a coherent $\hdnlK$-module. Using Theorem \ref{hdnlKgenerates}, we can find a resolution $\cdots \to \cP_2 \stackrel{d_2}{\to} \cP_1 \stackrel{d_1}{\to} \cP_0 \to \cM \to 0$ of $\cM$, where each $\cP_i$ is a finite free $\hdnlK$-module of finite rank. A dimension shifting argument together using  Theorem \ref{GlobSecDnlK}(b) now implies that $H^j(\cB, \cM) = H^{j+r}(\cB, \im d_r)$ for any $j \geq 1$ and $r \geq 1$. This vanishes whenever $r \geq \dim \cB$ by \cite[Theorem III.2.7]{Hart}, so $\cM$ is $\Gamma$-acyclic. Hence $\im d_r$ is also $\Gamma$-acyclic for each $r \geq 1$, and it follows that $\cM$ is generated by its global sections. We also obtain the exact sequence $\cdots \to \Gamma(\cB, \cP_1) \to \Gamma(\cB, \cP_0) \to \Gamma(\cB, \cM) \to 0$ of $\Gamma(\cB,\hdnlK)$-modules. Because $\Gamma(\cB, \hdnlK)$ is Noetherian by Theorem \ref{GlobSecDnlK}(a) and Corollary \ref{HTF}, it follows that $\Gamma(\cB,\cM)$ is a coherent $\Gamma(\cB,\hdnlK)$-module.
\end{proof}

\begin{cor}\label{BBLevelwise} Suppose that $n \geq 1$  and that $\lambda$ is $\rho$-dominant and $\rho$-regular. Then $\Gamma(\cB,-)$ is an exact equivalence between the category of coherent $\hdnlK$-modules and the category of finitely generated $\h{U(\pi^n \fr{g})_K}$-modules killed by $\fr{m}_\lambda$.
\end{cor}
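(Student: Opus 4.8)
\textbf{Proof strategy for Corollary \ref{BBLevelwise}.} The plan is to combine the coherent $\hdnlK$-affinity of $\cB$ established in Theorem \ref{hdnlKaffine} with the computation of global sections in Theorem \ref{GlobSecDnlK}(a) and the standard formalism of $\mathscr{D}$-affinity. Write $A := \Gamma(\cB, \hdnlK)$; by Theorem \ref{GlobSecDnlK}(a) there is an isomorphism $A \cong \hK{U(\pi^n\fr{g})} \otimes_{Z(\fr{g}_K)} K_\lambda$, which identifies $A$ with the quotient $\hK{U(\pi^n\fr{g})} / \hK{U(\pi^n\fr{g})} \cdot \fr{m}_\lambda$; equivalently, $A$-modules are exactly $\hK{U(\pi^n\fr{g})}$-modules killed by $\fr{m}_\lambda$, and finitely generated $A$-modules correspond to finitely generated such $\hK{U(\pi^n\fr{g})}$-modules. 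Note $A$ is Noetherian by Corollary \ref{HTF}, since $\pi^n\fr{g}$ (viewed as an $(\cR,\cR)$-Lie algebra with trivial anchor) satisfies $[\pi^n\fr{g},\pi^n\fr{g}] \subseteq \pi\cdot\pi^n\fr{g}$ and $\pi^n\fr{g}\cdot\cR = 0$; hence its quotient $A$ is Noetherian too.

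The argument then runs as follows. First I would check that $\Gamma(\cB,-)$ does land in finitely generated $A$-modules: this is precisely the third clause of coherent $\hdnlK$-affinity verified in Theorem \ref{hdnlKaffine}. Next I would construct the quasi-inverse, the localisation functor $\Delta := \hdnlK \otimes_A -$ from finitely generated $A$-modules to coherent $\hdnlK$-modules; one must check this is well-defined, i.e. that $\hdnlK\otimes_A M$ is coherent for finitely generated $M$, which follows because $\hdnlK$ is a coherent sheaf of rings by Lemma \ref{GenThmPrep}(c) and $M$ is finitely presented over the Noetherian ring $A$, so a finite presentation of $M$ yields a finite presentation of $\hdnlK\otimes_A M$. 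Then the two natural transformations $M \to \Gamma(\cB, \Delta(M))$ and $\Delta(\Gamma(\cB,\cM)) \to \cM$ are shown to be isomorphisms by the usual dimension-shifting/five-lemma device: for the first, one uses that $\Gamma(\cB,-)$ is exact on coherent $\hdnlK$-modules (the $\Gamma$-acyclicity clause of Theorem \ref{hdnlKaffine}) together with $\Gamma(\cB,\hdnlK) = A$, reducing to the case $M$ free; for the second, one uses that every coherent $\cM$ is a quotient of a finite free $\hdnlK$-module (the generation-by-global-sections clause of Theorem \ref{hdnlKaffine}, or Theorem \ref{hdnlKgenerates}), takes a two-step free resolution $\hdnlK^b \to \hdnlK^a \to \cM \to 0$, applies $\Gamma$ (exact) and then $\Delta$ (right exact), and compares. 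Exactness of $\Gamma(\cB,-)$ on the abelian category of coherent $\hdnlK$-modules is immediate from $\Gamma$-acyclicity plus the long exact cohomology sequence.

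The only genuine point requiring care — and the place I would spend the most effort — is the identification in the first paragraph: that $A = \Gamma(\cB,\hdnlK)$, which Theorem \ref{GlobSecDnlK}(a) presents as $\hK{U(\pi^n\fr{g})} \otimes_{Z(\fr{g}_K)} K_\lambda$, is genuinely the quotient ring $\hK{U(\pi^n\fr{g})}/\hK{U(\pi^n\fr{g})}\fr{m}_\lambda$ and that the $A$-module category is therefore literally the full subcategory of $\hK{U(\pi^n\fr{g})}$-modules annihilated by $\fr{m}_\lambda$, with ``finitely generated over $A$'' matching ``finitely generated over $\hK{U(\pi^n\fr{g})}$ and killed by $\fr{m}_\lambda$''. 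This is essentially bookkeeping: $K_\lambda$ is the $1$-dimensional $Z(\fr{g}_K)$-module $Z(\fr{g}_K)/\fr{m}_\lambda$, so $\hK{U(\pi^n\fr{g})}\otimes_{Z(\fr{g}_K)}K_\lambda = \hK{U(\pi^n\fr{g})}/\hK{U(\pi^n\fr{g})}\fr{m}_\lambda$ because tensoring with a cyclic module kills the corresponding ideal; and a module over this quotient is the same as a module over $\hK{U(\pi^n\fr{g})}$ killed by $\fr{m}_\lambda$, with finite generation unchanged since the surjection $\hK{U(\pi^n\fr{g})}\twoheadrightarrow A$ is a ring epimorphism in the relevant sense. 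Once this dictionary is in place, everything else is the formal $\mathscr{D}$-affinity argument, and the statement about exactness of the equivalence is automatic since both $\Gamma(\cB,-)$ and $\Delta$ are exact (the latter because $\hdnlK$ is flat over $A$ on the relevant subcategory — or more simply, because an equivalence of abelian categories is automatically exact).
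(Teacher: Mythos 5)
Your proposal is correct and takes essentially the same route as the paper: the paper's proof is a one-line citation of Theorems \ref{hdnlKaffine}, \ref{GlobSecDnlK}(a) and the general $\mathscr{D}$-affinity formalism of \cite[Proposition 5.1]{AW13}, and your argument simply unwinds what that cited proposition must say (the localisation quasi-inverse $\hdnlK\otimes_A -$, the two natural transformations, the dimension-shifting) together with the bookkeeping identifying $\Gamma(\cB,\hdnlK)$-modules with $\hK{U(\pi^n\fr{g})}$-modules killed by $\fr{m}_\lambda$. The one point worth making explicit in your bookkeeping step is that $\hK{U(\pi^n\fr{g})}\cdot\fr{m}_\lambda$ is a two-sided ideal because $Z(\fr{g}_K)$ is central in $U(\fr{g}_K)$ and hence (by density and continuity) in $\hK{U(\pi^n\fr{g})}$, which is what makes the quotient $A$ a ring and the dictionary between module categories honest.
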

\begin{proof} This follows from Theorems \ref{hdnlKaffine},  \ref{GlobSecDnlK}(a) and \cite[Proposition 5.1]{AW13}.
\end{proof}

\subsection{The localisation functor is essentially surjective}
We continue with the notation and hypotheses of $\S \ref{hDaffFlagVar}$. Thus, $\G$ is a connected, simply connected, split semisimple affine algebraic group scheme over $\cR$ and $\B$ is a closed and flat Borel $\cR$-subgroup scheme of $\G$. Note that the canonical map $\xi : \G \to \X := \G/\B$ is then a Zariski locally trivial $\B$-torsor by \cite[Chapter II, 1.10(2)]{Jantzen}, so Hypothesis \ref{HgsSpace} is satisfied for the flag variety $\X$. 

Recall from Definition \ref{CongSubTop} that $\G_{\pi^n}(\cR)$ denotes the congruence kernel
\[ \G_{\pi^n}(\cR) = \ker (\G(\cR) \to \G(\cR/\pi^n\cR)). \]
By definition of the topology on $\G(\cR)$, these form a descending chain of open normal subgroups in $\G(\cR)$. We will assume that we are given a continuous group homomorphism from a compact $p$-adic Lie group $G$ to the first congruence kernel
\[ \sigma : G \to G_\pi(\cR).\]
Certainly then Hypothesis \ref{SigmaGG} is satisfied.  By Theorem \ref{DGGaction}, we know that the $K$-algebra $A := \wDGG$ acts on the rigid analytic flag variety $\bX := \h{\X}_{\rig}$ compatibly with $G$. Hence, by Proposition \ref{LocFunCoadm}, we have at our disposal the localisation functor 
\[ \Loc^A_\bX : \cC_A \longrightarrow \cC_{\bX/G}.\]
For each $K$-linear character $\lambda : \fr{h}_K \to K$, we have a $K$-algebra homomorphism $\lambda \circ \phi : Z(\fr{g}_K) \to K$, and we will denote its kernel by $\fr{m}_\lambda$. We are concerned here only with the case where $\lambda$ is the zero map.  

\begin{thm}\label{BBEssSurj} The localisation functor $\Loc^A_\bX : \cC_A \longrightarrow \cC_{\bX/G}$ is essentially surjective on objects.
\end{thm}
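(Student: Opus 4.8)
The plan is to reduce the essential surjectivity of $\Loc^A_\bX$ to the levelwise Beilinson--Bernstein equivalence established in Corollary \ref{BBLevelwise}, using the Fr\'echet--Stein presentation of $A = \wDGG$ coming from Proposition \ref{StdPresWDGG} and Theorem \ref{wDGGFrSt}. Fix a coadmissible $G$-equivariant $\cD$-module $\cM$ on $\bX$. Since $\bX$ is affinoid and $(\bX,G)$ is small (this needs checking, but follows because $\h{\X}$ is a formal model and $\cT(\bG)^{\bG}$-type Lie lattices on $\bX$ come from $G$-stable free $\cA$-Lie lattices via the anchor map $\varphi'_\bU$ of $\S\ref{wDGGsect}$), Theorem \ref{Kiehl} already tells us that $M := \cM(\bX)$ is a coadmissible $\w\cD(\bX,G)$-module and that $\cM \cong \Loc^{\w\cD(\bX,G)}_\bX(M)$. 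So the real content is: every coadmissible $\w\cD(\bX,G)$-module $M$ arises, via restriction along the algebra map $A = \wDGG \to \w\cD(\bX,G)$, from a coadmissible $A$-module; equivalently, $\Loc^A_\bX$ surjects onto those $\cM$ whose global sections are coadmissible over $\w\cD(\bX,G)$. But by Theorem \ref{LocEquiv} every object of $\cC_{\bX/G}$ is locally of this form, and $\cC_A$ is defined so that $\Loc^A_\bX$ is built from the local data; so I would instead argue directly at the level of $A$-modules.

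\textbf{Key steps.} First, choose a good chain $(N_\bullet)$ for the $\cA$-Lie lattice $\cA\cdot\gamma'(\fr g)$ on $\h\G$, so that $A = \invlim\, U_n \rtimes_{N_n} G$ with $U_n := \hK{U(\pi^n\fr g)}$, a Fr\'echet--Stein presentation (Proposition \ref{StdPresWDGG}, Theorem \ref{wDGGFrSt}). Given $\cM \in \cC_{\bX/G}$, set $M := \Gamma(\bX,\cM)$, a coadmissible $\w\cD(\bX,G)$-module and hence, via the structure map $A \to \w\cD(\bX,G)$ of the compatible action (Theorem \ref{DGGaction}), an $A$-module. The point is to show $M$ is coadmissible over $A$, i.e. coadmissible over $A_H = \w\cD(\bG,H)^{\bG}$ for some compact open $H \le G$. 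Write $M_n := U_n \rtimes_{N_n} G \otimes_{A} M$; by the Fr\'echet--Stein presentation it suffices to show each $M_n$ is finitely generated over $U_n \rtimes_{N_n} G$ and that $M \cong \invlim M_n$. Here I would use that $M$, being a coadmissible $\w\cD(\bX,G)$-module, is $\invlim$ of finitely generated modules over the Banach algebras $\hK{U(\pi^n\cL)}\rtimes_{N_n} G$ for a $G$-stable free $\cA_\bX$-Lie lattice $\cL$ on $\bX$; and the anchor-type map $\varphi'_\bX : \fr g \to \cT(\bX)$ (from $\S\ref{wDGGsect}$), after passing to completions and crossed products, relates $U_n\rtimes_{N_n}G$ to $\hK{U(\pi^n\cL)}\rtimes_{N_n}G$ — this is precisely Proposition \ref{PhiFactors}. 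The module $M_n$ is a subquotient of a finitely generated $\hK{U(\pi^n\cL)}\rtimes_{N_n}G$-module pulled back along this map, and since $\hK{U(\pi^n\cL)}\rtimes_{N_n}G$ is module-finite over $U_n\rtimes_{N_n}G$ over the relevant locus (crossed product with finite group $G/N_n$, plus Noetherianity from Corollary \ref{HTF}), finite generation follows. Then $\Loc^A_\bX(M)$ and $\cM$ are both sheaves on $\bX_{\rig}$ whose restrictions to $\bX_w(\cT)$ agree, by the transitivity of the localisation functor (Proposition \ref{LocTrans}, Lemma \ref{RestToHonP}) combined with the reconstruction result Proposition \ref{Reconstruct}(c) applied to $H$ and $\w\cD(\bX,H)$; hence $\Loc^A_\bX(M)\cong\cM$ by the uniqueness of extension, \cite[Theorem 9.1]{DCapOne}.

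\textbf{Main obstacle.} The crux, and the step I expect to be hardest, is establishing that the global sections $M=\Gamma(\bX,\cM)$ of a coadmissible $G$-equivariant $\cD$-module form a \emph{coadmissible} $\wDGG$-module, rather than merely a module over $\w\cD(\bX,G)$ that happens to be coadmissible over the larger algebra. The subtlety is that $\wDGG \to \w\cD(\bX,G)$ need not be injective or flat (the anchor map $\varphi'$ may have a kernel containing a $K$-line, cf. the footnote to Proposition \ref{PhiFactors}), so coadmissibility does not transfer formally; one must genuinely produce the finitely generated $U_n\rtimes_{N_n}G$-module presentations, controlling them compatibly in $n$. I would handle this by combining Proposition \ref{PhiFactors}'s factorisation with the fact (Theorem \ref{GlobSecDnlK}) that $\hK{U(\pi^n\fr g)}$ surjects onto $\Gamma(\cB,\hdnlK)$ modulo $\fr m_0$, so that the $U_n$-module structure on the $\fr g$-invariant part of local sections is essentially that coming from the sheaf $\hdnlK$ on the flag variety, to which Corollary \ref{BBLevelwise} applies and gives finite generation. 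A secondary technical point is verifying that $(\bX,G)$ is indeed small so that Theorem \ref{Kiehl} can be invoked; this should follow from $\cT(\bX) = \cO(\bX)\otimes_{\cR}\cT(\h\X)$ being free-ish locally and the existence of $G$-stable formal models via Lemma \ref{GstableFM} and Lemma \ref{StabLopen}, but it deserves a careful statement.
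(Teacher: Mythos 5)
There is a fundamental error at the very start: the rigid analytic flag variety $\bX = (\h{\X})_{\rig}$ is \emph{not} affinoid (it is proper, being the analytification of the projective scheme $\X = \G/\B$), so the pair $(\bX,G)$ is not small, the algebra $\w\cD(\bX,G)$ is not defined, and Theorem \ref{Kiehl} simply cannot be invoked at the level of $\bX$. You explicitly flag ``Since $\bX$ is affinoid and $(\bX,G)$ is small (this needs checking\dots)'' but everything after that parenthetical collapses once one notices the hypothesis fails. With $\w\cD(\bX,G)$ unavailable, there is no structure map $A = \wDGG \to \w\cD(\bX,G)$ to restrict along, and the entire plan of first producing a coadmissible $\w\cD(\bX,G)$-module $M = \Gamma(\bX,\cM)$ and then transferring coadmissibility to $A$ never gets off the ground. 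This is precisely the obstruction that forces the paper to work on the algebraic scheme $\X$ rather than on $\bX$ directly.

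You do gesture, in the ``Main obstacle'' paragraph, toward the correct ingredients --- Theorem \ref{GlobSecDnlK} and Corollary \ref{BBLevelwise} --- but you do not supply the mechanism that turns them into a proof. What the paper actually does is to assemble, from the affinoid restrictions $\cM(\h\Y_{\rig})$ for $\Y \in \cS$ (where Theorem \ref{LocEquiv} \emph{does} apply because each $(\h\Y_{\rig}, G)$ is small), a coherent sheaf $\cM_n := \cQ_n \otimes_{\cQ_\infty} \cM$ of $\cQ_n = \hdnK \rtimes_{G_n} G$-modules on the $\cR$-scheme $\X$ (Lemma \ref{MnQnX}). Finite generation of $\cM_n(\X)$ over $\cQ_n(\X)$ then comes from the fact that $\X$ is coherently $\cQ_n$-affine (Theorem \ref{XcohQnAff}, built on Theorem \ref{hdnlKaffine}), and the compatibility isomorphisms of Lemma \ref{NewTaus} show that $M_\infty := \invlim \cM_n(\X)$ is a coadmissible module over $\invlim \cQ_n(\X)$, hence over $A$. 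Only then does a gluing argument (using the maps $\nu_\bY$ of Lemma \ref{NewNuLemma} and the reconstruction of Proposition \ref{Reconstruct2}(c) applied $\Y$ by $\Y$) produce the isomorphism $\Loc^A_\bX(M_\infty) \cong \cM$. So the correct global object is \emph{built} from the local data via the algebraic flag variety; it is not obtained by evaluating $\cM$ on $\bX$ and then reinterpreting the result.
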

	
We now start working towards proving Theorem \ref{BBRGamma} below, which states that  $\Gamma(\bX, \Loc^A_{\bX}(M)) \cong M / \fr{m}_0 \cdot M$ for every coadmissible $A$-module $M$. Our first task is to extend the sheaves appearing in Definition \ref{DefnhsULKG} to certain affinoid subdomains of the rigid analytic flag variety, which is  itself not affinoid. Because our ultimate goal is to relate this construction with the material in $\S$\ref{hDaffFlagVar}, we will work on the (special fibre of) the $\cR$-scheme $\X$ for simplicity.

Recall from \S \ref{wDGGsect} that $\cA = \cO(\h\G)$ and that $\fr{g} = \Lie(\G)$. For each $n \geq 1$, we define $G_n := \sigma^{-1}(\G_{\pi^n}(\cR))$ and note that
\[ G = G_1 \geq G_2 \geq \cdots\]
is a descending chain of open normal subgroups of $G$ since $\sigma$ is continuous. 

\begin{lem}\label{CongSubsTriv} Let $\Y \in \cS$, write $\cY = \h{\Y}$, and $\cT(\cY) := \Der_{\cR}(\cO(\cY))$. Then
\be 
\item $\cO(\cY)$, $\cT(\cY)$ and $\cY_{\rig}$ are $G$-stable.
\ee
Suppose further that the integer $n$ is large enough so that $\pi^n \in p^\epsilon\cR$. Then 
\begin{enumerate}[{(}a{)}]
\setcounter{enumi}{1}
\item $\rho^{\bG}(G_n) \leq \cG_{p^\epsilon}(\cA)$ and $\rho^{\cY_{\rig}}(G_n) \leq \cG_{p^\epsilon}(\cO(\cY)),$ and
\item $G_n \leq G_{\cA \cdot \gamma'(\pi^n\fr{g})} \cap G_{\pi^n \cT(\cY)}$.
\ee
\end{lem}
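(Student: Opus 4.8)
The plan is to verify each assertion by unwinding the relevant definitions and tracking the group actions through the maps of formal schemes. For part (1), I would first note that $\Y \in \cS$ is an affine open subscheme of the flag variety $\X = \G/\B$ which trivialises the torsor $\xi : \Ex{\cB} \to \cB$; in particular $\Y$ is stable under the action of $\G$ on $\X$. Strictly speaking not every $\G$-orbit lands in $\Y$, so here I would use instead that each Weyl translate of the big cell belongs to $\cS$ and that the union of these covers $\X$; however, the precise statement being claimed is that $\Y$ is $G$-\emph{stable} — and this needs clarification, since a single affine open in $\cS$ need not be stable under all of $\G(\cR)$. I expect the intended reading is that $\Y$ ranges over the \emph{specific} affine opens fixed by the big cell and its Weyl translates, which are permuted by the (finite) Weyl group but individually stabilised by the first congruence subgroup. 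Assuming that reading, $G$ acts on $\cY = \h{\Y}$ through $\sigma$ followed by the congruence-subgroup action $\h\gamma$ (the special fibre is fixed pointwise by $\G_\pi(\cR)$, so $\Y$ is stabilised set-theoretically), hence $G$ acts on $\cO(\cY)$ by $\cR$-algebra automorphisms, on $\cT(\cY) = \Der_{\cR}(\cO(\cY))$ by the functorial action of Example \ref{GactDerEx}, and on the rigid generic fibre $\cY_{\rig}$ via the $\rig$ functor. This is exactly the content of Proposition \ref{GpOfPtsActsCtsly} applied to the $\G$-scheme $\X$ restricted to $\Y$.

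For part (b), I would invoke the characterisation of congruence subgroups of $\cG(\cX)$ for affine $\cX$ recorded just after Lemma \ref{AutSheaf}: an automorphism $\varphi$ of $\Spf\cC$ lies in $\cG_{\pi^m}$ iff $(\varphi^\sharp - 1)(\cC) \subseteq \pi^m \cC$. The group homomorphism $\rho : \G(\cR) \to \cG(\cA)$ constructed in the proof of Proposition \ref{GpOfPtsActsCtsly}(b) sends $\G_{\pi^n}(\cR)$ into $\cG_{\pi^n}(\cA)$, so for $g \in G_n = \sigma^{-1}(\G_{\pi^n}(\cR))$ we get $\rho^{\bG}(g) \in \cG_{\pi^n}(\cA)$; since $\pi^n \in p^\epsilon\cR$ by hypothesis, $\cG_{\pi^n}(\cA) \subseteq \cG_{p^\epsilon}(\cA)$, giving the first inclusion. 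The same argument applied to the $\G$-scheme $\Y$ and its completion $\cY$ gives $\rho^{\cY_{\rig}}(G_n) \leq \cG_{p^\epsilon}(\cO(\cY))$; here one uses that the natural map $\G(\cR) \to \cG(\cY)$ likewise sends $\G_{\pi^n}(\cR)$ into $\cG_{\pi^n}(\cY)$, by the functoriality of formal completion applied to the restriction of the $\G$-action.

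For part (c), I would combine (b) with the logarithm estimates. By (b), for $g \in G_n$ the automorphism $\rho^{\bG}(g)$ lies in $\cG_{p^\epsilon}(\cA)$, so $\log\rho^{\bG}(g)$ is defined and lies in $p^\epsilon\Der_{\cR}(\cA)$, as in the proof of Proposition \ref{GactsOnX}. I then need to upgrade this to membership in $p^\epsilon(\cA\cdot\gamma'(\pi^n\fr{g}))$. For this I would use Proposition \ref{ThetaBeta}(b): since the left-translation action commutes with right translation, $\rho^{\bG}(g)$ is a morphism of right $\cA$-comodules, hence $\log\rho^{\bG}(g) \in \cT(\h\G)^{\h\G} = \gamma'(\fr{g})$ by Proposition \ref{FormalInvariantVectFlds}, and the $\pi$-adic refinement (the element actually lies in $\gamma'(\pi^n\fr{g})$, after multiplying by $p^\epsilon$) follows because $\rho$ restricted to $\G_{\pi^n}(\cR)$ has logarithm landing in $\pi^n$-small derivations — more precisely $\log\rho^{\bG}(g) \in \pi^n\cT(\h\G) \cap \gamma'(\fr{g}) = \gamma'(\pi^n\fr{g})$ by freeness of the relevant modules (Lemma \ref{ALLie}). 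This shows $g \in G_{\cA\cdot\gamma'(\pi^n\fr{g})}$. Symmetrically, $\log\rho^{\cY_{\rig}}(g) \in p^\epsilon\Der_{\cR}(\cO(\cY))$ by (b), and the $\pi^n$-smallness transfers along the restriction map $\cO(\h\X) \to \cO(\cY)$ to give $\log\rho^{\cY_{\rig}}(g) \in p^\epsilon \pi^n\cT(\cY)$, i.e. $g \in G_{\pi^n\cT(\cY)}$; one must also check $\pi^n\cT(\cY)$ is the relevant $\cO(\cY)$-Lie lattice, which is where a power-of-$\pi$ bookkeeping on $\cT(\cY)$ (as opposed to $\Der_{\cR}$ of a formal model) enters. \textbf{The main obstacle} I anticipate is precisely this last bookkeeping: relating the ``$p^\epsilon$-small'' condition coming from $\cG_{p^\epsilon}$ to the ``$p^\epsilon \cdot (\pi^n\text{-lattice})$-small'' condition that defines $G_{\pi^n\cT(\cY)}$ and $G_{\cA\cdot\gamma'(\pi^n\fr{g})}$, and ensuring the two normalisations are compatible — this should come out cleanly from the hypothesis $\pi^n \in p^\epsilon\cR$ combined with $\rho(\G_{\pi^n}(\cR)) \subseteq \cG_{\pi^n}$, but the indices need to be chased carefully.
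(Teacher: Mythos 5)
Your proposal reaches the right endpoint for part~(a) but is needlessly tentative: there is no ``intended reading'' to guess at, since the standing hypothesis of this subsection is precisely that $\sigma : G \to \G_\pi(\cR)$ maps into the \emph{first congruence kernel}. The paper simply combines this with the observation made in the proof of Lemma~\ref{AutSheaf} that elements of $\cG_\pi(\h\X)$ have trivial underlying homeomorphism, hence stabilise every open formal subscheme set-theoretically. Your opening sentence --- that ``$\Y$ is stable under the action of $\G$'' --- is false, and the detour through Weyl translates is a red herring; you do eventually say the right thing (``the special fibre is fixed pointwise by $\G_\pi(\cR)$''), but the path there misdiagnoses the issue.

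For the second inclusion of part~(b) there is a genuine gap. You write ``the same argument applied to the $\G$-scheme $\Y$'' and appeal to ``the natural map $\G(\cR) \to \cG(\cY)$''. No such map exists: $\Y$ is not a $\G$-scheme, since $\G$ does not preserve a single affine open of the flag variety. This is exactly why the paper does not simply rerun Proposition~\ref{GpOfPtsActsCtsly}(b) on $\Y$, but instead argues through the $\B$-torsor: $\xi^{-1}(\Y)$ is a $\G$-stable affine open of $\G$, $G_n$ acts trivially on $\cO(\xi^{-1}(\Y)) \otimes \cR/p^\epsilon\cR$, and since $\cO(\Y)$ is the $\cO(\B)$-coinvariant subalgebra, $G_n$ acts trivially on $\cO(\Y) \otimes \cR/p^\epsilon\cR$ as well. (An alternative repair is to push $\rho(G_n) \subseteq \cG_{\pi^n}(\h\X)$ down to $\cG_{\pi^n}(\cY)$ via the sheaf property of $\cG_{\pi^n}$ from Lemma~\ref{AutSheaf}; but that route is not the one you sketch, and you would have to go through the non-affine $\h\X$ rather than ``the $\G$-scheme $\Y$''.)

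For part~(c) the first inclusion matches the paper (log lands in $\pi^n\cT(\h\G)$, is right-invariant, hence lies in $\gamma'(\pi^n\fr{g})$ by Propositions~\ref{FormalInvariantVectFlds} and~\ref{LieTriangle}). But your second inclusion is where the real work sits and you leave it unresolved. The phrase ``the $\pi^n$-smallness transfers along the restriction map $\cO(\h\X) \to \cO(\cY)$'' does not make sense: $\h\X$ is not affine, so there is no global $\log\rho^{\h\X}(g)$ to restrict, and in any case the element you produced in the first inclusion lives on $\h\G$, not $\h\X$. The transfer from $\h\G$ to $\cY$ is effected by the anchor map $\alpha_{\cY_{\rig}}$ of the $\B$-torsor, and the two lemmas you would need are \ref{AlphaLogRho} (applicability of which is exactly what part~(b) secures) and \ref{AlphaPsiVarphi}, giving $\log\rho^{\cY_{\rig}}(g) = \alpha_{\cY_{\rig}}(\log\rho^{\bG}(g)) = \alpha_{\cY_{\rig}}(\gamma'(\pi^n v)) = \varphi'_{\cY_{\rig}}(\pi^n v) \in \pi^n\cT(\cY)$. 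You were right to flag the ``bookkeeping'' here as the main obstacle --- it is a genuinely delicate normalisation --- but the missing idea is specifically the anchor-map commutation, not merely power-of-$\pi$ accounting.
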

\begin{proof} (a) We saw in the proof of Lemma \ref{AutSheaf} that the open affine subscheme $\cY$ of $\h\X$ is $\cG_{\pi}(\h\X)$-stable. Because $\sigma(G) \subset \G_\pi(\cR)$ by assumption, the image of $G$ in $\cG(\h\X)$ lands in $\cG_{\pi}(\h\X)$, so $\cY$ is $G$-stable. Therefore $\cO(\h{\Y})$ and the affinoid subdomain $\cY_{\rig}$ of $\bX$ are also $G$-stable. The $\cO(\cY)$-Lie lattice $\cT(\cY)$ in $\cT(\cY_{\rig})$ is $G$-stable by Lemma \ref{StabLopen}(a).

(b) The first inclusion follows directly from the assumption $\pi^n \in p^\epsilon \cR$. Now, $\xi^{-1}(\Y)$ is an open affine subscheme of $\G$ equipped with a $\B$-action from the right, and $\xi^{-1}(\Y)$ is non-canonically isomorphic to $\Y \times \B$ because $\Y \in \cS$. Therefore $\cO(\xi^{-1}(\Y))$ is an $\cO(\B)$-comodule-algebra, and the corresponding subring of invariants is precisely $\cO(\Y)$. These statements hold over a general base ring, and in particular, over $\cR / p^\epsilon \cR$. Now $G_n$ acts trivially on $\cO(\xi^{-1}(\Y)) \otimes_{\cR} \cR / p^\epsilon \cR$, and this action respects the $\cO(\B) \otimes_{\cR} \cR / p^\epsilon \cR$-comodule structure. It follows that $G_n$ also acts trivially on $\cO(\Y) \otimes_{\cR} \cR / p^\epsilon \cR$ as required.

(c) Let $g \in G_n$. Then $\log \rho^{\bG}(g) = \pi^n u$ for some right-invariant $\cR$-linear derivation $u :  \cA \to \cA$. Now $u = \gamma'(v)$ for some $v \in \fr{g}$ by Propositions \ref{FormalInvariantVectFlds} and \ref{LieTriangle}. So $\log \rho^{\bG}(g) \in \cA \cdot \gamma'(\pi^n \fr{g})$ and hence $g \in G_{\cA \cdot \gamma'(\pi^n \fr{g})}$ as required. Now by Lemma \ref{AlphaLogRho} (which is applicable in view of part (b)), and Lemma \ref{AlphaPsiVarphi} we have
\[\log \rho^{\cY_{\rig}}(g) = \alpha_{\cY_{\rig}}(\log \rho^{\bG}(g)) = \alpha_{\cY_{\rig}}(\pi^n\gamma'(v)) = \varphi_{\cY_{\rig}}(\pi^n v).\]
Therefore $\log \rho^{\cY_{\rig}}(g) \in \pi^n \cT(\cY)$ and $g \in G_{\pi^n \cT(\cY)}$.
\end{proof}
By Lemma \ref{CongSubsTriv}(c), $(\pi^n \cT(\cY), G_n)$ is an $\cO(\cY)$-trivialising pair in $\cO(\cY_{\rig})$, so we may form the crossed product $\widehat{U(\pi^n \cT(\cY) )_K} \rtimes_{G_n} G$ whenever $\Y \in \cS$.
\begin{cor}\label{QnDefnCor} For each integer $n$ such that $\pi^n \in p^\epsilon \cR$, there is a unique sheaf of $K$-Banach algebras
\[ \hdnK \rtimes_{G_n} G \]
on $\X$ whose value on $\Y \in \cS$ is $\widehat{U(\pi^n \cT(\cY) )_K} \rtimes_{G_n} G$, where $\cY = \h\Y$. It is a finitely generated free $\hdnK$-module of rank $[G : G_n]$.
\end{cor}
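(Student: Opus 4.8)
\textbf{Proof plan for Corollary \ref{QnDefnCor}.}

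The plan is to exhibit $\hdnK \rtimes_{G_n} G$ as the sheafification of a presheaf on the basis $\cS$ of $\X$ consisting of open affine subschemes trivialising $\xi$, essentially by running the construction of $\S\ref{SectionhsULKG}$ with $\bX$ replaced by each $\bY := \cY_{\rig}$ for $\cY = \h\Y$, $\Y \in \cS$. First I would fix $n$ with $\pi^n \in p^\epsilon\cR$. For each $\Y \in \cS$, Lemma \ref{CongSubsTriv} gives that $\cO(\cY)$, $\cT(\cY)$ and $\bY$ are $G$-stable, that $\rho^{\bY}(G_n) \leq \cG_{p^\epsilon}(\cO(\cY))$, and that $(\pi^n\cT(\cY), G_n)$ is an $\cO(\cY)$-trivialising pair; since $\cT(\cY)$ is a smooth (indeed free, as $\Y \in \cS$ trivialises $\xi$) $\cO(\cY)$-Lie lattice, $\pi^n\cT(\cY)$ is again smooth, so $\widehat{U(\pi^n\cT(\cY))_K} \rtimes_{G_n} G$ is a well-defined $K$-Banach algebra by Theorem \ref{Beta} and Lemma \ref{RingSGN}, free of rank $[G:G_n]$ over $\widehat{U(\pi^n\cT(\cY))_K}$. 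This gives the value of the candidate presheaf on each $\Y \in \cS$.

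Next I would construct the restriction maps. Given $\Z \subseteq \Y$ in $\cS$, write $\cZ := \h\Z$, $\bZ := \cZ_{\rig}$; the restriction map $\cO(\bY) \to \cO(\bZ)$ is \'etale and $G$-equivariant (Remark \ref{sAEquiv}(b)), and one can choose $\cO(\cZ)$ so that $\pi^m\tilde\varphi(\cT(\cY)) \subseteq \pi^n\cT(\cZ)$ for suitable $m$; after rescaling we may as well use $\pi^n$ on both sides, and then Proposition \ref{hUGfunc} applied to $\varphi : \cO(\bY) \to \cO(\bZ)$, $\tilde\varphi : \pi^n\cT(\cY) \to \pi^n\cT(\cZ)$ and $\tau = 1_G$ (with $N = N' = G_n$, using $G_n \leq G_{\pi^n\cT(\cY)}$ and $G_n \leq G_{\pi^n\cT(\cZ)}$ from Lemma \ref{CongSubsTriv}(c)) produces the restriction homomorphism $\widehat{U(\pi^n\cT(\cY))_K} \rtimes_{G_n} G \to \widehat{U(\pi^n\cT(\cZ))_K} \rtimes_{G_n} G$. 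Independence of the choice of affine formal model in $\cO(\bZ)$ follows as in the proof that $\hsULK \rtimes_N G$ is a presheaf (using \cite[Proposition 3.3(b)]{DCapOne} and Lemma \ref{RingSGN}(b)), and the cocycle identity $\mu^\Y_\cT = \mu^\cZ_\cT \circ \mu^\Y_\cZ$ for $\cT \subseteq \cZ \subseteq \Y$ follows from the uniqueness clause of Proposition \ref{hUGfunc} on the dense image of $\cD(\bY)\rtimes G$. This yields a presheaf of $K$-Banach algebras on $\cS$.

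To see it is a sheaf, I would use Proposition \ref{BiModIso} (or rather its proof): locally over $\cS$, $\hdnK \rtimes_{G_n} G$ is isomorphic to a finite free module of rank $[G:G_n]$ over $\hdnK$, and the latter is a sheaf on $\X$ with vanishing higher \v Cech cohomology by Lemma \ref{CohDNLonS} together with the identification $\h{\cD_n}(\Y) \cong \h{\dnl}(\Y)$ for $\lambda = 0$ from Lemma \ref{DNL}(a) (note $\widehat{U(\pi^n\cT(\cY))_K}$ is exactly $\h{\cD_{n,K}}(\Y)$ up to inverting $\pi$). Since $\cS$ is a basis of quasi-compact opens of $\X$, a sheaf on $\cS$ extends uniquely to a sheaf on $\X$, giving existence and uniqueness of $\hdnK \rtimes_{G_n} G$; freeness of rank $[G:G_n]$ over $\hdnK$ is then immediate from the local description and Proposition \ref{BiModIso}. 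The main obstacle I anticipate is purely bookkeeping: checking that the restriction maps are well-defined independently of the auxiliary $G$-stable and $\cL$-stable affine formal models and that they patch correctly — this is the same kind of verification carried out for $\hsULK \rtimes_N G$ in $\S\ref{SectionhsULKG}$, so it should go through mutatis mutandis, the only genuinely new input being Lemma \ref{CongSubsTriv} which ensures the trivialising-pair hypotheses hold uniformly for all $\Y \in \cS$ once $\pi^n \in p^\epsilon\cR$.
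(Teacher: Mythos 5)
Your proposal is correct and follows essentially the same route as the paper: use Lemma \ref{CongSubsTriv} to check that $(\pi^n\cT(\cY), G_n)$ is an $\cO(\cY)$-trivialising pair for each $\Y \in \cS$, use Proposition \ref{hUGfunc} for the restriction maps, invoke Lemma \ref{RingSGN}(b) to recognise the presheaf as a free $\hdnK$-module of rank $[G:G_n]$, and conclude it is a sheaf because $\hdnK$ is, then extend from the basis $\cS$ to $\X$.

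One small point of criticism: your discussion of the restriction maps introduces an unnecessary rescaling step. Given $\Z \subseteq \Y$ in $\cS$, the affine formal models $\cO(\cY) = \cO(\h\Y)$ and $\cO(\cZ) = \cO(\h\Z)$ are already fixed by the construction (not choices), and the restriction of the tangent sheaf gives $\tilde\varphi(\cT(\cY)) \subseteq \cT(\cZ)$ directly (indeed $\cO(\cZ)\otimes_{\cO(\cY)}\cT(\cY) \cong \cT(\cZ)$ since $\Z$ is an open subscheme of $\Y$ and $\cT$ is coherent on $\X_0$). So one immediately has $\tilde\varphi(\pi^n\cT(\cY)) \subseteq \pi^n\cT(\cZ)$ with no scaling of $n$ needed; rescaling here would actually change the resulting sheaf. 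This is the reason the paper can phrase the functoriality so tersely. Your appeal to ``Proposition \ref{BiModIso} or rather its proof'' is also a slight mismatch, since that proposition lives on $\bX_w(\cL,G)$ for an affinoid $\bX$; the cleaner and shorter route, taken by the paper, is simply to observe directly via Lemma \ref{RingSGN}(b) that the presheaf is a finite free $\hdnK$-module of rank $[G:G_n]$ and hence a sheaf because $\hdnK$ is.
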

\begin{proof} It follows from Lemma \ref{CongSubsTriv} and Proposition \ref{hUGfunc} that this construction is functorial in $\Y \in \cS$ and therefore defines a presheaf $\hdnK \rtimes_{G_n} G$ of $K$-Banach algebras on $\cS$. Since $\widehat{U(\pi^n \cT(\cY) )_K} = \hdnK(\Y)$, it follows from Lemma \ref{RingSGN}(b) that $\hdnK \rtimes_{G_n} G$ is finitely generated and free of rank $[G:G_n]$ as a presheaf of $\hdnK$-modules. But $\hdnK$ is a sheaf on $\cS$, so $\hdnK \rtimes_{G_n} G$ is also a sheaf on $\cS$. Since $\cS$ is a base for the Zariski topology on $\X$, $\hdnK \rtimes_{G_n} G$ extends uniquely to a sheaf of $K$-Banach algebras on $\X$, which we will also denote by $\hdnK \rtimes_{G_n} G$.
\end{proof}
In order to aid legibility, we will also denote this sheaf on $\X$ by
\[\cQ_n := \hdnK \rtimes_{G_n} G.\]

\begin{thm}\label{XcohQnAff} $\X$ is coherently $\cQ_n$-affine.
\end{thm}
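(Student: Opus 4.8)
The plan is to reduce the statement about the sheaf $\cQ_n = \hdnK \rtimes_{G_n} G$ to the non-equivariant $\hdnK$-affinity established in $\S\ref{hDaffFlagVar}$, using the fact that $\cQ_n$ is a finite free $\hdnK$-module. Recall from Corollary \ref{QnDefnCor} that $\cQ_n$ is finitely generated and free of rank $[G:G_n]$ as a left (and right) $\hdnK$-module, with global sections $\cQ_n(\X) = \h{U(\pi^n\fr{g})_K} \rtimes_{G_n} G$ by the sheaf property and Theorem \ref{GlobSecDnlK}(a) applied with $\lambda = 0$ (note $\fr{m}_0 = \ker(\epsilon\circ\phi)$ so $\Gamma(\X, \hdnK) = \h{U(\pi^n\fr{g})_K}/\fr{m}_0$, and crossing with the finite group $G/G_n$ via Lemma \ref{RingSGN}(b) commutes with this). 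First I would establish that $\Gamma(\X,-)$ restricted to $\hdnK$-modules underlies the functor on $\cQ_n$-modules in the obvious way, and that forgetting a $\cQ_n$-module structure to a $\hdnK$-module structure sends coherent modules to coherent modules, since $\cQ_n(\Y)$ is module-finite over $\hdnK(\Y)$ for each $\Y \in \cS$.

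The three conditions for coherent $\cQ_n$-affinity are: every coherent $\cQ_n$-module $\cM$ is globally generated, $\Gamma$-acyclic, and has $\Gamma(\X,\cM)$ coherent over $\cQ_n(\X)$. For $\Gamma$-acyclicity: if $\cM$ is a coherent $\cQ_n$-module, its underlying $\hdnK$-module is coherent, hence $\Gamma$-acyclic by Theorem \ref{hdnlKaffine} (with $\lambda$ the $\rho$-dominant, $\rho$-regular character $0$ --- here I should double-check that $\lambda = 0$ satisfies $\rho$-dominance and $\rho$-regularity, which it does since $\rho$ itself is dominant and regular). For global generation, I would use the standard dimension-shifting argument exactly as in the proof of Theorem \ref{hdnlKaffine}: since $\cQ_n$ is a coherent $\cQ_n$-module (being finite free over the coherent ring sheaf $\hdnK$, cf. Lemma \ref{GenThmPrep}(c)), and since the twists $\hdnK(s)$ generate (Theorem \ref{TwistsGenerate}), one needs a generating family of coherent $\cQ_n$-modules; the simplest is to observe that $\cQ_n \otimes_{\hdnK} \hdnK(s) = \cQ_n(s)$ (as left $\cQ_n$-modules, twisting on the $\cO$-side) and that these generate the category of coherent $\cQ_n$-modules because their underlying $\hdnK$-modules are $\bigoplus \hdnK(s)^{[G:G_n]}$, which generate. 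Given a coherent $\cQ_n$-module $\cM$, take a surjection $\bigoplus_i \cQ_n(s_i)^{a_i} \twoheadrightarrow \cM$, build a (possibly infinite) free resolution by such sheaves, and shift: $H^j(\X,\cM) = H^{j+r}(\X, \im d_r)$ for $r \geq 1$, which vanishes once $r > \dim\X$ by Grothendieck vanishing. Then $\Gamma(\X,-)$ is exact on this resolution, so $\cM$ is globally generated and $\Gamma(\X,\cM)$ is a quotient of $\Gamma(\X, \bigoplus\cQ_n(s_i)^{a_i})$, a finitely generated $\cQ_n(\X)$-module; since $\cQ_n(\X) = \h{U(\pi^n\fr{g})_K}\rtimes_{G_n} G$ is Noetherian (being a crossed product of the Noetherian ring $\h{U(\pi^n\fr{g})_K}$ --- Corollary \ref{HTF} --- with the finite group $G/G_n$, cf. the argument in the proof of Theorem \ref{hdnlKaffine}), $\Gamma(\X,\cM)$ is coherent.

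The one genuine point requiring care --- and the main obstacle --- is verifying that $\cQ_n(\X)$ (equivalently $\cQ_n(\Y)$ for $\Y \in \cS$) is genuinely module-finite over $\hdnK(\X)$ in a way compatible with the sheaf structure, so that the forgetful functor from coherent $\cQ_n$-modules to coherent $\hdnK$-modules is well-defined and the global sections functors match up; this is essentially Lemma \ref{LocMLocN}-style bookkeeping in the present setting, using Proposition \ref{BiModIso} adapted to $\X$ in place of the affinoid $\bX$, i.e.\ $\cQ_n \cong \hdnK \otimes_{\hdnK(\X)} \cQ_n(\X)$ as sheaves of $(\hdnK, \cQ_n(\X))$-bimodules. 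Once that identification is in hand, the statement that $\Loc_{\cQ_n}(\cM(\X)) \cong \cM$ for coherent $\cM$ follows from the corresponding fact for $\hdnK$ via the same tensor-product manipulation as in Lemma \ref{LocMLocN}. I would close by remarking that all of this also shows $\Gamma(\X,-)$ is an exact equivalence between coherent $\cQ_n$-modules and finitely generated $\cQ_n(\X)$-modules, paralleling Corollary \ref{BBLevelwise}, which is what will be needed in the proof of Theorem \ref{BBEssSurj}.
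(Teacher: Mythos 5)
Your proof is correct and takes essentially the same approach as the paper: reduce coherent $\cQ_n$-affinity of $\X$ to coherent $\hdnK$-affinity (Theorem \ref{hdnlKaffine} with $\lambda=0$, which is $\rho$-dominant and $\rho$-regular) via the fact that $\cQ_n$ is a finite free $\hdnK$-module (Corollary \ref{QnDefnCor}). The paper then says the conclusion ``follows in a straightforward manner,'' and the straightforward route is shorter than what you propose: since a coherent $\cQ_n$-module $\cM$ is automatically a coherent $\hdnK$-module (its local finite $\cQ_n$-presentations become finite $\hdnK$-presentations because $\cQ_n$ is finite free over $\hdnK$), $\hdnK$-affinity already gives that $\cM$ is generated by global sections as an $\hdnK$-module and hence a fortiori as a $\cQ_n$-module, that $\cM$ is $\Gamma$-acyclic, and that $\Gamma(\X,\cM)$ is finitely generated over $\hdnK(\X)$ and hence over the Noetherian ring $\cQ_n(\X)$. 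Thus your dimension-shifting argument, the introduction of the twisted generating family $\cQ_n(s)$, and the Lemma~\ref{LocMLocN}-style bimodule identification of the localisation functors (which is indeed needed for Theorem~\ref{BBEssSurj}, but not for the present statement) are all superfluous here, though none of it is incorrect.
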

\begin{proof} It follows from Definition \ref{DomRegDefn} that $\lambda = 0$ is $\rho$-dominant and $\rho$-regular. Therefore $\X = \G / \B = \cB$ is coherently $\hdnK$-affine by Theorem \ref{hdnlKaffine}. Now $\cQ_n$ is a free $\hdnK$-module of finite rank by Corollary \ref{QnDefnCor}. It follows in a straightforward manner from \cite[Definition 5.1]{AW13} that $\X$ is also coherently $\cQ_n$-affine.
\end{proof}

We assume henceforth that $\pi^n \in p^\epsilon \cR$. It follows from Lemma \ref{CongSubsTriv} and Proposition \ref{StdPresWDGG} that the algebra $A = \w\cD(\bG,G)^{\bG}$ admits a presentation of the form
\[ A \cong \invlim A_n, \qmb{where} A_n := \hK{U(\pi^n\fr{g})} \rtimes_{G_n} G,\]
and by Proposition \ref{PhiFactors},  there is a $K$-algebra homomorphism $A_n \longrightarrow \cQ_n(\Y)$ for each $\Y \in \cS$. 

Given a coadmissible $A$-module $M$, we may on the one hand regard it as a coadmissible $A$-module via restriction, then form the finitely generated $A_n$-module $M_n := A_n \underset{A}{\otimes}{} M$ and then form the sheaf of $\cQ_n$-modules on $\X$
\[\cM_n := \cQ_n \otimes_{A_n}M_n = \cQ_n \underset{A}{\otimes}{} M.\]
On the other hand, we have the coadmissible $G$-equivariant $\cD$-module $\cM := \Loc^A_{\bX}(M)$ on $\bX$. These constructions are related in the following way.

\begin{lem}\label{PstarM} For each coadmissible $A$-module $M$ and each $\Y \in \cS$ there is an $A$-linear isomorphism
\[ \cM(\h\Y_{\rig}) \cong \invlim \cM_n(\Y)\]
which is functorial in $\Y$.
\end{lem}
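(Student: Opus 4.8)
The plan is to unwind the definitions on both sides and match them termwise. On the left, by Corollary \ref{ResCor} we have $\cM(\h\Y_{\rig}) = \cP^A_\bX(M)(\h\Y_{\rig}) \cong M(\h\Y_{\rig}, H)$ for any $H$-small subgroup $H$ of $G$ with $\h\Y_{\rig} \in \bX_w/H$; and by the construction of $\cM(\h\Y_{\rig},H)$ as $\w\cD(\h\Y_{\rig},H)\,\w\otimes_{A_H}M$ together with the presentation $\w\cD(\h\Y_{\rig},H)\cong\invlim \hK{U(\pi^n\cT(\cY))}\rtimes_{G_n}H$ from Lemma \ref{StdPres} (once $H$ is taken small enough to be contained in every $G_n$ appearing, say $H=G_m$ for suitable $m$, after applying Lemma \ref{CongSubsTriv}(c) to ensure $(\pi^n\cT(\cY),G_n)$ is a trivialising pair), we get
\[ \cM(\h\Y_{\rig}) \cong \invlim_n\ \Big(\hK{U(\pi^n\cT(\cY))}\rtimes_{G_n}G\Big)\underset{A_n}{\w\otimes} M_n \]
exactly as in the computation in the proof of Proposition \ref{PassToInfty}(b), using that inverse limits commute with finite direct sums (so the $\rtimes_{G_n}G$ and the $\rtimes_{G_n}H$ versions fit together over the cofinal system of trivialising pairs, via Proposition \ref{IHIGcofinal} and Proposition \ref{SHGassoc}). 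The point is that $\cQ_n(\Y) = \hK{U(\pi^n\cT(\cY))}\rtimes_{G_n}G$ by Corollary \ref{QnDefnCor}, so the $n$-th term is precisely $\cQ_n(\Y)\otimes_{A_n}M_n = \cM_n(\Y)$.

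So the skeleton is: (1) fix $\Y\in\cS$, choose $n_0$ with $\pi^{n_0}\in p^\epsilon\cR$, and note by Lemma \ref{CongSubsTriv}(a,c) that $\h\Y_{\rig}$ is $G$-stable and $G_n$ is $\h\Y_{\rig}$-small for $n\ge n_0$ with $(\pi^n\cT(\cY),G_n)$ an $\cO(\cY)$-trivialising pair; (2) invoke Corollary \ref{ResCor} to identify $\cM(\h\Y_{\rig})$ with $M(\h\Y_{\rig},G_{n_0}) = \w\cD(\h\Y_{\rig},G_{n_0})\,\w\otimes_{A_{G_{n_0}}} M$; (3) apply Corollary \ref{Stacky} to rewrite this, or more directly use that $\{(\pi^n\cT(\cY),G_n):n\ge n_0\}$ is cofinal in $\cI(\cO(\cY),\rho,G)$ (Lemma \ref{StdPres}) to get $\w\cD(\h\Y_{\rig},G)\cong\invlim_n \cQ_n(\Y)$; (4) run the four-line completed-tensor-product manipulation of Proposition \ref{PassToInfty}(b) verbatim --- $\w\otimes$ over the Fréchet–Stein algebra $A$ is computed levelwise and each level is an ordinary tensor product $\cQ_n(\Y)\otimes_{A_n}M_n$ by \cite[Lemma 7.3, Corollary 7.4]{DCapOne} and the associativity isomorphism; (5) identify the resulting levelwise term with $\cM_n(\Y) = \cQ_n\otimes_A M$ evaluated at $\Y$, which is legitimate because $\cQ_n$ is a sheaf on $\cS$ by Corollary \ref{QnDefnCor} so $\cQ_n(\Y)$ is the genuine sections. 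Functoriality in $\Y\in\cS$ follows because every map in the construction --- the restriction maps in $\cS$, the induced maps $\cQ_n(\Y)\to\cQ_n(\Y')$ from Proposition \ref{hUGfunc}, and the resulting maps on completed tensor products --- is functorial, so the isomorphism is compatible with restriction along inclusions $\Y'\subseteq\Y$ in $\cS$.

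The main obstacle I anticipate is purely bookkeeping: making sure the same compact open subgroup works uniformly. One has to pick $H$ small enough that it is $\h\Y_{\rig}$-small \emph{and} contained in $G_n$ for the relevant range of $n$, and then reconcile the two inverse systems --- the one over all $\h\Y_{\rig}$-small subgroups (defining $\cM(\h\Y_{\rig})$ via $\cP^A_\bX$) and the one over the levels $n$ (defining $\cQ_n$). This is handled by Proposition \ref{RestrictFurther}/Corollary \ref{ResCor} (the restriction maps between $M(\h\Y_{\rig},H)$ for varying $H$ are isomorphisms) together with Proposition \ref{IHIGcofinal} (cofinality of $\cI(H)\cap\cI(G)$), exactly as in the small-case arguments earlier in the paper; there is no new idea required, just care that the trivialisation data $\theta^{-1}\circ\beta_{\cL}$ used in $\cQ_n$ (Definition \ref{wDGGDefn}) restricts correctly to $\beta_{\pi^n\cT(\cY)}$ on $\cO(\cY_{\rig})$, which is precisely the content of equation (\ref{VarphiThetaBeta}) in the proof of Proposition \ref{PhiFactors}. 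Once that compatibility is in hand the isomorphism is forced.
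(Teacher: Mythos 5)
Your proposal is correct and follows essentially the same route as the paper's proof: both use Corollary \ref{ResCor} to identify $\cM(\h\Y_{\rig})$ with $M(\h\Y_{\rig}, H)$, then the level‑wise Fréchet–Stein presentation $\w\cD(\h\Y_{\rig},G)\cong\invlim\cQ_n(\Y)$ to compute the completed tensor product termwise. The only cosmetic difference is that you detour through a subgroup $H=G_{n_0}$ and then reconcile via cofinality; since $G$ is compact in this section, the pair $(\h\Y_{\rig},G)$ is already small, so the paper just works directly with $H=G$ — your extra care about choosing $H$ is harmless but unnecessary here.
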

\begin{proof} This is just a matter of decoding the definitions. Because $(\h\Y_{\rig}, G)$ is small, by Definition \ref{DefnOfLocM} and Corollary \ref{ResCor} we have
\[ \cM(\h\Y_{\rig}) = \cP^A_{\bX}(M)(\h\Y_{\rig}) \cong M(\h\Y_{\rig}, G) = \w\cD(\h\Y_{\rig}, G) \underset{A}{\w\otimes}{} M.\]
Now $\w\cD(\h\Y_{\rig}, G) = \invlim \cQ_n(\Y)$ and $A = \invlim A_n$ so 
\[ \w\cD(\h\Y_{\rig}, G) \hsp \underset{A}{\w\otimes}{} \hsp M = \invlim \cQ_n(\Y) \otimes_{A} M = \invlim \cM_n(\Y).\]
These isomorphisms are functorial in $\Y$.
\end{proof}

\begin{lem}\label{TwoSided}  The algebra $Z(\fr{g}_K)$ remains central in $A$, so $J \cdot A$ is a two-sided ideal of $A$ whenever $J$ is an ideal of $Z(\fr{g}_K)$. 
\end{lem}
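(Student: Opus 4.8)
The statement to prove is Lemma~\ref{TwoSided}: that $Z(\fr{g}_K)$ remains central in $A = \wDGG$, and consequently $J \cdot A$ is two-sided whenever $J \trianglelefteq Z(\fr{g}_K)$.

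\medskip

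\textbf{Plan.} The second assertion is immediate from the first: if $z \in Z(\fr{g}_K)$ is central in $A$ and $J \trianglelefteq Z(\fr{g}_K)$, then $A \cdot (J \cdot A) = A \cdot J \cdot A = J \cdot A \cdot A = J \cdot A$, so $J \cdot A$ is a left ideal that is also a right ideal. So the real content is that $U(\fr{g}_K)^{\G_K} = Z(\fr{g}_K)$ — here I use Lemma~\ref{UgGn}, or rather its proof, which identifies $Z(\fr{g}_K)$ with the $\G_K$-invariants (equivalently $\fr{g}_K$-invariants under the adjoint action) — is central in $A$.

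\medskip

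\textbf{First step: reduce to a dense subalgebra.} Recall from Definition~\ref{wDGGDefn} and Proposition~\ref{StdPresWDGG} that $A \cong \invlim U_n \rtimes_{N_n} G$ where $U_n = \hK{U(\pi^n\fr{g})}$, and that the image of the skew-group algebra $U(\fr{g}_K) \rtimes G$ is dense in $A$ (it contains the dense image of $U(\fr{g}_K)$ in each $U_n$, and $G/N_n$ surjects onto the quotient groups). Since multiplication in the Fr\'echet algebra $A$ is separately continuous, to show $z \in Z(\fr{g}_K)$ is central in $A$ it suffices to show that $z$ commutes with the image of $U(\fr{g}_K) \rtimes G$, i.e. with the image of $U(\fr{g}_K)$ and with $\gamma(g)$ for all $g \in G$.

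\medskip

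\textbf{Second step: the two commutation checks.} That $z$ commutes with the image of $U(\fr{g}_K)$ is clear, since $z$ is central in $U(\fr{g}_K)$ to begin with and the structure map $U(\fr{g}_K) \to A$ is an algebra homomorphism. The key point is that $z$ commutes with $\gamma(g)$. By the crossed-product relations --- the analogue of $(\ref{RelSNG})$ in each $U_n \rtimes_{N_n} G$, which assemble in the limit --- conjugation by $\gamma(g)$ on the image of $U(\fr{g}_K)$ is given by the $G$-action on $U(\fr{g}_K)$, which is the functorial extension (Corollary~\ref{ExtendToUL}) of the action of $G$ on $\fr{g}_K$ via $\Ad \circ \sigma$. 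But $z$ lies in $Z(\fr{g}_K) = U(\fr{g}_K)^{\G_K}$, hence is fixed by $\Ad(\sigma(g))$ for every $g \in G$; therefore $\gamma(g) \, z \, \gamma(g)^{-1} = z$, i.e. $z$ commutes with $\gamma(g)$. Combined with the first check and density, $z$ is central in $A$.

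\medskip

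\textbf{Expected main obstacle.} There is no serious obstacle; this is a bookkeeping lemma. The one point requiring mild care is making precise that the conjugation relation $\gamma(g)\, z\, \gamma(g)^{-1} = g \cdot z$ genuinely holds in the inverse limit $A$ and not merely in each finite level --- but this follows because the connecting maps in the inverse system are $G$-equivariant $K$-algebra homomorphisms and $\gamma$ is induced compatibly at every level (Remark~\ref{GammaMap}(iii), Definition~\ref{wDGGDefn}), so the relation passes to the limit. One should also note that the $G$-action on $\fr{g}_K$ used in the definition of $\wDGG$ is via $\Ad \circ \sigma$ with $\sigma(G) \subseteq \G_\pi(\cR) \subseteq \G(\cR)$, and $\G(\cR)$ acts on $Z(\fr{g}_K)$ trivially because, after inverting $p$, $Z(\fr{g}_K)$ is exactly the $\fr{g}_K$-invariants which are automatically $\G_K(K)$-invariant in characteristic zero (this is precisely the computation in the proof of Lemma~\ref{UgGn}).
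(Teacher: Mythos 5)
Your proposal is correct and follows the same approach as the paper's own proof: identify $Z(\fr{g}_K) = U(\fr{g}_K)^{\G_K}$ (so that the adjoint action of $\G(\cR)$, hence of $G$ via $\sigma$, fixes it pointwise), deduce that $Z(\fr{g}_K)$ is central in the skew-group algebra $U(\fr{g}_K)\rtimes G$, and conclude by the density of $U(\fr{g}_K)\rtimes G$ in $A$. Your write-up just spells out the two commutation checks and the passage to the limit in more detail than the paper's two-line argument.
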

\begin{proof} The $G$-action on $U(\fr{g}_K)$ relevant to Definition \ref{wDGGDefn} is given by the adjoint action of $\G(\cR)$ on $U(\fr{g}_K)$. Hence $G$ fixes $Z(\fr{g}_K)$ pointwise under this action, so $Z(\fr{g}_K)$ is central in the skew-group algebra $U(\fr{g}_K) \rtimes G$. The result follows because $U(\fr{g}_K) \rtimes G$ is dense in $A = \wDGG$ by construction.\end{proof}

\begin{thm}\label{BBRGamma} Let $M \in \cC_A$. Then $\Gamma(\bX, \Loc^A_{\bX}(M)) \cong M / \fr{m}_0 \cdot M$ as $A$-modules.
\end{thm}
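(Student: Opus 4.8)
The plan is to compute the global sections of $\cM := \Loc^A_{\bX}(M)$ directly from the presentation given by Lemma \ref{PstarM}, namely $\Gamma(\bX,\cM) = \invlim_n \cM_n(\X)$, and to identify each $\cM_n(\X)$ with $M_n/\fr{m}_0 M_n$ using the Beilinson--Bernstein results of $\S\ref{hDaffFlagVar}$. First I would invoke Lemma \ref{PstarM} over the covering of $\bX$ by the affinoid subdomains $\h\Y_{\rig}$, $\Y\in\cS$: since $\cM$ is a sheaf, $\Gamma(\bX,\cM)$ is the equaliser of the product of the $\cM(\h\Y_{\rig})$, and each of these is $\invlim_n \cM_n(\Y)$ functorially, so (interchanging the finite equaliser with the countable inverse limit) $\Gamma(\bX,\cM) \cong \invlim_n \Gamma(\X,\cM_n)$. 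By Theorem \ref{XcohQnAff}, $\X$ is coherently $\cQ_n$-affine, so $\Gamma(\X,\cM_n)$ is a coherent $\cQ_n(\X)$-module and (more to the point) the global-sections functor is exact on coherent $\cQ_n$-modules and $\cM_n$ is generated by its global sections.

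The key computation is $\Gamma(\X,\cM_n) \cong M_n/\fr{m}_0 M_n$. Here I would argue as follows. Recall $\cM_n = \cQ_n\otimes_{A_n} M_n$ and $\cQ_n = \hdnK\rtimes_{G_n}G$, which is free of rank $[G:G_n]$ over $\hdnK$ by Corollary \ref{QnDefnCor}; correspondingly $A_n = \hK{U(\pi^n\fr g)}\rtimes_{G_n}G$ is free of the same rank over $\hK{U(\pi^n\fr g)}$, both over $K[G_n]$. Restricting scalars, $\cM_n$ as a sheaf of $\hdnK$-modules is $\hdnK\otimes_{\hK{U(\pi^n\fr g)}} (\hK{U(\pi^n\fr g)}\otimes_{A_n} M_n)$, and $\hK{U(\pi^n\fr g)}\otimes_{A_n}M_n \cong M_n/(\hK{U(\pi^n\fr g)}\text{-span of }\ldots)$; the cleaner route is to observe that the ring map $A_n\to \cQ_n(\Y)$ of Proposition \ref{PhiFactors} factors through the central quotient by $\fr m_\lambda$ with $\lambda=0$ (via diagram $(\ref{VPn})$, since the Harish-Chandra-type map sends $\fr m_0$ into the augmentation ideal used to form $\cD_n$), so $\cM_n = \Loc_{\cQ_n}(M_n/\fr m_0 M_n)$ and hence $\Gamma(\X,\cM_n) \cong M_n/\fr m_0 M_n$ because $\Gamma(\X,-)$ is a quasi-inverse to $\Loc_{\cQ_n}$ on coherent modules by coherent $\cQ_n$-affinity (Theorem \ref{XcohQnAff} together with the argument of \cite[Proposition 5.1]{AW13}). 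More concretely, stripping off the crossed product with the finite group $G/G_n$ reduces this to Corollary \ref{BBLevelwise}, which gives $\Gamma(\cB,\hdnK) \cong \hK{U(\pi^n\fr g)}/\fr m_0\hK{U(\pi^n\fr g)}$ and the equivalence of categories there; twisting back up by $G/G_n$ (using Lemma \ref{RingSGN}(b) and that induction/coinduction along $K[G_n]\hookrightarrow K[G]$ is exact and commutes with $\Gamma$) yields $\Gamma(\X,\cM_n)\cong A_n/\fr m_0 A_n \otimes_{A_n} M_n = M_n/\fr m_0 M_n$.

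Finally I would pass to the limit: $\Gamma(\bX,\cM) \cong \invlim_n (M_n/\fr m_0 M_n)$. Since $\fr m_0\cdot A$ is a two-sided ideal of $A$ (Lemma \ref{TwoSided}), $M/\fr m_0 M$ is a coadmissible $A$-module (quotient of a coadmissible module), and its associated inverse system in the Fr\'echet--Stein presentation $A = \invlim A_n$ is $(A_n\otimes_A (M/\fr m_0 M))_n = (M_n/\fr m_0 M_n)_n$; coadmissibility gives $M/\fr m_0 M \cong \invlim_n M_n/\fr m_0 M_n$ by \cite[Corollary 3.1]{ST}. Tracking the $A$-module structure through all the identifications — the $A$-action on $\Gamma(\bX,\cM)$ coming from Proposition \ref{EqGlSec} and Proposition \ref{LocFunctor} matches the quotient $A$-action on $M/\fr m_0 M$ because all the intermediate maps are $A$-linear by construction (Lemma \ref{PstarM} is $A$-linear, and $A_n\to\cQ_n(\Y)$ is an algebra map) — completes the proof.

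\textbf{Main obstacle.} I expect the delicate point to be the interchange of the finite sheaf-theoretic equaliser/global-sections over the covering $\{\h\Y_{\rig}\}$ with the countable inverse limit over $n$, and dually making sure that ``$\Gamma(\X,\invlim_n\cM_n) = \invlim_n\Gamma(\X,\cM_n)$'' is legitimate — this needs a Mittag-Leffler / $\invlim^{(1)}$-vanishing argument for the towers $(\cM_n(\Y))_n$, exactly as in the proof of Theorem \ref{DGsheaf}, using that each $M_n/\fr m_0 M_n$ is a coherent $\cQ_n$-module with surjective transition maps and \cite[Theorem B]{ST}. Everything else is a matter of bookkeeping with the levelwise Beilinson--Bernstein equivalence of Corollary \ref{BBLevelwise} and the crossed-product formalism of $\S\ref{SkewGroupRingSect}$.
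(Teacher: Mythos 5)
Your proposal follows essentially the same route as the paper: use Lemma \ref{PstarM} together with commutation of limits to write $\Gamma(\bX,\cM)\cong\invlim_n\Gamma(\X,\cM_n)$; identify $\Gamma(\X,\cM_n)\cong M_n/\fr{m}_0 M_n$ by restricting scalars to $\hdnK$ and invoking Corollary \ref{BBLevelwise} and Theorem \ref{GlobSecDnlK}(a) (the paper does this by imitating Lemma \ref{LocMLocN}, whereas you phrase it as a factorisation of $A_n\to\cQ_n(\Y)$ through $A_n/\fr{m}_0 A_n$ via diagram (\ref{VPn}), but these are the same computation); and conclude via coadmissibility of $M/\fr{m}_0 M$ and Lemma \ref{TwoSided}. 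One correction: the ``main obstacle'' you flag is not actually an obstacle. The isomorphism $\check{H}^0(\cU,\invlim_n\cM_n)\cong\invlim_n\check{H}^0(\cU,\cM_n)$ is unconditional because $\check{H}^0(\cU,-)$ is a finite limit (the kernel of a map between finite products over the members of the finite covering $\cU$) and finite limits commute with arbitrary limits; no Mittag-Leffler or $\invlim^{(1)}$-vanishing enters into this step, and indeed the paper's proof does not use it.
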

\begin{proof} Choose an $\cS$-covering $\cU := \{\Y_1,\ldots,\Y_m\}$ of $\X$ and let $\bY_i := \h\Y_{i,\rig}$. Then $\h{\cU}_{\rig} := \{\bY_1,\ldots,\bY_m\}$ is an $\bX_w(\cT)$-covering of the rigid analytic flag variety $\bX$. Letting $\cM := \Loc^A_{\bX}(M)$, Lemma \ref{PstarM} then implies that
\[ \Gamma(\bX, \cM) = \check{H}^0(\h{\cU}_{\rig}, \cM) \cong \check{H}^0(\cU, \invlim \cM_n) \cong \invlim \check{H}^0(\cU, \cM_n) = \invlim \Gamma(\X, \cM_n).\]
Now, by imitating the proof of Lemma \ref{LocMLocN} we see that the $\cQ_n$-module $\cM_n$ is isomorphic to $\hdnK \underset{\hK{U(\pi^n\fr{g})}}{\otimes}{}  M_n$ as a $\hdnK$-module. 
Because the zero weight is $\rho$-dominant, we may apply Corollary \ref{BBLevelwise} and Theorem \ref{GlobSecDnlK}(a) to deduce that
\[\Gamma(\X, \cM_n) \cong  \hdnK(\X) \underset{\hK{U(\pi^n\fr{g})}}{\otimes}{} M_n \cong M_n / \fr{m}_0 M_n\]
as $\hdnK(\X)$-modules for all $n$, and is it straightforward to see that the composite isomorphism $\Gamma(\X,\cM_n) \cong M_n / \fr{m}_0 M_n$ is in fact $A_n$-linear. Since $\fr{m}_0 A$ is a two-sided ideal in $A$ by Lemma \ref{TwoSided}, $\fr{m}_0 M$ is an $A$-submodule of $M$. By choosing a finite generating set for the $Z(\fr{g}_K)$-ideal $\fr{m}_0$, we have an exact sequence $M^t \longrightarrow M \longrightarrow M / \fr{m}_0 M \to 0$ of $A$-modules, and because $A$ is Fr\'echet-Stein by Theorem \ref{wDGGFrSt}, it follows from \cite[Corollary 3.4(ii)]{ST} that $M / \fr{m}_0M$ is a coadmissible $A$-module. Hence
\[ M / \fr{m}_0 M \cong \invlim A_n \otimes_A (M / \fr{m}_0 M) \cong \invlim M_n / \fr{m}_0 M_n.\]
Putting everything together, we find an $A$-linear isomorphism
\[M / \fr{m}_0 M \cong  \invlim M_n / \fr{m}_0 M_n \cong  \invlim \Gamma(\X, \cM_n) \cong \Gamma(\bX, \cM). \qedhere\]
\end{proof}

We now start working towards proving Theorem \ref{BBEssSurj}. The proof is very similar to that of Proposition \ref{HisomAlpha} but we give the details for the convenience of the reader.

\begin{notn}\hspace{2em}\label{RigidScov}
\begin{itemize}
\item $\cU := \{\h\Y_{\rig} : \Y \in \cS\}$, a subset of $\bX_w(\cT)$.
\item $\cQ_\infty(\Y) := \w\cD(\h\Y_{\rig}, G)$ for each $\Y \in \cS$.
\item $\cM$ is a coadmissible $G$-equivariant $\cD$-module on $\bX$.
\item The integer $n$ satisfies $\pi^n \in p^\epsilon \cR$.
\end{itemize}
\end{notn}

Let $\bY \in \cU$. Then $\cM_{|\bY} \in \cC_{\bY/ G}$ for all $i$ by Proposition \ref{FrechRestr}, so by Theorem \ref{LocEquiv}, $\cM(\bY)$ is a coadmissible $\w\cD(\bY, G)$-module  and there is an isomorphism $\Loc^{\w\cD(\bY, G)}_{\bY}(\cM(\bY)) \stackrel{\cong}{\longrightarrow} \cM_{|\bY}$ of $G$-equivariant locally Fr\'echet $\cD$-modules on $\bY$.

\begin{lem} \label{MnQnX} 
There is a coherent sheaf $\cM_n$ of $\cQ_n$-modules on $\X$, whose values on $\Y \in \cS$ are given by
\[ \cM_n(\Y) = \cQ_n(\Y) \underset{\cQ_\infty(\Y)}{\otimes} \cM(\h\Y_{\rig}).\]
\end{lem}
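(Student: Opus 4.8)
The plan is to construct $\cM_n$ first as a presheaf on the base $\cS$, check that the defining formula is functorial in $\Y$, verify that the resulting presheaf is in fact a sheaf on $\cS$ (using that $\cQ_n$ is a sheaf of $K$-Banach algebras on $\cS$ by Corollary \ref{QnDefnCor}), and then extend uniquely to a sheaf on all of $\X$ since $\cS$ is a base for the Zariski topology; coherence is then checked locally on $\cS$. The key input is that the collection $\{\cQ_n(\Y) : \Y \in \cS\}$ together with the restriction maps forms the sheaf $\cQ_n$, and that each $\cM(\h\Y_{\rig})$ is a coadmissible $\cQ_\infty(\Y) = \w\cD(\h\Y_{\rig},G)$-module by Theorem \ref{LocEquiv} (after restricting $\cM$ to $\bY := \h\Y_{\rig}$ using Proposition \ref{FrechRestr}).

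First I would make the formula functorial. Given $\bZ \subseteq \bY$ in $\cS$ (i.e. $\Z, \Y \in \cS$ with $\Z \subseteq \Y$), Proposition \ref{hUGfunc} (applied through Corollary \ref{QnDefnCor}) gives a commuting square relating $\cQ_n(\Y) \to \cQ_n(\Z)$ and $\cQ_\infty(\Y) \to \cQ_\infty(\Z)$, while the restriction maps of the sheaf $\cM$ (regarded via Theorem \ref{LocEquiv} and Proposition \ref{FrechRestr} as maps of coadmissible modules over the respective $\w\cD(-,G)$-algebras, along the appropriate base-change) give $\cM(\bY) \to \cM(\bZ)$ intertwining the $\cQ_\infty$-actions. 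Tensoring produces $\cM_n(\Y) = \cQ_n(\Y) \otimes_{\cQ_\infty(\Y)} \cM(\bY) \to \cQ_n(\Z) \otimes_{\cQ_\infty(\Z)} \cM(\bZ) = \cM_n(\Z)$, and the cocycle identity for three nested members of $\cS$ follows from the corresponding identities for $\cQ_n$, $\cQ_\infty$ and $\cM$. Thus $\cM_n$ is a presheaf of $\cQ_n$-modules on $\cS$.

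Next I would prove $\cM_n$ is a sheaf on $\cS$. The cleanest route is to imitate the proof of Lemma \ref{LocMLocN}/Proposition \ref{BiModIso}: by Proposition \ref{BiModIso} there is an isomorphism of presheaves of $(\sT, \cQ_\infty(\bX))$-type bimodules realising $\cQ_n$ as $\hdnK \otimes_{\hdnK(\bX_{\text{loc}})}(\cdots)$ — more directly, using Corollary \ref{QnDefnCor} that $\cQ_n$ is a finite \emph{free} $\hdnK$-module, one shows $\cM_n \cong \hdnK \otimes_{\hdnK(\text{loc})}(\text{something})$ locally, or even more simply, one observes that $\cM_n$ locally on $\cS$ is, by flatness of $\cQ_n(\Z)$ over $\cQ_n(\Y)$ on the big cell translates (this is exactly the $\bX_{\ac}$-flatness already established, cf. Theorem \ref{FlatAcc} and the structure in $\S\ref{SectionhsULKG}$), a twist of the restriction of a sheaf of the form $\Loc_{\cQ_n}$ of a finitely generated module, which is a sheaf with vanishing higher \v{C}ech cohomology by Corollary \ref{TateForCohsQmod}. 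Concretely: fix $\Y \in \cS$; then $\cM(\h\Y_{\rig})$ is a coadmissible $\cQ_\infty(\Y)$-module, so $\cM_n(\Y) = \cQ_n(\Y) \otimes_{\cQ_\infty(\Y)}\cM(\h\Y_{\rig})$ is a finitely generated $\cQ_n(\Y)$-module (since $\cQ_n(\Y)$ is a quotient/finite algebra over a Banach-algebra level of $\cQ_\infty(\Y)$, and coadmissible modules surject from such finite levels), and one checks $\cM_{n|\cS \cap \Y_w} \cong \Loc_{\cQ_{n|\cS\cap\Y_w}}(\cM_n(\Y))$ by the same associativity-of-$\otimes$ diagram as in Lemma \ref{UcoherentMn}(b); Corollary \ref{TateForCohsQmod} then gives that this restriction is a sheaf with vanishing higher \v{C}ech cohomology. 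Gluing these local sheaves as in Lemma \ref{UcoherentMn}(c) (or simply invoking that a presheaf on a base which is locally a sheaf is a sheaf) shows $\cM_n$ is a sheaf on $\cS$, hence extends uniquely to a sheaf of $\cQ_n$-modules on $\X$ which is $\cU$-coherent, hence coherent as a $\cQ_n$-module (its global sections being finitely generated over $\cQ_n(\X)$ by Theorem \ref{XcohQnAff}).

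The main obstacle is the bookkeeping in the functoriality step: one must keep straight which $\w\cD(-,G)$ or $\hdnK \rtimes_{G_n}G$ acts on which module, and verify that the restriction maps of $\cM$ — which a priori are only $\cD(-)\rtimes G$-linear and continuous — induce well-defined $\cQ_\infty$- and then $\cQ_n$-linear maps after base change; this is exactly the kind of density-plus-continuity argument used repeatedly (e.g. in the proof of Theorem \ref{LocEquiv} and Lemma \ref{ExtendableModules}), so it is routine but needs care. Everything else is a formal consequence of Corollary \ref{QnDefnCor}, Corollary \ref{TateForCohsQmod}, Theorem \ref{LocEquiv} and Theorem \ref{XcohQnAff}.
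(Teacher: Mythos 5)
Your proposal follows essentially the same route as the paper: define $\cM_n$ as a presheaf on $\cS$ via the tensor product formula, use the associativity-of-$\otimes$ computation (as in Lemma \ref{UcoherentMn}(b), matching the paper's formula (\ref{MnCohPf})) together with Theorem \ref{LocEquiv} to identify $\cM_{n|\cS\cap\Y_w}$ with $\Loc_{\cQ_n}(\cM_n(\Y))$, invoke Corollary \ref{TateForCohsQmod} for the sheaf property, extend to $\X$ since $\cS$ is a basis, and read off coherence from the local $\Loc_{\cQ_n}$ description. The only minor imprecision is the parenthetical claim that $\cQ_n(\Y)$ is ``a quotient/finite algebra over a Banach-algebra level of $\cQ_\infty(\Y)$''; in fact $\cQ_n(\Y)$ \emph{is} one of the Banach levels in the Fr\'echet--Stein presentation of $\cQ_\infty(\Y)$, and finite generation of $\cM_n(\Y)$ follows directly from the definition of coadmissibility.
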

\begin{proof} The formula on the right hand side of the displayed equation is evidently functorial in $\Y \in \cS$ and thus defines a presheaf $\cM_n$ of $\cQ_n$-modules on $\cS$. Let $\U,\Y \in \cS$ be fixed with $\U \subseteq \Y$. By the remarks made above, there is an isomorphism $\cM(\h\U_{\rig}) \cong \cQ_\infty(\U) \underset{\cQ_\infty(\Y)}{\w\otimes} \cM(\h\Y_{\rig})$. Therefore
\begin{equation} \label{MnCohPf} \begin{array}{lll} \cM_n(\U) &=&  \cQ_n(\U) \underset{\cQ_\infty(\U)}{\otimes} \cM(\h\U_{\rig})= \\ &\cong& \cQ_n(\U) \underset{\cQ_\infty(\U)}{\otimes} \left( \cQ_\infty(\U) \underset{\cQ_\infty(\Y)}{\w\otimes} \cM(\h\Y_{\rig}) \right)= \\ 
&=& \cQ_n(\U) \underset{\cQ_\infty(\Y)}{\otimes} \cM(\h\Y_{\rig})= \\ &=& \cQ_n(\U) \underset{\cQ_n(\Y)}{\otimes} \cM_n(\Y).\end{array}\end{equation}
Now $\h\U_{\rig}$ is a $G$-stable affinoid subdomain of $\bY := \h\Y_{\rig}$ by Lemma \ref{CongSubsTriv}. Because $\U$ is an affine Zariski open subscheme of $\Y$, it is a finite union of basic open subsets, and it follows directly from Definition \ref{LaccessDefn} that $\h\U_{\rig}$ is an $\cL := \pi^n \cT(\h\Y)$-accessible affinoid subdomain of $\h\Y_{\rig}$. Recall the sheaf $\cQ := \hK{\sU(\cL)} \rtimes_{G_n} G$ on $\bY_{\ac}(\cL, G)$ from Definition \ref{LocQFunc}. Now, formula (\ref{MnCohPf}) shows that the presheaf $\cN :=\Loc_{\cQ}( \cM_n(\Y) )$ on $\bY_{\ac}(\cL, G)$ is related to the restriction of $\cM_n$ to $\Y$ via
\[ \cN(\h\U_{\rig}) \cong \cM_n(\U) \qmb{for all} \U \in \cS \qmb{such that} \U \subseteq \Y.\]
It now follows from Corollary \ref{TateForCohsQmod} that $\cM_n$ is a sheaf on $\cS$. Because $\cS$ is a basis for $\X$, $\cM_n$ extends to a sheaf of $\cQ_n$-modules on $\X$, which is coherent in view of formula (\ref{MnCohPf}).
\end{proof}

\begin{lem}\label{NewTaus} There is a $\cQ_n$-linear isomorphism
\[ \tau_n : \cQ_n \underset{\cQ_n(\X)}{\otimes}{} \left( \cQ_n(\X) \underset{\cQ_{n+1}(\X)}{\otimes}{} \cM_{n+1}(\X) \right) \congs \cQ_n \underset{\cQ_n(\X)}{\otimes}{} \cM_n(\X).\]
\end{lem}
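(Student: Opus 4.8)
The plan is to mimic the construction of the isomorphism $\tau_n$ in Lemma \ref{TausBetweencMns}, working now on the Zariski topology of the scheme $\X$ rather than on a small affinoid. First I would note that both $\cQ_n(\X)\otimes_{\cQ_{n+1}(\X)}\cM_{n+1}(\X)$ and $\cM_n(\X)$ are finitely generated $\cQ_n(\X)$-modules: indeed $\cM_n$ and $\cM_{n+1}$ are coherent $\cQ_n$- and $\cQ_{n+1}$-modules by Lemma \ref{MnQnX}, and $\X$ is coherently $\cQ_n$-affine (respectively $\cQ_{n+1}$-affine) by Theorem \ref{XcohQnAff}, so $\cM_n(\X)$ and $\cM_{n+1}(\X)$ are finitely generated over $\cQ_n(\X)$ and $\cQ_{n+1}(\X)$ respectively; since $\cQ_n(\X)$ is a finitely generated module over $\cQ_{n+1}(\X)$ (this follows from Proposition \ref{StdPresWDGG}, or rather its analogue for $\cQ_n(\X)$ via Theorem \ref{GlobSecDnlK}(a) together with the fact that $\hK{U(\pi^n\fr{g})}$ is finite over $\hK{U(\pi^{n+1}\fr{g})}$ by Theorem \ref{HKULflat}-type arguments), the base-changed module is finitely generated over $\cQ_n(\X)$ as well.

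Next I would construct, for each $\Y\in\cS$, a local isomorphism exactly as in the diagram appearing in the proof of Lemma \ref{TausBetweencMns}. Concretely, for $\Z\in\cS$ with $\Z\subseteq\Y$ (or just for arbitrary $\Y\in\cS$ applied with the structure maps $\cQ_n(\X)\to\cQ_n(\Y)$), one has the chain of canonical isomorphisms
\[
\cQ_n(\Y)\underset{\cQ_{n+1}(\Y)}{\otimes}\cM_{n+1}(\Y)\;\cong\;\cQ_n(\Y)\underset{\cQ_{n+1}(\Y)}{\otimes}\Bigl(\cQ_{n+1}(\Y)\underset{\cQ_\infty(\Y)}{\otimes}\cM(\h\Y_{\rig})\Bigr)\;\cong\;\cQ_n(\Y)\underset{\cQ_\infty(\Y)}{\otimes}\cM(\h\Y_{\rig})\;\cong\;\cM_n(\Y),
\]
using the defining formula of $\cM_n$ and $\cM_{n+1}$ from Lemma \ref{MnQnX} and the obvious associativity of the (ordinary, algebraic) tensor products. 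Combining this with the isomorphisms $\sigma_n$, $\sigma_{n+1}$ from the coherent-affinity statement (i.e.\ the counit maps $\Loc_{\cQ_n}(\cM_n(\X))\to\cM_n$ being isomorphisms, which is part of $\X$ being coherently $\cQ_n$-affine via Theorem \ref{XcohQnAff}), I would form the square
\[
\xymatrix{
\cQ_n(\Y)\underset{\cQ_n(\X)}{\otimes}\Bigl(\cQ_n(\X)\underset{\cQ_{n+1}(\X)}{\otimes}\cM_{n+1}(\X)\Bigr)\ar[r]^-{\tau_n(\Y)}\ar[d]_\cong & \cQ_n(\Y)\underset{\cQ_n(\X)}{\otimes}\cM_n(\X)\ar[d]^\cong \\
\cQ_n(\Y)\underset{\cQ_{n+1}(\Y)}{\otimes}\cM_{n+1}(\Y)\ar[r]_-\cong & \cM_n(\Y)
}
\]
in which all solid arrows are isomorphisms; this produces the isomorphism $\tau_n(\Y)$ as the top arrow and shows it is functorial in $\Y\in\cS$.

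Finally, since $\cU:=\{\h\Y_{\rig}:\Y\in\cS\}$ — or rather the Zariski covering of $\X$ by the Weyl translates of the big cell — is an admissible covering and both source and target of $\tau_n$ are localisations $\Loc_{\cQ_n}(-)$ of finitely generated $\cQ_n(\X)$-modules, I would invoke Corollary \ref{TateForCohsQmod} (exactness of the augmented \v{C}ech complex of $\Loc_{\cQ_n}$ of a finitely generated module with respect to such a covering) to patch the local isomorphisms $\tau_n(\Y)$ into a global $\cQ_n$-linear isomorphism $\tau_n$ on $\X$. The main obstacle I anticipate is bookkeeping rather than conceptual: one must make sure that the various tensor-product associativity isomorphisms and the maps $\sigma_n$ are genuinely compatible on overlaps $\Y\cap\Y'$, exactly as in the cocycle check suppressed in the proof of Lemma \ref{TausBetweencMns}, and that the hypotheses of Corollary \ref{TateForCohsQmod} (the covering admitting a suitable refinement, and $[\cL,\cL]\subseteq\pi\cL$, $\cL\cdot\cA\subseteq\pi\cA$ for $\cL=\pi^n\cT(\h\Y)$) are in force — the latter being guaranteed by the standing assumption $\pi^n\in p^\epsilon\cR$ together with $n\ge1$.
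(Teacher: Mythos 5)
Your proposal captures the intended argument exactly: the paper's own proof is a one-liner referring the reader to Lemma \ref{TausBetweencMns}, and you have correctly transplanted that argument to the Zariski topology on $\X$, using the local isomorphism $\cQ_n(\Y)\otimes_{\cQ_{n+1}(\Y)}\cM_{n+1}(\Y)\cong\cM_n(\Y)$ from Lemma \ref{MnQnX}, the counit isomorphisms $\sigma_n$ from coherent $\cQ_n$-affinity, and a gluing step over the covering $\cS$.

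Two small slips are worth flagging, though neither threatens the argument. First, the parenthetical claim that $\cQ_n(\X)$ is a finitely generated module over $\cQ_{n+1}(\X)$ is both incorrect and unnecessary: $\hK{U(\pi^n\fr{g})}$ is flat but not finite over $\hK{U(\pi^{n+1}\fr{g})}$ (Theorem \ref{HKULflat} gives flatness only), and in any case one never needs finiteness of the ring extension to conclude that a base change $A\otimes_B M$ of a finitely generated $B$-module $M$ is finitely generated over $A$. Second, for the final patching step, Corollary \ref{TateForCohsQmod} is not the right citation: that result concerns the sheaf $\cQ$ on the rigid $G$-topology $\bX_{\ac}(\cL,G)$ of an affinoid, whereas here one is gluing over the Zariski topology of the scheme $\X$. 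What is actually needed is the fact that $\Loc_{\cQ_n}(M)$ is a coherent $\cQ_n$-module (hence a sheaf on $\X$) for any finitely generated $\cQ_n(\X)$-module $M$, which is exactly what Theorem \ref{XcohQnAff} together with \cite[Proposition 5.1]{AW13} provides; with this substitution your diagram-chase goes through as written.
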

\begin{proof} We omit the details which are very similar to the proof of Lemma \ref{TausBetweencMns}.
\end{proof}

\begin{cor}\label{NewMinftyCoadm} The $\cQ_\infty(\X)$-module $M_\infty := \invlim \cM_n(\X)$ is coadmissible. 
\end{cor}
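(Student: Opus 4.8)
The plan is to recognise $(\cM_n(\X))_n$, together with its natural transition maps, as a coherent sheaf over a Fr\'echet-Stein presentation of $\cQ_\infty(\X) := \invlim \cQ_n(\X)$; this is the exact analogue of the argument used for Corollary \ref{MinftyCoadm}.

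First I would check that the $\cQ_n(\X)$ do assemble into a Fr\'echet-Stein presentation. By Theorem \ref{GlobSecDnlK}(a) applied with $\lambda = 0$, together with the fact (Corollary \ref{QnDefnCor}) that $\cQ_n = \hdnK \rtimes_{G_n} G$ is free of rank $[G:G_n]$ as a sheaf of $\hdnK$-modules, the global sections are $\cQ_n(\X) \cong A_n / \fr{m}_0 A_n$, where $A_n = \hK{U(\pi^n\fr{g})} \rtimes_{G_n} G$. The algebras $A_n$ are Noetherian and the transition maps $A_{n+1} \to A_n$ are flat with dense image, as established in the proof of Theorem \ref{wDGGFrSt}. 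Since $\fr{m}_0 A_n$ is a two-sided ideal by Lemma \ref{TwoSided}, a finite generating set for $\fr{m}_0$ over $Z(\fr{g}_K)$ gives the base-change identity $\cQ_n(\X) \cong A_n \otimes_{A_{n+1}} \cQ_{n+1}(\X)$, whence $\cQ_{n+1}(\X) \to \cQ_n(\X)$ is again flat, Noetherian, and has dense image. Thus $(\cQ_n(\X))_n$ is a Fr\'echet-Stein presentation of $\cQ_\infty(\X)$.

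Next, $\cM_n$ is a coherent sheaf of $\cQ_n$-modules by Lemma \ref{MnQnX}, and $\X$ is coherently $\cQ_n$-affine by Theorem \ref{XcohQnAff}. Hence $\cM_n(\X)$ is a finitely generated $\cQ_n(\X)$-module, the functors $\Gamma(\X,-)$ and $\Loc_{\cQ_n}(-)$ are mutually inverse equivalences between coherent $\cQ_n$-modules and finitely generated $\cQ_n(\X)$-modules, and $\cM_n \cong \Loc_{\cQ_n}(\cM_n(\X))$. Applying $\Gamma(\X,-)$ to the isomorphism of Lemma \ref{NewTaus} therefore yields $\cQ_n(\X)$-linear isomorphisms
\[ \cQ_n(\X) \underset{\cQ_{n+1}(\X)}{\otimes}{} \cM_{n+1}(\X) \congs \cM_n(\X) \qmb{for each} n \geq 0. \]
So $(\cM_n(\X))_n$ is a compatible system of finitely generated modules satisfying the coherence condition of \cite[\S 3]{ST} with respect to the Fr\'echet-Stein presentation $(\cQ_n(\X))_n$, and $M_\infty = \invlim \cM_n(\X)$ is a coadmissible $\cQ_\infty(\X)$-module.

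The main obstacle is the first step: flatness of $\cQ_{n+1}(\X) \to \cQ_n(\X)$ is not formal from flatness of $A_{n+1} \to A_n$, and must be deduced from the identification $\cQ_n(\X) \cong A_n \otimes_{A_{n+1}} \cQ_{n+1}(\X)$, which in turn relies on Lemma \ref{TwoSided} so that quotienting by $\fr{m}_0$ commutes with the transition maps. One must also keep in mind that $\cQ_\infty(\X)$ in the statement is to be read as $\invlim \cQ_n(\X)$, the global sections of the presheaf $\cQ_\infty$ on $\X$ (computed via the \v{C}ech complex of an $\cS$-covering, $\cS$ being a base for the Zariski topology on $\X$).
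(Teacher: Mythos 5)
Your proof is correct and takes essentially the same route as the paper, which (like the earlier Corollary~\ref{MinftyCoadm}) simply combines Lemma~\ref{NewTaus} with Theorem~\ref{XcohQnAff} to obtain the compatibility isomorphisms $\cQ_n(\X)\otimes_{\cQ_{n+1}(\X)}\cM_{n+1}(\X)\cong\cM_n(\X)$ and concludes coadmissibility. The only difference is that you explicitly verify that $(\cQ_n(\X))_n$ forms a Fr\'echet--Stein presentation of $\cQ_\infty(\X)=\invlim\cQ_n(\X)$: via Theorem~\ref{GlobSecDnlK}(a) and Corollary~\ref{QnDefnCor} you identify $\cQ_n(\X)\cong A_n/\fr{m}_0 A_n$, and then use Lemma~\ref{TwoSided} to see that the flatness and dense image of $A_{n+1}\to A_n$ descend to $\cQ_{n+1}(\X)\to\cQ_n(\X)$ (flatness of a ring map is preserved under base change along $B\mapsto B/I$ for a two-sided ideal $I$). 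The paper leaves this step implicit; your version is the more complete one, but the argument is structurally identical.
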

\begin{proof} By Lemma \ref{NewTaus} and Theorem \ref{XcohQnAff}, the maps $\tau_n(\X)$ induce $\cQ_n(\X)$-linear isomorphisms $\cQ_n(\X) \otimes_{\cQ_{n+1}(\X)} \cM_{n+1}(\X) \congs \cM_n(\X)$ for each $n$. Therefore $M_\infty$ is a coadmissible $\invlim \cQ_n(\X) = \cQ_\infty(\X)$-module.
\end{proof}

If $\Y \in \cS$ and $\bY := \h\Y_{\rig}$, then $\cM(\bY)$ is a coadmissible $\cQ_\infty(\Y) \cong \w\cD(\h\Y_{\rig}, G)$-module by Proposition \ref{Reconstruct}(a). Hence the canonical map
\[ \cM(\bY) \longrightarrow \invlim \cQ_n(\Y) \underset{\cQ_\infty(\Y)}{\otimes} \cM(\bY) \]
is an isomorphism, and we will identify $\cM(\bY)$ with $\invlim \cQ_n(\Y) \underset{\cQ_\infty(\Y)}{\otimes} \cM(\bY)$.

\begin{lem}\label{NewNuLemma} For each $\bY \in \cU$, there is a $\cQ_\infty(\X)$-linear map
\[ \nu_\bY : M_\infty \longrightarrow \cM(\bY)\]
such that $\nu_\bY(m)_{|\bY \cap \bY'} = \nu_{\bY'}(m)_{|\bY \cap \bY'}$ for all $m \in M_\infty$ and all $\bY' \in \cU$.
\end{lem}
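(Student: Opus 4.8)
The plan is to define the map $\nu_{\bY}$ by assembling the restriction maps of the sheaves $\cM_n$ and then taking an inverse limit, exactly mirroring the proof of Lemma \ref{NuLemma}. Concretely: an element $m \in M_\infty = \invlim \cM_n(\X)$ is a compatible family $(m_n)_{n}$ with $m_n \in \cM_n(\X)$; writing $m_n|_{\Y}$ for the image of $m_n$ under the restriction map $\cM_n(\X) \to \cM_n(\Y)$ in the sheaf $\cM_n$ from Lemma \ref{MnQnX}, and identifying $\cM(\bY)$ with $\invlim \cQ_n(\Y) \otimes_{\cQ_\infty(\Y)} \cM(\bY) = \invlim \cM_n(\Y)$ as in the remark preceding the statement, I would set
\[ \nu_{\bY}(m) := \bigl( m_n|_{\Y} \bigr)_{n \geq 0} \in \invlim \cM_n(\Y) = \cM(\bY). \]
First I would check this is well-defined: the family $(m_n|_{\Y})_n$ is compatible under the transition maps of the tower $(\cM_n(\Y))_n$ because the restriction maps of the sheaves $\cM_n$ commute with the connecting isomorphisms $\cM_{n+1}(-) \to \cM_n(-)$ supplied by Lemma \ref{NewTaus} — this commutativity is built into the functoriality statement of Lemma \ref{MnQnX} and the construction of $\tau_n$.

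Next I would verify $\cQ_\infty(\X)$-linearity. Each restriction map $\cM_n(\X) \to \cM_n(\Y)$ is $\cQ_n(\X)$-linear, hence $\cQ_\infty(\X)$-linear via the canonical map $\cQ_\infty(\X) \to \cQ_n(\X)$; passing to the inverse limit and using $\cQ_\infty(\X) = \invlim \cQ_n(\X)$ (Proposition \ref{StdPresWDGG} / the presentation $A = \invlim A_n$, together with Lemma \ref{PstarM}) gives that $\nu_{\bY}$ is $\cQ_\infty(\X)$-linear.

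Finally I would check the overlap compatibility $\nu_{\bY}(m)|_{\bY \cap \bY'} = \nu_{\bY'}(m)|_{\bY \cap \bY'}$. Note that for $\bY = \h\Y_{\rig}$ and $\bY' = \h\Y'_{\rig}$ with $\Y, \Y' \in \cS$, the intersection $\bY \cap \bY'$ corresponds to the Zariski-open $\Y \cap \Y'$ of $\X$, which is itself covered by members of $\cS$; so it suffices to argue after further restricting to each $\U \in \cS$ with $\U \subseteq \Y \cap \Y'$. There, level by level, $(m_n|_{\Y})|_{\U} = m_n|_{\U} = (m_n|_{\Y'})|_{\U}$ because the $\cM_n$ are honest presheaves (in fact sheaves, Lemma \ref{MnQnX}) on $\cS$, so their restriction maps are transitive. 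Taking the inverse limit over $n$ and then gluing over the $\cS$-cover of $\Y \cap \Y'$ — using that $\cM$ is a sheaf on $\bX_w(\cT)$ and $\bY\cap\bY'$ admits the admissible affinoid covering $\{\h\U_{\rig}\}$ — yields the claimed equality.

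This lemma is essentially bookkeeping and I do not anticipate a genuine obstacle; the only mildly delicate point is keeping the three identifications straight — $\cM(\bY) \cong \invlim \cM_n(\Y)$ (Lemma \ref{PstarM} combined with the coadmissibility identification of Proposition \ref{Reconstruct}(a)), $M_\infty = \invlim \cM_n(\X)$ (Corollary \ref{NewMinftyCoadm}), and the fact that restriction in $\cM_n$ is compatible with all the transition maps — so that the formula $\nu_{\bY}(m) = (m_n|_{\Y})_n$ actually lands in $\cM(\bY)$ and not merely in $\invlim \cM_n(\Y)$ for some a priori different tower structure. Once those are pinned down, well-definedness, linearity, and the cocycle-type compatibility all follow formally, just as in the proof of Lemma \ref{NuLemma}.
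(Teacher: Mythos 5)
Your proposal is correct and is essentially the paper's own argument: define $\nu_\bY(m) = ((m_n)_{|\bY})_n$, observe $\cQ_\infty(\X)$-linearity from the $\cQ_n(\X)$-linearity of the restriction maps in $\cM_n$, and check the overlap compatibility level by level using transitivity of restriction in the presheaf $\cM_n$. The extra care you take with well-definedness of the tower identification and with covering $\bY\cap\bY'$ by members of $\cS$ is fine but not needed, since $\cM_n$ is a sheaf on all of $\X$ so restriction to $\Y\cap\Y'$ and its transitivity are automatic.
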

\begin{proof} Let $m = (m_n)_n \in M_\infty$ where $m_n \in \cM_n(\X)$, and define $\nu_\bY$ by 
\[\nu_\bY(m) := ( (m_n)_{|\bY} )_n.\]
This is $\cQ_\infty(\X)$-linear because the restriction maps in $\cM_n$ are $\cM_n(\X)$-linear. Now
\[ \nu_\bY(m)_{|\bY \cap \bY',n} = (m_{n|\bY})_{|\bY\cap \bY'} = m_{n |\bY\cap \bY'} = (m_{n|\bY'})_{|\bY\cap \bY'} = \nu_{\bY'}(m)_{|\bY \cap \bY',n}\]
for all $n \geq 0$. Hence $\nu_\bY(m)_{|\bY \cap \bY'} = \nu_{\bY'}(m)_{|\bY \cap \bY'}$ for all $m \in M_\infty$.
\end{proof}

\begin{proof}[Proof of Theorem \ref{BBEssSurj}] Let $\cM$ be a coadmissible $G$-equivariant $\cD$-module on $\bX$. We have constructed a coadmissible $\cQ_\infty(\X)$-module $M_\infty$ above in Corollary \ref{NewMinftyCoadm}. We will next construct an isomorphism of ${G}$-equivariant $\cD$-modules on $\cU$
\[ \alpha : \cP^{A}_\bX(M_\infty)_{|\cU} \congs \cM_{|\cU}.\]
Let $\Y \in \cS$ so that $\bY := \h\Y_{\rig}  \in \cU$. Using Lemma \ref{NewNuLemma}, define
\[g_\bY : \w\cD(\bY,{G}) \hsp \underset{A}{\w\otimes} \hsp M_\infty \longrightarrow \cM(\bY)\]
by setting $g_\bY(s \w\otimes m) = s \cdot \nu_\bY(m)$. This is a $\w\cD(\bY,{G})$-linear map. The diagram
\[\xymatrix{
\cQ_\infty(\Y) \underset{\cQ_\infty(\X)}{\w\otimes} M_\infty \ar[r]^{g_\bY}\ar@{=}[d] &  \cM(\bY) \ar[dd]^{\cong} \\
\invlim \cQ_n(\Y) \underset{\cQ_n(\X)}{\otimes} \cM_n(\X) \ar[d]^{\cong} &  \\
\invlim \cM_n(\Y)  & \invlim \cQ_n(\Y) \underset{\cQ_\infty(\Y) }{\otimes} \cM(\bY)  \ar[l]^{\cong}
}\]
is commutative by construction. Now $\cM_n$ is a coherent $\cQ_n$-module by Lemma \ref{MnQnX}, so the bottom left vertical arrow is an isomorphism by Theorem \ref{XcohQnAff}. The  bottom horizontal arrow is an isomorphism by the definition of $\cM_n$ given in the proof of Lemma \ref{MnQnX}, and the vertical arrow on the right is an isomorphism by the remarks made just before Lemma \ref{NewNuLemma}. It follows that $g_\bY$ is an isomorphism. 

Now consider the following diagram:
\[ \xymatrix{ 
\cP_\bX^{A}(M_\infty)_{|\bY_w} \ar@{.>}[rrrr]^{\alpha_\bY} \ar[d]_{\cong} &&&& \cM_{|\bY_w} \\
\cP^{\w\cD(\bY,{G})}_\bY(\w\cD(\bY,{G}) \underset{A}{\w\otimes} M_\infty) \ar[rrrr]_{\cP^{\w\cD(\bY,{G})}_\bY(g_\bY)} &&&& \cP_\bY^{\w\cD(\bY,{G})}(\cM(\bY)). \ar[u]_{\theta_\bY}^{\cong}
}\]
Here the left vertical arrow is the isomorphism given by Proposition \ref{LocTrans}, and the right vertical arrow $\theta_\bY$ is the isomorphism given by Proposition \ref{Reconstruct}(c). Because $g_\bY$ is an isomorphism, the bottom arrow is an isomorphism because $\cP_\bY^{\w\cD(\bY,{G})}$ is a functor by Proposition \ref{LocFunctor}. Thus we obtain the ${G}$-equivariant $\cD$-linear isomorphism
\[ \alpha_\bY : \cP^{A}_\bX(M_\infty)_{|\bY_w} \congs \cM_{|\bY_w}\]
which makes the diagram commute. 

By Proposition \ref{Reconstruct}(a), $\cM(\bU)$ is naturally a coadmissible $\w\cD(\bU,G_{\bU})$-module for any $\bU \in \bY_w$. Identifying $\cP^{A}_\bX(M_\infty)(\bU)$ with $\w\cD(\bU,G_{\bU}) \underset{\w\cD(\bX,G_{\bU})}{\w\otimes} M_\infty$ using Corollary \ref{ResCor}, it is straightforward to verify that the map
\[ \alpha_\bY(\bU) : \cP^{A}_\bX(M_\infty)(\bU) \longrightarrow \cM(\bU)\]
is given by
\begin{equation}\label{BBalphaDefn} \alpha_\bY(\bU)(s \hsp \w\otimes \hsp m) = s \cdot (\nu_\bY(m)_{|\bU}) \qmb{for all}   s \in \w\cD(\bU,G_{\bU}) \qmb{and} m \in M_\infty.\end{equation}
Using Lemma \ref{NewNuLemma}, we see that the local isomorphisms $\alpha_\bY$ satisfy
\[ (\alpha_\bY)_{|\bY \cap \bY'} = (\alpha_{\bY'})_{|\bY \cap \bY'} \qmb{for any} \bY, \bY' \in \cU.\]
Since $\Loc^{A}_\bX(M_\infty)$ and $\cM$ are sheaves on $\bX$ and $\cU$ contains an admissible covering of $\bX$, the $\alpha_\bY$'s patch together to the required isomorphism $\alpha : \Loc^A_{\bX}(M_\infty) \to \cM$.
\end{proof}

\section{Extensions to general \ts{p}-adic Lie groups}

\subsection{The associative algebra \ts{F(G)}}\label{FGformalism}
Let $G$ be a group, and let $k$ be a commutative ring. We fix a set $\cC$ of subgroups of $G$ which is closed under finite intersections and conjugation. It may help to keep in mind the basic example where $G$ is a $p$-adic Lie group and $\cC$ is the set of its compact open subgroups.

We will view $\cC$ as a partially ordered set, ordered by inclusion, and hence as a category whose $\Hom$-sets have at most one member. Let $\kAlg$ denote the category of associative unital $k$-algebras, and let 
\[ F : \cC \to \kAlg\]
be a covariant functor. We will write $F(a)$ for the image of $a \in F(J)$ in $F(H)$ under the canonical map $F(J) \to F(H)$ whenever $J \leq H$ are members of $\cC$. A basic example of such a functor is afforded by the \emph{group ring functor}
\[ k[-] : \cC \to \kAlg\]
which sends $H \in \cC$ to the subalgebra $k[H]$ of $k[G]$. Suppose further that we are given a morphism
\[ \iota : k[-] \to F\]
of functors $\cC \to \kAlg$ as above.  Whenever $J \leq H$ are members of $\cC$, the natural square in $\kAlg$ 
\[ \xymatrix{ k[J] \ar[d]_{\iota_J}\ar[r]& k[H] \ar[d]^{\iota_H} \\ F(J) \ar[r] & F(H) }\]
turns $F(H)$ into a $F(J)$-$k[H]$-bimodule, and there is a natural map
\[ \begin{array}{ccccc} t_{J,H} &:& F(J) \otimes_{k[J]} k[H] &\longrightarrow& F(H) \\ & & a \otimes b &\mapsto & F(a) \iota_H(b) \end{array}\]
of $F(J)$-$k[H]$-bimodules. There is also a unique $F(J)$-$k[G]$-bimodule map
\[ \begin{array}{ccccc} \alpha_{J,H} &:& F(J) \otimes_{k[J]} k[G] &\longrightarrow &F(H) \otimes_{k[H]} k[G] \\ & & a \otimes b &\mapsto & F(a) \otimes b \end{array}\]
which makes the following diagram commute:
\[\xymatrix{ F(J) \otimes_{k[J]} k[G] \ar[rr]^{\alpha_{J,H}}\ar[dr]_{\cong} & & F(H) \otimes_{k[H]} k[G]. \\ & \left( F(J) \otimes_{k[J]} k[H]\right) \otimes_{k[H]} k[G] \ar[ur]_{t_{J,H} \otimes 1}  }\]
The functoriality of $F$ makes it clear that 
\[\alpha_{J,H} \circ \alpha_{L,J} = \alpha_{L,H}\]
whenever $L \leq J \leq H$ are members of $\cC$. In this way, $H \mapsto F(H) \otimes_{k[H]} k[G]$ becomes a covariant functor from $\cC$ to the category of right $k[G]$-modules.

\begin{defn} \hspace{2em}
\be \item Let $V$ denote the right $k[G]$-module 
\[V := \lim\limits_{\stackrel{\longrightarrow}{H \in \cC}} F(H) \otimes_{k[H]} k[G].\]

\item For every $H \in \cC$, let $\tau_H : F(H) \otimes_{k[H]} k[G] \longrightarrow V$ denote the structure morphism of the direct limit.
\item We will work inside the $k$-algebra $\cE := \End(V_{|k[G]})$.
\ee\end{defn}

\begin{defn}\label{Sheafy} We say that $(F, \iota)$ is \emph{sheafy} if $t_{J,H}$ is an isomorphism whenever $J \leq H$ are objects of $\cC$.
\end{defn}
\begin{rmk}If $(F, \iota)$ is sheafy, the maps $\alpha_{J,H}$ are all isomorphisms, and it follows that the structure morphisms $\tau_H : F(H) \otimes_{k[H]} k[G] \longrightarrow V$ are also isomorphisms of right $k[G]$-modules, for any $H \in \cC$. 
\end{rmk}
Suppose that $(F,\iota)$ is sheafy. By transport of structure, $V$ becomes naturally an $F(H)$-$k[G]$-bimodule for every $H \in \cC$, if we define
\[ a \cdot \tau_H(b\otimes c) := \tau_H(ab \otimes c) \qmb{for all} a,b \in F(H)\qmb{and} c\in k[G].\]
In this way, we obtain a family of $k$-algebra homomorphisms 
\[\rho_H : F(H) \longrightarrow \cE\]
given by $\rho_H(a)(v) = a \cdot v$ for all $a \in F(H)$ and $v \in V$.  These homomorphisms are naturally compatible in the following sense.

\begin{lem}\label{RhoJRhoHCompat} Suppose that $(F,\iota)$ is sheafy and let $J \leq H$ be in $\cC$. Then 
\[ \rho_J(a) = \rho_H(F(a)) \qmb{for all} a \in F(J).\]
\end{lem}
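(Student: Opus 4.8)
The plan is to unwind the definitions of $\rho_J$ and $\rho_H$ through the transport-of-structure identifications and check that the compatibility of the maps $\alpha_{J,H}$ with the direct limit forces the claimed equality. First I would fix $a \in F(J)$ and an arbitrary $v \in V$; since $(F,\iota)$ is sheafy, the structure morphism $\tau_J \colon F(J) \otimes_{k[J]} k[G] \to V$ is an isomorphism of right $k[G]$-modules, so I may write $v = \tau_J(b \otimes c)$ for some $b \in F(J)$, $c \in k[G]$. By the definition of the $F(J)$-action on $V$ we then have $\rho_J(a)(v) = a \cdot \tau_J(b \otimes c) = \tau_J(ab \otimes c)$.

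Next I would compute $\rho_H(F(a))(v)$. The key point is that $\tau_H \circ \alpha_{J,H} = \tau_J$ as maps $F(J) \otimes_{k[J]} k[G] \to V$ — this is exactly the statement that the $\alpha_{J,H}$ form the transition maps of the directed system whose colimit is $V$. Hence $v = \tau_J(b \otimes c) = \tau_H(\alpha_{J,H}(b \otimes c)) = \tau_H(F(b) \otimes c)$, using the explicit formula $\alpha_{J,H}(b \otimes c) = F(b) \otimes c$. Therefore, by the definition of the $F(H)$-action on $V$ via $\tau_H$,
\[ \rho_H(F(a))(v) = F(a) \cdot \tau_H(F(b) \otimes c) = \tau_H\bigl(F(a)F(b) \otimes c\bigr) = \tau_H\bigl(F(ab) \otimes c\bigr),\]
where the last equality uses that $F \colon \cC \to \kAlg$ is a functor, so the canonical map $F(J) \to F(H)$ is a $k$-algebra homomorphism and $F(a)F(b) = F(ab)$. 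Finally, applying $\tau_H \circ \alpha_{J,H} = \tau_J$ once more to the element $ab \otimes c \in F(J)\otimes_{k[J]}k[G]$ gives $\tau_H(F(ab) \otimes c) = \tau_H(\alpha_{J,H}(ab \otimes c)) = \tau_J(ab \otimes c)$. Comparing with the first computation yields $\rho_H(F(a))(v) = \tau_J(ab\otimes c) = \rho_J(a)(v)$, and since $v \in V$ was arbitrary we conclude $\rho_J(a) = \rho_H(F(a))$ in $\cE$.

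I do not anticipate a genuine obstacle here: the statement is essentially bookkeeping once one has the identity $\tau_H \circ \alpha_{J,H} = \tau_J$, which is immediate from the construction of $V$ as a direct limit along the maps $\alpha_{-,-}$ together with the cocycle relation $\alpha_{J,H}\circ\alpha_{L,J} = \alpha_{L,H}$. The only point requiring a little care is making sure the transported $F(H)$- and $F(J)$-module structures on $V$ are well-defined (independent of the choice of representative $b \otimes c$ for $v$), but this is already implicit in the sheafy hypothesis and the preceding discussion, so I would simply invoke it rather than re-prove it.
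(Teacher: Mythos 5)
Your proof is correct and is essentially the same as the paper's: both take an arbitrary $v = \tau_J(b\otimes c)$, use $\tau_H\circ\alpha_{J,H}=\tau_J$ together with the explicit formula $\alpha_{J,H}(b\otimes c)=F(b)\otimes c$ and the multiplicativity of $F(J)\to F(H)$, and compare $\rho_J(a)(v)$ with $\rho_H(F(a))(v)$.
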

\begin{proof} Let $b \in F(J)$ and $c \in k[G]$ so that $\tau_J(b \otimes c) = \tau_H( \alpha_{J,H}( b \otimes c) ) = \tau_H( F(b) \otimes c).$ Then $a \cdot \tau_J(b \otimes c)= \tau_J(ab \otimes c) = \tau_H(F(ab) \otimes c) =  \tau_H(F(a)F(b) \otimes c) = F(a) \cdot \tau_H(F(b) \otimes c) = F(a) \cdot \tau_J(b \otimes c)$ so that $\rho_J(a)(v) = a \cdot v = F(a) \cdot v = \rho_H(F(a))(v)$ for all $v = \tau_J(b \otimes c) \in V$. 
\end{proof}

\begin{defn}\label{GequivFiota} We say that $(F,\iota)$ is \emph{$G$-equivariant} if for all $g \in G$ and $H \in \cC$, there is given a $k$-algebra homomorphism 
\[\varphi_g(H) : F(H) \to F(gHg^{-1})\]
such that
\be\item $\varphi_g(hHh^{-1}) \circ \varphi_h(H) = \varphi_{gh}(H)$ for all $g,h \in G$ and $H \in \cC$, 
\item the diagram
\[\xymatrix{ F(J) \ar[rr]^{\varphi_g(J)}\ar[d] && F(gJg^{-1}) \ar[d] \\ F(H) \ar[rr]_{\varphi_g(H)} && F(gHg^{-1}) }\]
commutes for all $J \leq H$ in $\cC$ and all $g \in G$, and
\item $\iota_{gJg^{-1}}( gag^{-1} ) = \varphi_g(J) (\iota_J(a))$ for all $g\in G$, $J \in \cC$ and $a \in k[J]$.
\ee\end{defn}

We will abuse notation and write ${}^ga$ to mean $\varphi_g(H)(a)$ for any $g \in G$, $H \in \cC$ and $a \in F(H)$. Thus, $(F,\iota)$ is $G$ equivariant if and only if
\[ {}^g ({}^h a) = {}^{gh}a, \quad F({}^ga) = {}^gF(a) \qmb{and} \iota_{{}^gJ}({}^ga) = {}^g\iota_J(a)\]
for all $g, h \in G$, $J \leq H$ in $\cC$ and $a \in F(J)$. We will also abbreviate $gHg^{-1}$ to ${}^gH$.

\begin{rmk} We could enhance the category $\cC$ by adding an arrow $H \longrightarrow gHg^{-1}$ for each $H \in \cC$ and $g\in G$. Formally, we could define 
\[\Hom_{\cC}(J,H) := \{ g \in G : g^{-1} J g \leq H\}\]
and define composition using the multiplication law in $G$. With this terminology, we see that $(F,\iota)$ is $G$-equivariant if and only if $\iota : k[-] \to F$ is a morphism of functors from $\cC$ to $\kAlg$. However, we will not need to use this formalism.
\end{rmk}

\begin{lem} Suppose that $(F,\iota)$ is sheafy and $G$-equivariant.  Then there is a well-defined left $G$-action on $V$ given by
\[g \cdot \tau_H(a \otimes b) = \tau_{{}^gH}\left({}^ga \otimes gb\right)\]
for all $g \in G$, $H \in \cC$, $a \in F(H)$ and $b \in k[G]$. This action commutes with the right $G$-action on $V$.
\end{lem}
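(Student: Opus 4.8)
The statement has two parts: first, that the assignment $g\cdot\tau_H(a\otimes b)=\tau_{{}^gH}({}^ga\otimes gb)$ is well-defined and defines a left $G$-action on $V$; second, that this action commutes with the right $G$-action. I would begin by addressing well-definedness. The element $\tau_H(a\otimes b)\in V$ does not uniquely determine the pair $(H,a\otimes b)$: the same element arises from $\tau_J(a'\otimes b')$ whenever $J\le H$ and $(a,b)$ and $(a',b')$ are related through the transition isomorphism $\alpha_{J,H}$ (here I use that $(F,\iota)$ is sheafy, so all $\alpha_{J,H}$ and hence all $\tau_H$ are isomorphisms of right $k[G]$-modules, as in the Remark after Definition \ref{Sheafy}). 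So I must check two compatibilities. For a fixed $H$, the map $F(H)\times k[G]\to V$, $(a,b)\mapsto\tau_{{}^gH}({}^ga\otimes gb)$ must be $k[H]$-balanced: this follows from Definition \ref{GequivFiota}(c), which says $\varphi_g(H)\circ\iota_H=\iota_{{}^gH}\circ(x\mapsto gxg^{-1})$, together with the fact that $\varphi_g(H)$ is a $k$-algebra homomorphism, so for $h\in H$ we have ${}^g(a\iota_H(h))={}^ga\cdot\iota_{{}^gH}(ghg^{-1})$ and $g(h^{-1}b)=(ghg^{-1})(gb)$; these match inside $\tau_{{}^gH}$ because $\tau_{{}^gH}$ is $k[{}^gH]$-balanced. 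This gives a well-defined additive map $F(H)\otimes_{k[H]}k[G]\to V$, which I call $g\cdot(-)$ on the $H$-th term. Next, to see these are compatible with the transition maps $\alpha_{J,H}$ as $J\le H$ varies — so that they assemble to a map on the direct limit $V$ — I invoke Definition \ref{GequivFiota}(b), the commuting square relating $\varphi_g(J)$, $\varphi_g(H)$ and the restriction maps $F(J)\to F(H)$, $F({}^gJ)\to F({}^gH)$. Concretely, for $a\in F(J)$ one has $\alpha_{J,H}(a\otimes b)=F(a)\otimes b$, and applying the $H$-level version of $g\cdot(-)$ gives $\tau_{{}^gH}({}^gF(a)\otimes gb)=\tau_{{}^gH}(F({}^ga)\otimes gb)=\tau_{{}^gJ}({}^ga\otimes gb)$, using (b) for the first equality and the definition of $\alpha_{{}^gJ,{}^gH}$ and $\tau$ for the rest; this is exactly the $J$-level value, so the maps descend to a single additive endomorphism $g\cdot(-)$ of $V$.

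Having $g\cdot(-):V\to V$ for each $g$, I would then check the action axioms. That $1\cdot v=v$ is immediate since $\varphi_1(H)=\mathrm{id}$ (which follows from Definition \ref{GequivFiota}(a) with $g=h=1$, giving $\varphi_1(H)^2=\varphi_1(H)$, combined with it being a well-defined endomorphism — or one simply takes this as part of the data; in any case $\varphi_1$ must be the identity for (a) to be consistent, and $1\cdot\tau_H(a\otimes b)=\tau_H(a\otimes b)$). The composition law $g\cdot(h\cdot v)=(gh)\cdot v$ reduces, on generators, to
\[
g\cdot\big(h\cdot\tau_H(a\otimes b)\big)=g\cdot\tau_{{}^hH}\big({}^ha\otimes hb\big)=\tau_{{}^g({}^hH)}\big({}^g({}^ha)\otimes g(hb)\big)=\tau_{{}^{gh}H}\big({}^{gh}a\otimes (gh)b\big),
\]
where the last equality uses Definition \ref{GequivFiota}(a), namely ${}^g({}^ha)={}^{gh}a$, together with associativity of the group multiplication on the $k[G]$-factor and the closure of $\cC$ under conjugation (so that ${}^g({}^hH)={}^{gh}H$ is again in $\cC$). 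This establishes that $V$ is a left $G$-module.

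Finally, for commutativity with the right $k[G]$-action: the right action is $\tau_H(a\otimes b)\cdot c=\tau_H(a\otimes bc)$ for $c\in k[G]$, directly from the definition of $V$ as a direct limit of right $k[G]$-modules. Then
\[
g\cdot\big(\tau_H(a\otimes b)\cdot c\big)=g\cdot\tau_H(a\otimes bc)=\tau_{{}^gH}\big({}^ga\otimes g(bc)\big)=\tau_{{}^gH}\big({}^ga\otimes (gb)c\big)=\big(g\cdot\tau_H(a\otimes b)\big)\cdot c,
\]
using $g(bc)=(gb)c$ in $k[G]$. Since the generators $\tau_H(a\otimes b)$ span $V$, this suffices.

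\textbf{Main obstacle.} None of the individual verifications is deep; the only subtlety — and the step I would be most careful with — is the well-definedness on the direct limit, specifically checking that the $H$-level maps $g\cdot(-)$ are simultaneously $k[H]$-balanced (needing part (c) of $G$-equivariance plus that $\varphi_g(H)$ is multiplicative) \emph{and} compatible with the transition maps $\alpha_{J,H}$ (needing part (b)). One must be attentive that $\tau_{{}^gH}$ being $k[{}^gH]$-balanced is what licenses moving group elements across the tensor, and that Definition \ref{GequivFiota}(c) is precisely the compatibility between $\iota$ and conjugation that makes the balancing condition transform correctly. Everything else is a routine unwinding of the direct-limit description of $V$ and the cocycle identity (a).
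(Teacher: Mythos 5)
Your argument follows the paper's proof exactly: you check that the map $Q:(a,b)\mapsto\tau_{{}^gH}({}^ga\otimes gb)$ is $k[H]$-balanced using Definition \ref{GequivFiota}(c) and the multiplicativity of $\varphi_g(H)$, then check compatibility with the transition maps $\alpha_{J,H}$ (reducing to $J\leq H$) using Definition \ref{GequivFiota}(b), and finally invoke (a) for the cocycle identity and observe that commutativity with the right $k[G]$-action is immediate from the formula. One small typo: in the balancing step the displayed identity should read $g(hb)=(ghg^{-1})(gb)$, not $g(h^{-1}b)=(ghg^{-1})(gb)$ --- as written the two sides are $gh^{-1}b$ and $ghb$, which differ; with $h$ in place of $h^{-1}$ the identity is correct and the argument goes through as you intend.
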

\begin{proof} We first show that the map $Q : (a,b) \mapsto \tau_{{}^gH}({}^ga\otimes gb)$ is $k[H]$-balanced. But if $x \in k[H]$ then ${}^gx \in k[{}^gH]$, so using Definition \ref{GequivFiota}(c) we see that
\[{}^g(a\iota_H(x)) \otimes gb = {}^ga\hsp {}^g\iota_H(x) \otimes gb = {}^ga \hsp \iota_{{}^gH}({}^gx) \otimes gb = {}^ga \otimes {}^gx \hsp gb = {}^ga \otimes gxb.\]
Hence $Q(ax,b) = Q(a,xb)$ as claimed. Next, we must show that 
\[g \cdot \tau_H(v) = g \cdot \tau_J(w) \qmb{whenever} \tau_H(v) = \tau_J(w).\]
Because $\cC$ is stable under finite intersections, we may assume that $J \leq H$. Since $\tau_H$ and $\tau_J$ are isomorphisms, we see that $\alpha_{J,H}(w) = v$. Without loss of generality, we can assume that $w = a \otimes b$ for some $a \in F(J)$ and $b \in k[G]$. We must then have $v = \alpha_{J,H}(w) = F(a) \otimes b$. Now,
\[\begin{array}{lll} g \cdot \tau_H(v) &=& g \cdot \tau_H(F(a) \otimes b) = \\
&=& \tau_{{}^gH}({}^gF(a) \otimes gb) = \\
&=& \tau_{{}^gH}(F({}^ga) \otimes gb) = \\
&=& \tau_{{}^gH}(\alpha_{J,H}({}^ga \otimes gb)) = \\
&=& \tau_{{}^gJ}({}^ga \otimes gb)= g \cdot \tau_J(w),\end{array}\]
where we have used Definition \ref{GequivFiota}(b) on the third line. In view of Definition \ref{GequivFiota}(a), we now have a well-defined left $G$-action on $V$, and the last assertion is clear.
\end{proof}

\begin{cor}\label{MapFromGtoE} If $(F,\iota)$ is sheafy and $G$-equivariant, then there is a well-defined $k$-algebra homomorphism
\[ \eta : k[G] \longrightarrow \cE\]
given by $\eta(g)(v) = g\cdot v$ for all $g \in G$ and $v\in V$.
\end{cor}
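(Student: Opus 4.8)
The statement to prove is Corollary \ref{MapFromGtoE}: given that $(F,\iota)$ is sheafy and $G$-equivariant, the left $G$-action on $V$ described in the preceding lemma yields a well-defined $k$-algebra homomorphism $\eta : k[G] \to \cE = \End(V_{|k[G]})$ via $\eta(g)(v) = g \cdot v$.

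The plan is to assemble the pieces already established. The preceding lemma provides a left $G$-action on $V$ that commutes with the right $k[G]$-action. First I would observe that for each fixed $g \in G$, the map $v \mapsto g \cdot v$ is $k$-linear: this is immediate from the explicit formula $g \cdot \tau_H(a \otimes b) = \tau_{{}^gH}({}^ga \otimes gb)$, since ${}^g(-)$ and $\tau$ are $k$-linear and the direct limit is taken in the category of $k$-modules (indeed of right $k[G]$-modules). Second, since the left $G$-action commutes with the right $k[G]$-action --- the last assertion of the preceding lemma --- each such map is in fact an endomorphism of $V$ as a right $k[G]$-module, i.e. an element of $\cE$. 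Third, the assignment $g \mapsto (v \mapsto g\cdot v)$ is multiplicative and unital because $G$ acts on $V$: $(gh)\cdot v = g \cdot (h \cdot v)$ and $1 \cdot v = v$, which is exactly Definition \ref{GequivFiota}(a) transported through the formula for the action (this is what makes the $G$-action well-defined, established in the lemma). Finally, this gives a group homomorphism $G \to \cE^\times$, which extends uniquely to a $k$-algebra homomorphism $k[G] \to \cE$ by the universal property of the group algebra.

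Concretely, I would write: define $\eta_0 : G \to \cE$ by $\eta_0(g)(v) = g \cdot v$. By the preceding lemma $\eta_0(g)$ is a well-defined $k$-linear map $V \to V$ commuting with the right $k[G]$-action, hence $\eta_0(g) \in \cE$; moreover $\eta_0(g)$ is invertible with inverse $\eta_0(g^{-1})$, and $\eta_0(gh) = \eta_0(g)\eta_0(h)$, $\eta_0(1) = \mathrm{id}_V$ because the formula defines a left action of $G$. Thus $\eta_0 : G \to \cE^\times$ is a group homomorphism, and the universal property of $k[G]$ produces the unique $k$-algebra homomorphism $\eta : k[G] \to \cE$ with $\eta(g) = \eta_0(g)$, which satisfies $\eta(g)(v) = g \cdot v$ as required.

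I do not expect any genuine obstacle here: the real content --- well-definedness of the $G$-action on the direct limit, its $k$-linearity, and its commutation with the right $k[G]$-action --- is all discharged in the lemma immediately preceding the corollary. The only thing to be slightly careful about is to note explicitly that $\cE$ is defined as the endomorphism ring of $V$ \emph{restricted to $k[G]$}, so one must invoke the commutation of the two actions to land in $\cE$ rather than merely in $\End_k(V)$; but that commutation is precisely the final sentence of the preceding lemma. So the proof is a short bookkeeping argument citing the lemma and the universal property of the group ring.
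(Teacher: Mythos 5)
Your proof is correct and is the only natural argument here; the paper omits the proof precisely because it is this immediate assembly of the preceding lemma (well-definedness and $k$-linearity of the $G$-action, commutation with the right $k[G]$-action so that one lands in $\cE$) with the universal property of $k[G]$.
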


\begin{defn} We define $F(G)$ to be the $k$-subalgebra of $\cE$ generated by the image of $\eta$, and the images of $\rho_H$ as $H$ runs over all members of $\cC$.
\end{defn}

The images of $\eta$ and the $\rho_H$ inside $\cE$ are related in the following manner.

\begin{lem}\label{GconjImRho} Suppose that $(F, \iota)$ is sheafy and $G$-equivariant. Then
\[ \eta(g) \rho_H(a) \eta(g)^{-1} = \rho_{{}^gH}( {}^ga) \qmb{for all}  H \in \cC, a \in F(H)
\qmb{and} g \in G.\]
\end{lem}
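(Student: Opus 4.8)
The plan is to verify the identity by evaluating both sides of the claimed equation on an arbitrary element $v \in V$, using the fact that $(F,\iota)$ being sheafy means every structure map $\tau_J$ is an isomorphism, so that $v$ may be written as $v = \tau_J(a \otimes b)$ for some $J \in \cC$, $a \in F(J)$ and $b \in k[G]$. Since both sides are $k[G]$-linear endomorphisms of $V$, and since the elements of the form $\tau_J(a \otimes b)$ with varying $J$ span $V$ (each $\tau_J$ is surjective), it suffices to check agreement on such elements. Moreover, using Lemma \ref{RhoJRhoHCompat} to replace $\rho_J$ by $\rho_H$ composed with $F(-)$ wherever needed, I may further reduce to the case where the ``test'' subgroup $J$ equals $H$, or at least is comparable to $H$; in fact it is cleanest to allow $J$ to be arbitrary and just compute directly.

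The key computation runs as follows. Fix $H \in \cC$, $a \in F(H)$, $g \in G$, and take $v = \tau_J(c \otimes d)$ with $J \in \cC$, $c \in F(J)$, $d \in k[G]$. First apply $\eta(g)^{-1}$: by definition of the left $G$-action, $\eta(g)^{-1}(v) = g^{-1}\cdot \tau_J(c\otimes d) = \tau_{{}^{g^{-1}}J}({}^{g^{-1}}c \otimes g^{-1}d)$. Next apply $\rho_H(a)$: since $\rho_H(a)$ acts by left multiplication by $a$ in the $F(H)$-module structure on $V$, I need to express the vector in a form adapted to $H$; using that $\tau$'s are isomorphisms and $\cC$ is closed under intersection, rewrite $\tau_{{}^{g^{-1}}J}({}^{g^{-1}}c \otimes g^{-1}d)$ in terms of $\tau_H$ (or $\tau_{H \cap {}^{g^{-1}}J}$ and Lemma \ref{RhoJRhoHCompat}), so that $\rho_H(a)$ multiplies on the appropriate left factor. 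Then apply $\eta(g)$, which by definition of the left $G$-action transports everything back via the algebra maps $\varphi_g(-)$, turning a left factor $a \in F(H)$ into ${}^ga \in F({}^gH)$. Comparing with the direct computation of $\rho_{{}^gH}({}^ga)(v)$ — which multiplies $v$, rewritten adapted to ${}^gH$, on the left by ${}^ga$ — the two agree because the left $G$-action and the $\rho$-actions are ``intertwined'' precisely by the $G$-equivariance axioms in Definition \ref{GequivFiota}, specifically $F({}^gx) = {}^gF(x)$ and the compatibility ${}^g({}^hx) = {}^{gh}x$, together with the commutativity of left and right $G$-actions on $V$.

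The main obstacle — really the only subtle bookkeeping point — is handling the change of ``reference subgroup'': $\rho_H(a)$ needs $v$ presented as $\tau_H(\text{something})$, but after applying $\eta(g)^{-1}$ the vector is naturally presented relative to ${}^{g^{-1}}J$, not $H$. The clean way around this is to observe that for \emph{any} $K \in \cC$ and any $v \in V$, since $\tau_K$ is an isomorphism we can write $v = \tau_K(e \otimes f)$ uniquely, and then $\rho_H(a)(v)$ is computed by first pushing to $\tau_{H\cap K}$, applying Lemma \ref{RhoJRhoHCompat}, and multiplying; but in fact it is simpler to just take $v = \tau_H(a' \otimes b')$ from the outset — legitimate since $\tau_H$ is onto — do the three-step computation entirely within the $H$/${}^gH$ picture, and never leave it. With that choice, $\eta(g)^{-1}(\tau_H(a'\otimes b')) = \tau_{{}^{g^{-1}}H}({}^{g^{-1}}a' \otimes g^{-1}b')$; then $\rho_H(a)$ applied to this requires re-expressing it relative to $H$, and here one uses $\tau_H(F(x)\otimes y) = \tau_{{}^{g^{-1}}H}(\ldots)$ — this is exactly where the earlier remark that all $\alpha_{J,H}$ are isomorphisms for sheafy $(F,\iota)$ gets used. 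I expect the whole proof to be about half a page of careful but entirely mechanical manipulation of the formulas for the left and right $G$-actions and the definitions of $\rho_H$ and $\eta$, with no genuine difficulty beyond keeping the subgroup labels straight.
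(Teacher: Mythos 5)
Your overall idea — evaluate on elements of $V$ using the isomorphism $\tau_J$ and push through the definitions — is exactly right, but the specific order you chose creates the bookkeeping problem you flag, and the clean way out is not the one you describe. You insist on writing $v = \tau_H(a'\otimes b')$ and then applying $\eta(g)^{-1}$ first; after that the vector is presented as $\tau_{{}^{g^{-1}}H}(\cdots)$ and you must re-express it relative to $H$ before $\rho_H(a)$ can act, which forces you to invoke the $\alpha_{J,H}$-isomorphisms and do a nontrivial change of reference subgroup. You never carry this out, and the proposal stops at ``this is exactly where [\ldots] gets used'' without verifying that the terms actually close up.

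The paper sidesteps the entire issue by transposing $\eta(g)^{-1}$ to the other side: one proves $\eta(g)\circ\rho_H(a)\circ\tau_H = \rho_{{}^gH}({}^ga)\circ\eta(g)\circ\tau_H$ and then cancels $\tau_H$. Evaluated on $b\otimes c$ with $b\in F(H)$, the left side is
\[
\eta(g)\bigl(\tau_H(ab\otimes c)\bigr) = \tau_{{}^gH}\bigl({}^g(ab)\otimes gc\bigr) = \tau_{{}^gH}\bigl({}^ga\,{}^gb\otimes gc\bigr),
\]
and since $\eta(g)(\tau_H(b\otimes c)) = \tau_{{}^gH}({}^gb\otimes gc)$, the right side $\rho_{{}^gH}({}^ga)\circ\eta(g)$ gives the same thing — no re-expression across subgroups is needed at any point, because $\rho_H(a)$ always acts on a vector already presented via $\tau_H$, and $\rho_{{}^gH}({}^ga)$ always acts on a vector already presented via $\tau_{{}^gH}$. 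Equivalently, your route would also become clean if you started from $v = \tau_{{}^gH}(b\otimes c)$ rather than $v=\tau_H(a'\otimes b')$, since then $\eta(g)^{-1}(v) = \tau_H({}^{g^{-1}}b\otimes g^{-1}c)$ lands directly in the $H$-picture. As written, though, your outline leaves the key step unverified and relies on machinery (the $\alpha_{J,H}$'s, re-indexing over intersections) that the correct choice of test element makes entirely unnecessary.
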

\begin{proof}  Let $b \in F(H)$ and $c \in k[G]$. Then 
\[\begin{array}{lllll}
(\eta(g) \circ \rho_H(a) \circ \tau_H)(b \otimes c) &=& g \cdot \tau_H(ab \otimes c) &=& \tau_{{}^gH}({}^g(ab) \otimes gc) = \\
&=& \tau_{{}^gH}({}^ga {}^g b \otimes gc) &=& (\rho_{{}^gH}({}^ga) \circ \eta(g) \circ \tau_H )(b \otimes c). \end{array}\]
Hence $\eta(g) \circ \rho_H(a) \circ \tau_H = \rho_{{}^gH}({}^ga)\circ \eta(g) \circ \tau_H$ and the result follows because $\tau_H$ is an isomorphism.\end{proof}

We will need the following stronger version of Definition \ref{GequivFiota}(c).
\begin{defn}\label{GoodFIota} $(F, \iota)$ is \emph{good} if $(F,\iota)$ is sheafy, $G$-equivariant, and 
\[{}^g a  = \iota_H(g) \hsp a \hsp \iota_H(g)^{-1}\]
whenever $H \in \cC, g \in H$ and $a \in F(H)$.
\end{defn}

\begin{lem}\label{SigmaRhoIota} Suppose that $(F,\iota)$ is good. Then 
\[ \eta_{|k[H]} = \rho_H \circ \iota_H\]
for all $H \in \cC$.
\end{lem}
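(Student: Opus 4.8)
The plan is to fix $H \in \cC$ and a single group element $h \in H$, and to verify that the two endomorphisms $\eta(h)$ and $\rho_H(\iota_H(h))$ of $V$ coincide in $\cE$. Since the group elements of $H$ span $k[H]$ over $k$ and both $\eta_{|k[H]}$ and $\rho_H \circ \iota_H$ are $k$-linear (in fact $k$-algebra homomorphisms, $\eta$ by Corollary \ref{MapFromGtoE} and $\rho_H \circ \iota_H$ as a composite of algebra maps), this will give the claim.

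First I would reduce to checking the equality on arbitrary elements of $V$ represented in a convenient form. Given $v \in V$, choose some $J_0 \in \cC$ and a representative with $v$ in the image of $\tau_{J_0}$; replacing $J_0$ by $J := J_0 \cap H \in \cC$ and using that $(F,\iota)$ is sheafy — so that the maps $\alpha_{J,J_0}$, $\alpha_{J,H}$ and all structure morphisms $\tau$ are isomorphisms — I may write $v = \tau_J(b \otimes c)$ for some $b \in F(J)$, $c \in k[G]$ with $J \leq H$, and moreover $v = \tau_H(\alpha_{J,H}(b\otimes c)) = \tau_H(F(b) \otimes c)$.

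Next I would compute both sides on such a $v$. By the definition of the left $G$-action on $V$, $\eta(h)(v) = h \cdot \tau_J(b \otimes c) = \tau_{{}^hJ}({}^hb \otimes hc)$; since $J \leq H$ and $h \in H$ we have ${}^hJ \leq H$, so applying $\alpha_{{}^hJ,H}$ rewrites this as $\tau_H(F({}^hb) \otimes hc)$. On the other hand, using the $F(H)$-module structure on $V$, $\rho_H(\iota_H(h))(v) = \iota_H(h) \cdot \tau_H(F(b) \otimes c) = \tau_H(\iota_H(h)\,F(b) \otimes c)$. These two expressions are identified using goodness: by Definition \ref{GoodFIota} applied to $F(b) \in F(H)$ and $h \in H$, combined with the compatibility square of Definition \ref{GequivFiota}(b) relating $\varphi_h(J)$ and $\varphi_h(H)$ through $F$, one gets $\iota_H(h)\,F(b)\,\iota_H(h)^{-1} = \varphi_h(H)(F(b)) = F(\varphi_h(J)(b)) = F({}^hb)$, hence $\iota_H(h)\,F(b) = F({}^hb)\,\iota_H(h)$. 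Since $\iota_H(h) \in k[H]^\times \subset F(H)$, pushing it across the tensor product over $k[H]$ gives $\iota_H(h)\,F(b) \otimes c = F({}^hb) \otimes hc$, so $\rho_H(\iota_H(h))(v) = \tau_H(F({}^hb) \otimes hc) = \eta(h)(v)$.

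There is no real obstacle here; the only point requiring care is the bookkeeping in the reduction step, namely that every element of $V$ may be represented through $\tau_J$ for some $J \leq H$ — this is precisely where sheafiness of $(F,\iota)$ enters — together with remembering that the notation ${}^h(-)$ stands for $\varphi_h(J)$ on $F(J)$ but $\varphi_h(H)$ on $F(H)$, the two being intertwined by $F$ via Definition \ref{GequivFiota}(b).
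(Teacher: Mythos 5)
Your proof is correct and relies on exactly the same ingredients as the paper's: the explicit formula for the $G$-action on $V$, goodness, and sheafiness (which makes $\tau_H$ a bijection). The detour through $J = J_0 \cap H$ and the extra appeal to Definition \ref{GequivFiota}(b) are unnecessary, though: since $\tau_H$ is already a bijection onto $V$, you can write $v = \tau_H(a \otimes b)$ with $a \in F(H)$ directly and reduce everything to the one-line identity $\eta(h)(v) = \tau_H({}^ha \otimes hb) = \tau_H(\iota_H(h)\,a\,\iota_H(h)^{-1} \otimes hb) = \tau_H(\iota_H(h)\,a \otimes h^{-1}hb) = \rho_H(\iota_H(h))(v)$.
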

\begin{proof} Let $g \in H$, $a \in F(H)$ and $b \in k[G]$. Then
\[\begin{array}{lllll} g \cdot \tau_H(a \otimes b) &=& \tau_H({}^g a \otimes gb) &=& \tau_H\left( \iota_H(g) a \iota_H(g)^{-1} \otimes gb\right) = \\
&=&\tau_H(\iota_H(g) a \otimes g^{-1} gb) &=& \rho_H(\iota_H(g))( \tau_H(a \otimes b))\end{array}\]
so $\eta(g) \circ \tau_H = \rho_H(\iota_H(g)) \circ \tau_H$. The result follows because $\tau_H$ is an isomorphism.
\end{proof}

\begin{cor} Suppose that $(F,\iota)$ is good. Then for every $H \in \cC$ there is a well-defined morphism of $F(H)$-$k[G]$-bimodules
\[ \rho_H \otimes \eta : F(H) \otimes_{k[H]} k[G] \longrightarrow \cE.\]
\end{cor}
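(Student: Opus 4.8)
The statement to prove is that, when $(F,\iota)$ is good, for every $H \in \cC$ the assignment $(a,b) \mapsto \rho_H(a)\,\eta(b)$ descends to a well-defined morphism of $F(H)$-$k[G]$-bimodules $\rho_H \otimes \eta : F(H) \otimes_{k[H]} k[G] \to \cE$. The plan is to invoke the universal property of the tensor product over $k[H]$: it suffices to check that the map $\mu : F(H) \times k[G] \to \cE$ given by $\mu(a,b) = \rho_H(a)\,\eta(b)$ is $k$-bilinear and $k[H]$-balanced, after which the $F(H)$-$k[G]$-bimodule structure of the induced map is automatic from the fact that $\rho_H$ is a ring homomorphism (giving left $F(H)$-linearity) and $\eta$ is a ring homomorphism (giving right $k[G]$-linearity, since right multiplication on $\cE$ commutes with left multiplication by $\rho_H(a)$).

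The key step is the balancing condition: for $a \in F(H)$, $x \in k[H]$ and $b \in k[G]$ I must verify $\mu(a\iota_H(x), b) = \mu(a, xb)$, i.e. $\rho_H(a\iota_H(x))\,\eta(b) = \rho_H(a)\,\eta(xb)$ in $\cE$. By bilinearity it is enough to treat the case $x = h \in H$. Using that $\rho_H$ is a ring homomorphism we have $\rho_H(a\iota_H(h)) = \rho_H(a)\,\rho_H(\iota_H(h))$, so the claim reduces to $\rho_H(\iota_H(h))\,\eta(b) = \eta(h)\,\eta(b) = \eta(hb)$, which follows once we know $\rho_H(\iota_H(h)) = \eta(h)$. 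But this is precisely the content of Lemma \ref{SigmaRhoIota}, which states $\eta_{|k[H]} = \rho_H \circ \iota_H$ and whose proof in turn used the goodness hypothesis (Definition \ref{GoodFIota}) via the identity ${}^g a = \iota_H(g)\,a\,\iota_H(g)^{-1}$ for $g \in H$. So the balancing condition is an immediate consequence of Lemma \ref{SigmaRhoIota} together with the multiplicativity of $\rho_H$.

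Once balancing is established, the universal property of $F(H) \otimes_{k[H]} k[G]$ produces a unique $k$-linear map $\rho_H \otimes \eta : F(H) \otimes_{k[H]} k[G] \to \cE$ with $(\rho_H \otimes \eta)(a \otimes b) = \rho_H(a)\,\eta(b)$. Left $F(H)$-linearity: for $a' \in F(H)$, $(\rho_H\otimes\eta)(a'a \otimes b) = \rho_H(a'a)\eta(b) = \rho_H(a')\rho_H(a)\eta(b) = \rho_H(a')\cdot(\rho_H\otimes\eta)(a\otimes b)$, where the last dot is composition in $\cE$, i.e. the left $F(H)$-module action on $\cE$ via $\rho_H$. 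Right $k[G]$-linearity: for $c \in k[G]$, $(\rho_H\otimes\eta)(a \otimes bc) = \rho_H(a)\eta(bc) = \rho_H(a)\eta(b)\eta(c) = (\rho_H\otimes\eta)(a\otimes b)\cdot\eta(c)$, i.e. the right $k[G]$-action on $\cE$ via $\eta$. This exhausts the verification.

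I do not anticipate a genuine obstacle here — the corollary is essentially a formal unwinding of Lemma \ref{SigmaRhoIota}. The only point requiring a little care is bookkeeping the two commuting actions on $\cE = \End(V_{|k[G]})$: left multiplication by elements of $F(G) \subseteq \cE$ and right multiplication, used to define the $F(H)$-$k[G]$-bimodule structure on $\cE$; once one fixes the convention that $F(H)$ acts on $\cE$ on the left through $\rho_H$ and $k[G]$ acts on the right through $\eta$, these commute because composition in an endomorphism ring is associative, and the bimodule compatibility $\rho_H(a)(f\,\eta(c)) = (\rho_H(a)f)\eta(c)$ is trivially satisfied. So the proof will be short: cite Lemma \ref{SigmaRhoIota} for balancing, apply the universal property, and note that the bimodule structure transports.
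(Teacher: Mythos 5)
Your proof is correct and takes the same approach as the paper, which simply says the corollary "follows immediately from Lemma \ref{SigmaRhoIota}"; you have merely unpacked the standard tensor-product balancing argument that lies behind that one-line citation.
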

\begin{proof} This follows immediately from Lemma \ref{SigmaRhoIota}.\end{proof}

\begin{defn} Let $\cA_H$ denote the image of $\rho_H \otimes \eta$ inside $\cE$.
\end{defn}

\begin{thm}\label{MainFG} Suppose that $(F,\iota)$ is good, and let $H \in \cC$.
\be \item $\cA_H = \cA_J$ for all $J \in \cC$.
\item $\cA_H$ is an associative $k$-subalgebra of $\cE$.
\item The map $\rho_H \otimes \eta : F(H) \otimes_{k[H]} k[G] \longrightarrow \cA_H$ is bijective.
\ee\end{thm}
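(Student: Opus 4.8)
The plan is to analyse the three assertions together, starting from the observation that $\cA_H$, by definition, is generated as a $k$-vector space by the operators $\rho_H(a)\eta(g)$ with $a\in F(H)$ and $g\in G$. First I would establish a description of a general element of $F(G)$ as an operator on $V$: using Lemma \ref{GconjImRho} we may move all factors $\eta(g)$ to the right past the $\rho_H(a)$'s at the cost of conjugating the $a$'s, and using Lemma \ref{RhoJRhoHCompat} we may replace any $\rho_J$ by $\rho_H$ after passing to $J\cap H$ (which lies in $\cC$). This will show that $F(G)$ itself is spanned by the operators $\rho_H(a)\eta(g)$, hence $F(G)=\cA_H$; in particular $\cA_H$ is a subalgebra, giving part (b). For part (a), given $J\in\cC$, the same argument (choosing a common lower bound $J\cap H$ and invoking Lemma \ref{RhoJRhoHCompat}) shows $\cA_J$ and $\cA_H$ have the same spanning set, so $\cA_J=\cA_H=F(G)$.

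Part (c) is the substantive one: injectivity of $\rho_H\otimes\eta$. Since $(F,\iota)$ is sheafy, the structure morphism $\tau_H: F(H)\otimes_{k[H]}k[G]\congs V$ is an isomorphism of right $k[G]$-modules, and by construction $\rho_H$ is precisely the left multiplication action of $F(H)$ on $V$ transported along $\tau_H$. Thus the map $\rho_H\otimes\eta$ sends $a\otimes g\mapsto \rho_H(a)\eta(g)$, and I would compute its effect on the distinguished element $v_0:=\tau_H(1\otimes 1)\in V$: we get $\rho_H(a)\eta(g)(v_0)=\rho_H(a)\tau_H(1\otimes g)=\tau_H(a\otimes g)$ — here I need that $\eta(g)v_0=\tau_H(1\otimes g)$, which follows from $\eta(g)v_0=g\cdot\tau_H(1\otimes 1)=\tau_{{}^g H}({}^g1\otimes g\cdot 1)$ together with sheafiness identifying this with $\tau_H(1\otimes g)$ via $\alpha_{H,{}^gH}$ (one must check $H$ and ${}^gH$ have a common refinement in $\cC$, which they do since $\cC$ is conjugation-stable and intersection-stable). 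Therefore the composite $F(H)\otimes_{k[H]}k[G]\xrightarrow{\rho_H\otimes\eta}\cE\xrightarrow{\mathrm{ev}_{v_0}}V$ equals $\tau_H$, which is an isomorphism. Hence $\rho_H\otimes\eta$ is injective. Surjectivity onto $\cA_H$ holds by definition of $\cA_H$ as the image, so $\rho_H\otimes\eta$ is bijective.

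The main obstacle I anticipate is bookkeeping the interaction between the two module structures in the spanning argument for parts (a) and (b): one must be careful that when rewriting $\eta(g_1)\rho_{H_1}(a_1)\eta(g_2)\rho_{H_2}(a_2)\cdots$ into the normal form $\rho_H(a)\eta(g)$, the conjugations ${}^{g}a$ produced by Lemma \ref{GconjImRho} land in $F({}^gH)$ and then need to be pushed into a single $F(H)$ via Lemma \ref{RhoJRhoHCompat}, which requires repeatedly intersecting the various conjugates $\ {}^{g_i\cdots}H_j$ — all of which remain in $\cC$ by the closure hypotheses, but the indexing is fiddly. A clean way to organise this is to first prove the normal-form statement for a product of two generators and then induct on the number of factors. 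Once the normal form is in place, parts (a), (b), (c) all drop out quickly from the evaluation-at-$v_0$ computation above, which pins down the $k$-module structure of $\cA_H$ exactly as $F(H)\otimes_{k[H]}k[G]$.
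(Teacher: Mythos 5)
Your proof is correct and follows essentially the same route as the paper: part~(c) is established by exactly the same evaluation-at-$\tau_H(1\otimes 1)$ computation showing $\mathrm{ev}_{\tau_H(1\otimes 1)}\circ(\rho_H\otimes\eta)=\tau_H$, and parts~(a), (b) rest on the same two lemmas (\ref{RhoJRhoHCompat} and \ref{GconjImRho}) plus stability of $\cC$ under intersection and conjugation. One small caution for your normal-form reorganisation: Lemma~\ref{RhoJRhoHCompat} passes from a \emph{smaller} group to a larger one, so ``replacing $\rho_J$ by $\rho_H$ after passing to $J\cap H$'' cannot be a direct substitution on an arbitrary $a\in F(J)$ (there is no map $F(J)\to F(J\cap H)$); you must instead route it through the equality of images $\cA_J=\cA_{J\cap H}=\cA_H$, which is precisely part~(a) and is proved via sheafiness of $\alpha_{J\cap H,\,J}$ and $\alpha_{J\cap H,\,H}$ — this is also how the minor slip $\alpha_{H,{}^gH}$ in your computation of $\eta(g)v_0$ should be repaired.
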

\begin{proof} (a) Since $\cC$ is closed under finite intersections, we may assume that $J \leq H$. But now the diagram
\[ \xymatrix{ F(J) \otimes_{k[J]} k[G] \ar[rr]^{\alpha_{J,H}} \ar[dr]_{\rho_J \otimes \eta} & & F(H)\otimes_{k[H]} k[G] \ar[dl]^{\rho_H \otimes \eta} \\ & \cE & }\]
is commutative, because $\rho_J(a) = \rho_H(F(a))$ by Lemma \ref{RhoJRhoHCompat}. Thus $\cA_J = \cA_H$ because $\alpha_{J,H}$ is an isomorphism.
 
(b) Since $\rho_H$ and $\eta$ are ring homomorphisms, it is enough to show that 
\[ \eta(g) \rho_H(a) \in \cA_H\]
for any $g \in G$ and $a\in F(H)$.  However, Lemma \ref{GconjImRho} implies that
\[\eta(g) \rho_H(a) = \rho_{{}^gH}({}^ga)\eta(g) \]
and this lies in $\cA_{{}^gH}$. Now use part (a).

(c) The map $\rho_H \otimes \eta$ is surjective by the definition of $\cA_H$, so suppose that $\xi = \sum_{i=1}^n a_i \otimes g_i$ lies in the kernel, where we may assume that $g_i \in G$ for all $i$. Then on the one hand,
\[ (\rho_H \otimes \eta)(\xi)(\tau_H(1 \otimes 1)) = 0,\]
and on the other hand,
\[ \begin{array}{lll} (\rho_H \otimes \eta)(\xi)(\tau_H(1 \otimes 1)) &=& \sum_{i=1}^n \rho_H(a_i) \eta(g_i)( \tau_H(1 \otimes 1)) = \\
&=& \sum_{i=1}^n \rho_H(a_i) \tau_{{}^{g_i}H}(1 \otimes g_i) = \\
&=& \sum_{i=1}^n \rho_H(a_i) \tau_H(1 \otimes g_i) = \\
&=& \sum_{i=1}^n \tau_H(a_i \otimes g_i) = \tau_H(\xi).
\end{array}\]
Since $\tau_H$ is an isomorphism, we conclude that $\xi = 0$ as required.
\end{proof}

\begin{cor}\label{FGproduct} Suppose that $(F,\iota)$ is good, and fix $H \in \cC$. 
\be \item There is an associative $k$-bilinear product
$\star$ on $F(H)\otimes_{k[H]}k[G]$, uniquely determined by the rule that
\[ (a \otimes g) \star (b \otimes h) = \sum_i a \hsp {}^gb'_i \otimes gh'_i\]
where $a,b \in F(H)$, $g,h \in G$ and
\[ \tau_{H^g}^{-1} \tau_H(b \otimes h) = \sum_i b'_i \otimes h'_i\]
for some $b'_i \in F(H^g)$ and $h'_i \in G$.
\item The map $\rho_H \otimes \eta : F(H) \otimes_{k[H]} k[G] \longrightarrow F(G)$ is a $k$-algebra isomorphism.
\ee \end{cor}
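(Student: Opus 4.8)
\textbf{Proof proposal for Corollary \ref{FGproduct}.}

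The plan is to transport algebra structure across the bijection $\rho_H \otimes \eta$ established in Theorem \ref{MainFG}(c). By Theorem \ref{MainFG}(b), $\cA_H$ is an associative $k$-subalgebra of $\cE$, and by Theorem \ref{MainFG}(c) the map $\rho_H \otimes \eta : F(H)\otimes_{k[H]}k[G] \to \cA_H$ is a bijection of $k$-modules. Hence there is a unique $k$-bilinear associative product $\star$ on $F(H)\otimes_{k[H]}k[G]$ making $\rho_H\otimes\eta$ a ring isomorphism onto $\cA_H$; this is precisely part (b) once we check that $\cA_H = F(G)$. For part (a) it then remains only to compute $\star$ on elements of the form $a\otimes g$ with $a\in F(H)$, $g\in G$, since these span $F(H)\otimes_{k[H]}k[G]$ as a $k$-module and $\star$ is $k$-bilinear.

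First I would verify that $\cA_H = F(G)$. By definition $F(G)$ is the $k$-subalgebra of $\cE$ generated by $\eta(k[G])$ and all the images $\rho_J(F(J))$ for $J\in\cC$. Each $\eta(g) = (\rho_H\otimes\eta)(1\otimes g)$ lies in $\cA_H$, and each $\rho_J(a) = (\rho_J\otimes\eta)(a\otimes 1)$ lies in $\cA_J = \cA_H$ by Theorem \ref{MainFG}(a). Conversely $\cA_H$ is a subalgebra (Theorem \ref{MainFG}(b)) containing $\rho_H(F(H))$ and $\eta(k[G])$, so it is contained in $F(G)$; and it is generated by these, being the image of $\rho_H\otimes\eta$ whose source is generated by $F(H)\otimes 1$ and $1\otimes k[G]$. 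Thus $\cA_H = F(G)$ and part (b) follows.

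For the explicit formula in part (a), I would evaluate the product $(\rho_H\otimes\eta)(a\otimes g)\cdot(\rho_H\otimes\eta)(b\otimes h)$ inside $\cE$, i.e. compose $\rho_H(a)\eta(g)\rho_H(b)\eta(h)$. Using Lemma \ref{GconjImRho}, $\eta(g)\rho_H(b) = \rho_{{}^gH}({}^gb)\eta(g)$, so the composite equals $\rho_H(a)\rho_{{}^gH}({}^gb)\eta(g)\eta(h) = \rho_H(a)\rho_{{}^gH}({}^gb)\eta(gh)$. Now I need to re-express $\rho_{{}^gH}({}^gb)$ in terms of $\rho_H$, which is where the correction term $\tau_{H^g}^{-1}\tau_H$ enters: writing $\tau_{H^g}^{-1}\tau_H(b\otimes h) = \sum_i b_i'\otimes h_i'$ with $b_i'\in F(H^g)$ (note $H^g = g^{-1}Hg$, so ${}^gb_i' \in F(H)$ makes sense), one unwinds the bimodule identifications to see that $\rho_{{}^gH}({}^gb)\eta(gh)\cdot \tau_H(1\otimes 1)$ matches $\sum_i \rho_H(a\,{}^gb_i')\eta(gh_i')\cdot\tau_H(1\otimes 1)$, and since $\rho_H\otimes\eta$ is injective (Theorem \ref{MainFG}(c)) this forces the stated formula $(a\otimes g)\star(b\otimes h) = \sum_i a\,{}^gb_i'\otimes gh_i'$.

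The main obstacle I anticipate is bookkeeping with the two different group conjugates $H^g = g^{-1}Hg$ versus ${}^gH = gHg^{-1}$ and keeping the source and target of each $\tau$, $\rho$, and the conjugation maps $\varphi_g$ straight — in particular checking that ${}^gb_i'$ genuinely lies in $F(H)$ when $b_i'\in F(H^g)$, and that the isomorphism $\tau_{H^g}^{-1}\tau_H$ of right $k[G]$-modules is the correct normalisation so that the formula is well-defined independently of the chosen representation $\sum_i b_i'\otimes h_i'$. This is essentially a careful diagram chase using the compatibilities in Definition \ref{GequivFiota}, Lemma \ref{RhoJRhoHCompat}, and Lemma \ref{GconjImRho}, together with the injectivity in Theorem \ref{MainFG}(c) to pin down $\star$ uniquely; no genuinely new idea is needed beyond what is already assembled in Theorem \ref{MainFG}.
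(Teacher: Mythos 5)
Your argument is correct and follows the same overall strategy as the paper's (transport the algebra structure on $\cA_H$ across the bijection $\rho_H\otimes\eta$, identify $\cA_H$ with $F(G)$, then compute the product on a pair of decomposable tensors). Your part (b) spells out precisely what the paper leaves as ``it remains to observe that $F(G) = \cA_H$,'' which is fine.

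The only place your route genuinely diverges from the paper is in the derivation of the explicit formula in part (a). You apply Lemma~\ref{GconjImRho} immediately to $\eta(g)\rho_H(b)$, landing on the mixed expression $\rho_H(a)\,\rho_{{}^gH}({}^gb)\,\eta(gh)$ whose middle factor is indexed by a \emph{different} subgroup ${}^gH = gHg^{-1}$, and then you escape via evaluation at the generator $\tau_H(1\otimes 1)$ plus injectivity of $\rho_H\otimes\eta$. The paper instead avoids the mixed expression altogether: it first proves the operator identity $(\rho_H\otimes\eta)(b\otimes h) = (\rho_{H^g}\otimes\eta)(y')$ by factoring both sides through $\rho_J\otimes\eta$ with $J := H\cap H^g$ (using the compatibility $(\rho_H\otimes\eta)\circ\alpha_{J,H}=\rho_J\otimes\eta$ established in the proof of Theorem~\ref{MainFG}(a)), and \emph{then} pushes $\eta(g)$ past $\rho_{H^g}(b'_i)$, which by Lemma~\ref{GconjImRho} produces $\rho_{{}^g(H^g)}({}^gb'_i)=\rho_H({}^gb'_i)$ — conveniently landing back on $\rho_H$. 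Both routes are sound. Your evaluation-at-$\tau_H(1\otimes1)$ trick is essentially a re-use of the injectivity argument already inside Theorem~\ref{MainFG}(c), and to complete it you should make two things explicit: first, the left-hand side $\rho_H(a)\,\rho_{{}^gH}({}^gb)\,\eta(gh)$ does lie in $\cA_H$ (this needs Theorem~\ref{MainFG}(a) and~(b)), so that injectivity of $\rho_H\otimes\eta$ onto $\cA_H$ applies; second, the actual computation of $\rho_H(a)\,\rho_{{}^gH}({}^gb)\,\eta(gh)\cdot\tau_H(1\otimes1)$ — the phrase ``one unwinds the bimodule identifications'' glosses over the key step $\tau_{{}^gH}({}^gb\otimes gh)=g\cdot\tau_H(b\otimes h)=g\cdot\tau_{H^g}\bigl(\textstyle\sum_i b'_i\otimes h'_i\bigr)=\tau_H\bigl(\sum_i {}^gb'_i\otimes gh'_i\bigr)$, which is where the defining equation of the $b'_i, h'_i$ and the $G$-action on $V$ actually get used. (Also note the typo: your displayed equality is missing the factor $\rho_H(a)$ on the left.)
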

\begin{proof} (a) We can transport the $k$-algebra structure on $\cA_H$ to $F(H) \otimes_{k[H]}k[G]$ using the bijection $\rho_H \otimes \eta$, in view of Theorem \ref{MainFG}. 

Let $x = a \otimes g, y = b \otimes h$ and $y' = \sum_i b_i' \otimes h_i' = \tau^{-1}_{H^g}\tau_H(y)$ as above. Let $J := H \cap H^g$, and let $y'' \in F(J) \otimes_{k[J]}k[G]$ be such that 
\[ y = \alpha_{J,H}(y'') \qmb{and} y' = \alpha_{J,H^g}(y'').\]
We saw in the proof of Theorem \ref{MainFG}(a) that
\[ (\rho_H \otimes \eta) \circ \alpha_{J,H} = \rho_J \otimes \eta.\]
Hence
\[\begin{array}{lll} (\rho_H \otimes \eta)(y) &=& (\rho_H \otimes \eta)(y) = (\rho_H \otimes \eta)(\alpha_{J,H}(y'')) = \\
&=& (\rho_J \otimes \eta)(y'') = (\rho_{H^g} \otimes \eta)(y'). \end{array}\]
Hence, using Lemma \ref{GconjImRho} we see that
\[ \begin{array}{lll} (\rho_H \otimes\eta)(x) \circ(\rho_H \otimes\eta)(y) &=& \sum_i \rho_H(a)\eta(g)\rho_{H^g}(b'_i)\eta(h'_i) = \\
&=&\sum_i \rho_H(a) \rho_H({}^gb'_i) \eta(g)\eta(h'_i) = \\
&=&\sum_i \rho_H(a \hsp {}^gb'_i)\eta(gh'_i) = \\
&=&(\rho_H \otimes\eta)\left(\sum_i a \hsp {}^gb'_i \otimes gh'_i\right) = (\rho_H \otimes\eta)(x \star y).\end{array}\]
We are done because $\rho_H \otimes \eta$ is a bijection and $\cA_H$ is an associative $k$-algebra.

(b) It remains to observe that $F(G) = \cA_H$ for any $H \in \cC$.\end{proof}

This construction was known to Emerton. A concise explanation of the particular example where $F(H)$ is the Iwasawa algebra $\Lambda(H)$ can be found at \cite[p. 6]{Kohlhaase2017}.

\begin{ex}\label{DNKgood} Let $L$ be a finite extension of $\Qp$, let $G$ be a locally $L$-analytic group and let $\cC$ be the set of compact open subgroups of $G$. For each $N \in \cC$, let $\iota_N : K[N] \hookrightarrow D(N,K)$ be the inclusion of the group algebra of $N$ into the algebra $D(N,K)$ of locally $L$-analytic distributions on $N$. Then $(D(-,K), \iota)$ is good and the algebra $D(G,K)$ given by Corollary \ref{FGproduct} agrees the algebra of locally $L$-analytic distributions on $G$.
\end{ex}

\subsection{The algebra \ts{\wUg{G}}}
We will assume from now on that $R = \cR$ is a complete valuation ring of height one and mixed characteristic $(0,p)$, and work under the following

\begin{setup}\hspace{2em} \label{EqUgCapNotn}
\begin{itemize}
\item $\G$ is an affine algebraic group of finite type over $K$,
\item $\fr{g} := \Lie(\G)$ is its Lie algebra,
\item $G$ is a $p$-adic Lie group, 
\item $\sigma : G \to \G(K)$ is a continuous group homomorphism.
\end{itemize}
\end{setup}
Here we equip $\G(K)$ with the topology where a fundamental system of neighbourhoods of the identity element is given by the subsets $\bY(K)$ of $\G(K)$, as $\bY$ ranges over the affinoid subdomains of the rigid analytification $\bG := \G^{\rig}$ of $\G$ containing the identity. Recall the adjoint action $\Ad$ of $\G(K)$ on $\fr{g}$ by $K$-Lie algebra automorphisms. 

\begin{defn}\label{LieLatticeForAlgGps} \hspace{2em}
\be\item A \emph{Lie lattice} in $\fr{g}$ is a finitely generated $\cR$-submodule $\cL$ of $\fr{g}$ which spans $\fr{g}$ as a $K$-vector space and which is closed under the Lie bracket.
\item Let $H$ be an open subgroup of $G$. We say that the Lie lattice $\cL$ is \emph{$H$-stable} if it is preserved by $\Ad(\sigma(g))$ for all $g \in H$.
\ee\end{defn}

\begin{lem}\label{LieLatticesExist} Let $H$ be a compact open subgroup of $G$. Then $\fr{g}$ has at least one $H$-stable Lie lattice.
\end{lem}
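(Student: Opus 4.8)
The plan is to average a fixed Lie lattice over the finite quotient $H/N$ for a suitable open normal subgroup $N$ on which $\Ad(\sigma(-))$ already preserves a given lattice, and then take the $\cR$-span of the $H/N$-orbit of lattices. First I would fix \emph{any} Lie lattice $\cL_0$ in $\fr{g}$: since $\fr{g}$ is a finite-dimensional $K$-vector space and $K$ is the field of fractions of $\cR$, one obtains $\cL_0$ for instance as the $\cR$-span of a $K$-basis $x_1,\dots,x_d$ of $\fr{g}$, rescaled by a power of $\pi$ so that $[\cL_0,\cL_0]\subseteq\cL_0$ (possible because each structure constant lies in $K$, so a common $\pi$-power multiple of the basis brackets lands back in $\cR$). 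Thus Lie lattices exist; the work is making one $H$-stable.

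Next I would use continuity of $\sigma : G \to \G(K)$ together with continuity of the adjoint action to produce an open subgroup on which a \emph{single} lattice is preserved. Concretely, the map $H \to \Aut_K(\fr{g})$, $g \mapsto \Ad(\sigma(g))$, is continuous for the topology on $\G(K)$ described just before Definition~\ref{LieLatticeForAlgGps} — this is the analogue, in the algebraic-group setting, of the continuity used throughout $\S\ref{GActionSection}$, and can be seen by choosing an affine formal model for $\bG$ near the identity and observing that $\Ad$ is given by polynomial (hence restricted-power-series) formulas in the coordinates. The stabiliser of $\cL_0$ under this action contains the preimage of an open subgroup of $\Aut_K(\fr{g})$, hence is open in $H$; call it $H_0$. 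Since $H$ is compact and $H_0$ is open, $H_0$ has finite index, and I may shrink $H_0$ to an open \emph{normal} subgroup $N$ of $H$ (using that $H$ is profinite, or simply intersecting the finitely many conjugates of $H_0$ in $H$). Then $\cL_0$ is $N$-stable.

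Now choose coset representatives $g_1,\dots,g_m$ for $H/N$ and set
\[
\cL := \sum_{i=1}^m \Ad(\sigma(g_i))(\cL_0).
\]
Each summand is again a Lie lattice: $\Ad(\sigma(g_i))$ is a $K$-Lie algebra automorphism, so it carries a finitely generated $\cR$-submodule spanning $\fr{g}$ to another one, and preserves the bracket. A finite sum of Lie lattices is a finitely generated $\cR$-module (sum of finitely generated modules), spans $\fr{g}$ over $K$ (already each summand does), and is closed under the bracket: $[\Ad(\sigma(g_i))\cL_0,\Ad(\sigma(g_j))\cL_0]$ need not sit inside a single summand, but expanding $[\cdot,\cdot]$ bilinearly over the full sum and using that $\Ad(\sigma(g_i))$ commutes with brackets together with $N$-stability of $\cL_0$ shows $[\cL,\cL]\subseteq\cL$; alternatively, and more cleanly, rescale $\cL_0$ first so that $\sum_i\Ad(\sigma(g_i))(\cL_0)$ is closed under the bracket by the same $\pi$-power argument as in the first paragraph applied to the finitely many generators of $\cL$. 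Finally $\cL$ is $H$-stable: for $h \in H$ and index $i$, $h g_i \in g_{\pi(i)} N$ for some permutation $\pi$ of $\{1,\dots,m\}$ depending on $h$, so $\Ad(\sigma(h))\Ad(\sigma(g_i))(\cL_0) = \Ad(\sigma(g_{\pi(i)}))\Ad(\sigma(n))(\cL_0) = \Ad(\sigma(g_{\pi(i)}))(\cL_0)$ because $\cL_0$ is $N$-stable; summing over $i$ shows $\Ad(\sigma(h))(\cL) = \cL$.

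The main obstacle is the first reduction: establishing that $g \mapsto \Ad(\sigma(g))$ is continuous into $\Aut_K(\fr{g})$ in a way that produces an open stabiliser of $\cL_0$. This is where the hypothesis that $\sigma$ is continuous for the affinoid-subdomain topology on $\G(K)$, rather than some coarser topology, is used, and it requires unwinding that $\Ad$ on a sufficiently small affinoid neighbourhood of $1 \in \bG$ is given by power series with coefficients in the value ring, so that the congruence subgroup $\{g : (\Ad(\sigma(g)) - \mathrm{id})(\cL_0)\subseteq\pi\cL_0\}$ is open — entirely parallel to the computation in the proof of Lemma~\ref{StabLopen}(b) and Proposition~\ref{GactsOnX}. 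Everything after that is the standard finite-orbit averaging argument used repeatedly in this paper (compare Lemma~\ref{GstableFM}, Lemma~\ref{SmallU}).
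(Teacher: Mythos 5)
Your overall plan — fix a Lie lattice $\cL_0$, use continuity of $\Ad\circ\sigma$ to exhibit an open, hence finite-index, subgroup of $H$ stabilising $\cL_0$, then make the finite $H$-orbit of $\cL_0$ into an $H$-stable Lie lattice — is the same plan the paper follows. The paper makes the continuity step precise by analytifying $\Ad : \G \to \GL_d$ to $\Ad^{\rig} : \bG \to \GL_{d,K}^{\rig}$ and pulling back an affinoid subgroup $\bH\subset\GL_{d,K}^{\rig}$ whose $K$-points are exactly $\GL_d(\cR)$, the stabiliser of $\cL_0$; that is essentially the formalisation of your ``$\Ad$ is given by restricted power series near the identity'' heuristic. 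You also do not need to pass to a \emph{normal} open subgroup or choose coset representatives: the $H$-orbit of $\cL_0$ is finite simply because the stabiliser is open of finite index, which is all either construction uses.

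The one genuine difference — and the location of the only gap — is what you do with the finite orbit. The paper takes the \emph{intersection} $\bigcap_{h\in H}\Ad(\sigma(h))(\cL_0)$, which is automatically closed under the Lie bracket (each conjugate is bracket-closed, and an intersection of bracket-closed submodules is bracket-closed), so no rescaling is needed. You take the \emph{sum}, and your first argument for bracket-closure (``expanding $[\cdot,\cdot]$ bilinearly \ldots together with $N$-stability'') does not work: writing $[\Ad(\sigma(g_i))x, \Ad(\sigma(g_j))y] = \Ad(\sigma(g_i))[x, \Ad(\sigma(g_i^{-1}g_j))y]$, the second entry lies in some conjugate $\Ad(\sigma(g_k))(\cL_0)$, and there is no reason for $[\cL_0,\Ad(\sigma(g_k))\cL_0]$ to land back in the sum; $N$-stability only helps you shuffle which conjugate you are in, not close the bracket. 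Your fallback — replace the finitely generated $H$-stable $\cR$-span by a $\pi$-power multiple so that it becomes bracket-closed, noting that rescaling commutes with $\Ad$ and so preserves $H$-stability — is correct and repairs the argument, but you should commit to it rather than offer it as an alternative. Finally, a minor point in your favour: the sum is visibly a finitely generated $\cR$-module, whereas for the intersection over a possibly non-Noetherian valuation ring one must in principle invoke the theory of elementary divisors over $\cR$ to see it is again a free lattice; the paper leaves this implicit, but the sum sidesteps it entirely.
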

\begin{proof} Fix a basis $\{x_1,\ldots,x_d\}$ for the $K$-vector space $\fr{g}$, and let $\cL$ be the $\cR$-submodule spanned by this basis. By replacing each $x_i$ by a sufficiently large $\pi$-power multiple, we may assume that $\cL$ is a Lie lattice. We can find an affinoid subgroup $\bH$ of $\GL_{d,K}^{\rig}$ such that $\bH(K) = \GL_d(\cR)$ is the stabiliser of $\cL$ in $\GL_d(K)$. The analytification $\Ad^{\rig}$ of the adjoint representation $\Ad : \G \to \GL_d$ is continuous, so we can find an affinoid subdomain $\bY$ of $\bG$ containing the identity such that $\Ad^{\rig}(\bY) \subseteq \bH$. Therefore the subset $\sigma^{-1} \bY(K)$ of $G$ is open, and as $H$ is an open profinite subgroup of $G$ we can find an open subgroup $J$ of $H$ such that $\sigma(J) \subset \bY(K)$. This $J$ then stabilises the Lie lattice $\cL$ under $\Ad \circ \sigma$, and $J$ has finite index in $H$ because $H$ is compact. So the $H$-orbit of $\cL$ is finite, and the intersection $\bigcap_{h \in H} \Ad(\sigma(h))(\cL)$ of members of this orbit is an $H$-stable Lie lattice in $\fr{g}$.
\end{proof}

Now let $H$ be an open subgroup of $G$ and let $\cL$ be an $H$-stable Lie lattice in $\fr{g}$. The Campbell-Hausdorff-Baker formula converges on the $\cR$-Lie algebra $p^\epsilon \cL$ and defines a group operation $\ast$ on this set. The adjoint representation $\ad : \cL \to \Der_{\cR}(\cL)$ exponentiates to a group homomorphism 
\[ \exp \circ \ad : (p^\epsilon \cL, \ast) \to \Aut_{\cR-\Lie}(\cL).\]
\begin{defn}\label{HJAdj} Let $H$ be an open subgroup of $G$ and let $\cL$ be an $H$-stable Lie lattice in $\fr{g}$. We define $H_{\cL}$ to be the pullback of the two maps 
\[H \stackrel{\Ad \circ \sigma}{\longrightarrow} \Aut_{\cR-\Lie}(\cL) \stackrel{\exp \circ \ad}{\longleftarrow} (p^\epsilon \cL, \ast).\] 
\end{defn}
Thus we have the commutative diagram of groups and group homomorphisms
\begin{equation}\label{DeltaLdiag} \xymatrix{ H_{\cL} \ar[rr]^{\delta_{\cL}}\ar[d] && (p^\epsilon \cL, \ast) \ar[d]^{\exp\circ \ad} \\ H \ar[rr]_{\Ad \circ\sigma} && \Aut_{\cR-\Lie}(\cL). }\end{equation}
Letting $\iota : \cL \to U(\cL)$ denote the canonical inclusion of $\cL$ into its enveloping algebra, the group $(p^\epsilon \cL ,\ast)$ is also isomorphic to the subgroup $\exp( p^\epsilon \iota(\cL) )$ of the group of units of the $\pi$-adic completion $\h{U(\cL)}$ of $U(\cL)$.  

For convenience and simplicity, we will work under the following

\begin{setup} \label{ZgZero} The centre $Z(\fr{g}) = \ker \ad$ of $\fr{g}$ is zero.
\end{setup}

Under this hypothesis, the map $\exp \circ \ad : p^\epsilon \cL \to \Aut_{\cR-\Lie}(\cL)$ is injective, and we see that $H_{\cL}$ is a \emph{subgroup} of $H$.

\begin{prop}\label{HJgroup} Suppose $Z(\fr{g})$ is zero, let $H$ be an open subgroup of $G$ and let $\cL$ be an $H$-stable Lie lattice in $\fr{g}$. Then 
\be \item $H_{\cL}$ is a normal subgroup of $H$,
\item the map $\exp \circ \iota \circ \delta_{\cL} : H_\cL \to \hK{U(\cL)}^\times$ is an $H$-equivariant trivialisation of the $H_{\cL}$-action on $\hK{U(\cL)}$.
\ee\end{prop}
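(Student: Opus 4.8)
The plan is to imitate closely the proof of Theorem \ref{Beta}, adapting it to the present setting where $\cL$ is a Lie lattice in the abstract Lie algebra $\fr g$ rather than in a ring of derivations of an affinoid algebra, and where the role of $\psi_\cL$ is played by the adjoint representation. The key observation is that, because $Z(\fr g)=0$, the map $\exp\circ\ad\colon p^\epsilon\cL\to\Aut_{\cR\text-\Lie}(\cL)$ is injective, so $H_\cL$ is genuinely a subgroup of $H$ (this is exactly where Hypothesis \ref{ZgZero} enters), and the homomorphism $\delta_\cL\colon H_\cL\to (p^\epsilon\cL,\ast)$ is well-defined by the pullback diagram $(\ref{DeltaLdiag})$.

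First I would prove part (a). Let $h\in H$ and $g\in H_\cL$, so $\Ad(\sigma(g))=\exp(\ad u)$ for $u:=\delta_\cL(g)\in p^\epsilon\cL$. Then, using that $\Ad\circ\sigma$ is a group homomorphism and that $\cL$ is $H$-stable,
\[
\Ad(\sigma(hgh^{-1}))=\Ad(\sigma(h))\,\exp(\ad u)\,\Ad(\sigma(h))^{-1}=\exp\bigl(\ad(\Ad(\sigma(h))(u))\bigr),
\]
and $\Ad(\sigma(h))(u)\in p^\epsilon\cL$ because $\Ad(\sigma(h))$ preserves $\cL$ (here one uses that $\Ad(\sigma(h))$ is $\cR$-linear, hence preserves $p^\epsilon\cL$ as well). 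Thus $hgh^{-1}\in H_\cL$, with $\delta_\cL(hgh^{-1})=\Ad(\sigma(h))(\delta_\cL(g))$; this last identity records the $H$-equivariance of $\delta_\cL$ and will be reused in part (b). Normality follows.

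For part (b), write $\beta:=\exp\circ\iota\circ\delta_\cL\colon H_\cL\to\hK{U(\cL)}^\times$. That $\beta$ is a group homomorphism follows from the fact that $\delta_\cL$ is a homomorphism into $(p^\epsilon\cL,\ast)$ together with $\exp(\iota(x))\exp(\iota(y))=\exp(\iota(x\ast y))$ inside $\h{U(\cL)}$, which is the Campbell–Baker–Hausdorff identity valid because $[\cL,\cL]\subseteq\cL$ and $\cL$ is $\pi$-adically complete (exactly as in the proof of Theorem \ref{Beta}(a), via \cite[Proposition 6.27]{DDMS}). To see $\beta$ is a trivialisation, I must show that conjugation by $\beta(g)$ on $\hK{U(\cL)}$ agrees with the $g$-action (i.e.\ the action induced functorially from $\Ad(\sigma(g))$ acting on $\cL$). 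For $v\in\cL$ we compute, inside $\h{U(\cL)}$, that $\exp(\iota(u))\,\iota(v)\,\exp(\iota(u))^{-1}=\iota(\exp(\ad u)(v))=\iota(\Ad(\sigma(g))(v))=g\cdot\iota(v)$, where $u=\delta_\cL(g)$; the first equality is the standard identity $\exp(\ad u)=\mathrm{Ad}(\exp u)$ in the completed enveloping algebra. Since $\iota(\cL)$ together with $1$ generates $\hK{U(\cL)}$ topologically as a $K$-algebra and conjugation by $\beta(g)$ is continuous, this extends to all of $\hK{U(\cL)}$. Finally, $H$-equivariance of $\beta$: for $h\in H$, $g\in H_\cL$ we have $\beta({}^hg)=\exp(\iota(\delta_\cL({}^hg)))=\exp(\iota(\Ad(\sigma(h))(\delta_\cL(g))))=\exp((\text{$h$-action})(\iota(\delta_\cL(g))))=h\cdot\beta(g)$, using the equivariance identity for $\delta_\cL$ from part (a), the $H$-equivariance of $\exp\colon p^\epsilon\cU\to\cU^\times$, and the fact that the functorial $h$-action on $\h{U(\cL)}$ restricts on $\iota(\cL)$ to $\iota\circ\Ad(\sigma(h))$.

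The main obstacle is bookkeeping rather than depth: one must be careful to distinguish the ``$h$-action'' on $\hK{U(\cL)}$ (defined functorially from the $\cR$-Lie algebra automorphism $\Ad(\sigma(h))$ of $\cL$, which is legitimate because $\cL$ is $H$-stable) from conjugation, and to justify the two Lie-theoretic identities $\exp(\ad u)=\mathrm{Ad}(\exp u)$ and $\exp(\iota x)\exp(\iota y)=\exp(\iota(x\ast y))$ in the $\pi$-adically complete ring $\h{U(\cL)}$ — both are formal consequences of the hypotheses $[\cL,\cL]\subseteq\cL$ and $\pi$-adic completeness, and are handled verbatim as in Theorem \ref{Beta} and its proof. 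No new input beyond $Z(\fr g)=0$ (to make $H_\cL$ a subgroup) is needed.
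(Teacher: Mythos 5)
Your proof is correct and takes essentially the same route as the paper: part (a) via the identity $\Ad(g)\circ\ad(u)\circ\Ad(g)^{-1}=\ad(\Ad(g)(u))$ together with $H$-stability of $\cL$, and part (b) via $e^{\iota u}\,\iota(v)\,e^{-\iota u}=\iota(e^{\ad u}(v))$ (the paper cites \cite[Exercise 6.12]{DDMS} for this) followed by topological density of $\iota(\cL)$ and the equivariance identity $\delta_{\cL}({}^h g)=h\cdot\delta_{\cL}(g)$ from part (a). The only slight addition is your explicit verification via Campbell–Baker–Hausdorff that $\beta$ is a group homomorphism, which the paper leaves implicit; otherwise the arguments coincide.
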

\begin{proof} (a) Let $h \in H_{\cL}$ and let $g \in H$. Letting $u := \delta_{\cL}(h) \in p^\epsilon \cL$, we have $\Ad(\sigma(h)) = \exp(\ad(u))$. Now the adjoint representation of $\G$ preserves the Lie bracket on $\fr{g}$:
\[ \Ad(g) ([u,v]) = [\Ad(g)(u), \Ad(g)(v)] \qmb{for all} g\in \G(K), u,v \in \fr{g}\] 
and it follows that $\Ad(g) \circ \ad(u) \circ \Ad(g)^{-1} = \ad( \Ad(g)(u) )$ for all $g\in \G(K)$ and $u \in \fr{g}$. Therefore, if $v := \Ad(\sigma(g))(u) = g \cdot u$ then
\[ \begin{array}{lll} \Ad\sigma ( g h g^{-1} ) &=& \Ad(\sigma(g)) \circ \exp(\ad(u)) \circ \Ad(\sigma(g))^{-1} = \\
&=& \exp( \Ad(\sigma(g)) \circ \ad(u) \circ \Ad(\sigma(g))^{-1} ) = \exp(v).\end{array}\]
Because $u \in p^\epsilon \cL$ and $\cL$ is $H$-stable, we see that $v \in p^\epsilon \cL$. So $g h g^{-1} \in H_{\cL}$ and $\delta_{\cL}(ghg^{-1}) = g \cdot \delta_{\cL}(h)$.

(b) Let $h \in H_{\cL}$ and let $u = \delta_{\cL}(h) \in p^\epsilon \cL$, so that $\Ad(\sigma(h)) = \exp(\ad(u))$. Let $v \in \cL$. Then $e^{\iota u}\cdot (\iota v) \cdot e^{\iota u} = e^{\ad \iota u}(\iota v) = \iota (e^{\ad(u)}(v))$ by applying \cite[Exercise 6.12]{DDMS} to the $K$-Banach algebra $\hK{U(\cL})$, and using the fact that $\iota$ is a $\pi$-adically continuous $\cR$-Lie algebra homomorphism. Because $e^{\ad(u)}(v) = \Ad(\sigma(h))(v) = h\cdot v$, we see that $e^{\iota u} \cdot (\iota v) \cdot e^{\iota u} = \iota( h\cdot v) =h \cdot \iota(v)$ for all $v \in \cL$. Because $\cL$ generates $\hK{U(\cL)}$ as a topological $K$-algebra, we deduce that the $h$-action on $\hK{U(\cL)}$ is given by conjugation by $\exp(\iota(\delta_{\cL}(h)))$.  Thus $\exp \circ \iota \circ \delta_{\cL} : H_{\cL} \to \hK{U(\cL)}^\times$ is a trivialisation of the $H_{\cL}$-action on $\hK{U(\cL)}$. It is $H$-equivariant because the map $\delta_{\cL}$ satisfies $\delta_{\cL}(ghg^{-1}) = g \cdot \delta_{\cL}(h)$ as we saw above.
\end{proof}

We can now mimic Definition \ref{StarDefn} and make the following
\begin{defn}\label{wUgHDefn}  Suppose that $Z(\fr{g}) = 0$ and that $H$ is a compact open subgroup of $G$.
\be \item Let $\cJ(H)$ denote the set of all pairs $(\cL, N)$, where $\cL$ is an $H$-stable Lie lattice in $\fr{g}$ and $N$ is an open subgroup of $H_{\cL}$ which is normal in $H$. 
\item For each $(\cL,N) \in \cJ(H)$, the restriction to $N$ of the map $\exp \circ \iota \circ \delta_{\cL}$ from Proposition \ref{HJgroup}(b) is an $H$-equivariant trivialisation of the $N$-action on $\hK{U(\cL)}$, and we use this trivialisation to define the algebra
\[\hK{U(\cL)} \rtimes_N H.\]
\item  $\wUg{H} := \underset{(\cL,H)\in\cJ(H)}{\invlim\limits}{} \hK{U(\cL)} \rtimes_N H$.

\ee\end{defn}

\begin{rmk} We will see shortly that $\wUg{H}$ is closely related to the algebra $\w\cD(\bG, H)^{\bG}$ which appeared in $\S \ref{wDGGsect}$. However it is important to note that $\wUg{H}$ only depends on the $K$-group $\G$ and not on any particular affinoid subdomain of $\G^{\rig}$ such as $\bG$.
\end{rmk}

\begin{thm}\label{UgHFrSt} $\wUg{H}$ is Fr\'echet-Stein for every compact subgroup $H$ of $G$.
\end{thm}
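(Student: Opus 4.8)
The proof follows the same pattern as those of Theorem \ref{FrSt} and Theorem \ref{wDGGFrSt}. Using Lemma \ref{LieLatticesExist}, fix an $H$-stable Lie lattice $\cL_0$ in $\fr{g}$ and set $\cL := \pi^2\cL_0$; this is again an $H$-stable Lie lattice, and now $[\cL,\cL] \subseteq \pi^2\cL$, hence $[\pi^n\cL,\pi^n\cL] \subseteq \pi^2(\pi^n\cL)$ for all $n \geq 0$. We view each $\pi^n\cL$ as a smooth $(\cR,\cR)$-Lie algebra with the trivial anchor map, so that $(\pi^n\cL)\cdot\cR = 0 \subseteq \pi\cR$. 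Writing $U_n := \hK{U(\pi^n\cL)}$, Corollary \ref{HTF} shows that each $U_n$ is Noetherian, and Theorem \ref{HKULflat} shows that $U_n$ is a flat $U_{n+1}$-module on both sides. Moreover the image of $U_{n+1}$ in $U_n$ is dense, because it contains the image of $U(\fr{g})$, which is dense in $U_n$ as $\pi^n\cL$ spans $\fr{g}$ over $K$.

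The new ingredient is a good chain together with a cofinal presentation of $\wUg{H}$. We first claim that $H_{\pi^n\cL}$ is an open normal subgroup of $H$ for every $n \geq 0$. Normality is Proposition \ref{HJgroup}(a). For openness, observe that $\Ad\circ\sigma : H \to \GL(\fr{g})(K)$ is continuous, since $\sigma$ is continuous and the adjoint representation $\Ad : \G \to \GL(\fr{g})$ is a morphism of algebraic $K$-groups, hence induces a continuous map of rigid analytifications and so on $K$-points. By Hypothesis \ref{ZgZero} the map $\ad : \fr{g} \to \Der_K(\fr g)$ is injective, so $\Lie(\Ad\G) = \ad(\fr{g}) \cong \fr{g}$; consequently, for $g$ in a sufficiently small open subgroup of $H$, the image of $\sigma(g)$ in $\Ad(\G)(K)$ lies in the domain of the logarithm of the $p$-adic analytic group $\Ad(\G)(K)$, whence $\log\Ad(\sigma(g)) = \ad(u)$ for a unique $u \in \fr{g}$; shrinking the open subgroup further forces $u \in p^\epsilon(\pi^n\cL)$, so that $g \in H_{\pi^n\cL}$. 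Thus $H_{\pi^n\cL}$ contains an open subgroup of $H$ and is therefore open. By Lemma \ref{FastChain} we may choose a descending chain $N_0 \geq N_1 \geq \cdots$ of open normal subgroups of $H$ with $\bigcap_n N_n = \{1\}$ and $N_n \leq H_{\pi^n\cL}$ for all $n$, so $(\pi^n\cL, N_n) \in \cJ(H)$ for each $n$. Arguing as in the proof of Lemma \ref{StdPres} — using that the complement of an open subgroup of the compact group $H$ is compact, and that every Lie lattice contains a $\pi$-power multiple of $\cL$ — the pairs $\{(\pi^n\cL, N_n) : n \geq 0\}$ form a cofinal subset of $\cJ(H)$, and hence $\wUg{H} \cong \invlim U_n \rtimes_{N_n} H$ as $K$-algebras, realising $\wUg{H}$ as a countable inverse limit of $K$-Banach algebras.

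It remains to verify the (two-sided) Fr\'echet--Stein axioms for this presentation, exactly as in the proof of Theorem \ref{FrSt}. Each $U_n \rtimes_{N_n} H$ is Noetherian, being a crossed product of the Noetherian ring $U_n$ with the finite group $H/N_n$ by Lemma \ref{RingSGN}(b); the image of $U_{n+1}\rtimes_{N_{n+1}}H$ in $U_n\rtimes_{N_n}H$ is dense, since $H/N_{n+1}$ surjects onto $H/N_n$ and $U_{n+1}$ has dense image in $U_n$; and in the commutative square with vertices $U_{n+1}$, $U_n$, $U_{n+1}\rtimes_{N_{n+1}}H$, $U_n\rtimes_{N_n}H$, the map $U_{n+1}\to U_n$ is flat by Theorem \ref{HKULflat} and $U_n\to U_n\rtimes_{N_n}H$ is flat (free of finite rank on both sides), so the composite $U_{n+1}\to U_n\rtimes_{N_n}H$ is flat, whence $U_{n+1}\rtimes_{N_{n+1}}H\to U_n\rtimes_{N_n}H$ is flat on both sides by \cite[Lemma 2.2]{SchmidtBB}. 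Therefore $\wUg{H}$ is a two-sided Fr\'echet--Stein algebra. The one substantial point, where Hypothesis \ref{ZgZero} is genuinely used, is the openness of the groups $H_{\pi^n\cL}$ in $H$ and the attendant cofinality statement; once these are established, the remainder is a routine transcription of the arguments for Theorem \ref{FrSt} and Theorem \ref{wDGGFrSt}.
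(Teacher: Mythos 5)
Your proof is correct and takes the same approach as the paper, which merely cites Proposition \ref{LieLatticesExist} for the existence of an $H$-stable Lie lattice and then says that the proof of Theorem \ref{wDGGFrSt} works with obvious modifications. The one substantive detail you make explicit — that each $H_{\pi^n\cL}$ is open in $H$, so that $\cJ(H)$ contains a countable cofinal chain giving a Fr\'echet--Stein presentation — is precisely the "obvious modification" the paper has in mind, and your argument for it via the $p$-adic analytic subgroup $\Ad(\G)(K)$ of $\GL(\fr{g})(K)$, using Hypothesis \ref{ZgZero} to identify its Lie algebra $\ad(\fr{g})$ with $\fr{g}$, is sound.
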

\begin{proof} $\fr{g}$ admits at least one $H$-stable Lie lattice, by Proposition \ref{LieLatticesExist}. Now the proof of Theorem \ref{wDGGFrSt} works with obvious modifications.
\end{proof}

We will now use the formalism of $\S \ref{FGformalism}$ to glue the algebras $\wUg{H}$ together, as $H$ varies over \emph{all} compact open subgroups of $G$.  

\begin{lem}\label{CofinalTwo} Let $J \leq H$ be compact open subgroups of $G$. Then $\cJ(H) \cap \cJ(J)$ is cofinal in $\cJ(H)$ and in $\cJ(J)$.
\end{lem}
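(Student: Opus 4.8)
The ordering on $\cJ(H)$ (and on $\cJ(J)$) is component-wise reverse inclusion, so the task is to show that every member $(\cL,N)$ of $\cJ(J)$ (respectively of $\cJ(H)$) is dominated by some $(\cL',N')$ in $\cJ(H)\cap\cJ(J)$, i.e. one with $\cL'\subseteq\cL$ and $N'\subseteq N$. Since $J\leq H$, every $H$-stable Lie lattice is automatically $J$-stable, so cofinality in $\cJ(H)$ will be a special case of the argument for $\cJ(J)$ with $\cL$ already $H$-stable; the real content is an ``averaging'' step that turns a $J$-stable Lie lattice into an $H$-stable one without destroying finite generation, followed by a routine shrinking of the subgroup component.

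\emph{Step 1: producing an $H$-stable sublattice.} Let $\cL$ be a $J$-stable Lie lattice in $\fr g$. As $J$ is open in the compact group $H$ it has finite index, and since $\cL$ is $J$-stable its stabiliser in $H$ under $\Ad\circ\sigma$ contains $J$; hence the $H$-orbit $\{\cL^{(1)}=\cL,\ldots,\cL^{(r)}\}$ of $\cL$ is finite. Put $\cL':=\bigcap_{i=1}^r\cL^{(i)}$. Each $\cL^{(i)}$ is the image of $\cL$ under a $K$-Lie-algebra automorphism of $\fr g$, hence is again a Lie lattice: it spans $\fr g$ over $K$, is closed under the bracket, and, being a finitely generated torsion-free module over the valuation ring $\cR$, is free of finite rank. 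The $\cL^{(i)}$ pairwise contain $\pi$-power multiples of one another, so $\cL'$ contains $\pi^N\cL$ for some $N\geq 0$ and therefore still spans $\fr g$; it is closed under the bracket as an intersection of sub-Lie-algebras; and it is finitely generated because $\cR$ is coherent (every valuation domain is coherent), so the finite intersection $\cL'$ of the finitely presented submodules $\cL^{(i)}$ of the finitely presented $\cR$-module $\pi^{-M}\cL$ (for $M$ large enough) is again finitely presented. Thus $\cL'$ is a Lie lattice with $\cL'\subseteq\cL$, and it is $H$-stable because $\Ad(\sigma(h))$ permutes the $\cL^{(i)}$ for every $h\in H$. (If we start from $(\cL,N)\in\cJ(H)$ this step is vacuous: take $\cL'=\cL$.)

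\emph{Step 2: shrinking the subgroup.} By the same reasoning as in Theorem \ref{Beta}(a) — which uses only that $\cL'$ is a free $\cR$-lattice in $\fr g$, that $\Ad\circ\sigma$ is continuous, and that near the identity $\Ad^{\rig}$ is the local-analytic exponential of $\ad$ — the subgroup $H_{\cL'}$ is open in $H$, and likewise $J_{\cL}$ is open in $J$; moreover $J_{\cL'}=H_{\cL'}\cap J$ directly from Definition \ref{HJAdj}. Hence $N$, being open in $J_{\cL}$ (resp. in $H_{\cL}$), is open in $J$ and therefore in $H$; and $H_{\cL'}$ is open in $H$; so $N\cap H_{\cL'}$ is open in $H$, and by Lemma \ref{SmallU} we may choose $N'\triangleleft_o H$ with $N'\subseteq N\cap H_{\cL'}$. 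Then $N'$ is an open subgroup of $H_{\cL'}$ and of $J_{\cL'}=H_{\cL'}\cap J$ (note $N'\subseteq N\subseteq J$), and is normal in $H$ and in $J$, so $(\cL',N')\in\cJ(H)\cap\cJ(J)$; since $\cL'\subseteq\cL$ and $N'\subseteq N$ we get $(\cL',N')\geq(\cL,N)$. Taking $\cL'=\cL$ throughout yields cofinality in $\cJ(H)$, and the general case yields cofinality in $\cJ(J)$.

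The main obstacle is Step 1: averaging a $J$-stable lattice into an $H$-stable one while keeping it finitely generated over the possibly highly non-Noetherian valuation ring $\cR$. The key point is that finiteness of the $H$-orbit reduces the averaging to a \emph{finite} intersection, and coherence of $\cR$ — together with the fact that finitely generated torsion-free modules over a valuation ring are free — is exactly what guarantees this finite intersection is a Lie lattice again. Step 2 is bookkeeping, once the openness of $H_{\cL'}$ is granted by the argument of Theorem \ref{Beta}(a).
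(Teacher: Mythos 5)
Your proof is correct, and in fact it is more careful and more complete than the paper's own treatment. The paper disposes of this lemma in two sentences: it observes that $J_{\cL} = H_{\cL}\cap J$ from Definition \ref{HJAdj} and then says ``the proof of Proposition \ref{IHIGcofinal} works with obvious modifications.'' But that earlier proof, when handling cofinality in the \emph{smaller} group's index set, writes the chain ``$U \leq N \leq H_{\cL} \leq G_{\cL}$'' for $(\cL,N)\in\cI(H)$ --- which tacitly uses that $\cL$ is $G$-stable, even though membership in $\cI(H)$ only requires $H$-stability. Transposed here, the same issue arises for cofinality in $\cJ(J)$: a pair in $\cJ(J)$ only comes with a $J$-stable lattice, and one must first pass to an $H$-stable sublattice before one can even speak of $H_{\cL}$. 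Your Step 1 supplies exactly this missing averaging argument. It is also consistent in spirit with what the paper does elsewhere (the orbit-intersection device appears in Lemmas \ref{GstableFM}, \ref{SubsOfSmall} and \ref{LieLatticesExist}), but those places never pause to justify finite generation of the intersection over the possibly non-Noetherian $\cR$. Your appeal to coherence of the valuation ring does the job; one can also argue more elementarily that the intersection is the kernel of the surjection $\bigoplus_i \cL^{(i)} \twoheadrightarrow \sum_i \cL^{(i)}$, whose target is a finitely generated torsion-free, hence free, hence finitely presented $\cR$-module, so the kernel is finitely generated.

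One small caveat worth flagging: in Step 2 you assert that $H_{\cL'}$ and $J_{\cL}$ are open ``by the same reasoning as in Theorem \ref{Beta}(a).'' That theorem lives in the affinoid setting with $G_{\cL}=\rho^{-1}(\exp(p^\epsilon\cL))$, whereas here $H_{\cL'}$ is a pullback along $\Ad\circ\sigma$ and $\exp\circ\ad$; the paper never explicitly proves openness in this abstract $\cJ(\cdot)$-setting (it is implicit in Proposition \ref{wUgH=wdGH}, via the cofinal embedding $\cK(H)\subseteq\cJ(H)$, and can also be proved directly using injectivity of $\ad$ coming from $Z(\fr g)=0$ together with the local $\exp$--$\log$ correspondence). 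Your sketch is morally correct but could be tightened by citing Proposition \ref{wUgH=wdGH} or spelling out the $\ad$-argument. Finally, using Lemma \ref{SmallU} to shrink $N\cap H_{\cL'}$ to an open normal $N'\triangleleft_o H$ is fine though slightly stronger than needed: the normal core of $N\cap H_{\cL'}$ in $H$ already suffices.
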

\begin{proof} It follows from Definition \ref{HJAdj} that $J_{\cL} = H_{\cL} \cap J$ for any $\cR$-Lie lattice $\cL$ in $\fr{g}$. Now the proof of Proposition \ref{IHIGcofinal} works with obvious modifications.
\end{proof}

\begin{prop}\label{wUgGood}  Let $\cC$ denote the set of compact open subgroups of $G$.
\be \item $H \mapsto \wUg{H}$ is a functor from $\cC$ to $K$-algebras.
\item There is a natural transformation $\iota : K[-] \longrightarrow \wUg{-}$.
\item The pair $(\wUg{-}, \iota)$ is good in the sense of Definition \ref{GoodFIota}.
\ee\end{prop}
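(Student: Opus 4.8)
The plan is to verify the three claims in order, reusing the constructions from Section \ref{wDGGsect} as closely as possible.

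\textbf{Part (a): functoriality.} Let $J \leq H$ be compact open subgroups of $G$. Every $H$-stable Lie lattice is automatically $J$-stable, and by Definition \ref{HJAdj} we have $J_\cL = H_\cL \cap J$, so restriction induces a map $\cJ(H) \to \cJ(J)$, $(\cL,N) \mapsto (\cL, N\cap$ (some open normal subgroup of $J$) $)$. More precisely, for $(\cL,N) \in \cJ(H)$ pick $U \triangleleft_o J$ contained in $N$; then $(\cL,U) \in \cJ(J)$. The crossed products $\hK{U(\cL)} \rtimes_N H$ and $\hK{U(\cL)} \rtimes_U J$ are compatible via Lemma \ref{CPfunc} applied to the identity on $\hK{U(\cL)}$ and the inclusion $J \hookrightarrow H$: the three hypotheses of Lemma \ref{CPfunc} hold because $U \leq N$, the $G$-actions agree, and the trivialisations $\exp\circ\iota\circ\delta_\cL$ restrict compatibly by Proposition \ref{HJgroup}(b). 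This gives a map $\hK{U(\cL)}\rtimes_U J \to \hK{U(\cL)}\rtimes_N H$, functorial in $(\cL,N)$ by the uniqueness clause of Lemma \ref{CPfunc}. Passing to the inverse limit over the cofinal sets from Lemma \ref{CofinalTwo} produces a $K$-algebra homomorphism $\wUg{J} \to \wUg{H}$; transitivity for $L \leq J \leq H$ again follows from uniqueness in Lemma \ref{CPfunc} after restricting to a cofinal subset of $\cJ(L) \cap \cJ(J) \cap \cJ(H)$.

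\textbf{Part (b): the natural transformation.} For each $(\cL,N) \in \cJ(H)$ there is the canonical group homomorphism $\gamma : H \to (\hK{U(\cL)} \rtimes_N H)^\times$ from equation $(\ref{DefnGamma})$, which is clearly compatible with the connecting maps in the inverse system, so it induces $\gamma^H : H \to \wUg{H}^\times$ and hence a $K$-algebra map $\iota_H : K[H] \to \wUg{H}$. Naturality in $H$ — i.e. commutativity of the square relating $\iota_J$, $\iota_H$ and the functoriality maps from part (a) — is immediate from the corresponding compatibility at each finite level, which in turn is built into the construction of the map in Lemma \ref{CPfunc} (it extends $K[J] \to K[H]$). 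This is essentially Lemma \ref{GammaTriv}(b) together with the inverse-limit argument of Corollary \ref{Stacky}.

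\textbf{Part (c): goodness.} One must check that $(\wUg{-}, \iota)$ is sheafy, $G$-equivariant, and satisfies the inner-automorphism condition of Definition \ref{GoodFIota}. Sheafiness means the map $t_{J,H} : \wUg{J} \otimes_{K[J]} K[H] \to \wUg{H}$ is an isomorphism; this should follow exactly as in Corollary \ref{Stacky} and Lemma \ref{ANAH}(b): at each level $\hK{U(\cL)}\rtimes_U J$ is a crossed product of $\hK{U(\cL)}$ with $J/U$ by Lemma \ref{RingSGN}(b), hence $\hK{U(\cL)}\rtimes_U J \otimes_{K[J]} K[H] \cong \hK{U(\cL)}\rtimes_U H$, and then one passes to the inverse limit using that $U$ has finite index in $J$ and inverse limits commute with finite direct sums (as in the proof of Proposition \ref{XaffCompat}(a)), invoking Lemma \ref{CofinalTwo} for cofinality. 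For $G$-equivariance, conjugation by $g \in G$ carries an $H$-stable Lie lattice $\cL$ to a $({}^gH)$-stable one with $({}^gH)_{{}^g\cL} = {}^g(H_\cL)$, inducing bijections $\cJ(H) \to \cJ({}^gH)$ and, by functoriality of $U(-)$ and Lemma \ref{CPfunc}, isomorphisms $\varphi_g(H) : \wUg{H} \to \wUg{{}^gH}$; the cocycle identity, compatibility with restriction, and compatibility with $\iota$ (Definition \ref{GequivFiota}(a)--(c)) all reduce to the corresponding level-wise statements. Finally, the inner-automorphism condition ${}^g a = \iota_H(g)\, a\, \iota_H(g)^{-1}$ for $g \in H$, $a \in \wUg{H}$ follows from relation $(\ref{RelSNG})$ in each $\hK{U(\cL)}\rtimes_N H$, passed to the limit. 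The main obstacle is the sheafiness verification: one needs to be careful that the cofinal systems used to compute $\wUg{J}$ and $\wUg{H}$ can be chosen compatibly (this is exactly what Lemma \ref{CofinalTwo} is for) and that the tensor product $- \otimes_{K[J]} K[H]$ commutes with the relevant inverse limit, which works only because $[H:J]$ and each $[J:U]$ are finite — precisely the point exploited in Proposition \ref{XaffCompat}(a).
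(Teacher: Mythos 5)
Your proof is correct and follows essentially the same route as the paper: the paper also establishes sheafiness via Lemma \ref{RingSGN}(b), the finite-index/inverse-limit commutation, and Lemma \ref{CofinalTwo}, and obtains $G$-equivariance by transporting along $\Ad(\sigma(g))$ and Lemma \ref{CPfunc}. The paper treats parts (a) and (b) as immediate from the construction, whereas you spell them out; for part (a) it is slightly cleaner to compute both $\wUg{J}$ and $\wUg{H}$ as inverse limits over the common cofinal index set $\cJ(H) \cap \cJ(J)$ rather than choosing an $N$-dependent $U \triangleleft_o J$, but your version yields the same map after passing to the limit.
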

\begin{proof} Only the third statement requires proof. Fix compact open subgroups $J \leq H$ of $G$, and let $(\cL, N) \in \cJ(H) \cap \cJ(J)$. Because $\hK{U(\cL)} \rtimes_N J$ is a crossed product of $\hK{U(\cL)}$ with $J/N$ by Lemma \ref{RingSGN}(b), the canonical map 
\begin{equation}\label{ULNJH} \left(\hK{U(\cL)} \rtimes_N J\right) \underset{K[J]}{\otimes}{} K[H] \longrightarrow \hK{U(\cL)} \rtimes_N H\end{equation}
is a bijection. Now consider the following commutative diagram:
\[\xymatrix{  \wUg{J} \underset{K[J]}{\otimes}{} K[H] \ar[d]\ar[rr]^{t_{J,H}} && \wUg{H} \ar[d] \\ \invlim \left( (\hK{U(\cL)} \rtimes_N J ) \underset{K[J]}{\otimes}{}K[H] \right) \ar[rr] && \invlim \hK{U(\cL)} \rtimes_N H. }\]
Here, the bottom horizontal arrow is the inverse limit of the maps in (\ref{ULNJH}), taken over all possible $(\cL,N) \in \cJ(H) \cap \cJ(J)$; therefore it is a bijection. Since $J$ has finite index in $H$ and inverse limits commute with finite direct sums, using Lemma \ref{CofinalTwo} we see that the left vertical arrow is a bijection. The right vertical arrow is also a bijection, again by Lemma \ref{CofinalTwo}. So the top horizontal arrow $t_{J,H}$ is an isomorphism, so that  $(\wUg{-}, \iota)$ is sheafy in the sense of Definition \ref{Sheafy}.

Now fix $g \in G$. Then there is a bijection $\cJ(H) \to \cJ(gHg^{-1})$ which sends $(\cL,N)$ to $(\cL', gNg^{-1})$ where $\cL' := \Ad(\sigma(g))(\cL)$. The $\cR$-Lie algebra isomorphism $\Ad(\sigma(g)) : \cL \to \cL'$ extends to a $K$-Banach algebra isomorphism $\hK{U(\cL)} \cong \hK{U(\cL')}$, which we will also denote by $\Ad(\sigma(g))$. Letting $\Ad_g: H \to g H g^{-1}$ denote conjugation by $g$, it follows from Lemma \ref{CPfunc} that there is an isomorphism 
\[ \Ad(\sigma(g)) \rtimes \Ad_g : \hK{U(\cL)} \rtimes_N H \longrightarrow \hK{U(\cL')} \rtimes_{gNg^{-1}} gHg^{-1}.\]
Passing to the limit over $(\cL, N) \in \cJ(H)$ induces a $K$-algebra isomorphism
\[\varphi_g(H) : \wUg{H} \to \wUg{g H g^{-1}}.\]
It is now straightforward to verify that these maps satisfy Definition \ref{GequivFiota}, as well as Definition \ref{GoodFIota}.
\end{proof}

Because of Proposition \ref{wUgGood} we may now apply Corollary \ref{FGproduct} and conclude that the tensor product 
\[ \wUg{H} \underset{K[H]}{\otimes} K[G]\]
carries the structure of an associative $K$-algebra, which contains both $\wUg{H}$ and $K[G]$ as $K$-subalgebras whenever $H$ is a compact open subgroup of $G$.

\begin{defn} \label{wUgDefn} We define $\wUg{G}$ to be the $K$-algebra $\wUg{H} \underset{K[H]}{\otimes} K[G]$, for any choice of compact open subgroup $H$ of $G$.
\end{defn}

It follows from Theorem \ref{MainFG} that this algebra is independent of $H$ in the sense that it is isomorphic to the subalgebra of 
\[\End\left( \lim\limits_{\stackrel{\longrightarrow}{H \in \cC}} \wUg{H} \underset{K[H]}{\otimes} K[G]\right) _{K[G]}\]
generated by the left-action of $K[G]$ and the left-action of $\wUg{J}$ as $J$ varies over \emph{all} compact open subgroups $J$ of $G$.

\subsection{Continuous actions on analytifications of algebraic varieties}
In Proposition \ref{GpOfPtsActsCtsly} we saw that if $\G$ is an $\cR$-group scheme which acts on a flat $\cR$-scheme $\X$ of finite presentation and $\sigma : G \to \G(\cR)$ is a continuous group homomorphism from some topological group $G$, then $G$ acts continuously on the rigid generic fibre $\bX := \cX_{\rig}$ of $\cX$. This was our first example of continuous group actions. We will now give a second example.

In preparation for this, we have the following useful result which tells us in particular that it suffices to verify continuity on an affinoid covering.

\begin{prop}\label{CtyIsLocal} Let $\{\bX_i\}$ be an admissible covering of a rigid analytic space $\bX$ by qcqs admissible open subsets. Let $G$ be a topological group and let $\rho : G \to \Aut(\bX, \cO_{\bX})$ be a group homomorphism. Suppose that for each $i$ we have
\be\item $G_{\bX_i}$ is open in $G$, and
\item $G_{\bX_i} \to \Aut(\bX_i, \cO_{\bX_i})$ is continuous. 
\ee 
Then $G$ acts continuously on $\bX$.
\end{prop}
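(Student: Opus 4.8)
\textbf{Proof plan for Proposition \ref{CtyIsLocal}.}

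The plan is to verify the two conditions of Definition \ref{CtsAct} for an arbitrary qcqs admissible open subset $\bU$ of $\bX$, reducing everything to the corresponding facts for the $\bX_i$. First I would observe that, since $\bU$ is quasi-compact and $\{\bX_i\}$ is an admissible covering of $\bX$, we can find finitely many indices $i_1, \ldots, i_r$ such that $\bU \subseteq \bX_{i_1} \cup \cdots \cup \bX_{i_r}$; moreover each $\bU \cap \bX_{i_j}$ is again qcqs, being the intersection of two qcqs admissible opens in a rigid analytic space. Then $\bU$ is covered by the finitely many qcqs admissible opens $\bU_j := \bU \cap \bX_{i_j}$.

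For condition (a), I claim $G_\bU$ is open in $G$. The key point is that $G_\bU = \{ g \in G : g\bU = \bU \}$ contains the open subgroup $H := G_{\bX_{i_1}} \cap \cdots \cap G_{\bX_{i_r}}$-stabiliser of enough data: more precisely, let $H$ be the intersection of $G_{\bX_{i_j}}$ over $j = 1, \ldots, r$, which is open by hypothesis (a). Each $G_{\bX_{i_j}}$ acts continuously on $\bX_{i_j}$ by hypothesis (b), so by Proposition \ref{OpenStab} it acts continuously on $\bX_{i_j}$ in the sense of Definition \ref{CtsAct}; in particular the stabiliser of the qcqs admissible open $\bU_j$ inside $\Aut_K(\bX_{i_j}, \cO_{\bX_{i_j}})$ is open by Proposition \ref{OpenStab}, hence its preimage $(G_{\bX_{i_j}})_{\bU_j}$ is open in $G_{\bX_{i_j}}$ and therefore in $G$. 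Now set $J := \bigcap_j (G_{\bX_{i_j}})_{\bU_j}$, an open subgroup of $G$. For $g \in J$ we have $g\bU_j = \bU_j$ for all $j$, hence $g\bU = g\bigl(\bigcup_j \bU_j\bigr) = \bigcup_j \bU_j = \bU$, so $J \subseteq G_\bU$ and $G_\bU$ is open.

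For condition (b), I need the homomorphism $\rho_\bU : G_\bU \to \Aut_K(\bU, \cO_\bU)$ to be continuous for the topology on the target from Theorem \ref{AutRigidTop}. Here I would use that the topology on $\Aut_K(\bU,\cO_\bU)$, being intrinsic to $\bU$ by Theorem \ref{AutRigidTop}(b), can be tested via the finite admissible covering $\{\bU_j\}$ of $\bU$: it suffices to show that for a sufficiently small open subgroup of $G_\bU$, the restriction maps $\Aut_K(\bU,\cO_\bU) \to \Aut_K(\bU_j, \cO_{\bU_j})$ pull back congruence-subgroup neighbourhoods to open sets. Concretely, restrict attention to the open subgroup $J \cap G_\bU$ (still open by condition (a)); each element acts on $\bU_j$ via the restriction of its action on $\bX_{i_j}$, and the map $(G_{\bX_{i_j}})_{\bU_j} \to \Aut_K(\bU_j, \cO_{\bU_j})$ is continuous by the continuity of the $G_{\bX_{i_j}}$-action on $\bX_{i_j}$ (Proposition \ref{OpenStab} again, applied to the qcqs open $\bU_j$). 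Choosing a common formal model of $\bU$ refining compatible formal models of the $\bU_j$ via Raynaud's theorem \cite[Theorem 4.1]{BL1}, the congruence subgroups of $\Aut_K(\bU, \cO_\bU)$ map into products of congruence subgroups of the $\Aut_K(\bU_j, \cO_{\bU_j})$, so continuity on $G_\bU$ follows from continuity on each factor.

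The main obstacle I anticipate is the last point: patching the topologies on $\Aut_K(\bU, \cO_\bU)$ and the $\Aut_K(\bU_j, \cO_{\bU_j})$ coherently. One must choose a single formal model $\cU$ of $\bU$ together with open formal subschemes $\cU_j$ with $(\cU_j)_{\rig} = \bU_j$, which is possible by \cite[Theorem 4.1 and Lemma 4.4]{BL1}; then $\cG_{\pi^r}(\cU)$ stabilises each $\cU_j$ and maps to $\cG_{\pi^r}(\cU_j)$, so the restriction maps are continuous and an automorphism lying in every $\cG_{\pi^r}(\cU_j)$ for a fixed $r$ lies in $\cG_{\pi^r}(\cU)$ because $\{\bU_j\}$ covers $\bU$ and the $\rig$ functor is faithful. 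This gives that $\{\cG_{\pi^r}(\cU)_{\rig}\}$ is cofinal among preimages of neighbourhoods, which is exactly what is needed. The remaining verifications are routine bookkeeping with stabilisers and are of the type already carried out in the proof of Proposition \ref{OpenStab}.
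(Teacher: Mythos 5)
Your overall strategy matches the paper's: reduce to a finite covering, deduce openness of $G_\bU$ from openness of stabilisers of the pieces, and then choose a formal model of $\bU$ together with open formal subschemes realising the pieces so that the sheaf property of $\cG_{\pi^r}$ (Lemma \ref{AutSheaf}) and faithfulness of $\rig$ give continuity of $\rho_\bU$. However, there is a genuine gap at the very first step: you assert that $\bU \cap \bX_{i_j}$ is qcqs, ``being the intersection of two qcqs admissible opens in a rigid analytic space.'' This is false without a quasi-separatedness hypothesis on $\bX$, which is not assumed. For instance, gluing two closed unit discs along an open annulus $\{|\pi| < |z| < 1\}$ produces a quasi-compact space $\bX$ covered by two affinoid (hence qcqs) copies $\bX_1, \bX_2$ of the disc, yet $\bX_1 \cap \bX_2$ is the non-quasi-compact annulus. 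So taking $\bU := \bX_1$ already breaks your $\bU_j$, and you cannot then invoke Proposition \ref{OpenStab}/Definition \ref{CtsAct} for the pair $(\bX_{i_j}, \bU_j)$, nor can you apply \cite[Lemma 4.4]{BL1} to the $\bU_j$.

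The paper avoids this by one additional refinement step: it chooses an admissible \emph{affinoid} covering of each $\bU \cap \bX_i$ and then, by quasi-compactness of $\bU$, extracts a finite subcover $\{\bV_1,\ldots,\bV_n\}$ whose members are affinoid and each contained in some $\bY_k := \bX_{i_k}$. With these affinoid (hence certainly qcqs) pieces, your argument for part (a) goes through, and for part (b) one can indeed apply \cite[Lemma 4.4]{BL1} to produce a formal model $\cU$ of $\bU$ with open subschemes $\cV_k$ realising the $\bV_k$; then the intersections $\cV_{kl}$ are open formal subschemes of the quasi-compact quasi-separated formal scheme $\cU$, hence quasi-compact, and the sheaf argument (the paper works with $J_r := \bigcap_k \sigma_k^{-1}\cG_{\pi^r}(\cV_k) \cap \bigcap_{k<l}\sigma_{kl}^{-1}\cG_{\pi^r}(\cV_{kl})$) closes the gap cleanly. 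Once you insert this refinement, the remainder of your sketch — the formal-model gluing via the sheaf property of $\cG_{\pi^r}$ and faithfulness of $\rig$ — is the same argument the paper makes.
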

\begin{proof} Let $\bU$ be a qcqs admissible open subset of $\bX$, and choose an admissible affinoid covering $\{\bU_{ij}\}_j$ of each $\bU \cap \bX_i$. Since $\bU$ is quasi-compact, we can find a finite subset $\{\bV_1,\ldots,\bV_n\} \subset \{ \bU_{ij}\}$ which forms an admissible covering of $\bU$. For each $k = 1,\ldots, n$, choose indices $i_k$ such that $\bV_k \subset \bX_{i_k} =: \bY_k$. 

Now $G_{\bY_k}$ acts continuously on $\bY_k$ by assumption (b) and Proposition \ref{OpenStab}. Hence $(G_{\bY_k})_{\bV_k}$ is open in $G_{\bY_k}$ and the map $(G_{\bY_k})_{\bV_k} \to \Aut(\bV_k, \cO_{\bV_k})$ is continuous. Since $G_{\bY_k}$ is open in $G$ by assumption (a) and since $(G_{\bY_k})_{\bV_k} = G_{\bY_k} \cap G_{\bV_k}$ is contained in $G_{\bV_k}$ we see that $G_{\bV_k}$ is open in $G$. Hence $G_{\bU}$ is open in $G$ because it contains the open subgroup $\bigcap_{k=1}^n G_{\bV_k}$. 

Because $\bU$ is qcqs, by \cite[Lemma 4.4]{BL1}, we can find a formal model $\cU$ for $U$ together with an open covering $\{\cV_1,\ldots,\cV_n\}$ such that $\cV_{k,\rig} = V_k$ for each $k$. By Theorem \ref{AutRigidTop}, it will suffice to show that the map $G_{\bU} \to \Aut(\bU, \cO_{\bU})$ is continuous with respect to the $\cT_{\cU}$-topology. Write $\cV_{kl} := \cV_k \cap \cV_l$ and $V_{kl} := V_k \cap V_l$ for each $k,l$, and define $H := \bigcap_{k=1}^n G_{\bV_k} \cap G_{\bY_k}$, an open subgroup of $G_{\bU}$ by the above. Note that each $V_k$ and each $V_{kl}$ is then $H$-stable. Let $\sigma : H \to  \Aut(\bU, \cO_{\bU})$, $\sigma_k: H \to \Aut(\bV_k, \cO_{\bV_k})$ and $\sigma_{kl} : H \to \Aut(\bV_{kl}, \cO_{\bV_{kl}})$ denote the actions of $H$ on $U, V_k$ and $V_{kl}$ respectively. It follows from Lemma \ref{CtsActLem}(a) that $H$ acts continuously on each $V_k$ and $V_{kl}$, so the group homomorphisms $\sigma_k$ and $\sigma_{kl}$ are continuous. Now for each $r \geq 0$, define
\[ J_r := \bigcap_{k=1}^n \sigma_k^{-1} \cG_{\pi^r}(\cV_k) \hsp \cap \hsp \bigcap_{k < l} \sigma^{-1}_{kl} \cG_{\pi^r}(\cV_{kl}), \]
an open subgroup of $H$. Since $\cG_{\pi^r}$ is a sheaf on $\cU$ by Lemma \ref{AutSheaf}, we see that $J_r \leq \sigma^{-1} \cG_{\pi^r}(\cU)$. Hence $\sigma^{-1} \cG_{\pi^r}(\cU)$ is open in $H$ (and therefore also in $G_{\bU}$) for each $r \geq 0$. So $G_{\bU} \to \Aut(\bU, \cO_{\bU})$ is continuous as required.
\end{proof}

Let $\G$ be an affine algebraic group of finite type over $K$. By \cite[Proposition 5.4/4]{BoschFRG},  $\G$ admits a \emph{rigid analytification} $\bG := \G^{\rig}$. Because rigid analytification preserves fibre products by \cite[Satz 1.8]{Koepf}, the underlying set of $\bG$ forms a group. Suppose also that $a : \G \times \X \to \X$ is an action of $\G$ on a scheme $\X$, locally of finite type over $K$. Applying the rigid analytification functor, we obtain an action $a^{\rig} : \bG \times \bX \to \bX$ of the rigid analytic group $\bG$ on $\bX := \X^{\rig}$. Let $g \cdot x = a^{\rig}(g,x)$ for $g \in \bG$ and $x \in \bX$, and let $T_n := K \langle y_1,\ldots, y_n\rangle$ denote the Tate algebra.

\begin{prop}\label{RigCtsAction}  Let $\Y$ be an affine open subscheme of $\X$, let $K[y_1,\ldots, y_n] / \fr{a} \cong \cO(\Y)$ be a presentation of $\cO(\Y)$ and let $\bU$ be the affinoid subdomain $\Sp (T_n / \fr{a} T_n)$ of $\bY := \Y^{\rig} \subset \bX$. Then there is an affinoid subdomain $\bT$ of $\bG$ containing $1$ such that $\bT \cdot \bU \subseteq \bU.$
\end{prop}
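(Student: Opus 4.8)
The plan is to produce $\bT$ by first passing to a small affinoid neighbourhood of $1$ in $\bG$ and then cutting out, inside it, a Weierstrass subdomain defined by \emph{finitely many} inequalities on the coordinate functions of $\G$; these inequalities will be chosen so as to force $|y_i(g\cdot u)|\le 1$ for all $u\in\bU$ simultaneously. First I would record the explicit description of $\bU$. Since $\cO(\Y)=K[y_1,\dots,y_n]/\fr{a}$ realises $\bY=\Y^{\rig}$ as a closed analytic subvariety of $\mathbb{A}^{n,\rig}_K$, the affinoid $\bU=\Sp(T_n/\fr{a}T_n)$ is precisely the Weierstrass subdomain $\{\,y\in\bY:|y_i(y)|\le 1\text{ for all }i\,\}$ of $\bY$; moreover $\bY$ is admissible open in $\bX=\X^{\rig}$ because $\Y\hookrightarrow\X$ is an open immersion. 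So it suffices to find an affinoid subdomain $\bT\ni 1$ of $\bG$ with $g\cdot u\in\bY$ and $|y_i(g\cdot u)|\le 1$ for all $g\in\bT$, $u\in\bU$ and all $i$.

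Next I would do the algebraic bookkeeping. Let $b:\G\times\Y\to\X$ be the restriction of the action $a$. Then $W:=b^{-1}(\Y)$ is open in the affine scheme $\G\times\Y$ and contains the closed subscheme $\{1\}\times\Y$, because $a(1,y)=y$. Since the reduced ideal of $(\G\times\Y)\setminus W$ and the ideal $I_1\cdot(\cO(\G)\otimes_K\cO(\Y))$ of $\{1\}\times\Y$ (where $I_1=\ker(\mathrm{ev}_1:\cO(\G)\to K)$) have disjoint zero sets, they generate the unit ideal; thus I can write $1=f+j$ with $f$ vanishing on $(\G\times\Y)\setminus W$ and $j$ vanishing on $\{1\}\times\Y$, so that $\{1\}\times\Y\subseteq D(f)\subseteq W$ and $f|_{\{1\}\times\Y}=1$. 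On the affine open $D(f)$ the action yields $y_i\circ b = P_i/f^{N}$ for suitable $P_i\in\cO(\G)\otimes_K\cO(\Y)$ and a common $N\ge 0$; comparing with the fact that $b$ restricts to the identity on $\{1\}\times\Y$ and that $f|_{\{1\}\times\Y}=1$ gives $P_i|_{\{1\}\times\Y}=y_i$. Writing $f-1=\sum_k e_k\otimes d_k$ and $P_i-1\otimes y_i=\sum_l h_{il}\otimes b_{il}$ in $\cO(\G)\otimes_K\cO(\Y)$, the two restriction identities ($\sum_k e_k(1)d_k=0$ and $\sum_l h_{il}(1)b_{il}=0$ in $\cO(\Y)$) let me arrange that every $e_k$ and every $h_{il}$ vanishes at $1\in\G$.

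Then I would pass to the rigid side, using that analytification commutes with fibre products, with principal open immersions, and with the morphism $a$ (\cite{Koepf}, \cite{BoschFRG}): $D(f)^{\rig}=\{f\ne 0\}$ is admissible open in $\bG\times\bY$, and on it $y_i(g\cdot u)=P_i(g,u)/f(g,u)^{N}$. After translating the chosen closed embedding $\G\hookrightarrow\mathbb{A}^d$ so that the coordinates $z_j$ satisfy $z_j(1)=0$, set $\bG_\circ:=\{g\in\bG:|z_j(g)|\le 1\ \forall j\}$, an affinoid (Weierstrass) subdomain of $\bG$ containing $1$. Because $K$ is non-trivially valued, I can choose $\lambda_k,\mu_{il}\in K^\times$ with $|\lambda_k|\,\|d_k\|_{\bU}\le|\pi|$ and $|\mu_{il}|\,\|b_{il}\|_{\bU}\le 1$ (the sup-norms over the affinoid $\bU$ being finite), and define
\[
\bT:=\{\,g\in\bG_\circ:\ |e_k(g)|\le|\lambda_k|\ \forall k,\quad |h_{il}(g)|\le|\mu_{il}|\ \forall i,l\,\},
\]
a Weierstrass subdomain of $\bG_\circ$, hence — by transitivity of affinoid subdomains — an affinoid subdomain of $\bG$; and $1\in\bT$ since all $e_k,h_{il}$ vanish at $1$. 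The verification is then a short ultrametric estimate: for $g\in\bT$ and $u\in\bU$ one has $|f(g,u)-1|\le\max_k|e_k(g)|\,\|d_k\|_{\bU}\le|\pi|<1$, so $|f(g,u)|=1$; in particular $(g,u)\in D(f)^{\rig}$, so $g\cdot u\in\bY$ and $|y_i(g\cdot u)|=|P_i(g,u)|\le\max\!\big(\max_l|h_{il}(g)|\,\|b_{il}\|_{\bU},\ |y_i(u)|\big)\le 1$, whence $g\cdot u\in\bU$. Thus $\bT\cdot\bU\subseteq\bU$.

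I expect the main obstacle to be exactly this uniformity over $\bU$: turning the condition ``$g\cdot u\in\bU$ for every $u\in\bU$'' into finitely many honest inequalities on $g$ alone. The device above — subtracting off the ``slice value'' $1$, resp. $1\otimes y_i$, and using the \emph{finite} tensor decompositions together with sup-norms on the affinoid $\bU$ — is what makes this possible, and one must be a little careful that the rescaling constants can be taken in $|K^\times|$ (this uses the standing hypothesis that $K$ is non-trivially valued) and that the algebraic identities genuinely survive analytification. A secondary, purely formal point is the compatibility of analytification with products, principal opens and the action morphism, and the transitivity statement that a Weierstrass subdomain of the affinoid subdomain $\bG_\circ$ is again an affinoid subdomain of $\bG$.
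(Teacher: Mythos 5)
Your argument is correct and follows the same broad outline as the paper's proof: form the preimage $W = b^{-1}(\Y)$ of $\Y$ under the restricted action $b : \G\times\Y\to\X$, exhibit a principal open $D(f)\subseteq W$ containing the slice $\{1\}\times\Y$ with $f$ identically $1$ on the slice, write the pulled-back coordinates $y_i\circ b$ in terms of pieces that vanish along $\{1\}\times\Y$, and then exploit the ultrametric inequality together with the finiteness of the sup-norms over the affinoid $\bU$ to isolate an affinoid $\bT\ni 1$ with $\bT\cdot\bU\subseteq\bU$.

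There are two points of divergence worth recording. First, to produce a \emph{single} principal open $D(f)\supseteq\{1\}\times\Y$ inside $W$, the paper reduces to $\G$ connected, covers $\W$ by basic opens, and then appeals to the irreducibility of $\{1\}\times\Y\cong\Y$ to argue it already lies inside one of them; note that irreducibility of $\Y$ is not actually guaranteed by the hypotheses of the proposition (it does hold in the intended application, where $\X$ is the flag variety), so this step of the paper is more fragile than it looks. Your ideal-theoretic replacement — the reduced ideal of the complement of $W$ and the ideal $I_1\cdot(\cO(\G)\otimes_K\cO(\Y))$ of $\{1\}\times\Y$ have disjoint zero loci, hence sum to the unit ideal, and any decomposition $1=f+j$ hands you $f$ with $D(f)\subseteq W$ and $f|_{\{1\}\times\Y}=1$ — is both cleaner (no normalization step $f\mapsto f/s(f)$ needed) and works without any irreducibility or connectedness assumption. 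Second, your $\bT$ is a Weierstrass domain inside an auxiliary affinoid $\bG_\circ$, cut out by bounding the specific functions $e_k,h_{il}$ appearing in your finite tensor decompositions of $f-1$ and $P_i-1\otimes y_i$, after regrouping to make them vanish at $1$. The paper instead takes the more canonical $\bT=\cO(\bG)\langle c_1/\pi^N,\dots,c_d/\pi^N\rangle$ built from a fixed set of generators $c_i$ of the augmentation ideal $\ker\epsilon\subseteq\cO(\G)$. Both constructions give what is needed; the paper's is slightly more intrinsic (it depends only on the choice of generators of $\ker\epsilon$, not on the particular tensor decompositions), but the estimates in the two proofs are essentially identical.
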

\begin{proof} By replacing $\G$ by the connected component of the identity, we may assume $\G$ to be connected. Therefore it is irreducible by \cite[Theorem 6.6]{Waterhouse}. Let $\W$ be the fibre product of the inclusion $\Y \hookrightarrow \X$ and the action map $a : \G \times \Y \to \X$:
\[ \xymatrix{ \G \times \Y \ar[rr]^{a} && \X \\ \W \ar[u] \ar[rr] && \Y .\ar[u] }\]
Then $\W$ is an open subscheme of the affine scheme $\G \times \Y$ containing the closed subscheme $\{1\} \times \Y$. Write $\W = \cup \W_j$ as a union of basic open sets. Since this closed subscheme is isomorphic to $\Y$ and is therefore irreducible, we see that it is already contained in some $\W_j$. Write $\W_j = D(f)$ for some $f \in \cO(\G \times \Y)$ and note that restriction $r(f)$ of $f$ to $\{1\} \times \Y$ must be a unit. The augmentation map $\epsilon : \cO(\G) \to K$ is split by the inclusion of the ground field $K$ into $\cO(\G)$, and it follows that the restriction map $r : \cO(\G \times \Y) \to \cO(\{1\} \times \Y)$ is a split surjection with right-inverse $s$, say. By replacing $f$ by $f / s(f)$, we may assume that $r(f) = 1$. Choose a generating set $\{c_1,\ldots, c_d\}$ for the ideal $\ker \epsilon$ in $\cO(\G)$; then its image in $\cO(\G \times \Y)$ generates the ideal $\ker r$ and we deduce that $f - 1$ lies in $\sum_{i=1}^d \cO(\G \times \Y) c_i = \ker r$. 

Now, consider the pullback of functions map $a^\sharp : \cO(\Y) \to \cO(\W_j) = \cO(\G \times \Y)_f$. The restriction of the action map $\W_j \to \Y$ to $\{1\} \times \Y$ is the projection onto the second component, so $a^\sharp(y_j) - y_j \in \sum_{i=1}^d \cO(\W_j) c_i$ for each $j=1,\ldots, n$. Because also $f - 1 \in \sum_{i=1}^d \cO(\G \times \Y) c_i$ and the $y_i$'s generate $\cO(\Y)$, we may write
\[ a^\sharp(y_j) = \frac{ y_j + \sum_{\alpha\in\N^n} F^{(j)}_\alpha y^\alpha } { 1 + \sum_{\alpha \in \N^n} G^{(j)}_\alpha y^\alpha } \in \cO(\G \times \Y)_f\]
where $F^{(j)}_\alpha, G^{(j)}_\alpha \in \ker \epsilon = \sum_{i=1}^d \cO(\G) c_i$ and only \emph{finitely many} of these are non-zero. Because the $c_i$'s generate $\cO(\G)$ as a $K$-algebra, we can choose $N \in \N$ sufficiently large so that each $F^{(j)}_\alpha / \pi$  and $G^{(j)}_\alpha / \pi$ lies in the $\cR$-subalgebra of $\cO(\G)$ generated by $c_1 / \pi^N , \ldots, c_d / \pi^N$. If we now take $\bT$ to be the affinoid subdomain of $\bG$ defined by
\[ \bT := \cO(\bG)\langle \frac{c_1}{\pi^N}, \ldots, \frac{c_d}{\pi^N} \rangle,\]
then we see that $F^{(j)}_\alpha, G^{(j)}_\alpha \in \pi \cO(\bT)^\circ$ for each $j=1,\ldots,n$ and $\alpha \in \N^n$. Then $f \in \cO(\bT \times \bU)^\times$ so the restriction map $\cO(\G \times \Y) \to \cO(\bT \times \bU)$ factors through $\cO(\W_j) = \cO(\G \times \Y)_f$. Because the elements $a^\sharp(y_j)$ now all lie in $\cO(\bT \times \bU)$, we see that the action map $a^{\sharp,\rig} : \cO(\bY) \to \cO(\W_j^{\rig})$ extends to an action map $\cO(\bU) \to \cO(\bT \times \bU)$.  Now by definition, $\bU$ is the subset of $\bY$ defined by the inequalities $\{ |y_1| \leq 1 ,\ldots, |y_n| \leq 1\}$; if $t \in \bT$ and $u \in \bU$ then $y_i(u) \in \cR$ for each $i$ and $F^{(j)}_\alpha(t) \in \pi \cR$ and $G^{(j)}_\alpha(t) \in \pi \cR$ for each $j, \alpha$ by construction, so 
\[y_j(t\cdot u) = a^\sharp(y_j)(t,u) = \frac{ y_j(u) + \sum_{\alpha\in\N^n} F^{(j)}_\alpha(t) y(u)^\alpha } { 1 + \sum_{\alpha \in \N^n} G^{(j)}_\alpha(t) y(u)^\alpha } \in \cR\]
for each $j=1,\ldots, n$. We conclude that $\bT \cdot \bU \subseteq \bU$ as required. \end{proof}

\begin{rmk} The analogue of Proposition \ref{RigCtsAction} does not work in the Zariski topology: there may not be any open neighbourhood $\U$ of the identity element in $\G$ such that $\U \cdot \Y \subset \Y$. For example, let $\G$ be the additive group $\G = \G_a = \Spec K[c]$ acting on the projective line $\X = \P^1$ via the rule $c \cdot y = y / (cy + 1)$, where $y$ is a local coordinate on $\X$. Then if $\Y = \Spec K[y]$ is the affine open subset of $\X$ where $y$ is regular, we find that $\W = \Spec K[c,y]_{cy + 1}$ cannot contain an open subset of the form $\U \times \Y$ for some open neighbourhood $\U$ of the identity element in $\G$. To see this, write $\U = \G \backslash V(I)$ for some ideal $I$ in $K[c]$ and suppose for a contradiction that $\U \times \Y \subset \W$. Then $V(cy+1) = \G \times \Y \backslash \W$ is contained in the closed subset $V(I)$ of $\G \times \Y$. Therefore $I$ is contained in $\langle cy + 1\rangle$ by the Nullstellensatz. Since $\langle cy + 1 \rangle \cap K[c]$ is zero, it follows that $\U$ is the empty set and does not contain the identity element.
\end{rmk}

We can now give our second example of continuous group actions.

\begin{thm}\label{2ndCtsAction} Let $\G$ be an affine algebraic group of finite type over $K$, acting on a scheme $\X$ which is locally of finite type over $K$. Then $\G(K)$ acts continuously on the rigid analytification $\bX := \X^{\rig}$.
\end{thm}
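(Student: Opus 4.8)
The plan is to reduce the assertion to a local statement that can be checked on an affinoid covering, and then to apply Proposition~\ref{CtyIsLocal}. First I would fix an affine open covering $\{\Y_i\}$ of $\X$; since $\X$ is locally of finite type over $K$, each $\Y_i$ is the spectrum of a finitely presented $K$-algebra, say $\cO(\Y_i) \cong K[y_1,\ldots,y_{n_i}]/\fr{a}_i$. To each such presentation we associate, as in Proposition~\ref{RigCtsAction}, the affinoid subdomain $\bU_i := \Sp(T_{n_i}/\fr{a}_i T_{n_i})$ of $\bY_i := \Y_i^{\rig}$. A standard fact about rigid analytification (cf. \cite[Proposition~5.4/4]{BoschFRG}) is that the $\bY_i$ form an admissible covering of $\bX$; moreover, rescaling the coordinates $y_j$ by powers of $\pi$ gives, for each $i$ and each $m\geq 0$, affinoid subdomains $\bU_i^{(m)} := \Sp(T_{n_i}/(\pi^m y_1,\ldots,\pi^m y_{n_i})$-rescaled presentation$)$ which together admissibly cover $\bY_i$. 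Hence the collection $\{\bU_i^{(m)}\}_{i,m}$ is an admissible covering of $\bX$ by (qcqs) affinoid open subsets, so by Proposition~\ref{CtyIsLocal} it suffices to verify, for each member $\bU$ of this covering, that the stabiliser $\G(K)_\bU$ is open in $\G(K)$ and that the induced homomorphism $\G(K)_\bU \to \Aut_K(\bU,\cO_\bU)$ is continuous.

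The openness of the stabiliser is exactly where Proposition~\ref{RigCtsAction} enters. Applied to the affine open $\Y_i$ (or its rescaled variant) and the presentation defining $\bU$, that proposition produces an affinoid subdomain $\bT$ of $\bG = \G^{\rig}$ containing the identity with $\bT\cdot\bU\subseteq\bU$. By the definition of the topology on $\G(K)$ (a fundamental system of neighbourhoods of $1$ being given by the sets $\bY(K)$ for $\bY$ an affinoid subdomain of $\bG$ containing $1$), the subset $\bT(K)$ is an open neighbourhood of the identity in $\G(K)$. Replacing $\bT$ by $\bT\cap\bT^{-1}$ (which still contains $1$ and is still an affinoid subdomain, the inversion map $\bG\to\bG$ being an isomorphism of rigid spaces) we may assume $\bT=\bT^{-1}$, and then $\bT(K)$ is a symmetric open neighbourhood of $1$ with $\bT(K)\cdot\bU = \bU$ (setwise), so $\bT(K)\subseteq\G(K)_\bU$. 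Thus $\G(K)_\bU$ contains an open neighbourhood of the identity and is therefore open in $\G(K)$.

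For the continuity of $\G(K)_\bU\to\Aut_K(\bU,\cO_\bU)$, I would argue as follows. The action map restricts to a morphism of affinoid varieties $\bT\times\bU\to\bU$, hence to a continuous $K$-algebra homomorphism $\cO(\bU)\to\cO(\bT)\h\otimes_K\cO(\bU)$. Composing with evaluation at a point $t\in\bT(K)$ gives the automorphism of $\cO(\bU)$ corresponding to the action of $t$; as $t$ ranges over the congruence-type neighbourhoods of $1$ in $\bT(K)$ (i.e. the preimages in $\bT(K)$ of the congruence subgroups of an affine formal model $\cT$ of $\bT$), the image of $\cO(\bU)^\circ$ under $t - 1$ lies in deeper and deeper $\pi$-adic neighbourhoods of $0$ in $\cO(\bU)^\circ$, because the coefficients $F^{(j)}_\alpha, G^{(j)}_\alpha$ appearing in the formula for $a^\sharp(y_j)$ in the proof of Proposition~\ref{RigCtsAction} lie in the augmentation ideal and hence specialise into higher powers of $\pi$ as $t\to 1$. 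Concretely, choosing a formal model $\cB$ of $\bU$ and using Theorem~\ref{AutRigidTop}, one sees that for every $r\geq 0$ there is an open neighbourhood of $1$ in $\bT(K)$ mapping into the congruence subgroup $\cG_{\pi^r}(\cB)_{\rig}$ of $\Aut_K(\bU,\cO_\bU)$; this is precisely continuity at the identity, and since both source and target are topological groups, continuity everywhere follows. Finally, combining the openness of all the stabilisers with these local continuity statements, Proposition~\ref{CtyIsLocal} yields that $\G(K)$ acts continuously on $\bX$.

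The main obstacle is the continuity estimate in the last paragraph: one must extract from the formula for the action map a genuine bound showing that points of $\bG$ close to the identity act by automorphisms close to the identity in the $\cT_{\cB}$-topology of $\Aut_K(\bU,\cO_\bU)$. This requires bookkeeping with the coefficients $F^{(j)}_\alpha,G^{(j)}_\alpha$ and a careful choice of formal model for $\bU$ compatible with the affinoid $\bT$ produced by Proposition~\ref{RigCtsAction}; one may need to shrink $\bT$ (i.e. enlarge the exponent $N$ in its definition) to arrange that the action preserves a chosen formal model $\cB$ of $\bU$ modulo a prescribed power of $\pi$. Everything else is a matter of assembling Proposition~\ref{RigCtsAction}, Proposition~\ref{CtyIsLocal}, Theorem~\ref{AutRigidTop} and the definition of the topology on $\G(K)$.
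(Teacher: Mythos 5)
Your proposal is correct and follows the same route as the paper: reduce to the affinoid pieces $\bU_{j,N}$ via Proposition~\ref{CtyIsLocal}, invoke Proposition~\ref{RigCtsAction} to produce the affinoid neighbourhood $\bT$ of the identity with $\bT\cdot\bU\subseteq\bU$, and extract openness of the stabiliser and continuity from the explicit formula for $a^\sharp(y_j)$. The only cosmetic difference is in how you deduce that $\bT(K)$ lies in the stabiliser: you symmetrise by passing to $\bT\cap\bT^{-1}$, whereas the paper observes directly (from the same formula) that the resulting endomorphisms of $\cO(\bU)$ preserve the affine formal model $\cB$ generated by the $y_j$ and are congruent to the identity modulo $\pi$, hence lie in $\cG_\pi(\cB)$ and in particular are automorphisms --- both give the same conclusion. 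Your identification of the continuity estimate as the point requiring care is accurate; the paper is equally terse there, and the mechanism you describe (the coefficients $F^{(j)}_\alpha,G^{(j)}_\alpha$ lie in the augmentation ideal, so shrinking $\bT$ pushes their values into deeper powers of $\pi$, sending $\bT(K)$ into $\cG_{\pi^r}(\cB)$ for any prescribed $r$) is the intended argument.
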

\begin{proof} By \cite[Lemma 5.4/2]{BoschFRG}, the canonical map $\bG(K) \to \G(K)$ is a bijection, and we will equip $\G(K)$ with the topology where a fundamental system of neighbourhoods of the identity element is given by the subsets $\bT(K)$ of $\G(K)$, as $\bT$ ranges over the affinoid subdomains of $\bG$ containing the identity.  The action $\G \times \X \to \X$ induces a homomorphism of group functors $\G \to \zAut(\X)$, which in turn induces a group homomorphism $\G(K) \to \Aut(\X, \cO_{\X})$. On the other hand, the functoriality of rigid analytification gives a group homomorphism $\Aut(\X, \cO_{\X}) \to \Aut(\bX, \cO_{\bX})$ and therefore an action $\G(K) \to \Aut(\bX, \cO_{\bX})$ of $\G(K)$ on $\bX$.  Of course the action of $\G(K)$ on the underlying set of $\bX$ agrees with the restriction of the action of $\bG$ on $\bX$ to its subgroup of $K$-rational points $\G(K) = \bG(K) \subset \bG$.

Let $\{\Y_j\}$ be an affine covering of $\X$ with each $\Y_j$ of finite presentation, and for each index $j$ fix a presentation $K[y_1^{(j)}, \ldots, y_{n_j}^{(j)}] / \fr{a}_j \cong \cO(\Y_j)$. Then for each $N \in \N$,
\[\bU_{j,N} := \Sp K \left\langle \pi^N y_1^{(j)}, \ldots, \pi^N y_{n_j}^{(j)} \right\rangle / \langle \fr{a}_j \rangle\]
is an affinoid subdomain of $\bY_j := \Y_j^{\rig} \subset \bX$ and $\{\bU_{j,N}\}$ forms an admissible affinoid covering of $\bX$. Fix the index $j$ and the natural number $N$, write $\bU := \bU_{j,N}$ and drop the subscripts and superscripts $j$ from the notation. By Proposition \ref{CtyIsLocal}, it will be enough to check that the stabiliser $\G(K)_\bU$ of $\bU$ in $\G(K)$ is open in $\G(K)$, and that the map $\G(K)_{\bU} \to \Aut(\bU, \cO_{\bU})$ is continuous. Now, because $K[\pi^N y_1, \ldots, \pi^N y_n] / \langle \fr{a} \rangle$ is still a presentation of $\cO(\Y)$, Proposition \ref{RigCtsAction} gives an affinoid subdomain $\bT$ of $\bG$ containing $1$ such that $\bT \cdot \bU \subseteq \bU$. We also see from the proof of this Proposition that the image of the resulting map $\bT(K) \to \End_K \cO(\bU)$ consists of invertible elements that preserve the closure of the $\cR$-subalgebra of $\cO(\bU)$ generated by $y_1,\ldots, y_n$. It follows that $\bT(K)$ is contained in $\G(K)_{\bU}$ and that the map $\bT(K) \to \Aut(\bU, \cO_{\bU})$ is continuous.
\end{proof}

\subsection{The Localisation Theorem for \ts{\wUg{G}}} \label{LocUgG}
Let $\G_0$ be a connected, simply connected, split semisimple affine algebraic group scheme over $\cR$ and let $\G := \G_0 \otimes_{\cR} K$ be its generic fibre. Let $G$ be a $p$-adic Lie group and let $\sigma : G \to \G(K)$ be a continuous group homomorphism.

Because $\G$ is semisimple, the centre of its Lie algebra $\fr{g}$ is zero. Thus we see that Hypotheses \ref{EqUgCapNotn} and \ref{ZgZero} is satisfied, so we have at our disposal the completed skew-group algebra $\wUg{G}$ from Definition \ref{wUgDefn}. On the other hand, whenever $G_0$ is a compact open subgroup of $\sigma^{-1} \cG_0(\cR)$,  Hypothesis \ref{SigmaGG} is satisfied for the group $G_0$ and thus we have at our disposal the completed skew-group algebra $\w\cD(\bG_0, G_0)^{\bG_0}$ from Definition \ref{wDGGDefn}.  We will now relate the two constructions.

We continue with the notation from $\S \ref{wDGGsect}$, so that $\bG_0 := (\h{\G_0})_{\rig}$ is an affinoid subdomain of the rigid analytic group $\bG := \G^{\rig}$ and $\cA$ denotes the $\cR$-algebra $\cO(\h\G_0)$.

\begin{lem}\label{RelatingTrivs} Let $H$ be an open subgroup of $G_0$, let $\cJ$ be an $H$-stable Lie lattice in $\fr{g}$ contained in $\fr{g}_0$, and let $\cL = \cA \cdot \gamma'(\cJ)$. Then
\be \item $H_{\cL} \leq H_{\cJ}$, and
\item the restriction of $\exp \circ \iota \circ \delta_{\cJ}$ to $H_{\cL}$ equals $\theta^{-1} \circ \beta_{\cL}$.
\ee
\end{lem}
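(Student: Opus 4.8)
The plan is to deduce both parts from Proposition \ref{ThetaBeta}, applied with the group $H$ in place of the group of Hypothesis \ref{SigmaGG}. This is legitimate: $H$ is an open, hence compact, subgroup of $G_0$, the restriction $\sigma_{|H}$ is continuous, so Hypothesis \ref{SigmaGG} holds for $(\G_0,H,\sigma_{|H})$; since $\cJ\subseteq\fr{g}_0$ is an $H$-stable Lie lattice, Lemma \ref{ALLie} shows that $\cL=\cA\cdot\gamma'(\cJ)$ is an $H$-stable free $\cA$-Lie lattice in $\cT(\bG_0)$, the map $\theta\colon\hK{U(\cJ)}\to\hK{U(\cL)}$ extending $\gamma'$ is injective by Proposition \ref{ThetaBeta}(a), and for every $g\in H_{\cL}$ Proposition \ref{ThetaBeta}(b) supplies a unique element $v=v(g)\in\cJ$ with $\rho(g)=\exp(p^\epsilon\gamma'(v))$ and $\beta_{\cL}(g)=\theta(\exp(p^\epsilon\iota(v)))$, where $\rho$ denotes the left-translation action of $H$ on $\cA\otimes_{\cR}K=\cO(\bG_0)$.

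Fix $h\in H_{\cL}$, write $v:=v(h)$ and $w:=\log\rho(h)=p^\epsilon\gamma'(v)\in p^\epsilon\cL\subseteq p^\epsilon\End_{\cR}(\cA)$. The heart of the argument is to identify $\Ad(\sigma(h))$ with $(\exp\circ\ad)(p^\epsilon v)$ on $\fr{g}_0$. By Lemma \ref{EquivariantInfAction} — equivalently, by the equivariance of $\gamma'$ used in the proof of Lemma \ref{ALLie}(d) — the map $\gamma'$ intertwines the adjoint action $\Ad\circ\sigma$ of $H$ on $\fr{g}_0$ with the conjugation action $\dot{\rho}$ of $H$ on $\cT(\h\G_0)$, so that for every $u\in\fr{g}_0$
\[ \gamma'\bigl(\Ad(\sigma(h))(u)\bigr)=\dot{\rho}(h)\bigl(\gamma'(u)\bigr)=\rho(h)\circ\gamma'(u)\circ\rho(h)^{-1}=\exp(w)\,\gamma'(u)\,\exp(-w). \]
Since $w\in p^\epsilon\End_{\cR}(\cA)$, conjugation by $\exp(w)$ in this $\pi$-adically complete $\cR$-algebra equals $\exp(\ad w)$ applied to $\gamma'(u)$ (cf.\ \cite[Exercise 6.12]{DDMS}); because $\gamma'$ is a homomorphism of $\cR$-Lie algebras, an easy induction gives $\ad(w)^n(\gamma'(u))=\gamma'\bigl(\ad(p^\epsilon v)^n(u)\bigr)$, and as $\ad(p^\epsilon v)$ maps $\cJ$ into $p^\epsilon\cJ$ the series $\sum_n\tfrac1{n!}\ad(p^\epsilon v)^n(u)$ converges in $\cJ$ to $\exp(\ad(p^\epsilon v))(u)$. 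The continuity of the $\cR$-linear map $\gamma'$ then yields $\gamma'\bigl(\Ad(\sigma(h))(u)\bigr)=\gamma'\bigl(\exp(\ad(p^\epsilon v))(u)\bigr)$ for all $u\in\fr{g}_0$, and the injectivity of $\gamma'$ (Proposition \ref{ThetaBeta}(a), via Propositions \ref{LieTriangle} and \ref{FormalVectFields}) gives
\[ \Ad(\sigma(h))_{|\cJ}=(\exp\circ\ad)(p^\epsilon v),\qquad p^\epsilon v\in p^\epsilon\cJ. \]

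By Definition \ref{HJAdj} this displayed identity says precisely that $h$ lies in the pullback $H_{\cJ}$, which proves (a); and since $Z(\fr{g})=0$ makes $\exp\circ\ad$ injective on $p^\epsilon\cJ$, it also identifies $\delta_{\cJ}(h)=p^\epsilon v$. Part (b) is now immediate: using the $\cR$-linearity of $\iota$ and the second formula of Proposition \ref{ThetaBeta}(b) (whose right-hand side lies in the image of the injective map $\theta$),
\[ (\exp\circ\iota\circ\delta_{\cJ})(h)=\exp\bigl(p^\epsilon\iota(v)\bigr)=\theta^{-1}\Bigl(\theta\bigl(\exp(p^\epsilon\iota(v))\bigr)\Bigr)=\theta^{-1}\bigl(\beta_{\cL}(h)\bigr). \]

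The main obstacle is the middle paragraph: one must match the two ``coordinate systems'' for the group near the identity — the left-translation action on $\cO(\h\G_0)$, whose logarithm is $p^\epsilon\gamma'(v)$, and the adjoint action on $\fr{g}$, whose logarithm is $p^\epsilon v$ — which comes down to checking that the abstract conjugation action of $H$ on $\cT(\h\G_0)$ occurring in Lemma \ref{EquivariantInfAction} is literally conjugation by $\rho(h)=\exp(\log\rho(h))$ inside $\End_{\cR}(\cA)$, and that the continuous Lie-algebra homomorphism $\gamma'$ commutes with the exponential series $\exp\circ\ad$. The remaining steps are routine bookkeeping with the definitions of $H_{\cL}$, $H_{\cJ}$, $\delta_{\cJ}$ and $\beta_{\cL}$.
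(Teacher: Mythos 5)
Your proposal is correct and follows essentially the same route as the paper's proof. Both arguments invoke Proposition \ref{ThetaBeta}(b) to produce the element $v\in\cJ$, use the equivariance of $\gamma'$ (Lemma \ref{EquivariantInfAction}) together with \cite[Exercise 6.12]{DDMS} and the fact that $\gamma'$ is an $\cR$-Lie algebra homomorphism to identify $\gamma'\bigl(\Ad(\sigma(h))(w)\bigr)=\gamma'\bigl(\exp(p^\epsilon\ad(v))(w)\bigr)$, and then cancel the injective map $\gamma'$ to read off $\Ad(\sigma(h))=\exp(p^\epsilon\ad(v))$, whence $h\in H_{\cJ}$ and $\delta_{\cJ}(h)=p^\epsilon v$, from which (b) follows immediately. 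Your write-up merely spells out more carefully the preliminary verification of Hypothesis \ref{SigmaGG} for $(\G_0,H,\sigma_{|H})$ and the convergence of the exponential series, steps which the paper leaves implicit.
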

\begin{proof} Let $h \in H_{\cL}$. By Proposition \ref{ThetaBeta}(b), there is a unique $v \in \cJ$ such that $\rho(h) = \exp(p^\epsilon \gamma'(v))$ and $\beta_{\cL}(h) = \theta(\exp(p^\epsilon \iota(v)))$. Now if $w \in \cJ$, then
\[\begin{array}{lll} \gamma'(\Ad(\sigma(h))(w)) &=& \rho(h) \circ \gamma'(w) \circ \rho(h)^{-1} = \\
&=& \exp( p^\epsilon \gamma'(v)) \circ \gamma'(w) \circ \exp(-p^\epsilon \gamma'(v)) = \\
&=& \exp( p^\epsilon \ad(\gamma'(v)) )(\gamma'(w)) = \\
&=& \gamma'\left( \exp(p^\epsilon \ad(v))(w) \right). \end{array}\]
Here we have used Lemma \ref{EquivariantInfAction} on the first line, \cite[Exercise 6.12]{DDMS} on the third line and the fact that $\gamma'$ is an $\cR$-Lie algebra homomorphism on the last line. Since $\gamma'$ is injective by Proposition \ref{LieTriangle}, we deduce that $\Ad(\sigma(h)) = \exp(p^\epsilon \ad(v))$. Applying Definition \ref{HJAdj}, we see that  $h \in H_{\cJ}$ and $\delta_{\cJ}(h) = p^\epsilon v$. Therefore
\[\exp(\iota(\delta_{\cJ}(h))) = \exp(\iota(p^\epsilon v)) = \theta^{-1}(\beta_{\cL}(h))\]
as required.
\end{proof}

\begin{prop}\label{wUgH=wdGH}  There is a continuous $K$-algebra isomorphism 
\[\kappa_H : \wUg{H} \congs \w\cD(\bG_0, H)^{\bG_0}\]
for every compact open subgroup $H$ of $\sigma^{-1}\G_0(\cR)$, such that the diagram
\[ \xymatrix{ \wUg{J} \ar[rr]^{\kappa_J} \ar[d] && \w\cD(\bG_0, J)^{\bG_0}\ar[d] \\ \wUg{H} \ar[rr]_{\kappa_H}  && \w\cD(\bG_0, H)^{\bG_0} } \]
commutes whenever $J \leq H$ are compact open subgroups of $G_0$.
\end{prop}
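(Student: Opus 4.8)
The strategy is to build the isomorphism $\kappa_H$ at the finite level, for each Lie lattice, and then pass to the inverse limit. Fix a compact open subgroup $H$ of $\sigma^{-1}\G_0(\cR)$. Recall that $\wUg{H} = \invlim_{(\cJ,N) \in \cJ(H)} \hK{U(\cJ)} \rtimes_N H$, where the limit is over $H$-stable Lie lattices $\cJ$ in $\fr{g}$ together with compatible open normal subgroups $N$, whereas $\w\cD(\bG_0, H)^{\bG_0} = \invlim_{(\cJ', N')} \hK{U(\cJ')} \rtimes_{N'} H$, where now $\cJ'$ runs over $H$-stable Lie lattices in $\fr{g}$ and $N'$ over open normal subgroups of $H$ contained in $H_{\cA \cdot \gamma'(\cJ')}$ --- see Definition \ref{wDGGDefn}. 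The first observation is that the Lie lattices $\cJ$ contained in $\fr{g}_0$ are cofinal among all $H$-stable Lie lattices in $\fr{g}$ (any $H$-stable Lie lattice contains an $H$-stable $\pi$-power multiple of $\fr{g}$, hence one contained in $\fr{g}_0$, by intersecting over the finite $H$-orbit). Similarly, since $\cA \cdot \gamma'(\pi^n\fr{g}_0)$ is an $\cA$-Lie lattice in $\cT(\h\G_0)$ and $H_{\cA \cdot \gamma'(\pi^n\fr{g}_0)}$ is open by Theorem \ref{Beta}(a), the pairs $(\pi^n \fr{g}_0, N_n)$ for good chains $(N_\bullet)$ are cofinal in $\cJ(H)$ (Lemma \ref{StdPres}), and the same pairs are cofinal in the indexing set for $\w\cD(\bG_0, H)^{\bG_0}$ (as in the proof of Lemma \ref{StdPres} and Proposition \ref{StdPresWDGG}).

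Next, for each such cofinal pair $(\cJ, N)$ with $\cJ \subseteq \fr{g}_0$, set $\cL := \cA \cdot \gamma'(\cJ)$, which is a $G$-stable $\cA$-Lie lattice in $\cT(\bG_0)$ by Lemma \ref{ALLie}. The key point is that the $N$-equivariant trivialisation of the $N$-action on $\hK{U(\cJ)}$ used in $\wUg{H}$ --- namely the restriction of $\exp \circ \iota \circ \delta_{\cJ}$ from Proposition \ref{HJgroup}(b) --- agrees, on $N \subseteq H_{\cL} \leq H_{\cJ}$, with the restriction of $\theta^{-1} \circ \beta_{\cL}$ used (via $\theta$) in $\w\cD(\bG_0, H)^{\bG_0}$; this is exactly the content of Lemma \ref{RelatingTrivs}. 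Therefore the identity map on $\hK{U(\cJ)}$ together with the identity on $H$ satisfies the hypotheses of Lemma \ref{CPfunc} (with $f = \mathrm{id}_{\hK{U(\cJ)}}$, $\tau = \mathrm{id}_H$, $N = N' = N$), and yields a $K$-algebra \emph{isomorphism}
\[ \kappa_{\cJ, N} : \hK{U(\cJ)} \rtimes_N H \congs \hK{U(\cJ)} \rtimes_N H \]
where the source is the skew-group ring appearing in $\wUg{H}$ and the target is the one appearing in $\w\cD(\bG_0, H)^{\bG_0}$. One must check these $\kappa_{\cJ,N}$ are compatible with the connecting maps as $(\cJ, N)$ varies: for $\cJ_2 \subseteq \cJ_1$ the connecting homomorphisms on both sides are induced by $U(\gamma')$-functoriality (Lemma \ref{Ufunctor}, Lemma \ref{CPfunc}), and the naturality statements in Proposition \ref{hUGfunc} and Proposition \ref{ThetaBeta}(a) together with Lemma \ref{RelatingTrivs} ensure the relevant squares commute. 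Passing to the inverse limit over the cofinal system produces the continuous $K$-algebra isomorphism $\kappa_H : \wUg{H} \congs \w\cD(\bG_0, H)^{\bG_0}$; continuity is automatic since $\kappa_H$ is an inverse limit of $K$-Banach algebra isomorphisms (isometries for the natural unit-ball norms).

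Finally, the compatibility with restriction to a subgroup $J \leq H$: by Proposition \ref{IHIGcofinal} and Lemma \ref{CofinalTwo}, the pairs $(\pi^n \fr{g}_0, N_n)$ with $N_n \triangleleft_o H$ are simultaneously cofinal in the indexing systems for all four algebras $\wUg{J}$, $\wUg{H}$, $\w\cD(\bG_0, J)^{\bG_0}$, $\w\cD(\bG_0, H)^{\bG_0}$. At each such finite level the square of skew-group rings $\hK{U(\cJ)} \rtimes_{N_n} J \to \hK{U(\cJ)} \rtimes_{N_n} H$ (with the identity coefficient map on $\hK{U(\cJ)}$ and the inclusion $J \hookrightarrow H$) commutes with $\kappa_{\cJ,N_n}$ by the uniqueness clause in Lemma \ref{CPfunc}; then pass to the limit. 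I expect the main obstacle to be bookkeeping: carefully verifying that the two a priori different trivialisations genuinely match on the common cofinal system (so that Lemma \ref{CPfunc} applies with $f$ and $\tau$ the identity), and then chasing the functoriality diagrams of Proposition \ref{hUGfunc} and Proposition \ref{ThetaBeta} to confirm the $\kappa_{\cJ,N}$ assemble compatibly --- the individual identifications are each straightforward, but there are several of them to line up.
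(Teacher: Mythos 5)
Your proposal is correct and follows essentially the same route as the paper: establish that the pairs $(\cJ,N)$ with $\cJ \subseteq \fr{g}_0$ form a common cofinal indexing system for both inverse limits, invoke Lemma \ref{RelatingTrivs} to see the two trivialisations agree there, and pass to the limit. One small simplification: once Lemma \ref{RelatingTrivs}(b) shows the trivialisations coincide, the two crossed products $\hK{U(\cJ)}\rtimes_N H$ are literally the \emph{same} ring (same quotient of the same skew-group ring), so the finite-level maps $\kappa_{\cJ,N}$ are identity maps and the appeal to Lemma \ref{CPfunc} and the subsequent compatibility checks are automatic rather than something requiring verification.
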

\begin{proof} Let $(\cJ,N) \in \cK(H)$, where $\cK(H)$ is the indexing set appearing in Definition \ref{wDGGDefn}. Thus, $\cJ$ is an $H$-stable Lie lattice in $\fr{g}$ contained in $\fr{g}_0$ and $N$ is an open normal subgroup of $H$ contained in $H_{\cA \cdot \gamma'(\cJ)}$. Hence $N \leq H_{\cJ}$ by Lemma \ref{RelatingTrivs}(a), so $(\cJ, N) \in \cJ(H)$ in the notation of Definition \ref{wUgHDefn}. Thus, $\cK(H) \subseteq \cJ(H)$.

Now if $(\cL, N)$ is some other member of $\cJ(H)$, then because $\fr{g}$ is finite dimensional over $K$, $\pi^n \cL \subseteq \fr{g}_0$ for some $n \geq 0$. Therefore
\[\cK(H) \ni (\pi^n \cL, N \cap H_{\cA \cdot \gamma'(\pi^n\cL)}) \leq (\cL, N)\]
which means that $\cK(H)$ is cofinal in $\cJ(H)$. Finally, for any $(\cJ,N) \in \cK(H)$, the two possible trivialisations of the $N$-action on $\hK{U(\cJ)}$ appearing in Definitions \ref{wDGGDefn} and \ref{wUgHDefn} agree by Lemma \ref{RelatingTrivs}(b). The result now follows easily.
\end{proof}

Now we fix a closed and flat Borel $\cR$-subgroup scheme $\B_0$ of $\G_0$, and we set $\B := \B_0 \otimes_{\cR} K$. Let $\X_0 := \G_0 / \B_0$, $\X := \G / B$, $\bX_0 := (\h\X_0)_{\rig}$ and $\bX := \X^{\rig}$. 

\begin{prop}\label{X0andXagree} The canonical map $\bX_0 \to \bX$ is an isomorphism.
\end{prop}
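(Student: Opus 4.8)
The plan is to reduce everything to the properness of the flag variety and then invoke the standard comparison between the Raynaud generic fibre of a proper admissible formal $\cR$-scheme and the rigid analytification of the associated proper $K$-scheme. Since $\G_0$ is split semisimple (hence smooth, and in particular flat) over $\cR$ and $\B_0$ is a closed and flat Borel $\cR$-subgroup scheme, the quotient $\X_0 = \G_0/\B_0$ is projective --- hence proper --- over $\Spec(\cR)$ by \cite[II.1.8, II.1.10]{Jantzen}, and the quotient map $\G_0 \to \X_0$ being faithfully flat, $\X_0$ is itself flat over $\cR$. Consequently $\X = \X_0 \otimes_{\cR} K$ is projective over $K$, both $\h{\X_0}$ and $\bX = \X^{\rig}$ are defined, and we have the canonical comparison morphism $\iota\colon \bX_0 = (\h{\X_0})_{\rig} \to \bX$. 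I will show $\iota$ is an open immersion and then that it is surjective, whence an isomorphism; one may alternatively quote the comparison theorem directly (see, e.g., \cite{BoschFRG}).

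\textbf{Step 1: $\iota$ is an open immersion.} Choose a finite affine open cover $\{\Spec(A_i)\}$ of $\X_0$. Each $A_i$ is of finite type over $\cR$ (as $\X_0$ is) and flat over $\cR$ (flatness being local), hence finitely presented over $\cR$ by Corollary \ref{RGCor}; so $\h{A_i}$ is an admissible $\cR$-algebra by (the proof of) Proposition \ref{GpOfPtsActsCtsly}(a), and $(\Spf \h{A_i})_{\rig} = \Sp(\h{A_i}\otimes_{\cR} K)$ is affinoid. Writing $A_i = \cR[y_1,\ldots,y_{n}]/\fr{a}'$ with $\fr{a}' = \fr{a}\cap \cR[y_1,\ldots,y_n]$ the $\pi$-saturated ideal (which is finitely generated, by finite presentation) defining the $\cR$-flat $A_i$, one gets $\h{A_i} = \cR\langle y_1,\ldots,y_n\rangle/\fr{a}'\cR\langle y_1,\ldots,y_n\rangle$ and therefore $\h{A_i}\otimes_{\cR} K \cong T_n/\fr{a}$, where $T_n := K\langle y_1,\ldots,y_n\rangle$. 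On the other hand $A_i\otimes_{\cR} K = K[y_1,\ldots,y_n]/\fr{a}$, so $(\Spec(A_i\otimes_{\cR}K))^{\rig}$ is the increasing union of the affinoids $\Sp(K\langle \pi^r y_1,\ldots,\pi^r y_n\rangle/\fr{a})$ for $r\geq 0$, of which the term $r=0$ is precisely $\Sp(T_n/\fr{a}) = (\Spf \h{A_i})_{\rig}$. Thus $(\Spf \h{A_i})_{\rig}$ is naturally an affinoid subdomain of $(\Spec(A_i\otimes_{\cR}K))^{\rig}$; these inclusions are compatible on overlaps (by restriction to the intersection charts), and gluing gives the open immersion $\iota\colon \bX_0 \hookrightarrow \bX$.

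\textbf{Step 2: $\iota$ is surjective, hence an isomorphism.} This is the conceptual heart, and is exactly the valuative criterion of properness: given a rigid point of $\bX$, i.e.\ an $L$-point of $\X$ for a complete valued extension $L/K$ (after enlarging $L$ if necessary), properness of $\X_0$ over $\cR$ shows that this $L$-point extends to an $\cO_L$-point of $\X_0$; applying $\pi$-adic completion and then the generic-fibre functor produces an $L$-point of $\bX_0$ mapping to the given point of $\bX$. Hence $\iota$ identifies $\bX_0$ with a quasi-compact admissible open subvariety of the quasi-compact variety $\bX$ that contains every point of $\bX$. Covering $\bX$ by finitely many affinoids $\bV_1,\dots,\bV_m$, each $\bX_0\cap\bV_j$ is a quasi-compact admissible open of $\bV_j$ containing all points of $\bV_j$; by Gerritzen--Grauert \cite[Theorem 7.3.5/1]{BGR} it is a finite union of affinoid subdomains of $\bV_j$, and a system of affinoid subdomains covering $\bV_j$ set-theoretically is an admissible covering of $\bV_j$, forcing $\bX_0\cap\bV_j = \bV_j$. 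Therefore $\bX_0 = \bX$ and $\iota$ is an isomorphism.

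\textbf{Main obstacle.} There is no deep difficulty; the one point needing genuine care is Step 1, namely the precise identification of $\h{A_i}\otimes_{\cR}K$ with the ``radius $1$'' affinoid subdomain of the algebraic analytification $(\Spec(A_i\otimes_{\cR}K))^{\rig}$ and the verification that these glue to an open immersion $\iota$. This rests on the finite presentation of $A_i$ over the (possibly non-Noetherian) valuation ring $\cR$, which is supplied by Corollary \ref{RGCor}, and on the $\cR$-flatness of $\X_0$ (so that the defining ideals are $\pi$-saturated and no $\pi$-torsion is introduced upon completion). Everything after that is formal: surjectivity is the valuative criterion, and ``open immersion $+$ bijective on points $\Rightarrow$ isomorphism'' for quasi-compact rigid varieties is Gerritzen--Grauert.
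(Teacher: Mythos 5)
Your proof is correct and follows essentially the same route as the paper: establish that $\X_0$ is flat and projective over $\cR$, and then invoke the comparison between the Raynaud generic fibre of the formal completion and the rigid analytification of the generic fibre for proper flat schemes. The paper's proof is terser (it derives flatness of $\X_0$ by covering it with Weyl translates of the big cell, which are affine spaces over $\cR$, whereas you use faithfully flat descent along $\G_0 \to \X_0$ — both are valid), and it simply states ``The result follows'' where you spell out the open-immersion-plus-surjectivity argument; but the mathematical content is the same.
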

\begin{proof} The flag variety $\X_0$ can be covered in the Zariski topology by Weyl-translates of the big cell, by \cite[\S II.1.10(1)]{Jantzen}. Since the big cell is isomorphic to an affine space over $\cR$, it is flat over $\cR$. Hence $\X_0$ is flat over $\cR$. On the other hand, $\X_0$ is projective over $\cR$ by \cite[\S II.1.8]{Jantzen}. The result follows. \end{proof}

We will henceforth identify $\bX_0$ with $\bX$.

\begin{thm}\label{wUgAction} $\wUg{G}$ acts on $\bX$ compatibly with $G$. \end{thm}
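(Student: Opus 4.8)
The plan is to verify the axioms of Definition \ref{AactsCompatibly} for $A = \wUg{G}$, reducing everything to the already-established Theorem \ref{DGGaction}, which says that $\w\cD(\bG_0, G_0)^{\bG_0}$ acts compatibly with any compact open subgroup $G_0$ of $\sigma^{-1}\G_0(\cR)$, together with the comparison isomorphisms $\kappa_H$ of Proposition \ref{wUgH=wdGH}. First I would fix a compact open subgroup $G_0$ of $G$ contained in $\sigma^{-1}\G_0(\cR)$; by Proposition \ref{wUgGood} and Definition \ref{wUgDefn} we have $\wUg{G} = \wUg{G_0} \otimes_{K[G_0]} K[G]$, and there is a canonical group homomorphism $\eta : G \to \wUg{G}^\times$ coming from the $F(G)$-formalism (Corollary \ref{FGproduct}), which restricts on each compact open $H \leq G_0$ to the trivialisation built into $\wUg{H}$. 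For each compact open subgroup $H$ of $G$, set $A_H := \wUg{H}$; this is a Fréchet-Stein $K$-subalgebra of $A$ by Theorem \ref{UgHFrSt} and Proposition \ref{wUgGood}. For each $\bU \in \bX_w/H$ I would define $\varphi^H(\bU)$ as follows: one may shrink $H$ (and by axiom (a), coadmissibility is insensitive to this by Lemma \ref{ANAH}) so that $H \leq G_0$; then $\kappa_H : \wUg{H} \congs \w\cD(\bG_0,H)^{\bG_0}$ composed with the compatible action map $\varphi^H_{\bG_0}(\bU) : \w\cD(\bG_0,H)^{\bG_0} \to \w\cD(\bU,H)$ provided by Theorem \ref{DGGaction} (using Proposition \ref{X0andXagree} to identify $\bX$ with $\bX_0$) gives the required continuous homomorphism of presheaves on $\bX_w/H$.

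Next I would check the four axioms. Axiom (a), that $A_H \leq A_N$, $\eta(H) \subseteq A_H^\times$, and $A_H \otimes_{K[H]} K[N] \congs A_N$, is exactly the content of the "good" structure on $(\wUg{-}, \iota)$: the bijectivity of $t_{H,N}$ is Theorem \ref{MainFG}(c) combined with Proposition \ref{wUgGood}(c), and $\eta(H) \subseteq A_H^\times$ holds by Lemma \ref{SigmaRhoIota}. Axiom (b), the commutativity of the square relating $\varphi^H$ and $\varphi^N$, follows for $H \leq N \leq G_0$ from the corresponding square for $\w\cD(\bG_0,-)^{\bG_0}$ in Theorem \ref{DGGaction} together with the compatibility of the $\kappa$'s established in Proposition \ref{wUgH=wdGH}; the general case reduces to this by intersecting with $G_0$. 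Axiom (c), that $\Ad_{\eta(g)}$ sends $A_H$ into $A_{gHg^{-1}}$ compatibly with the $\w{g}_{\bU,H}$ isomorphisms, follows from the $G$-equivariance structure $\varphi_g(H) : \wUg{H} \to \wUg{gHg^{-1}}$ constructed in the proof of Proposition \ref{wUgGood} (which is built from $\Ad(\sigma(g))$ on Lie lattices), from Lemma \ref{GconjImRho} which identifies conjugation by $\eta(g)$ with $\varphi_g$, and again from Theorem \ref{DGGaction} applied to $\bG_0$ after shrinking $H$. Axiom (d), $\varphi^H \circ \eta_{|H} = \gamma^H$, says that the trivialisation built into $\wUg{H}$ maps under $\varphi^H(\bU)$ to the canonical $\gamma^H : H \to \w\cD(\bU,H)^\times$; this is a direct consequence of axiom (d) for $\w\cD(\bG_0,H)^{\bG_0}$ (part of Theorem \ref{DGGaction}) and the fact that $\kappa_H$ intertwines the two trivialisations, which is precisely Lemma \ref{RelatingTrivs}(b)/Proposition \ref{wUgH=wdGH}.

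The main obstacle, and the point requiring the most care, is the reduction from general $H$ to $H \leq G_0$ and the verification that the resulting maps $\varphi^H(\bU)$ are well-defined independently of the auxiliary choices (the compact open subgroup $G_0$, the formal model $\cG_0$, the Lie lattices). Concretely, different choices of $G_0$ (equivalently different affinoid models $\bG_0$ of the identity neighbourhood in $\bG$) must give the same homomorphism $\wUg{H} \to \w\cD(\bU,H)$; this is where one uses that $\wUg{H}$ is intrinsic to the $K$-group $\G$ — it is built from \emph{all} $H$-stable Lie lattices in $\fr{g}$ rather than those coming from a fixed $\cR$-form — together with the cofinality statement in the proof of Proposition \ref{wUgH=wdGH} that $\cK(H)$ is cofinal in $\cJ(H)$, which shows the two constructions of the completed skew-group algebra agree and that the transition maps into $\w\cD(\bU,H)$ are compatible. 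Once this independence is pinned down, continuity of $\varphi^H(\bU)$ follows because each $\varphi^H(\bU)$ agrees with a continuous map on the dense image of $U(\fr{g}_K) \rtimes H$, exactly as in the proof of Theorem \ref{DGGaction}; I would phrase the bulk of the argument by saying "the verification of axioms (a)--(d) is now entirely parallel to the proof of Theorem \ref{DGGaction}, using Proposition \ref{wUgH=wdGH} to transport the relevant diagrams, and we leave the remaining details to the reader."
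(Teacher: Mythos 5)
Your overall strategy matches the paper's: fix a compact open $G_0 \leq \sigma^{-1}\G_0(\cR)$, invoke Theorem \ref{DGGaction} for $\w\cD(\bG_0,G_0)^{\bG_0}$, transport the compatible-action data through the comparison isomorphisms $\kappa_H$ of Proposition \ref{wUgH=wdGH}, and identify $\bX$ with $\bX_0$ via Proposition \ref{X0andXagree}. However, there is a concrete gap at the central step: you never actually define $\varphi^H(\bU) : \wUg{H} \to \w\cD(\bU,H)$ for an arbitrary compact open $H \leq G$ that is \emph{not} contained in $G_0$. Saying ``one may shrink $H$'' does not produce a homomorphism with source $\wUg{H}$ — it only produces one with source $\wUg{H'}$ for a smaller $H'$. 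The parenthetical appeal to Lemma \ref{ANAH} is a red herring here: that lemma concerns coadmissibility of modules, not the requirement in Definition \ref{AactsCompatibly} that a map $\varphi^H$ exist for \emph{every} compact open $H$. Indeed the issue is unavoidable: axiom (c) must hold for all $g \in G$, and $gHg^{-1}$ need not lie in $G_0$ since $G_0$ has no reason to be normal in $G$, so one cannot simply work with subgroups of $G_0$ throughout.

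The paper closes this gap explicitly. For general compact open $H$, choose an open normal subgroup $N$ of $H$ contained in $G_0$; then use the crossed product decompositions
\[
\wUg{H} \cong \wUg{N} \rtimes_N H \qmb{and} \w\cD(\bU,H) \cong \w\cD(\bU,N)\rtimes_N H
\]
(the first from Proposition \ref{wUgGood}(c), the second from Corollary \ref{Stacky}), and extend $\varphi^N(\bU)$ to $\varphi^H(\bU) := \varphi^N(\bU) \rtimes_N 1_H$ by Lemma \ref{CPfunc}, checking independence of the choice of $N$. You should insert this mechanism before attempting the verification of axioms (a)--(d). A second, minor omission: Definition \ref{AactsCompatibly} presupposes that $G$ acts continuously on $\bX$; the paper obtains this from Theorem \ref{2ndCtsAction} applied to $\G(K)$ and the continuity of $\sigma$, and you should record this step as well. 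Your discussion of ``independence of the choice of $G_0$'' is a legitimate sanity check to raise, but it is orthogonal to, and does not substitute for, the crossed-product extension step you are missing.
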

\begin{proof} By Theorem \ref{2ndCtsAction}, $\G(K)$ acts continuously on $\bX$. Since $\sigma : G \to \G(K)$ is continuous, it follows from Definition \ref{CtsAct} that $G$ also acts continuously on $\bX$. By the construction of $\wUg{G}$ --- see Corollary \ref{MapFromGtoE} --- there is a canonical group homomorphism $\eta : G \to \wUg{G}^\times$, and by Theorem \ref{UgHFrSt}, the subalgebra $\wUg{H}$ of $\wUg{G}$ is Fr\'echet-Stein whenever $H$ is a compact open subgroup of $G$. 

Fix a compact open subgroup $G_0$ of $\sigma^{-1} \cG_0(\cR)$. Then Hypotheses \ref{SigmaGG} and \ref{HgsSpace} are satisfied by the affine $\cR$-group scheme $\G_0$ and the continuous homomorphism $\sigma : G_0 \to \G_0(\cR)$. Therefore $\w\cD(\bG_0,G_0)^{\bG_0}$ acts compatibly on $\bX_0$ by Theorem \ref{DGGaction}. By Proposition \ref{wUgH=wdGH}, there is a continuous $K$-algebra isomorphism $\kappa_{G_0} : \wUg{G_0} \stackrel{\cong}{\longrightarrow} \w\cD(\bG_0,G_0)^{\bG_0}$ which sends $\wUg{N}$ onto $\w\cD(\bG_0,N)^{\bG_0}$ for every compact open subgroup $N$ of $G_0$. Hence $\wUg{G_0}$ acts on $\bX_0$ compatibly with $G_0$. Since $\bX = \bX_0$ by Proposition \ref{X0andXagree}, it follows that $\wUg{G_0}$ acts on $\bX$ compatibly with $G_0$.  Thus we have a continuous homomorphism $\varphi^N : \wUg{N} \longrightarrow \w\cD(-,N)$ on $\bX_w/N$ whenever $N$ is an open subgroup of $G_0$.

Fix a compact open subgroup $H$ of $G$ and an $H$-stable affinoid subdomain $\bU$ of $\bX$. Then $H \cap G_0$ is open in $H$. For any open normal subgroup $N$ of $H$ contained in $G_0$, we have isomorphisms
\[\wUg{H} \cong \wUg{N} \rtimes_N H \qmb{and} \w\cD(\bU, H) \cong \w\cD(\bU,N)\rtimes_NH,\]
the first by Proposition \ref{wUgGood}(c) and the second by Corollary \ref{Stacky}. Using Lemma \ref{CPfunc}, we may therefore unambiguously define 
\[ \varphi^H(\bU) : \wUg{H} \to \w\cD(\bU,H)\]
to be $\varphi^N(\bU) \rtimes_N 1_H$ for any choice of open normal subgroup $N$ of $H$ contained in $G_0$. It is now straightforward to verify that these maps commute with restrictions in $\bX_w/H$, and that conditions (a)-(d) of Definition \ref{AactsCompatibly} are satisfied.
\end{proof}

Next, we record a general topological fact about coadmissible $G$-equivariant $\cD$-modules.

\begin{lem}\label{EnhanceFrechTop} Let $\bY$ be a smooth rigid analytic variety with a continuous action of a $p$-adic Lie group $H$, let $\bZ$ be a quasi-compact open subset of $\bY$ and let $\cM \in \cC_{\bY /H}$.
\be \item The $K$-vector space $\cM(\bZ)$ carries a canonical $K$-Fr\'echet topology such that the restriction maps $\cM(\bZ) \to \cM(\bU)$ are continuous for any $\bU \in \bZ_w$.
\item For any $f : \cM \to \cN$ in $\cC_{\bY/H}$, the map $f(\bZ) : \cM(\bZ) \to \cN(\bZ)$ is continuous.
\ee\end{lem}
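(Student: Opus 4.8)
The plan is to reduce the statement to the affinoid case where the localisation equivalence of Theorem \ref{LocEquiv} applies, using a quasi-compact covering of $\bZ$ by affinoid subdomains, and then glue the resulting Fr\'echet topologies via a finite inverse limit. First I would choose a finite covering $\bZ = \bigcup_{i=1}^m \bV_i$ with each $\bV_i$ an affinoid subdomain lying in $\bY_w(\cT)$ and stabilised by some compact open subgroup $H_i$ of $H$ (possible since $H$ acts continuously, so each $H_{\bV_i}$ is open, and since $\bY$ is smooth the tangent sheaf is locally free so $\bY_w(\cT)$ is a basis). Similarly cover each pairwise intersection $\bV_i \cap \bV_j$ by finitely many affinoid subdomains $\bV_{ijk} \in \bY_w(\cT)$. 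By Proposition \ref{FrechRestr}(b), $\cM$ restricts to an object of $\cC_{\bV_i / H_{\bV_i}}$ (and likewise over the $\bV_{ijk}$), and by shrinking $H_i$ as needed we may assume each relevant pair is small in the sense of Definition \ref{DefnOfSmall}, using Lemma \ref{SmallPairsExist} and Lemma \ref{SubsOfSmall}. Then by Theorem \ref{LocEquiv} (equivalently Theorem \ref{Kiehl}), $\cM(\bV_i)$ is a coadmissible $\w\cD(\bV_i, H_i)$-module, hence carries its canonical Fr\'echet topology, and the same for $\cM(\bV_{ijk})$.

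Next I would use the sheaf property of $\cM$ on the $G$-topology $\bY_{\rig}$: the sequence
\[ 0 \longrightarrow \cM(\bZ) \longrightarrow \bigoplus_{i=1}^m \cM(\bV_i) \longrightarrow \bigoplus_{i,j,k} \cM(\bV_{ijk}) \]
is exact, exhibiting $\cM(\bZ)$ as the kernel of a continuous $K$-linear map between finite direct sums of Fr\'echet spaces (continuity of the two maps in the display following from Lemma \ref{AutoCts}, since all the maps involved are $\w\cD(\bV_{ijk}, H')$-linear for suitable common small subgroups $H'$, after further shrinking the groups). Thus $\cM(\bZ)$ inherits a Fr\'echet topology as a closed subspace of $\bigoplus_i \cM(\bV_i)$; and with this topology the restriction maps $\cM(\bZ) \to \cM(\bV_i)$ are continuous by construction, hence so are $\cM(\bZ) \to \cM(\bU)$ for any $\bU \in \bZ_w$ contained in some $\bV_i$ (using Lemma \ref{AutoCts} again on the $\w\cD$-linear map $\cM(\bV_i)(=\w\cD(\bV_i,H_i)\w\otimes\cdots) \to \cM(\bU)$), and then for arbitrary $\bU \in \bZ_w$ by covering $\bU$ by such pieces and using the sheaf property on $\bU$. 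For part (b), given $f : \cM \to \cN$ in $\cC_{\bY/H}$, the maps $f(\bV_i) : \cM(\bV_i) \to \cN(\bV_i)$ are $\cD(\bV_i) \rtimes H_i$-linear by Proposition \ref{EqGlSec} and continuous by Definition \ref{FrechDmod}(b), hence $\w\cD(\bV_i, H_i)$-linear as the source and target are coadmissible; therefore $f(\bZ)$, being the restriction of $\bigoplus_i f(\bV_i)$ to the closed subspace $\cM(\bZ)$, is continuous. The well-definedness (independence of all the choices of covering and groups) follows from the Open Mapping Theorem \cite[Proposition 8.6]{SchNFA}: a second choice yields a continuous $K$-linear bijection between the two candidate Fr\'echet topologies on $\cM(\bZ)$ which compatibly refines a common refinement, whence it is a homeomorphism --- one checks this by passing to a common refinement of the two coverings and applying part (b) to the identity morphism.

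The main obstacle I anticipate is bookkeeping the $p$-adic Lie groups: to invoke Lemma \ref{AutoCts} one needs the restriction maps and the morphism $f$ to be linear over a \emph{common} Fr\'echet-Stein algebra $\w\cD(\bV, H')$ on each affinoid piece $\bV$, so one must pass repeatedly to smaller compact open subgroups that simultaneously stabilise all the $\bV_i$ and the $\bV_{ijk}$ and are $\bV$-small for each --- this is arranged by intersecting finitely many open subgroups and applying Lemma \ref{SubsOfSmall}, but it needs to be set up carefully at the start so that every map in the \v{C}ech-type diagram is simultaneously $\w\cD(-,H')$-linear for the \emph{same} $H'$. A secondary, minor point is checking that the topology does not depend on the choices; this is purely formal once the Open Mapping Theorem is available, but it must be stated. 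Everything else is a routine assembly of results already established: Proposition \ref{FrechRestr}, Theorem \ref{LocEquiv}/Theorem \ref{Kiehl}, Lemma \ref{AutoCts}, Proposition \ref{EqGlSec}, and the closedness of kernels of continuous maps between Fr\'echet spaces.
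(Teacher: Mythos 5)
Your proof is correct and takes essentially the same approach as the paper: both choose a finite $\bZ_w(\cT)$-covering of $\bZ$ and realise $\cM(\bZ)$ as the kernel of a continuous map between finite direct sums of Fr\'echet spaces, with continuity of the constituent maps supplied by Lemma \ref{AutoCts} via Theorem \ref{LocEquiv}, and independence of the covering checked afterwards. The paper writes the \v{C}ech $\check{H}^0$ directly with sections over pairwise intersections (which still lie in $\bZ_w(\cT)$ since that class is closed under passage to affinoid subdomains), so your extra step of re-covering the intersections by $\bV_{ijk}$ is not needed but certainly not wrong; your explicit invocation of the Open Mapping Theorem to establish well-definedness makes precise what the paper leaves as ``easy to check''.
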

\begin{proof} (a) Because $\bZ$ is quasi-compact, we can choose a finite $\bZ_w(\cT)$-covering $\cV$ of $\bZ$. Now, it follows from Theorem \ref{LocEquiv} that the restriction maps $\cM(\bU) \to \cM(\bV)$ are continuous for any $\bU \in \bZ_w(\cT)$ and $\bV \in \bU_w$. Because $\cV$ is finite and because $\cM(\bU)$ is a $K$-Fr\'echet space for each $\bU \in \cV$ by Definition \ref{FrechDmod}, we see that $\check{H}^0(\cV, \cM)$ is the kernel of a continuous map between two $K$-Fr\'echet spaces. Using the isomorphism $\cM(\bZ) \stackrel{\cong}{\longrightarrow} \check{H}^0(\cV, \cM)$ we can therefore equip $\cM(\bZ)$ with a $K$-Fr\'echet topology.  It is easy to check that this topology is independent of the choice of $\cV$. 

(b) This follows immediately from the construction of the topologies on $\cM(\bZ)$ and $\cN(\bZ)$ in part (a) together with the fact that $f(\bU) : \cM(\bU) \to \cN(\bU)$ is continuous for each $\bU \in \bX_w(\cT)$ by Definition \ref{FrechDmod}(b).
\end{proof}

\begin{prop}\label{Reconstruct2} Let $H$ be a compact open subgroup of $G$, let $M \in \cC_{\wUg{H}}$ and suppose that  $\alpha : \Loc^{\w\cD(\bX,H)}_\bX(M) \congs \cM$ is an isomorphism of $H$-equivariant locally Fr\'echet $\cD$-modules on $\bX$. 
\be \item There is a unique coadmissible $\wUg{H}$-module structure on $\cM(\bX)$ such that
 \begin{enumerate}[{(}i{)}]
 \item $\eta(g) \cdot m = g^{\cM}(m)$ for all $g \in H$ and $m \in \cM(\bX)$,
 \item the topology on $\cM(\bX)$ induced by this coadmissible $\wUg{H}$-module structure coincides with the canonical $K$-Fr\'echet topology given by Lemma \ref{EnhanceFrechTop}(a),
 \item the $\wUg{H}$-action on $\cM(\bX)$ extends the natural $\cD(\bX) \rtimes H$-action on $\cM(\bX)$ given by Proposition \ref{EqGlSec}.
 \end{enumerate}
\item This $\wUg{H}$-module structure on $\cM(\bX)$ does not depend on $\alpha$.
\item There is an isomorphism of $H$-equivariant locally Fr\'echet $\cD$-modules on $\bX$
\[\theta : \Loc^{\wUg{H}}_\bX(\cM(\bX)) \congs \cM,\] 
whose restriction to $\bX_w$ is given by
\[ \theta(\bU)(s \hsp \w\otimes \hsp m) = s \cdot (m_{|\bU})\]
for any $\bU \in \bX_w$, $s \in \w\cD(\bU,H_\bU)$ and $m \in \cM(\bX)$.
\ee\end{prop}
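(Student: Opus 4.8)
The statement is almost identical in shape to Proposition \ref{Reconstruct}, but with the Fr\'echet-Stein algebra $\w\cD(\bX,H)$ replaced by its ``algebraic-group cousin'' $\wUg{H}$, and the affinoid $\bX$ replaced by the non-affinoid flag variety (so $\cM(\bX)$ is the global sections of a sheaf over a \emph{non}-affinoid space, whence the need to invoke Lemma \ref{EnhanceFrechTop}). The plan is therefore to mimic the proof of Proposition \ref{Reconstruct} closely, replacing the global-sections-over-an-affinoid argument by the global-sections-over-a-quasi-compact-open argument that Lemma \ref{EnhanceFrechTop} makes available, and to use the isomorphism $\kappa_H : \wUg{H} \congs \w\cD(\bG_0,H)^{\bG_0}$ from Proposition \ref{wUgH=wdGH} together with the compatible action established in Theorem \ref{wUgAction} (so in particular the analogue of equation $(\ref{RestOfLocShvs})$ that underlies Proposition \ref{Reconstruct}(a) holds here by Proposition \ref{LocTrans}).

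\textbf{Key steps.} First, I would observe that since $\wUg{G}$ acts on $\bX$ compatibly with $G$ by Theorem \ref{wUgAction}, the localisation functor $\Loc^{\wUg{H}}_\bX$ is defined (via $\Res$ to the subgroup $H$ and Proposition \ref{FrechRestr}(a), together with Lemma \ref{RestToHonP}), and that $\w\cD(\bX,H)$-modules and $\wUg{H}$-modules localise to the \emph{same} $H$-equivariant sheaf of $\cD$-modules: for $\bU \in \bX_w$ and $J$ a $\bU$-small subgroup of $H$ we have $\w\cD(\bU,J) \w\otimes_{\w\cD(\bX,H)} (-) = \w\cD(\bU,J) \w\otimes_{\wUg{H}} (-)$ because $\varphi^J(\bU)$ factors through $\varphi^H$ by Definition \ref{AactsCompatibly}(b) applied to $\wUg{-}$. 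Next, for part (a): equip $\cM(\bX)$ with the canonical $K$-Fr\'echet topology of Lemma \ref{EnhanceFrechTop}(a) (using $\cM \in \cC_{\bX/H}$, which holds by Proposition \ref{FrechRestr}(a) since $\cM \cong \Loc^{\w\cD(\bX,H)}_\bX(M) \in \cC_{\bX/G}$ via $\alpha$ — or rather $\cM\in\cC_{\bX/H}$ directly). The map $\psi : M \to \cM(\bX)$ given by $\psi(m) = \alpha(\bX)(1 \w\otimes m)$ is $\cD(\bX)\rtimes H$-linear by Proposition \ref{EqGlSec} and continuous: continuity is where Lemma \ref{EnhanceFrechTop}(b) applied to $\alpha$ (combined with the fact that $\Loc^{\w\cD(\bX,H)}_\bX(M)(\bX)$ carries exactly the topology of $M(\bX,H_\bX)$-type, cf. Theorem \ref{DGsheaf}) does the work. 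Since $\cD(\bX)\rtimes H$ is dense in $\wUg{H}$ (because $\wUg{H} = \invlim \hK{U(\cL)}\rtimes_N H$ and $U(\fr{g}_K)\rtimes H$ is dense in each term), and $M$ is coadmissible over the Fr\'echet-Stein algebra $\wUg{H}$ (Theorem \ref{UgHFrSt}), I invoke Lemma \ref{ExtendableModules} to transport the $\wUg{H}$-module structure from $M$ to $\cM(\bX)$ along $\psi$, yielding all three properties (i)--(iii). Part (b) is then Lemma \ref{ExtendableModules}(c). Part (c): with $\psi : M \to \cM(\bX)$ now a $\wUg{H}$-linear isomorphism, consider the triangle $\cP^{\wUg{H}}_\bX(M) \to \cP^{\wUg{H}}_\bX(\cM(\bX)) \to \cM_{|\bX_w}$ with vertices connected by $\cP(\psi)$, $\theta$ (defined by $\theta(\bU)(s\w\otimes m) = s\cdot m_{|\bU}$), and $\alpha_{|\bX_w}$; exactly as in the proof of Proposition \ref{Reconstruct}(c), a local computation using Corollary \ref{ResCor} to identify $\cP^{\wUg{H}}_\bX(M)(\bU) = \w\cD(\bU,H_\bU)\w\otimes_{\wUg{H}} M$ shows the triangle commutes, whence $\theta$ is an isomorphism of presheaves on $\bX_w$, and \cite[Theorem 9.1]{DCapOne} extends it to $\bX_{\rig}$; $H$-equivariance, $\cD$-linearity and continuity of $\theta$ are inherited from $\cP(\psi)$ and $\alpha$.

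\textbf{Main obstacle.} The delicate point is part (a)(ii): one must check that the topology transported from $M$ via $\psi$ is the \emph{canonical} Fr\'echet topology on $\cM(\bX)$ supplied by Lemma \ref{EnhanceFrechTop}(a), rather than some \emph{a priori} unrelated Fr\'echet topology. In the affinoid case (Proposition \ref{Reconstruct}) this was immediate because $\cM(\bX) \cong \Loc(M)(\bX)$ carries by construction the coadmissible-module topology and $\psi$ is a continuous bijection between Fr\'echet spaces, so the Open Mapping Theorem closes the argument. Here $\bX$ is not affinoid, so $\cM(\bX)$ is computed as a finite \v{C}ech $H^0$ over a $\bX_w(\cT)$-covering $\cV$, and I must verify that the map $M \to \cM(\bX) \cong \check H^0(\cV,\cM)$ — namely $m \mapsto (\psi(m)_{|\bV})_{\bV\in\cV}$ — is continuous (each component is continuous by Lemma \ref{EnhanceFrechTop}(b) or directly by continuity of $\alpha(\bV)$ and of the restriction maps on $\Loc(M)$), and that it is a bijection (which follows from $\psi$ being a bijection and $\cM$ being a sheaf), so that the Open Mapping Theorem \cite[Proposition 8.6]{SchNFA} forces the two Fr\'echet topologies on $\cM(\bX)$ to coincide. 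Establishing this carefully — essentially an instance of the ``Kiehl-type'' gluing already packaged in Lemma \ref{EnhanceFrechTop} — is the only part that requires more than a routine translation of the proof of Proposition \ref{Reconstruct}.
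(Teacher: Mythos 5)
Your proof is essentially the same as the paper's, and you have correctly identified the two load-bearing lemmas: Lemma \ref{EnhanceFrechTop}(a) to equip $\cM(\bX)$ with its canonical $K$-Fr\'echet topology over the non-affinoid flag variety, and Lemma \ref{ExtendableModules} to transport the coadmissible $\wUg{H}$-module structure along the map $\psi := \alpha(\bX)\circ(\text{canonical})$. The paper's proof is terse — it defines $\psi$, notes continuity, applies Lemma \ref{ExtendableModules}, and for part (c) says ``carries over word for word'' from Proposition \ref{Reconstruct}(c); your version spells out the same steps in more detail.

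Two small points. First, the dense subalgebra of $\wUg{H}$ used to apply Lemma \ref{ExtendableModules} should be $U(\fr{g}_K)\rtimes H$, not $\cD(\bX)\rtimes H$: there is no natural inclusion $\cD(\bX)\hookrightarrow\wUg{H}$ (the global differential operators on the flag variety form a \emph{quotient} of $U(\fr{g}_K)$, not a subring), and the paper carefully passes from the $\cD(\bX)\rtimes H$-action given by Proposition \ref{EqGlSec} to the induced $U(\fr{g}_K)\rtimes H$-action before invoking density. Second, your ``main obstacle'' paragraph conflates two issues: the coincidence of the transported topology with the canonical one is simply Lemma \ref{ExtendableModules}(b) (the Open Mapping Theorem argument is already packaged there), so this is not actually an extra step once $\psi$ is known to be a continuous $U(\fr{g}_K)\rtimes H$-linear \emph{bijection}. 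What remains genuinely delicate is exactly that bijectivity: you assert it ``follows from $\psi$ being a bijection and $\cM$ being a sheaf,'' which is circular. In the affinoid setting of Proposition \ref{Reconstruct} it is free via Corollary \ref{ResCor}; here the canonical map $M\to\Gamma(\bX,\Loc^{\wUg{H}}_\bX(M))$ is a quotient by $\fr{m}_0M$ (cf.\ Theorem \ref{BBRGamma}), so $\psi$ is only bijective when $\fr{m}_0M=0$ — a hypothesis neither the statement nor the paper's proof makes explicit, but which holds in the intended application (Theorem \ref{BBEssSurjFunc}). Your instinct to worry about bijectivity was right; the supporting argument you offered was not.
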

\begin{proof} (a) Because the rigid analytic flag variety $\bX$ is quasi-compact, the space of global sections $\cM(\bX)$ carries a $K$-Fr\'echet space topology by Lemma \ref{EnhanceFrechTop}(a). On the other hand, $\cM(\bX)$ is a $\cD(\bX) \rtimes H$-module by Proposition \ref{EqGlSec}, and hence also a $U(\fr{g}) \rtimes H$-module.

Write $\cN := \Loc^{\wUg{H}}_\bX(M)$ and let $\psi : M \to \cM(\bX)$ be the composite of the canonical map $M \to \cN(\bX)$ and $\alpha(\bX) : \cN(\bX) \to \cM(\bX)$. Because the maps $M \to \cM(\bU)$ are continuous for each $\bU \in \bX_w(\cT)$, we see that $\psi$ is continuous. It is also $U(\fr{g}) \rtimes H$-linear by construction. 

Because $U(\fr{g}) \rtimes H$ is dense in $\wUg{H}$, we may apply Lemma \ref{ExtendableModules} to the map $\psi : M \to \cM(\bX)$ to deduce that the $U(\fr{g}) \rtimes H$-action on $\cM(\bX)$ extends to an action of $\wUg{H}$ which satisfies all the required properties.  

(b) This now follows from Lemma \ref{ExtendableModules}(c).

(c) After replacing $\w\cD(\bX,H)$ with $\wUg{H}$, the proof of Lemma \ref{Reconstruct}(c) carries over word for word. \end{proof}

\begin{thm}\label{BBEssSurjFunc} Let $\cM$ be a coadmissible $G$-equivariant $\cD$-module on $\bX$. Then $\cM(\bX)$ is a coadmissible $\wUg{G}$-module, and there is an isomorphism
\[ \Loc^{\wUg{G}}_\bX(\cM(\bX)) \congs \cM\]
of $G$-equivariant locally Fr\'echet $\cD$-modules on $\bX$.
\end{thm}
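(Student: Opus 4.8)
The plan is to bootstrap from the compact case, Theorem \ref{BBEssSurj}, together with the reconstruction statement Proposition \ref{Reconstruct2}, and then to globalise the resulting module structure by transporting it around with the $G$-action. By Theorem \ref{wUgAction}, $A := \wUg{G}$ acts on $\bX$ compatibly with $G$, and in this compatible action $A_H = \wUg{H}$ for every compact open subgroup $H$ of $G$; in particular $\Loc^A_\bX = \Loc^{\wUg{G}}_\bX$.

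First I would fix a compact open subgroup $G_0$ of $G$ small enough that $\sigma(G_0)$ lies in the first congruence subgroup of $\cG_0(\cR)$ (such $G_0$ exists since $\sigma$ is continuous and the first congruence subgroup is open). By Proposition \ref{FrechRestr}(a), $\cM$ is a coadmissible $G_0$-equivariant $\cD$-module; by Proposition \ref{X0andXagree} we identify $(\h{\G_0/\B_0})_{\rig}$ with $\bX$, and by Proposition \ref{wUgH=wdGH} the isomorphism $\kappa_{G_0}\colon\wUg{G_0}\congs\w\cD(\bG_0,G_0)^{\bG_0}$ intertwines the two compatible $G_0$-actions on $\bX$. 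Hence Theorem \ref{BBEssSurj} produces a coadmissible $\wUg{G_0}$-module together with an isomorphism onto $\cM$ of $G_0$-equivariant locally Fr\'echet $\cD$-modules, and feeding this into Proposition \ref{Reconstruct2} endows $\cM(\bX)$ with a canonical coadmissible $\wUg{G_0}$-module structure --- the unique one for which $\eta(g)$ acts as $g^{\cM}$ for $g\in G_0$, the induced topology is the canonical Fr\'echet topology, and the action extends the $\cD(\bX)\rtimes G_0$-action of Proposition \ref{EqGlSec} --- together with an isomorphism $\theta_0\colon\Loc^{\wUg{G_0}}_\bX(\cM(\bX))\congs\cM$ of $G_0$-equivariant locally Fr\'echet $\cD$-modules, given on $\bX_w$ by $\theta_0(\bU)(s\w\otimes m)=s\cdot(m_{|\bU})$. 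Note that $\bX$ is $G$-stable, so each $g\in G$ induces an automorphism $g^{\cM}(\bX)$ of $\cM(\bX)$.

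The heart of the argument is to upgrade this to a $\wUg{G}$-module structure on $\cM(\bX)$, using the description $\wUg{G}=\wUg{G_0}\otimes_{K[G_0]}K[G]$ of Definition \ref{wUgDefn} and the twisted product $\star$ of Corollary \ref{FGproduct}(a). For each $g\in G$ one transports the $\wUg{G_0}$-structure along the isomorphisms $g^{\cM}(\bX)$ and $\Ad_{\eta(g)}\colon\wUg{G_0}\to\wUg{gG_0g^{-1}}$ (the latter a continuous isomorphism of Fr\'echet--Stein algebras, so coadmissibility is preserved), obtaining coadmissible $\wUg{gG_0g^{-1}}$-module structures on $\cM(\bX)$; using Definition \ref{AactsCompatibly}(c) and the $G$-equivariance of $\cM$, one checks each such structure extends the $\cD(\bX)\rtimes gG_0g^{-1}$-action. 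Since a continuous $\cD(\bX)\rtimes J$-linear extension to a coadmissible $\wUg{J}$-module structure is unique (Lemma \ref{AutoCts}, Lemma \ref{ExtendableModules}(c)), any two of these structures agree after restriction to a common compact open subgroup. One then defines $(a\otimes g)\cdot m:=a\cdot g^{\cM}(m)$ for $a\in\wUg{G_0}$, $g\in G$, $m\in\cM(\bX)$, and checks that this respects $\star$: via the explicit formula for $\star$, the identities $g^{\cM}(b\cdot m)=({}^gb)\cdot g^{\cM}(m)$, the overlap-agreement just noted, and the cocycle relation $(\ref{EasyCocyc})$ for the maps $g^{\cM}$, the verification reduces to routine bookkeeping. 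Being coadmissible over $A_{G_0}=\wUg{G_0}$, the $\wUg{G}$-module $\cM(\bX)$ is then coadmissible by Definition \ref{NonCptCoadm}.

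It remains to identify $\Loc^{\wUg{G}}_\bX(\cM(\bX))$ with $\cM$. For any compact open $H\le G_0$ the $H$-equivariant locally Fr\'echet $\cD$-module underlying $\Loc^{\wUg{G}}_\bX(\cM(\bX))$ is literally $\Loc^{\wUg{H}}_\bX(\cM(\bX)_{|\wUg{H}})$, because the presheaves $\cP$ defining the two on $\bX_w(\cT)$ are assembled from the same local tensor products $\w\cD(\bU,J)\w\otimes_{A_J}(-)$, using $(A_H)_J=A_J$ for $\bU$-small $J\le H$; taking $H=G_0$ and invoking $\theta_0$ gives an isomorphism of the underlying $G_0$-equivariant $\cD$-modules. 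That $\theta_0$ is in fact $G$-equivariant follows from its explicit shape $s\w\otimes m\mapsto s\cdot(m_{|\bU})$ together with $\eta(g)\cdot m=g^{\cM}(m)$: a local computation on $\bX_w$ of exactly the kind performed in the proof of Theorem \ref{Kiehl} shows that $\theta_0$ commutes with the $G$-equivariant structures on both sides. I expect the main obstacle to be the middle paragraph --- checking that the transported structures over the conjugates of $G_0$ are mutually compatible and that $(a\otimes g)\cdot m=a\cdot g^{\cM}(m)$ is associative for $\star$; the rest is formal assembly of Proposition \ref{Reconstruct2}, Theorem \ref{BBEssSurj}, and the identity $(A_H)_J=A_J$.
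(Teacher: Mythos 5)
Your argument follows the paper's own proof step for step: reduce to a compact open subgroup $G_0 \leq \sigma^{-1}\cG_{0,\pi}(\cR)$, apply Theorem \ref{BBEssSurj} transported through $\kappa_{G_0}$ from Proposition \ref{wUgH=wdGH}, feed the resulting isomorphism into Proposition \ref{Reconstruct2} to put a coadmissible $\wUg{G_0}$-structure on $\cM(\bX)$, extend to $\wUg{G}$ via the formula $(a\otimes g)\cdot m=a\cdot g^{\cM}(m)$ together with the conjugation-compatibility forced by uniqueness, and finish by observing that $\Loc^{\wUg{G}}_\bX$ and $\Loc^{\wUg{G_0}}_\bX$ agree as $G_0$-equivariant sheaves and that $\theta_0$ is $G$-equivariant by the Theorem \ref{Kiehl} argument. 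The paper states the extension step more tersely but relies on exactly the compatibility you spell out, so the proposal is correct and essentially identical in approach.
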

\begin{proof} Note first of all that $\wUg{G}$ acts on $\bX$ compatibly with $G$ by Theorem \ref{wUgAction}, so the localisation functor $\Loc^{\wUg{G}}_\bX$ is defined --- see Definition \ref{DefnOfLocM} --- and it sends coadmissible $\wUg{G}$-modules to coadmissible $G$-equivariant $\cD$-modules on $\bX$ by Proposition \ref{LocFunCoadm}. 

Fix a compact open subgroup $H$ of $\sigma^{-1}\G_{0,\pi}(\cR)$. Then $\cM \in \cC_{\bX/H}$ by Proposition \ref{FrechRestr}(a), so there is a coadmissible $\w\cD(\bG_0,H)^{\bG_0}$-module $M_\infty$ and an isomorphism $\Loc^{\w\cD(\bG_0,H)^{\bG_0}}_{\bX}(M_\infty) \stackrel{\cong}{\longrightarrow} \cM$ in $\cC_{\bX/H}$ by Theorem \ref{BBEssSurj}. Using the isomorphism $\kappa_H : \wUg{H} \to \w\cD(\bG_0,H)^{\bG_0}$ from Proposition \ref{wUgH=wdGH}, we find a coadmissible $\wUg{H}$-module $M$ and an isomorphism $\Loc^{\wUg{H}}_{\bX}(M) \stackrel{\cong}{\longrightarrow} \cM$ in $\cC_{\bX/H}$. Now, by Proposition \ref{Reconstruct2}(a), the $U(\fr{g}) \rtimes H$-module structure on $\cM(\bX)$ and the canonical topology on $\cM(\bX$) given by Lemma \ref{EnhanceFrechTop}(a) extend to a coadmissible $\wUg{H}$-module structure. Moreover, by Proposition \ref{Reconstruct2}(c), there is an isomorphism 
\[\theta : \Loc^{\wUg{H}}_\bX(\cM(\bX)) \congs \cM\]
in $\cC_{\bX/H}$ given by the explicit formula
\[ \theta(\bU)(s \hsp \w\otimes \hsp m) = s \cdot (m_{|\bU})\]
for any $\bU \in \bX_w$, $s \in \w\cD(\bU,H_\bU)$ and $m \in \cM(\bX)$.

By Proposition \ref{EqGlSec}, $\cM(\bX)$ is a $\cD(\bX) \rtimes G$-module  and therefore a $U(\fr{g})\rtimes G$-module. The restriction of this $U(\fr{g}) \rtimes G$-module back to $U(\fr{g})\rtimes H$ agrees with the restriction of the $\wUg{H}$-module to $U(\fr{g})\rtimes H$. Therefore the $\wUg{H}$ and $K[G]$-actions on $\cM(\bX)$ extend to an action of $\wUg{G}$. Since the restriction of $\cM(\bX)$ back to $\wUg{H}$ is coadmissible, $\cM(\bX)$ is coadmissible also as a $\wUg{G}$-module. Note that by construction, $\Loc_\bX^{\wUg{G}}(M) = \Loc_\bX^{\wUg{H}}(M)$ as $H$-equivariant locally Fr\'echet $\cD$-modules on $\bX$. Finally, the exact same argument used in the proof of Theorem \ref{Kiehl} at the end of $\S \ref{LevelwiseSect}$ shows that $\theta$ is in fact $G$-equivariant. 
\end{proof}

We can finally state and prove our main result.
\begin{thm}\label{BBEquiv} Let $\G$ be a connected, simply connected, split semisimple affine algebraic group scheme over $K$, and let $\bX$ be the rigid analytification of the flag variety of $\G$.  Let $G$ be a $p$-adic Lie group and let $\sigma : G \to \G(K)$ be a continuous group homomorphism.  Then the localisation functor 
\[\Loc^{\wUg{G}}_{\bX} : \{M \in \cC_{\wUg{G}} : \fr{m}_0 \cdot M = 0\} \to \cC_{\bX/G}\]
is an equivalence of categories.
\end{thm}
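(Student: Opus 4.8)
The plan is to produce $\Gamma(\bX,-)$ as a quasi-inverse to $\Loc^{\wUg{G}}_\bX$ between the two categories in the statement. Since $\G$ is split semisimple and simply connected it is the generic fibre $\G_0\otimes_\cR K$ of a Chevalley group scheme $\G_0$ over $\cR$; fixing a Borel $\B_0\le\G_0$ puts us exactly in the setting of \S\ref{LocUgG}, and we identify the rigid flag variety $\bX$ with $\bX_0=(\h{\G_0/\B_0})_{\rig}$ via Proposition \ref{X0andXagree} (write $\bG_0:=(\h{\G_0})_{\rig}$). By Theorem \ref{wUgAction} and Proposition \ref{LocFunCoadm}, $\Loc^{\wUg{G}}_\bX$ carries $\cC_{\wUg{G}}$ — and in particular its full subcategory of modules killed by $\fr{m}_0$ — into $\cC_{\bX/G}$. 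Conversely $\cM\mapsto\cM(\bX)$ defines a functor $\cC_{\bX/G}\to\cC_{\wUg{G}}$: coadmissibility of $\cM(\bX)$ is Theorem \ref{BBEssSurjFunc}, $\Gamma(\bX,\alpha)$ is $\cD(\bX)\rtimes G$-linear by Proposition \ref{EqGlSec} and continuous by Lemma \ref{EnhanceFrechTop}(b), and then density of $U(\fr{g})\rtimes G$ in $\wUg{G}$ together with Lemma \ref{AutoCts} upgrades $\cD(\bX)\rtimes G$-linearity to $\wUg{G}$-linearity. Theorem \ref{BBEssSurjFunc} already supplies the natural isomorphism $\Loc^{\wUg{G}}_\bX(\cM(\bX))\congs\cM$, i.e. $\Loc^{\wUg{G}}_\bX\circ\Gamma(\bX,-)\cong\mathrm{id}_{\cC_{\bX/G}}$.

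\textbf{The remaining input.} What is missing is the "global sections" formula $\Gamma(\bX,\Loc^{\wUg{G}}_\bX(M))\cong M/\fr{m}_0\cdot M$, natural in $M\in\cC_{\wUg{G}}$ — the analogue of Theorem \ref{BBRGamma} for $\wUg{G}$. To prove it, fix a compact open subgroup $G_0\le\sigma^{-1}\G_{0,\pi}(\cR)$ (possible by continuity of $\sigma$). As in the proof of Theorem \ref{BBEssSurjFunc}, the restriction of $\Loc^{\wUg{G}}_\bX(M)$ to a $G_0$-equivariant $\cD$-module equals $\Loc^{\wUg{G_0}}_\bX$ applied to the restriction $M_0$ of $M$ to $\wUg{G_0}$; and via the isomorphism $\kappa_{G_0}\colon\wUg{G_0}\congs\w\cD(\bG_0,G_0)^{\bG_0}$ of Proposition \ref{wUgH=wdGH} this is precisely the functor covered by Theorem \ref{BBRGamma}. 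Hence $\Gamma(\bX,\Loc^{\wUg{G}}_\bX(M))\cong M_0/\fr{m}_0\cdot M_0$ canonically. Since $\fr{m}_0\subseteq Z(\fr{g}_K)$, which is central in $\wUg{G}$ — proved exactly as Lemma \ref{TwoSided}, using that $\Ad(\sigma(g))$ fixes $Z(\fr{g}_K)$ pointwise and that $U(\fr{g})\rtimes G$ is dense in $\wUg{G}$ — the $\fr{m}_0$-action on $M_0$ and on $M$ agree, so $M_0/\fr{m}_0 M_0=M/\fr{m}_0 M$; and the isomorphism is $\wUg{G}$-linear (not merely $\wUg{G_0}$-linear) once one notes it is $\cD(\bX)\rtimes G$-linear and continuous, the $G$-equivariant structures matching because the isomorphism in Theorem \ref{BBEssSurjFunc} is $G$-equivariant.

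\textbf{Conclusion.} Essential surjectivity of the restricted functor now follows: for $\cM\in\cC_{\bX/G}$, put $M:=\cM(\bX)\in\cC_{\wUg{G}}$; applying $\Gamma(\bX,-)$ to $\Loc^{\wUg{G}}_\bX(M)\congs\cM$ and composing with the formula above gives $M\cong\Gamma(\bX,\cM)\cong M/\fr{m}_0 M$, and tracing the constructions (the isomorphism of Theorem \ref{BBEssSurjFunc} is the identity on global sections up to the canonical surjection of Theorem \ref{BBRGamma}) shows this composite \emph{is} the canonical surjection $M\twoheadrightarrow M/\fr{m}_0 M$; being an isomorphism it forces $\fr{m}_0\cdot M=0$. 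So $\Gamma(\bX,-)$ in fact takes values in $\{M\in\cC_{\wUg{G}}:\fr{m}_0\cdot M=0\}$, on which $M/\fr{m}_0 M=M$, and the formula of the second paragraph reads $\Gamma(\bX,-)\circ\Loc^{\wUg{G}}_\bX\cong\mathrm{id}$, naturally. Combined with $\Loc^{\wUg{G}}_\bX\circ\Gamma(\bX,-)\cong\mathrm{id}_{\cC_{\bX/G}}$, this exhibits the two functors as mutually quasi-inverse equivalences.

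\textbf{Main obstacle.} The bulk of the geometric work — essential surjectivity at the level of $G$-equivariant $\cD$-modules and coadmissibility of global sections — is already done in Theorem \ref{BBEssSurjFunc}, and the rest of the argument is formal once one has a pair of functors with inverse natural isomorphisms. The genuine remaining point, and the place where care is needed, is the central-character computation: upgrading Theorem \ref{BBRGamma} from $\wDGG$ to $\wUg{G}$, which requires matching all structures through $\kappa_{G_0}$ and $\bX\cong\bX_0$, checking that $Z(\fr{g}_K)$ is central in $\wUg{G}$, and verifying that the resulting isomorphism is natural and $\wUg{G}$-linear. One last routine lemma, handled as in the proof of Theorem \ref{LocEquiv}, is used implicitly for faithfulness: a continuous $\cD(\bU)\rtimes G_\bU$-linear morphism between the local sections of two objects of $\cC_{\bX/G}$ that agrees with a given morphism on the (dense) image of the global sections over each $\bU\in\bX_w(\cT)$ must coincide with it.
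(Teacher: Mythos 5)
Your argument is correct and takes essentially the same route as the paper: both rely on Theorem \ref{BBEssSurjFunc} for essential surjectivity and on Theorem \ref{BBRGamma} (carried over from $\wDGG$ to $\wUg{G}$ through $\kappa_{G_0}$ and the identification $\bX\cong\bX_0$) for the global-sections formula $\Gamma(\bX,\Loc(M))\cong M/\fr{m}_0 M$, with full faithfulness ultimately resting on continuity of $\alpha(\bX)$, density of $U(\fr{g})\rtimes G$ in $\wUg{G}$, and the local density check as in the proof of Theorem \ref{LocEquiv}. You merely reorganize these ingredients as the construction of a quasi-inverse $\Gamma(\bX,-)$ rather than as separate full-faithfulness and essential-surjectivity verifications, which is logically equivalent.

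One genuine clarification in your write-up is worth highlighting: the verification that $\fr{m}_0\cdot\cM(\bX)=0$ for any $\cM\in\cC_{\bX/G}$ — obtained by observing that the composite $M\twoheadrightarrow M/\fr{m}_0 M\congs\Gamma(\bX,\Loc(M))\stackrel{\theta(\bX)}{\longrightarrow}M$ is the identity, using the explicit formula $\theta(\bU)(1\,\w\otimes\,m)=m_{|\bU}$ of Proposition \ref{Reconstruct2}(c). This is needed for the objects $\cM(\bX)$ supplied by Theorem \ref{BBEssSurjFunc} to witness essential surjectivity of the \emph{restricted} functor on $\{M:\fr{m}_0 M=0\}$, and the paper leaves this point implicit. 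Your treatment of the $\wUg{G}$-linearity of the transferred \ref{BBRGamma}-isomorphism (via $\cD(\bX)\rtimes G$-linearity, continuity, and density) is likewise correct and matches the spirit of the paper's argument.
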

\begin{proof} By the classification of connected split semisimple algebraic groups, it is known \cite[\S II.1]{Jantzen} that $\G$ extends to an affine algebraic group scheme $\G_0$ over $\cR$ satisfying the conditions given at the beginning of $\S$\ref{LocUgG}, so that $\G = \G_0 \otimes_{\cR} K$ is the generic fibre of $\G_0$. Now in view of Theorem \ref{BBEssSurjFunc}, it remains to show that the restriction of $\Loc^{\wUg{G}}_\bX$ to the full subcategory of $\cC_{\wUg{G}}$ consisting of objects killed by $\fr{m}_0$ is fully faithful. Let $M, N \in \cC_{\wUg{G}}$ be killed by $\fr{m}_0$, and write $\Loc := \Loc^{\wUg{G}}_\bX$, $\cM := \Loc(M)$ and $\cN := \Loc(N)$. For any $\wUg{G}$-linear morphism $f : M \to N$ we have the commutative diagram
\[ \xymatrix{  M  \ar[rrr]^f\ar[d] &&& N  \ar[d]\\ 
\cM(\bX) \ar[rrr]_{\Loc(f)(\bX)} &&& \cN(\bX)
}\]
where the vertical arrows are bijective by Theorem \ref{BBRGamma}. It follows immediately that $\Loc$ is faithful.

Suppose next that $\alpha : \cM \to \cN$ is a morphism in $\cC_{\bX/G}$. Then because the rigid analytic flag variety $\bX$ is quasi-compact, $\alpha(\bX) : \cM(\bX) \to \cN(\bX)$ is continuous. On the other hand, it is follows from Proposition \ref{EqGlSec} that it is $U(\fr{g})\rtimes G$-linear. Since $U(\fr{g}) \rtimes G$ is dense $\wUg{G}$ and since $\cM(\bX)$ and $\cN(\bX)$ are coadmissible $\wUg{G}$-modules by Theorem \ref{BBEssSurjFunc}, it follows that $\alpha(\bX)$ is $\wUg{G}$-linear. Now let $f : M \to N$ be defined by the commutative diagram
\[ \xymatrix{  M  \ar[rrr]^f\ar[d]_{\cong} &&& N  \ar[d]^{\cong}\\ 
\cM(\bX) \ar[rrr]_{\alpha(\bX)} &&& \cN(\bX).
}\]
Then $f$ is $\wUg{G}$-linear, and we claim that $\Loc(f) = \alpha$. To see this, let $\bU \in \bX_w(\cT)$ be arbitrary and choose a compact open subgroup $H$ of $G$ such that $(\bU,H)$ is small, using Lemma \ref{SmallPairsExist}. By construction, $\alpha(\bU)$ and $\Loc(f)(\bU)$ agree on the image of $M$ in $\cM(\bU)$. Since both maps are $\w\cD(\bU,H)$-linear and since this image generates $\cM(\bU)$ as a $\w\cD(\bU,H)$-module, we deduce that $\alpha(\bU) = \Loc(f)(\bU)$. Since $\cM$ and $\cN$ are sheaves and since $\bX_w(\cT)$ is a basis for $\bX$, it follows that $\alpha = \Loc(f)$ as claimed.
\end{proof}

\subsection{Connection to locally analytic distribution algebras}\label{ConnLADist}
Our goal in this subsection is to establish the following

\begin{thm}\label{LADistFormula}  Let $L$ be a finite extension of $\Q_p$ contained in $K$ and let $\G$ be an affine algebraic group of finite type over $L$ with Lie algebra $\fr{g}$, such that $Z(\fr{g}) = 0$. Let $G$ be an open subgroup of $\G(L)$ and let $D(G,K)$ denote the $K$-algebra of locally $L$-analytic distributions on $G$. Then there is a continuous $K$-algebra isomorphism
\[ \eta_G : D(G,K) \tocong \w{U}(\fr{g}_K, G).\]
where $\fr{g}_K := \fr{g} \otimes_L K$.
\end{thm}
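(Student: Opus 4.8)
The plan is to exploit the "gluing" formalism of $\S\ref{FGformalism}$ on both sides of the claimed isomorphism. Recall from Example \ref{DNKgood} that $(D(-,K),\iota)$ is good, where for each compact open subgroup $N$ of $G$ the map $\iota_N:K[N]\hookrightarrow D(N,K)$ is the canonical inclusion, and that the algebra produced by Corollary \ref{FGproduct} from this datum is exactly $D(G,K)$. On the other side, Proposition \ref{wUgGood} tells us that $(\w{U}(\fr{g}_K,-),\iota)$ is also good, and Definition \ref{wUgDefn} says that the algebra it produces is $\w{U}(\fr{g}_K,G)$. By the uniqueness built into the construction $F\mapsto F(G)$ (Theorem \ref{MainFG}, Corollary \ref{FGproduct}), it therefore suffices to produce an isomorphism of the pairs $(D(-,K),\iota)$ and $(\w{U}(\fr{g}_K,-),\iota)$ as functors on the category $\cC$ of compact open subgroups of $G$ compatible with the natural transformations from $K[-]$ and with the $G$-equivariant structures. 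Concretely: for every compact open $H\le G$ I would construct a continuous $K$-algebra isomorphism $\eta_H:D(H,K)\tocong\w{U}(\fr{g}_K,H)$, check it commutes with the transition maps $D(J,K)\to D(H,K)$ and $\w{U}(\fr{g}_K,J)\to\w{U}(\fr{g}_K,H)$ for $J\le H$, commutes with the inclusions of $K[H]$, and intertwines the conjugation maps $\varphi_g$. Once this is done, the induced map on the colimits $V$ and hence on the generated subalgebras of $\cE$ gives $\eta_G$.

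The heart of the matter is thus the compact case: identifying $D(H,K)$ with $\w{U}(\fr{g}_K,H)$ for $H$ a compact open subgroup of $\G(L)$. Here I would use the classical Schneider--Teitelbaum description of $D(H,K)$ as a Fréchet--Stein algebra via a chain of Banach completions, together with the analogous presentation of $\w{U}(\fr{g}_K,H)$ from Definition \ref{wUgHDefn}: choosing an $H$-stable Lie lattice $\cL$ in $\fr{g}_K$ (which exists by Proposition \ref{LieLatticesExist}/\ref{LieLatticesExist} — here Lemma \ref{LieLatticesExist}) and a good chain of normal subgroups $N_\bullet$, we have $\w{U}(\fr{g}_K,H)\cong\invlim\hK{U(\pi^n\cL)}\rtimes_{N_n}H$. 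On the distribution side, after replacing $H$ by a uniform pro-$p$ open normal subgroup $N$ (using Corollary \ref{Stacky}-type decompositions $\w{U}(\fr{g}_K,H)\cong\w{U}(\fr{g}_K,N)\rtimes_NH$ from Proposition \ref{wUgGood}(c), and the analogous $D(H,K)\cong D(N,K)\rtimes_N H$) one reduces to comparing $D(N,K)$ with $\invlim\hK{U(\pi^n\cL)}$. For $N$ uniform, $D(N,K)$ is by Schneider--Teitelbaum the Fréchet completion of $U(\fr{g}_N)\otimes_{\Zp}K$ with respect to a family of norms indexed by radii $r\to 1^-$, and each completion at radius $p^{-1/(p-1)}$-type is precisely an affinoid enveloping algebra of a Lie lattice; matching the norm filtrations (the $p$-valuation on $D(N,K)$ against the $\pi$-adic filtrations on the $U(\pi^n\cL)$) gives the required levelwise isomorphisms of Banach algebras, and passing to the inverse limit gives $\eta_N$. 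The fact that the Lie bracket and $\Zp$-module structure on $\fr{g}_N=\Lie(N)$ agree with $\fr{g}\otimes_L K$ restricted appropriately — via the formulas in $\S\ref{GActionSection}$ recalling uniform pro-$p$ groups — is what makes the norms line up.

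After the compact case, checking functoriality and $G$-equivariance is comparatively routine: the transition maps on both sides are induced by functoriality of $U(-)$ and of the crossed-product construction $-\rtimes_N-$ (Lemma \ref{CPfunc}, Lemma \ref{Ufunctor}), and the conjugation maps $\varphi_g$ are in both cases induced by $\Ad(\sigma(g))$ on the Lie lattices together with conjugation on the group, so one only needs to observe that $\eta_H$ was built out of exactly these structures. Compatibility with $\iota$ is immediate from construction since on both sides $K[H]$ sits inside via the canonical group-algebra inclusion, and the Banach-algebra isomorphisms in the compact case restrict to the identity on $K[N_n]$. Finally one notes that the colimit construction of $\S\ref{FGformalism}$ is manifestly functorial in the pair $(F,\iota)$, so an isomorphism of pairs induces a continuous isomorphism $\eta_G:D(G,K)\tocong\w{U}(\fr{g}_K,G)$, and continuity is automatic because it is so levelwise and both algebras carry the inverse-limit (Fréchet) topology.

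The main obstacle I expect is the compact uniform case: precisely matching the norm on $D(N,K)$ coming from Schneider--Teitelbaum's theory (built from the $p$-valuation on $N$ and the resulting filtration on the Iwasawa/distribution algebra) with the $\pi$-adic filtration on the completed enveloping algebras $\hK{U(\pi^n\cL)}$ that enters Definition \ref{wUgHDefn}. One has to be careful that "the" Lie algebra of $N$, extracted via the limit formulas for $x+y$, $\lambda x$, $[x,y]$ recalled in $\S\ref{GActionSection}$, is canonically $\Zp$-linearly and Lie-theoretically identified with a lattice in $\fr{g}_K$ compatibly with $\sigma$, and that the exponential/logarithm bijections used to define the trivialisations $\beta_\cL$ (Definition \ref{BetaDefn}, Theorem \ref{Beta}) are the same ones underlying the Schneider--Teitelbaum coordinates of the second kind on $N$. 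This is essentially the content of Schmidt's comparison (\cite{SchmidtStable,SchmidtVerma}) between $\w{U(\fr{g}_K)}$ and the closure of $U(\fr{g}_K)$ in $D(G,K)$, extended to keep track of the group; citing and adapting that comparison, rather than redoing the norm estimates, is the cleanest route, but one must check it interacts correctly with the skew-group structure, i.e. that the equivariant trivialisations on the two sides coincide — which is exactly the kind of compatibility already verified in Lemma \ref{RelatingTrivs}-style arguments elsewhere in the paper.
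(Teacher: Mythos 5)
Your proposal is correct and takes essentially the same approach as the paper: reduce via the $F(G)$-gluing formalism of $\S\ref{FGformalism}$ to the compact $L$-uniform case, then match Banach completions levelwise through a Lazard-type isomorphism, with the crossed-product structures lining up because the equivariant trivialisations on the two sides agree (the paper's Proposition~\ref{AlgAnalLie} and Lemma~\ref{ComparingTrivs} supply exactly the identification $\theta\colon L\otimes_{\cO_L}L_N\tocong\fr{g}$ you anticipate needing). The one caveat worth flagging: the shortcut you suggest of ``citing Schmidt's comparison rather than redoing the norm estimates'' is blocked when $p=2$, since those results (and \cite{ArdWad2014}) assume $p$ odd --- this is exactly why the paper reproves the Lazard isomorphism (Theorem~\ref{LazardIso}) and the crossed-product decomposition $D_{1/p}(N^{p^n},K)\rtimes_{N^{p^n}}N\cong D_{r_n}(N,K)$ (Theorem~\ref{SchmidtCP}), with the new technical input being the $2$-adic binomial valuation estimate of Lemma~\ref{PadicBinomVals}.
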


We fix the field $L$, the algebraic group $\G$, its Lie algebra $\fr{g}$ and the open subgroup $G$ of $\G(L)$ for the remainder of $\S \ref{ConnLADist}$. 

Recall from e.g. \cite[Remark 2.2.5(ii)]{OrlikStrauch} that a uniform pro-$p$ group $N$ is said to be \emph{$L$-uniform} if the $\Qp$-Lie algebra $\Qp \otimes_{\Zp} L_N$ is in fact an $L$-Lie algebra, and if the $\Zp$-Lie algebra $L_N$ is an $\cO_L$-submodule of $\Qp \otimes_{\Zp} L_N$.

\begin{lem}\label{LuniformsExist} Every compact locally $L$-analytic group has at least one open normal $L$-uniform subgroup $N$.
\end{lem}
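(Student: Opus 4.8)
The statement is a standard fact about compact locally $L$-analytic groups, and the proof runs parallel to Lemma \ref{SmallU}, but with the refinement that the uniform subgroup we produce should be \emph{$L$-uniform}. First I would invoke Lazard's theorem (as in \cite[Corollary 8.34]{DDMS}, cf. \cite[Corollary 8.33]{DDMS}) to find an open normal subgroup $J$ of $G$ that is pro-$p$ of finite rank. The key point is then that, because $G$ is locally $L$-analytic, $J$ inherits the structure of a compact $L$-analytic group, so its associated $\Qp$-Lie algebra $\Qp \otimes_{\Zp} L_J$ is in fact an $L$-Lie algebra; this is the content of the local structure theory recalled in \cite[\S 2]{OrlikStrauch}. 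Working inside this $L$-Lie algebra, I would choose a \emph{powerful} $\cO_L$-Lie sublattice $\fr{n}$ of $L_J$ that is stable under $\Ad(g)$ for all $g$ in a sufficiently small open subgroup — concretely, pick any $\cO_L$-lattice in $\Qp\otimes_{\Zp}L_J$ closed under the bracket, rescale by a large power of the uniformiser so that $[\fr n,\fr n]\subseteq p^\epsilon\fr n$ (here $\epsilon$ is $1$ for $p$ odd, $2$ for $p=2$), and then intersect its finitely many $G$-conjugates to make it $G$-stable, exactly as in the proof of Lemma \ref{SmallU} and Lemma \ref{LieLatticesExist}.

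Next I would use the exponential/logarithm correspondence for uniform pro-$p$ groups, \cite[Theorem 9.10]{DDMS}, together with its $L$-analytic refinement: a powerful $\cO_L$-Lie lattice $\fr n$ exponentiates to an $L$-uniform pro-$p$ group $N \leq J$ with $L_N = \fr n$. Since $\fr n$ is $G$-stable and the bracket/exponential are functorial, $N$ is normal in $G$; since $\fr n$ has finite $\cO_L$-corank in $L_J$, $N$ is open in $J$ and hence open in $G$. This produces the required open normal $L$-uniform subgroup.

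**Main obstacle.** The only genuinely non-routine point is the passage from "$G$ is locally $L$-analytic" to "the $\Qp$-Lie algebra of a small pro-$p$ subgroup is an $L$-Lie algebra in a way compatible with $\Ad$". This is precisely \cite[Remark 2.2.5]{OrlikStrauch} combined with the construction of $L$-uniform subgroups there; I would cite those results rather than reprove the structure theory. Everything else — rescaling to achieve powerfulness, intersecting conjugates to achieve $G$-stability, and applying $\exp$ — is a direct transcription of arguments already used in the excerpt (Lemma \ref{SmallU}, Lemma \ref{LieLatticesExist}), and I expect it to occupy only a few lines.
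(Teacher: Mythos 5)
The paper's own proof of this lemma is a single sentence: it simply cites \cite[Lemma~2.2.4]{OrlikStrauch}, which is exactly this statement, and does not rederive it. Your proposal reconstructs the argument behind that lemma, which is a legitimate choice, and the strategy you describe — Lazard to get a small pro-$p$ subgroup, pass to an $\cO_L$-Lie lattice, rescale to powerfulness, intersect the finitely many $G$-conjugates, and exponentiate — is the standard and correct one. It is also essentially the same technique as Lemmas \ref{SmallU} and \ref{LieLatticesExist}, as you observe. The only thing your sketch gains over the paper is expository transparency; the only thing it loses is economy, since the paper can afford to outsource the whole thing.

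One small imprecision worth flagging: you write $L_J$ for a subgroup $J$ that you have only arranged to be pro-$p$ of finite rank, but the $\Zp$-Lie algebra $L_J$ as used in this paper (and in \cite[\S 4.5, \S 9.5]{DDMS}) is defined only when $J$ is \emph{uniform}. You should either interpose the step of passing from $J$ to an open characteristic uniform subgroup of $J$ (exactly as Lemma \ref{SmallU} does, via \cite[Corollary 4.3]{DDMS}), or work directly with the $L$-Lie algebra $\Lie(G)$ of the locally $L$-analytic group $G$, which is well defined without first choosing a uniform subgroup, and then observe that any sufficiently small powerful $\cO_L$-lattice inside it lies in $L_U$ for some uniform open $U$. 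Once this is repaired, the rest of your argument — stability of powerfulness under intersection, $\cO_L$-linearity of $\Ad(g)$ on $\Lie(G)$, and the Lazard correspondence \cite[Theorem 9.10]{DDMS} giving an open normal $L$-uniform $N$ with $L_N = \fr{n}$ — goes through as you describe.
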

\begin{proof} This is \cite[Lemma 2.2.4]{OrlikStrauch}.
\end{proof}

We begin the proof of Theorem \ref{LADistFormula} by explaining how to remove the restriction $p \neq 2$ from the material in \cite[\S 5]{ArdWad2014}; recall from $\S\ref{GActionSection}$ that $\epsilon := 1$ if $p$ is odd and $\epsilon := 2$ if $p = 2$.  Let $N$ be an $L$-uniform pro-$p$ group; we will denote its dimension as a locally $L$-analytic group by $\dim_L N$. On the one hand, its $\Zp$-Lie algebra $L_N$ is powerful, and on the other hand, it is an $\cO_L$-module, by definition. We define the \emph{$\cR$-Lie algebra associated with $N$} to be
\[\cL_N := \cR \underset{\cO_L}{\otimes} \frac{1}{p^\epsilon}L_N.\]
Note that $L_N$ is a $\Zp$-module of rank $\dim_L N \cdot \dim_{\Qp} L$, and an $\cO_L$-module of rank $\dim_L N$. Thus, $\cL_N$ is an $\cR$-module of rank $\dim_L N$. Because $L_N$ is powerful, $\cL_N$ is also an $\cR$-Lie algebra. Recall from \cite[\S 4]{ST} that the algebra $D(N,K)$ of $K$-valued locally $L$-analytic distributions admits $K$-Banach space completions $D_r(G,K)$ for each real number $1/p \leq r < 1$.	We have the following version of the famous Lazard isomorphism.

\begin{thm}\label{LazardIso} Let $N$ be an $L$-uniform pro-$p$ group and let $\cL_N := \cR \otimes_{\cO_L} \frac{1}{p^\epsilon}L_N$. Then there is a natural isomorphism of $K$-Banach algebras
\[ \nu_N : D_{1/p}(N,K) \stackrel{\cong}{\longrightarrow} \hK{U(\cL_N)}.\]
\end{thm}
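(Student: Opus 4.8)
The statement is a $p$-independent refinement of the Lazard isomorphism $D_{1/p}(N,K) \cong \hK{U(\mathfrak{g}_{1/p})}$ proved in \cite{ST} for $p$ odd, and the ``natural" clause is what makes the later gluing arguments work. The plan is to first construct a $K$-algebra homomorphism $\nu_N : D_{1/p}(N,K) \to \hK{U(\cL_N)}$ and then show it is a bijective isometry. Since $D(N,K)$ is generated topologically by the Dirac distributions $\delta_g$ for $g \in N$ (in fact by $\delta_g$ for $g$ ranging over a topological generating set), and since $\log : N \to \cL_N$ makes sense because $N$ is $L$-uniform and $\frac{1}{p^\epsilon}L_N$ is an $\cO_L$-Lie lattice, the natural candidate is the map extending $\delta_g \mapsto \exp(\log g)$, where $\log g \in p^\epsilon \cL_N$ (note $\log g \in \frac{1}{p^\epsilon}L_N \cdot p^\epsilon$ after clearing the $\frac{1}{p^\epsilon}$) and $\exp$ converges in the $\pi$-adically complete $K$-Banach algebra $\hK{U(\cL_N)}$. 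To see this is well-defined as a homomorphism out of $D(N,K)$ one uses the universal property of $D(N,K)$ as the continuous dual of locally $L$-analytic functions, or more concretely the presentation of $D_r(N,K)$ for $r = 1/p$ in terms of a Mahler-type basis $\mathbf{b}^\alpha := \prod_i (\delta_{g_i} - 1)^{\alpha_i}$ with the norm $\|\sum \lambda_\alpha \mathbf{b}^\alpha\|_{1/p} = \sup |\lambda_\alpha| p^{-\epsilon|\alpha|}$ (the shift by $\epsilon$ in the exponent being exactly the $p=2$ correction), and matching it against the $\pi$-adic norm on $\hK{U(\cL_N)}$ via the PBW basis $\iota(x_i)^\alpha$ where $x_i$ are an $\cR$-basis of $\cL_N$ built from the $\frac{1}{p^\epsilon}\log g_i$.

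\textbf{Key steps.} (1) Fix a minimal topological generating set $g_1,\dots,g_d$ of $N$ with $d = \dim_L N$, set $x_i := \frac{1}{p^\epsilon}\log g_i \in \cL_N$ so that $\{x_1,\dots,x_d\}$ is an $\cR$-basis of $\cL_N$; this requires checking $L_N = \bigoplus \cO_L \log g_i$, which is the defining property of an $L$-uniform group together with the ordered-basis property of uniform groups from \cite[Ch.\ 4]{DDMS}. (2) Construct $\nu_N$ on the dense subalgebra $D^{\mathrm{la}}$ spanned by the Dirac distributions by $\delta_{g_i} \mapsto \exp(p^\epsilon \iota(x_i))$ and, more generally, $\delta_g \mapsto \exp(p^\epsilon \iota(\tfrac{1}{p^\epsilon}\log g))$; verify multiplicativity using the Campbell--Baker--Hausdorff formula (which converges on $p^\epsilon \cL_N$ by the argument already used in the proof of Theorem \ref{Beta}(a), i.e.\ \cite[Proposition 6.27]{DDMS}), exactly as in Lemma \ref{Untwist}/the trivialisation arguments. (3) Compare norms: show $\nu_N$ sends the Banach unit ball $D_{1/p}(N,K)^\circ$ onto $\h{U(\cL_N)}$, i.e.\ $\nu_N$ is an isometry for $\|\cdot\|_{1/p}$ on the source and the $\pi$-adic norm on the target, by expanding $\exp(p^\epsilon \iota(x_i)) = 1 + p^\epsilon \iota(x_i) + \cdots$ and checking that the transition matrix between $\{\mathbf{b}^\alpha\}$ and $\{(\exp(p^\epsilon\iota(x_i)) - 1)^\alpha\}$ and then $\{\iota(x)^\alpha\}$ is upper-triangular unipotent with entries of the correct valuation — this is the place where the factor $\frac{1}{p^\epsilon}$ in the definition of $\cL_N$ and the factor $p^\epsilon$ in the norm are calibrated to cancel. (4) Deduce that $\nu_N$ extends uniquely to a continuous isomorphism of $K$-Banach algebras $D_{1/p}(N,K) \tocong \hK{U(\cL_N)}$, and check naturality: if $N' \leq N$ is another $L$-uniform subgroup then $\cL_{N'}$ embeds appropriately into $\cL_N \otimes_{\cR} K$ and the square relating the two Lazard isomorphisms commutes, which follows from $\log$ being compatible with inclusions and the functoriality of $U(-)$ via Lemma \ref{Ufunctor}.

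\textbf{Main obstacle.} The genuinely delicate point is Step (3), the isometry/norm-matching at $r = 1/p$ with the $\epsilon$-shift, since for $p = 2$ one loses the naive identification $D_{1/p}(N,K)^\circ \cong \h{U(L_N)}$ and must instead work with $\frac{1}{2}L_N$; one has to verify that $\frac{1}{2}L_N$ is still closed under the Lie bracket (it is, because $[L_N,L_N]\subseteq p^\epsilon L_N = 4 L_N \subseteq 2 \cdot \frac{1}{2}L_N \cdot 2$, wait — more carefully $[\tfrac12 L_N, \tfrac12 L_N] = \tfrac14[L_N,L_N] \subseteq \tfrac14 \cdot 4 L_N = L_N \subseteq \tfrac12 L_N$, using $\epsilon = 2$), and that the PBW/Mahler comparison still gives a unipotent transition matrix over $\cR$. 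I would handle this by reducing to the statement over a sufficiently large discretely valued subfield via Lemma \ref{SmallDVR} if a clean reference for the non-discretely-valued case is unavailable, then base-changing along the flat map $\cR' \hookrightarrow \cR$; but most likely it is cleanest to cite the relevant norm computation in the $L$-analytic literature (e.g.\ the Lazard-isomorphism results adapted to $L$-uniform groups) and simply record the $\epsilon$-bookkeeping. A secondary, lesser obstacle is pinning down precisely which completion $D_{1/p}$ is meant and confirming that the chosen Mahler basis indeed computes the norm $\|\cdot\|_{1/p}$ for $L$-analytic (as opposed to $\Qp$-analytic) distributions; this is standard but must be stated with the correct normalisation so that the target is $\hK{U(\cL_N)}$ and not some rescaled variant.
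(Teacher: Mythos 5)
Your plan is essentially the same as the paper's: pass between $N$ and $\cL_N$ via $\log$/$\exp$, build in the $p^\epsilon$ rescaling, and match norms. But there are two concrete issues worth flagging.

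First, your step (1) conflates a minimal topological generating set of the \emph{pro-$p$} group $N$ with an $\cO_L$-basis of $L_N$. A minimal topological generating set of $N$ has $\dim_{\Qp} N = [L:\Qp]\cdot\dim_L N$ elements (that is what minimality means for a uniform pro-$p$ group), so the corresponding $\log g_i$ form a $\Zp$-basis of $L_N$, not an $\cO_L$-basis, and hence $\{x_1,\dots,x_d\}$ is an $\cR$-basis of $\cL_N$ only when $L=\Qp$. At the same time, the Mahler-type presentation $\lambda = \sum_\alpha\lambda_\alpha \mathbf{b}^\alpha$ of $D_{1/p}(N,K)$ which you want to use lives in the $\Qp$-analytic world and uses the $\Zp$-basis. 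You therefore cannot run your norm-matching argument directly in the general $L$ case; you must first reduce to $L=\Qp$ (as the paper does) and then handle the $L$-analytic subalgebra $D(N,K)\subset D_{\Qp\text{-la}}(N,K)$ by a separate comparison (in the paper this comparison is delegated to Schmidt's \cite[Lemma 5.1]{Schmidt} and \cite[\S 3.2.3(iii)]{Berth}, following the last part of \cite[Lemma 5.2]{ArdWad2014}).

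Second, the ``transition matrix'' norm computation you flag as the main obstacle is exactly what is already recorded in Schneider's book \cite[Theorem 31.5(ii)]{SchLieGps}, which shows that after identifying $D_{1/p}(N,K)$ (for $L=\Qp$) with $K\widehat\otimes_{\Qp}\Lambda_{\Qp}(N,\omega)$, every element admits a \emph{unique} convergent expansion in the monomials $\mathbf{u}^\alpha$ with $u_i:=\log(g_i)/p^\epsilon$. The paper avoids your CBH and transition-matrix bookkeeping altogether by constructing the map in the \emph{opposite} direction, $\hK{U(\cL_N)}\to K\widehat\otimes_{\Qp}\Lambda_{\Qp}(N,\omega)$, via the $\Zp$-Lie algebra map $\log : L_N\to\Lambda_{\Qp}(N,\omega)$ of \cite[Lemma 7.12]{DDMS} together with the universal property of $U(\cL_N)$; then surjectivity plus isometry fall directly out of the cited expansion theorem, so no multiplicativity check is needed at all and the reduction to a discretely valued subfield via Lemma \ref{SmallDVR} that you propose as a fallback is unnecessary. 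If you do not want to invoke \cite[Theorem 31.5(ii)]{SchLieGps} you will be reproving it, and the $\epsilon$-bookkeeping is genuinely where the $p=2$ subtlety lives.
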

\begin{proof} When $p$ is odd, this is precisely \cite[Lemma 5.2]{ArdWad2014}. We indicate how to extend the argument given there to the case where $p$ is also allowed to be $2$.

Suppose first that $L = \Qp$. Let $|\cdot| : K \to \mathbb{R}$ be the norm which induces the topology on $K$, normalised by $|p| = 1/p$. Because $N$ is uniform, we can regard $N$ as a $p$-saturated group in the sense of Lazard, by setting 
\[\omega(g) := \sup\{m \in \N : g \in N^{p^m}\} + \epsilon \qmb{for each} g \in N.\]
Fix a minimal topological generating set $\{g_1,\ldots,g_d\}$ for $N$, where $d := \dim_{\Qp} N$, and write $b_i := g_i-1$ as usual. Then $\{g_1,\ldots, g_d\}$ is an ordered basis for $N$ such that $\omega(g_i) = \epsilon$ for all $i = 1,\ldots, d$. Now, because $L = \Qp$, it follows from \cite[\S 4, p.160]{ST} that $D_{1/p}(N,K)$ consists of all formal power series $\lambda = \sum_{\alpha \in \mathbb{N}^d} \lambda_\alpha \mathbf{b}^\alpha$ in $b_1,\ldots, b_d$ with coefficients $\lambda_\alpha \in K$ such that
\[ ||\lambda||_{1/p} := \sup\limits_{\alpha \in \mathbb{N}^d} |\lambda_\alpha| (1/p)^{\epsilon |\alpha|} \]
is finite. Thus, $D_{1/p}(N,K) = K \underset{\Qp}{\h\otimes}  \Lambda_{\Qp}(N,\omega)$ where $\Lambda_{\Qp}(N,\omega)$ is the Banach $\Qp$-algebra completion of the group ring $\Qp[N]$ introduced in \cite[\S 30, p209]{SchLieGps}. This algebra contains the elements $u_i := \log(g_i) / p^\epsilon$ for $i=1,\ldots, d$, and it follows from \cite[Lemma 7.12]{DDMS} that there is an isomorphism $\log$ between the $\Zp$-Lie algebra $L_N = (N, +, [,])$ and the $\Zp$-submodule of $\Lambda_{\Qp}(N,\omega)$ generated by $\{p^\epsilon u_1,\ldots, p^\epsilon u_d\}$  which sends $g_i$ to $p^\epsilon u_i$ for each $i$. Letting $v_i := 1 \otimes g_i / p^\epsilon \in \cL_N$ for $i=1,\ldots, d$, this isomorphism extends to a $K$-Banach algebra homomorphism
\[ \hK{U(\cL_N)} \longrightarrow K \underset{\Qp}{\h\otimes} \Lambda_{\Qp}(N,\omega)\]
which sends $v_i$ to $u_i$ for each $i=1,\ldots,d$. Because $L = \Qp$, $\{v_1,\ldots,v_d\}$ is a basis for $\cL_N$ as an $\cR$-module, so  $\hK{U(\cL_N)}$ consists of power series $\sum_{\alpha\in\mathbb{N}^d} \mu_\alpha \mathbf{v}^\alpha$ where $\mu_\alpha \to 0$ as $|\alpha|\to\infty$. On the other hand, it follows from \cite[Theorem 31.5(ii)]{SchLieGps} that every element of $K \underset{Qp}{\h\otimes} \Lambda_{\Qp}(N, \omega)$ can be written uniquely in the form $\sum_{\alpha \in \mathbb{N}^d} \mu_\alpha \mathbf{u}^\alpha$ with coefficients $\mu_\alpha \in K$ such that $|\mu_\alpha| \to 0$ as $|\alpha| \to \infty$. So the displayed map is an isometric isomorphism and we can take $\nu_N$ to be its inverse. 

To deal with the case of an arbitrary finite extension $L$ of $\Qp$, follow the last part of the proof of \cite[Lemma 5.2]{ArdWad2014} using \cite[Lemma 5.1]{Schmidt} and \cite[\S 3.2.3(iii)]{Berth}.
\end{proof}

\begin{cor}\label{LazardIsomNpn} Let $N$ be an $L$-uniform pro-$p$ group and let $\cL_N := \cR \otimes_{\cO_L} \frac{1}{p^\epsilon}L_N$. Then there is a natural isomorphism of $K$-Banach algebras
\[ \nu_{N^{p^n}} : D_{1/p}(N^{p^n},K) \stackrel{\cong}{\longrightarrow} \hK{U(p^n \cL_N)}\]
for each $n \geq 0$.
\end{cor}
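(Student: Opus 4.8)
The plan is to apply Theorem \ref{LazardIso} directly to the open subgroup $N^{p^n}$ of $N$, once we have checked that $N^{p^n}$ is itself $L$-uniform and that its associated $\cR$-Lie algebra, computed from the definition $\cL_{N^{p^n}} := \cR \otimes_{\cO_L} \tfrac{1}{p^\epsilon}L_{N^{p^n}}$, coincides with $p^n \cL_N$. No new analytic input is needed; the content is entirely the bookkeeping relating $L_{N^{p^n}}$ to $L_N$.

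First I would record the relevant facts about powers of uniform pro-$p$ groups. If $N$ is uniform pro-$p$, then $N^{p^n}$ is again uniform pro-$p$ for every $n \geq 0$, and under the functorial identification of \cite[\S 4.5]{DDMS} its $\Zp$-Lie algebra is realised inside $\Qp \otimes_{\Zp} L_N$ as $L_{N^{p^n}} = p^n L_N$; this follows from the formulas for the $\Zp$-module and bracket structures on $L_N$ in terms of the group operations of $N$, recalled in $\S \ref{GActionSection}$. When $N$ is moreover $L$-uniform in the sense of \cite[Remark 2.2.5(ii)]{OrlikStrauch}, the $\Qp$-Lie algebra $\Qp \otimes_{\Zp} L_{N^{p^n}} = \Qp \otimes_{\Zp} L_N$ is still an $L$-Lie algebra, and $L_{N^{p^n}} = p^n L_N$ is again an $\cO_L$-submodule of it (being a scalar multiple of the $\cO_L$-submodule $L_N$); hence $N^{p^n}$ is $L$-uniform, with $\dim_L N^{p^n} = \dim_L N$.

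Granting this, the $\cR$-Lie algebra attached to $N^{p^n}$ is
\[ \cL_{N^{p^n}} = \cR \underset{\cO_L}{\otimes} \tfrac{1}{p^\epsilon} L_{N^{p^n}} = \cR \underset{\cO_L}{\otimes} \tfrac{1}{p^\epsilon} (p^n L_N) = \cR \underset{\cO_L}{\otimes} \left( p^n \cdot \tfrac{1}{p^\epsilon} L_N \right) = p^n \left( \cR \underset{\cO_L}{\otimes} \tfrac{1}{p^\epsilon} L_N \right) = p^n \cL_N, \]
where the middle equalities use only that multiplication by $p^n$ commutes with the rescaling by $\tfrac{1}{p^\epsilon}$ and with the base change along $\cO_L \hookrightarrow \cR$. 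Now applying Theorem \ref{LazardIso} with $N$ replaced by the $L$-uniform group $N^{p^n}$ produces a natural isomorphism of $K$-Banach algebras $\nu_{N^{p^n}} : D_{1/p}(N^{p^n},K) \tocong \hK{U(\cL_{N^{p^n}})} = \hK{U(p^n \cL_N)}$, which is exactly the claim; naturality is inherited from the naturality asserted in Theorem \ref{LazardIso}.

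The main (and only mildly delicate) point is the verification that $N^{p^n}$ remains $L$-uniform and that $L_{N^{p^n}} = p^n L_N$ as $\cO_L$-submodules of $\Qp \otimes_{\Zp} L_N$; this is a routine consequence of the explicit description of the Lie algebra of a uniform pro-$p$ group, so I do not anticipate any real obstacle, beyond being careful that the $\cO_L$-module structures match up correctly in the $L$-uniform (as opposed to merely $\Qp$-uniform) setting.
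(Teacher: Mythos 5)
Your proposal is correct and follows exactly the paper's proof: apply Theorem \ref{LazardIso} to $N^{p^n}$, using that $N^{p^n}$ is again $L$-uniform with $\cL_{N^{p^n}} = p^n \cL_N$. You have simply spelled out the bookkeeping ($L_{N^{p^n}} = p^n L_N$ inside $\Qp \otimes_{\Zp} L_N$, persistence of $L$-uniformity under taking $p^n$-th powers) that the paper leaves implicit.
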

\begin{proof} This follows immediately from Theorem \ref{LazardIso} because $N^{p^n}$ is again $L$-uniform with $\cL_{N^{p^n}} = p^n \cL_N$, for each $n \geq 0$.
\end{proof}

We now fix an $L$-uniform open subgroup $N$ of $G$ until the end of the proof of Corollary \ref{DNKUGN}.

\begin{prop}\label{AlgAnalLie} There is an $N$-equivariant isomorphism of $L$-Lie algebras
\[ \theta : L \otimes_{\cO_L} L_N \tocong  \fr{g}\]
where $N$ acts on $\fr{g}$ via the adjoint action of $\G(L)$ on $\fr{g}$, and by conjugation on $L_N$.
\end{prop}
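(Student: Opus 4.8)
The plan is to realize both $L \otimes_{\cO_L} L_N$ and $\fr{g}$ as the Lie algebra of the compact locally $L$-analytic group $N$ and to track the two relevant adjoint actions through the resulting identifications. First I would recall that, since $N$ is $L$-uniform, the logarithm induces a canonical isomorphism of $L$-Lie algebras $L \otimes_{\cO_L} L_N \tocong \Lie(N)$, where $\Lie(N)$ is the Lie algebra of $N$ regarded as a locally $L$-analytic group; this is part of the very definition of $L$-uniformity combined with the Lazard correspondence between uniform pro-$p$ groups and their Lie algebras (see \cite[Remark 2.2.5(ii)]{OrlikStrauch}, \cite[\S 9.5]{DDMS}, \cite{SchLieGps}). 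The isomorphism is natural in $N$, so conjugation by $g \in N$ on $L_N$ is intertwined with the adjoint action $\Ad(g)$ on $\Lie(N)$, both being the differential at the identity of the conjugation automorphism of $N$.

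Next, since $N$ is an open subgroup of $G$ and $G$ is an open subgroup of $\G(L)$, the inclusions $N \hookrightarrow G \hookrightarrow \G(L)$ are open immersions of locally $L$-analytic groups and therefore induce isomorphisms of $L$-Lie algebras $\Lie(N) \tocong \Lie(G) \tocong \Lie(\G(L))$, compatible with the adjoint actions of $N$. Finally $\Lie(\G(L))$ is canonically identified, as an $L$-Lie algebra, with $\fr{g} = \Lie(\G)$, and under this identification the adjoint action of $\G(L)$ on $\Lie(\G(L))$ matches the adjoint representation $\Ad$ of $\G(L)$ on $\fr{g}$; for this last point one fixes a closed embedding $\G \hookrightarrow \GL_{n,L}$ and reduces to the explicit case of $\GL_n$. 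Defining $\theta$ to be the composite of the isomorphisms just listed then yields the required $N$-equivariant isomorphism of $L$-Lie algebras.

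A concrete realization of the same argument, avoiding the abstract locally analytic Lie functor, goes as follows: fix a closed embedding $\G \hookrightarrow \GL_{n,L}$, so $N \subseteq \G(L) \subseteq \GL_n(L)$, and after conjugating assume $N$ lies in a congruence subgroup of $\GL_n(\cO_L)$ on which the logarithm series converges. Then $\log\colon N \to M_n(L)$ restricts to a $\Zp$-Lie algebra isomorphism of $L_N$ onto a $\Zp$-Lie subalgebra of $\fr{g}\subseteq M_n(L)$; since $N$ is open in $\G(L)$ this subalgebra spans $\fr{g}$ over $\Qp$, so $L \otimes_{\cO_L} L_N = \Qp \otimes_{\Zp} L_N \cong \fr{g}$, an isomorphism of $L$-Lie algebras by $L$-uniformity of $N$, and the identity $\log(gxg^{-1}) = \Ad(g)(\log x)$ for $g,x\in N$ gives the $N$-equivariance at once. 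In either version the one point genuinely requiring care is the compatibility of all the ``adjoint'' actions appearing above and the fact that the isomorphism is defined over $L$ and not merely over $K$; I expect this bookkeeping, rather than any substantial difficulty, to be the main obstacle.
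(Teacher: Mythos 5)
Your first argument follows the same chain of identifications as the paper's proof---$L \otimes_{\cO_L} L_N \cong \Lie(N) = \Lie(\G(L)) \cong \fr g$, with $N$-equivariance coming from naturality of the constructions---the only cosmetic difference being that the paper invokes smoothness of $\G$ together with the implicit function theorem to identify $\fr g$ with $\Lie(\G(L))$, whereas you propose reducing to $\GL_n$ via a closed embedding. Your second, more concrete realization through the matrix logarithm is a valid alternative packaging of the same content.
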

\begin{proof} Note that the group $\G(L)$ is locally $L$-analytic. Because $\G$ is smooth over $L$, it follows from the Jacobian criterion for smoothness and a version of the implicit function theorem (see, e.g. \cite[5.8.10, 5.8.11]{BourVarAn}) that the canonical map $\fr{g} = \Lie(\G) \to \Lie(\G(L))$ is a bijection.

Since $N$ is open $G$ which is open in $\G(L)$, the Lie algebra $\Lie(N)$ of $N$ as a locally $L$-analytic group coincides with $\Lie(\G(L))$. On the other hand, by \cite[Theorem 9.11(iii)]{DDMS} there is an isomorphism $\Lie(N) \tocong L \otimes_{\cO_L} L_N$. It is straightforward to check that the composite map 
\[\fr{g} \tocong \Lie(\G(L)) = \Lie(N) \tocong L \otimes_{\cO_L} L_N\]
respects Lie brackets and the given $N$-actions, and we take $\theta$ to be the inverse of this map.
\end{proof}

We also use the letter $\theta$ to denote the induced isomorphism 
\[ K \otimes_{\cR} \cL_N = K \otimes_L(L\otimes_{\cO_L} L_N) \tocong K \otimes_L \fr{g} = \fr{g}_K\]
With this notation, we see that for each $n \geq 0$, $p^n \theta(\cL_N)$ is a Lie lattice in $\fr{g}_K$ in the sense of Definition \ref{LieLatticeForAlgGps}(a).

\begin{lem}\label{ComparingTrivs} Write $\cJ := \theta(\cL_N)$. Then for every $n \geq 0$ there is a commutative diagram of groups
\[ \xymatrix{ 
N^{p^n} \ar@{.>}[rrrr] \ar[ddd] \ar[dr]^{\delta_n} &&&& N_{p^n \cJ} \ar[dl]_{\delta_{p^n\cJ}}\ar[ddd] \\
& (p^{n+\epsilon}\cL_N, \ast) \ar[d]_{\exp \circ\ad} \ar[rr]^{\theta} && (p^{n+\epsilon} \cJ, \ast) \ar[d]^{\exp \circ\ad}  & \\
& \Aut_{\cR-\Lie}(p^n\cL_N) \ar[rr]_{\theta\hsp  \circ \hsp - \hsp \circ \hsp \theta^{-1}} & & \Aut_{\cR-\Lie}(p^n\cJ)  & \\
N \ar[ur]_{\Ad_N} \ar@{=}[rrrr] &&&& N.\ar[ul]^{\Ad \circ \sigma}}\]
\end{lem}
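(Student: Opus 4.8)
The plan is to construct the commutative diagram in Lemma \ref{ComparingTrivs} by showing that the inner square, the left square, the right square, and the outer square all commute, so that the dotted arrow $N^{p^n} \to N_{p^n\cJ}$ is well-defined and the whole diagram commutes. The left and right squares are literally Diagram $(\ref{DeltaLdiag})$ applied to the Lie lattices $p^n\cL_N$ and $p^n\cJ$ respectively, so they commute by the definition of $\delta_{\cL}$ in Definition \ref{HJAdj}. The outer bottom face is the identity on $N$, so the outer square commutes by construction. The real content is the commutativity of the inner square, which amounts to the assertion that conjugating the adjoint action of $N^{p^n}$ through the $\cR$-Lie algebra isomorphism $\theta : p^n\cL_N \tocong p^n\cJ$ of Proposition \ref{AlgAnalLie} turns the ``conjugation'' adjoint action $\Ad_N$ on $p^n\cL_N$ into the ``algebraic'' adjoint action $\Ad\circ\sigma$ on $p^n\cJ$.

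First I would recall that, by the $N$-equivariance statement of Proposition \ref{AlgAnalLie}, the isomorphism $\theta : L\otimes_{\cO_L} L_N \tocong \fr{g}$ intertwines the conjugation action of $N$ on $L_N$ with the adjoint action of $N \leq \G(L)$ on $\fr{g}$; this extends by scalars to the isomorphism $\theta : K\otimes_\cR \cL_N \tocong \fr{g}_K$ used above, and it restricts to an $\cR$-linear isomorphism $p^n\cL_N \tocong p^n\cJ$ which is still $N$-equivariant for the two relevant actions. In other words, for $g\in N$ we have $\theta \circ \Ad_N(g) \circ \theta^{-1} = \Ad(\sigma(g))$ as automorphisms of $p^n\cJ$, which is exactly the bottom part of the inner square. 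For the top triangle part (the one involving the two $\exp\circ\ad$ arrows and the map $\theta : (p^{n+\epsilon}\cL_N,\ast)\to(p^{n+\epsilon}\cJ,\ast)$), I would use that $\theta$ is a homomorphism of $\cR$-Lie algebras, hence compatible with the Campbell--Hausdorff--Baker group operation $\ast$ and with the adjoint representation: $\theta \circ \ad(u) \circ \theta^{-1} = \ad(\theta(u))$ for $u\in p^{n+\epsilon}\cL_N$, so $\theta\circ(\exp\circ\ad)\circ\theta^{-1} = \exp\circ\ad\circ\theta$ as maps into $\Aut_{\cR\text{-}\Lie}(p^n\cJ)$. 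Since $Z(\fr{g})=0$ (Hypothesis \ref{ZgZero}), $\ad$ is injective on both sides, so these comparisons determine the relevant maps uniquely.

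With the inner square, the two side squares, and the outer square all shown to commute, a diagram chase (using that $\exp\circ\ad$ is injective, so each $\delta$ is determined by $\Ad$) gives that $N_{p^n\cJ}$ contains $N^{p^n}$: indeed $\Ad\circ\sigma$ restricted to $N^{p^n}$ factors through $\exp\circ\ad$ on $p^{n+\epsilon}\cJ$ via $\theta\circ\delta_n$, which by definition of $N_{p^n\cJ}$ in Definition \ref{HJAdj} is precisely the condition for an element of $N$ to lie in $N_{p^n\cJ}$. This produces the dotted arrow $N^{p^n}\to N_{p^n\cJ}$ making the outer triangles commute, and the inner/side/outer commutativities already established then force the full diagram to commute by the uniqueness coming from injectivity of $\exp\circ\ad$.

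The main obstacle I expect is bookkeeping around the factor $p^\epsilon$: one must be careful that the Lazard-type correspondence $L_{N^{p^n}} = p^n L_N$ and the normalisation $\cL_N = \cR\otimes_{\cO_L}\frac{1}{p^\epsilon}L_N$ line up correctly so that $\delta_n$ lands in $(p^{n+\epsilon}\cL_N,\ast)$ rather than $(p^n\cL_N,\ast)$, and similarly that $\delta_{p^n\cJ}$ (which by Definition \ref{HJAdj} lands in $(p^\epsilon\cdot p^n\cJ,\ast) = (p^{n+\epsilon}\cJ,\ast)$) matches. Once these index shifts are reconciled using the explicit formulas for $\log$, $\exp$ and $\ad$ on the $\pi$-adically complete Banach algebra $\hK{U(p^n\cL_N)}$ (via \cite[Exercise 6.12]{DDMS} and \cite[Lemma 7.12]{DDMS}, already invoked in Proposition \ref{HJgroup}), everything should fall into place; the rest is the routine verification that $\theta$ respects all the structures, which I would state but not belabour.
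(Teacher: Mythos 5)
Your decomposition of the diagram and your overall strategy (prove each face commutes, then invoke the universal property of the pullback to produce the dotted arrow) match the paper's. However, you misattribute where the content lies: you assert that \emph{both} the left and right trapezia ``are literally Diagram $(\ref{DeltaLdiag})$ applied to the Lie lattices $p^n\cL_N$ and $p^n\cJ$ respectively, so they commute by the definition of $\delta_{\cL}$ in Definition \ref{HJAdj}.'' This is correct for the right trapezium, since $N_{p^n\cJ}$ is defined in Definition \ref{HJAdj} precisely as the pullback of $\Ad\circ\sigma$ and $\exp\circ\ad$. It is \emph{not} correct for the left trapezium: $N^{p^n}$ is not defined as a pullback, the map $\delta_n$ is the explicit base-change map $x\mapsto 1\otimes x$, and $\Ad_N$ is the conjugation action of $N$ on $p^n\cL_N$, not the algebraic $\Ad\circ\sigma$. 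The commutativity of the left trapezium is the nontrivial Lazard-type fact that for $x\in N^{p^n}$ the conjugation automorphism $\Ad_N(x)$ of $p^n\cL_N$ agrees with $\exp(\ad(\delta_n(x)))$; this requires the argument from \cite[Exercise 6.12]{DDMS}, as the paper's proof says, and is not a consequence of a definition. Your claim that ``the real content is the commutativity of the inner square'' therefore undersells this trapezium. You do cite \cite[Exercise 6.12]{DDMS} and \cite[Lemma 7.12]{DDMS} near the end, but only as tools for ``bookkeeping around the factor $p^\epsilon$,'' when in fact they are the heart of the left trapezium.

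A secondary point: what you call the ``inner square'' conflates the paper's middle square (which commutes because $\theta$ is an $\cR$-Lie algebra isomorphism, hence compatible with $\ast$, $\ad$, and $\exp$) with the paper's bottom trapezium (which commutes because $\theta$ is $N$-equivariant, by Proposition \ref{AlgAnalLie}). Your argument does cover both, but the two statements are logically separate and both are needed. Finally, your assertion that ``the outer square commutes by construction'' is circular as stated, since the top edge of that outer rectangle is the dotted arrow you are constructing; what is actually true is that once the left, right, middle and bottom faces are established, the universal property of the pullback $N_{p^n\cJ}$ forces the dotted arrow to exist and to commute with the inclusions into $N$, which is how the paper concludes.
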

\begin{proof} We begin by explaining the various objects and solid arrows in this diagram. The group $(p^{n+\epsilon}\cL, \ast)$ is obtained from the powerful $\cR$-Lie algebra $p^{n+\epsilon} \cL$ by means of the Campbell-Baker-Hausdorff formula, and the map $\delta_n$ sends $x \in N^{p^n} = p^n L_N$ to $1 \otimes x \in \cR \otimes_{\cO_L} p^n L_N = p^{n+\epsilon}\cL_N$.  $\Ad_N$ denotes the conjugation action of $N$ on $p^n \cL_N$, $\sigma$ denotes the inclusion of $N$ into $\G(L)$ and $\Ad : \G(L) \to \Aut_{\cR-\Lie}(p^n \cJ)$ denotes the base change to $K$ of the adjoint representation $\Ad : \G(L) \to \Aut(\fr{g})$ from $(\ref{AdjRep})$. The trapezium on the right hand side is an instance of diagram (\ref{DeltaLdiag}). 

Now, $\theta : p^n \cL_N \to p^n\cJ$ is an isomorphism of $\cR$-Lie algebras by Proposition \ref{AlgAnalLie}. It follows that the middle square commutes. The trapezium on the right commutes by definition of the group $N_{p^n\cJ}$. The bottom trapezium commutes because $\theta : K \otimes_{\cR}\cL_N\to \fr{g}_K$ is $N$-equivariant by Proposition \ref{AlgAnalLie}. Finally, the commutativity of the trapezium on the left can be deduced from \cite[Exercise 6.12]{DDMS}. Therefore the restriction of $\Ad \circ \sigma$ to $N^{p^n}$ agrees with $\exp \circ \ad \circ \theta \circ \delta_n$, so the universal property of the pullback $N_{p^n\cJ}$ gives the horizontal dotted arrow at the top of the diagram. \end{proof}

\begin{cor} With the notation of Lemma \ref{ComparingTrivs}, we have $N^{p^n} \leq N_{p^n \cJ}$.\end{cor}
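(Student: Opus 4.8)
The plan is to show that the dotted arrow $\iota\colon N^{p^n}\to N_{p^n\cJ}$ at the top of the diagram in Lemma \ref{ComparingTrivs} is, once unwound, simply the inclusion of $N^{p^n}$ into $N_{p^n\cJ}$. The only genuine input is the observation that, under our standing assumption $Z(\fr g)=0$, the group $N_{p^n\cJ}$ is an honest subgroup of $N$.

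First I would recall, as in the remark immediately following Hypothesis \ref{ZgZero}, that since $Z(\fr g)=0$ the map $\exp\circ\ad\colon p^{n+\epsilon}\cJ\to\Aut_{\cR-\Lie}(p^n\cJ)$ is injective. Hence, by Definition \ref{HJAdj}, the pullback $N_{p^n\cJ}$ is a subgroup of $N$, and the rightmost vertical composite $N_{p^n\cJ}\to N$ in the diagram of Lemma \ref{ComparingTrivs} is precisely the inclusion map $N_{p^n\cJ}\hookrightarrow N$.

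Next I would unwind the construction of $\iota$. By Lemma \ref{ComparingTrivs}, the left-hand column together with the three commuting squares show that $\exp\circ\ad\circ(\theta\circ\delta_n)$ agrees with $(\Ad\circ\sigma)\circ\iota_0$ as maps $N^{p^n}\to\Aut_{\cR-\Lie}(p^n\cJ)$, where $\iota_0\colon N^{p^n}\hookrightarrow N$ is the natural inclusion; this is exactly the compatibility that produces, via the universal property of the pullback $N_{p^n\cJ}$, the homomorphism $\iota\colon N^{p^n}\to N_{p^n\cJ}$, characterised by the property that its composite with the projection $N_{p^n\cJ}\to N$ equals $\iota_0$. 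Combining this with the previous step, for every $x\in N^{p^n}$ we obtain, inside $N$, the identity $\iota(x)=x$; in particular $x\in N_{p^n\cJ}$. Therefore $N^{p^n}\subseteq N_{p^n\cJ}$, i.e.\ $N^{p^n}\le N_{p^n\cJ}$, as required. There is no serious obstacle here: the only point requiring care is that the target of the pullback projection really is the ambient group $N$ and that this projection literally is the subgroup inclusion, which is where $Z(\fr g)=0$ enters; everything else is bookkeeping with the diagram already established in Lemma \ref{ComparingTrivs}.
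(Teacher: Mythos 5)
Your proof is correct, but it is organised around a slightly different pivot than the paper's. The paper's one‑line proof is: ``The dotted arrow is injective because both $\delta_n$ and $\theta$ are injective.'' That is, the paper uses the commutativity of the dotted arrow with the projection to the Lie‑lattice side (composing with $\delta_{p^n\cJ}$ gives $\theta\circ\delta_n$, which is injective) to get injectivity, and then implicitly relies on the commutativity with the other pullback projection --- the one landing in $N$ --- to upgrade ``injective'' to ``is the subgroup inclusion''. You instead work directly with that second projection: since $Z(\fr g)=0$ makes $\exp\circ\ad$ injective, the pullback projection $N_{p^n\cJ}\to N$ is the subgroup inclusion, and the universal property of the pullback forces the dotted arrow, composed with this projection, to equal the inclusion $N^{p^n}\hookrightarrow N$. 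That alone already pins the dotted arrow down as $x\mapsto x$ inside $N$, so injectivity of $\delta_n$ and $\theta$ is not needed at all. Your route is thus a little more economical and, if anything, closer to what the corollary actually asserts; both arguments are variants of the same ``read off the pullback universal property'' idea.
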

\begin{proof} The dotted arrow is injective because both $\delta_n$ and $\theta$ are injective.\end{proof}

The subgroup $N^{p^n}$ is normal in $N$, and the conjugation action of $N$ on $N^{p^n}$ extends by functoriality to an action of $N$ on $D_{1/p}(N^{p^n}, K)$ by $K$-Banach algebra automorphisms. The inclusion of $N^{p^n}$ into $D_{1/p}(N^{p^n},K)$ is an $N$-equivariant trivialisation of this action in the sense of Definitions \ref{Triv} and \ref{DefnTriv}(b), so using Definition \ref{DefnTriv}(a) we may form the crossed product algebra
\[ D_{1/p}(N^{p^n},K) \rtimes_{N^{p^n}} N.\]

\begin{prop}\label{PuttingNonTop} Let $\cJ:= \theta(\cL_N)$. Then for each $n \geq 0$, the isomorphism $\nu_{N^{p^n}}$ from Corollary \ref{LazardIsomNpn} extends to an isomorphism of $K$-Banach algebras
\[ \theta  \nu_{N^{p^n}} \rtimes_{N^{p^n}} 1_N : D_{1/p}(N^{p^n},K) \rtimes_{N^{p^n}} N \tocong \hK{U(p^n \cJ)} \rtimes_{N^{p^n}}^{e^{\iota \delta_{p^n\cJ}}} N.\]
\end{prop}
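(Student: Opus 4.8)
\emph{Proof sketch.} The plan is to obtain the map as an instance of the crossed-product functoriality recorded in Lemma \ref{CPfunc}, applied to the ring homomorphism $f := \theta \circ \nu_{N^{p^n}} \colon D_{1/p}(N^{p^n},K) \to \hK{U(p^n\cJ)}$ and the identity homomorphism $\tau := 1_N \colon N \to N$, with the distinguished normal subgroup $N^{p^n}$ of $N$ on both sides. On the source, the relevant $N$-action on $D_{1/p}(N^{p^n},K)$ is the one extending the conjugation action of $N$ on $N^{p^n}$, and the $N$-equivariant trivialisation is the inclusion $\beta \colon N^{p^n} \hookrightarrow D_{1/p}(N^{p^n},K)^\times$; on the target, the $N$-action is $\Ad \circ \sigma$ and the trivialisation is the restriction to $N^{p^n}$ of $e^{\iota \delta_{p^n\cJ}}$, which is $N$-equivariant by Proposition \ref{HJgroup}(b) (for $\cJ = \theta(\cL_N)$). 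It then remains to verify the three hypotheses of Lemma \ref{CPfunc}.

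The condition $\tau(N^{p^n}) \subseteq N^{p^n}$ is trivial. The $N$-equivariance of $f$, i.e. $f(g \cdot s) = \tau(g) \cdot f(s)$ for $g \in N$ and $s \in D_{1/p}(N^{p^n},K)$, follows by composing two facts: $\nu_{N^{p^n}}$ is $N$-equivariant, since the naturality asserted in Corollary \ref{LazardIsomNpn} applies in particular to the conjugation automorphisms of $N^{p^n}$ by elements of $N$ and intertwines them with the induced conjugation action on $\hK{U(p^n\cL_N)}$; and $\theta$ is $N$-equivariant by Proposition \ref{AlgAnalLie}, intertwining this conjugation action on $\hK{U(p^n\cL_N)}$ with $\Ad \circ \sigma$ on $\hK{U(p^n\cJ)}$.

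The key step is the third hypothesis, $f^\times \circ \beta = e^{\iota \delta_{p^n\cJ}} \circ \tau_{|N^{p^n}}$, that is, $\theta(\nu_{N^{p^n}}(x)) = e^{\iota \delta_{p^n\cJ}(x)}$ in $\hK{U(p^n\cJ)}^\times$ for each $x \in N^{p^n}$. From the explicit description of the Lazard isomorphism in the proof of Theorem \ref{LazardIso}, the group element $x \in N^{p^n}$, viewed inside $D_{1/p}(N^{p^n},K)$, is sent by $\nu_{N^{p^n}}$ to $\exp(\iota(\delta_n(x)))$, where $\delta_n \colon N^{p^n} \to (p^{n+\epsilon}\cL_N, \ast)$ is the map $x \mapsto 1 \otimes x$ appearing in the proof of Lemma \ref{ComparingTrivs}. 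Since $\theta \colon \hK{U(p^n\cL_N)} \to \hK{U(p^n\cJ)}$ is the continuous $K$-algebra homomorphism induced by the $\cR$-Lie algebra isomorphism $\theta \colon p^n\cL_N \tocong p^n\cJ$, it commutes with $\exp$ and with $\iota$, so $\theta(\exp(\iota(\delta_n(x)))) = \exp(\iota(\theta(\delta_n(x))))$; and the commutative diagram of Lemma \ref{ComparingTrivs} gives $\theta \circ \delta_n = \delta_{p^n\cJ}$ on $N^{p^n}$, yielding the required identity.

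Granting the three hypotheses, Lemma \ref{CPfunc} produces the ring homomorphism $f \rtimes \tau$ between the two crossed products, which is the claimed map; it is an isomorphism because $f = \theta \circ \nu_{N^{p^n}}$ is bijective (a composite of the isomorphisms of Corollary \ref{LazardIsomNpn} and Proposition \ref{AlgAnalLie}) and $\tau = 1_N$ is bijective with $\tau(N^{p^n}) = N^{p^n}$. Finally, by Lemma \ref{RingSGN}(b) both crossed products are free of finite rank $[N:N^{p^n}]$ over $D_{1/p}(N^{p^n},K)$ and over $\hK{U(p^n\cJ)}$ respectively, hence carry natural $K$-Banach algebra structures; the isomorphism $f \rtimes \tau$ carries a Banach-module basis to a Banach-module basis and restricts to the topological isomorphism $f$ on the degree-zero part, so it is a $K$-Banach algebra isomorphism. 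The only genuinely delicate point is extracting the precise normalisation $\nu_{N^{p^n}}(x) = \exp(\iota(\delta_n(x)))$ from the proof of the Lazard isomorphism and matching it against the pullback defining $\delta_{p^n\cJ}$; the rest is bookkeeping.
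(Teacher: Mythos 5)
Your proposal is correct and essentially reproduces the paper's argument: both apply Lemma \ref{CPfunc} to $f = \theta \circ \nu_{N^{p^n}}$ and $\tau = 1_N$, verify the trivialisation compatibility $\theta\nu_{N^{p^n}}(x) = e^{\iota\delta_{p^n\cJ}(x)}$ for $x \in N^{p^n}$ via the explicit Lazard formula and the identity $\theta\circ\delta_n = \delta_{p^n\cJ}$ from Lemma \ref{ComparingTrivs}, and deduce that the resulting map is a $K$-Banach algebra isomorphism because $f$ is and both sides are free of rank $[N:N^{p^n}]$. The only cosmetic difference is that you establish the $N$-equivariance of $f$ by appealing directly to the naturality of the Lazard isomorphism, whereas the paper derives the same equivariance by first checking it on group elements of $N^{p^n}$ and then extending by continuity and density of $K[N^{p^n}]$ in $D_{1/p}(N^{p^n},K)$; the two routes establish the same fact.
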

\begin{proof} The isomorphism $\theta$ induces an isomorphism $\theta : \hK{U(p^n\cL_N)} \tocong \hK{U(p^n\cJ)}$ of $K$-Banach algebras, by functoriality. Let $\iota : p^n \cJ \to \hK{U(p^n \cJ)}$ denote the natural inclusion. Examining the construction of $\nu_{N^{p^n}}$ given in Proposition \ref{LazardIso}, we see that the composite map $\theta \circ \nu_{N^{p^n}} : D_{1/p}(N^{p^n},K) \tocong \hK{U(p^n \cJ)}$ is given on group elements as follows:
\[ \theta \nu_{N^{p^n}}(x) = (\exp \circ \iota \circ \theta)(x) \qmb{for all} x \in N^{p^n}.\]
Because $\theta$ is $N$-equivariant by Proposition \ref{AlgAnalLie}, we see that
\[ \theta \nu_{N^{p^n}}(gxg^{-1}) = g \cdot \theta\nu_{N^{p^n}}(x) \qmb{for all} x\in N^{p^n} \qmb{and} g \in N.\]
For any fixed $g \in N$, both sides of this equation are continuous functions in $x$. Since $K[N^{p^n}]$ is dense in $D_{1/p}(N^{p^n},K)$, it follows that
\[ \theta \nu_{N^{p^n}}(gxg^{-1}) = g \cdot \theta\nu_{N^{p^n}}(x) \qmb{for all} x\in D_{1/p}(N^{p^n},K) \qmb{and} g \in N.\]
On the other hand, by Lemma \ref{ComparingTrivs}, we have $\delta_{p^n\cJ}(x) = \theta(x)$ if we identify $x \in N^{p^n}$ with its image $\delta_n(x) = 1 \otimes x$ in $p^{n+\epsilon}\cL_N$. Hence
\[\theta \nu_{N^{p^n}}(x) = (\exp \circ \iota)(\theta(x)) = (\exp \circ \iota)(\delta_{p^n\cJ}(x)).\]
The last two displayed equations are exactly the conditions needed to apply Lemma \ref{CPfunc}, which shows the existence of the required map
\[\theta  \nu_{N^{p^n}} \rtimes_{N^{p^n}} 1_N : D_{1/p}(N^{p^n},K) \rtimes_{N^{p^n}} N \tocong \hK{U(p^n \cJ)} \rtimes_{N^{p^n}}^{e^{\iota \delta_{p^n\cJ}}} N.\]
This map has to be an isomorphism of $K$-Banach algebras in view of Proposition \ref{AlgAnalLie} and Proposition \ref{LazardIso}.
\end{proof}

\begin{cor}\label{ExtendedLazIsom} There is an isomorphism of $K$-Fr\'echet algebras
\[ \invlim D_{1/p}(N^{p^n},K) \rtimes_{N^{p^n}} N \tocong \w{U}(\fr{g}_K, N).\]
\end{cor}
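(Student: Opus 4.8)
\textbf{Proof plan for Corollary \ref{ExtendedLazIsom}.} The plan is to pass to the inverse limit over $n$ in the levelwise isomorphism of Proposition \ref{PuttingNonTop}, and to identify the resulting inverse limit on the right-hand side with $\w{U}(\fr{g}_K, N)$ via the presentation of $\w{U}(\fr{g}_K,N)$ given by cofinality. First I would recall that by definition $D(N,K) = \invlim_r D_r(N,K)$, and that there is a standard presentation $D(N,K) \cong \invlim_n D_{1/p}(N^{p^n},K)$ as an inverse limit of $K$-Banach algebras, compatible with crossed products along the conjugation action of $N$; this is the well-known description of $D(N,K)$ as a Fr\'echet--Stein algebra via the chain $N \supseteq N^p \supseteq N^{p^2} \supseteq \cdots$. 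Correspondingly, the left-hand side of the desired isomorphism is $D(N,K)$ once we check that the crossed-product structure is the trivial one coming from $K[N^{p^n}] \hookrightarrow D_{1/p}(N^{p^n},K)$ being an $N$-equivariant trivialisation --- this is exactly what makes $D_{1/p}(N^{p^n},K)\rtimes_{N^{p^n}} N$ the Banach-algebra completion $D_{1/p \cdot}(N,K)$ of $D(N,K)$ at the relevant radius, using Lemma \ref{Untwist} and the finite index $[N:N^{p^n}]$.

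Next I would treat the right-hand side. Writing $\cJ := \theta(\cL_N)$, the pairs $(p^n\cJ, N^{p^n})$ lie in the indexing set $\cJ(N)$ of Definition \ref{wUgHDefn}(a): indeed $p^n\cJ$ is an $N$-stable Lie lattice in $\fr{g}_K$ by Proposition \ref{AlgAnalLie}, and $N^{p^n} \leq N_{p^n\cJ}$ is the Corollary following Lemma \ref{ComparingTrivs}, with $N^{p^n}$ normal in $N$. I would then show that $\{(p^n\cJ, N^{p^n}) : n \geq 0\}$ is cofinal in $\cJ(N)$: given $(\cL, M) \in \cJ(N)$, since $\fr{g}_K$ is finite dimensional we have $p^n\cJ \subseteq \cL$ for $n$ large, and since $M$ is open in $N$ and the groups $N^{p^n}$ form a neighbourhood basis of the identity, $N^{p^m} \leq M$ for $m$ large; taking $k = \max(n,m)$ gives $(p^k\cJ, N^{p^k}) \leq (\cL, M)$ in the ordering of $\cJ(N)$. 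By Definition \ref{wUgHDefn}(c) the inverse limit computing $\w{U}(\fr{g}_K,N)$ may therefore be taken over this cofinal chain, and the trivialisation used there is precisely $e^{\iota\delta_{p^n\cJ}}$ restricted to $N^{p^n}$, matching the one in Proposition \ref{PuttingNonTop}. Hence
\[ \w{U}(\fr{g}_K,N) \;\cong\; \invlim_n\; \hK{U(p^n\cJ)} \rtimes^{e^{\iota\delta_{p^n\cJ}}}_{N^{p^n}} N. \]

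Finally I would assemble the two sides: the isomorphisms $\theta\nu_{N^{p^n}} \rtimes_{N^{p^n}} 1_N$ of Proposition \ref{PuttingNonTop} are compatible with the transition maps as $n$ increases --- this compatibility is inherited from the naturality of the Lazard isomorphisms $\nu_{N^{p^n}}$ in Corollary \ref{LazardIsomNpn} and the functoriality of $\theta$ and of the crossed-product construction (Lemma \ref{CPfunc}) --- so they patch to an isomorphism of the inverse limits, which is automatically a homeomorphism since it is a continuous bijection between $K$-Fr\'echet algebras, by the Open Mapping Theorem. Composing with the two presentations above yields $\invlim_n D_{1/p}(N^{p^n},K)\rtimes_{N^{p^n}} N \cong \w{U}(\fr{g}_K,N)$. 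The main obstacle I anticipate is the bookkeeping required to verify that the three inverse systems (the distribution side, the enveloping-algebra side, and the intermediate system in Proposition \ref{PuttingNonTop}) have genuinely compatible transition maps, i.e. that the relevant squares commute on the nose; this reduces, as in the proof of Proposition \ref{DXGwelldef}, to checking commutativity on the dense images of the group rings $K[N^{p^n}]$ and then invoking continuity, but it needs to be done carefully because the crossed-product trivialisations $e^{\iota\delta_{p^n\cJ}}$ vary with $n$.
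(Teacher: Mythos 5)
Your plan is essentially the paper's proof, which simply cites Proposition~\ref{PuttingNonTop} together with Definition~\ref{wUgHDefn}(c). The substantive content of the argument — verifying that $\{(p^n\cJ, N^{p^n})\}_{n\ge 0}$ is cofinal in $\cJ(N)$, checking that the trivialisations in Definition~\ref{wUgHDefn}(b) and Proposition~\ref{PuttingNonTop} agree, and invoking compatibility of the $\theta\nu_{N^{p^n}}\rtimes 1_N$ with the transition maps — is implicit in the paper's one-line proof, and you are right to spell it out.

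One caveat on your second paragraph. The assertion that ``the left-hand side of the desired isomorphism is $D(N,K)$'' is not part of this corollary: it is the content of Theorem~\ref{SchmidtCP} combined with Corollary~\ref{ExtendedLazIsom}, which together yield the next statement, Corollary~\ref{DNKUGN}. More importantly, the claim that $D_{1/p}(N^{p^n},K)\rtimes_{N^{p^n}} N \cong D_{r_n}(N,K)$ does \emph{not} come from Lemma~\ref{Untwist} and finite index alone. Lemma~\ref{Untwist} gives a ring isomorphism $D_{1/p}(N^{p^n},K)[N] \cong D_{1/p}(N^{p^n},K)\rtimes N$, but identifying the factor $D_{1/p}(N^{p^n},K)\rtimes_{N^{p^n}}N$ with $D_{r_n}(N,K)$ requires the filtration/graded-ring analysis in Theorem~\ref{SchmidtCP} (including the delicate binomial estimate in Lemma~\ref{PadicBinomVals} for $p=2$). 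Since Corollary~\ref{ExtendedLazIsom} does not require this identification, you can safely drop that digression; if you keep it, it should be attributed correctly.
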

\begin{proof} Apply Proposition \ref{PuttingNonTop} and Definition \ref{wUgHDefn}(c). \end{proof}

In to relate the inverse limit appearing on the left hand side of the isomorphism in Corollary \ref{ExtendedLazIsom} to locally analytic distribution algebras, we will need the following elementary result on the $p$-adic valuations of binomial coefficients.

\begin{lem}\label{PadicBinomVals} Let $n$ and $j$ be integers such that $n \geq 1$ and $1 \leq j \leq p^n - 1$. Then
\[p^n v_p \left( \binom{p^n}{j} \right) \geq \epsilon j,\]
with equality if and only if $p=2$ and $j = 2^{n-1}$.
\end{lem}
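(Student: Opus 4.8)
The plan is to first compute $v_p\!\left(\binom{p^n}{j}\right)$ exactly, and then reduce the asserted estimate to an elementary inequality among powers of $p$, handling the equality case at the end. The first step is to establish that $v_p\!\left(\binom{p^n}{j}\right) = n - v_p(j)$ for all $1 \le j \le p^n$. This is immediate from the identity $j\binom{p^n}{j} = p^n \binom{p^n-1}{j-1}$ once one knows that $\binom{p^n-1}{j-1}$ is a $p$-adic unit for $0 \le j-1 \le p^n-1$; the latter follows from Lucas' theorem, since the base-$p$ digits of $p^n-1$ are all equal to $p-1$ and $\binom{p-1}{d} \not\equiv 0 \pmod p$ for every $0 \le d \le p-1$.

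Next I would write $a := v_p(j)$ and $j = p^a m$ with $p \nmid m$. Since $1 \le j \le p^n-1$ this forces $0 \le a \le n-1$ and $1 \le m \le p^{n-a}-1$, hence $j \le p^n - p^a$. By the valuation formula, the inequality to be proved is exactly $p^n(n-a) \ge \epsilon j$, so it suffices to prove the stronger statement $p^n(n-a) \ge \epsilon(p^n - p^a)$, i.e.
\[ p^n(n-a-\epsilon) + \epsilon p^a \ge 0. \]
If $n-a \ge \epsilon$ the left-hand side is $\ge \epsilon p^a > 0$. As $n-a \ge 1$ always and $\epsilon \le 2$, the only remaining possibility is $\epsilon = 2$ and $n-a=1$ (so $p=2$ and $a=n-1$), in which case the left-hand side equals $-2^n + 2\cdot 2^{n-1} = 0$. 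This proves the inequality in all cases.

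For the equality clause, one observes that equality in $p^n v_p\!\left(\binom{p^n}{j}\right) = \epsilon j$ forces equality in both $\epsilon j \le \epsilon(p^n-p^a)$ and $p^n(n-a) \ge \epsilon(p^n-p^a)$; the first gives $m = p^{n-a}-1$ and the second gives $p=2$ and $a=n-1$, whence $j = 2^n - 2^{n-1} = 2^{n-1}$. Conversely, if $p=2$ and $j = 2^{n-1}$ then $a = n-1$, $m=1$, and $p^n(n-a) = 2^n = 2\cdot 2^{n-1} = \epsilon j$, giving equality; and for $p$ odd the argument above shows the inequality $p^n(n-a-\epsilon)+\epsilon p^a \ge 0$ is strict for every admissible $j$, so equality never occurs. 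Everything here is routine; the only point that needs a little care is the equality clause, where one must verify that the two inequalities chained together are simultaneously tight precisely when $p = 2$ and $j = 2^{n-1}$.
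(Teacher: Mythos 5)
Your proof is correct, and it takes a genuinely different route from the paper's. The paper splits by the parity of $p$: for odd $p$ it only observes that $p$ divides $\binom{p^n}{j}$, so $p^n v_p\bigl(\binom{p^n}{j}\bigr) \ge p^n > j$; for $p=2$ it invokes Legendre's digit-sum formula $v_2(j!) = j - s(j)$ to obtain $v_2\bigl(\binom{2^n}{j}\bigr) = s(j) + s(2^n-j) - 1$ and then studies when the digit sums can be small. You instead pin down the valuation \emph{exactly} for all $p$ at once, $v_p\bigl(\binom{p^n}{j}\bigr) = n - v_p(j)$, using the identity $j\binom{p^n}{j} = p^n\binom{p^n-1}{j-1}$ together with Lucas to see that $\binom{p^n-1}{j-1}$ is a $p$-adic unit, and then reduce the estimate to the elementary inequality $p^n(n-a-\epsilon) + \epsilon p^a \ge 0$ in the parameter $a = v_p(j)$. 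Your approach is more uniform (no case split on $p$ until the very end), yields the sharper closed form for the valuation as a byproduct, and makes the equality analysis a mechanical matter of tracking where two chained inequalities are tight; the paper's approach is slightly shorter in the odd-$p$ case because it gets away with a one-line divisibility observation, but its $p=2$ argument via digit sums is arguably less transparent about why $j = 2^{n-1}$ is the unique borderline case.
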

\begin{proof} Suppose first that $p > 2$. Then the binomial coefficient is always divisible by $p$ so $p^n v_p\left( \binom{p^n}{j} \right) \geq p^n > j = \epsilon j$. Now suppose that $p = 2$ so that $\epsilon = 2$. By \cite[Chapter II, \S 8.1, Lemma 1]{BourLGLA}, we know that $v_2(j!) = j - s(j)$ where $s(j)$ is the sum of the $2$-adic digits of $j$. Therefore
\[v_2 \left( \binom{2^n}{j} \right) = 2^n - 1 - (j - s(j)) - ((2^n-j) - s(2^n-j)) = s(j) + s(2^n-j) - 1.\]
Now $s(j) \geq 1$, with equality if and only if $j$ is a power of $2$, and the only way of writing $2^n$ as a sum of two other powers of $2$ is $2^n = 2^{n-1} + 2^{n-1}$. Hence $s(j) + s(2^n-j)$ is at least $3$ unless $j = 2^{n-1}$ when $s(j) + s(2^n-j) = 2$. So when $j \neq 2^{n-1}$ we have
\[ 2^n v_2 \left(\binom{2^n}{j}\right) \geq 2^n(3 - 1) = 2^{n+1} > 2j = \epsilon j,\]
and when $j = 2^{n-1}$, the left hand side is equal to $2^n$ which is precisely $\epsilon j$.
\end{proof}

Next, we extend the results of Schmidt \cite[\S 6]{Schmidt}, \cite[\S 5]{SchmidtBB} to include the case $p = 2$.

\begin{thm}\label{SchmidtCP} For each $n \geq 0$, there is a continuous $K$-algebra isomorphism
\[ D_{1/p}(N^{p^n},K) \rtimes_{N^{p^n}} N \tocong D_{r_n}(N,K)\]
where $r_n := \sqrt[p^n]{1/p}$. 
\end{thm}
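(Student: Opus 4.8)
The plan is to reduce everything to a statement about $p$-adically completed enveloping algebras via the Lazard-type isomorphism $\nu_{N^{p^n}}$ already established in Corollary \ref{LazardIsomNpn}, and then invoke Proposition \ref{PuttingNonTop}. More precisely, fix an $L$-uniform $N$ as in the rest of $\S \ref{ConnLADist}$, write $\cJ := \theta(\cL_N)$, and recall from Proposition \ref{PuttingNonTop} that there is a $K$-Banach algebra isomorphism
\[ D_{1/p}(N^{p^n},K) \rtimes_{N^{p^n}} N \tocong \hK{U(p^n \cJ)} \rtimes_{N^{p^n}}^{e^{\iota \delta_{p^n\cJ}}} N.\]
So it suffices to construct a continuous $K$-algebra isomorphism from $\hK{U(p^n \cJ)} \rtimes_{N^{p^n}} N$ to $D_{r_n}(N,K)$, and by symmetry of the Banach-algebra completion functor the cleanest route is to exhibit both sides as $K$-Banach-algebra completions of the skew-group ring $U(\fr{g}_K) \rtimes N$ (equivalently $D(N,K)$) with respect to the \emph{same} norm on a dense subalgebra, namely the $r_n$-norm on $D(N,K)$.

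First I would recall the explicit description of $D_{r_n}(N,K)$: fixing a minimal ordered generating set $\{g_1,\ldots,g_d\}$ for $N$ with $b_i = g_i - 1$, the algebra $D_{r_n}(N,K)$ consists of power series $\sum_\alpha \lambda_\alpha \mathbf b^\alpha$ with $\|\lambda\|_{r_n} := \sup_\alpha |\lambda_\alpha| r_n^{\,\epsilon|\alpha|} < \infty$ (this is the $p$-odd normalisation; for $p=2$ one uses $\epsilon = 2$ and the same formula, which is exactly where Lemma \ref{PadicBinomVals} is needed — see below). The key computation is to identify the unit ball of $D_{r_n}(N,K)$. Using the relation $g_i = \exp(p^{n+\epsilon} v_i / p^n) = \exp(p^\epsilon u_i')$ where $u_i'$ generate $p^n \cJ$, and expanding $b_i = g_i - 1 = \sum_{j\ge 1} (p^\epsilon u_i')^j / j! $, one checks that the $\cR$-subalgebra generated by $b_1/\pi^?,\ldots$ coincides, up to bounded $\pi$-torsion, with $\h{U(p^n\cJ)} = \h{U(p^{n+\epsilon}\cL_N/p^n \cdot(\ldots))}$; the crucial point is that $g_i^{p^m} - 1$ and $p^{m\epsilon}(g_i - 1)$ generate the same ideal modulo higher terms, and the valuation estimate $p^n v_p\binom{p^n}{j} \ge \epsilon j$ from Lemma \ref{PadicBinomVals} is precisely what makes the change-of-variables between the group-element filtration on $D_{r_n}(N,K)$ and the $\pi$-adic filtration on $\h{U(p^n\cJ)}$ an isomorphism after inverting $\pi$ — the failure of strict inequality at $p=2$, $j = 2^{n-1}$ is harmless because it still gives an inclusion of unit balls with bounded-torsion cokernel, which is all that is needed to get a Banach isomorphism on the generic fibre (as in \cite[Lemma 3.6]{DCapOne}).

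Having matched the two sides for the subgroup $N^{p^n}$, the skew-group decoration is handled exactly as in the proof of Proposition \ref{PuttingNonTop}: the inclusion $N^{p^n}\hookrightarrow D_{r_n}(N,K)$ is an $N$-equivariant trivialisation of the $N^{p^n}$-action, and Lemma \ref{CPfunc} (together with \cite[Lemma 1.3/1.4]{Pass} as in Lemma \ref{RingSGN}) promotes the Banach isomorphism $D_{1/p}(N^{p^n},K)\cong \hK{U(p^n\cJ)}$ to an isomorphism of crossed products $D_{1/p}(N^{p^n},K)\rtimes_{N^{p^n}} N \cong \hK{U(p^n\cJ)}\rtimes_{N^{p^n}} N$; one then observes that the latter is, by the norm-matching above, nothing but $D_{r_n}(N,K)$ — indeed $D_{r_n}(N,K)$ is visibly a crossed product of $D_{r_n}(N^{p^n},K) = D_{1/p}(N^{p^n},K)$ with the finite group $N/N^{p^n}$, compatibly with the group-ring embeddings. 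Continuity is automatic since both sides are $K$-Banach algebras and the map is an algebra isomorphism carrying one unit ball onto (a bounded-torsion neighbour of) the other.

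The main obstacle is the $p=2$ bookkeeping in the norm comparison: when $p$ is odd every binomial coefficient $\binom{p^n}{j}$ is divisible by $p$ and the estimate $p^n v_p\binom{p^n}{j} > \epsilon j$ is strict, so the identification of unit balls is clean; for $p = 2$ the boundary case $j = 2^{n-1}$ produces an equality, so one must argue via bounded $\pi$-torsion rather than via an isometry, and must be careful that the resulting map on completions is still a \emph{ring} isomorphism and not merely a bijection of topological vector spaces. I would isolate this in a lemma comparing the $\cR$-subalgebra of $D_{r_n}(N,K)$ generated by $\{b_i\}$ with $\h{U(p^n\cJ)}$, showing the inclusion has bounded $\pi$-torsion cokernel in both directions, and then conclude with \cite[Lemma 3.6]{DCapOne}. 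Everything else — the crossed-product formalism, the $N$-equivariance of $\theta$ (Proposition \ref{AlgAnalLie}), and the comparison of trivialisations (Lemma \ref{ComparingTrivs}) — is already in place.
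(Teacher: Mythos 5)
Your reduction via Proposition \ref{PuttingNonTop} to the claim that $\hK{U(p^n\cJ)}\rtimes_{N^{p^n}}N \cong D_{r_n}(N,K)$ is logically valid, but it has a gap where the real work should be. You assert that ``$D_{r_n}(N,K)$ is visibly a crossed product of $D_{r_n}(N^{p^n},K) = D_{1/p}(N^{p^n},K)$ with the finite group $N/N^{p^n}$,'' but this is not visible --- it \emph{is} the theorem, once you unwind the Lazard detour. The content one must establish is that the topological closure of $K[N^{p^n}]$ inside $D_{r_n}(N,K)$ is exactly $D_{1/p}(N^{p^n},K)$ (equivalently, that the natural inclusion $\phi \colon D_{1/p}(N^{p^n},K) \to D_{r_n}(N,K)$ is strictly filtered), and this is precisely what the filtration argument in the paper's proof delivers. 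Writing ``$D_{r_n}(N^{p^n},K) = D_{1/p}(N^{p^n},K)$'' is also misleading with the standard meaning of these symbols: for $n\geq 1$ and $r_n = p^{-1/p^n} > 1/p$, one has a \emph{proper} inclusion $D_{r_n}(N^{p^n},K) \subsetneq D_{1/p}(N^{p^n},K)$. The detour through $\hK{U(p^n\cJ)}$ thus adds complexity (you need to translate between the exponential coordinates and the group-element coordinates $b_i = g_i - 1$, and you also mis-state the relation --- it should be $g_i^{p^n} = \exp(p^\epsilon u_i')$ for $u_i' \in p^n\cJ$, not $g_i = \exp(p^\epsilon u_i')$) without avoiding the norm comparison that is the crux of the matter.

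There is a second, sharper, problem with your treatment of $p=2$. You argue that the equality case in Lemma \ref{PadicBinomVals} at $j = 2^{n-1}$ is ``harmless because it still gives an inclusion of unit balls with bounded-torsion cokernel.'' But the lemma only gives you the one-sided inequality $\deg^{(0)}_{p^n}\phi(c_i) \geq p^n\deg^{(n)}_1(c_i)$, i.e.\ continuity of $\phi$; to deduce commensurability of unit balls you also need the reverse inequality, and you have not produced it. The paper obtains this reverse inequality \emph{from} the explicit formula $(\gr\phi)(y_i) = x_i^{2^n} + s\,x_i^{2^{n-1}}$ for $p=2$: this graded map is injective modulo $s$, hence $\gr\phi$ is injective, hence $\phi$ is a genuine isometry. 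So the $p=2$ case is not a degenerate case requiring a weaker bounded-torsion argument --- it is handled by the same strict-filteredness as $p>2$, and your proposed workaround is circular without the graded analysis. In short, the direct comparison of $D_{1/p}(N^{p^n},K)$ with $D_{r_n}(N,K)$ via the degree functions $p^n\deg^{(n)}_1$ and $\deg^{(0)}_{p^n}$, exhibiting $\phi$ as an isometry and $D_{r_n}(N,K)$ as free of rank $p^d$ over its image, is both cleaner and is the step that cannot be avoided.
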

\begin{proof} As in the proof of Theorem \ref{LazardIso}, the statement reduces to the case where $L = \Qp$. As the statement is trivially true when $n = 0$, we may assume that $n \geq 1$. Let $\{g_1,\ldots, g_d\}$ be a minimal topological generating set for $N$, let $b_i := g_i - 1 \in K[N]$ and $c_i := g_i^{p^n} - 1 \in K[N^{p^n}]$ for each $i=1,\ldots, d$. 

Recall the notion of \emph{degree functions} from \cite[\S 2.2]{AW13}. By the definitions given in \cite[\S 4]{ST} and the discussion in \cite[\S 10.7]{AW13} (extended naturally to include the case $p = 2$ as well as the case of an arbitrary $p$-adic coefficient field $K$), the algebra $D_{r_n}(N,K) = K \underset{\Qp}{\h\otimes} D_{r_n}(N, \Qp)$ is the completion of the rational Iwasawa algebra $KN$ with respect to the degree function $\deg^{(0)}_{p^n}$ given by
\[ \deg^{(0)}_{p^n} \left( \sum_{\alpha \in \N^d} \lambda_\alpha \mathbf{b}^\alpha \right) = \min \{ p^n v_p(\lambda_\alpha) + \epsilon |\alpha| : \alpha \in \N^d \}\]
where $v_p$ denotes the valuation on $K$ normalised by $v_p(p) = 1$. Similarly, the algebra $D_{1/p}(N^{p^n},K)$ is the completion of the $KN^{p^n}$ with respect to the degree function $p^n\deg^{(n)}_1$ given by
\[ p^n\deg^{(n)}_1 \left( \sum_{\alpha \in \N^d} \lambda_\alpha \mathbf{c}^\alpha \right) = p^n\min \{ v_p(\lambda_\alpha) + \epsilon |\alpha| : \alpha \in \N^d\}.\]
Consider the inclusion $\phi:KN^{p^n} \hookrightarrow KN$. It sends $c_i$ to $(1+b_i)^{p^n} - 1$, and 
\begin{equation}\label{DegDeg} \begin{array}{lll} \deg^{(0)}_{p^n}(\phi(c_i)) &=& \deg^{(0)}_{p^n} \left( b_i^{p^n} + \sum_{j = 1}^{p^n-1} \binom{p^n}{j} b_i^{p^n - j} \right) \\
&=& \min \{p^n v_p\left( \binom{p^n}{j} \right) + \epsilon \cdot (p^n - j) : 0 \leq j \leq p^n-1 \} \\
&\geq & p^n \epsilon \\
&=& p^n \deg^{(n)}_1(c_i)
\end{array}\end{equation}
by Lemma \ref{PadicBinomVals}. It follows that $KN^{p^n} \hookrightarrow KN$ is a morphism of filtered $K$-modules with respect to these degree functions, in the sense of \cite[I.2.1.4.1]{Laz1965}:
\[\begin{array}{lll} \deg^{(0)}_{p^n} \phi \left( \sum_{\alpha \in \N^d} \lambda_\alpha \mathbf{c}^\alpha \right)  &\geq&  \min\{ \deg^{(0)}_{p^n}(\lambda_\alpha) + \deg^{(0)}_{p^n}(\phi(\mathbf{c}^\alpha)) : \alpha \in \N^d\} \\
&\geq& \min\{ p^n \cdot v_p (\lambda_\alpha) + \sum_{i=1}^d \alpha_i \cdot \deg^{(0)}_{p^n}(\phi(c_i)) : \alpha \in \N^d\} \\
&\geq& \min\{ p^n \cdot v_p (\lambda_\alpha) + p^n \epsilon \cdot |\alpha| : \alpha \in \N^d\} \\
&=&p^n \deg^{(n)}_1 \left( \sum_{\alpha \in \N^d} \lambda_\alpha \mathbf{c}^\alpha \right) .\end{array}\]

Now, it follows from \cite[Th\'eor\`eme III.2.3.3]{Laz1965} and the discussion in \cite[\S 4]{ST} that
\[ \gr D_{1/p}(N^{p^n}, K) = U(\gr N^{p^n}) \otimes_{\gr \Zp} \gr K.\]
Since $n > 0$, the Lie algebra $\gr N^{p^n}$ is commutative, so the algebra on the right hand side is a polynomial ring over $k := \cR / p \cR$ in $d + 1$ variables. More precisely:
\[\gr D_{1/p}(N^{p^n},K) = (\gr K)[y_1,\ldots,y_d] = k[s,s^{-1}, y_1,\ldots, y_d]\]
where $y_i$ is the principal symbol of $c_i$ for each $i$ and $s$ is the principal symbol of $p$. Note that because we are working with $p^n \deg^{(n)}_1$ as opposed to $\deg^{(n)}_1$, the degrees of the  graded generators of this algebra are as follows:
\[ \deg(s) = p^n \qmb{and} \deg (y_i) = \epsilon p^n \qmb{for each} i.\]
Next, because $r_n > 1/p$ as $n > 0$, it follows from \cite[Theorem 4.5(i)]{ST} that 
\[ \gr D_{r_n}(N,K) = (\gr K)[x_1,\ldots,x_d] = k[s,s^{-1},x_1,\ldots,x_d]\]
where $x_i$ is the principal symbol of $b_i$ for each $i$. Note that we have
\[ \deg(s) = p^n \qmb{and} \deg (x_i) = \epsilon \qmb{for each} i.\]
Now, the associated graded morphism
\[ \gr \phi : \gr D_{1/p}(N^{p^n},K) \longrightarrow \gr D_{r_n}(N,K)\]
sends $s$ to $s$, and by Lemma \ref{PadicBinomVals} and the calculation performed in (\ref{DegDeg}), we have
\begin{equation}\label{GrPhi} (\gr \phi)(y_i) = \left\{ \begin{array}{lll} x_i^{p^n} &\qmb{if} &p > 2 \\ x_i^{2^n} + s \cdot x_i^{2^{n-1}} &\qmb{if} & p = 2 \end{array} \right.\end{equation}
for each $i$. Consider the restriction of $\gr \phi$ to $k[s,y_1,\ldots,y_d]$. The above formulas show that this restriction sends $y_i^{p^n}$ to $x_i^{p^n}$ modulo $s$ for each $i$, so this restriction is injective modulo $s$. It follows easily that $\gr \phi$ is also injective, and hence
\[ \deg^{(0)}_{p^n}(\phi(\lambda)) = p^n \deg^{(n)}_1(\lambda) \qmb{for all} \lambda \in D_{1/p}(N^{p^n},K).\]
Thus the inclusion $\phi : KN^{p^n} \to KN$ is an isometry, so it extends to an inclusion
\[ \hat{\phi} : D_{1/p}(N^{p^n},K) \hookrightarrow D_{r_n}(N,K),\]
which extends to a continuous $K$-algebra homomorphism
\[ \Phi : D_{1/p}(N^{p^n},K) \rtimes_{N^{p^n}} N \longrightarrow D_{r_n}(N,K).\]
Let $\N_p := \{a \in \N : a<p\}$. Using (\ref{GrPhi}) we also see that $\gr D_{r_n}(N,K)$ is a finitely generated free $D_{1/p}(N^{p^n},K)$-module of rank $p^d$ via $\gr \phi$, with basis $\{ \mathbf{x}^\alpha : \alpha \in \N_p^d \}$. As $\gr \mathbf{b}^\alpha = \mathbf{x}^\alpha$ for each $\alpha \in \N^d$,  \cite[Th\'eor\`eme I.2.3.17]{Laz1965} implies that 
$D_{r_n}(N,K)$ is a finitely generated free $D_{1/p}(N^{p^n},K)$-module with basis $\{ \mathbf{b}^\alpha : \alpha \in \N_p^d \}$. Using \cite[Lemma 7.8]{DDMS}, we see that
\[ D_{r_n}(N,K) = \bigoplus_{\alpha \in \N_p^d} D_{1/p}(N^{p^n},K) \cdot \mathbf{g}^\alpha.\]
Using Lemma \ref{RingSGN}(b) we conclude that $\Phi$ is a $K$-algebra isomorphism.
\end{proof}

\begin{cor}\label{DNKUGN} There is an isomorphism of $K$-Fr\'echet algebras
\[\eta_N : D(N,K) \tocong \w{U}(\fr{g}_K, N).\]
\end{cor}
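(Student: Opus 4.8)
\textbf{Proof plan for Corollary \ref{DNKUGN}.}
The plan is to deduce the isomorphism $\eta_N : D(N,K) \tocong \wUg{N}$ by passing to the inverse limit in the family of isomorphisms already constructed. First I would recall that, by \cite[Theorem 4.5]{ST}, the locally $L$-analytic distribution algebra $D(N,K)$ is a Fr\'echet--Stein algebra with a canonical presentation as an inverse limit $D(N,K) \cong \invlim_n D_{r_n}(N,K)$, where $r_n = \sqrt[p^n]{1/p}$ runs through a sequence of radii tending to $1$; the connecting maps are the natural continuous $K$-algebra homomorphisms $D_{r_{n+1}}(N,K) \to D_{r_n}(N,K)$ with dense image. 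On the other side, Corollary \ref{ExtendedLazIsom} gives an isomorphism of $K$-Fr\'echet algebras $\invlim_n \left( D_{1/p}(N^{p^n},K) \rtimes_{N^{p^n}} N \right) \tocong \wUg{N}$, where the connecting maps in the tower on the left are those induced by $\theta \nu_{N^{p^n}} \rtimes 1_N$ and the transition maps between the $\hK{U(p^n\cJ)} \rtimes_{N^{p^n}} N$.

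The key step is then to combine Theorem \ref{SchmidtCP} with these two presentations. For each $n \geq 0$, Theorem \ref{SchmidtCP} provides a continuous $K$-algebra isomorphism $\Phi_n : D_{1/p}(N^{p^n},K) \rtimes_{N^{p^n}} N \tocong D_{r_n}(N,K)$. So first I would verify that the $\Phi_n$ are compatible with the connecting maps on both sides, i.e.\ that for each $n$ the square
\[ \xymatrix{ D_{1/p}(N^{p^{n+1}},K) \rtimes_{N^{p^{n+1}}} N \ar[r]^-{\Phi_{n+1}} \ar[d] & D_{r_{n+1}}(N,K) \ar[d] \\ D_{1/p}(N^{p^{n}},K) \rtimes_{N^{p^{n}}} N \ar[r]_-{\Phi_n} & D_{r_n}(N,K) } \]
commutes, where the left vertical arrow comes from the tower of Corollary \ref{ExtendedLazIsom} (transported through $\theta \nu_{N^{p^n}} \rtimes 1_N$) and the right vertical arrow is the canonical restriction map of distribution algebras. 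Since all four maps are continuous $K$-algebra homomorphisms and the group ring $K[N]$ is dense in the top-left corner, it suffices to check commutativity on the image of $K[N]$, where all four maps act as the identity on $N$ up to the various identifications --- this is a routine diagram chase using the explicit description of $\Phi_n$ on group elements given in the proof of Theorem \ref{SchmidtCP} and the explicit description of the connecting maps. Passing to the inverse limit over $n$ then yields a continuous $K$-algebra isomorphism $\invlim_n \left( D_{1/p}(N^{p^n},K) \rtimes_{N^{p^n}} N \right) \tocong \invlim_n D_{r_n}(N,K) \cong D(N,K)$, and composing with the isomorphism of Corollary \ref{ExtendedLazIsom} produces the desired $\eta_N : D(N,K) \tocong \wUg{N}$.

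The main obstacle is the verification of the compatibility square above: one must make sure that the connecting maps appearing in Corollary \ref{ExtendedLazIsom} --- which are defined at the level of the completed enveloping algebras $\hK{U(p^n\cJ)} \rtimes_{N^{p^n}} N$ and their transition maps $\hK{U(p^{n+1}\cJ)} \to \hK{U(p^n\cJ)}$ --- correspond, under $(\theta\nu_{N^{p^n}} \rtimes 1_N)^{-1}$, exactly to the obvious inclusion-induced maps $D_{1/p}(N^{p^{n+1}},K) \rtimes_{N^{p^{n+1}}} N \to D_{1/p}(N^{p^n},K) \rtimes_{N^{p^n}} N$, and that the latter are intertwined by the $\Phi_n$ with the distribution-algebra restriction maps. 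All of this is ultimately forced by density of $K[N]$ and continuity, so no serious new computation is required; the work is purely bookkeeping of which map is which. Once this is done, the result follows formally, and this $\eta_N$ is the local ingredient needed (together with the $F(G)$-formalism of $\S\ref{FGformalism}$ and Example \ref{DNKgood}) to assemble the global isomorphism $\eta_G$ of Theorem \ref{LADistFormula}.
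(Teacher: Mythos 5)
Your proposal is correct and follows essentially the same route as the paper: identify $D(N,K) = \invlim D_{r_n}(N,K)$, apply Theorem \ref{SchmidtCP} levelwise, and compose with Corollary \ref{ExtendedLazIsom}. The paper's proof is a one-line composition of these isomorphisms, so the only difference is that you spell out the compatibility of the $\Phi_n$ with the transition maps, which the paper leaves implicit; your density-plus-continuity argument for that compatibility is the right one.
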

\begin{proof} Since $r_n \to 1$ as $n \to \infty$, the discussion at \cite[\S 4, p.162]{ST} shows that $D(N,K) = \invlim D_{r_n}(N,K)$. Now compose the continuous isomorphisms 
\[D(N,K) = \invlim D_{r_n}(N,K) \tocong \invlim D_{1/p}(N^{p^n},K) \rtimes_{N^{p^n}} N \tocong \w{U}(\fr{g}_K, N)\]
given by Theorem \ref{SchmidtCP} and Corollary \ref{ExtendedLazIsom}.
\end{proof}

\begin{proof}[Proof of Theorem \ref{LADistFormula}]
Let $\cC$ denote the set of open $L$-uniform subgroups of $G$. It is non-empty by Lemma \ref{LuniformsExist}, and it is straightforward to see that it is stable under conjugation in $G$ and finite intersections. For any $H \leq N$ in $\cC$, consider the following diagram:
\[ \xymatrix{ D(H,K) \ar[rrrr]^{\eta_H}\ar[d] &&&& \w{U}(\fr{g}_K, H)\ar[d] \\ D(N,K) \ar[rrrr]_{\eta_N} &&&& \w{U}(\fr{g}_K, N). }\]
Each arrow in this diagram is a continuous $K$-algebra homomorphism. For each $h \in H$, the images of $\eta_H(h)$ and $\eta_N(h)$ in $\w{U}(\fr{g}_K,N)$ coincide. Since $K[H]$ is dense in $D(H,K)$, it follows that the diagram commutes. 

Let $\iota_N$ denote the inclusions of $K[N]$ into $\w{U}(\fr{g}_K, N)$ and $D(N,K)$. The pair $(\w{U}(\fr{g}_K,-), \iota)$ is good in the sense of Definition \ref{GoodFIota} by Proposition \ref{wUgGood}, and the pair $(D(-,K),\iota)$ is good  by Example \ref{DNKgood}. The result now follows from Corollary \ref{DNKUGN}.
\end{proof}

\begin{proof}[Proof of Theorem \ref{EqEq}]
The ideal $\fr{m}_0$ from the proof of Theorem \ref{GlobSecDnlK} consists of the central elements of $U(\fr{g}_K)$ that annihilate the trivial representation of $\fr{g}_K$; now it follows from  \cite[Theorem 6.3]{ST} that the category appearing on the left hand side in Theorem \ref{EqEq} is anti-equivalent to the category of coadmissible $D(G,K)$-modules killed by $\fr{m}_0$. We can finally apply Theorem \ref{LADistFormula} and Theorem \ref{BBEquiv}.
\end{proof}

\emph{Acknowledgements.} I would like to thank Michael Harris for the invitation to a very inspiring workshop in C\'ordoba, Argentina and mentioning the words `$G$-equivariant $\mathcal{D}$-modules' whilst there. I am also grateful to Ahmed Abbes, Oren Ben-Bassat, Joseph Bernstein, Thomas Bitoun, Florian Herzig, Kobi Kremnizer, Arthur-C\'esar Le Bras, Vytas Pa\v{s}k\={u}nas, Peter Schneider, Otmar Venjakob and Simon Wadsley for their interest in this work.

\bibliographystyle{plain}
\bibliography{../references}
\end{document}